\newcommand{\aut}{\operatorname{Aut}}
\newcommand{\out}{\operatorname{Out}}
\newcommand{\Irr}{\operatorname{Irr}}
\newcommand{\sym}{\operatorname{Sym}}
\newcommand{\alt}{\operatorname{Alt}}
\newcommand{\dih}{\operatorname{Dih}}
\newcommand{\sdih}{\operatorname{SDih}}
\newcommand{\syl}{\operatorname{Syl}}
\newcommand{\SL}{\mathrm{SL} }
\newcommand{\PSL}{\mathrm{PSL}}
\newcommand{\PGL}{\mathrm{PGL}}
\newcommand{\PSp}{\mathrm{PSp}}
\newcommand{\Sp}{\mathrm{Sp}}
\newcommand{\B}{\mathrm{B}}
\newcommand{\GL}{\mathrm{GL}}
\newcommand{\GF}{\mathrm{GF}}
\newcommand{\HN}{\mathrm{HN}}
\newcommand{\HS}{\mathrm{HS}}
\newcommand{\C}{\mathrm{C}}
\newcommand{\GO}{\mathrm{GO}}
\newcommand{\Q}{\mathrm{Q}}
\newcommand{\wt}[1]{\widetilde{#1}}
\newcommand{\CC}{{\mathcal{C}}}
\def \<{\langle }
\def \>{\rangle }
\def \bs {\backslash }
\renewcommand{\bar}{\overline}
\def \inv {^{-1}}
\def \GF {\mathrm{GF} }
\def \PSp {\mathrm{PSp} }
\def \GL {\mathrm{GL} }
\def \sym {\mathrm{Sym} }
\def \alt {\mathrm{Alt} }
\def \dih {\mathrm{Dih} }
\def \syl {\mathrm{Syl} }
\def \bs {\backslash }
\renewcommand{\bar}{\overline}
\renewcommand{\leq}{\leqslant}
\renewcommand{\geq}{\geqslant}
\def \End {\mathrm{End} }
\newcommand{\e}{{\epsilon }}
\def \b{\beta }
\def \a{\alpha }
\def \g{\gamma }
\def \d{\delta }
\def \l{\lambda }
\def\m{\mu }
\newtheorem*{thmA}{Theorem A}
\newtheorem*{thmB}{Theorem B}
\newtheorem*{thmC}{Theorem C}
\newtheorem*{Ithm}{Theorem}
\newtheorem*{Ihyp}{Hypothesis}
\newtheorem{lemma}{Lemma}[chapter]
\newtheorem{thm}[lemma]{Theorem}
\newtheorem{cor}[lemma]{Corollary}
\newtheorem{hyp}[lemma]{Hypothesis}
\newtheorem{notation1}[lemma]{Notation}
\theoremstyle{plain} \theorembodyfont{\upshape}
\newtheorem*{hypA}{Hypothesis A}
\newtheorem*{hypB}{Hypothesis B}
\newtheorem*{hypC}{Hypothesis C}
\newtheorem{defi}[lemma]{Definition}
\theoremstyle{nonumberplain} \theoremheaderfont{\normalfont\itshape}
\newtheorem{proof}{Proof}
\title{Odd Characterizations of Almost Simple Groups}
\author{Sarah Astill}
\begin{document}

\begin{titlepage}
  \vspace*{3cm}
    \begin{center}
          {\Huge{\sc{Odd Characterizations of Almost Simple Groups}}}\\
    \vspace{0.2in}
          {\Large{\sc{3-Local and Character Theoretic Methods}}}\\
    \vspace{0.5in}
    by\\
    \vspace{0.5in}
   {\Large{\sc{Sarah Astill}}}\\
    \vspace{5cm}
    \begin{singlespace}
    A thesis submitted to\\
    The University of Birmingham\\
    for the degree of\\
    {\sc{Doctor of Philosophy}}\\
    \end{singlespace}
\end{center}
\vfill
    \begin{singlespace}
  \hfill
  \parbox{2.8in}{School of Mathematics\\
          The University of Birmingham\\
          \date}
\end{singlespace}
\end{titlepage}
\normalsize
\chapter*{Abstract}\thispagestyle{empty}
In this PhD thesis we discuss methods of recognizing finite groups by the structure of  normalizers
of certain $3$-subgroups. We explain a method for characterizing groups using character theoretic
and block theoretic methods and we use these methods to characterize $\alt(9)$. Furthermore, we
describe a particular hypothesis related to the $3$-local structure of finite groups of local
characteristic $3$ and characterize two almost simple proper extensions of
$\Omega^+_8(2)$ as examples of the local approach to group recognition. We also give a $3$-local characterization of the
sporadic simple group $\HN$ using local methods whilst applying a character theoretic result.

\pagestyle{empty} \tableofcontents \thispagestyle{empty}
\addtocontents{toc}{\protect\thispagestyle{empty}} \clearpage \pagestyle{plain}
\pagenumbering{arabic} \setcounter{page}{1}

\chapter*{Introduction}\addcontentsline{toc}{chapter}{\protect\numberline{}Introduction}

In \cite{Higman-plocalconditions}, Higman gives the following weak analogue of the  Brauer--Fowler
theorem for the prime three.

\begin{Ithm}
There are a finite number of finite simple groups $G$ with more than one conjugacy class of
elements of order three such that for some integer $n$, $|C_G(x)|\leq n$ for every element of order
three, $x$ in $G$.
\end{Ithm}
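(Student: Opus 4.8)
The plan is to adapt the Brauer--Fowler counting argument from the prime $2$ to the prime $3$: for each fixed bound $n$, the goal is to produce a proper subgroup of $G$ whose index is bounded in terms of $n$ alone. Simplicity of $G$ then embeds $G$ into a symmetric group of bounded degree, so $|G|$ is bounded, and since there are only finitely many groups of each order, only finitely many $G$ can occur. Throughout write $g=|G|$ and let $t$ denote the number of elements of order $3$. Since each class of order-$3$ elements has size $[G:C_G(x)]\ge g/n$ and there are at least two such classes, I would first record the lower bound $t\ge 2g/n$.

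The heart of the argument is a factorization count. I would consider products $ab$ with $a$ ranging over a class $\mathcal{C}_1$ and $b$ over a class $\mathcal{C}_2$ of order-$3$ elements, so that there are $|\mathcal{C}_1|\,|\mathcal{C}_2|\ge (g/n)^2$ ordered pairs in play. Here the hypothesis of more than one class is exactly what I would exploit: it lets me choose $\mathcal{C}_1,\mathcal{C}_2$ with $\mathcal{C}_1^{-1}\neq\mathcal{C}_2$ as sets (if some class is non-real take $\mathcal{C}_2=\mathcal{C}_1$, otherwise take two distinct real classes), so that $ab=1$ is impossible and every product lands on a non-identity element. Distributing the $|\mathcal{C}_1|\,|\mathcal{C}_2|$ pairs over the non-identity elements, pigeonhole yields a fixed $z\neq 1$ admitting at least $|\mathcal{C}_1|\,|\mathcal{C}_2|/g\ge g/n^2$ factorizations $z=ab$ with $a\in\mathcal{C}_1$ and $b\in\mathcal{C}_2$.

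The main obstacle is the step that in the involution case is immediate and here is not: bounding the number of factorizations of the fixed $z$ from above by a quantity controlled by $|C_G(z)|$. For involutions one uses that in $z=ab$ the factor $a$ inverts $z$, so all admissible $a$ lie in a single coset of $C_G(z)$ and the count is at most $|C_G(z)|$; an order-$3$ element has no such inverting property, so I would instead estimate the factorization count through the class-algebra structure constant $\frac{|\mathcal{C}_1|\,|\mathcal{C}_2|}{g}\sum_{\chi\in\Irr(G)}\chi(a)\chi(b)\overline{\chi(z)}/\chi(1)$, controlling the contribution of the nontrivial characters by a multiple of $|C_G(z)|$, or alternatively work inside $N_G(\langle z\rangle)$ and bound the admissible factors coset by coset. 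Either route should give a factorization count at most a factor $k$ times $|C_G(z)|$, with $k$ absolute (or at worst a mild function of the data for fixed $n$). Closing this estimate is where the real work lies.

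Granting such a bound, the two estimates combine: $|C_G(z)|\ge g/(kn^2)$, so $C_G(z)$ is a proper subgroup (it is proper since $z\neq 1$ and $G$ is simple nonabelian) of index $[G:C_G(z)]\le kn^2$. A power of $z$ of order $3$ would have centralizer of order at most $n$ containing $C_G(z)$, so for large $g$ the element $z$ is automatically a $3'$-element, though only the size of its centralizer matters. Finally, since $G$ is simple its action on the cosets of $C_G(z)$ is faithful, embedding $G$ into $\sym(kn^2)$; hence $|G|\le (kn^2)!$, and only finitely many simple groups can satisfy the hypotheses for a given $n$.
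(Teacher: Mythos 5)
You have reproduced the outer shell of the Brauer--Fowler argument faithfully, and you have put your finger on exactly the point where it breaks; but the gap you flag cannot be closed along either route you suggest, because the bound you need is false. For involutions the factorization count works since each factor inverts the product ($x(xy)x=(xy)^{-1}$), so the admissible factors of a fixed $z$ lie in a single coset of $C_G(z)$ and in particular normalize $\langle z\rangle$. An order-three factor of an arbitrary product $z$ satisfies no such relation and need not lie in $N_G(\langle z\rangle)$ at all, so the coset-by-coset route collapses; and the class-algebra formula only confirms the problem, since its main term is already $|\mathcal{C}_1||\mathcal{C}_2|/|G|\ge |G|/n^2$, so that for a typical $z$ (say a regular element of a large group of Lie type) the number of factorizations is of that size while $|C_G(z)|$ stays tiny. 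No constant $k$ with $\#\{(a,b): ab=z\}\le k\,|C_G(z)|$ can exist for general $z$, and the pigeonhole step gives you no control over which $z$ you land on, so "many factorizations" simply does not imply "large centralizer".

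The missing idea --- the one this thesis explicitly identifies as the engine of Higman's proof, even though the theorem itself is quoted from Higman rather than reproved here --- is to count only those pairs whose product \emph{again has order three}, and to use the order-three analogue of the dihedral fact: a group generated by two elements of order three whose product has order three has an abelian normal subgroup of index three (Lemma \ref{FT 333 group}). From this one deduces (Lemma \ref{triangle group}) that if $a$, $b$ and $c=ab$ all have order three and $\langle a\rangle$, $\langle b\rangle$, $\langle c\rangle$ are not all conjugate, then $\langle a,b\rangle$ centralizes some element $w$ of order three, whence $\langle a,b\rangle\le C_G(w)$ with $|C_G(w)|\le n$. This is the true replacement for the inversion property: for a fixed $c$ of order three, every such factorization $c=ab$ has $w\in C_G(c)$ (at most $n$ choices) and then $a\in C_G(w)$ (at most $n$ choices), so the relevant structure constant is at most $n^2$, a bound depending on $n$ alone, and playing this off against a lower bound for these structure constants is what bounds $|G|$. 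Note also that the hypothesis of more than one conjugacy class of elements of order three does its real work here --- in producing triangles whose cyclic subgroups are not all conjugate so that Lemma \ref{triangle group} applies --- and not merely in excluding $ab=1$ as in your write-up.
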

The Brauer--Fowler Theorem itself says that there are a finite number of finite simple  groups with
a given centralizer of an involution. This, together with the Feit--Thompson Theorem suggested that
finite simple groups could be classified by the structure of involution centralizers. Furthermore,
much as the proof of the Brauer--Fowler Theorem relies on the fact that two involutions generate a
dihedral group, Higman's analogue relies on a well known observation that a group generated by
elements  $a$ and $b$ of order three such that $ab$ also has order three has an abelian normal
subgroup of index three. In \cite{Hartley} Hartley and Kuzucuo{\v{g}}lu proved using the
classification of finite simple groups (see \cite{GLS1})  that for any two natural numbers, $n$ and
$k$, there are a finite number of finite simple groups $G$ containing an element $x$ of order $n$
such that $|C_G(x)|\leq k$. However the elementary nature of the proof of Higman's statement
reminds us that elements of order three have a special role in finite group theory and also
provides hope that some simple groups can be recognized from the structure of their
$3$-centralizers independently of the classification of finite simple groups (see Section XVI in
\cite{GorensteinClassification} for further discussion of this). In fact, during the 1960's and 1970's, Higman and some of his
students worked towards odd characterizations of some simple groups using character theoretic
methods (see for example  \cite{Higman} and \cite{PrincePSp43}). The methods work particularly well
in characteristic three. Note that in a finite group $G$, the subgroups $N_G(P)$,  where $P$ is a
non-trivial $p$-subgroup of $G$ ($p$ a prime), are called the $p$-local subgroups of $G$. There are
many recent examples of so-called $3$-local characterizations of simple groups. See for example
\cite{AstillM12}, \cite{ParkerKorchaginaRowleyCo3}, \cite{SalarianCo1}. In particular, in the
qualifying thesis which preceded this thesis \cite{AstillMPhilThesis}, the following theorems were
proven.

\begin{Ithm}
Let $G$ be a finite group with subgroups $A \cong B$  such that
$C:=A \cap B$ contains a Sylow $3$-subgroup of both $A$ and $B$ and
such that no non-trivial normal subgroup of $C$ is normal in both
$A$ and $B$. Suppose further that
\begin{enumerate}[$(i)$]
    \item $G=\<A,B\>$;
    \item $A/O_3(A)\cong B/O_3(B)\cong \GL_2(3)$;
    \item $O_3(A)$ and $O_3(B)$ are natural modules with respect to
    the actions of $A/O_3(A)$ and $B/O_3(B)$ respectively; and
    \item for $S\in \mathrm{Syl}_3(C)$,
    $N_G(\mathcal{Z}(S))=N_A(\mathcal{Z}(S))=N_B(\mathcal{Z}(S))$.
\end{enumerate}
Then $G \cong \mathrm{M}_{12}$ or
$\mathrm{PSL}_3(3)$.
\end{Ithm}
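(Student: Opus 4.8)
The plan is to run the amalgam method. Throughout put $p=3$ and $Q_A=O_3(A)$, $Q_B=O_3(B)$; by hypotheses $(ii)$ and $(iii)$ each is a natural $\GL_2(3)$-module, so $|Q_A|=|Q_B|=9$ and a Sylow $3$-subgroup $S$ of $C$ has order $27$. A first direct computation inside $3^2{:}\GL_2(3)$ identifies $\mathcal{Z}(S)=\C_{Q_A}(S)$ as a single line of the natural module, of order $3$, with $N_A(\mathcal{Z}(S))=Q_A\rtimes B_0$ where $B_0\leq\GL_2(3)$ is the Borel of order $12$; this shows $N_A(\mathcal{Z}(S))=N_B(\mathcal{Z}(S))=C$, so hypothesis $(iv)$ says exactly that $N_G(\mathcal{Z}(S))=C$. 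I would also record at this stage that the natural module is an FF-module with quadratic offender $S/Q_A$, together with its commutator and fixed-point data, since these drive the later analysis.

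Next I would pass to the coset graph $\Gamma=\Gamma(G;A,B)$, whose vertices are the cosets of $A$ and of $B$ and whose edges are the cosets of $C$, with $G$ acting edge-transitively. Hypothesis $(i)$ gives connectedness, and the condition that no non-trivial subgroup of $C$ be normal in both $A$ and $B$ is precisely faithfulness of the action of $G$ on $\Gamma$; since $A\cong B$ the amalgam is symmetric. For a vertex $\alpha$ with $G_\alpha\cong A$ set $Q_\alpha=O_3(G_\alpha)$ and $Z_\alpha=\<\mathcal{Z}(S)^{G_\alpha}\>$; as $\GL_2(3)$ is transitive on the four lines of the natural module and these span it, $Z_\alpha=Q_\alpha\cong 3^2$, and symmetrically at vertices of the other type.

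The core is then the critical-distance analysis: I would study the commutators $[Z_\alpha,Z_{\alpha'}]$ as $\alpha'$ ranges over vertices at increasing distance from $\alpha$, using the quadratic action of the transvection subgroups and hypothesis $(iv)$ to control the stabilisers $G_{\alpha_i}$ and the radicals $Q_{\alpha_i}$ along a geodesic. The natural-module relations should pin the local structure down completely and identify $\mathcal{A}(A,B,C)$, up to isomorphism, as the unique amalgam of $\GL_2(3)$-type carrying these data; this is where I would invoke (or reprove) the relevant classification of rank-$2$ amalgams in the style of Goldschmidt and Delgado--Stellmacher, or of the symplectic amalgams of Parker--Rowley. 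Bounding the analysis so that no configuration of larger critical distance survives, and excluding the spurious amalgams, is the most delicate bookkeeping in this step.

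The genuine obstacle, however, is the completion step, and it is here that both target groups appear. Since $\PSL_3(3)$ and $\M_{12}$ share this same $3$-local amalgam, they cannot be separated locally; the distinction is global. Using $(iv)$ in the form $N_G(\mathcal{Z}(S))=C$ together with a Sylow and orbit-counting argument on $\Gamma$, I would bound the number of vertices and hence $|G|$, reducing $G$ to finitely many possibilities. The finish is a geometric analysis of $\Gamma$: the completion in which $\Gamma$ is the building of type $A_2$ over $\GF(3)$ (girth $6$, the incidence graph of $\mathrm{PG}(2,3)$ on $26$ vertices) yields $G\cong\PSL_3(3)$, while the one further admissible completion compatible with $(iv)$ carries the $3$-local geometry of $\M_{12}$ and yields $G\cong\M_{12}$. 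Matching group orders and Sylow $3$-normalisers then confirms that $G\cong\M_{12}$ or $\PSL_3(3)$ and that these are the only possibilities.
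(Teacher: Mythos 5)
Your local set-up is sound, and your observation that $\PSL_3(3)$ and $\M_{12}$ are completions of one and the same amalgam, so that the distinction between them must be global, is exactly right. But that is precisely where the proposal breaks down: the sentence ``Using $(iv)$ \ldots together with a Sylow and orbit-counting argument on $\Gamma$, I would bound the number of vertices and hence $|G|$'' is not an argument, and no argument of that shape can exist. The amalgamated product $A *_C B$ is residually finite, so there are infinitely many finite groups $G = \langle A, B\rangle$ satisfying $(i)$--$(iii)$ in which $A$ and $B$ embed with $A \cap B = C$ and with the faithfulness condition intact; every one of them induces exactly the same local data on its coset graph (the same $Z_\alpha$, the same commutator and critical-distance computations), because all of that is computed inside the amalgam itself. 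Hypothesis $(iv)$ is the only thing separating $\M_{12}$ and $\PSL_3(3)$ from this infinite family, and it is a statement about the normalizer in $G$ of a subgroup of order three; nothing in the vertex- and edge-counting combinatorics of $\Gamma$ converts it into a bound on $|G|$. So the real difficulty is not the amalgam identification (which is plausible via the Delgado--Stellmacher classification of weak BN-pairs of rank two --- note that Goldschmidt's classification concerns index-$(3,3)$ amalgams and does not apply here, where $[A:C]=[B:C]=4$), but a global step for which your outline offers no mechanism; the proposal assumes the conclusion precisely at the point it labels the genuine obstacle.

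What fills this hole in the actual proof --- the theorem is quoted from \cite{AstillMPhilThesis}, and the introduction of this thesis records its method --- is character theory. From $(iv)$ one knows $N_G(\mathcal{Z}(S))$, hence $C_G(\mathcal{Z}(S))$, exactly; the centralizers of the remaining elements of order three are then pinned down using the Feit--Thompson theorem on self-centralizing subgroups of order three (Theorem \ref{Feit-Thompson}) and the Smith--Tyrer theorem (Theorem \ref{Smith-Tyrer}), applied to sections which are $3$-soluble of length one. Only once these upper bounds on $3$-centralizers are available can one determine the structure of an involution centralizer and identify $G$ by a recognition theorem (compare Theorem \ref{Ascbacher M12}, which recognizes $\M_{12}$ among other groups from an involution centralizer of shape $2^{1+4}.\sym(3)$). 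If you wish to keep the coset-graph framework, you would still have to import this character-theoretic (or an equivalent transfer/fusion) bridge at the completion stage; without it, the amalgam method stalls exactly where you located the ``genuine obstacle,'' and the theorem remains unproved.
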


\begin{Ithm} Let
$\mathcal{G}$ be a finite group with non-conjugate subgroups $A_1$
and $A_2$ such that $A_{12}:=A_1 \cap A_2$ contains a Sylow
$3$-subgroup of both $A_1$ and $A_2$, and such that no non-trivial
normal subgroup of $A_{12}$ is normal in both $A_1$ and $A_2$.
Suppose further that, for $i=1,2$,
\begin{enumerate}[$(i)$]
    \item $|O_3(A_i)|=3^5$;
    \item $A_i/O_3(A_i) \cong
    \mathrm{GL}_2(3)$;
    \item $O_3(O_3(A_i))/\mathcal{Z}(A_i)$ and
    $\mathcal{Z}(O_3(A_i))/O_3(A_i)'$ are natural modules with respect to the action of
$A_i/O_3(A_i)$; and
    \item $N_\mathcal{G}(O_3(A_i)')=A_i$.
\end{enumerate}
Then
$\mathcal{G}\cong G_2(3)$.
\end{Ithm}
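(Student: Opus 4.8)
The plan is to treat $(A_1,A_2,A_{12})$ as a rank two amalgam of characteristic $3$, run the amalgam method on it, identify the amalgam with that of the two maximal parabolic subgroups of $G_2(3)$, and finally use $(iv)$ to recover $\mathcal G$ itself. Throughout write $Q_i=O_3(A_i)$ and $G=\<A_1,A_2\>$. First I would pin down the local structure forced by $(i)$--$(iii)$: each $Q_i$ has order $3^5$, and the two natural $\GL_2(3)$-factors $Q_i/Z(Q_i)$ and $Z(Q_i)/O_3(A_i)'$, together with $|O_3(A_i)'|=3$, force $Q_i$ to have class $2$ with $|Z(Q_i)|=3^3$. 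Since $\GL_2(3)$ is faithful on a natural module, $A_i/Q_i$ acts faithfully on $Q_i$, so $C_{A_i}(Q_i)=Z(Q_i)\leq Q_i$ and both $A_1,A_2$ are of characteristic $3$. Fixing $S\in\syl_3(A_{12})$ I would check that $|S|=3^6$, that $S\in\syl_3(G)$, and that $A_{12}=N_{A_1}(S)=N_{A_2}(S)$ is the common ``Borel'', so that the hypothesis that no non-trivial normal subgroup of $A_{12}$ is normal in both $A_1$ and $A_2$ says precisely that the amalgam is faithful, i.e. $G$ acts faithfully on its coset graph.

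Next I would build the coset graph $\Gamma$ on the cosets of $A_1$ and $A_2$, on which $G$ acts edge-transitively with vertex stabilisers the conjugates of $A_1,A_2$ and edge stabiliser $A_{12}$. For a vertex $\alpha$ I write $G_\alpha$ for its stabiliser, $Q_\alpha=O_3(G_\alpha)$, and $Z_\alpha=\<\,\Omega_1(Z(S))^{G_\alpha}\,\>$, and analyse $Z_\alpha$ as a module for $G_\alpha/Q_\alpha\cong\GL_2(3)$ using $(iii)$, identifying it with a natural module. The heart of the proof is the usual critical-distance analysis: let $b$ be the least distance between two vertices $\alpha,\alpha'$ in the same $G$-orbit with $[Z_\alpha,Z_{\alpha'}]\neq1$, and study the commutator action along a critical path $\alpha=\gamma_0,\gamma_1,\dots,\gamma_b=\alpha'$. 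Using the two natural chief factors inside each $Q_i$, the non-containment $Z_\alpha\not\leq Q_{\alpha'}$, and the action of $S$, I would bound $b$ and extract the precise commutator and module relations between consecutive terms of the path. The aim is to show that these relations determine the amalgam up to isomorphism and match it with the amalgam of maximal parabolics of $G_2(3)$ (equivalently, that the data $(i)$--$(iii)$ single out this amalgam among the rank two amalgams with $\GL_2(3)$ Levi complements and radicals of order $3^5$). I expect this critical-distance step to be the main obstacle: determining $b$ and proving that the relations forced by the two natural modules are rigid enough to exclude every competing configuration (degenerate amalgams, or amalgams belonging to other characteristic-$3$ groups) is exactly where the work lies.

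Once the amalgam is identified, $G$ is a faithful completion of the $G_2(3)$ amalgam, and I would recognise $G\cong G_2(3)$ by realising the associated weak $(B,N)$-pair (equivalently, the action on the associated generalised hexagon) and checking, via the explicit order $3^6$ of $S$ and its fusion, that $G$ is neither a proper cover nor a strictly larger completion. Finally I would use $(iv)$ to reduce $\mathcal G$ to $G$: since $O_3(A_i)'\leq G$ and $G\cong G_2(3)$ is simple, the normalisers $A_i=N_{\mathcal G}(O_3(A_i)')$ already lie in $G$ and are self-normalising parabolics, which forces $G\normal\mathcal G$ with $\mathcal G/G\leq\out(G_2(3))\cong C_2$. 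A non-trivial outer (graph) automorphism would fuse $A_1$ with $A_2$ and enlarge $N_{\mathcal G}(O_3(A_i)')$ beyond $A_i$, so hypothesis $(iv)$ rules it out and yields $\mathcal G=G\cong G_2(3)$; this last reduction and the recognition step are comparatively routine once the amalgam has been pinned down.
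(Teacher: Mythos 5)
Your overall strategy---run the amalgam method on $(A_1,A_2,A_{12})$, identify the parabolic amalgam of $G_2(3)$, then recognise the completion---is a genuinely different route from the one this thesis attributes to the theorem: the actual proof (in the cited qualifying thesis) goes through the structure of centralizers of elements of order three, uses the character-theoretic results of Smith--Tyrer and Feit--Thompson to control subgroups attached to the normalizer of a Sylow $3$-subgroup, determines an involution centralizer, and finishes with a centralizer recognition theorem. The local half of your plan is sound in principle: conditions $(i)$--$(iii)$ do force each $A_i$ to have characteristic $3$ (a $3'$-element acting trivially on both natural chief factors of $O_3(A_i)$ is trivial by coprime action), the faithfulness of the amalgam is, as you say, exactly the hypothesis that no non-trivial normal subgroup of $A_{12}$ is normal in both $A_1$ and $A_2$, and the critical-distance analysis (or an appeal to the Delgado--Stellmacher classification of weak $(B,N)$-pairs of rank $2$) can plausibly pin down the amalgam.

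The genuine gap is the final step, which you describe as ``comparatively routine.'' In rank $2$, identifying the amalgam identifies neither $G=\langle A_1,A_2\rangle$ nor $\mathcal{G}$: the universal completion is the amalgamated free product $A_1 \ast_{A_{12}} A_2$, an infinite, virtually free, residually finite group, and it has infinitely many pairwise non-isomorphic finite faithful completions. Unlike the rank $\geq 3$ situation, no covering-theory or simple-connectivity argument is available; the coset geometry of a rank-$2$ amalgam is only locally a generalized hexagon, its universal cover is tree-like, and there is no classification of generalized hexagons of order $(3,3)$ to quote. So ``checking via the explicit order $3^6$ of $S$ and its fusion that $G$ is not a strictly larger completion'' is not a check---it is the entire global half of the theorem, and it is precisely where hypothesis $(iv)$ must do real work (showing $S \in \syl_3(\mathcal{G})$, letting $A_i=N_{\mathcal{G}}(O_3(A_i)')$ control fusion, and then bounding the group via transfer or character theory before an involution-centralizer recognition theorem for $G_2(3)$ can be applied). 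Your proposal instead defers $(iv)$ to the last sentence and uses it only to control $\mathcal{G}/G$, and even that use is off: in $\aut(G_2(3))\cong G_2(3).2$ the graph automorphism sends long root subgroups to short root subgroups, which are not $G_2(3)$-conjugate, so $N_{G_2(3).2}(O_3(A_1)')$ is still $A_1$ and $(iv)$ survives there; what fails in $G_2(3).2$ is the hypothesis that $A_1$ and $A_2$ are non-conjugate, and that, not $(iv)$, is what excludes the extension. As written, the argument therefore has a hole exactly at the passage from the local (amalgam) data to the global isomorphism $\mathcal{G}\cong G_2(3)$.
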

Both theorems recognize a group which is \textit{rank 2} in the sense  that there are two subgroups
properly containing a given Sylow $3$-subgroup. The characterizations rely on two character
theoretic results by Smith and Tyrer and by Feit and Thompson (see Theorems \ref{Feit-Thompson} and
\ref{Smith-Tyrer} in Section \ref{section-recognition results} of this thesis). In $3$-local characterizations, we often need to determine the structure of centralizers of elements of order three. Once we have such information we must use it to determine
the structure of an involution centralizer. Character theoretic results allow us to restrict
the size and structure of such subgroups by using information related to the normalizer of a Sylow
$3$-subgroup. The Smith--Tyrer Theorem can be useful in determining the structure of a group if the
target group is $p$-soluble of length one and such structures appear surprisingly often in
\cite{AstillMPhilThesis}. The theorem is not used to such a large extent in this thesis (for
example we must deal with non-soluble centralizers). In fact, it has been observed that,
to $3$-locally recognize certain groups it is often necessary to develop character theoretic
arguments related to the specific $3$-local subgroups one encounters. To be more specific, in the
final chapter of this thesis, we recognize the Harada--Norton sporadic simple group, $\HN$  (see Chapter
\ref{Chapter-HN}). The group $\HN$ has a subgroup isomorphic $3 \times \alt(9)$. However the
information we acquire through $3$-local analysis only allows us to see a small part of this
subgroup. Thus in Chapter \ref{chapter-Alt9} of this thesis we present a proof of the following
theorem.
\begin{thmA}
Let $G$ be a finite group with $J\leq G$ such that $J$ is elementary abelian of order $27$. Suppose
$H=N_G(J)$ is isomorphic to a $3$-local subgroup of $\alt(9)$ of  shape $3^3.\sym(4)$. If
$O_{3'}(C_G(x))=1$ for every element of order three $x$ in $H$, then $G=H$ or $G \cong
\mathrm{Alt}(9)$.
\end{thmA}
The proof of this result is highly character theoretic and deals with a fixed isomorphism type  of
local subgroup and as such is tailored towards the situation arising in the $\HN$ recognition
result. However the method is most likely applicable to many situations involving $3$-local recognition of small groups. The character theoretic proof uses Suzuki's theory of special classes as described in
Chapter \ref{chapter-Alt9}. It also develops some character theoretic methods which were possibly
used by Higman and students in the 1970's. These methods involve blocks of characters and detailed
calculations. In fact we use a computer algebra package for some of these calculations and the code
is available on request. The proof also uses some local
methods to finally recognize the simple group $\alt(9)$ together with a theorem of Aschbacher.

In Chapter \ref{chaper general hypothesis} we consider groups satisfying a particular hypothesis.
This hypothesis is related to a major programme of research led by Meierfrankenfeld, Stellmacher
and Stroth. The programme aims to understand  \textit{groups of local characteristic $p$} (see
\cite{MSS-overview}). Given a group $X$ and a prime $p$, $X$ is said to be of
\textit{characteristic $p$} if $C_X(O_p(X))\leq O_p(X)$. Given a group $G$ and a prime $p$ dividing $|G|$, $G$ is of \textit{local characteristic $p$} if every
$p$-local subgroup is of characteristic $p$ and $G$  is of \textit{parabolic characteristic $p$} if every $p$-local subgroup which contains a Sylow $p$-subgroup is of characteristic $p$. A group $G$ is \textit{almost simple} if a subgroup $H \trianglelefteq G$ is non-abelian and simple and $G$ is isomorphic to a subgroup of $\aut(H)$. Almost simple groups of Lie type defined over fields of characteristic $p$ have local characteristic $p$ and several of the sporadic simple groups have a prime divisor
$p$ of the group order for which they are of either local or parabolic characteristic $p$ and therefore in some sense
mimic the local behavior of groups of Lie type in characteristic $p$. In this thesis we do not explicitly consider groups of local characteristic $p$ however we consider the following hypothesis which has application towards the understanding of such groups.
\begin{Ihyp}
Let $G$ be a finite group and let $Z$ be the centre of a Sylow $3$-subgroup of $G$ with
$Q:=O_3(C_G(Z))$.  Suppose that
\begin{enumerate}[$(i)$]
\item $Q\cong 3_+^{1+4}$;
\item $C_G(Q)\leq Q$; and
\item for some $x \in G\bs N_G(Z)$, $[Z,Z^x]=1$.
\end{enumerate}
\end{Ihyp}
The third condition is to say that $Z$ is not weakly closed in $C_G(Z)$ with respect to $G$.  Five
sporadic simple groups satisfy this hypothesis as well as several simple and almost simple groups
of Lie type in defining characteristics 2 and 3. Thus the configuration is exceptional as it admits
sporadic groups and simple groups of local characteristic 2. Full analysis of this hypothesis will form
part of a future project however we begin the analysis in this thesis. In particular, we replace
condition $(iii)$ with the following stronger condition.
\begin{list}{$(iii)$}{}
\item  $Z \neq Z^x \leq Q$ for some $x \in G$.
\end{list}

In Chapter \ref{chaper general hypothesis} we examine groups satisfying our hypothesis and produce a
list of local properties which such groups have. These properties are then used in Chapter
\ref{Chapter-O8Plus2} where we consider groups with an additional hypothesis as we prove the
following theorem.

\begin{thmB}
Let $G$ be a finite group and let $Z$ be the centre of a Sylow $3$-subgroup of $G$ with $Q:=O_3(C_G(Z))$.
Suppose that
\begin{enumerate}[$(i)$]
\item $Q\cong 3_+^{1+4}$;
\item $C_G(Q)\leq Q$; and
\item $Z \neq Z^x \leq Q$ for some $x \in G$.
\end{enumerate}
Furthermore assume that $C_G(Z)/Q \cong \SL_2(3)$ or $C_G(Z)/Q \cong
\SL_2(3)\times 2$ and the action of $O^2(C_G(Z)/Q)\cong \SL_2(3)$ on $Q/Z$ has one non-central
chief factor. Then $G\cong \Omega_8^+(2).3$ or $G \cong
\Omega_8^+(2).\sym(3)$.
\end{thmB}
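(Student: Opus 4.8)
The plan is to determine the two maximal $3$-local subgroups of $G$ explicitly, to assemble them into a rank-$2$ amalgam, and then to identify that amalgam with the parabolic amalgam of $\Omega_8^+(2).3$ (respectively $\Omega_8^+(2).\sym(3)$). First I would fix the structure of $M:=N_G(Z)$. For $S\in\syl_3(C_G(Z))$ we have $Q\le S$ and $C_S(Q)\le C_G(Q)\le Q$ by $(ii)$, so $Z\le Z(Q)$; since $Z(Q)$ is characteristic in $Q$ of order $3$ this gives $Z=Z(Q)$, whence $N_G(Z)=N_G(Q)$ and $M/C_G(Z)\hookrightarrow\aut(Z)\cong C_2$. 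Commutation equips $Q/Z$ with a nondegenerate symplectic form preserved by $C_G(Z)/Q\le\Sp_4(3)$; the hypothesis that $O^2(C_G(Z)/Q)\cong\SL_2(3)$ has a single non-central chief factor forces this factor to be the natural (self-dual) $\Sp_2(3)$-module $V$, and the argument that $V$ cannot be totally isotropic (else the dual would give a second non-central factor) yields the orthogonal decomposition $Q/Z=V\perp V_0$ with $V_0$ a nondegenerate $2$-space affording only central chief factors. This pins $M$ down to shape $3_+^{1+4}.\GL_2(3)$ or $3_+^{1+4}.(\GL_2(3)\times 2)$ according to the two cases for $C_G(Z)/Q$.

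Next I would exploit $(iii)$. The conjugate $Z^x\le Q$ with $Z\ne Z^x$ supplies further conjugates of $Z$ inside $Q$, and, feeding in the local properties established in Chapter~\ref{chaper general hypothesis}, I would show that a canonically defined elementary abelian subgroup $E$ containing these conjugates (realised as the preimage in $Q$ of a totally isotropic subspace, hence elementary abelian because $Q$ has exponent $3$) has $L:=N_G(E)$ a second maximal $3$-local subgroup with $M\cap L$ a common \emph{parabolic}. The substantive points to verify are that $O_3(L)=E$, that $L/E$ acts on $E$ as a recognisable group (I expect an orthogonal group over $\mathbb{F}_3$ together with an order-$3$ graph automorphism, reflecting triality), and that $M\cap L$ is the stabiliser of the flag $Z<E$ and contains a Sylow $3$-subgroup of $G$. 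Wherever the order or the composition structure of a centralizer of a $3$-element must be bounded I would invoke Theorems~\ref{Smith-Tyrer} and~\ref{Feit-Thompson} of Section~\ref{section-recognition results}.

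Finally I would prove $G=\langle M,L\rangle$ and identify $G$. Generation reduces to excluding a strongly $3$-embedded subgroup: since $(iii)$ says $Z$ is not weakly closed and $L$ fuses the noncentral points of $E$, the associated $3$-local geometry is connected, forcing $\langle M,L\rangle=G$. The amalgam $(M,L;M\cap L)$ then matches the parabolic amalgam of $\Omega_8^+(2).3$; I would either quote a weak-BN-pair / amalgam-uniqueness recognition theorem, or locate a normal subgroup $H\cong\Omega_8^+(2)$ (identified through its $3$-central-element or involution-centralizer characterization) and compute $N_G(H)/H$ using the theorem of Aschbacher cited in the Introduction. The two cases $C_G(Z)/Q\cong\SL_2(3)$ and $C_G(Z)/Q\cong\SL_2(3)\times 2$ then deliver $G\cong\Omega_8^+(2).3$ and $G\cong\Omega_8^+(2).\sym(3)$ respectively.

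The main obstacle lies in this last step. Proving $\langle M,L\rangle=G$ — i.e.\ ruling out a proper subgroup that absorbs every $3$-local — and then showing that the local amalgam we have constructed is realised by no group other than $\Omega_8^+(2).3$ or its $\sym(3)$-extension, is the delicate part; matching the computed amalgam against the genuine triality amalgam, and in particular distinguishing the extension $.3$ from $.\sym(3)$, demands tight control of the fusion of involutions and of the action of $M/C_G(Z)$ on $Q$.
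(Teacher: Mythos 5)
Your opening step is sound and matches what the paper does: the symplectic-form argument giving $Q/Z=V\perp V_0$, with $V$ the natural module and $V_0$ affording only central chief factors, is exactly the paper's decomposition $Q=N_1\ast N_2$ into two extraspecial groups of order $27$. The proposal breaks down at the construction of the second $3$-local subgroup. You take $E$ to be the preimage in $Q$ of a totally isotropic subspace, so $|E|\leq 3^3$; up to conjugacy the relevant such subgroup containing $Y=ZZ^x$ is $Z_2=J(S)\cap Q$, and the paper proves (Lemma \ref{O8-things about YU=Z2}) that $N_G(Z_2)=N_G(S)\leq N_G(Z)$. Thus your $L=N_G(E)$ lies \emph{inside} $M=N_G(Z)$, the amalgam degenerates, and $\langle M,L\rangle=M\neq G$. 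The subgroup that actually carries the second maximal $3$-local is $N_G(J)$ for $J=J(S)$ the Thompson subgroup, which is elementary abelian of order $3^4$ and not contained in $Q$ (indeed $J\cap Q=Z_2$); no elementary abelian subgroup of $Q\cong 3_+^{1+4}$ is large enough to play this role.

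Even after replacing $E$ by $J$, the identification strategy has no support. There is no weak-BN-pair or amalgam-uniqueness theorem covering the configuration $\bigl(3_+^{1+4}.\GL_2(3),\,3^4.\GO_4^+(3);\,N_G(S)\bigr)$, and your generation claim --- that connectivity of a $3$-local geometry forces $\langle M,L\rangle=G$ --- is not an argument: ruling out a proper subgroup that contains both $3$-locals is precisely the global problem. The paper sidesteps both difficulties and never proves a generation statement at all. Instead it (a) uses Gr\"{u}n's theorem together with fusion of $3$-elements to manufacture the normal subgroup of index $3$ (and Thompson/extremal transfer for the index-two quotient in the $\SL_2(3)\times 2$ case), (b) carries out a long $2$-local analysis --- building $F^\ast(C_G(t))\cong 2_+^{1+8}$ from four commuting quaternion groups indexed by the conjugates $Z^{t_i}$, and using Goldschmidt's strongly closed abelian subgroup theorem (Theorem \ref{goldschmidt}) to prove $C_G(t)=N_G(E)$ with quotient $\sym(3)\times\sym(3)\times\sym(3)$ --- and then (c) applies Smith's involution-centralizer recognition of $\Omega_8^+(2)$ (Theorem \ref{Smith-Orthog}), which identifies a normal simple subgroup with no generation hypothesis needed. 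Two smaller corrections: the tools that control $3$-element centralizers here are Prince's theorems (Theorems \ref{prince} and \ref{princeSym9}), not Smith--Tyrer or Feit--Thompson, which belong to the $\alt(9)$ chapter; and the Aschbacher theorem you cite for computing $N_G(H)/H$ is the $\M_{12}/\alt(9)$ recognition theorem and does not perform that task --- the distinction between the quotients $.3$ and $.\sym(3)$ falls out of the transfer analysis, not out of an automorphism computation.
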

This result has a direct application in a further recognition result in preparation  by Parker and
Stroth which aims to recognize the exceptional group of Lie type $^2E_6(2)$ (and its almost simple
extensions) as groups of parabolic characteristic $3$. The almost simple groups each have a section to
which Theorem B applies. The proof of Theorem B requires us to recognize firstly that a group
satisfying the hypothesis has a proper normal subgroup and secondly that a normal subgroup is
isomorphic to the simple orthogonal group $\Omega_8^+(2)$. After gathering both $3$-local and $2$-local information about groups satisfying the hypothesis of Theorem B we are able to use transfer results to recognize abelian quotients. We finally make use of a theorem due to Smith \cite{SmithOrthogonal} to recognize the simple subgroup.

The Harada--Norton sporadic simple group also satisfies the hypothesis we describe in Chapter
\ref{chaper general hypothesis} and in the final chapter of this thesis we give a  proof of the following result.

\begin{thmC}
Let $G$ be a finite group and let $Z$ be the centre of a Sylow $3$-subgroup of $G$ with $Q:=O_3(C_G(Z))$.
Suppose that
\begin{enumerate}[$(i)$]
\item $Q\cong 3_+^{1+4}$;
\item $C_G(Q)\leq Q$;
\item $Z \neq Z^x \leq Q$ for some $x \in G$; and
\item $C_G(Z)/Q \cong 2^{.}\alt(5)$.
\end{enumerate}
Then $G$ is isomorphic to the sporadic simple group $\HN$.
\end{thmC}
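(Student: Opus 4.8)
The plan is to recognise $G$ by reconstructing first the $3$-local and then the $2$-local structure of $\HN$, with Theorem A serving as the character-theoretic engine. Since conditions $(i)$--$(iii)$ are precisely the Hypothesis studied in Chapter \ref{chaper general hypothesis}, I would begin by importing the local properties established there. In particular $C_G(Q)=Z(Q)=Z$, because $Q$ is extraspecial and $C_G(Q)\leq Q$, and $N_G(Z)=N_G(Q)$, because $Q=O_3(C_G(Z))$ is characteristic in $C_G(Z)\normal N_G(Z)$. The group $C_G(Z)/Q\cong 2\cdot\alt(5)\cong\SL_2(5)$ acts on the symplectic space $Q/Z\cong\mathbb{F}_3^4$ as a subgroup of $\Sp_4(3)$; analysing this action together with condition $(iii)$ I would show $N_G(Z)/C_G(Z)\cong C_2$, so that $N_G(Z)=N_G(Q)$ has shape $3^{1+4}_+{:}4{\cdot}\alt(5)$ of order $58320$, matching the $3B$-normaliser of $\HN$. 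From the $\SL_2(5)$-orbits on $Q/Z$ I would then determine the $G$-classes of elements of order three, identifying a class $3A$ distinct from the class $3B$ of $Z$.

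The core of the proof is the identification of the $3A$-centraliser. Fixing a representative $t$ of the class $3A$ — produced, via the fusion just described, from the elementary abelian subgroups of $Q$ and their normalisers computed in Chapter \ref{chaper general hypothesis} — I would exhibit inside $\bar C:=C_G(t)/\langle t\rangle$ an elementary abelian subgroup $\bar J\cong 3^3$ whose normaliser $N_{\bar C}(\bar J)$ has the shape $3^3.\sym(4)$ required by Theorem A. The characteristic-three condition $C_G(Q)\leq Q$ propagates, through the local analysis, to the statement that $O_{3'}(C_{\bar C}(\bar x))=1$ for every element $\bar x$ of order three in $N_{\bar C}(\bar J)$. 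Theorem A then yields $\bar C=N_{\bar C}(\bar J)$ or $\bar C\cong\alt(9)$; the first alternative is impossible because $|C_G(t)|$ is divisible by $5$ and $7$ whereas $|N_{\bar C}(\bar J)|=648$, so $\bar C\cong\alt(9)$ and hence $C_G(t)\cong 3\times\alt(9)$, reproducing the $3A$-centraliser of $\HN$.

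With both $3$-local centralisers determined, I would reconstruct the $2$-local structure. The central involution $z$ of $\SL_2(5)\leq C_G(Z)/Q$ lifts to an involution inverting $Q/Z$ and centralising $Z$, so that $C_{C_G(Z)}(z)=Z\times\SL_2(5)$. Using the $2$-elements visible in $C_G(t)\cong 3\times\alt(9)$ and in $N_G(Z)$, I would control the fusion of involutions of $G$ and compute the centraliser of a suitable involution $u$, the objective being to realise the involution centralisers of $\HN$ — in particular to produce an involution $u$ with $C_G(u)\cong 2\cdot\HS\cdot 2$. I expect this transition from $3$-local to $2$-local data to be the principal obstacle: exactly as with the subgroup $3\times\alt(9)$, the $3$-local analysis exposes only a fragment of $C_G(u)$, and recognising the Higman--Sims group $\HS$ (or its double cover) inside it demands delicate fusion arguments supported by a further recognition theorem for $\HS$.

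Finally I would establish that $G$ is simple. As in the proof of Theorem B, transfer and focal-subgroup arguments applied to the $3$-local subgroups would show that $O^{3'}(G)=G$ and that $G$ has no proper normal subgroup of index prime to $3$; together with the structure of the $3$-local subgroups this forces $G$ to be non-abelian simple. Having matched the centraliser of an involution with $2\cdot\HS\cdot 2$, I would then invoke Harada's characterisation of $\HN$ by this involution centraliser to conclude that $G\cong\HN$.
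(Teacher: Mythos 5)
Your $3$-local half mirrors the paper's: import the properties of Chapter \ref{chaper general hypothesis}, isolate a second class $3\mathcal{A}$ of elements of order three in $Q$, and apply Theorem A to $C_G(a)/\<a\>$ to obtain $3\times\alt(9)$. But the step where you exclude the first alternative of Theorem A is circular. You reject $\bar C=N_{\bar C}(\bar J)$ ``because $|C_G(a)|$ is divisible by $5$ and $7$'', yet nothing available at that stage yields either divisibility: that $7$ divides $|G|$ at all is unknown before $\alt(9)$ has been identified, and the elements of order five in $C_G(Z)$ act fixed-point-freely on $Q/Z$, so none of them is known to commute with a $3\mathcal{A}$-element. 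The paper instead rules out the small case by a purely $3$-local argument (Lemma \ref{HN-centralizer of a does not normalize J}): a conjugate $b$ of $a$ lies in $Q\cap Q^x$, hence lies in the Thompson subgroups of a Sylow $3$-subgroup of $C_G(Z)$ and of one of $C_G(Z^x)$; these Thompson subgroups are distinct (otherwise $L=\<Q,Q^x\>$ would normalize one of them), so $C_G(b)$ has two Sylow $3$-subgroups with different Thompson subgroups and therefore $C_G(a)\nleq N_G(J)$. You need an argument of this kind, not a divisibility claim.

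The endgame also fails as stated. Harada's 1975 theorem shows that a group with an involution centralizer $2^.\aut(\HS)$ is simple of order $2^{14}3^65^67\cdot 11\cdot 19$; it does \emph{not} pin down the isomorphism type, and uniqueness remained open until Segev (1992). The recognition tool actually available — and the one the paper uses — is Segev's theorem (Theorem \ref{Segev-HN}), which requires \emph{two} involution centralizers, $C_G(u)\cong (2^.\HS):2$ and $C_G(t)\sim 2_+^{1+8}.(\alt(5)\wr 2)$ with $C_G(O_2(C_G(t)))\leq O_2(C_G(t))$, and requires no simplicity at all (so your transfer/focal-subgroup simplicity argument is both unsubstantiated and unnecessary). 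Consequently you must in addition determine the $2$-central involution centralizer $C_G(t)$, and this is in fact the bulk and the hardest part of the paper's proof: one builds $Q_1Q_2\cong 2_+^{1+8}$ from the groups $O_2(C_{C_G(t)}(a_i))$ attached to two commuting $3\mathcal{A}$-elements, shows $N_G(Q_1Q_2)/Q_1Q_2\cong\alt(5)\wr 2$, and then proves $N_G(Q_1Q_2)=C_G(t)$ via Goldschmidt's theorem on strongly closed abelian $2$-subgroups (Theorem \ref{goldschmidt}). Your sketch of $C_G(u)\cong 2^.\HS:2$ — through $2\times 2\times\alt(8)$, transfer, and Aschbacher's $\HS$-recognition (Theorem \ref{Aschbacher-HS}) — is the right idea for that half, but on its own it cannot complete the identification of $G$ with $\HN$.
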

We apply the general theory from Chapter \ref{chaper general hypothesis} to understand  the
$3$-local structure of groups satisfying the hypothesis of Theorem C. We also apply Theorem A to
recognize a $3$-centralizer of shape $3\times \alt(9)$. However the majority of the proof involves
$2$-local analysis. This is because in order to eventually recognize the simple group we apply a theorem of Segev. Segev's recognition result requires us to determine the
structure of two conjugacy classes of involution centralizer. Both involution
centralizers are non-soluble which, as described previously, can make identification more difficult. Moreover, both involution centralizers have small Sylow $3$-subgroups which further complicates our determination of the group structure.

We conclude this introduction with some discussion of transfer and the scope for further work. In
the proof of Theorem B we are forced to work ``at the top of the group" when we prove that our
group has proper derived subgroup. This requires results which use the
transfer homomorphism which is an essential tool when working with groups which are almost simple proper extensions. The recognition of $\HN$ could also be extended in this way to recognize the
almost simple group $\aut(\HN)\sim \HN.2$. However, as we see with the characterization of
$\Omega_8^+(2).\sym(3)$, proving the existence of an index two subgroup is difficult. The
transfer results can only be used once a Sylow $2$-subgroup has been found and after we have
gathered a great deal of information about fusion of elements of order two. Such things are not
observed until the later stages of the proof. However, future work will include extending Theorem C to the almost simple case and perhaps such
work will lead to a faster way to recognize proper $2$-quotients.
We mention also that perhaps the first case
to consider in relation to our general hypothesis is the simple group $\PSL_4(3)$. This has also
been characterized (see \cite{AstillPSL43}) with the following theorem.
\begin{Ithm}
Let $G$ be a finite group and let $Z$ be the centre of a Sylow $3$-subgroup of $G$. Suppose that
\begin{enumerate}[$(i)$]
\item $Q:=O_3(C_G(Z))\cong 3_+^{1+4}$;
\item $C_G(Q)\leq Q$;
\item $Z \neq Z^x \leq Q$ for some $x \in G$; and
\item $C_G(Z)/Q \cong \SL_2(3)$  and the action of
$C_G(Z)/Q$ on $Q/Z$ has two non-central chief factors.
\end{enumerate}
Then either $G \cong \PSL_4(3)$ or $G$ is isomorphic to a maximal parabolic subgroup of $\PSL_4(3)$
of shape $3^3.\SL_3(3)$.
\end{Ithm}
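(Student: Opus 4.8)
The plan is the usual $3$-local recognition strategy: determine $M:=C_G(Z)$ and $N_G(Z)$ precisely, use hypothesis~(iii) to build a maximal $3$-local subgroup $L\cong 3^3.\SL_3(3)$ (an end-node parabolic of $\PSL_4(3)$), and then decide whether $G=L$ or $G$ is generated by a parabolic amalgam yielding $\PSL_4(3)$. First I would fix the local setup. Since $Z\le\mathcal{Z}(M)\le C_G(Q)\le Q$ by~(ii), and $Q\cong 3_+^{1+4}$ is extraspecial with $|\mathcal{Z}(Q)|=3=|Z|$, we get $Z=\mathcal{Z}(Q)$, so commutation in $Q$ induces an $M/Q$-invariant symplectic form on $Q/Z$. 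Fixing $S\in\syl_3(M)$, hypothesis~(iv) gives $|S|=3\,|Q|=3^{6}$, and one checks $S\in\syl_3(G)$ and $|N_G(Z):M|\le 2$. The embedding $\SL_2(3)\cong M/Q\hookrightarrow\Sp_4(3)$ together with the two-non-central-chief-factor condition forces $Q/Z\cong W\oplus W^{*}$ as an $M/Q$-module, with $W$ the natural $\SL_2(3)$-module and $W^{*}$ its symplectic dual. From this I would read off the characteristic subgroups of $S$, the second centre $\mathcal{Z}_2(S)$, a normal elementary abelian subgroup $W_0\le Q$ of order $3^3$ (the preimage of one natural summand together with $Z$), and the $M$-conjugacy of the subgroups of order $3$ and $9$; this is routine coprime-action and extraspecial bookkeeping, controlled by Thompson's $A\times B$-lemma.

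Next I would manufacture the parabolic $L$. Hypothesis~(iii) gives $x$ with $Z\neq Z^{x}\le Q$, so $Z$ is not weakly closed in $Q$ and a $3$-local subgroup outside $N_G(Z)$ must exist. Taking $L:=N_G(W_0)$ for the normal $3^3$-subgroup above, I would prove $O_3(L)=W_0$ and $L/W_0\cong\SL_3(3)$, so that $L\cong 3^3.\SL_3(3)$ contains $M=C_G(Z)$. The mechanism is that the $\SL_2(3)$-action on $W_0/Z$, reassembled with the extraspecial commutator map and with the fusion of $Z$ supplied by~(iii), realises $W_0$ as a natural $\SL_3(3)$-module, exactly as $O_3$ of a point-stabiliser parabolic of $\PSL_4(3)$ acts on $\GF(3)^4$. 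Comparison inside $\PSL_4(3)$ — where $Z$ is a long-root element, $M$ its centraliser, and $L$ the stabiliser of the corresponding point — tells me precisely which amalgam to aim for.

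Now I would split according to whether $W_0\normal G$. If $W_0\normal G$, then $G=N_G(W_0)=L\cong 3^3.\SL_3(3)$, the degenerate conclusion. Otherwise $L$ is a proper maximal $3$-local subgroup, and the internal $\SL_3(3)$ together with the established fusion supplies the remaining $3$-locals: a second (dual) end-node parabolic and the middle parabolic, which with $L$ form a parabolic system of type $A_3$ over $\GF(3)$ sharing $S$ as a common Sylow $3$-subgroup. I would then identify this amalgam with that of $\PSL_4(3)$ and conclude $G\cong\PSL_4(3)$, either by recognising the associated rank-$3$ incidence geometry as the $A_3$ building and invoking Tits' classification, or by appealing to the classification of the relevant rank-two amalgam / weak $(B,N)$-pair.

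The hard part will be threefold. First, forcing the exact shape $3^3.\SL_3(3)$ of $L$: this amounts to pinning down all fusion of order-$3$ elements into $Q$ and ruling out every degenerate module configuration compatible with (i)--(iv), and it is here that the whole dichotomy in the conclusion is decided. Second, cleanly separating the two global outcomes — whether $G=L$ or $G$ is the full amalgam — rests on deciding whether $W_0\normal G$, which (exactly as in Theorem~B) forces one to control fusion and transfer ``at the top of the group'', the most delicate point in the argument. Third, the final recognition of $\PSL_4(3)$ from its parabolic amalgam must be made rigorous through an established identification theorem rather than by geometric analogy alone.
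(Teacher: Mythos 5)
A point of calibration first: the thesis does not itself prove this theorem --- it is quoted from \cite{AstillPSL43} --- but the intended route is exhibited in full by the parallel proofs of Theorems B and C and by the $3$-local analysis of Chapter \ref{Chapter-HN}, which the thesis says is ``very similar'' to the $\PSL_4(3)$ case: hypothesis $(iii)$ is exploited \emph{first}, through the machinery of Chapter \ref{chaper general hypothesis} ($Y=ZZ^x$, the abelian Thompson subgroup $J=J(S)$, $Z_2=J\cap Q$ with $S'=Z_2$), to pin down the invariant subgroups of $Q$; then come the centralizers of all elements of order three, $2$-local analysis producing an involution centralizer, transfer, and a recognition theorem phrased for $G$ itself in terms of $C_G(t)$. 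Your proposal shares the $3$-local opening but replaces this endgame by an amalgam/building identification. That is a genuinely different strategy, and as written it has two genuine gaps.

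The first gap is at the very start. You assert that $(i)$ and $(iv)$ alone force $Q/Z\cong W\oplus W^{*}$ with an invariant totally isotropic summand, whence the invariant elementary abelian $W_0\le Q$ of order $27$. This is not so: the diagonal embedding $\SL_2(3)\le \Sp_2(3)\times\Sp_2(3)\le\Sp_4(3)$ with isometric gluing (so that $Q$ is a central product of two invariant copies of $3_+^{1+2}$) also acts on $Q/Z$ with exactly two non-central chief factors, yet there \emph{every} $C_G(Z)$-invariant subgroup strictly between $Z$ and $Q$ is extraspecial and no invariant $W_0$ exists. (Complete reducibility itself is also not free in defining characteristic; it can be rescued because the unique non-split self-extension of the natural $\GF(3)\,\SL_2(3)$-module carries no invariant symplectic form, but this needs an argument.) What excludes the isometric configuration is precisely hypothesis $(iii)$: in the paper's development $Y\le Z_2=S'$ and the abelianness of $J(S)$ force the invariant subgroup containing $Y$ to be abelian --- this is Lemma \ref{Lemma N1-N4}$(iv)$ together with Lemma \ref{N_1, N_2 abelian, N_3, N_4 not}, resting on Theorem \ref{A general 3^1+4 theorem} and Lemma \ref{Parker-Rowley-SL2(q)-splitting}. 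So $(iii)$ is needed before $W_0$ can even be defined, not merely later ``for the fusion of $Z$''.

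The second gap is at the end. Identifying your parabolic system with the $A_3(3)$ amalgam and invoking Tits (or a weak $(B,N)$-pair theorem) identifies only $G_0:=\langle L_1,L_2,L_3\rangle$, a \emph{subgroup} of $G$; your dichotomy ``$W_0\trianglelefteq G$ or $G$ is the full amalgam'' silently omits the possibility $L<G_0\cong\PSL_4(3)<G$, and nothing in the hypothesis says $G$ is generated by its $3$-local subgroups. The step can be closed --- for instance, show the coset geometry is the building and that $G$ acts on it faithfully and flag-transitively (faithfulness uses $O_{3'}(G)=1$, available from Theorem \ref{A general 3^1+4 theorem}$(xii)$), so that $\PSL_4(3)\le G\le\aut(\PSL_4(3))$, and then use $(iv)$ together with Table \ref{Table-AlmostSimpleGps} to exclude every proper extension --- but some such argument must be supplied. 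The programme's choice of endgame (recognition theorems about $C_G(t)$ in $G$ itself, with proper normal subgroups and quotients handled by transfer, exactly as in Theorems B and C) is designed to avoid this generation problem altogether.
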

This result will also be extended to recognize the four almost simple extensions of $\PSL_4(3)$.
Furthermore, future work will
include characterizing all groups which satisfy our hypothesis with $Z \neq Z^x \leq Q$. This involves recognizing
$F_4(2)$ and $\aut(F_4(2))$ and of course proving that no further examples exist. Finally, we
remark that, as described previously,  the hypothesis we consider in this thesis can be weakened to consider groups in which $Z$ is not weakly closed in $C_G(Z)$ with respect to $G$. This, of course, is a much wider project and involves recognizing many more almost simple groups. However the work in this thesis makes a contribution to such an investigation and describes and develops methods which will certainly be applicable. In particular, it is likely that the character theoretic results in Chapter \ref{chapter-Alt9} can be extended and could prove to be a vital tool in such a project. A first extension of the character theoretic methods could, for example, be to characterize groups $G$ with a Sylow $3$-subgroup $S\cong 3 \wr 3$ such that $C_G(\mathcal{Z}(S))$ has characteristic $3$.

Finally, all groups in this thesis are finite.  We note that $\sym(n)$ and $\alt(n)$ denote the symmetric and alternating groups of degree $n$ and $\dih(n)$ denotes the dihedral group of order $n$ and $Q_n$ the quaternion group of order $n$. Notation for classical groups follows \cite{Aschbacher}. All other groups and notation for group extensions follows the {\sc Atlas} \cite{atlas} conventions. In particular, if $S$ is a group, $p$ is a prime and $n \in \mathbb{N}$ then $S\cong p^n$ means that $S$ is an
elementary abelian subgroup of that order. If a group $G$ has a normal subgroup $N$ of isomorphism
type  $A$ with $G/N$ of isomorphism type $B$ then we say that $G$ has shape $A.B$ or $G \sim A.B$.
If furthermore the extension is split, this is to say $G$ has a subgroup $M$ isomorphic to $B$ such
that $G=NM$, then we use the notation $G \sim A:B$ unless $[M,N]=1$ in which case $G\cong A \times
B$. If the extension is non-split then we denote this by $G \sim A^.B$. If $N,M \vartriangleleft G$ with $G=MN$, $[M,N]=1$ and $1 \neq \mathcal{Z}(M)\leq \mathcal{Z}(N)$ then we write $G=M \ast N$ and say $G$ is a central product of $M$ and $N$. Furthermore, if $G$ is a
group and $x \in G$ then $x^G$ represents the conjugacy class of $G$ containing $x$ (so $x^G=\{x^g
|g \in G\}$). If $1 \neq x \in G$ is in the centre of a Sylow $p$-subgroup then we say that $x$ is $p$-central in $G$. If a group $A$ acts on a group $G$ and $a \in A$ and $g \in G$ then $[g,a]=g\inv g^a$.
Further group theory notation and terminology is standard as in \cite{Aschbacher} and
\cite{stellmacher} except that $\mathcal{Z}(G)$ denotes the centre of a group $G$. The character theoretic notation used in Chapter \ref{chapter-Alt9} follows
\cite{Isaacs}.


\chapter{Preliminary Results}\label{chapter prelims}
We begin this thesis with some preliminary results whose proofs can mostly be found in \cite{Aschbacher},  \cite{Gorenstein} and \cite{stellmacher}.

\section{General Group Theoretic Results}

\begin{lemma}[Frattini Argument]\cite[3.2.7, p66]{stellmacher}\label{frattini}
Suppose that $p$ is a prime and that $H\trianglelefteq G$ with $P\in \syl_{p}(H)$. Then $G=N_{G}(P)H$.
\end{lemma}

\begin{lemma}[Dedekind's Modular Law]\cite[1.14]{Aschbacher}\label{dedekind}
Suppose that $A$, $B$ and $C$ are subgroups of a group $G$ such that $B\leq C$. Then \[AB\cap
C=(A\cap C)B.\]
\end{lemma}
Note that a commutator $[a,b]$ is defined to equal $a\inv a^b$. Also commutators are left defined so $[a,b,c]$ means $[[a,b],c]$.
\begin{lemma}[Three Subgroup Lemma]\cite[1.5.6, p26]{stellmacher}
Let $G$ be a group and let $A,B,C \leq G$. If $[A,B,C]=[B,C,A]=1$ then $[C,A,B]=1$.
\end{lemma}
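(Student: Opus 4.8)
The plan is to derive the statement from the Hall--Witt identity (the commutator analogue of the Jacobi identity), which asserts that for \emph{any} three elements $a,b,c$ of a group,
\[
[a,b\inv,c]^b\,[b,c\inv,a]^c\,[c,a\inv,b]^a = 1,
\]
where commutators are left-normed and follow the convention $[x,y]=x\inv x^y$ fixed earlier. This identity is proved by expanding each factor directly in terms of $a,b,c$ and their inverses and observing that everything cancels; it holds identically, with no hypotheses on the elements.

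Before applying it, I would record the elementary reduction that for subgroups $X,Y,W$ the relation $[X,Y,W]=1$ holds if and only if $[[x,y],w]=1$ for all $x\in X$, $y\in Y$, $w\in W$. Indeed $[X,Y]$ is generated by the commutators $[x,y]$, and an element centralises $W$ precisely when it centralises a generating set of $W$; since commuting elementwise is preserved under products and inverses on each side, $[X,Y]$ centralises $W$ exactly when each generator $[x,y]$ centralises each generator $w$. Thus each of the three subgroup conditions in the statement is equivalent to its elementwise counterpart.

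Now fix arbitrary $a\in A$, $b\in B$, $c\in C$. Since $B$ and $C$ are subgroups we have $b\inv\in B$ and $c\inv\in C$, so the hypothesis $[A,B,C]=1$ forces $[a,b\inv,c]=1$ and the hypothesis $[B,C,A]=1$ forces $[b,c\inv,a]=1$; hence the first two factors of the Hall--Witt identity are trivial after conjugation. Therefore $[c,a\inv,b]^a=1$, and so $[c,a\inv,b]=1$. As $a$ ranges over the whole subgroup $A$ we may replace $a$ by $a\inv$ to deduce $[c,a,b]=1$ for all $a\in A$, $b\in B$, $c\in C$. By the reduction of the previous paragraph this is exactly $[C,A,B]=1$, as required.

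The only genuine content is the Hall--Witt identity itself; once it is available the deduction is purely formal. The main point requiring care is matching the conjugation exponents and the placement of inverses to the convention $[x,y]=x\inv x^y$ in force here, since the identity takes a slightly different shape under the alternative convention $[x,y]=xyx\inv y\inv$, and a misplaced inverse would destroy the cancellation that makes two of the three factors vanish.
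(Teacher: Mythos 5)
Your proof is correct: the Hall--Witt identity in the form you state is valid under the convention $[x,y]=x^{-1}x^y$ used here, and the deduction --- using that $b^{-1}\in B$ and $c^{-1}\in C$ to make the first two factors vanish, then replacing $a$ by $a^{-1}$ and invoking the elementwise reduction --- is exactly the standard argument. The paper gives no proof of its own (it cites the lemma from the Kurzweil--Stellmacher text), and the proof in that source is precisely this Hall--Witt argument, so your route coincides with the one the paper defers to.
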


\begin{defi}
Let $p$ be a prime and let $E$ be a non-abelian $p$-group. If $E'=\mathcal{Z}(E)=\Phi(E)$ is cyclic of order $p$ then $E$
is said to be extraspecial.
\end{defi}
\begin{lemma}\cite[Thm 20.5]{doerk-hawkes}\label{prelim-exraspecial}
Let $E$ be an extraspecial $p$-group. Exactly one of the following holds.
\begin{enumerate}[$(i)$]
  \item $p=2$ and $E$ is a central product of $n$ copies of
  $\mathrm{Dih}(8)$.
  \item $p=2$ and $E$ is a central product of $n-1$ copies of
  $\mathrm{Dih}(8)$ and one copy of $Q_8$.
  \item $p\neq 2$ and $E$ has exponent $p$.
  \item $p\neq 2$ and $E$ has exponent $p^2$.
\end{enumerate}
We denote such groups as $E \cong 2_+^{1+2n}$, $2_-^{1+2n}$, $p_+^{1+2n}$ and $p_-^{1+2n}$
respectively.
\end{lemma}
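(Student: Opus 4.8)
The plan is to analyze $E$ through the elementary abelian quotient $V := E/\mathcal{Z}(E)$, regarded as a vector space over $\mathbb{F}_p$, together with the forms induced on it by the commutator and power maps. Write $Z := \mathcal{Z}(E)$, so $|Z| = p$, and since $\Phi(E) = Z$ the quotient $V$ is elementary abelian. Because $E' = Z$ is central, $E$ has nilpotency class $2$, so the standard commutator identities show that $(uZ, vZ) \mapsto [u,v]$ is a well-defined alternating bilinear form $b \colon V \times V \to Z \cong \mathbb{F}_p$. The first key step is to observe that $b$ is non-degenerate: if $uZ$ lies in the radical then $u$ centralizes $E$, so $u \in \mathcal{Z}(E) = Z$ and $uZ = 0$. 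A non-degenerate alternating form forces $\dim_{\mathbb{F}_p} V$ to be even, say $2n$, whence $|E| = p^{1+2n}$, which justifies the subscript in the notation.

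The second step is to realize $E$ as a central product of extraspecial groups of order $p^3$. Choosing a symplectic basis $e_1, f_1, \dots, e_n, f_n$ of $(V,b)$ and lifting each $e_i, f_i$ to elements $x_i, y_i \in E$, I set $E_i := \langle x_i, y_i, Z\rangle$. Each $E_i$ is a non-abelian group of order $p^3$ with centre $Z$, hence extraspecial, and because $b$ vanishes between distinct hyperbolic pairs the subgroups $E_i$ and $E_j$ commute elementwise for $i \neq j$. Thus $E = E_1 \ast E_2 \ast \cdots \ast E_n$. It then remains to classify the order-$p^3$ factors and to determine which central products of them are isomorphic. A direct check of presentations shows that for each prime $p$ there are exactly two extraspecial groups of order $p^3$: for $p = 2$ these are $\mathrm{Dih}(8)$ and $Q_8$, while for $p$ odd one has exponent $p$ (the Heisenberg group) and one has exponent $p^2$.

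The main obstacle, and the heart of the argument, is to collapse the many possible central products to the two types listed and to prove those two types are genuinely distinct. For $p = 2$ this rests on the isomorphism $\mathrm{Dih}(8) \ast \mathrm{Dih}(8) \cong Q_8 \ast Q_8$, which I would verify by exhibiting an explicit isomorphism (or, more conceptually, by comparing the quadratic forms these groups induce); applying it repeatedly trades any two $Q_8$ factors for two $\mathrm{Dih}(8)$ factors, so the central product depends only on the parity of the number of $Q_8$ factors, giving precisely the $+$ and $-$ types. For odd $p$ the analogous collapsing relation shows that a central product of order-$p^3$ groups is determined by whether all factors have exponent $p$ or not, yielding the exponent-$p$ and exponent-$p^2$ types. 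Finally, to see that the two types are non-isomorphic I would use an invariant preserved under isomorphism: for odd $p$ the exponent of $E$ itself distinguishes them, while for $p = 2$ one counts involutions, equivalently compares the type (Witt index) of the quadratic form $q(xZ) := x^2 \in Z \cong \mathbb{F}_2$ induced on $V$, whose polarization is exactly $b$. Establishing these form-theoretic invariants cleanly, and checking that isomorphic extraspecial groups induce equivalent forms, is the technical crux on which the exactness of the dichotomy depends.
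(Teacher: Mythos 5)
The paper does not prove this lemma at all: it is quoted verbatim from Doerk--Hawkes \cite[Thm 20.5]{doerk-hawkes}, so there is no in-paper argument to compare against. Your outline is correct and is essentially the standard proof given in that reference: the nondegenerate alternating form on $E/\mathcal{Z}(E)$, the central product decomposition along a symplectic basis, the collapsing relations ($\mathrm{Dih}(8)\ast\mathrm{Dih}(8)\cong Q_8\ast Q_8$ and its odd-$p$ analogue reducing to at most one exponent-$p^2$ factor), and the distinguishing invariants (exponent for odd $p$; involution count, equivalently the type of the quadratic form $x\mapsto x^2$, for $p=2$). The steps you leave as ``direct checks'' are exactly the routine ones, so the plan has no gap.
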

It is well known that $\mathrm{Dih}(8)*\mathrm{Dih}(8)\cong Q_8*Q_8$ so the description of
extraspecial $2$-groups given here is not unique.
\begin{thm}\label{extraspecial outer automorphisms}
Suppose that $p$ is a prime and that $E$ is an extraspecial $p$-group of order $p^{1+2n}$.
\begin{enumerate}[$(i)$]
  \item If $p=2$ and $E \cong  2_+^{1+2n}$ then
  $\mathrm{Out}(E)\cong \mathrm{O}^+_{2n}(2)$.
  \item If $p=2$ and $E \cong  2_-^{1+2n}$ then
  $\mathrm{Out}(E)\cong \mathrm{O}^-_{2n}(2)$.
  \item If $p$ is odd and $E \cong  p_+^{1+2n}$ then
  $\mathrm{Out}(E)\cong \Sp_{2n}(p).C_{p-1}$.
  \item If $p$ is odd and $E \cong  p_-^{1+2n}$ then
  $\mathrm{Out}(E)\cong p_+^{1+2(n-1)}.\Sp_{2n-2}(p).C_{p-1}$.
\end{enumerate}
\end{thm}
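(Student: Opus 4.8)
The plan is to compute $\out(E)$ for an extraspecial $p$-group $E$ of order $p^{1+2n}$ by studying the action of $\aut(E)$ on the quotient $V := E/\mathcal{Z}(E)$. Since $\mathcal{Z}(E)=\Phi(E)$ is characteristic and cyclic of order $p$, every automorphism of $E$ induces an automorphism of $V$, which is a $2n$-dimensional vector space over $\mathrm{GF}(p)$. The commutator map $(x\mathcal{Z}(E), y\mathcal{Z}(E)) \mapsto [x,y]$ is a well-defined nondegenerate alternating bilinear form $V \times V \to \mathcal{Z}(E)$, and when $p$ is odd the squaring map $x\mathcal{Z}(E) \mapsto x^p$ gives a quadratic form. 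First I would establish that $\inn(E) = E/\mathcal{Z}(E)$ so that $\out(E) = \aut(E)/\inn(E)$ acts faithfully on $V$, and that this action preserves the relevant form. The core of the argument is then to identify the image of $\out(E)$ in $\GL(V)$ as precisely the group preserving the form, together with the scalar action on $\mathcal{Z}(E)$.

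For the four cases I would proceed as follows. When $p=2$, the squaring map $x\mathcal{Z}(E)\mapsto x^2 \in \mathcal{Z}(E)\cong\mathrm{GF}(2)$ is a quadratic form $\mathcal{Q}$ on $V$ whose associated bilinear form is the commutator form; its type is $+$ for $2_+^{1+2n}$ and $-$ for $2_-^{1+2n}$, reflecting the central product decompositions in Lemma \ref{prelim-exraspecial}. I would show that $\out(E)$ is exactly the isometry group $\O^\varepsilon_{2n}(2)$ of $\mathcal{Q}$: any form-preserving linear map lifts to an automorphism of $E$, and conversely every automorphism preserves $\mathcal{Q}$ since it fixes $\mathcal{Z}(E)$ pointwise (as $\mathcal{Z}(E)$ has order $2$). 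For $p$ odd, $\aut(E)$ may act nontrivially on $\mathcal{Z}(E)\cong C_p$, giving a map $\aut(E)\to\aut(\mathcal{Z}(E))\cong C_{p-1}$. The automorphisms acting trivially on $\mathcal{Z}(E)$ preserve the alternating form exactly and modulo $\inn(E)$ yield $\Sp_{2n}(p)$; accounting for the scalar action on the centre contributes the $C_{p-1}$ factor, giving $\out(E)\cong\Sp_{2n}(p).C_{p-1}$ in the exponent-$p$ case (iii).

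Case (iv), $E\cong p_-^{1+2n}$ of exponent $p^2$, is the subtle one and I expect it to be the main obstacle. Here not every element of $V$ is the image of a $p$-th power, so the squaring map $x\mathcal{Z}(E)\mapsto x^p$ records extra structure that automorphisms must respect, and the automorphism group is correspondingly smaller and has a more complicated shape $p_+^{1+2(n-1)}.\Sp_{2n-2}(p).C_{p-1}$. The strategy is to isolate the characteristic subgroup consisting of elements of order dividing $p$ (together with $\mathcal{Z}(E)$), which forms a subspace of codimension matching an extraspecial group of type $p_+^{1+2(n-1)}$, and to analyze automorphisms via their action on this distinguished subspace and its complement. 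The extra normal $p$-subgroup $p_+^{1+2(n-1)}$ in $\out(E)$ arises from automorphisms that fix the extraspecial subgroup of exponent $p$ but shift the element of order $p^2$ by a central element, while the $\Sp_{2n-2}(p)$ governs the symplectic action on the smaller space. I would either cite the cited source \cite{doerk-hawkes}-style computation directly for this case or reduce it to the known structure of $\aut(p_+^{1+2(n-1)})$ from case (iii) applied to the characteristic subgroup. Throughout, the recurring technical point requiring care is that every candidate linear transformation preserving the appropriate form genuinely lifts to an honest automorphism of $E$; verifying surjectivity of $\aut(E)\to(\text{form group})$, rather than merely injectivity, is where the real work lies.
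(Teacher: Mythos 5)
Your proposal is mathematically sound, but note that the paper does not actually prove this theorem: its entire proof is the citation ``See \cite[20.8, 20.9]{doerk-hawkes}.'' What you have sketched is, in outline, precisely the argument that the cited source (essentially Winter's theorem) carries out --- passing to $V=E/\mathcal{Z}(E)$, using that $\inn(E)=E/\mathcal{Z}(E)$ so that $\out(E)$ acts on $V$, equipping $V$ with the commutator (symplectic) form and, for $p=2$ or in the exponent-$p^2$ case, the power map as additional structure, and then identifying $\out(E)$ with the full isometry/similitude group. So the comparison is really between your self-contained reconstruction and the paper's outsourcing of the result. What your route buys is transparency about where each factor comes from: the $C_{p-1}$ from the action on $\mathcal{Z}(E)$, the $\Sp_{2n}(p)$ or $\O^{\pm}_{2n}(2)$ from the form, and in case $(iv)$ the normal subgroup $p_+^{1+2(n-1)}$ from automorphisms centralizing $\Omega(E)$ modulo the centre. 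What it costs is exactly what you flag yourself: the surjectivity of $\aut(E)\to(\text{isometry group})$ --- i.e.\ that every isometry lifts to an honest automorphism --- and the full bookkeeping in the exponent-$p^2$ case are nontrivial computations, and your sketch defers the latter back to a Doerk--Hawkes-style calculation, which is the same dependence the paper has. As a blind proposal it is the right strategy and contains no false steps; to make it a complete proof you would need to execute the lifting argument (typically done by writing down generators of the isometry group, or by a cohomological/counting argument comparing $|\aut(E)|$ with the order of the isometry group), which is where all the real work in the cited reference lies.
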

\begin{proof}
See  \cite[20.8, 20.9]{doerk-hawkes}.
\end{proof}
The following result is well known. A proof can be found for example in \cite[1.18]{AstillMPhilThesis}.
\begin{lemma}\label{exactly 2 q-8's in extraspecial group}
The extraspecial $2$-group $2_+^{1+4}$ contains exactly two subgroups isomorphic to $Q_8$ and they
commute and $2_+^{1+4}$ contains exactly 12 elements of order four.
\end{lemma}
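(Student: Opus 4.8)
The plan is to exploit the quadratic form naturally carried by an extraspecial $2$-group. Write $E \cong 2_+^{1+4}$, so $|E|=32$ and, by definition of extraspecial, $\mathcal{Z}(E)=E'=\Phi(E)=\<z\>$ has order two while $V:=E/\mathcal{Z}(E)$ is elementary abelian of order $16$, a four-dimensional $\mathbb{F}_2$-space. Since commutators lie in $\<z\>$ and $(xy)^2=x^2y^2[y,x]$, the squaring map descends to a map $q\colon V\to\mathbb{F}_2$ with $q(\bar x)=0$ if $x^2=1$ and $q(\bar x)=1$ if $x^2=z$; this $q$ is a nondegenerate quadratic form whose associated bilinear form $b$ records commutation, and the subscript $+$ in $2_+^{1+4}$ is precisely the statement that $q$ has plus type, so that $(V,q)$ is the orthogonal space for $\O^+_4(2)$.

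For the count of elements of order four, I would observe that an element $x\in E$ has order four if and only if $x^2=z$, that is if and only if $\bar x$ is nonsingular ($q(\bar x)=1$). A plus-type form in dimension four has $2^{3}+2^{2}-2=10$ singular vectors including $0$, hence $16-10=6$ nonsingular vectors. Each nonzero coset has exactly the two preimages $x$ and $xz$, and these have equal squares, so both are of order four precisely when $q(\bar x)=1$. Thus $E$ has exactly $2\cdot 6=12$ elements of order four.

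Next I would set up a bijection between subgroups of $E$ isomorphic to $Q_8$ and anisotropic (minus-type) planes in $(V,q)$. If $R\leq E$ with $R\cong Q_8$, then $R$ has a unique involution $t$ with $\<t\>=R'=\mathcal{Z}(R)\leq E'=\<z\>$, forcing $t=z$; hence the image $W$ of $R$ in $V$ is a two-dimensional subspace all of whose three nonzero vectors are nonsingular, i.e.\ $q|_W$ is anisotropic. Conversely, the full preimage in $E$ of such a $W$ is a group of order eight with centre $z$ and every non-central element of order four, so it is isomorphic to $Q_8$. It therefore suffices to count minus-type planes in $(V,q)$.

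The remaining, and principal, task is this finite enumeration, which I would carry out in coordinates using $q(x_1,x_2,x_3,x_4)=x_1x_2+x_3x_4$. Listing the six nonsingular vectors and using $q(u+v)=q(u)+q(v)+b(u,v)$, a plane $\{0,u,v,u+v\}$ is anisotropic exactly when $b(u,v)=1$, and one checks that the six nonsingular vectors partition into exactly two such planes, each containing three of them. This gives exactly two subgroups isomorphic to $Q_8$. Finally, because these two anisotropic planes are mutually orthogonal for $b$, the corresponding subgroups centralize one another; equivalently one recovers the central decomposition $E\cong Q_8\ast Q_8$, so the two $Q_8$-subgroups commute. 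The main obstacle is the plane enumeration together with the orthogonality check, though both are short once the form is fixed.
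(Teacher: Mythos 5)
Your proof is correct. A point of comparison first: the paper itself does not prove this lemma at all --- it is stated as ``well known'' with the proof delegated to the author's earlier MPhil thesis \cite[1.18]{AstillMPhilThesis} --- so there is no in-paper argument to measure you against line by line.

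On its own terms your argument is complete and sound. The dictionary you set up (squaring map descends to a nondegenerate plus-type quadratic form $q$ on $V=E/\mathcal{Z}(E)$, commutation is recorded by the polarization $b$, elements of order four correspond to nonsingular vectors, $Q_8$-subgroups correspond to anisotropic planes) is the standard one, and each step of the translation is justified: every $Q_8\leq E$ contains $z$ because $1\neq R'\leq E'=\<z\>$, and the preimage of an anisotropic plane is a group of order $8$ with a unique involution and exponent $4$, hence $Q_8$. The counts check out: a hyperbolic form on $\mathbb{F}_2^4$ has $10$ singular vectors (your expression $2^3+2^2-2$ is a slightly nonstandard way of writing $2^{2n-1}+2^{n-1}=10$, but the number is right), giving $6$ nonsingular vectors and hence $12$ elements of order four; and with $q=x_1x_2+x_3x_4$ the six nonsingular vectors do split into exactly two anisotropic planes, e.g.\ $\{(1100),(1011),(0111)\}$ and $\{(1110),(1101),(0011)\}$, which are mutually orthogonal for $b$, so the two $Q_8$'s commute. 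One remark on uniqueness that makes the ``one checks'' step essentially free: two distinct planes meet in at most one nonzero vector, so a third anisotropic plane could pick up at most one nonzero vector from each of the two you found, fewer than the three it needs; this replaces the exhaustive check of cross-sums. An alternative, more pedestrian route --- closer to what one might expect in the cited thesis --- is to start from the central decomposition $2_+^{1+4}\cong Q_8\ast Q_8$ (noted in the paper right after Lemma \ref{prelim-exraspecial}), which hands you two commuting $Q_8$'s at once and reduces the lemma to the uniqueness and element counts; your approach has the advantage of proving existence, uniqueness, and the element count uniformly from the geometry.
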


\begin{defi}
Let $A$ be a group acting on a group $G$. The action of $A$ on $G$
is coprime if $|A|$ and $|G|$ are coprime.
\end{defi}

\begin{thm}[Coprime Action]\label{coprime action}
Suppose $A$ is a group acting on the group $G$ and suppose the
action of $A$ on $G$ is coprime. The following hold.
\begin{enumerate}[$(i)$]
  \item  $G=[G,A]C_G(A)$ and if $G$ is abelian, then $G=[G,A]\times
  C_G(A)$.
  \item $[G,A]=[G,A,A]$.
  \item $C_{G/N}(A)=C_G(A) N/N$ for any $A$-invariant
  $N\trianglelefteq G$.
  \item If $A$ is an elementary abelian $p$-group ($p$ is a prime) of order at least $p^2$ then $G=\<C_G(a) \mid a \in
  A^\#\>=\<C_G(A_1)\mid [A:A_1]=p\>$.
\end{enumerate}
\end{thm}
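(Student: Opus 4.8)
The four parts are interlocking, so the plan is to prove (iii) first, deduce (i) and (ii) from it cheaply, and then attack (iv) on its own. The only substantial external input is the Schur--Zassenhaus theorem, which is available precisely because the action is coprime.

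For (iii), form the semidirect product $\Gamma = G \rtimes A$, set up so that conjugation by $A$ realises the action, and work inside $NA \leq \Gamma$. The inclusion $C_G(A)N/N \leq C_{G/N}(A)$ is immediate. For the reverse, take $g$ with $gN \in C_{G/N}(A)$, so $g^{-1}g^a \in N$ for all $a \in A$; this says the coset $gN$ is $A$-invariant, and a short commutator computation then shows $A^g = g^{-1}Ag \leq NA$. Thus $A$ and $A^g$ are two complements to $N$ in $NA$, and since $|A|$ and $|N|$ are coprime, Schur--Zassenhaus gives $A^g = A^n$ for some $n \in N$. Then $gn^{-1}$ normalises, hence (as $A \cap G = 1$) centralises $A$, so $gn^{-1} \in C_G(A)$ and $gn^{-1}N = gN$, giving $gN \in C_G(A)N/N$.

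Parts (i) and (ii) now follow formally. For (i), apply (iii) with $N = [G,A]$: since $A$ acts trivially on $G/[G,A]$ by definition, $C_{G/[G,A]}(A) = G/[G,A]$, which (iii) rewrites as $C_G(A)[G,A]/[G,A]$, so $G = C_G(A)[G,A]$. In the abelian case I would introduce the norm homomorphism $\theta\colon g \mapsto \prod_{a \in A} g^a$, whose image lies in $C_G(A)$, which restricts on $C_G(A)$ to the bijection $x \mapsto x^{|A|}$, and which kills $[G,A]$; comparing these two facts forces $C_G(A) \cap [G,A] = 1$ and hence the direct decomposition. For (ii), pass to $\bar G = G/[G,A,A]$, where $[\bar G, A, A] = 1$ means $A$ centralises $[\bar G, A]$, i.e. $[\bar G, A] \leq C_{\bar G}(A)$; substituting this into the factorisation $\bar G = C_{\bar G}(A)[\bar G, A]$ from (i) gives $\bar G = C_{\bar G}(A)$, so $A$ acts trivially on $\bar G$ and therefore $[G,A] \leq [G,A,A]$, yielding equality.

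Part (iv) is where the real work lies, and I expect the inductive reduction to be the main obstacle. The strategy is induction on $|G|$ using (iii): for any proper nontrivial $A$-invariant normal $N$, the inductive hypothesis applied to $G/N$ together with $C_{G/N}(A_1) = C_G(A_1)N/N$ shows $G = G_0 N$, where $G_0 = \langle C_G(A_1) : [A:A_1]=p\rangle$. Pushing a minimal counterexample down these quotients reduces matters to $G$ an abelian $q$-group with $q \neq p$, and then (coprimality makes centralisers surject onto centralisers in the Frattini quotient) to $G = V$ an $\mathbb{F}_q$-space with $A$ acting linearly. Here the hypothesis that $A$ is elementary abelian of rank at least $2$ is exactly what is needed: every nontrivial character $\lambda \colon A \to \overline{\mathbb{F}_q}^{\times}$ has $A/\ker\lambda$ cyclic of exponent $p$, hence of order $p$, so $\ker\lambda$ is a hyperplane of index $p$; decomposing $V \otimes \overline{\mathbb{F}_q}$ into weight spaces $V_\lambda$, each $V_\lambda$ lies in the fixed space of $\ker\lambda$, so $V = \langle C_V(A_1) : [A:A_1]=p\rangle$, and descending scalars gives the claim for $G$. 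Finally, the two generating sets agree because each $a \in A^{\#}$ lies in some index-$p$ subgroup $A_1$, whence $C_G(A_1) \leq C_G(a)$, and conversely each $C_G(A_1)$ already occurs among the $C_G(a)$. The delicate point throughout is the bottom of the induction, where the coprime factorisations (i) and (iii) must be combined to collapse the case of a unique minimal $A$-invariant normal subgroup down to the linear situation above.
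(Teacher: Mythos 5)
Your parts (i)--(iii) are essentially correct and follow the standard route (which is all the paper does here in any case: it cites Kurzweil--Stellmacher 8.2.2, 8.2.7 and 8.3.4 rather than giving a proof). For (iii), note that the conjugacy half of Schur--Zassenhaus needs the solvability of $N$ or of the complement; this is supplied by Feit--Thompson, since coprimality forces one of $|N|$, $|A|$ to be odd. One wrinkle in your (ii): to pass to $\bar G = G/[G,A,A]$ you need $[G,A,A]\trianglelefteq G$, and this is not automatic from $[G,A]\trianglelefteq G$ (a commutator $[N,A]$ of a normal subgroup with a non-normal subgroup need not be normal). It is true here, because $[G,A,A]$ is normalized by $[G,A]$, by $A$, and by $C_G(A)$ (the last since $C_G(A)$ fixes $A$ elementwise), and these generate $G\rtimes A$ by part (i); alternatively one can avoid quotients entirely by applying the identity $[xy,a]=[x,a]^y[y,a]$ to the factorization $G=C_G(A)[G,A]$, which gives $[G,A]\leq[G,A,A]$ in one line.

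The genuine gap is in (iv), at exactly the step you flag as delicate: the claim that a minimal counterexample reduces to an \emph{abelian} $q$-group. What your inductive argument actually yields (once you also apply induction to $N$ itself, giving $N\leq G_0$ and hence $G=G_0N=G_0$ --- a step you need but do not state) is a minimal counterexample with no proper nontrivial $A$-invariant normal subgroup. Such a $G$ is a minimal normal subgroup of $G\rtimes A$, hence a direct product of isomorphic simple groups, which may be \emph{nonabelian}: coprime actions on nonabelian simple groups exist (for instance a field automorphism of order $3$ acting on $\mathrm{Sz}(8)$, whose order is prime to $3$), so this case cannot be waved away, and your weight-space argument says nothing about it. The standard repair --- and the one used in the source the paper cites --- is the existence of $A$-invariant Sylow subgroups under coprime action (Glauberman's lemma, applicable because the $p$-group $A$ is solvable): if $|G|$ has at least two prime divisors, then for each prime $q$ an $A$-invariant Sylow $q$-subgroup $Q$ is proper and nontrivial, so $Q\leq G_0$ by induction; as this holds for every $q$, $|G|$ divides $|G_0|$ and $G=G_0$. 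This reduces a minimal counterexample to a $q$-group, after which your Frattini-quotient reduction and the weight-space decomposition (which are correct, including the use of rank $\geq 2$ to ensure every kernel $\ker\lambda$ contains an index-$p$ subgroup) finish the proof. Finally, your closing remark that "each $C_G(A_1)$ already occurs among the $C_G(a)$" is literally true only when $A$ has rank exactly $2$; for larger rank $C_G(A_1)$ is an intersection of several $C_G(a)$, but the containments $\langle C_G(A_1)\rangle\leq\langle C_G(a)\rangle\leq G$ still let you deduce the $C_G(a)$ statement from the $C_G(A_1)$ statement, so that part is harmless.
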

\begin{proof}
For $(i)$ and $(ii)$ see \cite[8.2.7, p187]{stellmacher}. For $(iii)$
see \cite[8.2.2, p184]{stellmacher}. For $(iv)$ see \cite[8.3.4,
p193]{stellmacher}.
\end{proof}

\begin{lemma}\label{prelims-extraspecial and a coprime aut}
Let $p$ be a prime and let $Q$ be an extraspecial $p$-group. Suppose that $\a$ is a non-trivial automorphism of $Q$ of order coprime to $p$ with $[\mathcal{Z}(Q),\a]=1$. Then either
\begin{enumerate}[$(i)$]
\item $Q=[Q,\a]$ and $C_Q(\a)=\mathcal{Z}(Q)$; or
\item $C_Q(\a)$ and $[Q,\a]$ are both extraspecial with $Q=C_{Q}(\a) [Q,\a]$ and $C_{Q}(\a)\cap  [Q,\a]=\mathcal{Z}(Q)$.
\end{enumerate}
\end{lemma}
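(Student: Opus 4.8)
The plan is to pass to the quotient $V := Q/\mathcal{Z}(Q)$ and exploit that, because $Q$ is extraspecial, $V$ carries a nondegenerate alternating $\mathbb{F}_p$-bilinear form. Write $Z := \mathcal{Z}(Q) = Q' = \Phi(Q)$, cyclic of order $p$, and define $b(\bar x,\bar y) = [x,y]$ for $x,y \in Q$, viewing $Z$ additively as $\mathbb{F}_p$. This is well defined and bilinear because $Z$ is central with $Q' = Z$, it is alternating, and it is nondegenerate since its radical is $\mathcal{Z}(Q)/Z = 0$. The hypothesis $[\mathcal{Z}(Q),\alpha]=1$ enters precisely here: as $\alpha$ fixes $Z$ pointwise and $[x,y]^\alpha = [x^\alpha,y^\alpha]$, the induced map $\bar\alpha$ on $V$ preserves $b$.

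First I would record that $\bar\alpha \neq 1$: if $\bar\alpha$ were trivial then $[Q,\alpha]\leq Z$, so $[Q,\alpha,\alpha]=1$, and Theorem~\ref{coprime action}(ii) gives $[Q,\alpha]=[Q,\alpha,\alpha]=1$, forcing $\alpha=1$, a contradiction. Since $\alpha$ acts coprimely on the abelian group $V$, Theorem~\ref{coprime action}(i) yields $V = U \oplus W$ with $U := [V,\alpha]$ and $W := C_V(\alpha)$; here $U = [Q,\alpha]Z/Z$ as the homomorphic image of $[Q,\alpha]$, while Theorem~\ref{coprime action}(iii) identifies $W = C_Q(\alpha)/Z$ (note $Z \leq C_Q(\alpha)$). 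The key structural point is that this decomposition is orthogonal: for $w\in W$ and any $v$, invariance of $b$ gives $b([v,\alpha],w)=0$, so $b(U,W)=0$; as $V$ is nondegenerate and $V=U\oplus W$, each of $U$ and $W$ is a nondegenerate, hence even-dimensional, subspace. Since $\bar\alpha\neq1$ we have $U\neq0$, so $\dim U \geq 2$.

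Now I would split on whether $W=0$. If $W=0$ then $C_Q(\alpha)=Z=\mathcal{Z}(Q)$ and $V=U=[V,\alpha]$ gives $Q=[Q,\alpha]Z$; to upgrade this I observe $Q'=[[Q,\alpha]Z,[Q,\alpha]Z]=[Q,\alpha]'$, so $Z=Q'\leq[Q,\alpha]$ and hence $[Q,\alpha]=[Q,\alpha]Z=Q$, which is conclusion $(i)$. If $W\neq0$ then both $U$ and $W$ are nondegenerate of dimension $\geq2$. Writing $R:=[Q,\alpha]$ and $C:=C_Q(\alpha)$, nondegeneracy of $U$ lets me pick $a,b\in R$ whose images pair nontrivially, so $1\neq[a,b]\in R'\cap Z$ and thus $R'=Z$; likewise $C'=Z$. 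Comparing images in $V$ gives $R\cap C=Z$ (since $U\cap W=0$ and $Z\leq R\cap C$), and $Q=RC$ by Theorem~\ref{coprime action}(i). Finally I check $R$ and $C$ are extraspecial: each has derived and Frattini subgroup equal to $Z$ (using that $R/Z\cong U$ and $C/Z\cong W$ are elementary abelian), and each has centre exactly $Z$, because a nonzero vector of $U$ centralizing $R$ (resp. of $W$ centralizing $C$) would lie in the radical of the nondegenerate form on $U$ (resp. $W$). This gives conclusion $(ii)$.

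The main obstacle is the transition between the quotient $V$ and the group $Q$ itself: everything is transparent at the level of the symplectic space, but one must repeatedly lift back, verifying that $Z$ genuinely lies in both $[Q,\alpha]$ and $C_Q(\alpha)$ and that the two factors meet exactly in $Z$. The orthogonality of the coprime decomposition together with the nondegeneracy of each summand is what makes these lifts work, and it is the only place where the extraspecial structure, as opposed to mere $p$-group structure, is really used.
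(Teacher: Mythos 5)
Your proof is correct, and every step checks out: the form $b(\bar x,\bar y)=[x,y]$ is well defined and nondegenerate on $Q/\mathcal{Z}(Q)$, $\bar\alpha$ is an isometry precisely because $[\mathcal{Z}(Q),\alpha]=1$, the orthogonality computation $b([v,\alpha],w)=b(v^{\bar\alpha},w^{\bar\alpha})-b(v,w)=0$ is valid, and the lifting arguments (in particular $Z\leq [Q,\alpha]$ in both cases, via $[Q,\alpha]'=Q'$ when $W=0$ and via nondegeneracy of $b|_U$ when $W\neq 0$) are all sound.

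The route is genuinely different from the paper's, though the two proofs share the same skeleton (coprime decomposition of $Q/\mathcal{Z}(Q)$ into $[Q,\alpha]$-part and $C_Q(\alpha)$-part, followed by a lift). The crucial fact in both is that $[Q,\alpha]$ and $C_Q(\alpha)$ commute: the paper obtains this from the Three Subgroup Lemma, noting $[C_Q(\alpha),Q,\alpha]\leq[\mathcal{Z}(Q),\alpha]=1$ and $[C_Q(\alpha),\alpha,Q]=1$, whereas your orthogonality relation $b(U,W)=0$ is literally the same statement, derived instead from $\bar\alpha$-invariance of the commutator form. The endgames also differ: the paper shows $\mathcal{Z}(C_Q(\alpha))$ commutes with $\langle C_Q(\alpha),[Q,\alpha]\rangle=Q$ and hence equals $\mathcal{Z}(Q)$, then rules out the degenerate possibilities by the product decomposition; you instead observe that $W=U^{\perp}$, so each summand is nondegenerate, and nondegeneracy of the restricted forms immediately pins down the centres, derived subgroups and Frattini subgroups of the lifts. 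What your approach buys is conceptual economy and a little extra information for free (each of $C_Q(\alpha)/\mathcal{Z}(Q)$ and $[Q,\alpha]/\mathcal{Z}(Q)$ is even-dimensional, consistent with both factors being extraspecial); what the paper's approach buys is independence from the symplectic formalism, using only standard commutator calculus and coprime action, which keeps the prerequisites of the preliminary chapter minimal.
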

\begin{proof} By coprime action on an abelian group, we have $Q/\mathcal{Z}(Q)=[Q/\mathcal{Z}(Q),\a] \times C_{Q/\mathcal{Z}(Q)}(\a)$. Hence if $Q=[Q,\a]$ then $C_Q(\a)=\mathcal{Z}(Q)$. So suppose that $Q\neq [Q,\a]$.
Since $\a$ is non-trivial,  $Q\neq C_Q(\a)$ and so we have that $1< [Q,\a]<Q$. Notice that $[C_Q(\a),Q,\a]\leq [\mathcal{Z}(Q),\a]=1$ and $[C_Q(\a),\a,Q]=[1,Q]=1$ so by the three subgroup lemma, $[Q,\a,C_Q(\a)]=1$. Consider $\mathcal{Z}(C_Q(\a))$. This commutes with $\<C_Q(\a),[Q,\a]\>=Q$ and so $1 \neq \mathcal{Z}(C_Q(\a))\leq \mathcal{Z}(Q)$ and since $\mathcal{Z}(Q)$ is cyclic of order $p$,  $\mathcal{Z}(C_Q(\a))=\mathcal{Z}(Q)$. Similarly, $\mathcal{Z}([Q,\a])=\mathcal{Z}(Q)$.  If $C_Q(\a)=\mathcal{Z}(Q)$ then, because $Q=[Q,\a]C_Q(\a)$, it follows that $Q=[Q,\a]$ which is not the case. Similarly if $[Q,\a]=\mathcal{Z}(Q)$ then, $Q=C_Q(\a)$, which contradicts that $\a\neq 1$. Hence $C_Q(\a)$ and $[Q,\a]$ are both extraspecial and it follows immediately from coprime action that $Q=C_{Q}(\a) [Q,\a]$ and $C_{Q}(\a)\cap  [Q,\a]=\mathcal{Z}(Q)$.
\end{proof}

\begin{thm}[Thompson]\cite[2.1, p337]{Gorenstein}\label{thompson-nilpotent}
Let $Q$ be a group and $\a$ an automorphism of $Q$ of prime order such that $C_Q(\a)=1$. Then $Q$ is nilpotent.
\end{thm}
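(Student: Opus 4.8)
The final statement is Thompson's theorem that a finite group admitting a fixed-point-free automorphism of prime order is nilpotent, so I expect almost all of the work to lie well beyond the elementary lemmas of this chapter. The plan is to reduce to the Frobenius configuration, set up an induction on $|Q|$ using coprime action, and then feed the inductive step into a normal complement theorem. First I would form the semidirect product $\Gamma = Q\rtimes\langle\alpha\rangle$, so that $C_Q(\alpha)=1$ says exactly that $\langle\alpha\rangle$ is a Frobenius complement. The initial point to secure is coprimality $\gcd(p,|Q|)=1$: if $p\mid|Q|$, choose $R\in\syl_p(\Gamma)$ with $\alpha\in R$; then $1\neq R\cap Q\normal R$, so $R\cap Q$ meets $\mathcal{Z}(R)$ nontrivially, and any $1\neq z\in\mathcal{Z}(R)\cap Q$ is centralized by $\alpha$, contradicting $C_Q(\alpha)=1$. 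With coprimality in hand, Theorem~\ref{coprime action} applies: for each prime $q\mid|Q|$ there is an $\alpha$-invariant Sylow $q$-subgroup $P=P_q$, and it is unique since any two such are conjugate by an element of $C_Q(\alpha)=1$. Moreover, for every $\alpha$-invariant $N\normal Q$, part $(iii)$ of Theorem~\ref{coprime action} gives $C_{Q/N}(\alpha)=C_Q(\alpha)N/N=1$, so $\alpha$ induces a fixed-point-free automorphism on every $\alpha$-invariant section; this is precisely what makes the induction run.

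A finite group is nilpotent exactly when it has a normal $q$-complement for every prime $q$ dividing its order, since the Sylow subgroups are then the intersections of those complements and hence normal. So I would prove, by induction on $|Q|$, that $Q$ has a normal $q$-complement for each $q$. Fix $q$ and let $P=P_q$ be the unique $\alpha$-invariant Sylow $q$-subgroup; its characteristic subgroups $\mathcal{Z}(P)$ and the Thompson subgroup $J(P)$ are $\alpha$-invariant, and therefore so are $C_Q(\mathcal{Z}(P))$ and $N_Q(J(P))$. If both of these are proper in $Q$, then by induction each is nilpotent and so has a normal $q$-complement, and Thompson's normal $q$-complement theorem (for $q$ odd) then produces a normal $q$-complement of $Q$. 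If instead $C_Q(\mathcal{Z}(P))=Q$, then $\mathcal{Z}(P)\leq\mathcal{Z}(Q)$ is a nontrivial central $\alpha$-invariant $q$-subgroup $M$; passing to $Q/M$, induction gives a normal $q$-complement $\overline{K}$, and lifting its preimage $K$ one finds $M\in\syl_q(K)$ is central, so Schur--Zassenhaus splits $K=M\times L$ with $L\normal Q$ a normal $q$-complement of $Q$. Running this for every $q$ yields that $Q$ is nilpotent.

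The main obstacle is twofold, and both parts sit squarely inside the part of the argument that the elementary lemmas of this chapter cannot reach, which is exactly why the result is imported from \cite{Gorenstein} rather than reproved. The genuine engine is Thompson's normal $q$-complement theorem itself, whose proof depends on the detailed structure of the Thompson subgroup $J(P)$ and on transfer, far beyond anything available here. The second delicate point is that this criterion requires $q$ odd, so the prime $q=2$ demands separate treatment: here one must exploit the unusually rigid fusion in the Frobenius group $\Gamma$ (any two elements of $Q$ that are $\Gamma$-conjugate are already $Q$-conjugate) together with a Burnside- or Glauberman-type argument at the even prime. The same circle of ideas is what is needed to close the residual exceptional case $N_Q(J(P))=Q$, where $O_q(Q)\neq1$ but $J(P)$ is not central, so that the clean quotient-and-lift manoeuvre above no longer applies; I expect essentially all of the real difficulty of the theorem to be concentrated in these even-prime and $O_q(Q)\neq1$ configurations.
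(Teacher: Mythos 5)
The paper offers no proof of this statement at all: it is imported verbatim from \cite{Gorenstein} with only the citation, so the benchmark is Gorenstein's own argument (the result cited as 2.1, p.~337). Your preparatory steps reproduce that argument faithfully: the coprimality argument via the semidirect product $\Gamma=Q\rtimes\langle\alpha\rangle$ and the centre of a Sylow $p$-subgroup is the standard one; so are the unique $\alpha$-invariant Sylow subgroups, the observation that $\alpha$ acts fixed-point-freely on every $\alpha$-invariant section, the induction manufacturing a normal $q$-complement at an \emph{odd} prime $q$ from Thompson's $C_Q(\mathcal{Z}(P))$/$N_Q(J(P))$ criterion, and your central-case lifting. (One small slip: existence and conjugacy of $\alpha$-invariant Sylow subgroups is not among the four parts of Theorem \ref{coprime action}; it is a separate, standard coprime-action fact.)

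The genuine gap is in how you propose to finish. First, your ``rigid fusion'' claim is false: if $Q\cong 2\times 2$ and $\alpha$ has order $3$, the three involutions of $Q$ are $\Gamma$-conjugate but not $Q$-conjugate; and since the $J(P)$-criterion genuinely fails at the prime $2$, no Burnside- or Glauberman-style fusion argument of the kind you gesture at will produce a normal $2$-complement. More importantly, the actual proof never needs a normal $q$-complement at every prime, so the problem you are trying to solve at $q=2$ does not arise. A single odd prime divisor suffices: the trichotomy at one odd $q$ always yields a proper nontrivial $\alpha$-invariant normal subgroup of $Q$ (either $\mathcal{Z}(Q)\neq 1$, or $J(P)\normal Q$, or the characteristic normal $q$-complement), unless $Q$ is a $q$-group or a $2$-group and hence already nilpotent. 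One then takes a \emph{minimal} $\alpha$-invariant normal subgroup $N$, which is elementary abelian (an $r$-group, say) by induction; $Q/N$ and all proper $\alpha$-invariant subgroups are nilpotent by induction, which makes the Sylow subgroups for primes $s\neq r$ normal in $Q$ (or else $Q=NS_s$, handled directly); and the argument closes with the lemma your sketch is missing: \emph{if a Frobenius group acts coprimely on a group, its kernel and its complement cannot both act fixed-point-freely}. Applied with kernel the $\alpha$-invariant Sylow $r$-subgroup $R$ and complement $\langle\alpha\rangle$, acting on the Frattini quotients of the other Sylow subgroups, the hypothesis $C(\alpha)=1$ forces $R$ to centralize each such quotient, and Theorem \ref{Burnside-p'-automorphism} then forces $R$ to centralize the Sylow subgroups themselves; all Sylow subgroups of $Q$ are therefore normal and $Q$ is nilpotent. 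This one representation-theoretic lemma, not fusion, disposes of both of your ``residual'' configurations ($q=2$ and $O_q(Q)\neq 1$) uniformly, and together with Thompson's odd-prime complement theorem it is where the depth of the result actually sits.
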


\begin{thm}[Burnside]\cite[5.1.4, p174]{Gorenstein}\label{Burnside-p'-automorphism}
Let $p$ be a prime and let $P$ be a $p$-group and $\a$ an automorphism of $P$ of order prime to $p$. If $\a$ centralizes
$P/\Phi(P)$ then $\a=1$.
\end{thm}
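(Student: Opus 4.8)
The plan is to deduce the result directly from the Coprime Action theorem (Theorem \ref{coprime action}) together with the non-generator characterization of the Frattini subgroup. Set $A=\langle \alpha\rangle$, so that $A$ is a group of automorphisms of $P$ of order coprime to $p$; since $|P|$ is a power of $p$, the action of $A$ on $P$ is coprime. First I would translate the hypothesis into commutator language: saying that $\alpha$ centralizes $P/\Phi(P)$ means precisely that $[x,\alpha]\in\Phi(P)$ for every $x\in P$. As $\Phi(P)$ is a normal $\alpha$-invariant subgroup and is generated together with these commutators, this gives $[P,A]\leq\Phi(P)$. This translation is the only slightly fiddly point, and it is entirely routine.

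Next I would apply part $(i)$ of Theorem \ref{coprime action}, which yields the general factorization
\[
P=[P,A]\,C_P(A).
\]
Combining this with $[P,A]\leq\Phi(P)$ gives $P=\Phi(P)\,C_P(A)$. At this stage I would invoke the standard fact that $\Phi(P)$ consists exactly of the non-generators of $P$: if $P=\Phi(P)H$ for a subgroup $H\leq P$, then $P=\langle\Phi(P),H\rangle=\langle H\rangle=H$. Applying this with $H=C_P(A)$ forces $P=C_P(A)$, so $A$ centralizes $P$ and hence $\alpha=1$.

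The proof is therefore genuinely short, and there is no serious obstacle to surmount once the right two tools are identified; the content lies entirely in pairing the factorization $P=[P,A]C_P(A)$ from coprime action with the non-generator property of $\Phi(P)$. If one preferred to avoid citing the non-generator characterization, an alternative would be to argue that $[P,A]=[P,A,A]$ (part $(ii)$ of Theorem \ref{coprime action}) and then show that the $A$-invariant subgroup $V=[P,A]\leq\Phi(P)$ satisfies $V=[V,A]$, descending along a suitable $A$-invariant series until $V$ is forced to be trivial; but the non-generator route is cleaner and I would use that.
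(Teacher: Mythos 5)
The paper never proves this statement: it is quoted directly from Gorenstein with the citation [5.1.4, p174], so there is no internal proof to measure yours against. Your argument is correct and complete, and it uses only tools the thesis itself records. The translation of the hypothesis is sound: since $\alpha$ centralizes $P/\Phi(P)$, so does every power of $\alpha$, hence every generator $[x,\alpha^i]$ of $[P,A]$ lies in $\Phi(P)$ and $[P,A]\leq\Phi(P)$. The action of $A=\langle\alpha\rangle$ on $P$ is coprime in the sense of the paper's definition, so Theorem \ref{coprime action}$(i)$ gives $P=[P,A]C_P(A)=\Phi(P)C_P(A)$, and the non-generator property of the Frattini subgroup (if $P=\Phi(P)H$ for a subgroup $H$, then $H$ lies in no maximal subgroup, so $H=P$) forces $P=C_P(A)$, i.e.\ $\alpha=1$. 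For comparison, the classical textbook proof (as in the cited source) packages the same two ingredients differently: one forms the semidirect product $P\langle\alpha\rangle$, observes that $[P,\langle\alpha\rangle]\leq\Phi(P)$ makes $\Phi(P)\langle\alpha\rangle$ normal, applies a Frattini-type argument using conjugacy of complements (Schur--Zassenhaus) to get $P=\Phi(P)N_P(\langle\alpha\rangle)$, and then concludes with the non-generator property that $P$ normalizes $\langle\alpha\rangle$, whence $[P,\alpha]\leq P\cap\langle\alpha\rangle=1$. Your route through the coprime-action factorization avoids the semidirect product and the appeal to conjugacy of complements, and since Theorem \ref{coprime action} is already stated in the paper it is arguably the better fit here; your fallback sketch via $[P,A]=[P,A,A]$ is vaguer and unnecessary, so you are right to prefer the non-generator argument.
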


\begin{thm}[Gasch\"{u}tz]\cite[p63]{GLS2}\label{Gaschutz}
Let $p$ be a prime and let $A$ be an abelian normal $p$-subgroup of a group $G$. Suppose that $S \in \syl_p(G)$. Then there is a
complement to $A$ in $G$ if and only if there is a complement to $A$ in $S$.
\end{thm}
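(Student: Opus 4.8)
The plan is to treat the two implications separately, with the reverse direction carrying all the content. For the forward direction, suppose $H$ is a complement to $A$ in $G$, so $G=AH$ and $A\cap H=1$. Since $A$ is a normal $p$-subgroup it lies in $O_p(G)$, hence in every Sylow $p$-subgroup, so $A\leq S$. Then, using $AH=HA=G$ and $A\leq S$, Dedekind's Modular Law (Lemma~\ref{dedekind}) gives $S=G\cap S=HA\cap S=(H\cap S)A$, while $(H\cap S)\cap A\leq H\cap A=1$; thus $H\cap S$ is a complement to $A$ in $S$. This direction is routine.

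For the reverse direction I would pass to the language of group extensions. Because $A$ is abelian and normal, conjugation makes $A$ a module for $Q:=G/A$, and complements to $A$ in $G$ correspond exactly to splittings of the extension $1\to A\to G\to Q\to 1$; such a splitting exists if and only if the associated class $\alpha\in H^2(Q,A)$ vanishes. Here the hypothesis that $A$ is abelian is \emph{essential}: it is precisely what forces the conjugation action to factor through $Q$ and places us in the abelian-coefficient setting. Set $P:=S/A$. Since $A\leq S$, the preimage of $P$ under $\pi\colon G\to Q$ is exactly $S$, and as $[Q:P]=[G:S]$ is coprime to $p$, $P$ is a Sylow $p$-subgroup of $Q$. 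The given complement to $A$ in $S$ splits the restricted extension $1\to A\to S\to P\to 1$, so the restriction $\mathrm{res}^Q_P(\alpha)$ is zero.

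It remains to promote this local vanishing to global vanishing, and this is the crux. I would invoke the restriction--corestriction (transfer) identity $\mathrm{cor}^Q_P\circ\mathrm{res}^Q_P=[Q:P]\cdot\mathrm{id}$ on $H^2(Q,A)$, which yields $[Q:P]\,\alpha=\mathrm{cor}^Q_P(\mathrm{res}^Q_P(\alpha))=0$. On the other hand, since the coefficient module $A$ is a $p$-group, $H^2(Q,A)$ is itself a $p$-group, being a subquotient of the group of $A$-valued $2$-cochains; hence $\alpha$ has $p$-power order. As $[Q:P]=[G:S]$ is coprime to $p$, the relation $[Q:P]\,\alpha=0$ forces $\alpha=0$. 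Therefore the extension splits and $A$ has a complement in $G$, completing the argument.

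The main obstacle is exactly this transfer step: everything else is bookkeeping, but the assertion that vanishing of the obstruction class over a Sylow $p$-subgroup forces its vanishing over all of $Q$ is where the real work lies. If one prefers to avoid the machinery of $H^2$, the same argument can be rendered elementarily: choose a set-theoretic section of $\pi\colon G\to Q$, form its $2$-cocycle $f$, use the complement in $S$ to write $f$ restricted to $P\times P$ as a coboundary, and then average the trivializing $1$-cochain over a right transversal of $P$ in $Q$. Coprimeness of $[Q:P]$ to $p$ allows one to divide by $[Q:P]$ on the $p$-group $A$, and the verification that the averaged cochain trivializes $f$ on all of $Q$ is precisely the concrete incarnation of the obstacle above.
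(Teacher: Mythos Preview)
Your proof is correct and is the standard cohomological argument for Gasch\"utz's theorem. However, there is nothing to compare against: the paper does not supply its own proof of this result. The theorem appears in Chapter~\ref{chapter prelims} as a quoted preliminary with a citation to \cite[p63]{GLS2}, and no proof environment follows it. So your proposal stands on its own merits, and those merits are fine --- both the forward direction via Dedekind and the reverse via restriction--corestriction on $H^2(Q,A)$ are correctly executed.
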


\begin{defi}
Let $G$ be  group and let $p$ be a prime. Set $n$ to be the order of a largest abelian $p$-subgroup of $G$ and set $\mathcal{A}:=\{A \leq G||A|=n\}$. Then the Thompson subgroup of $G$ is $J(G):=\<A\mid A \in \mathcal{A}\>$.
\end{defi}
See \cite{stellmacher}, for example, for properties of the Thompson subgroup. We use the following property many times in this thesis.
\begin{lemma}\label{conjugation in thompson subgroup}
Let $G$ be a group, $p$ be a prime and $S\in \syl_p(G)$. Suppose $J(S)$ is abelian and suppose $a,b
\in J(S)$ are conjugate in $G$. Then $a$ and $b$ are conjugate in $N_G(J(S))$.
\end{lemma}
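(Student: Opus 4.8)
The plan is to run a Burnside-style fusion argument, the enabling fact being that the hypothesis ``$J(S)$ abelian'' forces $J(S)$ to be \emph{weakly closed} in $S$ with respect to $G$. First I would record the basic consequences of the definition of the Thompson subgroup. Write $n$ for the common order of a largest abelian $p$-subgroup of $S$ and of $G$; these agree because every abelian $p$-subgroup of $G$ lies in a conjugate of $S$. By definition $J(S)$ is generated by the abelian subgroups of $S$ of order $n$, and since $J(S)$ is assumed abelian it is itself an abelian $p$-subgroup, so $|J(S)| \leq n$; as $J(S)$ also contains a subgroup of order $n$ this forces $|J(S)| = n$. Hence $J(S)$ is the \emph{unique} abelian subgroup of $S$ of the maximal order $n$. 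From this, weak closure is immediate: if $g \in G$ and $J(S)^g \leq S$, then $J(S)^g$ is an abelian subgroup of $S$ of order $n$, so $J(S)^g = J(S)$ by uniqueness.

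Now suppose $b = a^g$ with $a,b \in J(S)$. Since $J(S)$ is abelian and contains $b$ we have $J(S) \leq C_G(b)$, and since $J(S)^g$ is abelian and contains $a^g = b$ we likewise have $J(S)^g \leq C_G(b)$; thus both $J(S)$ and $J(S)^g$ are $p$-subgroups of $C_G(b)$. I would choose $T \in \syl_p(C_G(b))$ with $J(S) \leq T$, and then use Sylow's theorem inside $C_G(b)$ to find $c \in C_G(b)$ with $J(S)^{gc} \leq T$ as well. Since $T$ is a $p$-subgroup of $G$ there is $S_1 \in \syl_p(G)$ with $T \leq S_1$, and some $h \in G$ with $S_1^h = S$. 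Then $J(S)^h$ and $J(S)^{gch}$ both lie in $S$, so weak closure gives $J(S)^h = J(S)$ and $J(S)^{gch} = J(S)$; that is, $h \in N_G(J(S))$ and $gch \in N_G(J(S))$, whence $gc = (gch)h^{-1} \in N_G(J(S))$. Finally $a^{gc} = (a^g)^c = b^c = b$ because $c$ centralizes $b$, so $gc$ is the required conjugating element in $N_G(J(S))$.

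I expect the only real subtlety to be the bookkeeping in the double Sylow passage: one must conjugate inside $C_G(b)$ to bring $J(S)^g$ into a common Sylow $p$-subgroup with $J(S)$, then conjugate the ambient Sylow $p$-subgroup of $G$ onto $S$, arranging matters so that weak closure applies simultaneously to $J(S)^h$ and $J(S)^{gch}$ and so that the net conjugator $gc$ both normalises $J(S)$ and sends $a$ to $b$. Everything else is routine once the uniqueness and weak-closure properties of $J(S)$ are in place.
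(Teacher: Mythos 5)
Your proof is correct and takes essentially the same approach as the paper's: both arguments rest on the observation that, because $J(S)$ is abelian, it is the unique abelian subgroup of maximal order (the paper packages this as ``$J(S)\leq P$ implies $J(S)=J(P)$'' for $P$ a Sylow $p$-subgroup of $C_G(b)$, you package it as weak closure of $J(S)$ in $S$), and both then apply Sylow's theorem inside $C_G(b)$ to force the resulting conjugating element into $N_G(J(S))$. Your additional passage through a Sylow $p$-subgroup of $G$ (the element $h$, which is then cancelled) is a minor detour that the paper avoids, but the underlying mechanism is identical.
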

\begin{proof}
Suppose $a^g=b$ for some $g \in G$. Notice first that it follows immediately from the definition of
the Thompson subgroup that $J(S)^g=J(S^g)$. Now $J(S),J(S^g)\leq C_G(b)$. Let $P,Q \in
\syl_p(C_G(b))$ such that $J(S)\leq P$ and $J(S^g)\leq Q$. Again, by the definition of the Thompson
subgroup, it is clear that $J(S)\leq P$ implies $J(S)=J(P)$ and similarly $J(S^g)= J(Q)$. By
Sylow's Theorem, there exists $x \in C_G(b)$ such that $Q^x=P$ and so $J(S)=J(P)=
J(Q)^x=J(S)^{gx}$. Thus $gx \in N_G(J(S))$ and $a^{gx}=b^x=b$ as required.
\end{proof}

\begin{lemma}\cite[7.1.5, p167]{stellmacher}\label{Burnside conjugation lemma}
Let $G$ be a group, $p$ be a prime and $S$ be a Sylow $p$-subgroup of $G$.  If $A_1$ and $A_2$ are
normal subsets of $S$ which are conjugate in $G$ then they are conjugate in $N_G(S)$.
\end{lemma}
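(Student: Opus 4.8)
The plan is to reduce the global conjugacy of $A_1$ and $A_2$ to an application of Sylow's theorem inside the normalizer of the common subset. First I would suppose $A_1^g = A_2$ for some $g \in G$ and record the key observation that $A_2$ is normalized by \emph{two} Sylow $p$-subgroups of $G$. Indeed, $A_2$ is a normal subset of $S$ by hypothesis, so $S \leq N_G(A_2)$; and since $A_1$ is a normal subset of $S$, conjugating by $g$ shows $A_2 = A_1^g$ is a normal subset of $S^g$, so $S^g \leq N_G(A_2)$.

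Writing $N := N_G(A_2)$, I would then observe that $S$ and $S^g$ both lie in $N$ and both are Sylow $p$-subgroups of $G$; consequently each is a Sylow $p$-subgroup of $N$. Applying Sylow's theorem inside $N$ produces an element $c \in N$ with $(S^g)^c = S$, i.e. $S^{gc} = S$, so that $gc \in N_G(S)$. Finally, because $c \in N = N_G(A_2)$, one checks $A_1^{gc} = (A_1^g)^c = A_2^c = A_2$, exhibiting $gc \in N_G(S)$ as the required conjugating element.

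The argument is short, and the only point requiring care is the recognition that a normal subset of $S$ (as opposed to a normal subgroup) may still be treated exactly like a normal subgroup for the purpose of forming its normalizer $N_G(A_2)$ in $G$ and of checking that $S$ and $S^g$ are Sylow $p$-subgroups there. Once $N_G(A_2)$ is identified as the right arena for Sylow's theorem, the conclusion follows mechanically; this is precisely the mechanism already used in Lemma \ref{conjugation in thompson subgroup}, specialized there to elements of an abelian Thompson subgroup.
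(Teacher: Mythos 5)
Your proof is correct, and it is the standard argument for this lemma (Burnside's fusion lemma): since $A_2$ is a normal subset of both $S$ and $S^g$, these are two Sylow $p$-subgroups of $N_G(A_2)$, and Sylow's theorem applied inside $N_G(A_2)$ supplies the element $c$ making $gc \in N_G(S)$ with $A_1^{gc}=A_2$. The paper itself gives no proof — it cites the result from Kurzweil–Stellmacher, where exactly this argument appears — and your observation at the end is also apt: it is the same mechanism as the paper's proof of Lemma \ref{conjugation in thompson subgroup}, which runs the Sylow argument inside $C_G(b)$ using the characteristic property of the Thompson subgroup in place of normality of the subset.
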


In many of the calculations in the proof of Theorem B and Theorem C we  often switch between a group with nilpotence class two
and its abelian quotient modulo the centre. The following lemma allows us to adjust between the
two groups.
\begin{lemma}\label{Prelims-p centralizers on class 2 groups}                                                                   
Let $P$ be a class two group and $\a$ an automorphism of $P$ that centralizes $\mathcal{Z}(P)$. Then
$[C_{P/\mathcal{Z}(P)}(\a):C_P(\a)/\mathcal{Z}(P)]$ divides  $|\mathcal{Z}(P)|$.
\end{lemma}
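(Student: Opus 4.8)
The plan is to produce a homomorphism from $C_{P/\mathcal{Z}(P)}(\alpha)$ into the abelian group $\mathcal{Z}(P)$ whose kernel is exactly $C_P(\alpha)/\mathcal{Z}(P)$; the index in the statement then equals the order of the image, and since the image is a subgroup of $\mathcal{Z}(P)$ it divides $|\mathcal{Z}(P)|$.

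First I would set up the quotient. As $\mathcal{Z}(P)$ is characteristic in $P$, the automorphism $\alpha$ induces an automorphism of $P/\mathcal{Z}(P)$, and a coset $x\mathcal{Z}(P)$ lies in $C_{P/\mathcal{Z}(P)}(\alpha)$ precisely when $[x,\alpha]=x^{-1}x^\alpha\in\mathcal{Z}(P)$. Since $\alpha$ fixes $\mathcal{Z}(P)$ pointwise we have $\mathcal{Z}(P)\leq C_P(\alpha)$, so $C_P(\alpha)/\mathcal{Z}(P)$ is a subgroup of $C_{P/\mathcal{Z}(P)}(\alpha)$ and the index in the statement is well posed.

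Next I would define $\phi\colon C_{P/\mathcal{Z}(P)}(\alpha)\to\mathcal{Z}(P)$ by $\phi(x\mathcal{Z}(P))=[x,\alpha]$, with two points needing care. For well-definedness, replacing $x$ by $xz$ with $z\in\mathcal{Z}(P)$ gives $[xz,\alpha]=z^{-1}[x,\alpha]z^\alpha$; here $z^\alpha=z$ because $\alpha$ centralizes $\mathcal{Z}(P)$, and $[x,\alpha]\in\mathcal{Z}(P)$ commutes with $z$, so $[xz,\alpha]=[x,\alpha]$. This is the one place where the hypothesis on $\alpha$ is genuinely used. For the homomorphism property I would invoke the standard identity $[xy,\alpha]=[x,\alpha]^y[y,\alpha]$ and note that $[x,\alpha]^y=[x,\alpha]$ since $[x,\alpha]$ is central, giving $\phi(xy\mathcal{Z}(P))=\phi(x\mathcal{Z}(P))\phi(y\mathcal{Z}(P))$.

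Finally I would compute the kernel: $\phi(x\mathcal{Z}(P))=1$ says $x^\alpha=x$, i.e. $x\in C_P(\alpha)$, so $\ker\phi=C_P(\alpha)/\mathcal{Z}(P)$. The first isomorphism theorem then yields $C_{P/\mathcal{Z}(P)}(\alpha)/(C_P(\alpha)/\mathcal{Z}(P))\cong\im\phi\leq\mathcal{Z}(P)$, and the divisibility follows. There is no serious obstacle here: the argument is entirely formal, and the only thing to watch is that the hypothesis $[\mathcal{Z}(P),\alpha]=1$ is fed in at exactly the right spot (well-definedness) and that one uses centrality of the commutators for both well-definedness and multiplicativity. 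I would also remark that the class-two hypothesis is not actually needed for the argument; it merely makes $P/\mathcal{Z}(P)$ abelian, which is the setting in which the lemma is applied, whereas all the proof uses is that $\mathcal{Z}(P)$ is abelian and fixed by $\alpha$.
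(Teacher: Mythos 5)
Your proof is correct and is essentially identical to the paper's: the paper defines the same map $\phi\colon C_{P/\mathcal{Z}(P)}(\a)\to\mathcal{Z}(P)$, $\mathcal{Z}(P)x\mapsto[x,\a]$, notes well-definedness from $[\mathcal{Z}(P),\a]=1$, and identifies the kernel as $C_P(\a)/\mathcal{Z}(P)$, exactly as you do (your proof just spells out the commutator manipulations the paper leaves implicit). Your closing remark is also accurate: the argument never uses the class-two hypothesis, only that $\mathcal{Z}(P)$ is abelian and centralized by $\a$.
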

\begin{proof}
Define a map \[\begin{array}{rlrcl}
    \phi & : & C_{P/\mathcal{Z}(P)}(\a) & \longrightarrow &\mathcal{Z}(P) \\
     \; &\;  & \mathcal{Z}(P)x & \longmapsto & [x,\a].\\
\end{array}\] Then $\phi$ is a well defined map since $\a$ commutes with $\mathcal{Z}(P)$. Moreover $\phi$ is
a homomorphism and the kernel is $C_P(\a)/\mathcal{Z}(P)$.
\end{proof}

\begin{lemma}\label{Prelims 2^8 3^2 Dih(8)}
Let $X$ be a group with an elementary abelian subgroup $E\vartriangleleft X$ of order $2^{2n}$ such
that $C_X(E)=E$. Let $S \in \syl_2(X)$ and suppose that whenever $E<R\trianglelefteq S$ with $R/E$
elementary abelian and $|R/E|=2^a$ we have  $|C_E(R)|\leq 2^{2n-a-1}$. Then $E$ is characteristic in
$S$.
\end{lemma}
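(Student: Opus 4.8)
The plan is to argue by contradiction, exploiting the fact that any automorphism of $S$ carries the normal self-centralizing subgroup $E$ to a \emph{second} subgroup with the same two properties, so that the product of the two is again a normal subgroup of $S$ to which the hypothesis can be applied. The whole argument then reduces to a single order count, and the only role of the ambient group $X$ is to supply two preliminary facts.

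First I would record those two facts. Since $E\trianglelefteq X$ is a $2$-group it lies in $O_2(X)\leq S$, so $E\leq S$; and $C_S(E)=S\cap C_X(E)=S\cap E=E$, so $E$ is already self-centralizing inside $S$. After this, everything takes place in $S$ and $X$ plays no further part. Now suppose, for contradiction, that $E$ is not characteristic in $S$, so there is some $\phi\in\aut(S)$ with $F:=\phi(E)\neq E$. I would check that $F$ inherits all the relevant structure of $E$: it is elementary abelian of order $2^{2n}$; it is normal in $S$, because $\phi(E)\trianglelefteq\phi(S)=S$; and it is self-centralizing in $S$, because $C_S(F)=C_S(\phi(E))=\phi(C_S(E))=\phi(E)=F$. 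Since $|F|=|E|$ and $F\neq E$ we have $F\not\leq E$, whence $E<EF$, while $EF\trianglelefteq S$ as a product of normal subgroups and $EF/E\cong F/(E\cap F)$ is elementary abelian, say of order $2^a$ with $a\geq 1$. Thus $R:=EF$ is exactly a subgroup to which the hypothesis applies.

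The key step is then to compute $C_E(R)$ in two ways. Because $E$ is abelian, every element of $E$ centralizes $E$, so $C_E(R)=C_E(EF)=C_E(F)=E\cap C_S(F)=E\cap F$; and from $EF/E\cong F/(E\cap F)$ we get $|E\cap F|=|F|/|EF/E|=2^{2n}/2^{a}=2^{2n-a}$. On the other hand the hypothesis, applied to $E<R=EF\trianglelefteq S$ with $|R/E|=2^a$, forces $|C_E(R)|\leq 2^{2n-a-1}$. These two statements are incompatible, so no such $\phi$ exists and $E$ is characteristic in $S$.

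The one step I would be most careful about — and the only place any thought is required — is the verification that $F=\phi(E)$ is again both normal and self-centralizing in $S$. This is what guarantees that $EF$ is a legitimate choice of $R$ (normal in $S$, with $E$ proper in it) and, crucially, that $C_E(R)$ collapses to the intersection $E\cap F$ rather than to something larger. Once those two properties are in hand, the order count $2^{2n-a}>2^{2n-a-1}$ closes the argument immediately, and there is no further case analysis.
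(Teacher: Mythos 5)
Your proof is correct and follows essentially the same route as the paper's: take $\phi\in\aut(S)$ moving $E$, form $R=E\,\phi(E)\trianglelefteq S$, note $R/E\cong \phi(E)/(E\cap\phi(E))$ is elementary abelian of order $2^a$ with $|E\cap\phi(E)|=2^{2n-a}$, and contradict the hypothesis on $|C_E(R)|$. The only cosmetic difference is that you compute $C_E(R)=E\cap\phi(E)$ exactly using $C_S(\phi(E))=\phi(E)$, whereas the paper merely observes that $E\cap\phi(E)$ is central in $R$ (both factors being abelian), which already gives the lower bound $|C_E(R)|\geq 2^{2n-a}$ needed for the contradiction.
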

\begin{proof}
First observe that since $C_X(E)=E$, $X/E$ is a group of outer automorphisms of $E$. Let $\a$ be an
automorphism of $S$ such that $E^\a\neq E$. Then $R:=EE^\a\trianglelefteq S$. Since $E^\a$ is elementary
abelian, we have that $E^\a/(E \cap E^\a)\cong EE^\a/E=R/E$ is elementary abelian and $E \cap E^\a$ is central in
$R$. If $|R/E|=2^a$ then $|E \cap E^\a|=2^{2n-a}$ so $|C_E(R)|\geq |E \cap
E^\a|=2^{2n-a}$ which is a contradiction.
\end{proof}

\section{Results Using Transfer}

The transfer homomorphism is a useful tool in group theory and is used to identify proper normal subgroups. See Chapter 7 in \cite{stellmacher} for  a
definition of the transfer homomorphism and related results. We state four transfer related results in this section and apply one of these in Lemma \ref{Prelims-4*4*4 transfer} to prove a result which is required in Chapter \ref{Chapter-HN}.

A group $G$ is said to have a normal $p$-complement ($p$ a prime) if $O_{p'}(G)=O^p(G)$.  This is to
say that $G$ has a normal subgroup $N$ of order prime to $p$ such that $G=NP$ for $P \in
\syl_p(G)$.
\begin{thm}[Burnside's Normal $p$-complement Theorem]\cite[7.2.1, p169]{stellmacher}\label{Burnside-normal p complement}
Let $G$ be a group and let $p$ be a prime. Suppose that $P\in \syl_p(G)$ such that $C_G(P)=N_G(P)$. Then $G$ has a normal $p$-complement.
\end{thm}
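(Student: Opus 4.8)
The plan is to prove this via the transfer homomorphism together with a no-fusion argument, which is the classical route to Burnside's theorem. First I would extract the structural consequence of the hypothesis: since $P \leq N_G(P) = C_G(P)$, every element of $P$ centralizes $P$, so \emph{$P$ is abelian}. Thus $P$ lies in the centre of its own normalizer, and in particular conjugation by any element of $N_G(P) = C_G(P)$ fixes each element of $P$ pointwise.

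The crucial intermediate claim is that \emph{no two distinct elements of $P$ are conjugate in $G$}. To see this, note that because $P$ is abelian, every singleton $\{x\}$ with $x \in P$ is a normal subset of $P$. If $x, y \in P$ are $G$-conjugate, then $\{x\}$ and $\{y\}$ are $G$-conjugate normal subsets of the Sylow subgroup $P$, so by Lemma \ref{Burnside conjugation lemma} they are conjugate by some $g \in N_G(P)$. But $N_G(P) = C_G(P)$ fixes each element of $P$, so $x = y$. This ``control of fusion in $P$'' is the heart of the matter.

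Next I would compute the transfer map $V \colon G \to P$, whose target is $P$ itself since $P$ is abelian. Using the standard evaluation of the transfer on an element $x \in P$ (as developed in Chapter 7 of \cite{stellmacher}), one obtains an expression $V(x) = \prod_j g_j x^{n_j} g_j^{-1}$ in which the exponents satisfy $\sum_j n_j = [G:P]$ and each factor $g_j x^{n_j} g_j^{-1}$ lies in $P$. Since $x^{n_j}$ and $g_j x^{n_j} g_j^{-1}$ are then $G$-conjugate elements of $P$, the fusion claim above forces $g_j x^{n_j} g_j^{-1} = x^{n_j}$, whence $V(x) = x^{[G:P]}$ for all $x \in P$. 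As $[G:P]$ is coprime to $p$ and $P$ is an abelian $p$-group, the power map $x \mapsto x^{[G:P]}$ is an automorphism of $P$, so $V|_P$, and hence $V$ itself, is surjective. Setting $K = \ker V$ gives $G/K \cong P$, so $K$ has order coprime to $p$, $K \cap P = 1$, and $G = KP$; thus $K$ is a normal $p$-complement, i.e. $O_{p'}(G) = O^p(G)$ as required.

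I expect the main obstacle to be the transfer computation itself, namely deriving the product formula for $V(x)$ and justifying that its factors genuinely lie in $P$. This depends on the precise definition and elementary machinery of the transfer homomorphism, which I would invoke from \cite{stellmacher} rather than redevelop here; once that formula is in hand, the rest of the argument is a short deduction from the $P$-abelian observation and Lemma \ref{Burnside conjugation lemma}.
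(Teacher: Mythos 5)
Your proof is correct; the paper states this theorem without proof, citing \cite[7.2.1]{stellmacher}, and your argument — observing $P$ is abelian, controlling fusion in $P$ via Lemma \ref{Burnside conjugation lemma}, and evaluating the transfer map to get $V(x)=x^{[G:P]}$ with kernel the normal $p$-complement — is precisely the standard proof given in that reference. Nothing to correct.
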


\begin{lemma}[Thompson's Transfer Lemma]\cite[12.1.1, p338]{stellmacher}\label{ThompsonTransfer}
Let $G$ be a group and $S\in \syl_2(G)$. Suppose that there exists a maximal subgroup $U< S$ and an
involution $t\in S$ such that $t^G \cap U = \emptyset$. Then $t$ is not contained in $O^2(G)$.
\end{lemma}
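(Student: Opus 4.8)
The plan is to produce an explicit homomorphism from $G$ onto a $2$-group on which $t$ acts nontrivially, using the transfer homomorphism (see Chapter 7 of \cite{stellmacher}). Since $U$ is maximal in the $2$-group $S$, it is normal of index $2$, so $S/U$ is abelian of order $2$. I would consider the transfer homomorphism $V\colon G\longrightarrow S/U$ associated to $S\leq G$ together with the natural projection $S\to S/U$. Because $S/U$ is an abelian $2$-group, the image $V(G)$ is a $2$-group, and as $\ker V\trianglelefteq G$ with $2$-group quotient, minimality of $O^2(G)$ gives $O^2(G)\leq\ker V$. Thus it suffices to show that $V(t)\neq 1$, for this forces $t\notin\ker V$ and hence $t\notin O^2(G)$.

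To evaluate $V(t)$ I would use the standard orbit formula for the transfer. Let $G$ act by right multiplication on the coset space $S\bs G$, which has odd cardinality $n=|G:S|$ since $S\in\syl_2(G)$. Fixing a transversal $\{x_i\}$ and decomposing the permutation of $S\bs G$ induced by $t$ into cycles, a cycle through $Sx_i$ of length $\ell_i$ satisfies $x_it^{\ell_i}x_i\inv\in S$ and contributes the image of this element in $S/U$. As $t$ is an involution, every cycle has length $1$ or $2$. A $2$-cycle contributes the image of $x_it^{2}x_i\inv=1$, which is trivial, so only the fixed cosets matter.

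For a fixed coset $Sx_i$ the element $s_i:=x_itx_i\inv$ lies in $S$ and is a $G$-conjugate of $t$. Here the hypothesis enters: since $t^G\cap U=\emptyset$, we have $s_i\notin U$, so $s_iU$ is the nontrivial element of $S/U$. Hence each fixed coset contributes the generator of $S/U$, and $V(t)$ equals that generator raised to the power $f$, where $f$ is the number of fixed cosets. The proof closes with a parity count: if $k$ denotes the number of $2$-cycles then $f+2k=n$ is odd, whence $f$ is odd and $V(t)\neq 1$, as required.

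The genuinely essential point is the final parity observation, which is where the Sylow hypothesis on $S$ is used: without $|G:S|$ being odd one could not conclude that the fixed cosets occur an odd number of times, and the argument would collapse. The main obstacle in writing this up cleanly is therefore not the parity count itself but keeping the bookkeeping of the transfer evaluation formula (choice of transversal, left versus right cosets, and the precise form of the cycle contributions) consistent, so that the conjugates $s_i$ genuinely lie in $t^G\cap S$ and the hypothesis applies verbatim.
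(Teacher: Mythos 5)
Your proof is correct and complete. Note that the paper gives no proof of this lemma at all---it is quoted verbatim from Kurzweil--Stellmacher \cite[12.1.1]{stellmacher}---so there is nothing internal to compare against; your argument is the standard transfer proof, and every step checks out: $U$ is normal of index $2$ in $S$ because maximal subgroups of $2$-groups are normal of index $p$; the transfer $V\colon G\to S/U$ is well defined since $S'\leq \Phi(S)\leq U$; $O^2(G)\leq \ker V$ because $G/\ker V$ is a $2$-group; in the cycle evaluation the $2$-cycles contribute trivially while each fixed coset contributes a $G$-conjugate of $t$ lying in $S$, which misses $U$ by hypothesis; and the parity count $f+2k=[G:S]$ with $[G:S]$ odd forces $V(t)\neq 1$. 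One remark on your closing worry about transversal bookkeeping: the argument can be packaged so that no explicit transfer evaluation is needed. Since $Ug t=Ug$ if and only if $gtg^{-1}\in U$, the hypothesis $t^G\cap U=\emptyset$ says that $t$ acts on the $2m$ cosets of $U$ in $G$ (where $m=[G:S]$ is odd) without fixed points, hence as a product of $m$ transpositions, which is an odd permutation; composing the permutation representation of $G$ on the cosets of $U$ with the sign map gives a homomorphism from $G$ onto a group of order $2$ whose kernel contains $O^2(G)$ but not $t$. This is the same homomorphism as your transfer map in disguise, but it eliminates the choice of transversal entirely.
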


\begin{thm}[Gr\"{u}n]\cite[7.4.2]{Gorenstein}\label{GrunsThm}
Let $G$ be a group, $p$ a prime and $S\in \syl_p(G)$. Then $S \cap G'=\<S \cap N_G(S)',S \cap P'|P
\in \syl_p(G)\>$.
\end{thm}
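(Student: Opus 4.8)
The plan is to attack the statement with the transfer homomorphism, the result being a refinement of the focal subgroup theorem. Write $R$ for the subgroup on the right-hand side. The inclusion $R\leq S\cap G'$ is immediate: $N_G(S)'\leq G'$ and $P'\leq G'$ for every $P\in\syl_p(G)$, while every listed generator lies in $S$ by construction. Taking $P=S$ gives $S'=S\cap S'\leq R$, and since $S/S'$ is abelian this already forces $R\normal S$ with $S/R$ an abelian $p$-group. Hence the whole content is the reverse inclusion $S\cap G'\leq R$, which I would obtain by running the transfer $\bar\tau\colon G\to S/R$, namely the transfer of $G$ into $S$ composed with the projection $S/S'\to S/R$, and proving that its kernel meets $S$ in exactly $R$. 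Granting this, the target $S/R$ is abelian, so $G'\leq\ker\bar\tau$ and therefore $S\cap G'\leq S\cap\ker\bar\tau=R$.

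To identify $\ker\bar\tau\cap S$ I would use the transfer evaluation formula. For $s\in S$, decomposing the cosets $S\bs G$ into $\<s\>$-orbits with representatives $x_i$ and orbit lengths $m_i$, so that $\sum_i m_i=[G:S]$, gives $\bar\tau(s)=\prod_i(x_i s^{m_i}x_i\inv)R$, where each $s^{m_i}\in S\cap S^{x_i}$ and $x_i s^{m_i}x_i\inv\in S$. Since $[G:S]$ is coprime to $p$, the map $sR\mapsto s^{[G:S]}R$ is an automorphism of the $p$-group $S/R$, so it suffices to prove $\bar\tau(s)=s^{[G:S]}R$ for every $s\in S$; as $S/R$ is abelian, this reduces to showing that each correction term $(x_i s^{m_i}x_i\inv)(s^{m_i})\inv$ lies in $R$. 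If $x_i\in N_G(S)$ then both $s^{m_i}$ and $x_i$ lie in $N_G(S)$ and the correction term lies in $N_G(S)'\cap S\leq R$; this is the transfer counterpart of Burnside's fusion argument (Lemma \ref{Burnside conjugation lemma}) and shows that the $N_G(S)'$ term accounts for all fusion already realized inside $N_G(S)$.

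The crux, and the step I expect to be the main obstacle, is the general correction term, in which $x_i\notin N_G(S)$ and $s^{m_i}$ is non-central. Here $s^{m_i}$ and $x_i s^{m_i}x_i\inv$ are two elements of $S$ interchanged by a conjugation carrying the Sylow subgroup $S^{x_i}$ onto $S$, and the task is to absorb the correction using the derived subgroup $(S^{x_i})'$ together with $N_G(S)'$. The plan is to split the conjugation into the portion taking place inside a single Sylow subgroup, which should contribute a commutator to the relevant $(S^{x_i})'$ and hence to $R$, and the portion that merely permutes Sylow subgroups, which should be controlled by $N_G(S)'$. Making this precise is delicate: the real difficulty is to ensure that every factor is genuinely absorbed by one of the two listed families, that is by the derived subgroup of a full Sylow subgroup or of $N_G(S)$, rather than by the derived subgroup of some smaller Sylow intersection, and it is exactly this bookkeeping that Grün's refinement of the focal subgroup theorem carries out. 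Once each correction term is shown to lie in $R$ we obtain $\bar\tau(s)=s^{[G:S]}R$ for all $s\in S$, whence $\ker\bar\tau\cap S=R$ and the theorem follows.
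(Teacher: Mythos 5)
A preliminary remark: the paper does not prove this statement at all --- it is stated as a quotation of \cite[7.4.2]{Gorenstein} --- so your proposal can only be compared with the classical transfer proof, not with anything internal to the thesis.

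Your framework is the standard one and is correct as far as it goes: the inclusion $R\leq S\cap G'$, the observation that $S'\leq R$ forces $R\trianglelefteq S$ with $S/R$ an abelian $p$-group, the reduction via the transfer $\bar\tau\colon G\to S/R$ to proving $\ker\bar\tau\cap S=R$, the coprimality argument, and the disposal of the correction terms with $x_i\in N_G(S)$ (which do land in $S\cap N_G(S)'$) are all fine. The genuine gap is the step you yourself call ``the crux'', and it is not a piece of deferred bookkeeping: it \emph{is} the theorem. The elements you must absorb are of the form $vu\inv$ with $u,v\in S$ conjugate in $G$ (here $u=s^{m_i}$, $v=x_is^{m_i}x_i\inv$), and by the focal subgroup theorem the set of all such elements generates exactly $S\cap G'$. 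Hence the assertion ``every general correction term lies in $R$'' is equivalent to the hard inclusion $S\cap G'\leq R$ that you are trying to prove; writing that this absorption ``is exactly the bookkeeping that Gr\"un's refinement of the focal subgroup theorem carries out'' amounts to citing the theorem in its own proof.

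Moreover, the mechanism you sketch for closing this gap --- splitting a conjugation carrying $S^{x_i}$ onto $S$ into ``a portion taking place inside a single Sylow subgroup'' plus ``a portion that merely permutes Sylow subgroups'' --- is not available for free. The existence of such a factorization of an arbitrary fusion is essentially Alperin's fusion theorem, and even granting that theorem, the factors it produces are commutators $[v,h]$ with $h\in N_G(S\cap S^y)$ for various Sylow intersections $S\cap S^y$; these lie in derived subgroups of normalizers of Sylow intersections, which are precisely \emph{not} among the two permitted families $S\cap N_G(S)'$ and $S\cap P'$, $P\in\syl_p(G)$. You name this obstruction yourself (``rather than by the derived subgroup of some smaller Sylow intersection'') but offer nothing to overcome it; overcoming it --- whether by Gr\"un's original double-coset manipulation of the transfer or by exploiting the tameness of the intersections appearing in Alperin's theorem --- is the substantive content of the proof in \cite{Gorenstein}. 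As it stands, your proposal is a correct reduction of the theorem to its hardest step, together with an acknowledgement that the hardest step remains open.
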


\begin{thm}[Extremal Transfer]\cite[15.15, p92]{GLS2}\label{extremal transfer}
Let $G$ be a group and let $p$ be a prime with $P \in \syl_p(G)$. Suppose $Q\lhd P$ and $[P:Q]=p$ and $x \in P\bs Q$. If
$x^G \cap P\subset Q \cup Qx$ then either $x \notin O^p(G)$ or there exists $g \in G$ such that
$x^g \in Q$ and $C_P(x^g)\in \syl_p(C_G(x^g))$.
\end{thm}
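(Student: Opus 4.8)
The plan is to detect $x$ by transferring into the cyclic quotient $P/Q$. Since $[P:Q]=p$, the quotient $P/Q$ is cyclic of order $p$, hence abelian, so $P'\leq Q$ and the transfer $V_{G\to P}\colon G\to P/P'$ composes with the natural projection $P/P'\to P/Q$ to give a homomorphism $\tau\colon G\to P/Q\cong C_p$. Because $P/Q$ is an abelian $p$-group, $O^p(G)\leq\ker\tau$; hence if $\tau(x)\neq 1$ then $x\notin O^p(G)$ and we are in the first alternative. So the whole task reduces to showing that $\tau(x)\neq 1$ unless some $G$-conjugate of $x$ lies in $Q$ and is \emph{extremal}, that is, satisfies $C_P(y)\in\syl_p(C_G(y))$. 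Write $\theta\colon P\to P/Q$ for the projection and identify $P/Q$ with $\mathbb{Z}/p$ by sending $xQ$ to $1$.

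Next I would evaluate $\tau(x)$ by the standard orbit formula for the transfer. Let $\langle x\rangle$ act by right multiplication on the set $\Omega$ of right cosets of $P$ in $G$, so $|\Omega|=[G:P]$ is coprime to $p$. For each orbit $O$ with representative coset $Pt_O$ and length $n_O$ one has $y_O:=t_O x^{n_O}t_O^{-1}\in P$, and $\tau(x)=\sum_O\theta(y_O)$ in $\mathbb{Z}/p$. As $x$ is a $p$-element, every $n_O$ is a power of $p$. The singleton orbits ($n_O=1$) are exactly the cosets fixed by $x$; each contributes a genuine $G$-conjugate $y_O=t_Oxt_O^{-1}$ of $x$ lying in $P$, which by hypothesis lies in $Q\cup Qx$, so $\theta(y_O)\in\{0,1\}$. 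Moreover the number of fixed cosets is congruent to $|\Omega|=[G:P]\not\equiv 0\pmod p$ by the usual fixed-point congruence for the $p$-group $\langle x\rangle$. The key refinement is to arrange the transfer to be evaluated on extremal elements: by the standard theory of extremal conjugation (every $G$-class meeting $P$ contains an extremal element of $P$), the contribution of each $G$-class of conjugates of $x$ to $\tau(x)$ is, modulo $p$, carried by its extremal representatives, each appearing with a multiplicity prime to $p$, the multiplicities summing to $[G:P]$ modulo $p$.

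With this in hand the conclusion is immediate. Suppose no $G$-conjugate of $x$ lying in $Q$ is extremal. Then every extremal conjugate of $x$ in $P$ lies outside $Q$, hence in $Qx$ by the hypothesis, so each contributes $1$ to the count; therefore $\tau(x)\equiv[G:P]\not\equiv 0\pmod p$, giving $x\notin O^p(G)$. Otherwise there is an extremal conjugate of $x$ inside $Q$, which is precisely the second alternative. The main obstacle is the refinement just invoked, namely controlling the contributions of the non-singleton orbits, equivalently the higher powers $x^{n_O}$ with $n_O>1$: when $x$ has order $p$ these orbits contribute nothing, since then $x^{n_O}=1$, exactly as in Thompson's Transfer Lemma~\ref{ThompsonTransfer}, and the count above is clean; but when $x$ has order larger than $p$ the elements $y_O$ are conjugates of the proper powers $x^{n_O}\in Q$, which the hypothesis does not constrain, and one must show via the extremal-conjugation apparatus (or an induction on $|x|$ or $|G|$ passing to suitable $p$-local subgroups) that these intermediate contributions still vanish modulo $p$. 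Assembling that cleanly is the crux of the argument.
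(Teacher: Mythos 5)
The paper offers no proof of this statement---it is quoted directly from GLS2---so your proposal can only be measured against the standard argument that GLS2 gives, and measured that way it is the right strategy but with the decisive step missing, as you yourself concede in your final sentences. The framework is correct: the transfer $\tau\colon G\to P/Q$, the orbit evaluation, and the fact that $O^p(G)\leq\ker\tau$. But both places where you appeal to ``the extremal-conjugation apparatus'' are unproven, and they are not side issues. First, your claim that, assuming no conjugate of $x$ in $Q$ is extremal, every singleton orbit contributes $1$ is unjustified: a singleton orbit produces some conjugate $x^{g}\in P$, and nothing in your standing assumption prevents $x^{g}$ from being a \emph{non-extremal} conjugate of $x$ lying in $Q$, which would contribute $0$ and destroy the congruence $\tau(x)\equiv[G:P]\pmod p$. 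Second, the contributions of orbits of length $n_O>1$ are conjugates of $x^{n_O}\in Q$ but need not lie in $Q$, and you explicitly leave their vanishing open. Also, the ``refinement'' as you phrase it (class contributions carried by extremal representatives with multiplicities prime to $p$) is not a quotable standard fact in that form; the factors of the naive orbit evaluation are only determined up to $P$-conjugacy, so one cannot simply choose extremal representatives orbit by orbit---making the evaluation extremal is itself a theorem that must be proved or cited.

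Here is what fills the hole, and it is exactly where the word ``extremal'' in the conclusion comes from. Ingredient one is the extremal transfer evaluation lemma (the preparatory result GLS2 proves for this purpose): $\tau(x)=\prod_i y_iQ$ where each $y_i\in P$ is an \emph{extremal} conjugate of $x^{n_i}$, the $n_i$ are powers of $p$, and $\sum_i n_i=[G:P]$. Ingredient two---the step you flag as the crux---is then easy: if $y=(x^{g})^{n}$ is extremal in $P$ with $p\mid n$, then $x^{g}$ centralizes $y$, so $x^{g}$ is a $p$-element of $C_G(y)$; since $C_P(y)\in\syl_p(C_G(y))$, Sylow's theorem inside $C_G(y)$ gives $c\in C_G(y)$ with $x^{gc}\in C_P(y)\leq P$, and then $y=y^{c}=(x^{gc})^{n}\in Q$, because $x^{gc}\in P$, $p\mid n$, and $P/Q$ has order $p$. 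Thus every extremal conjugate in $P$ of a proper $p$-power power of $x$ lies in $Q$ and contributes $0$, while each extremal conjugate of $x$ itself lies in $Q\cup Qx$ by hypothesis. If some such conjugate lies in $Q$, that is precisely the second alternative of the theorem; otherwise all the $n_i=1$ factors contribute $1$ and $\tau(x)\equiv[G:P]\not\equiv 0\pmod p$, so $x\notin O^p(G)$. Without ingredient one your counting has no basis, and without ingredient two the non-singleton terms cannot be controlled; neither an induction on $|x|$ nor on $|G|$ is needed once these are in place.
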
 Note that $x^G \cap P\subset Q \cup Qx$ holds automatically if $p=2$.

The following lemma is an application of Lemma
\ref{extremal transfer} that will be needed in Chapter \ref{Chapter-HN}. Note that given a $p$-group $S$, we set $\Omega(S)=\<x \mid x^p=1\>$.
\begin{lemma}\label{Prelims-4*4*4 transfer}
Let $G$ be a group and $4\times 4 \times 4 \cong A \leq G$ with $C_G(A)=A$. Set $X:=N_G(A)$ and assume that $X\sim
4^3: (2 \times \GL_3(2))$ contains a Sylow $2$-subgroup of $G$. Furthermore suppose that there
exists an involution $u \in X \bs O^2(X)$ such that $C_G(u)\cong 2 \times \sym(8)$. Then $u \notin
O^2(G)$.  In particular, $O^2(G)\neq G$.
\end{lemma}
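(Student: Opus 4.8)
The plan is to apply the Extremal Transfer theorem (Theorem \ref{extremal transfer}) with $p=2$ to the maximal subgroup of a Sylow $2$-subgroup of $G$ cut out by $O^2(X)$. Note first that Thompson's Transfer Lemma (Lemma \ref{ThompsonTransfer}) would finish immediately if no $G$-conjugate of $u$ met $P\cap O^2(X)$, but conjugates of $u$ may well lie there; the strength of extremal transfer is that only \emph{extremal} conjugates need to be excluded, and that is what makes the argument feasible.

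First I would pin down the index of $O^2(X)$ in $X$. Since $X\sim 4^3:(2\times \GL_3(2))$ with $A\cong 4^3$ normal, I pass to $X/A\cong 2\times\GL_3(2)$ and use $O^2(X)A/A=O^2(X/A)$ together with the perfectness of $\GL_3(2)$ to get $O^2(X)A/A\cong\GL_3(2)$, so $O^2(X)A$ has index $2$ in $X$. Because the action of $O^2(X)$ on $A/\Phi(A)$ is the nontrivial irreducible natural $\GL_3(2)$-module, $A=[A,O^2(X)]\Phi(A)$, whence $A=[A,O^2(X)]\leq O^2(X)$ as $\Phi(A)$ consists of non-generators. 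Thus $[X:O^2(X)]=2$ and $O^2(X)/A\cong\GL_3(2)$. Computing orders gives $|X|=2^{10}\cdot3\cdot7$, so any $P\in\syl_2(X)=\syl_2(G)$ has order $2^{10}$. Replacing $u$ by a suitable $X$-conjugate (which stays in the same coset of $O^2(X)$ and keeps the centralizer type $2\times\sym(8)$), I may assume $u\in P$. Setting $Q:=P\cap O^2(X)$, I have $Q\trianglelefteq P$ and $[P:Q]=[X:O^2(X)]=2$ since $PO^2(X)=X$, while $u\in P\setminus Q$ as $u\notin O^2(X)$. Applying Theorem \ref{extremal transfer} with $x=u$ (the hypothesis $u^G\cap P\subseteq Q\cup Qu$ being automatic for $p=2$) yields the dichotomy: either $u\notin O^2(G)$, which is the desired conclusion, or there exists $g\in G$ with $u^g\in Q$ and $C_P(u^g)\in\syl_2(C_G(u^g))$.

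It remains to rule out the second alternative, and this is the heart of the matter. Suppose $t:=u^g\in Q\leq O^2(X)$ is extremal. Since $C_G(t)\cong C_G(u)\cong 2\times\sym(8)$ has $2$-part $2^8$, extremality forces $|C_P(t)|=2^8$; and because $t\in X$ with $C_P(t)\leq C_X(t)\leq C_G(t)$ and $C_P(t)$ is already Sylow in $C_G(t)$, we obtain $C_P(t)\in\syl_2(C_X(t))$, so $|C_X(t)|_2=2^8$. I would then classify the involutions $t$ of $O^2(X)=A.\GL_3(2)$ according to their image $\bar t$ in $\GL_3(2)$ and compute $|C_X(t)|_2$ from the module structure of $A$. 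For $\bar t=1$, i.e. $t\in\Phi(A)\cong 2^3$, the stabilizer computation (with $\GL_3(2)$ acting naturally on $\Phi(A)$, point stabilizer $\cong\sym(4)$) gives $|C_X(t)|_2\in\{2^9,2^{10}\}$, which excludes these outright. For $\bar t$ a transvection one finds $|C_A(t)|=2^4$ and $C_{\GL_3(2)}(\bar t)\cong\dih(8)$, so $|C_X(t)|_2\leq 2^8$, and the value $2^8$ is not excluded by orders alone.

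The main obstacle is precisely this transvection case: the $2$-part of the centralizer does not separate it, so the contradiction must be extracted from the finer structure of $C_X(t)$. Here I would compare the actual shape of $C_X(t)$, which contains the abelian subgroup $C_A(t)\leq A$ of order $2^4$ and exponent $4$ sitting over the image in $C_{\GL_3(2)}(\bar t)\cong\dih(8)$, against that of a Sylow $2$-subgroup of $2\times\sym(8)$ (for instance via the isomorphism type of abelian subgroups of order $2^4$, or the fusion and embedding of $\Phi(A)$), and show the two are incompatible, so that no such $t$ can be $G$-conjugate to $u$. Having excluded both the $\bar t=1$ and the transvection cases, the second alternative of Theorem \ref{extremal transfer} cannot occur, so $u\notin O^2(G)$, and in particular $O^2(G)\neq G$.
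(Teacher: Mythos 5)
Your transfer setup is the same as the paper's: apply Theorem \ref{extremal transfer} to $Q=P\cap O^2(X)$ and reduce to excluding an extremal conjugate $r:=u^g\in O^2(X)$ with $C_P(r)\in\syl_2(C_G(r))$, so $|C_P(r)|=2^8$. Your exclusion of the case $r\in A$ by the order count (the preimage in $X$ of the point stabilizer centralizes $r$ and has $2$-part at least $2^9>2^8$) is valid, and is a legitimate alternative to the paper's one-line observation that $r\notin A$: every involution of $A\cong 4^3$ is the square of an element of order four of $A$, whereas no element of order four in $G$ squares to $u$ (write $v=(a,b)\in 2\times\sym(8)=C_G(u)$; then $v^2$ has trivial first coordinate, but $u$ generates the first factor).

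The genuine gap is the transvection case, which you yourself flag as the heart of the matter but leave as an intention: ``compare the shapes and show they are incompatible'' is precisely the content of the lemma, and the invariants you float do not obviously succeed --- in particular, a Sylow $2$-subgroup of $\sym(8)$ is $\dih(8)\wr 2$, and $\dih(8)\times\dih(8)$ already contains a subgroup isomorphic to $4\times 4$, so ``the isomorphism type of abelian subgroups of order $2^4$'' cannot separate the two sides. The paper closes this case as follows. Take $S\in\syl_2(C_X(r))$, so $|S|=2^8$; since $|S\cap A|\geq 2^8/2^4=2^4$ while $S\cap A\leq C_A(r)$ and $|C_A(r)|\leq|C_{\Omega(A)}(r)|^2\leq 2^4$, one gets $S\cap A=C_A(r)\cong 4\times4$ and $SA/A\cong 2\times\dih(8)$. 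Since $r\in\mathcal{Z}(S)$, the subgroup $S_0:=S\cap O^2(X)$ satisfies $S_0/\langle C_A(r),r\rangle\cong 2\times2$, and choosing $R$ with $C_A(r)<R<S$, $|R/C_A(r)|=2$, $S=S_0R$ and $[R,S_0]\leq C_A(r)$ gives $S/\langle C_A(r),r\rangle\cong 2\times2\times2$. Hence $(S/\langle r\rangle)'$ lies inside the abelian group $\langle C_A(r),r\rangle/\langle r\rangle\cong 4\times4$. But $S/\langle r\rangle$ has order $2^7$ and embeds in $C_G(r)/\langle r\rangle\cong\sym(8)$, so it would be a full Sylow $2$-subgroup $\dih(8)\wr2$ of $\sym(8)$, whose derived subgroup is non-abelian --- a contradiction. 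Some concrete structural comparison of this kind (here: abelianness of the derived subgroup of $S/\langle r\rangle$, extracted from the quotient shape $(4\times4).(2\times2\times2)$) is exactly what your proposal is missing.
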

\begin{proof}
Let $Y:=O^2(X)$ then $Y/A\cong \GL_3(2)$ and $u \notin Y$. We assume for a contradiction that for
some $g \in G$, $r:=u^g \in Y$ and so we apply Lemma \ref{extremal transfer} to see that
$C_X(r)$ contains a Sylow $2$-subgroup of $C_G(r)\cong 2 \times \sym(8)$. Observe first that $r
\notin A$ because no element of order four in $G$ squares to $r$.

Set $V:=\Omega(A)\cong 2^3$ and let $S\in \syl_2(C_X(r))$. Then $|S|=2^8$ and therefore $|S \cap A|\geq 2^4$. It
follows that $S \cap A\cong 4 \times 4$ since $Ar\in Y/A\cong \GL_3(2)$ acts faithfully on $V$ and
therefore $|C_V(r)|\leq 2^2$. In particular, $|C_A(r)|=2^4$ and so $SA\in \syl_2(X)$.

Since $X/A\cong 2\times \GL_3(2)$, $2 \times \dih(8)\cong SA/A \cong S /(A \cap S)=S/C_A(r)$. Set
$S_0:=S \cap Y$ then $r \in S_0$ and we have that $\dih(8)\cong S_0A/A \cong S_0 /(A \cap
S_0)=S_0/C_A(r)$. Since $r \in \mathcal{Z}(S)$, $C_A(r)r\in \mathcal{Z}(S_0/C_A(r))$. Therefore $S_0/\<C_A(r),r\>\cong
2 \times 2$. Let $C_A(r)<R<S$ such that $|R/C_A(r)|=2$ and $S=S_0R$ and $[R,S_0]\leq C_A(r)$. This
is possible as $S/C_A(r)\cong 2 \times \dih(8)$. We have therefore that $[R,S_0]$, $S_0 \cap R \leq
C_A(r) \leq \<C_A(r),r\>$ and so $S/\<C_A(r),r\> \cong 2 \times 2 \times 2$. Now
$\<C_A(r),r\>/\<r\>\cong C_A(r)\cong 4 \times 4$. Hence, $S/\<r\>\sim (4 \times 4). (2 \times 2
\times 2)$  which is a subgroup of $C_G(r)/\<r\>\cong \sym(8)$. However, a $2$-subgroup of
$\sym(8)$ has non-abelian derived subgroup which supplies us with a contradiction. Thus $u \notin
O^2(G)$.
\end{proof}

\section{Strongly Closed Subgroups}

We now define strongly $p$-embedded ($p$ a prime) subgroups as well as strongly closed  and weakly closed subgroups of a group. In Chapter \ref{chapter-Alt9} we prove a result concerning groups with a certain strongly $3$-embedded subgroup. Groups with a strongly $2$-embedded subgroup are well understood thanks to  a theorem due to Bender (see \cite{bender-stronglyembedded}).
\begin{defi}
Let $G$ be a group and $H \leq G$ with a prime $p$ dividing $|H|$. We say that $H$ is strongly
$p$-embedded if for all $g \in G\bs H$, $p \nmid |H \cap H^g|$. If $p=2$ we say that $H$ is
strongly embedded in $G$.
\end{defi}

\begin{defi}
Let $G$ be a group with subgroups $P \leq R\leq G$.
\begin{enumerate}[$(i)$]
\item We say that $P$ is strongly closed in $R$ with respect to $G$ if for all $g \in G$ and for all $x \in P$, $x^g \in R$ implies $x^g \in P$. Alternatively, for all $g \in G$, $P^g \cap R \leq P$.

\item We say that $P$ is weakly closed in $R$ with respect to $G$ if for all $g \in G$, $P^g \leq R$ implies that  $P^g = P$.
\end{enumerate}
\end{defi}

In the following lemma we use the notation $m(G)$ to be the order of the largest elementary abelian
$2$-subgroup of a group $G$. The result is due to Goldschmidt (see \cite{Goldschmidt-2-fusion}) but
is stated in the presented form and proven also in \cite{Yoshida}.
\begin{lemma}\label{prelim-strongly closed}
Let $E$ be an  elementary abelian $2$-subgroup of a group $G$ and let $E\leq S \in \syl_2(N_G(E))$.
Assume that for each $x \in S \bs E$, $m(E)> m(S/E) +m(C_E(x))$. Then $E$ is strongly closed in $S$
with respect to $G$. In particular, $S \in\syl_2(G)$.
\end{lemma}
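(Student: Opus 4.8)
The plan is to first extract the local consequences of the rank inequality, deduce $S\in\syl_2(G)$, and then upgrade to strong closure via an extremal-conjugation argument. I begin by noting $E\normal S$: since $E\normal N_G(E)$ and $S\in\syl_2(N_G(E))\subseteq N_G(E)$, $E$ is normal in $S$ and $S/E$ acts on $E$. If some $y\in S\bs E$ centralised $E$ then $m(C_E(y))=m(E)$, contradicting the hypothesis; so $C_S(E)=E$. More importantly, for any elementary abelian $A\leq S$ with $A\not\leq E$, picking $x\in A\bs E$ and using that $A$ is abelian gives $A\cap E\leq C_E(x)$ and $AE/E\cong A/(A\cap E)\leq S/E$, whence
\[ m(A)=m(A\cap E)+m(AE/E)\leq m(C_E(x))+m(S/E)<m(E). \]
Thus $E$ is the unique elementary abelian subgroup of $S$ of maximal rank, so $E=J(S)$ is characteristic in $S$; and applying the inequality to $A=E^g$ (for any $g$ with $E^g\leq S$) forces $E^g=E$. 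This weak closure of $E$ drives the rest of the proof. It also yields $S\in\syl_2(G)$: if $S\leq T\in\syl_2(G)$ then $N_T(S)$ normalises $J(S)=E$, so $N_T(S)\leq N_G(E)$ is a $2$-group containing the Sylow subgroup $S$ of $N_G(E)$; hence $N_T(S)=S$, and as $T$ is a $2$-group, $S=T$.

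For strong closure I argue by contradiction, assuming some $a\in E^{\#}$ is $G$-conjugate to an element of $S\bs E$. Since $E\normal S$, the set $S\bs E$ is $S$-invariant. I would select an \emph{extremal} representative $b\in S\bs E$ of the class $a^G$, meaning $C_S(b)\in\syl_2(C_G(b))$, and then apply the standard extremal-conjugation lemma (a consequence of Sylow's theorem, cf.\ \cite{Gorenstein}): there is $g\in G$ with $a^g=b$ and $C_S(a)^g\leq C_S(b)$. As $a\in E$ with $E$ abelian, $E\leq C_G(a)$ and so $E\leq C_S(a)$; therefore $E^g\leq C_S(a)^g\leq C_S(b)\leq S$. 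Weak closure gives $E^g=E$, hence $b=a^g\in E$ --- contradicting $b\in S\bs E$. This contradiction establishes strong closure.

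The hard part is the step I glossed: when $a^G$ meets $S\bs E$, one must be able to choose the extremal representative inside $S\bs E$ rather than have it slip into $E$. Maximising $|C_S(b)|$ over $a^G\cap(S\bs E)$ gives a candidate, but verifying extremality means understanding how the $2$-local fusion of $a$ is split between $E$ and its complement, which is precisely where Alperin-style localisation of fusion --- and in full generality Goldschmidt's study of $2$-fusion in \cite{Goldschmidt-2-fusion} --- is required. I would handle it by a minimal-counterexample argument, passing to the normaliser of a tame intersection $S\cap S^x$ that carries the offending fusion (so that $N_S(S\cap S^x)$ is Sylow there) and re-running the rank estimate locally. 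Since the rank inequality is not transparently inherited by these local subgroups, making this reduction self-contained is the principal obstacle, and it is cleanest to quote the Goldschmidt--Yoshida result as the statement does.
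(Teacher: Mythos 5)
Your preliminary analysis is correct and cleanly executed: the rank inequality does force $E$ to be the unique elementary abelian subgroup of $S$ of maximal rank, hence $E$ is characteristic in $S$ and weakly closed in $S$ with respect to $G$, and the step $N_T(S)\leq N_G(E)$ for $S\leq T \in \syl_2(G)$ correctly yields $S\in\syl_2(G)$. (One mislabel: with this thesis's definition of the Thompson subgroup --- generated by abelian subgroups of maximal \emph{order} --- $E$ need not equal $J(S)$; but you only use uniqueness among elementary abelian subgroups of maximal rank, which already gives ``characteristic'', so this is harmless.) Your extremal-conjugation argument is also correct \emph{in the case it covers}: if some extremal representative $b$ of the fused class lies in $S\setminus E$, then $E\leq C_S(a)$, $E^g\leq C_S(b)\leq S$ and weak closure give the contradiction.

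The gap is exactly the one you flag, and it is genuine: nothing rules out that \emph{every} extremal representative of $a^G$ in $S$ lies inside $E$. In that case one can only conjugate the non-extremal $b\in S\setminus E$ onto an extremal $c\in E$, which transports $C_S(b)\supseteq C_E(b)\langle b\rangle$ --- not all of $E$ --- into $S$, and the resulting estimates stall rather than collide: writing $F:=E^{h^{-1}}\ni b$ and working in a Sylow $2$-subgroup $T$ containing $C_S(c)^{h^{-1}}$, one gets $m(E\cap F)\leq m(C_E(b))<m(E)-m(S/E)$ on one side and $m(E\cap F)\geq m(C_E(b))-m(S/E)$ on the other, which are mutually consistent. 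Strong closure is strictly stronger than the weak closure you established, and bridging that distance is the entire content of the theorem; it needs the fusion analysis of Goldschmidt and Yoshida (localization to suitable intersections together with a maximality argument), which your final paragraph gestures at but does not carry out, as you concede. For what it is worth, the thesis does not prove this lemma either: it is stated with a citation to \cite{Goldschmidt-2-fusion} and \cite{Yoshida}, so your closing suggestion to quote the result is exactly the paper's treatment; but judged as a self-contained proof, your proposal establishes weak closure and the Sylow statement only, not the strong closure that the lemma asserts.
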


In Chapters \ref{Chapter-O8Plus2} and \ref{Chapter-HN} we show that certain abelian $2$-subgroups are strongly closed in a  Sylow
$2$-subgroup of a group with a view to applying the following theorem due to Goldschmidt. The result is an essential part of the $2$-local analysis required to determine a centralizer of an involution.

Recall that given a $p$-group $S$, we set $\Omega(S)=\<x \mid x^p=1\>$.
\begin{thm}[Goldschmidt]\cite[p370]{stellmacher} \label{goldschmidt}
Let $S$ be a Sylow 2-subgroup of a group $G$ and let $A$ be an abelian subgroup of $S$ such that $A$ is
strongly closed in $S$ with respect to $G$. Suppose that $G = \<A^G\>$ and $O_{2'}(G) = 1$.
Then $G = F^*(G)$ and $A = O_2(G)\Omega(S)$.
\end{thm}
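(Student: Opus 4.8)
The plan is to treat this as the streamlined form of Goldschmidt's classification of finite groups possessing a strongly closed abelian $2$-subgroup, and to aim the whole argument at pinning down $F^*(G)$. Since $O_{2'}(G)=1$, every normal subgroup of odd order is trivial, so $O_p(G)=1$ for each odd prime $p$, whence $F(G)=O_2(G)$ and $F^*(G)=O_2(G)E(G)$ is a central product of the $2$-radical with the layer. The logical skeleton I would use is this: once I can show that $A\leq F^*(G)$, the conclusion $G=F^*(G)$ is automatic, because $F^*(G)\trianglelefteq G$ forces $A^g\leq F^*(G)$ for every $g$, and then $G=\<A^G\>\leq F^*(G)$. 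Thus the real content is (a) to prove $A\leq F^*(G)$, and (b) to determine the components of $E(G)$ precisely enough to read off $A=O_2(G)\Omega(S)$. I would run this as an induction on $|G|$ through a minimal counterexample, constantly using that $G=\<A^G\>$ is generated by conjugates of the strongly closed subgroup.

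The first tool is fusion control. Strong closure says exactly that no element of $A$ is $G$-conjugate into $S\bs A$, so the $G$-fusion of the involutions of $A$ is governed inside $2$-local subgroups and, via Gr\"un's theorem (Theorem \ref{GrunsThm}) and Burnside's normal $p$-complement theorem (Theorem \ref{Burnside-normal p complement}), is linked to transfer. I would first dispose of the degenerate case $A\trianglelefteq G$: there $\<A^G\>=A$, so $G=A$ is an abelian $2$-group, $F^*(G)=O_2(G)=G=A$, and $\Omega(S)\leq S=A$ gives $A=O_2(G)\Omega(S)$ at once. Likewise if $E(G)=1$ then $F^*(G)=O_2(G)$ is self-centralizing, strong closure forces $A\leq O_2(G)$, and the skeleton above again collapses $G$ to a $2$-group. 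So I may assume $A$ is not normal and that there is a nontrivial layer to analyse.

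The structural heart is the production and identification of the components. Using coprime action (Theorem \ref{coprime action}) to handle the fixed points of the $2$-group $A$ on odd-order local sections, together with a signalizer-functor argument to patch the cores $O_{2'}(C_G(a))$ of involution centralizers into a global object --- which is forced to be trivial here since $O_{2'}(G)=1$ --- one shows that $A$ permutes the components $K_1,\dots,K_r$ of $E(G)$ and that $A\cap K_i\neq 1$ is itself a strongly closed abelian $2$-subgroup of the quasisimple group $K_i$. By minimality each $K_i/\mathcal{Z}(K_i)$ is then a simple group carrying a strongly closed abelian $2$-subgroup, and the classification of such groups forces it onto Goldschmidt's list: $\PSL_2(2^m)$, $\Sz(2^m)$, $\U_3(2^m)$, $\PSL_2(q)$ with $q\equiv\pm3\pmod 8$, $\J_1$, or a Ree group ${}^2\mathrm{G}_2(3^m)$. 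In every one of these the strongly closed abelian subgroup is precisely $\Omega$ of a Sylow $2$-subgroup --- the full elementary abelian Sylow group for $\PSL_2(2^m)$, $\PSL_2(q)$, $\J_1$ and ${}^2\mathrm{G}_2(3^m)$, and the centre of the Sylow group in the Suzuki and unitary cases --- so $A\cap K_i=\Omega(S\cap K_i)$. The same analysis shows $O_2(G)$ is abelian and contained in $A$, so $A\leq F^*(G)$ (hence $G=F^*(G)$) and $A=O_2(G)\prod_i(A\cap K_i)=O_2(G)\Omega(S)$.

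The main obstacle is exactly this final recognition step. Deducing from the bare hypothesis that ``$K_i$ has a strongly closed abelian $2$-subgroup'' that $K_i/\mathcal{Z}(K_i)$ lies on the list above is the deep content of Goldschmidt's theorem and rests on hard classification-type input: Bender's strongly embedded subgroup theorem, Walter's determination of the simple groups with abelian Sylow $2$-subgroups, and delicate character-theoretic uniqueness arguments for the exceptional cases such as $\J_1$ and the Ree groups. Everything upstream --- the reduction using $O_{2'}(G)=1$, the fusion and transfer arguments, and the passage to the layer --- is comparatively routine, whereas the component recognition is what makes this a result to be cited rather than reproved; in the thesis it is invoked through \cite{stellmacher}.
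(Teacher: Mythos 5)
This theorem is not proved in the thesis at all: it is quoted verbatim, with the citation \cite[p370]{stellmacher}, as Goldschmidt's classification of groups generated by a strongly closed abelian $2$-subgroup, so there is no internal proof to compare your sketch against. Your proposal ends in exactly the same place --- deferring the component-recognition step, which is the real content of the theorem, to the cited literature --- so it is consistent with what the paper does; the only caution is that several steps you label ``comparatively routine'' (the $2$-constrained case $E(G)=1$, where you assert strong closure forces $A\leq O_2(G)$, and the claim that $A$ meets every component in a nontrivial strongly closed abelian subgroup) are themselves substantial portions of Goldschmidt's Annals paper rather than easy reductions, so they could not be filled in as casually as the sketch suggests.
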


\section{Representation Theoretic Results}

In order to understand a group we often identify this group acting on a vector space and use
representation  theoretic results.  When one group $G$ acts on a $p$-group $V$ say ($p$ a prime) we
may consider sections of $V$ on which $G$ acts irreducibly and call these chief factors. Note that
in this thesis we often consider groups acting on elementary abelian $p$-groups.  We consider such
groups as vector spaces and call them modules. However we continue to write such groups
multiplicatively.

\begin{defi}
Let $G$ be a group which acts on a group $p$-group $V$. Consider a sequence  $1=V_0\vartriangleleft V_1
\vartriangleleft \hdots \vartriangleleft V_n=V$ where each $V_i$ is a $G$-invariant subgroup of $V$
and each $V_i\vartriangleleft V_{i+1}$ is maximal with respect to being $G$-invariant. We say that
the series is a $G$-chief series and that each factor $V_{i+1}/V_i$ is a $G$-chief factor. Moreover
if $[V_{i+1}/V_i,G]=1$ then we say that $V_{i+1}/V_i$ is a central $G$-chief factor and non-central
otherwise.
\end{defi}

\begin{defi}
Let $G$ be a group acting on an elementary abelian $p$-group $V$. We say that $G$ acts
quadratically on $V$ if $[V,G,G]=1$.
\end{defi}
See Chapter 9 in \cite{stellmacher} for results concerning quadratic action. We require the following such result.

\begin{lemma}\cite[9.1.1, p226]{stellmacher}\label{quadractic action lemma}
Let $V$ be an elementary abelian $p$-group and $G$ a group acting quadratically on $V$. Then
$G/C_G(V)$ is an elementary abelian $p$-group.
\end{lemma}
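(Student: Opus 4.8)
The plan is to reduce immediately to the faithful case and then translate the quadratic condition into a statement about square-zero endomorphisms of $V$. First I would pass to $\bar G := G/C_G(V)$; since $C_G(V)$ is the kernel of the action, it suffices to prove that $\bar G$ is elementary abelian. Viewing $V$ as an $\mathbb{F}_p$-vector space, each $g \in G$ acts as a linear map $A_g \in \GL(V)$, and the action embeds $\bar G$ into $\GL(V)$ in such a way that element orders and commuting relations are faithfully reflected.

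The key step is the dictionary between commutators and endomorphisms. Writing $V$ additively, the convention $[v,g]=v\inv v^g$ becomes $[v,g]=(A_g-1)v$, and hence $[v,g,h]=(A_h-1)(A_g-1)v$. Setting $N_g := A_g - 1 \in \End(V)$, the quadratic hypothesis $[V,G,G]=1$ says precisely that $N_h N_g = 0$ for all $g,h \in G$; in particular $N_g^2=0$, so each $A_g = 1 + N_g$ is unipotent with $N_g$ of square zero.

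Commutativity then follows by direct expansion: $A_g A_h = (1+N_g)(1+N_h) = 1 + N_g + N_h + N_g N_h = 1 + N_g + N_h$, which is symmetric in $g$ and $h$, so $A_g A_h = A_h A_g$ and $\bar G$ is abelian. For the exponent I would expand $A_g^p = (1+N_g)^p = \sum_{k=0}^p \binom{p}{k} N_g^k = 1 + pN_g$, using $N_g^2 = 0$ to kill every term with $k\geq 2$; since $V$ has characteristic $p$ we have $pN_g=0$, whence $A_g^p = 1$ and $g^p \in C_G(V)$. Thus $\bar G$ is abelian of exponent dividing $p$, i.e.\ elementary abelian, as required.

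There is no genuine obstacle here beyond careful bookkeeping. The only points needing attention are getting the left/right action conventions right so that the hypothesis reads as the strong identity $N_h N_g = 0$ (rather than something weaker), and observing that it is exactly the elementary abelian structure of $V$ that supplies the characteristic-$p$ relation $pN_g = 0$, and hence forces exponent $p$ for $\bar G$ rather than merely a finite $p$-power bound.
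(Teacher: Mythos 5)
Your proof is correct. Note that the paper itself gives no proof of this lemma; it is quoted directly from Kurzweil--Stellmacher \cite[9.1.1]{stellmacher}, so there is nothing internal to compare against. Your argument is the standard one: the cited source carries out essentially the same computation multiplicatively, using the commutator identities $[v,ab]=[v,a][v,b]$ (valid because $G$ centralizes $[V,G]$) to get $v^{ab}=v^{ba}$ and $[v,a^p]=[v,a]^p=1$, whereas you have translated it into the equivalent statement that each $N_g=A_g-1$ satisfies $N_hN_g=0$, from which commutativity and exponent $p$ of $G/C_G(V)$ fall out of $(1+N_g)(1+N_h)=1+N_g+N_h$ and $(1+N_g)^p=1+pN_g=1$. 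Both hinge on exactly the two points you flag: the quadratic hypothesis gives the full two-sided vanishing $N_hN_g=0$, and the exponent-$p$ structure of $V$ kills $pN_g$.
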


\begin{lemma}\label{lem-cenhalfspace}
Suppose that $p$ is a prime  and that $V$ is an elementary abelian $p$-group and let $x$ be an
automorphism of $V$.
\begin{enumerate}[$(i)$]
\item Then $V/C_{V}(x)\cong [V,x]$.
\item If $p=2$ and $x$ has order $2$,
then $C_{V}(x)\geq [V,x]$ and $|C_{V}(x)|^{2}\geq |V|$.
\end{enumerate}
\end{lemma}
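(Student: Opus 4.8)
The plan is to treat $V$ as a vector space over the field of $p$ elements and to obtain the isomorphism in part $(i)$ from a single homomorphism, after which part $(ii)$ follows by a short order count.

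For $(i)$, I would first define the map
\[
\phi : V \longrightarrow V, \qquad v \longmapsto [v,x] = v^{-1}v^x.
\]
Because $V$ is abelian, for any $v,w \in V$ one has $[vw,x]=(vw)^{-1}(vw)^x = v^{-1}w^{-1}v^xw^x = (v^{-1}v^x)(w^{-1}w^x)=[v,x][w,x]$, so $\phi$ is a homomorphism. Its image is a subgroup containing every commutator $[v,x]$ and hence equals $[V,x]$, while its kernel is exactly $\{v \in V \mid v^x=v\}=C_V(x)$. The first isomorphism theorem then gives $V/C_V(x)\cong [V,x]$, as required. Note this step uses nothing beyond the commutativity of $V$, so it holds for an arbitrary automorphism $x$ and arbitrary $p$.

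For $(ii)$, assume $p=2$ and $x^2=1$. The key step is to show $[V,x]\leq C_V(x)$. For $v \in V$, set $w=[v,x]=v^{-1}v^x$; using $x^2=1$ and the commutativity of $V$ I would compute
\[
w^x = (v^{-1}v^x)^x = (v^x)^{-1}v^{x^2} = (v^x)^{-1}v = (v^{-1}v^x)^{-1} = w^{-1}.
\]
Since $p=2$ the group $V$ has exponent $2$, so $w^{-1}=w$ and therefore $w^x=w$, i.e. $w \in C_V(x)$. As $[V,x]$ is generated by such elements, this gives $[V,x]\leq C_V(x)$. Combining this containment with part $(i)$ yields $|V|/|C_V(x)| = |[V,x]| \leq |C_V(x)|$, and hence $|V|\leq |C_V(x)|^2$.

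There is no genuine obstacle here; the argument is elementary throughout. The only points needing a little care are the use of commutativity of $V$ to ensure $\phi$ is a homomorphism rather than merely a set map, and the observation that exponent $2$ is precisely what turns the identity $w^x=w^{-1}$ into the containment $w \in C_V(x)$. This last remark explains why both hypotheses $p=2$ and $|x|=2$ are required in $(ii)$, even though $(i)$ is valid without them.
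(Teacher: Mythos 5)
Your proposal is correct and follows essentially the same route as the paper: part $(i)$ via the homomorphism $v \mapsto [v,x]$ (which is exactly the content of the result the paper cites from Kurzweil--Stellmacher), and part $(ii)$ by checking that $x$ fixes each $[v,x]$, which is the paper's quadratic-action computation, followed by the same order count using $(i)$.
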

\begin{proof}
$(i)$ This is Lemma 8.4.1 in \cite{stellmacher}.

$(ii)$ This follows because the action of $x$ on $V$ is necessarily quadratic because
$[v,x]^x=(v\inv v^x)^x=(vv^x)^x=v^xv=[v,x]$ and so
$[v,x,x]=[v,x]\inv[v,x]^x=[v,x][v,x]^x=[v,x][v,x]=1$ and so  $C_{V}(x)\geq [V,x]$ and by part $(i)$,  $|C_{V}(x)|^{2}\geq |V|$.
\end{proof}

\begin{lemma}\label{lem-conjinvos}
Let $G$ be a finite group and $V\trianglelefteq G$ be an elementary abelian $2$-group. Suppose that
$r\in G$ is an involution such that $C_{V}(r)=[V,r]$. Then
\begin{enumerate}[$(i)$]
\item every involution in $Vr$ is conjugate to $r$; and
\item
$|C_{G}(r)|=|C_{V}(r)||C_{G/V}(Vr)|$.
\end{enumerate}
\end{lemma}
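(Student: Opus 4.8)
The plan is to reduce both parts to two elementary computations in the coset $Vr$, exploiting the hypothesis $C_V(r)=[V,r]$ which, via Lemma \ref{lem-cenhalfspace}, records exactly that $r$ acts on $V$ with a ``half-space'' of fixed points (so that $|C_V(r)|^2=|V|$).

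First, for $(i)$, I would determine which elements of $Vr$ are involutions and which are $V$-conjugates of $r$, and observe that the two sets coincide. For $v\in V$, since $V$ is abelian of exponent $2$ and $r^2=1$, a direct calculation gives $(vr)^2=vv^r$; hence $vr$ is an involution if and only if $v^r=v^{-1}=v$, that is, if and only if $v\in C_V(r)$. On the other hand, conjugating $r$ by $w\in V$ gives $r^w=w^{-1}rw=[w,r]r$, so the $V$-conjugates of $r$ are precisely $\{[w,r]r: w\in V\}=\{cr: c\in[V,r]\}$. By hypothesis $[V,r]=C_V(r)$, so this set is exactly the set of involutions in $Vr$ found above. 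Thus every involution of $Vr$ is $V$-conjugate, and a fortiori $G$-conjugate, to $r$, proving $(i)$.

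For $(ii)$, write $\bar G=G/V$ and $\bar r=Vr$. I would first note $C_G(r)\cap V=C_V(r)$, so that
\[
|C_G(r)V/V|=\frac{|C_G(r)|}{|C_G(r)\cap V|}=\frac{|C_G(r)|}{|C_V(r)|}.
\]
Hence the claimed identity $|C_G(r)|=|C_V(r)|\,|C_{\bar G}(\bar r)|$ is equivalent to the equality of subgroups $C_G(r)V/V=C_{\bar G}(\bar r)$. The inclusion $\subseteq$ is immediate, since the quotient map carries a centralizer of $r$ into a centralizer of $\bar r$. For the reverse inclusion I would take $g\in G$ with $Vg\in C_{\bar G}(\bar r)$; then $r^g$ lies in $Vr$ and is an involution, so by part $(i)$ there is $w\in V$ with $r^g=r^w$, whence $gw^{-1}\in C_G(r)$ and $Vg=V(gw^{-1})\in C_G(r)V/V$. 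This yields $C_{\bar G}(\bar r)\subseteq C_G(r)V/V$, and hence equality.

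The computations $(vr)^2=vv^r$ and $r^w=[w,r]r$ carry no difficulty; the one place the hypothesis genuinely does work is the identification $[V,r]=C_V(r)$ in part $(i)$, and the one step to handle with care is the lifting argument in part $(ii)$, namely that every element of $\bar G$ centralizing $\bar r$ admits a representative actually centralizing $r$. That lifting is precisely where part $(i)$ is invoked, so the main obstacle is really just arranging the exposition so that $(i)$ feeds cleanly into $(ii)$; no deeper input is required.
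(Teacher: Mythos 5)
Your proposal is correct and follows essentially the same route as the paper: part $(i)$ rests on the identical computations $(vr)^2=vv^r$ and $r^w=[w,r]r$ together with the hypothesis $C_V(r)=[V,r]$, and part $(ii)$ establishes $C_G(r)/C_V(r)\cong C_{G/V}(Vr)$ by lifting an element centralizing $Vr$ to one centralizing $r$ via part $(i)$. The only cosmetic difference is that the paper packages $(ii)$ as a surjective homomorphism $\phi\colon C_G(r)\to C_{G/V}(Vr)$ with kernel $C_V(r)$, whereas you phrase it as the subgroup equality $C_G(r)V/V=C_{G/V}(Vr)$ plus an index count — the same argument in different clothing.
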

\begin{proof}
$(i)$ Let $t\in Vr$ be an involution. Then $t=qr$, for some $q\in V$. Since $t^2=1$, we have that
$1=qrqr=[q,r]$ as $r$ and $q$ have order at most two. So $q\in C_{V}(r)=[V,r]$. So
$q=q_{1}rq_{1}r$, for some $q_{1}\in V$, and therefore $t=q_{1}rq_{1}rr=r^{q_{1}}$ and so $t$ is
conjugate to $r$ by an element of $V$.

$(ii)$ Define a homomorphism, $\phi:C_{G}(r)\rightarrow C_{G/V}(Vr)$  by $\phi(x)=Vx$. Then $\ker
\phi=C_{V}(r)$. Moreover, if $Vy\in  C_{G/V}(Vr)$ then $Vr^y=Vr$. Hence, using $(i)$ we see that
there exists $q \in V$ such that $r^y=r^q$. Therefore $q\inv y \in C_G(r)$ and of course $Vq\inv y
=Vy$ and so $\phi(q\inv y)=Vy$. Therefore $\phi$ is surjective. Thus, by an isomorphism theorem,
$C_{G}(r)/C_{V}(r)\cong C_{G/V}(Vr)$ and $|C_{G}(r)|=|C_{V}(r)||C_{G/V}(Vr)|$, as required.
\end{proof}

During the proof of Theorems B and C we observe $3$-elements acting on elementary abelian $2$-groups. The
following lemma allows us to convert information about the fixed space of a $3$-element into
information about the fixed space of certain $2$-elements.

\begin{lemma}\label{Prelims-centralizers of invs on a vspace which invert a 3}
Let $G$ be a group with a normal $2$-subgroup $V$ which is elementary abelian of order
$2^n$. Suppose $t$ and $w$ are in $G$ such that $Vt$ has order two and $Vw$ has order three and
$Vt$ inverts $Vw$. If $|C_V(w)|=2^a$ then $|C_V(t)|\leq 2^{(n+a)/2}$.
\end{lemma}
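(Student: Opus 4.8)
The plan is to regard $V$ as a vector space over $\GF(2)$ of dimension $n$ and to analyse the automorphisms $\omega$ and $\tau$ induced on $V$ by conjugation by $w$ and $t$ respectively. Since $Vw$ has order three, $w^3\in V$; as $V$ is abelian, conjugation by $w^3$ is trivial on $V$, so $\omega^3=1$. Likewise $Vt$ of order two gives $\tau^2=1$. The hypothesis that $Vt$ inverts $Vw$ means $w^t=w^{-1}v$ for some $v\in V$; because $v$ centralises the abelian group $V$, conjugation by $w^t$ and by $w^{-1}$ agree on $V$, so $\tau\omega\tau^{-1}=\omega^{-1}$ as automorphisms of $V$. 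The image of $\<w\>$ in $\Aut(V)$ has order dividing $3$, hence coprime to $2$, so Theorem~\ref{coprime action}$(i)$ gives the $\omega$-invariant decomposition $V=C_V(w)\oplus[V,w]$ with $|C_V(w)|=2^a$ and $|[V,w]|=2^{n-a}$, and $\omega$ acting fixed-point-freely on $W:=[V,w]$.

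Next I would check that both summands are $\tau$-invariant. Using $w^t=w^{-1}v$ with $v$ central in $V$, one computes $C_V(w)^t=C_V(w^t)=C_V(w^{-1}v)=C_V(w)$ and $[V,w]^t=[V,w^{-1}v]=[V,w]$, so $\tau$ stabilises each factor. Consequently the fixed space splits as $C_V(t)=C_{C_V(w)}(t)\oplus C_W(t)$, and therefore $|C_V(t)|=|C_{C_V(w)}(t)|\cdot|C_W(t)|\leq 2^a\cdot|C_W(t)|$, the inequality being the trivial bound $|C_{C_V(w)}(t)|\leq|C_V(w)|=2^a$. It remains to prove $|C_W(t)|\leq 2^{(n-a)/2}$.

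For this last and crucial step I would exploit the $\GF(4)$-structure on $W$. Since $\omega$ is fixed-point-free of order three, its minimal polynomial on $W$ is $x^2+x+1$, so $W$ becomes a vector space over $\GF(4)=\GF(2)[\omega]$ of dimension $(n-a)/2$; in particular $n-a$ is even. The relation $\tau\omega\tau^{-1}=\omega^{-1}$ together with $\tau^2=1$ makes $\tau$ a $\GF(4)$-semilinear involution on $W$. Writing $F:=C_W(t)$, I claim $F\cap\omega F=0$: if $u\in F$ with $u=\omega v$ for some $v\in F$, then $u=\tau u=\tau\omega v=\omega^{-1}\tau v=\omega^{-1}v=\omega^2 v$, so $(\omega+\omega^2)v=0$; but $1+\omega+\omega^2=0$ gives $\omega+\omega^2=1$, whence $v=0$ and $u=0$. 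Since $\omega$ is a $\GF(2)$-linear bijection, $\dim_{\GF(2)}\omega F=\dim_{\GF(2)}F$, so $2\dim_{\GF(2)}F=\dim_{\GF(2)}(F\oplus\omega F)\leq\dim_{\GF(2)}W=n-a$, giving $|C_W(t)|\leq 2^{(n-a)/2}$. Combining with the previous paragraph yields $|C_V(t)|\leq 2^a\cdot 2^{(n-a)/2}=2^{(n+a)/2}$, as required.

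The main obstacle is the final paragraph: recognising that inverting a fixed-point-free order-three automorphism forces the fixed space $F$ and its translate $\omega F$ to meet trivially. This is what pins $C_W(t)$ down to at most half the $\GF(2)$-dimension of $[V,w]$, and it is precisely where the coprimeness (supplying the $\GF(4)$-structure) and the inverting hypothesis interact. By contrast, the invariance checks and the coprime decomposition that precede it are routine.
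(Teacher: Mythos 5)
Your proof is correct, but it takes a genuinely different route from the paper's. The paper never decomposes $V$: it works in $G/V$ with the two conjugate involutions $Vt$ and $Vt^w$. The inversion hypothesis gives $Vw^2=Vt\,Vt^w$, so the product $tt^w$ acts on $V$ exactly as $w^2$ does; hence $C_V(t)\cap C_V(t^w)\leq C_V(w^2)=C_V(w)$, which has order $2^a$. Since $C_V(t)C_V(t^w)\subseteq V$ and $|C_V(t^w)|=|C_V(t)|$, the product formula $|C_V(t)C_V(t^w)|=|C_V(t)||C_V(t^w)|/|C_V(t)\cap C_V(t^w)|$ yields $2^n\geq |C_V(t)|^2/2^a$ in a single line — shorter, purely group-theoretic, and needing neither coprime action nor linear algebra. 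Your route (coprime splitting $V=C_V(w)\times[V,w]$, $\tau$-invariance of both factors, then the half-dimension bound on $C_{[V,w]}(t)$) buys more structure: it locates the fixed points factor by factor, shows as a by-product that $n-a$ is even, and makes visible exactly when the bound is attained, namely when $t$ centralizes all of $C_V(w)$ and $C_{[V,w]}(t)\oplus\omega C_{[V,w]}(t)=[V,w]$. Two small remarks on your final step: the key claim can be streamlined, since from $u=\omega v$ and $u=\tau u=\omega^{-1}v$ you get $\omega^2v=v$, so $v\in C_W(\omega^2)=C_W(\omega)=0$ directly, with no need for the $\GF(4)$-structure or the identity $\omega+\omega^2=1$; and the semilinearity of $\tau$ is decorative, as the computation only uses the relation $\tau\omega=\omega^{-1}\tau$.
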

\begin{proof}
Since $Vt$ inverts $Vw$, we have that $Vw=Vtw^2t$ and so $Vw^2=Vtw^2tw=VtVt^w$. Therefore $C_V(t)
\cap C_V(t^w) \leq C_V(w^2)=C_V(w)$. We have that $|V| \geq
|C_V(t)C_V(t^w)|=|C_V(t)||C_V(t^w)|/|C_V(t) \cap C_V(t^w)|$ and so $2^n \geq |C_V(t)|^2/2^a$
which implies $|C_V(t)|\leq 2^{(n+a)/2}$.
\end{proof}

\begin{lemma}\label{lemma making new modules}
Let $p$ be a prime and let $X\cong \SL_2(p)$ act on a vector space $U$ over $\GF(p)$.
Suppose that $V$ and $W$ are distinct irreducible and isomorphic $\GF(p)X$-submodules of $U$
such that $U=V \bigoplus W$. Then $U$ contains exactly $p+1$ $X$-submodules and each is isomorphic
to $V$.
\end{lemma}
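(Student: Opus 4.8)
The plan is to exploit that $U$, being a direct sum of two isomorphic irreducibles, is a homogeneous semisimple module of composition length two, so that counting its submodules becomes linear algebra over the endomorphism field of $V$. First I would set $K:=\operatorname{End}_{\GF(p)X}(V)$. Since $V$ is irreducible, Schur's Lemma shows $K$ is a division ring, and being finite it is a field by Wedderburn's theorem; write $K\cong\GF(p^d)$. As $V\cong W$, fixing an isomorphism $\theta\colon V\to W$ identifies $\operatorname{Hom}_{\GF(p)X}(V,W)$ with $K$: every nonzero homomorphism between irreducibles is an isomorphism, hence of the form $\theta\alpha$ for a unique $\alpha\in K$, and the isomorphisms correspond to $\alpha\in K^{\ast}$.

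Next I would parametrise the irreducible submodules of $U$ by graphs. If $M\leq U$ is irreducible, composing its inclusion with the projections $\pi_V,\pi_W$ onto the two summands gives maps which, by irreducibility, are each either zero or an isomorphism. The two cases in which a projection vanishes yield exactly $M=V$ and $M=W$; if both projections are isomorphisms then $M=\{\,v+\phi(v)\mid v\in V\,\}$ is the graph of the isomorphism $\phi:=(\pi_W|_M)(\pi_V|_M)^{-1}\colon V\to W$. Distinct $\phi$ give distinct graphs, every graph of such a $\phi$ is an irreducible submodule isomorphic to $V$, and the number of isomorphisms $V\to W$ equals $|K^{\ast}|=p^{d}-1$. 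Hence the irreducible submodules number $2+(p^{d}-1)=p^{d}+1$, and since $U$ has length two its only submodules are $0$, $U$, and these $p^{d}+1$ irreducibles, each isomorphic to $V$.

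The crux is to show $d=1$, equivalently that $V$ is absolutely irreducible with $K=\GF(p)$; this is the step I expect to be the main obstacle, and the only place the hypothesis $X\cong\SL_2(p)$ is genuinely used. Here I would invoke the description of the irreducible modules of $\SL_2(p)$ in the defining characteristic $p$: up to isomorphism they are the symmetric powers $\operatorname{Sym}^{r}(N)$ with $0\leq r\leq p-1$ of the natural module $N$, each defined over the prime field and remaining irreducible over $\overline{\GF(p)}$. Thus any irreducible $\GF(p)X$-module is absolutely irreducible, forcing $K=\GF(p)$ and $p^{d}+1=p+1$. For the applications to Theorems B and C it in fact suffices to treat $V=N$, where $K=\GF(p)$ follows at once from the fact that the centraliser of $X$ in $\operatorname{End}_{\GF(p)}(N)\cong M_2(\GF(p))$ consists of scalars only. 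Combining this with the count above gives exactly $p+1$ submodules of $U$, each isomorphic to $V$, as required.
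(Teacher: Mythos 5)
Your proposal is correct, and its skeleton coincides with the paper's: both arguments rest on the fact that $\End_{\GF(p)X}(V)\cong \GF(p)$, and both realize the submodules other than $V$ and $W$ as graphs of isomorphisms $V\to W$ (the paper's $V_i=\{(v^i,\phi(v))\mid v\in V\}$ are exactly the graphs of the $p-1$ scalar multiples of a fixed isomorphism $\phi$). The genuine difference lies in what is proved rather than cited. The paper only \emph{constructs} $p+1$ submodules and then quotes Gorenstein's Theorem 3.5.6 for the assertion that $(p^2-1)/(p-1)=p+1$ is the exact number, having quoted GLS 2.8.8 and Aschbacher 25.8 for the endomorphism-field fact. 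You instead prove the exact count directly: the Schur-lemma projection argument shows that every irreducible submodule of $U$ is $V$, $W$, or a graph of an isomorphism; the composition-length-two observation disposes of all other submodules; and counting isomorphisms gives $p^d+1$ submodules where $\GF(p^d)\cong\End_{\GF(p)X}(V)$. This buys a self-contained argument (essentially a proof of the cited counting theorem in this case) that generalizes verbatim to any group whose irreducible has endomorphism field $\GF(p^d)$, at the cost of being longer; the paper's version is shorter on the page but leans on two external references. For the step $d=1$ you invoke the classification of irreducible $\GF(p)\SL_2(p)$-modules in defining characteristic, which is the same underlying fact as the paper's splitting-field citation; note, though, that the inference from ``every absolutely irreducible module is realizable over $\GF(p)$'' to ``every irreducible $\GF(p)X$-module is absolutely irreducible'' is a short Galois-descent argument rather than a tautology, so in a final write-up you should either include that argument or simply cite the splitting-field statement as the paper does.
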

\begin{proof}
Since $V$ and $W$ are isomorphic, there is an isomorphism $\phi: V \rightarrow W$. Consider the
sets $V_i=\{(v^i,\phi(v))|v \in V\}$ where $i=1,2,\hdots,p-1$ and where multiplication is
coordinate-wise. Then each $V_i$ is a $\GF(p)X$-submodule which is isomorphic to $V$.  Thus
$\{V,W,V_1,\hdots,V_{p-1}\}$ is a set of $p+1$ $\GF(p)X$-invariant submodules.

By \cite[2.8.8]{GLS3}, the splitting field for $\SL_2(p)$ is $\GF(p)$ and by
\cite[25.8]{Aschbacher}, this means that $\mathrm{End}_{\GF(p)X}(V)\cong \GF(p)$.  We now apply Theorem 3.5.6 in \cite[p79]{Gorenstein} which says that the number
of distinct irreducible $\GF(p)X$-submodules of $U$ is $(p^2-1)/(p-1)=p+1$ since $p=|\mathrm{End}_{\GF(p)X}(V)|$. Hence $U$ contains exactly $p+1$ $X$-submodules each isomorphic
to $V$.
\end{proof}

In this thesis, we will often consider natural $\SL_n(p)$-modules for $p$ a prime. We  will often observe a group $G \cong \SL_n(p)$ acting naturally on an elementary abelian $p$-group $V$ which we view as a vector space and call the natural $G$-module.

\begin{lemma}\label{prelims-natural sl23 mods}
Let $G\cong \SL_2(3)$ and suppose that $G$ acts on an elementary abelian $3$-group $V$ of order
nine. Then either $V$ is a natural $G$-module or $V$ has a trivial $G$-submodule.
\end{lemma}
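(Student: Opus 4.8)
The plan is to study the homomorphism $\rho\colon G \to \GL(V)\cong \GL_2(3)$ afforded by the action and to run through the possibilities for its kernel $K\normal G$. Since $G\cong \SL_2(3)$, its normal subgroups are exactly $1$, the centre $\mathcal{Z}(G)$ (of order $2$), the quaternion subgroup $Q_8=O_2(G)$, and $G$ itself, with $G/Q_8\cong C_3$ and $G/\mathcal{Z}(G)\cong \PSL_2(3)\cong \alt(4)$. I would show that in every case other than $K=1$ the module $V$ has a trivial submodule, that $K=\mathcal{Z}(G)$ cannot occur, and that $K=1$ forces $V$ to be natural.

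First the two easy cases. If $K=G$ the action is trivial and every line of $V$ is a trivial submodule. If $K=Q_8$ then $G/Q_8\cong C_3$ acts faithfully, so $\rho$ sends a generator to an element of order $3$ in $\GL_2(3)$; over $\GF(3)$ the polynomial $x^3-1=(x-1)^3$ has $1$ as its only root, so such an element is unipotent and fixes a nonzero vector, again giving a trivial submodule.

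Next I would rule out $K=\mathcal{Z}(G)$. Here $\alt(4)\cong G/\mathcal{Z}(G)$ would embed in $\GL_2(3)$, and the normal Klein four-subgroup $V_4\normal \alt(4)$ would map to a Klein four-subgroup of $\GL_2(3)$ whose three involutions are permuted transitively (hence are conjugate in $\GL_2(3)$) by the image of an element of order $3$. But the centraliser in $\GL_2(3)$ of any non-central involution is the diagonal torus, so two commuting involutions are simultaneously diagonalisable, and a direct check then shows that every Klein four-subgroup of $\GL_2(3)$ contains the central involution $-I$. Since $-I$ is not conjugate to a non-central involution, the three involutions cannot all be conjugate, a contradiction, so $K\neq\mathcal{Z}(G)$. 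I expect this elimination to be the main obstacle, since it is the one step that genuinely exploits the internal geometry of $\GL_2(3)$ rather than a formal property of $G$.

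Finally, if $K=1$ then $\rho$ is faithful and its image is a subgroup of $\GL_2(3)$ of order $24$, hence of index $2$ and therefore normal, so it contains $\GL_2(3)'$. I would then identify $\GL_2(3)'$: exhibiting a transvection as the commutator of a diagonal matrix of determinant $-1$ with a unipotent element shows that $\GL_2(3)'$ contains a transvection, and since the transvections form a single $\GL_2(3)$-conjugacy class and generate $\SL_2(3)$, we obtain $\GL_2(3)'=\SL_2(3)$. As $\GL_2(3)/\SL_2(3)\cong C_2$ has a unique index-$2$ subgroup, the image of $\rho$ is exactly the standard $\SL_2(3)\leq \GL_2(3)$. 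Thus $\rho$ is an isomorphism of $G$ onto $\SL_2(3)$ acting in its standard two-dimensional representation, that is, $V$ is a natural $G$-module, which completes the dichotomy.
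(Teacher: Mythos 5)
Your proposal is correct and follows essentially the same route as the paper: both arguments run through the possible kernels of the representation $G\to\GL(V)\cong\GL_2(3)$, obtaining a trivial submodule when the kernel has order $8$ or $24$, ruling out kernel of order $2$ because $\alt(4)$ does not embed in $\GL_2(3)$, and concluding that a faithful action makes $V$ natural. The only difference is one of detail: you supply explicit justifications (the Klein four-group argument for the non-embedding of $\alt(4)$, and the derived-subgroup argument identifying the image with $\SL(V)$) for steps the paper simply asserts.
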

\begin{proof}
Since $G$ acts on $V$ there is a homomorphism from $G$ to $\GL(V)\cong \GL_2(3)$. Moreover the
kernel of the homomorphism, $C_G(V)$, is a normal subgroup of $G$ therefore $|C_G(V)|=1,2,8$ or
$24$. So assume that $G$ acts non-trivially on $V$. If $|C_G(V)|=1$ then there is an injective
homomorphism from $G$ into $\GL(V)$ and it follows that $G\cong \SL(V)$ and so $V$ is a natural
$G$-module. If $|C_G(V)|=2$ then there is an injective homomorphism from $G/\mathcal{Z}(G)\cong \alt(4)$ into
$\GL_2(3)$ which is not possible. If $|C_G(V)|=8$ then let $S \in \syl_3(G)$ then $1 \neq C_V(S) \neq V$
and so $C_V(S)$ is a trivial $G$-submodule.
\end{proof}

\begin{lemma}\cite[3.20 $(iii)$]{ParkerRowley-book}\label{Parker-Rowley-SL2(q)-splitting}
Let $X\cong \SL_2(3)$ and $S \in \syl_3(X)$. Suppose that $X$ acts on an elementary  abelian
$3$-group $V$  such that $V=\<C_V(S)^X\>$, $C_V(X)=1$ and $[V,S,S]=1$. Then $V$ is a direct product
of natural modules for $X$.
\end{lemma}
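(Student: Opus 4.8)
The plan is to reduce the statement to the assertion that the central involution $z$ of $X$ inverts $V$, and then to pin down the module structure using the quadratic hypothesis. Throughout write $X=\SL_2(3)$, let $z$ be its central involution, let $Q=O_2(X)\cong Q_8$, and let $s$ generate $S$, so that the hypothesis $[V,S,S]=1$ says $(s-1)^2$ annihilates $V$ and hence every Jordan block of $s$ on $V$ has size at most two.

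First I would show $C_V(Q)=1$. Since $Q\trianglelefteq X$ has order coprime to $|V|$, coprime action (Theorem \ref{coprime action}) gives $V=[V,Q]\times C_V(Q)$ with both factors $X$-invariant. On $C_V(Q)$ the group $Q$ acts trivially, so only $X/Q\cong C_3$, generated by the image of $S$, acts; as $C_{C_V(Q)}(S)\le C_V(X)=1$ and a nontrivial $3$-group cannot act fixed-point-freely on a nontrivial $3$-group, we get $C_V(Q)=1$. Next I claim $C_V(z)=1$. If not, then $C_V(z)$ is a nonzero $X$-invariant subgroup on which $Q$ acts through $Q/\langle z\rangle\cong 2^2$, and fixed-point-freely since $C_{C_V(z)}(Q)\le C_V(Q)=1$. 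Over $\GF(3)$ the Klein four-group decomposes $C_V(z)$ into its three nontrivial character spaces $W_1,W_2,W_3$, which $S$ permutes cyclically because $S$ induces a $3$-cycle on the involutions of $Q/\langle z\rangle$. As $s^3=1$, this forces $C_V(z)$ to be a nonzero free $\GF(3)S$-module, on which $(s-1)^2\ne 0$, contradicting $[V,S,S]=1$. Therefore $z$ inverts $V$, i.e. $V=[V,z]$.

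With $z$ acting as $-1$, every $X$-composition factor of $V$ has $z$ acting as $-1$. Since $\GF(3)$ is a splitting field for $\SL_2(3)$ (as used in Lemma \ref{lemma making new modules}), the nontrivial irreducibles are the natural module and the three-dimensional $\mathrm{Sym}^2$ of the natural module, on which $z$ acts trivially; hence the only irreducible on which the centre acts by $-1$ is the natural module, and so every composition factor of $V$ is natural. In particular $V$ has no trivial factor, consistent with $C_V(X)=1$. It then remains to prove that $V$ is a direct sum of naturals, for which I would induct on $\dim V$: given a natural submodule $N\le V$ one shows the hypotheses pass to a complement, the crucial point being that a non-split self-extension of the natural module cannot occur, because on such a module $s$ would acquire a Jordan block of size at least three — exactly what happens on $\mathrm{Sym}^3$ of the natural module, where $s$ is a single size-four block — contradicting $[V,S,S]=1$. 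The generation hypothesis $V=\langle C_V(S)^X\rangle$ together with $C_V(X)=1$ then guarantees that the natural summands exhaust $V$ and that no degenerate summand survives, with Lemmas \ref{prelims-natural sl23 mods}, \ref{lem-cenhalfspace} and \ref{lemma making new modules} serving to identify the two-dimensional pieces and to count submodules.

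The main obstacle is this final semisimplicity step: ruling out non-split extensions of the natural module by itself. Equivalently, I expect the real content to be showing $[V,S]=C_V(S)$ (no size-one Jordan blocks of $s$) and that, once all blocks of $s$ have size exactly two, the $X$-action is completely reducible. Everything preceding this — the reduction to $z$ inverting $V$ and the identification of the composition factors — is routine given the coprime-action and quadratic-action machinery assembled above.
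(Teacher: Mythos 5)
The paper offers no proof of this lemma at all --- it is quoted directly from \cite[3.20 (iii)]{ParkerRowley-book} --- so there is nothing internal to compare your argument against; it has to stand on its own. Its first three steps do stand: the coprime decomposition $V=[V,Q]\times C_V(Q)$ together with $C_{C_V(Q)}(S)\leq C_V(X)=1$ gives $C_V(Q)=1$; the decomposition of $C_V(z)$ into the three nontrivial character spaces of $Q/\<z\>\cong 2^2$, which $S$ permutes cyclically, exhibits $C_V(z)$ as a free $\GF(3)S$-module, on which $(s-1)^2\neq 0$, so quadratic action forces $C_V(z)=1$; and since the natural module is the only irreducible $\GF(3)X$-module on which $z$ acts as $-1$, every composition factor of $V$ is natural. (Note that these steps never invoke $V=\<C_V(S)^X\>$; that hypothesis is in fact redundant given the other two.)

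The gap is precisely the step you yourself flag: semisimplicity is asserted, not proved. Concretely, two things are missing. First, you need that \emph{every} non-split self-extension of the natural module $N$ fails quadratic action, not merely that the particular module $\mathrm{Sym}^3(N)$ does; this requires knowing $\dim\mathrm{Ext}^1_X(N,N)=1$, which one can get from $N\otimes N\cong\GF(3)\oplus\mathrm{Sym}^2(N)$, where $\mathrm{Sym}^2(N)$ is the projective Steinberg module and $H^1(X,\GF(3))\cong\mathrm{Hom}(X,\GF(3))$ is one-dimensional. Second, you need the reduction that a non-semisimple module all of whose composition factors are isomorphic to $N$ has a \emph{subquotient} which is a non-split self-extension of $N$ (a socle argument), together with the remark that quadratic action passes to subquotients. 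There is also a factual slip: $s$ cannot act on $\mathrm{Sym}^3(N)$ as a single Jordan block of size four, since $s^3=1$ bounds block sizes by $3$ in characteristic $3$; the actual type is $(3,1)$, which still violates quadraticity, so your conclusion survives. A cleaner finish avoids all of this machinery: once $z$ inverts $V$, the restriction $V|_Q$ is homogeneous with the unique faithful $2$-dimensional $\GF(3)Q$-module as its only constituent, so evaluation gives an $X$-isomorphism $V\cong N\otimes W$ where $W:=\mathrm{Hom}_{\GF(3)Q}(N,V)$ is a module for $X/Q\cong C_3$; since in characteristic $3$ one has $J_2\otimes J_2\cong J_3\oplus J_1$ and $J_2\otimes J_3\cong J_3\oplus J_3$ for Jordan blocks $J_i$ of size $i$, quadratic action forces $s$ to act trivially on $W$, whence $V\cong N^{\oplus\dim W}$ is a direct sum of natural modules.
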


\begin{lemma}\label{prelims-extraspecial 2^5 in GL(4,3)}
Let $E\leq \GL_4(3)$ such that $|E|=2^5$, and $|\Phi(E)|\leq 2$. Furthermore let $S \leq \GL_4(3)$ be elementary abelian of order nine
such that $S$ acts faithfully on $E$. If $Q_8 \cong A \cong B$ with $A\neq B$ both $S$-invariant
subgroups of $E$, then $E\cong 2_+^{1+4}$ and $E$ is uniquely determined up to conjugation in
$\GL_4(3)$.
\end{lemma}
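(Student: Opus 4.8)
The plan is to analyse the $2$-group structure of $E$ through its action on $V:=E/\Phi(E)$ and to exploit the faithful coprime action of $S$.

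First I would record the basic reductions. Since $A\cong Q_8$ is non-abelian, so is $E$; thus $E'\neq 1$, and as $E'\leq \Phi(E)$ with $|\Phi(E)|\leq 2$ we get $E'=\Phi(E)=\langle z\rangle$ of order $2$. Then $V=E/\Phi(E)$ is elementary abelian of order $2^4$, the commutator map induces an alternating form $b$ on $V$ with values in $E'\cong \mathbb{F}_2$, and squaring induces a quadratic form $q\colon V\to \mathbb{F}_2$ with polarization $b$. Every automorphism of $E$ fixes $E'$ and so preserves both $b$ and $q$; in particular $S$ centralises $E'$ and acts on $V$ inside the orthogonal group $\mathrm{O}(V,q)$. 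By Theorem~\ref{Burnside-p'-automorphism} a non-trivial element of the $3$-group $S$ cannot centralise $V=E/\Phi(E)$, so $S\cong 3^2$ embeds in $\mathrm{O}(V,q)$; in particular $\mathrm{O}(V,q)$ contains a subgroup isomorphic to $3\times 3$.

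Next I would prove $E$ is extraspecial, i.e. that $R:=\mathcal{Z}(E)/E'=\operatorname{rad}(b)$ is trivial. As $E$ is non-abelian $\dim R\in\{0,2\}$, so suppose $\dim R=2$. If $q|_R\neq 0$ then $q|_R$ is a non-zero linear form, every isometry of $(V,q)$ fixes the unique non-zero singular vector of $R$, and since an element of order $3$ fixes no non-zero vector of $\mathbb{F}_2^2$, $S$ centralises $R$; the kernel of $S\to\GL(V/R)$ then consists of unipotent elements of order dividing $2$ and so is trivial, forcing $S\hookrightarrow \GL(V/R)\cong \sym(3)$, which is absurd as $|S|=9$. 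If instead $q|_R=0$ then $q$ descends to a non-degenerate form $\bar q$ of some type $\epsilon$ on $V/R$. The image in $V$ of any $Q_8\leq E$ is a $2$-space meeting $R$ trivially on which $q$ is anisotropic (its non-zero vectors are the cosets of the order-$4$ elements), and under $V\to V/R$ this image is isometric to $\bar q$; hence $\epsilon=-$, since for $\epsilon=+$ the form $\bar q$ is isotropic and $E$ would contain no $Q_8$ at all. This is the hard part: here $E\cong Q_8\times 2^2$ survives all the previous numerical and form-theoretic tests, and I must use both subgroups $A$ and $B$. One checks that $S$ induces a subgroup of order $3$ on each of $R$ and $V/R$, necessarily with distinct kernels (a common kernel would centralise both $R$ and $V/R$, hence all of $V$), so $R$ and $V/R$ are non-isomorphic irreducible $\mathbb{F}_2 S$-modules; by coprimeness the extension $0\to R\to V\to V/R\to 0$ splits, and since $\operatorname{Hom}_S(V/R,R)=0$ the $S$-invariant complement $W$ to $R$ is unique. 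As any $S$-invariant $Q_8$ has image an $S$-invariant complement to $R$, it must equal the full preimage $\widehat W$ of $W$; thus there is only one $S$-invariant $Q_8$, contradicting $A\neq B$. Hence $R=0$ and $E$ is extraspecial of order $2^5$.

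Finally I would fix the type and prove uniqueness. By Theorem~\ref{extraspecial outer automorphisms} the action on $V$ identifies $\mathrm{Out}(E)$ with $\mathrm{O}_4^\epsilon(2)$, and since $\mathrm{O}_4^-(2)\cong \sym(5)$ has cyclic Sylow $3$-subgroups while $S\cong 3^2$ embeds, we must have $\epsilon=+$ and $E\cong 2_+^{1+4}$. For uniqueness up to conjugacy I would observe that the natural module $U=\mathbb{F}_3^4$ is a faithful $\mathbb{F}_3 E$-module on which the central involution $z$ acts as $-1$ on every faithful constituent; since the faithful irreducible of $2_+^{1+4}$ has dimension $2^2=4$ and is absolutely irreducible and rational, $U$ must be exactly this module. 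Consequently any two subgroups of $\GL_4(3)$ satisfying the hypotheses afford equivalent representations and so are conjugate in $\GL_4(3)$. The main obstacle is precisely the case $E\cong Q_8\times 2^2$ in the middle paragraph, where the numerical and form-theoretic invariants do not settle matters and one genuinely needs the presence of two distinct $S$-invariant copies of $Q_8$.
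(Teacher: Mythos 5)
Your proof is correct, and while its endpoints coincide with the paper's (the reduction to a faithful action of $S$ on $E/\Phi(E)$ via Theorem~\ref{Burnside-p'-automorphism}, and the uniqueness-of-conjugacy-class step via the unique $4$-dimensional faithful irreducible of $2_+^{1+4}$ over $\GF(3)$, which is essentially the paper's $16+4^2=32$ count), the heart of the argument --- ruling out $|\mathcal{Z}(E)|>2$ --- is done by a genuinely different route. The paper first notes that a Sylow $3$-subgroup of $\GL_4(2)$ contains a fixed-point-free element of order three, so every $S$-invariant subgroup properly containing $\Phi(E)$ has order $2^3$ or $2^5$; it then writes $E=AB$, applies coprime action to $\mathcal{Z}(E)$ to produce $s\in S^\#$ centralising an element of $\mathcal{Z}(E)\setminus\Phi(E)$, and pushes a direct commutator computation to the contradiction $[E,s]=1$. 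You instead encode $E$ in the pair $(b,q)$ on $V=E/\Phi(E)$, split on the restriction of $q$ to the radical, and in the critical case (which you correctly isolate as $E\cong Q_8\times 2^2$) use that $R$ and $V/R$ are non-isomorphic irreducible $\mathbb{F}_2S$-modules, so $\operatorname{Hom}_S(V/R,R)=0$ forces a unique $S$-invariant complement and hence a unique $S$-invariant $Q_8$, contradicting $A\neq B$. Both arguments use $A\neq B$ and faithfulness of $S$ essentially; yours is more structural and has the added merit of making the plus-type determination explicit ($3^2$ cannot embed in $\mathrm{O}_4^-(2)\cong\sym(5)$), a point the paper passes over rather tersely when it concludes $E\cong 2_+^{1+4}$ directly from Lemma~\ref{prelim-exraspecial}, whereas the paper's computation is shorter and avoids any quadratic-form formalism.
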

\begin{proof}
Note that $E$ is non-abelian since $A,B \leq E$. Therefore $|E/\Phi(E)|=2^4$  is acted on
faithfully by ${S}$. Hence, $S$ is isomorphic to a subgroup of $\GL_4(2)$. Now observe that $\GL_4(2)$ has Sylow $3$-subgroups of order nine which
contain an element of order three which acts fixed-point-freely on the natural module. Thus any
${S}$-invariant subgroup of $E$ properly containing $\Phi(E)$ has order $2^3$ or $2^5$. Since $A$
and $B$ are distinct and normalized by $S$, we have $E=AB$. Suppose $|\mathcal{Z}(E)|>2$. Then
$|\mathcal{Z}(E)|=8$ is ${S}$-invariant. By coprime action,
$\mathcal{Z}(E)=\<C_{\mathcal{Z}(E)}(s)|s \in S^\#\>$. Thus there exists $s \in S^\#$ such that
$C_{\mathcal{Z}(E)}(s)>\Phi(E)$. Since $E=AB$, we find $a\in A$ and $b\in B$ such that $ab \in
C_{\mathcal{Z}(E)}(s)\bs \Phi(E)$. Then, as $s$ normalizes $A$ and $B$,  $s$ must centralize $a$
and $b$. Now $C_E(s)$ is $S$-invariant with $|C_E(s) \cap A|\geq 4$ and $|C_E(s) \cap B|\geq 4$. It
follows that $[E,s]=1$ which is a contradiction. Thus $\mathcal{Z}(E)=\Phi(E)$ and so $E$ is extraspecial and
by Lemma \ref{prelim-exraspecial}, $E\cong 2_+^{1+4}$.

Since $E$ is extraspecial, $[E:E']=2^4$. Therefore there are sixteen 1-dimensional representations of $E$ over $\GF(3)$.
Moreover there is a $4$-dimensional representation of $E$ since $E \leq \GL_4(3)$. Since
$16+4^2=32=|E|$, this accounts for all the irreducible representations of $E$ over $\GF(3)$. Hence there is a unique
$4$-dimensional representation of $E$ and so there is one conjugacy class of such subgroups in
$\GL_4(3)$.
\end{proof}

In \cite{AstillMPhilThesis} a complete proof of the following well known result due to Higman is given. We state a definition of $\GF(2)\SL_2(2^n)$-module.
\begin{defi}
Let $q=2^n$ and suppose that $G \cong \SL_2(q)$. Let $V$ be an irreducible finite-dimensional $\GF(2)G$-module such that $\End_{\GF(2)G}(V)\cong
\GF(q)$ and $V$ is a 2-dimensional $\GF(q)G$-module.
\end{defi}
\begin{thm}[Higman]\label{Higman's SL2 Thm}
Let $X$ be a group and $Q:=O_2(X)$ where $X/Q\cong \SL_2(2^n)$ for $n\geq 2$. If an element of order
three in $X/Q$ acts fixed-point-freely on $Q$ then $Q$ is elementary abelian and is a direct sum of
natural $\GF(2) X/Q$-modules.
\end{thm}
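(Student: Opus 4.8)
The plan is to lift the order-three element, extract from fixed-point-freeness the defining power relation for $Q$, identify the chief factors representation-theoretically, and finally collapse $Q$ to an elementary abelian direct sum of natural modules.

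First I would replace the given order-three element of $X/Q$ by a genuine element $t\in X$ of order three: since $\langle t\rangle$ has order coprime to $|Q|$ this is possible, and the action of $\langle t\rangle$ on $Q$ is coprime. By hypothesis $C_Q(t)=1$, so Theorem~\ref{coprime action}$(i)$ gives $Q=[Q,t]$, and the same holds for every $t$-invariant subquotient of $Q$, on each of which $t$ again acts fixed-point-freely. The map $x\mapsto x^{-1}x^t$ is injective (if $x^{-1}x^t=y^{-1}y^t$ then $yx^{-1}\in C_Q(t)=1$), hence bijective, so every element of $Q$ has the form $x^{-1}x^t$; evaluating the triple product on such an element yields the identity $g\,g^t\,g^{t^2}=1$ for all $g\in Q$. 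This relation pins down the action of $t$ and forces $Q$ to have nilpotency class at most two.

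The representation-theoretic core is to determine the $X$-chief factors inside $Q$. Passing to $\bar Q:=Q/\Phi(Q)$, which is an elementary abelian $\GF(2)(X/Q)$-module with $X/Q\cong\SL_2(2^n)$ and with $t$ acting fixed-point-freely, I would classify the irreducible $\GF(2)\SL_2(2^n)$-modules that admit a fixed-point-free element of order three. By the Steinberg tensor product theorem every irreducible is a tensor product of Frobenius twists $V^{(i)}$ of the natural module $V$; on each twist $t$ has eigenvalues $\omega^{\pm 2^{i}}$ (a primitive cube root of unity and its inverse), and a short counting argument over the possible signed sums of the exponents mod three shows that the trivial module and every tensor product of two or more twists has $1$ as an eigenvalue of $t$, so is not fixed-point-free. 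Hence the only admissible irreducibles are the natural module and its Frobenius twists, each of which is a natural $\GF(2)\SL_2(2^n)$-module in the sense defined (two-dimensional over $\GF(q)$ with $\End\cong\GF(q)$, which holds because $n\geq 2$ forces the splitting field to be $\GF(q)$). Consequently every $X$-chief factor inside $Q$ is a natural module and $Q$ has no central chief factor.

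The main obstacle, and the step I expect to be hardest, is upgrading ``all chief factors are natural'' to ``$Q$ is elementary abelian and splits as a direct sum of naturals'', that is, proving $\Phi(Q)=Q'Q^2=1$; fixed-point-freeness of $t$ alone does not suffice, since an abelian $2$-group can carry a fixed-point-free automorphism of order three without having exponent two. Here I would use the full $\SL_2(2^n)$-action rather than $t$ alone: the unipotent (transvection) elements of $\SL_2(2^n)$ have order two and act quadratically on each natural factor, so the quadratic-action machinery (Lemma~\ref{quadractic action lemma}) applies, and the generation of $\SL_2(2^n)$ by two opposite root subgroups together with the class-two relation $g\,g^t\,g^{t^2}=1$ should force $Q'=1$ and $Q^2=1$. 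With $Q$ elementary abelian and all composition factors natural, I would finish by invoking complete reducibility for these modules and the counting of isomorphic submodules over the splitting field $\GF(q)$ (the $\SL_2(2^n)$-analogue of Lemma~\ref{lemma making new modules}) to realize $Q$ as a direct sum of natural $\GF(2)(X/Q)$-modules.
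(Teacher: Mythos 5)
The first thing to note is that the paper contains no proof of this statement to compare against: it is quoted as a known theorem of Higman, with the complete proof deferred to the author's MPhil thesis \cite{AstillMPhilThesis}. Judged on its own terms, your first two stages are sound and standard: lifting the order-three element by coprimality, the bijectivity of $x \mapsto x^{-1}x^t$ giving $g\,g^t\,g^{t^2}=1$ on all of $Q$, the class-two bound (this is the classical Higman--Neumann theorem and deserves a citation rather than the claim that it is "forced" by the relation), and the Steinberg tensor-product/eigenvalue argument identifying every chief factor inside $Q$ as a Frobenius twist of the natural module.

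The gap is exactly at the step you yourself flag as hardest, and it is fatal as sketched. Take $n=1$: let $Q = \mathbb{Z}_4 \times \mathbb{Z}_4$ with $\sym(3)\cong \SL_2(2)$ acting via the mod~$4$ reduction of the integral two-dimensional representation, and $X = Q\rtimes \sym(3)$. Then $O_2(X)=Q$, $X/Q\cong \SL_2(2)$, and the element of order three acts fixed-point-freely on $Q$ (its matrix $t$ satisfies $\det(t-1)=3$, a unit mod $4$), yet $Q$ has exponent $4$. Now check your proposed ingredients against this example: the transvections of $\SL_2(2)$ have order two and act quadratically on both natural chief factors $Q/2Q$ and $2Q$; $\SL_2(2)$ is generated by two opposite root subgroups; the relation $g\,g^t\,g^{t^2}=1$ holds identically on $Q$; and $Q$ has class at most two. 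Every hypothesis your final step invokes is present, and the conclusion $Q^2=1$ is false. So quadratic action on the factors, generation by root subgroups, and the triple-product relation cannot "force $Q'=1$ and $Q^2=1$"; any correct argument must exploit $n\geq 2$ in an essential way (for instance that the root subgroups are noncyclic of order $q\geq 4$, or that a nontrivial torus of order $q-1$ normalizes them), and your outline never does --- the one place you invoke $n\geq 2$, the splitting-field remark, concerns a fact that is equally true for $n=1$. A secondary gap of the same nature: the closing appeal to "complete reducibility" is not a citation-level fact, since semisimplicity of a $\GF(2)\,\SL_2(2^n)$-module all of whose composition factors are natural amounts to the vanishing of $\mathrm{Ext}^1$ between twists of the natural module, which is essentially part of what the theorem asserts and must be proved, not assumed.
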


Modules for $\SL_2(2^n)$ where described in Section 2.6 and Chapter 8 in \cite{AstillMPhilThesis}.
In particular the next lemma follows from Lemma 8.5  in \cite{AstillMPhilThesis}.
\begin{lemma}\label{prelim-alt5 action}
Let $X\cong \alt(5)\cong \SL_2(4)$ act irreducibly on an elementary abelian $2$-group $V$ such that
an element of order three acts fixed-point-freely. Then  $|V|=2^4$ and $V$ is a natural module for
$X$ over $\GF(2)$.
\end{lemma}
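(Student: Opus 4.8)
The plan is to reduce the statement to Higman's theorem (Theorem~\ref{Higman's SL2 Thm}) by passing to a semidirect product. Since $X\cong \alt(5)$ is simple and the element of order three acts non-trivially on $V$ (indeed fixed-point-freely), the action of $X$ on $V$ is non-trivial, so its kernel $C_X(V)\trianglelefteq X$ is a proper normal subgroup and hence trivial; thus $X$ acts faithfully. I would then form the semidirect product $Y=V\rtimes X$ with respect to this action, regarding $V$ (written multiplicatively) as an elementary abelian normal $2$-subgroup of $Y$.

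The first substantive step is to identify $O_2(Y)$. Certainly $V\leq O_2(Y)$, and $O_2(Y)/V$ is a normal $2$-subgroup of $Y/V\cong X\cong \alt(5)$; as $O_2(\alt(5))=1$ this forces $O_2(Y)=V$. Now $Y/O_2(Y)=Y/V\cong \SL_2(4)=\SL_2(2^2)$, so Higman's hypothesis holds with $n=2\geq 2$, and by assumption an element of order three of $Y/V$ acts fixed-point-freely on $O_2(Y)=V$. Theorem~\ref{Higman's SL2 Thm} then yields that $V$ is elementary abelian (which we already know) and is a direct sum of natural $\GF(2)[Y/V]$-modules, that is, of natural $\GF(2)X$-modules.

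Finally, a direct sum of two or more natural modules has proper non-zero submodules, so it is reducible; since $V$ is assumed irreducible, the direct sum has exactly one summand and $V$ is itself a natural $\GF(2)\SL_2(4)$-module. The natural module is the $2$-dimensional $\GF(4)$-module for $\SL_2(4)$ regarded as a $\GF(2)$-module, hence it has $\GF(2)$-dimension $4$, and so $|V|=2^4$, as required.

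The argument has essentially no obstacle once Higman's theorem is available: the only points needing verification are the faithfulness of the action and the identification $O_2(Y)=V$, both immediate from the simplicity of $\alt(5)$. Should one wish to avoid Higman's theorem, an alternative is to compute the $2$-Brauer characters of $\alt(5)$ and to check directly that among the irreducible $\GF(2)X$-modules (of $\GF(2)$-dimensions $1$, $4$ and $4$) only the natural module admits an element of order three acting without non-zero fixed points; this is considerably more laborious and is the route one would take if Higman's theorem were not at hand.
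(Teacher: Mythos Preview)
Your proof is correct. The reduction to Higman's theorem via the semidirect product $Y=V\rtimes X$ is clean: faithfulness follows from the simplicity of $\alt(5)$, the identification $O_2(Y)=V$ is immediate since $O_2(\alt(5))=1$, and then Theorem~\ref{Higman's SL2 Thm} applies directly with $n=2$. Irreducibility forces a single summand, giving $|V|=2^4$.

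The paper itself does not give a proof at this point; it simply cites Lemma~8.5 of the author's earlier MPhil thesis, where modules for $\SL_2(2^n)$ were analysed. Your argument has the advantage of being self-contained within the present paper, since Higman's theorem is already stated as Theorem~\ref{Higman's SL2 Thm}. The thesis route presumably proceeds by a direct classification of the irreducible $\GF(2)\SL_2(2^n)$-modules, closer in spirit to the Brauer-character alternative you sketch at the end; your semidirect-product trick bypasses that classification entirely and is the more economical of the two.
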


%

\section{Recognition Results}\label{section-recognition results}

The following theorem by Higman is proved in \cite{Higman} using the Suzuki method (see Chapter
\ref{chapter-Alt9}).

\begin{thm}[Higman]\cite{Higman}\label{Higman Alt6}
Let $G$ be a simple group with a Sylow $3$-subgroup, $S$, which is elementary abelian of order
nine. Suppose $G$ has more than one conjugacy class of elements of order three and $C_G(s)=S$ for
each $1 \neq s \in S$. Then $G \cong \alt(6)$.
\end{thm}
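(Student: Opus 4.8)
The plan is to turn the strong self-centralising hypothesis into a trivial-intersection configuration with a Frobenius normaliser, and then to run Suzuki's exceptional-character machinery on the family of order-three classes. First I would pin down the local structure. Since $C_G(s)=S$ for every $s\in S^\#$, we have $C_G(S)=\bigcap_{s\in S^\#}C_G(s)=S$, so $N:=N_G(S)$ embeds $N/S\hookrightarrow\aut(S)\cong\GL_2(3)$. If $x\in N\setminus S$ fixed some $s\in S^\#$ then $x\in C_G(s)=S$, a contradiction; hence $N/S$ acts freely on the eight non-identity elements of $S$, so $N$ is Frobenius with kernel $S$ and $|N/S|\mid 8$. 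The same computation makes $S$ a TI-set: if $1\ne x\in S\cap S^g$ then $S=C_G(x)=S^g$. By Lemma~\ref{Burnside conjugation lemma}, applied to the (automatically normal) singleton subsets of the abelian group $S$, the number of $G$-classes of order-three elements equals the number of $N/S$-orbits on $S^\#$, namely $8/|N/S|$. More than one class forces $|N/S|\le 4$, and $|N/S|=1$ is impossible, as it would give $C_G(S)=N_G(S)=S$ and hence a normal $3$-complement by Theorem~\ref{Burnside-normal p complement}, against simplicity. A fixed-point-free group of order $4$ on $\GF(3)^2$ has a unique involution and so is cyclic, leaving $N=S\rtimes C$ with $C$ cyclic of order $e\in\{2,4\}$ and $t:=8/e\in\{4,2\}$ classes of order-three elements.

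Next I would set up the exceptional characters. As $N$ is Frobenius with abelian kernel $S$ and cyclic complement $C$, its irreducible characters are the $e$ linear characters inflated from $C$ together with $t$ characters $\psi_1,\dots,\psi_t$ of degree $e$, each induced from a non-principal linear character of $S$ and each vanishing on $N\setminus S$. For $i\ne j$ the differences $\psi_i-\psi_j$ vanish off $S$ and at the identity, so by the TI-isometry for $S$ they induce isometrically to $G$. Suzuki's theory of special classes then yields a sign $\epsilon=\pm1$ and distinct irreducible characters $\chi_1,\dots,\chi_t\in\Irr(G)$, all of one common degree $f$, with $(\psi_i-\psi_j)^G=\epsilon(\chi_i-\chi_j)$; in particular $f\equiv\epsilon e\pmod{9}$, on $S^\#$ one has $\chi_i(s)=\epsilon\psi_i(s)+\delta$ for a constant $\delta$ independent of $i$, and every non-exceptional irreducible character of $G$ is constant on the order-three elements.

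Finally I would extract $|G|$. Column orthogonality at an element $s$ of order three gives $\sum_{\chi\in\Irr(G)}|\chi(s)|^2=|C_G(s)|=9$; substituting the exceptional values $\chi_i(s)=\epsilon\psi_i(s)+\delta$ together with the common non-exceptional contribution, and combining this with the congruence $f\equiv\epsilon e\pmod{9}$ and the integrality of $\delta$ and $f$, should eliminate $e=2$ and force $e=4$, $t=2$, $\epsilon=-1$ and exceptional degree $f=5$ (so $|N|=36$). Assembling the resulting partial character table and combining $\sum_\chi\chi(1)^2=|G|$ with $|G:N|=|G|/36$ then pins $|G|=360$. Since $G$ is simple and there is a unique simple group of order $360$, I conclude $G\cong\alt(6)$.

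The main obstacle is precisely this middle-to-final character-theoretic bookkeeping: invoking the TI-isometry and the exceptional-character correspondence correctly, and then squeezing the orthogonality relations, the degree congruence and integrality hard enough both to exclude the Frobenius complement of order $2$ and to determine $|G|$ exactly. The local reduction is routine, but converting ``two classes of a self-centralising $3^2$'' into the single number $360$ is where essentially all of the work lies.
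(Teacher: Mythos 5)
You should note at the outset that the paper does not reprove this theorem: it quotes it from Higman's notes \cite{Higman}, remarking that the proof there uses the Suzuki method, which is the machinery the paper then develops in Chapter \ref{chapter-Alt9}. Measured against that method, your local reduction is correct and is indeed the expected first step: $C_G(s)=S$ for all $s\in S^\#$ forces $N:=N_G(S)$ to be Frobenius with kernel $S$ and complement acting freely on $S^\#$, makes $S$ a TI-set, gives control of fusion (Lemma \ref{Burnside conjugation lemma}), and then Theorem \ref{Burnside-normal p complement} with simplicity excludes $|N/S|=1$, freeness excludes the Klein four group, and the hypothesis of more than one class excludes $|N/S|=8$, leaving a cyclic complement of order $e\in\{2,4\}$ and $t=8/e$ classes. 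The exceptional-character framework is also the right one, but your stated congruence $f\equiv\epsilon e\pmod 9$ is not correct as it stands: the integrality of $\langle\chi_i|_S,1_S\rangle$ gives $f\equiv\epsilon e+\delta\pmod 9$, and in $\alt(6)$ itself, with the labelling $\epsilon=+1$, one has $f=5$, $e=4$, $\delta=1$, so the congruence without $\delta$ already fails in the target group.

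The genuine gap is the endgame, which you explicitly leave as something that ``should'' work. The relations you propose to squeeze --- column orthogonality at order-three elements, the degree congruence, integrality of $\delta$ and $f$, and $\sum_\chi\chi(1)^2=|G|$ --- provably cannot eliminate $e=2$, nor pin $|G|=360$ when $e=4$, because they admit consistent fake solutions. For $e=2$: take $\epsilon=+1$, $\delta=0$, four exceptional characters of degree $f=11$, one further character $\theta$ of degree $10$ with constant value $1$ on $S^\#$, one character of degree $9$ vanishing on $S^\#$, and $|G|=666=18\cdot 37$; then every relation you invoke holds ($11=1+10$ from first-column orthogonality, $11\equiv 2$ and $10\equiv 1\pmod 9$, $1+4\cdot 121+100+81=666$, and $n_3=37\equiv 1\pmod 9$), so no contradiction can follow from them. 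A similar numerology (degrees $1,14,14,17,8,10,9,9$ with $|G|=1008$) survives all your constraints in the case $e=4$. The missing idea --- which is exactly where Higman's proof and the paper's Chapter \ref{chapter-Alt9} reproduction of the method do their work --- is the structure-constant argument built on Lemmas \ref{FT 333 group} and \ref{triangle group}: if $x$, $y$ and $xy$ all have order three and are not all conjugate, then $\langle x,y\rangle$ lies in the centralizer of an order-three element, hence inside a single abelian TI Sylow $3$-subgroup, so the class-algebra constants $a^G_{xyz}$ for order-three classes can be computed exactly inside $S$ (here they are $0$ or $1$). Equating these with $\frac{|G|}{81}\sum_\chi\chi(x)\chi(y)\overline{\chi(z)}/\chi(1)$ (compare Lemma \ref{almost structure constants} and the candidate analysis in Section \ref{proof of main thm}) supplies the equations you are missing: for $e=2$ the single constant $a^G_{aa(ab)}=0$ yields $1+v_\theta/\theta(1)-4\epsilon/f=0$, which is impossible for either sign of $\epsilon$, and for $e=4$ the analogous identities force $f=5$, $|N|=36$ and $|G|=360$. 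Without this counting input (or an equivalent, e.g.\ the block-theoretic arguments the paper attributes to Higman), the proof cannot be completed along the lines you describe.
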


In \cite{PrincePSp43}, Prince completes an earlier characterization result by Hayden \cite{HaydenPSp43} to recognize the groups $\PSp_4(3)\cong \Omega^-_6(2)$, $\aut(\PSp_4(3))\cong \mathrm{SO}^-_6(2)$ and $\Sp_6(2)\cong
\mathrm{SO}_7(2)$.
\begin{thm}[Prince]\label{prince}
Let $G$ be a group and suppose $a \in G$ has order 3 such that the following hold.
 \begin{enumerate}[$(i)$]
 \item $C_G(a)$ has shape $3^{1+2}_+.\SL_2(3)$;
 \item there exists $J\leq C_G(a)$ which is elementary abelian of order 27 and normalizes no
 non-trivial $3'$-subgroup of $G$.
 \end{enumerate}If $a$ is not conjugate to its inverse in $G$ then either $G$ has a normal subgroup of index 3 or
$G\cong \PSp_4(3)\cong \Omega^-_6(2)$. If $a$ is conjugate to its inverse in $G$ then
either $G=N_G(\<a\>)$ or $G\cong \aut(\PSp_4(3))\cong \mathrm{SO}^-_6(2)$ or $G\cong \Sp_6(2)\cong
\mathrm{SO}_7(2)$.
\end{thm}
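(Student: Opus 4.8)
The plan is to combine a $3$-local analysis with the Suzuki method of special classes (the character-theoretic engine developed in Chapter~\ref{chapter-Alt9}) and to finish by identifying $G$ from the structure of an involution centralizer. First I would fix the local data. Write $H:=C_G(a)$ and $Q:=O_3(H)\cong 3^{1+2}_+$; since $a\in\mathcal{Z}(H)$ and $\mathcal{Z}(Q)$ has order three we have $\mathcal{Z}(Q)=\<a\>$, $a$ is $3$-central, and a Sylow $3$-subgroup $S$ of $H$ lies in $\syl_3(G)$ with $|S|=3^4$. As $H/Q\cong\SL_2(3)=\Sp_2(3)$ acts on $Q/\<a\>\cong 3^2$ as the natural symplectic module, the order-three classes of $H$ and their centralizers are determined; moreover $J$ meets $Q$ in a subgroup of order nine and $S=JQ$ with $J\trianglelefteq S$. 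I would then compute $N_G(\<a\>)$: either $N_G(\<a\>)=H$ (the case $a\not\sim a\inv$) or $|N_G(\<a\>):H|=2$ with the outer coset inverting $a$ (the case $a\sim a\inv$), which is the source of the dichotomy in the statement.

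Next I would exploit hypothesis~$(ii)$. Because $J$ normalizes no non-trivial $3'$-subgroup of $G$, coprime action (Lemma~\ref{coprime action}) yields $O_{3'}(N_G(J))=1$, so $N_G(J)$ is a $3$-local subgroup of characteristic three with $N_G(J)/C_G(J)$ embedding in $\GL_3(3)$. Knowing $H$ and $N_G(J)$, a fusion analysis in $N_G(S)$ (via Lemma~\ref{Burnside conjugation lemma}) determines the fusion of all order-three elements of $S$, and hence the number of conjugacy classes of elements of order three in $G$ together with their centralizer orders. In the non-real case transfer then intervenes: either Gr\"un's theorem (Theorem~\ref{GrunsThm}) places $a$ outside the focal subgroup and produces a normal subgroup of index three, or the fusion is fully controlled and $G$ is quasisimple.

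The heart of the argument is the Suzuki method. The non-trivial linear characters of $\<a\>$ induce, through the controlled fusion, to a coherent family of exceptional (``special'') characters of $G$; orthogonality and Frobenius reciprocity against $H$ and $N_G(J)$ constrain their number and degrees, and the $3$-modular block theory---counting irreducible characters against $3$-regular classes together with the contribution of the principal block---fixes the remaining degrees and, through the column orthogonality relation at $a$, the order $|G|$. The realness of $a$ changes the number of special classes and therefore the outcome: the non-real case returns $|G|=|\PSp_4(3)|$ (with the degenerate index-three alternative above), while the real case leaves the two possibilities realized by $\mathrm{SO}^-_6(2)\cong\aut(\PSp_4(3))$ and $\Sp_6(2)$, together with the degenerate case $G=N_G(\<a\>)$ when no simple configuration survives.

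Finally I would identify $G$. From the $3$-local fusion and the character data one reads off a $2$-central involution and the shape of its centralizer, and I would match $G$ against the known characterizations of $\PSp_4(3)\cong\Omega^-_6(2)$, $\mathrm{SO}^-_6(2)$ and $\Sp_6(2)\cong\mathrm{SO}_7(2)$ by the centralizer of an involution; this step is what separates $\mathrm{SO}^-_6(2)$ from $\Sp_6(2)$ in the real case, where the $3$-local data alone do not pin down $|G|$. The main obstacle is the character-theoretic bookkeeping in the Suzuki step: verifying coherence of the exceptional family and counting the special classes correctly is delicate, and since the combinatorics genuinely differs between the real and non-real cases the two must be run separately. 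A secondary difficulty is passing from the numerical character data to an actual isomorphism without circular appeal to the target groups, which is precisely why the involution-centralizer characterization, rather than the $3$-local data alone, carries the final identification.
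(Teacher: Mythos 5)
Your proposal sets out to re-prove Prince's theorem from scratch, but that is not what the paper does, and your sketch cannot substitute for it. The paper's proof is a short reduction to the literature: Theorems 1 and 2 of \cite{PrincePSp43} already give exactly the stated conclusions under the hypothesis that $C_G(a)$ is isomorphic to the centralizer of a $3$-central element of order three in $\PSp_4(3)$ (a specific isomorphism type, not merely a group of shape $3^{1+2}_+.\SL_2(3)$), and Lemma 6 of \cite{ParkerFi22} shows that this isomorphism type is forced once one knows the shape together with $O_2(C_G(a))=1$; the latter is immediate from hypothesis $(ii)$, since $O_2(C_G(a))$ is a $3'$-subgroup of $G$ normalized by $J$. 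That bridging observation is the entire mathematical content of the paper's proof, and it is absent from yours.

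The omission is not cosmetic. Your first paragraph asserts that $H/Q\cong\SL_2(3)$ acts on $Q/\<a\>$ as the natural symplectic module, but the shape hypothesis alone does not give this: the group $3^{1+2}_+\times\SL_2(3)$ has shape $3^{1+2}_+.\SL_2(3)$ with trivial action and $O_2\neq 1$. One needs $O_2(C_G(a))=1$, coming from $(ii)$, to force the action to be faithful (any non-trivial normal subgroup of $\SL_2(3)$ contains its centre, and a central involution acting trivially on $Q/\Phi(Q)$ would centralize $Q$ by Theorem \ref{Burnside-p'-automorphism} and so generate a normal $2$-subgroup), and even then pinning down the extension class is what the cited lemma of Parker is for. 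Beyond this, the core of your outline --- coherence of the exceptional characters, the degree equations, the computation of $|G|$ in the real and non-real cases, and the final identification of $\PSp_4(3)$, $\mathrm{SO}^-_6(2)$ and $\Sp_6(2)$ from involution centralizers --- is asserted rather than carried out; each of those steps is the substance of Hayden's and Prince's published papers, and the involution-centralizer characterizations you would invoke at the end are never named (nor are they among the recognition theorems available in this thesis). As a reconstruction of how the literature proves Prince's theorem your plan points in a sensible direction, but as a proof of the statement it has no force; the intended argument is the two-step reduction above.
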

\begin{proof}
Theorem 1 and 2 from \cite{PrincePSp43} give the result under the assumption that $C_G(a)$ is
isomorphic to the centralizer in $\PSp_4(3)$ of a $3$-central element of order three. Lemma 6 in
\cite{ParkerFi22} says that $C_G(a)$ has this isomorphism type provided it has shape
$3_+^{1+2}.\SL_2(3)$ and $O_2(C_G(a))=1$. However the condition $O_2(C_G(a))=1$ is guaranteed by
part $(ii)$ of the hypothesis.
\end{proof}
In Chapter \ref{Chapter-O8Plus2} we need to distinguish between $\mathrm{SO}^-_6(2)$ and
$\mathrm{SO}_7(2)$ and we require some theory about the subgroup structure of both groups.

\begin{lemma}\label{Prelims-distinguishPSp62}
If $G\cong \PSp_6(2)\cong \mathrm{SO}_7(2)$ and $J\leq G$ is elementary abelian of order $27$ then
there exist three distinct subgroups of $J$ of order three, $A_1,A_2,A_3$ such that $C_G(A_i)\cong
3 \times \sym(6)$ for each $i \in \{1,2,3\}$.
\end{lemma}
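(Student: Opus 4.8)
The plan is to reduce to an explicit copy of $J$ inside a wreath product and then read off the three centralizers from the symplectic geometry. Write $V$ for the natural $6$-dimensional symplectic $\GF(2)$-module and fix an orthogonal decomposition $V=V_1\perp V_2\perp V_3$ into nondegenerate $2$-spaces. Since the $V_i$ are mutually isometric, $G$ contains the wreath product $\Sp_2(2)\wr\sym(3)\cong\sym(3)\wr\sym(3)$, with base group $\Sp(V_1)\times\Sp(V_2)\times\Sp(V_3)$ permuted by $\sym(3)$. This subgroup has order $6^3\cdot 6=2^4\cdot 3^4$, so its Sylow $3$-subgroup $P=B\rtimes\langle\sigma\rangle$ is a full Sylow $3$-subgroup of $G$ (recall $|G|=2^9\cdot 3^4\cdot 5\cdot 7$), where $B=\langle t_1\rangle\times\langle t_2\rangle\times\langle t_3\rangle\cong 3^3$ with $t_i$ of order $3$ acting fixed-point-freely on $V_i$ and trivially on the other two summands, and $\sigma$ cyclically permutes the $t_i$.

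The first, and main, step is to show that every elementary abelian subgroup of $G$ of order $27$ is conjugate to $B$; for this I would prove that $B$ is the unique elementary abelian subgroup of order $27$ in $P$. Suppose $E\leq P$ is elementary abelian of order $27$ with $E\neq B$. Then $E\not\leq B$, so $EB=P$ and $|E\cap B|=9$. Choosing $g\in E\setminus B$, say $g=b\sigma^{\pm1}$ with $b\in B$, the subgroup $E\cap B$ is centralized by $g$; but $\sigma$ acts on $B$ as a cyclic permutation of the three coordinates, so $C_B(g)=C_B(\sigma^{\pm1})$ is the diagonal $\langle t_1t_2t_3\rangle$ of order $3$, contradicting $|E\cap B|=9$. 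Hence $E=B$. Since every elementary abelian subgroup of order $27$ lies in some Sylow $3$-subgroup, all Sylow $3$-subgroups are conjugate, and each contains a unique such subgroup, it follows that $J$ is $G$-conjugate to $B$; replacing $J$ by this conjugate, I may assume $J=B$.

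It then remains to compute the centralizers of $A_i:=\langle t_i\rangle\leq B$ and to note that they are distinct. By construction $C_V(t_i)=V_i^\perp=\sum_{j\neq i}V_j$ and $[V,t_i]=V_i$, and any element of $C_G(t_i)$ preserves both the fixed space $V_i^\perp$ and its nondegenerate complement $V_i$, whence $C_G(t_i)=C_{\Sp(V_i)}(t_i)\times\Sp(V_i^\perp)$. Using $\Sp(V_i)=\Sp_2(2)\cong\sym(3)$, the centralizer of the order-$3$ element $t_i$ in $\Sp(V_i)$ is $\langle t_i\rangle\cong 3$, while $\Sp(V_i^\perp)=\Sp_4(2)\cong\sym(6)$; hence $C_G(A_i)=C_G(t_i)\cong 3\times\sym(6)$. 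As $A_1,A_2,A_3$ are three distinct subgroups of $J$ of order $3$, the lemma follows. The only real content is the uniqueness argument of the second paragraph; the centralizer computation is routine once the isomorphisms $\Sp_2(2)\cong\sym(3)$ and $\Sp_4(2)\cong\sym(6)$ are invoked. (Alternatively the whole statement can be extracted from the conjugacy class and maximal subgroup data for $\Sp_6(2)$ in the {\sc Atlas}.)
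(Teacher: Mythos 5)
Your proof is correct, and it takes a genuinely different route from the paper's. The paper works with a single hyperbolic pair: it observes that $N_G(\langle e_i,f_i\rangle)\cong\sym(3)\times\PSp_4(2)\cong\sym(3)\times\sym(6)$, deduces that some element $x$ of order three has $C_G(x)\cong 3\times\sym(6)$, asserts that $x$ may be assumed to lie in $J$, and then produces the three subgroups by a fusion argument: since $x$ is not $3$-central it has at least three conjugates in $J$. You instead make $J$ itself explicit: the $2+2+2$ orthogonal decomposition gives $\Sp_2(2)\wr\sym(3)$, whose Sylow $3$-subgroup $3\wr 3$ is a full Sylow $3$-subgroup of $G$; your uniqueness argument (any elementary abelian $3^3$ in $3\wr 3$ must be the base group $B$, because an element outside $B$ centralizes only the diagonal of $B$) shows every elementary abelian subgroup of order $27$ is conjugate to $B$, so you may take $J=B$ and exhibit the three coordinate subgroups $\langle t_i\rangle$ with $C_G(t_i)=C_{\Sp(V_i)}(t_i)\times\Sp(V_i^\perp)\cong 3\times\sym(6)$. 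Both arguments rest on the same geometric fact -- the centralizer of an order-three element acting fixed-point-freely on a nondegenerate $2$-space is $3\times\sym(6)$ -- but your version buys a fully self-contained proof: it supplies the conjugacy of elementary abelian $3^3$'s that the paper's step ``we may assume $x\in J$'' implicitly uses, and it avoids the slightly delicate point that ``at least three conjugates of $x$ in $J$'' does not immediately yield three distinct subgroups of order three (two conjugates could generate the same subgroup if one is the inverse of the other). The price is length; the paper's proof is shorter because it is content to quote these fusion facts, and in an application where one only needs existence inside a given $J$ that brevity is reasonable.
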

\begin{proof}
Let $\{e_1,f_1,e_2,f_2,e_3,f_3\}$ be a symplectic basis where $\{e_i,f_i\}$ is a hyperbolic pair.
Then $N_G(\<e_i,f_i\>)\cong \sym(3)\times \PSp_4(2)\cong \sym(3) \times \sym(6)$. In particular
there exists an element of order three $x$ in $G$ such that $C_G(x) \cong 3 \times \sym(6)$. We may
assume $x\in J$.  Since this element of order three is non $3$-central, there are at least three
conjugates of $x$ in $J$.
\end{proof}

\begin{lemma}\label{Prelims-PSp4(3) involutions}
Let $G \cong \PSp_4(3)\cong \Omega^-_6(2)$. Suppose that $t \in G$ is an involution and $R \in \syl_3(C_G(t))$ such that $|R|=9$.  The following hold.
\begin{enumerate}[$(i)$]
\item  We have that $t$ is $2$-central in $G$ and $C_G(t)\sim 2_+^{1+4}.(3 \times 3).2$.
\item $O_2(C_{G}(t))\cong 2_+^{1+4}$.
\item If $Q_1\cong Q_2\cong Q_8$ are distinct subgroups of $C_G(t)$ such that $[Q_i,R]=Q_i$ for $i \in \{1,2\}$, then $[Q_1,Q_2]=1$ and $Q_1Q_2=O_2(C_{G}(t)) \cong 2_+^{1+4}$.
\end{enumerate}
\end{lemma}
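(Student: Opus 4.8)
The plan is to exploit the isogeny $\Sp_4(3)\to\PSp_4(3)$ and read off the structure of $C_G(t)$ from the centralizer of a preimage. Write $G=\Sp_4(3)/\langle z\rangle$ with $z=-I$ acting on $V=\GF(3)^4$ equipped with a nondegenerate symplectic form, and lift $t$ to $\tilde t\in\Sp_4(3)$ with $\tilde t^2\in\langle z\rangle$. First I would dispose of the possibility $\tilde t^2=z$: in that case $\langle\tilde t\rangle$ has order $4$ with eigenvalues the primitive fourth roots of unity in $\GF(9)$, so $V$ becomes a $2$-dimensional $\GF(9)$-space and $C_{\Sp_4(3)}(\tilde t)\cong\mathrm{GU}_2(3)$ of order $96=2^5\cdot 3$, whose Sylow $3$-subgroup has order $3$; this would give $|R|\le 3$, against the hypothesis. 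Hence $\tilde t^2=1$ and $\tilde t$ is a genuine involution, so $V=V_-\perp V_+$ with $V_\mp$ the $\mp1$-eigenspaces, each a nondegenerate symplectic plane (neither can be $0$ or all of $V$, else $\tilde t=\pm I$).

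The heart of the argument is the identification $C_G(t)=N_{\Sp_4(3)}(\{\tilde t,z\tilde t\})/\langle z\rangle$, and here lies the main subtlety I expect to be most error-prone: one must remember that elements of $\Sp_4(3)$ conjugating $\tilde t$ to $-\tilde t$, equivalently interchanging $V_-$ and $V_+$ (possible since all symplectic planes over $\GF(3)$ are isometric), also centralize $t$ modulo $z$. Thus $N_{\Sp_4(3)}(\{\tilde t,z\tilde t\})=(\SL_2(3)\times\SL_2(3))\rtimes\langle\sigma\rangle$ with $\sigma$ swapping the two $\SL_2(3)=\Sp(V_\mp)$ factors, of order $1152$, and $C_G(t)$ has order $576=2^6\cdot 3^2$ (not $288$). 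Since $|C_G(t)|_2=2^6=|G|_2$ and $t\in\mathcal Z(C_G(t))$, a Sylow $2$-subgroup of $C_G(t)$ is a Sylow $2$-subgroup of $G$ containing $t$ in its centre, so $t$ is $2$-central; this gives the first half of $(i)$.

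For $(ii)$ and the quotient in $(i)$ I would push the characteristic subgroup $Q_8\times Q_8=O_2(\SL_2(3)\times\SL_2(3))$ through the quotient by $\langle z\rangle$. As $z=(-I,-I)$ is the diagonal central involution lying inside $Q_8\times Q_8$, its image is the central product $Q_8\ast Q_8\cong 2_+^{1+4}$ of order $32$ (using the remark after Lemma~\ref{prelim-exraspecial}). This image is normal in $C_G(t)$, and since $C_G(t)/(Q_8\ast Q_8)\cong (3\times 3).2$ has trivial $O_2$ (the swapping involution $\sigma$ is not normal), it equals $O_2(C_G(t))$; this proves $(ii)$ and shows $C_G(t)\sim 2_+^{1+4}.(3\times 3).2$, completing $(i)$.

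Finally, for $(iii)$ set $E:=O_2(C_G(t))\cong 2_+^{1+4}$. The key reduction is that any $Q\cong Q_8$ in $C_G(t)$ with $[Q,R]=Q$ lies in $E$: the image $\bar Q$ in $C_G(t)/E\cong (3\times 3).2$ is a $2$-group, $\bar R$ is the full normal $3\times 3$ (as $R$ injects into the quotient with image of order $9$), and $\overline{[Q,R]}=[\bar Q,\bar R]\le\bar R$ is a $3$-group; since this equals $\bar Q$, a $2$-group, we conclude $\bar Q=1$, whence $Q\le E$. Applying this to $Q_1$ and $Q_2$ places both distinct $Q_8$'s inside $E$, and Lemma~\ref{exactly 2 q-8's in extraspecial group} (that $2_+^{1+4}$ contains exactly two subgroups isomorphic to $Q_8$, which commute) forces $\{Q_1,Q_2\}$ to be precisely these two. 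They commute, intersect only in $\mathcal Z(E)=\langle t\rangle$, so $Q_1Q_2$ has order $32$, giving $Q_1Q_2=E\cong 2_+^{1+4}$ as required.
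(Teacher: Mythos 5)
Your argument is correct, but the route to parts $(i)$ and $(ii)$ is genuinely different from the paper's. The paper disposes of $(i)$ by citing the {\sc Atlas}: $\PSp_4(3)$ has two classes of involutions, only one of which commutes with a subgroup of order nine, so $t$ is $2$-central, and the centralizer shape $(\Sp_2(3)\ast\Sp_2(3)).2\sim 2_+^{1+4}.(3\times 3).2$ is likewise read off; part $(ii)$ then follows at once from that shape. You instead compute inside the double cover $\Sp_4(3)$: the lift with $\tilde t^{\,2}=z$ is excluded because its centralizer $\mathrm{GU}_2(3)$ has Sylow $3$-subgroups of order $3$, and for a genuine involution you identify $C_G(t)$ as $N_{\Sp_4(3)}(\{\tilde t,z\tilde t\})/\langle z\rangle$, correctly including the eigenspace-swapping coset (giving order $576$, with $2$-centrality following from $|G|_2=2^6$). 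This costs more linear algebra but is self-contained (no {\sc Atlas} dependence) and makes the central-product structure $Q_8\ast Q_8\cong 2_+^{1+4}$ of $O_2(C_G(t))$ completely explicit, which is exactly what $(ii)$ needs; your observation that the quotient $(3\times 3).2$ has trivial $O_2$ (since its involutions are the non-normal ``swaps'') closes that step soundly. For part $(iii)$ the two proofs coincide: both first force $Q_1,Q_2\leq O_2(C_G(t))$ from the hypothesis $[Q_i,R]=Q_i$ and then invoke Lemma \ref{exactly 2 q-8's in extraspecial group}; your contribution there is merely to spell out the commutator computation in $C_G(t)/O_2(C_G(t))\cong(3\times 3).2$ (namely $\bar Q_i=[\bar Q_i,\bar R]\leq \bar R$ forces $\bar Q_i=1$) which the paper leaves implicit.
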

\begin{proof}
$(i)$ We see in \cite{atlas} that $G\cong \PSp_4(3)\cong \Omega^-_6(2)$ has two classes of involutions and only one class commutes with a
subgroup of order nine. Thus $t$ is $2$-central in $G$. We
observe that $C_{G}(t)\sim (\Sp_2(3) \ast \Sp_2(3)).2\sim 2^. (\alt(4) \wr 2)$. Moreover, we may describe this group as $C_{G}(t)\sim 2_+^{1+4}.(3 \times 3).2$.

$(ii)$ Clearly $O_2(C_{G}(t))$ contains a subgroup, $Q$ say, isomorphic to $2_+^{1+4}$. Since $C_{G}(t)\sim 2^. \alt(4) \wr 2$ which clearly has no larger normal Sylow $2$-subgroup, $O_2(C_{G}(t))\cong 2_+^{1+4}$.

$(iii)$ Since
$O_2(C_{G}(t))\cong 2_+^{1+4}$ and $Q_1$ and $Q_2$ are both normalized by $R\in
\syl_3(C_H(t))$, it follows that $\<Q_1,Q_2\>\leq O_2(C_G(t))$. Moreover, $Q_1\neq Q_2$
and by Lemma \ref{exactly 2 q-8's in extraspecial group}, $2^{1+4}_+$ has exactly two subgroups isomorphic to $Q_8$, therefore we have $[Q_1,Q_2]=1$ and $Q_1Q_2\cong 2_+^{1+4}$.
\end{proof}

\begin{lemma}\label{prelims-PSp43 normalizer J}
\begin{enumerate}[$(i)$]
\item Let $G \cong \PSp_4(3)\cong \Omega^-_6(2)$ or $\aut(\PSp_4(3))\cong \mathrm{SO}^-_6(2)$.
Suppose also that $J \leq G$ is elementary abelian of order $27$. Then ${N_G(J)}\sim
3^3.\sym(4)$ or ${N_G(J)}\sim 3^3.(\sym(4)\times 2)$.

\item If $G \cong \aut(\PSp_4(3))\cong \mathrm{SO}^-_6(2)$ and $w \notin G'$ is an involution with $9 \mid |C_G(w)|$, then $C_G(w)\cong 2 \times \sym(6)$. Also, $G$ has no element of order three which commutes with a subgroup isomorphic to $ \sym(6)$.
\end{enumerate}
\end{lemma}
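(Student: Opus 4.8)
The plan is to extract both parts from the subgroup structure and the unitary/orthogonal geometry of $\bar G:=\PSp_4(3)\cong\Omega^-_6(2)\cong\U_4(2)$ and of $\aut(\bar G)\cong\mathrm{SO}^-_6(2)$, leaning on \cite{atlas} where convenient. Throughout I would write $\bar G$ for the simple group, so that $\bar G=G$ in the simple case and $\bar G=G'$ of index two in the automorphism case, and record the trivial but useful fact that every element of odd order of $G$ already lies in $\bar G$. For part $(i)$ I would read from \cite{atlas} that $\bar G$ has, up to conjugacy, a unique maximal $3$-local subgroup of shape $3^3{:}\sym(4)$ and a unique one of shape $3^{1+2}_+.\SL_2(3)$, the latter being, by Lemma \ref{prince}, the centralizer of a $3$-central element. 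Writing $M\cong 3^3{:}\sym(4)$ and $J_0:=O_3(M)$, the group $J_0$ is elementary abelian of order $27$ and, by maximality of $M$, $N_{\bar G}(J_0)=M$.

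The crux of part $(i)$ is then to show that every elementary abelian $J\le\bar G$ of order $27$ is $\bar G$-conjugate to $J_0$. Conceptually this holds because $\bar G\cong\SU_4(2)$ with $3=q+1$ for $q=2$, so every $3$-element is semisimple and a maximal elementary abelian $3$-subgroup is a maximal torus; the only maximal torus of $\SU_4(2)$ with $3$-part of order $3^3$ is the diagonal torus $J_0$ of order $(q+1)^3=27$, and these form a single class, with torus normalizer $3^3{:}\sym(4)$ whose Weyl group is $\sym(4)$ (type $A_3$). In practice I would confirm this uniqueness either from the semisimple/torus classification or from a direct analysis of a Sylow $3$-subgroup $S$ of order $81$ (with $J_0\trianglelefteq S$), and conclude $N_{\bar G}(J)\sim 3^3.\sym(4)$. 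For the automorphism group $G\cong\mathrm{SO}^-_6(2)=\bar G.2$, uniqueness of the $\bar G$-class forces the outer coset to contain elements normalizing a conjugate of $J$, so $[N_G(J):N_{\bar G}(J)]=2$; the outer involution normalizes $J_0$ and acts on it by inversion while centralizing the $\sym(4)$, whence $N_G(J)/J\cong\sym(4)\times 2$ and $N_G(J)\sim 3^3.(\sym(4)\times 2)$, as claimed.

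For part $(ii)$ I would pass to the orthogonal model $G\cong\mathrm{SO}^-_6(2)$ on the $6$-dimensional $\GF(2)$-space $V$ of minus type, with $\Omega^-_6(2)=G'$ of index two detected by the Dickson invariant. An involution $w\in G\bs G'$ has $\dim[V,w]$ odd; if moreover $9\mid|C_G(w)|$ then, either from \cite{atlas} or by directly bounding the centralizers in the cases $\dim[V,w]=3,5$, I would show $w$ is a reflection $r_v$ in a non-singular vector $v$. Then $C_G(w)=\<w\>\times\O(v^\perp)=2\times\O_5(2)\cong 2\times\Sp_4(2)\cong 2\times\sym(6)$, giving the first assertion. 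The second assertion is the one that genuinely distinguishes $\mathrm{SO}^-_6(2)$ from $\Sp_6(2)\cong\mathrm{SO}_7(2)$, which by Lemma \ref{Prelims-distinguishPSp62} does contain order-three elements centralizing $\sym(6)$; here I would argue directly rather than by table-lookup. Suppose $x\in G$ has order three with $\sym(6)\le C_G(x)$. Then $\sym(6)$ contains an element $y$ of order five commuting with $x$, so $xy$ has order $15$; but $\bar G=\U_4(2)$ has element orders only $1,2,3,4,5,6,9,12$, and any element of order $15$, being of odd order, lies in $\bar G$, so $G$ has no element of order $15$ — a contradiction.

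The main obstacle is the conjugacy-uniqueness step in part $(i)$: once one knows there is a single $\bar G$-class of elementary abelian subgroups of order $27$, all of the stated shapes follow by the bookkeeping above, so the care is needed precisely in the semisimple-plus-torus argument (or, equivalently, in the direct Sylow $3$-analysis, or a \cite{atlas} verification) that pins down that uniqueness. Everything else — the extension to $G\sim 3^3.(\sym(4)\times 2)$, the reflection centralizer $2\times\sym(6)$, and the order-$15$ argument ruling out an order-three element centralizing $\sym(6)$ — is then routine.
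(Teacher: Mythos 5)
Your proposal is correct, but it takes a genuinely different route from the paper, whose proof is essentially a table lookup: for part $(i)$ the paper only notes that the statement of Theorem \ref{prince} guarantees the existence of such a $J$ and then reads the shape of $N_G(J)$ from \cite{atlas}, and part $(ii)$ is dismissed as ``easily checked'' in \cite{atlas}. You instead supply the mathematics behind those lookups: the Sylow/torus analysis showing that $J$ is the unique abelian subgroup of order $27$ in a Sylow $3$-subgroup of $\U_4(2)$ (any abelian $A$ of order $27$ not inside $J_0$ would force $|A\cap J_0|=9\leq |C_{J_0}(c)|=3$, a contradiction), hence a single conjugacy class, which identifies $N_{G'}(J)$ with the maximal subgroup $3^3{:}\sym(4)$ and, via a Frattini-type argument plus the field automorphism inverting the torus, gives index two over it in $\aut(\PSp_4(3))$; and for part $(ii)$ a geometric identification of $w$ as a transvection together with the order-$15$ argument (odd-order elements lie in $G'\cong \U_4(2)$, which has no element of order $15$) to exclude an order-three element centralizing $\sym(6)$. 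What your route buys is independence from character-table inspection, and the order-$15$ trick is a cleaner replacement for the lookup that the paper leans on. The costs are length and some characteristic-$2$ care that your write-up elides: since $v\in v^{\perp}$ in characteristic $2$, the expression $C_G(w)=\langle w\rangle\times \mathrm{O}(v^{\perp})$ is not literally meaningful (the form on $v^{\perp}$ is degenerate with radical $\langle v\rangle$); the correct statement is that the stabilizer of $v$ maps onto $\Sp(v^{\perp}/\langle v\rangle)\cong\Sp_4(2)\cong\sym(6)$ with kernel $\langle w\rangle$, and the splitting of this extension as a direct product still needs a word (or the Atlas). Also note that $\dim[V,w]=5$ cannot occur for an involution in characteristic $2$, since $[V,w]\leq C_V(w)$, so your case division is really just rank $1$ versus rank $3$. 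None of this affects the correctness of the plan.
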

\begin{proof}
$(i)$ It is clear from the statement of Theorem \ref{prince} that $G$ has an elementary abelian subgroup $J$ of order $27$. We can now observe from, for example, \cite{atlas} that ${N_G(J)}\sim 3^3.\sym(4)$ or ${N_G(J)}\sim 3^3.(\sym(4)\times 2)$.

Part $(ii)$ is easily checked in \cite{atlas}.
\end{proof}

\begin{lemma}\label{Prelims-sym9}
Let $G$ be a group of order $3^42$ with $S \in \syl_3(G)$ and $T \in \syl_2(G)$ and
$J\vartriangleleft G$ elementary abelian of order 27. Suppose that $Z:=\mathcal{Z}(S)$ has order three and
$Z \leq C_S(T)\neq S$. Then $G\cong C_{\sym(9)}(\hspace{0.5mm} (1,2,3)(4,5,6)(7,8,9) \hspace{0.5mm} )$.
\end{lemma}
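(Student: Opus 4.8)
The plan is to first normalize the Sylow $3$-subgroup, then determine its isomorphism type, and finally analyse the involution generating $T$.

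Since $|G|=2\cdot 3^4$, Sylow's theorem forces $n_3(G)=1$, so $S\trianglelefteq G$; writing $T=\langle t\rangle$ with $t$ an involution we get $G=S\rtimes T$. As $J$ is a $3$-group it lies in the unique Sylow $3$-subgroup, whence $J\leq S$, and $J\trianglelefteq G$ means $t$ normalizes $J$. First I would prove $C_S(J)=J$: since $J$ is abelian $J\leq C_S(J)$, and $C_S(J)=S$ would give $J\leq\mathcal{Z}(S)=Z$, contradicting $|Z|=3<27$; as $[S:J]=3$ is prime this leaves $C_S(J)=J$. Hence $S/J$ embeds into $\aut(J)\cong\GL_3(3)$ as a group of order three, generated by the image of some $s\in S\bs J$. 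Because $\mathcal{Z}(S)\leq C_S(J)=J$ we get $Z=\mathcal{Z}(S)=C_J(s)$, so by hypothesis $s$ has a one-dimensional fixed space on $J\cong\mathbb{F}_3^3$; being of order three (the characteristic) it is unipotent, and a single fixed line forces $s$ to be a regular unipotent, i.e.\ one Jordan block. Consequently $J$ is a free $\mathbb{F}_3\langle s\rangle$-module, the extension $1\to J\to S\to\langle s\rangle\to 1$ splits, and $S\cong C_3\wr C_3$ with $\langle s\rangle$ a complement to $J$ and $Z$ the fixed line of $s$.

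Next I would pin down $t$. Since $t$ normalizes $J$ and $S/J\cong C_3$, it acts on $S/J$ either trivially or by inversion. If $t$ centralized $S/J$, then $\alpha:=t|_J$ would be an involution of $\GL_3(3)$ commuting with $s$; but $C_{\GL_3(3)}(s)$ is abelian with unique involution the scalar $-I$, which does not fix $Z$ pointwise, whereas $Z\leq C_S(T)$ forces $t$ to fix $Z$. Thus $\alpha=1$, and then $t^2=1$ applied to $s^t\in sJ$ forces $s^t=s$, so $t$ centralizes $S$, contradicting $C_S(T)\neq S$. Therefore $t$ inverts $S/J$ and $\alpha$ is a genuine involution with $\alpha$ inverting the order-three automorphism induced by $s$. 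Using $Z\leq C_S(T)$, that is, that $\alpha$ fixes the line $Z=C_J(s)$ pointwise, I would show the pair $(s,\alpha)$ is determined up to conjugacy in $\GL_3(3)$: the involutions inverting $s$ form a single coset of $C_{\GL_3(3)}(s)$, exactly three of which fix $Z$, and these are permuted transitively by $\langle s\rangle$; all are conjugate to the coordinate transposition of type $\operatorname{diag}(1,1,-1)$. Hence $\langle s,\alpha\rangle\cong\sym(3)$ acting on $J$ as the natural permutation module $\mathbb{F}_3^3$.

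Finally I would assemble $G$. By Gasch\"utz (Theorem \ref{Gaschutz}), applied with the abelian normal $3$-subgroup $J$ and $S\in\syl_3(G)$, $J$ has a complement in $G$ because it has the complement $\langle s\rangle$ in $S$; such a complement $K$ has order six and maps isomorphically onto $G/J\cong\sym(3)$, acting on $J$ via $\langle s,\alpha\rangle$. Thus $G=J\rtimes K\cong C_3\wr\sym(3)$, and since $C_{\sym(9)}((1,2,3)(4,5,6)(7,8,9))$ is precisely $C_3\wr\sym(3)$ --- the base group being the three $3$-cycles, with $\sym(3)$ permuting the blocks and $(1,2,3)(4,5,6)(7,8,9)$ generating the centre --- the required isomorphism follows. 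I expect the main obstacle to be the involution analysis of the middle paragraph: ruling out the case where $t$ centralizes $S/J$, and, above all, extracting from the single constraint $Z\leq C_S(T)\neq S$ both the $\GL_3(3)$-type of $\alpha$ and the simultaneous conjugacy class of $(s,\alpha)$, since a priori an involution inverting $s$ could have a one- or two-dimensional fixed space and one must verify that fixing $Z$ selects exactly the transposition type.
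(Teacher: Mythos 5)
Your proof is correct, but it takes a genuinely different route from the paper's. The paper never determines the structure of $G$ directly: it applies Maschke's theorem to produce a $T$-invariant complement $K$ to $Z$ inside $J$, forms $L:=KT$ of index $9$, and checks that $L$ is core-free (a normal subgroup of $G$ inside $L$ of order divisible by $3$ would meet $\mathcal{Z}(S)$ and hence contain $Z\nleq K$, while a non-trivial normal $2$-subgroup would give a central involution, contradicting $C_S(T)\neq S$). The coset action then embeds $G$ into $\sym(9)$; since $Z\leq C_S(T)$ and $Z=\mathcal{Z}(S)$ make $Z$ central in $G=ST$, and $Z\cap L=1$ makes $z$ act fixed-point-freely (hence with cycle type $3^3$), the image of $G$ lies in $C_{\sym(9)}(\hspace{0.5mm}(1,2,3)(4,5,6)(7,8,9)\hspace{0.5mm})$, and the order count $|G|=162$ finishes. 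You instead determine $G$ structurally: $S\cong C_3\wr C_3$ with $s$ acting on $J$ as a regular unipotent, a classification of the involutions of $\GL_3(3)$ inverting $\sigma$ and fixing $Z$ pointwise, and Gasch\"utz to assemble $G\cong C_3\wr\sym(3)$, which you then identify with the target centralizer. What each buys: the paper's argument is shorter and avoids all computation inside $\GL_3(3)$ --- core-freeness plus an order count replaces your involution analysis --- whereas yours is constructive and yields more information along the way (the isomorphism type of $S$, the module structure of $J$, the explicit wreath-product decomposition and the fusion of the involutions). One wording slip worth fixing: the involutions inverting $\sigma$ do not ``form a single coset of $C_{\GL_3(3)}(\sigma)$''; the set of all elements inverting $\sigma$ is such a coset (of size $18$), and only six of its members are involutions. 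The facts you actually use --- exactly three involutions invert $\sigma$ and fix $Z$ pointwise, they are permuted transitively by $\langle\sigma\rangle$, and each is of type $\operatorname{diag}(1,1,-1)$ --- are correct, so the argument stands as written once that sentence is rephrased.
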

\begin{proof}
We have that $T$ normalizes $Z$ and $J$ and so by Maschke's Theorem, there exists a subgroup $K \leq J$ such that $K$ is a $T$-invariant complement to $Z$ in $J$. Set $L:=KT$ then $K\trianglelefteq L$ and $[G:L]=9$. Suppose that $N \leq L$ and that $N$ is normal in $G$. If $3\mid |N|$ then $N\cap \mathcal{Z}(S) \neq 1$ which is a contradiction since $Z \nleq K$. So $N$ is a $2$-group which implies $N=1$ otherwise $G$ has a central involution. Hence there is an injective homomorphism from $G$ into $\sym(9)$. Moreover there is a map from $G$ into the centralizer in $\sym(9)$ of the centre of a Sylow $3$-subgroup. Since  $| C_{\sym(9)}(\hspace{0.5mm} (1,2,3)(4,5,6)(7,8,9) \hspace{0.5mm}|=|G|$, we have an isomorphism.
\end{proof}

\begin{thm}[Prince]\cite{PrinceSym9}\label{princeSym9}
Let $G$ be a group and suppose $x \in G$ has order 3 such that $C_G(x)\cong
C_{\sym(9)}(\hspace{0.5mm} (1,2,3)(4,5,6)(7,8,9) \hspace{0.5mm} )$ and there exists $J\leq C_G(x)$
which is elementary abelian of order 27 and normalizes no non-trivial $3'$-subgroup of $G$. Then
either $J\vartriangleleft G$ or $G \cong \sym(9)$.
\end{thm}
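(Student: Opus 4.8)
The plan is to run a $3$-local analysis, pin down $N_G(J)$, and then split according to whether $J$ is normal in $G$. Throughout I write $C:=C_G(x)\cong 3\wr\sym(3)$, so $|C|=2\cdot 3^4$, fix $S\in\syl_3(C)$, and set $Z:=\mathcal{Z}(S)$. First I would unwind the structure of $C$. The base group of the wreath product is elementary abelian of order $27$, and since one checks it is the unique elementary abelian subgroup of that order in the Sylow $3$-subgroup $S\cong 3\wr 3$, the hypothesis subgroup $J$ must equal it; thus $J\trianglelefteq C$ and $J=J(S)$ is characteristic in $S$, while $Z=\langle x\rangle$ with $x\in\mathcal{Z}(S)$. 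Because $\aut(Z)$ is a $3'$-group, any $3$-element normalizing $S$ centralizes $Z$, hence lies in $C_G(x)=C$ and so in $S$; this shows $S\in\syl_3(G)$ and that $x$ is $3$-central. Finally $O_{3'}(G)=1$, since $J$ normalizes $O_{3'}(G)\trianglelefteq G$ and the hypothesis forbids non-trivial normalized $3'$-subgroups.

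Next I would control fusion and set up $M:=N_G(J)$. As $J=J(S)$ is abelian and characteristic in the Sylow $3$-subgroup, Lemma \ref{conjugation in thompson subgroup} shows that $G$-fusion of elements of $J$ is realized inside $M$, and of course $C\le M$. A short argument gives $C_M(J)=J$: a Sylow $3$-subgroup of $C_M(J)$ equals $C_S(J)=J$, so by Schur--Zassenhaus $C_M(J)=J\times K$ with $K$ a $3'$-group centralized by $J$, whence $K=1$ by the hypothesis. Therefore $\overline M:=M/J$ embeds in $\aut(J)\cong\GL_3(3)$, and the image of $\overline C=C/J\cong\sym(3)$ is the stabiliser of the vector $x$, acting as the coordinate permutations of $J$ in the basis on which the wreath action of $C$ is monomial. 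Computing, inside $C$, the centralizers of the ``one-'', ``two-'' and ``three-coordinate'' representatives of $J$ then identifies the $M$-orbits (equivalently the $G$-classes) on $J\setminus\{1\}$ and feeds the determination of $\overline M$.

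With this in place I would make the dichotomy. If $M=G$ then $J\trianglelefteq G$ and we are in the first conclusion, so assume $M<G$; the goal is $G\cong\sym(9)$. Analysing $N_G(Z)$, where $C_G(Z)=C$ and $[N_G(Z):C]\le 2$, together with the fusion control, I would show that $x$ is conjugate to $x^{-1}$, which forces the diagonal sign-changes into $\overline M$ and hence $\overline M\cong\sym(4)\times 2$, the full group of signed permutation matrices; thus $M\cong 3^3{:}(\sym(4)\times 2)$. I would then single out a ``coordinate'' element $a\in J$ with $\langle a,x\rangle$ of order $9$ and $C_C(a)=C_G(a)\cap C$ of order $54$, and upgrade the local data to the full centralizer $C_G(a)\cong 3\times\sym(6)$. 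This large, non-soluble, maximal-type centralizer is what drives the recognition: its $\sym(6)$ together with $C$ and $M$ generate $G$ acting faithfully on $9$ points.

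The main obstacle is precisely this passage from the purely $3$-central datum $C_G(x)$ to the global centralizer $3\times\sym(6)$, and the subsequent naming of $G$. Two routes present themselves. Locally, one reconstructs the natural degree-$9$ permutation representation from $C_G(a)\cong 3\times\sym(6)$ and the monomial action of $M$ on $J$, and appeals to a recognition of $\sym(9)$ (identifying $\langle C_G(a)^G\rangle$ with $\alt(9)$ and settling parity by transfer); here the order-$162$ centralizer, as opposed to the order-$81$ centralizer occurring in $\alt(9)$, forces the outer involutions to be present, so that $G\cong\sym(9)$ rather than $\alt(9)$. Alternatively, and closer to the Suzuki method used elsewhere in this thesis, one classifies the classes of elements of order three and their centralizers, establishes coherence of the characters induced from the $3$-local subgroups, reads off the degree-$8$ irreducible, and identifies $G$ from its character table. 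Either way the genuinely non-routine step is the local-to-global upgrade producing $3\times\sym(6)$ and the assembly of the degree-$9$ action; the remaining transfer and parity bookkeeping separating the $J\trianglelefteq G$ outcome from $G\cong\sym(9)$ is comparatively straightforward.
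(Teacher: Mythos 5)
A point of order first: the paper does not prove this statement. It is quoted as an external result of Prince \cite{PrinceSym9} (exactly as Theorems \ref{prince} and \ref{Ascbacher M12} are quoted) and is then applied as a black box, e.g.\ in Lemma \ref{O8-C_G(d) is sym9 or normalies J}. So there is no internal proof to compare you against; your proposal has to stand on its own.

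Judged that way, your opening is correct and is the natural one: $C:=C_G(x)\cong 3\wr\sym(3)$ has order $2\cdot 3^4$; the base is the unique elementary abelian subgroup of order $27$ of $S\cong 3\wr 3$, so $J=J(S)$ is characteristic in $S$ and $\mathcal{Z}(S)=\langle x\rangle$; the standard argument then gives $S\in\syl_3(G)$, the hypothesis gives $O_{3'}(G)=1$ and $C_G(J)=J$, and Lemma \ref{conjugation in thompson subgroup} places all fusion of elements of $J$ inside $M:=N_G(J)$.

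From that point on, however, what you write is a plan rather than a proof, and the plan stops exactly where the theorem lives. In the case $M<G$ you assert, without argument: (i) that $x$ is conjugate to $x^{-1}$ --- nothing you have established forces an inverting element to exist, and a priori $\overline{M}\leq\GL_3(3)$ could be one of several subgroups with point stabiliser $\sym(3)$ and Sylow $3$-subgroup of order $3$, so excluding the alternatives is genuine work; (ii) that $\overline{M}\cong 2\times\sym(4)$; (iii) --- the crux --- that a ``coordinate'' element $a\in J$ has $C_G(a)\cong 3\times\sym(6)$: passing from the order-$54$ local datum $C_C(a)$ to this global centralizer is itself a characterization-theorem-sized step (note that no element of order three has a self-centralizing cyclic centralizer in the relevant sections, so Feit--Thompson \ref{Feit-Thompson} and Smith--Tyrer \ref{Smith-Tyrer} do not apply directly; compare the full page of argument, resting on Higman's Theorem \ref{Higman Alt6}, that this thesis needs in Chapter \ref{chapter-Alt9} just to upgrade a local datum to $C_G(x)\cong 3\times\alt(6)$); and (iv) that from $C_G(a)\cong 3\times\sym(6)$ one can ``assemble the degree-$9$ action'' and conclude $G\cong\sym(9)$ --- again a recognition theorem that must be proved or cited, analogous to the role Aschbacher's Theorem \ref{Ascbacher M12} plays in Chapter \ref{chapter-Alt9}. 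You flag this entire chain yourself as ``the main obstacle'' and offer two unexecuted routes (local reconstruction, or the Suzuki/block method); but that chain \emph{is} the content of Prince's theorem, so the proposal as it stands has a genuine gap rather than being a proof by a different route.
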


\begin{thm}[Aschbacher]\cite{AschbacherM12}\label{Ascbacher M12}
Let $G$ be a finite group with an involution $s$. Set $L:=C_G(s)$, $Q:=O_2(L)$ and choose $X\in
\syl_3(L)$. Assume that $Q$ is extraspecial of order 32, $L/Q\cong \sym(3)$, $C_Q(X)=\langle s\rangle$
and $s$ is not weakly closed in $Q$ with respect to $G$. Then either $G$ has shape
$2^3.\PSL_3(2)$ or $G\cong \alt(8)$, $\alt(9)$ or $\mathrm{M}_{12}$.
\end{thm}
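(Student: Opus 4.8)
The plan is to run a classical centralizer-of-involution analysis: first pin down the $2$-local structure around $s$, then use the failure of weak closure to manufacture a second $2$-local subgroup, and finally control the global fusion of involutions well enough to separate the degenerate case from the simple ones and invoke recognition.

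First I would determine $L=C_G(s)$ exactly. Since $s\in\mathcal{Z}(Q)$ and $Q=O_2(L)$ is extraspecial of order $32$, we have $\mathcal{Z}(Q)=\langle s\rangle$, and $X$ acts coprimely on $Q$ centralising $\mathcal{Z}(Q)$ with $C_Q(X)=\langle s\rangle$, so by Lemma \ref{prelims-extraspecial and a coprime aut} it acts fixed-point-freely on $\bar Q:=Q/\langle s\rangle\cong 2^4$. Because $X$ has order $3$ and is fixed-point-free, every nonzero $X$-orbit on $\bar Q$ has length $3$, and the same holds on the $X$-invariant set of nonzero singular vectors (those $\bar q$ with $q$ an involution); as $2_-^{1+4}$ has only $5$ such vectors while $2_+^{1+4}$ has $9$, divisibility by $3$ forces $Q\cong 2_+^{1+4}$. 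Next $C_L(Q)$ has image in $L/Q\cong\sym(3)$ a normal subgroup acting trivially on $Q$, hence trivial since $X$ is fixed-point-free, so $C_L(Q)=\langle s\rangle$ and $L/Q\hookrightarrow\out(Q)\cong\O_4^+(2)$ by Theorem \ref{extraspecial outer automorphisms}. Taking $S\in\syl_2(L)$ (so $|S|=2^6$, $[S:Q]=2$) the same computation gives $\mathcal{Z}(S)\leq C_S(Q)\leq C_L(Q)=\langle s\rangle$, whence $\mathcal{Z}(S)=\langle s\rangle$; then for $T\in\syl_2(G)$ with $S\leq T$ we get $N_T(S)\leq C_G(s)=L$, forcing $S=T$. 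Thus $s$ is $2$-central and $S\in\syl_2(G)$, and I would record the $L$-fusion of the involutions of $S$ and $Q$.

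Next I would exploit the hypothesis. Failure of weak closure yields $g\in G$ with $z:=s^g\in Q$, $z\neq s$; then $z$ is a non-central involution, so $\bar z$ is a nonzero singular vector. Fixed-point-freeness gives $\bar z+\bar z^X+\bar z^{X^2}=0$, so $\langle\bar z,\bar z^X\rangle$ is an $X$-invariant totally singular $2$-space and its preimage $U:=\langle s,z,z^X\rangle$ is an $X$-invariant maximal elementary abelian subgroup of $Q$ of order $2^3$, self-centralising in $Q$, all of whose non-central elements are $G$-conjugate to $s$. Now $C_G(z)\cong L$ meets $L$ in a large $2$-local, and $N_G(U)$ is a second relevant $2$-local with $N_G(U)/C_G(U)\hookrightarrow\GL_3(2)$, the subgroup induced by $N_L(U)$ being the $\GL_2(2)$-Levi of a point stabiliser. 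The crux of this stage is to pin down $N_G(U)$ and to show $G=\langle L,N_G(U)\rangle$.

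Finally I would identify $G$ by a strong-closure dichotomy. If $U$ is strongly closed in $S$ with respect to $G$ then, after the standard reduction to $O_{2'}(G)=1$, Goldschmidt's Theorem \ref{goldschmidt} applied to $\langle U^G\rangle$ forces $O_2(G)\neq 1$ and, together with the normaliser structure above, gives the degenerate case $G\sim 2^3.\PSL_3(2)$. Otherwise $G$ has no proper nontrivial abelian strongly closed $2$-subgroup; I would then determine the complete global fusion of involutions using Thompson's Transfer Lemma \ref{ThompsonTransfer} and the extremal transfer result \ref{extremal transfer} to show $G=O^2(G)$ and to count the $G$-classes of involutions, and compare the resulting centraliser data with the $3$-local structure $N_G(X)$ against the finitely many possibilities to obtain $G\cong\alt(8)$, $\alt(9)$ or $\mathrm{M}_{12}$. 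I expect this last step to be the main obstacle: separating the simple case from the strongly closed case rigorously, and then distinguishing the three simple targets, requires the genuine recognition input (matching $3$-local data and group orders) that makes this kind of theorem hard, whereas the $2$-local bookkeeping in the earlier stages is comparatively routine.
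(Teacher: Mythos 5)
First, a point of comparison that matters here: the paper does not prove this statement at all. Theorem \ref{Ascbacher M12} is imported from Aschbacher \cite{AschbacherM12} and used purely as a black-box recognition tool (it is invoked once, in Case 3 of the $\alt(9)$ chapter, via Lemmas \ref{Centralizer of s} and \ref{s not weakly closed}). So there is no proof in the paper to measure your proposal against; your attempt has to stand on its own as a proof of Aschbacher's theorem. On its own terms, your opening $2$-local analysis is correct and well executed: the singular-vector count (9 versus 5) forcing $Q\cong 2_+^{1+4}$, the chain $\mathcal{Z}(S)\leq C_S(Q)\leq C_L(Q)=\langle s\rangle$ giving $2$-centrality of $s$ and $S\in\syl_2(G)$, and the construction of the $X$-invariant elementary abelian $U=\langle s,z,z^x\rangle$ of order $8$ from the failure of weak closure (including the observation that $z$ and $sz$ are $Q$-conjugate, so all six non-central involutions of $U$ fuse to $s$) are all sound.

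The genuine gap is exactly where you predict one, and it is not a gap the tools you name can close. ``Determine the complete global fusion of involutions \dots{} and compare the resulting centraliser data with the $3$-local structure $N_G(X)$ against the finitely many possibilities'' is not an argument: the existence of that finite list of possibilities is the assertion being proved, so as written the step is circular. Transfer and strong-closure results (Lemmas \ref{ThompsonTransfer} and \ref{extremal transfer}, Theorem \ref{goldschmidt}) can show $G=O^2(G)$ and split off the $2^3.\PSL_3(2)$ branch, but they cannot distinguish isomorphism types among groups sharing the same involution-centralizer data --- and here that is unavoidable, since $\alt(8)$, $\alt(9)$ and $\mathrm{M}_{12}$ all have centralizer of a $2$-central involution isomorphic to $2_+^{1+4}.\sym(3)$ of order $192$, with isomorphic Sylow $2$-subgroups in the alternating cases; this is the classical example showing $2$-local bookkeeping alone does not determine the group. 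A complete proof must (i) bound $|G|$, for instance by the Thompson order formula or character-theoretic counting off the centralizers of the two involution classes, and then (ii) actually identify $G$ in each case by constructing an isomorphism (a presentation, a permutation representation of known degree, or a geometry on which $G$ acts), which is the real content of \cite{AschbacherM12}. There are also two smaller, fillable holes: the ``standard reduction to $O_{2'}(G)=1$'' needs an argument (the hypotheses do not grant it; one must show, e.g., that $C_{O_{2'}(G)}(s)=1$ forces $s$ to invert $O_{2'}(G)$, and that $O_{2'}(G)\leq C_G(sz)$ with $O_{2'}(C_G(sz))=1$ kills it), and in the Goldschmidt branch you must prove $N_G(U)/C_G(U)$ is all of $\GL_3(2)$ rather than a proper subgroup containing the $\alt(4)$ induced by $N_L(U)$ (this follows from transitivity on the seven involutions of $U$, but it has to be said).
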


The following theorem will be vital in the proof of Theorem B in Chapter \ref{Chapter-O8Plus2}. Note that our notation for orthogonal groups follows \cite{Aschbacher}. In particular, for a natural number $n$  and a prime $p$ and $\e \in \{1,-1\}$,  $\Omega_{2n}^{\e}(p)$ is the derived subgroup of  $\mathrm{SO}_{2n}^{\e}(p)$.

\begin{thm}[Smith]\cite{SmithOrthogonal}\label{Smith-Orthog}
Let $G$ be a finite group and let $t$ be an involution in $G=O^2(G)$. Suppose $F^\ast(C_G(t))$ is
extraspecial of order $2^9$ and $C_G(t)/O_2(C_G(t))\cong \sym(3) \times \sym(3) \times \sym(3)$ and $C_G(O_2(C_G(t)))\leq O_2(C_G(t))$.
Then either $O_{2'}(G)t \in \mathcal{Z}(G/O_{2'}(G))$ or $G \cong \Omega_8^+(2)$.
\end{thm}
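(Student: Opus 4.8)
The plan is to treat this as a classical involution-centralizer characterization and to proceed in four stages: pin down the internal structure of $H:=C_G(t)$, apply a $Z^*$-type dichotomy, control the global fusion of involutions, and finally identify $G$ with $\Omega_8^+(2)$ through a recognition theorem for groups of Lie type in characteristic two.

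First I would determine $H$ completely. Writing $Q:=F^*(H)=O_2(H)$, the hypothesis $C_G(O_2(H))\leq O_2(H)$ gives $C_H(Q)=\mathcal{Z}(Q)=\langle t\rangle$, so $H/\langle t\rangle$ embeds in $\aut(Q)$ and $H/Q\cong\sym(3)^3$ embeds in $\out(Q)$; by \ref{extraspecial outer automorphisms} this forces $Q\cong 2_+^{1+8}$ or $2_-^{1+8}$ with $\out(Q)\cong\mathrm{O}_8^\pm(2)$. The commutator map $Q\times Q\to\langle t\rangle$ makes $V:=Q/\langle t\rangle$ a nondegenerate $\GF(2)$-quadratic space on which $H/Q$ acts. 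I would show that the three commuting normal subgroups of order three in $H/Q$ each act fixed-point-freely on $V$ and, using coprime action (\ref{coprime action}), identify $V$ as the tensor product $V_1\otimes V_2\otimes V_3$ of three natural two-dimensional $\sym(3)\cong\SL_2(2)$-modules, namely the unipotent radical module of the middle-node parabolic of $D_4$. This module carries a $+$-type form, so $Q\cong 2_+^{1+8}$, and $H$ is determined up to extension data as the middle-node maximal parabolic of $\Omega_8^+(2)$, in which the triality $\sym(3)$ permutes the three $\sym(3)$ Levi factors. This also fixes a Sylow $2$-subgroup $S$ of $H$, which (after the standard check that $t$ is $2$-central) is Sylow in $G$ of order $2^{12}$, matching $\Omega_8^+(2)$.

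Next I would invoke Glauberman's $Z^*$-theorem. If $t$ is isolated, that is $t^G\cap H=\{t\}$, then $t\in Z^*(G)$ and $O_{2'}(G)t$ is central in $G/O_{2'}(G)$, giving the first alternative. So I may assume $t$ is not isolated and, by a Sylow and fusion argument, that some $G$-conjugate $s=t^g$ lies in $Q\setminus\langle t\rangle$. The noncentral involutions of $Q$ correspond to the nonzero singular vectors of $V$ and fall into a small number of $H$-orbits according to the tensor rank of their images; I would determine exactly which of these orbits fuse to $t$ in $G$, using that $C_G(s)\cong H$ and comparing the two extraspecial groups $Q$ and $Q^g$ inside $C_G(s)\cap C_G(t)=C_G(\langle t,s\rangle)$.

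Finally I would reconstruct $G$ from overlapping centralizers. With $s\in Q$ a $G$-conjugate of $t$, the group $\langle H,H^g\rangle$ exhibits a second maximal parabolic, and iterating with the triality symmetry produces the full characteristic-two parabolic amalgam of $D_4$; I would then identify $G=\langle H,H^g\rangle$ with $\Omega_8^+(2)$ by a Curtis--Tits/amalgam recognition, or equivalently by applying a classical-involution theorem to the root involutions supplied by $Q$. The hypothesis $G=O^2(G)$ then forces the simple group rather than a proper extension, and reducing modulo $O_{2'}(G)$ delivers the stated dichotomy. The hardest step is this recognition endgame: controlling the global fusion of the involutions of $Q$ tightly enough to see the complete parabolic amalgam and invoke a uniqueness theorem for $\Omega_8^+(2)$, while ruling out any degenerate fusion pattern in which the second involution $s$ fails to generate the triality symmetry. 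Accurately pinning the tensor-product module structure in the first stage is precisely what makes that fusion analysis tractable.
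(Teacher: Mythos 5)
This statement is not proved in the paper at all: it is Smith's recognition theorem, quoted from \cite{SmithOrthogonal} and used as a black box in the proof of Theorem B. So there is no internal argument to compare yours against; the only fair benchmark is Smith's original paper, which is a substantial piece of work rather than something reproducible in a one-page outline. Judged on its own terms, your sketch follows the classical template for involution-centralizer characterizations, but it contains two genuine gaps.

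First, in your opening stage you claim that the three commuting normal subgroups of order three of $H/Q$ act fixed-point-freely on $V=Q/\langle t\rangle$, and hence that $V$ is the tensor product $V_1\otimes V_2\otimes V_3$ of natural $\sym(3)$-modules. This does not follow from the stated hypotheses on $C_G(t)$. The group $\sym(3)\times\sym(3)\times\sym(3)$ also embeds in $\mathrm{O}_8^+(2)$ acting on an orthogonal sum $V_1\perp V_2\perp V_3\perp W$, where each $V_i$ is an anisotropic (minus-type) $2$-space acted on naturally by one factor and trivially by the others, and $W$ is a minus-type $2$-space with trivial action; the type arithmetic gives a plus-type $8$-space, so the corresponding group of shape $2_+^{1+8}.(\sym(3)\times\sym(3)\times\sym(3))$ satisfies every hypothesis imposed on $C_G(t)$ (extraspecial $F^\ast$, the prescribed quotient, $C_G(O_2)\leq O_2$), yet its order-three elements have $6$-dimensional fixed spaces on $V$. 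Hence the tensor-product structure can only be extracted from global information (non-isolation of $t$ together with fusion in $G$), or else the orthogonal-sum configuration must be carried as a separate case and shown to force $O_{2'}(G)t\in \mathcal{Z}(G/O_{2'}(G))$; your sketch does neither, and the later fusion analysis you describe leans on precisely the module structure that has not been established. Second, the endgame is asserted rather than argued: you never justify why $G=\langle H,H^g\rangle$, why this subgroup exhibits the full $D_4$ parabolic amalgam rather than some degenerate configuration, or how $G=O^2(G)$ is used to exclude proper overgroups of the recognized subgroup. Those steps are not routine verifications to be filled in later; together with the case analysis above, they constitute essentially the whole content of Smith's theorem.
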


The following three theorems are all required to recognize
certain sections of a group satisfying Hypothesis C.
\begin{thm}[Parker--Rowley]\cite{ParkerRowleyA8}\label{ParkerRowleyA8}
Let $G$ be a finite group with $R :=\<a, b\>$ an elementary abelian Sylow 3-subgroup of $G$ of order
nine. Assume the following hold. \begin{enumerate}[$(i)$]
 \item  $C_G(R)=R$ and $N_G(R)/C_G(R)\cong Dih(8)$.
 \item $C_G(a)\cong 3\times \alt(5)$ and $N_G(\<a\>)$ is isomorphic to the diagonal subgroup
 of index two in  $\sym(3)\times \sym(5)$.
 \item $C_G(b)\leq N_G(R)$, $C_G(b)/R\cong 2$ and $N_G(\<b\>)/R\cong 2\times 2$.
 \end{enumerate}Then $G$ is isomorphic to $\alt(8)$.
\end{thm}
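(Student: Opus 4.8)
The plan is to steer $G$ into the hypotheses of Aschbacher's theorem (Theorem~\ref{Ascbacher M12}) by exhibiting an involution whose centralizer has the prescribed extraspecial shape, and then to discard every group in its conclusion except $\alt(8)$ using only that $|G|_3=9$. To begin, since $R\cong 3^2$ is a Sylow $3$-subgroup, $\aut(R)\cong\GL_2(3)$ and hypothesis~$(i)$ gives $|N_G(R)|=72$ with $N_G(R)/R\cong\dih(8)$. Burnside's fusion lemma (Lemma~\ref{Burnside conjugation lemma}) shows that the $G$-fusion of the four subgroups of $R$ of order three is controlled inside $N_G(R)$; as $|C_G(a)|=180\neq 18=|C_G(b)|$, the subgroups $\langle a\rangle$ and $\langle b\rangle$ are not $G$-conjugate, so there are precisely two $G$-classes of such subgroups, represented by $\langle a\rangle$ and $\langle b\rangle$. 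Because $N_G(\langle a\rangle)/C_G(a)$ and $N_G(\langle b\rangle)/C_G(b)$ both have order two and embed in $\aut(C_3)\cong C_2$, they act by inversion, so $a\sim a\inv$ and $b\sim b\inv$ in $G$.

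Next I would produce the involution centralizer. Fix an involution $s\in C_G(b)$ (these exist as $C_G(b)\cong 3^2{:}2$), and set $L:=C_G(s)$ and $Q:=O_2(L)$; note $b\in L$ and $s\in Q$. If some $P\in\syl_3(L)$ had order nine it would be a Sylow $3$-subgroup of $G$, whence $s\in C_G(P)=P$ by hypothesis~$(i)$, which is absurd; thus $|P|=3$ and $P=\langle b\rangle\in\syl_3(L)$. Moreover $s$ acts on $R$ as a nontrivial involutory automorphism fixing $b$ (nontrivial, else $s\in C_G(R)=R$), hence as a reflection with $C_R(s)=\langle b\rangle$; therefore $C_L(b)=C_{C_G(b)}(s)=\langle b\rangle\times\langle s\rangle$ has order six, which forces $C_Q(\langle b\rangle)=\langle s\rangle$. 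Coprime action (Theorem~\ref{coprime action}) then shows $b$ acts fixed-point-freely on $Q/\langle s\rangle$, so that $Q/\langle s\rangle$ is a sum of two-dimensional natural $\langle b\rangle$-modules (compare Lemma~\ref{prelim-alt5 action}) and $|Q/\langle s\rangle|=2^{2k}$. The goal is to prove $k=2$, so that $|Q|=32$, $Q\cong 2^{1+4}_+$ by Lemma~\ref{prelim-exraspecial}, and $L/Q\cong\sym(3)$.

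The serious difficulty is precisely this last step: reconstructing the full $2$-local structure of $L$ from essentially $3$-local input. One must bound $Q$ to order exactly $32$ (ruling out a larger $O_2(L)$), identify the complement $L/Q$ with $\sym(3)$, and --- most delicately --- verify that $s$ is \emph{not} weakly closed in $Q$ with respect to $G$, i.e.\ that some involution of $Q\setminus\langle s\rangle$ is $G$-conjugate to $s$. This requires a careful transfer- and fusion-theoretic study of involutions, drawing on the structure $C_G(a)\cong 3\times\alt(5)$ to pin down the even part of $L$; it is where essentially all of the work lies.

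Finally, once $L=C_G(s)$ has $Q=O_2(L)$ extraspecial of order $32$, $L/Q\cong\sym(3)$, $C_Q(\langle b\rangle)=\langle s\rangle$ and $s$ not weakly closed in $Q$, Theorem~\ref{Ascbacher M12} applies and yields that $G$ has shape $2^3{.}\PSL_3(2)$ or $G\cong\alt(8)$, $\alt(9)$ or $\mathrm{M}_{12}$. Every alternative other than $\alt(8)$ is excluded by $|G|_3=9$, since a Sylow $3$-subgroup has order $3$ in $2^3{.}\PSL_3(2)$, order $27$ in $\mathrm{M}_{12}$, and order $81$ in $\alt(9)$. Hence $G\cong\alt(8)$, as required. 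The companion simple group $\PSL_3(4)$ of the same order never intervenes, since it has no element of order six and so fails hypothesis~$(ii)$ at the outset.
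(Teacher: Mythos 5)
You should first be aware that the paper contains no proof of this statement: it is imported verbatim from Parker--Rowley \cite{ParkerRowleyA8} as a recognition tool, so there is no internal argument to compare against. Your proposal therefore has to stand on its own. Its overall strategy is reasonable and is in fact the route the thesis itself takes when it recognizes $\alt(9)$ (Hypothesis \ref{case 3 hyp}): manufacture a suitable involution $s$, pin down $C_G(s)$ as $2_+^{1+4}.\sym(3)$ with $s$ not weakly closed in $O_2(C_G(s))$, apply Theorem \ref{Ascbacher M12}, and discriminate among its conclusions by the order of a Sylow $3$-subgroup. Your preliminary steps are also correct: the existence of the involution $s\in C_G(b)$, $C_R(s)=\langle b\rangle$, $\langle b\rangle\in\syl_3(C_G(s))$, $C_{C_G(s)}(b)=\langle b,s\rangle$, and $C_{O_2(C_G(s))}(b)=\langle s\rangle$ all follow as you say, and the final elimination of $2^3.\PSL_3(2)$, $\alt(9)$ and $\mathrm{M}_{12}$ by $|G|_3=9$ is sound.

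However, there is a genuine gap, and it sits exactly where you place the phrase ``where essentially all of the work lies.'' The three facts needed to invoke Theorem \ref{Ascbacher M12} --- that $O_2(C_G(s))$ is extraspecial of order $32$, that $C_G(s)/O_2(C_G(s))\cong\sym(3)$, and that $s$ is not weakly closed in $O_2(C_G(s))$ --- are announced as goals but never established, and they are not formal consequences of anything you did establish. Concretely, $C_G(s)\cong 2\times\alt(5)$ and $C_G(s)\cong\SL_2(5)$ are both perfectly consistent with $\langle b\rangle\in\syl_3(C_G(s))$ and $C_{C_G(s)}(b)$ of order six; in either case $O_2(C_G(s))=\langle s\rangle$ and the Aschbacher hypotheses fail outright. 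Excluding such configurations requires real fusion and counting arguments that feed in hypothesis $(ii)$ (the subgroup $3\times\alt(5)$) and the precise normalizer data of $(i)$ and $(iii)$; the natural first move, which you never make, is to apply the Feit--Thompson theorem (Theorem \ref{Feit-Thompson}) to $C_G(s)/\langle s\rangle$, which has a self-centralizing element of order three, and then eliminate the $\alt(5)$ and $\PSL_2(7)$ branches and bound the nilpotent normal subgroup. For a sense of the magnitude of what is missing: when the thesis runs this same strategy for $\alt(9)$, precisely these steps occupy Lemmas \ref{the involution s}, \ref{Centralizer of s} and \ref{s not weakly closed}, and consume Feit--Thompson, Higman's theorem (Theorem \ref{Higman's SL2 Thm}) and several pages of $2$-local analysis before Aschbacher's theorem can be applied. (A minor additional slip: you cannot yet describe $O_2(C_G(s))/\langle s\rangle$ as a ``sum of natural modules,'' since it is not known to be elementary abelian at that stage; only the congruence $|O_2(C_G(s))/\langle s\rangle|\equiv 1 \pmod 3$, hence an even power of $2$, follows from the fixed-point-free action of $b$.) As it stands, the proposal is a plan whose central portion is missing, not a proof.
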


\begin{cor}\label{Cor-ParkerRowleyA8}
Let $G$ be a group and $\alt(8)\cong H\leq G$ such that for $R \in \syl_3(H)$ and each $r \in
R^\#$, $C_G(r) \leq H$. Then $G=H$.
\end{cor}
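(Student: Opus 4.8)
The plan is to verify that $G$ itself satisfies the hypotheses of the Parker--Rowley recognition theorem (Theorem \ref{ParkerRowleyA8}), conclude $G \cong \alt(8)$, and then observe that since $H \cong \alt(8)$ is contained in $G$ and the two groups have equal finite order, $G = H$. The starting observation is that the hypothesis $C_G(r) \leq H$ forces $C_G(r) = C_H(r)$ for every $r \in R^\#$, because $C_H(r) = C_G(r) \cap H = C_G(r)$. Thus all centralizers of $3$-elements that we shall need are already determined by the structure of $\alt(8)$; the real work lies in controlling normalizers in $G$ and, before that, in showing that $R$ is Sylow in $G$.

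First I would show $R \in \syl_3(G)$. Suppose not; then $R$ lies properly in some $S \in \syl_3(G)$, and since a proper subgroup of a $p$-group is proper in its normalizer, there is a $3$-element $s \in N_S(R) \setminus R$. As $s$ acts on $R \cong \mathbb{F}_3^2$ through a $3$-element of $\GL_2(3)$, which is unipotent, it fixes some $1 \neq r \in R$; hence $s \in C_G(r) \leq H$. But then $R\langle s\rangle$ is a $3$-subgroup of $H$ properly containing $R$, contradicting $R \in \syl_3(H)$. Therefore $R \in \syl_3(G)$, and in particular $R \in \syl_3(N_G(R))$.

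Next I would establish hypothesis $(i)$ of Theorem \ref{ParkerRowleyA8}. Since $C_G(R) = \bigcap_{r \in R^\#} C_G(r) = \bigcap_{r \in R^\#} C_H(r) = C_H(R) = R$, we have $C_G(R)=R$ and $N_G(R)/R \hookrightarrow \aut(R) \cong \GL_2(3)$. As $R$ is Sylow in $N_G(R)$, the quotient $N_G(R)/R$ is a $2$-group, hence embeds in a Sylow $2$-subgroup $\sdih(16)$ of $\GL_2(3)$; it also contains the subgroup $N_H(R)/R \cong \dih(8)$ coming from $\alt(8)$. The main obstacle is to exclude the possibility $N_G(R)/R \cong \sdih(16)$. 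I would do this by acting on the four subgroups of $R$ of order three: a copy of $\sdih(16)$ acts transitively on these four lines, so it would conjugate a subgroup of $R$ generated by a $3$-cycle of $\alt(8)$ to one generated by a product of two $3$-cycles. This is impossible, because the corresponding centralizers $C_G(\cdot) = C_H(\cdot)$ have orders $180$ and $18$ respectively, and conjugation preserves centralizer orders. Hence $N_G(R)/R$ has order $8$ and, containing $\dih(8)$, equals it; so $N_G(R)=N_H(R)$ and $(i)$ holds.

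Finally, conditions $(ii)$ and $(iii)$ follow by the same sandwiching principle applied to the subgroup normalizers. Choosing $a$ a $3$-cycle and $b$ a product of two $3$-cycles with $R = \langle a,b\rangle$, I have $C_G(a) = C_H(a) \cong 3 \times \alt(5)$ and $C_G(b) = C_H(b)$ of order $18$ with $C_G(b) \leq N_G(R) = N_H(R)$ and $C_G(b)/R \cong 2$. Since $N_G(\langle a\rangle)/C_G(a)$ and $N_G(\langle b\rangle)/C_G(b)$ each embed in $\aut(C_3)\cong C_2$, while the known $H$-normalizers already realise index two, I obtain $N_G(\langle a\rangle) = N_H(\langle a\rangle)$, the diagonal subgroup of index two in $\sym(3)\times\sym(5)$, and $N_G(\langle b\rangle) = N_H(\langle b\rangle)$ with $N_G(\langle b\rangle)/R \cong 2\times 2$. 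All hypotheses of Theorem \ref{ParkerRowleyA8} are then met, so $G \cong \alt(8)$ and therefore $G = H$. I expect the exclusion of $\sdih(16)$ in the third step to be the crux; everything else reduces to transporting the known subgroup structure of $\alt(8)$ into $G$ through the centralizer hypothesis.
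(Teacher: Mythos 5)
Your proposal is correct and follows essentially the same route as the paper: show $R \in \syl_3(G)$ via a $3$-element normalizing $R$ landing in a centralizer (hence in $H$), deduce $C_G(R)=R$, rule out $N_G(R)/R \cong \sdih(16)$ by transitivity versus the distinct centralizer orders of the two classes of elements of order three, pin down the cyclic normalizers via the $\aut(C_3)$ embedding, and invoke Theorem \ref{ParkerRowleyA8}. The only cosmetic differences are that you fix a nonzero vector using unipotence where the paper uses $\mathcal{Z}(N_S(R))\cap R \neq 1$, and you make explicit the centralizer-order contradiction ($180$ versus $18$) that the paper leaves implicit.
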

\begin{proof}
Suppose $R$ is not a Sylow $3$-subgroup of $G$. Then there exists $R<S\in \syl_3(G)$. Therefore
$R<N_S(R)$ and $1 \neq r \in \mathcal{Z}(N_S(R))\cap R$. Therefore $N_S(R)\leq C_G(r)\leq H$ which is a
contradiction. Thus $R \in \syl_3(G)$. Pick $a,b\in R$ such that $C_H(a)\cong 3 \times \alt(5)$ and
$C_H(b)\leq N_H(R)$. Now we check the hypotheses of Theorem \ref{ParkerRowleyA8}. We have that for
any $r \in R^\#$, $C_G(R)\leq C_G(r) \leq H$ and so $C_G(R)=C_H(R)=R$. So consider $N_G(R)/C_G(R)$
which is isomorphic to a subgroup of $\GL_2(3)$. Since $R \in \syl_3(G)$, $N_G(R)/R$ is a
$2$-group. Also $N_H(R)/R\cong \dih(8)$. Suppose $N_G(R)/R \cong \sdih(16)$. Then $N_G(R)$ is
transitive on $R^\#$ which is a contradiction. Therefore  $N_G(R)=N_H(R)$ and $N_G(R)/C_G(R)\cong
\dih(8)$ so $(i)$ is satisfied. Now $C_G(a)=C_H(a)$ and there exists some $x \in H$ that inverts $a$. Therefore $N_H(\<a\>)=C_H(a)\<x\>\leq H$. Similarly $C_G(b)=C_H(b)$ and there exists some $y \in H$ that  inverts $b$. Therefore $N_H(\<b\>)=C_H(b)\<y\>\leq H$.
Thus $(ii)$ and $(iii)$ are satisfied so $G=H\cong \alt(8)$.
\end{proof}

\begin{thm}[Aschbacher]\cite{AschbacherHS}\label{Aschbacher-HS}
Let $G$ be a group with an involution $t$ and set $H:=C_G(t)$. Let $V \leq G$ such that  $V\cong 2 \times 2
\times 2$ and set $M:=N_G(V)$. Suppose that
 \begin{enumerate}[$(i)$]
 \item $O_2(H)\cong 4 *2_+^{1+4}$ and $H/O_2(H) \cong \sym(5)$; and
 \item $V \leq O_2(H)$, $O_2(M)\cong 4 \times 4 \times 4$ and $M/O_2(M)\cong \GL_3(2)$.
\end{enumerate} Then $G$ is isomorphic to the sporadic simple group $\HS$.
\end{thm}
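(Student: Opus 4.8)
The plan is to recognize $G$ from the centralizer of the involution $t$ in the manner of the classical involution-centralizer characterizations of sporadic groups: first pin down the two maximal $2$-local subgroups and the global fusion of involutions, then compute $|G|$ by Thompson's order formula, and finally realize $G$ as a rank-$3$ permutation group of degree $100$ on the Higman--Sims graph and invoke uniqueness.

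First I would work inside $H=C_G(t)$. Setting $Q:=O_2(H)\cong 4\ast 2_+^{1+4}$, one checks that $\mathcal{Z}(Q)$ is cyclic of order $4$ with $t$ its unique involution, that $Q'=\Phi(Q)=\langle t\rangle$, and that $\bar Q:=Q/\langle t\rangle\cong 2^5$ is a $\GF(2)\,\sym(5)$-module; since an odd-order element of $C_H(Q)Q/Q$ would centralize $\bar Q=Q/\Phi(Q)$ and hence $Q$ (Theorem \ref{Burnside-p'-automorphism}), one gets $C_H(Q)=\mathcal{Z}(Q)$, so $F^*(H)=Q$, $H$ has characteristic $2$, and $O_{2'}(H)=1$. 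Choosing $S\in\syl_2(H)$ we have $|S|=2^9$, and since $\mathcal{Z}(S)\leq C_S(Q)=\mathcal{Z}(Q)$, whose only involution is $t$, every element normalizing $S$ fixes $t$; hence $N_G(S)\leq H$ and $S\in\syl_2(G)$, so $t$ is $2$-central. The analogous bookkeeping in $M=N_G(V)$ gives $|M|_2=2^9$, and I would next locate a common Sylow $2$-subgroup of $H$ and $M$ and fix the embedding $V\leq Q$, using the module structure of $\bar Q$ to determine how $V$ sits inside $Q$.

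With both $2$-locals in hand I would determine the $G$-fusion of involutions, the goal being exactly two classes. The group $M/O_2(M)\cong\GL_3(2)$ is transitive on $V^\#$, so the seven involutions of $V$ form a single $G$-class; examining the $\sym(5)$-orbits on the involutions of $Q$ and transporting local fusion to global fusion (Lemma \ref{Burnside conjugation lemma} for $S$-central involutions, together with control inside $H$ and $M$) then shows that every involution of $G$ is conjugate to $t$ or to a fixed non-central involution $s\in Q$, and to no other. Alongside this I would verify that $G$ is simple: a standard coprime-action argument, using $C_{O_{2'}(G)}(t)\leq O_{2'}(H)=1$, shows $O_{2'}(G)=1$; Thompson's transfer lemma \ref{ThompsonTransfer} together with Gr\"un's theorem \ref{GrunsThm} excludes a subgroup of index $2$; and the simple quotient $\GL_3(2)$ of $M$ combined with this yields that $G$ is simple. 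Once there are precisely two involution classes, Thompson's order formula expresses $|G|$ as $|C_G(t)|\,c_1+|C_G(s)|\,c_2$, where the structure constants $c_1,c_2$ count the factorizations of a fixed involution as a product of one involution from each class; evaluating these from the $2$-local data yields $|G|=2^9\cdot3^2\cdot5^3\cdot7\cdot11=44352000$.

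The hard part is the final identification, since the order and the two $2$-locals do not by themselves pin down $G$. I would reconstruct the Higman--Sims graph directly from the local structure: recover inside $\langle H,M\rangle$ a point stabiliser of shape $\M_{22}$, take its $100$ cosets as vertices, and define adjacency so that $G$ acts as a group of automorphisms of a strongly regular graph with parameters $(100,22,0,6)$. The genuinely delicate step is this passage from purely local fusion information to a globally consistent rank-$3$ geometry, together with the proof that the resulting configuration is uniquely determined; granted that, the known uniqueness of the Higman--Sims graph identifies $G\cong\HS$.
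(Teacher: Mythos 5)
This theorem is not proved in the thesis at all: it is Aschbacher's recognition theorem for $\HS$, stated verbatim with the citation \cite{AschbacherHS} and used as a black box in Section \ref{HN-Section-CG(u)} to identify $C_G(u)$. So there is no internal proof to compare your sketch against; your proposal has to stand on its own as a proof of Aschbacher's theorem, and as such it has genuine gaps.

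Two concrete problems. First, your derivation of $F^\ast(H)=Q$ is a non sequitur: Burnside's theorem (Theorem \ref{Burnside-p'-automorphism}) controls the \emph{automorphisms} of $Q$ induced by elements of $H$, not the kernel $C_H(Q)$ of the action, and elements of $C_H(Q)$ centralize $Q/\Phi(Q)$ by definition, so no contradiction can arise that way. Hypothesis $(i)$ alone does not force $C_H(Q)\leq Q$: one could have $C_H(Q)Q/Q\cong \alt(5)$ with $C_H(Q)$ a central extension of $\mathcal{Z}(Q)\cong 4$ by $\alt(5)$, and still $O_2(H)=Q$. The correct argument must use hypothesis $(ii)$: since $t\in Q$ we have $C_G(Q)=C_H(Q)\leq C_G(V)\leq M$, and $|M|=2^6\cdot|\GL_3(2)|=2^9\cdot 3\cdot 7$ is not divisible by $5$, so $C_H(Q)Q/Q$ is a normal $5'$-subgroup of $\sym(5)$ and hence trivial. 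Second, and more seriously, the three pillars of your outline --- that $G$ has exactly two classes of involutions, the Thompson order formula computation of $|G|$, and above all the reconstruction of an $\M_{22}$ point stabiliser and the rank-$3$ graph on $100$ points --- are asserted rather than carried out, and you explicitly defer the last one (``granted that\dots''). That final step is not a routine appendix: it \emph{is} the content of the recognition theorem (it is where the Janko--Wong-style characterizations of $\HS$ and Aschbacher's treatment spend essentially all of their effort), so deferring it leaves the proposal as a plausible strategy outline in the classical involution-centralizer style rather than a proof. Since the thesis itself imports this result rather than proving it, that is consistent with how the result is used here, but it is a genuine gap in the proposed proof.
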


\begin{thm}[Segev]\cite{SegevHN}\label{Segev-HN}
Let G be a finite group containing two involutions $u$ and $t$ such that $C_G(u)\cong (2 ^. \HS) : 2$
and $C_G(t)\sim 2_+^{1+8}.(\alt(5)\wr 2)$ with $C_G(O_2(C_G(t)))\leq O_2(C_G(t))$. Then $G \cong \HN$.
\end{thm}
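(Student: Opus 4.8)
The plan is to carry out a classical centralizer-of-involution characterization: starting from the two prescribed centralizers $H := C_G(t)$ and $K := C_G(u)$, I would reconstruct the entire $2$-local structure of $G$, determine the fusion of involutions, and then recognize $G$ as $\HN$. The decisive structural feature is that $Q := O_2(H) \cong 2_+^{1+8}$ is a \emph{large} extraspecial $2$-subgroup: the hypothesis $C_G(O_2(H)) \leq O_2(H)$ says precisely that $C_G(Q) = \mathcal{Z}(Q) = \langle t \rangle$, so $Q$ is self-centralizing and $H = N_G(Q) = C_G(t)$ governs a large part of the fusion.

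First I would analyze $H$. Since $C_G(Q)\leq Q$, $H$ has characteristic $2$, whence $O_{2'}(H)=1$; and as $t\in\mathcal{Z}(Q)\leq\mathcal{Z}(S)$ for $S\in\syl_2(H)$, a standard argument ($N_G(S)$ normalises $\langle t\rangle$, so $N_G(S)\leq C_G(t)=H$) shows $t$ is $2$-central and $S\in\syl_2(G)$. By Theorem~\ref{extraspecial outer automorphisms}(i) we have $\out(Q)\cong \O_8^+(2)$, and the complement $H/Q\cong\alt(5)\wr 2$ acts on the orthogonal $\GF(2)$-space $Q/\langle t\rangle\cong 2^8$. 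Decomposing this module under the two wreathed $\alt(5)$-factors (each acting as in Lemma~\ref{prelim-alt5 action}) I would separate the singular from the non-singular vectors, thereby pinning down the $H$-classes of involutions inside $Q$, their centralizers, and --- crucially --- a non-central involution of $Q$ that is $G$-conjugate to $u$.

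Next I would study $K=C_G(u)$, whose generalized Fitting subgroup involves $2^{.}\HS$. Using the known $2$-local structure of $\HS$ --- an involution centralizer of shape $4\ast 2_+^{1+4}$ extended by $\sym(5)$ together with a $2^3$-normalizer whose $O_2$ is $4\times 4\times 4$ over $\GL_3(2)$ --- I would verify via Aschbacher's Theorem~\ref{Aschbacher-HS} that the relevant section is genuinely $\HS$, and then read off the involutions of $K$ and their $K$-centralizers. Matching the involutions visible in $H$ against those in $K$, and controlling fusion in $S$ by Alperin/Burnside arguments such as Lemma~\ref{Burnside conjugation lemma}, should establish that $G$ has exactly two classes of involutions, with representatives $t$ and $u$ and the stated centralizers. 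Thompson's order formula then yields $|G|=|\HN|=2^{14}\cdot 3^6\cdot 5^6\cdot 7\cdot 11\cdot 19$.

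Finally, to identify $G$ I would argue that $H$, $K$, and the normalizer of a suitable four-subgroup form an amalgam uniquely determined by the data, so that $G=\langle H,K\rangle$ is forced up to isomorphism; since $\HN$ manifestly satisfies every hypothesis, $G\cong\HN$. An alternative is to feed the configuration (a large extraspecial $2_+^{1+8}$ with $N_G(Q)/Q\cong\alt(5)\wr 2$ and the prescribed $C_G(u)$) into the classification of groups with a large extraspecial $2$-subgroup, which isolates $\HN$. I expect the main obstacle to be the fusion analysis: proving that $u$ really occurs inside $Q$, that no further involution classes exist, and that fusion in $S$ is controlled tightly enough --- using only the two centralizers $H$ and $K$ --- to license both the order formula and the final recognition. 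The careful interplay between the orthogonal module structure on $Q/\langle t\rangle$ and the embedding of the $\HS$-subgroup is where essentially all of the delicate $2$-local bookkeeping resides.
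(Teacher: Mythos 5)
The first thing to say is that the paper contains no proof of this statement: Theorem \ref{Segev-HN} is quoted directly from Segev's uniqueness paper \cite{SegevHN} and is used in Chapter \ref{Chapter-HN} purely as a black box, so there is no internal argument to compare your proposal against. Judged on its own terms, the front end of your outline (showing $t$ is $2$-central and $S \in \syl_2(G)$, decomposing the orthogonal $\GF(2)$-module $Q/\langle t \rangle$ under $\alt(5)\wr 2$, locating a conjugate of $u$ inside $Q$, matching involution classes against those of $(2^{.}\HS){:}2$, and then invoking Thompson's order formula) is the standard and plausible opening of such a characterization, although each of these steps conceals a substantial fusion computation that the proposal only names rather than performs, and ``exactly two classes of involutions'' is itself something to be proved, not assumed.

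The genuine gap is at your final identification step. From ``the amalgam formed by $H$, $K$ and a third $2$-local is uniquely determined by the data'' you conclude that $G=\langle H,K\rangle$ ``is forced up to isomorphism''; this inference is false as stated. An amalgam of finite groups over a common subgroup has as its universal completion the amalgamated free product, which is infinite, and in general an amalgam admits many pairwise non-isomorphic finite completions, so pinning down the amalgam up to isomorphism does not pin down $G$. To make this strategy work one must in addition prove that a coset graph or geometry built from these subgroups (on which any group satisfying the hypotheses acts) is simply connected, so that all completions are controlled by the universal one; establishing exactly this is the hard content of Segev's actual proof, which is carried out in the Aschbacher--Segev framework of uniqueness systems. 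That missing idea is the theorem, in effect. Your alternative route --- feeding the configuration into the classification of groups with a large extraspecial $2$-subgroup --- is coherent, but it replaces Segev's theorem by another classification of at least comparable depth used as a black box, so it is a reduction rather than a proof. (Note also that the conclusion $G\cong\HN$ presupposes Norton's existence result; uniqueness alone is what your argument would need to supply.)
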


Finally we present the following two theorems by Feit and Thompson and by Smith and Tyrer which
have both proved to be very useful in odd characterizations (see \cite{AstillM12} and
\cite{AstillG23} for example). Both theorems are required in the proof of Theorem A in Chapter \ref{chapter-Alt9}.

\begin{thm}[Feit--Thompson] \cite{FeitThompson}\label{Feit-Thompson}
Let $G$ be a finite group containing a subgroup, $X$, of order three such that $C_G(X)=X$. Then one
of the following holds:
\begin{enumerate}[$(i)$]
    \item $G$ contains a nilpotent normal subgroup, $N$, such that $G/N\cong \sym(3)$ or
    $C_3$;
    \item $G$ contains an elementary abelian normal 2-subgroup, $N$, such that
    $G/N\cong \mathrm{Alt}(5)$; or
    \item $G\cong\mathrm{PSL}_2(7)$.
\end{enumerate}
\end{thm}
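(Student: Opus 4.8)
The plan is to split on the structure of $N_G(X)$, strip off a nilpotent normal subgroup, and reduce the remainder to a recognition problem for simple groups. First I would note that $X$ is a Sylow $3$-subgroup of $G$: if $X$ lay properly inside a Sylow $3$-subgroup $S$ then $N_S(X)$ would properly contain $X$, and any $s\in N_S(X)\setminus X$, being a $3$-element normalising $\langle a\rangle$, would centralise $a$ (a $3$-element cannot invert it), forcing $s\in C_G(a)=X$, a contradiction. Since $X$ is abelian and $N_G(X)/C_G(X)\leq\aut(X)\cong C_2$, there are exactly two cases: $N_G(X)=X$, or $N_G(X)=\langle a,t\rangle\cong\sym(3)$ with $t$ an involution inverting $a$. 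In the first case $N_G(X)=C_G(X)$, so Burnside's normal $p$-complement theorem (\ref{Burnside-normal p complement}) yields a normal subgroup $N$ of order prime to $3$ with $G=NX$ and $G/N\cong C_3$; as $C_N(a)\leq C_G(a)\cap N=X\cap N=1$, the element $a$ acts fixed-point-freely on $N$, so $N$ is nilpotent by Thompson's theorem (\ref{thompson-nilpotent}), giving outcome $(i)$.

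The substance of the argument is therefore the case $N_G(X)\cong\sym(3)$. Here I would set $F:=O_{3'}(G)$ and observe that $a$ again acts fixed-point-freely on $F$ (since $C_F(a)\leq X\cap F=1$), so $F$ is nilpotent by (\ref{thompson-nilpotent}). Passing to $\bar G:=G/F$, coprime action (\ref{coprime action}) keeps $\bar X$ self-centralising, and a Frattini argument (\ref{frattini}) applied to $N_G(XF)=F\,N_G(X)$ shows $N_{\bar G}(\bar X)\cong\sym(3)$, while $O_{3'}(\bar G)=1$. If $\bar X\trianglelefteq\bar G$ then $\bar G=N_{\bar G}(\bar X)\cong\sym(3)$, and with $N:=F$ we land in outcome $(i)$ with $G/N\cong\sym(3)$. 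Otherwise $\bar X$ is not normal, so $O_3(\bar G)=1$ and hence $F^\ast(\bar G)=E(\bar G)\neq 1$; a generalised Fitting analysis, using $C_{\bar G}(\bar X)=\bar X$ to forbid any component that centralises or is disjoint from $\bar X$, forces $\bar G$ to be almost simple with socle a nonabelian simple group $K$ whose Sylow $3$-subgroup is $C_3$ and which satisfies $C_K(\bar X)=\bar X$ and $N_K(\bar X)/\bar X\cong C_2$.

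The main obstacle is then the recognition step: to show that the only such simple groups are $\alt(5)$ and $\PSL_2(7)$. This is precisely the character-theoretic heart of the Feit--Thompson argument. The defect group $X$ is cyclic of order $3$ with inertial index $2$, so the principal $3$-block has a Brauer tree which is a line with two edges; the resulting relations among a handful of irreducible degrees, together with the exceptional characters induced from $\sym(3)$, impose tight congruences on $|G|$, and combining these with the fusion and structure of $C_G(t)$ pins down $K$. I expect this classification (carried out without the classification of finite simple groups) to be the genuinely hard part.

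Finally I would return from $\bar G$ to $G$. For $K\cong\PSL_2(7)$ one checks that no nontrivial $\PSL_2(7)$-module admits a fixed-point-free element of order three: for every nontrivial Brauer character the trivial-constituent multiplicity $\tfrac13(\chi(1)+\chi(a)+\chi(a^2))$ is positive, so $a$ always has a nonzero fixed space. This forces $F=1$ and hence $G\cong\PSL_2(7)$, outcome $(iii)$. For $K\cong\alt(5)$ the same computation shows that fixed-point-free action of an element of order three occurs only on the natural $\SL_2(4)$-module in characteristic $2$, so $F$ is elementary abelian of $2$-power order and $G/F\cong\alt(5)$, outcome $(ii)$; one also checks $\bar G=\alt(5)$ rather than $\sym(5)$, since in $\sym(5)$ an element of order three has centraliser $C_6\neq X$.
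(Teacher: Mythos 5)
There is a genuine gap, and it sits exactly where the theorem lives. Your reduction scaffolding is sound: $X$ is a Sylow $3$-subgroup, the case $N_G(X)=C_G(X)=X$ follows from Burnside's normal $p$-complement theorem (Theorem \ref{Burnside-normal p complement}) together with Thompson's fixed-point-free criterion (Theorem \ref{thompson-nilpotent}), and in the case $N_G(X)\cong\sym(3)$ the passage to $\bar G=G/O_{3'}(G)$ via coprime action and a Frattini argument is standard. But at the decisive moment --- showing that a group $\bar G$ with $O_{3'}(\bar G)=1$, a self-centralizing subgroup $\bar X$ of order three and $N_{\bar G}(\bar X)\cong\sym(3)$ must be $\sym(3)$, $\alt(5)$ or $\PSL_2(7)$ --- you write that you ``expect this classification to be the genuinely hard part'' and offer only a gesture at cyclic-defect/Brauer-tree data ``pinning down $K$''. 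That classification is not a routine recognition step: it is the entire content of the original Feit--Thompson paper, where it requires a sustained character-theoretic argument (exceptional characters, block-theoretic congruences, an order bound, and then identification of the group), and no amount of reduction makes it disappear. A proof that defers it has not proved the theorem. Note, for comparison, that the paper itself does not attempt a proof at all: the result is quoted from \cite{FeitThompson}, with only the refinement in $(ii)$ --- that $N$ is elementary abelian --- attributed to Higman's Theorem \ref{Higman's SL2 Thm}.

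That Higman input exposes a second, smaller gap in your final paragraph. Knowing that every chief factor of $F=O_{3'}(G)$ on which $\bar G\cong\alt(5)$ acts nontrivially is a natural $\SL_2(4)$-module does not by itself yield that $F$ is elementary abelian: a priori $F$ could be a nonabelian $2$-group all of whose chief factors are natural modules. Ruling this out is precisely Higman's theorem (Theorem \ref{Higman's SL2 Thm}), which states that when an element of order three of $X/O_2(X)\cong\SL_2(2^n)$ acts fixed-point-freely on $O_2(X)$, the group $O_2(X)$ is elementary abelian and a direct sum of natural modules. Your sentence ``so $F$ is elementary abelian of $2$-power order'' silently uses exactly this, and it is the one ingredient the paper explicitly singles out as needed on top of the citation of \cite{FeitThompson}.
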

The result can be found in  \cite{FeitThompson} however the additional information in conclusion
$(ii)$ that $N$ is elementary abelian uses a theorem of Higman (see \ref{Higman's SL2 Thm}).

\begin{defi}
A group $G$ is $p$-soluble if every composition factor of $G$ is
either a $p$-group or a $p'$-group.
\end{defi}Consider the following series
\[1\trianglelefteq O_{p'}(G) \trianglelefteq O_{p',p}(G)
\trianglelefteq O_{p',p,p'}(G) \trianglelefteq \hdots\] where
$O_{p',p}(G)$ is the preimage in $G$ of $O_p(G/O_{p'}(G))$ and
$O_{p',p,p'}(G)$ is the preimage in $G$ of $O_{p'}(G/O_{p',p}(G))$
and so on. This series defines a minimal factorization of $G$ into
$p$ and $p'$ factors (minimal in the sense that the number of
factors is as few as possible). We call this the lower $p$-series
for $G$.
\begin{defi}
A $p$-soluble group $G$ has length $n$ if there are $n$ factors in
the lower $p$-series for $G$ which are $p$-groups. In particular,
$G$ is $p$-soluble of length one if $G=O_{p',p,p'}(G)$.
Alternatively, $G$ is $p$-soluble of length one if for any Sylow
$p$-subgroup, $S$, of $G$, $O_{p'}(G)S\trianglelefteq G$.
\end{defi}

\begin{thm}[Smith--Tyrer] \cite{Smith-Tyrer}\label{Smith-Tyrer}
Let $G$ be a finite group and let $P$ be a Sylow $p$-subgroup of $G$ for an odd prime $p$. Suppose
$P$ is abelian and $[N_G(P):C_G(P)]=2$. If $P$ is non-cyclic, then $O^p(G) <G$ or $G$ is
$p$-soluble of length 1.
\end{thm}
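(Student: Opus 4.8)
The plan is to split on the action of the order-two group $N_G(P)/C_G(P)$ on $P$: in one case transfer settles everything, and the genuinely hard work is confined to the ``inversion'' case. Write $\alpha$ for a generator of $N_G(P)/C_G(P)$, an involution in $\aut(P)$. First I would record the fusion. Since $P$ is abelian, every subset of $P$ is a normal subset of $P$, so Lemma~\ref{Burnside conjugation lemma} shows that the $G$-fusion of elements of $P$ is controlled by $N_G(P)$, hence by the single involution $\alpha$. Feeding this into Gr\"un's Theorem~\ref{GrunsThm}, and using that every Sylow $p$-subgroup is abelian (so the terms $P\cap Q'$ vanish), gives $P\cap G'=P\cap N_G(P)'=[P,\alpha]$. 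By coprime action (Theorem~\ref{coprime action}$(i)$, as $|\alpha|=2$ is prime to the odd number $p$) we have $P=[P,\alpha]\times C_P(\alpha)$, so the Sylow $p$-subgroup $PG'/G'$ of the abelianisation $G/G'$ is isomorphic to $C_P(\alpha)$. Thus if $C_P(\alpha)\neq 1$ then $G/G'$ has nontrivial $p$-part and $O^p(G)<G$, the first alternative. I may therefore assume $C_P(\alpha)=1$, i.e. $\alpha$ inverts $P$; since $|P|$ is odd this action is fixed-point-free and there is a $2$-element $t\in N_G(P)$ inverting every element of $P$.

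Next I would set up the target. Applying Burnside's Theorem~\ref{Burnside-normal p complement} inside $C_G(P)$, where $P$ is a central Sylow $p$-subgroup, gives $C_G(P)=P\times M$ with $M=O_{p'}(C_G(P))$, and $t$ normalises $M$; in particular $P\langle t\rangle$ is a Frobenius group with kernel $P$ and complement of order $2$, and the $G$-fusion of $p$-elements is exactly the generalised-dihedral pattern. To prove $p$-solubility of length one it suffices, after passing to $\bar G=G/O_{p'}(G)$ (which preserves all of the above and satisfies $O_{p'}(\bar G)=1$ and $\bar G=O^p(\bar G)$), to show $\bar P\trianglelefteq\bar G$; equivalently, that $\bar G$ has no component.

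It is \emph{essential} here that $P$ be non-cyclic. For cyclic $P$ the very same hypotheses are realised by the simple group $\alt(5)\cong\PSL_2(5)$ at $p=5$: its Sylow $5$-subgroup is $C_5$, the quotient $N/C$ has order $2$ and acts by inversion, $O^5(\alt(5))=\alt(5)$, and $\alt(5)$ is not $5$-soluble. Hence any correct argument must use non-cyclicity to exclude quasisimple configurations, and fusion alone cannot do this, since $\alt(5)$ and the generalised dihedral group $\mathrm{Dih}(10)$ present the identical fusion of $5$-elements.

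The hard part will be this exclusion of components, and I would attack it by character theory in the spirit of Suzuki's method of special classes. Since $N_G(P)=(P\times M)\langle t\rangle$ is essentially Frobenius over $P$, the nonprincipal irreducible characters of $N_G(P)$ lying over faithful characters of $P$ fall into families that are coherent and induce, with sign, to exceptional irreducible characters of $G$; the generalised orthogonality relations then pin down their number and their degrees modulo the order of a $p$-element centraliser. Reconciling this count with the $p$-regular class data of a minimal counterexample should force the principal $p$-block to have a normal defect group, whence $\bar P\trianglelefteq\bar G$ and $G$ is $p$-soluble of length one. The role of non-cyclicity is precisely to make the coherent family large enough, of size comparable to $(|P|-1)/2$, that the only numerically consistent outcome is the $p$-soluble one; the small cyclic-defect families are exactly what admits the quasisimple exceptions such as $\alt(5)$. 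Carrying out this coherence and block bookkeeping, and ruling out every quasisimple possibility without appealing to the classification, is where the real difficulty lies.
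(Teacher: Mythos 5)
The thesis contains no proof of this statement to compare against: it is quoted from Smith and Tyrer \cite{Smith-Tyrer} and used as a black box, so your attempt must stand on its own, and on its own terms it has a genuine gap. The first half is correct and complete: since $P$ is abelian every subset of $P$ is normal in $P$, so Lemma \ref{Burnside conjugation lemma} gives control of fusion by $N_G(P)$; Gr\"un's Theorem \ref{GrunsThm}, with all terms $P\cap Q'$ vanishing because Sylow $p$-subgroups are abelian, gives $P\cap G'=P\cap N_G(P)'=[P,\alpha]$ (using Burnside's Theorem \ref{Burnside-normal p complement} to write $C_G(P)=P\times M$); and coprime action (Theorem \ref{coprime action}) gives $P=[P,\alpha]\times C_P(\alpha)$, so $O^p(G)<G$ whenever $C_P(\alpha)\neq 1$. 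Your $\alt(5)$ example also correctly locates where non-cyclicity must enter. But the complementary case, in which $\alpha$ inverts $P$, is the entire content of the Smith--Tyrer theorem, and for that case you offer only a program --- coherence, exceptional characters, block bookkeeping, exclusion of quasisimple configurations --- none of which is executed; your own closing sentence concedes that carrying it out ``is where the real difficulty lies.'' An outline of a strategy for the hard case is not a proof of the theorem.

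Worse, the strategy as stated cannot even be set up under the given hypotheses. Suzuki's machinery (Definition \ref{defi-special classes}, Theorem \ref{suzuki}) requires $C_G(h)\leq H$ for every $h$ in the special classes; taking $H=N_G(P)$ and $h\in P^{\#}$, this forces every Sylow $p$-subgroup containing $h$ to lie in $N_G(P)$, hence to equal $P$ (the unique Sylow $p$-subgroup of $N_G(P)$), i.e.\ it forces $P$ to be a trivial-intersection set. Nothing in the hypotheses gives this, even in the inversion case. Concretely, let $V=\GF(7)^{2}$, let $P=\<x\>\times\<y\>\cong 3\times 3$ act on $V$ with $x$ acting as $\mathrm{diag}(2,4)$ and $y$ acting trivially, and let $t$ invert both $x$ and $y$ and act on $V$ by swapping the two coordinates (this is compatible, since the swap conjugates $\mathrm{diag}(2,4)$ to $\mathrm{diag}(4,2)=\mathrm{diag}(2,4)^{-1}$). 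The group $G=V\rtimes(P\rtimes\<t\>)$ of order $882$ satisfies every hypothesis of the theorem with $\alpha$ inverting $P$: one checks $C_V(x)=0$, whence $C_G(P)=P$ and $N_G(P)=P\<t\>$. Yet $y$ centralizes $V$, so $1\neq\<y\>\leq P\cap P^{v}$ with $P^{v}\neq P$ for every $v\in V^{\#}$; thus $P$ is not a TI-set and $C_G(y)\geq VP\nleq N_G(P)$, so the exceptional-character apparatus you invoke is unavailable, even though this $G$ is $3$-soluble of length one exactly as the theorem predicts. So what is missing is not routine bookkeeping: one must first reduce to, or otherwise dispose of, the non-TI configurations (for instance by a minimal-counterexample analysis involving $O_{p'}(G)$ and the components of $G/O_{p'}(G)$), and that reduction together with the ensuing character theory is precisely the substance of Smith and Tyrer's work.
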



\chapter{Character Theoretic Results and a 3-Local Recognition of $\alt(9)$}\label{chapter-Alt9}

Let $G$ be a finite group and let $J \leq G$ be  a $3$-subgroup of $G$.
Suppose that $J$ is elementary abelian of order 27 and $N_G(J)$ is isomorphic to the normalizer in
$\alt(9)$ of an elementary abelian subgroup of order $27$. Then we say that $G$ has a $3$-local
subgroup, $N_G(J)$, of $\alt(9)$-type. The theorem we  prove in this chapter is the following.

\begin{thmA}
Let $G$ be a finite group and suppose that $H$ is a $3$-local subgroup of
$G$ of $\alt(9)$-type. If $O_{3'}(C_G(x))=1$ for every element $x$ of
order three in $ H$ then $G=H$ or $G \cong \mathrm{Alt}(9)$.
\end{thmA}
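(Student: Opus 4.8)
The plan is to combine $3$-local analysis, character theory and a final $2$-local recognition. I would first record the internal structure of $H\cong 3^3.\sym(4)$. For $S\in\syl_3(H)$ one has $S\cong C_3\wr C_3$ of order $3^4$ with base group $J$, and $J$ meets the order-three elements in three $H$-classes, represented by a ``$3$-cycle type'' element $x$, a ``double'' $y$ and a ``triple'' $z$, where $Z:=\mathcal{Z}(S)=\langle z\rangle$ has order three. Since any element centralizing $J$ normalizes it, $C_G(J)\leq N_G(J)=H$ and so $C_G(J)=C_H(J)=J$; and because the only elementary abelian subgroup of order $27$ in $S$ is $J$ (any order-three element of $S\setminus J$ centralizes only $Z$ inside $J$), $J$ is characteristic in $S$, whence $N_G(S)\leq N_G(J)=H$ and $S\in\syl_3(G)$ with $N_G(S)=N_H(S)\cong S.2$ of order $162$. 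A computation of the action of $H/J\cong\sym(4)$ on $J$ shows that the eight triple elements form a single $H$-class, with $H/J$ permuting the four inverse-pairs $\{z',z'^{-1}\}$ as in its natural degree-four action, so $N_H(Z)/C_H(Z)\cong C_2$ and $C_H(z)=S$; in particular $z$ is inverted in $G$. If $J\trianglelefteq G$ then $G=N_G(J)=H$ and we are done, so from now on I assume $J\ntrianglelefteq G$ and aim to prove $G\cong\alt(9)$. Using $N_G(S)\leq H$, Alperin-style fusion control and the hypothesis $O_{3'}(C_G(z))=1$, I would then establish that $C_G(Z)=C_G(z)=S$ is a $3$-group and $N_G(Z)\cong S.2$, so that $C_G(Z)$ is nilpotent and $|N_G(Z):C_G(Z)|=2$.

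The heart of the argument is character-theoretic, because the full centralizer $C_G(x)$ of the $3$-cycle-type element is far larger than the portion $C_H(x)\cong 3\times(3^2{:}4)$ visible inside $H$. With $C_G(Z)$ nilpotent and $N_G(Z)/C_G(Z)\cong C_2$, I would invoke Suzuki's theory of special (exceptional) classes attached to $Z$: the inversion of $z$ yields a coherent family of exceptional characters, and column orthogonality on the $3$-singular classes produces numerical relations among $|C_G(x)|$, $|C_G(y)|$, $|C_G(z)|$ and the associated character degrees. Feeding in block-theoretic data for the principal $3$-block (its irreducible Brauer characters and decomposition numbers, computed with machine assistance) and using $O_{3'}(C_G(w))=1$ for each order-three $w$ to force the centralizers to be $3$-constrained, these relations pin down the remaining centralizer orders: a soluble double centralizer of order $54$, and crucially $|C_G(x)|=1080=|3\times\alt(6)|$. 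Higman's Theorem \ref{Higman Alt6} then identifies the relevant simple section of $C_G(x)$ as $\alt(6)$ (its Sylow $3$-subgroup being elementary abelian of order $9$ with self-centralizing order-three elements in more than one class), yielding $C_G(x)\cong 3\times\alt(6)$. The Smith--Tyrer and Feit--Thompson theorems (Theorems \ref{Smith-Tyrer} and \ref{Feit-Thompson}) are used along the way to constrain the soluble centralizers and the small $\{2,3\}$-sections that arise.

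With the $3$-centralizers determined I would turn to $2$-local analysis. A double-type element commutes with an involution of ``$2^4$ type'', so working inside the now-known centralizers I would locate an involution $s$ and compute $C_G(s)$, showing that $Q:=O_2(C_G(s))\cong 2_+^{1+4}$ is extraspecial of order $32$, that $C_G(s)/Q\cong\sym(3)$, that $C_Q(X)=\langle s\rangle$ for $X\in\syl_3(C_G(s))$, and---exploiting the $3$-fusion forced by $J\ntrianglelefteq G$---that $s$ is not weakly closed in $Q$ with respect to $G$. Aschbacher's Theorem \ref{Ascbacher M12} then forces $G$ to have shape $2^3.\PSL_3(2)$ or to be isomorphic to one of $\alt(8)$, $\alt(9)$ or $\mathrm{M}_{12}$. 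Of these, only $\alt(9)$ possesses an elementary abelian subgroup of order $27$: for $2^3.\PSL_3(2)$ and $\alt(8)$ one has $|G|_3\leq 9$, while a Sylow $3$-subgroup of $\mathrm{M}_{12}$ is extraspecial of order $27$ and hence contains no such subgroup. Thus the hypothesis forces $G\cong\alt(9)$, completing the proof.

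The principal obstacle is the middle, character-theoretic step: recovering the true orders and structures of the $3$-centralizers from the meagre data visible in $H$. Because $C_G(x)$ is non-soluble, the soluble-length reductions (as in Smith--Tyrer) do not finish the job, and since the defect group $S$ of the principal $3$-block is non-cyclic there is no Brauer-tree shortcut; one is obliged to run Suzuki's special-class coherence argument explicitly and to solve the resulting orthogonality/Diophantine system by computer in order to exclude spurious centralizer orders. A secondary difficulty is the $2$-local bootstrap, and in particular the verification that $s$ is not weakly closed in $Q$, which depends delicately on the $3$-fusion established in the first step.
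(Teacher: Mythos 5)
Your proposal follows essentially the same route as the paper: the same $3$-local analysis (giving $S\in\syl_3(G)$, $C_G(z)=S$, and fusion control via $J=J(S)$), the same character-theoretic core (Suzuki's special classes plus principal $3$-block and decomposition-number calculations, carried out partly by computer, to exclude the configurations in which $G\neq H$ yet the relevant order-three centralizers stay inside $H$), Higman's $\alt(6)$ theorem to identify $C_G(x)\cong 3\times\alt(6)$, and the same endgame ($C_G(s)$ with $O_2(C_G(s))\cong 2_+^{1+4}$ and quotient $\sym(3)$, $s$ not weakly closed, Aschbacher's theorem, then elimination of $2^3.\PSL_3(2)$, $\alt(8)$ and $\mathrm{M}_{12}$ by the Sylow $3$-structure). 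The only place your description inverts the paper's logic is the middle step: the Suzuki/block machinery cannot ``pin down'' $|C_G(x)|=1080$ directly, since structure constants involving the class of $x$ are computable from the character table of $H$ only under the hypothesis $C_G(x)\leq H$, so one must first obtain the dichotomy $C_G(x)\leq H$ or $C_G(x)\cong 3\times\alt(6)$ by local analysis together with Higman's theorem, and then use the character theory to rule out the branches with $C_G(x)\leq H$ --- which is precisely how the paper organizes its three cases.
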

There are two isomorphism types of groups $X$ with $X/O_3(X)\cong \sym(4)$ acting faithfully on an
elementary abelian subgroup $O_3(X)$ of order 27. Both are isomorphic to subgroups of $\sym(9)$
however only one embeds into $\alt(9)$ which is the type we consider in Theorem A. In
\cite{PrincePSp43}, Prince characterizes $\PSp_4(3)$ which has a $3$-local subgroup with the same
shape, $3^3:\sym(4)$, as $H$ but with a different isomorphism type. Furthermore some of the methods
used in the proof of Theorem A date back to Prince and to Higman's odd characterizations.
Higman characterized the nine finite simple groups with smallest order which have more than one
conjugacy class of elements of order three and he lists these in \cite{Higman-plocalconditions}. He
characterized the groups assuming the order of each $3$-centralizer. Most of these calculations
were never published but his methods would certainly have involved detailed character calculations,
in particular, the Suzuki method and possibly arguments involving blocks of characters. It is
likely he would have calculated part of the character table and then used calculations involving
structure constants to obtain an upper bound for the group order. These methods are particularly relevant when the $3$-structure of a
group is small. In such situations local group theoretic arguments become very difficult because it
is often not possible to control the size and the structure of an involution centralizer and
therefore one has no control of the group order. Character theory may however allow calculation of
an upper bound for the group order and therefore come to the rescue when local methods fail.

The proof of Theorem A uses a combination of local and character theoretic methods. We briefly
describe Suzuki's theory of special classes and define some necessary $p$-block theory. We then
begin to work under the hypothesis of Theorem A. This leads us to three possibilities for the
$3$-centralizer structure of $G$ and we consider each case separately. The first case describes a
situation when, in some sense, $H=N_G(J)$ has full control of the $3$-centralizer structure of $G$
(Hypothesis \ref{case 1 hyp}). An argument involving Suzuki's theory of special classes proves that
$G=H$. We only calculate the part of the table which is necessary and so the calculations are
somewhat delicate. In the second case we consider the possibility that the $3$-centralizers of $G$
are small but $H$ does not control the $3$-structure (Hypothesis \ref{case 2 hyp}). We require some
detailed calculations involving blocks of characters to reach a contradiction. The character
calculations involve using the character table of $H$ to calculate part of the character table of
$G$. From here structure constants are calculated and an upper bound for $|G|$ is found. The
complexity of the calculation is far greater than in the first case. Finally, in the third case we
recognize $\alt(9)$. A calculation of Higman's to recognize $\alt(6)$ using the Suzuki method
allows us to see that $G$ has a $3$-centralizer isomorphic to $3 \times \alt(6)$ (Hypothesis
\ref{case 3 hyp}). It is then possible to use local group theory to determine the structure of an
involution centralizer. We may then recognize $\alt(9)$ using a recent theorem of Aschbacher
\cite{AschbacherM12}.

It is likely that Higman used character calculations to recognize
$\alt(9)$ from the order of each $3$-centralizer. We note that the
block character theoretic methods used in Case 2 were extended in Case 3 to repeat Higman's calculation of part of the
character table of $\alt(9)$. From here an upper bound for the
group order was  obtained. However, this leaves the difficulty of
recognizing $\alt(9)$ from its group order and knowledge of the
$3$-structure. Thus we present here the local arguments only.

We may deduce a corollary concerning strongly $3$-embedded groups immediately from Theorem A. Recall
that a subgroup $H$ of a group $G$ is strongly $p$-embedded ($p$ a prime divisor of $|H|$) if $p\mid |H|$, $H
\neq G$ and $p \nmid |H \cap H^g|$ for all $g \in G \bs H$.

\begin{cor}
If $G$ and $H$ satisfy the hypothesis of Theorem A then $H$ cannot be a strongly $3$-embedded
subgroup of $G$.
\end{cor}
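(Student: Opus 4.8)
The plan is to feed the dichotomy of Theorem A into the elementary observation that a strongly $3$-embedded subgroup must contain the centralizer of each of its nontrivial $3$-elements, and then to exhibit a $3$-element of $J$ whose centralizer is visibly too large to lie in $H$.

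First I would argue by contradiction: suppose $H$ is strongly $3$-embedded in $G$. By the definition, $3\mid |H|$ and $H\neq G$, so Theorem A leaves only the possibility $G\cong\alt(9)$. Next I would record the general principle used throughout the paper: if $H$ is strongly $3$-embedded and $1\neq x\in H$ is a $3$-element, then for any $g\in N_G(\<x\>)$ we have $\<x\>=\<x\>^g\leq H\cap H^g$, so $3\mid |H\cap H^g|$ and hence $g\in H$; thus $N_G(\<x\>)\leq H$ and in particular $C_G(x)\leq H$. Since being strongly $3$-embedded is invariant under $G$-conjugacy, I may replace $J$ by any convenient conjugate.

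The contradiction then comes from a concrete element of $J$ in $G\cong\alt(9)$. Since $H\sim 3^3.\sym(4)$ we have $|H|=27\cdot 24=2^3\cdot 3^4$, so $H$ is a $\{2,3\}$-group. Up to conjugacy I would take $J=\<(1,2,3),(4,5,6),(7,8,9)\>$, which is elementary abelian of order $27$ with $N_G(J)$ of the required $\alt(9)$-type and which manifestly contains the $3$-cycle $x=(1,2,3)$; as $x\in J\leq H$, the previous paragraph forces $C_G(x)\leq H$. However $x$ is not $3$-central, and $C_G(x)=C_{\alt(9)}((1,2,3))\cong 3\times\alt(6)$ has order $1080=2^3\cdot 3^3\cdot 5$, which is divisible by $5$, whereas $5\nmid |H|$. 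Hence $C_G(x)\not\leq H$, the desired contradiction; so $H$ cannot be strongly $3$-embedded.

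The only step needing any care is the identification inside $\alt(9)$: one must confirm that the $\alt(9)$-type of $N_G(J)$ pins $J$ down (up to conjugacy) to the ``diagonal'' $3^3$ generated by three disjoint $3$-cycles, so that $J$ genuinely contains ordinary $3$-cycles with centralizer $3\times\alt(6)$. This is routine and {\sc Atlas}-checkable, and it is reassuring that the offending centralizer $3\times\alt(6)$ is precisely the $3$-centralizer isolated in Case~$3$ (Hypothesis \ref{case 3 hyp}) of the proof of Theorem~A; there is no genuine obstacle beyond this bookkeeping.
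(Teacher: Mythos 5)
Your proof is correct, and it is essentially the argument the paper intends: the paper gives no explicit proof, asserting the corollary is "immediate" from Theorem A, and your argument is exactly that deduction made explicit. Namely, strong $3$-embedding forces $H\neq G$, so Theorem A gives $G\cong\alt(9)$; then the standard fact $C_G(x)\leq H$ for $3$-elements $x\in H$ collides with the $3$-cycle centralizer $3\times\alt(6)$ (order divisible by $5$, while $|H|=2^3\cdot 3^4$), which is the contradiction.
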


Character notation follows \cite{Collins-CT} and \cite{Isaacs} in particular for a character $\chi$
of a  group $H \leq G$ the induced character is labeled $\chi^G$. If $\chi$, $\psi$ are two
characters of a group $G$ then $(\chi,\psi)_G$ is the inner product in $G$ and equals
$\frac{1}{|G|}\sum_{g \in G}\chi(g)\bar{\psi(g)}$.

\section{Preliminary Results}\label{CharacterTheory-prelims}

We will apply Theorem A in Chapter \ref{Chapter-HN}. To do so we need the following lemma which gives conditions which guarantee that a group is isomorphic to a $3$-local subgroup of $\alt(9)$-type.
\begin{lemma}\label{Alt9 prelims 3cubedsym4}
Let $H$ be a group of order $3^42^3$ with $S \in \syl_3(H)$. Suppose the following hold.
\begin{enumerate}[$(i)$]
 \item  $J\trianglelefteq H$ is elementary abelian of order 27.
 \item $Z:=\mathcal{Z}(S)$ has order three with $C_H(Z)=S$.
 \item There is an involution $t\in N_H(S)$ such that $C_J(t)\neq 1$.
\end{enumerate}
Then $H$ is isomorphic to a $3$-local subgroup of $\alt(9)$-type.
\end{lemma}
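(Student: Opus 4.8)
My plan is to pin down the isomorphism type of $H$ in four stages: fix the Sylow $3$-structure, prove $C_H(J)=J$, identify $\bar H:=H/J\cong\sym(4)$ together with its action on $J$, and finally split the extension. Since $J\trianglelefteq H$ is a $3$-group it lies in $S$, and $[S:J]=3$. As $J$ is abelian, $J\le C_S(J)$; were $C_S(J)=S$ we would get $J\le\mathcal{Z}(S)$, contradicting $|J|=27>3=|Z|$, so $C_S(J)=J$ and $S/J$ acts faithfully on $J$. Because $\mathcal{Z}(S)\le C_S(J)=J$, an element of $J$ is $S$-central exactly when it is fixed by a generator $s$ of $S/J$; thus $Z=C_J(s)$ has order $3$ and $s$ acts on $J\cong\GF(3)^3$ as a single Jordan block (a regular unipotent element), so that $J$ is a free $\GF(3)\langle s\rangle$-module. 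Setting $K=C_H(J)$, I note $J\le\mathcal{Z}(K)$ and $K/J$ is a $2$-group (as $|H|=3^4\cdot 2^3$ and $S\cap K=C_S(J)=J$), whence $K$ is nilpotent and $K=J\times O_2(K)$. But $O_2(K)$ centralises $J\ge Z$, so by $(ii)$ it lies in $C_H(Z)=S$, a $3$-group, and is therefore trivial. Hence $C_H(J)=J$ and $\bar H$ embeds faithfully in $\aut(J)=\GL_3(3)$, with $|\bar H|=24$.

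Next I would identify $\bar H$. The centraliser in $\GL_3(3)$ of the regular unipotent $\bar s$ is the unit group of $\GF(3)[\bar s]\cong\GF(3)[x]/(x-1)^3$, which is abelian of order $18$ with a unique involution, namely $-I$; so $N_{\GL_3(3)}(\bar S)$ has order at most $36$, forcing $|N_{\bar H}(\bar S)|$ to divide $\gcd(36,24)=12$. Since $24\nmid 12$, $\bar S$ is not normal in $\bar H$, and as $n_3(\bar H)$ divides $8$ and is $\equiv 1\pmod 3$ we get $n_3(\bar H)=4$. A group of order $24$ with four Sylow $3$-subgroups is isomorphic to $\sym(4)$, $\SL_2(3)$ or $\alt(4)\times 2$. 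Now $\SL_2(3)$ has no faithful $3$-dimensional $\GF(3)$-module on which an order-three element is regular unipotent: its central involution acts semisimply and nontrivially, so the module splits as (natural)$\,\oplus\,$(trivial) and the order-three element has Jordan type $(2,1)$. And in $\alt(4)\times 2$ the central factor centralises $\bar s$, so it coincides with the unique involution $-I$ of $C_{\GL_3(3)}(\bar s)$; the involution $\bar t$ arising from the $t$ of $(iii)$ lies in $N_{\bar H}(\bar S)$, which here is cyclic of order $6$, so $\bar t=-I$ and $C_J(t)=C_J(-I)=1$, contradicting $(iii)$. Therefore $\bar H\cong\sym(4)$.

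There are exactly two faithful $3$-dimensional $\GF(3)\sym(4)$-modules, the standard module and its twist by the sign character, giving the two isomorphism types of $3^3:\sym(4)$. To choose between them I would use $(ii)$: the full preimage of $C_{\bar H}(Z)$ is $C_H(Z)=S$, so $C_{\bar H}(Z)=\bar S$ has order $3$. In the standard module the stabiliser in $\sym(4)$ of the fixed line $Z$ of $\bar s$ is a full $\sym(3)$ of order $6$, violating $(ii)$, whereas in the twisted module it is exactly $\bar S$; hence $J$ is the twisted module. Finally, one checks directly that $S$ splits over $J$ (equivalently that $J$ is free over $\langle\bar s\rangle$), so by Gasch\"utz (Theorem \ref{Gaschutz}) $J$ has a complement in $H$ and $H\cong J\rtimes\sym(4)$ with the twisted action. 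Since $N_{\alt(9)}(J_0)$, for $J_0$ elementary abelian of order $27$, itself satisfies $(i)$–$(iii)$ and is one of the two split types, it must be the twisted one; therefore $H\cong N_{\alt(9)}(J_0)$ is of $\alt(9)$-type.

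The heart of the argument, and the main obstacle, is recognising that hypotheses $(ii)$ and $(iii)$ are exactly what separate the surviving case from its competitors: $(iii)$ eliminates $\alt(4)\times 2$, where the relevant involution acts as $-I$ and so fixes nothing on $J$, while $(ii)$ selects the twisted $\sym(4)$-module — the $\alt(9)$-type — over the standard one by forcing the fixer of $Z$ to have order $3$ rather than $6$. The supporting facts (the centraliser of a regular unipotent in $\GL_3(3)$ has order $18$, and the two fixed-point computations in the standard and twisted modules) are routine but carry all the case-specific content.
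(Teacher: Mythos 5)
Your proof is correct, and it takes a genuinely different route from the paper's. The paper never analyses $H/J$ abstractly: it manufactures a core-free subgroup $X=WT$ of index $9$ (where $W\le J$ has order $9$ and is invariant under $T\in\syl_2(H)$, shown to be non-normal and to avoid $Z$ using hypotheses $(ii)$ and $(iii)$), embeds $H$ into $\sym(9)$ — in fact into $N_{\sym(9)}(\langle(123),(456),(789)\rangle)$ — and then compares $H$ with three explicit permutation groups $A$, $B$, $C$ of the right order, discarding $B$ by $(ii)$ and $C$ by $(iii)$. You instead classify the abstract possibilities: $\bar H=H/J$ is one of the three groups of order $24$ with four Sylow $3$-subgroups; $\SL_2(3)$ dies because its only faithful $3$-dimensional $\GF(3)$-module is natural$\,\oplus\,$trivial, which has no regular unipotent element of order three; $\alt(4)\times 2$ dies by $(iii)$ since its relevant involution must act as $-I$; the standard $\sym(4)$-module dies by $(ii)$; and freeness of $J$ over $\GF(3)\langle\bar s\rangle$ plus Gasch\"utz splits the extension, so $H$ is the unique split extension by the twisted module, which is $N_{\alt(9)}(J_0)$. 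The two case analyses in fact correspond exactly — the paper's $B$ satisfies $B/J\cong\sym(4)$ acting via the standard module, and $C/J\cong 2\times\alt(4)$ — so both proofs spend $(ii)$ and $(iii)$ at the same places. What the paper's embedding buys is freedom from module theory and from the extension problem: $\SL_2(3)$ never arises and no splitting argument is needed, since the isomorphism type is read off inside $\sym(9)$. What your route buys is independence from explicit permutation computations and a transparent accounting of which hypothesis excludes which configuration, at the cost of three routine facts you assert without proof (the classification of order-$24$ groups with $n_3=4$, the fact that $\sym(4)$ has exactly two faithful $3$-dimensional $\GF(3)$-modules, and that $N_{\alt(9)}(J_0)$ itself satisfies $(i)$--$(iii)$). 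Two small repairs to your write-up: in eliminating the standard module you must argue with the pointwise stabiliser (centraliser) of $Z$, which is what $(ii)$ controls — the setwise line stabiliser is a full $\sym(3)$ in \emph{both} modules, so it does not distinguish them; and the $\gcd(36,24)$ step is better phrased as: $\bar S\trianglelefteq\bar H$ would force $\bar H\le N_{\GL_3(3)}(\bar S)$, whose order divides $36$, contradicting $|\bar H|=24$.
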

\begin{proof}
Let $T \in \syl_2(H)$. First observe that  since $J\vartriangleleft S$, $Z=\mathcal{Z}(S)\leq J$.
Therefore, $C_H(J)\leq C_H(Z)=S$ and since $J$ is not central in $S$, $C_H(J)=J$ and so $H/J$ acts
faithfully on $J$. Hence $H/J$ is isomorphic to a subgroup of $\GL_3(3)$. By Sylow's Theorem, $H$
has four Sylow $3$-subgroups or $S\trianglelefteq H$. However, if $S\trianglelefteq H$ then
$\mathcal{Z}(S)\trianglelefteq H$ and then $C_H(Z)$ would not be a three group. Thus $H$ has four
Sylow $3$-subgroups.

If $V$ is a $\GF(3)$-vector space of dimension three then we observe that a Sylow $2$-subgroup of $\GL(V)$ has order $2^5$. The normalizer in $\GL(V)$ of a subspace of dimension two is isomorphic to $2 \times \GL_2(3)$. Notice that this normalizer contains a Sylow $2$-subgroup of $\GL(V)$. We conclude from this that $T\in \syl_2(H)$ preserves a subgroup of $J$ of order nine. Let $W$ be such a  $T$-invariant subgroup of $J$ of order nine. Suppose that $W\trianglelefteq H$. Then $W \cap S\trianglelefteq S$ so $Z \leq W$. We see that $C_H(W)=J$ since no involution centralizes $Z \leq W$ and $|\mathcal{Z}(S)|=3$. Thus, $H/J$ is isomorphic to a subgroup of $\GL_2(3)$ and then it follows from $|H|$ that $H/J\cong \SL_2(3)$.
Now, by hypothesis, an involution $t$ normalizes $S$ with $C_J(t)\neq 1$. Hence $Jt\in \mathcal{Z}(H/J)$ and $Jt$ inverts $W$. Moreover, $|C_J(t)|=3$. We have that $C_H(t)\sim 3.\SL_2(3)$ and furthermore we have that $H=JC_H(t)$. Therefore $C_J(t)\trianglelefteq \<J,C_H(t)\>=H$. This is a contradiction since $C_J(t)\trianglelefteq H$ implies that $Z \leq C_J(t)$. Thus $W\ntrianglelefteq H$.

Now suppose that $Z \leq W$. We have seen that $H$ has four Sylow $3$-subgroups and these are permuted by $T$. Therefore, $T$ permutes the centres of the Sylow $3$-subgroups. Since $T$ normalizes $W$, we have that $W=\bigcup Z^H$. However this implies that $W\trianglelefteq H$ which is not the case. So $Z \nleq W$. This implies that no proper non-trivial subgroup of $W$ is normal in $H$ for if it were then this normal subgroup would contain $Z$.

Now we set $X:=WT$. Then $[H:X]=9$.
Consider the core of $X$ in $H$, $K:=\bigcap_{g \in H}X^g\vartriangleleft H$. Then $K \cap J\leq
W$ and $K \cap J\trianglelefteq H$. Therefore $K \cap J=1$ and so $K$ is a $2$-group. However now we have $[K,J]\leq K \cap J=1$
and so $K \leq C_H(J)=J$. Thus $K=1$. Hence $H$ is isomorphic to a subgroup of $\sym(9)$. Furthermore, $H$
embeds into the normalizer in $\sym(9)$ of $\<(1,2,3),(4,5,6),(7,8,9)\>$. There are three possible isomorphism types for $H$ and only one of these is contained in $\alt(9)$. The three subgroups are

\[A=\langle (123),(456),(789),(147)(258)(369),(23)(89), (14)(25)(36)(78)\rangle;\]
\[B=\langle
(123),(456),(789),(147)(258)(369),(23)(89), (14)(25)(36)\rangle;\] and
\[C=\langle
(123),(456),(789),(147)(258)(369),(23)(56)(89), (23)(89)\rangle.\]

Notice that $H\ncong B$ since we calculate that the centralizer of a $3$-central element of order three in $B$ is not a $3$-group (because $(14)(25)(36)$ centralizes $(123)(456)(789)$).
Notice also that by hypothesis, an involution $t$ in $N_H(S)$, has non-trivial centralizer on $J$. However an involution in $C$ which normalizes a Sylow $3$-subgroup of $C$ acts fixed-point-freely on $C$ (the involution $(23)(56)(89)$). Thus we conclude that $H \cong A$ is isomorphic to a subgroup of $\alt(9)$.
\end{proof}

\begin{lemma}\label{self-normalizing extraspecial 3^3}
Let $G$ be a group and $S \in \syl_3(G)$ such that $S \cong
3_+^{1+2}$ and $N_G(\mathcal{Z}(S))=S$. Then $G=O_{3'}(G)S$. In particular, if
$O_{3'}(G)=1$ then $G=S$.
\end{lemma}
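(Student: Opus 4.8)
The plan is to show that $G$ is $3$-nilpotent, i.e. that $G$ possesses a normal $3$-complement. A normal $3$-complement is necessarily $O_{3'}(G)$ (any normal $3'$-subgroup maps trivially into the $3$-group $G/K$, hence lies in $K$) and satisfies $G=O_{3'}(G)S$, so this immediately gives the main assertion, and the ``in particular'' clause follows by specialising to $O_{3'}(G)=1$. To produce the normal $3$-complement I would invoke Thompson's normal $p$-complement theorem for the odd prime $p=3$: if both $C_G(\mathcal{Z}(S))$ and $N_G(J(S))$ are $3$-nilpotent, then so is $G$. The strategy is therefore to identify these two subgroups explicitly and observe that the hypothesis $N_G(\mathcal{Z}(S))=S$ forces each of them to collapse onto the $3$-group $S$, which is trivially $3$-nilpotent.

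First I would pin down $C_G(\mathcal{Z}(S))$. Writing $Z=\mathcal{Z}(S)$, we have $C_G(Z)\leq N_G(Z)=N_G(\mathcal{Z}(S))=S$, while $Z\leq\mathcal{Z}(S)$ gives $S\leq C_G(Z)$; hence $C_G(\mathcal{Z}(S))=S$. Next, since $\mathcal{Z}(S)$ is characteristic in $S$, any element normalising $S$ normalises $\mathcal{Z}(S)$, so $N_G(S)\leq N_G(\mathcal{Z}(S))=S$ and thus $N_G(S)=S$. It then remains to compute the Thompson subgroup $J(S)$. As $S\cong 3_+^{1+2}$ has order $27$ and exponent $3$, its maximal subgroups have order $9$, are elementary abelian (hence of maximal possible order among abelian subgroups of $S$, since $S$ itself is non-abelian), and each contains $\Phi(S)=\mathcal{Z}(S)$; any two distinct such subgroups already generate $S$. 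Therefore $J(S)=S$, and consequently $N_G(J(S))=N_G(S)=S$.

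Having shown $C_G(\mathcal{Z}(S))=N_G(J(S))=S$, both subgroups are $3$-groups and so are $3$-nilpotent; Thompson's normal $p$-complement theorem then yields that $G$ is $3$-nilpotent, that is $G=O_{3'}(G)S$, and if $O_{3'}(G)=1$ this reads $G=S$. I expect no genuinely hard step once the correct tool is selected: the only real content is the two local identifications above, and the one mildly non-formal point is the computation $J(S)=S$, which rests on the elementary structure of $3_+^{1+2}$. The main obstacle is really just recognising that the $p$-nilpotence criterion applies; for contrast, one could instead attempt a transfer argument, using Gr\"{u}n's theorem (Theorem \ref{GrunsThm}) to control $S\cap G'$ through the conjugates $Z^g$ that land in $S$, but bridging from information about the abelian $3$-quotient to a full normal $3$-complement is precisely where that route becomes delicate, so the normal $p$-complement theorem is the cleaner choice.
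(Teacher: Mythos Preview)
Your proof is correct but follows a genuinely different route from the paper. You invoke Thompson's normal $p$-complement theorem after the quick identifications $C_G(\mathcal{Z}(S))=N_G(J(S))=S$; the paper instead establishes a fusion fact---no subgroup $Y\leq S$ with $Y\neq Z$ is $G$-conjugate to $Z$ (arguing that otherwise $\langle S,S^g\rangle/\langle Z,Y\rangle\cong\SL_2(3)$ would force an involution into $N_G(S)=S$)---and feeds this into Gr\"un's theorem to get $S\cap G'=Z$, then applies Burnside's normal $3$-complement theorem to $G'$ and finishes with a Frattini argument.

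Your approach is shorter and sidesteps the fusion computation entirely, but it imports Thompson's theorem, which is substantially deeper than the tools the paper has assembled in its preliminaries (only Burnside and Gr\"un appear there). The paper's route stays within that lighter toolkit at the cost of the extra fusion step. Amusingly, you flag the Gr\"un route as ``delicate'' in bridging from the abelian $3$-quotient to the full complement; the paper shows this bridge is actually short---once $Z\in\syl_3(G')$, Burnside gives the complement in $G'$ immediately, and Frattini lifts it to $G$---so the transfer approach is more tractable than you anticipated.
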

\begin{proof}
Let $Z:=\mathcal{Z}(S)$. Note that $N_G(S)\leq N_G(Z)$ implies that $S=N_G(S)$. Suppose $Z \neq
Y\leq S$ with $Y^g=Z$ for some $g \in G$. Then $\<S,S^g\> \leq N_G(\<Z,Y\>)$ and $\<Z,Y\>$ is
self-centralizing so $\<S,S^g\>/\<Z,Y\>$ is isomorphic to $\SL_2(3)$. Therefore $N_G(S)$ contains
an involution which is a contradiction. Thus no subgroup of $S$ distinct from $Z$ is conjugate to
$Z$. Now by Gr\"{u}n's Theorem (Theorem \ref{GrunsThm}), $S \cap G'=\<N_G(S)',S \cap P'|P \in
\syl_3(G)\>$. Since $N_G(S)'=S'=Z$ and $S \cap P'\leq Z$ for any Sylow $3$-subgroup $P$ of $G$, $S
\cap G'=Z$. So $Z \in \syl_3(G')$ and $Z$ is necessarily self normalizing in $G'$. Thus $G'$ has a
normal $3$-complement (by Burnside's normal $p$-complement Theorem) $N=O_{3'}(G')$ such that
$G'=NZ$. Now, by a Lemma \ref{frattini} (Frattini), $G=G'N_G(Z)=NZN_G(Z)=NZ S=NS$ and clearly
$N=O_{3'}(G)$.
\end{proof}

The following observation allows
the calculation of structure constants in a group in which all
$3$-centralizers are known. It is stated as a theorem and proved in
\cite{FeitThompson} however it was probably known well before this.

\begin{lemma}\label{FT 333 group}
Let $X$ be a group with elements $a$, $b$ and $c$ each of order $3$ such that $c=ab$ and
$X:=\<a,b\>$. Then $X$ has an abelian normal subgroup of index $3$.
\end{lemma}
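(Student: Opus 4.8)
The plan is to exhibit the required subgroup explicitly and verify its properties by direct computation, avoiding any appeal to a presentation. I would set $q:=a^{-1}b$ and take $N:=\langle q,\,q^{a}\rangle$; the goal is to show $N$ is abelian, normal in $X$, and of index $3$ (with one degenerate case treated separately). Everything is driven by a single relation extracted from $(ab)^3=1$: removing the trailing $b$ from $ababab=1$ gives
\[ ababa=b^{-1}, \]
and left-multiplying by $a^{-1}$ yields the handy form $baba=a^{-1}b^{-1}$. This one identity (together with $a^3=b^3=1$) suffices to collapse every short alternating word in $a$ and $b$ that occurs below, and it is precisely here that the hypothesis ``$ab$ has order three'' enters.

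First I would record the conjugates of $q$ under $\langle a\rangle$: using $a^{-1}=a^2$ one gets $q^{a}=aba$ and $q^{a^2}=ba^{-1}$. The crux is the claim that $q$ and $q^{a}$ commute, so that $N=\langle q,q^a\rangle$ is abelian. This is a one-line check from the relation above: on the one hand $q\,q^{a}=(a^{-1}b)(aba)=a^{-1}(baba)=a^{-1}\!\cdot a^{-1}b^{-1}=ab^{2}$, and on the other $q^{a}\,q=(aba)(a^{-1}b)=ab^{2}$ directly, whence $[q,q^{a}]=1$. I expect this commutation identity to be the only genuine obstacle; the remaining steps are bookkeeping.

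To finish, I would note $q\,q^{a}\,q^{a^2}=(ab^{2})(ba^{-1})=ab^{3}a^{-1}=1$, so $q^{a^2}=(q\,q^{a})^{-1}\in N$; hence $q^{a},q^{a^2}\in N$ and (since $a^{3}=1$) the cyclic group $\langle a\rangle$ normalises $N$. As $b=aq$ with $q\in N$ and $N$ abelian, conjugation by $b$ equals conjugation by $a$ followed by conjugation by $q\in N$, so $N^{b}=N$ as well; thus $N\trianglelefteq X$. Moreover $X=\langle a,b\rangle=\langle a\rangle N$, so $X/N$ is cyclic, generated by $aN$, of order dividing $3$, giving $[X:N]\in\{1,3\}$. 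If $[X:N]=3$ we are done. The equality $[X:N]=1$ forces $a\in N$, hence $b=aq\in N$ and $X=N=\langle a\rangle\cong C_{3}$; in that single degenerate case the trivial subgroup is an abelian normal subgroup of index $3$. This completes the argument in all cases.
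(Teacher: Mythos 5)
Your proof is correct, but it cannot "follow the paper's route" because the paper has none: the thesis states this lemma without proof, justifying it only by the citation to Feit--Thompson and the remark that it "was probably known well before this." What you supply is the standard self-contained verification, which makes the statement independent of that reference. The engine of your argument --- extracting $baba=a^{-1}b^{-1}$ from $(ab)^3=1$ and checking that the $\langle a\rangle$-conjugates $q$, $q^{a}$, $q^{a^{2}}$ of $q=a^{-1}b$ commute pairwise and have product $1$ --- is exactly the computation underlying the classical fact that $\langle a,b\mid a^{3}=b^{3}=(ab)^{3}=1\rangle\cong\mathbb{Z}^{2}\rtimes C_{3}$, with your $N=\langle q,q^{a}\rangle$ playing the role of the translation subgroup; it also has the merit of using no finiteness hypothesis, matching the generality of the statement as written. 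One line deserves a patch: in the degenerate case $[X:N]=1$ you conclude $X=N=\langle a\rangle\cong C_{3}$ from $a,b\in N$ alone, but that by itself does not exclude $X\cong C_{3}\times C_{3}$; the missing observation is that $X=N$ is abelian, so $q^{a}=q$ and $N=\langle q\rangle$ is cyclic, whence $N=\langle a\rangle$ because $a\in N$ has order $3$. (In fact this case is vacuous: $b\in\langle a\rangle$ together with $|ab|=3$ forces $b=a$, so $q=1$ and $N=1$ already has index $3$.) Since the repair is immediate, the proof stands as a complete and more informative substitute for the paper's citation.
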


\begin{lemma}\label{triangle group}
Let $G$ be a group with elements $a$, $b$ and $c$ each of order
three such that $ab=c$ and such that $\<a\>$, $\<b\>$ and $\<c\>$
are not all $G$-conjugate. Then there exists an element of order
three $z \in X=\<a,b\>$ such that $X \leq C_G(z)$.
\end{lemma}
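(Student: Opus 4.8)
The plan is to build directly on Lemma \ref{FT 333 group}, which supplies an abelian normal subgroup $A$ of index $3$ in $X=\<a,b\>$. The key observation is that the desired conclusion $X\le C_G(z)$ is equivalent to $z\in\mathcal{Z}(X)$, so it suffices to produce an element of order three in the centre of $X$; the conjugacy hypothesis will be used only to eliminate the single degenerate configuration in which no such element exists. Note also that if $\<a\>,\<b\>,\<c\>$ were all conjugate in $X\le G$ they would be all conjugate in $G$, so it is enough to contradict $X$-conjugacy.

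First I would record the coprime structure. Since $A$ is abelian, its Hall $3'$-subgroup $C:=O_{3'}(A)$ is characteristic in $A$ and hence normal in $X$; an order count using $|X|=3|A|$ shows $C$ is the normal Hall $3'$-subgroup of $X$, so $C=O_{3'}(X)$. Fixing $P\in\syl_3(X)$ then gives $X=CP$ with $C\cap P=1$. The object to focus on is $K:=C_P(C)$. Because $C_X(C)\trianglelefteq X$ and splits as $C\times K$ (each element of $C_X(C)$ decomposes into its $3'$-part in $C$ and a $3$-part necessarily centralizing $C$), the subgroup $K=O_3(C_X(C))$ is characteristic in $C_X(C)$ and therefore $K\trianglelefteq X$.

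The argument then splits according to whether $K$ is trivial. If $K\neq 1$, then $K\trianglelefteq P$ forces $K\cap\mathcal{Z}(P)\neq 1$, and any element $z$ of order three in this intersection centralizes both $C$ (as $z\in K$) and $P$ (as $z\in\mathcal{Z}(P)$), hence centralizes $X=CP$; this $z$ is the required element. If instead $K=1$, then $P$ acts faithfully on $C$, and I would argue that this collapses the whole configuration: the subgroup $O_3(A)$ is characteristic in $A$, hence normal in $X$ and contained in every Sylow $3$-subgroup, so $O_3(A)\le P$, and since $O_3(A)\le A$ is abelian it centralizes $C$, giving $O_3(A)\le K=1$. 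Thus $A=C$ and $|P|=3$, so each of $\<a\>,\<b\>,\<c\>$ is a full Sylow $3$-subgroup of $X$; by Sylow's Theorem they are all $X$-conjugate, hence all $G$-conjugate, contradicting the hypothesis.

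The substantive point behind this dichotomy is the equivalence that the absence of a central element of order three is exactly the condition that $P$ act faithfully on $O_{3'}(X)$. The main obstacle is therefore recognizing that such faithfulness squeezes $X$ down to a group of order $3\,|C|$ in which every subgroup of order three is a Sylow subgroup, so that Sylow conjugacy applies; this is precisely the Frobenius-like situation that the non-conjugacy assumption forbids. Once this is isolated, the remaining steps are routine group-order bookkeeping and the standard fact that a nontrivial normal subgroup of a $3$-group meets the centre.
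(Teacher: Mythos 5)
Your proof is correct, and it rests on the same two pillars as the paper's: Lemma \ref{FT 333 group} together with the fact that a non-trivial normal subgroup of a $3$-group meets its centre, with the non-conjugacy hypothesis entering via Sylow's Theorem. The organization, however, is genuinely different. The paper uses the hypothesis at the very start: if a Sylow $3$-subgroup $S$ of $X$ had order $3$, then $\<a\>$, $\<b\>$ and $\<c\>$ would all be Sylow $3$-subgroups of $X$ and hence all conjugate (this is exactly your closing argument), so $|S|\geq 9$; consequently $1 \neq S \cap N \trianglelefteq S$, and any element $z$ of order three in $\mathcal{Z}(S)\cap N$ centralizes $\<S,N\>=X$ in one stroke, because $N$ is abelian. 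Your route instead centralizes the pair $(C,P)$ with $C=O_{3'}(X)$ and $P \in \syl_3(X)$, rather than the pair $(N,S)$, and this is what forces you to manufacture the normal subgroup $K=C_P(C)$ and to split into cases according to whether $K=1$. That is all correct, but note that your splitting $C_X(C)=C\times K$ is the one compressed step: knowing that the $3$-part of an element of $C_X(C)$ centralizes $C$ does not by itself place that $3$-part in $P$; you need either that $C_X(C)$ is nilpotent (its centre contains its Hall $3'$-subgroup $C$, and $C_X(C)/C$ is a $3$-group), or, more simply, Lemma \ref{dedekind}, which gives $C_X(C)=CP\cap C_X(C)=C(P\cap C_X(C))=C\times K$ in one line. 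What your version buys is the explicit dichotomy that a central element of order three exists unless $P$ acts faithfully on $O_{3'}(X)$; what the paper's version buys is brevity, since working with the abelian normal subgroup $N$ itself makes the Hall decomposition and the case split unnecessary.
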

\begin{proof}
By Lemma \ref{FT 333 group}, $X$ has an abelian normal subgroup of index three, $N$ say. Let $S \in
\syl_3(X)$. Since $\<a\>$, $\<b\>$ and $\<c\>$ are not all conjugate, $|S|\geq 9$. Therefore $1\neq
S \cap N \trianglelefteq S$ and so $\mathcal{Z}(S) \cap N \neq 1$. Choose $z \in \mathcal{Z}(S) \cap N$ of order three
then $z $ commutes with $\<S,N\>=X$ and therefore $X \leq C_G(z)$.
\end{proof}

Given a group $F$ and elements $x$, $y$, $z$ in $F$, $a^F_{xyz}$
denotes the number of pairs $(a,b) \in x^F\times y^F$ such that
$ab=z$. This integer is called a structure constant and may be
calculated from the character table of $F$ using the formula:
\[a^F_{xyz}= \frac{|F|}{|C_F(x)||C_F(y)|} \sum_{\chi \in
\mathrm{Irr}(F)} \frac{\chi(x) \chi(y)\bar{\chi(z)}}{\chi(1)}.\] We
introduce the  notation:
\[\a^F_{xyz}:= \sum_{\chi \in
\mathrm{Irr}(F)} \frac{\chi(x) \chi(y)\bar{\chi(z)}}{\chi(1)}.\]
Therefore $\a^F_{xyz}=a^F_{xyz}\frac{|C_F(x)||C_F(y)|}{|F|}$.

\subsection{Suzuki's Theory of Special Classes}\label{suzuki special classes}

We now present Suzuki's definition of special classes and the
Suzuki method. See \cite{Collins-CT} and \cite{Curtis-Reiner} for
further details.

\begin{defi}\label{defi-special classes}
Let $G$ be a group and $H$ a subgroup of $G$. Suppose that
$\mathcal{C}=\bigcup_{i=1}^n\mathcal{C}_i$ is a union of $H$-conjugacy classes of $H$. Then
$\mathcal{C}$  is called a set of special classes in $H$ provided the following hold:
\begin{enumerate}[$(i)$]
\item $C_G(h)\leq H$ for all $h \in \mathcal{C}$;
\item $\mathcal{C}_i^G \cap \mathcal{C}=\mathcal{C}_i$ for all
$1\leq i \leq n$; and
\item if $h \in \mathcal{C}$ and $\<h\>=\<f\>$ then $f \in \mathcal{C}$.
\end{enumerate}
\end{defi}
Suppose that $G$ is a group, $H \leq G$ and $\CC$ is a set of
special classes in $H$. Set $\mathrm{CF}(H)$ to be the
$\mathbb{C}$-space of all class functions of $H$ and set \[W:=\{\phi
\in\mathrm{CF}(H)|\phi(h)=0 ~\mathrm{for~ all}~ h \in H \bs
\mathcal{C}\}.\] Let $\{\psi_1,\hdots,\psi_r\}=\mathrm{Irr}(H)$ and
$\{\theta_1,\hdots,\theta_s\}=\mathrm{Irr}(G)$.

\begin{lemma}\label{special classes 1}
Suppose $\lambda,\mu\in W$. Then $\lambda^G(x)=\lambda(x)$ for all
$x \in \CC$ and for all $x \in G \bs \CC^G$.  Furthermore
$(\lambda^G,\mu^G)_G=(\lambda,\mu)_H$.
\end{lemma}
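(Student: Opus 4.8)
The plan is to compute everything directly from the formula for an induced class function. Write $\lambda^{\circ}$ for the extension of $\lambda$ to $G$ by zero outside $H$ (so $\lambda^{\circ}(u)=\lambda(u)$ for $u \in H$ and $\lambda^{\circ}(u)=0$ otherwise), and recall that $\lambda^G(x)=\frac{1}{|H|}\sum_{g \in G}\lambda^{\circ}(g\inv x g)$. Since $\lambda \in W$ vanishes on $H \bs \CC$, the only surviving summands are those $g$ with $g\inv x g \in \CC$. I would first establish the two pointwise identities for $\lambda^G$, and then deduce the inner product statement quickly from Frobenius reciprocity, so that no second class-counting computation is needed.

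For $x \in G \bs \CC^G$, any surviving term would require $g\inv x g \in \CC$, which forces $x \in \CC^G$, a contradiction; hence every term vanishes and $\lambda^G(x)=0$, matching $\lambda^{\circ}(x)=0$ here. For $x \in \CC$, say $x \in \CC_i$, the point is to use hypotheses $(i)$ and $(ii)$ to collapse the sum. If $g\inv x g \in \CC$, then since $g\inv x g$ is a $G$-conjugate of $x \in \CC_i$, condition $(ii)$ gives $g\inv x g \in \CC_i^G \cap \CC = \CC_i$, so $g\inv x g$ is $H$-conjugate to $x$ and contributes the single value $\lambda(x)$. Thus $\lambda^G(x)=\frac{\lambda(x)}{|H|}\,\bigl|\{g \in G : g\inv x g \in \CC_i\}\bigr|$, and since the set of $g$ realising a fixed conjugate is a coset of $C_G(x)$, each of the $|\CC_i|=[H:C_H(x)]$ elements of $\CC_i$ is hit exactly $|C_G(x)|$ times. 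By condition $(i)$ we have $C_G(x)\leq H$, so $C_G(x)=C_H(x)$ and the cardinality is precisely $[H:C_H(x)]\,|C_H(x)|=|H|$; therefore $\lambda^G(x)=\lambda(x)$.

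For the inner product I would avoid a second direct computation and instead invoke Frobenius reciprocity for class functions, which gives $(\lambda^G,\mu^G)_G=(\lambda,(\mu^G)|_H)_H=\frac{1}{|H|}\sum_{h \in H}\lambda(h)\,\overline{\mu^G(h)}$. Since $\lambda$ is supported on $\CC$, this sum runs only over $h \in \CC$, and there the first part yields $\mu^G(h)=\mu(h)$; hence the right-hand side equals $\frac{1}{|H|}\sum_{h\in\CC}\lambda(h)\,\overline{\mu(h)}=(\lambda,\mu)_H$, using once more that $\mu$ vanishes off $\CC$.

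The only genuinely delicate step is the centralizer count in the second paragraph: one must see both that condition $(ii)$ prevents stray $G$-conjugates from landing in other special classes (so the summand reduces to the single value $\lambda(x)$) and that condition $(i)$ forces $C_G(x)=C_H(x)$ (so the multiplicity works out to exactly $|H|$). I expect that condition $(iii)$ plays no role in this particular lemma; it should be needed only later, when one wants the induction map to send generalised characters supported on $\CC$ to class functions of $G$ with controlled values on power-equivalent elements.
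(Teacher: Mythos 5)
Your proof is correct. Note that the paper itself does not prove this lemma at all --- it simply cites Collins \cite[p111]{Collins-CT} --- so your argument fills in what the paper leaves to a reference. Your treatment of the values of $\lambda^G$ is the standard one: conditions $(i)$ and $(ii)$ are exactly what make the induction formula collapse, with $(ii)$ forcing every surviving conjugate $g\inv x g$ into the single $H$-class $\CC_i$, and $(i)$ forcing $C_G(x)=C_H(x)$ so that the count $|\CC_i|\,|C_G(x)|=[H:C_H(x)]\,|C_H(x)|=|H|$ comes out exactly right. Your handling of the inner product via Frobenius reciprocity (valid for arbitrary class functions, not just characters) is a clean shortcut: it reuses the pointwise statement $\mu^G(h)=\mu(h)$ on $\CC$ instead of performing a second sum-over-$G$ manipulation, which is how such results are often proved directly. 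You are also right that condition $(iii)$ of Definition \ref{defi-special classes} is not needed here; it enters only in the later parts of the Suzuki method, where one needs the space $W$ and its image under induction to interact well with generalized characters and their values on powers of elements.
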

\begin{proof}
See \cite[p111]{Collins-CT}.
\end{proof}

\begin{lemma}\label{special classes 2}
Suppose $\mathcal{C}=\bigcup_{i=1}^n\mathcal{C}_i$ is a set of
special classes in $H \leq G$. Then the set of all class functions
of $H$ which vanish on $H \bs \mathcal{C}$ is a
$\mathbb{C}$-subspace of the set of all class functions of $H$ and
has dimension $n$.
\end{lemma}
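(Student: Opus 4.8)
The plan is to identify $W$ with the span of the indicator functions of the $n$ classes comprising $\mathcal{C}$, thereby reducing the statement to elementary linear algebra. First I would recall that the space $\mathrm{CF}(H)$ of all class functions on $H$ is a finite-dimensional $\mathbb{C}$-vector space whose dimension equals the number $k$ of conjugacy classes of $H$, and that a natural basis is given by the class indicator functions: for each conjugacy class $K$ of $H$, let $e_K \in \mathrm{CF}(H)$ be the function taking the value $1$ on $K$ and $0$ on $H \bs K$. Each $e_K$ is constant on every conjugacy class, hence is genuinely a class function; the family $\{e_K\}_K$ is linearly independent because its members have pairwise disjoint supports, and it spans $\mathrm{CF}(H)$ since any $\phi \in \mathrm{CF}(H)$ can be written as $\phi = \sum_K \phi_K\, e_K$, where $\phi_K$ denotes the (well-defined, constant) value of $\phi$ on $K$.

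Next I would verify directly that $W$ is a subspace. The zero function vanishes on all of $H$, so $0 \in W$; and if $\lambda, \mu \in W$ and $c \in \mathbb{C}$, then $c\lambda + \mu$ vanishes at every point of $H \bs \mathcal{C}$, whence $c\lambda + \mu \in W$. Being nonempty and closed under the vector-space operations, $W$ is a $\mathbb{C}$-subspace of $\mathrm{CF}(H)$.

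To pin down the dimension, I would show that $\{e_{\mathcal{C}_1}, \dots, e_{\mathcal{C}_n}\}$ is a basis of $W$. Since $\mathcal{C} = \bigcup_{i=1}^n \mathcal{C}_i$ is by definition a union of $n$ distinct $H$-conjugacy classes, each $e_{\mathcal{C}_i}$ lies in $W$, as it vanishes off $\mathcal{C}_i \subseteq \mathcal{C}$. Conversely, given $\phi \in W$, expanding $\phi = \sum_K \phi_K\, e_K$ over all classes $K$ of $H$ and using $\phi_K = 0$ whenever $K \not\subseteq \mathcal{C}$, the sum collapses to $\phi = \sum_{i=1}^n \phi_{\mathcal{C}_i}\, e_{\mathcal{C}_i}$; thus these $n$ functions span $W$. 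They are linearly independent, being a sub-collection of the basis $\{e_K\}_K$ of $\mathrm{CF}(H)$ (equivalently, because their supports are disjoint). Hence $\dim_{\mathbb{C}} W = n$.

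There is no genuine obstacle in this argument; it is purely a matter of recognizing the natural indicator-function basis. The single point meriting care is the bookkeeping that guarantees $\mathcal{C}$ is a union of exactly $n$ \emph{distinct} conjugacy classes, so that the $e_{\mathcal{C}_i}$ are distinct basis vectors and the count is genuinely $n$ — but this is immediate from the definition of a set of special classes, in which $\mathcal{C}$ is presented as such a union.
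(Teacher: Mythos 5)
Your proof is correct. Note, however, that the paper does not actually prove this lemma at all: it simply cites Curtis--Reiner \cite[14.6, p348]{Curtis-Reiner}, so there is no internal argument to compare against. Your indicator-function argument is the standard one underlying that reference, and it is complete: the class indicator functions $e_K$ form a basis of $\mathrm{CF}(H)$, the subspace $W$ of functions vanishing on $H \bs \mathcal{C}$ is visibly closed under the vector-space operations, and expanding any $\phi \in W$ in the basis $\{e_K\}$ kills every coefficient attached to a class outside $\mathcal{C}$, so $\{e_{\mathcal{C}_1},\dots,e_{\mathcal{C}_n}\}$ spans $W$ and is independent by disjointness of supports. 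You are also right that the special-class axioms $(i)$--$(iii)$ of Definition \ref{defi-special classes} play no role here; only the fact that $\mathcal{C}$ is a union of $n$ distinct conjugacy classes is used, and that is built into the definition. The only thing your write-up buys beyond the paper is self-containedness, which is a perfectly reasonable trade.
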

\begin{proof}
See \cite[14.6, p348]{Curtis-Reiner}.
\end{proof}

Let $\mathcal{B}_W:=\{\lambda_1,\hdots,\lambda_n\}$ be a basis for
$W$  and define a matrix $A=(a_{ij})$ such that
\[\lambda_i=\sum_{j=1}^r a_{ij}\psi_j.\] Now consider the class
functions of $G$, $\mu_i=\lambda_i^G$. Each can be written as a
linear combination of irreducible characters of $G$ (these are
$\theta_1,\hdots,\theta_s$) and so let $B=(b_{ij})$ be the matrix
such that \[\mu_i=\sum_{j=1}^s b_{ij}\theta_j.\]

\begin{thm}[Suzuki]\label{suzuki}
There exist uniquely defined $C=(c_{ij})$ such that the $(i,j)$'th entry of $CA$ is
$\psi_j(x_i)$ and the $(i,j)$'th entry of $CB$ is $\theta_j(x_i)$ where $x_i$ is a
representative from the $H$-conjugacy class $\mathcal{C}_i$.
\end{thm}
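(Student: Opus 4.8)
The plan is to construct $C$ explicitly as a change-of-basis matrix inside the space $W$ and then read off the two stated identities by expanding the matrix products as inner products, so that no genuine computation beyond column orthogonality and Frobenius reciprocity is needed.

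First I would record the fact underlying everything: evaluation at the chosen representatives, $\lambda\mapsto(\lambda(x_1),\dots,\lambda(x_n))$, is a linear isomorphism $W\to\mathbb{C}^n$. Indeed a function in $W$ vanishes off $\mathcal{C}$ and is constant on each $\mathcal{C}_i$, so it is determined by these $n$ values; the map is therefore injective, and since $\dim W=n$ by Lemma \ref{special classes 2} it is an isomorphism. In particular the matrix $(\lambda_k(x_i))$ is invertible, which is what will force $C$ to be unique. Next I would produce, for each special class, a distinguished element of $W$: for fixed $i$ set $\gamma_i:=\sum_{l}\psi_l(x_i)\psi_l$. By column orthogonality in the character table of $H$ this class function is supported on the $H$-class of $x_i^{-1}$, where it takes the value $|C_H(x_i)|$. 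Since $\langle x_i^{-1}\rangle=\langle x_i\rangle$ and $x_i\in\mathcal{C}$, part $(iii)$ of Definition \ref{defi-special classes} gives $x_i^{-1}\in\mathcal{C}$, so $\gamma_i\in W$. As $\{\lambda_k\}$ is a basis of $W$ I may write $\gamma_i=\sum_k c_{ik}\lambda_k$ with unique scalars $c_{ik}$; these define $C=(c_{ik})$, and uniqueness of $C$ is immediate from the linear independence of the $\lambda_k$.

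The first identity is then a one-line expansion. Since $a_{kj}=(\lambda_k,\psi_j)_H$, I obtain $(CA)_{ij}=\sum_k c_{ik}(\lambda_k,\psi_j)_H=(\gamma_i,\psi_j)_H=\psi_j(x_i)$, the last step using the definition of $\gamma_i$ and the orthonormality of $\mathrm{Irr}(H)$. For the second identity I would exploit the linearity of induction, giving $\sum_k c_{ik}\mu_k=\sum_k c_{ik}\lambda_k^G=\gamma_i^G$, whence $(CB)_{ij}=\sum_k c_{ik}(\mu_k,\theta_j)_G=(\gamma_i^G,\theta_j)_G$. Here the special-class machinery enters: by Lemma \ref{special classes 1} the induced function $\gamma_i^G$ vanishes off $\mathcal{C}^G$ and agrees with $\gamma_i$ on $\mathcal{C}$, and using part $(ii)$ of Definition \ref{defi-special classes} (so that distinct special classes remain $G$-distinct) one sees that $\gamma_i^G$ is supported on the single $G$-class of $x_i^{-1}$, still with value $|C_H(x_i)|$. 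Part $(i)$ of the definition gives $C_G(x_i)=C_H(x_i)$, so evaluating the inner product over that one class yields $(\gamma_i^G,\theta_j)_G=\overline{\theta_j(x_i^{-1})}=\theta_j(x_i)$, as required.

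The main obstacle is bookkeeping rather than depth. One must track the inverse (complex-conjugate) class carefully, since the orthogonality delta $\gamma_i$ lands on $x_i^{-1}$ rather than $x_i$, and one must verify that the support of $\gamma_i^G$ does not spread beyond a single $G$-class — this is precisely where conditions $(i)$, $(ii)$ and $(iii)$ of Definition \ref{defi-special classes} are consumed, together with Lemma \ref{special classes 1}. Once the support and the identity $|C_G(x_i)|=|C_H(x_i)|$ are in hand, both stated formulas fall out of the inner-product computations above.
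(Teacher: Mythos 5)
Your proof is correct, and it is worth noting that the paper itself offers no argument for this statement at all: its ``proof'' is the citation \cite[14.11, p351]{Curtis-Reiner}. So yours is a genuinely self-contained alternative. The interesting structural point is that you run the logic of the paper in reverse. The paper states Theorem \ref{suzuki} first and then deduces, as Corollary \ref{cor-gamma ad cij's}, that the matrix $C$ satisfies $\gamma_i=\sum_k c_{ik}\lambda_k$; you instead \emph{define} $C$ by that decomposition (legitimate, since $\gamma_i\in W$ by column orthogonality together with condition $(iii)$ of Definition \ref{defi-special classes}, and $\{\lambda_k\}$ is a basis of $W$ by Lemma \ref{special classes 2}) and then recover both matrix identities by inner-product expansions, using $a_{kj}=(\lambda_k,\psi_j)_H$, $b_{kj}=(\mu_k,\theta_j)_G$, linearity of induction, Lemma \ref{special classes 1}, and conditions $(i)$--$(ii)$ to pin down the support and value of $\gamma_i^G$ on the single $G$-class of $x_i^{-1}$. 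Your uniqueness argument is also sound: any $C'$ with $C'A=(\psi_j(x_i))$ yields $\sum_k c'_{ik}\lambda_k=\gamma_i$, so $C'=C$ by linear independence of the $\lambda_k$ (full row rank of $A$); uniqueness thus already follows from the first identity alone. Two small bookkeeping points you handled correctly and that are easy to get wrong: the orthogonality delta $\gamma_i$ is supported on the class of $x_i^{-1}$, not of $x_i$, and the final evaluation needs $C_G(x_i)=C_H(x_i)$ (condition $(i)$) together with $\overline{\theta_j(x_i^{-1})}=\theta_j(x_i)$. What your route buys is a proof entirely internal to the paper's stated lemmas, and it makes transparent that the theorem and its corollary are two readings of the same change-of-basis fact.
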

\begin{proof}
See \cite[14.11, p351]{Curtis-Reiner}
\end{proof}
\begin{cor}\label{cor-gamma ad cij's}
$C=(c_{ij})$ is such that
$\gamma_i:=\sum_{j=1}^r\psi_j(x_i)\psi_j=\sum_{k=1}^n
c_{ik}\lambda_k$.
\end{cor}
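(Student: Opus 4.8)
The plan is to obtain the identity by a direct computation, reading everything off from the matrix equation supplied by Theorem \ref{suzuki}. First I would fix the shapes of the matrices involved: since $\CC$ consists of $n$ classes, the representatives $x_i$ range over $1\leq i\leq n$, the basis $\{\lambda_1,\dots,\lambda_n\}$ of $W$ has $n$ elements, and $A=(a_{kj})$ is the $n\times r$ matrix recording the expansion $\lambda_k=\sum_{j=1}^r a_{kj}\psi_j$. Consequently the matrix $C=(c_{ik})$ produced by Theorem \ref{suzuki} is $n\times n$, so that $CA$ is $n\times r$ and its $(i,j)$ entry $\psi_j(x_i)$ is correctly indexed by $1\leq i\leq n$, $1\leq j\leq r$. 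The key observation is that $\{\psi_1,\dots,\psi_r\}$ is a basis for the space of all class functions of $H$, so it suffices to compare the $\psi_j$-coefficients of the two sides of the claimed identity.

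Carrying out the computation, I would substitute the definition of $\lambda_k$ into the right-hand side and interchange the (finite) summations:
\[\sum_{k=1}^n c_{ik}\lambda_k=\sum_{k=1}^n c_{ik}\sum_{j=1}^r a_{kj}\psi_j=\sum_{j=1}^r\Bigl(\sum_{k=1}^n c_{ik}a_{kj}\Bigr)\psi_j=\sum_{j=1}^r (CA)_{ij}\,\psi_j.\]
The only input from Theorem \ref{suzuki} that is needed is the assertion that the $(i,j)$ entry of $CA$ equals $\psi_j(x_i)$; the parallel statement about $CB$ and $\theta_j(x_i)$ plays no role here. Replacing $(CA)_{ij}$ by $\psi_j(x_i)$ gives exactly $\sum_{j=1}^r\psi_j(x_i)\psi_j=\gamma_i$, which is the desired equality.

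The point is that there is essentially no obstacle: all the content sits in the existence and uniqueness of $C$ from Theorem \ref{suzuki}, and the corollary is merely a repackaging of the relation $CA=(\psi_j(x_i))_{i,j}$ back in terms of the basis $\{\lambda_k\}$ of $W$. Two remarks I would include for completeness. First, since each $\lambda_k\in W$ and $W$ is a $\mathbb{C}$-subspace of the class functions of $H$, the identity shows as a byproduct that $\gamma_i\in W$, that is, that $\gamma_i$ vanishes off $\CC$. Second, the coefficients $c_{ik}$ are forced: because $\{\lambda_k\}$ is a basis of $W$, the expansion of $\gamma_i\in W$ in this basis is unique, so the $c_{ik}$ appearing here are precisely the entries of the matrix $C$ of Theorem \ref{suzuki}, with no ambiguity.
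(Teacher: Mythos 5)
Your proposal is correct and is essentially the same argument as the paper's: both rest solely on the relation $(CA)_{ij}=\psi_j(x_i)$ from Theorem \ref{suzuki}, the definition $\lambda_k=\sum_{j=1}^r a_{kj}\psi_j$, and an interchange of finite sums, with the paper merely running the computation from $\gamma_i$ to $\sum_k c_{ik}\lambda_k$ rather than in the reverse direction. Your added remarks on matrix shapes, uniqueness of the expansion in the basis $\{\lambda_k\}$, and the byproduct $\gamma_i\in W$ are harmless elaborations, not a different method.
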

\begin{proof}
We have $\gamma_i:=\sum_{j=1}^r\psi_j(x_i)\psi_j$ and by Suzuki's
Theorem, $\psi_j(x_i)=\sum_{k=1}^nc_{ik}a_{kj}$ so
\[\begin{array}{rcl}
    \sum_{j=1}^r\psi_j(x_i)\psi_j&=&\sum_{j=1}^r\sum_{k=1}^n
c_{ik}a_{kj}\psi_j\\
\; &=& \sum_{k=1}^n c_{ik}\sum_{j=1}^r a_{kj}\psi_j\\
\; &=& \sum_{k=1}^n c_{ik}\lambda_k
\end{array}\] because $\lambda_k=\sum_{j=1}^r a_{kj}\psi_j$ by
definition of $A$. Hence $\gamma_i=\sum_{k=1}^n c_{ik}\lambda_k$.
\end{proof}

Therefore given the character table of $H$ it is possible to determine the constants $c_{ij}$ by
calculating each $\gamma_i$ and writing as a linear combination of basis  elements using the basis
$\mathcal{B}_W$ above. The Suzuki method involves making a careful choice of basis $\mathcal{B}_W$.
A good choice of basis will make it easier to determine candidates for $B$ and therefore candidates
for the character table of $G$.


\subsection{Some $p$-Block Theory}\label{block theory}
We present the relevant block theory for use in the proof of Theorem A. See \cite{Isaacs} and
\cite{Feit-RepTheory} for further details. In modular character theory it is always necessary to
fix a prime $p$ and then make a fixed choice of ring homomorphism $\ast$ defined on the ring of
algebraic integers $\mathbb{A}$ with kernel equal to a maximal ideal which contains $p\mathbb{A}$.
Then $\ast$ maps $\mathbb{A}$ onto an algebraically closed field of characteristic $p$. Let
$\Irr(G)$ be the set of irreducible ordinary characters of $G$ and let $\mathrm{IBr}(G)$ be the set
of irreducible Brauer characters of a group $G$. We do not require a formal definition of Brauer
character only that a Brauer character is a map from the $p$-regular elements (elements of order
coprime to $p$) of $G$ to $\mathbb{C}$,  and that every Brauer character can be written as a sum of irreducible Brauer characters. See \cite[p263]{Isaacs} for a full definition. Given an ordinary character
$\chi$ of $G$, restricting $\chi$ to the $p$-regular elements of $G$ gives a Brauer character of
$G$ which can be written as a sum of irreducible Brauer characters. If some irreducible Brauer
character $\rho \in \mathrm{IBr}(G)$ appears in this sum we say $\rho$ \textit{appears in the
Brauer decomposition of} $\chi$.

\begin{defi}
Let $G$ be a group with $p \mid |G|$. A $p$-block of $G$ is a subset
$B \subseteq \mathrm{Irr}(G) \cup \mathrm{IBr}(G)$ satisfying
\begin{enumerate}[$(i)$]
\item for $\chi$, $\phi$ in $B \cap \mathrm{Irr}(G)$,
\[\left(\frac{\chi(g)|G|}{\chi(1)|C_G(g)|}\right)^\ast =
\left(\frac{\phi(g)|G|}{\phi(1)|C_G(g)|}\right)^\ast\] for every $g
\in G$; and
\item $B \cap \mathrm{IBr}(G)$ is the set of irreducible Brauer characters
which appear in the Brauer decomposition of some $\chi \in B \cap
\mathrm{Irr}(G)$.
\end{enumerate}
\end{defi}
The $p$-blocks define a partition of $\mathrm{Irr}(G)$ and the
principal $p$-block, denoted $B_0(G)$, is the $p$-block containing the
principal character. We are mostly interested in the ordinary
characters and so given a $p$-block $B$ we often refer to $B$ in
place of $B \cap \Irr(G)$. If there is no ambiguity in $p$ we often
refer simply to the block rather than the $p$-block.

\begin{lemma}\label{non-vanishing principal block on p-central}
Let $G$ be a group, $S \in \syl_p(G)$ and $1\neq x \in \mathcal{Z}(S)$. If
$\chi$ is an irreducible character in the principal $p$-block of $G$
then $\chi(x) \neq 0$.
\end{lemma}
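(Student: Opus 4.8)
The plan is to exploit the defining property of a $p$-block in the block definition by pairing $\chi$ against the trivial character $1_G$, which lies in $B_0(G)$ since $B_0(G)$ is by definition the block containing the principal character. Condition $(i)$ of the block definition then gives, for every $g\in G$,
\[
\left(\frac{\chi(g)|G|}{\chi(1)|C_G(g)|}\right)^\ast=\left(\frac{1_G(g)|G|}{1_G(1)|C_G(g)|}\right)^\ast=\left(\frac{|G|}{|C_G(g)|}\right)^\ast .
\]
Writing $\omega_\chi(g):=\frac{\chi(g)|G|}{\chi(1)|C_G(g)|}$ for the (central) character value (which is an algebraic integer, so that $\ast$ may legitimately be applied to it), it therefore suffices, on specialising to $g=x$, to show that $\left(\frac{|G|}{|C_G(x)|}\right)^\ast\neq 0$. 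Indeed, once this is known we get $\omega_\chi(x)^\ast\neq 0$, hence $\omega_\chi(x)\neq 0$, and since $\chi(1)$ and $|x^G|=|G|/|C_G(x)|$ are nonzero this forces $\chi(x)\neq 0$.

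The key step is the observation that $x$ is $p$-central. Because $x\in\mathcal{Z}(S)$ with $S\in\syl_p(G)$, we have $S\leq C_G(x)$, so $C_G(x)$ contains a full Sylow $p$-subgroup of $G$. Consequently $[G:C_G(x)]$ divides the $p'$-number $[G:S]$, and hence $[G:C_G(x)]=|G|/|C_G(x)|$ is itself a positive integer coprime to $p$.

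To finish, I would recall that the fixed ring homomorphism $\ast$ has kernel a maximal ideal of $\mathbb{A}$ containing $p\mathbb{A}$, so its restriction to $\mathbb{Z}$ is reduction modulo $p$; thus a rational integer lies in the kernel of $\ast$ precisely when $p$ divides it. Since $[G:C_G(x)]$ is coprime to $p$, its image under $\ast$ is nonzero, which is exactly what was needed. There is no serious obstacle in this argument: the whole proof rests on comparing $\chi$ with $1_G$ through the block relation and on the single numerical fact that $p$-centrality of $x$ makes $[G:C_G(x)]$ a $p'$-number. The only points requiring any care are the two standard facts invoked above, namely that $\omega_\chi(x)$ is an algebraic integer so that $\ast$ is defined on it, and that $\ast$ restricts to mod-$p$ reduction on $\mathbb{Z}$.
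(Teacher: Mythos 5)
Your proposal is correct and follows essentially the same argument as the paper: apply the block relation pairing $\chi$ with the principal character at $g=x$, then note that $p$-centrality of $x$ makes $|G|/|C_G(x)|$ a $p'$-integer, whose image under $\ast$ is therefore nonzero. The extra details you supply (why $[G:C_G(x)]$ is coprime to $p$, and why $\ast$ restricts to reduction mod $p$ on $\mathbb{Z}$) are exactly the facts the paper leaves implicit.
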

\begin{proof}
Since $\chi\in B_0(G)$,
\[\left(\frac{\chi(x)|G|}{\chi(1)|C_G(x)|}\right)^\ast =
\left(\frac{|G|}{|C_G(x)|}\right)^\ast.\] Since $x$ is $p$-central,
$|G|/|C_G(x)|$ is an integer which is coprime to $p$ and so its
image under $\ast$ is non-zero. Therefore $\chi(x) \neq 0$.
\end{proof}

The following result was proved independently however a proof can also be found in \cite{Laska}.
\begin{lemma}\label{Characters mod p}
Let $G$ be a group and $\chi$ a character of $G$. Suppose that $g \in G$ is a $p$-element and
$\chi(g)\in \mathbb{Z}$. Then $\chi(g) \equiv \chi(1) ~\mathrm{mod} ~p$.
\end{lemma}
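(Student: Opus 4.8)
The plan is to use the fixed ring homomorphism $\ast$ on the algebraic integers $\mathbb{A}$ introduced in Section \ref{block theory}, whose kernel is a maximal ideal $M$ containing $p\mathbb{A}$ and whose image is an algebraically closed field $F$ of characteristic $p$. First I would record the elementary but crucial fact that the restriction of $\ast$ to $\mathbb{Z}$ is reduction modulo $p$: since $M$ is a proper ideal containing $p\mathbb{A}$ we have $p\mathbb{Z}\subseteq M\cap \mathbb{Z}\subsetneq \mathbb{Z}$, and the maximality of $p\mathbb{Z}$ in $\mathbb{Z}$ forces $M\cap \mathbb{Z}=p\mathbb{Z}$. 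Hence for $m,m'\in \mathbb{Z}$ we have $m^\ast=(m')^\ast$ in $F$ if and only if $m\equiv m' \pmod p$.

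Next I would express $\chi(g)$ as a sum of roots of unity. Realising $\chi$ as the trace of a representation $\rho$ of degree $\chi(1)$, the matrix $\rho(g)$ is diagonalisable with eigenvalues $\epsilon_1,\dots,\epsilon_{\chi(1)}$, each a root of unity whose order divides the order of $g$. Since $g$ is a $p$-element, its order is a power $p^k$ of $p$, so every $\epsilon_i$ is a $p^k$-th root of unity and $\chi(g)=\sum_{i=1}^{\chi(1)}\epsilon_i$.

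The key step is that each such root of unity satisfies $\epsilon_i^\ast=1$ in $F$. Indeed, applying $\ast$ to $\epsilon_i^{p^k}=1$ gives $(\epsilon_i^\ast)^{p^k}=1$ in $F$; but $F$ has characteristic $p$, so $X^{p^k}-1=(X-1)^{p^k}$ and the only solution of $(\epsilon_i^\ast)^{p^k}=1$ is $\epsilon_i^\ast=1$. Summing, $\chi(g)^\ast=\sum_{i=1}^{\chi(1)}\epsilon_i^\ast=\chi(1)\cdot 1_F=\chi(1)^\ast$. Since $\chi(g)$ and $\chi(1)$ are both rational integers, the first paragraph now gives $\chi(g)\equiv \chi(1)\pmod p$, as required.

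There is no serious obstacle here; the result is standard and the argument is short. The only points requiring care are the justification that $\ast$ restricts to reduction modulo $p$ on $\mathbb{Z}$ and the use of the Frobenius identity $X^{p^k}-1=(X-1)^{p^k}$ in characteristic $p$, both of which are routine. An entirely equivalent alternative, avoiding the formalism of $\ast$, would be to work directly in the cyclotomic ring $\mathbb{Z}[\zeta]$ with $\zeta$ a primitive $p^k$-th root of unity and observe that each $\epsilon_i\equiv 1$ modulo the prime ideal $(1-\zeta)$ lying above $p$, so that $\chi(g)-\chi(1)\in (1-\zeta)\cap \mathbb{Z}=p\mathbb{Z}$.
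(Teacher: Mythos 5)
Your proof is correct, but it takes a genuinely different route from the paper's. The paper argues entirely inside $\mathbb{Z}[X]$: it writes $\chi(g)$ as an integer combination of powers of a primitive $p^n$-th root of unity $\epsilon$, invokes the irreducibility of the cyclotomic polynomial $\Phi_{p^n}(X)=1+X^{p^{n-1}}+\dots+X^{(p-1)p^{n-1}}$ to factor the resulting integer relation as $\sum_i a_iX^i=\bigl(\sum_{i<p^{n-1}}a_iX^i\bigr)\Phi_{p^n}(X)$, and then reads off the congruence from the explicit coefficient identities $a_i=a_{jp^{n-1}+i}$. You instead push everything through the reduction homomorphism $\ast$ that the paper sets up at the start of Section \ref{block theory}: its kernel meets $\mathbb{Z}$ in exactly $p\mathbb{Z}$, every eigenvalue of $\rho(g)$ is a $p^k$-th root of unity and hence maps to $1$ by the Frobenius identity $X^{p^k}-1=(X-1)^{p^k}$ in characteristic $p$, so $\chi(g)^\ast=\chi(1)^\ast$ and the congruence follows. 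Your argument is shorter and more conceptual, and it reuses machinery the paper already has on hand for block theory; the paper's elementary proof buys self-containedness, needing nothing beyond the irreducibility of $\Phi_{p^n}$ and no choice of maximal ideal above $p$. Your closing alternative via the ideal $(1-\zeta)$ of $\mathbb{Z}[\zeta]$ sits between the two: it avoids the formalism of $\ast$ but still rests on the arithmetic of the prime above $p$ (via $\Phi_{p^k}(1)=p$) rather than on polynomial identities.
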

\begin{proof}
For any integer $n \geq 1$, if $\e$ is a primitive $p^n$'th root of unity then $\e$ is a root of
the cyclotomic polynomial, \[\Phi_{p^n}(X)=1+X^{p^{n-1}}+X^{2p^{n-1}}+\hdots +X^{(p-1)p^{n-1}}\]
and this polynomial is well known to be irreducible and hence the minimal polynomial of
$\e$.

Now suppose that there exist integers $a_0,a_1,\hdots, a_{p^n-1}$  such that $\sum_{i=0}^{p^{n}-1}
a_i\e^i=0$. Then $\e$ is a root of the equation $\sum_{i=0}^{p^n-1} a_iX^i=0$ and so
$\sum_{i=0}^{p^n-1} a_iX^i=\Phi_{p^n}(X)F(X)$ for some polynomial $F(X)\in \mathbb{Z}[X]$. In fact we may calculate,
$F(X)=a_0+a_1X+a_2X^2+ \hdots +a_{p^{n-1}-1}X^{p^{n-1}-1}$ and it follows that for $i \in
\{0,\hdots,p^{n-1}-1\}$ and $j \in \{1,\hdots, p-1\}$, $a_i=a_{jp^{n-1} +i}$ and
$\sum_{i=0}^{p^n-1} a_iX^i=\sum_{i=0}^{p^{n-1}-1} a_iX^i\Phi_{p^n}(X)$.

So now we suppose that $G$ is a group, $\chi$ is a character of $G$ of degree $m$ and  $g \in G$
has order $p^n$ with $\chi(g)=z\in \mathbb{Z}$. Then $\chi(g)$ is a sum of $m$  $p^n$'th roots of
unity. Therefore we may suppose that $z =\chi(g)=\sum_{i=0}^{p^n-1} b_i\e^i$ where each $b_i \in
\mathbb{Z}$ and $m=b_0+\hdots+ b_{p^n-1}$. By setting $a_0:=b_0-z$ and $a_i:=b_i$ otherwise,  we
may describe an equation for which $\e$ is a root, $0=\sum_{i=0}^{p^n-1}
a_iX^i=\sum_{i=0}^{p^{n-1}-1} a_iX^i\Phi_{p^n}(X)$ and as above for $i \in \{0,\hdots,p^{n-1}-1\}$
and $j \in \{1,\hdots, p-1\}$, $a_i=a_{jp^{n-1} +i}$.

Now $\chi(1)=m=b_0+\hdots+b_{p^n-1}$ and since for  $i \in \{0,\hdots,p^{n-1}-1\}$ and $j \in
\{1,\hdots, p-1\}$, $a_i=a_{jp^{n-1} +i}$ and $b_i=a_i$ except when $i=0$, we have that
$m=b_0+(p-1)a_0+pa_1+pa_2+\hdots +pa_{p^{n-1}-1}$. Therefore $m \equiv b_0-a_0~\mathrm{mod}~p$ and
since $b_0-a_0=z=\chi(g)$, we have $\chi(g) \equiv \chi(1) ~\mathrm{mod} ~p$.
\end{proof}

To every $p$-block of $G$ we associate a $G$-conjugacy class of $p$-subgroup (see
\cite[p278-9]{Isaacs} for a description of how we do this and why it is possible). Given a
$p$-block $B$ with associated $p$-subgroup $D$, we say $D$ is a \textit{defect group} of the
$p$-block $B$. The Sylow $p$-subgroups are the defect groups of the principal block.


\begin{lemma}[Generalized Decomposition Numbers]\label{gen
decomp numbers} Let $x\in G$ have order $p^n$. For  $\chi \in
\Irr(G)$ and $\varphi \in \mathrm{IBr}(C_G(x))$ there exist unique
algebraic integers  $d^x_{\chi \varphi}\in\mathbb{Q}(e^{2\pi i
/p^n})$ such that
\[\chi(xf)=\sum_{\varphi \in \mathrm{IBr}(C_G(x))}d^x_{\chi \varphi}
\varphi(f)\] for every $p$-regular element $f$ in $C_G(x)$.
\end{lemma}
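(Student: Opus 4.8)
The plan is to obtain the coefficients over $\mathbb{C}$ by a soft basis argument and then to address the arithmetic refinements, which are the real content. Write $H:=C_G(x)$ and $\epsilon:=e^{2\pi i/p^n}$. Since $x$ lies in $\mathcal{Z}(H)$, for every $h\in H$ and every $p$-regular $f\in H$ we have $x(h\inv f h)=h\inv(xf)h$, whence $\chi(x(h\inv f h))=\chi(xf)$; thus the function $\alpha\colon f\mapsto \chi(xf)$ is constant on the $H$-classes of $p$-regular elements, i.e. it is a class function on the $p$-regular classes of $H$. The set $\mathrm{IBr}(H)$ is a $\mathbb{C}$-basis of the space of such class functions, its size being the number of $p$-regular classes of $H$, so there are unique complex scalars $d^x_{\chi\varphi}$ with $\alpha=\sum_{\varphi\in \mathrm{IBr}(H)}d^x_{\chi\varphi}\varphi$. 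This yields the displayed identity and the uniqueness assertion at once.

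It remains to locate the $d^x_{\chi\varphi}$ in the ring of algebraic integers of $\mathbb{Q}(\epsilon)$, and this is where the work lies. First I would extract them explicitly by pairing against the projective indecomposable characters $\Phi_\varphi$ of $H$ (one for each $\varphi\in\mathrm{IBr}(H)$). These are ordinary characters which vanish on $p$-singular elements, and with respect to the inner product $(\beta,\gamma)^\circ:=\frac{1}{|H|}\sum_{f}\beta(f)\overline{\gamma(f)}$ taken over $p$-regular $f\in H$ the families $\{\varphi\}$ and $\{\Phi_\varphi\}$ are dual bases, this being the standard orthogonality relation in which the Gram matrix of the $\varphi$ is the inverse Cartan matrix. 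Consequently
\[d^x_{\chi\varphi}=(\alpha,\Phi_\varphi)^\circ=\frac{1}{|H|}\sum_{f}\chi(xf)\overline{\Phi_\varphi(f)},\]
the sum running over $p$-regular $f\in H$. Because $\Phi_\varphi=\sum_{\psi\in\Irr(H)}d_{\psi\varphi}\psi$ has non-negative integer decomposition numbers, each $\Phi_\varphi(f)$, like each $\chi(xf)$, is an algebraic integer; so at least $|H|\,d^x_{\chi\varphi}$ is an algebraic integer and each $d^x_{\chi\varphi}$ is an algebraic number.

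The two genuine obstacles are upgrading ``$|H|\,d^x_{\chi\varphi}$ is integral'' to integrality of $d^x_{\chi\varphi}$ itself, and pinning the coefficients down to the small field $\mathbb{Q}(\epsilon)$ rather than to $\mathbb{Q}(\zeta_{|G|})$. For the latter I would use Galois descent: an automorphism $\sigma$ of $\mathbb{Q}(\zeta_{|G|})$ fixing $\epsilon=\zeta_{p^n}$ fixes the $p$-part $x$, permutes $\Irr(G)$ and $\mathrm{IBr}(H)$ compatibly with the pairing above, and so fixes each $d^x_{\chi\varphi}$, forcing it into $\mathbb{Q}(\epsilon)$. For the integrality I would pass to a module-theoretic description over a complete discrete valuation ring whose residue field realises the reduction $\ast$: the central $p$-element $x$ of order $p^n$ acts on a lattice affording $\chi$ with eigenvalues among the $p^n$-th roots of unity, and decomposing this lattice along the block idempotents of $H$ exhibits $d^x_{\chi\varphi}$ as the trace of $x$ on an honest lattice, hence as an element of $\mathbb{Z}[\epsilon]$. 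The delicate point throughout is the compatibility of the Galois action and of the idempotent decomposition with the fixed reduction $\ast$; this is exactly the part that requires Brauer's theory in full, and in practice I would simply invoke the complete treatment in \cite[Chapter~15]{Isaacs}, where this result is established.
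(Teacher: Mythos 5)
The paper offers no proof of this lemma beyond the citation ``See [Isaacs, 15.47, p.~283]'', and your proposal, which ultimately defers the genuine arithmetic content (integrality of the coefficients and their membership in $\mathbb{Q}(e^{2\pi i/p^n})$) to Isaacs' Chapter~15, is therefore essentially the same as the paper's, with the added benefit of correctly sketching the standard argument (uniqueness from $\mathrm{IBr}(C_G(x))$ being a basis of the class functions on $p$-regular classes, and the explicit formula via pairing against projective indecomposables). One small correction to your sketch: the lattice affording $\chi$ should be decomposed along the eigenspaces of the central element $x$ (equivalently the primitive idempotents of $\mathbb{Q}(e^{2\pi i/p^n})\langle x\rangle$), each eigenspace being a $C_G(x)$-module whose reduction contributes non-negative integer multiplicities $m_{i\varphi}$ giving $d^x_{\chi\varphi}=\sum_i m_{i\varphi}e^{2\pi i\cdot i/p^n}$ --- not along the block idempotents of $C_G(x)$, which belong instead to Brauer's second main theorem.
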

\begin{proof}
See \cite[15.47, p283]{Isaacs}.
\end{proof}
The algebraic integers $d^x_{\chi \varphi}$ are called \textit{generalized decomposition numbers}.
The following result is used in Section \ref{section case 2} when we have a group with restricted
$3$-local subgroups and it allows us to restrict our calculations entirely within the principal
$3$-block of the character table. The proof uses Brauer's first and second main theorems (see
\cite[p282, p284]{Isaacs}).

\begin{lemma}\label{brauer's theorems}
Let $G$ be a group, $1\neq x\in S \in \syl_p(G)$ and $\chi \in
\Irr(G) \bs B_0(G)$. Suppose $N_G(D)$ is $p$-soluble and
$O_{p'}(N_G(D))=1$ for each $1\neq D \leq S \in \syl_p(G)$. Then

\begin{enumerate}[$(i)$]
\item $B_0(G)$ is the only block of $G$ with non-trivial defect
group; and

\item $\chi(fx)= 0$ for each $p$-regular $f \in C_G(x)$.

\end{enumerate}
\end{lemma}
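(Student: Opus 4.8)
The plan is to reduce both parts to Brauer's main theorems together with one standard fact from the block theory of $p$-soluble groups: if $H$ is $p$-soluble with $O_{p'}(H)=1$, then $H$ has a unique $p$-block, namely the principal block $B_0(H)$, and this block has full defect (equivalently, all irreducible and irreducible Brauer characters of $H$ lie in $B_0(H)$). The hypothesis is tailored precisely so that this fact applies to $N_G(D)$ for \emph{every} non-trivial $p$-subgroup $D\leq S$, and this is the single lever behind the whole argument; I would cite the uniqueness fact from \cite{Isaacs} or \cite{Feit-RepTheory}.

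For $(i)$ I would take a block $B$ of $G$ with defect group $D\neq 1$ and show $B=B_0(G)$. Brauer's First Main Theorem produces a unique block $b$ of $N_G(D)$ with defect group $D$ and Brauer correspondent $b^G=B$. Since $D\neq 1$, the subgroup $N_G(D)$ is $p$-soluble with $O_{p'}(N_G(D))=1$, so it has only the block $B_0(N_G(D))$, of full defect; as $b$ has defect group $D$, this forces $D$ to be a Sylow $p$-subgroup of $N_G(D)$. Because $D$ is then a normal Sylow $p$-subgroup of $N_G(D)$, the standard fact that $D\in\syl_p(N_G(D))$ implies $D\in\syl_p(G)$ gives $D=S\in\syl_p(G)$. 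Finally $b=B_0(N_G(S))$, and Brauer's Third Main Theorem (the principal block corresponds to the principal block) yields $B=b^G=B_0(G)$, so every block with non-trivial defect group is $B_0(G)$.

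For $(ii)$, fix $1\neq x\in S$ and a $p$-regular $f\in C_G(x)$, and let $\chi$ lie in a block $B\neq B_0(G)$. Here $C_G(x)=C_G(\langle x\rangle)$ is normal in $N_G(\langle x\rangle)$, which is $p$-soluble with trivial $O_{p'}$; hence $C_G(x)$ is $p$-soluble, and $O_{p'}(C_G(x))$—characteristic in $C_G(x)$, and so a normal $p'$-subgroup of $N_G(\langle x\rangle)$—is trivial. Thus $C_G(x)$ has the single block $B_0(C_G(x))$, every $\varphi\in\mathrm{IBr}(C_G(x))$ lies in it, and by Brauer's Third Main Theorem $B_0(C_G(x))^G=B_0(G)$. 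Writing the generalized decomposition of Lemma \ref{gen decomp numbers},
\[\chi(xf)=\sum_{\varphi\in\mathrm{IBr}(C_G(x))}d^x_{\chi\varphi}\,\varphi(f),\]
Brauer's Second Main Theorem forces $d^x_{\chi\varphi}=0$ whenever the block of $\varphi$ does not Brauer-correspond to $B$; since every $\varphi$ corresponds to $B_0(G)\neq B$, all terms vanish and $\chi(xf)=0$.

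The main obstacle is really just securing the external uniqueness fact cleanly and keeping the defect-group bookkeeping in the Brauer correspondence straight—in particular, the step in $(i)$ where matching the First Main Theorem defect group $D$ against the full-defect principal block of $N_G(D)$ is what pins $D$ down as a Sylow subgroup. I note that once $(i)$ is established, $(ii)$ also follows immediately without invoking the Second Main Theorem: by $(i)$ the character $\chi\notin B_0(G)$ lies in a block of defect zero, such characters vanish on all $p$-singular elements, and $xf$ is $p$-singular because $x\neq 1$ commutes with the $p$-regular element $f$.
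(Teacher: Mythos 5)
Your proposal is correct, and part $(i)$ is in substance the paper's own argument: both rest on the fact that a $p$-soluble group with trivial $p'$-core has a unique block (of full defect), combined with Brauer's First Main Theorem. The only cosmetic difference in $(i)$ is at the end: the paper observes that the unique block of $G$ with defect group $S$ must be $B_0(G)$ because principal blocks have Sylow defect groups, whereas you invoke the Third Main Theorem ($b=B_0(N_G(S))$ forces $b^G=B_0(G)$); both are valid, yours being slightly heavier machinery than needed. The genuine divergence is in $(ii)$. The paper deduces $(ii)$ from $(i)$: every non-principal block has defect zero, and characters of defect zero vanish on $p$-singular elements (the corollary to the Second Main Theorem cited as \cite[15.49]{Isaacs}) — exactly the alternative you sketch in your final remark. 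Your primary argument for $(ii)$ instead works locally at $x$ and does not use $(i)$ at all: you note that $C_G(x)\trianglelefteq N_G(\langle x\rangle)$ inherits $p$-solubility and trivial $O_{p'}$ (the characteristic-subgroup argument is correct), so $C_G(x)$ has the single block $B_0(C_G(x))$, which the Third Main Theorem matches to $B_0(G)$; the Second Main Theorem then annihilates every generalized decomposition number $d^x_{\chi\varphi}$ for $\chi\notin B_0(G)$. This buys you logical independence of $(ii)$ from $(i)$ and makes visible that the vanishing is forced by the local block structure at $x$, at the cost of invoking the Second and Third Main Theorems directly rather than a one-line corollary. One small bookkeeping point worth making explicit in $(i)$: defect groups are only defined up to conjugacy, so one should first replace $D$ by a conjugate lying in $S$ before the hypothesis on $N_G(D)$ can be applied (the paper's formulation has the same implicit step).
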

\begin{proof}
By \cite[15.40]{Isaacs}, for each $1\neq D \leq S \in \syl_p(G)$,
$N_G(D)$ has only one block and the block necessarily has defect
group $D_1 \in \syl_p(N_G(D))$. Hence if $D \neq S$ then $D \neq
D_1$ and by Brauer's first main theorem (\cite[15.45]{Isaacs}), $G$
has no block with defect group $D$. On the other hand, if $D=S$ then
by Brauer's first main theorem, $G$ has exactly one block with
defect group $S$ and this must be $B_0(G)$. Thus every block of $G$
has defect group $S$ or $1$. It therefore follows immediately from a
corollary to Brauer's second main theorem (\cite[15.49]{Isaacs})
that for $1 \neq x \in S$ and any $p$-regular $f \in C_G(x)$,
$\chi(fx)=0$.
\end{proof}

Given a group $G$ with $\Irr(G)=\{\chi_1,\hdots, \chi_n\}$ define a
\textit{column} of $G$ to be a sequence of numbers indexed by
$\Irr(G)$, $(a_i)_{i=1,..,n}$. For example given an element $x \in
G$ the column of the character table of $G$ corresponding to $x$
forms a column $(\chi(x))_{i=1,..,n}$ as do the columns of
generalized decomposition numbers if $x$ is a $p$-element and
$\varphi \in \mathrm{IBr}(C_G(x))$, $(d^x_{\chi_i
\varphi})_{i=1,..,n}$. We define the inner product of columns
$(a_i)_{i=1,..,n}$ and $(b_i)_{i=1,..,n}$ to be the usual dot
product $((a_i),(b_i))=\sum_{i=1}^n a_i b_i$. We further define a
\textit{($p$-)principal column} of $G$ to be  a sequence indexed by the
principal ($p$-)block characters of $G$.

We use the following lemma in the proof of  Theorem A to calculate part of the character table of
$G$. The method involves producing an invertible matrix $M$ that satisfies the hypothesis of this
lemma. In Section \ref{section case 2}, we restrict our character calculations to the principal
$3$-block of $G$ and we use this lemma in place of the Suzuki method.

\begin{lemma}\label{CT-Integer Matrix}
Let $G$ be a group and $H \leq G$ with $\{\psi_1,\hdots,\psi_r\}=:\mathrm{Irr}(H)$. Let
$\{\chi_1,\hdots,\chi_s\}\subseteq \mathrm{Irr}(G)$ and for any $n\leq r$ let $x_1,\hdots,x_n$ be
representatives in $H$ of any $n$ of the conjugacy classes of $H$. Set
\[N:=(\psi_i(x_j))_{1\leq i \leq r,\hspace{1pt} 1 \leq j \leq n}
\hspace{10pt} \mathrm{and} \hspace{10pt} L=(\chi_i(x_j))_{1\leq i
\leq s ,\hspace{1pt} 1 \leq j \leq n}.\] If $M=(m_{ij})\in
\mathrm{M}_n(\mathbb{C})$ is such that $NM$ is a matrix with integer
entries then $LM$ is a matrix with integer entries.
\end{lemma}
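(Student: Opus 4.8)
The plan is to show that the matrix $L$ is obtained from $N$ by left-multiplication by an integer matrix; once this is established, $LM = C(NM)$ will inherit integer entries directly from the hypothesis on $NM$.

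First I would record the standard fact about restriction of characters. For each $\chi_i \in \{\chi_1,\dots,\chi_s\} \subseteq \Irr(G)$, the restriction $\chi_i|_H$ is a character of $H$ (restriction of a representation of $G$ to the subgroup $H$ is a representation of $H$), and hence decomposes as a non-negative integer combination of the full set of irreducible characters of $H$:
\[\chi_i|_H = \sum_{k=1}^r c_{ik}\psi_k, \qquad c_{ik} = (\chi_i|_H, \psi_k)_H \in \mathbb{Z}_{\geq 0}.\]
The multiplicities $c_{ik}$ are non-negative integers because they are inner products of genuine characters; equivalently, by Frobenius reciprocity $c_{ik}=(\chi_i,\psi_k^G)_G$.

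Next, since each $x_j$ lies in $H$, I would evaluate this identity at $x_j$ to obtain $\chi_i(x_j) = \sum_{k=1}^r c_{ik}\psi_k(x_j)$ for all $1 \le i \le s$ and $1 \le j \le n$. Reading this entrywise, it says precisely that $L = CN$, where $C := (c_{ik})_{1\le i \le s,\, 1 \le k \le r}$ is the $s\times r$ matrix of multiplicities, which has integer entries. Here it is essential that the rows of $N$ run over the \emph{complete} list $\psi_1,\dots,\psi_r$ of irreducible characters of $H$, so that the decomposition of each $\chi_i|_H$ genuinely expresses every row of $L$ as an integer combination of the rows of $N$.

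Finally I would compute $LM = (CN)M = C(NM)$. By hypothesis $NM$ has integer entries, and $C$ has integer entries, so their product $C(NM) = LM$ has integer entries, as required. There is essentially no obstacle beyond the single observation that restricting an irreducible character of $G$ to $H$ yields an honest $H$-character with integer multiplicities against $\Irr(H)$; the matrix manipulation $LM = C(NM)$ is then immediate.
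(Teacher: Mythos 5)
Your proposal is correct and follows essentially the same route as the paper: both restrict each $\chi_i$ to $H$, expand it as an integer combination of $\psi_1,\hdots,\psi_r$, and conclude that every entry of $LM$ is an integer combination of entries of $NM$ (you simply package this as the matrix identity $LM=C(NM)$, while the paper argues entrywise). No gaps.
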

\begin{proof}
The $(i,j)$'th entry of $LM$ is
\[ \sum_{k=1}^n \chi_i(x_k)m_{kj}.\] We restrict $\chi_i$ to $H$ to
obtain integers $a_1, \hdots, a_r$ such that $\chi_i
\big|_H=a_1\psi_1 + \hdots + a_r \psi_r.$ Hence
\[\begin{array}{rcl}
\sum_{k=1}^n \chi_i(x_k)m_{kj} &=&\sum_{k=1}^n (a_1\psi_1 + \hdots
+ a_r \psi_r)(x_k) m_{kj}\\
\;&\;&\; \\
 \;&=& a_1\sum_{k=1}^n \psi_1(x_k)m_{kj} +
\hdots + a_r\sum_{k=1}^n \psi_r(x_k)m_{kj}.\end{array}\]This is an
integer sum of integer entries of the matrix $NM$. Therefore the
$(i,j)$'th entry of $LM$ is integral.
\end{proof}
Lemma \ref{CT-Integer Matrix} allows us to choose an invertible
matrix $M$ in a nice way such that $LM$ is an integer matrix with
few entries.  The idea is that we are able to calculate the column inner products in $L$ but not the specific entries. The matrix $K:=LM$ contains columns which are linear combinations of columns of $L$ chosen such that entries are integral. Provided the matrix $K$ is sufficiently sparse, we can determine possibilities for the matrix $K$ and
then calculate $KM\inv=L$. The procedure for making a suitable
choice for $M$ amounts to choosing $M$ to be a matrix of column
operations such that $NM$ has few entries and such that these
entries are as small as possible. We will demonstrate these ideas in Section \ref{section case 2}.


\section{The Hypothesis}\label{proof of main thm}

From now on we work under the hypothesis of Theorem A. Recall that a group $G$ is said to have a
$3$-local subgroup of $\alt(9)$-type if $P\leq G$ is elementary abelian of order $27$ and $N_G(P)$
is isomorphic to the normalizer in $\alt(9)$ of an elementary abelian subgroup of order $27$.

\begin{hypA}\label{alt9 hyp} Let $G$ be a finite group with $J\leq G$ such that $J$ is
elementary abelian of order $27$. Suppose $H:=N_G(J)$ is a $3$-local subgroup of $G$ of
$\alt(9)$-type and suppose $O_{3'}(C_G(x))=1$ for every element of order three $x \in H$.
\end{hypA}
The proof of Theorem A is highly character theoretic. Many of the calculations involve the
character table of $H$ (Table \ref{char table H}). We note in particular that each irreducible
character of $H$ gives integral values on $3$-elements in $H$. We prove in Lemma \ref{order of
sylow 3's and H is self normalizing} that $H$ contains a Sylow $3$-subgroup of $G$ and hence every
irreducible character of $G$ gives integral values on all $3$-elements. We label the conjugacy
classes of $H$ by $\CC_1,\hdots, \CC_{14}$ and we continue this notation throughout Section
\ref{proof of main thm}. Furthermore, we use the notation $\CC_i^G$ to be the conjugacy class in
$G$ containing $\CC_i$ for $1 \leq i \leq 14$. Observe that we may have $\CC_i^G=\CC_j^G$ for $i
\neq j$. For the purpose of understanding the group structure of $H$, we consider a permutation
representation of $H$ in $\alt(9)$:
\[\langle
(123),(456),(789),(147)(258)(369),(14)(2536)\rangle\] from which we
see that $J=C_H(J)=O_3(H)$. In this representation, elements in
$\CC_4$ are $3$-cycles, elements in $\CC_6$ have cycle shape
$3^21^3$ and elements in $\CC_5$ have cycle shape $3^3$.

\begin{table}[h]\[\begin{tabular}{|c|c|c|c|c|c|c|c|c|c|c|c|c|c|c|}
\hline
Class &  $\CC_1$& $\CC_2$&$\CC_3$&$\CC_4$&$\CC_5$&$\CC_6$&$\CC_7$&$\CC_8$&$\CC_9$& $\CC_{10}$&$\CC_{11}$&$\CC_{12}$&$\CC_{13}$ &$\CC_{14}$\\

$|C_H(x)|$&  648 &   24 &   12  &108&81 &54 &9 &12& 12& 6 &   9  & 9 & 12 & 12\\
Order &     1 &    2 &    2 &  3 & 3&  3& 3 & 4&  6 &   6 &  9 & 9 & 12&12\\
\hline
$\psi_1$  &   1 & 1 & 1 & 1 &  1 &  1 & 1 & 1 & 1 & 1 &   1 & 1 & 1&1\\

$\psi_2$  &   1 &1 &-1&  1&  1&  1&  1& -1&  1&  -1&  1&  1&  -1&
-1\\

$\psi_3$  &    2& 2 & 0&  2&  2&  2&-1&  0&  2&   0& -1& -1&   0&
0\\

$\psi_4$  &  3& -1& -1&  3&  3&  3& 0&  1& -1&  -1& 0&  0&   1&
1\\

$\psi_5$  &   3& -1& 1 & 3&  3&  3& 0 &-1 &-1 &  1 & 0 &0 & -1&
-1\\

$\psi_6$  &    6&  2& 0 & 3 &-3 & 0 & 0& -2& -1&   0&  0& 0 &  1&
1\\

$\psi_7$  & 6& 2&  0& 3& -3&  0 & 0 & 2& -1&   0&  0&  0& -1&  -1\\

$\psi_8$  &   6& -2&  0& 3 &-3 & 0 &0 & 0 & 1& 0&  0&  0&  $\sqrt{3}$& $-\sqrt{3}$\\

$\psi_{9}$  & 6 &-2&  0&  3& -3&  0&  0&  0& 1 &  0& 0 & 0
&$-\sqrt{3}$&
$\sqrt{3}$\\

$\psi_{10}$  &   8& 0&  0 &-4& -1& 2& -1&  0 & 0& 0& 2& -1& 0&   0\\

$\psi_{11}$  &   8&  0&  0& -4& -1&  2& -1& 0 & 0 &0 &-1 &2 &0  &
0\\

$\psi_{12}$  &  8 &0 & 0& -4& -1& 2 & 2&  0 & 0 &  0& -1 &-1&   0&
0\\

$\psi_{13}$  & 12& 0 &-2 & 0 & 3 &-3 &0 & 0 & 0&   1&  0& 0 &  0&
0\\

$\psi_{14}$  & 12&  0& 2&  0& 3& -3& 0 &0 & 0 &-1& 0&  0& 0& 0\\
\hline
\end{tabular}\]\caption{The character table of $H$.}\label{char table H}\end{table}

The following lemma is a consequence of $J$ being the Thompson
subgroup of a Sylow $3$-subgroup of $G$. However for completeness we
include a proof.
\begin{lemma}\label{J is characteristic}
$J=C_H(J)=O_3(H)$ is a characteristic subgroup of every $S \in
\syl_3(H)$ and for any $a,b \in J$, $a$ is conjugate to $b$ in $G$
if and only if $a$ is conjugate to $b$ in $H$.
\end{lemma}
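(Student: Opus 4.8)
The plan is to recognise $J$ as the Thompson subgroup $J(S)$ of a Sylow $3$-subgroup $S$ of $H$, conclude that $J$ is characteristic in $S$ (the Thompson subgroup always is), then promote $S$ to a Sylow $3$-subgroup of $G$, and finally extract the fusion statement from Lemma~\ref{conjugation in thompson subgroup}. Taking $J = C_H(J) = O_3(H)$ as already established from the $\alt(9)$-type structure, fix $S \in \syl_3(H)$. Since $|J| = 27$ and $|H|_3 = 3^4$, we have $J \normal S$ with $[S:J]=3$, and from the explicit permutation representation $S \cong 3 \wr 3$ with $J$ the base group; in particular $S$ is non-abelian and a generator of $S/J$ fixes only $\mathcal{Z}(S) \leq J$, so that $C_J(a) = \mathcal{Z}(S)$ has order $3$ for every $a \in S \setminus J$.

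First I would show that $J$ is the unique abelian subgroup of $S$ of largest order, whence $J = J(S)$. As $S$ is non-abelian, no abelian subgroup has order $81$. If $A \leq S$ were abelian of order $27$ with $A \neq J$, then $A \not\leq J$ forces $|A \cap J| = 9$ (using $|AJ| \leq |S|$), so $A = (A \cap J)\langle a\rangle$ with $a \in S \setminus J$; commutativity of $A$ would make $a$ centralise the order-$9$ group $A \cap J$, so $A \cap J \leq C_J(a) = \mathcal{Z}(S)$, contradicting $|\mathcal{Z}(S)| = 3$. Hence $J = J(S)$ is characteristic in $S$, and since the Sylow $3$-subgroups of $H$ are $H$-conjugate while $J \normal H$, the same conclusion holds for every $S \in \syl_3(H)$.

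Next I would upgrade $S$ to a Sylow $3$-subgroup of $G$. Suppose not, so $S < P$ for some $P \in \syl_3(G)$; since normalizers grow in $p$-groups, choose $x \in N_P(S) \setminus S$. As $x$ normalizes $S$ it normalizes the characteristic subgroup $J$, so $x \in N_G(J) = H$. But then $S\langle x\rangle$ is a $3$-subgroup of $H$ properly containing $S \in \syl_3(H)$, which is impossible. Therefore $S \in \syl_3(G)$.

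Finally, the fusion statement follows at once. The implication "$a \sim_H b \Rightarrow a \sim_G b$" is trivial since $H \leq G$. For the converse, $S \in \syl_3(G)$ with $J = J(S)$ abelian, so Lemma~\ref{conjugation in thompson subgroup} applies: if $a, b \in J$ are conjugate in $G$, they are conjugate in $N_G(J(S)) = N_G(J) = H$. I expect the main obstacle to be the first step, namely verifying that $J$ is the unique abelian subgroup of maximal order in $S$; this rests entirely on the precise action of $S/J$ on $J$ (that a generator of $S/J$ fixes only $\mathcal{Z}(S)$), which is exactly the information supplied by the $\alt(9)$-type structure of $H$.
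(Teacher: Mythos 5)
Your proof is correct, and its first half is essentially the paper's: both arguments exploit $|\mathcal{Z}(S)|=3$ to show that $J$ is the unique abelian subgroup of $S$ of its order, hence characteristic. (The paper's variant is marginally leaner: if $J_0\cong J$ with $J_0\neq J$, then $S=JJ_0$, so $J\cap J_0$ has order $9$ and is centralized by both abelian factors, hence lies in $\mathcal{Z}(S)$, a contradiction; this avoids your computation $C_J(a)=\mathcal{Z}(S)$ from the wreath-product structure, though it only gives uniqueness among elementary abelian subgroups of order $27$ -- enough for characteristicness -- whereas you prove the stronger statement $J=J(S)$.) Where you genuinely diverge is the fusion statement. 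The paper argues directly: given $a^g=b$, it takes $P,Q\in\syl_3(C_G(b))$ with $J\leq P$ and $J^g\leq Q$, conjugates $Q$ onto $P$ by some $x\in C_G(b)$, and concludes $J^{gx}=J$. You instead first prove $S\in\syl_3(G)$ (a fact the paper records only in the following lemma, Lemma \ref{order of sylow 3's and H is self normalizing}, as a consequence of this one) and then invoke Lemma \ref{conjugation in thompson subgroup}. Your detour buys rigour: the paper's final step $J^{gx}=J$ is precisely the assertion that $J$ is the unique copy of itself inside a Sylow $3$-subgroup of $C_G(b)$, which is the Thompson-subgroup property that your combination of $J=J(S)$ and $S\in\syl_3(G)$ makes explicit. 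Indeed the paper itself flags the lemma as ``a consequence of $J$ being the Thompson subgroup of a Sylow $3$-subgroup of $G$,'' and your write-up is the self-contained realization of that remark, at the modest cost of leaning on the explicit $3\wr 3$ structure of $S$ supplied by the $\alt(9)$ representation.
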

\begin{proof}
We observe, by considering a representation of $H$ in $\alt(9)$,
that $H/J$ acts faithfully on $J$. Thus $J=C_H(J)=O_3(H)$. Now
suppose $J \neq J_0\leq S$ for $S \in \syl_3(H)$  with $J_0 \cong
J$. Then $S=JJ_0$ and $J \cap J_0\leq \mathcal{Z}(S)$ has order nine. However
again calculating in $\alt(9)$ gives $|\mathcal{Z}(S)|=3$. Thus $J$ is
characteristic in $S$.

Suppose $a^g=b$ for some $g \in G$. Then $J,J^g\leq C_G(b)$. Let
$P,Q \in \syl_3(C_G(b))$ such that $J\leq P$ and $J^g\leq Q$. By
Sylow's Theorem, there exists $x \in C_G(b)$ such that $Q^x=P$ and
so $J^{gx}=J$. Thus $gx \in H$ and $a^{gx}=b^x=b$ as required.
\end{proof}

\begin{lemma}\label{order of sylow 3's and H is self normalizing}
\begin{enumerate}[$(i)$]
\item $H=N_G(H)$.

\item  $N_G(S) \leq H$ for $S\in \syl_3(H)$, in particular, $\syl_3(H)
\subseteq \syl_3(G)$.

\item $\CC_i^G \cap
\CC_j^G =\emptyset$ for $\{i,j\}\subset \{4,5,6\}$.

\item $C_G(x) \leq H$ for $x \in \CC_5 \cup \CC_6 \cup
\CC_{10} \cup \CC_{11} \cup \CC_{12}$ and $\CC_{11}^G\neq \CC_{12}^G$.

\item If $T \leq J$ has order nine then $N_G(T) \leq H$ and $C_G(T)
=J$.

\item Fix $x \in \CC_4$ then $H$ has two conjugacy classes of
subgroups of order nine containing $x$, $P_1^H$ and $P_2^H$ say
where $|P_1 \cap \CC_4|=|P_1 \cap \CC_6|=4$ and $|P_2 \cap
\CC_4|=|P_2 \cap \CC_6|=2$ and $|P_2 \cap \CC_5|=4$.
\end{enumerate}\end{lemma}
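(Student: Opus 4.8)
The easy parts come first. Since $J=O_3(H)$ is characteristic in $H$ (Lemma \ref{J is characteristic}), any $g\in N_G(H)$ normalises $J$, so $N_G(H)\le N_G(J)=H$, which is $(i)$. As $J$ is also characteristic in each $S\in\syl_3(H)$ (Lemma \ref{J is characteristic}), we get $N_G(S)\le N_G(J)=H$, the first assertion of $(ii)$; the inclusion $\syl_3(H)\subseteq\syl_3(G)$ then follows by the usual normaliser-grows argument, for if $S<S^\ast\in\syl_3(G)$ then $S<N_{S^\ast}(S)\le N_G(S)\le H$ would contradict $S\in\syl_3(H)$. For $(iii)$, the classes $\CC_4,\CC_5,\CC_6$ all lie inside $J$, and Lemma \ref{J is characteristic} says $G$-fusion and $H$-fusion agree on $J$, so distinct ones cannot meet in $G$. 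Before attacking $(iv)$ I would record the key preliminary $C_G(J)=J$: one checks $O_{3'}(H)=1$ (as $O_{3'}(H)$ centralises $J$ and $C_H(J)=J$), whence $O_{3'}(C_G(J))\trianglelefteq H$ forces $O_{3'}(C_G(J))=1$; since $J\le\mathcal{Z}(C_G(J))$ is a Sylow $3$-subgroup of $C_G(J)$, Burnside's normal $3$-complement theorem (Theorem \ref{Burnside-normal p complement}) gives $C_G(J)=J$.

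The plan for $(iv)$ is to reduce everything to the two order-three classes $\CC_5$ and $\CC_6$. Reading the power maps from Table \ref{char table H}: each element of $\CC_{11}\cup\CC_{12}$ has order $9$ and cubes into $\mathcal{Z}(S)\le\CC_5$, while each element of $\CC_{10}$ has order $6$ and squares into $\CC_6$; hence $C_G(x)\le C_G(x^3)$ respectively $C_G(x)\le C_G(x^2)$, so once $C_G(\CC_5)\le H$ and $C_G(\CC_6)\le H$ are known the remaining three classes follow at once. The inequality $\CC_{11}^G\neq\CC_{12}^G$ is then also immediate: if $x\in\CC_{11}$, $x'\in\CC_{12}$ were $G$-conjugate then, after adjusting $x$ inside its $H$-class (using that $x^3,x'^3\in\CC_5$ are $H$-fused by Lemma \ref{J is characteristic}), one may take $x^3=x'^3=:z$; any $g$ with $x^g=x'$ fixes $z$, so $g\in C_G(z)\le H$, contradicting $\CC_{11}\neq\CC_{12}$ in $H$.

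The crux is therefore $C_G(x)\le H$ for $x\in\CC_5\cup\CC_6$. First I would pin down the relevant Sylow $3$-subgroup inside $C:=C_G(x)$: for $x\in\CC_5$ one has $C_H(x)=S\in\syl_3(G)$, hence $S\in\syl_3(C)$, while for $x\in\CC_6$ a short normaliser argument shows $J\in\syl_3(C)$ (any $3$-element of $C$ normalising $J$ but outside it would enlarge $J$ inside the Sylow $3$-subgroup of $C_H(x)$, which is $J$). In both cases $C_C(J)=C_G(J)=J$ by the preliminary, and $N_C(J)=C\cap H=C_H(x)$; thus $N_C(J)=S$ is a $3$-group in the $\CC_5$ case and $[N_C(J):J]=2$ in the $\CC_6$ case. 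One now wants $J\trianglelefteq C$, for then $C\le N_G(J)=H$ and $(iv)$ is proved. The \emph{main obstacle} is exactly this step: with $O_{3'}(C)=1$ and a Sylow $3$-subgroup whose normaliser is either $S$ itself or $J$-by-$2$, I must exclude any non-trivial $3'$-section. Here I would exploit that $J=J(S)$ is the Thompson subgroup of the maximal-class group $S$ (it is the unique abelian subgroup of $S$ of order $27$), and run a Glauberman/Thompson-type factorisation together with coprime action (Theorem \ref{coprime action}) on the specific $\sym(4)$-action of $H/J$ on $J$ to force $3$-closure; the delicate point is that at the prime $3$ one must rule out $\SL_2(3)$- and $\alt(5)$-like sections by hand, using the precise module structure, since self-normalising Sylow and $O_{3'}=1$ alone do not suffice.

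Finally, $(v)$ and $(vi)$ are linear algebra over $\GF(3)$ inside $J\cong\GF(3)^3$ with its $H/J\cong\sym(4)$-action. For $(v)$, every plane $T\le J$ of order nine contains a vector of $\CC_6$-type (a short check on the $13$ lines of $J$, classified as $3$ of $\CC_4$-type, $6$ of $\CC_6$-type and $4$ of $\CC_5$-type), so picking $y\in T\cap\CC_6$ gives $C_G(T)\le C_G(y)\le H$ by $(iv)$; hence $C_G(T)=C_H(T)=J$ (no non-identity element of $\sym(4)$ fixes a plane of $J$ pointwise), and then $N_G(T)\le N_G(C_G(T))=N_G(J)=H$. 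For $(vi)$, fixing a $\CC_4$-line $\langle x\rangle$ one lists the four planes of $J$ through it: two contain only $\CC_4$- and $\CC_6$-lines (giving $|P_1\cap\CC_4|=|P_1\cap\CC_6|=4$) and two contain one $\CC_4$-line, one $\CC_6$-line and two $\CC_5$-lines (giving $|P_2\cap\CC_4|=|P_2\cap\CC_6|=2$, $|P_2\cap\CC_5|=4$), and the $\sym(4)$-action fuses these into exactly the two stated classes $P_1^H$ and $P_2^H$.
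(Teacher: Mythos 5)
Your treatment of $(i)$, $(ii)$, $(iii)$, $(v)$ and $(vi)$, and your reduction of $\CC_{10}$, $\CC_{11}$, $\CC_{12}$ to the order-three classes via power maps, all match the paper's argument and are sound. The genuine gap is exactly where you flag the ``main obstacle'': you never prove $C_G(x)\leq H$ for $x \in \CC_5 \cup \CC_6$, and the tools you gesture at cannot close it. A Glauberman--Thompson factorisation (or ZJ-type) argument presupposes $3$-constraint/$3$-stability of $C_G(x)$, which is precisely what is in question here; and the Glauberman--Thompson normal $p$-complement criterion is inapplicable, since for $x\in\CC_6$ the subgroup $N_{\bar X}(\bar J)=\bar J\langle \bar t\rangle$ (with $\bar t$ inverting $\bar J$) visibly has no normal $3$-complement, and in any case the desired conclusion is $\bar J \trianglelefteq \bar X$, not a normal $3$-complement. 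With only ``$\bar J$ elementary abelian of order $9$ and Sylow, automizer of order $2$, $O_{3'}(\bar X)=1$'' one must genuinely exclude non-$3$-soluble configurations (e.g.\ an $\alt(6)$-like section sitting above $\bar J$ -- note $\alt(5)$ is not the issue, its Sylow $3$-subgroups are cyclic); that exclusion is a theorem, not a by-hand computation with the $\sym(4)$-module structure of $J$.

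The paper closes this hole with two specific results, neither of which appears in your outline. For $x\in\CC_6$ it applies the Smith--Tyrer Theorem (Theorem \ref{Smith-Tyrer}) to $\bar X=C_G(x)/\langle x\rangle$: here $\bar J$ is a non-cyclic abelian Sylow $3$-subgroup with $[N_{\bar X}(\bar J):C_{\bar X}(\bar J)]=2$, so either $\bar X$ is $3$-soluble of length one or $O^3(\bar X)<\bar X$; the latter is impossible because $\bar J=[\bar J,\bar t]\leq O^3(\bar X)$, and the former together with $O_{3'}(\bar X)=1$ gives $\bar J=O_3(\bar X)\trianglelefteq \bar X$, whence $C_G(x)\leq N_G(J)=H$. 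For $w\in\CC_5$ the paper first proves $(v)$ (which, as in your write-up, needs only the $\CC_6$ case -- so the ordering can be arranged), and then applies Lemma \ref{self-normalizing extraspecial 3^3}, a Gr\"un-plus-Burnside transfer argument, to $\wt W=C_G(w)/\langle w\rangle$: its Sylow $3$-subgroup $\wt S\cong 3_+^{1+2}$ is self-normalizing by $(v)$, so $\wt W=O_{3'}(\wt W)\wt S=\wt S$, i.e.\ $C_G(w)=S\leq H$. Your proposal contains neither ingredient nor a substitute of comparable strength, so as it stands part $(iv)$ -- and with it the lemma -- is unproved.
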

\begin{proof}
Since $J=O_3(H)$ and $H=N_G(J)$,  $H$ is self normalizing in $G$.
Also, since $J$ is characteristic in $S$ by Lemma \ref{J is
characteristic},  $N_G(S) \leq N_G(J)=H$ and so $S \in \syl_3(G)$.
Therefore $(i)$, $(ii)$ and $(iii)$ follow
immediately from Lemma \ref{J is characteristic}.

Let $x \in \CC_6$ and set $X:=C_G(x)$. Then $J \in \syl_3(X)$. Observe that $|N_X(J)|=|H \cap
C_G(x)|=3^32$. Let $\bar{X}=C_G(x)/\<x\>$ and let $t$ be an involution in $C_H(x)$ then
$C_J(t)=\<x\>$ so by coprime action, $\bar{t}$ acts fixed-point-freely on $\bar{J}$. Therefore
\[|N_{\bar{X}}(\bar{J})/C_{\bar{X}}(\bar{J})|=|N_{\bar{X}}(\bar{J})/\bar{J}|=2\]
and so $\bar{X}$ satisfies the hypothesis of Theorem \ref{Smith-Tyrer}. By Hypothesis A,
$O_{3'}(X)=1$ and so $O_{3'}(\bar{X})=1$. Moreover $\bar{J}=[\bar{J},\bar{t}] \leq O^3(\bar{X})$.
Hence $\bar{X}$ is $3$-soluble of length one with trivial $3'$-radical. Therefore
$\bar{X}=N_{\bar{X}}(\bar{J})$ and so $X \leq H$.

Let $T\leq J$ have order nine. A calculation in $H$ verifies that
$C_H(T)=J$ for each such $T$. Notice that every choice of $T$
contains an element in $\CC_6$ and so $C_G(T)\leq H$. Thus
$C_G(T)=J$. It is therefore immediate that $N_G(T)$ normalizes $J$
and so $N_G(T)\leq H$. This proves $(v)$.

Now let $w \in \CC_5$ and set $W:=C_G(w)$ and $\wt{W}=W/\<w\>$. Then $w\in \mathcal{Z}(S)$ for some
$S \in \syl_3(H)\subseteq \syl_3(G)$. We calculate in the image of $S$ in $\alt(9)$ to see that $|S'|=9$ and every element of order nine in $S$ cubes into $\<w\>$.  Therefore $\wt{S}$ is non-abelian of exponent three and so $\wt{S}\cong 3_+^{1+2}$. Furthermore, $N_G(S) \cap W\leq H \cap W=S$ and so $\wt{W}$ has a
self-normalizing and non-abelian Sylow $3$-subgroup isomorphic to $3_+^{1+2}$.
Let $T<S$ such that $w \in T$ and
$\wt{T}=\mathcal{Z}(\wt{S})$  then $|T|=9$ so $N_G(T)\leq H$. Therefore, $N_W(T)\leq H \cap W=S$
and so $N_{\wt{W}}(\mathcal{Z}(\wt{S}))=\wt{S}$.  Thus $\wt{W}$ satisfies the hypothesis of Lemma
\ref{self-normalizing extraspecial 3^3}. However, by hypothesis, $O_{3'}(W)=1$ and so
$O_{3'}(\wt{W})=1$. Thus $\wt{W}=\wt{S}$ and so $W=S \leq H$.

Now, if $v \in  \CC_{10} \cup \CC_{11} \cup \CC_{12}$ then either $v^2$ or $v^3$ is in $\CC_5 \cup
\CC_6$  and so it follows immediately that $C_G(v)\leq H$. So suppose that $\CC_{11}^G=\CC_{12}^G$
then let $a \in \CC_{11}$ and $b \in \CC_{12}$ such that $a^3=b^3\in \CC_5$. Then there exists $g
\in G$ such that $a^g=b$ and so $(a^3)^g=(a^g)^3=b^3$ and so $g \in C_G(a^3)\leq H$. Which implies
that $\CC_{11}= \CC_{12}$. This contradiction completes the proof of $(iv)$.

Finally to prove $(vi)$ we allow $x$ to be represented in $\alt(9)$ by $(123)$. Any group of order nine
containing $x$ necessarily centralizes $x$ and so is a subgroup of $J$. Therefore we need only
consider $P_1=\<(123),(456)\>$ and $P_2=\<(123),(456)(789)\>$ and count the $H$-orbits of the
subgroups of $J$.
\end{proof}

\begin{lemma}
If $x \in \CC_4$ then $C_G(x) \leq H$ or $C_G(x)\cong 3 \times
\alt(6)$.
\end{lemma}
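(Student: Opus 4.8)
The plan is to set $X:=C_G(x)$ and study $\bar X:=X/\langle x\rangle$, the upshot being a dichotomy: either $J$ is normal in $X$, whence $X\leq N_G(J)=H$, or $\bar X\cong\alt(6)$, whence $C_G(x)\cong 3\times\alt(6)$. The first task is to show $J\in\syl_3(X)$. Since $x\in J$ and $J$ is abelian, $J\leq X$, and $N_X(J)=H\cap X=C_H(x)$ has order $108=4\cdot 27$ (as $x\in\CC_4$), so $J$ is a normal Sylow $3$-subgroup of $N_X(J)$. Hence for any $3$-subgroup $P$ of $X$ with $J\leq P$ the group $N_P(J)$ is a $3$-subgroup of $N_X(J)$, so $N_P(J)\leq J$, forcing $N_P(J)=J$ and $P=J$. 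Thus $J\in\syl_3(X)$, $\bar J:=J/\langle x\rangle\cong 3^2$ is Sylow in $\bar X$, and since $\langle x\rangle$ is central in $X$ the hypothesis $O_{3'}(X)=1$ yields $O_{3'}(\bar X)=1$.

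The engine of the proof is the computation $C_{\bar X}(\bar s)=\bar J$ for every nontrivial $\bar s\in\bar J$. Writing $T$ for the preimage in $J$ of $\langle\bar s\rangle$, we have $|T|=9$, so Lemma \ref{order of sylow 3's and H is self normalizing}$(v)$ gives $C_G(T)=J$ and $N_G(T)\leq H$. Any $g\in X$ with $\bar g\in C_{\bar X}(\bar s)$ fixes $x$ and sends $s$ into $s\langle x\rangle$, hence normalises $T$; so the preimage $D$ of $C_{\bar X}(\bar s)$ lies in $N_X(T)\leq H$, contains $C_X(T)=J$, and $D/J$ embeds in the group of transvections of $T$ fixing $x$ and $T/\langle x\rangle$, which is cyclic of order $3$. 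If $D\neq J$ then $|D|=81$ and $D$ is a $3$-subgroup of $X$ properly containing the Sylow $3$-subgroup $J$, which is absurd; hence $D=J$ and $C_{\bar X}(\bar s)=\bar J$.

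If $J\trianglelefteq X$ then $X\leq N_G(J)=H$ and the first alternative holds, so assume $J\ntrianglelefteq X$, i.e. $\bar J\ntrianglelefteq\bar X$. Let $\bar M$ be a minimal normal subgroup of $\bar X$. It cannot be a $3$-group (otherwise $C_{\bar X}(\bar M)=\bar J$ would be normal, forcing $\bar J\trianglelefteq\bar X$) nor a $3'$-group (as $O_{3'}(\bar X)=1$), so $\bar M$ is a product of isomorphic nonabelian simple groups; the self-centralising condition on $\bar J$ rules out more than one factor (a commuting factor would enlarge some $C_{\bar X}(\bar s)$), so $\bar M$ is simple with $3\mid|\bar M|$, $C_{\bar X}(\bar M)=1$ and $\bar X\hookrightarrow\aut(\bar M)$. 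Next I would show $\bar J\leq\bar M$: otherwise $\bar J\cap\bar M=\langle\bar s_0\rangle$ has order $3$ and $C_{\bar M}(\bar s_0)\leq\bar J\cap\bar M=\langle\bar s_0\rangle$, so Theorem \ref{Feit-Thompson} forces $\bar M\cong\alt(5)$ or $\PSL_2(7)$; but both have outer automorphism group of order $2$, contradicting the existence of the order-$3$ element $\bar s_1\in\bar J\setminus\bar M$ mapping nontrivially into $\bar X/\bar M$. Thus $\bar J\in\syl_3(\bar M)$ and $C_{\bar M}(\bar s)=\bar J$ for all nontrivial $\bar s\in\bar J$; fusion of elements of the abelian Sylow subgroup $\bar J$ is controlled by $N_{\bar M}(\bar J)$ (Lemma \ref{Burnside conjugation lemma}), whose action of order dividing $4$ on the eight nontrivial elements of $\bar J$ produces more than one class of elements of order three. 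Higman's Theorem \ref{Higman Alt6} now gives $\bar M\cong\alt(6)$, and since $|N_{\bar X}(\bar J)/\bar J|=4=|N_{\bar M}(\bar J)/\bar J|$ a Frattini argument (Lemma \ref{frattini}) yields $\bar X=\bar M\cong\alt(6)$.

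It remains to lift the isomorphism. As $\langle x\rangle$ is a central $3$-subgroup of $X$ admitting a complement in the elementary abelian Sylow subgroup $J$, Gasch\"utz's Theorem \ref{Gaschutz} provides a complement $M\cong\alt(6)$ to $\langle x\rangle$ in $X$; centrality of $\langle x\rangle$ makes this a direct product, so $C_G(x)=X\cong 3\times\alt(6)$. I expect the main obstacle to be the third paragraph: pinning down $\bar X$ first as a genuinely simple group and then as $\alt(6)$ rather than one of its automorphic extensions, where the self-centralising condition, the Feit--Thompson trichotomy, and the precise order of $N_{\bar X}(\bar J)$ must all be brought to bear together.
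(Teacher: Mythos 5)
Your proof is correct, and although it shares the paper's skeleton (pass to $\bar{X}=C_G(x)/\langle x\rangle$, analyse a minimal normal subgroup, recognise $\alt(6)$ via Theorem \ref{Higman Alt6}, finish with a Frattini argument and Gasch\"utz), the working parts are genuinely different. Your engine is the single statement $C_{\bar{X}}(\bar{s})=\bar{J}$ for every nontrivial $\bar{s}\in\bar{J}$, proved up front from Lemma \ref{order of sylow 3's and H is self normalizing}$(v)$ plus the Sylow-order/transvection count; the paper never isolates this, and instead drives each stage with the element $f\in C_H(x)$ of order four satisfying $C_J(f)=\langle x\rangle$, whose fixed-point-free action on $\bar{J}$ does the work your centralizer computation does. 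Concretely: to kill a two-factor minimal normal subgroup the paper splits a factor off $\langle x\rangle$ by Gasch\"utz and uses solubility of $H$, where you simply note a commuting factor would lie inside $C_{\bar{X}}(\bar{s})=\bar{J}$; to force $\bar{J}\leq\bar{M}$ the paper uses the fixed-point-free order-four element, where you invoke Theorem \ref{Feit-Thompson} together with $|\out(\alt(5))|=|\out(\PSL_2(7))|=2$; and for ``more than one class of elements of order three'' you use fusion control in the abelian Sylow subgroup (Lemma \ref{Burnside conjugation lemma}) with $|N_{\bar{X}}(\bar{J})/\bar{J}|=4$, where the paper quotes Lemma \ref{order of sylow 3's and H is self normalizing}$(vi)$. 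Your version is more modular---everything flows from one centralizer identity, and you avoid both the order-four analysis and part $(vi)$---at the cost of importing Feit--Thompson, which the paper's proof of this particular lemma does not need. The two small steps you leave implicit (that $C_{\bar{X}}(\bar{M})=1$, and that a Sylow $3$-subgroup of a direct product is a product of Sylow subgroups of the factors, so some nontrivial $\bar{s}\in\bar{J}$ lies in a single factor) close immediately by the same centralizer argument, so they are not genuine gaps.
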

\begin{proof}
Suppose $C_G(x) \nleq H$ and set $\bar{X}:=C_G(x)/\<x\>$. Since $x$ is not $3$-central in $G$,
$C_G(x)$ has Sylow $3$-subgroups of order $3^3$ and so $\bar{X}$ has Sylow $3$-subgroups of order
nine. Observe that $\bar{J} \in \syl_3(\bar{X})$ and $|N_{\bar{X}}(\bar{J})|=3^22^2$. We show that
$\bar{X}$ must be simple. So let $\bar{N}$ be a minimal normal subgroup of $\bar{X}$ then $\bar{N}$
is a direct product of isomorphic simple groups. If $3 \nmid |\bar{N}|$ then $O_{3'}(C_G(x))\neq 1$
which is not possible. Therefore $3\mid |\bar{N}|$. Now $\bar{X}$ has Sylow $3$-subgroups of order
nine and so either $\bar{N}$ is simple or $\bar{N}$ is a direct product of two isomorphic simple
groups each with Sylow $3$-subgroups of order three. Suppose the latter then $\bar{N}=\bar{N_1}
\times \bar{N_2}$ where $\bar{N_1} \cong \bar{N_2}$ are simple groups. Choose $\bar{T}\in
\syl_3(\bar{N_2})$ then $[\bar{T},\bar{N_1}]=1$. Let $T$ and $N_1$ be the preimages in $C_G(x)$ of
$\bar{T}$ and $\bar{N_1}$ respectively. Then $N_1$ splits over $\<x\>$ by Gasch\"{u}tz's Theorem
(\ref{Gaschutz}). Let $M_1$ be a complement to $\<x\>$ in $N_1$ then $M_1\cong \bar{N_1}$ is simple
and normalized by $T$. Since $[T,N_1] \leq \<x\>$, $[T,M_1] \leq \<x\> \cap M_1=1$. Thus $M_1 \leq
C_G(T)$. Now $T$ is conjugate to a subgroup of $J$ of order nine and so by Lemma \ref{order of
sylow 3's and H is self normalizing} $(v)$, $M_1\leq H^g$ for some $g \in G$. However $M_1$ is
simple and $H^g$ is soluble which implies $M_1$ has prime order. Since $3\mid |M_1|$, $M_1\cong
\bar{N_1}$ is cyclic of order three. Therefore $\bar{N}$ has order nine and so $\bar{X}$ has a
normal Sylow $3$-subgroup $\bar{J}$ and so $C_G(x) \leq H$ contradicting our assumption.

So we may assume $\bar{N}$ is simple. Let $f \in C_H(x)$ have order four then
$C_J(f)=C_J(f^2)=\<x\>$. By coprime action, $\bar{f}$ and $\bar{f^2}$ act fixed-point-freely on
$\bar{J}$. Suppose $\bar{N}$ has Sylow $3$-subgroups of order three. Then $\bar{J}\cap \bar{N}$ has
order three. However $\bar{f}$ has order four which implies that $\bar{J}=[\bar{J} \cap
\bar{N},\bar{f}]\leq \bar{N}$. Thus $\bar{J}$ is a Sylow $3$-subgroup of $\bar{N}$. Also since $J$
has more than one conjugacy class of subgroup of order nine containing $x$ by Lemma \ref{order of
sylow 3's and H is self normalizing} $(vi)$, $\bar{J}$ has more than one conjugacy class of order
three. Let $1 \neq \bar{j}\in \bar{J}$ and suppose that $\bar{a} \in \bar{N}$ is a $3'$-element and
centralizes $\bar{j}$. Then $\<a\>$ normalizes $\<j,x\>\leq J$ which has order nine. By Lemma
\ref{order of sylow 3's and H is self normalizing} $(v)$, $a\in H$ has even order. Let $t \in
\<a\>$ be an involution then we have seen that $\bar{t}$ acts fixed-point-freely on $\bar{J}$. This
contradicts our assumption that $\bar{t}$ centralizes $ \bar{j}$. Thus
$C_{\bar{X}}(\bar{j})=\bar{J}$ and so $\bar{N}$ satisfies Theorem \ref{Higman Alt6} so
$\bar{N}\cong \mathrm{Alt}(6)$. In particular, $N_{\bar{X}}(\bar{J})\leq \bar{N}$. Since
$\bar{N}\trianglelefteq \bar{X}$ and $\bar{J} \in \syl_3(\bar{N})$, we may use a Lemma
\ref{frattini} (Frattini argument) to write $\bar{X}=\bar{N} N_{\bar{X}}(\bar{J})=\bar{N}$.
Therefore $\bar{X}=\bar{N}\cong \alt(6)$ and so by Gasch\"{u}tz's Theorem (\ref{Gaschutz}), $C_G(x)
\cong 3 \times \alt(6)$.
\end{proof}

\begin{lemma}\label{3-elements in S not J}
If $x \in \CC_7$ then either $C_G(x) \leq H$ or $\CC_7^G=\CC_5^G$.
If $\CC_7^G=\CC_5^G$ then for $S\in\syl_3(H)$ and $T \in
\syl_2(N_H(S))$, $C_G(T)$ contains a subgroup isomorphic to
$\SL_2(3)$ and $T^\# \subset \CC_3$.
\end{lemma}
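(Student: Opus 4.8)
The plan is to read both assertions off the position of $x$ in a Sylow $3$-subgroup, with a single fusion computation in $H$ doing all the work. Write $X:=C_G(x)$. The structure of $H$ gives $C_H(x)=D:=\langle x,z\rangle\cong 3^2$, where $z$ generates $\mathcal{Z}(S)$ for a Sylow $3$-subgroup $S\le H$ with $x\in S$; in particular $z\in\CC_5$. By Lemma \ref{order of sylow 3's and H is self normalizing} one has $C_G(z)=S$ and $N_G(\langle z\rangle)\le N_G(S)\le H$, whence $C_X(z)=X\cap S=C_S(x)=D$. The one input needed from $H$ is the \emph{fusion fact} (a direct check in $H\le\alt(9)$): every non-trivial element of $D$ with non-zero $x$-exponent lies in $\CC_7$, while $\langle z\rangle^{\#}\subseteq\CC_5$. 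Thus if $z$ is $G$-conjugate to any $z^{\pm1}x^{a}$ with $a\not\equiv0$, then $\CC_5^G=\CC_7^G$, and then $x\in\CC_7\subseteq\CC_5^G$ is $3$-central in $G$.

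For the dichotomy, choose $P\in\syl_3(X)$ with $D\le P$; since $P\le X$ and $x\in D$ we get $x\in\mathcal{Z}(P)$. If $P\in\syl_3(G)$ then $x$ is $3$-central, hence $G$-conjugate to a generator of $\mathcal{Z}(S)\subseteq\CC_5$, giving $\CC_7^G=\CC_5^G$. So suppose $\CC_7^G\ne\CC_5^G$; by the fusion fact $x$ is not $3$-central and $P\notin\syl_3(G)$. I first show $P=D$: if $D<P$, take $g\in N_P(D)\setminus D$; as a $3$-element centralizing $x$ and normalizing $D$ it acts trivially on $D/\langle x\rangle$, so $z^g=zx^{a}$, where $a=0$ forces $g\in C_X(z)=D$ and $a\ne0$ makes $z$ $G$-conjugate to $zx^{a}\in\CC_7$ — both impossible. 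Hence $P=D$ and $\langle\bar z\rangle$ is a Sylow $3$-subgroup of $\bar X:=X/\langle x\rangle$. Next, $N_{\bar X}(\langle\bar z\rangle)=\langle\bar z\rangle$: any $g\in X$ with $z^g\in\langle z\rangle\langle x\rangle$ has $z^g\in z^{\pm1}\langle x\rangle$; the cases $z^g=z^{\pm1}x^{a}$ ($a\ne0$) are excluded by the fusion fact, $z^g=z$ gives $g\in D$, and $z^g=z^{-1}$ forces $g\in N_G(\langle z\rangle)\cap X\le H\cap X=D$, which centralizes $z$. Since $O_{3'}(\bar X)=1$, Burnside's normal $p$-complement theorem (Theorem \ref{Burnside-normal p complement}) gives $\bar X=\langle\bar z\rangle$, so $C_G(x)=D\le H$.

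For the second statement assume $\CC_7^G=\CC_5^G$. As $H$ has four Sylow $3$-subgroups, $|N_H(S)|=162$, so $T=\langle t_0\rangle$ has order two, and since $N_H(S)=N_H(\langle z\rangle)$ induces $\aut(\langle z\rangle)\cong C_2$, the involution $t_0$ inverts $z$. Its image $t_0J$ is an involution of $H/J\cong\sym(4)$ normalizing the Sylow $3$-subgroup $S/J$, hence a transposition, with $C_{\sym(4)}(t_0J)\cong 2\times2$. Therefore the $2$-part of $C_H(t_0)$ has order at most $4$, which rules out the class $\CC_2$ (its centralizer has $2$-part $8$); thus $t_0\in\CC_3$ and $T^{\#}\subseteq\CC_3$.

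It remains to locate $\SL_2(3)\le C_G(t_0)$. Let $v$ generate $C_J(t_0)=C_S(t_0)$; a computation in $H$ shows $v\in\CC_6$, so $C_G(v)=C_H(v)\le H$ by Lemma \ref{order of sylow 3's and H is self normalizing}, and $\langle v\rangle\in\syl_3(C_G(t_0))$ (one checks $|C_G(t_0)|_3=3$ from the action of $t_0$ on a $t_0$-invariant Sylow $3$-subgroup). The target is a $\langle v\rangle$-invariant quaternion group $Q_8\le C_G(t_0)$ with $C_{Q_8}(v)=1$, for then $\langle Q_8,v\rangle\cong\SL_2(3)$. This is the main obstacle: such a $Q_8$ cannot lie in $H$, since $|C_H(t_0)|_2=4$, so it has to be manufactured from the \emph{enlarged} fusion $\CC_7^G=\CC_5^G$ — the extra $3$-central elements conjugate to the non-$J$ generators of $S$ must force $C_G(t_0)$ to have strictly larger $2$-local structure than $C_H(t_0)$. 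The plan is to pin down enough of that structure to exhibit a $v$-invariant quaternion subgroup of $C_G(t_0)$ on which $v$ acts fixed-point-freely; showing that $C_G(t_0)$ genuinely acquires such a quaternion section, rather than remaining the soluble group visible inside $H$, is where the real work of Part~2 lies.
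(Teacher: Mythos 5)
Your first half is sound. The dichotomy argument is essentially the paper's own proof in a lightly different packaging: the fusion pattern inside $D=\<z,x\>$ (two $\CC_5$-elements, six $\CC_7$-elements) forces the Sylow $3$-subgroup of $C_G(x)$ to be self-normalizing when $\CC_7^G\neq\CC_5^G$, and then Burnside's normal $p$-complement theorem together with $O_{3'}(C_G(x))=1$ gives $C_G(x)=D\leq H$. Your derivation of $T^\#\subset\CC_3$ --- bounding the $2$-part of $C_H(t_0)$ via the transposition image of $t_0$ in $H/J\cong\sym(4)$ --- is also valid, and is a reasonable self-contained alternative to the paper's direct appeal to the permutation representation plus Sylow conjugacy in $N_H(S)$.

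The genuine gap is the central claim of the second statement: you never exhibit $\SL_2(3)\leq C_G(T)$. You reduce it to finding a $\<v\>$-invariant $Q_8$ in $C_G(t_0)$ and then explicitly defer that construction (``where the real work of Part~2 lies''), so the proposal stops exactly at the one point where the hypothesis $\CC_7^G=\CC_5^G$ must do real work. The missing idea is to build the $\SL_2(3)$ around a self-centralizing $3^2$, not around the prefixed involution $t_0$. Take $a\in\mathcal{Z}(S)^\#$ and $b\in S\bs J$ of order three, so $b\in\CC_7$, and use the fusion hypothesis to choose $g\in G$ with $a^g=b$. Then $X:=\<a,b\>=C_S(b)$ (as $|C_H(b)|=9$), $C_G(a)=S$ and $C_G(b)=S^g$, hence $S\cap S^g=X$ and $C_G(X)=C_G(a)\cap C_G(b)=X$. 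Consequently $N_G(X)/X$ embeds in $\GL_2(3)$ and contains the two distinct order-three subgroups $N_S(X)/X$ and $N_{S^g}(X)/X$, so $Y:=\<N_S(X),N_{S^g}(X)\>$ satisfies $Y/X\cong\SL_2(3)$, acting on $X$ as $\SL(X)$. Lifting the central involution of $Y/X$ gives an involution $t$ that inverts $X$; in particular $t$ inverts $a$, so $\<t\>\leq N_G(\<a\>)\leq N_G(C_G(a))=N_G(S)$, i.e.\ $\<t\>\in\syl_2(N_G(S))$, and since $C_X(t)=1$, coprime action yields $C_Y(t)\cong C_Y(t)X/X=C_{Y/X}(Xt)=Y/X\cong\SL_2(3)$, whence $\SL_2(3)\leq C_G(t)$. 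Sylow's theorem in $N_G(S)=N_H(S)$ then conjugates $\<t\>$ onto your chosen $T$, transferring the conclusion. Note the order of quantification: the involution is \emph{constructed} inside $Y$ and only afterwards identified with an arbitrary $T\in\syl_2(N_H(S))$; your plan of fixing $t_0$ first and trying to enlarge $C_G(t_0)$ ``from the outside'' has no mechanism for importing the fusion $\CC_7^G=\CC_5^G$, which is precisely why it stalls (and, incidentally, your parenthetical claim that $|C_G(t_0)|_3=3$ is itself unjustified at this stage of the argument).
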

\begin{proof}
We see from the character table of $H$ (Table \ref{char table H})
that $C_H(x)$ has order nine and therefore $C_H(x)=\<x,z\>$ where $z
\in \mathcal{Z}(S)$ for some $S \in \syl_3(H)$. Moreover $C_H(x)=\<x,z\>$
contains two $H$-conjugates of $z$ and six $H$-conjugates of $x$. If
$x$ is not conjugate in $G$ to $z$ then $N_{C_G(x)}(\<z,x\>) \leq
N_G(\<z\>)\cap C_G(x)\leq H\cap C_G(x)=\<z,x\>$. Thus $C_G(x)$ has a
Sylow $3$-subgroup of order nine which is self-normalizing.
Therefore $C_G(x)$ has a normal $3$-complement. However, by
hypothesis, $O_{3'}(C_G(x))=1$. We conclude that $C_G(x)=\<z,x\>$ or
$x$ is conjugate to $z$.

Assume $\CC_7^G=\CC_5^G$ and let $S \in \syl_3(H)$ then $N_H(S)=N_G(S)=N_G(\mathcal{Z}(S))$ has
order $3^42$ and contains involutions from the $H$-conjugacy class $\CC_3$. Let $a \in
\mathcal{Z}(S)^\#$ and $b \in S\bs J$ have order three then $b \in \CC_7$. By assumption, $b$ is
conjugate in $G$ to $a$. Set $X:=\<a,b\>$. Let $g\in H$ such that $a^g=b$. Then $b \in
\mathcal{Z}(S^g)$ and $S=C_G(a)$ implies $S^g=C_G(b)$. Since $C_S(b)=X$, $S \cap S^g=X$. So let
$A:=N_S(X)$ and $B:=N_{S^g}(X)$ and set $Y:=\<A,B\>$. Observe that $X$ is self-centralizing in $G$
and $A$ and $B$ are distinct. Thus $Y/X\cong \SL_2(3)$. Let $TX/X= \mathcal{Z}(Y/X)$ where $T \cong
2$. Then $T$ inverts $a$ and so $T\in \syl_2(N_G(S))\leq H$. Let $1 \neq t \in T$ then $t\in
\CC_3^G$. By coprime action and an isomorphism theorem, $\SL_2(3)\cong C_{Y/X}(TX/X)\cong
C_{Y}(T)X/X\cong C_{Y}(T).$ Since $T \in \syl_2(N_G(S))$, the result is true for every subgroup in
$\syl_2(N_G(S))$.
\end{proof}

We have three scenarios to consider in more detail in the following three subsections.

\noindent $\bullet$ Case 1: For $x \in \CC_7$, $C_G(x) \leq H$.

\noindent $\bullet$ Case 2: For $x \in \CC_4$, $C_G(x) \leq H$ and $\CC_7^G=\CC_5^G$.

\noindent $\bullet$ Case 3:  For $x \in \CC_4$, $C_G(x) \cong 3 \times \alt(6)$
and $\CC_7^G=\CC_5^G$.


\subsection{Case 1}
In this case we  hypothesize that for $x \in \CC_7$, $C_G(x) \leq H$. We do not need to consider
the possibilities for $C_G(y)$ for $y \in\CC_4$ since the assumption on $\CC_7$ proves to be very
powerful. By Lemma \ref{3-elements in S not J}, a more succinct way to describe this scenario is to
hypothesize in addition to Hypothesis A that, as sets, $J^G \cap H=J$. Throughout this section we
assume $G$ satisfies the following hypothesis.
\begin{hyp}\label{case 1 hyp}
Let $G$ satisfy Hypothesis A and in addition assume that, as sets, $J^G \cap H=J$.
\end{hyp}
\begin{thm}\label{case 1 thm}
If $G$ satisfies Hypothesis \ref{case 1 hyp} then $G=H$.
\end{thm}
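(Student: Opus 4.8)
The plan is to identify $G$ with $H$ by a Suzuki special-classes argument, as foreshadowed in the narrative: under Hypothesis~\ref{case 1 hyp} almost every $3$-singular class of $H$ has its full $G$-centralizer inside $H$, and the set-theoretic condition $J^G\cap H=J$ supplies exactly the fusion control needed to convert this into a computation of the values of $\Irr(G)$ on those classes. Concretely I would take as the set of special classes
$$\CC=\CC_5\cup\CC_6\cup\CC_7\cup\CC_{10}\cup\CC_{11}\cup\CC_{12}.$$
Condition $(i)$ of Definition~\ref{defi-special classes} holds since $C_G(x)\leq H$ on each of these: for $\CC_5,\CC_6,\CC_{10},\CC_{11},\CC_{12}$ this is Lemma~\ref{order of sylow 3's and H is self normalizing}$(iv)$, and for $\CC_7$ it follows from Lemma~\ref{3-elements in S not J} once one observes that $\CC_7^G\neq\CC_5^G$ (indeed $\CC_5\subseteq J$ while $\CC_7\cap J=\emptyset$, so $\CC_7^G=\CC_5^G$ would force $\CC_7\subseteq J^G\cap H=J$). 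Condition $(ii)$ requires that no two of these six classes fuse in $G$: the order-$3$ classes stay distinct because $\CC_5,\CC_6$ are non-fused by Lemma~\ref{order of sylow 3's and H is self normalizing}$(iii)$ while $\CC_7$ avoids both $\CC_5^G$ and $\CC_6^G$ (else $\CC_7\subseteq J^G\cap H=J$), one has $\CC_{11}^G\neq\CC_{12}^G$ by part $(iv)$, and $\CC_{10}$ cannot fuse with the others on order grounds. Condition $(iii)$ holds because each class is real (all entries of Table~\ref{char table H} on them are rational) and $\CC_{11},\CC_{12}$ together exhaust the order-$9$ classes, so $\CC$ is closed under passing to generators of cyclic subgroups. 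I deliberately do \emph{not} include $\CC_4$; the strength of the hypothesis on $\CC_7$ is meant to make $C_G(x)$ for $x\in\CC_4$ irrelevant.

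With $\CC$ fixed, $\dim W=6$ by Lemma~\ref{special classes 2}, and I would feed this into the Suzuki machine of Theorem~\ref{suzuki} and Corollary~\ref{cor-gamma ad cij's}. The part of $W$ visible through differences of irreducible characters of $H$ is small: scanning Table~\ref{char table H}, the only characters that pairwise agree off $\CC$ are $\psi_{10},\psi_{11},\psi_{12}$, so $\psi_{10}-\psi_{11}$ and $\psi_{12}-\psi_{11}$ furnish two norm-$2$ vectors of $W$ supported on $\CC_7,\CC_{11},\CC_{12}$. Inducing them to $G$ and using the isometry $(\lambda^G,\mu^G)_G=(\lambda,\mu)_H$ of Lemma~\ref{special classes 1} produces a family of exceptional irreducible characters of $G$ attached to the order-$9$ and $\CC_7$ columns. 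The remaining four dimensions of $W$ I would span by class functions supported on $\CC_5,\CC_6,\CC_{10}$, compute $C$ from $\gamma_i=\sum_j\psi_j(x_i)\psi_j=\sum_k c_{ik}\lambda_k$, and then recover the values $\theta_j(x_i)$ of $\Irr(G)$ on $\CC$ as the entries of $CB$, pinning them down with the column-orthogonality relations $\sum_j\theta_j(x_i)\overline{\theta_j(x_k)}=\delta_{ik}|C_G(x_i)|$ (legitimate because $|C_G(x_i)|=|C_H(x_i)|$ is read off Table~\ref{char table H}) together with the congruences of Lemma~\ref{Characters mod p} on the $3$-elements of $\CC$.

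The endgame is a degree count. Once the values of $\Irr(G)$ on $\CC$ are known, I would argue that the Suzuki correspondence sets up a degree-preserving bijection between $\Irr(H)$ and $\Irr(G)$ with no characters of $G$ left over, so that $G$ has precisely the fourteen degrees of Table~\ref{char table H}. Those degrees satisfy $\sum_{\psi\in\Irr(H)}\psi(1)^2=648=|H|$, whence $|G|=\sum_{\theta\in\Irr(G)}\theta(1)^2=648=|H|$ and therefore $G=H$; note this retroactively forces $C_G(x)\leq H$ for $x\in\CC_4$ as well, so no separate analysis of that class is ever needed.

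I expect the main obstacle to be exactly the step the text flags as delicate. Because only a two-dimensional slice of the six-dimensional $W$ comes from differences of irreducible characters of $H$, neither the values of $\Irr(G)$ on $\CC_5,\CC_6,\CC_{10}$ nor — crucially — the degrees of $\Irr(G)$ are handed to us directly by the exceptional-character formalism. Extracting them demands simultaneously exploiting the isometry of Lemma~\ref{special classes 1}, the centralizer orders on the special classes, and the integrality and congruence constraints, and then establishing \emph{completeness}: that $G$ has no further irreducible characters vanishing on all of $\CC$, so that the degree-sum identity genuinely closes up at $|H|$ rather than overshooting. This completeness, rather than the mechanical setting-up of the special classes, is where the real work lies.
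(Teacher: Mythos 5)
Your setup is essentially sound: under Hypothesis \ref{case 1 hyp} your six classes $\CC_5\cup\CC_6\cup\CC_7\cup\CC_{10}\cup\CC_{11}\cup\CC_{12}$ do form a set of special classes, and your fusion observations (for instance $\CC_7^G\neq\CC_5^G$ because $\CC_5\subseteq J$, $\CC_7\cap J=\emptyset$ and $J^G\cap H=J$) are exactly right; the paper merely economizes by taking only $\CC_6\cup\CC_7\cup\CC_{11}\cup\CC_{12}$, so that $\dim W=4$ and fewer candidate decompositions arise. The genuine gap is your endgame. The Suzuki formalism does \emph{not} produce a degree-preserving bijection between $\Irr(H)$ and $\Irr(G)$: nothing in Lemma \ref{special classes 1}, Theorem \ref{suzuki}, or column orthogonality controls the number of irreducible characters of $G$ outside the exceptional ones, nor the degree of any character of $G$ at all. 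In the paper's own computation the exceptional degrees $d_2,\dots,d_6$ remain unknowns subject only to linear relations (Lemma \ref{degrees of irreducibles for G}), and the remaining characters $\theta_7,\dots,\theta_s$ --- arbitrary in number and degree --- vanish on $\CC_7^G,\CC_{11}^G,\CC_{12}^G$ but need not vanish on $\CC_6^G$. Hence the sum $|G|=\sum_{\theta\in\Irr(G)}\theta(1)^2$ cannot be evaluated, and the ``completeness'' you flag is not a technical loose end to be checked at the last step: it is the whole theorem, and no argument of the shape you sketch can close it, because character values on the special classes alone never see $|G|$.

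The missing mechanism, which is where the paper's proof actually lives, is the injection of $|G|$ into the computation via structure constants together with an index bound. The paper argues by contradiction: if $G\neq H$ then $J\ntrianglelefteq G$, and letting $J$ act on its $G$-conjugates gives $|G|/|H|\geq 28$ (Lemma \ref{index of H in G}). Next, Lemma \ref{Structure constants 1} --- a consequence of $J^G\cap H=J$ via the triangle-group Lemma \ref{triangle group} --- shows that every pair $(a,b)\in y^G\times z^G$ with $ab\in\{y,z\}$, for $y\in\CC_6$ and $z\in\CC_7$, generates a subgroup of $H$, so that $a^G_{yzy}=a^H_{yzy}=0$ and $a^G_{yzz}=a^H_{yzz}=6$ are computable from Table \ref{char table H}. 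Since $a^G_{xyz}=\frac{|G|}{|C_G(x)||C_G(y)|}\sum_{\theta}\theta(x)\theta(y)\overline{\theta(z)}/\theta(1)$, and the choice of $y,z$ kills every term with $j\geq 7$ (because $\theta_j(z)=0$ there), these two equalities become equations relating $|G|$ to the unknown exceptional degrees. Solving them in each of the five candidates for $\mu_2$ (Lemma \ref{5 cases-the unknowns a2-a6 in B}) yields expressions such as $|G|=2^23^5d/(d+4\epsilon)$, which are incompatible with $|G|/|H|\geq 28$ and positivity of $|G|$. To salvage your proposal you would have to replace the bijection claim by this structure-constant step (or some equivalent device linking the known values on $\CC$ to $|G|$), and run the argument as a contradiction against the index bound; your special-class setup would then carry through, at the cost of handling a six-dimensional $W$ rather than the paper's four-dimensional one.
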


We suppose for a contradiction that $G\neq H$. In particular,
$J\ntrianglelefteq G$.

\begin{lemma}\label{index of H in G}
$|G|/|H| \geq 28$.
\end{lemma}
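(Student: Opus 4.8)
The plan is to identify $[G:H]$ with the number $|\mathcal{J}|$ of $G$-conjugates of $J$, where $\mathcal{J}=\{J^g : g\in G\}$, since $H=N_G(J)$; as $G\neq H$ we have $|\mathcal{J}|\geq 2$. I would first record the dichotomy from the preceding result that $C_G(a)\leq H$ or $C_G(a)\cong 3\times\alt(6)$ for $a\in\CC_4$: because $\CC_4^G$ is a single $G$-class, exactly one of these alternatives holds for \emph{all} $a\in\CC_4$. I treat the two possibilities separately and in each produce at least $28$ conjugates of $J$.

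Suppose first $C_G(a)\leq H$ for $a\in\CC_4$. I let $J$ act on $\mathcal{J}$ by conjugation and analyse the orbits. Since $J$ is the Thompson subgroup of every Sylow $3$-subgroup containing it (Lemma \ref{J is characteristic}), $J$ fixes $J^g$ if and only if $J\leq H^g$, which forces $J=J^g$; so $J$ has a unique fixed point. For $g\notin H$ I claim $J\cap H^g=1$: the conjugate of the Case~1 hypothesis reads $J^G\cap H^g=J^g$, and any $1\neq a\in J\cap H^g$ lies in $\CC_4^G\cup\CC_5^G\cup\CC_6^G\subseteq J^G$, whence $a\in J^G\cap H^g=J^g$; then $J$ and $J^g$ are both Sylow $3$-subgroups of $C_G(a)$, which in each of the three cases has $J$ as its unique (normal) Sylow $3$-subgroup (the centralizer having order $108$, $81$, $54$ respectively, by Lemma \ref{order of sylow 3's and H is self normalizing}(iv) together with the present assumption on $\CC_4$), forcing $J=J^g$ and contradicting $g\notin H$. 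Hence every non-fixed $J$-orbit is regular of length $27$, so $|\mathcal{J}|\equiv 1\pmod{27}$, and being $>1$ it is at least $28$.

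Now suppose $C_G(a)\cong 3\times\alt(6)$ for $a\in\CC_4$. Here the six $\CC_4$-elements of $J$ fall into exactly three cyclic subgroups $\langle a_1\rangle,\langle a_2\rangle,\langle a_3\rangle$ (the coordinate $3$-cycles in the $\alt(9)$-model), with $J=\langle a_1,a_2,a_3\rangle$. Let $A_i\subseteq\mathcal{J}$ be the set of conjugates of $J$ containing $\langle a_i\rangle$. A conjugate $J'$ contains $a_i$ exactly when $J'\leq C_G(a_i)$ is a Sylow $3$-subgroup of $C_G(a_i)$, so $A_i$ is precisely the set of Sylow $3$-subgroups of $C_G(a_i)\cong 3\times\alt(6)$, giving $|A_i|=n_3(\alt(6))=10$. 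For $i\neq j$, any $J'\in A_i\cap A_j$ contains $\langle a_i,a_j\rangle$, an order-$9$ subgroup of $J$; as $J'$ is abelian, $J'\leq C_G(\langle a_i,a_j\rangle)=J$ by Lemma \ref{order of sylow 3's and H is self normalizing}(v), forcing $J'=J$, so $|A_i\cap A_j|=|A_1\cap A_2\cap A_3|=1$. Inclusion--exclusion then yields $|A_1\cup A_2\cup A_3|=30-3+1=28$, and since $A_1\cup A_2\cup A_3\subseteq\mathcal{J}$ we again obtain $|\mathcal{J}|\geq 28$.

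The main obstacle is exactly this $3\times\alt(6)$ case: there $J$ is no longer the unique Sylow $3$-subgroup of the centralizer of a $\CC_4$-element, so the clean orbit argument of the first case breaks down ($J$ acquires orbits of length $3$ and $9$ on $\mathcal{J}$), and one must instead count the conjugates through the three coordinate subgroups directly. The two facts that make this count close are $n_3(\alt(6))=10$ and the self-centralizing property $C_G(T)=J$ for order-$9$ subgroups $T\leq J$ (Lemma \ref{order of sylow 3's and H is self normalizing}(v)); combining both cases gives $[G:H]=|\mathcal{J}|\geq 28$.
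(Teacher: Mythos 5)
Your proof is correct, and in fact it does more than the paper's own one-paragraph argument. The paper's proof is precisely your first case run without any case split: it lets $J$ act on its distinct conjugates and disposes of non-trivial stabilizers with the single assertion that $j \in H^g \cap J = 1$. But under Hypothesis \ref{case 1 hyp} one only gets $H^g \cap J = J^G \cap H^g \cap J = J^g \cap J$, so that assertion is equivalent to saying that distinct conjugates of $J$ intersect trivially, and this in turn is equivalent (via Lemma \ref{order of sylow 3's and H is self normalizing}) to $C_G(a) \leq H$ for $a \in \CC_4$ --- which is exactly what Case 1 does \emph{not} supply: the lemma immediately preceding Lemma \ref{3-elements in S not J} leaves open the alternative $C_G(a) \cong 3 \times \alt(6)$, and the paper itself works around this unknown elsewhere in Case 1 (see the separate treatment of the possibility $z \in \CC_4$ in the proof of Lemma \ref{Structure constants 1}, and the exclusion of $\CC_4$ from the special classes). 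In that alternative, $a$ lies in all ten Sylow $3$-subgroups of $C_G(a)$, each of which is a conjugate of $J$, so $J$ is certainly not trivially intersecting with its conjugates and the paper's stabilizer claim fails. Your second case, counting $\syl_3(C_G(a_i))$ for the three cyclic subgroups $\<a_1\>,\<a_2\>,\<a_3\>$ generated by $\CC_4$-elements of $J$ and using Lemma \ref{order of sylow 3's and H is self normalizing}$(v)$ to pin the pairwise intersections, gives $10+10+10-3+1=28$ and covers precisely this residual configuration. So your route is genuinely different where it matters: the paper's argument buys brevity but only works in the sub-case $C_G(\CC_4)\leq H$, while your case analysis proves the stated bound under the full Case 1 hypothesis.

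Two small repairs to your write-up, neither affecting the argument. First, when $a \in \CC_5^G$ the centralizer $C_G(a)$ has order $81$, so it is itself a Sylow $3$-subgroup of $G$ and $J$, $J^g$ are not Sylow $3$-subgroups of it; the correct statement there is that $C_G(a)$ contains a unique elementary abelian subgroup of order $27$ (this is the uniqueness established in the proof of Lemma \ref{J is characteristic}), which still forces $J = J^g$. Second, in your closing remark the non-regular $J$-orbits arising in the $3 \times \alt(6)$ case have lengths $9$ and $27$, not $3$ and $9$: a stabilizer $J \cap J^g$ with $J^g \neq J$ has order at most $3$, since a subgroup of order $9$ in the intersection would give $J^g \leq C_G(T) = J$ by Lemma \ref{order of sylow 3's and H is self normalizing}$(v)$.
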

\begin{proof}
The index of $H$ in $G$ equals the number of conjugates of $J$ in
$G$. So consider the action of $J$ on the set, $\Omega:=\{J^g|g \in
G \bs H\}$,  of its distinct conjugates in $G$. Since $H \neq G$,
this set is non-empty. Moreover, if $j \in J$ fixes some $J^g \in
\Omega$ then $j \in H^g \cap J=1$. Hence $|\Omega| \geq 27$.
\end{proof}

\begin{lemma} $\CC:=\CC_6\cup  \CC_7\cup
\CC_{11}\cup \CC_{12}$ is a set of special classes in $H$.
\end{lemma}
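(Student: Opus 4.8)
The plan is to verify directly the three defining conditions of Definition~\ref{defi-special classes} for $\CC=\CC_6\cup\CC_7\cup\CC_{11}\cup\CC_{12}$. Before starting I would record two structural facts, both read off from the permutation representation of $H$ in $\alt(9)$ and from Table~\ref{char table H}. First, every non-identity element of $J$ has order three, and the cardinalities $648/108+648/81+648/54=6+8+12=26$ show that $J^\#=\CC_4\cup\CC_5\cup\CC_6$ (these are exactly the cycle types $3\cdot1^6$, $3^3$, $3^2\cdot1^3$ occurring in $J$); in particular $\CC_7\cap J=\emptyset$, so elements of $\CC_7$ lie outside $J$. Second, the only classes of elements of order nine are $\CC_{11}$ and $\CC_{12}$, and all character values in the columns $\CC_6,\CC_7,\CC_{11},\CC_{12}$ are real (the only irrationalities $\pm\sqrt{3}$ occur in $\CC_{13},\CC_{14}$), so each of these four classes is conjugate to its inverse.

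For condition~(i), that $C_G(h)\leq H$ for all $h\in\CC$, Lemma~\ref{order of sylow 3's and H is self normalizing}(iv) already supplies $C_G(h)\leq H$ when $h\in\CC_6\cup\CC_{11}\cup\CC_{12}$. For $h\in\CC_7$ I would apply Lemma~\ref{3-elements in S not J}, which leaves only the two alternatives $C_G(h)\leq H$ or $\CC_7^G=\CC_5^G$. The second is excluded using the Case~1 hypothesis: if $\CC_7^G=\CC_5^G$ then $h$ is $G$-conjugate to an element of $\CC_5\subseteq J$, so $h\in J^G\cap H=J$ by Hypothesis~\ref{case 1 hyp}, contradicting $\CC_7\cap J=\emptyset$. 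This single invocation of $J^G\cap H=J$ is the one genuinely substantive step, and is where I expect the (modest) difficulty to sit; everything else is bookkeeping.

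For condition~(ii), that $\CC_i^G\cap\CC=\CC_i$ for each $i\in\{6,7,11,12\}$, the inclusion $\CC_i\subseteq\CC_i^G\cap\CC$ is automatic, so I need only rule out fusion between distinct members of $\{\CC_6,\CC_7,\CC_{11},\CC_{12}\}$. Conjugate elements have equal order, so no order-three class ($\CC_6,\CC_7$) can fuse with an order-nine class ($\CC_{11},\CC_{12}$). The pair $\CC_{11},\CC_{12}$ is separated by $\CC_{11}^G\neq\CC_{12}^G$ from Lemma~\ref{order of sylow 3's and H is self normalizing}(iv), and $\CC_6,\CC_7$ are separated by the same argument as above: if $\CC_6^G=\CC_7^G$ then an element of $\CC_7$ would lie in $\CC_6^G\subseteq J^G$, hence in $J^G\cap H=J$, again contradicting $\CC_7\cap J=\emptyset$.

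For condition~(iii), closure of $\CC$ under passing to generators of $\langle h\rangle$, the order-nine case is automatic: any generator of $\langle h\rangle$ with $h$ of order nine again has order nine, so it lies in $\CC_{11}\cup\CC_{12}\subseteq\CC$. When $h$ has order three the only other generator is $h^{-1}$, which lies in the same class as $h$ because $\CC_6$ and $\CC_7$ are real (inverse-closed) classes; thus $h^{-1}\in\CC$. Having established the three conditions, I conclude that $\CC$ is a set of special classes in $H$, which is exactly what is needed before the Suzuki machinery of Section~\ref{suzuki special classes} can be deployed in Case~1.
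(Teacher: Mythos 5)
Your proof is correct and takes essentially the same route as the paper: condition $(i)$ via Lemma \ref{order of sylow 3's and H is self normalizing} $(iv)$ together with Lemma \ref{3-elements in S not J} and the Case 1 hypothesis $J^G\cap H=J$; condition $(ii)$ via element orders plus $\CC_{11}^G\neq\CC_{12}^G$; and condition $(iii)$ from the reality of $\CC_6,\CC_7$ and the fact that $\CC_{11}\cup\CC_{12}$ exhausts the elements of order nine. The only cosmetic difference is that to separate $\CC_6$ from $\CC_7$ the paper compares centralizers ($|C_G(x)|=54$ versus $9$, using that $C_G(x)=C_H(x)$ once $(i)$ is known), whereas you reuse $J^G\cap H=J$ and $\CC_7\cap J=\emptyset$; both are immediate.
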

\begin{proof}
By Lemma \ref{order of sylow 3's and H is self normalizing} and Hypothesis \ref{case 1 hyp},
$C_G(x) \leq H$ for each $x \in\CC$. Now suppose for $i \in \{6,7,11,12\}$, $\mathcal{C}_i^G \cap
\mathcal{C}\neq \mathcal{C}_i$. Then there exists $x \in \CC_i$, $y \in \CC_j$ ($i \neq j$) such
that $x^g=y$ for some $g \in G$. However this implies that $C_G(x)\cong C_G(y)$ and so the only
possibility is that $\{i,j\}=\{11,12\}$. However by Lemma \ref{order of sylow 3's and H is self
normalizing}  $(iv)$, $\CC_{11}^G\neq \CC_{12}^G$. Finally we need to satisfy condition $(iii)$ of
Definition \ref{defi-special classes}. However this is immediate since any element in  $\CC_6$,
$\CC_7$ is conjugate in $H$ to its inverse and every element of order nine in $H$ is contained in
the set.
\end{proof}

\begin{lemma}\label{Structure constants 1}
Suppose that $a \in \CC_6$, $b \in \CC_7$ and $c \in \CC_6 \cup \CC_7$ such that for some $h,g \in G$,  $a^hb^g=c$. Then $X:=\<a^h,b^g\>
\leq H$.
\end{lemma}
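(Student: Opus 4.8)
The plan is to apply Lemma \ref{triangle group} to the triple $a^h,b^g,c$ and then locate the resulting central element of order three inside $H$ itself. First I would record the consequences of Hypothesis \ref{case 1 hyp}: for every $x\in\CC_6\cup\CC_7$ we have $C_G(x)\leq H$, where the case $\CC_6$ is Lemma \ref{order of sylow 3's and H is self normalizing}$(iv)$ and the case $\CC_7$ follows from Lemma \ref{3-elements in S not J} together with the hypothesis $J^G\cap H=J$ (which forces $\CC_7^G\neq\CC_5^G$). In particular $|C_G(a^h)|=54$ while $|C_G(b^g)|=9$, so $\CC_6^G\neq\CC_7^G$. Since $\CC_6$ and $\CC_7$ are real $H$-classes, both non-trivial elements of $\langle a^h\rangle$ lie in $\CC_6^G$ and both non-trivial elements of $\langle b^g\rangle$ lie in $\CC_7^G$; hence $\langle a^h\rangle$ and $\langle b^g\rangle$ are not $G$-conjugate, and therefore $\langle a^h\rangle,\langle b^g\rangle,\langle c\rangle$ are not all $G$-conjugate.

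Next I would invoke Lemma \ref{triangle group} to produce an element $z$ of order three in $X=\langle a^h,b^g\rangle$ with $X\leq C_G(z)$. The statement $X\leq H$ then reduces to showing $C_G(z)\leq H$. Because $c\in X\leq C_G(z)$ we have $z\in C_G(c)$, and since $c\in\CC_6\cup\CC_7$ the first paragraph gives $C_G(c)\leq H$; thus $z$ is an element of order three lying in $H$, and so its $H$-class is one of $\CC_4,\CC_5,\CC_6,\CC_7$.

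It then remains to exclude $z\in\CC_4$, which is the only order-three $H$-class whose $G$-centralizer is not known to lie in $H$ in this case. Here I would use $b^g\in X\leq C_G(z)$, so $z\in C_G(b^g)$. Now $C_G(b^g)$ is a $G$-conjugate of the order-nine group $C_H(b)=\langle b,z_0\rangle$ with $z_0\in\mathcal{Z}(S)\cap\CC_5$, whose non-identity elements split as two in $\CC_5$ and six in $\CC_7$ by Lemma \ref{3-elements in S not J}; hence $z\in\CC_5^G\cup\CC_7^G$. But $\CC_4^G\cap\CC_5^G=\emptyset$ by Lemma \ref{order of sylow 3's and H is self normalizing}$(iii)$, and $\CC_4^G\cap\CC_7^G=\emptyset$ since the centralizer orders $108$ and $9$ differ. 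Therefore $z\notin\CC_4$, so $z\in\CC_5\cup\CC_6\cup\CC_7$, whence $C_G(z)\leq H$ and $X\leq C_G(z)\leq H$ as required.

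I expect the main obstacle to be this final identification. Lemma \ref{triangle group} only guarantees $X\leq C_G(z)$ for \emph{some} order-three $z\in X$, and a priori $C_G(z)$ could lie in a $G$-conjugate of $H$ other than $H$, or, were $z$ to fall in $\CC_4$, could fail to be contained in any conjugate of $H$ at all. The delicate point is therefore to pin $z$ down to an $H$-class with $C_G(z)\leq H$: first showing $z\in H$ via the $\CC_6\cup\CC_7$-element $c$, and then showing $z\notin\CC_4$ via the $\CC_7$-element $b^g$ together with the disjointness of the relevant $G$-classes.
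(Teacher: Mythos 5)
Your proof is correct and follows the same skeleton as the paper's: both invoke Lemma \ref{triangle group} to produce an element $z$ of order three with $X\leq C_G(z)$, and both then use $c\in\CC_6\cup\CC_7$ together with $C_G(c)\leq H$ to place $z$ inside $H$. Where you genuinely diverge is in ruling out $z\in\CC_4$. The paper first uses $a^h$ and the fusion hypothesis $J^G\cap H=J$ to force $z\in J^h\cap H\leq J$, and then, assuming $z\in\CC_4$, applies that hypothesis a second time to get $z\in H^g\cap J\leq J^g$, whence $z\in C_G(b^g)\cap J^g=\mathcal{Z}(S^g)\subset\CC_5^G$, contradicting $\CC_4^G\cap\CC_5^G=\emptyset$. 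You bypass $J$ entirely: since $z\in C_G(b^g)=C_H(b)^g$ and the eight non-trivial elements of $C_H(b)=\<b,z_0\>$ split as two in $\CC_5$ and six in $\CC_7$ (recorded in the proof of Lemma \ref{3-elements in S not J}), you conclude $z\in\CC_5^G\cup\CC_7^G$ directly, and then disjointness of $\CC_4^G$ from $\CC_5^G$ (Lemma \ref{order of sylow 3's and H is self normalizing}$(iii)$) and from $\CC_7^G$ finishes the exclusion. For the latter disjointness your wording (``the centralizer orders $108$ and $9$ differ'') should really be the inequality $|C_G(x)|\geq|C_H(x)|=108>9=|C_G(y)|$ for $x\in\CC_4$, $y\in\CC_7$, since under Hypothesis \ref{case 1 hyp} the $G$-centralizer of a $\CC_4$-element is not pinned down to order $108$; the conclusion is unaffected. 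Both arguments are sound; yours is marginally more economical in that, after using the case hypothesis once to secure $C_G(x)\leq H$ for $x\in\CC_7$, it makes no further appeal to $J^G\cap H=J$, whereas the paper leans on it twice more, and you correctly identified the delicate point that Lemma \ref{triangle group} gives no control over which order-three element $z$ turns out to be.
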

\begin{proof}
By Lemma \ref{triangle group}, there exists $z \in X$ of order three
such that $X\leq C_G(z)$. Since $z$ commutes with $c \in \CC_6 \cup
\CC_7$, $z \in C_G(c) \leq H$. Since $z$ commutes with $a^h\in J^h$,
$z \in J^h \cap H\leq J$.  If however $z \in \CC_4$, then since $[z,b^g]=1$, $z \in H^g \cap J\leq J^g$. So $z \in \mathcal{Z}(S^g)$ and then $z \in \CC_5^G$ which is a contradiction. So $z \in \CC_5 \cup \CC_6$ and since $a^h\in J$, $X \leq H$.
\end{proof}

Recall from Section \ref{CharacterTheory-prelims} the definition of the structure constants (labeled $a^G_{xyz}$ and $\a^G_{xyz}$ for $G$ a group with elements $x,y,z$).
\begin{lemma}\label{almost structure constants}
Let $y \in \CC_6$ and $z \in \CC_7$. Then
\begin{enumerate}[$(i)$]
    \item $\a^G_{yzy}=0$; and
    \item $\a^G_{yzz}=\frac{2^23^6}{|G|}$.
\end{enumerate}
\end{lemma}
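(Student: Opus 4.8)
The plan is to compute the two structure constants \emph{combinatorially} in $G$ and then convert to the $\a$-normalisation, rather than trying to use $\Irr(G)$ directly (which is unknown). Recall from Section~\ref{CharacterTheory-prelims} that $\a^G_{yzc}=a^G_{yzc}\,|C_G(y)||C_G(z)|/|G|$, where $a^G_{yzc}$ counts the pairs $(\alpha,\beta)\in y^G\times z^G$ with $\alpha\beta=c$. The decisive point I would establish is that, for a target $c\in\CC_6\cup\CC_7$, these pairs are forced to lie entirely inside $H$, so that the $G$-structure constant agrees with the corresponding $H$-structure constant, which can be read off Table~\ref{char table H}.

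First I would confine the pairs to $H$. Taking $c=y$ (for $(i)$) or $c=z$ (for $(ii)$), both lie in $\CC_6\cup\CC_7$, so for any pair $(\alpha,\beta)\in y^G\times z^G$ with $\alpha\beta=c$ Lemma~\ref{Structure constants 1} gives $\langle\alpha,\beta\rangle\leq H$; in particular $\alpha,\beta\in H$. Next I would pin down the fusion. Since conjugacy preserves order, any element of $H$ that is $G$-conjugate into $\CC_6$ or $\CC_7$ has order three, hence lies in $\CC_4\cup\CC_5\cup\CC_6\cup\CC_7$. Lemma~\ref{order of sylow 3's and H is self normalizing}$(iii)$ separates $\CC_4^G,\CC_5^G,\CC_6^G$, while under Hypothesis~\ref{case 1 hyp} we are assuming $C_G(x)\leq H$ for $x\in\CC_7$, so $|C_G(x)|=9$; as the $G$-centralisers of elements of $\CC_4,\CC_5,\CC_6$ have orders $108$ or $1080$, then $81$, then $54$ (the last by Lemma~\ref{order of sylow 3's and H is self normalizing}$(iv)$), $\CC_7^G$ is disjoint from each of $\CC_4^G,\CC_5^G,\CC_6^G$. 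Hence $\CC_6^G\cap H=\CC_6$ and $\CC_7^G\cap H=\CC_7$, so the $G$-pairs counted are exactly the $H$-pairs and $a^G_{yzc}=a^H_{yzc}$. Feeding in $|C_G(y)|=|C_H(y)|$ and $|C_G(z)|=|C_H(z)|$ then gives the clean identity $\a^G_{yzc}=\frac{|H|}{|G|}\,\a^H_{yzc}$.

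It then remains to evaluate $\a^H_{yzc}=\sum_{\psi\in\Irr(H)}\psi(y)\psi(z)\overline{\psi(c)}/\psi(1)$ directly from the $\CC_6$ and $\CC_7$ columns of Table~\ref{char table H} (all values there are real). For $(i)$, with $c=y\in\CC_6$, the nonzero contributions come from $\psi_1,\psi_2,\psi_3,\psi_{10},\psi_{11},\psi_{12}$ and sum to $1+1-2-\tfrac12-\tfrac12+1=0$, so $\a^G_{yzy}=0$. For $(ii)$, with $c=z\in\CC_7$, the same characters contribute $1+1+1+\tfrac14+\tfrac14+1=\tfrac92$, and since $|H|=2^3 3^4=648$ we obtain $\a^G_{yzz}=\frac{648}{|G|}\cdot\frac92=\frac{2^2 3^6}{|G|}$, as claimed.

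The main obstacle is the middle step: justifying that the structure constants computed in $G$ coincide with those computed in $H$. This is precisely where Lemma~\ref{Structure constants 1} (to trap the generating pairs inside $H$) and the centraliser-order bookkeeping of Lemma~\ref{order of sylow 3's and H is self normalizing} under Hypothesis~\ref{case 1 hyp} (to identify $\CC_6^G\cap H$ and $\CC_7^G\cap H$) do the real work; once this reduction is in place the remaining character-table arithmetic is entirely routine.
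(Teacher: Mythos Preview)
Your proof is correct and follows essentially the same approach as the paper: use Lemma~\ref{Structure constants 1} to confine the contributing pairs to $H$, identify $a^G_{yzc}=a^H_{yzc}$, and then read off the values from Table~\ref{char table H}. The only difference is that you spell out the fusion step ($\CC_6^G\cap H=\CC_6$ and $\CC_7^G\cap H=\CC_7$) explicitly via centraliser orders, whereas the paper leaves this implicit in its appeal to Lemma~\ref{Structure constants 1} together with the preceding special-classes lemma.
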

\begin{proof}
By Lemma \ref{Structure constants 1}, the number of pairs $(a,b) \in
y^G \times z^G$ such that $ab=y$ equals the number of pairs $(a,b)
\in y^H \times z^H$ such that $ab=y$ and from the character table of
$H$ we calculate this number to be
$a^G_{yzy}=a^H_{yzy}=\frac{2^33^4}{3^4.3^2}.0=0$. Hence
$\a^G_{yzy}=0$.

By the same argument we have
$a^G_{yzz}=a^H_{yzz}=\frac{2^33^4}{3^32.3^2}.\frac{9}{2}=6$ since
$C_G(y)=C_H(y)$ and $C_G(z)=C_H(z)$. Hence
$\a^G_{yzz}=6\frac{3^32.3^2}{|G|}$.
\end{proof}

We now apply Suzuki's Theory  with the set of special classes $\CC:=\CC_6\cup \CC_7 \cup
\CC_{11} \cup \CC_{12}$. We begin by finding a basis for the space
$W=\{\phi \in \mathrm{CF}(H)\mid \phi(h)=0 ~\mathrm{for~ all}~ h \in H
\bs \mathcal{C}\}$. By Lemma \ref{special classes 2}, $W$ has
dimension four over $\mathbb{C}$.

\begin{enumerate}[$(i)$]
\item $\lambda_1=\psi_1+\psi_2-\psi_3$;
\item $\lambda_2=\psi_3+\psi_4+\psi_5+\psi_{10}+\psi_{11}-\psi_{13}-\psi_{14}$;
\item $\lambda_3=\psi_{10}-\psi_{12}$;
\item $\lambda_4=\psi_{11}-\psi_{12}$.
\end{enumerate}

We calculate $\gamma_i:=\sum_{j=1}^{14} \psi_j(x_i){\psi_j}$ where
$x_i \in \CC_i$ for $i=6,7,11,12$ and write each as a linear
combination of the class functions $\{\lambda_1,\hdots,\lambda_4\}$
to give the following:
\begin{enumerate}[$(i)$]
\item $\g_6=\l_1+3\l_2-\l_3-\l_4$;
\item $\g_7=\l_1-\l_3-\l_4$;
\item $\g_{11}=\l_1+2\l_3-\l_4$; and
\item $\g_{12}=\l_1-\l_3+2\l_4$.
\end{enumerate}
Thus we have the matrix:
\[C:=\left(
  \begin{array}{cccc}
    1 & 3 & -1 & -1  \\
    1 & 0 & -1 & -1  \\
    1 & 0 & 2 & -1 \\
    1 & 0 & -1 & 2  \\
  \end{array}
\right).\]

Let $\theta_1=1_G,\theta_2,\hdots,\theta_s$ be the irreducible
characters of $G$ where the numbering is not fixed except that
$\theta_1$ always represents the principal character. Let
$\mu_i=\lambda_i^G$ for $i=1,\hdots,4$. Table \ref{inner prods mu
's} gives the pairwise inner products $(\mu_i,\mu_j)_G$ using Lemma
\ref{special classes 1}.

\begin{table}[h]\[\begin{tabular}{|c|c c c c|}
  \hline
  $(,)_G$ & $\m_1$ & $\m_2$ & $\m_3$ & $\m_4$ \\
  \hline
  $\m_1$ & 3 &   &   &     \\
  $\m_2$ & -1 & 7 &   &    \\
  $\m_{3}$ & 0 & 1 & 2 &    \\
  $\m_{4}$ & 0 & 1 & 1 & 2   \\
  \hline
\end{tabular}\]\caption{The table of inner products $(\mu_i,\mu_j)_G$ for $1\leq i,j \leq
4$.}\label{inner prods mu 's}\end{table} We also have, using Lemma \ref{special classes 1} that
$\mu_i(1)=\lambda_i(1)=0$ for each $1\leq i\leq 4$ and
$(\mu_i,\theta_1)=0$ for $i=2,3,4$ and $(\mu_1,\theta_1)=1$. We
express $\mu_1$, $\mu_{3}$ and $\mu_{4}$ as linear combinations of
irreducible characters of $G$.
\begin{lemma}\label{Alt9-case1-mu1,3,4}
\begin{enumerate}[$(i)$]
    \item $\mu_{1}=\theta_1+\d\theta_5-\d\theta_6$;
    \item $\mu_{3}=\e\theta_2-\e\theta_3$;
    \item $\mu_{4}=\e\theta_4-\e\theta_3$;
\end{enumerate}
where $\e,\d=\pm 1$.
\end{lemma}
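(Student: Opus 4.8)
The plan is to use the fact that each $\mu_i=\lambda_i^G$ is a virtual character of $G$: since $\lambda_i$ is an integral combination of $\Irr(H)$, its induction is an integral combination $\mu_i=\sum_j b_{ij}\theta_j$ with $b_{ij}\in\mathbb{Z}$. The only inputs needed are the inner products of Table~\ref{inner prods mu 's}, the relations $\mu_i(1)=0$ (for $1\leq i\leq 4$), $(\mu_1,\theta_1)_G=1$ and $(\mu_i,\theta_1)_G=0$ (for $i=2,3,4$) recorded above, and the freedom to number the non-principal irreducibles of $G$ as we please.

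First I would determine $\mu_1$. From $(\mu_1,\mu_1)_G=3$ we get $\sum_j b_{1j}^2=3$, so exactly three coefficients equal $\pm1$ and the rest vanish. The relation $(\mu_1,\theta_1)_G=1$ puts the principal character in with coefficient $+1$, and then $\mu_1(1)=0$ reads $1+s\theta(1)+s'\theta'(1)=0$ for the two remaining constituents with signs $s,s'\in\{\pm1\}$; as irreducible degrees are positive this forces $s=-s'$ and the two degrees to differ by exactly one. Naming these two constituents $\theta_5,\theta_6$ gives $\mu_1=\theta_1+\d\theta_5-\d\theta_6$ with $\d=\pm1$.

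The same counting disposes of $\mu_3$ and $\mu_4$ together. For $i\in\{3,4\}$, $(\mu_i,\mu_i)_G=2$ forces exactly two constituents of coefficient $\pm1$, $(\mu_i,\theta_1)_G=0$ rules out the principal character, and $\mu_i(1)=0$ forces these two constituents to carry opposite signs and equal degree. Hence $\mu_3=\e(\theta_2-\theta_3)$ and $\mu_4=\pm(\theta-\theta')$ for suitable distinct equal-degree irreducibles. The last ingredient is $(\mu_3,\mu_4)_G=1$: since each of $\mu_3,\mu_4$ has just two constituents, their inner product vanishes unless they share a constituent, and a shared constituent contributes the product of its two unit coefficients. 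As this product must equal $+1$, the two characters share exactly one irreducible, occurring with the same sign in both. Calling this shared irreducible $\theta_3$ and choosing the global sign $\e$ so that it carries coefficient $-\e$ in $\mu_3$, the sign matching then forces coefficient $-\e$ in $\mu_4$ as well, whence $\mu_4=\e\theta_4-\e\theta_3$ with the same $\e$ as in $\mu_3$.

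I expect the only real subtlety to be the sign bookkeeping in this final step, where the positivity of $(\mu_3,\mu_4)_G$ (rather than the value $-1$) is exactly what pins down a single $\e$ common to $\mu_3$ and $\mu_4$ and forces the shared constituent into the slot $\theta_3$. I would also note, from $(\mu_1,\mu_3)_G=(\mu_1,\mu_4)_G=0$, that $\{\theta_5,\theta_6\}$ must be disjoint from $\{\theta_2,\theta_3,\theta_4\}$ (any overlap would contribute a nonzero $\pm\d\e$ to one of these products that cannot be cancelled), so that $\theta_1,\dots,\theta_6$ are six genuinely distinct irreducible characters — a bookkeeping fact on which the later structure-constant count will rely.
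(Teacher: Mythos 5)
Your proof is correct and takes essentially the same approach as the paper's: both rest on the integrality of the induced virtual characters, the inner-product table, the degree conditions $\mu_i(1)=0$, and the inner products with the principal character to force each $\mu_i$ into the stated shape. The only differences are cosmetic — you handle $\mu_1$ before $\mu_3,\mu_4$ and spell out explicitly why $\{\theta_5,\theta_6\}$ is disjoint from $\{\theta_2,\theta_3,\theta_4\}$, a point the paper leaves more implicit.
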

\begin{proof}
Since $\mu_3$ and $\mu_4$ each involve exactly two irreducible characters of $G$ and have inner
product $1$, it is clear that we can write $\mu_{3}=\e_2\theta_2+\e_3\theta_3$ and
$\mu_{4}=\e_4\theta_4+\e_3\theta_3$ ($\e_2,\e_3,\e_4=\pm 1$). However since $\mu_3(1)=\mu_4(1)=0$,
we have $\e:=\e_2=\e_4=-\e_3$. Now since $\mu_1$ involves two irreducible characters together with
the principal character and has inner product 0 with $\mu_3$ and $\mu_4$, we have
$\mu_{1}=\theta_1+\e_5\theta_5+\e_6\theta_6$ ($\e_5,\e_6=\pm 1$). Suppose $\e_5=\e_6$. Then
$0=\mu_1(1)=1+\e_5(\theta_6(1)+\theta_5(1))$. However $\theta_6(1)+\theta_5(1)$ is an integer
greater than $1$ and so we may take $\d:=\e_5=-\e_6$.
\end{proof}

Using Suzuki's Theorem, we can now calculate part of the character
table of $G$. So far we have not induced the character $\mu_2$ to
$G$ and so we let $b_i$ represent unknown constants such that
$\mu_2=\sum_{1 \leq i \leq s} b_i\theta_i$ in the following matrix:

\[B=\left(
  \begin{array}{cccccccc}
    1 & 0 & 0 & 0 & \d & -\d & 0 & \hdots \\
    0 &  b_2 & b_3 & b_4 & b_5 & b_6  & b_7 & \hdots \\
    0 & \e & -\e & 0 & 0 & 0 & 0  & \hdots\\
    0 & 0 & -\e & \e & 0 & 0 & 0  & \hdots \\
  \end{array}
\right)\] Therefore we calculate a portion of the character table
$(CB)^t$ (Table \ref{part of char table}) and we let
$d_i=\theta_i(1)$ and we avoid calculating the entries $\theta_i(x)$
for $x \in \CC_6$ for the moment.

\begin{table}[h]\[\begin{tabular}{|c|c |r r r r|}
  \hline
  \; & 1 & $\CC_6^G$ & $\CC_7^G$ & $\CC_{11}^G$ & $\CC_{12}^G$ \\
  \hline
  $\theta_1$   &     1 & 1 &    $1$ & 1      &  1       \\
  $\theta_2$   & $d_2$ &   & $-\e$  &  $2\e$ &  $-\e$   \\
  $\theta_{3}$ & $d_3$ &   & $2\e$  &  $-\e$ &  $-\e$   \\
  $\theta_{4}$ & $d_4$ &   & $-\e$  &  $-\e$ & $2\e$    \\
  $\theta_{5}$ & $d_5$ &   & $\d$   &  $\d$  & $\d$     \\
  $\theta_{6}$ & $d_6$ &   & $-\d$  &  $-\d$ & $-\d$    \\
  $\theta_{7}$ & $d_7$ &   & 0      & 0      &   0      \\
  $\vdots$ & $\vdots$ &   & $\vdots$  & $\vdots$& $\vdots$   \\
  $\theta_{s}$ & $d_s$ &   & 0      & 0      &   0      \\
  \hline
\end{tabular}\]\caption{Part of the character table of
$G$}\label{part of char table}\end{table}

\begin{lemma}\label{degrees of irreducibles for G}
$d:=d_2=d_3=d_4$ and $1+\d d_5-\d d_6=0$.
\end{lemma}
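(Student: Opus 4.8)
The second assertion is essentially already in hand, and the plan is simply to evaluate the decomposition of $\mu_1$ at the identity. By Lemma \ref{special classes 1} the induced class function $\mu_1=\lambda_1^G$ agrees with $\lambda_1$ on $G\bs\CC^G$, and the identity lies outside $\CC^G$; hence $\mu_1(1)=\lambda_1(1)=\psi_1(1)+\psi_2(1)-\psi_3(1)=1+1-2=0$. Substituting the expression $\mu_1=\theta_1+\d\theta_5-\d\theta_6$ furnished by Lemma \ref{Alt9-case1-mu1,3,4} then yields $1+\d d_5-\d d_6=0$ at once.

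For the degree equalities my plan is to apply the second orthogonality relation between the identity column of the character table of $G$ and each of the three columns indexed by $\CC_7^G$, $\CC_{11}^G$, $\CC_{12}^G$ in Table \ref{part of char table}. The observation that makes this feasible is that these three columns are completely known: any irreducible character of $G$ other than $\theta_1,\dots,\theta_6$ occurs in no $\mu_i$ except possibly $\mu_2$, and Theorem \ref{suzuki} shows that such a character has a zero column in $B$ away from the $\mu_2$ entry, so its column in $CB$ is supported only on $\CC_6^G$ and vanishes on $\CC_7$, $\CC_{11}$, $\CC_{12}$. Consequently each orthogonality relation involves only $\theta_1,\dots,\theta_6$; since all of their relevant values are rational integers, complex conjugation may be ignored and the relation for a class representative $x$ reads $\sum_{j=1}^{6} d_j\theta_j(x)=0$.

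Reading the entries off Table \ref{part of char table} and cancelling the contribution of $\theta_1,\theta_5,\theta_6$ by means of $1+\d d_5-\d d_6=0$, each relation loses its constant and its $\theta_5,\theta_6$ terms; after dividing out the common factor $\e$ the three relations become
\begin{align*}
-d_2 + 2d_3 - d_4 &= 0, \\
2d_2 - d_3 - d_4 &= 0, \\
-d_2 - d_3 + 2d_4 &= 0,
\end{align*}
arising from $\CC_7^G$, $\CC_{11}^G$, $\CC_{12}^G$ respectively. These are linearly dependent (they sum to zero), but any two of them already force $d_2=d_3=d_4=:d$, which is the first assertion. There is no serious obstacle here beyond bookkeeping; the one point that genuinely needs care is the justification that no irreducible character of $G$ beyond $\theta_1,\dots,\theta_6$ intrudes into the orthogonality sums on $\CC_7$, $\CC_{11}$, $\CC_{12}$. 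This vanishing is exactly why the argument must be run on these three classes and not on $\CC_6$, whose column is still unknown at this stage.
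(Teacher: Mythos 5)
Your proof is correct, but for the degree equalities you take a genuinely different (and heavier) route than the paper. The paper handles both assertions with the same one-line observation you use for the second: by Lemma \ref{special classes 1}, $\mu_1(1)=\mu_3(1)=\mu_4(1)=0$, and then Lemma \ref{Alt9-case1-mu1,3,4} gives $\e d_2-\e d_3=\mu_3(1)=0$ and $\e d_4-\e d_3=\mu_4(1)=0$ directly, so $d_2=d_3=d_4$ falls out with no further work. You instead run column orthogonality between the identity column and the columns of $\CC_7^G$, $\CC_{11}^G$, $\CC_{12}^G$; this is valid, and your key justification is sound: for $j\geq 7$ the only possibly nonzero entry in the $j$'th column of $B$ is the $\mu_2$-coefficient, and since the second column of $C$ is $(3,0,0,0)^t$, the corresponding column of $CB$ is supported only on $\CC_6^G$, so $\theta_j$ vanishes on the three classes you use. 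Your three linear relations and their solution are also correct. What your approach costs is exactly this extra justification (plus the prior knowledge of $1+\d d_5-\d d_6=0$ to strip the $\theta_1,\theta_5,\theta_6$ terms); what it buys is essentially a consistency check on Table \ref{part of char table}, since the entries you feed into orthogonality were themselves produced from the decompositions of $\mu_3$ and $\mu_4$ via Suzuki's theorem — so you are not avoiding Lemma \ref{Alt9-case1-mu1,3,4}, only routing through it indirectly. Either argument is acceptable; the paper's is the shorter one.
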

\begin{proof}
By Lemma \ref{special classes 1}, $\mu_1(1)=\mu_3(1)=\mu_4(1)=0$. Using Lemma
\ref{Alt9-case1-mu1,3,4} we see that $\e d_2-\e d_3=\e d_3-\e d_4=0$  and $1+\d d_5- \d d_6=0$ and
so $d:=d_2=d_3=d_4$ and $1+\d d_5-\d d_6=0$.
\end{proof}

We calculate structure constants for $G$ which involve the
$G$-conjugacy class  $\CC_7^G$ and the class $\CC_6^G$. Hence we
only need to know some of the character values for $\CC_6^G$
provided we know all the character values for $\CC_7^G$. Therefore
we need to calculate $\theta_i(x)$ ($x \in \CC_6$) only for
$i=2,3,4,5,6$.

\begin{lemma}\label{5 cases-the unknowns a2-a6 in B}
Let $b_2,b_3,b_4,b_5,b_6$ be the constants as in the matrix $B$. One
of the following hold.
\begin{enumerate}[$(i)$]
    \item $(b_2,b_3,b_4,b_5,b_6)=(\e,0,\e,-\d,0)$;
    \item $(b_2,b_3,b_4,b_5,b_6)=(\e,0,\e,-2\d,-\d)$;
    \item $(b_2,b_3,b_4,b_5,b_6)=(0,-\e,0,-\d,0)$;
    \item $(b_2,b_3,b_4,b_5,b_6)=(0,-\e,0,-2\d,-\d)$; or
    \item $(b_2,b_3,b_4,b_5,b_6)=(-\e,-2\e,-\e,-\d,0)$.
\end{enumerate}
\end{lemma}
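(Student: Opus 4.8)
The plan is to pin down the decomposition $\mu_2 = \sum_i b_i \theta_i$ entirely from the inner-product data in Table \ref{inner prods mu 's} together with the known decompositions of $\mu_1,\mu_3,\mu_4$ from Lemma \ref{Alt9-case1-mu1,3,4}, exploiting that $\Irr(G)$ is an orthonormal set. First I would record that $b_1 = (\mu_2,\theta_1)_G = 0$ and $\mu_2(1)=\lambda_2(1)=0$. Then I would read off three linear relations among the $b_i$ from the off-diagonal entries of the inner-product table. Pairing $\mu_2$ with $\mu_3 = \epsilon\theta_2 - \epsilon\theta_3$ gives $\epsilon(b_2-b_3) = (\mu_2,\mu_3)_G = 1$, so $b_2 - b_3 = \epsilon$; pairing with $\mu_4 = \epsilon\theta_4 - \epsilon\theta_3$ gives $b_4 - b_3 = \epsilon$; and since $b_1=0$, pairing with $\mu_1 = \theta_1 + \delta\theta_5 - \delta\theta_6$ gives $\delta(b_5-b_6) = -1$, i.e. $b_5 - b_6 = -\delta$. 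In particular $b_2 = b_4 = b_3 + \epsilon$.

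Next I would invoke the diagonal entry $(\mu_2,\mu_2)_G = 7$, which by orthonormality forces $\sum_i b_i^2 = 7$; in particular $b_2^2 + b_3^2 + b_4^2 + b_5^2 + b_6^2 \leq 7$, the remaining weight being carried by constituents $\theta_i$ with $i \geq 7$ that do not enter the statement. The problem thus reduces to an elementary integer enumeration. From $b_2 = b_4 = b_3 + \epsilon$ with $2(b_3+\epsilon)^2 + b_3^2 \leq 7$, the only integer possibilities are $b_3 \in \{0, -\epsilon, -2\epsilon\}$, giving the triples $(\epsilon,0,\epsilon)$, $(0,-\epsilon,0)$ and $(-\epsilon,-2\epsilon,-\epsilon)$ with norm contributions $2,1,6$ respectively (the value $b_3 = \epsilon$ is excluded, contributing $9 > 7$).

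For the pair $(b_5,b_6)$, the relation $b_5 - b_6 = -\delta$ together with $b_5^2 + b_6^2 \leq 6$ leaves the possibilities $(-\delta,0)$, $(0,\delta)$, $(-2\delta,-\delta)$ and $(\delta,2\delta)$, of norms $1,1,5,5$ (the next candidate $(-3\delta,-2\delta)$ has norm $13$). Here I would use the one genuine freedom in the set-up: interchanging the labels $\theta_5$ and $\theta_6$ preserves the form of $\mu_1$ while replacing $\delta$ by $-\delta$ and swapping $(b_5,b_6)$, so it identifies $(0,\delta)$ with $(-\delta,0)$ and $(\delta,2\delta)$ with $(-2\delta,-\delta)$. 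Choosing the labelling accordingly, I may assume $(b_5,b_6) \in \{(-\delta,0),(-2\delta,-\delta)\}$ with norm $1$ or $5$. Finally I would combine the two lists subject to $(b_2^2+b_3^2+b_4^2)+(b_5^2+b_6^2) \leq 7$: this rules out only the single pairing of the norm-$6$ triple with the norm-$5$ pair (total $11$), and the five surviving combinations are exactly cases $(i)$–$(v)$.

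I expect the only real subtlety to be the bookkeeping of the labelling freedom of $\theta_5,\theta_6$ used to discard the reflected pairs $(0,\delta)$ and $(\delta,2\delta)$, together with the accompanying point that the norm bound on $b_2,\dots,b_6$ is an \emph{inequality} rather than an equality, since $\mu_2$ may have further constituents $\theta_i$ ($i\geq 7$) invisible to the statement. Everything else is routine integer arithmetic driven directly by the inner-product table.
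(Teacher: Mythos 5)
Your proof is correct and follows essentially the same route as the paper: the same inner products $(\mu_2,\mu_1)_G=-1$, $(\mu_2,\mu_3)_G=(\mu_2,\mu_4)_G=1$ and the norm $(\mu_2,\mu_2)_G=7$ drive an identical integer enumeration of $(b_2,b_3,b_4)$ and $(b_5,b_6)$, with the norm bound killing only the combination of the norm-$6$ triple with the norm-$5$ pair. The one point where you go beyond the paper is in explicitly invoking the $\theta_5\leftrightarrow\theta_6$ relabelling (which replaces $\delta$ by $-\delta$) to identify the reflected pairs $(0,\delta)$ and $(\delta,2\delta)$ with $(-\delta,0)$ and $(-2\delta,-\delta)$ — the paper's proof simply never lists those two possibilities, so your bookkeeping makes explicit a symmetry the paper uses silently.
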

\begin{proof}
We begin to induce $\l_2$ to $G$. Since
$(\mu_2,\mu_3)=(\mu_2,\mu_4)=1$ we have that $\mu_2$ either involves
$\e\theta_2+\e\theta_4$ or $-\e\theta_3$ or
$-\e\theta_2-2\e\theta_3-\e\theta_4$. In the first case in order to
satisfy $(\mu_2,\mu_1)=-1$ we have that $\mu_2$ involves either
$-\d\theta_5$ or $-2\d\theta_5-\d\theta_6$. In the second case we
again see that to satisfy $(\mu_2,\mu_1)=-1$, $\mu_2$ involves
either $-\d\theta_5$ or $-2\d\theta_5-\d\theta_6$. Finally in the
third case the only possibility is that
$\mu_2=-\e\theta_2-2\e\theta_3-\e\theta_4-\d\theta_5$.
\end{proof}

We now calculate using Lemma \ref{almost structure constants} which
says that \[0=\a^G_{yzy}= \sum_{1 \leq i \leq s} \frac{\theta_i(y)
\theta_i(z)\bar{\theta(z)}}{\theta_i(1)}=\sum_{1 \leq i \leq 6}
\frac{\theta_i(y) \theta_i(z){\theta(z)}}{\theta_i(1)}\] and

\[\frac{2^23^6}{|G|}=\a^G_{yzz}= \sum_{1 \leq i \leq s} \frac{\theta_i(y)
\theta_i(z)\bar{\theta(z)}}{\theta_i(1)}=\sum_{1 \leq i \leq 6}
\frac{\theta_i(y) \theta_i(z){\theta(z)}}{\theta_i(1)}\] where $y
\in \CC_6$ and $z \in \CC_7$. Notice that all $H$-characters are
integral on elements in $\CC_6$ and in $\CC_7$. Therefore all $G$-characters
are integral on $\CC_6$ and $\CC_7$ also. We now consider each of
the five cases described in Lemma \ref{5 cases-the unknowns a2-a6 in
B}.

\subsubsection{Candidate 1:
$\bf (b_2,b_3,b_4,b_5,b_6)=(\e,0,\e,-\d,0)$}

The missing entries from the character table are displayed in Table
\ref{suzuki-case i}.
\begin{table}[h]\[\begin{tabular}{|c|c |c|c|}
  \hline
  \; & 1 & $\CC_6^G$  \\
  \hline
  $\theta_1$   &     1 & 1       \\
  $\theta_2$   & $d$ &  $2\e$    \\
  $\theta_{3}$ & $d$ &  $2\e$    \\
  $\theta_{4}$ & $d$ &  $2\e$    \\
  $\theta_{5}$ & $d_5$ &  $-2\d$     \\
  $\theta_{6}$ & $d_6$ &  $-\d$     \\
  \hline
\end{tabular}\]\caption{A candidate for part of the character table
of G.}\label{suzuki-case i}\end{table} So we calculate (where $y \in
\CC_6$ and $z \in \CC_7$):
\[0=\a^G_{yzy}=1-\frac{4\e}{d}+\frac{4.2\e}{d}-\frac{4\e}{d}+\frac{4\d}{d_5}-\frac{\d}{d_6}=1+\frac{4\d}{d_5}-\frac{\d}{d_6}.\]
We now rearrange using the relation $1-\d d_6=-\d d_5$ (by Lemma
\ref{degrees of irreducibles for G}) to get $0=1+d_5d_6 +3\d d_6$.
Since $d_5$ and $d_6$ are positive, we have $\d=-1$ and so
$d_5=1+d_6$ and we simplify further to get
$0=1+d_6(1+d_6)-3d_6=d_6^2-2d_6+1$. This quadratic in $d_6$ has the
repeated root $d_6=1$ and so $d_5=2$. Now we calculate:
\[\frac{2^23^6}{|G|}=\a^G_{yzz}= 1+
\frac{2\e}{d}+\frac{2\e}{d}+\frac{8\e}{d}+\frac{2}{2}+\frac{1}{1}=3+\frac{12\e}{d}.\]
We simplify to get $|G|(d+4\e)=2^23^5d$. Since $d \neq0$, $d+4\e
\neq 0$. Therefore we rearrange to see that $|G|/|H|=3d/2(d+4\e)$.
By Lemma \ref{index of H in G}, $|G|/|H|\geq 28$. We rearrange
$3d/2(d+4\e)\geq 28$ to get $53d\leq -224\e$. Therefore $\e=-1$ and
$0<d\leq 4$. However this is a contradiction since this implies
$d+4\e< 0$ and so  $|G|=2^23^5d/(d+4\e)<0$.

\subsubsection{Candidate 2:
$\bf (b_2,b_3,b_4,b_5,b_6)=(\e,0,\e,-2\d,-\d)$} The missing entries
from the character table are displayed in Table \ref{suzuki-case
ii}.
\begin{table}[h]\[\begin{tabular}{|c|c |c |}
  \hline
  \; & 1 & $\CC_6^G$  \\
  \hline
  $\theta_1$   &     1 & 1       \\
  $\theta_2$   & $d$ &  $2\e$    \\
  $\theta_{3}$ & $d$ &  $2\e$    \\
  $\theta_{4}$ & $d$ &  $2\e$    \\
  $\theta_{5}$ & $d_5$ &  $-5\d$     \\
  $\theta_{6}$ & $d_6$ &  $-4\d$     \\
  \hline
\end{tabular}\]\caption{A candidate for part of the character table
of G.}\label{suzuki-case ii}\end{table} So we calculate (where $y
\in \CC_6$ and $z \in \CC_7$):
\[0=\a^G_{yzy}=1-\frac{4\e}{d}+\frac{4.2\e}{d}-\frac{4\e}{d}+\frac{25\d}{d_5}-\frac{16\d}{d_6}=1+\frac{25\d}{d_5}-\frac{16\d}{d_6}.\]
As before we rearrange and use $1-\d d_6=-\d d_5$ to get
$0=16+d_5d_6 +9\d d_6$. Now since $d_5$ and $d_6$ are positive, we
have $\d=-1$ and so $d_5=1+d_6$ and we simplify further to get
$0=16+d_6(1+d_6)-9d_6=d_6^2-8d_6+16$. This quadratic in $d_6$ has
the repeated root $d_6=4$ and so $d_5=5$. Now we calculate:
\[\frac{2^23^6}{|G|}=\a^G_{yzz}= 1+
\frac{2\e}{d}+\frac{8\e}{d}+\frac{2\e}{d}+\frac{5}{5}+\frac{4}{4}=3+\frac{12\e}{d}.\]
We simplify to get $|G|(d+4\e)=2^23^5d$ or $|G|=2^23^5d/(d+4\e)$.
Since $d \neq0$, $d+4\e \neq 0$. Therefore we rearrange to see that
$|G|/|H|=3d/2(d+4\e)$. By Lemma \ref{index of H in G}, $|G|/|H|\geq
28$. We rearrange $3d/2(d+4\e)\geq 28$ to get $53d\leq -224\e$.
Therefore $\e=-1$ and $0<d\leq 4$. However this is a contradiction
since this implies $d+4\e< 0$ and so  $|G|=2^23^5d/(d+4\e)<0$.

\subsubsection{Candidate 3:
$\bf (b_2,b_3,b_4,b_5,b_6)=(0,-\e,0,-\d,0)$}

The missing entries from the character table are displayed in Table
\ref{suzuki-case iii}.
\begin{table}[h]\[\begin{tabular}{|c|c |c |}
  \hline
  \; & 1 & $\CC_6^G$  \\
  \hline
  $\theta_1$   &     1 & 1       \\
  $\theta_2$   & $d$ &  $-\e$    \\
  $\theta_{3}$ & $d$ &  $-\e$    \\
  $\theta_{4}$ & $d$ &  $-\e$    \\
  $\theta_{5}$ & $d_5$ & $-2\d$     \\
  $\theta_{6}$ & $d_6$ & $-\d$     \\
  \hline
\end{tabular}\]\caption{A candidate for part of the character table
of G.}\label{suzuki-case iii}\end{table} So we calculate (where $y
\in \CC_6$ and $z \in \CC_7$):
\[0=\a^G_{yzy}=1-\frac{\e}{d}+2\frac{\e}{d}-\frac{\e}{d}
+\frac{4\d}{d_5}-\frac{\d}{d_6}=1+\frac{4\d}{d_5}-\frac{\d}{d_6}.\]
Again we rearrange and substitute for $\d d_5$ to get $0=1+d_5d_6
+3\d d_6$ and again see that  $\d=-1$ and so $d_5=1+d_6$ and so we
simplify further to get $0=1+d_6(1+d_6)-3d_6=d_6^2-2d_6+1$. This
quadratic in $d_6$ has the repeated root $d_6=1$ and so $d_5=2$. Now
we calculate:
\[\frac{2^23^6}{|G|}=\a^G_{yzz}= 1-
\frac{\e}{d}-\frac{4\e}{d}-\frac{\e}{d}+\frac{2}{2}+\frac{1}{1}=3-\frac{6\e}{d}.\]
We simplify to get $|G|(d-2\e)=2^23^5d$. Since $d \neq0$, $d-2\e
\neq 0$. Therefore we rearrange to see that $|G|/|H|=3d/2(d-2\e)$.
By Lemma \ref{index of H in G}, $|G|/|H|\geq 28$. We rearrange
$3d/2(d-2\e)\geq 28$ to get $53d\leq 112\e$. Therefore $\e=1$ and
$0<d\leq 2$. However this is a contradiction since this implies
$d-2\e< 0$ and so  $|G|=2^23^5d/(d-2\e)<0$.

\subsubsection{Candidate 4:
$\bf (b_2,b_3,b_4,b_5,b_6)=(0,-\e,0,-2\d,-\d)$}

The missing entries from the character table are displayed in Table
\ref{suzuki-case iv}.
\begin{table}[h]\[\begin{tabular}{|c|c |c |}
  \hline
  \; & 1 & $\CC_6^G$  \\
  \hline
  $\theta_1$   &     1 & 1       \\
  $\theta_2$   & $d$ &  $-\e$    \\
  $\theta_{3}$ & $d$ &  $-\e$    \\
  $\theta_{4}$ & $d$ &  $-\e$    \\
  $\theta_{5}$ & $d_5$ & $-5\d$     \\
  $\theta_{6}$ & $d_6$ & $-4\d$     \\
  \hline
\end{tabular}\]\caption{A candidate for part of the character table
of G.}\label{suzuki-case iv}\end{table} So we calculate (where $y
\in \CC_6$ and $z \in \CC_7$):
\[0=\a^G_{yzy}=1-\frac{\e}{d}+2\frac{\e}{d}-\frac{\e}{d}
+\frac{25\d}{d_5}-\frac{16\d}{d_6}=1+\frac{25\d}{d_5}-\frac{16\d}{d_6}.\]
Rearrange and substitute for $\d d_5$ to get $0=16+d_5d_6 +9\d d_6$.
Observe again that $\d=-1$ and so $d_5=1+d_6$ and simplify further
to get $0=16+d_6(1+d_6)-9d_6=d_6^2-8d_6+16$. This quadratic in $d_6$
has the repeated root $d_6=4$ and so $d_5=5$. Now we calculate
\[\frac{2^23^6}{|G|}=\a^G_{yzz}= 1-
\frac{\e}{d}-\frac{4\e}{d}-\frac{\e}{d}+\frac{5}{5}+\frac{4}{4}=3-\frac{6\e}{d}.\]
We simplify to get $|G|(d-2\e)=2^23^5d$. Since $d \neq0$, $d-2\e
\neq 0$. Therefore we rearrange to see that $|G|/|H|=3d/2(d-2\e)$.
By Lemma \ref{index of H in G}, $|G|/|H|\geq 28$. We rearrange
$3d/2(d-2\e)\geq 28$ to get $53d\leq 112\e$. Therefore $\e=1$ and
$0<d\leq 2$. However this is a contradiction since this implies
$d-2\e< 0$ and so  $|G|=2^23^5d/(d-2\e)<0$.

\subsubsection{Candidate 5:
$\bf (b_2,b_3,b_4,b_5,b_6)=(-\e,-2\e,-\e,-\d,0)$}

The missing entries from the character table are displayed in Table
\ref{suzuki-case v}.
\begin{table}[h]\[\begin{tabular}{|c|c |c |}
  \hline
  \; & 1 & $\CC_6^G$  \\
  \hline
  $\theta_1$   &     1 & 1       \\
  $\theta_2$   & $d$ &  $-4\e$    \\
  $\theta_{3}$ & $d$ &  $-4\e$    \\
  $\theta_{4}$ & $d$ &  $-4\e$    \\
  $\theta_{5}$ & $d_5$ & $-2\d$     \\
  $\theta_{6}$ & $d_6$ & $-\d$     \\
  \hline
\end{tabular}\]\caption{A candidate for part of the character table
of G.}\label{suzuki-case v}\end{table} So we calculate (where $y \in
\CC_6$ and $z \in \CC_7$):
\[0=\a^G_{yzy}=1-\frac{16\e}{d}+2\frac{16\e}{d}-\frac{16\e}{d}
+\frac{4\d}{d_5}-\frac{\d}{d_6}=1+\frac{4\d}{d_5}-\frac{\d}{d_6}\]
which this time reduces to $0=1+d_6(1+d_6)-3d_6=d_6^2-2d_6+1$ when
we observe that $\d=-1$. This quadratic in $d_6$ has the repeated
root $d_6=1$ and so $d_5=2$. Now we calculate:
\[\frac{2^23^6}{|G|}=\a^G_{yzz}= 1-
\frac{4\e}{d}-\frac{16\e}{d}-\frac{4\e}{d}+\frac{2}{2}+\frac{1}{1}=3-\frac{24\e}{d}.\]
We simplify to get $|G|(d-8\e)=2^23^5d$. Since $d \neq0$, $d-8\e
\neq 0$. Therefore we rearrange to see that $|G|/|H|=3d/2(d-8\e)$.
By Lemma \ref{index of H in G}, $|G|/|H|\geq 28$. We rearrange
$3d/2(d-8\e)\geq 28$ to get $53d\leq 448\e$. Therefore $\e=1$ and
$0<d\leq 8$. However this is a contradiction since this implies
$d-8\e< 0$ and so $|G|=2^23^5d/(d-8\e)<0$.

Thus our calculations in each case given by Lemma \ref{5 cases-the
unknowns a2-a6 in B} give a contradiction. Therefore we may
conclude that $G=H$. This completes the proof of Theorem \ref{case
1 thm}.


\subsection{Case 2}\label{section case 2} In this second case we
hypothesize that for  $x \in \CC_4$, $C_G(x) \leq H$ and $\CC_7^G=\CC_5^G$. Under the assumptions
of Hypothesis A  it is clear that this is equivalent to the following hypothesis.

\begin{hyp}\label{case 2 hyp}
Let $G$ satisfy Hypothesis A and in addition assume $C_G(j) \leq H$ for each $j \in J^\#$ and for
$x\in H \bs J$ of order three $x^G \cap J \neq \emptyset$.
\end{hyp}
\begin{thm}\label{case 2 thm}
No group satisfies Hypothesis \ref{case 2 hyp}.
\end{thm}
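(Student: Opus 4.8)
The plan is to derive a contradiction by computing enough of the character table of $G$ to bound $|G|$, exactly as in Case~1, but now working inside the principal $3$-block rather than with a set of special classes. First note that Hypothesis~\ref{case 2 hyp} forces $G \neq H$: the fusion $\CC_7^G = \CC_5^G$ cannot occur inside $H$, so $J = O_3(H)$ is not normal in $G$ and $[G:H] > 1$. As in the argument of Lemma~\ref{index of H in G}, studying the action of $J$ on the set of its $G$-conjugates, together with the Sylow count $[G:H] \equiv 1 \pmod 3$ (which follows since $N_G(S) \leq H$ by Lemma~\ref{order of sylow 3's and H is self normalizing}), produces a lower bound on $[G:H]$, which I will save for the final numerical contradiction. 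By Lemma~\ref{order of sylow 3's and H is self normalizing} the $3$-element classes of $G$ meeting $H$ are $\CC_4^G$, $\CC_5^G = \CC_7^G$, $\CC_6^G$, $\CC_{11}^G$ and $\CC_{12}^G$; these stay distinct by Lemma~\ref{J is characteristic} and part~$(iv)$, and every $3$-element of $G$ is conjugate into one of them with centralizer order read off from $H$.

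The first real step is to reduce all computations to the principal block $B_0(G)$. For this I would verify that for every non-trivial $3$-subgroup $D \leq S \in \syl_3(G)$ the normalizer $N_G(D)$ is $3$-soluble with $O_{3'}(N_G(D)) = 1$. The hypothesis $C_G(j) \leq H$ for $j \in J^\#$, the solubility of $H$, and the control of $N_G(S)$, $N_G(\mathcal{Z}(S))$ and $N_G(T)$ for $T \leq J$ of order nine (Lemma~\ref{order of sylow 3's and H is self normalizing}) should settle every conjugacy class of such $D$. Granting this, Lemma~\ref{brauer's theorems} applies: $B_0(G)$ is the only block of $G$ with non-trivial defect, and every $\chi \in \Irr(G) \setminus B_0(G)$ vanishes on all $3$-singular elements, in particular on all $3$-elements.

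Next I would pin down the values of the principal-block characters on the five $3$-element classes above. Since the non-principal characters vanish on $3$-elements, the column orthogonality relations restricted to these classes involve only $B_0(G)$; hence the principal-block $3$-columns satisfy the full orthogonality relations with inner products $\delta_{ij}\,|C_G(x_i)|$, and the relevant centralizer orders are those computed in $H$. I would then invoke Lemma~\ref{CT-Integer Matrix}: taking $N$ to be the principal-block portion of Table~\ref{char table H} on the $3$-classes and choosing $M$ to be a sparse sequence of column operations making $NM$ integral with small entries, the integrality of $LM$ confines the unknown $G$-entries to a short list. The congruences $\chi(g) \equiv \chi(1) \pmod 3$ (Lemma~\ref{Characters mod p}) and the non-vanishing on the $3$-central class $\CC_5^G$ (Lemma~\ref{non-vanishing principal block on p-central}) should cut this down further, leaving only finitely many candidate principal-block tables on the $3$-classes.

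Finally, for each candidate I would compute structure constants. Choosing a triple $a,b,c$ of order-three elements with $ab=c$ whose cyclic subgroups are not all $G$-conjugate (for instance drawn from the distinct classes $\CC_4^G$, $\CC_5^G$, $\CC_6^G$), Lemma~\ref{triangle group} places $\langle a,b\rangle$ inside $C_G(z)$ for some $z$ of order three, hence inside a conjugate of $H$; so $a^G_{abc}$ equals the corresponding structure constant of $H$ and is a known integer. Equating this with $\alpha^G_{abc} = \sum_{\chi \in B_0(G)} \chi(a)\chi(b)\overline{\chi(c)}/\chi(1)$ evaluated from the candidate table, and using $\alpha^G_{abc} = a^G_{abc}\,|C_G(a)||C_G(b)|/|G|$, yields an explicit expression for $|G|$ in terms of the unknown degrees. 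As in Case~1, comparing this with the lower bound on $[G:H]$ and with the positivity and residue constraints on the degrees should force a sign or integrality contradiction for every candidate. The hard part will be the bookkeeping of the last two steps: the principal $3$-block is much larger than the four-dimensional space of Case~1, so the integer-matrix reduction introduces many more unknowns and candidate tables, and the $3$-local hypotheses of Lemma~\ref{brauer's theorems} must be checked for every class of subgroup $D$.
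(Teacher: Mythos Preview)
Your overall architecture matches the paper's: reduce to the principal $3$-block via Lemma~\ref{brauer's theorems} (after checking the $3$-local hypothesis on $N_G(D)$, which is Lemma~\ref{normalizers of D}), use the integer-matrix method of Lemma~\ref{CT-Integer Matrix} to pin down principal-block values on $3$-singular classes, and feed these into structure constants computed inside $H$ via Lemma~\ref{triangle group}. Two points of divergence are worth flagging.

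First, a minor one: you work only with the five classes of $3$-elements, whereas the paper uses all nine $3$-singular classes $\CC_4,\CC_5,\CC_6,\CC_9,\CC_{10},\CC_{11},\CC_{12},\CC_{13},\CC_{14}$ (orders $3,3,3,6,6,9,9,12,12$). The extra columns are needed both to make the $9\times 9$ matrix $M$ invertible and to eliminate spurious candidates via generalized decomposition numbers (see the treatment of Candidates~2 and~3, which are killed by a parity relation between the $\CC_6$ and $\CC_{10}$ columns).

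Second, and this is the real gap: your endgame does not close. In Case~1 the structure-constant equation together with $[G:H]\geq 28$ gave a direct contradiction, but here the numbers are too slack. After the integer-matrix reduction the paper is left with a single viable candidate (Table~\ref{part of principal block}), and the combination of three structure constants yields only
\[
\frac{15552}{|G|} \;=\; 1+\frac{8}{d_6}+\frac{1}{d_7}+\frac{27}{d_{11}}+\frac{27}{d_{12}},
\]
which after bounding the right-hand side gives merely $|G|<36630$, perfectly compatible with $|G|\geq 28\cdot 648=18144$. The paper needs two further ingredients you do not mention: congruences $d_i\equiv c_i\pmod{81}$ obtained by restricting $\chi_i$ to $H$ and computing $(\chi_i|_H,\psi_{12})_H$ (these are what make the right-hand side bounded away from zero, and ruling out the boundary values $d_{11}=-51$, $d_7=1$ requires separate arguments including Lemma~\ref{No normal subgroup containing S}); and Frobenius's theorem on the number of solutions to $x^{3^n}=1$, which forces $|G|/81\equiv -1\pmod{81}$. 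Only the conjunction of the upper bound, this congruence, and $8\mid |G|$ pins down $|G|=6480$, whereupon $d_6^2+d_7^2\geq 29^2+80^2>6480$ finishes. Your mod~$3$ congruence from Lemma~\ref{Characters mod p} and the index lower bound are not strong enough substitutes.
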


We  suppose for a contradiction that $G$ is a group satisfying
Hypothesis \ref{case 2 hyp}. We use the $p$-block theory from Section
\ref{block theory} with $p=3$. Recall that $B_0(G)$ is the set of ordinary characters in the principal block of $G$. We will show in Lemma \ref{normalizers of D} that we only need to calculate character values from characters in $B_0(G)$.

\begin{lemma}\label{No normal subgroup containing S}
If $N\trianglelefteq G$ and $\syl_3(N) \subseteq \syl_3(G)$ then
$G=N$.
\end{lemma}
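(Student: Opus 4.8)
The plan is to prove that $N$ contains a full Sylow $3$-subgroup of $G$, to bound $[G:N]$ by two via a Frattini argument, and then to eliminate the index-two possibility using the quaternion group hidden inside the $\SL_2(3)$ supplied by Lemma \ref{3-elements in S not J}.

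First I would observe that every Sylow $3$-subgroup of $G$ lies in $N$: since $N\trianglelefteq G$, for $P\in\syl_3(G)$ we have $P\cap N\in\syl_3(N)\subseteq\syl_3(G)$, and as $P\cap N\le P$ with both of full Sylow order, $P=P\cap N\le N$. Fixing $S\in\syl_3(H)$ we thus get $S\le N$ and $S\in\syl_3(N)$, so the Frattini argument (Lemma \ref{frattini}) yields $G=N\,N_G(S)$. Because $J$ is characteristic in $S$ (Lemma \ref{J is characteristic}), $N_G(S)\le N_G(J)=H$, whence $N_G(S)=N_H(S)=N_G(\mathcal{Z}(S))$ has order $2\cdot 3^4$ and $[N_G(S):S]=2$. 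Since $G/N\cong N_G(S)/N_N(S)$ with $S\le N_N(S)\le N_G(S)$, we conclude $[G:N]\in\{1,2\}$.

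The hard part is ruling out $[G:N]=2$, and this is where the case hypothesis enters. Assume $[G:N]=2$; then $N_N(S)=S$, and since $N_G(S)$ has a Sylow $2$-subgroup $T=\langle t\rangle$ of order two with $T\cap N\le N\cap N_G(S)=N_N(S)=S$ a $2$-group inside the $3$-group $S$, we get $T\cap N=1$ and $t\notin N$. As $\CC_7^G=\CC_5^G$ in Case 2 (Hypothesis \ref{case 2 hyp}), Lemma \ref{3-elements in S not J} produces $K\cong\SL_2(3)$ inside $C_G(T)=C_G(t)$; in the notation of that lemma $K=C_Y(T)$ with $t\in Y$, so $t\in K$ and, commuting with all of $K$, $t\in\mathcal{Z}(K)$. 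Hence $t$ is the unique involution of $K$, and therefore the central involution of $Q:=O_2(K)\cong Q_8$.

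The contradiction is then immediate: $Q\le G$, $N\trianglelefteq G$ and $[G:N]=2$ force $[Q:Q\cap N]\le 2$, so $Q\cap N$ is a nontrivial subgroup of $Q_8$ and must contain its unique involution $t$; thus $t\in Q\cap N\le N$, contradicting $t\notin N$. So $[G:N]=1$ and $G=N$. I expect the crux to be exactly this last manoeuvre: arranging, through the fusion $\CC_7^G=\CC_5^G$, for a quaternion subgroup of $C_G(t)$ whose centre is precisely the extra involution $t$ lying outside $N$, so that normality of $N$ together with $[G:N]=2$ drags $t$ back into $N$.
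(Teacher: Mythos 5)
Your proof is correct and takes essentially the same route as the paper: a Frattini argument together with $|N_G(S)|=2\cdot 3^4$ bounds $[G:N]\leq 2$, and then the $\SL_2(3)$ supplied by Lemma \ref{3-elements in S not J} (whose proof indeed shows $t$ is its central involution) rules out index two. The only cosmetic difference is that the paper intersects $N$ with a cyclic subgroup of order four of that $\SL_2(3)$ and derives the contradiction $F\cong NF/N\leq G/N$, whereas you intersect $N$ with $O_2(\SL_2(3))\cong Q_8$; both steps hinge on the same fact, namely that the unique involution of the relevant $2$-group is $t\notin N$.
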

\begin{proof}
Suppose $N\trianglelefteq G$ with $N \neq G$ and assume $S \in \syl_3(N)\cap \syl_3(G)\neq
\emptyset$. Then by Lemma \ref{frattini} (Frattini argument), $G=N N_G(S)$. Let $T \in
\syl_2(N_G(S))$ then $|T|=2$ and $N_G(S)=ST$ follows from Lemma \ref{order of sylow 3's and H is
self normalizing}. Thus $G=NT$ and since $G \neq N$, $T \nleq N$. Thus, by an isomorphism theorem,
$G/N=NT/N\cong T/N\cap T \cong T$. By Lemma \ref{3-elements in S not J}, there exists $A \leq G$
such that $A \cong \SL_2(3)$ and $T=\mathcal{Z}(A)$. In particular, there exists a cyclic group of order four
$F \leq A$ such that $T<F$. Since $N \cap T=1$, $N \cap F=1$ and so $G/N=NF/N \cong F$ which is a
contradiction.
\end{proof}

\begin{lemma}\label{normalizers of D}
Let $D \leq H$ be a non-trivial $3$-subgroup. Then $N_G(D)$ is $3$-soluble and $O_{3'}(N_G(D))=1$.
In particular, if $\chi \in \Irr(G) \bs B_0(G)$ then $\chi(a)=0$ for any element $1 \neq a \in G$ of order
a multiple of $3$.
\end{lemma}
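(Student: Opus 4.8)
The plan is to deduce the ``in particular'' clause directly from Lemma~\ref{brauer's theorems} applied with $p=3$, so the real work is to verify its two hypotheses: that $N_G(D)$ is $3$-soluble and $O_{3'}(N_G(D))=1$ for every non-trivial $3$-subgroup $D$. Since, by Lemma~\ref{order of sylow 3's and H is self normalizing}, every $S\in\syl_3(H)$ lies in $\syl_3(G)$ and $S\leq H$, every non-trivial $3$-subgroup of a fixed Sylow $3$-subgroup of $G$ may be taken inside $H$; this is exactly the range of $D$ in the statement, and it is the range required by Lemma~\ref{brauer's theorems}. Granting the two local conditions, the final assertion follows: an element $a$ of order a multiple of three has the form $a=xf$ with $x$ its (non-trivial) $3$-part and $f\in C_G(x)$ its $3$-regular part; conjugating $x$ into $S$ and applying Lemma~\ref{brauer's theorems}$(ii)$ gives $\chi(a)=\chi(xf)=0$ for $\chi\in\Irr(G)\bs B_0(G)$.

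The structural engine is an observation about centralisers of $3$-elements. Under Hypothesis~\ref{case 2 hyp}, every element $z$ of order three in $H$ satisfies $z^G\cap J\neq\emptyset$, so $C_G(z)$ is conjugate to some $C_G(j)\leq H$ with $j\in J^\#$; hence $C_G(z)$ is conjugate into $H$ and so is soluble. Combining this with Hypothesis~\ref{alt9 hyp}, which gives $O_{3'}(C_G(z))=1$, I would conclude that the Fitting subgroup of the soluble group $C_G(z)$ is a $3$-group, that is, $F^\ast(C_G(z))=O_3(C_G(z))$ and $C_{C_G(z)}(O_3(C_G(z)))\leq O_3(C_G(z))$: every such centraliser has characteristic $3$.

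To kill $O_{3'}(N_G(D))$, set $K:=O_{3'}(N_G(D))$. Since $D$ and $K$ are both normal in $N_G(D)$ and of coprime order, $[D,K]\leq D\cap K=1$, so $K\leq C_G(D)$; as $K\normal N_G(D)\geq C_G(D)$ is a $3'$-group, in fact $K=O_{3'}(C_G(D))$. Choosing $z\in\mathcal{Z}(D)$ of order three, one has $D\leq C_G(z)$ and $z\in\mathcal{Z}(C_G(D))$, so $C_G(D)\leq C_G(z)$ and $K$ is a normal $3'$-subgroup of $C_{C_G(z)}(D)$. The task therefore reduces to a purely local statement inside the characteristic-$3$ group $C:=C_G(z)$: namely $O_{3'}(C_C(D))=1$ for the $3$-subgroup $D\leq C$. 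Writing $Q:=O_3(C)$, I would use that $C_Q(D)\normal C_C(D)$ forces $K$ to centralise $C_Q(D)$ (coprime orders), and then push this to $[Q,K]=1$ via coprime action (Theorem~\ref{coprime action}) together with the three subgroup lemma; since $C_C(Q)\leq Q$, the equality $[Q,K]=1$ gives $K\leq C_C(Q)\leq Q$ and hence $K=1$. For the $3$-solubility of $N_G(D)$ I note first that $C_G(D)\leq C_G(z)$ is already soluble, so it remains to control the action group $N_G(D)/C_G(D)\hookrightarrow\aut(D)$.

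The main obstacle is precisely this last point: $\aut(D)$ need not be $3$-soluble, so I must rule out a non-$3$-soluble composition factor of $N_G(D)/C_G(D)$. I expect to handle it by a minimality argument: any such factor would produce a section of $G$ forcing a $3$-local subgroup larger than, or of a shape different from, those permitted by Lemma~\ref{order of sylow 3's and H is self normalizing}, and would contradict the solubility of all $3$-element centralisers established above together with the smallness of the $3$-structure under Hypothesis~\ref{case 2 hyp}. Once $N_G(D)$ is known to be $3$-soluble with $O_{3'}(N_G(D))=1$ for every non-trivial $3$-subgroup $D\leq S$, Lemma~\ref{brauer's theorems} applies and yields both conclusions of the statement.
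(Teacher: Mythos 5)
Your treatment of $O_{3'}(N_G(D))=1$ and of the final character-theoretic deduction is correct, and the $O_{3'}$ argument is genuinely different from the paper's. You reduce to $C:=C_G(z)$ for $z\in\mathcal{Z}(D)$ of order three, observe that $C$ is soluble (every order-three element of $H$ is conjugate into $J$ under Hypothesis \ref{case 2 hyp}, and $C_G(j)\leq H$ for $j \in J^\#$) with $O_{3'}(C)=1$, hence of characteristic $3$, and then kill $K=O_{3'}(C_C(D))$ by showing $K$ centralizes $C_Q(D)$ and $D$ and concluding $[Q,K]=1$. That last step is exactly Thompson's $A\times B$ lemma (8.2.8 in the cited Kurzweil--Stellmacher text); your appeal to coprime action plus the three subgroup lemma is the right gesture, since that lemma's proof is an induction using precisely $[W,A]=[W,A,A]$ and the three subgroup lemma, but you should cite or prove it rather than leave it implicit. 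The paper argues quite differently: it shows that if $O_{3'}(C_G(D))\neq 1$ then every element of order three in $D$ commutes with a non-trivial $3'$-group, hence is not $3$-central (elements of $\CC_5^G$ have centralizer of order $3^4$), hence lies in $J$; then $|D|=3$ is impossible by Hypothesis A and $|D|=9$ is impossible because $C_G(T)=J$ for $T\leq J$ of order nine. Your route is more general and arguably cleaner.

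The genuine gap is the $3$-solubility of $N_G(D)$. You correctly reduce to controlling $N_G(D)/C_G(D)\hookrightarrow\aut(D)$, name the obstacle that $\aut(D)$ need not be $3$-soluble, and then offer only the hope that ``a minimality argument'' will produce a contradiction; that is a placeholder, not a proof, and nothing in your sketch indicates how a non-$3$-soluble composition factor of $N_G(D)/C_G(D)$ would actually collide with Lemma \ref{order of sylow 3's and H is self normalizing}. The resolution is elementary and is what the paper does: up to conjugacy $D\leq S$, and $|S|=3^4$. If $D$ is conjugate to $S$ or to $J$, then $N_G(D)$ is conjugate into $N_G(S)\leq H$ or equal to a conjugate of $N_G(J)=H$, and $H$ is soluble of order $3^42^3$, so there is nothing to prove. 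Otherwise $D$ has isomorphism type $3$, $3\times 3$, $C_9$, $3_+^{1+2}$ or $3_-^{1+2}$: the only elementary abelian subgroup of $S$ of order $27$ is $J$ itself (Lemma \ref{J is characteristic}), and $S$ contains no $C_9\times C_3$ because elements of order nine in $H$ (classes $\CC_{11}$, $\CC_{12}$) have centralizer of order $9$. Each of these five groups has a \emph{soluble} automorphism group, so $N_G(D)/C_G(D)$ is soluble and $N_G(D)$ is $3$-soluble, $C_G(D)$ being soluble as you showed. The one dangerous case, $\aut(3^3)\cong\GL_3(3)$, which is not soluble, arises only for $D$ conjugate to $J$, where it is harmless. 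In short, the obstacle you flagged dissolves by inspecting the possible types of $D$ inside a Sylow $3$-subgroup of order $3^4$, and without this step your proof of the first assertion of the lemma is incomplete.
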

\begin{proof}
If $D=S$ then $N_G(S) \leq H$ and the conclusion is clear. If $D=J$ then $N_G(D)= H$ and again it
is clear. Therefore $D$ has one of the following isomorphism types: $3$, $3 \times 3$, $3_+^{1+2}$,
$3_-^{1+2}$, $C_9$. In each case $N_G(D)/C_G(D)$ is $3$-soluble. Now since $C_G(x)$ is $3$-soluble
for each element of order three in $D$, $C_G(D)$ is $3$-soluble and thus $N_G(D)$ is $3$-soluble.
So suppose $O_{3'}(N_G(D))\neq 1$. Then $[O_{3'}(N_G(D)),D]=1$ and so $O_{3'}(C_G(D))\neq 1$. If $1
\neq x \in D$ then $x$ commutes with $O_{3'}(C_G(D))$ and so $x$ is not $3$-central as elements in
$\CC_5^G$ have centralizers of order $3^4$. Therefore $x \in J$. If $D=\<x\>$ is cyclic then we
have a contradiction since $O_{3'}(C_G(x))=1$ for every element of order three in $J$. So we must
have $D \leq J$ of order nine. However by Lemma \ref{order of sylow 3's and H is self normalizing}
$(v)$,  $C_G(D)= J$ gives us a contradiction.

Hence we may apply Lemma \ref{brauer's theorems} to say that for any  $\chi \in \Irr(G) \bs B_0(G)$
and any element non-identity $a$ of order a multiple of $3$, $\chi(a)=0$
\end{proof}

Recall from Section \ref{CharacterTheory-prelims} the definition of the structure constants (labeled $a^G_{xyz}$ and $\a^G_{xyz}$ for $G$ a group with elements $x,y,z$).
\begin{lemma}\label{part2-structure constants}
Let $a \in \CC_4$, $b \in \CC_5$, $c \in \CC_6$. Then
\begin{enumerate}[$(i)$]
\item $\a^G_{abc}=2\cdot 108\cdot 81/|G|$;
\item $\a^G_{aac}=2\cdot 108\cdot 108/|G|$; and
\item $\a^G_{cca}=4\cdot 54\cdot 54/|G|$.
\end{enumerate}
\end{lemma}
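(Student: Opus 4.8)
The plan is to reduce each of these three $G$-structure constants to the corresponding structure constant inside $H$, which can then be read off from Table \ref{char table H}. Recall from Section \ref{CharacterTheory-prelims} that $\a^G_{xyz}=a^G_{xyz}|C_G(x)||C_G(y)|/|G|$, where $a^G_{xyz}$ is the integer counting the relevant pairs. Since $a\in\CC_4$, $b\in\CC_5$ and $c\in\CC_6$ all satisfy $C_G(\cdot)\leq H$ (by Lemma \ref{order of sylow 3's and H is self normalizing} together with Hypothesis \ref{case 2 hyp}), we have $|C_G(a)|=108$, $|C_G(b)|=81$ and $|C_G(c)|=54$. Thus it suffices to prove that the three counts satisfy $a^G_{abc}=2$, $a^G_{aac}=2$ and $a^G_{cca}=4$, and for this I would show that each of these $G$-counts equals the count in $H$ obtained from the character table.

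The key step is fusion control. I would present each contributing pair as a triple $(x_1,x_2,x_3)$ of elements of order three with $x_1x_2x_3=1$, whose cyclic subgroups lie in the classes $\CC_4^G,\CC_5^G,\CC_6^G$ in the relevant pattern. In every one of the three cases the triple contains an element $c_0$ whose class is $\CC_6^G$. Since the classes $\CC_4^G,\CC_5^G,\CC_6^G$ are pairwise distinct (Lemma \ref{order of sylow 3's and H is self normalizing}$(iii)$), the cyclic subgroups generated are never all $G$-conjugate, so by Lemma \ref{triangle group} there is an element $z$ of order three in $X:=\langle x_1,x_2\rangle$ with $X\leq C_G(z)$, and $z$ commutes with $c_0$. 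Now $c_0\in J$ and $|C_G(c_0)|=54$, so $J$ is a normal Sylow $3$-subgroup of $C_G(c_0)$ of index two, whence $z\in J$. Consequently $z\in\CC_4\cup\CC_5\cup\CC_6$, and in each case $C_G(z)\leq H$ (with $C_G(z)=S\in\syl_3(G)$ when $z\in\CC_5$), so $X\leq H$. This containment is the crux, and I expect it to be the main obstacle: were $z$ allowed to lie in $\CC_7$ its centralizer would meet $H$ only in a group of order nine (Lemma \ref{3-elements in S not J}) and the pair could escape $H$; pivoting on the $\CC_6$-element $c_0$, whose centralizer has $J$ as its normal Sylow $3$-subgroup, is exactly what rules this out.

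With $X\leq H$ in hand, the elements are forced into $J$: a factor lying in $\CC_4^G\cap H=\CC_4$ or in $\CC_6^G\cap H=\CC_6$ lies in $J$, and the remaining factor, being a product of $J$-elements, also lies in $J$ and hence in the prescribed $H$-class (here I use Lemma \ref{J is characteristic}, that $G$-fusion of $J$-elements is controlled in $H$). Conversely any pair realising the relation inside $J$ is a valid $G$-pair, so $a^G_{abc}=a^H_{abc}$, $a^G_{aac}=a^H_{aac}$ and $a^G_{cca}=a^H_{cca}$. Finally I would evaluate these $H$-constants directly from Table \ref{char table H} via $\a^H_{xyz}=\sum_{\psi\in\Irr(H)}\psi(x)\psi(y)\overline{\psi(z)}/\psi(1)$; the non-zero contributions come only from $\psi_1,\dots,\psi_5$ and $\psi_{10},\psi_{11},\psi_{12}$, giving $\a^H_{abc}=27$, $\a^H_{aac}=36$ and $\a^H_{cca}=18$, whence $a^H_{abc}=2$, $a^H_{aac}=2$ and $a^H_{cca}=4$. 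Substituting the centralizer orders into $\a^G_{xyz}=a^G_{xyz}|C_G(x)||C_G(y)|/|G|$ then yields the three stated identities.
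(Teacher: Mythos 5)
Your overall strategy is the one the paper uses: invoke Lemma \ref{triangle group} to force each contributing pair into $H$ and then into $J$, use Lemma \ref{J is characteristic} to identify $G$-fusion with $H$-fusion, and evaluate the resulting $H$-structure constants from Table \ref{char table H}. Your centralizer orders and your final numerical evaluations ($\a^H_{abc}=27$, $\a^H_{aac}=36$, $\a^H_{cca}=18$, hence the counts $2$, $2$, $4$) are all correct.

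However, there is a genuine gap in your fusion-control step, and it occurs precisely in case $(iii)$. You pivot on ``the element $c_0$ of the triple whose class is $\CC_6^G$'' and assert $c_0\in J$. In cases $(i)$ and $(ii)$ this is fine, because there the $\CC_6^G$-element is (up to inversion) the fixed target $c\in\CC_6\subset J$. But in case $(iii)$ the target is $a\in\CC_4$, and the $\CC_6^G$-elements of the triple are the two \emph{factors} $c^g,c^h$, which are arbitrary $G$-conjugates of $c$; that they lie in $J$ is essentially what the lemma asserts, so writing $c_0\in J$ at this point is unjustified, indeed circular. What your pivot actually yields in case $(iii)$ is $z\in J^g$ and hence $X\leq C_G(z)\leq H^g$ for some $g\in G$ depending on the pair, which does not immediately give $a^G_{cca}=a^H_{cca}$, since the target $a$ is a fixed element of $J$, not of $J^g$. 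The paper avoids this by pivoting on the fixed target $y$ in all three cases: each target lies in $\CC_4\cup\CC_6\subset J$, so $C_G(y)\leq H$ by Hypothesis \ref{case 2 hyp}, and $J$ is the normal (hence unique) Sylow $3$-subgroup of $C_G(y)$, of index $4$ or $2$; therefore the element $z$ produced by Lemma \ref{triangle group} lies in $J$, whence $X\leq C_G(z)\leq H$ and both factors land in $J$ exactly as you argue afterwards. Alternatively, your version can be repaired with one extra step: from $X\leq H^g$ you get both factors, and hence the target $a=c^gc^h$, inside $J^g$; then $J$ and $J^g$ are both Sylow $3$-subgroups of $C_G(a)$, and $J\trianglelefteq C_G(a)$, so $J^g=J$ and the count proceeds as you intended. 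Without one of these two fixes, part $(iii)$ of your proof does not go through.
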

\begin{proof}
Suppose $(w,x,y) \in \{ (a,b,c),(a,a,c), (c,c,a) \}$. Suppose for
some $g,h \in G$, $w^gx^h=y$  and set $X:=\<w^g,x^h\>$. By Lemma
\ref{triangle group}, there exists an element of order three $z
\in X$ such that $X \leq C_G(z)$. Since $z$ commutes with $y \in
\CC_4 \cup \CC_6$, $z \in J$. Therefore $X \leq C_G(z) \leq H$.
Notice that $w$ lies in $\CC_4 \cup \CC_6$ and so $w^g\in J$.
Therefore $X=\<w^g,y\> \leq J$ and $w^g,x^h \in J$. Now elements
in $J$ are $G$-conjugate if and only if they are conjugate in $H$
(Lemma \ref{J is characteristic}) so we may assume $g,h \in H$.
Therefore $a^G_{wxy}=a^H_{wxy}$ and we may calculate $\a^G_{wxy}$
from Table \ref{char table H}.
\end{proof}

Recall from Section \ref{block theory} the definition of a
\textit{principal column} of $G$ as a sequence indexed by the
principal block characters of $G$.

\begin{lemma}\label{principal block order}
There are at most $14$ characters in the principal $3$-block of $G$
and if $z\in G$ is $3$-central then $\chi(z)\neq 0$ for each
$\chi\in B_0(G)$.
\end{lemma}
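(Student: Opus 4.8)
The second statement is immediate: if $z$ is $3$-central then $z \in \mathcal{Z}(S)$ for some $S \in \syl_3(G)$, and by Lemma~\ref{order of sylow 3's and H is self normalizing} we may take $S \leq H$; Lemma~\ref{non-vanishing principal block on p-central} with $p=3$ then gives $\chi(z) \neq 0$ for every $\chi \in B_0(G)$. So I concentrate on the numerical bound for $|B_0(G)|$.

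My plan for the first statement is to count $|B_0(G)|$ through the $3$-sections of $G$. By Lemma~\ref{normalizers of D} and Lemma~\ref{brauer's theorems}, $B_0(G)$ is the unique $3$-block of $G$ of positive defect, so every other irreducible character of $G$ forms a block of defect zero and vanishes on all $3$-singular elements. Feeding this into the generalized decomposition numbers of Lemma~\ref{gen decomp numbers}, together with Brauer's first, second and third main theorems, the part of the generalized decomposition matrix belonging to $B_0(G)$ is square, its columns indexed by the $G$-classes of pairs $(x,\varphi)$ with $x$ a $3$-element and $\varphi$ an irreducible Brauer character of $C_G(x)$ lying in the principal block of $C_G(x)$. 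Separating off the contribution of $x=1$ I expect
\[
|B_0(G)| \;=\; \ell(B_0(G)) \;+\; \sum_{x \neq 1} \ell(B_0(C_G(x))),
\]
the sum running over representatives of the nontrivial classes of $3$-elements of $G$, where $\ell$ denotes the number of irreducible Brauer characters of the indicated block.

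Next I would evaluate the sum. Every $3$-element of $G$ is conjugate into $S \leq H$, and since $S$ has exponent dividing $9$ the nontrivial classes are represented in $S$; using $\CC_7^G = \CC_5^G$, the separation of $\CC_4,\CC_5,\CC_6$ from Lemma~\ref{order of sylow 3's and H is self normalizing}, and $\CC_{11}^G \neq \CC_{12}^G$, these classes are exactly $\CC_4^G,\CC_5^G,\CC_6^G,\CC_{11}^G,\CC_{12}^G$. By Hypothesis~\ref{case 2 hyp}, Lemma~\ref{order of sylow 3's and H is self normalizing} and Lemma~\ref{J is characteristic}, each such centralizer lies in $H$ and has $J$ (or, for the two classes of order $9$, a smaller $3$-group) as a normal Sylow $3$-subgroup with trivial $3'$-core; writing $C_G(x) = J \rtimes T$ one has $C_{C_G(x)}(J)=J$, so every nontrivial $3'$-element acts without fixing all of $J$ and hence its $C_G(x)$-class has size divisible by $3$. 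The usual central-character argument modulo $3$ then collapses each $C_G(x)$ to a single block, giving $\ell(B_0(C_G(x))) = \ell(C_G(x)) = |\Irr(T)|$. This yields $1$ for $\CC_5^G$, $1$ for each of $\CC_{11}^G$ and $\CC_{12}^G$, the value $|\Irr(T)| = 2$ for $\CC_6^G$ and $4$ for $\CC_4^G$; so the sum equals $9$ and $|B_0(G)| = \ell(B_0(G)) + 9$.

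The remaining, and genuinely hard, step is to bound $\ell(B_0(G)) \leq 5$. Unlike the summands above, this is a purely modular invariant not read off from the table of $H$, and it is where I expect the main difficulty. The lever I would use is the same divisibility observation applied to $G$ itself: since $C_G(z)=S$ for $3$-central $z$, any $3$-regular element centralising a full Sylow $3$-subgroup is trivial, so every nontrivial $3$-regular class of $G$ has size divisible by $3$; this sharply restricts how the $3$-regular classes distribute among blocks and, combined with the control $C_G(j)\leq H$ for $j \in J^\#$ afforded by Hypothesis~\ref{case 2 hyp} and the congruence $\chi(g) \equiv \chi(1) \pmod 3$ of Lemma~\ref{Characters mod p}, is what must be pushed to force $\ell(B_0(G)) \leq 5$. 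Granting this, $|B_0(G)| = \ell(B_0(G)) + 9 \leq 14$, as required.
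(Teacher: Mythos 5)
Your handling of the second statement is fine (it is exactly the paper's: quote Lemma \ref{non-vanishing principal block on p-central}), and the reduction you set up for the first statement is sound as far as it goes. Granting the block-wise squareness of the generalized decomposition matrix (Brauer's second and third main theorems --- a standard fact, though not stated anywhere in the paper), the identity $m=\ell(B_0(G))+\sum_{x\neq 1}\ell(B_0(C_G(x)))$ is correct, the five nontrivial classes of $3$-elements are indeed $\CC_4^G,\CC_5^G,\CC_6^G,\CC_{11}^G,\CC_{12}^G$, and the local counts $4+1+2+1+1=9$ are right: each of these centralizers is $3$-soluble with trivial $3'$-core, hence has a unique block, and its number of Brauer characters is $|\Irr|$ of the $3'$-complement. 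So $m=\ell(B_0(G))+9$.

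The problem is that the proof stops exactly where the content of the lemma lies: the bound $\ell(B_0(G))\leq 5$ is ``granted'', not proved, and the lever you sketch cannot plausibly supply it. That nontrivial $3$-regular classes of $G$ have size divisible by $3$, that $C_G(j)\leq H$ for $j\in J^\#$, and the congruence of Lemma \ref{Characters mod p} say nothing direct about $\ell(B_0(G))$, which is a global modular invariant of the unknown group $G$: a priori $G$ has arbitrarily many $3$-regular classes, and deciding how many of the corresponding Brauer characters lie in $B_0(G)$ rather than in defect-zero blocks is precisely as hard as the original problem --- indeed, by your own identity, $\ell(B_0(G))\leq 5$ is \emph{equivalent} to $m\leq 14$, so the subsection bookkeeping relocates the difficulty without reducing it. What actually closes the gap in the paper is an ordinary-character computation that never mentions $\ell(B_0(G))$: since every $\chi\notin B_0(G)$ vanishes on $3$-singular elements (Lemmas \ref{normalizers of D} and \ref{brauer's theorems}), column orthogonality for non-conjugate $d\in\CC_{11}$, $e\in\CC_{12}$ (integer values, centralizers of order $9$) restricted to $B_0(G)$ gives $\sum\chi(d)^2=\sum\chi(e)^2=9$ and $\sum\chi(d)\chi(e)=0$; the integer vector $\bigl(\tfrac{1}{3}(\chi(d)-\chi(e))\bigr)_{\chi\in B_0(G)}$ then has norm $2$, so some $\chi_j$ satisfies $|\chi_j(f)|\geq 2$ for some $f\in\{d,e\}$, whence at most $6$ characters of $B_0(G)$ are nonzero on $f$; and any $\chi\in B_0(G)$ with $\chi(f)=0$ has $\chi(z)\equiv\chi(f)\equiv 0\pmod 3$ yet $\chi(z)\neq 0$, so $\chi(z)^2\geq 9$, and since the trivial character (nonzero on $f$) already contributes to $\sum\chi(z)^2=81$, at most $8$ such characters exist. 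To complete your argument you would have to reproduce this two-part count (or an equivalent), at which point your detour through subsections becomes superfluous.
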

\begin{proof}
By Lemma \ref{normalizers of D}, we may apply Lemma \ref{brauer's
theorems} to $G$ to say that if $x$ is a $3$-element in $G$ and
$\chi \in \Irr(G)$ with $\chi(x) \neq 0$ then $\chi \in \B_0(G)$.

Let $d \in \CC_{11}$ and $e \in \CC_{12}$. By Lemma \ref{order of sylow 3's and H is self
normalizing},  $\CC_{11}^G \neq \CC_{12}^G$. Since any character of $G$ restricts to a sum of
characters of $H$, $\chi(d),\chi(e)\in \mathbb{Z}$ for any $\chi \in B_0(G)$. Furthermore $\chi(d)
\equiv \chi(e) ~\mathrm{mod}~ 3$ by Lemma \ref{Characters mod p}. Let $\{\chi_1, \hdots,
\chi_m\}=B_0(G)$ then by Lemma \ref{normalizers of D}, $\sum_{i=1}^m \chi_i(d)= \sum_{i=1}^m
\chi_i(e)=9$ and since $\CC_{11}^G \neq \CC_{12}^G$, $\sum_{i=1}^m \chi_i(d)\chi_i(e)=0$. Define
the following principal column $C:=(\frac{1}{3}(\chi_i(d)-\chi_i(e)) )_{i=1..m}$. Then the inner
product $(C,C)$ equals:
\[\sum_{i=1}^m
(\frac{1}{3}(\chi_i(d)-\chi_i(e)))^2=\frac{1}{9}\sum_{i=1}^m
\chi_i(d)^2-2\chi_i(d)\chi_i(e)+\chi_i(e)^2=\frac{1}{9} (9+0+9)=2.\]
So $C$ has just two non-zero entries. Let $\chi_j \in B_0(G)$ such
that $(\chi_j(d)-\chi_j(e))/3 =\pm 1$. Then
$\chi_j(d)-\chi_j(e)=\pm 3$. In particular there exists $f \in
\{d,e\}$ such that $|\chi_j(f)|>1$. We can therefore conclude that
the number of irreducible characters of $G$ which are non-zero on
$f$ is at most six.

So, for $z \in \CC_5$, it follows from Lemma \ref{normalizers of D} that
$\sum_{i=1}^m\chi_i(z)^2=81=|C_G(z)|$. By Lemma \ref{non-vanishing principal block on p-central}, $\chi_i(z)
\neq 0$ for each $\chi_i\in B_0(G)$. If $\chi_i\in B_0(G)$ and $\chi_i(f) =0$ then $\chi_i(z)$ is a
multiple of three since $\chi(f) \equiv \chi(z) ~\mathrm{mod} ~3$. The number of characters in
$B_0(G)$ on which $z$ is a multiple of three is at most $8$ since $8*3^2=72<81$.  It therefore
follows that $m$ is at most $8+6$ and so $m\leq 14$.
\end{proof}

Set $B_0(G):=\{\chi_1,\hdots, \chi_m\}$ where $m \leq 14$. We define the following principal
columns of character values:
\begin{enumerate}[$(i)$]
\item $\mathbf{A}=(\chi_i(x_4))_{1\leq i \leq m}$;
 \item $\mathbf{B}=(\chi_i(x_5))_{1\leq i \leq m}$;
 \item $\mathbf{C}=(\chi_i(x_6))_{1\leq i \leq m}$;
 \item $\mathbf{D}=(\chi_i(x_9))_{1\leq i \leq m}$;
 \item $\mathbf{E}=(\chi_i(x_{10}))_{1\leq i \leq m}$;
 \item $\mathbf{F}=(\chi_i(x_{11}))_{1\leq i \leq m}$;
 \item $\mathbf{G}=(\chi_i(x_{12}))_{1\leq i \leq m}$;
 \item $\mathbf{H}=(\chi_i(x_{13}))_{1\leq i \leq m}$; and
 \item $\mathbf{I}=(\chi_i(x_{14}))_{1\leq i \leq m}$.
\end{enumerate}

\begin{lemma}\label{alt9-case 2 inner prods table}
Table \ref{character inner products} shows the pairwise inner products of the principal columns
$\mathbf{A}$,  $\mathbf{B}$, $\mathbf{C}$, $\mathbf{D}$, $\mathbf{E}$, $\mathbf{F}$, $\mathbf{G}$,
$\mathbf{H}$, $\mathbf{I}$.
\end{lemma}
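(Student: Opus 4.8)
The plan is to recognise that every one of the nine classes $\CC_4,\CC_5,\CC_6,\CC_9,\CC_{10},\CC_{11},\CC_{12},\CC_{13},\CC_{14}$ consists of $3$-singular elements, that is, elements whose order ($3$, $6$, $9$ or $12$) is divisible by $3$. By Lemma \ref{normalizers of D}, any $\chi\in\Irr(G)\bs B_0(G)$ vanishes on every element of order a multiple of three, so those characters contribute nothing to any of these columns. Consequently, for classes $\CC_a,\CC_b$ from the list, the principal-column inner product $\sum_{i=1}^m\chi_i(x_a)\chi_i(x_b)$ (with $B_0(G)=\{\chi_1,\dots,\chi_m\}$) coincides with the \emph{full} sum $\sum_{\chi\in\Irr(G)}\chi(x_a)\chi(x_b)$. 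First I would make this reduction precise, and then observe that the values occurring are all real (integers on the classes of order $3,6,9$, since $H$-characters are integral on $3$-elements and $G$-characters restrict to $\mathbb{Z}$-combinations of them on $H$, and $\pm\sqrt3$ on the two classes of order $12$). Reality lets me replace $\chi(x_b)$ by $\overline{\chi(x_b)}$, so the plain dot-product of the columns equals the second orthogonality expression $\sum_{\chi\in\Irr(G)}\chi(x_a)\overline{\chi(x_b)}$.

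Applying the second orthogonality relation, each such inner product is then $|C_G(x_a)|$ when $\CC_a^G=\CC_b^G$ and $0$ otherwise, so the whole table is pinned down once I know (a) the orders $|C_G(x_i)|$ and (b) the $G$-fusion among the nine classes. For the centralizer orders I would note that under Hypothesis \ref{case 2 hyp} every $\CC_4$-, $\CC_5$- and $\CC_6$-element has a representative inside $J$ (in the $\alt(9)$-model $J=\<(123),(456),(789)\>$ contains a $3$-cycle, a $3^21^3$- and a $3^3$-element), whence $C_G(x)\le H$ for each such $x$; and every element in the remaining six classes has its $3$-part conjugate into $J$ and lying in $\CC_4\cup\CC_5\cup\CC_6$, so $C_G(y)\le C_G(y^{\text{$3$-part}})\le H$ and hence $C_G(y)=C_H(y)$. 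Thus all nine diagonal entries are read directly off the $|C_H(x)|$ row of Table \ref{char table H}.

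For the fusion data I would use that $G$-fusion of elements of $J$ is $H$-fusion (Lemma \ref{J is characteristic}), together with the non-fusion statements already in hand: $\CC_i^G\cap\CC_j^G=\emptyset$ for $\{i,j\}\subset\{4,5,6\}$ and $\CC_{11}^G\neq\CC_{12}^G$ (Lemma \ref{order of sylow 3's and H is self normalizing}), which makes those columns mutually orthogonal. Since the $3$-parts of the composite-order classes lie in $J$ and their centralizers lie in $H$, their $G$-fusion is controlled in $H$ as well, forcing off-diagonal entries to vanish except where a genuine $G$-coincidence of classes occurs. \textbf{The hard part} will be the bookkeeping for the two classes of order twelve, $\CC_{13}$ and $\CC_{14}$: here I must determine the precise power-map and inverse relationship (the $\pm\sqrt3$ values in $\psi_8,\psi_9$ signal that $x_{14}$ is essentially $x_{13}^{-1}$ or its Galois conjugate). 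If $x_{14}=x_{13}^{-1}$ then $\chi(x_{14})=\overline{\chi(x_{13})}$ for every $\chi$, so the off-diagonal product $\mathbf{H}\cdot\mathbf{I}=\sum_\chi\chi(x_{13})\overline{\chi(x_{13})}=|C_G(x_{13})|$ is \emph{nonzero}, whereas a plain non-fusion would give $0$. Getting this entry (and the analogous reality checks for $\CC_9,\CC_{10}$) right is where the care lies; once the inverse/fusion structure of all nine classes is settled, the table of Lemma \ref{alt9-case 2 inner prods table} follows entry by entry from orthogonality.
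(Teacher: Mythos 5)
Your overall strategy is exactly the paper's: Lemma \ref{normalizers of D} kills every $\chi\in\Irr(G)\bs B_0(G)$ on these nine classes, so each principal-column inner product equals a full orthogonality sum over $\Irr(G)$, and the table reduces to knowing the $G$-centralizer orders and the $G$-fusion among the classes. However, as written your proposal does not prove the lemma, because you leave its crucial entry open: you say the table follows ``once the inverse/fusion structure of all nine classes is settled,'' and you even entertain $\mathbf{H}\cdot\mathbf{I}=|C_G(x_{13})|\neq 0$, which would contradict the table you are meant to verify. The worry is vacuous, and the tools to dispose of it are already in your write-up. Every entry of Table \ref{char table H} is real, so every conjugacy class of $H$ is real (the number of real irreducible characters equals the number of real classes; equivalently, $x$ and $x^{-1}$ have identical character columns and distinct classes have distinct columns). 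Hence $x_{13}^{-1}\sim_H x_{13}$ and $x_{14}^{-1}\sim_H x_{14}$; the $\pm\sqrt 3$ values signal Galois conjugacy ($x_{13}^{5}$, $x_{13}^{7}$), not inversion. Better still, once you have shown (as you do) that all $G$-character values on these columns are real, the plain dot product coincides with the Hermitian one, so every off-diagonal entry tests ordinary $G$-fusion only and inverses never enter. Then $(\mathbf{H},\mathbf{I})=0$ by precisely the fusion-control argument you set up: the image of an order-$12$ element in $H/J\cong\sym(4)$ is a $4$-cycle, so $x_{13}^4,x_{14}^4$ are order-three elements of $J$ with $C_G(x_{13}^4),C_G(x_{14}^4)\leq H$ under Hypothesis \ref{case 2 hyp}; if $x_{13}^g=x_{14}$ then $(x_{13}^4)^g=x_{14}^4$, Lemma \ref{J is characteristic} gives $h\in H$ with $(x_{13}^4)^h=x_{14}^4$, whence $gh^{-1}\in C_G(x_{13}^4)\leq H$ and $g\in H$, contradicting $\CC_{13}\neq\CC_{14}$ in $H$. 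The same remark settles $(\mathbf{D},\mathbf{E})$.

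A smaller slip: your blanket claim that each of the six composite-order classes ``has its $3$-part conjugate into $J$'' fails for $\CC_{11}$ and $\CC_{12}$ --- the $3$-part of an element of order nine is the element itself, and $J$ has exponent three. For those two classes use the cube, which lies in $\CC_5$ (as shown in the proof of Lemma \ref{order of sylow 3's and H is self normalizing}, elements of order nine in $S$ cube into $Z$), or simply cite part $(iv)$ of that lemma, which already gives $C_G(x)\leq H$ for $x\in\CC_{10}\cup\CC_{11}\cup\CC_{12}$; only $\CC_9$ genuinely needs your square-into-$J$ argument.
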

\begin{proof}
For $\psi \in \Irr(G)\bs B_0(G)$ and $x_i \in \CC_i^G$ ($i \in \{4,5,6,9,10,11,12,13,14\}$),  by
Lemma \ref{normalizers of D}, $\psi(x_i)=0$. In particular this implies that  $\sum_{\chi \in
B_0(G)}\chi(x_i)^2=|C_G(x_i)|$ and that for $i \neq j \in \{4,5,6,9,10,11,12,13,14\}$, $\sum_{\chi
\in B_0(G)}\chi(x_i)\chi(x_j) =0$.
\end{proof}

\begin{table}[h]\[\begin{tabular}{|c|c c c c c c c c c|}
  \hline
  $(,)$ & $\bf{A}$ & $\bf{B}$ & $\bf{C}$ & $\bf{D}$ & $\bf{E}$ & $\bf{F}$ & $\bf{G}$ & $\bf{H}$ & $\bf{I}$\\
  \hline
   $\bf{A}$& 108 & 0 & 0  & 0  & 0  & 0 & 0  & 0  & 0  \\
   $\bf{B}$& 0 & 81 & 0  & 0  & 0  & 0 & 0  & 0  & 0     \\
   $\bf{C}$& 0 & 0 & 54 & 0  & 0  & 0 & 0  & 0  & 0     \\
   $\bf{D}$& 0 & 0 & 0  & 12  & 0  & 0 & 0  & 0  & 0     \\
   $\bf{E}$& 0 & 0 & 0  & 0  & 6  & 0 & 0  & 0  & 0     \\
   $\bf{F}$& 0 & 0 & 0  & 0  & 0  & 9 & 0  & 0  & 0     \\
   $\bf{G}$& 0 & 0 & 0  & 0  & 0  & 0 & 9  & 0  & 0     \\
   $\bf{H}$& 0 & 0 & 0  & 0  & 0  & 0 & 0  & 12  & 0     \\
   $\bf{I}$& 0 & 0 & 0  & 0  & 0  & 0 & 0  & 0  & 12     \\
  \hline
\end{tabular}\]\caption{The pairwise inner products of the principal
columns $\bf{A}$-$\bf{I}$}\label{character inner
products}\end{table}

Recall Lemma \ref{CT-Integer Matrix} and consider the following
invertible matrix:

\[M:=\left(\begin{smallmatrix}
0&0&1/12&1/12&1/36&1/36&1/18&1/9&-1/
9\\
 \noalign{\medskip}0&0&0&0&1/9&1/9&-1/9&-2/9&-1/9
\\
 \noalign{\medskip}0&0&0&0&-1/18&1/9&1/18&1/9&2/9\\
  \noalign{\medskip}0
&0&1/4&1/4&1/4&-1/4&0&0&0\\
 \noalign{\medskip}0&0&0&0&1/2&0&1/2&0&0
\\
 \noalign{\medskip}-1/3&0&-1/3&1/3&1/3&0&0&0&0\\
  \noalign{\medskip}1/3
&0&0&1/3&1/3&0&0&0&0\\
 \noalign{\medskip}0&\sqrt
{3}/6&-\sqrt {3}/12&-\sqrt {3}/12&1/4&\sqrt {3}/12&-\left(3+ \sqrt
{3}
 \right)/12&0&0\\
  \noalign{\medskip}0&-\sqrt {3}/6&
\sqrt {3}/12&\sqrt {3}/12&1/4&-\sqrt {3}/12&-\left( 3-\sqrt {3}
 \right)/12&0&0\end{smallmatrix} \right) .\]
Set $N:=(\psi_i(x_j))_{ij}$ to be the $14\times 9$ matrix of
$H$-character values where $i=1,\hdots, 14$ and
$j=4,5,6,9,10,11,12,13,14$ with $x_j \in \CC_j$. We calculate $NM$
to be a matrix with integer entries. Furthermore, to address the obvious question related to how $M$ is found,  $M$ has been chosen
as a matrix of row operations of $N$ in such a way that $NM$ has few
entries and the entries are small integers. The method is somewhat ad hoc and a different choice of $M$ may potentially work just as well. Now consider the $m\times 9$ matrix $L:=(\chi_i(x_j))_{ij}$ where
$\{\chi_1,\hdots,\chi_m\}=B_0(G)$ and again $x_j \in \CC_j$ for
$j=4,5,6,9,10,11,12,13,14$.  By Lemma \ref{CT-Integer Matrix},
$LM=:K$ is a matrix with integer entries.

Now let $M_j$ be the $j$'th column of $M$. Then $L M_j$ equals the
$j$'th column of $K$, $K_j$ say.  View $K_j$ as  a principal
column. We calculate the following:

\begin{enumerate}[$(i)$]
\item $K_1=\frac{1}{3}(-\mathbf{F}+\mathbf{G})$;

\item $K_2=\frac{\sqrt{3}}{6}(\bf{H}-\bf{I})$;

\item
$K_3=\frac{1}{12}(\bf{A}+3\bf{D}-4\bf{F}-\sqrt{3}\bf{H}+\sqrt{3}\bf{I})$;

\item
$K_4=\frac{1}{12}(\bf{A}+3\bf{D}+4\bf{F}+4\bf{G}-\sqrt{3}\bf{H}+\sqrt{3}\bf{I})$;

\item
$K_5=\frac{1}{36}(\bf{A}+4\bf{B}-2\bf{C}+9\bf{D}+18\bf{E}+12\bf{F}+12\bf{G}+9\bf{H}+9\bf{I})$;

\item
$K_6=\frac{1}{36}(\bf{A}+4\bf{B}+4\bf{C}-9\bf{D}+3\sqrt{3}\bf{H}-3\sqrt{3}\bf{I})$;

\item
$K_7=\frac{1}{36}(2\bf{A}-4\bf{B}+2\bf{C}+18\bf{E}-(9+3\sqrt{3})\bf{H}-(9-3\sqrt{3})\bf{I})$;

\item $K_8=\frac{1}{9}(\bf{A}-2\bf{B}+\bf{C})$;

\item $K_9=\frac{1}{9}(-\bf{A}-\bf{B}+2\bf{C})$.
\end{enumerate}

We use the bi-linearity of the column inner product to calculate the pairwise principal column
inner products $(K_i,K_j)$ as displayed in Table \ref{Ki's- inner prods}.
\begin{table}[h]\[\begin{tabular}{|c|c c c c c c c c c|}
  \hline
  $(,)$ & $K_1$ & $K_2$ & $K_3$ & $K_4$ & $K_5$ & $K_6$ & $K_7$ & $K_8$ & $K_9$\\
  \hline
   $K_1$&  2 & \; &\; &\; &\; &\; &\; &\; &\; \\
   $K_2$&  0&2&\; &\; &\; &\; &\; &\; &\;  \\
   $K_3$&  1&-1&3 &  \; &\; &\; &\; &\; &\;   \\
   $K_4$&  0&-1&1&4&  \; &\; &\; &\; &\;  \\
   $K_5$&  0&0&0&3&7& \; &\; &\; &\; \\
   $K_6$&  0&1&-1&-1&0&3 & \; &\; &\;  \\
   $K_7$&  0&-1&1&1&-1&-1&5&  \; &\;  \\
   $K_8$&  0&0&1&1&-2&-1&3&6&  \;  \\
   $K_9$&  0&0&-1&-1&-2&0&1&2&5  \\
  \hline
\end{tabular}\]\caption{The table of principal column inner
products $(K_i,K_j)_{1 \leq i,j \leq 9}$.}\label{Ki's- inner prods}\end{table} Note that we can
easily calculate the first row of $K$ since the first entry in  each column $\bf{A}$-$\bf{I}$ is 1.
\begin{table}[h]\[\begin{tabular}{|c|c c c c c c c c c|}
  \hline
  $(,)$ & $K_1$ & $K_2$ & $K_3$ & $K_4$ & $K_5$ & $K_6$ & $K_7$ & $K_8$ & $K_9$\\
  \hline
   $1_{G}$& 0 &0 &0 & 1 &  2 &0 &0 &0 & 0\\
  \hline
\end{tabular}\]\caption{The first row of $K$.}\label{First row of K}\end{table}

The aim therefore is to find each possibility for $K$ by finding
each possibility for the columns $K_1,\hdots, K_9$ which have
integer entries. Each column has length at most $14$ and each entry
is an integer with bounded modulus. Thus there are a finite number
of possible candidates for the matrix $K$. Of course given a
candidate matrix $K$ any permutation of the rows gives another
solution. Similarly, given a candidate $K$, multiplying any row or
rows by $-1$ gives a further solution. Therefore any strategy for
finding candidates for $K$ must take this into account.

The necessary calculations were done by hand and then again with
the aid of a computer algebra package. The code is available on request.
The calculations provide thirteen
candidates for $K$. In each case we calculate $K
M\inv=L=(\chi_i(x_j))$ which gives us a candidate for part of the
principal $3$-block of the character table of $G$.

\subsubsection{Candidate 1}
\[ K= \left[ \begin{smallmatrix} 0&0&0&1&2&0&0&0&0
\\\noalign{\medskip}1&0&0&0&0&0&0&0&1\\\noalign{\medskip}1&0&1&0&0&0&0
&0&-1\\\noalign{\medskip}0&1&0&0&0&0&0&1&0\\\noalign{\medskip}0&1&-1&-
1&0&1&-1&-1&0\\\noalign{\medskip}0&0&1&0&0&0&0&0&0\\\noalign{\medskip}0
&0&0&1&0&0&0&0&0\\\noalign{\medskip}0&0&0&1&1&0&0&0&-1
\\\noalign{\medskip}0&0&0&0&1&0&0&-1&-1\\\noalign{\medskip}0&0&0&0&1&0
&-1&-1&0\\\noalign{\medskip}0&0&0&0&0&1&-1&0&0\\\noalign{\medskip}0&0&0
&0&0&1&1&0&0\\\noalign{\medskip}0&0&0&0&0&0&1&1&1\\\noalign{\medskip}0
&0&0&0&0&0&0&1&0\end{smallmatrix}\right],
 L=\left[ \begin{smallmatrix} 1&1&1&1&1&1&1&1&1
\\\noalign{\medskip}-4&-1&2&0&0&-1&2&0&0\\\noalign{\medskip}4&1&-2&0&0
&-2&1&0&0\\\noalign{\medskip}3&-3&0&1&0&0&0&\sqrt {3}&-\sqrt {3}
\\\noalign{\medskip}-3&3&0&-1&0&0&0&\sqrt {3}&-\sqrt {3}
\\\noalign{\medskip}2&2&2&2&0&-1&-1&0&0\\\noalign{\medskip}1&1&1&1&-1&
1&1&-1&-1\\\noalign{\medskip}4&1&-2&0&0&1&1&0&0\\\noalign{\medskip}0&3
&-3&0&1&0&0&0&0\\\noalign{\medskip}-3&3&0&1&0&0&0&1&1
\\\noalign{\medskip}3&3&3&-1&-1&0&0&1&1\\\noalign{\medskip}3&3&3&-1&1&0
&0&-1&-1\\\noalign{\medskip}0&-3&3&0&1&0&0&0&0\\\noalign{\medskip}3&-3
&0&-1&0&0&0&1&1\end{smallmatrix} \right].\]

Observe that the matrix $L$ up to rearrangement and sign changes of
the rows is equal to part of the character table of $H$.

\subsubsection{Candidates 2 and 3}
\[ K= \left[ \begin{smallmatrix} 0&0&0&1&2&0&0&0&0
\\\noalign{\medskip}1&0&1&0&0&0&0&0&-1\\\noalign{\medskip}1&0&0&0&0&0&0
&0&1\\\noalign{\medskip}0&1&0&0&0&0&0&1&0\\\noalign{\medskip}0&1&-1&-1
&0&1&-1&-1&0\\\noalign{\medskip}0&0&1&0&0&0&0&0&0\\\noalign{\medskip}0
&0&0&1&0&0&0&0&0\\\noalign{\medskip}0&0&0&1&1&0&0&0&-1
\\\noalign{\medskip}0&0&0&0&1&0&-1&-1&-1\\\noalign{\medskip}0&0&0&0&1&0
&0&-1&0\\\noalign{\medskip}0&0&0&0&0&1&1&0&0\\\noalign{\medskip}0&0&0&0
&0&1&-1&0&0\\\noalign{\medskip}0&0&0&0&0&0&1&1&0\\\noalign{\medskip}0&0
&0&0&0&0&0&1&1\end{smallmatrix} \right],
 L=\left[ \begin{smallmatrix} 1&1&1&1&1&1&1&1&1
\\\noalign{\medskip}4&1&-2&0&0&-2&1&0&0\\\noalign{\medskip}-4&-1&2&0&0
&-1&2&0&0\\\noalign{\medskip}3&-3&0&1&0&0&0&\sqrt {3}&-\sqrt {3}
\\\noalign{\medskip}-3&3&0&-1&0&0&0&\sqrt {3}&-\sqrt {3}
\\\noalign{\medskip}2&2&2&2&0&-1&-1&0&0\\\noalign{\medskip}1&1&1&1&-1&
1&1&-1&-1\\\noalign{\medskip}4&1&-2&0&0&1&1&0&0\\\noalign{\medskip}0&3
&-3&0&0&0&0&1&1\\\noalign{\medskip}-3&3&0&1&1&0&0&0&0
\\\noalign{\medskip}3&3&3&-1&1&0&0&-1&-1\\\noalign{\medskip}3&3&3&-1&-
1&0&0&1&1\\\noalign{\medskip}3&-3&0&-1&1&0&0&0&0\\\noalign{\medskip}0&
-3&3&0&0&0&0&1&1\end{smallmatrix} \right]; \]
\[K= \left[ \begin{smallmatrix} 0&0&0&1&2&0&0&0&0
\\\noalign{\medskip}1&0&0&0&0&0&0&0&1\\\noalign{\medskip}1&0&1&0&0&0&0
&0&-1\\\noalign{\medskip}0&1&-1&-1&0&1&-1&-1&0\\\noalign{\medskip}0&1&0
&0&0&0&0&1&0\\\noalign{\medskip}0&0&1&0&0&0&0&0&0\\\noalign{\medskip}0
&0&0&1&0&0&0&0&-1\\\noalign{\medskip}0&0&0&1&1&0&0&0&0
\\\noalign{\medskip}0&0&0&0&1&0&-1&-1&-1\\\noalign{\medskip}0&0&0&0&1&0
&0&-1&-1\\\noalign{\medskip}0&0&0&0&0&1&1&0&0\\\noalign{\medskip}0&0&0
&0&0&1&-1&0&0\\\noalign{\medskip}0&0&0&0&0&0&1&1&0\\\noalign{\medskip}0
&0&0&0&0&0&0&1&0\end{smallmatrix} \right],
 L= \left[ \begin{smallmatrix} 1&1&1&1&1&1&1&1&1
\\\noalign{\medskip}-4&-1&2&0&0&-1&2&0&0\\\noalign{\medskip}4&1&-2&0&0
&-2&1&0&0\\\noalign{\medskip}-3&3&0&-1&0&0&0&\sqrt {3}&-\sqrt {3}
\\\noalign{\medskip}3&-3&0&1&0&0&0&\sqrt {3}&-\sqrt {3}
\\\noalign{\medskip}2&2&2&2&0&-1&-1&0&0\\\noalign{\medskip}4&1&-2&0&-1
&1&1&-1&-1\\\noalign{\medskip}1&1&1&1&0&1&1&0&0\\\noalign{\medskip}0&3
&-3&0&0&0&0&1&1\\\noalign{\medskip}0&3&-3&0&1&0&0&0&0
\\\noalign{\medskip}3&3&3&-1&1&0&0&-1&-1\\\noalign{\medskip}3&3&3&-1&-
1&0&0&1&1\\\noalign{\medskip}3&-3&0&-1&1&0&0&0&0\\\noalign{\medskip}3&
-3&0&-1&0&0&0&1&1\end{smallmatrix} \right].\]

In both cases observe the third and the fifth columns of $L$. These
entries correspond (up to sign) to character values on $x_6 \in
\CC_6$ and $x_{10} \in \CC_{10}$. The Brauer character table of $C_G(x_6)$ is given in Table
\ref{char table CG(c)} (note that rows of the table correspond to the two irreducible Brauer characters of $C_G(x_6)$ and the columns correspond to the two $3$-regular elements of $C_G(x_6)$ of order one and two).

\begin{table}[h]\[\begin{tabular}{|c|c c|}
\hline

Order &   1 & 2 \\
\hline
$\phi_1$  & 1 & 1  \\
$\phi_2$  & 1 & -1  \\
\hline
\end{tabular}.\]\caption{The Brauer character table of $C_G(x_6)$.}\label{char table
CG(c)}\end{table} By Lemma \ref{gen decomp numbers},  for any $\chi\in \Irr(G)$, there exists
algebraic integers (generalized decomposition numbers) $c_1=d^{x_6}_{\chi,\phi_1}$ and
$c_2=d^{x_6}_{\chi,\phi_2}$ in $\mathbb{Q}(e^{2\pi i /p^n})$ such
that $\chi(x_6)=c_1+c_2$ and $\chi(x_{10})=c_1-c_2$. Since all
$H$-characters are integral on $x_6$ and $x_{10}$,
$\chi(x_{6}),\chi(x_{11})\in \mathbb{Z}$. Also, since $c_1$ and
$c_2$ are algebraic integers, $c_1,c_2\in \mathbb{Z}$. Thus, if
$\chi_i(x_6)=0$ then $\chi_i(x_{10})$ is an even integer. Now we
observe the thirteenth row of each candidate for $L$ and see that in
each case we have calculated a character in $B_0(G)$ which vanishes
on $\CC_6^G$ and gives $\pm 1$ on $\CC_{10}^G$. Hence both
candidates give a contradiction.

\subsubsection{The Remaining Candidates}

\tiny{\[\tiny{ K= \left[ \begin{smallmatrix}0&0&0&1&2&0&0&0&0
\\\noalign{\medskip}1&0&0&0&0&0&0&0&0\\\noalign{\medskip}1&0&1&0&0&0&0
&0&0\\\noalign{\medskip}0&1&0&0&0&0&0&1&0\\\noalign{\medskip}0&1&-1&-1
&0&1&-1&-1&0\\\noalign{\medskip}0&0&1&0&0&0&0&0&-1\\\noalign{\medskip}0
&0&0&1&0&0&0&1&0\\\noalign{\medskip}0&0&0&1&1&0&0&-1&-1
\\\noalign{\medskip}0&0&0&0&1&0&-1&-1&0\\\noalign{\medskip}0&0&0&0&1&0
&0&0&-1\\\noalign{\medskip}0&0&0&0&0&1&-1&0&0\\\noalign{\medskip}0&0&0
&0&0&1&1&0&0\\\noalign{\medskip}0&0&0&0&0&0&1&1&1\\\noalign{\medskip}0
&0&0&0&0&0&0&0&1\end{smallmatrix} \right], L= \left[
\begin{smallmatrix} 1&1&1&1&1&1&1&1&1
\\\noalign{\medskip}-1&-1&-1&-1&0&-1&2&0&0\\\noalign{\medskip}1&1&1&1&0
&-2&1&0&0\\\noalign{\medskip}3&-3&0&1&0&0&0&\sqrt {3}&-\sqrt {3}
\\\noalign{\medskip}-3&3&0&-1&0&0&0&\sqrt {3}&-\sqrt {3}
\\\noalign{\medskip}5&2&-1&1&0&-1&-1&0&0\\\noalign{\medskip}4&-2&1&0&-
1&1&1&0&0\\\noalign{\medskip}1&4&-2&1&0&1&1&-1&-1\\\noalign{\medskip}-
3&3&0&1&0&0&0&1&1\\\noalign{\medskip}3&0&-3&-1&1&0&0&1&1
\\\noalign{\medskip}3&3&3&-1&-1&0&0&1&1\\\noalign{\medskip}3&3&3&-1&1&0
&0&-1&-1\\\noalign{\medskip}0&-3&3&0&1&0&0&0&0\\\noalign{\medskip}-3&0
&3&1&0&0&0&0&0\end{smallmatrix} \right];}\]}

\tiny{\[ K= \left[ \begin{smallmatrix} 0&0&0&1&2&0&0&0&0
\\\noalign{\medskip}1&0&0&0&0&0&0&0&1\\\noalign{\medskip}1&0&1&0&0&0&0
&0&-1\\\noalign{\medskip}0&1&0&0&0&0&0&1&0\\\noalign{\medskip}0&1&-1&-
1&0&1&-1&-1&0\\\noalign{\medskip}0&0&1&0&0&0&0&0&0\\\noalign{\medskip}0
&0&0&1&1&0&-1&-1&-1\\\noalign{\medskip}0&0&0&1&0&0&1&1&0
\\\noalign{\medskip}0&0&0&0&1&0&0&-1&0\\\noalign{\medskip}0&0&0&0&1&0&0
&0&-1\\\noalign{\medskip}0&0&0&0&0&1&1&0&0\\\noalign{\medskip}0&0&0&0&0
&1&-1&0&0\\\noalign{\medskip}0&0&0&0&0&0&0&1&1\end{smallmatrix}
\right],
 L=\left[  \begin{smallmatrix} 1&1&1&1&1&1&1&1&1
\\\noalign{\medskip}-4&-1&2&0&0&-1&2&0&0\\\noalign{\medskip}4&1&-2&0&0
&-2&1&0&0\\\noalign{\medskip}3&-3&0&1&0&0&0&\sqrt {3}&-\sqrt {3}
\\\noalign{\medskip}-3&3&0&-1&0&0&0&\sqrt {3}&-\sqrt {3}
\\\noalign{\medskip}2&2&2&2&0&-1&-1&0&0\\\noalign{\medskip}1&4&-2&1&-1
&1&1&0&0\\\noalign{\medskip}4&-2&1&0&0&1&1&-1&-1\\\noalign{\medskip}-3
&3&0&1&1&0&0&0&0\\\noalign{\medskip}3&0&-3&-1&1&0&0&1&1
\\\noalign{\medskip}3&3&3&-1&1&0&0&-1&-1\\\noalign{\medskip}3&3&3&-1&-
1&0&0&1&1\\\noalign{\medskip}0&-3&3&0&0&0&0&1&1\end{smallmatrix}
\right];\]}

\tiny{\[ K= \left[\begin{smallmatrix} 0&0&0&1&2&0&0&0&0
\\\noalign{\medskip}1&0&0&0&0&0&0&0&0\\\noalign{\medskip}1&0&1&0&0&0&0
&0&0\\\noalign{\medskip}0&1&0&0&0&0&0&1&0\\\noalign{\medskip}0&1&-1&-1
&0&1&-1&-1&0\\\noalign{\medskip}0&0&1&0&0&0&0&0&-1\\\noalign{\medskip}0
&0&0&1&1&0&-1&-1&-1\\\noalign{\medskip}0&0&0&1&0&0&1&1&0
\\\noalign{\medskip}0&0&0&0&1&0&0&-1&0\\\noalign{\medskip}0&0&0&0&1&0&0
&0&-1\\\noalign{\medskip}0&0&0&0&0&1&-1&0&0\\\noalign{\medskip}0&0&0&0
&0&1&1&0&0\\\noalign{\medskip}0&0&0&0&0&0&0&1&1\\\noalign{\medskip}0&0
&0&0&0&0&0&0&1\end{smallmatrix}\right],
 L=\left[ \begin{smallmatrix} 1&1&1&1&1&1&1&1&1
\\\noalign{\medskip}-1&-1&-1&-1&0&-1&2&0&0\\\noalign{\medskip}1&1&1&1&0
&-2&1&0&0\\\noalign{\medskip}3&-3&0&1&0&0&0&\sqrt {3}&-\sqrt {3}
\\\noalign{\medskip}-3&3&0&-1&0&0&0&\sqrt {3}&-\sqrt {3}
\\\noalign{\medskip}5&2&-1&1&0&-1&-1&0&0\\\noalign{\medskip}1&4&-2&1&-
1&1&1&0&0\\\noalign{\medskip}4&-2&1&0&0&1&1&-1&-1\\\noalign{\medskip}-
3&3&0&1&1&0&0&0&0\\\noalign{\medskip}3&0&-3&-1&1&0&0&1&1
\\\noalign{\medskip}3&3&3&-1&-1&0&0&1&1\\\noalign{\medskip}3&3&3&-1&1&0
&0&-1&-1\\\noalign{\medskip}0&-3&3&0&0&0&0&1&1\\\noalign{\medskip}-3&0
&3&1&0&0&0&0&0\end{smallmatrix} \right];\]}

\tiny{\[ K= \left[\begin{smallmatrix} 0&0&0&1&2&0&0&0&0
\\\noalign{\medskip}1&0&0&0&0&0&0&-1&0\\\noalign{\medskip}1&0&1&0&0&0&0
&1&0\\\noalign{\medskip}0&1&0&0&0&0&-1&0&0\\\noalign{\medskip}0&1&-1&-
1&0&1&0&0&0\\\noalign{\medskip}0&0&1&0&0&0&1&0&-1\\\noalign{\medskip}0
&0&0&1&0&0&1&1&0\\\noalign{\medskip}0&0&0&1&1&0&0&0&-1
\\\noalign{\medskip}0&0&0&0&1&0&-1&-1&-1\\\noalign{\medskip}0&0&0&0&1&0
&0&-1&0\\\noalign{\medskip}0&0&0&0&0&1&0&0&1\\\noalign{\medskip}0&0&0&0
&0&1&-1&-1&-1\end{smallmatrix} \right],
 L=\left[ \begin{smallmatrix} 1&1&1&1&1&1&1&1&1
\\\noalign{\medskip}-4&2&-1&0&0&-1&2&-1&-1\\\noalign{\medskip}4&-2&1&0
&0&-2&1&1&1\\\noalign{\medskip}0&0&0&2&-1&0&0&\sqrt {3}&-\sqrt {3}
\\\noalign{\medskip}0&0&0&-2&1&0&0&\sqrt {3}&-\sqrt {3}
\\\noalign{\medskip}5&2&-1&1&1&-1&-1&-1&-1\\\noalign{\medskip}4&-2&1&0
&0&1&1&-1&-1\\\noalign{\medskip}4&1&-2&0&0&1&1&0&0\\\noalign{\medskip}0
&3&-3&0&0&0&0&1&1\\\noalign{\medskip}-3&3&0&1&1&0&0&0&0
\\\noalign{\medskip}0&3&6&0&0&0&0&0&0\\\noalign{\medskip}3&6&0&-1&-1&0
&0&0&0\end{smallmatrix} \right];\]}

\tiny{\[K= \left[ \begin{smallmatrix} 0&0&0&1&2&0&0&0&0
\\\noalign{\medskip}1&0&1&0&0&0&0&0&-1\\\noalign{\medskip}1&0&0&0&0&0&0
&0&1\\\noalign{\medskip}0&1&0&0&0&0&0&0&0\\\noalign{\medskip}0&1&-1&-1
&0&1&-1&0&0\\\noalign{\medskip}0&0&1&0&0&0&0&1&0\\\noalign{\medskip}0&0
&0&1&0&0&0&1&0\\\noalign{\medskip}0&0&0&1&1&0&0&0&-1
\\\noalign{\medskip}0&0&0&0&1&0&-1&-1&0\\\noalign{\medskip}0&0&0&0&1&0
&0&-1&-1\\\noalign{\medskip}0&0&0&0&0&1&-1&-1&0\\\noalign{\medskip}0&0
&0&0&0&1&1&0&0\\\noalign{\medskip}0&0&0&0&0&0&1&1&1
\end{smallmatrix} \right],
 L=\left[\begin{smallmatrix} 1&1&1&1&1&1&1&1&1
\\\noalign{\medskip}4&1&-2&0&0&-2&1&0&0\\\noalign{\medskip}-4&-1&2&0&0
&-1&2&0&0\\\noalign{\medskip}0&0&0&2&0&0&0&\sqrt {3}-1&-\sqrt {3}-1
\\\noalign{\medskip}0&0&0&-2&0&0&0&\sqrt {3}+1&1-\sqrt {3}
\\\noalign{\medskip}5&-1&2&1&0&-1&-1&1&1\\\noalign{\medskip}4&-2&1&0&-
1&1&1&0&0\\\noalign{\medskip}4&1&-2&0&0&1&1&0&0\\\noalign{\medskip}-3&
3&0&1&0&0&0&1&1\\\noalign{\medskip}0&3&-3&0&1&0&0&0&0
\\\noalign{\medskip}0&6&3&0&-1&0&0&0&0\\\noalign{\medskip}3&3&3&-1&1&0
&0&-1&-1\\\noalign{\medskip}0&-3&3&0&1&0&0&0&0\end{smallmatrix}
\right];\]}

\tiny{\[ K= \left[\begin{smallmatrix} 0&0&0&1&2&0&0&0&0
\\\noalign{\medskip}1&0&0&0&0&0&0&0&1\\\noalign{\medskip}1&0&1&0&0&0&0
&0&-1\\\noalign{\medskip}0&1&0&0&0&0&0&1&0\\\noalign{\medskip}0&1&-1&-
1&0&1&-1&-1&0\\\noalign{\medskip}0&0&1&0&0&0&0&0&0\\\noalign{\medskip}0
&0&0&1&1&0&0&-1&-1\\\noalign{\medskip}0&0&0&1&0&0&0&1&0
\\\noalign{\medskip}0&0&0&0&1&0&0&0&-1\\\noalign{\medskip}0&0&0&0&1&0&
-1&-1&0\\\noalign{\medskip}0&0&0&0&0&1&-1&0&0\\\noalign{\medskip}0&0&0
&0&0&1&1&0&0\\\noalign{\medskip}0&0&0&0&0&0&1&1&1\end{smallmatrix}
\right],
 L=\left[\begin{smallmatrix}1&1&1&1&1&1&1&1&1
\\\noalign{\medskip}-4&-1&2&0&0&-1&2&0&0\\\noalign{\medskip}4&1&-2&0&0
&-2&1&0&0\\\noalign{\medskip}3&-3&0&1&0&0&0&\sqrt {3}&-\sqrt {3}
\\\noalign{\medskip}-3&3&0&-1&0&0&0&\sqrt {3}&-\sqrt {3}
\\\noalign{\medskip}2&2&2&2&0&-1&-1&0&0\\\noalign{\medskip}1&4&-2&1&0&
1&1&-1&-1\\\noalign{\medskip}4&-2&1&0&-1&1&1&0&0\\\noalign{\medskip}3&0
&-3&-1&1&0&0&1&1\\\noalign{\medskip}-3&3&0&1&0&0&0&1&1
\\\noalign{\medskip}3&3&3&-1&-1&0&0&1&1\\\noalign{\medskip}3&3&3&-1&1&0
&0&-1&-1\\\noalign{\medskip}0&-3&3&0&1&0&0&0&0\end{smallmatrix}\right];\]}

\tiny{\[K= \left[ \begin{smallmatrix} 0&0&0&1&2&0&0&0&0
\\\noalign{\medskip}1&0&0&0&0&0&0&0&1\\\noalign{\medskip}1&0&1&0&0&0&0
&0&-1\\\noalign{\medskip}0&1&-1&-1&0&1&0&0&0\\\noalign{\medskip}0&1&0&0
&0&0&-1&0&0\\\noalign{\medskip}0&0&1&0&0&0&1&1&0\\\noalign{\medskip}0&0
&0&1&0&0&1&1&0\\\noalign{\medskip}0&0&0&1&1&0&0&0&-1
\\\noalign{\medskip}0&0&0&0&1&0&0&-1&0\\\noalign{\medskip}0&0&0&0&1&0&
-1&-1&-1\\\noalign{\medskip}0&0&0&0&0&1&0&-1&0\\\noalign{\medskip}0&0&0
&0&0&1&-1&0&0\\\noalign{\medskip}0&0&0&0&0&0&0&1&1\end{smallmatrix}
\right],
 L=\left[ \begin{smallmatrix} 1&1&1&1&1&1&1&1&1
\\\noalign{\medskip}-4&-1&2&0&0&-1&2&0&0\\\noalign{\medskip}4&1&-2&0&0
&-2&1&0&0\\\noalign{\medskip}0&0&0&-2&1&0&0&\sqrt {3}&-\sqrt {3}
\\\noalign{\medskip}0&0&0&2&-1&0&0&\sqrt {3}&-\sqrt {3}
\\\noalign{\medskip}5&-1&2&1&1&-1&-1&0&0\\\noalign{\medskip}4&-2&1&0&0
&1&1&-1&-1\\\noalign{\medskip}4&1&-2&0&0&1&1&0&0\\\noalign{\medskip}-3
&3&0&1&1&0&0&0&0\\\noalign{\medskip}0&3&-3&0&0&0&0&1&1
\\\noalign{\medskip}0&6&3&0&0&0&0&-1&-1\\\noalign{\medskip}3&3&3&-1&-1
&0&0&1&1\\\noalign{\medskip}0&-3&3&0&0&0&0&1&1\end{smallmatrix}
\right];\]}

\tiny{ \[K=\left[ \begin{smallmatrix} 0&0&0&1&2&0&0&0&0
\\\noalign{\medskip}1&0&0&0&0&0&-1&-1&0\\\noalign{\medskip}1&0&1&0&0&0
&1&1&0\\\noalign{\medskip}0&1&-1&-1&0&1&-1&0&0\\\noalign{\medskip}0&1&0
&0&0&0&0&0&0\\\noalign{\medskip}0&0&1&0&0&0&-1&0&-1
\\\noalign{\medskip}0&0&0&1&0&0&0&1&0\\\noalign{\medskip}0&0&0&1&1&0&0
&0&-1\\\noalign{\medskip}0&0&0&0&1&0&0&-1&-1\\\noalign{\medskip}0&0&0&0
&1&0&-1&-1&0\\\noalign{\medskip}0&0&0&0&0&1&0&0&1\\\noalign{\medskip}0
&0&0&0&0&1&0&-1&-1\end{smallmatrix}\right],
 L=\left[ \begin{smallmatrix} 1&1&1&1&1&1&1&1&1
\\\noalign{\medskip}-4&2&-1&0&-1&-1&2&0&0\\\noalign{\medskip}4&-2&1&0&
1&-2&1&0&0\\\noalign{\medskip}0&0&0&-2&0&0&0&\sqrt {3}+1&1-\sqrt {3}
\\\noalign{\medskip}0&0&0&2&0&0&0&\sqrt {3}-1&-\sqrt {3}-1
\\\noalign{\medskip}5&2&-1&1&-1&-1&-1&1&1\\\noalign{\medskip}4&-2&1&0&
-1&1&1&0&0\\\noalign{\medskip}4&1&-2&0&0&1&1&0&0\\\noalign{\medskip}0&
3&-3&0&1&0&0&0&0\\\noalign{\medskip}-3&3&0&1&0&0&0&1&1
\\\noalign{\medskip}0&3&6&0&0&0&0&0&0\\\noalign{\medskip}3&6&0&-1&0&0&0
&-1&-1\end{smallmatrix} \right];\]}

\tiny{\[K= \left[\begin{smallmatrix} 0&0&0&1&2&0&0&0&0
\\\noalign{\medskip}1&0&0&0&0&0&0&0&1\\\noalign{\medskip}1&0&1&0&0&0&0
&0&-1\\\noalign{\medskip}0&1&0&0&0&0&0&1&0\\\noalign{\medskip}0&1&-1&-
1&0&1&-1&-1&0\\\noalign{\medskip}0&0&1&0&0&0&0&0&0\\\noalign{\medskip}0
&0&0&1&0&0&1&1&0\\\noalign{\medskip}0&0&0&1&1&0&-1&-1&-1
\\\noalign{\medskip}0&0&0&0&1&0&0&-1&-1\\\noalign{\medskip}0&0&0&0&1&0
&0&0&0\\\noalign{\medskip}0&0&0&0&0&1&-1&0&0\\\noalign{\medskip}0&0&0&0
&0&1&1&0&0\\\noalign{\medskip}0&0&0&0&0&0&0&1&0\\\noalign{\medskip}0&0
&0&0&0&0&0&0&1 \end{smallmatrix} \right],
 L=\left[\begin{smallmatrix}1&1&1&1&1&1&1&1&1
\\\noalign{\medskip}-4&-1&2&0&0&-1&2&0&0\\\noalign{\medskip}4&1&-2&0&0
&-2&1&0&0\\\noalign{\medskip}3&-3&0&1&0&0&0&\sqrt {3}&-\sqrt {3}
\\\noalign{\medskip}-3&3&0&-1&0&0&0&\sqrt {3}&-\sqrt {3}
\\\noalign{\medskip}2&2&2&2&0&-1&-1&0&0\\\noalign{\medskip}4&-2&1&0&0&
1&1&-1&-1\\\noalign{\medskip}1&4&-2&1&-1&1&1&0&0\\\noalign{\medskip}0&
3&-3&0&1&0&0&0&0\\\noalign{\medskip}0&0&0&0&1&0&0&1&1
\\\noalign{\medskip}3&3&3&-1&-1&0&0&1&1\\\noalign{\medskip}3&3&3&-1&1&0
&0&-1&-1\\\noalign{\medskip}3&-3&0&-1&0&0&0&1&1\\\noalign{\medskip}-3&0
&3&1&0&0&0&0&0\end{smallmatrix} \right]; \]}

\tiny{\[K= \left[ \begin{smallmatrix} 0&0&0&1&2&0&0&0&0
\\\noalign{\medskip}1&0&0&0&0&0&0&0&1\\\noalign{\medskip}1&0&1&0&0&0&0
&0&-1\\\noalign{\medskip}0&1&-1&-1&0&1&-1&-1&0\\\noalign{\medskip}0&1&0
&0&0&0&0&1&0\\\noalign{\medskip}0&0&1&0&0&0&0&0&0\\\noalign{\medskip}0
&0&0&1&0&0&0&1&0\\\noalign{\medskip}0&0&0&1&1&0&0&-1&-1
\\\noalign{\medskip}0&0&0&0&1&0&0&0&0\\\noalign{\medskip}0&0&0&0&1&0&-
1&-1&-1\\\noalign{\medskip}0&0&0&0&0&1&-1&0&0\\\noalign{\medskip}0&0&0
&0&0&1&1&0&0\\\noalign{\medskip}0&0&0&0&0&0&1&1&0\\\noalign{\medskip}0
&0&0&0&0&0&0&0&1\end{smallmatrix}\right],
 L= \left[ \begin{smallmatrix} 1&1&1&1&1&1&1&1&1
\\\noalign{\medskip}-4&-1&2&0&0&-1&2&0&0\\\noalign{\medskip}4&1&-2&0&0
&-2&1&0&0\\\noalign{\medskip}-3&3&0&-1&0&0&0&\sqrt {3}&-\sqrt {3}
\\\noalign{\medskip}3&-3&0&1&0&0&0&\sqrt {3}&-\sqrt {3}
\\\noalign{\medskip}2&2&2&2&0&-1&-1&0&0\\\noalign{\medskip}4&-2&1&0&-1
&1&1&0&0\\\noalign{\medskip}1&4&-2&1&0&1&1&-1&-1\\\noalign{\medskip}0&0
&0&0&1&0&0&1&1\\\noalign{\medskip}0&3&-3&0&0&0&0&1&1
\\\noalign{\medskip}3&3&3&-1&-1&0&0&1&1\\\noalign{\medskip}3&3&3&-1&1&0
&0&-1&-1\\\noalign{\medskip}3&-3&0&-1&1&0&0&0&0\\\noalign{\medskip}-3&0
&3&1&0&0&0&0&0\end{smallmatrix} \right]. \]}

\normalsize In each of the remaining ten cases we observe the second
column of $L$ which gives us (up to sign) character values for
$3$-central $z \in \CC_5^G$ in the principal $3$-block. However, by
Lemma \ref{non-vanishing principal block on p-central}, $\chi(z)\neq
0$ for all $\chi \in B_0(G)$. Therefore none of these ten cases
occur.

\subsubsection{The Proof of Theorem \ref{case 2 thm}}
\begin{table}[h]\[\begin{tabular}{|c|c |r r r r r r r r r|}
  \hline
  \; & 1 & $\CC_4^G$ & $\CC_5^G$ & $\CC_{6}^G$ &$\CC_{9}^G$& $\CC_{10}^G$& $\CC_{11}^G$& $\CC_{12}^G$& $\CC_{13}^G$& $\CC_{14}^G$\\
  \hline

 $\chi_1$         &  1&    1&1&1&1&1&1&1&1&1\\
 $\e_2\chi_2$     &  $d_2$ &    -4&-1&2&0&0&-1&2&0&0\\
 $\e_3\chi_{3}$   &  $d_{3}$ &    4&1&-2&0&0&-2&1&0&0\\
 $\e_4\chi_{4}$   &  $d_{4}$&    3&-3&0&1&0&0&0&$\sqrt {3}$&$-\sqrt {3}$\\
 $\e_5\chi_{5}$   &  $d_{5}$&    -3&3&0&-1&0&0&0&$\sqrt {3}$&$-\sqrt {3}$\\
 $\e_6\chi_{6}$   &  $d_{6}$&    2&2&2&2&0&-1&-1&0&0\\
 $\e_7\chi_{7}$   &  $d_{7}$&    1&1&1&1&-1&1&1&-1&-1\\
 $\e_8\chi_{8}$   &  $d_{8}$&    4&1&-2&0&0&1&1&0&0\\
 $\e_9\chi_{9}$   &  $d_{9}$&    0&3&-3&0&1&0&0&0&0\\
 $\e_{10}\chi_{10}$& $d_{10}$&   -3&3&0&1&0&0&0&1&1\\
 $\e_{11}\chi_{11}$& $d_{11}$&   3&3&3&-1&-1&0&0&1&1\\
 $\e_{12}\chi_{12}$& $d_{12}$&   3&3&3&-1&1&0&0&-1&-1\\
 $\e_{13}\chi_{13}$& $d_{13}$&   0&-3&3&0&1&0&0&0&0\\
 $\e_{14}\chi_{14}$& $d_{14}$&   3&-3&0&-1&0&0&0&1&1 \\
\hline
\end{tabular}\]\caption{Part of the principal $3$-block of the
character table of $G$ (up to sign, $\e_i=\pm 1$).}\label{part of principal block}\end{table}The
only candidate for $L$ we need to consider is the first one. Therefore Table \ref{part of principal
block} displays part of the principal $3$-block of $G$. Note that characters are displayed up to
sign so $B_0(G)=\{\chi_1,\hdots, \chi_{14}\}$ and $\e_i=\pm 1$ for each $2 \leq i \leq 14$. Also we
set $d_i:=\e_i\chi_i(1)$ so each $d_i$ is an integer which may be positive or negative.

\begin{lemma}The character degrees satisfy the following
congruences.
\begin{enumerate}[$(i)$]
 \item $d_{6} \equiv -52~\mathrm{mod}~ 81$;
 \item $d_{7} \equiv 1~\mathrm{mod}~ 81$;
 \item $d_{11} \equiv -51~\mathrm{mod}~ 81$;
 \item $d_{12} \equiv -51
~\mathrm{mod}~ 81$.
\end{enumerate}
\end{lemma}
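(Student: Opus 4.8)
The plan is to exploit the elementary fact that for any $\chi\in\Irr(G)$ and any Sylow $3$-subgroup $S$ of $G$ the inner product $(\chi|_S,1_S)_S=\frac{1}{|S|}\sum_{s\in S}\chi(s)$ is a non-negative integer. Since $|S|=81$ this yields, for every $\chi\in B_0(G)$, the congruence
\[
\sum_{s\in S}\chi(s)\equiv 0 \pmod{81}.
\]
The modulus $81=|S|$ is exactly the one appearing in the statement, which is what makes this the natural device here; the cruder Lemma \ref{Characters mod p} only delivers information modulo $3$, and one checks directly that the claimed residues are all consistent with it.

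First I would determine how the $81$ elements of $S$ distribute among the $G$-conjugacy classes. By Lemma \ref{J is characteristic}, $J=O_3(H)\cong 3^3$ is normal in $S$, and its $26$ non-identity elements split as $6$ elements of $\CC_4$, $8$ of $\CC_5$ and $12$ of $\CC_6$ (by cycle shape in the $\alt(9)$-representation, i.e.\ according to having one, three or two non-trivial block-coordinates). Writing $S=\langle J,g\rangle$ with $g$ cyclically permuting the three factors of $J$, a direct computation gives $s^3=w^{\,i+j+k}$ for $s=a^ib^jc^kg^{\pm1}\in S\setminus J$, where $w$ generates $\mathcal{Z}(S)=\langle w\rangle$ with $w\in\CC_5$ and $S/\langle w\rangle\cong 3_+^{1+2}$. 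Hence the $54$ elements of $S\setminus J$ comprise $18$ of order three (those with $i+j+k\equiv 0$) and $36$ of order nine. The order-three elements outside $J$ make up the unique $H$-class $\CC_7$ of order-three elements off $J$, and Hypothesis \ref{case 2 hyp} ($\CC_7^G=\CC_5^G$) merges them into $\CC_5^G$; the order-nine elements lie in $\CC_{11}^G\cup\CC_{12}^G$. Thus $|S\cap\CC_4^G|=6$, $|S\cap\CC_6^G|=12$, $|S\cap\CC_5^G|=8+18=26$, and $n_{11}+n_{12}:=|S\cap\CC_{11}^G|+|S\cap\CC_{12}^G|=36$.

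Feeding these counts and the entries of Table \ref{part of principal block} into the congruence, each $\chi\in B_0(G)$ satisfies
\[
\chi(1)+6\chi(x_4)+26\chi(x_5)+12\chi(x_6)+n_{11}\chi(x_{11})+n_{12}\chi(x_{12})\equiv 0 \pmod{81}.
\]
The crucial point making this usable is that the individual values of $n_{11}$ and $n_{12}$ never intervene: for $\chi_{11},\chi_{12}$ the entries on $\CC_{11}^G,\CC_{12}^G$ are $0$, while for $\chi_6,\chi_7$ the two entries coincide, so only the sum $n_{11}+n_{12}=36$ enters. Reading off the four rows and reducing modulo $81$ then produces $d_6\equiv -52$, $d_7\equiv 1$ and $d_{11}\equiv d_{12}\equiv -51$ (the signs $\e_i=\pm1$ are harmless, since multiplying a congruence modulo $81$ by $\pm1$ preserves it). The main obstacle is the bookkeeping of the middle paragraph: one must pin down the fusion pattern of $S$, in particular that \emph{every} order-three element off $J$ fuses to $\CC_5^G$ — which is precisely where Hypothesis \ref{case 2 hyp} is used — and that no element of $\CC_4^G$ or $\CC_6^G$ lies outside $J$, which holds because $J\trianglelefteq H$ forbids $H$-fusion between $J$ and its complement and because $\CC_4^G,\CC_5^G,\CC_6^G$ are pairwise disjoint by Lemma \ref{order of sylow 3's and H is self normalizing}$(iii)$.
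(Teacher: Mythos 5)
Your proof is correct — I checked both the distribution of the $81$ elements of $S$ over the $G$-classes and the final arithmetic, and all four congruences come out as claimed — but it is a genuinely different argument from the paper's.

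The paper obtains each congruence by restricting $\e_i\chi_i$ to all of $H$ and computing the inner product $(\e_i\chi_i,\psi_{12})_H$ against the degree-$8$ irreducible character $\psi_{12}$ of $H$. Since $\psi_{12}(1)=8=|H|_2$, that character has $2$-defect zero and hence vanishes on every class of even order, so only $\CC_1,\CC_4,\CC_5,\CC_6,\CC_7,\CC_{11},\CC_{12}$ contribute; their $G$-fusion is exactly the data already recorded (including $\CC_7^G=\CC_5^G$ from Hypothesis \ref{case 2 hyp}), the factor $8$ cancels against $|H|/81$, and integrality yields $(d_i+c)/81\in\mathbb{Z}$ with no reference to the internal structure of a Sylow $3$-subgroup. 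You instead pair $\chi_i|_S$ with $1_S$ — by Frobenius reciprocity the same as pairing $\chi_i|_H$ with the permutation character $1_S^H$, which is likewise supported on $3$-elements — so no inspired choice of $\psi$ is needed; the cost is transferred to fusion bookkeeping inside $S\cong 3\wr 3$: the split $6,8,12$ of $J^{\#}$ among $\CC_4,\CC_5,\CC_6$, the count of $18$ order-three and $36$ order-nine elements of $S\setminus J$ via $s^3=w^{i+j+k}$, and, crucially, the observation that the unknown split $n_{11}+n_{12}=36$ never enters because $\chi_6,\chi_7,\chi_{11},\chi_{12}$ all take equal values (zero for the last two) on $\CC_{11}^G$ and $\CC_{12}^G$ — a coincidence your method needs and the paper's does not, since the paper treats $\CC_{11}$ and $\CC_{12}$ as separate $H$-classes of known size $72$. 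Both arguments invoke Hypothesis \ref{case 2 hyp} at the identical point (to place the order-three elements of $S\setminus J$, i.e.\ the class $\CC_7$, inside $\CC_5^G$), and both produce congruences to the modulus $81=|S|$; the paper's version is shorter on the page because all class sizes are read off Table \ref{char table H}, while yours is more elementary and makes the provenance of the modulus $81$ completely transparent.
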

\begin{proof}
For each $i\in \{6,7,11,12\}$ we restrict $\e_i\chi_i$ to $H$ and
calculate the character inner product $(\e_i\chi_i, \psi_{12})_H$
which is integral. We calculate:
\[(\e_6\chi_6, \psi_{12})_H=\frac{1}{|H|}(8d_6-48-16+48+288+72+72)=\frac{d_6+52}{81}.\]
Thus $d_{6} \equiv -52~\mathrm{mod}~ 81$. Similarly:
\[(\e_7\chi_7, \psi_{12})_H=\frac{1}{|H|}(8d_7-24-8+24+144-72-72)=\frac{d_7-1}{81};\]
\[(\e_{11}\chi_{11},
\psi_{12})_H=\frac{1}{|H|}(8d_{11}-72-24+72+432)=\frac{d_{11}+51}{81};\]and
\[(\e_{12}\chi_{12},
\psi_{12})_H=\frac{1}{|H|}(8d_{12}-72-24+72+432)=\frac{d_{12}+51}{81}.\]
Hence $d_{7} \equiv 1~\mathrm{mod}~ 81$ and $d_{11} \equiv d_{12}
\equiv -51~\mathrm{mod}~ 81$.
\end{proof}

\begin{lemma}\label{alt9-case2 d7 isn't 1}
$d_{11}\neq -51$, $d_{12}\neq -51$ and $d_7\neq 1$.
\end{lemma}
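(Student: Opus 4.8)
The plan is to rule out all three values by a direct appeal to the mod-$3$ congruence for character values on $3$-elements, namely Lemma \ref{Characters mod p} with $p=3$. The congruences of the preceding lemma pin down exactly the values to be excluded: $d_7=1$ forces $\e_7=+1$ and $\chi_7(1)=1$, whereas $d_{11}=-51$ (respectively $d_{12}=-51$) forces $\e_{11}=-1$ and $\chi_{11}(1)=51$ (respectively $\e_{12}=-1$ and $\chi_{12}(1)=51$). In every case the idea is to exhibit a single $3$-element class on which the genuine character value is a known integer lying in a different residue class modulo $3$ from the supposed degree, contradicting Lemma \ref{Characters mod p}.

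First I would treat $\chi_7$. Reading off Table \ref{part of principal block} and restoring the sign $\e_7=+1$ gives $\chi_7(1)=1$ and $\chi_7(x)=-1$ for $x\in\CC_{12}^G$, which consists of elements of order $9$ and hence of $3$-elements. Since $-1\not\equiv 1~\mathrm{mod}~3$, Lemma \ref{Characters mod p} is violated and $d_7\neq 1$. The characters $\chi_{11}$ and $\chi_{12}$ I would handle together, this time using the class $\CC_6^G$, whose elements have order $3$. With $\e_{11}=\e_{12}=-1$ the table yields $\chi_{11}(x)=\chi_{12}(x)=+1$ for $x\in\CC_6^G$, while the supposed degrees give $\chi_{11}(1)=\chi_{12}(1)=51\equiv 0~\mathrm{mod}~3$; as $1\not\equiv 0~\mathrm{mod}~3$, Lemma \ref{Characters mod p} again fails and $d_{11}\neq -51$, $d_{12}\neq -51$.

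I do not expect any genuine mathematical obstacle here; the only points requiring care are bookkeeping. One must remember that Table \ref{part of principal block} records $\e_i\chi_i$ rather than $\chi_i$, so the sign $\e_i$ determined by the hypothesised value must be put back before applying the congruence; one must verify that each class used ($\CC_{12}^G$ of order $9$ for $\chi_7$, and $\CC_6^G$ of order $3$ for $\chi_{11}$ and $\chi_{12}$) genuinely consists of $3$-elements, so that the hypothesis of Lemma \ref{Characters mod p} is met; and one must check that the tabulated entries in question are integers, which they are. Once these are noted, each excluded value places the degree and a tabulated integer value in distinct residue classes modulo $3$, giving the required contradiction.
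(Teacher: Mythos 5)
Your proposed route fails, and the failure starts with misread table entries. In Table \ref{part of principal block} the row for $\e_7\chi_7$ reads $1,1,1,1,-1,1,1,-1,-1$ on the classes $\CC_4^G,\CC_5^G,\CC_6^G,\CC_9^G,\CC_{10}^G,\CC_{11}^G,\CC_{12}^G,\CC_{13}^G,\CC_{14}^G$; in particular the value on $\CC_{12}^G$ is $+1$, not $-1$ (the entries $-1$ occur only on $\CC_{10}^G$, $\CC_{13}^G$, $\CC_{14}^G$, whose elements have orders $6$ and $12$, so they are not $3$-elements and Lemma \ref{Characters mod p} says nothing about them). Likewise the rows for $\e_{11}\chi_{11}$ and $\e_{12}\chi_{12}$ have the value $3$, not $1$, on $\CC_6^G$. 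With the correct entries no contradiction arises: if $d_7=1$ then $\chi_7$ takes the value $1\equiv 1 ~\mathrm{mod}~ 3$ on every $3$-element class, and if $d_{11}=-51$ then $\chi_{11}$ takes the values $-3$ and $0$ on the $3$-element classes, both $\equiv 0 \equiv -51 ~\mathrm{mod}~3$. Moreover this is not a repairable slip but a structural obstruction: the preceding lemma gives $d_7\equiv 1$ and $d_{11}\equiv d_{12}\equiv -51 ~\mathrm{mod}~ 81$, and one checks from the table that any degrees satisfying these congruences automatically satisfy every congruence of Lemma \ref{Characters mod p} on every $3$-element class. A mod-$3$ argument therefore can never exclude precisely these values.

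The paper uses genuinely finer information. For $d_{11}=-51$ (and symmetrically $d_{12}$): since $d_{11}\leq 0$ forces $\e_{11}=-1$, the restriction $\e_{11}\chi_{11}\big|_H=-\chi_{11}\big|_H$ has \emph{non-positive} integral inner product with every character of $H$. Computing $(\e_{11}\chi_{11}|_H,\psi_1+\psi_2)_H=(7+n)/12$ and $(\e_{11}\chi_{11}|_H,\psi_4+\psi_5)_H=(5-n)/12$, where $n=\e_{11}\chi_{11}(x_2)$ is the (untabulated) value on the involution class $\CC_2$, non-positivity forces $n\leq -7$ and $n\geq 5$ simultaneously, a contradiction. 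Note this exploits a class \emph{outside} the columns of Table \ref{part of principal block}, which is exactly the extra leverage your argument lacks. For $d_7=1$ the exclusion is group-theoretic rather than arithmetic: a degree-one $\chi_7$ would be a linear character whose kernel contains $\CC_4^G,\CC_5^G,\CC_6^G,\CC_{11}^G,\CC_{12}^G$ (all entries $+1$), so $G'$ would be a proper normal subgroup containing a Sylow $3$-subgroup of $G$, contradicting Lemma \ref{No normal subgroup containing S}. You need both of these ideas (or substitutes of comparable strength); congruences on $3$-elements alone cannot finish the lemma.
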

\begin{proof}
Suppose $d_{11}=-51$ and consider the characters of $H$
\[\psi_1+\psi_2=(2,2,0,2,2,2,2,0,2,0,2,2,0,0)\] and
\[\psi_4+\psi_5=(6,-2,0,6,6,6,0,0,-2,0,0,0,0,0).\] We calculate the $H$-character inner products
$(\e_{11}{\chi_{11}}{{\big|_H}},\psi_1+\psi_2)$ and
$(\e_{11}{\chi_{11}}{{\big|_H}},\psi_4+\psi_5)$ both of which are
integral. Also, since $d_{11}\leq 0$, $\e_{11}=-1$ and so both inner
products are non-positive. Let $n=\e_{11}\chi_{11}(x_2)$ for $x_2 \in
\CC_2$ (notice this is the only character value which appears in the
inner products which we have not calculated). We calculate:

\[(\e_{11}\chi_{11}|_H,\psi_1+\psi_2)=\frac{1}{|H|}(-102+54n+36+48+72+432-108)=\frac{7+n}{12},\]
which implies $n \leq -7$, and,
\[(\e_{11}\chi_{11}|_H,\psi_4+\psi_5)=\frac{1}{|H|}(-306-54n+108+144+216+108)=\frac{5-n}{12},\]
which implies $n \geq 5$. Therefore we have a contradiction and
$d_{11} \neq 51$.

The same calculation assuming $d_{12}=-51$ gives a similar
contradiction and so $d_{12}\neq -51$.

Suppose $d_7=1$. Then $G' \neq G$. Moreover if $\rho$ is the
representation of $G$ affording $\chi_7$ then $\CC_i \leq
\mathrm{Ker}(\rho)$ for $i=4,5,6,7,11,12$. Thus $G'$ is a proper
normal subgroup of $G$ which contains a Sylow $3$-subgroup of $G$.
This contradicts Lemma \ref{No normal subgroup containing S}. Hence
$d_7 \neq 1$.
\end{proof}

Let $a \in \CC_4$, $b \in \CC_5$ and $c \in \CC_6$. We
find expressions involving the unknown character degrees $d_i$ using
Lemma \ref{part2-structure constants} to calculate $\a^G_{abc}$,
$\a^G_{aac}$ and $\a^G_{cca}$. We calculate:
\[\a^G_{abc}-\frac{\a^G_{aac}-\a^G_{cca}}{6} = 1+ \frac{8}{d_{6}}+\frac{1}{d_7}+\frac{27}{d_{11}}+\frac{27}{d_{12}} =
\frac{15552}{|G|}.\] Since $d_{6} \equiv -52~\mathrm{mod}~ 81$,
either $d_6 \leq -52$ or $d_6\geq 0$. In either case we have $1/d_6
\geq -1/52$. Similarly  $1/d_7\geq -1/80$. Now $d_{11}\neq -51 \neq
d_{12}$ so we have $1/d_{11} \geq -1/132$ and $1/d_{12} \geq
-1/132$. Therefore
\[1+\frac{8}{d_{6}}+ \frac{1}{d_7}+\frac{27}{d_{11}}+\frac{27}{d_{12}}
\geq \frac{4857}{11440}\] and so $\frac{15552}{|G|}\geq
\frac{4857}{11440}$. This gives us that $|G| < 36630$. Now a formula
due to Frobenius (see \cite[s10, p28]{Zassenhaus}) says that
\[| \{x \in G |x^{3^n}=1, n \in \mathbb{N}\}| \equiv 0
~\mathrm{mod}~ 81.\] Therefore, since we have three conjugacy classes of elements of order three and two conjugacy  classes of elements of order nine, we have
\[1+|G|(1/108+1/81+1/54+1/9+1/9)=1+|G|85/324\equiv 0
~\mathrm{mod} ~81\] and  so \[|G|(1/108+1/81+1/54+1/9+1/9)=|G|85/324\equiv -1
~\mathrm{mod} ~81.\] It follows that $|G|/81 \equiv -1
~\mathrm{mod}~ 81$.

Consider $|G|/81$. This is an integer which is a multiple of 8 and
lies between $8$ and $452$. It is easy to check that the only such
integer is 80 so $|G|=6480$. By Lemma \ref{alt9-case2 d7 isn't 1}, $d_7 \neq 1$  and since $d_7 \equiv 1~\mathrm{mod}~ 81$, $d_7^2\geq 80^2$. Also $d_6^2 \geq 29^2$  so
$|G|\geq d_6^2 +d_7^2 \geq 29^2+80^2=7241$. This is our final
contradiction in Case 2 and concludes the proof of Theorem \ref{case
2 thm}.


\subsection{Case 3}

In this final case we identify the group $\alt(9)$. We  assume that
for $x \in \CC_4$, $C_G(x) \cong 3 \times \alt(6)$ and
$\CC_7^G=\CC_5^G$. Since no group satisfies Hypothesis \ref{case 2
hyp}, it is clear that this is equivalent to an assumption that $H\neq
G$.

\begin{hyp}\label{case 3 hyp}
Let $G$ satisfy Hypothesis A and in addition assume that $H$ is a proper subgroup of $G$.
\end{hyp}
\begin{thm}\label{case 3 thm}
If $G$ satisfies Hypothesis \ref{case 3 hyp} then $G \cong \alt(9)$.
\end{thm}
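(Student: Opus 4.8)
The plan is to locate an involution $t$ such that $C_G(t)$ satisfies the hypotheses of Aschbacher's theorem (Theorem~\ref{Ascbacher M12}), apply that theorem to pin $G$ down to one of four groups, and then single out $\alt(9)$ by a Sylow $3$-count. We are in the situation left open by Theorems~\ref{case 1 thm} and~\ref{case 2 thm}: under Hypothesis~\ref{case 3 hyp} the subgroup $H$ is proper in $G$, so $C_G(x)\cong 3\times\alt(6)$ for $x\in\CC_4$ and $\CC_7^G=\CC_5^G$. By Lemma~\ref{order of sylow 3's and H is self normalizing}, $S\in\syl_3(H)$ is a Sylow $3$-subgroup of $G$ with $|S|=3^4$ and $N_G(S)=N_H(S)$ of order $2\cdot 3^4$. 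The four conclusions of Theorem~\ref{Ascbacher M12} are the groups of shape $2^3.\PSL_3(2)$, $\alt(8)$, $\alt(9)$ and $\M_{12}$, whose Sylow $3$-subgroups have orders $3$, $3^2$, $3^4$ and $3^3$ respectively; hence, once Theorem~\ref{Ascbacher M12} applies, the equality $|S|=3^4$ forces $G\cong\alt(9)$. Thus the whole weight of the proof falls on the $2$-local analysis needed to verify the hypotheses of Theorem~\ref{Ascbacher M12}.

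To build the extraspecial group, note that by Lemma~\ref{3-elements in S not J}, for $T\in\syl_2(N_G(S))$ we have $T=\langle t\rangle$ with $t\in\CC_3$, and $C_G(t)$ contains a subgroup $A_1\cong\SL_2(3)$ whose central involution is $t$. I would set $Q_1:=O_2(A_1)\cong Q_8$ and let $\langle y\rangle$ be a Sylow $3$-subgroup of $A_1$, so that $\langle y\rangle=C_S(t)$, the element $y$ normalises $Q_1$, and $C_{Q_1}(y)=\langle t\rangle$. The crucial local step is to produce a second quaternion subgroup $Q_2\cong Q_8$ of $C_G(t)$ with $[Q_1,Q_2]=1$ and $\mathcal{Z}(Q_2)=\langle t\rangle$. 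I would obtain $Q_2$ from the $2$-local structure of $C_G(u)$ for an involution $u\in\CC_2$ commuting with $t$: the members of $\CC_2$ are precisely the involutions lying inside the $\alt(6)$ direct factor of the centralisers $C_G(x)\cong3\times\alt(6)$, which gives a firm grip on $C_G(u)$ and on the $G$-fusion of involutions, and from this (together with the action of $y$) one extracts a $y$-invariant $Q_2\cong Q_8$ commuting with $Q_1$. Once two such quaternion groups are in hand, Lemma~\ref{exactly 2 q-8's in extraspecial group} (the mechanism being that of Lemma~\ref{Prelims-PSp4(3) involutions}$(iii)$) identifies $Q:=Q_1Q_2$ as an extraspecial group isomorphic to $2_+^{1+4}$ of order $32$ with $\mathcal{Z}(Q)=\langle t\rangle$; and by coprime action (Theorem~\ref{coprime action}) $y$ acts fixed-point-freely on $Q/\langle t\rangle$, so $C_Q(y)=\langle t\rangle$.

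It then remains to show that $Q=O_2(C_G(t))$ and $C_G(t)/Q\cong\sym(3)$. I would first prove $C_{C_G(t)}(Q)\le Q$, so that $C_G(t)/Q$ embeds in $\out(Q)\cong\mathrm{O}_4^+(2)$ (Theorem~\ref{extraspecial outer automorphisms}), a group of order $72$; since $\langle y\rangle$ is a Sylow $3$-subgroup of $C_G(t)$ acting on $Q/\langle t\rangle$ as above, a careful analysis of the possible image — ruling out the larger subgroups of $\mathrm{O}_4^+(2)$ by fusion and transfer arguments — should yield $C_G(t)/Q\cong\sym(3)$ and hence $|C_G(t)|=192$. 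With $X:=\langle y\rangle\in\syl_3(C_G(t))$ we then have $C_Q(X)=\langle t\rangle$ from the previous paragraph. Finally I would check that $t$ is not weakly closed in $Q$ with respect to $G$ by exhibiting a second $G$-conjugate of $t$ among the non-central involutions of $Q$ (in $\alt(9)$, $Q$ does contain non-central involutions $G$-conjugate to $t$), the required fusion again being supplied by the analysis of $C_G(u)$ and of $N_G(Q)$. All hypotheses of Theorem~\ref{Ascbacher M12} are then met, and the Sylow $3$-count of the first paragraph gives $G\cong\alt(9)$.

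The genuine difficulty lies in the second and third paragraphs: constructing the commuting second $Q_8$ and, above all, bounding $C_G(t)$ \emph{from above} so as to force $|C_G(t)|=192$ with $O_2(C_G(t))\cong 2_+^{1+4}$. The $3$-local hypotheses determine $C_G(t)$ only through the single $\SL_2(3)$ and the order-$3$ group $\langle y\rangle$, so the extraspecial structure and the $\sym(3)$ quotient have to be manufactured by purely $2$-local work — controlling $C_{C_G(t)}(Q)$, the image in $\mathrm{O}_4^+(2)$, and the involution fusion — and this is where the argument is most delicate.
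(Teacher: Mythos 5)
Your frame — manufacture an involution $t\in\CC_3^G$ whose centralizer is $2_+^{1+4}.\sym(3)$ with $t$ not weakly closed in the $O_2$, apply Theorem~\ref{Ascbacher M12}, and then separate $\alt(9)$ from $2^3.\PSL_3(2)$, $\alt(8)$ and $\mathrm{M}_{12}$ by the Sylow $3$-order $3^4$ — is exactly the paper's endgame, and that last step is sound. But the body of your argument has a genuine gap, and you have in effect flagged it yourself: the two steps you defer (``constructing the commuting second $Q_8$'' and ``bounding $C_G(t)$ from above'') are the entire content of the theorem, and the route you sketch for them does not close. Your source for the second quaternion group is $C_G(u)$ for $u\in\CC_2$, but under the hypothesis nothing is known about $C_G(u)$: the $3$-local data bounds centralizers of $3$-elements, not of involutions, and determining $C_G(u)$ is a problem of the same kind and difficulty as determining $C_G(t)$ itself (the paper never touches $C_G(u)$ for $u\in\CC_2$ at all). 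Likewise ``ruling out the larger subgroups of $\mathrm{O}_4^+(2)$ by fusion and transfer arguments'' is not an argument: before you know $Q=O_2(C_G(t))$ and $C_{C_G(t)}(Q)\leq Q$ you cannot even embed $C_G(t)/Q$ into $\out(Q)$, and nothing in your setup caps the order of $C_G(t)$ — a priori it could contain arbitrary $3'$-material commuting with $Q_1$.

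What makes the upper bound possible in the paper is a tool you never invoke: the Feit--Thompson theorem (Theorem~\ref{Feit-Thompson}). The $3$-local data shows that $s\in\CC_3^G$ commutes with some $y\in\CC_6$ with $C_G(y)=J\<s\>$ and $C_J(s)=\<y\>$, so $\<s\>y$ is \emph{self-centralizing of order three} in $C_G(s)/\<s\>$ (Lemma~\ref{the involution s}); Theorem~\ref{Feit-Thompson} then pins $C_G(s)/\<s\>$ down to three possibilities, and the extraspecial structure is extracted from there using Thompson's nilpotency theorem (Theorem~\ref{thompson-nilpotent}) together with the subgroups $E_1,E_2\cong 2^3$ of $K=N_G(\<x\>)$, whose structure is available because $K/\<x\>\cong\sym(6)$ (Lemmas~\ref{finding sym6}--\ref{applying FT to Fi}). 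In other words, the paper builds $O_2(C_G(s))$ and its $\sym(3)$ quotient simultaneously from Feit--Thompson, rather than first assembling $2_+^{1+4}$ from commuting quaternions and then bounding the quotient. Without a self-centralizing order-three element (or some substitute giving an a priori bound on $C_G(t)$), your plan stalls precisely at the point you call ``most delicate''; the same remark applies to the weak-closure step, which in the paper is a substantial contradiction argument (Lemma~\ref{s not weakly closed}), not a fusion observation read off from $C_G(u)$ and $N_G(Q)$.
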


We set up some further notation for this section. Fix $x \in
\CC_4$ and set $C:=C_G(x) \cong 3 \times \alt(6)$ and
$K=N_G(\<x\>)$. Since $x$ is conjugate in $H$ to its inverse,
$[K:C]=2$.

\begin{lemma}\label{F1,F2,P1 and P2}
There exist subgroups $F_1,F_2,P_1,P_2 \leq C$ such that, for $i
\in \{1,2\}$, $F_i \cong 2 \times 2$, $C_C(F_i)=\<x,F_i\>$ and
$3 \times 3\cong P_i \in \syl_3(N_C(F_i))$ where
$N_C(F_i)\cong 3 \times \sym(4)$. Furthermore, $|P_1 \cap
\CC_4^G|=|P_1 \cap \CC_6^G|=4$, $|P_2 \cap \CC_4^G|=|P_2 \cap
\CC_6^G|=2$ and $|P_2 \cap \CC_5^G|=4$.
\end{lemma}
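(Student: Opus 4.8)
The plan is to exploit the direct factorisation of $C$ and to import the two subgroups of order nine directly from Lemma \ref{order of sylow 3's and H is self normalizing} $(vi)$, leaving only an internal computation in $\alt(6)$ to manufacture the four-groups $F_i$. First I would record that, since $C\cong 3\times\alt(6)$ and $x$ generates $\mathcal{Z}(C)$, we have $C=\<x\>\times A$ where $A:=C'\cong\alt(6)$. Replacing $x$ by a suitable $H$-conjugate (which alters neither the isomorphism types below nor the $G$-classes $\CC_j^G$) I may assume $x\in J$, so that $J\leq C_G(x)=C$ and, by order, $J\in\syl_3(C)$. Since $\<x\>\leq J\leq\<x\>\times A$, Lemma \ref{dedekind} gives $J=\<x\>\times R$ with $R:=J\cap A\in\syl_3(A)$.

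Next I would transport the subgroups supplied by Lemma \ref{order of sylow 3's and H is self normalizing} $(vi)$, namely $P_1,P_2\leq J$ with $x\in P_i$ and with the stated intersection numbers against the $H$-classes $\CC_4,\CC_5,\CC_6$. Because $P_i\leq J$, and because Lemma \ref{J is characteristic} together with Lemma \ref{order of sylow 3's and H is self normalizing} $(iii)$ shows that the three pairwise distinct $G$-classes $\CC_4^G,\CC_5^G,\CC_6^G$ meet $J$ in exactly the $H$-classes $\CC_4,\CC_5,\CC_6$ (here one uses $\CC_7\cap J=\emptyset$, as elements of $\CC_7$ lie in $S\bs J$), the numbers $|P_i\cap\CC_j^G|$ coincide with those of Lemma \ref{order of sylow 3's and H is self normalizing} $(vi)$. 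Writing $P_i=\<x\>\times L_i$ with $L_i:=P_i\cap A$ of order three then settles all of the fusion bookkeeping; in particular $L_1\subseteq\CC_4$ and $L_2\subseteq\CC_6$.

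It then remains to realise each $L_i$ as a Sylow $3$-subgroup of a self-normalising $\sym(4)$ inside $A$. The internal fact I would take from \cite{atlas} (or verify in the degree-six representation) is that $A\cong\alt(6)$ has exactly two classes of subgroups isomorphic to $\sym(4)$, each of the form $N_A(F)$ for a four-group $F$ with $C_A(F)=F$, and that every element of order three of $A$ generates the Sylow $3$-subgroup of one such $\sym(4)$. Applying this to a generator of $L_i$ yields $F_i\leq A$ with $F_i\cong 2\times 2$, $C_A(F_i)=F_i$, $N_A(F_i)\cong\sym(4)$ and $L_i\in\syl_3(N_A(F_i))$. Passing back through the direct product (using that $\<x\>$ is central and $F_i\leq A$, so $C_C(F_i)=\<x\>\times C_A(F_i)$ and $N_C(F_i)=\<x\>\times N_A(F_i)$) gives $C_C(F_i)=\<x,F_i\>$, $N_C(F_i)\cong 3\times\sym(4)$ and $P_i=\<x\>\times L_i\in\syl_3(N_C(F_i))$ with $P_i\cong 3\times 3$, exactly as required.

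The hard part will be the internal $\alt(6)$-computation of the last paragraph: checking that $C_A(F)=F$ for the relevant four-groups (so that the centraliser in $C$ is precisely $\<x,F_i\>$) and that both an $A$-three-cycle and an $A$-$(3,3)$-element occur as the Sylow generator of a self-normalising $\sym(4)$, matching the two lines $L_1\subseteq\CC_4$ and $L_2\subseteq\CC_6$ forced by Lemma \ref{order of sylow 3's and H is self normalizing} $(vi)$. Everything else is formal manipulation of the splitting $C=\<x\>\times A$ and reuse of the fusion already controlled inside $J$.
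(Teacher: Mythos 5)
Your proposal is correct, and it proves the lemma by running the paper's construction in reverse. The paper works from the $2$-side: it first fixes fours groups $F_1,F_2\leq C$ representing the two $\alt(6)$-classes, with $\dih(8)\cong F_1F_2\in\syl_2(C)$, takes $P_i$ to be a Sylow $3$-subgroup of $N_C(F_i)$ containing $x$, and then earns the intersection numbers by a matching argument: $P_1$ and $P_2$ are non-conjugate in $C$, the group $C$ has exactly two classes of order-nine subgroups containing $x$ (one per class of order-three elements of $\alt(6)$), so $P_1,P_2$ must represent the two $H$-classes of Lemma \ref{order of sylow 3's and H is self normalizing} $(vi)$ and the counts transfer after relabeling. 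You work from the $3$-side: you take $P_1,P_2$ directly from Lemma \ref{order of sylow 3's and H is self normalizing} $(vi)$, so the counts against $\CC_4^G,\CC_5^G,\CC_6^G$ are immediate once $G$-fusion in $J$ is identified with $H$-fusion (Lemma \ref{J is characteristic}), and you then build $F_i$ out of the complement $L_i$ in $P_i=\<x\>\times L_i$, using the fact that every order-three element of $\alt(6)$ generates the Sylow $3$-subgroup of the normalizer of some fours group --- which is equivalent to the paper's statement that the two classes of order-three subgroups of $Y\cong\alt(6)$ are exactly $\syl_3(N_Y(A))^Y$ and $\syl_3(N_Y(B))^Y$. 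Your direction dispenses with the class-matching step and the ``without loss of generality'' relabeling, at the cost of shifting the weight onto the internal $\alt(6)$ verification, which you correctly flag as the substantive check; both routes consume the same two external inputs (Lemma \ref{order of sylow 3's and H is self normalizing} $(vi)$ and the $\alt(6)$ structure facts). Two minor points: $x\in\CC_4\subseteq J$ holds automatically, so no replacement of $x$ by an $H$-conjugate is needed; and your aside that $L_1\subseteq\CC_4$ and $L_2\subseteq\CC_6$ does not follow from the counting in $(vi)$ alone (one needs, say, that elements of $A$ fix three letters in the degree-nine representation, so that $\<x\>L_i$ meets $\CC_5$ precisely when $L_i^\#\subseteq\CC_6$) --- but nothing in your argument depends on that aside, so the proof stands as written.
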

\begin{proof}
First we  observe that any group $Y \cong \alt(6)$ has Sylow
$2$-subgroups which are dihedral of order eight and  any fours group
in $Y$ is self-centralizing with normalizer in $Y$ isomorphic to
$\sym(4)$. Moreover, $Y$ has two conjugacy classes of fours groups
$A^Y$ and $B^Y$ say where $AB\in \syl_2(Y)$. Furthermore $Y$ has two
conjugacy classes of subgroups of order three $\syl_3(N_Y(A))^Y$ and
$\syl_3(N_Y(B))^Y$.

Now we fix two fours groups $F_1,F_2 \leq C$ such that $\dih(8)\cong F_1F_2\in \syl_2(C)$ and
choose $P_1,P_2 \leq C$ such that $x \in P_i \in \syl_3(N_C(F_i))$. It is clear from the structure
of $\alt(6)$ that $C_C(F_i)=\<x,F_i\>$ and $N_C(F_i)\cong 3 \times \sym(4)$. Also $P_1$ and $P_2$
are not conjugate in $C$. Since $\alt(6)$ has two conjugacy classes of elements of order three, $C$
has two  conjugacy classes of subgroups of order nine containing $x$. Thus $P_1$ and $P_2$ are
representatives of these two classes. Now by Lemma \ref{order of sylow 3's and H is self
normalizing} $(vi)$, $H$ also has two conjugacy classes of subgroups of order nine in $J \leq C$
containing $x$ and these are non-conjugate in $G$. It follows that $P_1$ and $P_2$ are
representatives of these classes and so are non-conjugate in $G$.  Therefore, using Lemma
\ref{order of sylow 3's and H is self normalizing}, we may assume that $|P_1 \cap \CC_4^G|=|P_1
\cap \CC_6^G|=4$, $|P_2 \cap \CC_4^G|=|P_2 \cap \CC_6^G|=2$ and $|P_2 \cap \CC_5^G|=4$.
\end{proof}

We fix notation such that $P_2=\{1,x,x^2,y,y^2,z,z^2,w,w^2\}$ where
$y,z\in \CC_5^G$ and $w \in \CC_6^G$.

\begin{lemma}\label{finding sym6}
$K/\<x\> \cong \sym(6)$.
\end{lemma}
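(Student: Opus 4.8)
The plan is to first pin $K/\langle x\rangle$ down as an index-two extension of $\alt(6)$ and then identify which one it is. Since $C\cong 3\times\alt(6)$ we have $\mathcal{Z}(C)=\langle x\rangle$ and the derived subgroup $C'\cong\alt(6)$, with $C=\langle x\rangle\times C'$; as $C'$ is characteristic in $C\trianglelefteq K$, it is normal in $K$. Because $[K:C]=2$ and $|C|=3\cdot|\alt(6)|$, the quotient $K/\langle x\rangle$ has order $720$ and contains $C\langle x\rangle/\langle x\rangle\cong C'\cong\alt(6)$ as a normal subgroup of index two. A standard fact (using $\out(\alt(6))\cong 2\times 2$ and $\mathcal{Z}(\alt(6))=1$; see \cite{atlas}) is that a group with a normal subgroup isomorphic to $\alt(6)$ of index two is one of $\alt(6)\times 2$, $\sym(6)$, $\PGL_2(9)$ or $\mathrm{M}_{10}$. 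So it suffices to exclude the first, third and fourth.

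I would eliminate $\PGL_2(9)$ and $\mathrm{M}_{10}$ by a fusion argument: each of these has a \emph{single} conjugacy class of subgroups of order three (their order-three elements form one class), whereas $K/\langle x\rangle$ has at least two. Indeed $P_1$ and $P_2$ both contain $\langle x\rangle$, so their images $\overline{P_1},\overline{P_2}$ in $K/\langle x\rangle$ are subgroups of order three. Were they conjugate in $K/\langle x\rangle$, the conjugacy would lift: since $\langle x\rangle\trianglelefteq K$ and each $P_i\supseteq\langle x\rangle$, a relation $\overline{P_1}^{\,\bar k}=\overline{P_2}$ gives $P_1^{k}=P_2$ for some $k\in K\le G$, contradicting that $P_1$ and $P_2$ are non-conjugate in $G$ by Lemma \ref{F1,F2,P1 and P2}. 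Hence $K/\langle x\rangle$ has two non-conjugate subgroups of order three and is neither $\PGL_2(9)$ nor $\mathrm{M}_{10}$.

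The main obstacle is to exclude the direct product $\alt(6)\times 2$, which also carries two classes of order-three subgroups, so the fusion argument alone does not suffice; here one must use the global $G$-structure. Suppose $K/\langle x\rangle\cong\alt(6)\times 2$. The central factor gives $\bar t\in\mathcal{Z}(K/\langle x\rangle)$ centralizing $\overline{C'}$; lifting to $t\in K\setminus C$ we get $[C',t]\le C'\cap\langle x\rangle=1$, so $t$ centralizes $C'$, and since $t\notin C=C_K(x)$ it inverts $x$. Then $t$ centralizes a Sylow $3$-subgroup $R\cong 3\times 3$ of $C'\cong\alt(6)$. Choosing $v\in R$ of $3$-cycle type, the class data for $P_1$ in Lemma \ref{F1,F2,P1 and P2} (together with the fact that all such elements are $G$-conjugate) gives $v\in\CC_4^G$, whence $C_G(v)\cong 3\times\alt(6)$ has order $1080$. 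But $t$ is an involution of $C_G(v)$, so $C_{C_G(v)}(t)\cong 3\times\dih(8)$ has order $24$, while $R\le C_G(v)\cap C_G(t)=C_{C_G(v)}(t)$ has order $9$; as $9\nmid 24$ this is a contradiction.

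Therefore $\alt(6)\times 2$ is excluded and $K/\langle x\rangle\cong\sym(6)$, as claimed. Throughout, the delicate bookkeeping is keeping the two $\alt(6)$-classes of order-three subgroups distinct and matching them to the global classes $\CC_4^G$, $\CC_5^G$, $\CC_6^G$ recorded for $P_1$ and $P_2$ in Lemma \ref{F1,F2,P1 and P2}, on which both the non-fusion step and the final centralizer contradiction rely.
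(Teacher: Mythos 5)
Your reduction to the four candidates $\alt(6)\times 2$, $\sym(6)$, $\PGL_2(9)$, $\mathrm{M}_{10}$ is sound, and your elimination of $\PGL_2(9)$ and $\mathrm{M}_{10}$ via the non-conjugate subgroups $\bar{P_1}$, $\bar{P_2}$ of order three is exactly the paper's fusion argument. Where you diverge is the exclusion of $\alt(6)\times 2$: the paper does this first, showing $C_{\bar{K}}(\bar{C})=1$ by noting that a lift $t$ of a central involution would, by coprime action, centralize a subgroup of $J$ of order nine, against Lemma \ref{order of sylow 3's and H is self normalizing}$(v)$, which says $C_G(T)=J$ for every such subgroup. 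Your route through the structure of $C_G(v)\cong 3\times\alt(6)$ is viable in principle, but as written it has a gap.

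The gap is the claim that for $v\in R$ ``of $3$-cycle type'' the class data of Lemma \ref{F1,F2,P1 and P2} gives $v\in\CC_4^G$. That lemma records only $|P_1\cap\CC_4^G|=|P_1\cap\CC_6^G|=4$; it does not say that the two elements of $\CC_4^G$ in $P_1$ other than $x^{\pm 1}$ lie in $P_1\cap C'$ rather than in the diagonal cosets $x^{\pm 1}(P_1\cap C')$. A priori one could have $(P_1\cap C')^\#\subset\CC_6^G$ with one diagonal inverse-pair in $\CC_4^G$; then your chosen $v$ would lie in $\CC_6^G$, whose centralizer ($\leq H$, of order $54$) contains $R\<t\>$ without any contradiction, and your argument stalls. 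The claim is true, but it needs the following argument, which is exactly what the paper runs for $P_2$ inside the proof of Lemma \ref{Normalizer of Ei in K}: the generator $r$ of $P_1\cap C'$ is inverted by an element of $C'\cong\alt(6)$, and that element centralizes $x$, so the four diagonal elements $x^{\pm 1}r^{\pm 1}$ are all $G$-conjugate to one another; were they in $\CC_4^G$ we would get $|P_1\cap\CC_4^G|\geq 6$, so they lie in $\CC_6^G$, forcing $(P_1\cap C')^\#\subset\CC_4^G$ as you need. Alternatively you can bypass the class bookkeeping entirely: since $J\in\syl_3(C)$, we have $J=\<x\>\times(J\cap C')$ with $|J\cap C'|=9$; your involution $t$ centralizes $J\cap C'\leq J$, and Lemma \ref{order of sylow 3's and H is self normalizing}$(v)$ then gives $t\in C_G(J\cap C')=J$, a contradiction --- which is in essence the paper's own proof.
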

\begin{proof}
Set $\bar{K}:=K/\<x\>$. Then $\bar{K}$ has an index two subgroup $\bar{C}\cong \alt(6)$. Consider
$\bar{T}:=C_{\bar{K}}(\bar{C})$. Since $\bar{T}\cap \bar{C}=1$, $|\bar{T}|\leq 2$. Suppose
$|\bar{T}|=2$. By coprime action and an isomorphism theorem,
$\bar{J}=C_{\bar{J}}(\bar{T})=\bar{C_J{T}}\cong C_J(T)$. Thus $C_J(T)$ is a subgroup of $J$ of
order nine. Lemma \ref{order of sylow 3's and H is self normalizing} $(v)$ now gives a
contradiction. Therefore $\bar{T}=1$ and $\bar{K}$ is isomorphic to a subgroup of $\aut(\alt(6))$
and hence $\bar{K} \cong \sym(6), \mathrm{M}_{10}$ or $\PGL_2(9)$. Now $\mathrm{M}_{10}$ and
$\PGL_2(9)$ both have one conjugacy class of subgroups of order three (see \cite{atlas}). However
$\bar{P_1}$ and $\bar{P_2}$ are non-conjugate in $\bar{K}$. Thus $\bar{K} \cong \sym(6)$.
\end{proof}

By Lemma \ref{finding sym6}, $K$ has Sylow $2$-subgroups
isomorphic to $2\times \dih(8)$. Hence we may fix some further
notation by setting $E_1,E_2 \leq K$ to be the elementary abelian
subgroups of $K$ of order eight such that $E_1>F_1$, $E_2>F_2$ and
$E_1E_2\in \syl_2(K)$.

\begin{lemma}\label{Normalizer of Ei in K} Let $i \in \{1,2\}$
then $N_K(E_i) \cong 2 \times \sym(4)$ and $\mathcal{Z}(N_K(E_2)) \leq E_1 \cap
E_2$ contains an involution in $\CC_3^G$.
\end{lemma}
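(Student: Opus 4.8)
The plan is to exploit the isomorphism $\bar K:=K/\langle x\rangle\cong\sym(6)$ from Lemma~\ref{finding sym6}, together with the fact that $\langle x\rangle$ has order coprime to $2$. First I would observe that $C\cong 3\times\alt(6)$ has Sylow $2$-subgroups isomorphic to $\dih(8)$, so $C$ contains no elementary abelian $2$-group of order $8$; hence $E_i\not\leq C$, and since $[K:C]=2$ we get $E_i\cap C=F_i$ while every element of $E_i\setminus F_i$ lies in $K\setminus C$ and so inverts $x$. Passing to $\bar K$, the image $\bar E_i\cong E_i$ is elementary abelian of order $8$ with $\bar F_i=\bar E_i\cap\bar C$ a fours group of $\bar C\cong\alt(6)$. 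Since $E_i\in\syl_2(E_i\langle x\rangle)$ and $\langle x\rangle\trianglelefteq K$, a Frattini/Sylow argument shows that $N_K(E_i)$ maps \emph{onto} $N_{\bar K}(\bar E_i)$, and the map is injective because $x$ does not normalise $E_i$. Identifying the two $\sym(6)$-classes of such $\bar E_i$ (one supported on a $4+2$ partition, the other meeting all six points), one computes $N_{\sym(6)}(\bar E_i)\cong\sym(4)\times 2$ in both cases (using $\sym(2)\wr\sym(3)\cong\sym(4)\times 2$ for the latter), whence $N_K(E_i)\cong 2\times\sym(4)$.

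For the containment $\mathcal{Z}(N_K(E_2))\leq E_1\cap E_2$, note that $\mathcal{Z}(N_K(E_2))=\mathcal{Z}(2\times\sym(4))$ has order $2$; write $\langle\zeta\rangle:=\mathcal{Z}(N_K(E_2))$. Since $E_2$ has index $2$ in the Sylow $2$-subgroup $E_1E_2$ of $K$, we have $E_2\trianglelefteq E_1E_2\leq N_K(E_2)$, so $E_1E_2\in\syl_2(N_K(E_2))$. As $\zeta$ is a central $2$-element it lies in $O_2(N_K(E_2))\leq E_1E_2$ and centralises $E_1E_2$, so $\zeta\in\mathcal{Z}(E_1E_2)$. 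Because $E_1,E_2$ are distinct abelian subgroups of index $2$ in $E_1E_2$, their intersection has order $4$, centralises $E_1E_2$, and therefore equals $\mathcal{Z}(E_1E_2)$; thus $\zeta\in E_1\cap E_2$.

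The heart of the matter is showing $\zeta\in\CC_3^G$, and here I would use the characterisation coming from Lemma~\ref{3-elements in S not J}: for $S\in\syl_3(H)$ a Sylow $2$-subgroup of $N_G(S)=N_H(S)$ has order $2$ and its involution lies in $\CC_3^G$, so every involution of $N_G(S)$ lies in $\CC_3^G$. The plan is therefore to produce a $3$-central element (class $\CC_5^G$) inverted by $\zeta$. The labelling of $F_2,P_2$ in Lemma~\ref{F1,F2,P1 and P2} forces $F_2$ to be the fours group normalised by the $\CC_6^G$-element $\rho\in P_2$ (this is exactly what produces $|P_2\cap\CC_5^G|=4$), and this pins down $\bar E_2$ as the ``three-domino'' subgroup of $\sym(6)$ and $\bar\zeta$ as the product of its three transpositions. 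Reading off the action of $\bar\zeta$ on $\bar C\cong\alt(6)$, I would exhibit a $\CC_6^G$-element $\rho'\in C$ commuting with $x$ whose two $3$-cycles are interchanged by $\zeta$, so that $\zeta$ inverts both $x$ and $\rho'$ and hence inverts $s:=x\rho'$. One then checks $s\in\CC_5^G$, so that $C_G(s)=S\in\syl_3(G)$ by Lemma~\ref{order of sylow 3's and H is self normalizing}, giving $\zeta\in N_G(\langle s\rangle)\leq N_G(S)=N_H(S)$; the characterisation above then yields $\zeta\in\CC_3^G$.

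The main obstacle is this last step. Unlike the routine order count of the first assertion, identifying the $G$-class of $\zeta$ requires correctly resolving the two $\sym(6)$-conjugacy possibilities for $\bar E_2$: the \emph{wrong} choice would make $\bar\zeta$ a transposition and place $\zeta$ in $\CC_2^G$, and the correct resolution depends on transporting the information $|P_2\cap\CC_5^G|=4$ through the isomorphism $K/\langle x\rangle\cong\sym(6)$. Verifying that the constructed $s=x\rho'$ is genuinely $3$-central and genuinely inverted by $\zeta$ is the delicate computation on which the conclusion rests.
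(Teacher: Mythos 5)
Your first paragraph is in substance the paper's own argument: pass to $\bar K=K/\langle x\rangle\cong\sym(6)$ (Lemma \ref{finding sym6}), note that $x$ cannot normalise $E_i$, and identify $N_K(E_i)$ with the normaliser in $\sym(6)$ of an elementary abelian subgroup of order eight, which is $2\times\sym(4)$. Your second paragraph, which locates $\mathcal{Z}(N_K(E_2))$ inside $\mathcal{Z}(E_1E_2)=E_1\cap E_2$, is a correct minor variant of the paper's argument (the paper instead uses $E_1\leq N_K(E_2)$ together with $C_K(E_1)=E_1$). Both of these parts are fine.

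The third part has a genuine gap, exactly at the step you flag. Your mechanism for producing an element inverted by $\zeta$ is the claim that if $\zeta$ interchanges the two $3$-cycles of a $\CC_6^G$-element $\rho'$ then $\zeta$ inverts $\rho'$. That implication is false: an involution swapping the two $3$-cycles of a $3^2$-element may centralise it or invert it, depending on how the cyclic orderings are matched. Your own configuration witnesses the failure: the $\CC_6^G$-element $w$ generating $R_2=P_2\cap O^3(C)$ lies in $N_{O^3(C)}(F_2)\leq N_K(E_2)$, so the central involution $\zeta$ \emph{centralises} $w$; yet in the $\sym(6)$ picture $\bar\zeta$ is fixed-point-free and certainly interchanges the two $3$-cycles of $\bar w$. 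So your criterion cannot separate ``centralised'' from ``inverted'', and the existence of your $\rho'$ (hence of $s=x\rho'\in\CC_5^G$ inverted by $\zeta$) is not established. The route could be repaired: since $\bar\zeta\neq 1$ and $C_{\bar K}(\bar C)=1$, the involution $\zeta$ induces a nontrivial automorphism of $O^3(C)\cong\alt(6)$ coming from an odd permutation in the $\sym(6)$-picture, and one can check that an odd involution of $\sym(6)$ inverts some member of whichever $\alt(6)$-class of order-three elements it centralises a member of; moreover $x\rho'\in\CC_5^G$ does then follow, because $\rho'=w^c$ with $c\in O^3(C)$ gives $x\rho'=(xw)^c$ with $xw\in\CC_5$ by the labelling of $P_2$. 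But none of this is in your proposal, and it is all genuine extra work.

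That extra work is also unnecessary, which is the real point of comparison: the relation defeating your criterion is precisely what the paper exploits. Since $\zeta$ centralises $w\in\CC_6^G$, Lemma \ref{order of sylow 3's and H is self normalizing}$(iv)$ gives $\zeta\in C_G(w)\leq H$, where $C_G(w)$ has order $3^3\cdot 2$ and its involutions lie in the class $\CC_3$; hence $\zeta\in\CC_3^G$ at once, with no inverted $3$-central element and no appeal to Lemma \ref{3-elements in S not J}.
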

\begin{proof}
Since $E_i\nleq C$, $[E_i,x]=\<x\>$ and so $x$ does not normalize $E$.  Therefore $N_K(E_i)\cong
N_K(E_i)\<x\>/\<x\>\leq K/\<x\>$ which is isomorphic to the normalizer in $\sym(6)$ of an
elementary abelian subgroup of order eight. It follows that $N_K(E_i) \cong 2 \times \sym(4)$.
Notice that $F_i=E_i \cap C\vartriangleleft N_K(E_i)$ and so $\sym(4)\cong
N_{O^3(C)}(F_i)\vartriangleleft N_K(E_i)$.

Let $R_i\in \syl_3(N_K(E_i))$ then $|R_i|=3$ and we must have that $R_i \leq N_{O^3(C)}(F_i)\leq
O^3(C)$  so we may assume $R_i \leq P_i$. Recall that $P_2=\{1,x,x^2,y,y^2,z,z^2,w,w^2\}$ where
$y,z\in \CC_5^G$ and $w \in \CC_6^G$. Since $R_2=P_2 \cap O^3(C)$ is inverted in $O^3(C)\cong
\alt(6)$, the elements $xr$ and $xr^2$ ($\<r\>=R_2$) are conjugate. Thus we must have that $w \in
O^3(C)$. Therefore $R_2^\# \subset \CC_6^G$. Now for $y \in \CC_6^G$, by Lemma \ref{order of sylow
3's and H is self normalizing} $(iv)$, $C_G(y) \leq H$ has order $3^32$ and commutes with an
involution in $\CC_3$. Therefore we have that  $\mathcal{Z}(N_K(E_2))^\# \in \CC_3^G$ and since $E_1 \leq
N_K(E_2)$, $[E_1, \mathcal{Z}(N_K(E_2))]=1$. Since $N_K(E_1)\cong 2 \times \sym(4)$, we see that
$C_K(E_1)=E_1$ and so   $\mathcal{Z}(N_K(E_2)) \leq E_1 \cap E_2$.
\end{proof}

%
%

By exploiting the $3$-subgroups normalizing $F_1$ and $F_2$, we
are able to determine $N_G(F_2)$ in the following lemma. However,
we are not able to fully determine $N_G(F_1)$ until we have
control of the $2$-structure of $G$.

\begin{lemma}\label{applying FT to Fi}
\begin{enumerate}[$(i)$]
 \item $C_G(F_1)/F_1$ has a self-centralizing, but
 not self-normalizing, element of order three and $C_G(F_1)/F_1\ncong
 \PSL_2(7)$.
 \item $C_G(F_2)=E_2\<x\>$.
 \item $C_G(E_2)=E_2$.
 \end{enumerate}
\end{lemma}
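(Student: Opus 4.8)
The plan is to study, for $i\in\{1,2\}$, the quotient $\bar X_i:=C_G(F_i)/F_i$ via the Feit--Thompson Theorem (Theorem \ref{Feit-Thompson}). First I would produce a self-centralizing element of order three. Since $C_G(x)=C$, any element of $C_G(F_i)$ commuting with $x$ lies in $C\cap C_G(F_i)=C_C(F_i)=\langle x\rangle\times F_i$ by Lemma \ref{F1,F2,P1 and P2}; as $F_i$ is $\langle x\rangle$-invariant and coprime to $\langle x\rangle$, coprime action (Theorem \ref{coprime action}$(iii)$) gives $C_{\bar X_i}(\bar x)=(\langle x\rangle\times F_i)F_i/F_i=\langle\bar x\rangle$, where $\bar x:=xF_i$. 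A short $p$-group argument then shows $\langle\bar x\rangle$ is a full Sylow $3$-subgroup of $\bar X_i$: if a Sylow $3$-subgroup $S\ni x$ of $C_G(F_i)$ were larger then $C_S(x)=S\cap(\langle x\rangle\times F_i)=\langle x\rangle$ would be properly contained in $N_S(\langle x\rangle)$ while $N_S(\langle x\rangle)=C_S(\langle x\rangle)=\langle x\rangle$, a contradiction. Finally, any involution of $E_i\setminus F_i$ lies in $C_G(F_i)\cap K$, normalizes $\langle x\rangle$ (as $E_i\le K$) and inverts it, witnessing that $\langle\bar x\rangle$ is not self-normalizing. This proves the first half of $(i)$.

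To exclude $\bar X_i\cong\PSL_2(7)$ I would use the generator $\rho_i$ of $R_i\in\syl_3(N_C(F_i))$: it normalizes $F_i$ and hence $C_G(F_i)$, centralizes $x$ (so fixes $\bar x$), and permutes the three involutions of $F_i$ cyclically, so $C_{F_i}(\rho_i)=1$. As $\out(\PSL_2(7))$ has no element of order three, the automorphism induced by $\rho_i$ would be conjugation by some element of $C_{\bar X_i}(\bar x)=\langle\bar x\rangle$; thus $g:=\rho_i x^{-k}$ has order three, lies in $P_i^{\#}\subset\CC_4^G\cup\CC_5^G\cup\CC_6^G$, and satisfies $[C_G(F_i),g]\le F_i$ with $C_{F_i}(g)=1$. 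The homomorphism $q\mapsto[q,g]$ from $C_G(F_i)$ to the central subgroup $F_i$ is then surjective with kernel $C_{C_G(F_i)}(g)$, so $C_{C_G(F_i)}(g)$ is a complement to $F_i$ isomorphic to $\bar X_i\cong\PSL_2(7)$ and contained in $C_G(g)$. But for $g\in\CC_5^G\cup\CC_6^G$ we have $C_G(g)\le H$ soluble of order at most $81$ (Lemma \ref{order of sylow 3's and H is self normalizing}), and for $g\in\CC_4^G$ we have $C_G(g)\cong 3\times\alt(6)$ of order $1080$; in neither case can $C_G(g)$ contain $\PSL_2(7)$, a contradiction. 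This completes $(i)$.

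For $(ii)$ I would additionally rule out $\bar X_2\cong\alt(5)$, the group $\PSL_2(7)$, and any non-trivial normal subgroup in the Feit--Thompson conclusions, so as to force $\bar X_2\cong\sym(3)=E_2\langle x\rangle/F_2$. The crux is to show that $E_2$ is a Sylow $2$-subgroup of $C_G(F_2)$, i.e. $|C_G(F_2)|_2=8$; this is where the $2$-local data of Lemma \ref{Normalizer of Ei in K} (the subgroup $N_K(E_2)\cong 2\times\sym(4)$, the identity $C_K(E_2)=E_2$, and the class-$\CC_3^G$ involution of $\mathcal{Z}(N_K(E_2))\le E_1\cap E_2$) must be exploited. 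Granting $|C_G(F_2)|_2=8$, so that $|\bar X_2|_2=2$, every non-minimal Feit--Thompson outcome collapses: the non-soluble cases $\alt(5)$ and $\PSL_2(7)$ require $2$-part at least $4$, hence are excluded; a $C_3$-quotient with a normal $2$-group is impossible because the inverting involution of the first paragraph, lying in that normal $2$-group, would satisfy $\bar x\,\bar u\,\bar x^{-1}\notin N$; and a $\sym(3)$-quotient forces the nilpotent kernel $N$ to be an odd $\{2,3\}'$-group, which vanishes by coprime action (Theorem \ref{coprime action}$(iv)$) since for every $a\in P_2^{\#}$ the group $C_{C_G(F_2)}(a)$ contains no $\{2,3\}'$-element (it lies in $C_C(F_2)=\langle x\rangle\times F_2$ when $a\in\CC_4^G$, and in the $\{2,3\}$-group $C_G(a)\le H$ when $a\in\CC_5^G\cup\CC_6^G$). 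Thus $\bar X_2\cong\sym(3)$, $|C_G(F_2)|=24$, and $C_G(F_2)=E_2\langle x\rangle$.

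Part $(iii)$ is then immediate: $C_G(E_2)\le C_G(F_2)=E_2\langle x\rangle$, and since $x$ does not centralize $E_2$ (indeed $[E_2,x]=\langle x\rangle$), we get $C_G(E_2)=C_{E_2\langle x\rangle}(E_2)=E_2$. The hard part of the whole argument is precisely the determination that $E_2$ is a Sylow $2$-subgroup of $C_G(F_2)$: it is the $2$-local control available around $E_2$, and absent for $F_1$, that allows $C_G(F_2)$ to be pinned down exactly in $(ii)$ while for $F_1$ only the partial information $(i)$ can be extracted until the global $2$-structure of $G$ is understood.
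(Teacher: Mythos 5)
Your part $(i)$ is correct. The self-centralizing/non-self-normalizing argument is essentially the paper's, and your exclusion of $\PSL_2(7)$ is a genuinely different and valid route: instead of the paper's $N/C$-plus-$\mathrm{Out}$ argument (which manufactures a subgroup $3\times \PSL_2(7)$ and contradicts the absence of $\{3,7\}$-commuting), you produce $g\in P_i^\#$ inducing the trivial automorphism of $C_G(F_i)/F_i$, split off a complement $C_{C_G(F_i)}(g)\cong \PSL_2(7)$ via the commutator homomorphism into the central subgroup $F_i$, and contradict the known orders of the $3$-centralizers. Both ultimately rest on the same fact (no $3$-centralizer in $G$ has order divisible by $7$), but your version is self-contained and clean.

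The gap is in part $(ii)$. Your entire deduction is conditional on $E_2\in\syl_2(C_G(F_2))$, which you yourself label ``the crux'' and ``the hard part'' but never prove, and the source you point to for it --- the $2$-local data of Lemma \ref{Normalizer of Ei in K} --- cannot deliver it at this stage. That lemma only controls $N_K(E_2)$ and $C_K(E_2)$, i.e.\ the situation inside $K=N_G(\<x\>)$; if $E_2<T\in\syl_2(C_G(F_2))$ then $N_T(E_2)>E_2$ consists of elements centralizing $F_2$ that need not normalize $\<x\>$, so nothing about $K$ constrains them. Worse, in the paper the $2$-local control of $E_2$ inside $G$ (that $C_G(E_2)=E_2$, later that $E_2\trianglelefteq C_G(s)$ in Lemma \ref{Centralizer of s}) is a \emph{consequence} of this very lemma, so your plan runs in a circle. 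The paper needs no $2$-part bound as input: it proves directly that $O_{3'}(C_G(F_2))=F_2$ by coprime action of $P_2$ on $X:=O_{3'}(C_G(F_2))/F_2$, using that the four subgroups of order three in $P_2$ have tiny centralizers --- $\<x\>$, $\<y\>$, $\<z\>$ act fixed-point-freely on $X$ (since $C_G(F_2)\cap C=\<x\>\times F_2$ and $y,z\in\CC_5^G$ have $3$-group centralizers), hence $X=C_X(\<w\>)$ has order at most $2$ (as $|C_G(w)|=3^32$ for $w\in\CC_6^G$), and then $X=1$ because it admits a fixed-point-free automorphism of order three. With all Feit--Thompson kernels trivial, $\alt(5)$ is excluded because $F_2x$ would be forced into the centre of $C_G(F_2)/F_2$, and $C_3$ by the non-self-normalizing statement of $(i)$. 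This also corrects your closing diagnosis: the true asymmetry between $F_1$ and $F_2$ is not ``$2$-local control around $E_2$'' but the conjugacy classes met by $P_1$ and $P_2$ (Lemma \ref{F1,F2,P1 and P2}): $P_2$ contains four elements of $\CC_5^G$, whose centralizers are $3$-groups, whereas $P_1$ meets only $\CC_4^G\cup\CC_6^G$, which is exactly why the coprime argument fails for $F_1$ (and indeed $C_G(F_1)/F_1\cong\alt(5)$ in the target group $\alt(9)$).
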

\begin{proof}
Let $i \in \{1,2\}$. By Lemma \ref{F1,F2,P1 and P2}, $C_G(F_i) \cap C=\<F_i,x\>\cong 3
\times 2 \times 2$ and $N_G(F_i) \cap C \cong 3 \times
\sym(4)$. In particular this tells us that $N_G(F_i)/C_G(F_i)\cong
\sym(3)\cong \mathrm{Aut}(F_i)$.  We also see that $C_G(F_i)/F_i$
has a self-centralizing element of order three. Suppose
$C_G(F_i)/F_i\cong \PSL_2(7)$. Since $C_G(F_i)/F_i$ is normalized
by $N_G(F_i)/C_G(F_i)\cong \sym(3)$ and
$|\mathrm{Out}(\PSL_2(7))|=2$, $N_G(F_i)/F_i$ has a subgroup
isomorphic to $3 \times \PSL_2(7)$. This forces an element of
order three in $G$ to commute with an element of order seven which
is not possible. Thus $C_G(F_i)/F_i\ncong \PSL_2(7)$. Since
$E_i\leq C_G(F_i)\cap K$ and $E_i \nleq C$, $\<x\>$ is not
self-normalizing in $C_G(F_i)$. This proves part
$(i)$.

Now we fix $i=2$ and apply the Feit--Thompson Theorem (Theorem
\ref{Feit-Thompson}) to $C_G(F_2)/F_2$. Set
$X:=O_{3'}(C_G(F_2))/F_2$. Then $X$ is acted on by $P_2$ and by
coprime action, $X=\<C_X(\<r\>)|1< \<r\>< P_2\>$. Recall
$P_2=\{1,x,x^2,y,y^2,z,z^2,w,w^2\}$  where $y,z\in \CC_5^G$ and $w
\in \CC_6^G$. Since $\<x\>, \<y\>$ and $\<z\>$ act
fixed-point-freely on $X$, $X=C_X(\<w\>)$. Now $|C_G(\<w\>)|=3^32$
and so $|X|\leq 2$. However $X$ admits a fixed-point-free
automorphism of order three and so $|X|=1$ and
$O_{3'}(C_G(F_2))=F_2$.

So we have $C_G(F_2)/F_2\cong \alt(5)$ or $\sym(3)$. Suppose $C_G(F_2)/F_2\cong \alt(5)$. Since
$C_G(F_2)/F_2$ is normalized by $N_G(F_2)/F_2$ and $|\mathrm{Out}(\alt(5))|=2$, $N_G(F_2)/F_2$
contains a subgroup isomorphic to $3 \times \alt(5)$. Therefore an element of order three in
$P_2F_2/F_2 \cong P_2$ commutes with $C_G(F_2)/F_2$ and in particular with an element of order
five. However the only elements of order three in $P_2$ which commute with an element of order five
are $x$ and $x\inv$ and if $[F_2x,C_G(F_2)/F_2]=1$ then  $F_2x \in \mathcal{Z}(C_G(F_2)/F_2)=1$ which is a
contradiction.

Therefore we may conclude that $C_G(F_2)/F_2\cong \sym(3)$ and so
$C_G(F_2)=C_K(F_2)=E_2\<x\>$. It follows immediately that
$C_G(E_2)=E_2$.
\end{proof}

\begin{lemma}\label{the involution s}
Let $s \in \CC_3^G$. Then $C_G(s)/\<s\>$ has a self-centralizing
element of order three in $\CC_6^G$ and $s$ is in the centre of a subgroup of $G$
isomorphic to $\SL_2(3)$.
\end{lemma}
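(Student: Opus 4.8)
The plan is to establish both claims for the single involution $s_0$ generating $\mathcal{Z}(N_K(E_2))$ and then transport them to an arbitrary $s\in\CC_3^G$ by conjugation; this is legitimate because $\CC_3^G$ is one $G$-class and both properties are conjugation-invariant. The $\SL_2(3)$ statement I would obtain directly from Case~3: since $\CC_7^G=\CC_5^G$, Lemma~\ref{3-elements in S not J} supplies, for $S\in\syl_3(H)$ and $T\in\syl_2(N_H(S))$, an involution $t$ with $T=\langle t\rangle$, $t\in\CC_3^G$, and a subgroup $Y_0\cong\SL_2(3)$ with $t$ centralizing $Y_0$. As $t\in T\le Y_0$ and $\SL_2(3)$ has a unique involution, $t$ is the central involution of $Y_0$; hence every $s\in\CC_3^G$ is the central involution of a conjugate of $Y_0$.

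For the self-centralizing statement I would take $s=s_0$. By Lemma~\ref{Normalizer of Ei in K} we have $s_0\in\CC_3^G$, and its proof exhibits a generator $w$ of $R_2\le P_2\le J$ with $w\in\CC_6^G$ and $[s_0,w]=1$. The structure of $C_G(w)$ is then pinned down by Lemma~\ref{order of sylow 3's and H is self normalizing}: for $w\in\CC_6^G$ we have $C_G(w)\le H$ with $|C_G(w)|=54$, and since $w\in J$ with $J$ abelian, $J\in\syl_3(C_G(w))$ is normal, giving $C_G(w)=J\rtimes\langle t_0\rangle$ for an involution $t_0$ acting fixed-point-freely on $J/\langle w\rangle$, so that $C_J(t_0)=\langle w\rangle$ has order three. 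Every involution of $C_G(w)$ lies in $Jt_0$; writing $s_0=j_0t_0$ and using that $J$ is abelian, $s_0$ and $t_0$ induce the same automorphism of $J$, whence $C_J(s_0)=C_J(t_0)=\langle w\rangle$ and $C_{C_G(w)}(s_0)=\langle w,s_0\rangle$ has order six. In particular $C_{C_G(s_0)}(w)=C_G(s_0)\cap C_G(w)=\langle w,s_0\rangle$.

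The decisive step, and the main obstacle, is to descend to $\bar G:=C_G(s_0)/\langle s_0\rangle$ and show the image $\bar w$ is self-centralizing, the difficulty being that $\langle w\rangle$ does not act coprimely on $C_G(s_0)$, so Theorem~\ref{coprime action} is unavailable and one must directly bound how the centralizer can grow in the central quotient. I would let $M$ be the preimage of $C_{\bar G}(\bar w)$, so $M=\{g\in C_G(s_0):[g,w]\in\langle s_0\rangle\}$, and note that since $\langle s_0\rangle$ is central the map $\delta\colon M\to\langle s_0\rangle$, $g\mapsto[g,w]$, is a homomorphism (using $[g_1g_2,w]=[g_1,w]^{g_2}[g_2,w]$ and centrality) with kernel $C_{C_G(s_0)}(w)=\langle w,s_0\rangle$. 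To see $\delta$ is trivial, suppose $[g,w]=s_0$; then $g^{w}=gs_0$, whence $g^{w^2}=(g^{w})^{w}=(gs_0)^{w}=g^{w}s_0=gs_0s_0=g$, so $g$ centralizes $w^2$ and therefore $w$, forcing $[g,w]=1$, a contradiction. Thus $M=\langle w,s_0\rangle$ and $C_{\bar G}(\bar w)=\langle\bar w\rangle$ has order three, so $\bar w$ is a self-centralizing element of order three in the image of $\CC_6^G$. Conjugating $s_0$ to an arbitrary $s\in\CC_3^G$ then yields both assertions in full.
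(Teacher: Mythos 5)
You prove the right things, and in substance you run on the same two engines as the paper: a commuting pair consisting of an involution in $\CC_3^G$ and an element of order three in $\CC_6^G$ (plus the known structure of $\CC_6$-centralizers), and Lemma \ref{3-elements in S not J} for the $\SL_2(3)$ claim. The differences are in execution. The paper takes $s\in\CC_3$ and reads off from Table \ref{char table H} that $s$ commutes with some $y\in\CC_6$, then quotes $C_G(y)=J\<s\>$ and $C_J(s)=\<y\>$ and stops; you instead manufacture the pair $(s_0,w)$ from Lemma \ref{Normalizer of Ei in K}, which works but routes through more machinery than necessary. Where your write-up is genuinely better is the descent to $C_G(s_0)/\<s_0\>$: the paper's ``Therefore'' silently upgrades $C_{C_G(s)}(y)=\<y,s\>$ to self-centralization of $\<s\>y$ in the quotient, and your homomorphism $\delta$ closes exactly that gap. (There is also a one-line alternative: if $[g,w]\in\<s_0\>$ then $w^g$ lies in the coset $w\<s_0\>$, whose unique element of order three is $w$, so $w^g=w$.)

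Three imprecisions, none fatal. First, $P_2\le J$ is unjustified, and in Case 3 dubious, since $P_2$ lies in $C_G(x)\cong 3\times\alt(6)$, which is not contained in $H$; fortunately you only use $w\in\CC_6^G$ and $[s_0,w]=1$, which do hold. Second, for $w\in\CC_6^G$ rather than $w\in\CC_6$, the assertions $C_G(w)\le H$, $w\in J$ and $C_J(t_0)=\<w\>$ must be read in a conjugate $H^g$, $J^g$; your opening conjugation-invariance remark covers this, but note that $C_J(t_0)=\<w\>$ is not literally part of Lemma \ref{order of sylow 3's and H is self normalizing} and needs the extra observation that involution centralizers in $H$ have $3$-part $3$. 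Third, the containment $T\le Y_0$ is not in the statement of Lemma \ref{3-elements in S not J}, only in its proof, where the $\SL_2(3)$ is built as $C_Y(T)\geq T$; without that containment the centrality of $t$ cannot be recovered, since $Y_0\times\<t\>$ contains no copy of $\SL_2(3)$ having $t$ as its central involution. The paper's citation relies on exactly the same fact, so this is a shared rather than a new gap, but it is the one step you should have flagged as coming from the cited lemma's proof rather than its statement.
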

\begin{proof}
It is clear from Table \ref{char table H} that $s\in \CC_3$ commutes
with some $y \in \CC_6$. Also $C_G(y)=J\<s\>$ and $C_J(s)=\<y\>$.
Therefore $C_{C_G(s)}(y)=\<y,s\>$ and  $C_G(s)/\<s\>$ has a
self-centralizing element of order three $\<s\>y$.

We see that $s$ lies at the centre of a subgroup isomorphic to
$\SL_2(3)$ from Lemma \ref{3-elements in S not J}.
\end{proof}

By Lemma \ref{Normalizer of Ei in K}, there is an involution $s \in
\mathcal{Z}(N_K(E_2))$ such that $s \in \CC_3^G$ and $s \in E_1 \cap E_2$. Set
$L:=C_G(s)$.

\begin{lemma}\label{Centralizer of s}
\begin{enumerate}[$(i)$]
\item $O_2(L)\cong 2_+^{1+4}$ and $L/O_2(L)\cong
\sym(3)$.

\item If $T \in \syl_2(G)$ then $|T|=2^6$ and $|\mathcal{Z}(T)|=2$ with $\mathcal{Z}(T)^\# \in \CC_3^G$.

\item  $E_2\trianglelefteq L$.
\end{enumerate}
\end{lemma}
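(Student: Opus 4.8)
To prove $(i)$ the plan is to pass to $\bar L := L/\langle s\rangle$ and apply the Feit--Thompson Theorem (Theorem \ref{Feit-Thompson}). Since $s = \mathcal{Z}(L)$ and, by Lemma \ref{the involution s}, $L$ contains $A \cong \SL_2(3)$ with $\mathcal{Z}(A) = \langle s\rangle$ together with an element $y$ satisfying $C_L(y) = \langle y,s\rangle$, the image $\langle \bar y\rangle$ is a self-centralizing subgroup of order three in $\bar L$, so one of the three conclusions of Theorem \ref{Feit-Thompson} must hold. Before splitting into cases I would record two facts. First, $N_K(E_2) \cong 2 \times \sym(4)$ centralizes $s$ (Lemma \ref{Normalizer of Ei in K}), hence lies in $L$, and with $s$ in its central direct factor its image $\bar M := N_K(E_2)/\langle s\rangle \cong \sym(4)$ sits in $\bar L$. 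Second, $E_2 \le L$ is elementary abelian of order eight with $C_L(E_2) = E_2$ (Lemma \ref{applying FT to Fi}); since $s \notin C = C_G(x)$ one has $F_2\langle s\rangle = E_2$, so $\overline{E_2} = O_2(\bar M) \cong 2^2$.

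The heart of the argument, and the main obstacle, is to eliminate the $\alt(5)$ and $\PSL_2(7)$ conclusions while identifying $O_2(L)$. Whenever $O_2(\bar L) \neq 1$ I would note that $\bar M \cap O_2(\bar L)$ is a normal $2$-subgroup of $\bar M \cong \sym(4)$ with quotient embedding into $\bar L/O_2(\bar L)$; as neither $\sym(3)$ nor $\alt(5)$ contains $\sym(4)$, this intersection is non-trivial and therefore equals $O_2(\bar M) = \overline{E_2}$, so that $E_2 \le Q := O_2(L)$. Now $Q$ contains $A \cap Q = O_2(A) \cong Q_8$, while $C_Q(y) = Q \cap \langle y,s\rangle = \langle s\rangle$ and $\mathcal{Z}(Q) \le C_Q(E_2) = E_2$; a short computation with the coprime action of $y$ then gives $\mathcal{Z}(Q) = \Phi(Q) = Q' = \langle s\rangle$, so $Q$ is extraspecial (Lemma \ref{prelim-exraspecial}), and since $E_2 \le Q$ is elementary abelian of order $2^3$ with $C_Q(E_2) = E_2$ the Witt index forces $Q \cong 2_+^{1+4}$ of order $32$. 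This rules out the conclusion $\bar L/O_2(\bar L) \cong \alt(5)$, because $L/Q$ would then embed in $\out(2_+^{1+4}) \cong \O_4^+(2)$ (Theorem \ref{extraspecial outer automorphisms}) of order $72$, and $|\alt(5)| = 60 \nmid 72$; it also forces the nilpotent normal subgroup of the first Feit--Thompson case to be a $2$-group, since in a nilpotent group the odd-order part centralizes the Sylow $2$-subgroup and would thus centralize $E_2 \le O_2(L)$, against $C_L(E_2) = E_2$. The $\PSL_2(7)$ conclusion, where $O_2(\bar L) = 1$ and the preceding does not apply, I would discard directly: $L$ is then a central extension $\langle s\rangle.\PSL_2(7)$, hence $2 \times \PSL_2(7)$ or $\SL_2(7)$; the former contains no $Q_8$ and the latter, with Sylow $2$-subgroup $Q_{16}$, contains no $2 \times \dih(8)$, contradicting $A \cong \SL_2(3) \le L$ respectively $N_K(E_2) \cong 2 \times \sym(4) \le L$. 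Thus $\bar L = O_2(\bar L):\sym(3)$ with $O_2(\bar L)$ a $2$-group on which $\bar y$ acts fixed-point-freely (so $O_2(\bar L)$ is elementary abelian by Theorem \ref{Higman's SL2 Thm}), and the computation above delivers $Q = O_2(L) \cong 2_+^{1+4}$ and $L/Q \cong \sym(3)$, proving $(i)$.

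For $(ii)$, $|L| = 32 \cdot 6 = 2^6 \cdot 3$, so any $T \in \syl_2(L)$ has order $2^6$ and contains $Q$ with $[T:Q] = 2$. The action of $L/Q \cong \sym(3)$ on $Q$ is faithful, for otherwise an element of order three would centralize $Q$, against $C_Q(y) = \langle s\rangle$; hence the involution generating $T/Q$ acts non-trivially on $Q$, giving $C_T(Q) = \mathcal{Z}(Q) = \langle s\rangle$ and $\mathcal{Z}(T) = \langle s\rangle$ with $\langle s\rangle^\# \subseteq \CC_3^G$ by the choice of $s$. To see $T \in \syl_2(G)$ I would argue that if $T < T_1 \in \syl_2(G)$ then $N_{T_1}(T) > T$ contains an element normalizing, hence centralizing, $\mathcal{Z}(T) = \langle s\rangle$; this element lies in $C_G(s) = L$ and normalizes $T \in \syl_2(L)$, so lies in $T$, a contradiction. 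Therefore $|T| = 2^6$.

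For $(iii)$, the inclusion $E_2 \le Q = O_2(L) \cong 2_+^{1+4}$ was established in the course of $(i)$. Since $Q$ has class two with $Q' = \langle s\rangle \le E_2$, every subgroup of $Q$ containing $\langle s\rangle$ is normal in $Q$; in particular $E_2 \trianglelefteq Q$ and so $Q \le N_L(E_2)$. Finally $N_K(E_2) \le N_L(E_2)$ surjects onto $L/Q \cong \sym(3)$, its order $48$ meeting $Q$ in a subgroup of order $8$, whence $N_L(E_2) \supseteq \langle Q, N_K(E_2)\rangle = L$ and $E_2 \trianglelefteq L$.
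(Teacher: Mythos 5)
Your skeleton is the paper's own: pass to $\bar{L}=L/\langle s\rangle$, apply Feit--Thompson to the self-centralizing image of $y$, eliminate $\PSL_2(7)$ via the Schur multiplier together with the subgroups $\SL_2(3)$ and $2\times\sym(4)$, and identify $O_2(L)\cong 2_+^{1+4}$; and your parts $(ii)$ and $(iii)$ are correct once $(i)$ is in hand. The genuine gap sits inside $(i)$, at exactly the point where the paper has to work hardest. Two of your claims are not available when you invoke them. First, $A\cap Q=O_2(A)$: in the paper this containment is proved only \emph{after} $L/Q\cong\sym(3)$ has been established, from $[O_2(A),R]=O_2(A)$ together with the fact that in $\sym(3)$ no non-trivial $2$-subgroup equals its commutator with a Sylow $3$-subgroup; at the stage where you use it the quotient could still be $\alt(5)$, in which case $\bar{A}\cong\alt(4)$ may embed into the quotient and $A\cap Q=\langle s\rangle$ is a live possibility. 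Second, and more seriously, there is no ``short computation with the coprime action of $y$'' yielding $\mathcal{Z}(Q)=\Phi(Q)=Q'=\langle s\rangle$. From $C_Q(y)=\langle s\rangle$, $\mathcal{Z}(Q)\leq E_2$ and $C_Q(E_2)=E_2$ you cannot even bound $|Q|$ (orbit counting gives only $|Q|\equiv 2 \bmod 3$, so $|Q|=2^7$ is not excluded), and you do not know $E_2\trianglelefteq Q$. This is precisely what the paper spends the bulk of its proof establishing: $N_Q(E_2)/E_2$ is a $y$-invariant $2$-subgroup of $\GL_3(2)$, hence of order four; $\Phi(N_Q(E_2))$ is $\langle s\rangle$ or $E_2$; if $Q>N_Q(E_2)$ then normalizer growth in the $2$-group, the characteristic-subgroup trick, and the structure of $\out(2_+^{1+4})$ produce a contradiction, forcing $Q=N_Q(E_2)$ of order $2^5$ with $E_2\trianglelefteq Q$; and only then does $\Phi(Q)\leq E_2\cap O_2(A)=\langle s\rangle$ follow. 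Without this chain, your extraspecial/``Witt index'' step has nothing to stand on.

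There is also an outright misapplication: you cite Theorem \ref{Higman's SL2 Thm} to make $O_2(\bar{L})$ elementary abelian in the $\sym(3)$ case, but that theorem requires $X/Q\cong\SL_2(2^n)$ with $n\geq 2$; for $\sym(3)=\SL_2(2)$ the conclusion fails in general (the homocyclic group $C_4\times C_4$ admits a fixed-point-free automorphism of order three). The paper invokes Higman only in the $\alt(5)$ branch, where the hypothesis is satisfied, and that is exactly how it disposes of $\alt(5)$ without ever needing $O_2(A)\leq Q$ there --- a route you might adopt to repair your ordering. Finally, a smaller logical hole: your dichotomy ``$O_2(\bar{L})\neq 1$'' versus ``$\PSL_2(7)$'' omits the Feit--Thompson case of a nilpotent normal subgroup of odd order, which also has $O_2(\bar{L})=1$; your own $\sym(4)$-embedding observation kills it at once, but it needs to be said.
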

\begin{proof}
By Lemma \ref{the involution s},  $L/\<s\>$ satisfies Theorem
\ref{Feit-Thompson}. Suppose $L/\<s\> \cong \PSL_2(7)$. By Lemma
\ref{the involution s}, $s$ lies at the centre of a subgroup
isomorphic to $\SL_2(3)$. This implies that $L$ does not split over
$\<s\>$ and so it follows from calculation of the Schur Multiplier
of $\PSL_2(7)$ (see \cite{atlas} for example) that  $L \cong
\SL_2(7)$. However $E_2 \leq L$ is elementary abelian of order $8$
and $\SL_2(7)$ has no such subgroup. Thus $L/\<s\> \ncong
\PSL_2(7)$.

Let $Q:=O_{3'}(L)$ then by Theorem \ref{thompson-nilpotent}, $Q/\<s\>$ is nilpotent which means $Q$ is
also nilpotent. Let $R \in \syl_3(N_K(E_2))$. Then $R$ centralizes
$s$ and so $R\in \syl_3(L)$. By coprime action, $E_2=C_{E_2}(R)
\times [E_2,R]= \<s\> \times [E_2,R]$. Suppose $E_2 \nleq Q$ then $Q \cap E_2=\<s\>$.
Since $Q \cap N_K(E_2)$ is normalized by $R$ and $N_K(E_2)\cong
2 \times \sym(4)$, $N_K(E_2) \cap Q=\<s\>$. Therefore
$QN_K(E_2)/Q \cong N_K(E_2)/(N_K(E_2) \cap Q)\cong \sym(4)$.
However, by Theorem \ref{Feit-Thompson}, $L/Q\cong 3$, $\sym(3)$ or $\alt(5)$ so this is
impossible. Thus $E_2 \leq Q$. Since $Q$ is nilpotent and
$C_G(E_2)=E_2$ by Lemma \ref{applying FT to Fi}, $Q$ is a
$2$-group. Furthermore, $\mathcal{Z}(Q)\leq E_2$. If $\mathcal{Z}(Q)=E_2$ then $Q=E_2$
and $L=N_K(E_2)\cong 2 \times \sym(4)$. However this contradicts
Lemma \ref{the involution s} which says that $L$ contains a
subgroup isomorphic to $\SL_2(3)$. So $s \in \mathcal{Z}(Q) <E_2$ and $\mathcal{Z}(Q)$
is normalized by $R$. Thus $\mathcal{Z}(Q)=\<s\>$. Furthermore
$C_L(Q)=\<s\>$ which implies that $L/Q$ is isomorphic to a
subgroup of $\out(Q)$.

Suppose $L/Q\cong \alt(5)$. Then $Q/\<s\>$ is elementary abelian by Theorem \ref{Higman's SL2 Thm}.
Therefore $Q$ is an extraspecial group and $E_2\unlhd Q$. Now $1\neq Q/E_2$ embeds into $\aut(E_2)
\cong \GL_3(2)$ (as $E_2=C_Q(E_2)$) and is $R$-invariant. Therefore $|Q/E_2|=2^2$. Hence $Q$ has
order $2^5$ and contains an elementary abelian subgroup of order $2^3$ which implies that $Q\cong
2_+^{1+4}$. This is a contradiction since $\out(2_+^{1+4})$ does not contain a subgroup isomorphic
to $\alt(5)$. It is clear that $R$ is not self-normalizing in $L$ and so $L/Q\cong \sym(3)$ and
$L=QN_K(E_2)$.

Consider $Q_0:=N_Q(E_2)$. Then $Q_0/E_2$ is $R$-invariant and is
isomorphic to a subgroup of $\GL_3(2)$. Therefore $|Q_0|=2^5$.
Since $C_G(E_2)=E_2$, $Q_0$ is non-abelian. Since $Q_0/E_2$ has
order four and is acted on fixed-point-freely by $R$, $Q_0/E_2$ is
elementary abelian. Thus $\Phi(Q_0)\leq E_2$. Since $\Phi(Q_0)$ is
$R$ invariant, $\Phi(Q_0)=E_2$ or $\<s\>$.

Suppose $Q>Q_0$ then $N_Q(Q_0)>Q_0$. Thus, if $E_2=\Phi(Q_0)$ then
$E_2\trianglelefteq N_Q(Q_0)>Q_0$ which is a contradiction.
Therefore $\Phi(Q_0)=\<s\>$ which proves that $Q_0$ is
extraspecial. Furthermore, $E_2 \leq Q_0$ implies $Q_0\cong
2_+^{1+4}$. Now $Q_0 N_L(Q_0)/Q_0$ is isomorphic to a
subgroup of $\mathrm{Out}(2_+^{1+4})\cong \sym(3)\wr \sym(2)$ and
contains a proper normal $2$-subgroup $N_Q(Q_0)/Q_0$ which admits
a fixed-point-free action by $RQ_0/Q_0$. This is a contradiction.
Therefore $Q=Q_0$ which is to say that $E_2 \trianglelefteq Q$ and
$|Q|=2^5$ with $\Phi(Q)\leq E_2$. Furthermore, $E_2\trianglelefteq
L=QN_K(E_2)$.

By Lemma \ref{the involution s}, there exists a subgroup $A\leq L$
such that $A \cong \SL_2(3)$. Moreover, since $R \in \syl_3(L)$, we
may assume $R \leq A$. Observe that $Q_8\cong O_2(A) \leq Q$ since
$[O_2(A),R]=O_2(A)$. Therefore $Q=\<E_2,O_2(A)\>$. Consider $O_2(A)<
N_Q(O_2(A))\leq Q$. Since $R$ acts fixed-point-freely on
$Q/N_Q(O_2(A))$ it must be trivial. So $O_2(A)\trianglelefteq Q$ and
$Q/O_2(A)$ is elementary abelian. This implies that $\Phi(Q)\leq E_2
\cap O_2(A)=\<s\>$. It is therefore clear that
$Q'=\mathcal{Z}(Q)=\Phi(Q)=\<s\>$. Hence $Q\cong 2_+^{1+4}$.

Finally, let $T \in \syl_2(L)$. Then $\mathcal{Z}(T)\leq C_T(E_2) \leq E_2\leq
Q$ and so $\<s\>\leq \mathcal{Z}(T)\leq \mathcal{Z}(Q)=\<s\>$. Therefore $\mathcal{Z}(T)=\<s\>$
which implies that $N_G(T) \leq L$ and so $T \in \syl_2(G)$.
\end{proof}

We continue to set $Q=O_2(C_G(s))$.
\begin{lemma}\label{s not weakly closed}
$s$ is not weakly closed in $Q$ with respect to $G$.
\end{lemma}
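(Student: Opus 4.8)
The plan is to prove the assertion directly, by exhibiting a $G$-conjugate of $s$ that lies in $Q$ and is different from $s$; this is exactly the input required for Theorem~\ref{Ascbacher M12}. The first point I would record is the basic obstruction to finding it: since $\<s\>=\mathcal{Z}(Q)$ is characteristic in $Q$, we have $N_G(Q)=C_G(s)=L$, so every element normalising $Q$ already centralises $s$. Moreover $N_G(E_2)=L$ as well (if $N_G(E_2)/E_2$ were larger than the $\sym(4)$ induced by $L$ it would be all of $\GL_3(2)$, forcing the seven involutions of $E_2$ to be $G$-conjugate, contrary to $E_2$ containing both $\CC_2^G$- and $\CC_3^G$-involutions). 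Hence no conjugation inside $L$ can move $s$ to a non-central involution of $Q$, and the fusing element must be sought outside $L$.

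The conjugate I am after is a non-central involution $u\in Q\setminus\<s\>$ with $u\in s^G$; as $Q\cong 2_+^{1+4}$, such a $u$ automatically lies in $\CC_3^G$. I would look for a fours group $V\le Q$ with $s\in V$ all of whose involutions lie in $\CC_3^G$, together with an element $g$ of order three normalising $V$ and cycling its three involutions, so that $\<V,g\>\cong\alt(4)$ and $u:=s^g\in V\setminus\<s\>\le Q$ is the desired conjugate. The raw material for $V$ comes from Lemmas~\ref{the involution s} and~\ref{exactly 2 q-8's in extraspecial group}: $Q$ is the central product of the two quaternion subgroups $Q_1=O_2(A)$ (with $A\cong\SL_2(3)$ and $\mathcal{Z}(A)=\<s\>$) and $Q_2$, on each of which $R\in\syl_3(L)$ acts fixed-point-freely modulo $\<s\>$. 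A candidate pure-$\CC_3^G$ fours group is then $\<s,q_1q_2\>$ for suitable order-four $q_1\in Q_1$, $q_2\in Q_2$ with $q_1^2=q_2^2=s$, so that $q_1q_2$ is a non-central involution. Note that any such $g$ cannot centralise $s$ (otherwise it would fix each element of $V$), whence $g\notin L$; chasing the possible classes of order-three elements, I expect $g\in\CC_6^G$.

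As an accessible intermediate step I would first show that $s$ is $G$-conjugate to another $\CC_3^G$-involution lying inside $L$. In contrast to $E_2$, the group $E_1$ carries three involutions of class $\CC_3^G$, and a Sylow $3$-subgroup $R_1$ of $N_K(E_1)\cong 2\times\sym(4)$ normalises $E_1$ and permutes these cyclically. Here $R_1\le N_{O^3(C)}(F_1)$, and because $F_1$ is the fours group whose normaliser in $O^3(C)\cong\alt(6)$ has its Sylow $3$-subgroup consisting of ``single $3$-cycles'', one gets $R_1^\#\subset\CC_4^G$ (in contrast to $R_2^\#\subset\CC_6^G$ from Lemma~\ref{Normalizer of Ei in K}). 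Thus $R_1$ realises the fusion of $s$ to a $\CC_3^G$-involution $a\in E_1$; the difficulty is that $a$ lies in $L\setminus Q$, not in $Q$.

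The main obstacle is precisely this last gap. Since $Q\trianglelefteq L$, $L$-fusion can never carry a non-central $\CC_3^G$-involution of $L$ into or out of $Q$; in the same way the easily produced conjugates of $s$ land in a coset $Qa$ inside a Sylow $2$-subgroup, i.e. outside $Q$. To reach an involution genuinely inside $Q$ I expect to have to construct the ``transverse'' element $g\in\CC_6^G$ itself, equivalently to prove that the three involutions of the candidate fours group $V\le Q$ are truly $G$-fused rather than merely of abstract class $\CC_3^G$; this fusion is invisible to both $C_G(s)$ and $N_G(E_2)$. The cleanest way I foresee to close it is either to locate a subgroup (an $\alt(8)$- or $\sym(6)$-section meeting $Q$) in which $s$ and a non-central involution of $Q$ are visibly conjugate, or to run a short counting argument pitting the two $L$-orbits of non-central involutions of $Q$, of lengths $6$ and $12$, against the single global class $\CC_3^G$. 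Once any one $G$-conjugate of $s$ is placed in $Q\setminus\<s\>$, weak closure fails and the lemma is proved.
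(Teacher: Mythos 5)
Your proposal does not prove the lemma: everything up to your ``main obstacle'' is sound, but the obstacle is the entire content of the statement, and neither of your two suggested ways of closing it is an argument. Concretely, you correctly reformulate the lemma as exhibiting some $s^g\in Q\setminus\<s\>$, you correctly get $N_G(Q)=C_G(s)=L$ and (by a nice unconditional use of $C_G(E_2)=E_2$ from Lemma \ref{applying FT to Fi} and the maximality of $\sym(4)$ in $\GL_3(2)$) that $N_G(E_2)=L$, hence $E_1\nleq Q$; and your fusion of $s$ inside $E_1$ is genuine, since $R_1^\#\subset \CC_4^G$ forces $C_{E_1}(R_1)$ to be the unique involution of $E_1\setminus F_1$ commuting with a $\CC_4^G$-element, so $s$ lies in the $R_1$-orbit of length three. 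But all of these conjugates lie in $L\setminus Q$, exactly as you admit. Your first proposed remedy (find an $\alt(8)$- or $\sym(6)$-section meeting $Q$ in which the fusion is ``visible'') presupposes subgroup structure of $G$ that is not available at this stage --- it is essentially assuming the conclusion of the chapter --- and your second (playing the $L$-orbits of lengths $6$ and $12$ on non-central involutions of $Q$ against the class $\CC_3^G$) names no mechanism at all: nothing about those orbit lengths forces any non-central involution of $Q$ into $s^G$. So the proof stops precisely where it needed to start.

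It is worth seeing how the paper avoids your obstacle: it never constructs the conjugate directly. It assumes $s$ is weakly closed in $Q$ and mines that assumption. Weak closure transports along conjugation, so if $u:=C_{E_1}(R_1)$ were conjugate to $s$ then $s\in E_1\cap E_2\leq O_2(C_G(u))$ with $E_1\nleq Q$ would violate weak closure for $u$; hence $u\not\sim s$ (you get this unconditionally, which is a small improvement), so $E_1=\<s,s_1,s_2\>$ is generated by exactly three conjugates of $s$, whence $N_G(E_1)/C_G(E_1)\hookrightarrow\sym(3)$ and $C_G(E_1)$ is a $2$-group of order at least $2^4$. From there the paper pins down $|C_G(F_1)|_2=2^4$ (a larger Sylow $2$-subgroup would put $F_1\trianglelefteq T\in\syl_2(G)$ and meet $\mathcal{Z}(T)^\#\subset\CC_3^G$, impossible as $F_1\cap\CC_3^G=\emptyset$), applies the Feit--Thompson theorem (Theorem \ref{Feit-Thompson}, via Lemma \ref{applying FT to Fi}$(i)$) to get $C_G(F_1)/F_1\cong\alt(5)$, and then examines an abelian $U\in\syl_2(C_G(F_1))\cap\syl_2(C_G(E_1))$: a Sylow-counting argument shows $U$ contains at least nine conjugates of $s$, and weak closure in $Q$ then forces every element of $U\setminus Q$ to be conjugate to $s$, which drags $F_1$, and hence $E_1=\<F_1,s\>$, into $Q$ --- contradicting $E_1\nleq Q$. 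This indirect route through $C_G(F_1)$ is the missing idea; if you want to keep your direct strategy you would need to supply machinery of comparable strength, and none is present in the proposal.
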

\begin{proof}
Suppose for a contradiction that $s$ is weakly closed in $Q$ with respect to $G$. Since $E_2 \leq
Q$,  $s$ is also weakly closed in $E_2$ with respect to $G$. Also, by Lemma \ref{Normalizer of Ei
in K}, $s \in E_1 \cap E_2$ so $E_1 \leq L$. Since $s$ is the unique conjugate of itself  in
$E_2$, $L \leq N_G(E_2)\leq L$ and so $L=N_G(E_2)$. Therefore $Q=O_2(N_G(E_2))$. Since $E_1 \nleq
O_2(N_K(E_2))=E_2$,  $E_1 \nleq Q$.

Let $u \in \mathcal{Z}(N_K(E_1))\cong 2 \times \sym(4)$ and suppose $u$ is conjugate to $s$. Then $C_G(u)$
has shape $2_+^{1+4}. \sym(3)$. It follows that $E_1\leq O_2(C_G(u))$. Since $E_1 \nleq Q$, $s \neq
u$ and because $s \in E_1 \cap E_2 \leq O_2(C_G(u))$,  $u$ is not weakly closed in $O_2(C_G(u))$.
This contradicts our assumption on $s$. Therefore $u$ is not conjugate to $s$.  This implies that
$E_1$ contains at least three conjugates of $s$ as $s$ is not central in $N_K(E_1)$. Moreover
$F_1=E_1\cap C_G(x)$ and every element of order two in $F_1$ commutes with $x \in \CC_4$ however,
by Lemma \ref{the involution s}, involutions in $\CC_3^G$ commute only with elements of order three
in $\CC_6^G$. Therefore $F \cap \CC_3^G=\emptyset$. It follows that $E_1$ has exactly three
conjugates of $s$, namely $\{s,s_1,s_2\}$. If $\<s,s_1,s_2\>$ has order four then
$\<s,s_1,s_2\>\cap F_1$ would have order two and contain one of $s$, $s_1$ or $s_2$ which is not
possible. Thus $\<s,s_1,s_2\>=E_1$. Therefore $N_G(E_1)/C_G(E_1)$ is isomorphic to a subgroup of
$\sym(3)$. Since $s \in E_1$, $C_G(E_1)\leq L$ and since a Sylow $3$-subgroup of $L/\<s\>$  is
self-centralizing,  $C_G(E_1)$ is a $2$-group. Let $T \in \syl_2(L)$ such that $E_1 \leq T$. Notice
that we necessarily have that $E_2\vartriangleleft T$. Thus $N_T(E_1)=N_T(E_1E_2)>E_1E_2$. Hence
$N_T(E_1)$ has order at least $2^5$. In particular, $C_G(E_1)$ has order a multiple of $2^4$ and
therefore $2^4\mid |C_G(F_1)|$.

Suppose $2^5\mid |C_G(F_1)|$. Since $N_G(F_1)/C_G(F_1)\cong \sym(3)$, $2^6\mid |N_G(F_1)|$.
Therefore, by Lemma \ref{Centralizer of s}, we may choose $T \in \syl_2(G) \cap \syl_2(N_G(F_1))$.
Now $F_1\trianglelefteq T$ which implies $F_1 \cap \mathcal{Z}(T)\neq 1$. However this is a contradiction
since $F_1 \cap \CC_3^G=\emptyset$ and by Lemma \ref{Centralizer of s},  $\mathcal{Z}(T)^\# \subset \CC_3^G$.
Thus $C_G(F_1)$ has Sylow $2$-subgroups of order $2^4$ and it follows that $\syl_2(C_G(E_1))\cap
\syl_2(C_G(F_1))\neq \emptyset$.

Recall Lemma \ref{applying FT to Fi} $(i)$ which together with
Theorem \ref{Feit-Thompson} implies that $C_G(F_1)$ has a nilpotent
normal subgroup $N$ such that  $C_G(F_1)/N \cong \sym(3)$ or $N$ is
a $2$-group and $C_G(F_1)/N \cong \alt(5)$. Suppose $C_G(F_1)/N
\cong \sym(3)$. Then $|N/F_1|$ is an odd multiple of $2$. However
$N/F_1$ has a fixed-point-free automorphism and is nilpotent. This
contradiction implies $C_G(F_1)/N \cong \alt(5)$ and $N$ is a
$2$-group. Hence $N=F_1$.

Choose $U \in \syl_2(C_G(F_1))\cap \syl_2(C_G(E_1))$. Then $U \leq L$ and $E_1 \leq \mathcal{Z}(U)$ and so
$U$ must be abelian and contain $s$.   Since $\alt(5)$ has five Sylow $2$-subgroups, $C_G(F_1)$ has
five Sylow $2$-subgroups. Since $N_G(F_1)/C_G(F_1)\cong \sym(3)$, Sylow $3$-subgroups of $N_G(F_1)$
have order nine and act on this set of order five with at least one fixed-point. Thus we may choose
$P \in \syl_3(N_{N_G(F_1)}(U))$. Consider $L \cap P$ which normalizes $\<F_1,s\>=E_1$ and
$Q=O_2(L)$. Thus $L \cap P$ normalizes $QE_1 \in \syl_2(L)$. However by Lemma \ref{Centralizer of
s}, $\mathcal{Z}(QE_1)=\<s\>$ and so $N_G(QE_1)=N_L(QE_1)=QE_1$ since $L$ has shape $2^{1+4}.\sym(3)$.
Therefore $L \cap P=1$ which means that $U$ contains at least nine conjugates of $s$.  Since $U
\leq L$,  $|U \cap Q|\geq 2^3$. Now, $s$ is weakly closed in $Q$ so $U \cap Q$ contains no distinct
conjugate of $s$. Hence every element in $U \bs Q$ must be a conjugate of $s$. However this forces
$F_1 \leq Q\cap U$ and then $E_1=\<F_1,s\> \leq Q$. This is our final contradiction and we may
conclude that $s$ is not weakly closed in $Q$ with respect to $G$.
\end{proof}

\begin{lemma}
$G \cong \alt(9)$.
\end{lemma}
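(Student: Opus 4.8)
The plan is to verify that $s$ and its centraliser $L = C_G(s)$ satisfy the hypotheses of Aschbacher's Theorem \ref{Ascbacher M12}, and then to eliminate all but one of the four possible conclusions by a single Sylow count. Setting $Q = O_2(L)$, Lemma \ref{Centralizer of s} already supplies everything except one item: it gives that $s$ is an involution, that $Q \cong 2_+^{1+4}$ is extraspecial of order $32$, and that $L/Q \cong \sym(3)$; and Lemma \ref{s not weakly closed} gives that $s$ is not weakly closed in $Q$ with respect to $G$. The only remaining hypothesis to check is that $C_Q(X) = \langle s \rangle$ for $X \in \syl_3(L)$.

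To establish this I would take $R \in \syl_3(N_K(E_2))$, which is a Sylow $3$-subgroup of $L$ by Lemma \ref{Centralizer of s}, and apply Lemma \ref{prelims-extraspecial and a coprime aut} to the coprime action of $R$ on the extraspecial group $Q$. Since $R$ centralises $\mathcal{Z}(Q) = \langle s \rangle$, either $C_Q(R) = \langle s \rangle$, as desired, or both $C_Q(R)$ and $[Q,R]$ are extraspecial of order $8$. In the second case I would pass to the $\GF(2)R$-module $Q/\langle s \rangle$ and use coprime action (Theorem \ref{coprime action}) to write it as $C_{Q/\langle s \rangle}(R) \oplus [Q/\langle s \rangle, R]$ with each summand of dimension two. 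Both $E_2/\langle s \rangle$ and $O_2(A)\langle s \rangle/\langle s \rangle$ are two-dimensional submodules on which $R$ acts fixed-point-freely (the former because $E_2 = \langle s \rangle \times [E_2,R]$, the latter because $[O_2(A),R] = O_2(A)$), so both lie in the two-dimensional space $[Q/\langle s \rangle, R]$ and hence coincide. This forces $E_2 = O_2(A)\langle s \rangle \cong Q_8$, contradicting that $E_2$ is elementary abelian. Thus $C_Q(R) = \langle s \rangle$ and all the hypotheses of Theorem \ref{Ascbacher M12} hold.

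Applying Theorem \ref{Ascbacher M12}, $G$ has shape $2^3.\PSL_3(2)$, or $G$ is isomorphic to one of $\alt(8)$, $\alt(9)$ or $\M_{12}$. To finish I would pin down the order of a Sylow $3$-subgroup of $G$: by Lemma \ref{order of sylow 3's and H is self normalizing} we have $\syl_3(H) \subseteq \syl_3(G)$, and since $|H| = 2^3 3^4$ a Sylow $3$-subgroup of $G$ has order $3^4 = 81$. The three unwanted candidates have Sylow $3$-subgroups of strictly smaller order: the groups of shape $2^3.\PSL_3(2)$, $\alt(8)$ and $\M_{12}$ have Sylow $3$-subgroups of orders $3$, $3^2$ and $3^3$ respectively, so each is excluded. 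Hence $G \cong \alt(9)$.

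The main obstacle is the verification of $C_Q(X) = \langle s \rangle$, since this is the single hypothesis of Aschbacher's theorem not delivered verbatim by the preceding lemmas; once it is in hand the identification is immediate, and the elimination of the other three groups is a one-line Sylow comparison.
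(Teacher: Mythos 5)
Your proof is correct and takes essentially the same route as the paper: Lemmas~\ref{Centralizer of s} and~\ref{s not weakly closed} are fed into Aschbacher's Theorem~\ref{Ascbacher M12}, and $\alt(9)$ is then singled out from the four conclusions by the order $3^4$ of a Sylow $3$-subgroup of $G$ (via Lemma~\ref{order of sylow 3's and H is self normalizing}). The only difference is that you explicitly verify the hypothesis $C_Q(X)=\langle s\rangle$ using Lemma~\ref{prelims-extraspecial and a coprime aut}, a point the paper treats as implicit in Lemma~\ref{Centralizer of s}, whose proof already contains the needed facts ($E_2=\langle s\rangle\times[E_2,R]$, $Q_8\cong O_2(A)=[O_2(A),R]\leq Q$, and the fixed-point-free action of $R$ on $Q/E_2$).
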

\begin{proof}
We see from Lemmas \ref{Centralizer of s} and \ref{s not weakly
closed} that $G$ satisfies Theorem \ref{Ascbacher M12}. It follows
from the order of a Sylow $3$-subgroup of $G$ that $G \cong
\alt(9)$.
\end{proof}



\chapter{A Certain 3-Local Hypothesis}\label{chaper general hypothesis}

We recall that a group $H$ is said to have characteristic $p$ ($p$ an odd prime) if
$C_H(O_p(H))\leq O_p(H)$. Moreover a group $G$ has local characteristic $p$ if every $p$-local
subgroup of $G$ has characteristic $p$ and $G$ has parabolic characteristic $p$ if every $p$-local
subgroup containing a Sylow $p$-subgroup of $G$ has characteristic $p$. An interesting situation
which arises within the ongoing project to understand groups of local characteristic $p$ concerns groups
$G$ for which $C_G(z)$ has characteristic $3$ where $z$ is $3$-central in $G$ and
$O_3(C_G(z))\cong 3_+^{1+4}$. Five of the sporadic simple groups have this structure as well
as some groups of Lie type in defining characteristic three and some groups of Lie type in defining
characteristic two also. In this chapter we consider groups which satisfy this characteristic three
condition as well as a further non-weak closure hypothesis as follows.

\begin{hyp}\label{3^1+4 Hypothesis}
Let $G$ be a finite group and let $Z$ be the centre of a Sylow $3$-subgroup of $G$ with
$Q:=O_3(C_G(Z))$.  Suppose that
\begin{enumerate}[$(i)$]
\item $Q\cong 3_+^{1+4}$;
\item $C_G(Q)\leq Q$; and
\item $Z \neq Z^x \leq Q$ for some $x \in G$.
\end{enumerate}
\end{hyp}
We describe many properties which groups satisfying Hypothesis \ref{3^1+4 Hypothesis} have. These
will be used in Chapter \ref{Chapter-O8Plus2} to characterize two almost simple groups and also in
 Chapter \ref{Chapter-HN} to characterize the sporadic simple group $\HN$. Also, in
Section \ref{GenHyp-Concluding Remarks} we list the nine almost simple groups satisfying Hypothesis
\ref{3^1+4 Hypothesis}.

\section{Groups Satisfying Hypothesis \ref{3^1+4 Hypothesis}}

Fix the following notation.
\begin{enumerate}[$(i)$]
\item $x \in G\bs N_G(Z)$ such that $Z^x \leq Q$.
\item  $Y:=\<Z,Z^x\>$.
 \item  $L:=\<Q,Q^x\>$.
 \item $W:=\<C_Q(Y),C_{Q^x}(Y)\>$.
 \item $S:=\<Q,W\>$.
\end{enumerate}

\begin{lemma}\label{Prelims-EasyLemma}
\begin{enumerate}[$(i)$]
\item $|Z|=3$.
\item $C_G(Z)/Q$ is isomorphic to a subgroup of $\Sp_4(3)$.
\item $C_{C_G(Z)}(Q/Z)=Q$.
\item $Q \cap Q^x$ is elementary abelian.
\end{enumerate}
\end{lemma}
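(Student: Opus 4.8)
The plan is to prove the four parts in order, since each relies on standard structural facts about the extraspecial group $Q \cong 3_+^{1+4}$ together with the hypotheses $C_G(Q) \leq Q$ and $Z \neq Z^x \leq Q$.

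For part $(i)$, I would argue that $Z = \mathcal{Z}(C_G(Z))$ and note that $Z \leq \mathcal{Z}(Q)$ because $Q = O_3(C_G(Z))$ is normalized by $C_G(Z)$ and $Z$ centralizes $Q$ only if $Z \leq \mathcal{Z}(Q)$; more directly, since $Z$ is the centre of a Sylow $3$-subgroup $S_0$ and $Q \trianglelefteq C_G(Z)$ with $S_0 \leq C_G(Z)$, the centre $Z$ lies in $\mathcal{Z}(Q)$. As $\mathcal{Z}(Q)$ is cyclic of order $3$ (being extraspecial, by Lemma~\ref{prelim-exraspecial}), I conclude $Z = \mathcal{Z}(Q)$ has order $3$. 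For part $(ii)$, the key point is that $C_G(Z)/Q$ acts faithfully on $Q/Z$: indeed $C_{C_G(Z)}(Q/Z)$ will be shown in part $(iii)$ to equal $Q$, and $C_G(Z)$ centralizes $Z = \mathcal{Z}(Q)$. The induced action preserves the symplectic form on $Q/Z \cong 3^4$ arising from the commutator map $Q/Z \times Q/Z \to Z$, so $C_G(Z)/Q$ embeds into $\operatorname{Sp}_4(3)$. I would invoke Theorem~\ref{extraspecial outer automorphisms}$(iii)$, which gives $\operatorname{Out}(Q) \cong \operatorname{Sp}_4(3).C_2$; since $C_G(Z)$ centralizes $Z$, its image lies in the subgroup inducing the identity on $\mathcal{Z}(Q)$, hence in $\operatorname{Sp}_4(3)$.

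For part $(iii)$, I would show $C_{C_G(Z)}(Q/Z) = Q$. The inclusion $Q \leq C_{C_G(Z)}(Q/Z)$ is immediate since $Q/Z$ is abelian. For the reverse, suppose $g \in C_G(Z)$ centralizes $Q/Z$. Then $g$ acts trivially on $Q/\Phi(Q)$ (noting $\Phi(Q) = Z$ for extraspecial $Q$ of exponent $p$). Writing $g$ as a product of its $3$-part and $3'$-part modulo $Q$, the $3'$-part would be a coprime automorphism of $Q$ centralizing $Q/\Phi(Q)$, hence trivial by Burnside's theorem (Theorem~\ref{Burnside-p'-automorphism}); so $gQ$ is a $3$-element. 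Then $\langle Q, g\rangle$ is a $3$-group in which $g$ centralizes $Q/Z$, so $[\langle Q,g\rangle, Q] \leq Z$, forcing $g$ into the preimage structure; using $C_G(Q) \leq Q$ one eliminates any nontrivial contribution and deduces $g \in Q$. The cleanest route is: $C_{C_G(Z)}(Q/Z)/Q$ embeds in the kernel of the $\operatorname{Sp}_4(3)$-action, which is trivial, giving equality directly.

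For part $(iv)$, the hard part, I would use that both $Q$ and $Q^x$ are extraspecial of exponent $3$ with centres $Z$ and $Z^x$ respectively, and that $Z^x \leq Q$ with $Z^x \neq Z$. Set $D := Q \cap Q^x$. Every element of $D$ lies in $Q$ (exponent $3$) so $D$ has exponent $3$. To see $D$ is abelian, I would consider the commutator of any two elements $a, b \in D$: since $a, b \in Q$ we have $[a,b] \in \mathcal{Z}(Q) = Z$, and since $a, b \in Q^x$ we have $[a,b] \in \mathcal{Z}(Q^x) = Z^x$; thus $[a,b] \in Z \cap Z^x = 1$ because $Z \neq Z^x$ are distinct subgroups of order $3$. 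Hence $D$ is abelian, and being of exponent $3$ it is elementary abelian. I expect the main subtlety to be confirming that $Z \cap Z^x = 1$, which follows from $|Z| = |Z^x| = 3$ (part $(i)$) and $Z \neq Z^x$; and confirming $Z^x = \mathcal{Z}(Q^x)$, which is immediate by applying part $(i)$ to the conjugate $Q^x$.

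\begin{proof}
$(i)$ Since $S_0 \in \syl_3(G)$ has centre $Z$ and $Q = O_3(C_G(Z)) \trianglelefteq C_G(Z) \geq S_0$, we have $Z \leq \mathcal{Z}(Q)$. As $Q$ is extraspecial, $\mathcal{Z}(Q)$ is cyclic of order three by Lemma~\ref{prelim-exraspecial}, and since $Z$ is nontrivial, $Z = \mathcal{Z}(Q)$ has order three.

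$(iii)$ Clearly $Q \leq C_{C_G(Z)}(Q/Z)$. Conversely, the commutator map induces a symplectic form on $Q/Z$ with values in $Z$, and $C_G(Z)$ centralizes $Z = \mathcal{Z}(Q)$. Thus $C_G(Z)/Q$ acts on $Q/Z$, and the kernel of this action is $C_{C_G(Z)}(Q/Z)/Q$. If $g \in C_G(Z)$ centralizes $Q/Z$, then $gQ$ centralizes $Q/\Phi(Q) = Q/Z$. Writing the coprime part of $gQ$ as an automorphism of $Q$ centralizing $Q/\Phi(Q)$, Theorem~\ref{Burnside-p'-automorphism} forces it to be trivial, so $gQ$ is a three-element. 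Then $[\langle Q, g\rangle, Q] \leq Z$, and as $C_G(Q) \leq Q$, one obtains $g \in Q$. Hence $C_{C_G(Z)}(Q/Z) = Q$.

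$(ii)$ By part $(iii)$, $C_G(Z)/Q$ acts faithfully on $Q/Z \cong 3^4$, preserving the symplectic form described above. By Theorem~\ref{extraspecial outer automorphisms}$(iii)$, $\operatorname{Out}(Q) \cong \Sp_4(3).C_{2}$, where the factor $C_{2}$ records the action on $\mathcal{Z}(Q)$. Since $C_G(Z)$ centralizes $Z = \mathcal{Z}(Q)$, its image lies in the subgroup acting trivially on $\mathcal{Z}(Q)$, namely $\Sp_4(3)$. Thus $C_G(Z)/Q$ is isomorphic to a subgroup of $\Sp_4(3)$.

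$(iv)$ Set $D := Q \cap Q^x$. Since $Q$ has exponent three, so does $D$. Applying part $(i)$ to the conjugate $C_G(Z^x)$ gives $Z^x = \mathcal{Z}(Q^x)$ of order three. As $Z \neq Z^x$ are distinct subgroups of order three, $Z \cap Z^x = 1$. Now for any $a, b \in D$ we have $a, b \in Q$, so $[a,b] \in \mathcal{Z}(Q) = Z$; likewise $a, b \in Q^x$, so $[a,b] \in \mathcal{Z}(Q^x) = Z^x$. Hence $[a,b] \in Z \cap Z^x = 1$, so $D$ is abelian. Being abelian of exponent three, $D = Q \cap Q^x$ is elementary abelian.
\end{proof}
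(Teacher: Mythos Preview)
Your argument is correct and follows essentially the same route as the paper for all four parts. Two small points of presentation are worth tightening.

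In part $(i)$, the assertion ``we have $Z \leq \mathcal{Z}(Q)$'' needs the hypothesis $C_G(Q) \leq Q$: from $Q \leq S_0$ and $Z = \mathcal{Z}(S_0)$ you get $[Z,Q]=1$, and then $Z \leq C_G(Q) \leq Q$ gives $Z \leq \mathcal{Z}(Q)$. The paper makes this step explicit.

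In part $(iii)$, the final clause ``as $C_G(Q) \leq Q$, one obtains $g \in Q$'' does not follow from what precedes it: knowing only that $gQ$ is a $3$-element and $[g,Q] \leq Z$ does not force $g \in Q$. The paper (and your own plan) closes this correctly by a global argument: once every $3'$-element of $C_{C_G(Z)}(Q/Z)$ is shown to lie in $Q$, the quotient $C_{C_G(Z)}(Q/Z)/Q$ is a $3$-group; since $C_{C_G(Z)}(Q/Z)$ is normal in $C_G(Z)$, it is a normal $3$-subgroup and hence contained in $O_3(C_G(Z)) = Q$. You should replace the last sentence of your proof of $(iii)$ with this normality/$O_3$ step.
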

\begin{proof}
$(i)$ By hypothesis, $Z=\mathcal{Z}(P)$ for some $P \in \syl_3(G)$. Since $Q=O_3(C_G(Z))$, $Q \leq P$. Therefore $[Q,Z]=1$ which implies that $Z \leq Q$ since $C_G(Q)\leq Q$. Thus $Z \leq \mathcal{Z}(Q)$ and $|\mathcal{Z}(Q)|=3$ since $Q$ is extraspecial. Hence $Z=\mathcal{Z}(Q)$ has order three.

$(ii)$ We have that $C_G(Q) \leq Q$ and so $C_G(Z)/Q$ is isomorphic to a subgroup of the outer
automorphism  group of $Q$. By Theorem \ref{extraspecial outer automorphisms}, $\out(Q)
\sim \Sp_4(3).2$. Notice that this group has a unique subgroup of index
two which is necessarily isomorphic to $\Sp_4(3)$. Moreover, this index two subgroup is the
subgroup of outer automorphisms which centralize $Z$. Thus $C_G(Z)/Q$ embeds into $\Sp_4(3)$.

$(iii)$ Suppose that $p$ is a prime and  $g \in C_G(Z)$ is a $p$-element such that $[Q/Z,g]=1$.
Then if $p \neq 3$ we may apply coprime action to say that $Q/Z=C_{Q/Z}(g)=C_Q(g)Z/Z= C_Q(g)/Z$
and so $[Q,g]=1$ which is a contradiction as $C_G(Q) \leq Q$. Therefore $C_{C_G(Z)/Z}(Q/Z)$ is a
$3$-group and the preimage in  $C_G(Z)$ is a normal $3$-subgroup of $C_G(Z)$ and so must be
contained in $O_3(C_G(Z))=Q$. Therefore $Q \leq C_{C_G(Z)}(Q/Z)\leq Q$.

$(iv)$ Since $[Q,Q]=Z \neq Z^x =[Q^x,Q^x]$, we immediately see that $[Q \cap Q^x,Q \cap Q^x] \leq Z
\cap Z^x =1$.  Therefore $Q \cap Q^x$ is  abelian and since  $Q$ has exponent three, $Q \cap Q^x$
is elementary abelian.
\end{proof}

\begin{lemma}
$Z \leq Q^x$.
\end{lemma}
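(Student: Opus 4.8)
The plan is to fix a generator $z$ of $Z$ and reduce the containment $Z\le Q^x$ to a statement about how $z$ acts on the elementary abelian section $Q^x/Z^x$, which can then be analyzed inside $\Sp_4(3)$.

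First I would record the easy containments. Since $z^x\in Q$ by hypothesis and $Z=\mathcal{Z}(Q)$ by Lemma~\ref{Prelims-EasyLemma}$(i)$, $z$ centralizes $z^x$, so $z\in C_G(Z^x)$. As $Q^x=O_3(C_G(Z^x))$ is characteristic in $C_G(Z^x)$, this gives $z\in C_G(Z^x)\le N_G(Q^x)$; thus $z$ normalizes $Q^x$ and centralizes $Z^x=\mathcal{Z}(Q^x)$. Conjugating Lemma~\ref{Prelims-EasyLemma}$(iii)$ by $x$ yields $C_{C_G(Z^x)}(Q^x/Z^x)=Q^x$, so it suffices to prove that $z$ centralizes $Q^x/Z^x$, i.e. that $[Q^x,z]\le Z^x$; this would place $z\in Q^x$ and hence $Z\le Q^x$.

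Next I would set up the module picture. Because $z$ centralizes $Z^x=[Q^x,Q^x]$, conjugation by $z$ preserves the symplectic commutator form on $V:=Q^x/Z^x\cong 3^4$ and so induces an element $\bar z$ of order dividing $3$ in $\Sp(V)\cong\Sp_4(3)$ (using Theorem~\ref{extraspecial outer automorphisms}$(iii)$ together with Lemma~\ref{Prelims-EasyLemma}$(ii)$ for the identification), and the task becomes showing $\bar z=1$. To locate the fixed points of $\bar z$ I would use that $z\in\mathcal{Z}(Q)$ centralizes $D:=Q\cap Q^x$, which is elementary abelian by Lemma~\ref{Prelims-EasyLemma}$(iv)$ and contains $Z^x$, so $\bar z$ fixes $D/Z^x$ pointwise; Lemma~\ref{Prelims-p centralizers on class 2 groups} then relates $C_{Q^x}(z)$ to $C_V(\bar z)$ up to a factor of $|Z^x|=3$. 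I would also feed in the dual relation $[Q,z^x]\le Z$, valid because $z^x\in Q$ acts on the class-two group $Q$ as an inner automorphism, to control $\bar z$ through the geometry of the totally isotropic subspace $D/Z^x$ of $V$.

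The hard part will be ruling out a nontrivial action of $\bar z$ on $V$: fixed-point counting alone cannot finish, since a symplectic transvection of $\Sp_4(3)$ already fixes a hyperplane pointwise, so merely showing that $\bar z$ centralizes $D/Z^x$ (or even a three-dimensional subspace) does not force $\bar z=1$. To close this gap I expect to exploit that $z$ is not an arbitrary element centralizing $Z^x$ but is $3$-central in $G$: since $z\in\mathcal{Z}(C_Q(z^x))$, I would embed $z$ in a Sylow $3$-subgroup $R$ of $C_G(Z^x)$ through $C_Q(z^x)\le R$, noting that $R\in\syl_3(G)$ because $Z^x$ is $3$-central and $Q^x\trianglelefteq R$, and then argue that the symmetric roles of $z$ and $z^x$ inside $Y=\langle Z,Z^x\rangle$ force the action of $z$ on $Q^x$ to be inner, i.e. $z\in Q^x$. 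Establishing this symmetry is the crux of the argument.
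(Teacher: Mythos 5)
Your opening reduction is correct: since $z^x\in Q$ and $Z=\mathcal{Z}(Q)$, we get $z\in C_G(Z^x)=N_G(Q^x)$, and by the conjugate of Lemma~\ref{Prelims-EasyLemma}$(iii)$ the lemma is equivalent to $[Q^x,z]\le Z^x$. But the proposal stops exactly where the lemma begins: you yourself write that ``establishing this symmetry is the crux of the argument,'' and no argument for it is given. Worse, the two tools you assemble cannot supply it. First, the fixed-point space $D/Z^x$ with $D=Q\cap Q^x$ is vacuous in precisely the case you must exclude: if $D>Z^x$, then since $C_Q(Y)$ normalizes both $Q$ and $Q^x$ one has $[D,C_Q(Y)]\le Z\cap Q^x$, and a short argument (normality of $D$ in $C_Q(Y)$, whose derived group is $Z$) already forces $Z\le Q^x$; so in the remaining situation $D=Z^x$ and $D/Z^x=1$ gives no information about $C_V(\bar z)$. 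Second, the ``dual relation'' $[Q,z^x]\le Z$ conjugates under $x$ to $[Q^x,z^{x^2}]\le Z^x$, a statement about $z^{x^2}$, not about $z$; there is no genuine symmetry between $z$ and $z^x$ here, because the hypothesis $Z^x\le Q$ is not symmetric --- the desired conclusion $Z\le Q^x$ is equivalent to $Z^{x^{-1}}\le Q$, which is not what is assumed. So nothing in the proposal rules out $\bar z$ acting as, say, a transvection, which is exactly the obstruction you identified.

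The paper closes this gap by arguing by contradiction and, crucially, by letting a much larger group than $Z$ act. Assuming $Z\nleq Q^x$, the observation above pins down $Q\cap Q^x=Z^x$, so $C_Q(Y)/Z^x\cong 3_+^{1+2}$ embeds into $C_G(Z^x)/Q^x$; since that quotient embeds in $\Sp_4(3)$ with trivial $3$-core and now has Sylow $3$-subgroups of order at least $3^3$, inspection of maximal subgroups forces $C_G(Z^x)/Q^x\cong\Sp_4(3)$. Transitivity of $\Sp_4(3)$ on $(Q/Z)^\#$ then yields $|C_{Q^x}(Y)|=3^4$, and the three-subgroup lemma shows $[Q^x,C_Q(Y),C_{Q^x}(Y)]=1$, whence $C_Q(Y)$ acts quadratically on $Q^x/Z^x$; Lemma~\ref{quadractic action lemma} then makes $C_Q(Y)/Z^x$ elementary abelian, contradicting $C_Q(Y)/Z^x\cong 3_+^{1+2}$. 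The moral is that the leverage comes from the action of the whole group $C_Q(Y)$ (quadratic action forces an elementary abelian quotient), not from fixed-point data of the single element $z$, which, as you correctly feared, is too weak in $\Sp_4(3)$.
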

\begin{proof}
Suppose $Z \nleq Q^x$. Notice that $C_Q(Y)$ normalizes $Q$ and $Q^x$ and therefore $Q \cap Q^x
\trianglelefteq C_Q(Y)$. This implies that $Q \cap Q^x=Z^x$ for if we had $Q \cap Q^x>Z^x$ then
$Z=C_Q(Y)'\leq Q \cap Q^x$. Therefore $Q^xC_Q(Y)/Q^x\cong C_Q(Y)/Z^x$ which must be non-abelian of
exponent three and order $3^3$ and so $Q^xC_Q(Y)/Q^x\cong C_Q(Y)/Z^x\cong 3_+^{1+2}$. Since
$C_G(Z^x)/Q^x$ is isomorphic to a subgroup of $\Sp_4(3)$ with no non-trivial normal $3$-subgroup,
and a Sylow $3$-subgroup of order at least $3^3$, it follows from the maximal subgroups of
$\Sp_4(3)$ (see \cite{atlas} for example) that $C_G(Z^x)/Q^x\cong \Sp_4(3)$. Therefore
$C_G(Z)/Q\cong \Sp_4(3)$ is transitive on $(Q/Z)^\#$. Let $P \in \syl_3(C_G(Z))$ then $1 \neq
\mathcal{Z}(P/Z) \cap Q/Z$ and since $C_G(Z)/Q$ is transitive on $(Q/Z)^\#$, we may assume $Y/Z\leq \mathcal{Z}(P/Z)$.
Therefore $|C_P(Y)|=3^8$ and $[P:C_P(Y)]=3$. Therefore there exists a Sylow $3$-subgroup of
$C_G(Z^x)$, $R$ say  with $[R:C_P(Y)]=3$. In particular,  $|C_{Q^x}(Y)|=|C_P(Y) \cap Q^x|=3^4$.

Now we have that $[C_{Q^x}(Y),Q^x,C_Q(Y)]=[Z^x,C_Q(Y)]=1$ and $[C_{Q}(Y),C_{Q^x}(Y),Q^x]\leq [Q
\cap Q^x,Q^x]=[Z^x,Q^x]=1$ and so by the three subgroup lemma, $[Q^x,C_Q(Y),C_{Q^x}(Y)]=1$. Therefore
$[Q^x,C_Q(Y)]$ is a subgroup of $Q^x$ commuting with $C_{Q^x}(Y)$. Since $|C_{Q^x}(Y)|=3^4$ and
$Q^x$ is extraspecial, we have that $[Q^x,C_Q(Y)]\leq \mathcal{Z}(C_{Q^x}(Y))$ and $\mathcal{Z}(C_{Q^x}(Y))$ has order
nine. However this implies that $[Q^x/Z^x,C_Q(Y),C_Q(Y)]\leq [\mathcal{Z}(C_{Q^x}(Y))/Z^x,C_Q(Y)]=1$ and so
$C_Q(Y)$ acts quadratically on $Q^x/Z^x$. Now by Lemma \ref{quadractic action lemma},
$C_Q(Y)/C_{C_Q(Y)}(Q^x/Z^x)$ is elementary abelian. However, by Lemma \ref{Prelims-EasyLemma},
$C_{C_Q(Y)}(Q^x/Z^x)=C_Q(Y) \cap Q^x=Z^x$ and so $C_Q(Y)/Z^x$ is elementary abelian. However we
have already seen that $C_Q(Y)/Z^x \cong 3_+^{1+2}$ which is a contradiction.
\end{proof}

Notice in particular that this implies that $L \leq N_G(Y)$.

\begin{lemma}\label{Lemma-facts about L}
\begin{enumerate}[$(i)$]
\item $W$ is a $3$-group and  $S=QW$ is a $3$-group with $\mathcal{Z}(S)=Z$.
\item $L/C_L(Y) \cong \SL_2(3)$.
\item $W \trianglelefteq L$.
\item $C_L(Y)/W \leq \mathcal{Z}(L/W)$.
\item $W=O_3(L)$, $L/W$ has four Sylow $3$-subgroups and $C_L(Y)/W$
is a $2$-group.
\item $S=QW \in \syl_3(L)$.
\end{enumerate}
\end{lemma}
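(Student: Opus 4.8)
The plan is to run the entire lemma on the two subgroups $C_Q(Y)$ and $C_{Q^x}(Y)$, with a single application of the three subgroups lemma as the engine. First I record the geometry: since $Z^x\le Q$ and (by the preceding lemma) $Z\le Q^x$, the group $Y=\<Z,Z^x\>\le Q\cap Q^x$ is elementary abelian of order $9$, and $L=\<Q,Q^x\>\le N_G(Y)$, so $Y\trianglelefteq L$; as $Z^x$ is non-central in $Q$, $C_Q(Y)=C_Q(Z^x)$ has index $3$ in $Q$, and symmetrically for $C_{Q^x}(Y)$. The key relation is $[C_{Q^x}(Y),Q]\le C_Q(Y)$: indeed $C_{Q^x}(Y)\le C_G(Z)$ normalises $Q=O_3(C_G(Z))$, so $[C_{Q^x}(Y),Q]\le Q$ and this commutator centralises $Z$; and applying the three subgroups lemma to $C_{Q^x}(Y),Q,Z^x$, using $[Z^x,C_{Q^x}(Y)]=1$ together with $[Q,Z^x]\le Q'=Z\le C_G(C_{Q^x}(Y))$, gives $[C_{Q^x}(Y),Q,Z^x]=1$, so $[C_{Q^x}(Y),Q]$ centralises $Y$. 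In particular $C_{Q^x}(Y)$ normalises $C_Q(Y)$, so $W=C_Q(Y)C_{Q^x}(Y)$ is a product of two $3$-groups, hence a $3$-group; since both factors normalise $Q$, so does $W$, and $S=QW$ is a $3$-group. Finally $\mathcal{Z}(S)\le C_G(Q)\cap S\le Q$ forces $\mathcal{Z}(S)\le\mathcal{Z}(Q)=Z$, while $Z$ is centralised by $Q$ and by $W\le C_G(Y)$, giving $\mathcal{Z}(S)=Z$. This settles $(i)$.

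For $(ii)$, the action of $L$ on $Y\cong 3^2$ gives $L/C_L(Y)\hookrightarrow\GL_2(3)$; fixing $Z$ as a coordinate line, $Q$ centralises $Z$ and acts non-trivially on $Y$ (as $Z^x$ is non-central in $Q$), so its image is the full transvection subgroup fixing $Z$ pointwise, and symmetrically the image of $Q^x$ is the opposite root subgroup. Two distinct root subgroups generate $\SL_2(3)$, and $L/C_L(Y)$ is generated by these images, so $L/C_L(Y)\cong\SL_2(3)$. For $(iii)$ it suffices to show $Q$ normalises $W$ (then $Q^x$ does by the symmetric argument, and $L=\<Q,Q^x\>$ follows): $C_Q(Y)$ is $Q$-invariant since $Q$ normalises $Q$ and $Y$, and for $q\in Q$, $c\in C_{Q^x}(Y)$ we have $c^q=c[c,q]$ with $[c,q]\in[C_{Q^x}(Y),Q]\le C_Q(Y)\le W$ and $c\in W$, so $C_{Q^x}(Y)^q\le W$ and $W^q=W$. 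The same kind of commutator computation, now with $C:=C_L(Y)$ in place of $C_{Q^x}(Y)$, yields $[C,Q]\le C_Q(Y)\le W$ and (symmetrically) $[C,Q^x]\le C_{Q^x}(Y)\le W$; since $W\trianglelefteq L$ and $L=\<Q,Q^x\>$, this gives $[C,L]\le W$, i.e. $C_L(Y)/W\le\mathcal{Z}(L/W)$, proving $(iv)$.

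The substance, and the main obstacle, is $(v)$ and $(vi)$: identifying $W$ with $O_3(L)$ and showing $C_L(Y)/W$ is exactly a $2$-group. I would first bound the $3'$-part of $C:=C_L(Y)$: as $C\le C_G(Z)$ centralises $Y$, its image in $C_G(Z)/Q\hookrightarrow\Sp_4(3)$ (Lemma \ref{Prelims-EasyLemma}$(ii)$) centralises the non-zero vector $Y/Z$ of the natural module, hence lies in a vector stabiliser, of order $2^2\cdot 3^4$; since the kernel $C\cap Q=C_Q(Y)$ is a $3$-group (here using $C_{C_G(Z)}(Q/Z)=Q$, Lemma \ref{Prelims-EasyLemma}$(iii)$), the $3'$-part of $C$ divides $4$. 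Next $W\le O_3(L)$ is immediate from $(iii)$, while $O_3(L)$ maps to a normal $3$-subgroup of $\SL_2(3)$ and so $O_3(L)\le C$; combining $C_G(Q)\le Q$ with the relation $[O_3(L),Q]\le C_Q(Y)$ and a Sylow argument based on $\mathcal{Z}(S)=Z$ (if $S=QW$ were not Sylow in $L$, then $N_T(S)>S$ for $S<T\in\syl_3(L)$ would normalise $\mathcal{Z}(S)=Z$, hence lie in $C_G(Z)\cap L$, against $Q=O_3(C_G(Z))$ and the $\Sp_4(3)$ bound) one gets $S=QW\in\syl_3(L)$ and $O_3(L)=W$. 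Then $C/W$ is abelian (central by $(iv)$) with trivial $3$-part — a non-trivial Sylow $3$-subgroup of $C/W$ would be a central, hence normal, $3$-subgroup of $L/W$, forcing $O_3(L)>W$ — and $3'$-part dividing $4$, so $C_L(Y)/W$ is a $2$-group; and $L/W$ has four Sylow $3$-subgroups since it maps onto $\SL_2(3)$ with the central $2$-group $C_L(Y)/W$ as kernel. The genuinely delicate point is pinning the $3$-part of $C_L(Y)$, i.e. showing that the extra centralising automorphisms allowed by $\Sp_4(3)$ do not occur inside the amalgam $L=\<Q,Q^x\>$; this is where $C_G(Q)\le Q$ and the smallness of $L$ must be used together.
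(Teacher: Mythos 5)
Your parts $(i)$--$(iv)$ are correct and essentially coincide with the paper's own proof. The differences are cosmetic: where you derive the key relation $[C_{Q^x}(Y),Q]\le C_Q(Y)$ from the three subgroup lemma, the paper simply notes that $C_Q(Y)=Q\cap C_G(Z^x)$ is normalised by $C_G(Z)\cap C_G(Z^x)\ge C_{Q^x}(Y)$; and for $(iii)$ and $(iv)$ the paper uses the one-line computations $[Q,W]\le[Q,C_G(Y)]\le Q\cap C_G(Y)=C_Q(Y)$ and $[C_L(Y),Q]\le C_L(Y)\cap Q=C_Q(Y)$ in place of your element-wise version. One numerical slip: the stabiliser of a non-zero vector in $\Sp_4(3)$ is $3^3{:}\SL_2(3)$ of order $2^3\cdot 3^4$ (since $|\Sp_4(3)|=2^7\cdot3^4\cdot5$ and there are $80$ non-zero vectors), not $2^2\cdot3^4$; this is harmless, as you only need that the $3'$-part of $C_L(Y)$ is a $2$-group.

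The genuine gap is in $(v)$/$(vi)$, exactly where you flag it, and your parenthetical Sylow argument does not close it --- it is circular. Suppose $S<T\in\syl_3(L)$ and set $N:=N_T(S)>S$. You correctly get $N\le C_G(Z)\cap L$, but nothing proved so far makes this a contradiction: the $\Sp_4(3)$ embedding only bounds $3$-subgroups of $C_G(Z)$ by $3^9$, whereas $|S|=3|W|\le 3^7$. Worse, since the image of $N$ in $L/C_L(Y)\cong\SL_2(3)$ is a $3$-group containing the image of $Q$, we have $N\le QC_L(Y)$, so $N=Q(N\cap C_L(Y))$ by Dedekind and $N\cap C_L(Y)>S\cap C_L(Y)=W$; thus the contradiction you need is \emph{literally equivalent} to ``$W\in\syl_3(C_L(Y))$'', which is the assertion being proved. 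Since your steps (b) and (c) are deduced from $O_3(L)=W$, all of $(v)$/$(vi)$ hangs on this unproved claim. Moreover, no argument assembled only from $(i)$--$(iv)$, generation of $L/W$ by the two order-three subgroups $QW/W$ and $Q^xW/W$, and a bound on the $3'$-part of $C_L(Y)$ can possibly succeed: the group $X=\SL_2(3)\times 3$, with the central factor $3$ in the role of $C_L(Y)/W$, satisfies all of these hypotheses. Indeed, take $U=\<(a,c)\>$ and $V=\<(b,1)\>$ with $\<a,b\>=\SL_2(3)$ and $c\ne 1$; then $ab$ has order $4$, so $\bigl((a,c)(b,1)\bigr)^4=(1,c)$ and $X=\<U,V\>$. (In fact, since the Schur multiplier of $\SL_2(3)$ is trivial, $C_L(Y)/W$ meets $(L/W)'$ trivially and so embeds in the abelianisation of $L/W$, which is a $3$-group of order at most $9$; so the entire problem is to exclude the single possibility $C_L(Y)/W\cong 3$, and that exclusion is the real content of $(v)$.)

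For comparison, the paper's proof of $(v)$ runs as follows: set $L_1:=O^3(L)C_L(Y)$, so $[L:L_1]=3$ and $L_1/C_L(Y)\cong Q_8$; by $(iv)$, $L_1/W$ is a central extension of $C_L(Y)/W$ by $Q_8$, hence nilpotent, so $L_1/W=P/W\times R/W$ with $P/W\in\syl_2(L_1/W)$ and $R/W\in\syl_3(L_1/W)$; then $R/W\trianglelefteq L/W$, and the paper argues that $\<P/W,\,QW/W\>=L/W$, which forces $|L/W|=3|P/W|$ and hence $R=W=O_3(L)$, $C_L(Y)/W$ a $2$-group, four Sylow $3$-subgroups in $L/W$, and $S=QW\in\syl_3(L)$ by order count. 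So the ingredient you are missing is this nilpotent splitting of $L_1/W$ together with the generation statement $\<P/W,QW/W\>=L/W$ --- an argument internal to $L/W$, not (as you guessed) a further appeal to $C_G(Q)\le Q$ or to the ambient $3$-local structure. A word of caution when you compare: that generation statement is the crux, and the paper's stated justification (``$\<P/W,QW/W\>$ has more than one Sylow $3$-subgroup, and any two Sylow $3$-subgroups of $L$ generate $L$'') should be examined critically, because the Sylow $3$-subgroups of $\<P/W,QW/W\>$ have order $3$ and are Sylow in $L/W$ only after one already knows $R=W$; the example $\SL_2(3)\times 3$ above satisfies both cited premises while violating the conclusion. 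So treat this step of the paper with the same suspicion you rightly applied to your own sketch.
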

\begin{proof}
$(i)$ Since $Q$ is extraspecial and $Y \leq Q$ with $|Y|=9$, $|C_Q(Y)|=3^4$. Similarly $Y \leq Q^x$ and so
$|C_{Q^x}(Y)|=3^4$.  Notice that  $C_Q(Y)=Q \cap C_G(Z^x)$ is normalized by $C_G(Z) \cap C_G(Z^x)$
so $C_{Q^x}(Y)$ normalizes $C_Q(Y)$. Thus $W$ and hence $S=QW$ are $3$-groups. Moreover, $\mathcal{Z}(S) \leq
C_G(Q) \leq Q$ and so $Z \leq \mathcal{Z}(S) \leq \mathcal{Z}(Q)=Z$.

$(ii)$ Clearly $L/C_L(Y)$ embeds into $\GL_2(3)$. Moreover since $Q \nleq C_L(Y)$, $L/C_L(Y)$ is
generated by two of its Sylow $3$-subgroups $QC_L(Y)/C_L(Y)$ and $Q^xC_L(Y)/C_L(Y)$. These are
distinct since they centralize distinct subgroups of $Y$. Thus $L/C_L(Y)\cong \SL_2(3)$.

$(iii)$ Since $W \leq C_G(Y)$, we see that $[Q,W] \leq [Q, C_G(Y)] \leq Q \cap C_G(Y)=C_Q(Y) \leq
W$.  Therefore $Q$ normalizes $W$ and similarly, $Q^x$ normalizes $W$. Therefore $W\trianglelefteq
L$.

$(iv)$ We see that $[C_L(Y),Q]\leq C_L(Y) \cap  Q =C_Q(Y) \leq W$ and similarly $[C_L(Y),Q^x] \leq
W$. So $[C_L(Y),L]\leq W$ and therefore $C_L(Y)/W\leq \mathcal{Z}(L/W)$.

$(v)$ Since $L/C_L(Y)\cong \SL_2(3)$, $L_1:=O^3(L)C_L(Y)$ is a subgroup of $L$ of index three and
$L_1/C_L(Y)\cong Q_8$. Since $C_L(Y)/W\leq \mathcal{Z}(L/W)$, it follows that $L_1/W$ is nilpotent and is
therefore a direct product of its Sylow subgroups. Let $W \leq P$ and $W \leq R$ be subgroups of
$L_1$ such that $P/W$ is a Sylow $2$-subgroup of $L_1/W$ and $R/W$ is a Sylow $3$-subgroup. Clearly
$R$ is a normal $3$-subgroup of $L$ and so $QR/W \in \syl_3(L/W)$ and $L$ has four Sylow
$3$-subgroups and any two of them generate $L$. Now $P/W$ is normalized but not centralized by $S/W=QW/W$ and so $\<P/W,S/W\>$ has
more than one Sylow $3$-subgroup. Therefore $\<P/W,S/W\>=L/W$ has a normal Sylow $2$-subgroup $P/W$ and moreover, $L_1=P$, $C_L(Y)/W$ is a $2$-group
and $W=R=O_3(L)$.

$(vi)$ It now follows immediately that $W \in \syl_3(C_L(Y))$ and so $QW \in \syl_3(L)$.
\end{proof}

\begin{lemma}\label{W is centralizer of Y in L}
$W=C_L(Y)$, in particular, $L/W\cong \SL_2(3)$.
\end{lemma}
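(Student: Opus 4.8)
The plan is to show that $M:=C_L(Y)/W=1$, since the reverse inclusion $W\le C_L(Y)$ is immediate from $W=\<C_Q(Y),C_{Q^x}(Y)\>\le C_G(Y)$. By Lemma~\ref{Lemma-facts about L}$(iv)$ and $(v)$, $M$ is a central---hence abelian---$2$-subgroup of $L/W$, and by part $(ii)$ the quotient $(L/W)/M\cong L/C_L(Y)\cong\SL_2(3)$. Moreover $L/W$ is generated by the two subgroups $QW/W$ and $Q^xW/W$; these are cyclic of order three because $Q\cap W=Q\cap C_G(Y)=C_Q(Y)$ has index three in the extraspecial group $Q$ (as $Y$ is non-central). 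Thus $L/W$ is generated by its elements of order dividing three.

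To finish I would argue that the central extension $1\to M\to L/W\to\SL_2(3)\to 1$ splits. Since $M$ is an abelian $2$-group with trivial $L/W$-action, the universal coefficient description of $H^2(\SL_2(3),M)$ expresses it through $\operatorname{Ext}(\SL_2(3)^{\mathrm{ab}},M)$ together with $\operatorname{Hom}$ of the Schur multiplier of $\SL_2(3)$ into $M$. As $\SL_2(3)^{\mathrm{ab}}\cong C_3$ has order coprime to $|M|$ and the Schur multiplier of $\SL_2(3)$ is trivial, both summands vanish, so $H^2(\SL_2(3),M)=0$ and $L/W\cong M\times\SL_2(3)$. In such a direct product every element of order dividing three lies in the factor $\SL_2(3)$, since its projection to the $2$-group $M$ is trivial; hence the subgroup generated by all such elements is exactly $\SL_2(3)$. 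Since $L/W$ is itself generated by order-three elements, this forces $M=1$, that is $C_L(Y)=W$ and $L/W\cong\SL_2(3)$.

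The only real content, and hence the main obstacle, is the vanishing step that yields $M=1$; everything else is bookkeeping with the preceding lemma. If one prefers to avoid quoting the Schur multiplier of $\SL_2(3)$, an alternative is available using the paper's own tools: by Lemma~\ref{Lemma-facts about L}$(v)$, $W$ is a normal Sylow $3$-subgroup of $C_L(Y)$, so a Schur--Zassenhaus complement $T\cong M$ exists. Then $T$ is a $2$-group centralizing $Y\supseteq Z$, whence $T\le C_G(Z)\le N_G(Q)$ and $[T,Q]\le W\cap Q=C_Q(Y)$. For a non-trivial $t\in T$, Lemma~\ref{prelims-extraspecial and a coprime aut} applied to the coprime action of $t$ on the extraspecial group $Q$ (noting $Y\le C_Q(t)$, so $C_Q(t)\supsetneq Z$) forces $C_Q(t)\cong 3_+^{1+2}$ with $Q=C_Q(t)\ast[Q,t]$; carrying out the same analysis simultaneously for $Q^x$ and comparing the resulting decompositions inside $C_Q(Y)$ and $C_{Q^x}(Y)$ should yield the contradiction establishing $T=1$. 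Either route gives $W=C_L(Y)$ and therefore $L/W\cong\SL_2(3)$.
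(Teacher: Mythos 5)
Your proof is correct, but it takes a genuinely different route from the paper's. The paper works entirely inside $\bar{L}=L/W$ with local methods: writing $\bar{P}$ for the (normal) Sylow $2$-subgroup, it shows via coprime action that $[\bar{P},\bar{Q}]=\bar{P}$, deduces $C_{\bar{P}}(\bar{Q})=\Phi(\bar{P})=\bar{P}'=\mathcal{Z}(\bar{P})$ using Burnside's theorem on $p'$-automorphisms and the three subgroup lemma, and then a bare-hands commutator computation ($\bar{P}=\langle a,b\rangle$ with $a^2,b^2$ central, $[a,b]^2=[a^2,b]=1$) forces $|C_{\bar{P}}(\bar{Q})|=2$, hence $|\bar{P}|=8$, $|\bar{L}|=24$ and $W=C_L(Y)$. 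You instead isolate $M:=C_L(Y)/W$ as a central $2$-subgroup with quotient $\SL_2(3)$, kill the extension class cohomologically --- $H^2(\SL_2(3),M)=0$ because $\operatorname{Ext}(C_3,M)$ vanishes by coprimality and the Schur multiplier of $\SL_2(3)$ is trivial (true: the $p$-parts of the multiplier embed in those of the Sylow subgroups $Q_8$ and $C_3$, both trivial) --- and then use the neat observation that $L/W=\langle QW/W,\,Q^xW/W\rangle$ is generated by two subgroups of order three, so in the resulting direct product $M\times\SL_2(3)$ all generators land in the $\SL_2(3)$ factor, forcing $M=1$. Your argument is shorter and more conceptual, at the cost of importing machinery (universal coefficients, the multiplier of $\SL_2(3)$) that sits outside the paper's otherwise self-contained toolkit of coprime action and commutator calculus; the paper's proof buys elementarity and consistency with the methods used throughout the thesis. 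Note that your second, ``alternative'' route via Schur--Zassenhaus and Lemma \ref{prelims-extraspecial and a coprime aut} is only a sketch (``should yield the contradiction''), but this does not matter since your first route is complete.
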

\begin{proof}
Let $\bar{L}=L/W$ and $W\leq P \leq L$ such that $\bar{P}\in \syl_2(\bar{L})$. Then $\bar{Q}$
normalizes  $\bar{P}$ and $\bar{L}=\bar{P}\bar{Q}$ so $[N_{\bar{P}}(\bar{Q}),\bar{Q}] \leq \bar{P}
\cap \bar{Q}=1$. Therefore $N_{\bar{P}}(\bar{Q})=C_{\bar{P}}(\bar{Q})$. Since $\bar{L}$ has four
Sylow $3$-subgroups, $[\bar{P}: C_{\bar{P}}(\bar{Q})]=4$. In particular, $[\bar{P}, \bar{Q}]\neq
1$. By Lemma \ref{Lemma-facts about L} $(iv)$, $\bar{C_L(Y)} \leq \mathcal{Z}(\bar{L})$ and since
$\bar{C_L(Y)}$ has index two in $C_{\bar{P}}(\bar{Q})$, it follows that $C_{\bar{P}}(\bar{Q})$ is
abelian. However since $\bar{P}/\bar{C_L(Y)}\cong \Q_8$, $\bar{P}$ is non-abelian.

Now, $\bar{L}$ is generated by any two of its Sylow $3$-subgroups. So suppose that $\bar{P_0} <
\bar{P}$ such that $\bar{P_0}$ is normalized by $\bar{Q}$ then it must be centralized by $\bar{Q}$
else $\bar{L}=\<\bar{P_0},\bar{Q}\> < \<\bar{P},\bar{Q}\>=\bar{L}$. Therefore we have that any
proper $\bar{Q}$-invariant subgroup of $\bar{P}$ is contained in $C_{\bar{P}}(\bar{Q})$. So suppose
$[\bar{P}, \bar{Q}]<\bar{P}$. Then $1 \neq [\bar{P}, \bar{Q}]<C_{\bar{P}}(\bar{Q})$ which, using
coprime action, implies $[\bar{P}, \bar{Q}]=[\bar{P}, \bar{Q},\bar{Q}]=1$. This contradiction
proves that $[\bar{P}, \bar{Q}]=\bar{P}$.

Now, $\Phi(\bar{P})$ is also a proper $\bar{Q}$-invariant subgroup of $\bar{P}$ and so we have that
$\Phi(\bar{P}) \leq C_{\bar{P}}(\bar{Q})$. Now by coprime action,
\[\frac{\bar{P}}{\Phi(\bar{P})}=C_{\bar{P}/\Phi(\bar{P})}(\bar{Q}) \times
[\frac{\bar{P}}{\Phi(\bar{P})},\bar{Q}].\] However
\[[\frac{\bar{P}}{\Phi(\bar{P})},\bar{Q}]= \frac{[{\bar{P}},\bar{Q}]{\Phi(\bar{P})}}{{\Phi(\bar{P})}}=\frac{\bar{P}}{{\Phi(\bar{P})}}.\]
Therefore $1=C_{\bar{P}/\Phi(\bar{P})}(\bar{Q})\cong C_{\bar{P}}(\bar{Q})/\Phi(\bar{P})$. Thus
$C_{\bar{P}}(\bar{Q})=\Phi(\bar{P})$. The same argument with $\bar{P}'$ in place of $\Phi(\bar{P})$
gives $C_{\bar{P}}(\bar{Q})=\bar{P}'$. Moreover, since $\mathcal{Z}(\bar{P})\neq \bar{P}$ is normalized by $\bar{Q}$, we also have that  $\mathcal{Z}(\bar{P})\leq C_{\bar{P}}(\bar{Q})=\bar{P}'$ and so $[\bar{P},C_{\bar{P}}(\bar{Q}),\bar{Q}]\leq [\bar{P}',\bar{Q}]=1$. Moreover $[C_{\bar{P}}(\bar{Q}),\bar{Q},\bar{P}]=1$  and so by the three subgroup lemma, we have $[\bar{Q},\bar{P},C_{\bar{P}}(\bar{Q})]=1$ and so $[\bar{P},C_{\bar{P}}(\bar{Q})]=1$ which implies that $C_{\bar{P}}(\bar{Q})= \mathcal{Z}(\bar{P})$.

Now, since $\bar{P}/C_{\bar{P}}(\bar{Q})$ is elementary abelian of order four, we may choose, $a,b \in
\bar{P}\bs C_{\bar{P}}(\bar{Q})$ such that $\bar{P}=\<a,b\>$. Notice that $a^2 \in C_{\bar{P}}(\bar{Q})$. Since
$C_{\bar{P}}(\bar{Q})$ is central in $\bar{P}$, it follows that, $C_{\bar{P}}(\bar{Q})=\bar{P}'=\<[a,b]\>$.
Furthermore, $[a,b]^2=[a^2,b]=1$. Therefore $|C_{\bar{P}}(\bar{Q})|=2$ and so $|\bar{P}|=8$. Thus
$|\bar{L}|=24$ and so $W=C_L(Y)$.
\end{proof}

\begin{lemma}\label{facts about W}
\begin{enumerate}[$(i)$]
\item $Y < Q \cap Q^x$.

\item $Y$ and $W/Q \cap Q^x$ are natural $L/W$-modules and $Q\cap
Q^x /Y$ is the trivial $L/W$-module. In particular $|W|=3^5$.

\item If $Z \neq Z^{x'}\leq Y$ then $Q \cap Q^x=Q \cap Q^{x'}$ and
$\<Q,Q^{x'}\>=\<Q,Q^x\>=L$.

\item $\mathcal{Z}(W)=Y$.

\item $W$ has exponent three.
\end{enumerate}
\end{lemma}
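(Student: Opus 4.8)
Throughout write $V:=Q\cap Q^x$, and recall from the earlier lemmas that $Y=\<Z,Z^x\>\le V$ is elementary abelian of order $9$, that $W=\<C_Q(Y),C_{Q^x}(Y)\>=C_L(Y)\normal L$ with $L/W\cong\SL_2(3)$ and $S=QW\in\syl_3(L)$, and that $Q\cap W=C_Q(Y)$ with $|C_Q(Y)|=|C_{Q^x}(Y)|=3^4$. Since $C_Q(Y)$ and $C_{Q^x}(Y)$ normalise one another and meet in $V$, we have $W=C_Q(Y)C_{Q^x}(Y)$ and $|W|=3^8/|V|$. For \emph{(i)} I would first note that $V/Z$ is totally isotropic in the symplectic space $Q/Z$ (as $V$ is abelian containing $Z=\mathcal{Z}(Q)$), so $|V|\le 3^3$; thus $9\le|V|\le 27$, and it remains to exclude $|V|=9$. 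Assuming $V=Y$, I would rerun the three–subgroup argument that proved $Z\le Q^x$: from $[C_{Q^x}(Y),Q^x]\le Z^x$ and $[C_Q(Y),C_{Q^x}(Y)]\le V=Y$ (so $[C_Q(Y),C_{Q^x}(Y),Q^x]\le[Y,Q^x]=Z^x$), the three subgroup lemma gives $[Q^x,C_Q(Y),C_{Q^x}(Y)]\le Z^x$; since $C_{Q^x}(Y)/Z^x=\<\overline Z\>^{\perp}$ in $Q^x/Z^x$, this forces $[Q^x/Z^x,C_Q(Y)]\le\<\overline Z\>$. Hence $C_Q(Y)$ induces on $Q^x/Z^x$ only transvections with centre $\overline Z$, a group of order at most $3$, with kernel $C_Q(Y)\cap Q^x=Y$ (using the conjugate of Lemma~\ref{Prelims-EasyLemma}$(iii)$, $C_{C_G(Z^x)}(Q^x/Z^x)=Q^x$). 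This yields $|C_Q(Y)|\le 3|Y|=27$, contradicting $|C_Q(Y)|=3^4$. Therefore $|V|=27>9$, proving \emph{(i)} and giving $|W|=3^5$.

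For \emph{(ii)}: $Y$ is a faithful $2$–dimensional $L/W\cong\SL_2(3)$–module, hence natural. Next $V\normal L$, since $[V,Q]\le Q'=Z\le V$ and $[V,Q^x]\le(Q^x)'=Z^x\le V$; moreover $[V,W]\le Y$ (as $[V,C_Q(Y)]\le Z$, $[V,C_{Q^x}(Y)]\le Z^x$), so $V/Y$ is an $L/W$–module of order $3$, i.e. trivial. For $W/V$ (order $9$) I would show $L/W$ acts nontrivially: if $[W,L]\le V$ then $[C_Q(Y),Q^x]\le V\le C_Q(Y)$ makes $C_Q(Y)\normal L$, whence $Z=C_Q(Y)'\normal L$, contradicting $[Z,Q^x]=Z^x\neq Z$. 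Nontrivial action forces both Sylow $3$–subgroups $QW/W$ and $Q^xW/W$ to act nontrivially, with fixed spaces the single lines $C_Q(Y)/V$ and $C_{Q^x}(Y)/V$; these meet trivially, so $W/V$ has no trivial submodule and is natural by Lemma~\ref{prelims-natural sl23 mods}. Thus $|W|=3^5$ and the module claims hold.

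For \emph{(iii)}: since $Z^{x'}\le Y$ and $Z^{x'}\neq Z$ we get $\<Z,Z^{x'}\>=Y$. As $Y$ is natural, $L$ is transitive on the four subgroups of order $3$ in $Y$, so there is $g\in L$ with $Z^g=Z^{x'}$, giving $Q^{x'}=Q^g\le L$ and hence $\<Q,Q^{x'}\>\le L$; comparing orders ($24\cdot 3^5$) yields $\<Q,Q^{x'}\>=L$ and $O_3(\<Q,Q^{x'}\>)=W$. Finally $V$ is the preimage in $W$ of the unique trivial submodule $C_{W/Y}(L/W)$ of $W/Y$, an invariant of $L$; applying this to $x'$ gives $Q\cap Q^{x'}=V$. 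For \emph{(iv)}: $Y\le\mathcal{Z}(W)$ is clear; $\mathcal{Z}(W)V/V$ is a submodule of the irreducible $W/V$, so is $1$ or $W/V$, and the latter makes $W=\mathcal{Z}(W)V$ abelian (false, as $W'\ge Y$), so $\mathcal{Z}(W)\le V$. If $\mathcal{Z}(W)=V$ then $C_Q(Y)\le C_Q(V)=V$ (a Lagrangian is self–perpendicular), contradicting $|C_Q(Y)|=81$; hence $\mathcal{Z}(W)=Y$.

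For \emph{(v)} it suffices to prove $W'\le Y$, for then $W$ has class $2<3$, hence is regular, and being generated by the exponent–$3$ groups $C_Q(Y),C_{Q^x}(Y)$ with $W'=Y$ elementary abelian, regularity forces every element to have order $3$. Since $W'\le V$, $W'\ge Y$ and $|V/Y|=3$, we have $W'\in\{Y,V\}$, so I must show $[C_Q(Y),C_{Q^x}(Y)]\le Y$. Take $b\in C_{Q^x}(Y)$ of order $3$; it induces $\beta\in\Sp(Q/Z)$ of order dividing $3$ fixing $\overline{Z^x}$, stabilising the flag $\<\overline{Z^x}\>< V/Z< C_Q(Y)/Z< Q/Z$ and acting nontrivially on $V/Z$ (as $[V,b]=Z^x$). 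If $b$ also acted nontrivially on $C_Q(Y)/Y$, then the symplectic duality between $V/Z$ and $Q/Z$ over the Lagrangian $V/Z$ makes all three connecting maps of $\beta-1$ along the flag nonzero, so $(\beta-1)^3\neq 0$ and $\beta$ has order $9$ — contradicting $|b|=3$. Hence $b$ centralises $C_Q(Y)/Y$ and $W'=Y$. The main obstacle is precisely this last point: a priori $W$ could have class $3$ (and exponent $9$), and excluding it requires the order–$3$ condition to bound the Jordan type of the action on $Q/Z$; everything else is steady bookkeeping with the two symplectic structures and the $\SL_2(3)$–module theory already set up.
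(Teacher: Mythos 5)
Your proposal is correct, and while parts (ii)--(iv) run essentially parallel to the paper's proof (the paper gets non-triviality of the action on $W/(Q\cap Q^x)$ by arranging $x\in L$ so that conjugation by $x$ swaps the two factors, where you instead observe that $[W,L]\leq Q\cap Q^x$ would force $C_Q(Y)$, hence $Z=C_Q(Y)'$, to be normal in $L$; and in (iii) the paper uses an element of $L$ fixing $Z$ and carrying $Z^x$ to $Z^{x'}$ together with normality of $Q\cap Q^x$ in $L$, rather than your order count and ``preimage of the unique trivial submodule'' characterisation), parts (i) and (v) are genuinely different. For (i), the paper assumes $Y=Q\cap Q^x$, deduces $|W|=3^6$ and $W'=\Phi(W)=Y$, and then studies the central involution $s$ of $L/W$ acting on $W/Y$: via Lemma \ref{Burnside-p'-automorphism} and Lemma \ref{prelims-natural sl23 mods}, $C_{W/Y}(s)$ is forced to be a natural $L/W$-module, which $s$ visibly centralises --- a contradiction. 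Your alternative (rerunning the three-subgroup computation that proved $Z\leq Q^x$, so that under $Y=Q\cap Q^x$ the group $C_Q(Y)$ maps into the order-$3$ group of symplectic transvections of $Q^x/Z^x$ with centre $\bar{Z}$, with kernel $C_Q(Y)\cap Q^x=Y$, giving $|C_Q(Y)|\leq 27$) is sound and stays entirely inside the commutator/symplectic calculus, never invoking $s$. For (v), the paper's proof is a two-line consequence of (ii): $L/W$ is transitive on the non-identity elements of the natural module $W/(Q\cap Q^x)$, so every non-trivial coset is $L$-conjugate to one lying in $C_Q(Y)\leq Q$, and $Q$ has exponent three. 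Your route --- proving $W'=Y$ via the self-dual flag $\<\overline{Z^x}\> < (Q\cap Q^x)/Z < C_Q(Y)/Z < Q/Z$ and the order-$9$ obstruction, then applying class-two regularity --- is correct (the adjointness of the two outer connecting maps under the symplectic pairings is precisely what legitimises ``all three maps non-zero'', since all successive quotients are one-dimensional), but it is considerably heavier, and note the small imprecision: you must take $b\in C_{Q^x}(Y)\bs (Q\cap Q^x)$ for $[Q\cap Q^x,b]=Z^x$ to hold, the case $b\in Q\cap Q^x$ being trivial because then $[C_Q(Y),b]\leq Q'=Z$. What your version of (v) buys in compensation is the identity $W'=Y$ itself, which the paper only obtains in the subsequent Lemma \ref{lemma structure of Y and W} by a separate argument comparing the two index-nine normal subgroups $Q\cap Q^x$ and $Z_2$ of $W$.
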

\begin{proof}
$(i)$ Suppose that $Y=Q \cap Q^x=C_Q(Y) \cap C_{Q^x}(Y)$. Then $|W|=3^43^4/3^2=3^6$. Since $W$
centralizes $Y$, $C_Q(Y),C_{Q^x}(Y)\trianglelefteq W$. Therefore $W'\leq C_Q(Y) \cap C_{Q^x}(Y)=Q
\cap Q^x=Y$ and so $W/Y$ is abelian of order $3^4$. Furthermore $W/Y$ is generated by the
elementary abelian groups $C_{Q}(Y)/Y$ and $C_{Q^x}(Y)/Y$ and so is elementary abelian. Since
$W'\geq Z$ and $W'\geq Z^x$, we see that $Y=W'=\Phi(W)$. Choose an involution $s\in L$ such that
$Ws \in \mathcal{Z}(L/W)$. Since $W$ is non-abelian, $C_W(s) \neq 1$ and since $Y$ is a natural $L/W$-module,
$C_Y(s)=1$ and so $C_{W/Y}(s)\neq 1$. By Lemma \ref{Burnside-p'-automorphism}, $[W/Y,s] \neq 1$
else $s$ acts trivially on $W$. So $W/Y=C_{W/Y}(s) \times [W/Y,s]$ and since $[W/Y,s]$ is a
non-trivial $L/W$-module which $\mathcal{Z}(L/W)$ inverts, it has order $3^2$. Therefore $|C_{W/Y}(s)|=3^2$.
Now $Ws \in \mathcal{Z}(L/W)$ so $Ws$ normalizes $S/W=QW/W\cong Q/(Q \cap W)$. Furthermore $s$ inverts $Z$
and so normalizes $Q$ and therefore also $Q \cap W$. Since $C_Q(Y)=Q \cap W$ is non-abelian $C_{(Q
\cap W)}(s) \neq 1$ and so $C_{(Q \cap W)/Y}(s) \neq 1$. Similarly $C_{(Q^x \cap W)/Y}(s) \neq 1$
and clearly $C_{(Q \cap W)/Y}(s) \neq C_{(Q^x \cap W)/Y}(s)$. Thus $C_{W/Y}(s)=C_{(Q \cap W)/Y}(s)
C_{(Q^x \cap W)/Y}(s)$.  We may assume that $x \in L$ as $L$ is transitive on $Y$. Therefore $L/W$
permutes $C_{(Q \cap W)/Y}(s)$ and  $C_{(Q^x \cap W)/Y}(s)$ and so acts non-trivially on
$C_{W/Y}(s)$. Hence by Lemma \ref{prelims-natural sl23 mods}, $C_{W/Y}(s)$ is a natural
$L/W$-module. This is a contradiction as $s$ clearly centralizes $C_{W/Y}(s)$. Thus we may conclude
that $Q \cap Q^x \neq Y$.

$(ii)$ Since $Q \cap Q^x$ is abelian and a subgroup
of an extraspecial group, it has order at most $3^3$ and, by $(i)$, it has order exactly $3^3$.
Therefore $|W|=3^43^4/3^3=3^5$. Now every subgroup of $Q$ containing $Z$ is normalized by $Q$ and every
subgroup of $Q^x$ containing $Z^x$ is normalized by $Q^x$ and so $Q \cap Q^x$ is normalized by
$L=\<Q,Q^x\>$. Therefore $(Q \cap Q^x)/Y$ is normalized by $L/W$ and so must be a trivial module. Now
$L/W$ also acts on the elementary abelian group $W/(Q \cap Q^x)$ which is the direct product of the
groups $C_Q(Y)/(Q \cap Q^x)$ and $C_{Q^x}(Y)/(Q \cap Q^x)$. Since we can assume as before that $x \in
L$, $L/W$ acts non-trivially on $W/(Q \cap Q^x)$ and so this must be a natural $L/W$-module.

$(iii)$ Since $Y$ is a natural $L/W$-module, $L$ is transitive on $Y^\#$.
Moreover there exists an element of $L$ which preserves $Z$ and swaps $Z^x$ and $Z^{x'}$. This
element of course maps $Q \cap Q^x$ to $Q \cap Q^{x'}$ but $Q \cap Q^x$ is normal in $L$ and so the
two groups must be equal.

$(iv)$ Clearly $Y \leq \mathcal{Z}(W)$ so suppose $Y<\mathcal{Z}(W)$ then $\mathcal{Z}(W)$ has index at most nine in $W$. Since
$C_Q(Y)$ is non-abelian and contained in $W$, $W$ is non-abelian. Therefore $[W:\mathcal{Z}(W)]=9$. Notice
that  $\mathcal{Z}(W)\neq Q \cap Q^x$ otherwise $C_Q(Y)$ is abelian. So we have that $(Q\cap Q^x )\mathcal{Z}(W)/(Q
\cap Q^x)$ is a proper and non-trivial $L/W$ invariant subgroup of the natural $L/W$-module, $W/(Q
\cap Q^x)$. This is a contradiction. Thus $Y=\mathcal{Z}(W)$.

$(v)$ Since $Q$ has exponent three, $Q \cap Q^x$ does also. Choose $a \in Q  \bs (Q \cap Q^x)$ then
$(Q \cap Q^x) a$ is a non-identity element of the natural $L/W$-module, $W/Q \cap Q^x$. Moreover,
every element in the coset has order dividing three since $Q$ has exponent three. Since $L/W$ is
transitive on the non-identity elements of the natural module $W/(Q \cap Q^x)$, every element of
$W$ has order dividing three.
\end{proof}

Let $Z \leq Z_2\leq S$ such that $Z_2/Z=\mathcal{Z}(S/Z)$. Since $L/W\cong \SL_2(3)$ and $W$ is a $3$-group,
we may fix an involution $s$ in $L$ such that $W\<s\>/W=\mathcal{Z}(L/W)$. Set $J=[W,s]$.

\begin{lemma}\label{lemma structure of Y and W}
\begin{enumerate}[$(i)$]
 \item $Y \leq Z_2\leq W$ and $Z_2$ is abelian of order $3^3$ but distinct from $Q \cap Q^x$.
 \item $W'=Y$.
 \item $Q \cap Q^x=Y C_W(s)$ and $|C_W(s)|=3$.
 \item $Q \cap
J=Z_2\leq J=[W,s]=[S,s]$ is an elementary abelian subgroup of $W$ of order $3^4$ that is inverted by $s$.
 \item $J=J(S)=J(W)$ and $Y \leq S' \leq Z_2$.
 \item If $J<S_0<S$ then $Z_2> \mathcal{Z}(S_0)$  and $|\mathcal{Z}(S_0)|=9$.
 \item $L/J \cong 3\times \SL_2(3)$ and $J/Y$ is a natural $L/W$-module.
\end{enumerate}
\end{lemma}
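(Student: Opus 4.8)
The plan is to work throughout inside the Sylow $3$-subgroup $S=QW$ of $L$, noting first that $Q,W\trianglelefteq S$ with $Q\cap W=C_Q(Y)$ of order $3^4$, so that $|S|=3^6$, and to exploit the $L/W$-module picture of Lemma \ref{facts about W}: $\mathcal{Z}(W)=Y$ and $W/(Q\cap Q^x)$ are natural $L/W\cong\SL_2(3)$-modules while $(Q\cap Q^x)/Y$ is trivial. The involution $s$ projects to the central involution $-1$ of $\SL_2(3)$, so $s$ inverts $Y$ and $W/(Q\cap Q^x)$ and centralises $(Q\cap Q^x)/Y$; this single observation, fed through coprime action (Theorem \ref{coprime action}), drives almost every part. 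I would record at the outset that $s$ normalises $Q$ (it inverts $Z$, hence lies in $N_G(Z)$ and normalises $Q=O_3(C_G(Z))$) and $W$, hence $S$.

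For part $(ii)$ I would first show $Y\le W'\le Q\cap Q^x$: since $C_Q(Y)'=Z$ and $C_{Q^x}(Y)'=Z^x$ we get $Y\le W'$, while $C_{Q^x}(Y)\le C_G(Z)$ normalises $Q$ and $C_Q(Y)\le C_G(Z^x)$ normalises $Q^x$, giving $[C_Q(Y),C_{Q^x}(Y)]\le Q\cap Q^x$ and hence $W'\le Q\cap Q^x$. As $W'/Y$ is an $L/W$-submodule of the trivial module $(Q\cap Q^x)/Y$, either $W'=Y$ or $W'=Q\cap Q^x$; the latter would make $W$ two-generated, impossible for an exponent-three group of order $3^5$ since a two-generated group of exponent three has order at most $27$. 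Thus $W'=Y$ and $W/Y$ is elementary abelian of order $27$. Part $(iii)$ is then pure coprime action: $C_{W/Y}(s)=(Q\cap Q^x)/Y$ (the trivial layer) has order $3$ while $C_Y(s)=1$, so $|C_W(s)|=3$ and $C_W(s)Y=Q\cap Q^x$. For the bulk of $(iv)$ I set $J=[W,s]$. Then $Y=[Y,s]\le J$, and since $W/Y$ is abelian $J/Y=[W/Y,s]$ has order $27/3=9$, so $|J|=3^4$; moreover $s$ inverts every commutator $[w,s]$, so $s$ acts as $-1$ on $J/\Phi(J)$, forcing $C_J(s)\le\Phi(J)=J'\le W'=Y$, and since $s$ inverts $Y$ this gives $C_J(s)=1$; a fixed-point-free involution makes $J$ abelian, hence elementary abelian and inverted by $s$. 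Finally $[Q,s]\le Q\cap W=C_Q(Y)$ and $[Q,s]=[Q,s,s]\le[C_Q(Y),s]\le J$, so $J=[S,s]$; and because $\bar s$ is central in $L/W$, $J/Y=[W/Y,\bar s]$ is $L/W$-invariant, giving $J\trianglelefteq L$ with $J/Y\cong W/(Q\cap Q^x)$ a natural module — the first half of $(vii)$.

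The heart of the argument, and the step I expect to cost the most, is identifying $Z_2$. Projecting $Q\cap J$ to $V=W/(Q\cap Q^x)$ shows $Q\cap J\supseteq Y$ with $(Q\cap J)/Y$ of order $3$ and $J\cap(Q\cap Q^x)=Y$, so $|Q\cap J|=3^3$ and $Q\cap J\ne Q\cap Q^x$. Writing $W=J\langle c\rangle$ with $c\in C_W(s)\le Q\cap Q^x\le Q$, I get $[Q\cap J,c]\le[Q,Q]=Z$ and $[Q\cap J,J]=1$, so $[Q\cap J,W]\le Z$; with $[Q\cap J,Q]\le Z$ this yields $Q\cap J\le Z_2$. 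For the reverse inclusion I would show $Z_2\le W$ (an element outside $W$ acts nontrivially on the natural layer of $W/Y$, so cannot centralise $W/Z$), then $Z_2\le C_{C_G(Z)}(Q/Z)=Q$ by Lemma \ref{Prelims-EasyLemma}$(iii)$, hence $Z_2\le C_Q(Y)$; finally the commutator pairing $\bar\delta\colon C_Q(Y)\times C_{Q^x}(Y)\to Y/Z$, $(u,v)\mapsto[u,v]Z$, is bilinear and nonzero (it is nonzero on $(Q\cap Q^x)\times\langle b\rangle$ since $[Q\cap Q^x,b]$ meets $Z^x$), so its left radical $\{u:[u,C_{Q^x}(Y)]\le Z\}$, which contains $Z_2$, has order at most $27$; combined with $Q\cap J\le Z_2$ this forces $Z_2=Q\cap J$ of order $3^3$, proving $(i)$ and the remaining clause of $(iv)$. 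Part $(v)$ is now quick: $S/J$ has order $9$, hence is abelian, so $S'\le J$; as $S'\le C_Q(Y)$ (because $S/C_Q(Y)\cong 3\times3$) we get $Y\le S'\le C_Q(Y)\cap J=Q\cap J=Z_2$. For the Thompson subgroup, abelian subgroups of $Q$ have order at most $3^3$ and $[S:Q]=3$, so $d(S)=3^4=|J|$ and $J\le J(S)$; equality then follows by checking that every abelian subgroup of maximal order $3^4$ contains $Z$ (in $Q$), resp.\ $Y$ (in $W$), and lies in $J$, using the nondegeneracy of the commutator form $W/Y\times W/Y\to Y$, so $J=J(S)=J(W)$.

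For $(vi)$ I would analyse $J$ as a faithful $\mathbb{F}_3(S/J)$-module, $S/J\cong 3\times3$, through the chain of $S$-submodules $Z<Y<Z_2<J$ with trivial factors. For $S_0=\langle J,g\rangle$ with $J<S_0<S$ one has $C_S(J)=J$, so $\mathcal{Z}(S_0)=C_J(g)=\ker\mu_g$, where $\mu_g\colon J\to Z_2$, $j\mapsto[j,g]$, is a homomorphism into $Z_2$ (as $[S,J]\le Z_2$); computing the rank of $\mu_g$ from the symplectic form on $Q/Z$ (for the $Q$-part of $g$) and from the pairing into $W'=Y$ (for the $W$-part) gives $|\ker\mu_g|=9$ with $\ker\mu_g\le Z_2$, which is exactly $Z_2>\mathcal{Z}(S_0)$ and $|\mathcal{Z}(S_0)|=9$. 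This symplectic bookkeeping — verifying that no $g\in S\setminus J$ fixes a point of $J\setminus Z_2$ and that its fixed space has rank precisely two, simultaneously for all four maximal $S_0$ — is the main obstacle, since it requires controlling the action on every chief factor at once. Finally, for the rest of $(vii)$, $W/J$ has order $3$ and is a trivial $L/W$-module (the top factor of $W/Y$), so $W/J\le\mathcal{Z}(L/J)$; since $S/J\cong 3\times3$ contains the central $W/J$ with a complement, Gasch\"utz's theorem (Theorem \ref{Gaschutz}) provides a complement to $W/J$ in $L/J$, giving $L/J\cong 3\times\SL_2(3)$.
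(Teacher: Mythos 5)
Your proposal is correct and reaches every part of the lemma, but the load\-/bearing steps are organised genuinely differently from the paper's proof, so a comparison is worth recording. Where you coincide with the paper: part (iii), the body of (iv) (coprime action plus a fixed\-/point\-/free involution forcing $J$ abelian and inverted), the uniqueness of the abelian $3^4$\-/subgroup inside $W$, and the Gasch\"{u}tz argument in (vii). The real differences are three. For (ii), the paper plays the two distinct index\-/nine normal subgroups $Z_2$ and $Q\cap Q^x$ of $W$ against each other, so it must prove (i) first; you instead rule out $W'=Q\cap Q^x$ by noting it would make $W$ a two\-/generated exponent\-/three group of order $3^5$, contradicting the fact that the free two\-/generator group of exponent three has order $27$ --- this decouples (ii) from (i) completely, at the cost of importing a Burnside fact the paper never uses. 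For the identification of $Z_2$ the logic runs in the opposite direction: the paper gets $|Z_2|\geq 3^3$ from decomposability of the $S/Q$\-/action on $Q/Z$ (Jordan blocks) and then squeezes from above via $Q\cap Q^x\nleq Z_2$, whereas you exhibit $Q\cap J\leq Z_2$ by a direct commutator computation and cap $|Z_2|$ by the radical of the pairing $C_Q(Y)\times C_{Q^x}(Y)\to Y/Z$; your route delivers the explicit equality $Z_2=Q\cap J$ at once, which the paper only obtains at the end of (iv) via the fixed\-/point\-/free action of $s$ on $Z_2$. For (vi), the paper argues inside $Q$ ($\mathcal{Z}(Q\cap S_0)\leq Z_2$ since otherwise $Q\cap S_0$ would be abelian of order $3^4$, then $\mathcal{Z}(Q\cap S_0)=\mathcal{Z}(S_0)$, then a $\langle Q,g\rangle=S$ argument excludes anything larger); your $\mu_g$ approach is sound, and the bookkeeping you flag as the main obstacle largely evaporates once you notice that every $S_0$ with $J<S_0<S$ satisfies $|S_0\cap Q|=3^4$, so the generator $g$ may always be chosen in $(S_0\cap Q)\setminus Z_2$: then $C_{Z_2}(g)$ has order $9$ because $Z_2/Z$ is a maximal totally isotropic subspace of the symplectic space $Q/Z$, while $[J,g]\leq Z$ together with $[Q,g]\leq Q'=Z$ would force $g\in Z_2$; these two facts pin $\ker\mu_g=C_{Z_2}(g)\leq Z_2$ of order $9$, with no mixed ``$Q$\-/part/$W$\-/part'' analysis at all. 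The one place you are thinner than you should be is (v) for an abelian $A\leq S$ of order $3^4$ with $A\nleq W$: nondegeneracy of the form on $W/Y$ only settles uniqueness \emph{inside} $W$, and to force $A\leq W$ you still need the paper's chain --- $Z\leq A$ (since $AZ$ is abelian), hence $S=QA$, hence $(A\cap Q)/Z\leq\mathcal{Z}(S/Z)$ and $A\cap Q=Z_2\ni Y$, whence $A\leq C_S(Y)=W$; your parenthetical ``contains $Y$ (in $W$)'' is exactly this claim, but it comes from the $\mathcal{Z}(S/Z)$ argument, not from the form, so it should be spelled out. With that step made explicit, your proof is complete.
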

\begin{proof}
$(i)$ By Lemma \ref{Prelims-EasyLemma} $(iii)$, $C_G(Z)/Q$ acts faithfully on $Q/Z$. Since $S/Q$ is isomorphic to a cyclic subgroup of $\GL_4(3)$ of order three, we may consider the Jordan blocks of elements of order three to see that the action of any such cyclic subgroup on $Q/Z$ is not indecomposable. Thus, there exist $S/Q$-invariant, proper, non-trivial subgroups, $N_1,N_2$ of $Q/Z$ such that $Q/Z=N_1N_2$. Hence for $i\in \{1,2\}$, $1 \neq C_{N_i}(S/Q)\leq \mathcal{Z}(S/Z)$.  Therefore $|Z_2/Z|\geq 9$ and so $|Z_2|\geq 27$. 
Since $[Q/Z,Z_2]=1$, $Z_2 \leq Q$ by Lemma \ref{Prelims-EasyLemma} $(iii)$. Now suppose $Z_2 \nleq W$. Then $S=Z_2W \in \syl_3(L)$.
Since $Z_2/Z=\mathcal{Z}(S/Z)$, $[S,Z_2] \leq Z$ and so $[W,Z_2] \leq Z \leq Q \cap Q^x$. Therefore $[W/Q
\cap Q^x, Z_2]=1$, however this implies that $S/W=Z_2W/W$ acts trivially on the natural
$L/W$-module $W/(Q \cap Q^x)$ which is a contradiction. So $Z_2 \leq W\cap Q$. Suppose  $Q \cap Q^x
\leq Z_2$. Then $Z^x=[C_{Q^x}(Y), Q \cap Q^x]\leq Z$ which is a contradiction. Therefore $|Z_2|=27$
and in particular, $Z_2 \neq Q \cap Q^x$. Furthermore $Y\vartriangleleft QW=S$ and so $Y/Z$ is
central in $S/Z$. Therefore $Y \leq Z_2$ and since $Y$ is central in $Z_2 \leq W$, $Z_2$ is
abelian.

$(ii)$ Now $Z=C_Q(Y)'\leq W'$ and $Z^x=C_{Q^x}(Y)'\leq W'$ and so $Y \leq W'$. Moreover, we have
just observed that $Z_2 \neq Q \cap Q^x$ and so  $Q \cap Q^x$ and $Z_2$ are distinct  normal
subgroups of $W$ both of index nine. Thus $Y \leq W' \leq Q \cap Q^x \cap Z_2$. It follows from the
group orders that $Y=W'=Q \cap Q^x \cap Z_2$.

$(iii)$ By coprime action on an abelian group, $W/Y=C_{W/Y}(s) \times [W/Y,s]$. By Lemma \ref{facts
about W}  $(ii)$, $Y$ and $W/(Q \cap Q^x)$ are natural $L/W$-modules. Therefore $s$ inverts $Y$ and
$W/(Q \cap Q^x)$. It follows from coprime action that $|C_W(s)|=3$ and that $C_W(s)\leq Q \cap Q^x$
with $Q \cap Q^x=YC_W(s)$.

$(iv)$ We have that $Y$ is inverted by $s$ and so $Y=[Y,s] \leq [W,s]=J$. Therefore
$[W/Y,s]=Y[W,s]/Y \cong [W,s]/([W,s] \cap Y)=[W,s]/Y$ has order nine. This implies that $[W,s]$ has
order $3^4$.  Now $s$ normalizes $W$ so we use coprime action again to see that $W=C_W(s) [W,s]$.
Notice  that this product is split since $J=[W,s]$ has order $3^4$ and $C_W(s)$ has order three. In
particular this implies that $s$ acts fixed-point-freely on $J$ and so $J$  is abelian and inverted
by $s$. By Lemma \ref{facts about W} $(v)$, $W$ has exponent three and so $J$ is elementary
abelian.

Observe that the involution $s$ normalizes $S$. Now $S\leq L=\<Q,Q^x\>$ and so $S$ normalizes
$W\<s\>$ and therefore $[S,s] \leq S \cap W\<s\>=W$. So by coprime action, $[S,s]=[S,s,s]\leq
[W,s]=J$. Since $s$ normalizes $Z$ and  $S$, we must have that $s$ normalizes $Z_2$. Moreover, $Z_2
\leq W$ and so if $C_{Z_2}(s)$ is non-trivial then $C_{Z_2}(s)=C_W(s)\leq Q \cap Q^x$ and then
$Z_2=YC_W(s)=Q \cap Q^x$. However by $(i)$, $Z_2 \neq Q \cap Q^x$. Thus $s$ acts fixed-point-freely
on $Z_2$ and so $Z_2\leq J$. Since $J \nleq Q$, we have $Z_2=Q \cap J$.

$(v)$ Suppose there was another abelian subgroup of $W$ of order $3^4$, $J_0$ say. Then $|J \cap
J_0|=3^3$ and  $J \cap J_0$ would be central in $W$. This contradicts Lemma \ref{facts about W}
which says that $\mathcal{Z}(W)=Y$. It follows therefore that $J(W)=J$.

Clearly $3^4$ is the largest possible order of an abelian subgroup of $S$ (else $Q$ would contain
abelian subgroups of order $3^4$). So suppose $J_1$ is an abelian subgroup of $S$ distinct from
$J$. Then $J_1 \nleq W$ and $J_1 \nleq Q$. Therefore, $S/Z$ contains three distinct abelian
subgroups  $Q/Z$, $J/Z$ and $J_1/Z$. We must have that $S=QJ=QJ_1$. Hence,  $(Q/Z) \cap (J/Z)$ and
$(Q/Z) \cap (J_1/Z)$ both have order nine and are both central in $S/Z$. We must have that  $Q/Z
\cap J/Z =Q/Z \cap J_1/Z=Z_2/Z$. Thus $Y \leq Z_2 \leq J_1$ and so $J_1\leq C_S(Y)=W$. However we
have seen that $J=J(W)$ is the unique abelian subgroup of order $3^4$. Thus $J=J(S)$. In
particular, $J$ is a normal subgroup of $S$ of index nine so $Y=W' \leq S' \leq Q \cap J=Z_2$.

$(vi)$ Suppose  $J<S_0<S$ then $|S_0|=3^5$. Since $J\nleq Q$, $S_0 \nleq Q$ and so $|S_0 \cap
Q|=3^4$. Therefore $\mathcal{Z}(Q \cap S_0)$ has order nine. Since $J\leq S_0$, $Z_2=J \cap Q\leq Q \cap
S_0$. Hence,  $\mathcal{Z}(Q \cap S_0)\leq Z_2$ otherwise $Q \cap S_0=\<\mathcal{Z}(Q \cap S_0),Z_2\>$ would be
abelian. Thus $\mathcal{Z}(Q \cap S_0)\leq J$ and so $\mathcal{Z}(Q \cap S_0)$ commutes with $S_0=\<Q \cap S_0,J\>$ and
$\mathcal{Z}(Q \cap S_0)=\mathcal{Z}(S_0) \cap Q$ has order nine. So suppose $\mathcal{Z}(S_0)$ has order greater than nine. Then
there exists $g\in \mathcal{Z}(S_0) \bs Q$ such that $S_0 \cap Q\leq C_Q(g)$. Therefore $S=\<Q,g\>$ and
$[S_0 \cap Q, S]=[S_0 \cap Q,Q][S_0 \cap Q,g]= Z$  which implies that $S_0 \cap Q \leq Z_2$ which
is a contradiction.

$(vii)$ We have that $L/W\cong \SL_2(3)$ and  $L/J$ has a normal subgroup of order three $W/J$.
Since $\SL_2(3)$ has no index two subgroup, $W/J$ is central in $L/J$ (else $C_{L/J}(W/J)$ would
have index two). Now $J$ is not a subgroup of $Q$ and so $S/J=QJ/J\cong Q/(Q \cap J)$ and since $Q$
has exponent three, so does $S/J$. Therefore $S/J$ splits over $W/J$ and so by Gasch\"{u}tz's
Theorem (\ref{Gaschutz}), $L/J$ splits over $W/J$ and so $L/J \cong 3 \times \SL_2(3)$. Finally it
is clear that $L/W$ acts on $J/Y=J/(J \cap Q \cap Q^x)\cong J(Q \cap Q^x)/(Q \cap Q^x)=W/(Q \cap
Q^x)$ which is a natural $L/W$-module.
\end{proof}

\begin{lemma}\label{central involutions don't fix Y}
Suppose $u \in C_G(Z)$ is an involution and  $[Qu,S/Q]=1$. Then either
 \begin{enumerate}[$(i)$]
  \item $u$ inverts $Q/Z$, $|C_J(u)|=3^2$ and $u$ inverts $S/J$; or
  \item $Q$ is a central product of the two groups $C_Q(u),[Q,u] \cong 3_+^{1+2}$, $u$ does
not normalize $Y$, $|C_{S}(u)|=3^4$, $|C_J(u)|=3^3$ and $[J,u]=[Y,u]$ has order 3.
\end{enumerate}
\end{lemma}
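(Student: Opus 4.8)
The plan is to analyse the action of $u$ on $Q$ and then propagate it to $S$, $J$ and $Y$, reading everything off the symplectic module $V := Q/Z$, on which $C_G(Z)/Q$ acts as a subgroup of $\Sp_4(3)$ (Lemma~\ref{Prelims-EasyLemma}). First I would note that $u$ cannot centralise $Q$: otherwise $u \in C_G(Q) \leq Q$, forcing the involution $u$ into the $3$-group $Q$. Since $[Z,u]=1$, Lemma~\ref{prelims-extraspecial and a coprime aut} applies and yields exactly two possibilities: (a) $Q = [Q,u]$ with $C_Q(u) = Z$; or (b) $Q = C_Q(u) \ast [Q,u]$ with $C_Q(u), [Q,u] \cong 3^{1+2}_+$. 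I will show (a) gives conclusion $(i)$ and (b) gives conclusion $(ii)$. Throughout, the hypothesis $[Qu, S/Q]=1$ together with $u \in N_G(Q)$ shows that $u$ normalises $S$ and acts trivially on $S/Q$ (which has order $3$, as $|S|=3^6$); hence $u$ normalises the characteristic subgroups $J = J(S)$ and $Z_2$, and I record that $C_V(t) = Z_2/Z$ is a totally isotropic $2$-space for a generator $t$ of $S/Q$ (Lemma~\ref{lemma structure of Y and W}).

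In case (a), $u$ inverts $V = Q/Z$. Since $QJ = S$ and $Q \cap J = Z_2$, the quotient $S/J \cong Q/Z_2$ is inverted by $u$, giving the last assertion of $(i)$. For $|C_J(u)|$ I would use coprime action (Theorem~\ref{coprime action}) along the $u$-invariant series $Z \leq Z_2 \leq J$: here $u$ inverts $Z_2/Z = C_V(t)$, so $C_{Z_2}(u) = Z$, while $J/Z_2 \cong S/Q$ is centralised by $u$; thus $|C_J(u)| = |C_{Z_2}(u)|\cdot|C_{J/Z_2}(u)| = 3 \cdot 3 = 3^2$, completing $(i)$.

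In case (b) I would first compute $|C_S(u)|$ by coprime action along $Z \leq Q \leq S$: since $|C_V(u)| = |C_Q(u)/Z| = 3^2$ and $u$ is trivial on $Z$ and on $S/Q$, we get $|C_S(u)| = 3 \cdot 3^2 \cdot 3 = 3^4$. The key linear-algebra point is that $t$ acts \emph{nontrivially} on $C_V(u)$: if it acted trivially then $C_V(u) \subseteq C_V(t) = Z_2/Z$, forcing $C_Q(u) = Z_2$, which is abelian and contradicts $C_Q(u) \cong 3^{1+2}_+$. As $u$ and $t$ commute, $C_V(t) \cap C_V(u) = C_{C_V(u)}(t)$ then has dimension $1$, so the same series $Z \leq Z_2 \leq J$ now gives $C_{Z_2}(u)$ of order $3^2$ and hence $|C_J(u)| = 3^2 \cdot 3 = 3^3$ and $|[J,u]| = 3$.

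The main obstacle is the assertion that $u$ does not normalise $Y$, together with $[J,u]=[Y,u]$. Since $Y/Z \leq C_V(t)$ and $u$ acts on the $2$-space $C_V(t)$ with eigenlines $\ell_{+} = C_V(t)\cap C_V(u)$ and $\ell_{-} = C_V(t)\cap[V,u]$, one has: $u$ normalises $Y$ if and only if $Y/Z \in \{\ell_{+},\ell_{-}\}$. My plan is to exclude both positions by contradiction. Note first that $W = C_S(Y)$ (since $S/C_S(Y)$ embeds in $\GL_2(3)$ while $Y \not\leq Z = \mathcal{Z}(S)$), so if $u$ normalises $Y$ it normalises $W$ and $[W,u] \leq Q \cap W = C_Q(Y)$. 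For $Y/Z = \ell_{+}$, $u$ centralises $Y$, hence $u \in C_G(Z^x) = N_G(Q^x)$ and so $u \in N_G(L)$ centralising $L/W \cong \SL_2(3)$; then $C_L(u)$ covers $L/W$ and supplies a $u$-centralising $3$-element inducing $t$ on $V$, which I expect to contradict the nontriviality of $t$ on $C_V(u)$ established above. For $Y/Z = \ell_{-}$ one gets $Y \leq [Q,u]$, and I would combine $[W,u] \leq C_Q(Y)$ with the natural $L/W$-module structure of $Y$ and of $W/(Q\cap Q^x)$ (both inverted by $s$) to reach a contradiction. This reconciliation of the local picture on $V$ with the global conjugate $Q^x$ is the delicate part. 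Once $Y/Z \notin \{\ell_{+},\ell_{-}\}$ is secured, the homomorphism $Y \to J$, $y \mapsto [y,u]$ (well defined since $Y \leq J$ is abelian and $u$-invariant) has kernel $C_Y(u) = Z$, so $[Y,u]$ has order $3$; as $[Y,u] \leq [J,u]$ and $|[J,u]| = 3$, we conclude $[Y,u] = [J,u]$, finishing $(ii)$.
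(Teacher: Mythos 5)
Your architecture is sound and parts of it are genuinely cleaner than the paper's own proof. The dichotomy via Lemma~\ref{prelims-extraspecial and a coprime aut} is exactly the paper's split (phrased there as $C_{Q/Z}(u)=1$ or not); your case $(i)$ and the count $|C_S(u)|=3^4$ agree with the paper; and in case $(ii)$ your derivation of $|C_{Z_2}(u)|=3^2$ --- from $t$ acting non-trivially on the $2$-space $C_V(u)$ because $C_Q(u)$ is non-abelian while $Z_2$ is abelian --- is slicker than the paper's route, which bounds $[Z_2,u]$ inside the extraspecial group $[Q,u]$ and then proves $[S,u]=[Q,u]$. The reduction of ``$u$ normalizes $Y$'' to $Y/Z\in\{\ell_+,\ell_-\}$ is correct, as is the closing step $[Y,u]=[J,u]$ once non-normalization is known. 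The gap is precisely the statement the lemma is named after: you never prove that $u$ does not normalize $Y$, and neither of your two proposed exclusions works.

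For $\ell_+$ your ``expected'' contradiction is a phantom. The existence of a $u$-centralizing $3$-element inducing $t^{\pm 1}$ on $V$ is an unconditional consequence of the hypothesis: $[Qu,S/Q]=1$ and coprime action give $C_S(u)Q=S$, so such an element exists whether or not $u$ centralizes $Y$; moreover it coexists with the non-triviality of $t$ on $C_V(u)$ in every group realizing conclusion $(ii)$ (this is exactly the configuration of Lemma~\ref{O8-Easy Lemma} in $\Omega_8^+(2).3$), so these two facts can never contradict one another --- commuting actions do not act trivially on each other's fixed spaces. For $\ell_-$ you offer no argument at all, only the intention to ``combine'' facts. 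The missing idea in both branches is an order comparison between $C_W(u)$ and $C_S(u)$ carried out on the natural $L/W$-module $W/(Q\cap Q^x)$, which is where the obstruction actually lives; nothing computed inside $V=Q/Z$ alone can exclude either eigenline. Concretely, the paper argues: (a) if $u$ centralizes $Y$ then $[L,u]\leq W$, so $C_{W/(Q\cap Q^x)}(u)$ is an $L/W$-submodule; it is non-trivial (because $u$ centralizes $W/(Q\cap W)\cong S/Q$), hence by irreducibility equals $W/(Q\cap Q^x)$, giving $|C_W(u)|=3^2\,|C_{Q\cap Q^x}(u)|\geq 3^4$ since $Y\leq C_{Q\cap Q^x}(u)$; as $|C_S(u)|=3^4$ this forces $C_Q(u)\leq C_S(u)=C_W(u)\leq C_G(Y)$, whence $Y\leq \mathcal{Z}(C_Q(u))=Z$, absurd. (b) If $u$ normalizes but does not centralize $Y$, rechoose $Z^x=[Y,u]$; then $u$ induces a reflection on $Y$, so $L\langle u\rangle/W\cong\GL_2(3)$, and since a reflection centralizes no non-trivial $3$-element of $\GL_2(3)$ we get $C_{S/W}(u)=1$ and $C_S(u)=C_W(u)$ of order $3^4$; on the other hand the reflection action on $W/(Q\cap Q^x)$ gives $|C_W(u)|=3\,|C_{Q\cap Q^x}(u)|$, and $u$ inverting $Z^x$ forces $|C_{Q\cap Q^x}(u)|\leq 3^2$, so $|C_W(u)|\leq 3^3$, a contradiction. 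Without some version of these two counts your proof of conclusion $(ii)$ is incomplete at its central claim.
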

\begin{proof}
If $C_{Q/Z}(u)=1$ then $C_Q(u)=Z$ and $u$ inverts $Q/Z$. We have that $u$
centralizes $S/Q=QJ/Q\cong J/(J \cap Q)=J/Z_2$. Since $Z \leq C_{Z_2}(u)\leq C_Q(u)=Z$, we see that
$C_{J/Z_2}(u)=C_J(u)Z_2/Z_2\cong C_J(u)/Z$ and so $|C_J(u)|=3^2$. We also see that $u$ inverts
$Q/Z_2=Q/(Q \cap J)\cong QJ/J=S/J$.

So suppose that $C_{Q/Z}(u)\neq 1$. We have,
$Q/Z=C_{Q/Z}(u)\times [Q/Z,u]=(C_Q(u)/Z) ([Q,u]Z/Z)$. If $Q/Z=C_{Q/Z}(u)$ then $[Q,u]=1$ by coprime
action which is a contradiction. So $C_{Q/Z}(u)$ and $[Q/Z,u]$ are both proper non-trivial
subgroups of $Q/Z$. Notice that $[Q,C_Q(u),u] \leq [Z,u]=1$ and $[C_Q(u),u,Q]=[1,Q]=1$. By the
Three Subgroup Lemma, $[Q,u]$ commutes with $C_Q(u)$. We therefore see that both $C_Q(u)$ and
$[Q,u]$ are non-abelian else they would be central in $Q$. It follows immediately that $C_Q(u)\cong
[Q,u]$ are extraspecial and must have exponent three  as $Q\cong 3_+^{1+4}$ does. Therefore
$C_Q(u)\cong [Q,u]\cong 3_+^{1+2}$.

We have that $Qu$ commutes with $S/Q$ so  $3\sim
C_{S/Q}(u)=C_S(u) Q / Q \cong C_S(u)/ C_Q(u)$ and so $C_S(u)$ has order $3^4$.

Suppose that $u$ centralizes $Y$ then $u$ normalizes $\<Q, Q^x\>=L$ and $W\<u\>=C_{L\<u\>}(Y)$.
Hence $[L,u]\leq L \cap W\<u\>=W$ and so $u$ commutes with $L/W$ and in particular with $S/W=QW/W
\cong Q /(Q \cap W)$. Also $u$ centralizes $QW/Q\cong W/ (Q \cap W)$, therefore \[3\sim C_{W /(Q
\cap W)}(u)=\frac{C_W(u) (Q \cap W)}{Q \cap W} \cong \frac{C_W(u)}{C_{Q \cap W}(u)}\] and so
$|C_W(u)/C_{Q \cap Q^x}(u)|\geq 3$ and  $C_W(u)/C_{Q \cap Q^x}(u)\cong C_W(u)(Q \cap Q^x)/(Q \cap
Q^x)=C_{W /Q \cap Q^x}(u)$. Since $W/Q \cap Q^x$ is a natural $L/W$-module and $u$ commutes with
$L/W$, we have that $C_{W /Q \cap Q^x}(u)=W /Q \cap Q^x$. In particular, $C_W(u)$ has order $3^4$
and therefore $C_W(u)=C_S(u)\geq C_Q(u)\geq Y$. However this implies $Y \leq C_Q(u)\leq W\leq
C_G(Y)$ which is a contradiction as $C_Q(u)$ is extraspecial.

So suppose that $u$ induces a non-trivial automorphism on $Y$. We can assume without loss of
generality that $u$ inverts $Z^x$ and so normalizes $L$ and $Q \cap Q^x$. We have that $u$ does not
invert $Y$ as $u$ commutes with $Z$ and so $\<u\>L/W \cong \GL_2(3)$. This forces
$1=C_{S/W}(u)=C_S(u)W/W \cong C_S(u)/C_W(u)$ and so $C_W(u)=C_S(u)$ has order $3^4$. Since $W/Q
\cap Q^x$ is a natural $L/W$-module, we must have $3\sim C_{W/Q \cap Q^x}(u)\cong C_W(u)/C_{Q \cap
Q^x}(u)$. Since $Q \cap Q^x$ is abelian, we have $Q \cap Q^x=C_{Q \cap Q^x}(u) \times [Q \cap
Q^x,u]$ and since $u$ inverts $Z^x$, $C_{Q \cap Q^x}(u)$ has order at most $3^2$. However this
together with $3\sim C_W(u)/C_{Q \cap Q^x}(u)$ implies $C_W(u)$ has order at most $3^3$ which is a
contradiction. Thus we may conclude that $u$ does not normalize $Y$.

We have that $S' \leq Z_2$ so $S/Z_2$ is abelian and normalized by $u$ so again by coprime action,
$S/Z_2=C_{S/Z_2}(u) \times [S/Z_2,u]$ and $C_{S/Z_2}(u)=C_S(u)Z_2/Z_2 \cong C_S(u)/C_{Z_2}(u)$. Now
$Z_2$ is abelian so $Z_2 \neq C_Q(u)$. Therefore $Z_2=C_{Z_2}(u) \times [Z_2,u]$ where $Z \nleq
[Z_2,u]\neq 1$. If $|[Z_2,u]|=9$ then $[Z_2,u]\cap [Q,u]>Z$ follows as $[Q,u]$ is extraspecial
which is a contradiction. Therefore $|[Z_2,u]|=|Z_2 \cap [Q,u]|=3$ and $|C_{Z_2}(u)|=9$. Hence
$|C_{S/Z_2}(u)|=9$ and so $|[S/Z_2,u]|=3$. Now $[S/Z_2,u]=[S,u]Z_2/Z_2 \cong [S,u]/(Z_2\cap
[S,u])$. Suppose $Z_2 \leq [S,u]$ then $Q \geq Z_2[Q,u]=[S,u]$ has order $3^4$. By coprime action,
$[S,u]=[S,u,u] \leq [Q,u]$ has order $3^3$ which is a contradiction. Thus $Z_2\nleq [S,u]$ and so
$Z_2 \cap [S,u]$ has order at most $3^2$ and so $[S,u]$ has order at most $3^3$ and so
$[S,u]=[Q,u]$. Hence $[J,u] \leq [S,u] \cap J =[Q,u]\cap J$ has order at most nine as $J$ is
abelian. Since $J=J(S)$ is abelian and normalized by $u$, $J=C_J(u) \times [J,u]$ by coprime action
and $[J,u]\neq 1$ else $u$ centralizes $Y$ which is not the case. Furthermore $Z \leq C_J(u)$ and
$Z\leq [Q,u] \cap J$. Therefore $[J,u]<[Q,u]\cap J$ and so $|[J,u]|=3$ and $|C_J(u)|=27$. Finally,
since $u$ does not normalize $Y$, $[Y,u]=[J,u]$.
\end{proof}

\begin{lemma}\label{3' group normalized by Y}
Let $N$ be a $3'$-subgroup of $G$ which is normalized by $Y$. Then $[N,Y]=1$.
\end{lemma}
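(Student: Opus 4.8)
The plan is to reduce the statement to a fixed-point analysis via coprime action. Note first that $Y=\langle Z,Z^x\rangle$ is elementary abelian of order $9$ (it is a natural $L/W$-module by Lemma \ref{facts about W}$(ii)$) and that it acts coprimely on the $3'$-group $N$. Hence, by Theorem \ref{coprime action}$(iv)$, $N=\langle C_N(z)\mid z\in Y^\#\rangle$, and it suffices to prove the single claim that each $C_N(z)$ with $z\in Y^\#$ centralises the whole of $Y$. Granting this claim gives $N=\langle C_N(z)\mid z\in Y^\#\rangle\leq C_N(Y)$ and therefore $[N,Y]=1$, as required. So the whole proof comes down to establishing, for an arbitrary $z\in Y^\#$, that $[Y,C_N(z)]=1$.

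To set this up I would record two elementary facts about a fixed $z\in Y^\#$. Since $z$ centralises $C_N(z)$ by definition, $C_N(z)\leq C_G(z)$; and since $Y$ is abelian it centralises $z$ and so normalises $C_N(z)$, giving $[Y,C_N(z)]\leq C_N(z)$. The substantive point is then to locate $Y$ inside $O_3(C_G(z))$. Every subgroup $\langle z\rangle$ of $Y$ of order three is $G$-conjugate to $Z$, because $Y$ is a natural $L/W$-module and $L/W\cong\SL_2(3)$ (Lemma \ref{W is centralizer of Y in L}) acts transitively on the nonzero vectors of its natural module; writing $\langle z\rangle=Z^{x'}$ we obtain $O_3(C_G(z))=O_3\!\left(C_G(Z)^{x'}\right)=Q^{x'}$. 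If $\langle z\rangle=Z$ this is $Q$, and $Y\leq Q$ follows from $Y<Q\cap Q^x$ (Lemma \ref{facts about W}$(i)$). If $\langle z\rangle\neq Z$, then Lemma \ref{facts about W}$(iii)$ gives $Q\cap Q^{x'}=Q\cap Q^x$, whence $Y<Q\cap Q^x=Q\cap Q^{x'}\leq Q^{x'}$. In all cases, therefore, $Y\leq O_3(C_G(z))$.

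With these preliminaries the commutator computation is short. Fix $z\in Y^\#$ and put $P:=O_3(C_G(z))$, a normal $3$-subgroup of $C_G(z)$ containing $Y$. As $C_N(z)\leq C_G(z)$, it normalises $P$, so for $n\in C_N(z)$ and $y\in Y\leq P$ we have $y^n\in P^n=P$ and hence $[y,n]=y^{-1}y^n\in P$. On the other hand $[y,n]\in[Y,C_N(z)]\leq C_N(z)$, a $3'$-group, so $[y,n]\in C_N(z)\cap P=1$. Thus $[Y,C_N(z)]=1$ for every $z\in Y^\#$, and the coprime-action reduction of the first paragraph completes the argument. I expect the only genuine obstacle to be the uniform verification that $Y\leq O_3(C_G(z))$ for all $z\in Y^\#$, which hinges on the conjugacy of the order-three subgroups of $Y$ to $Z$ together with the identification $Q\cap Q^{x'}=Q\cap Q^x$ of Lemma \ref{facts about W}$(iii)$; by contrast the coprime-action step and the final commutator calculation are routine.
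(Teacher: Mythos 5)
Your proof is correct and follows essentially the same route as the paper: coprime action gives $N=\langle C_N(y)\mid y\in Y^\#\rangle$, and each $C_N(y)$ is killed against $Y$ via $[C_N(y),Y]\leq C_N(y)\cap O_3(C_G(y))=1$. The only difference is that you explicitly verify the containment $Y\leq O_3(C_G(y))$ for all $y\in Y^\#$ (using transitivity of $L$ on $Y^\#$ and Lemma \ref{facts about W}$(iii)$), a step the paper's proof simply asserts.
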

\begin{proof}
By coprime action, $N=\<C_N(y)| y \in Y^\#\>$. However for each $y \in Y^\#$, $C_N(y)$ is a
$3'$-group commuting with $y$ which is normalized by $Y \leq O_3(C_G(y))$ and so $[C_N(y),Y]\leq
C_N(y) \cap O_3(C_G(y))=1$ so $C_N(y) \leq C_G(Y)$ for each $y \in Y^\#$ and therefore $[N,Y]=1$.
\end{proof}

\section{Concluding Remarks on The Hypothesis}\label{GenHyp-Concluding Remarks}
We summarize the results of the previous section in the following theorem.

\begin{thm}\label{A general 3^1+4 theorem}
Let $G$ satisfy the Hypothesis \ref{3^1+4 Hypothesis}. Then the following hold.
\begin{enumerate}[$(i)$]
\item $Z \leq Q^x$ and so $Y:=ZZ^x \leq Q \cap Q^x$.
\item $L:=\<Q,Q^x\>\leq N_G(Y)$;
\item $W:=C_L(Y)=O_3(L)$ has order $3^5$ with $L/W\cong \SL_2(3)$.
\item $\mathcal{Z}(W)=W'=Y$ and $W$ has exponent three.
\item $S:=QW\in \syl_3(L)$  and there is an involution, $s$ such that $Ws \in \mathcal{Z}(L/W)$ and then
$J:=J(S)=J(W)=[S,s]=[W,s]=[J,s]$ is elementary abelian of order $3^4$ and inverted by $s$.
\item $Z_2:=J \cap Q$ has order $27$ and $Z_2/Z=\mathcal{Z}(S/Z)$.
\item $Y\leq S' \leq Z_2$.
\item If $J<S_0<S$ then $|\mathcal{Z}(S_0)|=9$.
\item $Y$, $W/(Q \cap Q^x)$ and $J/Y$ are natural $L/W$-modules.
\item $L/J \cong 3 \times \SL_2(3)$.
\item $Q \cap Q^x=Y\times C_W(s)$ has order $3^3$.
\item If $N \leq G$ is a $3'$-subgroup of $G$ which is normalized by $Y$ then $[Y,N]=1$.
\item If $u \in C_G(Z)$ is an involution and $[Qu,S/Q]=1$ then either  $u$ inverts $Q/Z$,
$|C_J(u)|=3^2$ and $u$ inverts $S/J$; or $Q$ is a central product of the two groups $C_Q(u),[Q,u]
\cong 3_+^{1+2}$, $u$ does
not normalize $Y$, $|C_{S}(u)|=3^4$, $|C_J(u)|=3^3$ and $[J,u]=[Y,u]$ has order 3.
\end{enumerate}
\end{thm}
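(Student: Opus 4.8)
The plan is to recognise Theorem~\ref{A general 3^1+4 theorem} as a compilation of the lemmas proved throughout this section, so that the proof reduces to matching each listed conclusion with the place where it was established. First I would dispose of the structural backbone. Clause~$(i)$ is exactly the (unlabelled) lemma ruling out $Z \nleq Q^x$, together with the definition $Y := ZZ^x$ and the observation $Y \leq Q \cap Q^x$; clause~$(ii)$ is the remark immediately following that lemma, namely that $Z, Z^x \leq Q \cap Q^x$ forces $Q$ and $Q^x$ to normalise $Y$ and hence $L \leq N_G(Y)$. The identity $W = C_L(Y) = O_3(L)$ with $L/W \cong \SL_2(3)$ in clause~$(iii)$ is read off from Lemma~\ref{Lemma-facts about L}$(v)$ (for $W = O_3(L)$), Lemma~\ref{W is centralizer of Y in L} (for $W = C_L(Y)$ and the isomorphism type), and Lemma~\ref{facts about W}$(ii)$ (for $|W| = 3^5$).

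Next I would assemble the remaining internal clauses. Clause~$(iv)$ combines Lemma~\ref{facts about W}$(iv)$ (giving $\mathcal{Z}(W) = Y$) and $(v)$ (exponent three) with Lemma~\ref{lemma structure of Y and W}$(ii)$ (giving $W' = Y$). Clauses~$(v)$, $(vi)$, $(vii)$, $(viii)$ and $(x)$ are the corresponding parts of Lemma~\ref{lemma structure of Y and W}: the description of $J = J(S) = J(W) = [S,s] = [W,s]$ as an elementary abelian group of order $3^4$ inverted by $s$ comes from parts $(iv)$ and $(v)$, the facts about $Z_2 := J \cap Q$ from parts $(i)$ and $(iv)$, the inclusion $Y \leq S' \leq Z_2$ from part $(v)$, the centre computation for $J < S_0 < S$ from part $(vi)$, and the splitting $L/J \cong 3 \times \SL_2(3)$ from part $(vii)$; the Sylow statement $S = QW \in \syl_3(L)$ is Lemma~\ref{Lemma-facts about L}$(vi)$. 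The module statements in clause~$(ix)$ combine Lemma~\ref{facts about W}$(ii)$ (for $Y$ and $W/(Q \cap Q^x)$) with Lemma~\ref{lemma structure of Y and W}$(vii)$ (for $J/Y$), and clause~$(xi)$ is the factorisation $Q \cap Q^x = Y C_W(s)$ of Lemma~\ref{lemma structure of Y and W}$(iii)$, the product being direct because $s$ inverts $Y$ and centralises $C_W(s)$, forcing $Y \cap C_W(s) = 1$, while $|Q \cap Q^x| = 3^3$ by Lemma~\ref{facts about W}$(ii)$.

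Finally, clauses~$(xii)$ and $(xiii)$ are verbatim Lemma~\ref{3' group normalized by Y} and Lemma~\ref{central involutions don't fix Y} respectively, so they need no further argument. The genuinely substantive work is therefore not in this theorem at all but in two earlier steps that I would flag as the real obstacles: proving $Z \leq Q^x$, and proving $Y < Q \cap Q^x$ in Lemma~\ref{facts about W}$(i)$. The first is a quadratic-action argument, where assuming $Z \nleq Q^x$ produces a $3_+^{1+2}$ quotient $C_Q(Y)/Z^x$ that is then shown to act quadratically on $Q^x/Z^x$, so that Lemma~\ref{quadractic action lemma} forces it to be elementary abelian, a contradiction; the delicate point is the three-subgroup-lemma bookkeeping pinning down the order $3^4$ of $C_{Q^x}(Y)$. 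The second rules out $Y = Q \cap Q^x$ by using the natural-module recognition of Lemma~\ref{prelims-natural sl23 mods} to identify $C_{W/Y}(s)$ as a natural $L/W$-module, contradicting the fact that $s$ centralises its own fixed space. Once those two facts are in hand, the order count $|W| = 3^5$ and the uniform module identifications fall out, and the summary theorem is simply their restatement.
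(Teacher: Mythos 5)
Your proposal is correct and matches the paper exactly: Theorem \ref{A general 3^1+4 theorem} is introduced there with the words ``We summarize the results of the previous section in the following theorem'' and carries no separate proof, so the intended argument is precisely the clause-by-lemma compilation you give, and your matching of each clause (including the direct-product refinement in $(xi)$ and the identification of the two genuinely hard inputs, $Z \leq Q^x$ and $Y < Q \cap Q^x$) is accurate.
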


Three simple groups satisfy Hypothesis \ref{3^1+4 Hypothesis} as well as several more almost  simple groups as
displayed in Table \ref{Table-AlmostSimpleGps}.

\begin{table}[h]\[\begin{tabular}{|c|c|}
  \hline
  $G$ & $C_G(Z)/Q$ \\
  \hline
    $\PSL_4(3)$ & $\SL_2(3)$ \\
    $\PGL_4(3)$ & $\GL_2(3)$ \\
    $\PSL_4(3).2$ & $\SL_2(3) \times 2$ \\
    $\PSL_4(3).2$ & $\SL_2(3).2$ \\
    $\aut(\PSL_4(3))$ & $\SL_2(3).(2 \times 2)$ \\
\hline
    $\Omega_8^+(2).3$ & $\SL_2(3)$ \\
     $\aut(\Omega_8^+(2))$ & $\SL_2(3) \times 2$ \\
\hline
   $F_4(2)$ & $(Q_8 \times Q_8):3$ \\
    $\aut(F_4(2))$ & $(Q_8 \times Q_8):\sym(3)$ \\
\hline
    $\HN$ & $2^.\alt(5)$\\
    $\aut(\HN)$ & $2^.\sym(5)$\\
\hline
\end{tabular}\]\caption{Almost simple groups satisfying Hypothesis \ref{3^1+4 Hypothesis}.}
\label{Table-AlmostSimpleGps}\end{table}

We note that there is scope for extending the results in this chapter. Observe, in particular, that in each of the cases shown in Table \ref{Table-AlmostSimpleGps} the Sylow $3$-subgroup has order $3^6$. It is hoped that future work will prove that
the only possibilities for the structure of $C_G(Z)$ are those that appear in Table
\ref{Table-AlmostSimpleGps}.

\chapter{Two Extensions of the Simple Orthogonal Group $\Omega_8^+(2)$}\label{Chapter-O8Plus2}

The two almost simple groups of shape $\Omega_8^+(2).3$ and
$\Omega_8^+(2).\sym(3)$ are both examples of groups satisfying Hypothesis \ref{3^1+4
Hypothesis}. Moreover, despite being extensions of classical groups defined over a field of order
two, they are both groups of parabolic characteristic three. Recall that a group $G$ is of
parabolic characteristic $p$ ($p$ a prime) if any $p$-local subgroup of $G$ which contains a Sylow
$p$-subgroup of $G$ is of characteristic $p$. As part of the ongoing project to understand the
groups of local characteristic $p$, Parker and Stroth will characterize the group $^2E_6(2)$. This
exceptional group of Lie type over $\GF(2)$ also has parabolic characteristic three. Moreover it
has a $3$-centralizer of shape $3 \times \Omega_8^+(2).3$. In order to $3$-locally
recognize $^2E_6(2)$ and its almost simple extensions, one needs to be able to $3$-locally recognize  both
$\Omega_8^+(2).3$ and $\Omega_8^+(2).\sym(3)$. The hypothesis we consider and
the theorem we prove are as follows.

\begin{hypB}
Let $G$ be a finite group and let $Z$ be the centre of a Sylow $3$-subgroup of $G$ with $Q:=O_3(C_G(Z))$.
Suppose that
\begin{enumerate}[$(i)$]
\item $Q\cong 3_+^{1+4}$;
\item $C_G(Q)\leq Q$; and
\item $Z \neq Z^x \leq Q$ for some $x \in G$.
\end{enumerate}
Furthermore assume that $C_G(Z)/Q \cong \SL_2(3)$ or $C_G(Z)/Q \cong
\SL_2(3)\times 2$ and the action of $O^2(C_G(Z)/Q)\cong \SL_2(3)$ on $Q/Z$ has exactly one non-central
chief factor.
\end{hypB}

\begin{thmB}\label{thmO8}
If $G$ satisfies Hypothesis B then $G \cong \Omega_8^+(2).3$ or $G \cong
\Omega_8^+(2).\sym(3)$.
\end{thmB}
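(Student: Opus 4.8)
The plan is to pass from the $3$-local data supplied by Theorem \ref{A general 3^1+4 theorem} to the structure of a suitable involution centraliser, and then to recognise the simple section by Smith's theorem \ref{Smith-Orthog}, using transfer to pin down the isomorphism type of the full group.

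First I would fix the notation $Z$, $Q$, $Y$, $L$, $W$, $S=QW$ and $J=J(S)$ of Chapter \ref{chaper general hypothesis} and feed in the extra hypothesis of Theorem B. Since $O^2(C_G(Z)/Q)\cong\SL_2(3)$ has exactly one non-central chief factor on $Q/Z$, I would record that $Q/Z$ decomposes as a natural $\SL_2(3)$-module plus a $2$-dimensional centralised part, so that $C_Q(O^2(C_G(Z)))$ is a specific subgroup of order $3^3$, and I would determine $N_G(Z)$ together with whether $Z$ is inverted in $G$ (the case $C_G(Z)/Q\cong\SL_2(3)\times 2$ versus $\SL_2(3)$). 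Next I would analyse the fusion of elements of order three using Theorem \ref{A general 3^1+4 theorem} and Lemma \ref{conjugation in thompson subgroup}: $J$ contains non-$3$-central elements $a$ whose centraliser I would identify. Applying the Prince machinery (Theorem \ref{prince}) to $C_G(a)/\langle a\rangle$, and separating the real from the non-real case by Lemma \ref{Prelims-distinguishPSp62} and Lemma \ref{prelims-PSp43 normalizer J}, I expect $C_G(a)\cong 3\times\PSp_4(3)$ or $3\times\Sp_6(2)$; these two outcomes are exactly what will, at the end, distinguish $\Omega_8^+(2).3$ from $\Omega_8^+(2).\sym(3)$. Along the way I would also check $O_{3'}(G)=1$ and $O_{2'}(G)=1$ (the latter via Lemma \ref{3' group normalized by Y} and coprime action, Theorem \ref{coprime action}).

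The heart of the proof is the determination of a $2$-central involution centraliser. Let $t$ be the involution of $O^2(C_G(Z)/Q)\cong\SL_2(3)$, realised as an element of $C_G(Z)$ acting on $Q$ as in Theorem \ref{A general 3^1+4 theorem} part $(xiii)$; from that part I would read off $C_Q(t)$, $C_J(t)$, $C_S(t)$ and the action of $t$ on $S/J$ and $Q/Z$. Setting $R:=O_2(C_G(t))$, I would build $R$ up from the $3$-elements centralising $t$: the subgroups $O_2(A)\cong Q_8$ coming from the copies of $\SL_2(3)$ normalised by these $3$-elements (Lemma \ref{Prelims-PSp4(3) involutions}, Lemma \ref{prelims-natural sl23 mods}), together with Higman's theorem \ref{Higman's SL2 Thm} and the coprime-action estimates of Lemmas \ref{Prelims-centralizers of invs on a vspace which invert a 3}, \ref{lem-cenhalfspace} and \ref{lem-conjinvos}. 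The goal is to show that $R\cong 2^{1+8}_+$, that $C_G(R)\le R$, and that $C_G(t)/R\cong\sym(3)\times\sym(3)\times\sym(3)$. \textbf{This is the step I expect to be the main obstacle}: because $C_G(t)$ is non-soluble with a small Sylow $3$-subgroup, its order is not bounded a priori, so one cannot simply count; instead one must prove directly that $R$ is extraspecial of order exactly $2^9$ (ruling out a larger $O_2(C_G(t))$ and any $2'$-part) and that the three $\sym(3)$-factors are present with no larger quotient, which requires careful control of $2$-fusion and of the identified $\PSp_4(3)$ and $\Sp_6(2)$ subquotients. Lemma \ref{prelim-strongly closed}, Lemma \ref{Prelims 2^8 3^2 Dih(8)} and Goldschmidt's theorem \ref{goldschmidt} would be used here to locate $R$ and to show that the relevant abelian $2$-subgroup is strongly closed, forcing $F^*$ of the generated subgroup to behave as required.

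Finally, with the Smith configuration established, I would set $N:=\langle t^G\rangle\trianglelefteq G$, verify that $O_{2'}(N)=1$, that $N=O^2(N)$, and that $t$ is not central modulo $O_{2'}(N)$ (so the degenerate alternative of Theorem \ref{Smith-Orthog} is excluded), and apply Theorem \ref{Smith-Orthog} to conclude $N\cong\Omega_8^+(2)$. Since a Sylow $3$-subgroup of $G$ has order $3^6$ while $3^5$ is the full power of three dividing $|\Omega_8^+(2)|$, the subgroup $N$ is proper, giving the promised proper normal subgroup; moreover $C_G(N)=1$, so $G$ embeds in $\aut(\Omega_8^+(2))$, and comparison with the $3$-local data already obtained forces $G/N\cong 3$ or $\sym(3)$. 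To decide between these I would work at the top of the group: having fixed a Sylow $2$-subgroup $T$ and determined the fusion of its involutions, I would apply Gr\"un's theorem \ref{GrunsThm} together with Thompson's transfer lemma \ref{ThompsonTransfer} and the extremal transfer result \ref{extremal transfer} to compute $O^2(G)$. If $G=O^2(G)$ then $G\cong\Omega_8^+(2).3$, while an index-two subgroup yields $G\cong\Omega_8^+(2).\sym(3)$; these two cases match precisely the dichotomy $C_G(a)\cong 3\times\PSp_4(3)$ versus $3\times\Sp_6(2)$ found in the $3$-local analysis, completing the proof.
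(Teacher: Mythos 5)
Your toolkit is the right one (Prince's theorem for the $3\mathcal{A}$-centralizer, commuting $Q_8$'s assembling into $2_+^{1+8}$, Goldschmidt strong closure, Smith recognition, transfer), but the order in which you deploy it contains a genuine error. You propose to prove that $C_G(t)/O_2(C_G(t))\cong\sym(3)\times\sym(3)\times\sym(3)$ and then apply Theorem \ref{Smith-Orthog} to $N:=\langle t^G\rangle$, postponing all transfer to a final step that merely decides between the two conclusions. This cannot work: under Hypothesis B the involution $t$ falls into the central-product alternative of Theorem \ref{A general 3^1+4 theorem} $(xiii)$, so $|C_S(t)|=3^4$, and $C_S(t)$ is a Sylow $3$-subgroup of $C_G(t)$; hence the $3$-part of $C_G(t)/O_2(C_G(t))$ is $3^4$, whereas $\sym(3)\times\sym(3)\times\sym(3)$ has $3$-part only $3^3$. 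The Smith configuration therefore never holds in $G$ itself, and it cannot be verified in $\langle t^G\rangle$ without already knowing which $3$-elements that subgroup omits. The paper resolves this by running transfer \emph{first}: Gr\"un's Theorem \ref{GrunsThm} applied to $3$-fusion (for every $P\in\syl_3(G)$ one has $P'\cap S\leq C_S(a)$, because the classes $3\mathcal{B}$ and $3\mathcal{E}$ do not meet $C_S(a)$) produces a normal subgroup $\wt{G}=O^3(G)$ with $S\cap\wt{G}=C_S(a)$ of order $3^5$ (Lemma \ref{O8-index three subgroup}); only inside $\wt{G}$ does the quotient $C_{\wt{G}}(t)/O_2(C_{\wt{G}}(t))$ become $\sym(3)\times\sym(3)\times\sym(3)$, and Smith's theorem is applied to $\wt{G}$, not to $\langle t^G\rangle$. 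Transfer is what \emph{creates} the subgroup to which Smith applies, so it is logically prior to the recognition step, and the dichotomy $.3$ versus $.\sym(3)$ comes directly from the two cases of the hypothesis (with extremal transfer supplying the index-two subgroup when $C_G(Z)/Q\cong\SL_2(3)\times 2$), not from a post hoc computation of $O^2(G)$.

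Two smaller corrections to your $3$-local picture. Since the action of $O^2(C_G(Z)/Q)$ on $Q/Z$ has one non-central chief factor, $Q$ is the central product $C_Q(t)\ast[Q,t]$ of two copies of $3_+^{1+2}$; one has $C_Q(O^2(C_G(Z)))=Z$, not a subgroup of order $3^3$, and only a subgroup of order three of $C_Q(t)/Z$ is a trivial $O^2(C_G(Z))$-submodule. Moreover the dichotomy for the $3\mathcal{A}$-centralizer is $C_G(a)\cong 3\times\Omega^-_6(2)$ versus $3\times\mathrm{SO}^-_6(2)$, the possibility $3\times\mathrm{SO}_7(2)$ being explicitly eliminated; the group $\Sp_6(2)\cong\mathrm{SO}_7(2)$ does occur in the $\sym(3)$ case, but as $C_G(w)/\langle w\rangle$ for the outer involution $w\in C_G(Z)\setminus O^2(C_G(Z))$, not as the centralizer of a $3$-element.
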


We use the results from Chapter \ref{chaper general hypothesis} to determine more fully the
$3$-local structure of a group $G$ satisfying Hypothesis B. Observe that the hypothesis gives two
potential structures of $C_G(Z)$ and therefore we must consider each possibility at each stage of
our analysis. In both cases we identify five conjugacy  classes of subgroups of order three. We
label sets of  elements in these classes by $3\mathcal{A}$, $3\mathcal{B}$, $3\mathcal{C}$,
$3\mathcal{D}$ and $3\mathcal{E}$ and we apply a theorem due to Prince to recognize that elements
in $3\mathcal{A}$ have centralizer isomorphic to either $3\times \Omega^-_6(2)$ or $3\times
\mathrm{SO}^-_6(2)$. In fact, we observe that the $3$-local structure of $\mathrm{SO}^-_6(2)$ is
very similar to the $3$-local structure of $\mathrm{SO}_7(2)$ and so we require further work
involving the subgroup structure of both groups to determine, from a local perspective, which
isomorphism types appear. In fact when $C_G(Z)/Q \cong \SL_2(3)\times 2$ we see that $G$  contains
subgroups isomorphic to $\mathrm{SO}^-_6(2)$ which centralize an element of order three and
subgroups isomorphic to  $\mathrm{SO}_7(2)$ which centralize an involution.  When $C_G(Z)/Q \cong
\SL_2(3)$ we are able to show that $G$ has an index three subgroup relatively easily using a
transfer theorem of Gr\"{u}n. We see that the index three subgroup only contains elements in the
classes  $3\mathcal{A}$, $3\mathcal{C}$ and $3\mathcal{D}$ and so these are the focus of our
attention in this case. However in the case that $C_G(Z)/Q \cong \SL_2(3)\times 2$ we are unable to
recognize that $G$ has an index two subgroup until we have a good understanding of the centralizer
of an involution, $C_G(t)$. Fortunately the structure of the involution centralizer is fairly easy
to see partly due to the relatively large Sylow $3$-subgroup. We use our knowledge of the
$3$-structure and a theorem due to Goldschmidt to show that the involution centralizer has a normal
subgroup which is extraspecial of order $2^9$. In the case when $C_G(Z)/Q\cong \SL_2(3) \times 2$
we are able to recognize the index two subgroup of $G$ and so we are left only to recognize the
simple subgroup $\Omega_8^+(2)$. A theorem of Smith \cite{SmithOrthogonal} which characterizes
$\Omega_8^+(2)$ by the structure of an involution centralizer allows us to do this and therefore
completes the proof.

\section{Determining the 3-Local Structure of $G$}\label{O8-Section-3Local}

We continue notation and apply the results from Chapter \ref{chaper general hypothesis}. In
particular note that $Y:=ZZ^x$ (some $x \in G \bs N_G(Z)$ such that $Z^x \leq Q$), $L:=\<Q,Q^x\> \leq N_G(Y)$, $W:=C_L(Y)$,
$s$ is an involution such that $Ws \in \mathcal{Z}(L/W)$. We have that $S:=QW$ and $J:=[W,s]=[J,s]=J(W)=J(S)$
is elementary abelian (and inverted by $s$) and $Z_2:= J \cap Q$ where $Y \leq Z_2$ and
$Z_2/Z=\mathcal{Z}(S/Z)$.

Furthermore set $X:=O^2(C_G(Z))$ then $X/Q\cong \SL_2(3)$. Choose an involution $t$ such that
$Qt\in \mathcal{Z}(X/Q)$ and such that $s$ and $t$ commute.  In the case where $C_G(Z)/Q \cong
\SL_2(3)\times 2$ we choose $w\in C_G(Z)\bs X$ of order two such that $[w,t]=1$ and $\<t,s,w\>$ is
a $2$-group and $Q\<t,w\>/Q=\mathcal{Z}(C_G(Z)/Q)$. Finally set $N_1:=[Q,t]$ and $N_2:=C_Q(t)$.


\begin{lemma}\label{O8-Easy Lemma}
\begin{enumerate}[$(i)$]
\item $S \in \syl_3(G)$.
\item $Q=N_1N_2$ where $[N_1,N_2]=1$ and for $i\in \{1,2\}$, $3_+^{1+2}\cong N_i\vartriangleleft
N_G(Z)$.
\item For $i \in \{1,2\}$, $Y \nleq N_i$
and $|Z_2 \cap N_i|=9$, in particular $Z^G \cap N_i=Z$.
\end{enumerate}
\end{lemma}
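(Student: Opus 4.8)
The plan is to establish the three parts in order, using the structural results of Chapter~\ref{chaper general hypothesis} (summarised in Theorem~\ref{A general 3^1+4 theorem}) together with the extra information on $C_G(Z)/Q$ supplied by Hypothesis~B. For (i), I would first note that $S\leq C_G(Z)$: since $\mathcal{Z}(S)=Z$ by Lemma~\ref{Lemma-facts about L}, $S$ centralises $Z$. Writing $Z=\mathcal{Z}(P)$ for $P\in\syl_3(G)$ gives $P\leq C_G(Z)$, so $P\in\syl_3(C_G(Z))$; as $Q\vartriangleleft C_G(Z)$ with $C_G(Z)/Q$ isomorphic to $\SL_2(3)$ or $\SL_2(3)\times 2$, a Sylow $3$-subgroup of $C_G(Z)/Q$ has order $3$, so $|P|=3^5\cdot 3=3^6$. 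On the other hand $|S|=|Q||W|/|Q\cap W|=3^5\cdot 3^5/3^4=3^6$, since $Q\cap W=C_Q(Y)$ has order $3^4$. Thus $S$ is a $3$-subgroup of $G$ of maximal possible order, whence $S\in\syl_3(G)$.

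For (ii), recall that $t\in C_G(Z)$ centralises $Z=\mathcal{Z}(Q)$, so I would apply Lemma~\ref{prelims-extraspecial and a coprime aut} to the coprime automorphism $t$ of $Q$. To exclude alternative~$(i)$ there (which would force $C_Q(t)=Z$, i.e.\ $t$ inverting $Q/Z$), I use that $Qt$ is the central involution of $X/Q\cong\SL_2(3)$, hence acts as $-1$ on each non-central (natural) chief factor of $Q/Z$ and trivially on each central one; as Hypothesis~B gives exactly one non-central chief factor, the $+1$-eigenspace $C_{Q/Z}(t)$ is $2$-dimensional, so $C_Q(t)\neq Z$ by coprime action (Theorem~\ref{coprime action}). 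Therefore alternative~$(ii)$ holds: $Q=C_Q(t)\,[Q,t]=N_2N_1$ with $N_1\cap N_2=Z$, $[N_1,N_2]=1$, and $N_1,N_2$ extraspecial of order $27$; as $Q$ has exponent three so do the $N_i$, giving $N_i\cong 3_+^{1+2}$. For normality, $Q\vartriangleleft N_G(Z)$ and $Qt$, being the characteristic central involution of $C_G(Z)/Q$, is fixed by $N_G(Z)$, so $t^a\in Qt$ for all $a\in N_G(Z)$; a direct computation gives $[Q,qt]\leq N_1$ for every $q\in Q$, so $N_1^a=[Q,t^a]\leq N_1$ and $N_1^a=N_1$ by order. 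Hence $N_1\vartriangleleft N_G(Z)$, and since $N_2=C_Q(N_1)$ (central product structure) it follows that $N_2\vartriangleleft N_G(Z)$ as well.

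For (iii), I would first prove $|Z_2\cap N_i|=9$. Let $\bar g$ generate $S/Q$; as $S\leq C_G(Z)$ I may take $\bar g\in X/Q$, and since $\bar t:=Qt$ is central in $X/Q$ it commutes with $\bar g$, so the $\bar t$-eigenspaces $N_iZ/Z=N_i/Z$ are $\bar g$-invariant. Because $S/Q$ acts non-trivially on $Q/Z$, one identifies $Z_2/Z=\mathcal{Z}(S/Z)=C_{Q/Z}(\bar g)$, which has order $9$ by Theorem~\ref{A general 3^1+4 theorem}; decomposing $C_{Q/Z}(\bar g)=C_{N_1/Z}(\bar g)\oplus C_{N_2/Z}(\bar g)$ and using that the $3$-element $\bar g$ has nonzero fixed points on each $2$-dimensional $N_i/Z$, each summand is exactly $1$-dimensional, so $|Z_2\cap N_i|=9$. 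For the closure statement, suppose $Z\neq Z^h\leq N_i$ for some $h\in G$; then $Z^h\leq Q$ is a second $3$-central conjugate of $Z$ in $Q$, so the notation of Chapter~\ref{chaper general hypothesis} applies with $x$ replaced by $h$, and Lemma~\ref{central involutions don't fix Y} (alternative~$(ii)$, which we have shown holds) gives that $t$ does not normalise $Y_h:=ZZ^h$. But $t$ acts as the scalar $\mp1$ on $N_i/Z$, hence normalises every subgroup of $N_i$ containing $Z$; as $Y_h\leq N_i$ this forces $t$ to normalise $Y_h$, a contradiction. Thus $Z^G\cap N_i=Z$, and in particular $Z^x\notin N_i$, so $Y=ZZ^x\nleq N_i$.

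The order bookkeeping in (i) and the commutator and coprime-action computations are routine; the main obstacle is the closure statement $Z^G\cap N_i=Z$. The crucial observation there is that Lemma~\ref{central involutions don't fix Y} applies verbatim to any $Y_h=ZZ^h$ coming from a second $3$-central line $Z^h\leq Q$, because $[Qt,S_h/Q]=1$ continues to hold with $S_h\in\syl_3(\langle Q,Q^h\rangle)$ (the involution $t$ being central in $X/Q$ commutes with all Sylow $3$-subgroups of $C_G(Z)/Q$). This lets the single distinguished involution $t$ both fail to normalise $Y_h$ and be forced to normalise it once $Y_h\leq N_i$, which is exactly the contradiction needed.
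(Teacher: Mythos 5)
Your proof is correct and takes essentially the same route as the paper's: order counting for (i); the coprime involution $t$ acting on $Q$, with the inversion case excluded by the single non-central chief factor and normality obtained from $Qt$ being central, for (ii); and for (iii) the fixed points of $S/Q$ on $N_i/Z$ together with the arbitrariness of $Z^x$ and the fact that $t$ cannot normalize $Y_h=ZZ^h$ yet stabilizes every subgroup of $N_i$ containing $Z$. The only cosmetic deviations are that you rederive the central product from Lemma \ref{prelims-extraspecial and a coprime aut} where the paper cites Lemma \ref{central involutions don't fix Y} directly, and you obtain $|Z_2\cap N_i|=9$ from the decomposition $C_{Q/Z}(\bar{g})=C_{N_1/Z}(\bar{g})\times C_{N_2/Z}(\bar{g})$ where the paper instead uses that $Z_2$ is abelian while $N_i$ is not.
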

\begin{proof}
$(i)$ It is clear that $C_G(Z)$ has Sylow $3$-subgroups of order $3^6$ and, by hypothesis, $Z$ is
central  in a Sylow $3$-subgroup of $G$. We have that $|Q|=|W|=3^5$ and $S=QW$ is a $3$-group with
$Q \neq W$. Thus $|S|=3^6$ and so $S \in \syl_3(C_G(Z))\subset \syl_3(G)$.

$(ii)$ By Lemma \ref{central involutions don't fix Y},  $Q$ is a
central product of the two groups $N_1\cong N_2\cong 3_+^{1+2}$ and since $Qt\in \mathcal{Z}(X/Q)$,
$N_1$ and $N_2$ are $N_G(Z)/Q$-invariant and so are normal subgroups of $N_G(Z)$.

$(iii)$ By Lemma \ref{central involutions don't fix Y}, $t$ does not normalize $Y$ so $Y$ is not
contained in either $N_1$ or $N_2$. Since $Z^x$ was chosen arbitrarily in $Q$, the only
$G$-conjugate of $Z$ in $N_1 \cup N_2$ is $Z$ itself. Since $N_1/Z$ and $N_2/Z$ are $S/Q$-invariant
$C_{N_i/Z}(S)$ is non-trivial for $i=1,2$. Thus $Z_2\cap N_i>Z$. Since $Z_2$ is abelian,  $N_i\neq
Z_2$. Thus $|N_i \cap Z_2|=9$.
\end{proof}
Set $A:=N_2 \cap Z_2$.  In the following Lemma we see that $A\vartriangleleft C_G(Z)$. Note that
elements in $A \bs Z$ play an important role in our proof of Theorem B.

\begin{lemma}\label{O8-first results}
\begin{enumerate}[$(i)$]
 \item If
$C_G(Z)/Q \cong \SL_2(3)\times 2$ then, without loss of generality, we may assume that $N_1=C_Q(w)$, $N_2=[Q,w]$ and $Q=[Q,tw]$. In particular we may assume that $tw$
acts fixed-point-freely on $Q/Z$.
 \item We have that
$C_G(Z)\leq N_G(A)\leq N_G(Z)$. Furthermore $C_G(A) \leq X$ and $[X:C_{G}(A)]=3$.
 \item $|C_S(t)|=3^4$, $J=C_J(t) \times
[J,t]$ is normalized by $t$, $C_J(t)$ has order $3^3$ and $3\cong [J,t]=[Y,t]\neq Z$.
 \item $S'=Z_2$.
 \item $X\<s\>/Q\cong \GL_2(3)$.
\end{enumerate}
\end{lemma}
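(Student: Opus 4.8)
My plan is to deduce all five parts from a single picture: the action of $C_G(Z)/Q$ on the natural $\GF(3)$-space $Q/Z$. By Lemma~\ref{Prelims-EasyLemma}(iii) this action is faithful, and by hypothesis $O^2(C_G(Z)/Q)\cong\SL_2(3)$ has exactly one non-central chief factor on the four-dimensional $Q/Z$. Since $t$ is the central involution of $X/Q\cong\SL_2(3)$, it is central in $X/Q$ and of order prime to $3$, so it splits $Q/Z=U\oplus V$ into $X/Q$-submodules $U:=[Q/Z,t]$ and $V:=C_{Q/Z}(t)$; counting composition factors shows $U$ is the natural module and $V$ is $2$-dimensional with trivial composition factors. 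By the definitions of $N_1,N_2$ and Lemma~\ref{O8-Easy Lemma}(ii) we have $U=N_1/Z$ and $V=N_2/Z$, and since $U$ is the only non-central factor the representation gives $X/Q\cong\SL(U)\cong\SL_2(3)$. Finally, writing $w_0$ for a generator of $S/Q\le X/Q$, the fact $Z_2/Z=\mathcal{Z}(S/Z)=C_{Q/Z}(w_0)$ has order $9$ together with $w_0$ being regular unipotent on $U$ forces $w_0$ to act non-trivially on $V$; thus $X/Q$ acts on $V$ through its order-$3$ quotient, with canonical fixed line $C_V(X/Q)$.

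For (i), in the case $C_G(Z)/Q\cong\SL_2(3)\times 2$, the coset $Qw$ is central, so $w$ commutes with $X/Q$ and hence acts on the absolutely irreducible $U$ (splitting field $\GF(3)$) as a scalar $\pm1$ by Schur's Lemma; as $t$ acts as $-1$ on $U$, I would replace $w$ by $wt$ if necessary---this preserves all the defining properties of $w$---to arrange $w|_U=+1$. Then Burnside's Theorem~\ref{Burnside-p'-automorphism} gives $N_1\le C_Q(w)$, while $w$ acts non-trivially on $N_2$ (else $w\in C_G(Q)\le Q$); Lemma~\ref{prelims-extraspecial and a coprime aut}, whose central-product alternative is impossible on orders, then forces $N_2=[N_2,w]$ and $C_{N_2}(w)=Z$. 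Hence $C_Q(w)=N_1$, $[Q,w]=N_2$, and $w$ inverts $V$, so $tw$ acts as $-1$ on both $U$ and $V$, i.e.\ fixed-point-freely, giving $Q=[Q,tw]$. For (ii), $A=N_2\cap Z_2$ satisfies $A/Z=V\cap C_{Q/Z}(w_0)=C_V(X/Q)$, the canonical fixed line of $V$, which is therefore invariant under anything normalising $X/Q$ and stabilising $V$; so $A\trianglelefteq C_G(Z)\le N_G(A)$. As $Z\le A\cong 3\times 3$ and $Z^G\cap N_2=Z$ by Lemma~\ref{O8-Easy Lemma}(iii), $Z$ is the only one of the four order-$3$ subgroups of $A$ in the class of $Z$, so $N_G(A)\le N_G(Z)$. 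Computing $C_Q(a)=N_1\langle a\rangle$ of order $3^4$ for $a\in A\setminus Z$ gives $[Q:C_Q(A)]=3$; since $X/Q$ fixes $A/Z$ pointwise it acts on $A$ only through transvections already realised in $Q$, whence $[X:C_G(A)]=3$; and since $X/Q$ fixes $A/Z$ while $Qw$ inverts $V$ by (i), every element of $C_G(Z)\setminus X$ inverts the line $A/Z$, so $C_G(A)\le X$.

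Parts (iii) and (iv) are quick. For (iii) I would apply Lemma~\ref{central involutions don't fix Y} to $u=t$: its hypotheses hold because $t\in C_G(Z)$ is an involution with $Qt\in\mathcal{Z}(X/Q)$ and $S\le X=O^2(C_G(Z))$ gives $[Qt,S/Q]=1$; the second alternative of that lemma applies since $C_Q(t)=N_2\ne Z$, yielding $|C_S(t)|=3^4$, $|C_J(t)|=3^3$ and $[J,t]=[Y,t]\ne Z$ of order $3$, while coprime action supplies the splitting $J=C_J(t)\times[J,t]$. For (iv), Lemma~\ref{lem-cenhalfspace}(i) gives $[Q/Z,w_0]\cong(Q/Z)/C_{Q/Z}(w_0)$ of dimension $2$, so $S'\ge[Q,w_0]Z$ has order at least $27$; combined with $Y\le S'\le Z_2$ from Lemma~\ref{lemma structure of Y and W}(v) and $|Z_2|=27$, this forces $S'=Z_2$.

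The real work---and the main obstacle---is (v). Here $s\in L$ inverts $Z=\mathcal{Z}(Q)$, so $s\notin X$ and $Qs$ is an involution outside $X/Q$; the claim $X\langle s\rangle/Q\cong\GL_2(3)$ reduces to showing that $Qs$ induces the outer automorphism of $X/Q\cong\SL(U)$, i.e.\ that $\det(s|_U)=-1$. The device I would use is the non-degenerate symplectic form $f(\bar a,\bar b)=[a,b]\in Z\cong\GF(3)$ on $Q/Z$: since $s$ inverts $Q'=Z$ it satisfies $f(s\bar a,s\bar b)=-f(\bar a,\bar b)$, forcing both eigenspaces $C_{Q/Z}(s)$ and $[Q/Z,s]$ to be totally isotropic and hence each of dimension exactly $2$. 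As $s$ commutes with $t$ it preserves $U$ and $V$, and as $s$ inverts $Z_2\le J$ it inverts the lines $(Z_2\cap N_1)/Z\le U$ and $(Z_2\cap N_2)/Z\le V$; the isotropy constraint $\dim C_U(s)+\dim C_V(s)=2$ with each summand at most $1$ then gives $\dim C_U(s)=1$. Thus $s|_U$ is a reflection, $\det(s|_U)=-1$, and conjugation by $Qs$ is outer on $\SL(U)$. Since among the groups of shape $\SL_2(3).2$ only $\GL_2(3)$ possesses an involution inducing the outer automorphism (the binary octahedral group has none, and $\SL_2(3)\times 2$ realises the inner action), I would conclude $X\langle s\rangle/Q\cong\GL_2(3)$. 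Without the symplectic observation one cannot exclude $s$ acting as $-1$ on the whole of $U$, which would wrongly produce $\SL_2(3)\times 2$; that is the crux of the argument.
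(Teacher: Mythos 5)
Your proposal is correct, and parts (i)--(iv) track the paper's own proof closely: in (i) you make the same without-loss adjustment of $w$ by $t$ and finish with Burnside's Theorem on $N_1$ and Lemma~\ref{prelims-extraspecial and a coprime aut} on $N_2$ (Schur's Lemma standing in for the paper's coprime generation $N_1/Z=\langle C_{N_1/Z}(w),C_{N_1/Z}(tw)\rangle$); (ii) and (iii) are the paper's arguments in substance; and in (iv) your count $|[Q,w_0]Z|=3^3$ via Lemma~\ref{lem-cenhalfspace} is a tidy variant of the paper's reductio (if $S'=Y$ then $N_1\le Z_2$, against Lemma~\ref{O8-Easy Lemma}). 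The genuine divergence is (v), and there your closing assertion about necessity is wrong even though your argument is right. The paper excludes $s$ inverting $N_1/Z$ in two lines: since $s$ also inverts $Z$, inverting $N_1/Z$ would make $s$ a fixed-point-free involutory automorphism of $N_1$, hence $s$ would invert $N_1$ and $N_1$ would be abelian, contradicting $N_1\cong 3_+^{1+2}$; together with Theorem~\ref{Burnside-p'-automorphism} (ruling out $s$ centralizing $N_1/Z$) this makes $s$ a reflection on $N_1/Z$, and then the faithful action of $X\langle s\rangle/Q$ on $N_1/Z$ has image containing $\SL_2(3)$ together with a determinant $-1$ element, so the image is all of $\GL_2(3)$ and the order count $48$ forces $X\langle s\rangle/Q\cong\GL_2(3)$. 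Your symplectic route --- $s$ negates the commutator form on $Q/Z$, so both eigenspaces of $s$ are totally isotropic of dimension two, and the lines $(Z_2\cap N_i)/Z$, inverted because $s$ inverts $J\geq Z_2$, force $\dim C_U(s)=\dim C_V(s)=1$ --- is a valid and rather elegant alternative, and it has the merit of never using non-abelianness of $N_1$ (and it gives the symmetric statement for $V$ for free); but it is not the only way to exclude $s|_U=-1$, and it costs you an extra classification of groups of shape $\SL_2(3).2$, where, incidentally, your list omits the central product $\SL_2(3)\ast 4$ (that case also realises only inner action by outer-coset involutions, so your conclusion still stands). The paper's endgame, mapping $X\langle s\rangle/Q$ into $\GL(N_1/Z)$ and comparing orders, lets you skip that classification entirely.
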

\begin{proof}
$(i)$ We have that $\<t,w\> \cong 2 \times 2$ and so using coprime action and that
$C_{N_1/Z}(t)=1$, we have $N_1/Z=\<C_{N_1/Z}(w),C_{N_1/Z}(tw)\>$. Since $C_{N_1/Z}(w)$ and
$C_{N_1/Z}(tw)$ are preserved by $X$, we have (without loss of generality) that $1=[N_1,w]$ and
$N_1=[N_1,tw]$. Similarly $C_{N_2/Z}(w)$ and $C_{N_2/Z}(tw)$ are preserved by $X$ and $w$ does not
centralize $N_2/Z$ else $w$ centralizes $Q$. Therefore $tw$ does not centralize $N_2$ either. It
follows that $N_2=[N_2,tw]=[N_2,w]$. Therefore $N_1=C_Q(w)=[Q,t]$, $N_2=[Q,w]=C_Q(t)$ and
$Q=[Q,tw]$.

$(ii)$ Since $N_2/Z$ is a $C_G(Z)/Q$-module on which $t$ acts trivially (and if applicable $w$ acts
fixed-point-freely), it contains a trivial $X$-submodule. This trivial submodule has order  three and
is necessarily contained in $\mathcal{Z}(S/Z)=Z_2/Z$. Since $A=N_2 \cap Z_2$ has order nine, we
have that $A/Z$ is this trivial submodule and so $A\trianglelefteq C_G(Z)$. By Lemma \ref{O8-Easy
Lemma} $(iii)$, the only subgroup of $A$ which is conjugate to $Z$ is $Z$ itself. Thus $N_G(A)\leq
N_G(Z)$. However $A\leq C_Q(t)\cong 3_+^{1+2}$ so $C_{X}(A)$ has index at least three in $X$. Let
$S_2\cong Q_8$ be a Sylow $2$-subgroup of $X$ then $S_2$ acts trivially on $A/Z$ and therefore acts
trivially on $A$. Thus $C_{X}(A)$ has index exactly three in $X$. If $C_G(Z)=X$ then we clearly
have $C_G(A) \leq X$. Suppose $C_G(Z)>X$. Then a Sylow $2$-subgroup of $C_G(Z)$ does not centralize
$A$ since  $A/Z$ is inverted by $Qw$. Thus $C_G(A) \leq X$ and in either case we have
$[X:C_G(A)]=3$.

$(iii)$ This is just Lemma \ref{central involutions don't fix Y}.

$(iv)$ By Lemma \ref{lemma structure of Y and W} $(v)$, we have $Y\leq S' \leq Z_2$. Suppose
$S'=Y$. Then $[N_1,S] \leq N_1 \cap Y =Z$. Therefore $N_1 \leq Z_2$ which contradicts Lemma
\ref{O8-Easy Lemma} $(iii)$.  Thus $Y<S'=Z_2$.

$(v)$ We have that $X/Q\cong \SL_2(3)$ and $s$ inverts $Z$ so $s \in N_G(Z)\bs C_G(Z)$ and
normalizes $X$. Notice that $s$ does not invert $N_1/Z$ which is a natural $X/Q$-module else $s$
would invert $N_1$ which is not possible as $N_1$ is non-abelian. Also $s$ does not centralize
$N_1/Z$ by Theorem \ref{Burnside-p'-automorphism}. It follows from this action that $X\<s\>/Q\cong
\GL_2(3)$.
\end{proof}

By Lemma \ref{O8-first results} $(i)$, when $C_G(Z)>X$, we choose the involution $w\in C_G(Z)\bs X$
such that $[N_1,w]=1$.

\begin{lemma}\label{O8 lemma w and tw}The following hold.
\begin{enumerate}[$(i)$]
\item $W=C_G(Y)$.
\item If $C_G(Z)/Q\cong \SL_2(3)$ then $L=N_G(Y)$ and $N_G(Y)/C_G(Y)\cong \SL_2(3)$.
\item If $C_G(Z)/Q\cong \SL_2(3)\times 2$ then
$L\<tw\>=N_G(Y)$ and $N_G(Y)/C_G(Y)\cong \GL_2(3)$.
\end{enumerate}
\end{lemma}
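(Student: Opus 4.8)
The three assertions are tightly linked: once (i) is known, (ii) and (iii) reduce to identifying $N_G(Y)/W$ inside $\aut(Y)\cong\GL_2(3)$, so the plan is to invest almost all the work in proving $C_G(Y)=W$ and then read off the quotient from the action on $Y\cong 3\times 3$. For (i), since $Z\le Y$ we have $C_G(Y)\le C_G(Z)$, and $C_G(Y)\cap Q=C_Q(Y)$ has order $3^4$. Let $v\in Q/Z$ be the image of a generator of $Z^x$, so that $Y/Z=\<v\>$. Every element of $C_G(Y)$ centralizes $Z^x$, hence fixes $v$, so its image in $C_G(Z)/Q$ lies in the stabilizer $T$ of $v$ for the faithful action of $C_G(Z)/Q$ on $Q/Z$ (Lemma \ref{Prelims-EasyLemma}). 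Thus it suffices to show $|T|=3$: then $|C_G(Y)|\le 3^4\cdot 3=3^5=|W|$, and as $W\le C_G(Y)$ we obtain equality.

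To compute $T$ I would use the $C_G(Z)/Q$-module splitting $Q/Z=N_1/Z\oplus N_2/Z$ afforded by the central product $Q=N_1\ast N_2$ (Lemma \ref{O8-Easy Lemma}). Let $\bar\sigma$ generate the Sylow $3$-subgroup $S/Q$ of $C_G(Z)/Q$. Since $N_1/Z$ is the unique non-central chief factor it is a natural $\SL_2(3)$-module, on which $\bar\sigma$ acts as a nontrivial unipotent element with fixed space of order $3$. Because $Z_2/Z=\mathcal{Z}(S/Z)=C_{Q/Z}(\bar\sigma)$ has order $9$ (Theorem \ref{A general 3^1+4 theorem}), the fixed space of $\bar\sigma$ on $N_2/Z$ must also have order $3$. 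As $Y\le Z_2$, the vector $v$ lies in $C_{Q/Z}(\bar\sigma)$, so its components $v_1\in N_1/Z$ and $v_2\in N_2/Z$ are both fixed by $\bar\sigma$ and both nonzero (because $Y\nleq N_i$). The stabilizer of the nonzero vector $v_1$ in $X/Q\cong\SL_2(3)$ acting naturally is a Sylow $3$-subgroup, namely $\<\bar\sigma\>$; since $\bar\sigma$ also fixes $v_2$, any element of $T$ projects into $\<\bar\sigma\>$ on the $\SL_2(3)$-factor. In case (iii) the extra factor $\<Qw\>$ inverts $N_2/Z$ (Lemma \ref{O8-first results}), so its nontrivial element cannot fix $v_2\neq 0$; hence in both cases $T=\<\bar\sigma\>$ has order $3$, which completes (i).

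For (ii) and (iii), $N_G(Y)/C_G(Y)=N_G(Y)/W$ embeds in $\aut(Y)\cong\GL_2(3)$ and contains $L/W\cong\SL_2(3)$, so it is either $\SL_2(3)$ or $\GL_2(3)$. In case (iii), $tw$ centralizes $Z$ and inverts $Q/Z$ (Lemma \ref{O8-first results}), so it normalizes $Y$ and induces a determinant $-1$ automorphism; therefore $N_G(Y)/W=\GL_2(3)$, and comparing orders with $L\<tw\>/W$ gives $N_G(Y)=L\<tw\>$. In case (ii) I must exclude a determinant $-1$ automorphism. Suppose $N_G(Y)/W=\GL_2(3)$; it then contains a reflection with $+1$-eigenline $Z$, lifting to some $g\in C_G(Z)$ that inverts $v$ and hence inverts its nonzero component $v_2\in N_2/Z$. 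But $X/Q\cong\SL_2(3)$ acts on $N_2/Z$ through a quotient on which the central involution is trivial, i.e. through a quotient of $\alt(4)$; since no copy of $\alt(4)$ lies in $\GL_2(3)$ (every four-subgroup of $\GL_2(3)$ contains the central involution), this quotient is a $3$-group, acts unipotently, and inverts no nonzero vector. This contradiction forces $N_G(Y)/W=\SL_2(3)=L/W$, whence $N_G(Y)=L$.

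The main obstacle is the stabilizer computation in (i). The difficulty is that merely fixing the line $Y/Z$ is far weaker than centralizing $Z^x$, and the argument only closes because $X/Q$ acts \emph{nontrivially} (indeed unipotently) on $N_2/Z$; this has to be extracted from the purely numerical datum $|\mathcal{Z}(S/Z)|=9$ together with the central-product structure of $Q$, after which the fixed-point count pins $T$ down to the Sylow $3$-subgroup $S/Q$. The same unipotency fact, repackaged via $\alt(4)\not\le\GL_2(3)$, is exactly what powers the non-existence of the reflection in case (ii).
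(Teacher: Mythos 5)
Your proof is correct, and for parts $(i)$ and $(ii)$ it takes a genuinely different route from the paper's. For $(i)$, the paper argues by parity: by Lemmas \ref{O8-Easy Lemma}$(iii)$ and \ref{O8-first results}$(i)$ the fixed spaces of the involutions $Qt$, $Qw$, $Qtw$ on $Q/Z$ are $N_2/Z$, $N_1/Z$ and $1$, none of which contains $Y/Z$, so $C_G(Y)$ contains no involution and hence, lying inside the $\{2,3\}$-group $C_G(Z)$, is a $3$-group; since $|W|=3^5$ forces $W\in\syl_3(C_G(Y))$, equality follows at once. You instead bound the order directly, computing the stabilizer $T$ of $v$ in $C_G(Z)/Q$ via transitivity of $\SL_2(3)$ on the nonzero vectors of the natural module $N_1/Z$; this costs the extra observation that $\bar\sigma$ acts unipotently on $N_2/Z$ (which you correctly extract from $|\mathcal{Z}(S/Z)|=9$ and the central-product decomposition), but it avoids the parity step and pins down the image $C_G(Y)Q/Q=S/Q$ exactly. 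For $(ii)$, the paper lifts the hypothetical reflection to an involution $r$ by coprime lifting, identifies $Qr$ with the unique involution $Qt$ of $\SL_2(3)$, and derives the contradiction $Z^x\le N_1$; you work with an arbitrary lift and obtain the contradiction on the \emph{other} factor, from the fact that the action of $C_G(Z)/Q$ on $N_2/Z$ factors through a quotient of $\alt(4)$, which embeds in $\GL_2(3)$ only as a $3$-group and so inverts no nonzero vector. Part $(iii)$ is handled identically in both (via $tw$ inverting $Q/Z$). The one place to tighten your write-up: in case $(iii)$ of the stabilizer computation, an element of $T$ with nontrivial $\<Qw\>$-part has the form $\bar\sigma^i\cdot Qw$, and it is the conjunction of your two facts ($\bar\sigma^i$ fixes $v_2$, while $Qw$ inverts it) that shows this product moves $v_2$; as written your sentence addresses only $Qw$ itself, though the needed deduction is immediate.
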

\begin{proof}
By Lemma \ref{facts about W}, $|W|=3^5$  and therefore $W \in \syl_3(C_G(Y))$. Since $C_G(Y)\leq
C_G(Z)$, we only need to check that $C_G(Y)$ has odd order to show that $C_G(Y)=W$. However by
Lemma \ref{O8-Easy Lemma} $(iii)$ and Lemma \ref{O8-first results} $(i)$, no involution in
$C_G(Z)/Q$ centralizes $Y$ and therefore $C_G(Y)$ is a $3$-group. This proves $(i)$.

By Lemma \ref{Lemma-facts about L} and Lemma \ref{W is centralizer of Y in L}, we have that
$\SL_2(3)\cong L/W$ and further $L/W\leq N_G(Y)/C_G(Y)$ which is isomorphic to a subgroup of
$\GL_2(3)$. Suppose $N_G(Y)/W\cong \GL_2(3)$. Then there exists an involution $r\in N_G(Y)$ such
that $Wr$ centralizes $Z$ whilst inverting $Y/Z$. Therefore $r \in C_G(Z)$. If $C_G(Z)/Q\cong
\SL_2(3)$ then $Qr=Qt$ and so $Z^x \leq N_1$ which is a contradiction. Hence, if $C_G(Z)/Q\cong
\SL_2(3)$ then $L=N_G(Y)$. If $C_G(Z)/Q\cong \SL_2(3)\times 2$ then we have seen that $tw$ inverts
$Q/Z$ and so $N_G(Y)/C_G(Y)=L\<tw\>/W\cong \GL_2(3)$. This proves $(ii)$ and $(iii)$.
\end{proof}
\begin{lemma}\label{O8-3' subgroup normalized by Y}
No non-trivial $3'$-subgroup of $G$ is normalized by $Y$.
\end{lemma}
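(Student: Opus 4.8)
The plan is to combine two facts already in hand, which together make this almost immediate. Suppose for a contradiction that $N$ is a non-trivial $3'$-subgroup of $G$ normalized by $Y$. The first ingredient is Lemma \ref{3' group normalized by Y} (equivalently part $(xii)$ of Theorem \ref{A general 3^1+4 theorem}), which asserts that any $3'$-subgroup normalized by $Y$ is in fact centralized by $Y$. Applying this to $N$ gives $[N,Y]=1$, so that $N \leq C_G(Y)$.

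The second ingredient is Lemma \ref{O8 lemma w and tw} $(i)$, which identifies $C_G(Y)=W$. Since $W$ is a $3$-group (by Lemma \ref{Lemma-facts about L} $(i)$ / Lemma \ref{facts about W}, $|W|=3^5$), the containment $N \leq C_G(Y)=W$ forces $N$ to be simultaneously a non-trivial $3'$-group and a subgroup of a $3$-group. This is the desired contradiction, so no such $N$ exists.

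I do not expect any genuine obstacle here: the substance of the argument was already carried out in the general-hypothesis chapter, where the coprime-action computation $N=\langle C_N(y)\mid y\in Y^\#\rangle$ together with $C_N(y)\cap O_3(C_G(y))=1$ yields $[N,Y]=1$. The only thing particular to the present chapter is the identification $C_G(Y)=W$, which in turn rests on the observation (Lemma \ref{O8-Easy Lemma} $(iii)$ and Lemma \ref{O8-first results} $(i)$) that no involution of $C_G(Z)/Q$ centralizes $Y$, so that $C_G(Y)$ has odd order. Thus the write-up is a two-line deduction from the cited lemmas, with no fresh calculation required.

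\begin{proof}
Suppose for a contradiction that $N$ is a non-trivial $3'$-subgroup of $G$ normalized by $Y$. By Lemma \ref{3' group normalized by Y}, $[N,Y]=1$ and so $N \leq C_G(Y)$. However, by Lemma \ref{O8 lemma w and tw} $(i)$, $C_G(Y)=W$ which is a $3$-group. Thus $N$ is a non-trivial $3'$-subgroup contained in a $3$-group, a contradiction.
\end{proof}
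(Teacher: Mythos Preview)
Your proof is correct and takes essentially the same approach as the paper: invoke Lemma \ref{3' group normalized by Y} to get $[N,Y]=1$, then use $C_G(Y)=W$ from Lemma \ref{O8 lemma w and tw} together with $W$ being a $3$-group to derive the contradiction. The paper's own proof is just these two sentences; your surrounding commentary about the provenance of the ingredients is accurate but not needed in the write-up.
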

\begin{proof}
By Lemma \ref{3' group normalized by Y} such a group would commute with $Y$. However $C_G(Y)=W$ by Lemma \ref{O8 lemma w and tw} and $W$ is a $3$-group.
\end{proof}

\begin{lemma}\label{o8-Order of N_G(S)}
 \begin{enumerate}[$(i)$]
 \item If $C_G(Z)/Q \cong \SL_2(3)$ then $N_G(S)=S\<s,t\>$ has order $3^62^2$.
 \item If $C_G(Z)/Q \cong \SL_2(3)\times 2$ then $N_G(S)=S\<s,t,w\>$ has order $3^62^3$
 (where $\<s,t,w\>\cong 2^3$),
 $|C_J(tw)|=3^2$ and $|C_J(w)|=3^3$. 
\end{enumerate}
\end{lemma}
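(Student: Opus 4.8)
The plan is to first reduce to the normaliser of $Z$ and then read off the Sylow normaliser from the known image $C_G(Z)/Q$. Since $Z=\mathcal{Z}(S)$ is characteristic in $S$ (Theorem \ref{A general 3^1+4 theorem}), every element of $N_G(S)$ normalises $Z$, so $N_G(S)\leq N_G(Z)$. As $|Z|=3$ and $s$ inverts $Z$, we have $N_G(Z)=C_G(Z)\<s\>$ with $[N_G(Z):C_G(Z)]=2$. First I would check $s\in N_G(S)$: indeed $s$ normalises $Q=O_3(C_G(Z))$ (because $s$ normalises $C_G(Z)$) and $s$ normalises $W=O_3(L)$ (because $s\in L$), hence $s$ normalises $S=QW$; since $s\notin C_G(Z)$ this forces $N_G(S)=N_{C_G(Z)}(S)\<s\>$ with $[N_G(S):N_{C_G(Z)}(S)]=2$.

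Next I would compute $N_{C_G(Z)}(S)$. Because $Q\trianglelefteq C_G(Z)$ and $Q\leq S$, a coset $gQ$ normalises $S/Q$ if and only if $g$ normalises $S$; thus $N_{C_G(Z)}(S)/Q=N_{C_G(Z)/Q}(S/Q)$. In $\SL_2(3)$ the normaliser of a Sylow $3$-subgroup is cyclic of order $6$, namely $(S/Q)\times\<Qt\>$ where $Qt$ is the central involution; in $\SL_2(3)\times 2$ it is this times the central factor $\<Qw\>$, of order $12$. Multiplying by $|Q|=3^5$ gives $|N_{C_G(Z)}(S)|=3^6\cdot2$ (resp. $3^6\cdot2^2$) with $N_{C_G(Z)}(S)=S\<t\>$ (resp. $S\<t,w\>$); adjoining $s$ doubles the order, so $N_G(S)=S\<s,t\>$ of order $3^62^2$ in case $(i)$ and $N_G(S)=S\<s,t,w\>$ of order $3^62^3$ in case $(ii)$.

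For the structure of the complement $\<s,t,w\>$ in case $(ii)$ I would work inside $J=J(S)$, which is characteristic in $S$ and hence $N_G(S)$-invariant. Here $s$ acts as $-1$, which is central in $\GL(J)\cong\GL_4(3)$. Using that $[J,t]=[Y,t]$ and $[J,w]$ are both nontrivial, and that $Qw\in\mathcal{Z}(C_G(Z)/Q)$ centralises $S/Q$ (so that $j^w\in jZ_2$ for $j\in J\setminus Q$, giving $[J,w]\leq J\cap Q=Z_2$), one verifies that no nonidentity element of $\<s,t,w\>$ centralises $J$; thus the action on $J$ is faithful and $s$, acting as the central scalar $-1$, lies in $\mathcal{Z}(\<s,t,w\>)$. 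Since $[s,t]=[t,w]=1$ by choice, this yields $[s,w]=1$, so $\<s,t,w\>$ is abelian of order $8$ generated by involutions, i.e.\ $\cong 2^3$.

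Finally, the two fixed-point counts follow from a simultaneous eigenspace decomposition of $J$ under $\<t,w\>\cong2^2$ (coprime action, Theorem \ref{coprime action}). The submodule $Z_2$ of dimension $3$ splits as $Z\oplus(Z_2\cap N_1)/Z\oplus A/Z$, where $Z$ is of type $(+,+)$, the $N_1$-line is of type $(-,+)$ (as $t$ inverts $N_1/Z$ while $w$ centralises $N_1=C_Q(w)$), and $A=Z_2\cap N_2$ is of type $(+,-)$ (as $t$ centralises $N_2=C_Q(t)$ while $w$ inverts $N_2/Z$); the remaining $1$-dimensional quotient $J/Z_2$ is of type $(+,+)$ because $[J,t]\leq Y\leq Z_2$ and $[J,w]\leq Z_2$. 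Reading off eigenspace dimensions $(2,1,1,0)$ for the types $(++,\,+-,\,-+,\,--)$ gives $|C_J(t)|=3^3$, $|C_J(w)|=3^3$ and $|C_J(tw)|=3^2$, as required. The main obstacle is precisely this module bookkeeping on $J$: everything hinges on the single observation that centrality of $Qw$ in $C_G(Z)/Q$ forces $[J,w]\leq Z_2$, combined with the already established action of $t$ on $J$ from Lemma \ref{central involutions don't fix Y} and Lemma \ref{O8-first results}.
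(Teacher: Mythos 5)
Your proof is correct, and for most of the lemma it follows the same route as the paper: reduce to $N_G(Z)=C_G(Z)\<s\>$ via $Z=\mathcal{Z}(S)$, check that $s$ normalises $S=QW$, read the order off the structure of $C_G(Z)/Q$, and obtain $[s,w]=1$ from the observation that $s$ acts on $J$ as the central scalar $-1$ of $\GL(J)$ (the paper words this as: $C_G(J)s$ is central in $N_G(J)/C_G(J)$). One caution on that step: your justification of faithfulness, ``one verifies that no nonidentity element of $\<s,t,w\>$ centralises $J$'', is slightly circular, since enumerating the elements of $\<s,t,w\>$ presupposes the group structure you are deriving; the clean statement -- and this is exactly the paper's argument -- is that $[s,t]$ and $[s,w]$ lie both in the $2$-group $\<s,t,w\>$ and in $C_G(J)\leq C_G(Y)=W$, which is a $3$-group by Lemma \ref{O8 lemma w and tw}$(i)$, hence are trivial. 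Where you genuinely diverge is in the fixed-point counts. The paper gets them in one line by quoting Lemma \ref{central involutions don't fix Y}: $tw$ acts fixed-point-freely on $Q/Z$ and centralises $S/Q$ (Lemma \ref{O8-first results}$(i)$), so case $(i)$ of that lemma gives $|C_J(tw)|=3^2$, while $C_Q(w)=N_1\neq Z$ puts $w$ in case $(ii)$, giving $|C_J(w)|=3^3$. You instead carry out a simultaneous $\<t,w\>$-eigenspace decomposition of $J$; your bookkeeping (types $(+,+)$, $(-,+)$, $(+,-)$, $(+,+)$ for $Z$, the $N_1$-line, $A$, and $J/Z_2$, hence dimensions $(2,1,1,0)$) is correct and has the virtue of exhibiting the module structure of $J$ explicitly, with $|C_J(t)|=3^3$ falling out as a byproduct. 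The trade-off is that the inputs you feed into it ($N_1=C_Q(w)=[Q,t]$, $N_2=C_Q(t)=[Q,w]$, and $[J,t],[J,w]\leq Z_2$) were themselves extracted from that same dichotomy lemma, so the paper's direct citation is the shorter route; your version re-proves a special case of it by hand.
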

\begin{proof}
We have that $N_G(Z)/Q\cong \GL_2(3)$ or $N_G(Z)/Q\sim 2.\GL_2(3)$. Therefore $|N_G(S)|=3^62^2$ or
$|N_G(S)|=3^62^3$ respectively. Furthermore $s$ normalizes $W$ and $Z$ and therefore normalizes
$\<Q,W\>=QW=S$. Therefore we have that if $C_G(Z)/Q \cong \SL_2(3)$ then $N_G(S)=S\<s,t\>$ and if
$C_G(Z)/Q \cong \SL_2(3)\times 2$ then $Qw$ normalizes $S/Q$ and so $N_G(S)=S\<s,t,w\>$ where by
choice $\<s,t,w\>$ is a $2$-group. Moreover $\<s,t,w\>$ is a $2$-subgroup of $N_G(J(S))=N_G(J)$.
Since $s$  inverts $J$, $C_G(J)s$ is central in $N_G(J)/C_G(J)$. Hence $[s,t],[s,w] \in C_G(J)\leq
C_G(Y)=W$. Therefore $[s,t]=[s,w]=1$. Furthermore $t$ is central in $\<s,t,w\>$ since $Qt$ is
central in $N_G(Z)/Q$. Therefore $\<s,t,w\>$ is elementary abelian.

We have seen that $tw$ acts fixed-point-freely on $Q/Z$ and centralizes $S/Q$. So by Lemma
\ref{central involutions don't fix Y}, we have that $|C_J(tw)|=3^2$ and that $tw$ inverts $S/J$.
Also, by Lemma \ref{central involutions don't fix Y}, we have that $|C_J(w)|=3^3$.
\end{proof}

We intend to count conjugacy classes of elements of order three in $S$. For this we need some
notation. Recall that $A=Z_2 \cap N_2$. Recall also that by Lemma \ref{lemma structure of Y and W}
$(iii)$,  $3 \cong C_W(s)\leq Q \cap Q^x$. We fix the following elements of order three. Let $a \in
A\bs Z$, $b \in N_2 \bs A$, $z \in Z^\#$, $d \in [Y,t]^\#$ and $e\in C_W(s)^\#$.  We define the
following sets of elements of order three in $G$.

\begin{enumerate}[$(i)$]
 \item $3\mathcal{A}=\{a^g|g \in G\}\cup \{ (a\inv )^g|g \in G\}$;
 \item $3\mathcal{B}=\{b^g|g \in G\}\cup \{ (b\inv )^g|g \in G\}$;
 \item $3\mathcal{C}=\{z^g|g \in G\}\cup \{ (z\inv )^g|g \in G\}$;
 \item $3\mathcal{D}=\{d^g|g \in G\}\cup \{ (d\inv )^g|g \in G\}$;
 \item $3\mathcal{E}=\{e^g|g \in G\}\cup \{ (e\inv )^g|g \in G\}$.
\end{enumerate}
Clearly each of the sets $3\mathcal{A}$, $3\mathcal{B}$, $3\mathcal{C}$, $3\mathcal{D}$ and
$3\mathcal{E}$ is either a conjugacy class in $G$ or a union of two conjugacy classes. Note that
the labeling has been chosen to be consistent with {\sc Atlas} \cite{atlas} notation such that the
classes are ordered by the size of the centralizer in our target groups. Note that the classes
which play the greatest role in our proof are $3\mathcal{A}$, $3\mathcal{C}$ and $3\mathcal{D}$. We
will observe that these classes lie in a proper normal subgroup of $G$.

\begin{lemma}\label{o8-A not eq C not eq D}
$3\mathcal{A} \neq 3\mathcal{C} \neq 3\mathcal{D}$.
\end{lemma}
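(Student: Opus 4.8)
The plan is to separate the $3$-central class $3\mathcal{C}$ from the other two by exploiting the control over conjugates of $Z$ already established in Lemma~\ref{O8-Easy Lemma}$(iii)$, namely that the only $G$-conjugate of the subgroup $Z$ contained in $N_1 \cup N_2$ is $Z$ itself. Since $3\mathcal{C} = z^G \cup (z^{-1})^G$ consists of generators of conjugates of $Z$, it suffices to exhibit the chosen representatives $a$ of $3\mathcal{A}$ and $d$ of $3\mathcal{D}$ inside $(N_1 \cup N_2) \setminus Z$. Indeed, if $a$ were conjugate to $z$ or to $z^{-1}$, then $\langle a \rangle$ would be a $G$-conjugate of $Z$ lying in $N_2$, forcing $\langle a \rangle = Z$ and contradicting $a \notin Z$; the same reasoning disposes of $d$ once it is placed in $N_1$.

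First I would note that $a$ is already in position: by definition $a \in A \setminus Z$ with $A = N_2 \cap Z_2 \le N_2$ and $Z \le A$, so $\langle a \rangle \ne Z$ is a cyclic subgroup of $N_2$, and $3\mathcal{A} \ne 3\mathcal{C}$ follows at once. For $d$ the corresponding placement is the one genuine computation. Recall $N_1 = [Q,t]$ and $N_2 = C_Q(t)$, so that modulo $Z$ the involution $t$ splits $Q/Z$ into its $(-1)$- and $(+1)$-eigenspaces $N_1/Z$ and $N_2/Z$. Writing $y \in Y$ as $\overline{y} = \overline{n_1} + \overline{n_2}$ with $\overline{n_i} \in N_i/Z$, one computes in characteristic three that $\overline{[y,t]} = \overline{y}^{\,t} - \overline{y} = (-\overline{n_1} + \overline{n_2}) - (\overline{n_1} + \overline{n_2}) = -2\,\overline{n_1} = \overline{n_1} \in N_1/Z$; since $Z \le N_1$ this shows $[Y,t] \le N_1$. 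By Lemma~\ref{O8-first results}$(iii)$ we have $[Y,t] = [J,t]$ of order three with $[Y,t] \ne Z$, so $[Y,t]$ and $Z$ are distinct subgroups of order three and hence meet trivially. Thus $d \in [Y,t] \le N_1$ with $\langle d \rangle \ne Z$.

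With both representatives located in $N_1 \cup N_2$, the argument of the first paragraph applies verbatim to $d$ via $Z^G \cap N_1 = Z$, giving $3\mathcal{D} \ne 3\mathcal{C}$ and completing the proof. I expect the only delicate point—and the main, if modest, obstacle—to be the identification $[Y,t] \le N_1$, because, unlike $a$, the representative $d$ is defined through the action of $t$ on $Y$ rather than as a member of a fixed $N_i$; the eigenspace computation above is what converts that dynamical description into membership in $N_1$. Everything else is an immediate appeal to the already-proven weak control of $Z$ inside $N_1 \cup N_2$, together with the elementary remark that two distinct subgroups of order three intersect trivially, which is exactly what guarantees $d \notin Z$.
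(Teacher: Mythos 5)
Your proposal is correct and takes essentially the same approach as the paper: both rest on Lemma \ref{O8-Easy Lemma}$(iii)$ (that $Z^G \cap N_i = Z$) together with the placements $a \in N_2 \setminus Z$ and $d \in [Y,t] \setminus Z \subset N_1 \setminus Z$. The only difference is that your eigenspace computation for $[Y,t] \le N_1$ is unnecessary, since $Y \le Q$ gives $[Y,t] \le [Q,t] = N_1$ immediately.
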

\begin{proof}
By Lemma \ref{O8-Easy Lemma} $(iii)$, $N_1 \cap Z^G=Z$ and $N_2 \cap Z^G=Z$. Since $a \in A \bs Z  \subset N_2 \bs Z$ and $d \in [Y,t] \bs Z \subset N_1 \bs Z$, it is clear that $3\mathcal{A} \neq 3\mathcal{C} \neq 3\mathcal{D}$.
\end{proof}

\begin{lemma}\label{O8-things about YU=Z2}
$Z_2^\#\subseteq 3\mathcal{A}\cup 3\mathcal{C} \cup  3\mathcal{D}$ and $|Z_2 \cap 3\mathcal{C}|=14$.
Furthermore, $N_G(Z) \cap N_G(J)=N_G(S)$, $C_G(Z_2)=C_G(J)=J$ and $N_G(Z_2)=N_G(S)$.
\end{lemma}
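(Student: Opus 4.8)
The plan is to reduce everything to the action of $N_G(S)$ on $Z_2$ together with the fusion control coming from the Thompson subgroup. Since $Z_2\leq J=J(S)$ and $J$ is abelian, Lemma \ref{conjugation in thompson subgroup} shows that two elements of $Z_2$ are conjugate in $G$ exactly when they are conjugate in $N_G(J)\supseteq N_G(S)$; in particular each of $3\mathcal{A}$, $3\mathcal{C}$, $3\mathcal{D}$ meets $Z_2$ in a union of $N_G(S)$-orbits. So the first task is to compute the $N_G(S)$-orbits on $Z_2^\#$ and to name them by class.

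For this I would record the linear picture of $Z_2\cong 3^3$. Because $Z_2/Z=\mathcal{Z}(S/Z)$ we have $[S,Z_2]\leq Z$, so every $S$-orbit on $Z_2\setminus Z$ is a non-trivial coset of $Z$, and the $N_G(S)$-orbits are governed by the action of $N_G(S)/S$ on the four lines of $Z_2/Z$. Using Lemmas \ref{O8-Easy Lemma}, \ref{O8-first results} and \ref{central involutions don't fix Y}, these four lines are $A/Z$, $(N_1\cap Z_2)/Z$, $Y/Z$ and $Y^t/Z$: the last two are the two ``diagonal'' lines complementary to the ``axes'' $A/Z$ and $(N_1\cap Z_2)/Z$, and $Y^t\neq Y$ since $t$ does not normalise $Y$. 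As $s$ inverts all of $J\geq Z_2$ (Lemma \ref{lemma structure of Y and W}) while $t$ fixes $A/Z$ and inverts $(N_1\cap Z_2)/Z$, the group $N_G(S)/S$ acts on $Z_2/Z$ as the full group of sign-changes in the eigenbasis $\{A/Z,(N_1\cap Z_2)/Z\}$; by Lemma \ref{o8-Order of N_G(S)} this is identical in both cases of the hypothesis (when $C_G(Z)/Q\cong\SL_2(3)\times 2$ the extra involution $w$ acts as $st$ on $Z_2/Z$). Hence there are exactly four orbits on $Z_2^\#$, namely $Z^\#$ (size $2$), $A\setminus Z$ (size $6$), $(N_1\cap Z_2)\setminus Z$ (size $6$) and $(Y\cup Y^t)\setminus Z$ (size $12$). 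Identifying classes is now immediate: $Z^\#\subseteq 3\mathcal{C}$ by definition, $a\in A\setminus Z$ forces $A\setminus Z\subseteq 3\mathcal{A}$, $d\in(N_1\cap Z_2)\setminus Z$ forces that orbit into $3\mathcal{D}$, and since $L$ is transitive on $Y^\#$ ($Y$ being a natural $L/W$-module by Lemma \ref{facts about W}) we get $Y^\#\subseteq 3\mathcal{C}$ and so $(Y\cup Y^t)\setminus Z\subseteq 3\mathcal{C}$. This gives $Z_2^\#\subseteq 3\mathcal{A}\cup 3\mathcal{C}\cup 3\mathcal{D}$, and since $3\mathcal{A}\neq 3\mathcal{C}\neq 3\mathcal{D}$ (Lemma \ref{o8-A not eq C not eq D}) the two orbits of size $6$ avoid $3\mathcal{C}$, whence $|Z_2\cap 3\mathcal{C}|=2+12=14$.

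The two purely structural assertions are then short. First $S=QJ$: from $Q\cap J=Z_2$ and $|Q|=|J|\cdot 3=3^5$ an order count gives $|QJ|=3^6=|S|$. Since $N_G(Z)\leq N_G(Q)$ (as $Q=O_3(C_G(Z))$), this yields $N_G(Z)\cap N_G(J)\leq N_G(Q)\cap N_G(J)=N_G(QJ)=N_G(S)$, the reverse inclusion being clear because $Z=\mathcal{Z}(S)$ and $J=J(S)$ are characteristic in $S$. For the centraliser statements, as $Y\leq Z_2\leq J$ both $C_G(J)$ and $C_G(Z_2)$ lie in $C_G(Y)=W$ (Lemma \ref{O8 lemma w and tw}); since $J$ is abelian with $J\not\leq\mathcal{Z}(W)=Y$ and $[W:J]=3$, one gets $C_W(J)=C_W(Z_2)=J$, hence $C_G(J)=C_G(Z_2)=J$.

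The main obstacle is $N_G(Z_2)=N_G(S)$. Here $N_G(S)\leq N_G(Z_2)$ because $Z_2=S'$; for the reverse, $N_G(Z_2)$ normalises $C_G(Z_2)=J$ and so $N_G(Z_2)\leq N_G(J)$, and it then suffices to show $N_G(Z_2)\leq N_G(Z)$, after which the third paragraph finishes the proof. The delicate step is to recover $Z$ inside $Z_2$ in an $N_G(Z_2)$-invariant way, and I would do this from the fusion pattern above: $N_G(Z_2)$ permutes the seven lines of $3\mathcal{C}\cap Z_2$, and a short incidence count shows that the only $2$-dimensional subspaces of $Z_2$ all of whose lines lie in $3\mathcal{C}$ are $Y$ and $Y^t$ (any plane carrying three such lines must contain two lines from one of $Y$, $Y^t$ and hence equal it). Thus $N_G(Z_2)$ stabilises $\{Y,Y^t\}$ and so fixes $Z=Y\cap Y^t$, giving $N_G(Z_2)\leq N_G(Z)$. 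The two points needing care are this incidence argument (ruling out a third such plane) and verifying that the orbit computation and the identification of the four lines of $Z_2/Z$ go through uniformly in both cases $C_G(Z)/Q\cong\SL_2(3)$ and $C_G(Z)/Q\cong\SL_2(3)\times 2$.
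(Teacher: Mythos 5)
Your proof is correct and takes essentially the same route as the paper: you identify $(Y\cup Y^t)\setminus\{1\}$ as $Z_2\cap 3\mathcal{C}$ using the four subgroups of order nine over $Z$ (with $A\setminus Z\subseteq 3\mathcal{A}$, $(N_1\cap Z_2)\setminus Z\subseteq 3\mathcal{D}$), recover $Z=Y\cap Y^t$ as $N_G(Z_2)$-invariant, and finish with the same characteristic-subgroup argument for $N_G(Z)\cap N_G(J)=N_G(S)$ and the $\mathcal{Z}(W)=Y$ argument for $C_G(Z_2)=C_G(J)=J$. The differences are purely presentational: you compute the full $N_G(S)$-orbit structure via the sign-change action and spell out the plane-incidence count that the paper's terse step ``$N_G(Z_2)$ normalizes $Y\cup Y^t$ and thus fixes $Y\cap Y^t=Z$'' leaves implicit.
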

\begin{proof}
We have that $3\mathcal{A} \neq 3\mathcal{C} \neq
3\mathcal{D}$. Since $a,d \in J$ and $s$ inverts $J$, $|\{a^{\<Q,s\>}\}|=|\{d^{\<Q,s\>}\}|=6$.
Therefore $|Z_2\cap (3\mathcal{A}\cup 3\mathcal{D})|\geq 12$. Moreover, since $Y$ is not normalized
by $t$ but $Z_2$ is, $Z_2=YY^t=Y[Y,t]$. Thus  $Z_2\cap 3\mathcal{C}\subseteq Y \cup Y^t$ and $|Z_2
\cap 3\mathcal{C}|=14$. Furthermore, $N_G(Z_2)$ normalizes $Y \cup Y^t$ and thus $Y \cap Y^t=Z$. So
$N_G(Z_2) \leq N_G(Z)$.

Observe that $Z=\mathcal{Z}(S)$ and $J=J(S)$, therefore $N_G(S) \leq N_G(Z) \cap N_G(J)$.  However
$N_G(Z) \cap N_G(J)$ normalizes $JQ=S$. Therefore $N_G(S)=N_G(Z) \cap N_G(S)$. Since $J$ is
abelian, $J \leq C_G(J) \leq  C_G(Z_2) \leq C_G(Y)=W$ and since $\mathcal{Z}(W)=Y$,
$J=C_G(J)=C_G(Z_2)$. In particular his implies that $N_G(Z_2) \leq N_G(Z) \cap N_G(J)=N_G(S)$.
\end{proof}

\begin{lemma}\label{O8-elements of order nine}
There are four subgroups lying strictly between $J$ and $S$ namely $W$, $W^t$, $C_S(A)=C_S(N_2 \cap
Z_2)$ and $C_S([Y,t])=C_S(N_1 \cap Z_2)$. Every element of order three in $S$ lies in the set
$Q\cup W \cup W^t$. Moreover $C_S(A)$ and $C_S([Y,t])$ are not conjugate in $G$.
\end{lemma}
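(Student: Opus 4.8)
The plan is to analyse the lattice of subgroups between $J$ and $S$ through the quotient $S/J$, and then to read off the element orders from the conjugation action of $S$ on $J$. First I would record that $S/J\cong QJ/J\cong Q/(Q\cap J)=Q/Z_2$, and since $Q$ is extraspecial of exponent three (Hypothesis \ref{3^1+4 Hypothesis}) this quotient is elementary abelian of order $3^2$. Thus $S/J\cong\mathbb{F}_3^2$ has exactly four subgroups of order three, so there are exactly four subgroups strictly between $J$ and $S$, each of order $3^5$. By Lemma \ref{facts about W} the groups $W$ and $W^t$ have order $3^5$ and contain $J=J(W)=J(W^t)$; moreover $A=N_2\cap Z_2$ and $N_1\cap Z_2$ lie in $J$ and are centralised by $J$, so $C_S(A)$ and $C_S(N_1\cap Z_2)$ also contain $J$ and are proper in $S$ (since $A,N_1\cap Z_2\not\leq Z=\mathcal{Z}(S)$). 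I would then identify $N_1\cap Z_2=\langle Z,[Y,t]\rangle$, using that $[Y,t]\leq[Q,t]=N_1$ and $[Y,t]\leq Z_2$ while $|N_1\cap Z_2|=9$ (Lemma \ref{O8-Easy Lemma}), whence $C_S([Y,t])=C_S(N_1\cap Z_2)$.

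To see the four subgroups are distinct I would compare their centres. By Lemma \ref{facts about W}, $\mathcal{Z}(W)=Y$ and $\mathcal{Z}(W^t)=Y^t$, and by Lemma \ref{lemma structure of Y and W}(vi) every intermediate subgroup $S_0$ has $|\mathcal{Z}(S_0)|=9$ with $\mathcal{Z}(S_0)<Z_2$; since $A\leq\mathcal{Z}(C_S(A))$ and $N_1\cap Z_2\leq\mathcal{Z}(C_S(N_1\cap Z_2))$, the order count forces $\mathcal{Z}(C_S(A))=A$ and $\mathcal{Z}(C_S(N_1\cap Z_2))=N_1\cap Z_2$. The four subspaces $Y,Y^t,A,N_1\cap Z_2$ of $Z_2$ are pairwise distinct: $A\leq N_2$ and $N_1\cap Z_2\leq N_1$ while $Y,Y^t\not\leq N_i$ (Lemma \ref{O8-Easy Lemma}); $A\neq N_1\cap Z_2$ because $N_1\cap N_2=Z$; and $Y^t$ differs from $A$ and from $N_1\cap Z_2$ since $t$ normalises the latter two (as $t$ normalises $N_1,N_2,Z_2$) but $Y^t\neq Y$. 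Hence $\{W,W^t,C_S(A),C_S(N_1\cap Z_2)\}$ are exactly the four intermediate subgroups, and since four lines cover $\mathbb{F}_3^2$ we obtain $S=W\cup W^t\cup C_S(A)\cup C_S(N_1\cap Z_2)$.

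For the element orders the key is the action of $S$ on $J$. For $g\in S$ of order three, writing $\sigma_g$ for its action on $J$, a direct commutator computation gives $(jg)^3=(\sigma_g-1)^2(j)$ for $j\in J$ (using $g^3=1$ and $1+\sigma_g+\sigma_g^2=(\sigma_g-1)^2$ over $\mathbb{F}_3$). Since $[J,S]\leq S'=Z_2$ and $[Z_2,S]\leq Z$, the operator $(\sigma_g-1)^2$ maps $J$ into $Z$ and factors through $J/Z_2$, so its kernel $K_g$ has order $3^4$ or $3^3$ according as $g$ acts quadratically on $J$ or not. Because $[J,W]\leq W'=Y=\mathcal{Z}(W)$ we have $[J,W,W]=1$, so the generators of the lines $W/J$ and $W^t/J$ are quadratic. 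The assignment $gJ\mapsto(\sigma_g-1)^2\in\operatorname{Hom}(J/Z_2,Z)\cong\mathbb{F}_3$ descends to a homogeneous degree-two map $\Theta$ on $S/J\cong\mathbb{F}_3^2$, vanishing on the two distinct lines $W/J,W^t/J$. A binary quadratic form vanishing on two distinct lines is either identically zero or the product of the two corresponding linear forms, whose zero set is exactly those two lines; so once $\Theta\not\equiv0$ the lines $C_S(A)/J$ and $C_S(N_1\cap Z_2)/J$ are non-quadratic. Granting this, for a generator $g\in C_Q(A)\setminus Z_2$ of $C_S(A)/J$ (so $g\in Q$, $g^3=1$) the order-three elements of the coset $Jg$ form $K_gg$ of size $3^3$, and $C_Q(A)\cap Jg$ already supplies $3^3$ order-three elements inside this set; hence they coincide, and the order-three elements of $C_S(A)\setminus J$ are precisely $C_Q(A)\setminus Z_2\subseteq Q$, and similarly for $C_S(N_1\cap Z_2)$. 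As $J\subseteq W$ and $W,W^t$ have exponent three, every order-three element of $S$ then lies in $Q\cup W\cup W^t$. The main obstacle is exactly $\Theta\not\equiv0$, i.e. producing one element of $S$ of order nine; I would do this by choosing $q\in Q$ with $[Z_2,q]=Z$ (possible since $Z_2$ is a maximal abelian, hence self-centralising, subgroup of the extraspecial group $Q$) and checking, using that $J/Y$ is a natural $L/W$-module so $[J,q]\not\leq Y$, that $[J,q,q]=Z\neq1$.

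Finally, for the non-conjugacy I would work with the centres $A$ and $N_1\cap Z_2$. Both are normal in $S$, since $N_1,N_2\trianglelefteq N_G(Z)\geq S$ and $Z_2=S'\trianglelefteq S$, and both are normalised by all of $N_G(S)$: indeed $N_G(S)\leq N_G(\mathcal{Z}(S))=N_G(Z)$ normalises $N_1$ and $N_2$ and normalises $Z_2=S'$, hence it normalises $N_2\cap Z_2=A$ and $N_1\cap Z_2$. As $A\neq N_1\cap Z_2$, Burnside's fusion lemma (Lemma \ref{Burnside conjugation lemma}) shows that $A$ and $N_1\cap Z_2$ are not conjugate in $G$. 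Since conjugation preserves centres and $\mathcal{Z}(C_S(A))=A\neq N_1\cap Z_2=\mathcal{Z}(C_S(N_1\cap Z_2))$, the subgroups $C_S(A)$ and $C_S(N_1\cap Z_2)=C_S([Y,t])$ are not conjugate in $G$, which completes the proof.
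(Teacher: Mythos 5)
Your overall architecture is sound and in places genuinely different from the paper: deducing that there are exactly four intermediate subgroups from $S/J\cong Q/Z_2$ being elementary abelian, separating them by their centres, and proving the non-conjugacy via Burnside's fusion lemma (Lemma \ref{Burnside conjugation lemma}) rather than, as the paper does, via $Z^G\cap N_i=Z$, are all fine. (One small omission: to invoke Lemma \ref{lemma structure of Y and W}$(vi)$ for $C_S(A)$ and $C_S(N_1\cap Z_2)$ you must first check that they contain $J$ \emph{strictly}, which you never do; it is true, since $[N_1,N_2]=1$ gives $JN_1\leq C_S(A)$ and $JN_2\leq C_S(N_1\cap Z_2)$, each of order $3^5$ --- indeed this is how the paper identifies these two subgroups.)

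The genuine gap is at the step you yourself single out as the main obstacle: showing $\Theta\not\equiv 0$, i.e.\ that $S$ has an element of order nine. Your proposed verification is invalid. The two hypotheses you place on $q$ --- $[Z_2,q]=Z$ and $[J,q]\nleq Y$ --- are satisfied by \emph{every} $q\in Q\setminus C_Q(Y)$: the first holds for all $q\in Q\setminus Z_2$ because $C_Q(Z_2)=Z_2$ and $[Z_2,q]\leq Q'=Z$, and the second holds whenever $qW\neq W$, since then $qW$ is a non-trivial element of order three of $L/W$ acting faithfully on the natural module $J/Y$. In particular both hold for $q\in C_Q(Y^t)\setminus Z_2$; but such a $q$ lies in $W^t$, and $[J,W^t,W^t]\leq [Y^t,W^t]=1$, so $[J,q,q]=1$ for it. Hence the implication you assert cannot be true. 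What you actually need, for $q\in C_Q(A)\setminus Z_2$, is $[J,q]\nleq C_{Z_2}(q)$, and here $C_{Z_2}(q)=A$ (the commutator map $x\mapsto[x,q]$ is a homomorphism from $Z_2$ onto $Z$ whose kernel has order nine and contains $A$); your natural-module fact excludes the wrong order-nine subgroup, namely $Y\neq A$. This missing point is exactly what the paper's dedicated commutator computation supplies: assuming $[g,h,h]=1$ for $h\in C_Q(A)\setminus Z_2$ and $g\in J\setminus Q$, one gets $Z_2=S'\leq A\langle[h,g]\rangle$, which is then centralized by $h$, contradicting $C_Q(Z_2)=Z_2$ and $h\notin Z_2$ (and similarly on the line $C_S(N_1\cap Z_2)$). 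Without this, or an equivalent argument, nothing in your proof excludes the possibility that $S$ has exponent three --- in which case the elements of $C_S(A)\setminus(Q\cup J)$ would be elements of order three lying outside $Q\cup W\cup W^t$, and the lemma would simply be false.
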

\begin{proof}
Since $J=J(S)\vartriangleleft S$, $S/J$ has order 9 and is either cyclic or has four proper
non-trivial subgroups. We have seen that $Y=\mathcal{Z}(W)$ is not normalized by $t$. Thus $W^t\neq
W$ and $W \cap W^t=J$. In particular $S/J$ is not cyclic. Now let $P_1=C_S([Y,t])$ and
$P_2=C_S(A)$. Then $P_1\supseteq JN_2$ and $P_2\supseteq JN_1$ and $|JN_1|=|JN_2|=3^5$ so it
follows that $P_1=JN_2$ and $P_2=JN_1$. By Lemma \ref{lemma structure of Y and W},
$|\mathcal{Z}(P_i)|=9$ and so $\mathcal{Z}(P_i)=N_i \cap Z_2$ for each $i \in \{1,2\}$.  Thus we
have found the four proper subgroups of $S$ strictly containing $J$. Suppose for some $g \in G$,
$P_1^g=P_2$. Then $\mathcal{Z}(P_1)^g=\mathcal{Z}(P_2)$. Since for $i \in \{1,2\}$, $Z^G \cap
N_i=Z$  (by Lemma \ref{O8-Easy Lemma} $(iii)$) and $\mathcal{Z}(P_i)\leq N_i$, we have that
$Z^g=Z$. Therefore $g \in N_G(Z)$. However $N_1,N_2\vartriangleleft N_G(Z)$ so we cannot have
$(N_1\cap Z_2)^g=N_2 \cap Z_2$.

We have that $|Q \cap W|=|C_Q(Y)|=3^4$ and similarly $|Q \cap W^t|=3^4$. Also, $W \cap W^t \cap
Q=J \cap Q=Z_2$ has order $3^3$ so the set $Q \cup W \cup W^t$ has order $(3^5\times 3) - (3^4\times
3)+3^3=513$. By hypothesis, $Q\cong 3_+^{1+4}$ has exponent three and by Lemma \ref{facts about W} $(v)$,  $W$ also has
exponent three. Observe that  $P_i=(P_i \cap Q)J$. Now let $ g \in J\bs Q$
and $h \in (P_i \cap Q)\bs J$ then every element in $P_i\bs (Q \cup J)$ can be written
as a product of such a $g$ and $h$. Suppose $(hg)^3=1$. Then we calculate using the identity
$h[g,h][g,h,h]=[g,h]h$ and using that $g \in J$ so commutes with all commutators in $S'=Z_2\leq J$.

\begin{eqnarray*}
1&=&hghghg \\
&=&h^2g[g,h]ghg \\
&=&h^2g^2[g,h]hg \\
&=&h^2g^2h[g,h][g,h,h]g \\
&=&h^2g^2hg[g,h][g,h,h] \\
&=&[h,g][g,h][g,h,h] \\
&=&[g,h,h].
\end{eqnarray*}
Now $S=Q\<g\>$ and so $S' =Q'[Q,g]$. Notice that $Q=(P_i \cap Q)N_i=Z_2N_i\<h\>$ and so it follows
from a commutator relation that
\[[Q,g]=[Z_2N_i\<h\>,g]\leq \<[Z_2,g]^{N_i\<h\>}\> \<[N_i,g]^{\<h\>}\> \<[h,g]\>=\<[N_i,g]^{\<h\>}\> \<[h,g]\>\leq (N_i\cap Z_2)\<[h,g]\>.\]
Thus we have that $Z_2\leq\<[h,g]\>(N_i\cap Z_2)$ which is centralized by  $h$ as $h \in
P_i=C_S(N_i \cap Z_2)$. However $C_Q(Z_2)=Z_2$ and $h \notin Z_2$ which gives us a contradiction.
Thus every such element $gh$ has order at least nine. This accounts for $108=3^5-(3^4\times 2)
+3^3=|P_i\bs (Q \cup J)|$ elements and $513+108+108=3^6=|S|$. Thus there are exactly $513-1=512$
elements in $S$ of order three and every such element lies in $Q \cup W \cup W^t$.
\end{proof}

We begin to gather some information about the conjugacy classes of elements of order three.  In
particular, in the following lemma we determine the order of a Sylow $3$-subgroup of the
centralizer of elements in $3\mathcal{B}$ and $3\mathcal{E}$. Note that we will see later that $G$
has a simple normal subgroup which does not contain these classes. Recall that $b \in N_2\bs A$
with $b \in 3\mathcal{B}$ and that $e \in C_W(s)^\#$ with $e \in 3\mathcal{E}$.
\begin{lemma}\label{O8- centralizer of e and v}
The following hold.
\begin{enumerate}[$(i)$]
 \item $C_Q(b)\in \syl_3(C_G(b))$, in particular, $3\mathcal{B}\notin \{3\mathcal{A},3\mathcal{C},3\mathcal{D}\}$.
 \item $C_Q(e)\in \syl_3(C_G(e))$, in particular, $3\mathcal{E}\notin \{3\mathcal{A},3\mathcal{C},3\mathcal{D}\}$.
 \item $N_X(\<e\>)=C_X(e)=C_Q(e)$ and $|Q \cap 3\mathcal{E}|\geq 144$.
\end{enumerate}
\end{lemma}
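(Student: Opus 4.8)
The plan is to handle $b$ and $e$ by a common mechanism and to deal with the small residual differences separately. First I would record that both $b$ and $e$ lie in $Q\setminus Z_2$: for $b$ this is because $b\in N_2\setminus A$ and $N_2\cap Z_2=A$, while for $e$ it is because $e$ generates $C_W(s)$, and $W=C_W(s)\times J$ with $Z_2\leq J$ forces $\langle e\rangle\cap Z_2=1$; likewise $e\notin Z$ since $Q\cap Q^x=Y\times C_W(s)$ and $Z\leq Y$. Since $Z_2/Z=C_{Q/Z}(S/Q)$ and $Q$ is extraspecial, for any $v\in Q\setminus Z_2$ the image $vZ$ is moved by $S/Q$, so no element of $S\setminus Q$ centralises $v$; hence $C_S(v)=C_Q(v)$, of index $3$ in $Q$ and order $3^4$. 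I would also fix the $X/Q\cong\SL_2(3)$-module structure of $Q/Z$: Hypothesis~B gives a single non-central chief factor, so $Q/Z=N_1/Z\oplus N_2/Z$ with $N_1/Z$ the natural module, $N_2/Z$ uniserial with socle $A/Z$, $Q_8=O_2(X/Q)$ acting freely on $N_1/Z$ and trivially on $N_2/Z$, and every $3$-element of $X/Q$ acting on $N_2/Z$ as a single Jordan block with fixed space exactly $A/Z$.

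The second ingredient is a uniform Sylow reduction. Since $C_Q(v)$ has order $3^4>3^3$ it is non-abelian, so $Z=C_Q(v)'$ is characteristic in $C_Q(v)=C_S(v)$. Consequently, if some $R\in\syl_3(C_G(v))$ properly contained $C_S(v)$, then any $g\in N_R(C_S(v))\setminus C_S(v)$ would normalise $Z$; being a $3$-element and $N_G(Z)/C_G(Z)\hookrightarrow C_2$, it would lie in $C_G(Z)$. Thus it suffices to prove $C_Q(v)\in\syl_3(C_{C_G(Z)}(v))$. A Sylow argument inside $C_G(Z)$ reduces this in turn to $v^{C_G(Z)}\cap Z_2=\emptyset$: if $P\in\syl_3(C_{C_G(Z)}(v))$ contains $C_S(v)$ and $P^{h^{-1}}\leq S$, then $P^{h^{-1}}\leq C_S(v^{h^{-1}})$, which has order $3^4$ as soon as $v^{h^{-1}}\notin Z_2$. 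Finally, since $Q$ centralises $Q/Z$, the condition $v^{C_G(Z)}\cap Z_2=\emptyset$ is equivalent to the statement that $vZ$ is fixed by no nontrivial $3$-element of $\overline{C_G(Z)}=C_G(Z)/Q$.

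For $b$ this is immediate: $bZ$ lies in the $\overline{C_G(Z)}$-submodule $N_2/Z$ but outside its socle $A/Z$, and every $3$-element of $X/Q$ fixes only $A/Z$ on $N_2/Z$ (the extra involution $w$ contributes no $3$-elements). Hence $C_Q(b)\in\syl_3(C_G(b))$, of order $3^4$. The ``in particular'' then follows by comparing orders of Sylow $3$-subgroups of centralisers: elements of $3\mathcal{A}$ and $3\mathcal{D}$ lie in $Z_2\setminus Z$ and so have $|C_S(\cdot)|=3^5$, while $3\mathcal{C}=z^G$ has $|C_S(z)|=3^6$; since $|C_G(b)|_3=3^4$, $b$ lies in none of these classes. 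The same order comparison disposes of the ``in particular'' clause of part $(ii)$ once the Sylow statement for $e$ is established.

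The remaining work, and the real obstacle, is to locate $eZ$ inside $Q/Z=N_1/Z\oplus N_2/Z$. Here I would exploit the commuting involutions: $s$ centralises $e$ and (by Lemma~\ref{O8-first results}$(v)$) acts on $N_1/Z$ as a reflection while inverting $Y$, whereas $t$ inverts $N_1/Z$ and centralises $N_2/Z$, and (when present) $w$ inverts $N_2/Z$ and centralises $N_1/Z$. Writing $eZ=u_1+u_2$ with $u_i\in N_i/Z$, the fact that $s$ fixes $eZ$ places $u_1\in C_{N_1/Z}(s)$ and $u_2$ in the $s$-fixed line of $N_2/Z$, which is transverse to $A/Z$; the plan is to show, using $Q\cap Q^x=Y\times\langle e\rangle$ together with the position of the ``$3\mathcal{C}$-line'' $Y/Z$ (diagonal in $Z_2/Z$), that $u_1\neq0$ and $u_2\notin A/Z$, i.e. $e\notin N_1$ and $e\notin N_2$. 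This is the delicate computation. Granting it, $u_2\notin A/Z$ already yields the non-conjugacy of $e$ into $Z_2$ needed for $(ii)$, while $u_1\neq0$ additionally kills the $Q_8$-part of the stabiliser, so $\mathrm{Stab}_{X/Q}(eZ)=1$ and hence $C_X(e)=C_Q(e)$. Since no element of $X/Q$ can invert the non-socle component $u_2$ of $N_2/Z$ (the action there factors through $X/Q\to C_3$), no element of $X$ inverts $e$, giving $N_X(\langle e\rangle)=C_X(e)=C_Q(e)$. Part $(ii)$ then follows from the Sylow reduction together with $[C_G(Z):X]\leq2$, and the count is $|Q\cap3\mathcal{E}|\geq|e^X|+|(e^{-1})^X|=2\,|X:C_X(e)|=2\cdot72=144$, the two $X$-orbits being disjoint precisely because $e$ is not inverted in $X$.
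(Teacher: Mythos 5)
Your part (i) is complete and correct, and it takes a mildly different route from the paper: you reduce to $C_G(Z)$ via ``$Z=C_Q(b)'$ is characteristic in $C_S(b)$ and $N_G(Z)/C_G(Z)\hookrightarrow C_2$'' and then use the module criterion (no nontrivial $3$-element of $C_G(Z)/Q$ fixes $bZ$, since such elements fix only $A/Z$ on $N_2/Z$), whereas the paper shows $N_G(C_Q(b))\leq N_G(Z)$ using $\mathcal{Z}(C_Q(b))=\langle Z,b\rangle$ together with $N_2\cap Z^G=Z$, and then derives a contradiction from $A\leq S_0'\leq C_{S_0}(b)$. Both arguments are sound, and your ``in particular'' comparison of $3$-parts of centralizer orders matches the paper's.

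Parts (ii) and (iii), however, contain a genuine gap: everything you do for $e$ is conditional on the two claims $u_1\neq 0$ and $u_2\notin A/Z$ (equivalently $e\notin N_2$ and $[A,e]\neq 1$), and you explicitly defer them --- ``this is the delicate computation. Granting it\ldots''. That deferred step is the heart of the lemma, and the ingredients you point to ($Q\cap Q^x=Y\times\langle e\rangle$ and the diagonal position of $Y/Z$ in $Z_2/Z$) do not by themselves locate the $N_2/Z$-component of $eZ$; what is needed is a maximal-abelian-subgroup bound. Worse, your opening claim that $W=C_W(s)\times J$ is false and is actually incompatible with what you must prove: a direct product would force $[e,J]=1$, hence $[A,e]=1$ (as $A\leq Z_2\leq J$), killing your own later claims; only $C_W(s)\cap J=1$ holds. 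The paper fills the gap through the Thompson subgroup: since $J=J(W)$ is the unique abelian subgroup of $W$ of order $3^4$ and $e\notin J$, one gets $C_J(e)=Y$, whence $[A,e]\neq 1$ (your $u_2\notin A/Z$) and $[N_1\cap Z_2,e]\neq 1$ (your $u_1\neq 0$, using $[N_1,N_2]=1$). Alternatively one can argue inside $Q$: since $[Y,e]=1$ and $Z_2=AY=(N_1\cap Z_2)Y$, if $[A,e]=1$ or $e\in N_2$ then $[Z_2,e]=1$, and $Z_2\langle e\rangle$ would be an abelian subgroup of $Q\cong 3_+^{1+4}$ of order $3^4$, which is impossible. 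Without some such argument, your (ii), (iii), and the count $|Q\cap 3\mathcal{E}|\geq 144$ (whose conditional derivation is otherwise fine) remain unproven.
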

\begin{proof}
$(i)$ We have that $b \in N_2\bs A$ and by Lemma \ref{O8-Easy Lemma}, $N_2 \cap Z^G=Z$.  Therefore
$\<Z,b\>\cap 3\mathcal{C} =Z^\#$. As $Q$ is extraspecial, $C_Q(b)$ has order $3^4$ and
$\<Z,b\>=\mathcal{Z}(C_Q(b))$. Therefore $N_G(C_Q(b))\leq N_G(Z)$. Notice that $[A,b]\neq 1$ else
$N_2=A\<b\>$ is abelian. Recall that $A\vartriangleleft N_G(Z)$ and $A\leq S'=Z_2$ so $A$ is
contained in the derived subgroup of every Sylow $3$-subgroup of $N_G(Z)$. Suppose that $S_0\in
\syl_3(N_G(Z))$ such that $C_Q(b)<C_{S_0}(b)$ has order $3^5$. Then $A \leq S_0' \leq C_{S_0}(b)$
which is a contradiction. Thus $C_Q(b)$ is  a Sylow $3$-subgroup of $C_G(b)$. By Lemma
\ref{O8-elements of order nine},  since $a \in A=N_2 \cap Z_2$ and $d \in [Y,t]=[J,t]\leq N_1 \cap
Z_2$, both $a$ and $d$ commute with $3$-subgroups of order $3^5$. Thus $3\mathcal{A}\neq
3\mathcal{B}\neq 3\mathcal{D}$. Clearly $b$ is not conjugate to $Z$ which commutes with a $3$-group
of order $3^6$. Thus $3\mathcal{C}\neq 3\mathcal{B}$.

$(ii)$ Recall $e \in C_W(s)$ and $Y=\mathcal{Z}(W)$ and so $[Y,e]=1$. Notice that $e \notin J$ as
$J=[J,s]$ is inverted by $s$. Suppose $|C_J(e)|\geq 3^3$. Then $\<e\>C_J(e)$ would be an elementary
abelian subgroup of $W$ of order at least $3^4$ which contradicts that $J=J(W)$. Thus $C_J(e)=Y$.
In particular, $[A,e] \neq 1$. Notice that $Z^x$ is an arbitrary conjugate of $Z$ in $Q$ and lies
in $C_S(A)$. Thus every conjugate of $Z$ in $Q$ lies in $C_S(A)$. Therefore $e$ cannot be conjugate
into $Z$ and so $N_G(C_Q(e))\leq N_G(Z)$. Now we argue as before by supposing $C_{S_0}(e)>C_Q(e)$
for some $S_0\in \syl_3(N_G(Z))$ then $C_{S_0}(e)$ must have order $3^5$ and then it would contain
$S_0'>A$ which is a contradiction. Thus $C_Q(e)$ is a Sylow $3$-subgroup of $C_G(e)$.

$(iii)$ We have that $e \notin N_1$ else $[e,A]=1$ and we also have $e \notin N_2$ else $e$
commutes with $Z_2 \cap N_1$ which contradicts that $C_J(e)=Y$. Therefore $Ze$ is not preserved  by
an involution in $X/Q$. Hence $N_X(e)=C_X(e)=C_Q(e)$.  In particular this implies that $Q$ contains
$3^62^3/3^4=72$ subgroups conjugate to $\<e\>$. Thus $|Q \cap 3\mathcal{E}|\geq 144$.
\end{proof}

We now consider  $N_G(J)/J$. Given our target groups we would expect $N_G(J)/J$  to be isomorphic
to $\mathrm{SO}_4^+(3)$ or $\mathrm{GO}_4^+(3)$. We could in fact recognize this in our abstract
group $G$ by considering a quadratic form on $J$ in which conjugates of $z$ are singular. However
we require only the order of $N_G(J)/J$ which we calculate in the following lemma.
\begin{lemma}\label{O8-normalizer of J}
$J^\# \subset 3\mathcal{A} \cup 3\mathcal{C} \cup 3\mathcal{D}$ with $|J
\cap 3\mathcal{C}|=32$.

In particular, if $C_G(Z)/Q \cong \SL_2(3)$, then $|N_G(J)|=3^62^6$ and if $C_G(Z)/Q \cong
\SL_2(3)\times 2$, then $|N_G(J)|=3^62^7$.
\end{lemma}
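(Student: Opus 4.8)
The plan is to exploit that $J=J(S)$ is abelian with $S\in\syl_3(G)$ (Lemma \ref{O8-Easy Lemma}), so that by Lemma \ref{conjugation in thompson subgroup} two elements of $J$ are $G$-conjugate if and only if they are $N_G(J)$-conjugate. Hence each set $3\mathcal{X}\cap J$ is a union of $N_G(J)$-orbits on the $\GF(3)$-space $J\cong\GF(3)^4$, and it suffices to understand the $N_G(J)$-action together with a few orbit counts. First I would record the subgroups of $N_G(J)$ already in hand: since $W\trianglelefteq L$ (Lemma \ref{Lemma-facts about L}) and $J=J(W)$ is characteristic in $W$ (Lemma \ref{lemma structure of Y and W}), we have $L\leq N_G(J)$; likewise $L^t\leq N_G(J)$ and $N_G(S)\leq N_G(J)$. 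By Lemma \ref{W is centralizer of Y in L} and Lemmas \ref{facts about W}, \ref{lemma structure of Y and W}, $L/J\cong 3\times\SL_2(3)$ acts on $J$ with the two chief factors $Y$ and $J/Y$ both natural $\SL_2(3)$-modules, and symmetrically for $L^t$ with $Y^t$ and $J/Y^t$.

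The heart of the argument is to turn these two $\SL_2(3)$-actions into a coordinate system. I would identify $J$ with the space $\mathrm{Mat}_2(\GF(3))$ so that the image of $L$ in $\GL(J)$ acts by left multiplication by $\SL_2(3)$ and the image of $L^t$ by right multiplication, the two $\SL_2(3)$'s being the commuting factors of $\langle\bar L,\bar{L^t}\rangle\cong\Omega^+_4(3)\cong\SL_2(3)\circ\SL_2(3)$; equivalently $J\cong U\otimes V$ with $Y=U\otimes\langle v_0\rangle$, $Y^t=\langle u_0\rangle\otimes V$, so that $Y\cap Y^t=Z=\langle u_0\otimes v_0\rangle$ is spanned by the rank-one (singular) vector $z$. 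The technical content is to verify that $\bar L$ and $\bar{L^t}$ centralize one another modulo scalars and that their natural $2$-dimensional constituents pair up into this tensor decomposition. Granting this, the $\langle\bar L,\bar{L^t}\rangle$-orbits on $J^\#$ are transparent: since $\det(gMh^{-1})=\det M$ for $g,h\in\SL_2(3)$, the orbits are the rank-one matrices and the two level sets $\det=1$ and $\det=2$ of invertible matrices, of sizes $32$, $24$ and $24$.

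It then remains to match these three orbits with the conjugacy classes. The $32$ rank-one vectors form a single $N_G(J)$-orbit containing $z$, so $J\cap3\mathcal{C}$ is exactly this set and $|J\cap3\mathcal{C}|=32$. The twelve vectors of $Z_2\setminus(Y\cup Y^t)$ are non-singular and, by Lemma \ref{O8-things about YU=Z2} (where $|Z_2\cap3\mathcal{C}|=14$), lie in $3\mathcal{A}\cup3\mathcal{D}$; since each $\det$-orbit is a single $N_G(J)$-orbit meeting $Z_2\setminus Z$ (it contains $a\in A\setminus Z$ or $d\in[Y,t]$, both of which are rank-two as $3\mathcal{A}\neq3\mathcal{C}\neq3\mathcal{D}$ by Lemma \ref{o8-A not eq C not eq D}), the two $\det$-orbits are precisely $J\cap3\mathcal{A}$ and $J\cap3\mathcal{D}$. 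Thus $J^\#$ is partitioned into $J\cap3\mathcal{A}$, $J\cap3\mathcal{C}$, $J\cap3\mathcal{D}$, so $J^\#\subset 3\mathcal{A}\cup3\mathcal{C}\cup3\mathcal{D}$; this is consistent with Lemma \ref{O8- centralizer of e and v}, since elements of $3\mathcal{B}$ and $3\mathcal{E}$ have a Sylow $3$-subgroup of their centralizer of order $3^4$, whereas every vector of $J$ centralizes a $3$-group of order at least $3^5$ (Lemma \ref{O8-elements of order nine}).

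Finally I would read off $|N_G(J)|$ by orbit--stabilizer applied to the action on $J\cap3\mathcal{C}$. Because $s\in N_G(J)$ inverts $J$, the $32$ vectors form one orbit, so $|N_G(J)|=32\cdot|C_{N_G(J)}(z)|=32\cdot|N_{C_G(Z)}(J)|$. As $Q=O_3(C_G(Z))\trianglelefteq C_G(Z)$, any element of $C_G(Z)$ normalizing $J$ also normalizes $Z_2=J\cap Q$, and $N_G(Z_2)=N_G(S)$ by Lemma \ref{O8-things about YU=Z2}; hence $N_{C_G(Z)}(J)=N_G(S)\cap C_G(Z)$. Discarding from Lemma \ref{o8-Order of N_G(S)} the coset of $s$ (which inverts $Z$) gives $|N_G(S)\cap C_G(Z)|=2\cdot3^6$ when $C_G(Z)/Q\cong\SL_2(3)$ and $4\cdot3^6$ when $C_G(Z)/Q\cong\SL_2(3)\times2$, whence $|N_G(J)|=32\cdot2\cdot3^6=3^62^6$ and $|N_G(J)|=32\cdot4\cdot3^6=3^62^7$ respectively. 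The main obstacle I anticipate is the module identification $J\cong\mathrm{Mat}_2(\GF(3))$ with $\langle\bar L,\bar{L^t}\rangle\cong\Omega^+_4(3)$: proving that the two $\SL_2(3)$-actions commute and genuinely realise the tensor (matrix) module, rather than some diagonal or twisted configuration, is where the real work lies, and it must be carried out uniformly over both possibilities for $C_G(Z)/Q$. Once that is in place the orbit counts and the orbit--stabilizer computation are routine.
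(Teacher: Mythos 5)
You finish correctly: the orbit--stabilizer computation on the $N_G(J)$-orbit of $z$ (via Lemma \ref{conjugation in thompson subgroup}), the identification $N_{C_G(Z)}(J)=N_G(S)\cap C_G(Z)$ through $N_G(Z_2)=N_G(S)$, and the orders read off from Lemma \ref{o8-Order of N_G(S)} are exactly how the paper concludes. The gap is in the middle, and it is genuine: all of your orbit counts ($32$, $24$, $24$) depend on identifying $J$ with $\mathrm{Mat}_2(\GF(3))$ carrying a left/right multiplication action of $\SL_2(3)\circ\SL_2(3)\cong\Omega_4^+(3)$ realized by $\langle\bar{L},\bar{L^t}\rangle$. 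You assert this, concede that proving it ``is where the real work lies'', and then grant it; but nothing established so far in the chapter delivers it, so the crux of the proposal is missing. Note moreover that the image of $L$ in $\GL(J)$ is $L/J\cong 3\times\SL_2(3)$ of order $72$ (Theorem \ref{A general 3^1+4 theorem} $(x)$ together with $C_G(J)=J$ from Lemma \ref{O8-things about YU=Z2}), so it is not ``left multiplication by $\SL_2(3)$''; even the statement of the identification needs repair, since one must extract the two commuting $\SL_2(3)$'s from inside $\bar{L}$ and $\bar{L^t}$, prove they commute, act faithfully as a central product, and realize the tensor module rather than some other four-dimensional configuration, in both cases for $C_G(Z)/Q$. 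The paper deliberately sidesteps exactly this (see the remark preceding the lemma: one could recognize an orthogonal group via a quadratic form on $J$, ``however we require only the order of $N_G(J)/J$''). A secondary over-claim: ``the two $\det$-orbits are precisely $J\cap3\mathcal{A}$ and $J\cap3\mathcal{D}$'' presupposes $3\mathcal{A}\neq3\mathcal{D}$, which Lemma \ref{o8-A not eq C not eq D} does not give (it only separates each from $3\mathcal{C}$); that disjointness is proved later, in Lemma \ref{O8-PSp43}, \emph{using} the present lemma, so it cannot be assumed here. This over-claim happens not to affect the two assertions of the lemma, but it should not be made.

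The fusion statements can be obtained with no information about $N_G(J)/J$ whatsoever, which is what the paper does and what you should do to close the gap. Since $J/Y$ is a natural $L/W$-module (Lemma \ref{lemma structure of Y and W}), the four $L$-conjugates of $Z_2$ are the preimages in $J$ of the four order-three subgroups of $J/Y$; they pairwise intersect in $Y$ and their union is all of $J$. By Lemma \ref{O8-things about YU=Z2} we have $Z_2^\#\subset3\mathcal{A}\cup3\mathcal{C}\cup3\mathcal{D}$, $|Z_2\cap3\mathcal{C}|=14$ and $Z_2\cap3\mathcal{C}\subseteq Y\cup Y^t$, so each conjugate of $Z_2$ contains exactly $14-8=6$ elements of $3\mathcal{C}$ outside $Y$ (the sets being disjoint across conjugates because pairwise intersections equal $Y$, which $L$ normalizes). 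Hence $|J\cap3\mathcal{C}|=8+4\cdot6=32$ and $J^\#\subset3\mathcal{A}\cup3\mathcal{C}\cup3\mathcal{D}$ follow simultaneously, and with these two facts your final paragraph goes through verbatim.
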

\begin{proof}
By Lemma \ref{lemma structure of Y and W}, $J/Y$ is a natural $L/W$-module. Since $Y<Z_2<J$, there
are four $L$-conjugates of $Z_2$ in $J$ which intersect at $Y$ and every element of $J$ lies in at
least one such conjugate. By Lemma \ref{O8-things about YU=Z2}, $Z_2^\# \subset 3\mathcal{A} \cup
3\mathcal{C} \cup 3\mathcal{D}$ and hence $J^\# \subset 3\mathcal{A} \cup 3\mathcal{C} \cup
3\mathcal{D}$. Also by Lemma \ref{O8-things about YU=Z2}, $|Z_2 \cap 3\mathcal{C}|=14$. Therefore
the four $L$-conjugates of $Z_2$ allow us to see that $8+6+6+6+6=32$ elements of $J^\#$ are in $3
\mathcal{C}$ and the remaining $12+12+12+12=48$ elements are in $3\mathcal{A} \cup 3 \mathcal{D}$.
By Lemma \ref{o8-A not eq C not eq D}, these remaining elements are not conjugate to $z$.  Thus
$|J\cap 3 \mathcal{C}|=32$. By Lemma \ref{conjugation in thompson subgroup}, elements in $J$ are
conjugate if and only if they are conjugate in $N_G(J)$. Hence $[N_G(J):N_G(J) \cap C_G(Z)]=32$.

Now by Lemma \ref{O8-things about YU=Z2}, $N_G(Z) \cap N_G(J)=N_G(S)$. Moreover, by Lemma \ref{o8-Order
of N_G(S)}, if $C_G(Z)/Q \cong \SL_2(3)$ then $|N_G(S)|=3^62^2$ and if $C_G(Z)/Q \cong
\SL_2(3)\times 2$ then $|N_G(S)|=3^62^3$. Hence we have $|N_G(J)|=3^62^6$ or $|N_G(J)|=3^62^7$
respectively.
\end{proof}

\begin{lemma}\label{O8-no inverting z and centralizing u}
If $C_G(Z)/Q \cong \SL_2(3)$, then $z$ is not conjugate to $z^2$ in $C_G(a)$.
\end{lemma}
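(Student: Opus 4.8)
The plan is to argue by contradiction: suppose there is $g \in C_G(a)$ with $z^g = z^{-1}$, and derive a contradiction from the fact that the hypothesis $C_G(Z)/Q \cong \SL_2(3)$ forces $C_G(Z) = X$ to centralize $A/Z$. The whole argument turns on locating the conjugating element inside $N_G(Z)$ and then pushing it into $C_G(Z)$ using the involution $s$.

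First I would pin down where $g$ lives. Since $a \in A\setminus Z$ we have $A = \langle a, z\rangle \cong 3\times 3$, and any $g \in C_G(a)$ inverting $z$ satisfies $A^g = \langle a^g, z^g\rangle = \langle a, z^{-1}\rangle = A$, so $g \in N_G(A)$. By Lemma \ref{O8-first results} we have $N_G(A) \leq N_G(Z)$, hence $g \in N_G(Z)$; and as $g$ inverts $z$ it lies outside $C_G(Z)$. Because $s$ also inverts $Z$ and $N_G(Z)/C_G(Z)$ embeds in $\aut(Z)\cong C_2$, we get $N_G(Z) = C_G(Z)\langle s\rangle$, so I may write $g = cs$ with $c \in C_G(Z)$.

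Next I would exploit that $s$ inverts $J$. Since $A \leq Z_2 = J\cap Q \leq J$ with $J = [J,s]$ inverted by $s$ (Lemma \ref{lemma structure of Y and W}), and since $c \in C_G(Z) \leq N_G(A)$ gives $a^c \in A \leq J$, the relation $a = a^g = (a^c)^s = (a^c)^{-1}$ yields $a^c = a^{-1}$. Thus the entire question reduces to whether $C_G(Z)$ can contain an element inverting $a$.

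The decisive step — and the one place where the hypothesis $C_G(Z)/Q \cong \SL_2(3)$ is genuinely used — is to rule this out. Because $C_G(Z)/Q \cong \SL_2(3) = X/Q$, we have $C_G(Z) = X = O^2(C_G(Z))$, and by the construction of $A$ in Lemma \ref{O8-first results} the section $A/Z$ is a trivial $X$-module, so $[A, C_G(Z)] \leq Z$. Hence $a^c \in aZ$, whereas $a^{-1} = a^2 \in aZ$ would force $a \in Z$, contradicting $a \in A\setminus Z$. This contradiction shows no such $g$ exists. The main conceptual point, worth flagging in the write-up, is that the argument deliberately fails in the $\SL_2(3)\times 2$ case: there the extra involution $w \in C_G(Z)$ with $N_2 = [Q,w]$ inverts $A/Z$, so $C_G(Z)$ need not fix $a$ modulo $Z$; isolating this asymmetry between the two hypotheses is exactly what the lemma records.
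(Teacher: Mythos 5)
Your proof is correct, but it takes a genuinely different route from the paper's. You localize the hypothetical element $g$ inside $N_G(A) \leq N_G(Z)$, factor it as $g = cs$ with $c \in C_G(Z)$, use the fact that $s$ inverts $J \geq A$ elementwise to turn $a^g = a$ into the exact equation $a^c = a^{-1}$, and then contradict the triviality of the $C_G(Z)$-action on $A/Z$. The paper never factors $g$: it works in the quotient $(N_G(Z) \cap N_G(\langle a \rangle))/C_G(A)$, which embeds in a fours group because it acts on $A = Z \times \langle a \rangle$ preserving both factors; the images of $s$ and of the hypothetical $g$ are distinct involutions there, so the quotient is the full fours group, whence $(C_G(Z) \cap N_G(\langle a \rangle))/C_G(A)$ has order $2$, contradicting $[C_G(Z):C_G(A)] = 3$ from Lemma \ref{O8-first results}$(ii)$. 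The two contradictions are equivalent faces of the same restriction (no element of $C_G(Z)$ can invert $a$ modulo $Z$), but the inputs differ slightly: the paper needs only the stated index $[X:C_G(A)]=3$ together with $s$ inverting $A$, whereas you additionally invoke the trivial-module fact for $A/Z$, which is established inside the proof of Lemma \ref{O8-first results} rather than in its statement. That reliance is harmless --- since $X/C_G(A) \cong C_3$ acts on $A$ fixing $Z$ pointwise and $\aut(A/Z)$ has order two, the trivial action on $A/Z$ follows from the stated index anyway --- but citing only the statement, as the paper does, is tidier. Your closing remark on the asymmetry with the $\SL_2(3)\times 2$ case is exactly right: there $w$ inverts $A/Z$, and the paper uses precisely the element $sw$ in Lemma \ref{O8-PSp43} to show that $z$ \emph{is} conjugate to $z^2$ in $C_G(a)$ in that case.
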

\begin{proof}
Consider $(N_G(Z) \cap N_G(\<a\>))/C_G(A)$. As $A=\<Z,a\>$ has order nine, $(N_G(Z) \cap N_G(\<a\>))/C_G(A)$ is elementary abelian of order at most four and is non-trivial since $C_G(A)s$ inverts $A$.
Suppose that $z$ is conjugate to $z^2$ in $C_G(a)$. Then $1 \neq (N_G(Z) \cap C_G(a))/C_G(A)\leq
(N_G(Z) \cap N_G(\<a\>))/C_G(A)$ which is therefore elementary abelian of order four. Therefore
$|(C_G(Z) \cap N_G(\<a\>))/C_G(A)|=2$. This is a contradiction since by Lemma \ref{O8-first
results} $(ii)$, $[C_G(Z):C_G(A)]=3$ and so $[C_G(Z) \cap N_G(\<a\>):C_G(A)]=1$ or $3$.
\end{proof}

In the proof of the following lemma we will be gathering the required hypotheses and then applying
a theorem due to Prince (Theorem \ref{prince}). Recall that $a \in A \bs Z$ and $A\vartriangleleft
C_G(Z)$.
\begin{lemma}\label{O8-PSp43}
We have that $N_G(J) \cap C_G(a) \nleq N_G(Z)$ and the following hold.
\begin{enumerate}[$(i)$]
\item If $C_G(Z)/Q \cong \SL_2(3)$ then $C_G(\<a\>) \cong 3 \times \Omega^-_6(2)$ and $N_G(\<a\>)$ is
isomorphic to the diagonal subgroup of index two in $\sym(3) \times \mathrm{SO}^-_6(2)$.

 \item If  $C_G(Z)/Q \cong \SL_2(3)\times 2$ then either $C_G(\<a\>) \cong 3 \times \mathrm{SO}^-_6(2)$ and
$N_G(\<a\>)\cong \sym(3) \times \mathrm{SO}^-_6(2)$ or $C_G(\<a\>) \cong 3 \times \mathrm{SO}_7(2)$
and $N_G(\<a\>)\cong \sym(3) \times \mathrm{SO}_7(2)$.
\end{enumerate}In either case $|J \cap 3 \mathcal{A}|=24$ and $3\mathcal{A} \neq 3\mathcal{D}$.
\end{lemma}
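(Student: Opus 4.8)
The plan is to pass to $\mathcal{G}:=C_G(a)/\langle a\rangle$ and identify it using Prince's theorem (Theorem \ref{prince}), whose conclusions are exactly the $\Omega^-_6(2)$-type groups sought. First I would establish $N_G(J)\cap C_G(a)\nleq N_G(Z)$ by orbit counting. If this intersection lay in $N_G(Z)$ then, since $N_G(J)\cap N_G(Z)=N_G(S)$ by Lemma \ref{O8-things about YU=Z2}, it would equal $C_G(a)\cap N_G(S)=C_S(a)\,C_{\langle s,t\rangle}(a)$ (respectively $C_S(a)\,C_{\langle s,t,w\rangle}(a)$). The map $g\mapsto[a,g]$ is a homomorphism $S\to Z$ with kernel $C_S(a)$, whence $|C_S(a)|=3^5$; combining this with the orders of $N_G(S)$ and $N_G(J)$ from Lemmas \ref{o8-Order of N_G(S)} and \ref{O8-normalizer of J}, the orbit $a^{N_G(J)}\subseteq J\cap 3\mathcal{A}$ would have size $96$. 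Since $|J\cap(3\mathcal{A}\cup 3\mathcal{D})|=48$ by Lemma \ref{O8-normalizer of J}, this is a contradiction.

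Next I would verify the two hypotheses of Theorem \ref{prince} for $\mathcal{G}$ with distinguished element $\hat z$, the image of a generator of $Z$. The crucial point is that the full preimage of $C_{\mathcal{G}}(\hat z)$ is precisely $C_G(A)$: were there $g\in C_G(a)$ with $z^g\in A\setminus Z$, then $z^g\in 3\mathcal{C}$ would force $A\setminus Z$ to meet $3\mathcal{C}$, but $A\cap Y=A\cap Y^t=Z$ (as $t$ normalises neither $Y$ nor $Y^t$) and $Z_2\cap 3\mathcal{C}=Y^\#\cup(Y^t)^\#$ by Lemma \ref{O8-things about YU=Z2}, so $A\cap 3\mathcal{C}=Z^\#$ and no such $g$ exists. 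Hence $C_{\mathcal{G}}(\hat z)=C_G(A)/\langle a\rangle$ has order $648$, and since $C_Q(a)/\langle a\rangle\cong 3_+^{1+2}$ is normal with quotient $\SL_2(3)$ it has the required shape. For the second hypothesis I would take $\hat J:=J/\langle a\rangle$: because $Z\le J$ and $J$ is abelian, $J\le C_G(z)\cap C_G(a)=C_G(A)$, so $\hat J\le C_{\mathcal{G}}(\hat z)$ is elementary abelian of order $27$; and as $Y\le J$, any nontrivial $3'$-subgroup of $\mathcal{G}$ normalised by $\hat J$ would lift to one normalised by $Y$, which is impossible by Lemma \ref{O8-3' subgroup normalized by Y}.

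I would then settle the reality of $\hat z$. When $C_G(Z)/Q\cong\SL_2(3)$, Lemma \ref{O8-no inverting z and centralizing u} gives $z\not\sim z^2$ in $C_G(a)$, and since $z$ cannot be conjugate to any element of $z^2\langle a\rangle\setminus Z\subseteq A\setminus Z$ (disjoint from $3\mathcal{C}$), $\hat z$ is not real, placing us in the first case of Theorem \ref{prince}; when $C_G(Z)/Q\cong\SL_2(3)\times 2$, choosing $a\in[A,w]$ makes $sw$ centralise $a$ while inverting $z$, so $\hat z$ is real. Applying Prince's theorem and discarding the degenerate alternatives — the case $\mathcal{G}=N_{\mathcal{G}}(\langle\hat z\rangle)$ is excluded by the first claim, as an element of $N_G(J)\cap C_G(a)$ moves $\hat z$ off $\langle\hat z\rangle$ (again using $A\cap 3\mathcal{C}=Z^\#$) — yields $\mathcal{G}\cong\Omega^-_6(2)$ in the first case and $\mathcal{G}\cong\mathrm{SO}^-_6(2)$ or $\mathrm{SO}_7(2)$ in the second. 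I expect the genuine difficulty to be ruling out the ``normal subgroup of index three'' alternative of Theorem \ref{prince} in the $\SL_2(3)$ case: the fusion datum above does not by itself contradict such a subgroup, and one must instead exploit that $C_{\mathcal{G}}(\hat z)$ is \emph{exactly} $3_+^{1+2}.\SL_2(3)$ (carrying no surplus $3$-structure), together with the $N_G(J)$-fusion of $\hat z$, to force $\mathcal{G}=O^3(\mathcal{G})$.

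Finally, since the Schur multiplier of each of $\Omega^-_6(2)$, $\mathrm{SO}^-_6(2)$ and $\mathrm{SO}_7(2)$ is a $2$-group, the central extension $1\to\langle a\rangle\to C_G(a)\to\mathcal{G}\to 1$ splits as a direct product, so $C_G(a)\cong 3\times\Omega^-_6(2)$ (respectively $3\times\mathrm{SO}^-_6(2)$ or $3\times\mathrm{SO}_7(2)$). As $s$ inverts $a$ we have $[N_G(\langle a\rangle):C_G(a)]=2$; in the $\SL_2(3)$ case $s$ simultaneously inverts $a$ and induces the outer automorphism of $\Omega^-_6(2)$ realising the reality of $\hat z$, whence $N_G(\langle a\rangle)$ is the diagonal index-two subgroup of $\sym(3)\times\mathrm{SO}^-_6(2)$, while in the other case the inversion of $a$ is independent of the already-present outer structure, giving $N_G(\langle a\rangle)\cong\sym(3)\times\mathrm{SO}^-_6(2)$ or $\sym(3)\times\mathrm{SO}_7(2)$. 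For the last assertions I would compute $|J\cap 3\mathcal{A}|=|a^{N_G(J)}|=|N_G(J)|/|N_{C_G(a)}(J)|$; since $N_{C_G(a)}(J)=\langle a\rangle\times N_{\mathcal{G}}(\hat J)$ with $N_{\mathcal{G}}(\hat J)$ of the known order in these $\Omega^-_6(2)$-type groups, this gives $24$, and because $24<48=|J\cap(3\mathcal{A}\cup 3\mathcal{D})|$ we conclude $3\mathcal{A}\neq 3\mathcal{D}$, reducing fusion in $J$ to $N_G(J)$ via Lemma \ref{conjugation in thompson subgroup}.
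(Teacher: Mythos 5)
Most of your proposal runs parallel to the paper's own proof: you verify the hypotheses of Theorem \ref{prince} for $\bar{C_a}:=C_G(a)/\<a\>$ in the same way (the identification $C_{\bar{C_a}}(\bar{z})=\bar{C_G(A)}\sim 3_+^{1+2}.\SL_2(3)$ via $A\cap 3\mathcal{C}=Z^\#$, and the $3'$-core condition via Lemma \ref{O8-3' subgroup normalized by Y}), you settle the reality of $\bar{z}$ by the same two arguments (Lemma \ref{O8-no inverting z and centralizing u}, respectively the element $sw$ with $a$ chosen in $[A,w]$), and you finish with the same Schur-multiplier splitting, the same treatment of $N_G(\<a\>)$, and the same count $|J\cap 3\mathcal{A}|=24$ via Lemma \ref{conjugation in thompson subgroup}. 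Your proof of the first claim is a pleasant variant: assuming $N_{C_G(a)}(J)\leq N_G(Z)$ you compute the orbit $a^{N_G(J)}$ to have size exactly $96>48=|J\cap(3\mathcal{A}\cup 3\mathcal{D})|$, whereas the paper bounds the relevant index below by $48$ and then contradicts the non-conjugacy of $C_S(A)$ and $C_S([Y,t])$ from Lemma \ref{O8-elements of order nine}; both are correct.

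The genuine gap is in case $(i)$. When $\bar{z}$ is not real, Theorem \ref{prince} concludes that either $\bar{C_a}\cong \Omega^-_6(2)$ or $\bar{C_a}$ has a normal subgroup of index three, and you leave the second alternative unresolved: you assert that ``the fusion datum above does not by itself contradict such a subgroup'' and gesture at forcing $\bar{C_a}=O^3(\bar{C_a})$ by unspecified means. That assertion is mistaken, and the gesture is not a proof. The paper eliminates the index-three alternative using precisely your first claim, as follows. Suppose $a\in K\leq C_G(a)$ with $\bar{K}\trianglelefteq \bar{C_a}$ of index three; then $K\supseteq O^3(C_G(a))$. A Sylow $2$-subgroup $Q_8$ of $C_G(A)$ lies in $K$, and since $C_G(A)/C_Q(a)\cong X/Q\cong\SL_2(3)$ acts on $N_1/Z$ as a natural module (see the proof of Lemma \ref{O8-first results} $(v)$), one gets $N_1=[N_1,Q_8]Z=[N_1,Q_8]\leq K$ (equality holds because otherwise $N_1$ would split abelianly over $Z$); together with $a\in K$ this gives $C_Q(a)=N_1\<a\>\leq K$, while $C_S(a)\nleq K$ (otherwise $C_G(a)=O^3(C_G(a))C_S(a)\leq K$). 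Hence $K\cap C_S(a)=C_Q(a)\in\syl_3(K)$ and $J\cap K=J\cap C_Q(a)=Z_2$. Since $K\trianglelefteq C_G(a)$, the group $N_{C_G(a)}(J)$ normalizes $J\cap K=Z_2$, and $N_G(Z_2)=N_G(S)\leq N_G(Z)$ by Lemma \ref{O8-things about YU=Z2}, so $N_{C_G(a)}(J)\leq N_G(Z)$ --- contradicting exactly the statement you proved first. Without this (or an equivalent) argument, your proposal does not establish conclusion $(i)$.
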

\begin{proof}
Let $C_a=C_G(a)$ and $\bar{C_a}=C_a/\<a\>$. By Lemma \ref{o8-A not eq C not eq D}, $\<a\>$ is not
conjugate to $Z$ so is not central in a Sylow $3$-subgroup. By Lemma \ref{O8-elements of order
nine}, $C_S(A)=C_S(N_2 \cap Z_2)$ has order $3^5$. Hence $C_S(a)\in \syl_3(C_a)$. Observe that
$\<a\>z\cap 3\mathcal{C}=\{z\}$ by Lemma \ref{O8-Easy Lemma} $(iii)$. We hence see that if $g \in
C_a$ and $\bar{g}$ centralizes $\bar{z}$ then $g$ centralizes $\<a,z\>=A$. Therefore
$C_{\bar{C_a}}(\bar{z})=\bar{C_G(A)}$.

Now $C_G(A) \leq X=C_G(A)Q$ so we calculate using an isomorphism theorem that
\[\SL_2(3) \cong X/Q=C_G(A)Q/Q \cong C_G(A)/C_Q(A).\] Furthermore $C_Q(A)=N_1\<a\>$ and so
\[3_+^{1+2}\cong N_1\cong N_1/(N_1\cap \<a\>)\cong N_1\<a\>/\<a\>=C_Q(A)/\<a\>.\]
So we have that $C_{\bar{C_a}}(\bar{z})=\bar{C_G(A)}$ has shape $3_+^{1+2}.\SL_2(3)$.  Suppose $J$
normalizes a $3'$-subgroup $N$ of $C_a$. Then $Y$ normalizes $N$ and so $N$ is trivial by Lemma
\ref{O8-3' subgroup normalized by Y}. Therefore $\bar{J}$ (which is an elementary abelian subgroup
of $C_{\bar{C_a}}(\bar{z})$ of order $27$) normalizes no non-trivial $3'$-subgroup of $\bar{C_a}$.

Suppose $N_{C_a}(J)\leq N_G(Z)$. We have $[N_G(J): N_{N_G(Z)}(J)]=16$ as $J$ contains 16 conjugates
of $Z$ and therefore $[N_G(J): N_{C_a}(J)]$ is a multiple of $16$. Moreover $[N_G(J): N_{C_a}(J)]$
is a multiple of $3$ since $\<a\>$ is not central in a Sylow $3$-subgroup of $G$. Thus $[N_G(J):
N_{C_a}(J)] \geq 48$ and so $J^\# \subseteq 3\mathcal{A} \cup 3\mathcal{C}$. Therefore there exists
$h\in N_G(J)$ such that $a^h=d$. Moreover, by Sylow's Theorem, we may choose $h$ such that
$C_S(a)^h=C_S(d)$. However this contradicts Lemma \ref{O8-elements of order nine}. Thus
$N_{C_a}(J)\nleq N_G(Z)$. In particular this implies that ${C_a} \neq N_{{C_a}}({Z})$.

Consider the cosets $\<a\>z$ and $\<a\>z^2$. Since $3\mathcal{C}\cap A=Z^\#$, it follows that $\<a\>z$
is conjugate to $\<a\>z^2$ in $C_a$ if and only if $z$ is conjugate to $z^2$ in $C_a$. If
$C_G(Z)=X$ then by Lemma \ref{O8-no inverting z and centralizing u}, $\<a\>z$ is not conjugate to
$\<a\>z^2$ in $C_a$. Therefore $\bar{z}$ is not conjugate to its inverse in $\bar{C_a}$. Now we may
apply Theorem \ref{prince} to say either $\bar{C_a}$ has a normal subgroup of index three or
$\bar{C_a} \cong \Omega^-_6(2)$. Suppose that $C_a$ has a normal subgroup of index three, $a \in K$,
such that $[\bar{C_a}:\bar{K}]=3$. Then $C_Q(a)$ is a Sylow $3$-subgroup of $K$ and $J \cap K=J
\cap C_Q(a)=Z_2$. In particular $N_{C_a}(J)\leq N_{C_a}(J \cap K)=N_{C_a}(Z_2)\leq N_G(Z)$ by Lemma
\ref{O8-things about YU=Z2} which we have seen is not the case.  Thus $\bar{C_a}\cong \Omega^-_6(2)$.

Now suppose $C_G(Z)>X$ then consider the element $sw$. By Lemma \ref{O8-first results}, $A \leq
N_2=[N_2,w]$. We may hence assume that $w$ inverts $a$. Clearly $s$ also inverts $a$ and so
$[sw,a]=1$. Therefore $sw\in \C_a$ and $s$ inverts $Z$ whilst $w$ centralizes $Z$. Hence $\bar{z}$
is conjugate to its inverse in $\bar{C_a}$. Again we apply Theorem \ref{prince} to $\bar{C_a}$.
Since $\bar{C_a} \neq N_{\bar{C_a}}(\bar{Z})$, we see that $\bar{C_a}\cong \mathrm{SO}^-_6(2)$ or $\bar{C_a}\cong
\mathrm{SO}_7(2)$.

In either case of $C_G(Z)$ we calculate, using Lemma \ref{O8-normalizer of J}, that
$[N_G(J):N_{C_G(a)}(J)]=24$ and therefore by Lemma \ref{conjugation in thompson subgroup}, $|J \cap
3 \mathcal{A}|=24$ and by Lemma \ref{O8-normalizer of J}, $3\mathcal{A} \neq 3\mathcal{D}$. Moreover, using \cite{atlas} for example we see that in any case the Schur
Multiplier of $\bar{C_a}$ has order two. Therefore $C_a$ splits over $\<a\>$.

Finally, in the case when $C_G(Z)/Q \cong \SL_2(3)$ we suppose for a contradiction that $N_G(\<a\>)\cong \sym(3)\times \Omega^-_6(2)$. Then an involutions, $u$ say inverts $a$ whilst normalizing $A$ and therefore $u$ normalizes $Z$. If $u$ inverts $Z$ then $u$ inverts $A$ which contradicts our assumed structure of $N_G(\<a\>)$. Therefore $[Z,u]=1$ and so $Qt=Qu$ which implies that $[a,u]=1$ which is a contradiction. The structure of $N_G(\<a\>)$ now follows since
$\Omega^-_6(2)$ has automorphism group $\mathrm{SO}^-_6(2)\sim
\Omega^-_6(2):2$. In the case when $\bar{C_a}$ is isomorphic to $\mathrm{SO}^-_6(2)$ or
$\mathrm{SO}_7(2)$, we observe that both groups are isomorphic to their automorphism groups and so
we have $N_G(\<a\>)\cong \sym(3) \times \mathrm{SO}^-_6(2)$ or $\sym(3) \times \mathrm{SO}_7(2)$.
\end{proof}

Recall that $d \in [Y,t]^\#=[J,t]^\# \subset N_1\bs Z$.
\begin{lemma}\label{O8-C_G(d) is sym9 or normalies J}
If $C_G(Z)/Q \cong \SL_2(3)\times 2$ then $C_G(d)/\<d\>\cong \sym(9)$ or $C_G(d) \leq N_G(J)$.
\end{lemma}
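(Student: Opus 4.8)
The plan is to apply Prince's $\sym(9)$ recognition theorem (Theorem \ref{princeSym9}) to the quotient $\bar{C_d}:=C_G(d)/\langle d\rangle$, taking for the distinguished element of order three the image $\bar z$ of a $3$-central element $z\in Z^\#$. Since $d\in[Y,t]^\#\subseteq N_1\setminus Z$ and $Z\le Z_2$ with $Z_2$ abelian, $z$ centralises $d$, and $z\notin\langle d\rangle$ because $Z$ and $\langle d\rangle$ are distinct subgroups of $N_1$; so $\bar z$ is a genuine element of order three in $\bar{C_d}$. The first step is to identify $C_{\bar{C_d}}(\bar z)$. If $g\in C_G(d)$ satisfies $[z,g]\in\langle d\rangle$, then $z^g\in\langle z,d\rangle=N_1\cap Z_2\le N_1$; as $z^g$ is a $G$-conjugate of $z$ lying in $N_1$, Lemma \ref{O8-Easy Lemma}$(iii)$ forces $z^g\in Z$, and since $zd^{\pm1}\notin Z$ this yields $[z,g]=1$. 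Hence $C_{\bar{C_d}}(\bar z)=C_{C_G(Z)}(d)/\langle d\rangle$, using $C_G(z)=C_G(Z)$.

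Next I would compute $|C_{C_G(Z)}(d)|$ by an orbit count on $N_1\setminus Z$. Because $N_1\vartriangleleft C_G(Z)$ and $Z\le\mathcal{Z}(C_G(Z))$, the group $C_G(Z)/Q\cong\SL_2(3)\times 2$ acts on $N_1/Z$ as the natural $\SL_2(3)$-module, transitively on its eight non-zero vectors, while $[d,N_1]=Z$ shows $Q$ fuses $d,dz,dz^2$; thus all $24$ non-central elements of $N_1$ form one $C_G(Z)$-class, giving $|C_{C_G(Z)}(d)|=2^4 3^6/24=2\cdot3^5$. Its Sylow $3$-subgroup is $C_S(d)=JN_2$ of order $3^5$ (Lemma \ref{O8-elements of order nine}), and a Sylow $2$-subgroup is $\langle w\rangle$, since $w$ centralises $N_1\ni d$ by Lemma \ref{O8-first results}$(i)$. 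Therefore $\bar M:=C_{C_G(Z)}(d)/\langle d\rangle$ has order $2\cdot3^4$, Sylow $3$-subgroup $\bar S_M=C_S(d)/\langle d\rangle$ of order $3^4$, and contains $\bar J=J/\langle d\rangle$, which is elementary abelian of order $27$ and normal in $\bar M$ (as $J\vartriangleleft S$ and $w\in N_G(S)\le N_G(J)$ by Lemma \ref{o8-Order of N_G(S)}).

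I would then verify the hypotheses of Lemma \ref{Prelims-sym9} for $\bar M$. Here $\mathcal{Z}(C_S(d))=N_1\cap Z_2=\langle Z,d\rangle$ by Lemma \ref{O8-elements of order nine}; using that $N_2\vartriangleleft C_S(d)$ with $N_2'=Z$ and $\langle d\rangle\cap N_2\le N_1\cap N_2=Z$ (so $\langle d\rangle\cap N_2=1$), a short commutator argument shows the preimage of $\mathcal{Z}(\bar S_M)$ is exactly $\langle Z,d\rangle$, so $\mathcal{Z}(\bar S_M)=\bar Z$ has order three. Moreover $\bar Z\le C_{\bar S_M}(\bar w)\ne\bar S_M$, since $w$ centralises $Z$ but inverts $N_2/Z$. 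Lemma \ref{Prelims-sym9} then identifies $\bar M\cong C_{\sym(9)}(\,(1,2,3)(4,5,6)(7,8,9)\,)$. Finally, $\bar J$ normalises no non-trivial $3'$-subgroup of $\bar{C_d}$: the preimage $N$ of such a subgroup has $\langle d\rangle$ central in $C_G(d)$, so $N=\langle d\rangle\times O_{3'}(N)$, and $O_{3'}(N)$ is a $3'$-group normalised by $Y\le J$, hence trivial by Lemma \ref{O8-3' subgroup normalized by Y}. Thus Theorem \ref{princeSym9} applies to $\bar{C_d}$ with $\bar z$ and $\bar J$, giving either $\bar J\vartriangleleft\bar{C_d}$, in which case $J\vartriangleleft C_G(d)$ and so $C_G(d)\le N_G(J)$, or $\bar{C_d}=C_G(d)/\langle d\rangle\cong\sym(9)$.

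The main obstacle will be pinning down the isomorphism type of $\bar M$ tightly enough to invoke Lemma \ref{Prelims-sym9}, in particular confirming that $\mathcal{Z}(\bar S_M)$ collapses to order three after factoring out $\langle d\rangle$ and that $\bar J$ is indeed the advertised normal elementary abelian subgroup of order $27$; this is exactly where the detailed $3$-local data assembled in Chapter \ref{chaper general hypothesis} and in Lemmas \ref{O8-elements of order nine}, \ref{O8-first results} and \ref{o8-Order of N_G(S)} must be combined with care. By contrast the fusion step identifying $C_{\bar{C_d}}(\bar z)$, the orbit count for $|C_{C_G(Z)}(d)|$, and the elimination of $3'$-subgroups are comparatively routine given the earlier results.
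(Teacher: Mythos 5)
Your proposal is correct and follows essentially the same route as the paper: identify $C_{\bar{C_d}}(\bar z)$ with $C_G(\langle d,z\rangle)/\langle d\rangle$ via the fact that $z$ is the only conjugate of $z$ in $\langle d\rangle z$, pin down this group of order $2\cdot 3^4$ well enough to apply Lemma \ref{Prelims-sym9}, check the $3'$-condition via Lemma \ref{O8-3' subgroup normalized by Y}, and finish with Theorem \ref{princeSym9}. The only differences are cosmetic choices of justification for intermediate facts (e.g.\ you get $|C_{C_G(Z)}(d)|=2\cdot 3^5$ by an orbit count and $\mathcal{Z}(\bar{C_S(d)})=\bar Z$ from $\mathcal{Z}(C_S(d))=N_1\cap Z_2$ plus a lifting argument, where the paper computes the centralizer directly and runs a contradiction argument via $C_S(d)'\leq A$), and these variants are all sound.
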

\begin{proof}
Assume that $C_G(Z)/Q \cong \SL_2(3)\times 2$  and set $C_d:=C_G(d)$ and $\bar{C_d}:=C_d/\<d\>$. By
Lemma \ref{O8-elements of order nine}, $C_S(d)=C_S([Y,t])=C_S(N_1 \cap Z_2)$ has order $3^5$. Hence
$C_S(d)\in \syl_3(C_d)$. Now $d \in N_1\cong 3_+^{1+2}$ and $N_1/Z$ is a natural $X/Q$-module.
Therefore no element of order two in $X$ centralizes $d$ and $C_X(d)=C_S(d)$. However $[N_1,w]=1$
and so $C_{C_G(Z)}(d)=C_G(N_1 \cap Z_2)=C_S(d)\<w\>$ has order $3^52$. The only conjugate of $z$ in
$\<d\>z$ is $z$ itself. This implies that the preimage of any element in $C_{\bar{C_d}}(\bar{z})$
centralizes $z$ and $d$. Hence $C_{\bar{C_d}}(\bar{z})=\bar{C_G(\<d,z\>)}$ which has order $3^42$.
The Sylow $3$-subgroup of $\bar{C_G(\<d,z\>)}$ is $\bar{C_S(d)}$. Notice that $C_S(d)'\leq Z_2 \cap
N_2=A$ and so $\bar{C_S(d)}' \leq \bar{A}$. Consider the centre of $\bar{C_S(d)}$. We have
$\mathcal{Z}(\bar{C_S(d)})\geq \bar{Z}$. Suppose this containment were proper. Then
$\bar{C_S(d)}'\leq \mathcal{Z}(\bar{C_S(d)})$. Hence $[C_S(d)',C_S(d)] \leq \<d\> \cap C_S(d)'\leq
\<d\> \cap A=1$. Therefore $C_S(d)'$ commutes with $\<C_S(d), C_S(A)\>=S$ which implies that
$C_S(d)'=Z$. However this means that $S/Z$ has two distinct abelian subgroups of index three, $Q/Z$
and $C_S(d)/Z$. Therefore $Q/Z \cap C_S(d)/Z \leq \mathcal{Z}(S/Z)$ has order at least $3^3$ which
contradicts that $Z_2/Z=\mathcal{Z}(S/Z)$ has order nine. Hence $\mathcal{Z}(\bar{C_S(d)})=
\bar{Z}$.

Now we have that $\bar{C_G(\<d,z\>)}$ has order $3^42$ and a Sylow $3$-subgroup with centre of
order three. We have further that $\bar{J}\vartriangleleft \bar{C_G(\<d,z\>)}$ so suppose that
$[\bar{w}, \bar{C_S(d)}]=1$. Then $[w, C_S(d)]=1$. This is a contradiction since $|C_J(w)|= 3^3$ by
Lemma \ref{o8-Order of N_G(S)}. Therefore we may apply Lemma \ref{Prelims-sym9} to say that
$C_{\bar{C_d}}(\bar{z})\cong C_{\sym(9)}(\hspace{0.5mm} (1,2,3)(4,5,6)(7,8,9) \hspace{0.5mm} )$.

If $J$ normalizes a $3'$-subgroup $N$ of $C_d$. Then $Y$ normalizes $N$ and so $N$ is trivial by
Lemma \ref{O8-3' subgroup normalized by Y}. Therefore $\bar{J}$ normalizes no non-trivial
$3'$-subgroup of $\bar{C_a}$. We may now apply Theorem \ref{princeSym9} to say that either
$\bar{C_d}\leq N_{\bar{C_d}}(\bar{J})$, in which case $C_d \leq N_G(J)$, or $\bar{C_d}\cong
\sym(9)$.
\end{proof}

\begin{lemma}\label{O8-Conjugates in centralizer of u}
Every element of order three in $C_S(a)$ is in the set $3\mathcal{A} \cup 3\mathcal{C} \cup
3\mathcal{D}$.
\end{lemma}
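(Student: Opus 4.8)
The plan is to pin down $C_S(a)$ exactly and then enumerate its elements of order three. Since $a \in A \setminus Z$ and $A = \langle Z, a\rangle$ with $Z = \mathcal{Z}(S)$, we have $C_S(a) = C_S(A)$, which Lemma \ref{O8-elements of order nine} identifies as $P_2 = JN_1$, one of the four subgroups lying strictly between $J$ and $S$. The same lemma records that every element of order three in $S$ lies in $Q \cup W \cup W^t$, so it suffices to understand the three intersections $P_2 \cap Q$, $P_2 \cap W$ and $P_2 \cap W^t$.

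First I would compute these intersections. Both $W$ and $W^t$ have order $3^5$ and contain $J = J(S)$, while $\mathcal{Z}(W) = Y$ and $\mathcal{Z}(W^t) = Y^t$ are distinct from $\mathcal{Z}(P_2) = A$ (using $Y \nleq N_2$, hence $Y^t \nleq N_2$, together with $A \le N_2$); thus $W \neq P_2 \neq W^t$, and comparing orders inside $S$ gives $P_2 \cap W = P_2 \cap W^t = J$. By Dedekind's modular law (Lemma \ref{dedekind}), $P_2 \cap Q = (J \cap Q)N_1 = Z_2 N_1 = N_1 A$. Hence every element of order three in $C_S(a)$ lies in $J \cup N_1 A$. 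Since $J^\# \subseteq 3\mathcal{A}\cup 3\mathcal{C}\cup 3\mathcal{D}$ by Lemma \ref{O8-normalizer of J}, the lemma reduces to showing $(N_1 A)^\# \subseteq 3\mathcal{A}\cup 3\mathcal{C}\cup 3\mathcal{D}$.

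The key step, and the only real obstacle, is the treatment of the ``mixed'' elements of $N_1 A$ whose image in $Q/Z$ lies in neither $N_1/Z$ nor $N_2/Z$; a priori such an element could be fused to $3\mathcal{B}$ or $3\mathcal{E}$, and these must be excluded. Here I would exploit the module structure. By Lemma \ref{O8-Easy Lemma} the central product $Q = N_1N_2$ yields $Q/Z = N_1/Z \oplus N_2/Z$ as $C_G(Z)$-modules, and by Hypothesis B together with Lemma \ref{O8-first results} the group $X = O^2(C_G(Z))$ acts on $N_1/Z$ as a natural $\SL_2(3)$-module and trivially on $N_2/Z$ (the unique non-central chief factor being $N_1/Z$). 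Given $g \in N_1 A$ with $\bar g = (v_1, v_0)$, the component $v_0$ lies in $A/Z \le Z_2/Z$ and is fixed by $X$, whereas $X$ is transitive on $(N_1/Z)^\#$. Choosing $c \in X$ carrying $v_1$ into the line $(N_1 \cap Z_2)/Z$ when $v_1 \neq 0$ (and noting that $v_1 = 0$ already places $g$ in $A \le Z_2$), the conjugate $g^c$ has image in $Z_2/Z$, whence $g^c \in Z_2 \le J$. Thus every element of $N_1 A$ is $X$-conjugate into $J$, and the result follows from the case of $J$ settled in Lemma \ref{O8-normalizer of J}. The main delicacy to get right will be verifying that the intersections $P_2 \cap W$ and $P_2 \cap W^t$ genuinely collapse to $J$, and that $v_0$ remains in $Z_2/Z$ under the $X$-action, so that the conjugated element lands inside $Z_2$ and not merely inside $Q$.
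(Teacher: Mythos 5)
Your proof is correct and is in substance the same as the paper's: both arguments reduce, via Lemma \ref{O8-elements of order nine}, to elements of order three lying in $C_Q(a)\cup J$ (your $N_1A\cup J$ is the same set, since $C_Q(a)=N_1A$), both dispose of $J$ by Lemma \ref{O8-normalizer of J}, and both handle $C_Q(a)$ by showing each of its elements is $X$-conjugate into $Z_2$, using transitivity of $X/Q\cong \SL_2(3)$ on the natural module $N_1/Z$. The paper phrases this last step as a covering statement (the four subgroups of order $27$ of $C_Q(A)$ containing $A$ are exactly the $X$-conjugates of $Z_2$, and they cover $C_Q(A)$), while you phrase it as moving the $N_1$-component of each element into the line $(N_1\cap Z_2)/Z$; these are the same computation.

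One side claim in your write-up is false, though harmlessly so: $X$ does \emph{not} act trivially on $N_2/Z$. If it did, then $N_2/Z\leq C_{Q/Z}(S)=Z_2/Z$ would force $N_2=Z_2$, which is impossible because $N_2\cong 3_+^{1+2}$ is non-abelian while $Z_2\leq J$ is elementary abelian. The fact that all chief factors of $X$ on $N_2/Z$ are central only yields that $O^3(X/Q)\cong Q_8$ acts trivially there; $S/Q$ still acts non-trivially on $N_2/Z$, fixing exactly $A/Z$. Fortunately, your argument never uses the full trivial-action claim: all that is needed is that the component $v_0\in A/Z$ remains inside $Z_2/Z$ under conjugation by $c\in X$, and this is exactly what Lemma \ref{O8-first results} $(ii)$ provides ($A\trianglelefteq C_G(Z)$, indeed $A/Z$ is a trivial $X$-submodule). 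Replacing the parenthetical claim by that correct statement, your proof goes through as written.
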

\begin{proof}
By Lemma \ref{O8-elements of order nine}, every element of order three in $S$ lies in the set $Q
\cup W \cup W^t$. Since $C_W(a)=C_{W^t}(a)=J$, every element of order three in $C_S(a)$ is in $Q
\cup J$. Now $C_Q(a)=AN_1$ and $Z_2=S' \leq C_Q(a)$. By Lemma \ref{O8-things about YU=Z2},
$N_G(Z_2)=N_G(S)$.  We have that $A\vartriangleleft X$ and so $C_Q(a)=C_Q(A)\vartriangleleft X$ and
since $S'=Z_2\ntriangleleft X$, $C_Q(A)$ contains four proper subgroups of order 27 properly
containing $A$, namely $\{Z_2^g=[S,S]^g|g \in X\}$. Every element $C_Q(a)$ therefore lies in a
subgroup conjugate to $Z_2$. By Lemma \ref{O8-things about YU=Z2}, $Z_2^\#\subseteq 3\mathcal{A}
\cup 3\mathcal{C}\cup 3\mathcal{D}$ and hence $C_Q(a)^\#\subseteq 3\mathcal{A} \cup
3\mathcal{C}\cup 3\mathcal{D}$.

By Lemma \ref{O8-normalizer of J}, $J^\# \subseteq 3\mathcal{A} \cup 3\mathcal{C}\cup
3\mathcal{D}$. Hence every element of order three in $C_S(a)$ is also in the set.
\end{proof}

Recall that $b \in N_2 \bs A$ and $e \in C_W(s)\leq Q \cap Q^x$. Recall also that we have defined
the sets $b \in 3\mathcal{B}$ and $e \in 3\mathcal{E}$.
\begin{lemma}\label{O8-Derived p-groups meeting S}
For any $P \in \syl_3(G)$, $P' \cap S \leq C_S(a)$.
\end{lemma}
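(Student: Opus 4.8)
The plan is to reduce the assertion to a single conjugacy class of subgroups. By Lemma~\ref{O8-Easy Lemma}~$(i)$ we have $S\in\syl_3(G)$, so every $P\in\syl_3(G)$ is of the form $P=S^g$, and then $P'=(S')^g=Z_2^g$ by Lemma~\ref{O8-first results}~$(iv)$. Hence it suffices to show $Z_2^g\cap S\leq C_S(a)$ for all $g\in G$. The ingredients I expect to use are: $Z_2$ is elementary abelian of order $27$ with $Z_2^\#\subseteq 3\mathcal{A}\cup 3\mathcal{C}\cup 3\mathcal{D}$ (Lemma~\ref{O8-things about YU=Z2}), so $(Z_2^g)^\#\subseteq 3\mathcal{A}\cup 3\mathcal{C}\cup 3\mathcal{D}$ since these are unions of $G$-classes; that $a\in A=N_2\cap Z_2$ with $A\trianglelefteq C_G(Z)$ (Lemma~\ref{O8-first results}~$(ii)$) and $A\leq Z_2\leq J$; and that $C_S(a)=C_S(A)=JN_1$ has index three in $S$ (Lemma~\ref{O8-elements of order nine}).

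Next I would reformulate the desired containment as the vanishing of a transvection. Since $Z_2=S'$ and $Z_2/Z=\mathcal{Z}(S/Z)$, we have $[S,Z_2]\leq Z$; in particular $[S,a]\leq Z=\mathcal{Z}(S)$, so the map $\phi_a\colon S\to Z$, $r\mapsto[r,a]$, is a homomorphism with kernel exactly $C_S(a)$ (using that $[r_1,a]\in Z$ is central to split the commutator). Writing $R:=Z_2^g\cap S$, the image $\phi_a(R)$ is a subgroup of $Z\cong C_3$, so either $R\leq C_S(a)$, as required, or $\phi_a(R)=Z$ and there is $h\in R$ with $[h,a]=z$ a generator of $Z$. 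Since $A\trianglelefteq S$ and $R\leq S$, the group $R$ normalizes $A$ and centralizes $Z=\mathcal{Z}(S)$, so it acts on $A$ as a group of transvections; the bad case is precisely that this action is nontrivial, in which case $\langle a,h\rangle\cong 3_+^{1+2}$ with centre $Z$ and $h\in 3\mathcal{A}\cup 3\mathcal{C}\cup 3\mathcal{D}$.

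The hard part will be to exclude this nontrivial transvection, and I would do so by splitting on the class of $h$. I would first record that $R\leq Z_2^g\leq J(S^g)=J^g$, so $h$ lies in the abelian Thompson subgroup $J^g$ as well as in $S$, and that $Y^\#\subseteq 3\mathcal{C}$ (which follows from $|Z_2\cap 3\mathcal{C}|=14$ and $|Y\cup Y^t|=14$ in Lemma~\ref{O8-things about YU=Z2}). The most delicate case is $h\in 3\mathcal{C}$: here $\langle h\rangle$ is conjugate to $Z$, so I would transport the whole of Hypothesis~\ref{3^1+4 Hypothesis} to $\langle h\rangle$, writing $Q_h=O_3(C_G(h))\cong 3_+^{1+4}$, and use that $z=[a,h]$ together with $a\notin C_G(h)$ forces an incompatible configuration of $3$-central elements around $h$. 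The cases $h\in 3\mathcal{A}$ and $h\in 3\mathcal{D}$ I expect to settle by centralizer-order comparisons, exploiting that $\langle a,h\rangle\cong 3_+^{1+2}$ and the known orthogonal-type action of $N_G(J)$ on $J$ (whose order is fixed in Lemma~\ref{O8-normalizer of J}) and the structure of $C_G(\langle a\rangle)$ from Lemma~\ref{O8-PSp43}, which constrain how an element of $Z_2^g\cap S$ can fail to centralize $a$ given $N_G(Z_2)=N_G(S)$.

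Once the transvection is ruled out in all three cases we obtain $\phi_a(R)=1$, that is $R=Z_2^g\cap S\leq C_S(a)$, which is the assertion. I would note finally that this inclusion is exactly what is needed to control $S\cap G'$ through Gr\"un's Theorem (Theorem~\ref{GrunsThm}), since combined with the analysis of $S\cap N_G(S)'$ it will show $S\cap G'\leq C_S(a)<S$ and hence produce a proper normal subgroup of $G$.
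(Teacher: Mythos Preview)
Your reduction in the first paragraph is exactly right and matches the paper: $P'=Z_2^g$ with $(Z_2^g)^\#\subseteq\mathcal{X}:=3\mathcal{A}\cup3\mathcal{C}\cup3\mathcal{D}$, so it suffices to show $S\cap\mathcal{X}\subseteq C_S(a)$. The transvection reformulation via $\phi_a$ is correct as well, and is equivalent to this containment.

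The gap is that you never actually carry out the case split on $h$. Your treatment of $h\in3\mathcal{C}$ (``forces an incompatible configuration of $3$-central elements around $h$'') and of $h\in3\mathcal{A},3\mathcal{D}$ (``centralizer-order comparisons'') are statements of intent, not arguments. It is far from clear that these vague ideas can be made to work element by element: for instance, when $h\in3\mathcal{C}$ you would need to locate $a$ relative to $Q_h=O_3(C_G(h))$ and derive a contradiction from $[h,a]=z$, but $a$ need not lie in $Q_h$, and nothing obvious is violated.

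The paper bypasses the case analysis entirely by proving the global containment $S\cap\mathcal{X}\subseteq C_S(a)$ directly. The argument uses Lemma~\ref{O8-elements of order nine}, which says every element of order three in $S$ lies in $Q\cup W\cup W^t$, and then handles each piece. For $Q$ it is pure counting: $|Q\cap3\mathcal{E}|\geq144$ and $|N_2\cap3\mathcal{B}|\geq18$ already fill up all $162$ elements of $Q\setminus C_Q(a)$, so $Q\cap\mathcal{X}\subseteq C_Q(a)$. For $W$, one first observes $C_Q(Y)\cap C_Q(A)=C_Q(Z_2)=Z_2$, whence $C_Q(Y)\cap\mathcal{X}\subseteq Z_2$; since $W/(Q\cap Q^x)$ is a natural $L/W$-module, $W$ is the union of the $N_G(Y)$-conjugates of $C_Q(Y)$, giving $W\cap\mathcal{X}\subseteq J$. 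The same holds for $W^t$, and putting the three pieces together yields $S\cap\mathcal{X}\subseteq\langle C_Q(A),J\rangle=C_S(a)$. This is both short and avoids any delicate analysis of individual elements.
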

\begin{proof}
Set $\mathcal{X}:=3\mathcal{A} \cup 3\mathcal{C} \cup 3\mathcal{D}$. By Lemma \ref{O8- centralizer
of e and v}, $3\mathcal{B}\cap 3\mathcal{X}=\emptyset$ and $3\mathcal{E}\cap
3\mathcal{X}=\emptyset$. In particular neither $b$ nor $e$ are conjugate into $C_S(a)$ by Lemma
\ref{O8-Conjugates in centralizer of u}. By Lemma \ref{O8- centralizer of e and v}, $|Q \cap 3
\mathcal{E}|\geq 144$. Since $C_Q(b) \in \syl_3(C_G(b))$, there are at least $9$ conjugates of
$\<b\>$ in $N_2$. Thus we have
counted $144 + 18=162$ distinct elements in $Q \bs C_Q(a)$. Hence  $Q \cap  \mathcal{X}\subseteq C_Q(a)$. 

Notice that $C_Q(Y) \cap \mathcal{X}\subseteq Z_2$ since $C_Q(YA)=C_Q(Z_2)=Z_2$. Also,
$N_G(Y)/C_G(Y)$ is  transitive on subgroups of order three of the natural module, $W/(Q \cap Q^x)$
(by Lemma \ref{facts about W} $(ii)$). Therefore $W=\<C_Q(Y)^{N_G(Y)}\>$ and so $W \cap \mathcal{X}
\subseteq \<Z_2^{N_G(Y)}\>=J$. It follows of course that $W^t \cap \mathcal{X} \subseteq J$.

By Lemma \ref{O8-things about YU=Z2}, every element of order three in $Z_2=S'$ is in $\mathcal{X}$. So let $P \in \syl_3(G)$ then $P' \cap S \subseteq S \cap \mathcal{X}\subseteq \<C_Q(A),J\>=C_S(a)$.
\end{proof}

\begin{lemma}\label{O8-index three subgroup}
If $C_G(Z)/Q \cong \SL_2(3)$, then $G$ has a normal subgroup of index three $\wt G$ and $\wt G \cap
S = C_S(a)$.
\end{lemma}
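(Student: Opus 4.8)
The plan is to compute the focal subgroup $S \cap G'$ by means of Gr\"{u}n's Theorem (Theorem \ref{GrunsThm}) and to show that it coincides with $C_S(a)$, a subgroup of index three in $S$; a routine transfer argument then delivers the normal subgroup of index three. Recall from Lemma \ref{O8-elements of order nine} that $C_S(a) = C_S(A) = JN_1$ has order $3^5$, so that $[S:C_S(a)] = 3$. Gr\"{u}n's Theorem expresses $S \cap G' = \langle S \cap N_G(S)',\, S \cap P' \mid P \in \syl_3(G)\rangle$, and the second type of contribution is already under control: Lemma \ref{O8-Derived p-groups meeting S} gives $P' \cap S \leq C_S(a)$ for every $P \in \syl_3(G)$. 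It therefore remains to bound the first term and then to prove the reverse inclusion.

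First I would show $N_G(S)' \leq C_S(a)$. By Lemma \ref{o8-Order of N_G(S)} we have $N_G(S) = S\langle s,t\rangle$ with $|N_G(S)/S| = 4$, so $N_G(S)/S$ is abelian and hence $N_G(S)' \leq S$; moreover $N_G(S)'$ is generated by $S' = Z_2$, by $[S,s]$ and by $[S,t]$. Now $Z_2 \leq J \leq C_S(a)$, and by Theorem \ref{A general 3^1+4 theorem}(v) we have $[S,s] = J \leq C_S(a)$. For the last generator I would use $S = QJ$ (both $Q$ and $J$ being normal in the $3$-group $S$) and the expansion $[qj,t] = [q,t]^{j}[j,t]$ for $q \in Q$, $j \in J$. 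Since $[Q,t] = N_1$ (Lemma \ref{O8-Easy Lemma}) and $[J,t] = [Y,t] \leq N_1$ (Lemma \ref{O8-first results}(iii)), while $N_1 \trianglelefteq N_G(Z) \geq S$, this yields $[S,t] \leq N_1 \leq C_S(a)$. Hence $N_G(S)' \leq \langle J, N_1 \rangle = JN_1 = C_S(a)$, and combining with Lemma \ref{O8-Derived p-groups meeting S} gives $S \cap G' \leq C_S(a)$.

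For the reverse inclusion I would verify $C_S(a) = JN_1 \leq G'$ directly. As $J = [J,s]$ by Theorem \ref{A general 3^1+4 theorem}(v), $J$ is generated by commutators, so $J \leq G'$. For $N_1 \cong 3_+^{1+2}$ we have $Z = N_1' \leq G'$, while $N_1/Z$ is the natural module for $X/Q \cong \SL_2(3)$, so $[N_1,X]Z = N_1$; since $[N_1,X] \leq G'$ this forces $N_1 = [N_1,X]Z \leq G'$. Thus $C_S(a) \leq S \cap G'$, and together with the previous paragraph we conclude $S \cap G' = C_S(a)$, a subgroup of index three in $S$.

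Finally, by the focal subgroup theorem the image of $S$ in $G/G'$ is $S/(S \cap G')$, which is cyclic of order three, so the Sylow $3$-subgroup of $G^{\mathrm{ab}}$ has order three and $G$ admits a surjection onto $\mathbb{Z}/3$; let $\wt G$ be its kernel. Then $[G:\wt G] = 3$ and $S\wt G = G$, whence $S \cap \wt G \supseteq S \cap G' = C_S(a)$ with $[S : S \cap \wt G] = 3 = [S : C_S(a)]$, forcing $\wt G \cap S = C_S(a)$. The main obstacle is the computation of $N_G(S)'$, and in particular the verification that $[S,t] \leq N_1$; the remainder either quotes Lemma \ref{O8-Derived p-groups meeting S} or is standard transfer bookkeeping.
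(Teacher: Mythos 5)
Your proof is correct, and its first half is the paper's argument in different clothing: the paper also traps $N_G(S)'$ inside $C_S(a)=C_S(A)$ (it does so by showing $S\<s,t\>/C_S(A)$ is abelian, using that $s$ centralizes $S/J$ and that $t$ centralizes $S/C_S(A)\cong C_Q(t)/A$, rather than via your decomposition $N_G(S)'\leq S'[S,s][S,t]=Z_2JN_1$), and then feeds Gr\"un's Theorem together with Lemma \ref{O8-Derived p-groups meeting S} to get $S\cap G'\leq C_S(a)$, exactly as you do. Where you genuinely diverge is the endgame. The paper never proves the reverse inclusion $C_S(a)\leq G'$: it sets $\wt G:=O^3(G)$, notes this is proper because $S\cap G'<S$, and deduces $C_S(a)\leq \wt G$ from $C_S(a)\leq O^3(N_G(\<a\>))\leq O^3(G)$, which rests on the identification of $N_G(\<a\>)$ as the diagonal index-two subgroup of $\sym(3)\times \mathrm{SO}^-_6(2)$ in Lemma \ref{O8-PSp43} (the paper's stated justification, that $N_G(\<a\>)$ ``has no normal $3$-subgroup'', is not literally what is used --- $\<a\>$ itself is normal there; the fact actually needed is $O^3(N_G(\<a\>))=N_G(\<a\>)$, which does hold for that group). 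You instead pin down the focal subgroup exactly: $J=[J,s]\leq G'$, and $N_1=[N_1,X]Z\leq G'$ since $Z=N_1'$ and $N_1/Z$ is a nontrivial irreducible $X/Q$-module, so $C_S(a)=JN_1\leq G'$ and hence $S\cap G'=C_S(a)$; the index-three subgroup is then the kernel of the surjection $G\to G/G'\to \mathbb{Z}/3$ (as you implicitly note, this last step is just the isomorphism theorem, not really the focal subgroup theorem). Your route buys independence from Lemma \ref{O8-PSp43} within this proof and the sharper conclusion $S\cap G'=C_S(a)$; the paper's route is shorter given that Lemma \ref{O8-PSp43} is already in hand, and it names $\wt G$ as $O^3(G)$, the form it uses afterwards. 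The two subgroups coincide in any case, since $O^3(G)$ lies in the kernel of every homomorphism of $G$ onto a $3$-group and both have index three.
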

\begin{proof}
Assume that  $C_G(Z)/Q \cong \SL_2(3)$. By Lemma \ref{o8-Order of N_G(S)}, $N_G(S)=S\<s,t\>$.
Recall that $s$ centralizes $S/J\leq L/J$ and so $s$ centralizes $S/C_S(A)$. Also $S=C_Q(t)C_S(A)$
and so $S/C_S(A)\cong C_Q(t)/(C_Q(t)\cap C_S(A))=C_Q(t)/A$ is also centralized by $t$. Therefore
$S\<s,t\>/C_S(A)$ is abelian and so $N_G(S)'\leq C_S(A)$.

We may now apply Theorem \ref{GrunsThm} which says that $S \cap G'=\<S \cap N_G(S)',S \cap P'\mid P
\in \syl_3(G)\>$. So by Lemma \ref{O8-Derived p-groups meeting S}, $S \cap P'\leq C_S(a)$ for every
$P \in \syl_3(G)$ and we have just seen that $S \cap N_G(S)'\leq C_S(a)=C_S(A)$. Therefore $S \cap
G' \leq C_S(a)$. So $G$ has a proper derived subgroup with index a multiple of three. We let $\wt G
=O^3(G)$. Notice that $C_S(a) \leq \wt G$ since $N_G(\<a\>)$ has no normal $3$-subgroup and so
$C_S(a) \leq O^3(N_G(\<a\>)) \leq \wt G$. Thus $\wt G \cap S = C_S(a)$.
\end{proof}

Of course it follows now that when $C_G(Z)/Q \cong \SL_2(3)$ every element of order three in $\wt
G$ lies in $3\mathcal{A} \cup 3\mathcal{C} \cup 3\mathcal{D}$. We use notation such that for any $H
\leq G$, $\wt H= H \cap \wt G$.

\begin{lemma}\label{O8-AutPSp43 and Centralizer of d}
If $C_G(Z)/Q \cong \SL_2(3) \times 2$ then $C_G(a)\cong 3 \times \mathrm{SO}^-_6(2)$ and
$C_G(d)\leq N_G(J)$.
\end{lemma}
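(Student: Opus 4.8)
The plan is to establish both assertions together, by contradiction, exploiting the fact that the two candidates $\mathrm{SO}^-_6(2)$ and $\mathrm{SO}_7(2)\cong \PSp_6(2)$ for $C_G(a)/\<a\>$ are separated precisely by whether they contain an element of order three commuting with a subgroup isomorphic to $\sym(6)$: Lemma \ref{Prelims-distinguishPSp62} shows $\PSp_6(2)$ does, inside any elementary abelian subgroup of order $27$, while Lemma \ref{prelims-PSp43 normalizer J}$(ii)$ shows $\mathrm{SO}^-_6(2)$ does not. The same property detects the bad alternative $\sym(9)$ for $C_G(d)/\<d\>$, since $C_{\sym(9)}((789))\cong \sym(6)\times 3$. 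So I would assume for contradiction that $\bar C_a:=C_G(a)/\<a\>\cong \mathrm{SO}_7(2)$ (the case to be excluded from Lemma \ref{O8-PSp43}$(ii)$) and, in parallel, that $\bar C_d:=C_G(d)/\<d\>\cong \sym(9)$ (the case to be excluded from Lemma \ref{O8-C_G(d) is sym9 or normalies J}).

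The central mechanism is a lifting step. Here $a,d\in J$, since $a\in A\le Z_2\le J$ and $d\in [Y,t]=[J,t]\le J$, so $\bar J=J/\<a\>$ and $J/\<d\>$ are elementary abelian of order $27$ inside $\bar C_a$ and $\bar C_d$ respectively. Applying Lemma \ref{Prelims-distinguishPSp62} to $\bar C_a\cong \PSp_6(2)$ yields $\bar c\in\bar J$ of order three with $C_{\bar C_a}(\bar c)\cong 3\times\sym(6)$, and lifting gives $c\in J$ of order three with $\<a,c\>$ of order nine. Since $C_G(a)$ splits over $\<a\>$ (established in the proof of Lemma \ref{O8-PSp43}), and since the $3$-part of the Schur multiplier of $\sym(6)$ is trivial while $\sym(6)^{\mathrm{ab}}\cong 2$, the $\sym(6)$-factor lifts to $\Sigma\cong\sym(6)\le C_G(a)$ centralizing both $a$ and $c$, hence centralizing $\<a,c\>$. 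Running the identical argument inside $\bar C_d\cong\sym(9)$ (using the splitting of $C_G(d)$ over $\<d\>$ and the single $3$-cycles of $\bar J$) produces $\Sigma'\cong\sym(6)$ centralizing a subgroup $\<d,c'\>\le J$ of order nine.

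Next I would pin down the $G$-classes of the auxiliary elements $c,c'$. From $\Sigma\le C_G(c)$ we get $5\mid |C_G(c)|$; but $|C_G(Z)|=3^6\cdot 2^4$ and $|N_G(J)|=3^6\cdot 2^7$ (Lemma \ref{O8-normalizer of J}) are both prime to $5$, so $c$ lies in neither $3\mathcal{C}$ nor (via the $N_G(J)$-alternative of Lemma \ref{O8-C_G(d) is sym9 or normalies J}) any class whose centralizer sits in $N_G(J)$. As $c\in J^\#\subseteq 3\mathcal{A}\cup 3\mathcal{C}\cup 3\mathcal{D}$ by Lemma \ref{O8-normalizer of J}, this forces $c\in 3\mathcal{A}\cup 3\mathcal{D}$, and in the $3\mathcal{D}$ possibility it forces the $\sym(9)$-alternative for $C_G(d)$. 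This is exactly where the two assertions interlock: the $\mathrm{SO}_7(2)$-hypothesis on $a$ deposits $\Sigma$ into the centralizer of an element of $3\mathcal{A}\cup 3\mathcal{D}$, and symmetrically the $\sym(9)$-hypothesis on $d$ does the same. Whenever the auxiliary element lands in the \emph{other} of the two non-singular families, I can transport $\Sigma$ into its centralizer and invoke Lemma \ref{prelims-PSp43 normalizer J}$(ii)$ (an element of order three of $\mathrm{SO}^-_6(2)$ never commutes with $\sym(6)$) against the good structure, or use the $N_G(J)$-alternative to contradict $5\mid|C_G(c)|$; in either way the bad case is destroyed.

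The hard part will be the residual \emph{self-referential} configuration, where the lifted element stays in the same class as the element under study: two commuting members of $3\mathcal{A}$ (or of $3\mathcal{D}$) sharing a common $\sym(6)$-centralizer. To close this I expect to use the orthogonal geometry of $J$, on which $N_G(J)/J\cong \GO_4^+(3)$ acts so that $3\mathcal{C}\cap J$ is the set of $32$ singular vectors and $3\mathcal{A}\cap J$, $3\mathcal{D}\cap J$ are the two families of $24$ non-singular vectors of the two norms (consistent with Lemma \ref{O8-normalizer of J} and $3\mathcal{A}\neq 3\mathcal{D}$). The point to verify is that a $2$-space spanned by two vectors of the \emph{same} non-singular norm cannot be the fixed space of a $\sym(6)$ carrying an element of order five, so that $c$ is forced into the complementary family and the self-referential case collapses into the interlocked contradiction above. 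The delicate bookkeeping throughout is tracking which non-singular family each auxiliary element occupies and confirming that the lift $\Sigma$ genuinely centralizes, rather than merely normalizes, the order-nine subgroup; both become routine once the splitting of $C_G(a)$ and $C_G(d)$ and the triviality of the relevant Schur multipliers are in hand.
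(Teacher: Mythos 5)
There is a genuine gap, and it is structural: your interlocking scheme never terminates. The two bad alternatives are $C_G(a)\cong 3\times\mathrm{SO}_7(2)$ (Lemma \ref{O8-PSp43}$(ii)$) and $C_G(d)\cong 3\times\sym(9)$ (Lemma \ref{O8-C_G(d) is sym9 or normalies J}), and since every element of $3\mathcal{A}$ has centralizer isomorphic to $C_G(a)$ and every element of $3\mathcal{D}$ has centralizer isomorphic to $C_G(d)$, in the combined bad scenario there is no ``good structure'' anywhere to play your auxiliary elements against. Concretely: when $c$ lands in $3\mathcal{D}$, the condition $5\mid |C_G(c)|$ excludes only the $N_G(J)$-alternative and forces the $\sym(9)$-alternative, which is perfectly consistent with the configuration ($3\times 3\times\sym(6)$ embeds in $3\times\sym(9)$); when $c$ lands in $3\mathcal{A}$, the hypothesis under which you are working makes $C_G(c)\cong 3\times\mathrm{SO}_7(2)$, so Lemma \ref{prelims-PSp43 normalizer J}$(ii)$ is not applicable. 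Thus every branch funnels into your deferred ``geometric'' claim, and that claim is not a statement about the orthogonal geometry of $J$ at all: the $\sym(6)$ in question contains elements of order five, and $|N_G(J)|=3^62^7$ (Lemma \ref{O8-normalizer of J}), so those elements do not normalize $J$, and no analysis of the $N_G(J)/J$-action on $J$ can decide whether such a centralizer exists. What you call ``the point to verify'' is essentially the lemma itself, and you give no argument for it.

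The paper closes the two cases by mechanisms absent from your proposal, and in the opposite order, which matters. For $d$: assuming $C_G(d)\cong 3\times\sym(9)$, take $b\in N_2\bs A$ (class $3\mathcal{B}$); since $[N_1,N_2]=1$ and $d\in N_1$, $b\in C_G(d)$; because in $\sym(9)$ every element of order three is conjugate into the Thompson subgroup of a Sylow $3$-subgroup (elementary abelian of order $27$), some $h\in C_G(d)$ puts $\<d,b\>^h\leq J$; but by Lemma \ref{O8- centralizer of e and v} the element $b$ has non-abelian Sylow $3$-centralizer of order $3^4$, so $b^h$ cannot lie in $J^\#\subseteq 3\mathcal{A}\cup 3\mathcal{C}\cup 3\mathcal{D}$ --- contradiction, hence $C_G(d)\leq N_G(J)$. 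For $a$: with this in hand, $5\nmid |C_G(d)|$, which is exactly what forces $J_i^\#\subset 3\mathcal{A}$ for the three order-nine subgroups through $a$ coming from Lemma \ref{Prelims-distinguishPSp62}; then counting closes your ``self-referential'' case: the three $J_i$ account for $20$ of the $|J\cap 3\mathcal{A}|=24$ elements, so the remaining four lie in $A$, whence $A\trianglelefteq C_G(a)\cap N_G(J)$; since $N_G(A)\leq N_G(Z)$ (Lemma \ref{O8-first results}$(ii)$), this gives $C_G(a)\cap N_G(J)\leq N_G(Z)$, contradicting $N_G(J)\cap C_G(a)\nleq N_G(Z)$ from Lemma \ref{O8-PSp43}. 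Your splitting and class-pinning observations are correct, but without these two closing arguments (or substitutes for them) the proof does not go through.
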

\begin{proof}
Assume that $C_G(Z)/Q \cong \SL_2(3) \times 2$. By Lemma \ref{O8-C_G(d) is sym9 or normalies J},
$C_G(d)/\<d\>\cong \sym(9)$ or $C_G(d) \leq N_G(J)$.  So suppose that $C_G(d)\cong 3 \times
\sym(9)$. Observe that every element of order three in $\sym(9)$ is conjugate into the Thompson
subgroup of a Sylow $3$-subgroup of $\sym(9)$ which is elementary abelian of order 27. Recall that
$d\in N_1$ and $b \in N_2$ and $[N_1,N_2]=1$  and so $b \in C_G(d)$. This implies that there exists
$h \in C_G(d)$ such that $\<d,b\>^h \leq J$. However by Lemma \ref{O8- centralizer of e and v},
$C_G(b)$ has non-abelian Sylow $3$-subgroups of order $3^4$ and so $b^h$ is not in $J^\# \subseteq
3\mathcal{A} \cup 3\mathcal{C} \cup 3\mathcal{D}$ which is a contradiction. Hence, by Lemma
\ref{O8-C_G(d) is sym9 or normalies J}, $C_G(d)\leq N_G(J)$.

Now recall that by Lemma \ref{O8-PSp43} $(ii)$, $C_G(\<a\>) \cong 3 \times \mathrm{SO}^-_6(2)$ or
$3 \times \mathrm{SO}_7(2)$. So suppose that  $C_G(a)\cong 3 \times \mathrm{SO}_7(2)$. By Lemma
\ref{Prelims-distinguishPSp62}, there exist three subgroups of $J$ of order nine, $J_1$, $J_2$,
$J_3$ say with $a \in J_i$ and $C_G(J_i)\cong 3 \times 3 \times \sym(6)$ for each $1\leq i \leq 3$.
Notice that no conjugate of $z$ or $d$ commutes with an element of order five and so we have
$J_i^\# \subset 3\mathcal{A}$. By Lemma \ref{O8-PSp43}, $|J \cap 3 \mathcal{A}|=24$. Since $|J_1
\cup J_2 \cup J_3|=21$, there exist 18 elements of $J \cap 3\mathcal{A}$ which, together with $a$
generate $J_i$ for some $1\leq i \leq 3$. The remaining conjugates of $a$ in $J$ are therefore in
$A$ which implies that $A\trianglelefteq C_G(a) \cap N_G(J)$. By Lemma \ref{O8-first results}
$(ii)$, $N_G(A) \leq N_G(Z)$ so we have that $C_G(a)\cap N_G(J)\leq N_G(Z)$. However this
contradicts Lemma \ref{O8-PSp43}. Thus $C_G(a)\cong 3 \times \mathrm{SO}^-_6(2)$.
\end{proof}

\begin{lemma}\label{O8-centralizer of w}
Suppose $C_G(Z)/Q\cong \SL_2(3)\times 2$. Then $C_G(w)/\<w\>\cong \mathrm{SO}^-_6(2)$ or
$\mathrm{SO}_7(2)$.
\end{lemma}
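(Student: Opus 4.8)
The plan is to recognize $\bar{C_w} := C_G(w)/\<w\>$ by applying Prince's Theorem (\ref{prince}) to the image $\bar{z}$ of a generator $z$ of $Z$. First I would pin down the action of $w$: since $Qw \in \mathcal{Z}(C_G(Z)/Q)$ we have $[Qw, S/Q]=1$, so $w$ meets the hypothesis of Lemma \ref{central involutions don't fix Y}, and as $w$ centralizes $N_1 = C_Q(w) \cong 3_+^{1+2}$ (Lemma \ref{O8-first results}) we are in case $(ii)$, giving $|C_S(w)|=3^4$, $|C_J(w)|=3^3$, and that $w$ does not normalize $Y$. Because $\<w\>$ is central in $C_w$ and $z$ is a $3$-element, an order argument (if $z^g \in z\<w\>$ then $z^g=z$, since $zw$ has order six whereas $z^g$ has order three) shows that $C_{\bar{C_w}}(\bar{z}) = C_{C_G(Z)}(w)/\<w\>$.

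Next I would compute $C_{C_G(Z)}(w)$. It contains $N_1$, and since $Qw$ is central a Sylow $2$-subgroup $P$ of $C_G(Z)$ chosen to contain $w$ satisfies $[P,w] \le P \cap Q = 1$, so $w \in \mathcal{Z}(P)$ and $P \le C_{C_G(Z)}(w)$; moreover $C_S(w)$, of order $3^4$, lies in the centralizer and maps onto the subgroup of order three of $X/Q$. Hence the image of $C_{C_G(Z)}(w)$ in $C_G(Z)/Q \cong \SL_2(3)\times 2$ contains both a full Sylow $2$-subgroup and an element of order three, so it is everything, and $C_{C_G(Z)}(w)$ has shape $(3_+^{1+2}\times 2).\SL_2(3)$ with $\<w\>$ the central factor of order two. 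Factoring out $\<w\>$ gives $C_{\bar{C_w}}(\bar{z})$ of shape $3_+^{1+2}.\SL_2(3)$, which is exactly the first hypothesis of Theorem \ref{prince}. For the second hypothesis I would take $\bar{J_0} := \overline{C_J(w)}$, which is elementary abelian of order $27$ and, being contained in the abelian group $J$, centralizes $z$, so lies in $C_{\bar{C_w}}(\bar{z})$. Finally, $s$ centralizes $w$ (Lemma \ref{o8-Order of N_G(S)}) and inverts $z$, so $\bar{z}$ is conjugate to $\bar{z}^{-1}$ in $\bar{C_w}$; Theorem \ref{prince} then yields that either $\bar{C_w} = N_{\bar{C_w}}(\<\bar{z}\>)$, or $\bar{C_w} \cong \mathrm{SO}^-_6(2)$, or $\bar{C_w} \cong \mathrm{SO}_7(2)$.

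Two points remain, and these I expect to be the crux. The first is the $3'$-condition in Theorem \ref{prince}: I must show $\bar{J_0}$ normalizes no non-trivial $3'$-subgroup of $\bar{C_w}$. Lifting such a subgroup gives a non-trivial $3'$-subgroup $N_0 \le C_w$ normalized by $C_J(w)$; by coprime action (Theorem \ref{coprime action}$(iv)$), $N_0 = \<C_{N_0}(v) \mid v \in C_J(w)^\#\>$, and I would analyse each $C_{N_0}(v)$ via the known structure of $C_G(v)$ according to whether $v \in 3\mathcal{A}$, $3\mathcal{C}$ or $3\mathcal{D}$ (recall $C_J(w)^\# \subseteq J^\# \subseteq 3\mathcal{A}\cup 3\mathcal{C}\cup 3\mathcal{D}$ by Lemma \ref{O8-normalizer of J}); the cleanest finish would be to locate a $G$-conjugate of $Y$ inside $C_J(w)$ and invoke Lemma \ref{O8-3' subgroup normalized by Y}. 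The second point is ruling out $\bar{C_w} = N_{\bar{C_w}}(\<\bar{z}\>)$, which amounts to producing an element of $C_G(w)$ that does not normalize $Z$. Here I would show that $C_J(w)$ contains a $3$-central subgroup $Z_1 \neq Z$ (necessarily centralized by $w$, since $w$ centralizes $C_J(w)$), and then fuse $Z$ to $Z_1$ inside $C_{N_G(J)}(w)$ using Lemma \ref{conjugation in thompson subgroup} together with the order of $N_G(J)$ from Lemma \ref{O8-normalizer of J}. The delicate part of both steps is controlling exactly which of the classes $3\mathcal{A}$, $3\mathcal{C}$, $3\mathcal{D}$ meet $C_J(w)$ and how $w$ permutes the $3$-central points of $J$; this is where the argument is most likely to need genuine care.
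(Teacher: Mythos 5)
Your overall frame coincides with the paper's: apply Theorem \ref{prince} to $\bar{z}$ in $\bar{C_w}:=C_G(w)/\<w\>$, compute $C_{\bar{C_w}}(\bar{z})\sim 3_+^{1+2}.\SL_2(3)$ from $C_{C_G(Z)}(w)$, take $\overline{C_J(w)}$ as the elementary abelian group of order $27$, and use $\bar{s}$ to get $\bar{z}$ conjugate to its inverse; those parts are sound. But the first of your two ``crux'' steps fails outright: there is \emph{no} $G$-conjugate of $Y$ inside $C_J(w)$. By Lemma \ref{O8 lemma w and tw}, $C_G(Y)=W$ is a $3$-group, so no involution centralizes any conjugate of $Y$; if $Y^g\leq C_J(w)$ then $w\in C_G(Y^g)=W^g$, which is absurd. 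So your ``cleanest finish'' for the $3'$-condition is impossible, and your fallback case analysis must be completed by the mechanism the paper actually uses: since $N_1=C_Q(w)$, one has $N_1\cap Z_2\leq C_J(w)$ with $(N_1\cap Z_2)\bs Z\subseteq 3\mathcal{D}$, and for such $g$ Lemma \ref{O8-AutPSp43 and Centralizer of d} gives $C_G(g)\leq N_G(J)$, whence $C_N(g)$ normalizes and then (being a $3'$-group mutually normalizing a $3$-group) centralizes $C_J(w)$; thus $N=C_N(Z)\leq C_{C_G(w)}(Z)$, whose only $C_J(w)$-invariant $3'$-subgroup is $\<w\>$. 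That chain of reductions is the missing content, not a routine verification.

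Your second crux step also has a genuine gap. It is true that $C_J(w)$ contains a conjugate $Z_1\neq Z$ of $Z$ (a parity count of the $\<w\>$-orbits on the sixteen conjugates of $Z$ in $J$ works, because $[J,w]=[Y,w]\leq N_2$ is not conjugate to $Z$ by Lemma \ref{O8-Easy Lemma}). But Lemma \ref{conjugation in thompson subgroup} only produces $n\in N_G(J)$ with $Z^n=Z_1$; you need an element of $C_G(w)$ moving $Z$ off itself. To replace $n$ by an element of $C_{N_G(J)}(w)$ you would have to show that $w$ and $w^n$ are conjugate inside $N_G(J)\cap C_G(Z_1)$, i.e.\ you need control of the fusion of involutions in $N_G(J)$ and $N_G(S)$ --- exactly the information the paper only obtains later (for instance $t\not\sim w$ is Lemma \ref{O8-t not conj tw}, proved in the next section via the analysis of $C_G(t)$), so at this stage the step is unsupported. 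The paper's device, which is the key idea your proposal lacks, is to manufacture a conjugate of $w$ directly: by Lemma \ref{O8-AutPSp43 and Centralizer of d}, $N_G(\<a\>)\cong\sym(3)\times\mathrm{SO}^-_6(2)$, so there is an involution $w'$ inverting $a$ with $[w',O^3(C_G(a))]=1$; since $w'$ inverts $a$ it is not in $Qt$, and since it has nontrivial fixed points on $Q/Z$ (it centralizes $N_1\cap O^3(C_G(a))>Z$) it is not in $Qtw$, forcing $Qw'=Qw$ and hence $w'$ conjugate to $w$. Therefore $C_G(w)$ contains a copy of $\mathrm{SO}^-_6(2)$, which rules out the conclusion $\bar{C_w}=N_{\bar{C_w}}(\<\bar{z}\>)$ of Theorem \ref{prince} on order grounds alone and completes the proof.
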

\begin{proof}
Assume that $C_G(Z)/Q\cong \SL_2(3)\times 2$. Since $N_G(\<a\>)\cong \sym(3) \times
\mathrm{SO}^-_6(2)$  we may choose an element of order two $w'$ say such that $w'$ inverts $a$ and
$w' \in C_G(O^3(C_G(a)))$ where $O^3(C_G(a))\cong \mathrm{SO}^-_6(2)$. Hence $C_G(w')$ has a
subgroup isomorphic to $\mathrm{SO}^-_6(2)$. Notice that $|N_1 \cap O^3(C_G(a))|\geq 9$ and since
$N_1$ is extraspecial, $Z \leq N_1 \cap O^3(C_G(a))$. Therefore $w' \in C_G(Z)$ however $Qw' \neq
Qt$ since $w'$ inverts $a$. Since $w$ commutes with $N_1 \cap O^3(C_G(a))>Z$, we also have $Qw'\neq
Qwt$ as $wt$ acts fixed-point-freely on $Q/Z$ by Lemma \ref{O8-first results} $(i)$. Hence $Qw'=Qw$
and so $w'$ is conjugate to $w$. Therefore $C_G(w)$ also has a subgroup isomorphic to
$\mathrm{SO}^-_6(2)$.

Recall that $N_1=C_Q(w)$. Since $N_1$ is extraspecial, we have that $N_{C_G(w)}(N_1)\leq N_G(Z)$.
By coprime action and an isomorphism theorem, we have
\[2 \times \SL_2(3)\cong C_{C_G(Z)/Q}(w)=C_{C_G(Z)}(w)Q/Q\cong C_{C_G(Z)}(w)/C_Q(w)\]
and so $C_{C_G(Z)}(w)\sim 3^{1+2}_+.(2\times\SL_2(3))$. Therefore $C_{C_G(w)}(Z)/\<w\>\sim
3^{1+2}_+.\SL_2(3)$.  Suppose $N$ is a $3'$-subgroup of $C_G(w)$ that is normalized by $C_J(w)$. By
Lemma \ref{o8-Order of N_G(S)} $(ii)$, $C_J(w)$ has order $27$ and by coprime action, $N=\<C_N(g)|g
\in (N_1 \cap Z_2)^\#\>$ since $N_1 \cap Z_2 \leq C_J(w)$. Let $g\in (N_1 \cap Z_2)\bs Z$ then $g
\in 3\mathcal{D}$ and $C_N(g)$ is normalized by $C_J(w)$. By Lemma \ref{O8-AutPSp43 and Centralizer
of d}, $C_G(g) \leq N_G(J)$. Therefore $C_N(g)$ normalizes $J \cap C_G(w)=C_J(w)$. Hence
$[C_N(g),C_J(w)]=1$. So $N=C_N(Z)$. However the only $3'$-subgroup of $C_{C_G(w)}(Z)$ which is
normalized by $C_J(w)$ is $\<w\>$. Thus $C_G(w)/\<w\>$ satisfies Theorem \ref{prince} and so
$C_G(w)/\<w\>\cong \mathrm{SO}^-_6(2)$ or $\mathrm{SO}_7(2)$.
\end{proof}






\section{The Structure of the Centralizer of $t$}

We now have sufficient information concerning the $3$-local structure of $G$ to determine an
involution centralizer. We set $H:=C_G(t)$, $P:=C_S(t)$, $R:=C_J(t)$ and $\bar{H}:=H/\<t\>$.

\begin{lemma}\label{O8-N_H(Z), P,R are sylow}
\begin{enumerate}[$(i)$]
\item If $C_G(Z)/Q \cong \SL_2(3)$ then $C_H(Z) \sim 3_+^{1+2}.\SL_2(3)$.
\item If $C_G(Z)/Q \cong \SL_2(3)\times 2$ then $C_H(Z) \sim 3_+^{1+2}.(\SL_2(3)\times 2)$.
\item $\mathcal{Z}(P)=Z$, $P \in \syl_3(H)$.
\item $O_2(C_H(A))=O_2(C_H(Z))\cong Q_8$ commutes with $C_Q(t)$.
\item $C_H(R)=\<R,t\>$.
\end{enumerate}
\end{lemma}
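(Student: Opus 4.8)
The plan is to compute everything inside $C_H(Z)=C_{C_G(Z)}(t)$, exploiting that $Qt$ is central in $C_G(Z)/Q$ and that, by Lemma \ref{central involutions don't fix Y} together with Lemma \ref{O8-Easy Lemma}, $N_2:=C_Q(t)\cong 3_+^{1+2}$ with $Q=N_1\ast N_2$. For parts $(i)$ and $(ii)$ I would first note $C_H(Z)\cap Q=C_Q(t)=N_2$, and then show the natural map $C_H(Z)\to C_G(Z)/Q$ is surjective. Since $t$ is a $2$-element of $X=O^2(C_G(Z))$ it lies in some $T\in\syl_2(X)$; as $X/Q\cong\SL_2(3)$ and $Q$ has odd order, $T\cong Q_8$ and $t$ is its unique involution, so $T\le C_H(Z)$ covers $O_2(\SL_2(3))$. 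For each $3$-element $\bar g$ of $C_G(Z)/Q$ the group $Q\<g\>$ is a $3$-group normalised by $t$, so coprime action (Theorem \ref{coprime action}) gives $C_{Q\<g\>}(t)Q=Q\<g\>$ and hence a preimage of $\bar g$ inside $C_H(Z)$; in the split case the chosen involution $w$ (with $[w,t]=1$) covers the remaining direct factor. Thus $C_H(Z)/N_2\cong C_G(Z)/Q$, giving the two shapes.

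For $(iii)$, recall $P=C_S(t)$ has order $3^4$ and $t\in N_G(S)$ by Lemma \ref{o8-Order of N_G(S)}, while $Z=\mathcal{Z}(S)\le P$. I would show $\mathcal{Z}(P)=Z$ as follows. Any element of $\mathcal{Z}(P)\cap Q$ centralises $N_2=P\cap Q$, hence lies in $\mathcal{Z}(N_2)=Z$, so $\mathcal{Z}(P)\cap Q=Z$; since $[PQ:Q]=3$ this forces $|\mathcal{Z}(P)|\le 9$. If $|\mathcal{Z}(P)|=9$ there is $g\in\mathcal{Z}(P)\setminus Q$ with $\<Q,g\>=S$, and $g$ centralises $N_2$; but $C_{Q/Z}(g)=C_{Q/Z}(S/Q)=\mathcal{Z}(S/Z)=Z_2/Z$ by the definition of $Z_2$, whence $N_2\le Z_2$, contradicting $|N_2\cap Z_2|=9$ (Lemma \ref{O8-Easy Lemma}). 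So $\mathcal{Z}(P)=Z$. Since $|C_H(Z)|_3=3^4$ by $(i)$--$(ii)$, $P\in\syl_3(C_H(Z))$ and hence $P\in\syl_3(N_H(Z))$; then $N_H(P)\le N_H(\mathcal{Z}(P))=N_H(Z)$ propagates this to $P\in\syl_3(H)$ by the usual Sylow argument.

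For $(iv)$ and $(v)$ the key observation is that the action of $C_H(Z)/N_2\cong\SL_2(3)$ on the socle $A/Z$ of $N_2/Z$ is trivial (Lemma \ref{O8-first results}), so $O_2(\SL_2(3))=Q_8$ lies in the kernel of the action on $N_2/Z$, the action factoring through $\SL_2(3)/Q_8\cong C_3$. Thus the preimage $D$ of $Q_8$ acts on $N_2$ by automorphisms centralising $N_2/\Phi(N_2)$, which are inner, so each coset of $N_2$ in $D$ meets $C_D(N_2)$ and $C_D(N_2)=Z\times\tilde Q$ with $\tilde Q\cong Q_8$; as $\tilde Q N_2=\tilde Q\times N_2\trianglelefteq C_H(Z)$ one gets $\tilde Q=O_2(C_H(Z))\cong Q_8$ centralising $C_Q(t)=N_2$. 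Since $\tilde Q$ centralises $A\le N_2$ and is normal in $C_H(Z)\ge C_H(A)$, while $|C_H(A)|_2=8$, it is also $O_2(C_H(A))$. For $(v)$, writing $R=C_J(t)$ I would first identify $R\cap Q=C_{Z_2}(t)=A$, using that $t$ swaps the two natural factors of $Z_2/Z=Y/Z\times Y^t/Z$ (Lemma \ref{O8-things about YU=Z2}) so that its fixed line is $A/Z$, and choose $r\in R\setminus Q$; then $rN_2$ has order $3$ in $C_H(Z)/N_2\cong\SL_2(3)$, so $C_H(R)$ maps into $C_{\SL_2(3)}(rN_2)\cong C_6$, while $C_H(R)\cap N_2\le C_{N_2}(A)=A$. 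This gives $|C_H(R)|\le 9\cdot 6=54=|\<R,t\>|$, and since $\<R,t\>\le C_H(R)$ equality holds.

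The main obstacle is the structural input for $(iv)$--$(v)$: recognising that, although $\SL_2(3)$ need not centralise all of $N_2/Z$, its derived subgroup $Q_8$ does, which is exactly what produces a normal $Q_8$ centralising $N_2$ and pins down the $C_6$-centraliser used in the order count for $C_H(R)$. Keeping track of where $A$, $Z_2$ and $R$ sit relative to $N_1$, $N_2$ and $J$ is the fiddly part, and the identity $R\cap Q=A$ is the one computation that really must be got right.
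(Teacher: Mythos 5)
Your parts $(i)$--$(iii)$ are correct and essentially the paper's own route: the paper also obtains $(i)$ and $(ii)$ in one line from coprime action, $C_{C_G(Z)/Q}(t)=C_{C_G(Z)}(t)Q/Q\cong C_{C_G(Z)}(t)/C_Q(t)$, and your alternative derivation of $\mathcal{Z}(P)=Z$ (the paper instead uses $b\in C_Q(t)\bs A$ together with Lemma \ref{O8- centralizer of e and v}) and your Sylow propagation argument are sound. Your routes for $(iv)$ (automorphisms of $N_2\cong 3_+^{1+2}$ trivial on the Frattini quotient are inner, in place of the paper's three-subgroup-lemma computation) and for $(v)$ (a direct order count in place of Burnside's normal $p$-complement theorem) are legitimate alternatives --- but as written they only work in the first case of the hypothesis.

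The genuine gap: in both $(iv)$ and $(v)$ you work with ``$C_H(Z)/N_2\cong \SL_2(3)$'', but when $C_G(Z)/Q\cong \SL_2(3)\times 2$ one has $C_H(Z)/N_2\cong \SL_2(3)\times 2$ --- this is your own part $(ii)$. The extra central involution (the image of $w$) breaks both of your counts. In $(iv)$, $O_2(C_H(Z)/N_2)=Q_8\times 2$, so normality of $\tilde{Q}N_2$ in $C_H(Z)$ gives only $\tilde{Q}\leq O_2(C_H(Z))$ and leaves open $|O_2(C_H(Z))|=16$; the claimed equality is exactly what needs proof. In $(v)$, the centralizer of a $3$-element of $\SL_2(3)\times 2$ is $C_6\times 2$, of order $12$, so your bound becomes $|C_H(R)|\leq 9\cdot 12=108$, which does not force $C_H(R)=\<R,t\>$ of order $54$.

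The missing idea --- and it is precisely the pivot of the paper's proof of $(iv)$ and $(v)$ --- is Lemma \ref{O8-first results} $(ii)$: $C_G(A)\leq X=O^2(C_G(Z))$. Since $A\leq C_Q(t)$ and $A\leq R$, anything centralizing $C_Q(t)$ or centralizing $R$ lies in $C_G(A)$, hence in $C_X(t)$, and $C_X(t)/N_2$ genuinely is $\SL_2(3)$. In particular $O_2(C_H(Z))$ commutes with $N_2$ (two normal subgroups of coprime order), so $O_2(C_H(Z))\leq C_H(C_Q(t))\leq C_X(t)$, and likewise $C_H(R)\leq C_H(A)\leq C_X(t)$; with these insertions your arguments for $(iv)$ and $(v)$ close in the split case as well. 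This is exactly how the paper proceeds: it shows $C_H(C_Q(t))=Z\times T$ with $C_H(C_Q(t))\leq C_G(A)\leq X$ (via the three-subgroup lemma), and in $(v)$ it places the normal $3$-complement $M$ of $C_H(R)$ inside $C_H(C_Q(t))$ before concluding $M=\<t\>$.
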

\begin{proof}
By Lemma \ref{O8-first results} $(iii)$, $|P|=|C_S(t)|=3^4$. We apply coprime action and an isomorphism
theorem to see that
\[C_{C_G(Z)/Q}(t)=C_{C_G(Z)}(t)Q/Q \cong C_{C_G(Z)}(t)/C_Q(t).\] Since $C_Q(t)=N_2 \cong
3_+^{1+2}$ and $C_{C_G(Z)/Q}(t)\cong \SL_2(3)$ or $\SL_2(3)\times 2$, we have $C_H(Z)=C_{C_G(Z)}(t)
\sim 3_+^{1+2}.\SL_2(3)$ or $3_+^{1+2}.(\SL_2(3)\times 2)$. Recall that $b \in C_Q(t) \bs A=C_Q(t)
\bs J$ therefore $b\in P \bs R$ and by Lemma \ref{O8- centralizer of e and v}, $C_Q(b)=C_S(b)\in
\syl_3(C_G(b))$. Hence $\mathcal{Z}(P)\leq C_P(b)\leq Q \cap P=C_Q(t)$. This implies that
$\mathcal{Z}(P)=Z$  and so $N_H(P)\leq N_H(Z)$. Suppose $P_0\in \syl_3(H)$ and $P<P_0$. Then
$P<N_{P_0}(P)$. Therefore $N_{P_0}(P)\leq C_G(Z)$ and $N_{P_0}(P)$ has order at least $3^5$. Thus
$|Q \cap N_{P_0}(P)|\geq 3^4$. This is a contradiction since $|C_Q(t)|=27$. Hence $P \in
\syl_3(H)$. This proves $(i)-(iii)$.

Consider $C_H(C_Q(t))$.  We have that $C_G(C_Q(t))\leq C_G(A)$ and by Lemma \ref{O8-first results}
$(ii)$, $C_G(A) \leq X$. Furthermore, it follows from above that $C_X(t)/C_Q(t)\cong \SL_2(3)$. Now
let $T$ be a Sylow $2$-subgroup of $H \cap X$ then $T\cong Q_8$. It follows also from Lemma
\ref{O8-first results} $(ii)$ that $[T,A]=1$. Suppose that $[C_Q(t),T] \neq 1$. Then by coprime
action, $C_Q(t)=A[C_Q(t),T]$. Observe that $[C_Q(t),A,T]=[Z,T]=1$ and $[A,T,C_Q(t)]=1$. Thus by the
three subgroup lemma, $[C_Q(t),T,A]=1$ which implies that $A$ commutes with
$A[C_Q(t),T]=C_Q(t)\cong 3_+^{1+2}$ which is a contradiction. Hence, $[T,C_Q(t)]=1$. Since $P$ is
non-abelian and $C_H(C_Q(t)) \leq X$, we have that $C_H(C_Q(t))=Z\times T$. In particular, $T$ is
normal in $C_H(Z)$.

Now, observe that $[O_2(C_H(Z)),C_Q(t)]\leq O_2(C_H(Z)) \cap C_Q(t)=1$.  Hence $O_2(C_H(Z))\leq
C_H(C_Q(t))$. This implies that $T = O_2(C_H(Z))$. Moreover, since $[T,A]=1$ and $T\in
\syl_2(C_H(A))$, $T=O_2(C_H(A))$. This proves $(iv)$.

Finally, since $P$ is non-abelian, $R\in \syl_3(C_H(R))$ and so $C_H(R)$  has a normal
$3$-complement, $M$ say, by Burnside's Theorem (\ref{Burnside-normal p complement}). Clearly $M
\leq C_H(Z)$ and is normalized by $P$. Thus $[M,C_Q(t)] \leq M \cap C_Q(t)=1$. Hence $M \leq
C_H(C_Q(t))$ and so $M=\<t\>$ and therefore $C_H(R)=\<R,t\>$ which completes the proof.
\end{proof}

\begin{lemma}\label{O8-t not conj tw}
Suppose that $C_G(Z)\neq X$. Then $t$ is not conjugate to $w$ in $G$.
\end{lemma}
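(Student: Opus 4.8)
The plan is to show that $G$-conjugacy of $t$ and $w$ would force $N_G(Z)$-conjugacy, and then to rule the latter out directly. The reduction rests on the fact that $Z$ is a $3$-central subgroup of \emph{both} $C_G(t)$ and $C_G(w)$. Indeed, by Lemma \ref{O8-N_H(Z), P,R are sylow} we have $P:=C_S(t)\in \syl_3(C_G(t))$ with $\mathcal{Z}(P)=Z$, so $Z$ is the centre of a Sylow $3$-subgroup of $C_G(t)$. For $C_G(w)$ I would argue as follows: by Lemma \ref{O8-centralizer of w}, $C_G(w)/\langle w\rangle\cong \mathrm{SO}^-_6(2)$ or $\mathrm{SO}_7(2)$, and since $\langle w\rangle$ is a $2$-group both possibilities give $|C_G(w)|_3=3^4$; on the other hand $C_{C_G(w)}(Z)=C_{C_G(Z)}(w)\sim 3_+^{1+2}.(2\times \SL_2(3))$ (computed in the proof of Lemma \ref{O8-centralizer of w}) also has $3$-part $3^4$. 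Hence $C_{C_G(w)}(Z)$ contains a Sylow $3$-subgroup $P_0$ of $C_G(w)$, so $Z\leq \mathcal{Z}(P_0)$.

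Now I would run the reduction. Suppose for a contradiction that $t^g=w$ for some $g\in G$. Then $P^g\in \syl_3(C_G(w))$ with $\mathcal{Z}(P^g)=Z^g$. Choosing $P_0$ as above with $Z\leq \mathcal{Z}(P_0)$, Sylow's Theorem gives $h\in C_G(w)$ with $P_0^h=P^g$, whence $Z^h\leq \mathcal{Z}(P_0)^h=\mathcal{Z}(P^g)=Z^g$; comparing orders forces $Z^h=Z^g$. Therefore $k:=gh\inv\in N_G(Z)$, and since $h\in C_G(w)$ we get $t^k=w^{h\inv}=w$. Thus $t$ and $w$ would already be conjugate in $N_G(Z)$.

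Finally I would show $t$ and $w$ are \emph{not} fused in $N_G(Z)$, which completes the contradiction. Since $t,w$ are involutions and $Q$ is a $3$-group, it suffices to prove that their images $Qt$ and $Qw$ are non-conjugate in $N_G(Z)/Q$, and for this I would check that both are distinct central elements of $N_G(Z)/Q$. They are distinct because $t\in X=O^2(C_G(Z))$ while $w\notin X$, so $Qt\in X/Q$ but $Qw\notin X/Q$. For centrality: $Qt=\mathcal{Z}(X/Q)$ is central in $X\langle s\rangle/Q\cong \GL_2(3)$ by Lemma \ref{O8-first results}(v), and $[t,w]=1$ (Lemma \ref{o8-Order of N_G(S)}) gives $[Qt,Qw]=1$, so $Qt$ is central in $N_G(Z)/Q=\langle X/Q, Qs, Qw\rangle$; likewise $Qw\in \mathcal{Z}(C_G(Z)/Q)$ and $[s,w]=1$ show $Qw$ is central. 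Two distinct central elements cannot be conjugate, so $t\not\sim w$ in $N_G(Z)$, contradicting the previous paragraph. I expect the only delicate point to be the clean verification that $Z$ is $3$-central in $C_G(w)$ (i.e.\ the matching of the $3$-parts $|C_{C_G(w)}(Z)|_3=|C_G(w)|_3=3^4$); once this is in hand the fusion reduction and the $N_G(Z)/Q$ computation are routine.
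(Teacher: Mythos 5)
Your proof is correct, and every result you cite (Lemmas \ref{O8-N_H(Z), P,R are sylow}, \ref{O8-centralizer of w}, \ref{O8-first results}, \ref{o8-Order of N_G(S)}) precedes this lemma in the paper, so there is no circularity; but your route differs from the paper's in two ways. First, you establish that $Z$ is $3$-central in $C_G(w)$ \emph{unconditionally}, by combining Lemma \ref{O8-centralizer of w} (both $\mathrm{SO}^-_6(2)$ and $\mathrm{SO}_7(2)$ have $3$-part $3^4$) with the coprime-action computation $C_{C_G(Z)}(w)\sim 3_+^{1+2}.(2\times \SL_2(3))$. The paper never touches Lemma \ref{O8-centralizer of w} here; it gets the Sylow statement for free \emph{inside} the contradiction: if $t^g=w$ then $C_G(w)\cong C_G(t)$ has Sylow $3$-subgroups of order $3^4$, and since $C_S(w)\geq N_1C_J(w)$ already has order $3^4$, it follows that $C_S(w)\in \syl_3(C_G(w))$ with $Z \leq \mathcal{Z}(C_S(w))$. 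So you pay for an unconditional statement with a heavier input (Lemma \ref{O8-centralizer of w} rests on Prince's theorem), where the paper makes do with elementary bookkeeping. Second, and more substantively, the distinguishing invariants differ. You make the Burnside-type reduction explicit ($G$-fusion of $t$ and $w$ forces $N_G(Z)$-fusion) and then kill it by noting that $Qt$ and $Qw$ are \emph{distinct central} elements of $N_G(Z)/Q$, which can never be conjugate. The paper instead transports the element of order four squaring to $t$ — which exists because $C_{C_G(t)}(Z)\sim (3_+^{1+2}\times Q_8).(3\times 2)$ — to deduce that $C_G(Z)\cap C_G(w)$ would contain an element of order four squaring to $w$, and this is impossible because every square in $C_G(Z)/Q\cong \SL_2(3)\times 2$ lies in the $\SL_2(3)$ factor while $Qw$ generates the direct factor of order two. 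Both contradictions ultimately exploit the same structural fact, namely that $Qw$ spans the central direct factor of $C_G(Z)/Q$; your final step is the more elementary of the two (centrality and distinctness, no order-four element needed), and the explicit Sylow adjustment $k=gh^{-1}\in N_G(Z)$ with $t^k=w$ is carried out cleanly.
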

\begin{proof}
Assume that $C_G(Z)\neq X$. By Lemma \ref{O8-N_H(Z), P,R are sylow}, $P\in \syl_3(C_G(t))$ and
$\mathcal{Z}(P)=Z$. Furthermore, $C_H(Z) \cap C_G(t) \sim (3_+^{1+2}\times Q_8).(3 \times 2)$ which
in particular contains an element of order four which squares to $t$. Suppose $t$ is conjugate to
$w$ in $G$. By Lemmas \ref{O8-first results} and \ref{o8-Order of N_G(S)}, $C_S(w)\geqslant N_1
C_J(w)$ and $|N_1C_J(w)|= 3^4$ so, since $C_G(t)$ has Sylow $3$-subgroups of order $3^4$,
$C_S(w)=N_1C_J(w)\in \syl_3(C_G(w))$ and clearly $Z\leq \mathcal{Z}(C_S(w))$. Thus $C_G(Z) \cap
C_G(w)$ must contain an element of order four which squares to $w$. However this is clearly not the
case. Thus $w$ is not conjugate to $t$ in $G$.
\end{proof}

Recall that $Y$ is a natural $N_G(Y)/W$-module and so $N_G(Y)/W$ is transitive on $Y^\#$. Fix
$x_1$, $x_2$, $x_3$ in $L$ such that $Y=Z \cup Z^{x_1}\cup Z^{x_2}\cup Z^{x_3}$ and set $x_0:=1$ so
that $Z^{x_0}=Z$. We will now find an appropriate conjugate of $t$ in each $C_G(Z^{x_i}) \cap H$.

\begin{lemma}\label{O8-defining ti's}
For each $i \in \{1,2,3\}$ there exists $t_i \in O^2(C_G(Z^{x_i}))$ such that  $[t_i,t]=1$ and $t$
is conjugate to $t_i$ in $N_G(J)$.
\end{lemma}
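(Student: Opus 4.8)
The plan is to take the obvious candidate and then repair it so that it centralises $t$. First I would record that $L\leq N_G(J)$: since $J=J(W)$ is characteristic in $W=O_3(L)\trianglelefteq L$ by Lemma~\ref{lemma structure of Y and W}, we have $J\trianglelefteq L$, so every $x_i\in L$ (and indeed all of $Q^{x_i}\leq L$) normalises $J$. Because $Z^{x_i}\leq Y$ is distinct from $Z$, conjugation gives $C_G(Z^{x_i})=C_G(Z)^{x_i}$ and hence $O^2(C_G(Z^{x_i}))=X^{x_i}$. Thus $t^{x_i}\in O^2(C_G(Z^{x_i}))$ and $t^{x_i}$ is conjugate to $t$ by $x_i\in N_G(J)$; the element $t^{x_i}$ therefore meets two of the three demands, and the whole problem reduces to producing a representative that in addition commutes with $t$.

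Next I would isolate the involutions eligible to be $t_i$. Conjugating Lemma~\ref{central involutions don't fix Y}$(ii)$ (equivalently Lemma~\ref{O8-first results}$(iii)$) by $x_i$, any involution $u$ of $X^{x_i}$ with $Q^{x_i}u\in\mathcal{Z}(X^{x_i}/Q^{x_i})$ is of central-product type and so acts on $J$ (which $x_i$ normalises) as a ``reflection'': $|C_J(u)|=3^3$ while $[J,u]=[Y,t]^{x_i}$ has order three and lies in the non-central class $3\mathcal{D}$. The point of restricting to the $Q^{x_i}$-conjugates $t^{x_iq}$ ($q\in Q^{x_i}$) is that each is automatically $N_G(J)$-conjugate to $t$, via $x_iq\in L\leq N_G(J)$, and its inverted line is $([Y,t]^{x_i})^{q}$. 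Hence it suffices to find $q\in Q^{x_i}$ with $t^{x_iq}\in C_G(t)$, and I would look for it among these conjugates rather than among arbitrary central involutions of $X^{x_i}$.

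The main obstacle is precisely this commutativity, which I would attack through the action on $J$. The involution $t$ acts on $J$ as the reflection fixing $R=C_J(t)$ and inverting $[Y,t]$, and on the $\Omega_4^+(3)$-geometry carried by $J$ — the one in which the $G$-conjugates of $z$ are the singular points, forced by $|J\cap 3\mathcal{C}|=32$ in Lemma~\ref{O8-normalizer of J} — two such reflections commute exactly when their inverted lines are perpendicular. So I would first show that the $Q^{x_i}$-orbit of the line $[Y,t]^{x_i}$ contains a line perpendicular to $[Y,t]$, giving a $q$ for which $t$ and $t^{x_iq}$ commute modulo $C_G(J)=J$. The delicate step, and the one I expect to cost the most, is upgrading this mod-$J$ commutation to genuine commutation in $G$: since $J$ has odd order $3^4$ and $\langle t,t^{x_iq}\rangle$ is dihedral, I would force $\langle t,t^{x_iq}\rangle$ to be a $2$-group by placing both involutions in a common $2$-subgroup, exploiting that $s$ commutes with $t$ and normalises $C_G(Z^{x_i})$ (as $s$ inverts $Z^{x_i}$) together with the $Q_8=O_2(C_H(Z))$ of Lemma~\ref{O8-N_H(Z), P,R are sylow}. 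Once $t_i:=t^{x_iq}$ is seen to lie in $C_G(t)\cap O^2(C_G(Z^{x_i}))$ and to be $N_G(J)$-conjugate to $t$, the lemma follows for each $i\in\{1,2,3\}$.
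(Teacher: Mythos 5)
There is a genuine gap, and it sits exactly at the crux of the lemma. Your argument routes everything through an $\Omega_4^+(3)$-geometry on $J$ in which the conjugates of $z$ are the singular points, and you treat this as "forced by $|J\cap 3\mathcal{C}|=32$". It is not: knowing that $J$ contains $32$ elements of $3\mathcal{C}$ (equivalently $16$ distinguished points of the projective space) does not produce an $N_G(J)$-invariant quadratic form, and the paper pointedly never constructs one — just before Lemma \ref{O8-normalizer of J} it remarks that one *could* try to recognize a form on $J$ but that only $|N_G(J)|$ will be used. Without the form, "perpendicular" is undefined, so both your commuting criterion and, more importantly, your key existence claim — that the $Q^{x_i}$-orbit of the line $[Y,t]^{x_i}$ contains a line perpendicular to $[Y,t]$ — have no support. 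Even if you replace perpendicularity by the form-free criterion (each inverted line lies inside the other involution's fixed hyperplane), that existence statement is precisely the hard content of the lemma, and your proposal asserts it rather than proves it; it is also not clear that conjugating by $Q^{x_i}$ alone (rather than by the larger group $C_{S^{x_i}}(A^{x_i})$ the paper uses) reaches such a configuration. Your final step is also shakier than you suggest: putting $t$ and $t^{x_iq}$ in a common $2$-subgroup does not by itself give commutation (they could generate $\dih(8)$), so it must be combined with the mod-$J$ commutation — which is fine, but then the appeal to $s$ and $O_2(C_H(Z))\cong Q_8$ is left entirely vague, and in fact once you have commutation modulo $J$ you do not need it at all: $(tt_i)^2$ lies in $J$ and is inverted by both involutions, hence lies in $[J,t]\cap[J,t_i]=[Y,t]\cap[Y,t]^{x_iq}$, which is trivial when the two lines are distinct.

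The paper's mechanism for the existence step is completely different and avoids geometry. Since $t$ does not normalize $Y$, one has $Z^{x_i}\nleq R=C_J(t)$, so $A^{x_i}\cap R=\langle a'\rangle$ with $a'\in 3\mathcal{A}$; this $a'$ is centralized by \emph{both} $t$ (as $a'\in R$) and $t^{x_i}$ (as $a'\in A^{x_i}$ and $[A,t]=1$). The Prince identification (Lemmas \ref{O8-PSp43}, \ref{O8-AutPSp43 and Centralizer of d}) gives $C_G(a')\cong 3\times\Omega_6^-(2)$ or $3\times\mathrm{SO}_6^-(2)$, whence $C_{N_G(J)}(a')\sim 3^4.\sym(4)$ or $3^4.(\sym(4)\times 2)$, a group in which a Sylow $3$-subgroup (namely $C_{S^{x_i}}(A^{x_i})$) permutes the Sylow $2$-subgroups transitively. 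Sylow's theorem then supplies $g$ with $\langle t^{x_ig},t\rangle$ a $2$-group, and $\dih(8)$ is excluded because $|C_J(t)|=3^3$ forces both $Jt$ and $Jt_i$ to map to transpositions in the $\sym(4)$-quotient, and two transpositions cannot generate $\dih(8)$. If you want to salvage your outline, the common centralized element $a'$ and the structure of $C_{N_G(J)}(a')$ are the missing ingredients that make the existence step provable.
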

\begin{proof}
Recall that $t$ does not normalize $Y$ and so for each $i \in \{1,2,3\}$, $Z^{x_i} \nleq R=C_J(t)$.
Therefore, $|A^{x_i}\cap R|=3$ and so there exists some $1 \neq a' \in A^{x_i} \cap R$. Since $a'
\in  A^{x_i} \bs Z^{x_i}$, $a' \in 3 \mathcal{A}$. Thus by Lemma \ref{O8-PSp43} and Lemma
\ref{O8-AutPSp43 and Centralizer of d}, $C_G(a')\cong 3\times \Omega^-_6(2)$ or $3 \times
\mathrm{SO}^-_6(2)$. By Lemma \ref{prelims-PSp43 normalizer J}, we have further that
$C_{N_G(J)}(a')\sim 3^4.\sym(4)$ or $3^4 .(\sym(4) \times 2)$ respectively. In either case, we
observe that a Sylow $3$-subgroup of $C_{N_G(J)}(a')$ is transitive on the set of Sylow
$2$-subgroups of $C_{N_G(J)}(a')$. It follows from Lemma \ref{O8-first results} that $|C_S(A)|=3^5$
and of course $C_S(A)$ normalizes $J$. Therefore, $C_{S^{x_i}}(A^{x_i})$ normalizes $J$ and
$C_{S^{x_i}}(A^{x_i})\in \syl_3(C_{N_G(J)}(a'))$. Now $t^{x_i} \in O^2(C_G(Z^{x_i}))$ and so
$t^{x_i}$ centralizes $a'$ and normalizes $J$ which means that $t^{x_i}\in C_{N_G(J)}(a')$. Thus,
there exists $g \in C_{S^{x_i}}(A^{x_i})$ such that $\<t^{x_ig},t\>$ is a $2$-group. Set
$t_i:=t^{x_ig}$ then $t_i \in O^2(C_G(Z^{x_i}))$. If $\<t_i,t\>$ is non-abelian then it must be
dihedral of order eight. In particular, $J\<t,t_i\>/J$ is contained in a subgroup of
$C_{N_G(J)}(a')/J$ which is isomorphic to $\sym(4)$. However, since $|C_J(t)|=3^3$, $t \notin
O^2(N_G(J))$. Therefore $t,t_i \notin O^2(C_{N_G(J)}(a'))$ so the image of both $Jt$ and $Jt_i$ in
$\sym(4)$ is a transposition. This contradicts that $\<t,t_i\>\cong \dih(8)$ and so $[t,t_i]=1$.
Finally, $x_i \in N_G(Y) \leq N_G(J)$ and $g \in S^{x_i}\leq N_G(J)$. Therefore $t$ is conjugate to
$t_i$ in $N_G(J)$.
\end{proof}
We set $t_0:=t$ and continue notation from Lemma \ref{O8-defining ti's} by fixing an involution
$t_i$ in  $O^2(C_G(Z^{x_i})) \cap H$ for each $i \in \{0,1,2,3\}$.

\begin{lemma}
For $\{i,j\} \subset\{1,2,3\}$, $[J,t_i] \leq R$ and $[J,t_i] \neq [J,t_j]$. Moreover, either
$[t_i,t_j]=1$ or the following hold.
\begin{enumerate}[$(i)$]
\item $\<t_i,t_j\> \cong \dih(8)$; and
\item $\<t_i,t_j\>$ acts transitively on subgroups of $[J,t_i][J,t_j]$ of order three.
\end{enumerate}
\end{lemma}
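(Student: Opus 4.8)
The plan is to run the whole argument through the faithful action of the relevant involutions on the elementary abelian group $J$, exploiting $C_G(J)=J$ (Lemma \ref{O8-things about YU=Z2}), so that each $t_i$ and the group $\langle t_i,t_j\rangle$ embed into $\GL(J)\cong\GL_4(3)$. First I would pin down $[J,t_i]$. By the proof of Lemma \ref{O8-defining ti's} we may write $t_i=t^{g_i}$ with $g_i=x_ig$, where $x_i\in L\le N_G(Y)$ and $g\in C_{S^{x_i}}(A^{x_i})\le C_G(Z^{x_i})$; in particular $g_i\in N_G(J)$ and $Z^{g_i}=Z^{x_i}$. Since $g_i$ normalises $J$ and $[J,t]=[Y,t]$ has order three and lies in $3\mathcal{D}$ (Lemma \ref{O8-first results}$(iii)$, together with the definition of $3\mathcal{D}$), it follows that $[J,t_i]=[J,t]^{g_i}$ has order three and lies in $3\mathcal{D}$.

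Next I would prove $[J,t_i]\le R$. Recall $[t,t_i]=1$ (Lemma \ref{O8-defining ti's}), so $t$ normalises the order three group $[J,t_i]$ and hence either centralises or inverts it. If $t$ inverts $[J,t_i]$ then $[J,t_i]\le[J,t]$, forcing $[J,t_i]=[J,t]$ as both have order three; but then $t$ and $t_i$ induce the same map on $J=C_J(t)\oplus[J,t]$, so $tt_i\in C_G(J)=J$, a $2$-element in a $3$-group, whence $t=t_i$. As $t_i$ centralises $Z^{x_i}$ and $t$ centralises $Z$, this would give $\langle Z,Z^{x_i}\rangle=Y\le C_G(t)$, contradicting that $t$ does not normalise $Y$ (Lemma \ref{central involutions don't fix Y}, applicable via Lemma \ref{O8-first results}$(iii)$). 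Hence $t$ centralises $[J,t_i]$, that is $[J,t_i]\le C_J(t)=R$.

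For $[J,t_i]\ne[J,t_j]$ I would first dispose of the commuting case: if $[t_i,t_j]=1$, the identical argument with the pair $t_i,t_j$ in place of $t,t_i$ shows that $[J,t_i]=[J,t_j]$ would force $t_i=t_j$, hence $Y\le C_G(t_i)$, again contradicting Lemma \ref{central involutions don't fix Y}. So assume $[t_i,t_j]\ne1$. Both $t_i,t_j$ commute with $t=t_0$, hence both normalise $R=C_J(t)$ and, by the previous paragraph, act on $R\cong 3^3$ as reflections with $[R,t_i]=[J,t_i]$ and $[R,t_j]=[J,t_j]$ of order three and with two dimensional fixed spaces. If $C_R(t_i)=C_R(t_j)$, then $t_it_j$ centralises both $R$ and $J/R$ (each $t_i,t_j$ acts trivially on $J/R$ because $[J,t_i],[J,t_j]\le R$), so $t_it_j$ is unipotent; being a $2$-element it is trivial, giving $t_i=t_j$ and the usual contradiction. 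Otherwise $\ell:=C_R(t_i)\cap C_R(t_j)$ is a line fixed by $\langle t_i,t_j\rangle$, and a short computation shows that $[J,t_i]=[J,t_j]$ would make $t_it_j$ fix a two dimensional subspace of $R$ pointwise, hence (having determinant $1$ on $R$) be trivial there, once more forcing $t_i=t_j$. Thus $[J,t_i]\ne[J,t_j]$ in all cases.

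Finally, still in the non-commuting case, I would identify $\langle t_i,t_j\rangle$ and its action on $M:=[J,t_i][J,t_j]$. The product $t_it_j$ fixes the line $\ell$, acts trivially on $J/R$, and preserves the two dimensional space $M$, and the goal is to show that it has order exactly $4$, so that $\langle t_i,t_j\rangle\cong\dih(8)$, and that this $\dih(8)$ permutes the four order three subgroups of $M$ transitively. The hard part will be precisely this last step: securing that $t_it_j$ has order $4$ (rather than some other dihedral type) and that the induced action on the four subgroups of $M$ is genuinely transitive. This forces me to control the fine configuration of the two reflecting lines inside $J$ relative to the orthogonal-type geometry recorded by the $N_G(J)$-classes $3\mathcal{A}$, $3\mathcal{C}$, $3\mathcal{D}$; I expect to extract the needed information either from the explicit $N_G(J)$-invariant quadratic form on $J$ (in which the conjugates of $z$ are the singular points), or by locating $\langle t_i,t_j\rangle$ inside the known centraliser $C_G(a)\cong 3\times\mathrm{SO}^-_6(2)$ of Lemmas \ref{O8-PSp43} and \ref{O8-AutPSp43 and Centralizer of d} and reading off the dihedral structure and orbit there.
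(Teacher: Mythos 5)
Your treatment of $[J,t_i]\leq R$ and of the \emph{commuting} case of $[J,t_i]\neq[J,t_j]$ is correct, and it takes a genuinely different route from the paper's: you exploit faithfulness ($C_G(J)=J$, Lemma \ref{O8-things about YU=Z2}) together with the commutation $[t,t_i]=1$ (which is what makes ``$t$ and $t_i$ induce the same map'' legitimate, since then $[C_J(t),t_i]\leq C_J(t)\cap[J,t_i]=1$) to force $tt_i\in C_G(J)$, whereas the paper notes $[J,t_i]\leq \<Q^{x_i},t_i\>\cap J\leq Q^{x_i}\cap J$, so that $[J,t_i]=[J,t_j]$ would place this subgroup inside $Q\cap Q^x\cap J=Y$ (Lemma \ref{facts about W}), contradicting $Y^\#\subset 3\mathcal{C}$, $[J,t_i]^\#\subset 3\mathcal{D}$ and $3\mathcal{C}\neq 3\mathcal{D}$. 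The paper's argument is uniform in $i,j$; yours is not, and your non-commuting case is where the proposal breaks. When $[t_i,t_j]\neq 1$ the element $t_it_j$ is \emph{not} a $2$-element, so both of your sub-arguments collapse at their final step: from the fixed subspaces you exhibit, you may only conclude that $t_it_j$ acts \emph{unipotently} on $R$ (and on $J$), i.e.\ that its image in $N_G(J)/J$ is a $3$-element; likewise a determinant-one element of $\GL(R)$ fixing a plane pointwise can perfectly well be a nontrivial transvection. Nothing you have written excludes the configuration in which $\<t_i,t_j\>$ maps onto a dihedral group of order $2\cdot 3^k$, and that is exactly the hard case.

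That case is eliminated in the paper by a fusion argument your proposal never supplies: if $3\mid|\<t_i,t_j\>|$, put $g=(t_it_j)^n$ of order three; then $\<R,g\>\in\syl_3(C_G(t))$, so $C_R(g)=\mathcal{Z}(\<R,g\>)$ is an $N_G(J)$-conjugate $Z^h$ of $Z$, and $1\neq C_R(t_i)\cap C_R(t_j)\leq C_R(g)$ makes $\<t,t_i\>$ a fours subgroup of $C_G(Z^h)$; this forces $\<t,t_i\>$ to be $N_G(J)$-conjugate to $\<t,w\>$, which is impossible because $t\not\sim w$ (Lemma \ref{O8-t not conj tw}) while $|C_J(tw)|=3^2<3^3=|C_J(t)|$ (Lemma \ref{o8-Order of N_G(S)}). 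Once $3\nmid|\<t_i,t_j\>|$, the group is a $2$-group (since $N_G(J)$ is a $\{2,3\}$-group, Lemma \ref{O8-normalizer of J}), its central involution acts nontrivially on $V=[J,t_i][J,t_j]$ (else it would centralize $\<V,C_J(t_i)\cap C_J(t_j)\>=J=C_G(J)$), so $\<t_i,t_j\>$ embeds in $\aut(V)\cong\GL_2(3)$ and must be $\dih(8)$, and transitivity on the four subgroups of $V$ of order three is then automatic. Your closing paragraph explicitly defers this identification to an unspecified quadratic-form or $\mathrm{SO}^-_6(2)$ argument, so conclusions $(i)$ and $(ii)$, as well as $[J,t_i]\neq[J,t_j]$ for non-commuting pairs, remain unproved in your proposal.
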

\begin{proof}
By Lemma \ref{O8-first results} $(iii)$,  $[J,t_0]=[Y,t_0]\leq Q$ has order three and
$[Y,t_0]^\#\subset 3\mathcal{D}$. Let $\{i,j\} \subset \{0,1,2,3\}$  and suppose that
$[J,t_i]=[J,t_j]$. Since $Q^{x_i}t_i$ is central in $C_G(Z)^{x_i}/Q^{x_i}$, we have that $[J,t_i] \leq
\<Q^{x_i},t_i\> \cap J \leq Q^{x_i} \cap J$. Therefore $[J,t_i]=[J,t_j] \leq Q^{x_i} \cap Q^{x_j}
\cap J=Q \cap Q^x\cap J=Y$ (by Lemma \ref{facts about W}). However $Y^\# \subseteq 3 \mathcal{C}$
whereas $[J,t_i]^\#\subseteq 3\mathcal{D}$ and by Lemma \ref{o8-A not eq C not eq D}, $3\mathcal{C}
\neq 3\mathcal{D}$. Hence $[J,t_i]\neq [J,t_j]$. Now for $i \in \{1,2,3\}$, $[t_i,t_0]=1$, and so
$t_0$ normalizes $[J,t_i]$. If $t_0$ inverts $[J,t_i]$ then $[J,t_0]=[J,t_i]$ which we have just
seen is not the case. Therefore $[J,t_i,t_0]=1$ and so $[J,t_i] \leq R$.

Now let $\{i,j\} \subset \{1,2,3\}$ and suppose $[t_i,t_j]\neq 1$. Then $D:=\<t_i,t_j\>$ is a
non-abelian dihedral group and  $D \leq H \cap N_G(J)$. Set $V:=[J,t_i][J,t_j]$ then $|V|=9$ and
$V$ is normalized by $D$ since $[V,t_i] \leq [J,t_i] \leq V$. Suppose $J \cap D \neq 1$. Then $D
\cap J$ must be inverted by $t_i$ and $t_j$ which implies that $[J,t_i] \cap [J,t_j]\neq 1$ which
is a contradiction. So suppose that $3 \mid |D|$. Then for some $n \in \mathbb{Z}$, $g:=(t_it_j)^n$
has order three and $\<R,g\>\in \syl_3(H)$. Therefore $C_R(g)=\mathcal{Z}(\<R,g\>)$ is conjugate to
$Z$ by Lemma \ref{O8-N_H(Z), P,R are sylow}.  Now $1 \neq C_R(t_i) \cap C_R(t_j)\leq C_R(g)$  and
so we must have that $C_R(t_i) \cap C_R(t_j)=C_R(g)=Z^h$ for some $h \in N_H(J)$. In particular, $2
\times 2 \cong \<t,t_i\> \leq C_G(Z^h)$. Recall that $\<t,w\>$ is a fours subgroup of $C_G(Z)$ and
so $\<t,t_i\>$ must be conjugate to $\<t,w\>$ in $N_G(J)$. We also have that $t$ is conjugate to
$t_i$ in $N_G(J)$ (where $J \leq C_G(Z_\ast)$). Since $t$ is not conjugate to $w$ (by Lemma
\ref{O8-t not conj tw}), we must have that $t$ is conjugate to $tw$ by an element of $N_G(J)$.
However this is a contradiction since by Lemma \ref{o8-Order of N_G(S)} $(ii)$,
$|C_J(tw)|=3^2<3^3=|C_J(t)|$. Thus $3\nmid |D|$.

Since $D \leq N_G(J)$ and $N_G(J)$ is a $\{2,3\}$-group by Lemma \ref{O8-normalizer of J},  we have
that $D$ is a $2$-group. In particular, we may apply coprime action to see that $V=C_V(D) \times
[V,D]$.  Suppose that $C_V(t_i)=C_V(t_j)$. Then $C_V(D) \neq 1$ which implies that $[V,t_i],
[V,t_j] \leq [V,D]$. However this forces $[J,t_i]=[J,t_j]$ which is a contradiction. Therefore,
$C_V(t_i)\neq C_V(t_j)$ and so $V=C_V(t_i)C_V(t_j)$.

Now let $r \in \mathcal{Z}(D)$ be an involution. Then $r \in \<t_it_j\>$. Suppose $r$ centralizes
$V$.  Then $r$ centralizes $C_J(t_i)\cap C_J(t_j)$ which has order at least $3^2$ and has trivial
intersection with $V$ (as $C_V(t_i)\neq C_V(t_j)$). So $r$ commutes with $J$. However by Lemma
\ref{O8-things about YU=Z2}, $C_G(J)=J$.  Thus $r$ acts non-trivially on $V$ which implies that $D$
is isomorphic to a subgroup of $\aut(V)\cong \GL_2(3)$. Therefore we must have that $D\cong
\dih(8)$ and so $D$ necessarily acts transitively on the subgroups of $V$ of order three.
\end{proof}

\begin{lemma}\label{o8-finally t's commute}
For $\{i,j\} \subset\{0,1,2,3\}$, $[t_i,t_j]=1$. In particular, $[R,t_i,t_j]=1$.
\end{lemma}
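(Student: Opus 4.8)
The plan is to reduce immediately to the only open case and then to rule out a dihedral configuration by a geometric analysis of $C_J(t)$. Since Lemma~\ref{O8-defining ti's} already gives $[t_i,t_0]=1$ for $i\in\{1,2,3\}$, and the case $i=j$ is trivial, it suffices to treat $\{i,j\}\subset\{1,2,3\}$. So I would argue by contradiction: suppose $[t_i,t_j]\neq 1$. Then the preceding lemma applies, giving $D:=\langle t_i,t_j\rangle\cong\dih(8)$ acting transitively on the four subgroups of order three of $V:=[J,t_i][J,t_j]$. Since $[J,t_i]^\#$ and $[J,t_j]^\#$ both lie in $3\mathcal{D}$, and $3\mathcal{D}$ is a union of $G$-conjugacy classes, transitivity of $D$ forces all four of these subgroups to be $G$-conjugate and hence contained in $3\mathcal{D}$. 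Thus $V^\#\subset 3\mathcal{D}$; in particular $V$ contains no conjugate of $Z$, i.e.\ $V\cap 3\mathcal{C}=\emptyset$.

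The heart of the argument is then an analysis of $R:=C_J(t)$, which contains $V$ (both $[J,t_i],[J,t_j]\le R$ by the previous lemma). Here I would finally exploit the quadratic form on $J$ that the paper has so far deliberately avoided: equip $J\cong 3^4$ with the $N_G(J)$-invariant quadratic form whose singular points are exactly the conjugates of $Z$. Using $|J\cap 3\mathcal{C}|=32$ (so $16$ singular points, the count for a $+$-type form), the value $|N_G(J)|=3^62^6$ or $3^62^7$ from Lemma~\ref{O8-normalizer of J}, and the faithful action of $N_G(J)/J$ on $J$ (recall $C_G(J)=J$ by Lemma~\ref{O8-things about YU=Z2}), I would identify $N_G(J)/J$ with $\mathrm{SO}_4^+(3)$ or $\mathrm{GO}_4^+(3)$. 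Since $t\in N_G(J)$ centralizes $R$ and inverts the one-dimensional $[J,t]=[Y,t]\subset 3\mathcal{D}$, it acts on $J$ as the reflection in the non-singular point $[J,t]$; hence $R=[J,t]^{\perp}$ is a non-degenerate three-dimensional orthogonal space, containing exactly four singular points (the four conjugates of $Z$ in $R$). Crucially, the two non-singular $N_G(J)$-orbits on points remain distinct (they are distinguished by the square class of the norm, matching $3\mathcal{A}\neq 3\mathcal{D}$ from Lemma~\ref{O8-PSp43}).

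With this in hand the contradiction is clean: $V\le R$ is a two-dimensional subspace meeting $3\mathcal{C}$ trivially, so $V$ is an anisotropic (minus-type) plane of $R$, and over $\GF(3)$ such a plane carries both square classes of the norm, so its four points split evenly between $3\mathcal{A}$ and $3\mathcal{D}$. This contradicts $V^\#\subset 3\mathcal{D}$, and therefore $[t_i,t_j]=1$ for every pair. The concluding assertion $[R,t_i,t_j]=1$ follows at once: $t_i$ normalizes $R=C_J(t)$ (as $[t_i,t_0]=1$), so $[R,t_i]\le[J,t_i]$, which has order three; and since $t_j$ now commutes with $t_i$ it normalizes $[J,t_i]$, but it cannot invert it (else $[J,t_i]=[J,t_j]$, contrary to the previous lemma), so $t_j$ centralizes $[J,t_i]$ and hence $[R,t_i,t_j]=1$. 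The main obstacle I anticipate is the middle paragraph: carefully setting up the form and pinning down $N_G(J)/J$ as $\mathrm{SO}_4^+(3)$ or $\mathrm{GO}_4^+(3)$ (including verifying that $3\mathcal{A}$ and $3\mathcal{D}$ are not fused and correspond to the two non-singular point-orbits), since everything downstream rests on reading off the geometry of $R$ from this identification.
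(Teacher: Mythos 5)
Your frame matches the paper's proof exactly at both ends: the reduction to $\{i,j\}\subset\{1,2,3\}$, the appeal to the preceding lemma for $D=\langle t_i,t_j\rangle\cong\dih(8)$ acting transitively on the order-three subgroups of $V=[J,t_i][J,t_j]$, the deduction $V^\#\subset 3\mathcal{D}$, and your closing derivation of $[R,t_i,t_j]=1$ are all correct and essentially identical to what the paper does. The problem is the middle step, which you yourself flag as the main obstacle: you never establish that $J$ carries an $N_G(J)$-invariant quadratic form whose singular vectors are exactly $J\cap 3\mathcal{C}$, nor that $N_G(J)/J$ is $\mathrm{SO}_4^+(3)$ or $\mathrm{GO}_4^+(3)$ acting naturally on $J$. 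This is not a bookkeeping omission. A quadratic form cannot be defined by prescribing its zero set; one must prove that the $16$-point orbit $J\cap 3\mathcal{C}$ actually is a quadric, that the form is invariant under all of $N_G(J)$ (the natural-module structure of $Y$ and $J/Y$ gives invariance data only for $L$), and that a group of order $2^63^2$ or $2^73^2$ acting faithfully with orbits $32,24,24$ must act as an isometry group of such a form. The paper is explicit, in the discussion before Lemma \ref{O8-normalizer of J}, that this identification is possible but is being deliberately avoided because only $|N_G(J)|$ is needed; your proposal makes the avoided identification load-bearing and leaves it unproved, so as written the proof has a genuine gap exactly where the contradiction is supposed to come from.

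For contrast, the paper closes the argument with no geometry, using only facts already proved. Since $t_it_j$ has order four and acts coprimely on $R$, we get $R=[R,t_it_j]\times C_R(t_it_j)$, and transitivity of $D$ on the points of $V$ forces $V=[R,t_it_j]$ with $|C_R(t_it_j)|=3$. If $t_it_j$ normalized $Z$, then $Z\nleq V$ (as $V^\#\subset 3\mathcal{D}$) would force $Z=C_R(t_it_j)\geq C_R(t_i)\cap C_R(t_j)\neq 1$, so $t_i$ would centralize $Z$ and hence $Y=ZZ^{x_i}$, contradicting Lemma \ref{O8 lemma w and tw}$(i)$, which says $C_G(Y)=W$ is a $3$-group. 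Hence $Z$, $Z^{t_i}$, $Z^{t_j}$, $Z^{t_it_j}$ are four distinct conjugates of $Z$ inside $R$, giving $|R\cap 3\mathcal{C}|\geq 8$; the corresponding four conjugates of $A$ in $R$ pairwise meet in distinct order-three subgroups, giving $|R\cap 3\mathcal{A}|\geq 12$; and $V$ contributes $|R\cap 3\mathcal{D}|\geq 8$. Then $8+12+8>26=|R^\#|$ is the contradiction. Your geometric picture (four singular points in the nondegenerate $3$-space $R=[J,t]^\perp$, anisotropic planes over $\GF(3)$ carrying both norm classes) is a faithful description of the target groups, but as a proof it stands or falls with the unestablished identification; either supply that construction or fall back on the direct count, which is shorter and self-contained.
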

\begin{proof}
Let $\{i,j\} \subset\{0,1,2,3\}$ and suppose that  $[t_i,t_j]\neq 1$. Then  $\{i,j\}
\subset\{1,2,3\}$ and $D:=\<t_i,t_j\>\cong \dih(8)$ acts transitively on the subgroups of
$V:=[J,t_i][J,t_j]$ of order three. Since $t_i$ is conjugate to $t$ and $[J,t_i]^\# \subset
3\mathcal{D}$, we have  $V^\#\subset 3\mathcal{D}$. Now $t_it_j$ has order four and by coprime
action, $R=[R,t_it_j]\times C_R(t_it_j)$. Since $D$ acts transitively on the subgroups of $V^\#$ of
order three, $V \leq [R,t_it_j]$ and since $1\neq C_R(t_i) \cap C_R(t_j) \leq C_R(t_it_j)$, it
follows that $|C_R(t_it_j)|=3$  and $|[R,t_it_j]|=9$ so $V= [R,t_it_j]$. Now suppose $t_it_j$
normalizes $Z$. Then either $Z = C_R(t_it_j)$ or $Z \leq [R,t_it_j]=V$. Since $V^\# \subset
3\mathcal{D}$, we must have that $Z=C_R(t_it_j)$. However $1 \neq C_R(t_i) \cap C_R(t_j)\leq
C_R(t_it_j)$ and so $t_i$ centralizes $Z$ which implies that $t_i$ centralizes $ZZ^{x_i}=Y$.
However, this contradicts Lemma \ref{O8 lemma w and tw} $(i)$ which says that $C_G(Y)$ is a
$3$-group.  Thus $t_it_j$ does not normalize $Z$ and so $R$ contains at least four conjugates of
$Z$, namely $Z,Z^{t_i},Z^{t_j},Z^{t_it_j}$ which implies that $|R\cap 3 \mathcal{C}|\geq 8$. Moreover,
since $A \leq R$, $A^{t_i}\leq R$ and since $Z^{t_i} \neq Z^{t_j}$, $A^{t_i} \neq A^{t_j}$. However
$1 \neq A^{t_i} \cap A^{t_j}$ therefore since $|A \cap 3 \mathcal{A}|=6$, we see that $|R\cap 3
\mathcal{A}|\geq 6*4-(2*6)=12$. However we now have a contradiction since $V^\# \subset 3
\mathcal{D}$, which implies that $|R\cap 3\mathcal{D}|\geq 8$ and $12+8+8>26=|R^\#|$. We can
therefore conclude that $[t_i,t_j]=1$.

In particular, we have that $[R,t_i]$ is normalized by $t_j$ and so $[R,t_i,t_j] \leq [R,t_i] \cap [R,t_j]=1$.
\end{proof}

\begin{lemma}\label{O8-U^t_i's}
\begin{enumerate}[$(i)$]
 \item For $\{i,j\} \subset\{0,1,2,3\}$,
$Z^{t_i} \neq Z^{t_j}$.

 \item For $\{i,j\}\subset\{0,1,2,3\}$ there exists $a_{ij}\in 3 \mathcal{A}$
 such that $A^{t_i} \cap A^{t_j}=\<a_{ij}\>$ and for $\{k,l\}\subset\{0,1,2,3\}$
 $\<a_{ij}\>=\<a_{jk}\>$ if and only if $\{i,j\}=\{k,l\}$.
 \item $|R\cap 3\mathcal{D}|=6$, $|R\cap 3\mathcal{C}|=8$ and  $|R\cap 3\mathcal{A}|=12$.

 \item $P$ contains at least four conjugacy classes of elements of order three.
\end{enumerate}
\end{lemma}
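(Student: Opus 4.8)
The plan is to place all the relevant subgroups inside the elementary abelian group $R=C_J(t)\cong 3^3$ (so $|R^\#|=26$ and $R^\#\subseteq 3\mathcal{A}\cup 3\mathcal{C}\cup 3\mathcal{D}$ by Lemma \ref{O8-normalizer of J}), read off their mutual positions among the $13$ lines of $R$, and finish by a counting squeeze. First I would record the ambient data: $A=N_2\cap Z_2\leq R$ with $Z\leq A$, $Z^\#\subset 3\mathcal{C}$ and $A\setminus Z\subset 3\mathcal{A}$; since each $t_i$ commutes with $t$ and normalises $J$ it normalises $R$, so $A^{t_i},Z^{t_i}\leq R$ with $A^{t_0}=A$ and $Z^{t_0}=Z$ ($t$ centralising $A$). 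Thus each $A^{t_i}$ has order $9$ with a single $3\mathcal{C}$-line $Z^{t_i}$ and three lines in $3\mathcal{A}$. Finally, because $t_i$ commutes with $t_0$ it preserves both $[J,t_0]=[Y,t]$ and $R$, and as $[J,t_i]\leq R$ (preceding lemma) one gets $[[Y,t],t_i]=1$ and $D_i:=[J,t_i]=[R,t_i]$ of order $3$ in $3\mathcal{D}$, with $D_1,D_2,D_3$ pairwise distinct.

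For part (i) I would first treat $Z^{t_j}=Z$ with $j\geq 1$. Then $t_j\in N_G(Z)$ fixes the line $Z^{x_j}$ of $Y$, hence normalises $Y=ZZ^{x_j}$ and acts non-trivially (as $C_G(Y)=W$ is a $3$-group, Lemma \ref{O8 lemma w and tw}). When $C_G(Z)/Q\cong\SL_2(3)$ the group $N_G(Y)/C_G(Y)\cong\SL_2(3)$ has only the involution $-1$, which inverts $Z^{x_j}$ — impossible; when $C_G(Z)/Q\cong\SL_2(3)\times 2$ the image of $t_j$ in $N_G(Y)/C_G(Y)\cong\GL_2(3)$ is a reflection fixing $Z^{x_j}$, and it must then invert $Z$, forcing $Z=[J,t_j]\in 3\mathcal{D}$ against $3\mathcal{C}\neq 3\mathcal{D}$ (Lemma \ref{o8-A not eq C not eq D}). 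For $1\leq i<j$ with $Z^{t_i}=Z^{t_j}\neq Z$, I note that $t_i$ acts on $R$ as a reflection with $[R,t_i]=D_i$, so $Z^{t_i}\leq\langle Z,D_i\rangle$ and likewise $Z^{t_j}\leq\langle Z,D_j\rangle$. If these planes are distinct they meet in $Z$, giving $Z^{t_i}=Z$, already excluded; otherwise $\langle Z,D_i\rangle=\langle Z,D_j\rangle$ has exactly the four lines $Z,\,Z^{t_i},\,D_i,\,D_j$, and an eigenvalue computation shows that $t_it_j$ inverts this whole plane, in particular inverting $Z$. So part (i) reduces to excluding an involution $t_it_j$ that commutes with $t$ and inverts $Z$.

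Granting (i), parts (ii) and (iii) come together by an inclusion–exclusion count. Since $Z^{t_i}$ is the unique $3\mathcal{C}$-line of $A^{t_i}$, distinctness of the $Z^{t_i}$ gives $A^{t_i}\neq A^{t_j}$, so each $A^{t_i}\cap A^{t_j}$ is a single line $\langle a_{ij}\rangle$; it cannot be $Z^{t_i}$ (else $Z^{t_i}=Z^{t_j}$), whence $a_{ij}\in 3\mathcal{A}$. The four distinct $Z^{t_i}$ give $|R\cap 3\mathcal{C}|\geq 8$ and the three distinct $D_i$ give $|R\cap 3\mathcal{D}|\geq 6$, so $|R\cap 3\mathcal{A}|\leq 26-8-6=12$; conversely Bonferroni applied to the four sets $A^{t_i}\cap 3\mathcal{A}$ (each of size $6$, with pairwise intersections of size at most $2$ since $A^{t_i}\cap A^{t_j}$ has order $3$) gives $|R\cap 3\mathcal{A}|\geq 4\cdot 6-\binom{4}{2}\cdot 2=12$. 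Hence all three counts are exact, which is (iii); and this equality forces the truncated inclusion–exclusion to be sharp, so no element of $3\mathcal{A}$ lies in three of the $A^{t_i}$, i.e. the six lines $\langle a_{ij}\rangle$ are pairwise distinct, completing (ii). Part (iv) is then immediate: $P=C_S(t)$ contains $R$, so meets $3\mathcal{A},3\mathcal{C},3\mathcal{D}$ by (iii), and it contains $b\in N_2\setminus A$ with $b\in 3\mathcal{B}$; as $3\mathcal{B}\notin\{3\mathcal{A},3\mathcal{C},3\mathcal{D}\}$ (Lemma \ref{O8- centralizer of e and v}) and these latter three are mutually distinct (Lemmas \ref{o8-A not eq C not eq D} and \ref{O8-PSp43}), $P$ meets at least four conjugacy classes of elements of order three.

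The whole proof hinges on part (i), and concretely on the final step of ruling out an involution $t_it_j$ that centralises $t$ yet inverts $Z$; this is the step I expect to be the main obstacle. I would dispose of it by an involution–fusion argument inside $N_G(J)$, whose order is known (Lemma \ref{O8-normalizer of J}): such an element is pinned down by its fixed space on $J$ and can be compared with $s,t,tw,w$, or, equivalently, by the observation that it would make $\langle Z,D_i\rangle$ a plane carrying two $3\mathcal{C}$-lines and two $3\mathcal{D}$-lines, a configuration incompatible with the behaviour of the conjugates of $Z$. Everything else is bookkeeping among the subspaces of $R$ together with the single global identity $8+6+12=26$.
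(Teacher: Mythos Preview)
Your argument for (ii)--(iv) is correct and essentially matches the paper's (the paper counts $3\mathcal{A}$-elements in $R$ by successively adding $A^{t_0},A^{t_1},A^{t_2},A^{t_3}$; you phrase the same count as a Bonferroni squeeze, and equality indeed kills all triple intersections). The first half of (i), that $Z^{t_i}\neq Z$ for $i\geq 1$, is also fine, though the paper does it in one line: $t_i$ is conjugate to $t$ by an element of $N_G(Y)$ (since $x_i\in L$ and $g\in S^{x_i}\leq L$), hence $t_i$ does not normalise $Y=ZZ^{x_i}$; as $t_i$ centralises $Z^{x_i}$, it cannot normalise $Z$.

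The gap is your ``main obstacle''. You reduce the remaining case of (i) to excluding your Case~2, namely $\langle Z,D_i\rangle=\langle Z,D_j\rangle$, and then propose vague involution-fusion or line-configuration arguments. But Case~2 dissolves in one line once you bring in the third index $k$ with $\{i,j,k\}=\{1,2,3\}$: by Lemma~\ref{o8-finally t's commute} we have $[D_i,t_k]=[D_j,t_k]=1$, so $D_iD_j\leq C_R(t_k)$, and since $|C_R(t_k)|=9$ this is an equality. Now $\langle Z,D_i\rangle=\langle Z,D_j\rangle$ forces $Z\leq\langle D_i,D_j\rangle=D_iD_j=C_R(t_k)$, i.e.\ $Z^{t_k}=Z$, which you have already excluded. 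So Case~2 is vacuous and your Case~1 argument finishes (i).

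This use of $t_k$ is precisely the paper's device: rather than case-split, the paper computes directly that $[R,t_it_j]=D_iD_j=C_R(t_k)$ and $C_R(t_it_j)=D_k$, then observes $Z\nleq C_R(t_k)$ (since $Z^{t_k}\neq Z$, so $t_it_j$ does not invert $Z$) and $Z\neq D_k$ (since $3\mathcal{C}\neq 3\mathcal{D}$, so $t_it_j$ does not centralise $Z$), whence $Z^{t_it_j}\neq Z$. Once you plug in this step, your proof is complete and equivalent to the paper's.
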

\begin{proof}
Recall that $t\in O^2(C_G(Z))$ does not normalize $Y$ and so $t_i\in O^2(C_G(Z^{x_i}))$ does not normalize
$Y^{x_i}=Y=Z{Z^{x_i}}$. However $t_i$ centralizes $Z^{x_i}$ and so does not normalize $Z$.
Therefore $Z^{t_i}\neq Z$ for each $i \in \{1,2,3\}$.

Let $\{i, j\} \subset \{1,2,3\}$. Since $[J,t_i]\leq R$, $[J,t_i]=[R,t_i]$. Now we have
$[R,t_it_j]=[R,t_j][R,t_i]^{t_j}=[R,t_j][R,t_i]$ as $[R,t_i,t_j]=1$. Since $[R,t_i]=[J,t_i]\neq
[J,t_j]=[R,t_j]$, $[R,t_it_j]$ has order nine. Since $t_it_j$ is an involution and coprime action
gives $R=C_R(t_it_j) \times [R,t_it_j]$, we must have $|C_R(t_it_j)|=3$. Moreover if
$\{i,j,k\}=\{1,2,3\}$ then $t_i$ and $t_j$ commute with $[R,t_k]$ so we have $C_R(t_it_j)=[R,t_k]$.
We also have that $[R,t_it_j]=[R,t_j][R,t_i]= C_R(t_k)$. Recall that $t_k$ is conjugate to $t$ by an element of $N_G(J)$ and so $[R,t_k]=[J,t_k]$ is conjugate to $[J,t]$. Therefore
$C_R(t_it_j)^\#=[R,t_k]^\# \subset 3 \mathcal{D}$. In particular, $t_it_j$ does not
centralize $Z$. Also since $t_k$ does not centralize $Z$ we have that $Z \nleq[R,t_it_j]=C_R(t_k)$
and so $t_it_j$ does not invert $Z$. Therefore $Z^{t_it_j}\neq Z$ and so $Z^{t_i} \neq Z^{t_j}$ for
every $i \neq j \in \{1,2,3\}$.

We conclude that $Z^{t_i} \neq Z^{t_j}$ for every $i \neq j\in \{0,1,2,3\}$ which proves $(i)$.
Also this gives us that $|R\cap 3\mathcal{C}|\geq 8$. Since $[R,t_i]\neq [R,t_j]$ for $\{i, j\}
\subset \{1,2,3\}$, we have $|R\cap 3\mathcal{D}|\geq 6$. Thus $|R\cap 3\mathcal{A}|\leq 12$.

Recall that $A \leq R$ and so $A^{t_i}\leq R$ for each $i\in \{0,1,2,3\}$. Now let $\{i,
j\},\{k,l\} \subset \{0,1,2,3\}$. Since $|R|=3^3$, we must have $|A^{t_i} \cap A^{t_j}|\geq 3$ and
since $Z^{t_i} \neq Z^{t_j}$, we have $A^{t_i} \cap A^{t_j}=\<a_{ij}\>$ for some $a_{ij}\in 3
\mathcal{A}$. We count conjugates of $a$ in $R$. Firstly, $A$ contains $6$ and $A^{t_1}$ contains a
further $4$. Now $A^{t_2}$ can contain only a further $2$ and so $A \cap A^{t_2}\neq A^{t_1} \cap
A^{t_2}$. The same argument gives that $A^{t_3}$ intersects each distinct conjugate at a distinct
subgroup of order three. This proves $(ii)$.

Furthermore we get that $|R\cap 3\mathcal{A}|=12$ and so $|R\cap 3\mathcal{D}|=6$ and $|R\cap
3\mathcal{C}|=8$ which proves $(iii)$.

Finally, for $(iv)$ we apply Lemmas \ref{o8-A not eq C not eq D}, \ref{O8- centralizer of e and v}
and \ref{O8-PSp43} to see that the sets $3\mathcal{A}$, $3\mathcal{B}$, $3\mathcal{C}$ and
$3\mathcal{D}$ are pairwise distinct.
\end{proof}

Set $D_i:=[R,t_i]$ and define
\begin{list}{$\bullet$}{}
 \item $\Omega_1:=\{\d_i|\d_i \in D_i^\#,i\in \{1,2,3\}\}$;
 \item $\Omega_2:=\{\d_i\d_j|\d_i \in D_i^\#, \d_j \in D_j^\#,\{i,j\}\subset\{1,2,3\}\}$;  and
 \item $\Omega_3:=\{\d_1\d_2\d_3|\d_i \in D_i^\#,1\leq i \leq 3\}.$
\end{list}

\begin{lemma}\label{O8-Omega orbits etc}
\begin{enumerate}[$(i)$]
 \item $R\<t_1,t_2,t_3\>\cong \sym(3)\times\sym(3)\times \sym(3)$.
 \item $R=D_1D_2D_3$, $|\<t_1,t_2,t_3\>|=8$ and $t_1t_2t_3$ inverts $R$.
 \item  $|\Omega_1|=6$,  $|\Omega_2|=12$ and  $|\Omega_3|=8$.
 \item $N_H(R)$ acts transitively on $\Omega_1\subset 3\mathcal{D}$, $\Omega_2\subset 3\mathcal{A}$ and
$\Omega_3\subset 3\mathcal{C}$ and irreducibly on $R$.
 \item $|N_H(R)|= 2^43^4$ if $C_G(Z)=X$ and $|N_H(R)|=2^53^4$ otherwise.
\end{enumerate}
\end{lemma}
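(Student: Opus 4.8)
The plan is to exhibit explicit elements of $N_H(R)$, identify the induced action on $R\cong\GF(3)^3$ with a subgroup of the monomial group $2\wr\sym(3)\le\GL_3(3)$, and read off all five statements from that picture. First I would record the action of the commuting involutions $t_1,t_2,t_3$. Writing $D_i=[R,t_i]=[J,t_i]$ (each of order three, pairwise distinct by Lemma \ref{O8-U^t_i's}), the last line of Lemma \ref{o8-finally t's commute} gives $[D_i,t_j]=[R,t_i,t_j]=1$ for $i\ne j$ while $t_i$ inverts $D_i$; hence $\langle D_i,t_i\rangle\cong\sym(3)$, the three copies pairwise commute, and $R\langle t_1,t_2,t_3\rangle\cong\sym(3)\times\sym(3)\times\sym(3)$, which is $(i)$. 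For $(ii)$, the computation inside the proof of Lemma \ref{O8-U^t_i's} already yields $C_R(t_it_j)=D_k$ and $[R,t_it_j]=D_iD_j$ for $\{i,j,k\}=\{1,2,3\}$, whence $R=C_R(t_it_j)\times[R,t_it_j]=D_k\times(D_iD_j)=D_1D_2D_3$ is an internal direct product; since no $t_i$ or $t_it_j$ can centralise the factor it must invert, the $t_i$ are independent, so $|\langle t_1,t_2,t_3\rangle|=8$, and $t_1t_2t_3$ inverts each $D_i$, hence all of $R$.

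Identifying $R$ with $\GF(3)^3$ along the basis $D_1,D_2,D_3$, the $t_i$ become the diagonal sign-changes, so part $(iii)$ is the elementary count of points with exactly one, two, or three non-zero coordinates, giving $|\Omega_1|=6$, $|\Omega_2|=12$, $|\Omega_3|=8$. To match classes I note $Z\le R$ lies in no $C_R(t_k)=D_iD_j$ (as $Z^{t_k}\ne Z$ by Lemma \ref{O8-U^t_i's}$(i)$), so $Z\in\Omega_3$; since $D_i^\#\subset 3\mathcal{D}$ and the three sets have cardinalities $6,12,8$ agreeing with the intersection numbers of Lemma \ref{O8-U^t_i's}$(iii)$, they must be exactly $R\cap 3\mathcal{D}$, $R\cap 3\mathcal{A}$, $R\cap 3\mathcal{C}$, giving the class assertions of $(iv)$.

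For the transitivity and irreducibility in $(iv)$ the decisive extra element is a $3$-element cycling the lines. A brief order count shows $P=C_S(t)=N_2R\le N_H(R)$ with $R\trianglelefteq P$ and $|P/R|=3$, so any $b\in N_2\setminus A$ normalises $R$ and preserves $R\cap 3\mathcal{D}=\Omega_1$, hence permutes $\{D_1,D_2,D_3\}$; having order three modulo $C_H(R)$ and being unable to fix every $D_i$ (that would force $b\in C_H(R)=\langle R,t\rangle$ by Lemma \ref{O8-N_H(Z), P,R are sylow}$(v)$, contrary to $b\in N_2\setminus A$), it acts as a $3$-cycle. Thus modulo $C_H(R)$ the group $M_0:=\langle t_1,t_2,t_3,b\rangle$ is the monomial group $(\pm1)^3\rtimes C_3$ of order $24$, and a direct check shows $M_0$ is transitive on each of $\Omega_1,\Omega_2,\Omega_3$ and irreducible on $R$; this proves $(iv)$ for all of $N_H(R)\ge M_0$.

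Part $(v)$ then reduces to pinning $|N_H(R)/C_H(R)|$, where $|C_H(R)|=2\cdot 3^3$. The upper bound is clean: $N_H(R)/C_H(R)$ preserves the six spanning points $\Omega_1$ lying on the three coordinate lines, so it embeds in the full monomial stabiliser $2\wr\sym(3)$ of order $48$; containing $M_0$, its order is $24$ or $48$, i.e.\ $|N_H(R)|\in\{2^43^4,2^53^4\}$. The two cases are decided by the presence of an odd (coordinate-transposing) element. When $C_G(Z)\ne X$ the involution $w$ lies in $N_H(R)$ (it normalises $S$, hence $J$ and $R=C_J(t)$, and $[w,t]=1$), and a short eigenspace computation for $\langle t,w\rangle$ on $J$ gives $|[R,w]|=3$ with $[R,w]=\langle a\rangle$, $a\in A\setminus Z\subset 3\mathcal{A}$ (as $w$ inverts $a$); thus $[R,w]\subset\Omega_2$, so $w$ is a transposition and $|N_H(R)|=2^53^4$. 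When $C_G(Z)=X$ I would argue by contradiction: a hypothetical odd element, after composing with members of $M_0$, produces an element of $N_H(R)$ centralising a line of $3\mathcal{A}$ while inverting the associated $3$-central line, that is, an element of $C_G(a)$ conjugating a $3$-central element to its inverse, contradicting Lemma \ref{O8-no inverting z and centralizing u}; hence the image is only $M_0$ and $|N_H(R)|=2^43^4$. The main obstacle is precisely this last case: everything else is bookkeeping inside the fixed monomial picture, but excluding the transposition in the split case forces a careful choice of the basis $D_1,D_2,D_3$ so that the centralised $3\mathcal{A}$-line and the inverted $3$-central line sit in the right classes simultaneously, exactly as required to invoke Lemma \ref{O8-no inverting z and centralizing u}.
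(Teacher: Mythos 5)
Your parts (i)--(iv) are correct and are essentially the paper's own argument: the same coprime-action splitting $R=D_1D_2D_3$, the same $3$-element $b\in P\setminus R$ forced to cycle $\{D_1,D_2,D_3\}$ because $C_H(R)=R\langle t\rangle$, and class identifications resting on Lemma \ref{O8-U^t_i's}$(iii)$. For (v) you genuinely diverge from the paper, which never analyses transpositions at all: it computes $N_H(R)\cap C_G(Z)=C_H(Z)\cap N_H(P)$, reads its order ($3^4\cdot 2$ or $3^4\cdot 2^2$) straight from Lemma \ref{O8-N_H(Z), P,R are sylow}$(i)$,$(ii)$, and multiplies by the orbit length $|\Omega_3|=8$, so the case distinction comes for free. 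Your monomial bound $N_H(R)/C_H(R)\leq 2\wr\sym(3)$ is sound, and your half of the case split with $C_G(Z)\neq X$ is correct: $w\in N_H(R)$, $[R,w]=[A,w]$ is a line in $3\mathcal{A}$ by Lemmas \ref{O8-first results} and \ref{o8-Order of N_G(S)}, and an involution of $2\wr\sym(3)$ whose inverted line lies in $\Omega_2$ cannot lie in the base group, hence lies outside the image of $M_0$.

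The gap is the case $C_G(Z)=X$, exactly the step you flag as "the main obstacle" and then do not carry out. Lemma \ref{O8-no inverting z and centralizing u} is a statement about the specific commuting pair $(a,z)$ --- its proof uses $N_G(A)\leq N_G(Z)$ and $[C_G(Z):C_G(A)]=3$ for $A=\langle a,z\rangle$ --- not about arbitrary commuting pairs from $3\mathcal{A}\times 3\mathcal{C}$. A transposition-type element, adjusted by $M_0$, only hands you \emph{some} $a'\in\Omega_2$ centralised and \emph{some} commuting $v\in\Omega_3$ inverted; to contradict the lemma you must either show $(a',v)$ is $G$-conjugate to $(a,z^{\pm 1})$, or arrange the transposition to fix $a$ itself and invert $z$ itself, and nothing in your write-up does either. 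This is a real issue and not bookkeeping: in the frame $z=d_1d_2d_3$, the group $2\wr\sym(3)$ contains an element fixing $a$ and inverting $z$ when $a$ has mixed signs on its support (for $a=d_1d_2^{-1}$, take $d_1\mapsto d_2^{-1}$, $d_2\mapsto d_1^{-1}$, $d_3\mapsto d_3^{-1}$), but \emph{no} such element when $a=d_1d_2$, since the stabiliser of $d_1d_2$ in $2\wr\sym(3)$ has order four and moves only the third coordinate. So the argument stands or falls with the relative position of $a$ and $z$, which you never pin down. It can be pinned down from facts already in the paper: $a^Q=aZ$ (as $Q'=Z$), so $A\setminus Z\subseteq 3\mathcal{A}$, while $A\cap 3\mathcal{C}=Z^\#$ by Lemma \ref{O8-Easy Lemma}$(iii)$; if $a$ were of type $d_1d_2$ then $az=d_1^{-1}d_2^{-1}d_3\in\Omega_3\subseteq 3\mathcal{C}$, a contradiction, so $a$ is of mixed type, the inverting element exists in the full monomial group, and an image of order $48$ would indeed contradict Lemma \ref{O8-no inverting z and centralizing u}. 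Without this supplement your case $C_G(Z)=X$ is not a proof; with it, your route closes, though the paper's orbit--stabiliser computation settles both cases of (v) with none of this effort.
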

\begin{proof}
For $i \in \{1,2,3\}$, we have seen that $|C_R(t_i)|=9$. Moreover if $\{i,j,k\}=\{1,2,3\}$ then
$t_i$ commutes with $[R,t_j][R,t_k]$ by Lemma \ref{o8-finally t's commute} and so
$C_R(t_i)=[R,t_j][R,t_k]$. By coprime action, $R=[R,t_i]\times C_R(t_i)$ and so,
$R=[R,t_i][R,t_j][R,t_k]=D_1D_2D_3$ and furthermore $R=C_R(t_i) C_R(t_j) C_R(t_k)$. Since each
$t_i$ inverts $D_i$ and centralizes $D_jD_k$, it is clear that $t_1t_2t_3$ inverts $R$. This
implies that $\<t_1,t_2,t_3\>$ does not have order $2$ or $4$ and so must have order $8$. Hence
$\prod_{1 \leq i \leq 3}\<D_i,t_i\>\cong \sym(3) \times \sym(3) \times \sym(3)$. This proves $(i)$
and $(ii)$.

Part $(iii)$ now follows immediately from $R=D_1D_2D_3$.

Recall that $b\in P \bs R$ and $C_R(b)$ commutes with $\<R,b\>=P$. Thus $C_R(b)=\mathcal{Z}(P)=Z$. This
implies that $\<b\>$ permutes the three subgroups $D_1$, $D_2$, $D_3$ transitively. We may assume
$b: D_1\mapsto D_2 \mapsto D_3$ (else we may swap $b$ for $b^2$). So we choose $d_1 \in D_1^\#$ and
then set $d_2:=d_1^b\in D_2$ and $d_3:=d_2^b\in D_3$. Therefore $d_3^b=d_1$. Now consider the
following $N_H(R)$-invariant partition of $R^\#=\Omega_1 \cup \Omega_2 \cup \Omega_3$. Since $t_1$
inverts $D_1$ and centralizes $D_2$ and $D_3$ and since $s$ inverts $R$, it is clear that
$\<b,t_1,s\>$ acts transitively on $\Omega_2$ and $\Omega_3$ and that $b$ centralizes $d_1d_2d_3
\in \Omega_3$. Thus it follows from $(iii)$ and Lemma \ref{O8-U^t_i's} $(iii)$ that  $\Omega_2 \subset 3 \mathcal{A}$ and $\Omega_3 \subset 3
\mathcal{C}$. We also clearly have $\Omega_1 \subset 3 \mathcal{D}$. In particular it is clear that
$N_H(R)$ acts irreducibly on $R$. This proves $(iv)$.

Notice that $C_H(Z)\cap N_H(R) \leq C_H(Z) \cap N_H(P)$ since $P=RN_2$ and $N_2\vartriangleleft
C_H(Z)$. Also $C_H(Z) \cap N_H(P)\leq C_H(Z)\cap N_H(R)$ since $R$ is the unique abelian subgroup
of order 27 in $P$ (otherwise $|\mathcal{Z}(P)|\geq 9$). Since  $C_H(Z) \sim 3_+^{1+2}.\SL_2(3)$ or $C_H(Z)
\sim 3_+^{1+2}.(\SL_2(3)\times 2)$, we have $| C_H(Z)\cap N_H(R)|=|C_G(Z) \cap N_H(P)|=3^42$ or
$3^42^2$ respectively. Since $N_H(R)$ acts transitively on $\Omega_3\subset 3 \mathcal{C}$, we have
that $[N_H(R):N_H(R) \cap C_G(Z)]=8$. Thus $|N_H(R)|=2^43^4$ or $2^53^4$ respectively.
\end{proof}

\begin{lemma}\label{O8-subgroups of R of order nine}
Let $V\leq R$ with $[R:V]=3$. Then $V \cap 3 \mathcal{A}\neq \emptyset$.
\end{lemma}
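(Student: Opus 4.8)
The plan is to read off the $\GF(3)$-vector space structure of $R$ together with the weight decomposition of $R^\#$ already established in Lemmas \ref{O8-Omega orbits etc} and \ref{O8-U^t_i's}. Recall from Lemma \ref{O8-Omega orbits etc} that $R=D_1D_2D_3$ is elementary abelian of order $27$ with each $D_i=\langle d_i\rangle$ of order three, so I would regard $R$ as a three-dimensional $\GF(3)$-space with $d_1,d_2,d_3$ a basis and write a typical element as $d_1^{x_1}d_2^{x_2}d_3^{x_3}$ with $x_i\in\GF(3)$. Comparing the orbit sizes $|\Omega_1|=6$, $|\Omega_2|=12$, $|\Omega_3|=8$ from Lemma \ref{O8-Omega orbits etc} with $|R\cap 3\mathcal{D}|=6$, $|R\cap 3\mathcal{A}|=12$, $|R\cap 3\mathcal{C}|=8$ from Lemma \ref{O8-U^t_i's}, and using that $\Omega_1\subset 3\mathcal{D}$, $\Omega_2\subset 3\mathcal{A}$, $\Omega_3\subset 3\mathcal{C}$ are pairwise disjoint and cover $R^\#$, I would conclude that $R\cap 3\mathcal{A}=\Omega_2$ is exactly the set of elements of $R$ whose support $\{i\mid x_i\neq 0\}$ has size two. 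Thus the assertion $V\cap 3\mathcal{A}\neq\emptyset$ becomes the statement that every subspace of $R$ of index three contains an element of weight two.

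Since $[R:V]=3$, the subgroup $V$ is a hyperplane, so $V=\ker\phi$ for some non-zero additive homomorphism $\phi\colon R\to\GF(3)$; set $a_i:=\phi(d_i)$. The strategy is to exhibit a weight-two element of $V$ directly. First I would treat the case in which at least two of the $a_i$ are non-zero, say $a_i$ and $a_j$: then the element $d_i^{a_j}d_j^{-a_i}$ has support exactly $\{i,j\}$ and lies in $\ker\phi=V$, because $\phi(d_i^{a_j}d_j^{-a_i})=a_ja_i-a_ia_j=0$. In the remaining case exactly one coefficient, say $a_i$, is non-zero, whence $V=\langle d_j,d_k\rangle$ with $\{j,k\}=\{1,2,3\}\setminus\{i\}$, and the weight-two element $d_jd_k$ lies in $V$. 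Either way $V$ meets $\Omega_2=R\cap 3\mathcal{A}$, which is precisely what is required.

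I do not expect a genuine obstacle here: once the identification of $R\cap 3\mathcal{A}$ with the weight-two elements is in hand, the argument is a one-line piece of linear algebra over $\GF(3)$, and the only care needed is the routine bookkeeping matching the three orbit sizes to the three class intersections via Lemmas \ref{O8-Omega orbits etc} and \ref{O8-U^t_i's}. (An equivalent route argues by contradiction: the four one-dimensional subspaces contained in $V$ would all have weight one or three; at most one of them could be a coordinate axis $D_i$, since two would force $V=D_iD_j\ni d_id_j$, a weight-two element; hence $V$ would contain at least three weight-three lines, but any three of the four weight-three lines span $R$ and so cannot lie inside the two-dimensional $V$. The direct construction above is cleaner and avoids this case analysis.)
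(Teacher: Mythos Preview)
Your proof is correct and follows essentially the same approach as the paper: both reduce the claim to showing that every index-three subgroup of $R=D_1D_2D_3$ meets $\Omega_2$ (the weight-two elements with respect to the basis $d_1,d_2,d_3$), which lies in $3\mathcal{A}$ by Lemma~\ref{O8-Omega orbits etc}. The only difference is in execution: the paper simply lists the three types of hyperplanes $\langle \delta_i,\delta_j\rangle$, $\langle \delta_i\delta_j,\delta_j\delta_k\rangle$, $\langle \delta_i,\delta_i\delta_j\delta_k\rangle$ and observes each contains a weight-two element, whereas you parametrize hyperplanes by linear functionals and construct the weight-two element explicitly; both are immediate once the setup from Lemma~\ref{O8-Omega orbits etc} is in place.
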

\begin{proof}
Since $R=D_1D_2D_3=\Omega_1\cup \Omega_2\cup \Omega_3\cup \{1\}$ we calculate the possible subgroups of $R$ of order nine to be the following.
\begin{enumerate}[$(i)$]
\item $\<\d_i,\d_j\>$ for $\{i,j\}\subset\{1,2,3\}$.
\item $\<\d_i\d_j,\d_j\d_k\>$ for $\{i,j,k\}=\{1,2,3\}$.
\item $\<\d_i,\d_i\d_j\d_k\>$ for $\{i,j,k\}=\{1,2,3\}$.
\end{enumerate}
Therefore every
subgroup of order nine contains an element in $\Omega_2\subset 3 \mathcal{A}$.
\end{proof}

Set $E_i:=O_2(C_H(A^{t_i}))$ for $i \in \{0,1,2,3\}$, Then $E_i=O_2(C_H(Z^{t_i})) \cong Q_8$ by
Lemma \ref{O8-N_H(Z), P,R are sylow}. Set $E:=\<E_0,E_1,E_2,E_3\>$ and $K:=N_H(E)$.

\begin{lemma}\label{O8-describing E}
\begin{enumerate}[$(i)$]
 \item for $\{i,j\} \subset \{0,1,2,3\}$, $[E_i,R]=E_i$ and $E_iE_j=O_2(C_H(a_{ij}))$ where
 $a_{ij}\in A^{t_i}\cap A^{t_j}$ (as in Lemma \ref{O8-U^t_i's} $(ii)$).                          
 \item $E\cong 2_+^{1+8}$ and $N_H(R)\leq K$.
 \item If $N$ is any
$3'$-subgroup of $H$ normalized by $R$ then $N \leq E$.
 \item $N_H(R) \cap
E=\<t\>$.
 \end{enumerate}
\end{lemma}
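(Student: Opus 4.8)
The plan is to build $E$ as an iterated central product of the four $Q_8$'s $E_0,E_1,E_2,E_3$, using the involution centralizer structure of $\PSp_4(3)\cong\Omega^-_6(2)$ recorded in Lemma \ref{Prelims-PSp4(3) involutions}, and then to read off (ii)--(iv) by elementary coprime--action and module arguments. First I would dispose of the commutator claim in (i). Since $R=C_J(t)$ is abelian and centralizes $Z$ and each $Z^{t_i}$ (note $A^{t_i}\le R$ as $t_i$ normalizes $J$, so $Z^{t_i}\le R$), $R$ normalizes every $E_i=O_2(C_H(Z^{t_i}))$. For $i=0$, recall $C_H(Z)\sim 3_+^{1+2}.\SL_2(3)$ with $E_0=O_2(C_H(Z))\cong Q_8$ centralized by $C_Q(t)=N_2$ (Lemma \ref{O8-N_H(Z), P,R are sylow}). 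Because $P=C_S(t)=RN_2$ with $R\cap N_2=A$, a generator of $R/A$ maps onto the element of order three of $C_H(Z)/N_2\cong\SL_2(3)$, which acts fixed-point-freely on $E_0/\langle t\rangle\cong 2\times 2$. Hence $C_{E_0}(R)=\langle t\rangle$ and $[E_0,R]=E_0$; conjugating by $t_i$ (which lies in $H$ and normalizes $R$) gives $[E_i,R]=E_i$ for all $i$.

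The heart of the argument, and the step I expect to be the main obstacle, is the identification $E_iE_j=O_2(C_H(a_{ij}))\cong 2_+^{1+4}$, where $a_{ij}$ generates $A^{t_i}\cap A^{t_j}\in 3\mathcal{A}$ (Lemma \ref{O8-U^t_i's}). By Lemmas \ref{O8-PSp43} and \ref{O8-AutPSp43 and Centralizer of d}, $C_G(a_{ij})=\langle a_{ij}\rangle\times M$ with $M_0:=O^2(M)\cong\Omega^-_6(2)\cong\PSp_4(3)$, and since $t$ centralizes $a_{ij}$ we have $C_H(a_{ij})=\langle a_{ij}\rangle\times C_M(t)$. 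The plan is to show $t$ is a $2$-central involution of $M_0$. Indeed $t$ centralizes the image $\bar R$ of $R$, which has order $9$ and lies in $M_0$; moreover $t$ centralizes $E_i\cong Q_8$ (as $E_i\le H$), and as $2\times\sym(6)$ contains no $Q_8$, Lemma \ref{prelims-PSp43 normalizer J}$(ii)$ rules out $t$ being an outer involution, so $t$ is $2$-central in $M_0$. Because $\bar R\le M_0\vartriangleleft M$, the relation $[E_i,R]=E_i$ projects to $[E_i,\bar R]=E_i\le M_0$, forcing $E_i\le M_0$, and likewise $E_j\le M_0$. Since $E_i$ is centralized by $Z^{t_i}$ while $[E_i,\bar Z^{t_j}]=E_i$ (and symmetrically for $E_j$), the two groups are distinct. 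Applying Lemma \ref{Prelims-PSp4(3) involutions}$(iii)$ inside $M_0$ then gives $[E_i,E_j]=1$ and $E_iE_j=O_2(C_{M_0}(t))\cong 2_+^{1+4}$; finally one checks $O_2(C_M(t))=O_2(C_{M_0}(t))$ (the quotient $C_M(t)/O_2(C_{M_0}(t))$ has a normal $3\times 3$ and hence no larger normal $2$-group), yielding $E_iE_j=O_2(C_H(a_{ij}))$.

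Part (ii) then follows from module theory. The four $E_k$ pairwise commute with common centre $\langle t\rangle$, so $E=E_0\ast E_1\ast E_2\ast E_3$ is a central product and $E/\langle t\rangle$ is elementary abelian. By coprime action $E/\langle t\rangle$ is a semisimple $\mathbb{F}_2R$-module; each $\bar E_k=[\bar E_k,R]$ is an irreducible two-dimensional module whose kernel is exactly the hyperplane $A^{t_k}\le R$, and the four hyperplanes $A^{t_k}$ are pairwise distinct by Lemma \ref{O8-U^t_i's}$(ii)$. Distinct kernels give pairwise non-isomorphic summands, so $\sum_k\bar E_k=\bigoplus_k\bar E_k$ has dimension $8$; hence $|E|=2^9$ and $E\cong 2_+^{1+8}$. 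For $N_H(R)\le K$, note that $Z^{t_0},\dots,Z^{t_3}$ are precisely the four subgroups of order three of $R$ lying in $3\mathcal{C}$ (Lemma \ref{O8-U^t_i's}$(iii)$ gives $|R\cap 3\mathcal{C}|=8$); thus $N_H(R)$ permutes them, hence permutes the canonically attached $E_k=O_2(C_H(Z^{t_k}))$, and so normalizes $E$.

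Finally, for (iii) I would use coprime action in the form $N=\langle C_N(R_1):[R:R_1]=3\rangle$ (Theorem \ref{coprime action}$(iv)$). By Lemma \ref{O8-subgroups of R of order nine} every hyperplane $R_1$ meets $3\mathcal{A}$, say in $\langle a'\rangle=\langle a_{ij}\rangle$, so $C_N(R_1)\le C_N(a')$, an $R$-invariant $3'$-subgroup of $C_H(a')=\langle a'\rangle\times C_M(t)$; its image is an $\bar R$-invariant $2$-subgroup $B$, with $\bar R\cong 3\times 3$ a Sylow $3$-subgroup of $C_M(t)$. Since $C_M(t)/O_2(C_M(t))$ has shape $(3\times 3).2$ with the top involution swapping the two $C_3$ factors, no subgroup of order two of $C_M(t)/O_2(C_M(t))$ is normalized by $\bar R$; hence $B\,O_2(C_M(t))=O_2(C_M(t))$, giving $C_N(R_1)\le O_2(C_H(a'))=E_iE_j\le E$, and therefore $N\le E$. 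Part (iv) is then immediate: by (ii), $R$ acts fixed-point-freely on $E/\langle t\rangle$, so $C_E(R)=\langle t\rangle$; and for $e\in N_E(R)$ we have $[e,R]\le E\cap R=1$ (as $E\vartriangleleft RE$ and $e$ normalizes $R$), whence $e\in C_E(R)=\langle t\rangle$. Thus $N_H(R)\cap E=N_E(R)=\langle t\rangle$.
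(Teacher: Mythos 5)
Your proof is correct in substance and, for parts $(i)$, $(ii)$ and $(iv)$, follows the paper's route: establish $[E_0,R]=E_0$ from the structure of $C_H(Z)$ and $P=C_Q(t)R$, conjugate by the $t_i$, show the $E_i$ are pairwise distinct, and then invoke Lemma \ref{Prelims-PSp4(3) involutions}$(iii)$ inside $C_G(a_{ij})'\cong\Omega^-_6(2)\cong\PSp_4(3)$; part $(iv)$ is the same one-line commutator argument. You are in fact more careful than the paper at several points it elides: you verify $E_i=[E_i,\bar R]\leq O^2(M)$, you rule out $t$ being an outer involution of $\mathrm{SO}^-_6(2)$ via Lemma \ref{prelims-PSp43 normalizer J}$(ii)$ before claiming $2$-centrality, and you reconcile $O_2(C_H(a_{ij}))$ (what the lemma asserts) with $O_2(H\cap C_G(a_{ij})')$ (what the paper's proof actually produces). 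Your dimension count for $|E|=2^9$ via distinct kernels of the $R$-action is also a tidier justification than the paper's bare assertion. Where you genuinely diverge is part $(iii)$: the paper decomposes $N$ by coprime action over the four order-three subgroups of $A$, which forces it to treat $C_N(z)\leq E_0$ as a separate (and in fact unproved) case inside $C_H(Z)$; you instead decompose over the hyperplanes of $R$ and use Lemma \ref{O8-subgroups of R of order nine} so that every generator of $N$ lands in the centralizer of a $3\mathcal{A}$-element, whose structure is known. This is a cleaner route that avoids the $C_H(Z)$ analysis altogether.

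One repair is needed in your part $(iii)$: the claim that $C_M(t)/O_2(C_M(t))$ has shape $(3\times 3).2$ is only correct when $C_G(Z)/Q\cong\SL_2(3)$, so that $M=O^2(M)\cong\Omega^-_6(2)$; when $C_G(Z)/Q\cong\SL_2(3)\times 2$ one has $M\cong\mathrm{SO}^-_6(2)$ and the quotient has shape $(3\times3).2^2$, and one must also dispose of a possible $\bar R$-invariant subgroup of order $4$. Neither point is fatal, and both are handled by the paper's own device, which you should adopt here: write $B=C_B(R)[B,R]$ by coprime action, note $C_B(R)\leq C_H(R)\cap B\leq\<t\>$ by Lemma \ref{O8-N_H(Z), P,R are sylow}$(v)$, and observe $[B,R]\leq C_G(a')'$, so that $[B,R]$ is an $\bar R$-invariant $2$-subgroup of $C_{O^2(M)}(t)\sim 2_+^{1+4}.(3\times3).2$, whose shape \emph{is} supplied by Lemma \ref{Prelims-PSp4(3) involutions}$(i)$; there the quotient argument you gave applies verbatim and yields $[B,R]\leq E_iE_j$, hence $B\leq E$.
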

\begin{proof}
By Lemma \ref{O8-N_H(Z), P,R are sylow} $(iii)$, $[E_0,C_Q(t)]=1$. Since $P=C_Q(t)R$ and
$\<E_0,P\>\sim 3_+^{1+2}.\SL_2(3)$, we see that  $[E_0,P]\neq 1$ and so  $[E_0,R]\neq 1$. It
follows that $[E_0,R]=E_0$. Hence $[E_0,R]^{t_i}=[E_i,R]=E_i$ for each $i \in \{0,1,2,3\}$. So
suppose for some $i \neq j$, $O_2(C_H(A)^{t_i})=E_i = E_j=O_2(C_H(A)^{t_j})$. Then $E_i$ commutes
with $A^{t_i}A^{t_j}$. By Lemma \ref{O8-U^t_i's} $(ii)$, $A^{t_i}\neq A^{t_j}$. So if $i \neq j$
then $A^{t_i}A^{t_j}=R$ which implies that $[E_i,R]=1$ which is a contradiction. Thus for $i \neq
j$, $E_i \neq E_j$.

By Lemma \ref{O8-U^t_i's} $(ii)$, $(A^{t_i} \cap A^{t_j})^\#=\<a_{ij}\>^\# \subset 3 \mathcal{A}$
for $i \neq j$. Since $E_i\cong E_j\cong Q_8$ and $E_i \neq E_j$, by Lemma \ref{Prelims-PSp4(3)
involutions} $(iii)$, $[E_i,E_j]=1$ and $E_iE_j=O_2(H \cap C_G(a_{ij})')\cong 2_+^{1+4}$. Since $\{i,j\}
\subset \{0,1,2,3\}$ were arbitrary, we have $E:=E_0E_1E_2E_3\cong 2_+^{1+8}$. Furthermore $E$ is
normalized by $R$ and since $N_H(R)$ acts on $\Omega_3$, $N_H(R)$ acts on the set
$\{E_0,E_1,E_2,E_3\}$. Hence $N_H(R)$ normalizes $E=E_0E_1E_2E_3$. This proves $(i)$ and $(ii)$.

Let $N$ be a $3'$-subgroup of $H$ normalized by $R$. It follows from Lemma \ref{O8-U^t_i's} $(ii)$ that $A=Z \cup \<a_{01}\> \cup \<a_{02}\>
\cup\<a_{03}\>$. Hence, by coprime action, $N=\<C_N(z),C_N(a_{01}),C_N(a_{02}),C_N(a_{03})\>$. Now
$C_N(z)$ is a $3'$-subgroup of $C_H(z)$ normalized by $R$ and so $C_N(z)\leq E_0$. Fix $i \in
\{1,2,3\}$ and set $M:=C_N(a_{0i})$. Then $M$ is normalized by $R$. By coprime action,
$M=C_M(R)[M,R]$. By Lemma \ref{O8-N_H(Z), P,R are sylow} $(v)$, $C_M(R) \leq \<t\>$. Now $[M,R]\leq
C_G(a_{0i})'\cong \Omega^-_6(2)$ and so by Lemma \ref{Prelims-PSp4(3) involutions},  $[M,R]\leq
O_2(H \cap C_{G}(a_{0i})')=E_0E_i$. Thus $M \leq E$. Of course this argument holds for each $i \in
\{1,2,3\}$ and so this proves that $N \leq E$. This proves $(iii)$.

Finally observe that $[N_E(R),R]\leq E \cap R=1$ and so $N_E(R)=C_E(R)=\<t\>$ by Lemma \ref{O8-N_H(Z), P,R are sylow} $(v)$ which proves $(iv)$.
\end{proof}

Recall that $K=N_H(E)$. We now determine the order and structure of $K$.
\begin{lemma}\label{O8-order of KleqH}
We have that $K=EN_H(R)$ and $C_K(E)\leq E$. Moreover, if $C_G(Z)=X$, then $|K|=2^92^33^4$ whereas if $C_G(Z)>X$, then
$|K|=2^92^43^4$.
\end{lemma}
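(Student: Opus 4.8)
The plan is to prove the three assertions in sequence: first $K = EN_H(R)$, then $C_K(E) \leq E$, and finally compute $|K|$ in each of the two cases for $C_G(Z)$.

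For the equality $K = EN_H(R)$, I would argue via a Frattini-type argument using the fact that $R \in \syl_3$ of something inside $K$. First I would establish that $E \trianglelefteq K$ by definition (as $K = N_H(E)$), and that $R$ normalizes $E$ by Lemma~\ref{O8-describing E}~$(ii)$, so $EN_H(R) \leq K$. For the reverse inclusion, the key observation is that $R$ should be a Sylow $3$-subgroup of $K$: since $R \leq P \in \syl_3(H)$ and $E$ is a $2$-group, a Sylow $3$-subgroup of $K$ lies in some conjugate of $P$; I would show $R \in \syl_3(K)$ by checking that $N_P(R)$ does not properly contain $R$ while normalizing $E$ (using that $b \in P \setminus R$ does not normalize $E$, since $b$ cyclically permutes $E_1, E_2, E_3$ and hence moves $E_0E_1E_2E_3$ only if it fixes $E$ setwise — here I must verify $b$ does normalize $E$ but $C_Q(t)$-elements outside $R$ do not centralize enough). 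Once $R \in \syl_3(K)$, the Frattini argument (Lemma~\ref{frattini}) applied to $E \trianglelefteq K$ gives $K = E N_K(R_0)$ for an appropriate Sylow subgroup; combined with $N_K(R) \leq N_H(R)$ this yields $K = E N_H(R)$.

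For $C_K(E) \leq E$, I would use that $E \cong 2_+^{1+8}$ with $\mathcal{Z}(E) = \langle t \rangle$, so $C_K(E) \cap E = \mathcal{Z}(E) = \langle t \rangle$. Writing $K = E N_H(R)$, any element of $C_K(E)$ can be adjusted modulo $E$ to lie in $N_H(R)$, so it suffices to show $C_{N_H(R)}(E) = \langle t \rangle$. Here I would exploit Lemma~\ref{O8-describing E}~$(iv)$, namely $N_H(R) \cap E = \langle t \rangle$, together with the transitive action of $N_H(R)$ on $\Omega_3$ (hence on $\{E_0,E_1,E_2,E_3\}$ by Lemma~\ref{O8-Omega orbits etc}~$(iv)$): an element centralizing $E$ must fix each $E_i$ and act trivially on it, but $[E_i,R]=E_i$ forces such an element into the centralizer of $R$ as well, and by Lemma~\ref{O8-N_H(Z), P,R are sylow}~$(v)$ we have $C_H(R)=\langle R,t\rangle$, whose only $2$-part centralizing $E$ is $\langle t\rangle$.

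The order computation is then essentially bookkeeping, and I expect it to be the least delicate step. From $K = EN_H(R)$ and $C_K(E) \leq E$ we get $|K| = |E| \cdot |N_H(R)| / |E \cap N_H(R)|$. Using $|E| = 2^9$ (as $E \cong 2_+^{1+8}$), $|E \cap N_H(R)| = 2$ (Lemma~\ref{O8-describing E}~$(iv)$), and the two values of $|N_H(R)|$ from Lemma~\ref{O8-Omega orbits etc}~$(v)$ — namely $2^4 3^4$ when $C_G(Z)=X$ and $2^5 3^4$ otherwise — I obtain $|K| = 2^9 \cdot 2^4 3^4 / 2 = 2^9 \cdot 2^3 3^4$ in the first case and $|K| = 2^9 \cdot 2^5 3^4 / 2 = 2^9 \cdot 2^4 3^4$ in the second, matching the claim. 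The main obstacle I anticipate is the first step, specifically verifying rigorously that $R \in \syl_3(K)$: I must rule out that any $3$-element of $H$ outside $R$ normalizes $E$, which requires understanding precisely how elements of $P \setminus R$ (such as $b$) and their conjugates act on the four quaternion subgroups $E_0,\dots,E_3$, and confirming that the permutation action combined with the faithfulness of the action on $E/\mathcal{Z}(E)$ prevents a larger $3$-subgroup from stabilizing $E$.
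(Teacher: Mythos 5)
Your decomposition of the task is sensible, but the route you take to $K=EN_H(R)$ breaks down at exactly the point you flag as the main obstacle, and it cannot be repaired along the lines you sketch. The claim $R\in\syl_3(K)$ is false: $P=C_S(t)$ centralizes $t$ and normalizes $J$, hence normalizes $R=C_J(t)$, so $P\leq N_H(R)\leq K$ by Lemma \ref{O8-describing E}~$(ii)$, and $|P|=3^4>3^3=|R|$. (This is also forced by the statement you are proving, in which $3^4$ divides $|K|$, and by Lemma \ref{O8-Omega orbits etc}~$(v)$, which gives $3^4\mid |N_H(R)|$.) Your supporting claim that $b\in P\bs R$ fails to normalize $E$ is also wrong: $b\in C_Q(t)$ centralizes $E_0$ by Lemma \ref{O8-N_H(Z), P,R are sylow}~$(iv)$ and permutes $\{E_1,E_2,E_3\}$ (this is exactly how $b$ is used in the proof of Lemma \ref{O8-D_i acts fpf on E and Centre T is t}), so $b$ normalizes $E=E_0E_1E_2E_3$; concretely, $N_P(R)=P$ normalizes $E$ and properly contains $R$, so your proposed check fails. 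Finally, a Frattini argument applied to $E\trianglelefteq K$ is vacuous: Frattini factorizes $K$ over the normalizer of a Sylow subgroup of the \emph{normal} subgroup, and the only Sylow subgroup of the $2$-group $E$ is $E$ itself. To get $K=EN_K(R)$ from Frattini one needs a normal subgroup of $K$ having $R$ as a Sylow $3$-subgroup, i.e.\ one must first prove $ER\trianglelefteq K$.

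Proving $ER\trianglelefteq K$ is the real content of the paper's proof, and it is absent from your proposal. The paper first establishes $C_K(E)\leq E$ (if $C_K(E)\cap P\neq 1$ then, being normal in $P$, it would contain $Z=\mathcal{Z}(P)$, impossible since $[Z,E]\neq1$; so $C_K(E)$ is a $3'$-group normalized by $R$ and Lemma \ref{O8-describing E}~$(iii)$ applies), which embeds $K/E$ into $\mathrm{GO}^+_8(2)$ via Theorem \ref{extraspecial outer automorphisms}. It then takes $N$ with $N/E$ a minimal normal subgroup of $K/E$, shows $R\leq N$ using the irreducible action of $N_H(R)$ on $R$, and rules out $PE/E\in\syl_3(N/E)$ by checking in the {\sc Atlas} that the simple subgroups of $\mathrm{GO}^+_8(2)$ with Sylow $3$-subgroups of order $3^4$ (namely $\alt(9)$, $\Omega^-_6(2)$, $\mathrm{SO}_7(2)$) have only three classes of elements of order three, contradicting Lemma \ref{O8-U^t_i's}~$(iv)$; a further maximal-subgroup analysis forces $N=ER$, and only then is Frattini invoked. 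No amount of Sylow/normalizer bookkeeping inside $H$ substitutes for this identification step. Two smaller remarks: your argument that an element of $N_H(R)$ centralizing $E$ lies in $C_H(R)$ needs the three-subgroup lemma together with $C_R(E)=1$ to be made precise (it is patchable), and your final order computation is correct once $K=EN_H(R)$ and $N_H(R)\cap E=\langle t\rangle$ are in hand.
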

\begin{proof}
Consider $C_K(E)$. If $C_K(E)$ is a $3'$-group, then by Lemma \ref{O8-describing E} $(iii)$, $C_K(E)
\leq E$. Suppose $C_K(E)\cap P \neq 1$. Since $C_P(E)\trianglelefteq P$, $Z=\mathcal{Z}(P)\leq C_P(E)$. This
is a contradiction since $[Z,E]\neq 1$. Hence $C_K(E) \leq E$ and so $K/E$ is isomorphic to a
subgroup of $\mathrm{GO}^+_8(2)$ by Lemma \ref{extraspecial outer automorphisms}. Moreover, Lemma \ref{O8-describing E} $(iii)$ also gives us that $O_{3'}(K)=E$.

Let $N$ be a subgroup of $K$ such that $E \leq N \leq K$ and $N/E$ is a minimal
normal subgroup of $K/E$. Then $N\cap P \neq 1$. Since $N \cap P\vartriangleleft P$, $Z \leq N$ and
so $N \cap R \neq 1$. Since $N_H(R)\leq K$ acts irreducibly on $R$, $R \leq N$. Suppose $PE/E\in
\syl_3(N/E)$. If $N$ is a direct product of two or more isomorphic simple groups then $P$ is a
direct product of two or more of its subgroups which implies that $P$ is abelian. Hence $N/E$ is
simple. Using \cite{atlas} we see that the only simple subgroups of $\mathrm{GO}^+_8(2)$ with Sylow
$3$-subgroups of order $3^4$ are $\alt(9)$, $\Omega^-_6(2)$ and $\mathrm{SO}_7(2)$.
However in each case $N/E$ has just three conjugacy classes of elements of order three which implies $H$ has
at most three classes of elements of order three. However this contradicts Lemma \ref{O8-U^t_i's}
$(iv)$. So we have $RE/E\in \syl_3(N/E)$. Since $N/E$ is a minimal normal subgroup of $K/E$, $N/E$
is a direct product of isomorphic simple groups and so is either simple or a direct product of
three isomorphic simple groups. Thus, analysis of the maximal subgroups of $\mathrm{GO}^+_8(2)$
(again using \cite{atlas}) ensures that $N=ER$. Therefore by Lemma \ref{frattini} (Frattini argument), we
have $K=EN_K(R)=EN_H(R)$ since $N_H(R) \leq K$. The order of $K$ follows from Lemma \ref{O8-Omega
orbits etc} $(v)$ and since $N_H(R) \cap E=\<t\>$ by Lemma \ref{O8-describing E} $(iv)$.
\end{proof}

Recall that $\bar{H}=H/\<t\>$ and consider the following sets of elements of order two in $\bar{E}=\bar{E_0E_1E_2E_3}$.

\begin{list}{$\bullet$}{}
 \item $\Pi_1:=\{\bar{p_i} \mid p_i \in E_i, p_i^2=t, 0 \leq i \leq 3
\}$;
 \item $\Pi_2:=\{\bar{p_ip_j}\mid p_i \in E_i, p_j \in E_j,
p_i^2=p_j^2=t, \{i, j\}\subset  \{0,1,2,3\} \}$;
 \item $\Pi_3:=\{\bar{p_ip_jp_k}\mid p_i \in E_i, p_j \in E_j, p_k \in E_k,
p_i^2=p_j^2=p_k^2=t, \{i, j,k\}\subset  \{0,1,2,3\} \}$;
 \item $\Pi_4:=\{\bar{p_1p_2p_3p_4}\mid  p_i \in E_i, p_i^2=t, 0 \leq i
\leq 3 \}$.
\end{list}
Note that $|\Pi_1|=12$, $|\Pi_2|=54$, $|\Pi_3|=108$, $|\Pi_4|=81$, that $\Pi_1$ and $\Pi_3$ consist
of the images in $\bar{E}$ of elements of order four in $E$ whilst $\Pi_2$ and $\Pi_4$ consist of
the images in $\bar{E}$ of non-central elements of order two in $E$. Notice also that
$\bar{E}^\#=\Pi_1 \cup \Pi_2\cup \Pi_3 \cup \Pi_4$.

Observe that the sets $\Pi_1,\Pi_2,\Pi_3,\Pi_4$ are $N_H(R)$-invariant since the set
$\{E_0,E_1,E_2,E_3\}$ is $N_H(R)$-invariant.

Recall from Lemma \ref{O8-Omega orbits etc} $(ii)$ that $\<t_1,t_2,t_3\>$ is elementary abelian of order eight. Define $2$-groups $T^\ddagger$ and $T$ such that $T^\ddagger=E\<t_1,t_2,t_3\> \leq T\in \syl_2(K)$. It follows from the non-trivial action of
$\<t_1,t_2,t_3\>$ on $R$ that $E \cap \<t_1,t_2,t_3\>=1$ and so $|T^\ddagger|=2^{12}$. By Lemma
\ref{O8-order of KleqH}, if $C_G(Z)/Q \cong \SL_2(3)$, then $|K|=2^92^43^4$ and so $T=T^\ddagger$.
Otherwise $|K|=2^92^53^4$ and $T>T^\ddagger$.

Recall that for $i \in \{1,2,3\}$, $D_i=[R,t_i]$ and $D_i^\# \subseteq 3\mathcal{D}$.
\begin{lemma}\label{O8-D_i acts fpf on E and Centre T is t}
For $i \in \{1,2,3\}$,  $D_i$ acts fixed-point-freely on $\bar{E}$, $K$ acts irreducibly on
$\bar{E}$, $|\mathcal{Z}(\bar{T})|=2$ and $\mathcal{Z}(T)=\<t\>$.
\end{lemma}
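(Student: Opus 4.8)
The plan is to derive all four statements from the decomposition of $\bar{E}=E/\langle t\rangle$ under $R$ together with the permutation action of the $t_i$ on the four subspaces $\bar E_i=E_i/\langle t\rangle\cong 2^2$. The first thing I would record is that $R$ normalises each $E_i$: indeed $R\le C_H(A)$ normalises $E_0=O_2(C_H(A))$ by Lemma \ref{O8-N_H(Z), P,R are sylow}(iv), and conjugating by $t_i$ (which normalises $R$) gives the same for $E_i$, so $R$ preserves $\bar E=\bar E_0\oplus\bar E_1\oplus\bar E_2\oplus\bar E_3$. Next I would pin down the $R$-action on each factor: the subgroup $N_2^{t_i}=O_3(C_H(Z^{t_i}))$ centralises $E_i$ (Lemma \ref{O8-N_H(Z), P,R are sylow}(iv), conjugated), so $A^{t_i}\le R$ lies in the kernel of $R$ on $\bar E_i$; since $[E_i,R]=E_i$ by Lemma \ref{O8-describing E}(i) the quotient $R/A^{t_i}\cong 3$ acts non-trivially, hence fixed-point-freely, and the kernel is exactly $A^{t_i}$. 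For statement (1): $D_i$ has order three with $D_i^{\#}\subseteq 3\mathcal{D}$, while $(A^{t_j})^{\#}\subseteq 3\mathcal{A}\cup 3\mathcal{C}$, and these classes are pairwise distinct by Lemmas \ref{o8-A not eq C not eq D} and \ref{O8-PSp43}; hence $D_i\cap A^{t_j}=1$ for every $j$, so $D_i$ maps non-trivially into each $R/A^{t_j}$ and acts fixed-point-freely on each $\bar E_j$, and therefore on $\bar E$.

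For statement (2) I would first check that the kernels $A^{t_0},\dots,A^{t_3}$ are four distinct subgroups of order nine in $R$: if $A^{t_i}=A^{t_j}$ then $Z^{t_j}\le A^{t_i}$, which by Lemma \ref{O8-Easy Lemma}(iii) forces $Z^{t_j}=Z$, contradicting Lemma \ref{O8-U^t_i's}(i). Consequently the $\bar E_i$ are pairwise non-isomorphic irreducible $\GF(2)R$-modules and are precisely the homogeneous components of the semisimple $\GF(2)R$-module $\bar E$. Since $\bar E$ is abelian, $E$ acts trivially on it, so (as $K=EN_H(R)$) every $K$-submodule is an $R$-submodule, hence a sum $\bigoplus_{i\in I}\bar E_i$. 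Finally $N_H(R)$ permutes $\{\bar E_i\}$ transitively, because it permutes the four subgroups $\langle Z^{t_i}\rangle=R\cap 3\mathcal{C}$ transitively by Lemma \ref{O8-Omega orbits etc}(iv). Thus $I=\emptyset$ or $\{0,1,2,3\}$ and $\bar E$ is an irreducible $K$-module.

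Statement (4) is immediate: $E\trianglelefteq T$ and $\mathcal{Z}(E)=\langle t\rangle$, so $\mathcal{Z}(T)\le C_T(E)\le C_K(E)\le E$ by Lemma \ref{O8-order of KleqH}, whence $\mathcal{Z}(T)\le \mathcal{Z}(E)=\langle t\rangle$, and $t\in\mathcal{Z}(T)$. The real work is in (3), and I would split it into showing $\mathcal{Z}(\bar T)\le\bar E$ and computing $C_{\bar E}(T)$. For the first half: any $y\in C_T(\bar E)$ induces on the extraspecial group $E\cong 2_+^{1+8}$ an automorphism trivial on $\mathcal{Z}(E)$ and on $E/\mathcal{Z}(E)$; for an extraspecial group with non-degenerate commutator form every such central automorphism is inner, so $C_T(\bar E)=E\,C_T(E)=E$ (using $C_T(E)\le C_K(E)\le E$). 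Hence $\mathcal{Z}(\bar T)\le C_{\bar T}(\bar E)=\bar E$, and so $\mathcal{Z}(\bar T)=\mathcal{Z}(\bar T)\cap\bar E=C_{\bar E}(T)$.

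It then remains to compute $C_{\bar E}(T)$, and this is where I expect the only genuine difficulty. Using the action of $\langle t_1,t_2,t_3\rangle$ on $R\cong\GF(3)^3$, with $t_i$ inverting $D_i$ and centralising $D_j,D_k$, the induced permutation of the four subgroups $\langle Z^{t_i}\rangle$ is the regular Klein-four action whose point stabiliser is $\langle t_1t_2t_3\rangle$; hence $\bar E$ restricted to $\langle t_1,t_2,t_3\rangle$ is the module induced from $\bar E_0$ over $\langle t_1t_2t_3\rangle$, and Frobenius reciprocity gives $C_{\bar E}(\langle t_1,t_2,t_3\rangle)\cong C_{\bar E_0}(t_1t_2t_3)$. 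Since $t_1t_2t_3$ inverts $R$ (Lemma \ref{O8-Omega orbits etc}(ii)), its image in $\mathrm{GL}(\bar E_0)\cong\GL_2(2)$ inverts the fixed-point-free element of order three $R/A$, so it must be a reflection with one-dimensional fixed space; thus $|C_{\bar E_0}(t_1t_2t_3)|=2$. As $T\ge T^{\ddagger}=E\langle t_1,t_2,t_3\rangle$ we get $|C_{\bar E}(T)|\le 2$, while $C_{\bar E}(T)\ne 1$ because a non-trivial $2$-group has non-zero fixed points on a non-zero $\GF(2)$-module; hence $|C_{\bar E}(T)|=2=|\mathcal{Z}(\bar T)|$. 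The hard part will be precisely forcing $|C_{\bar E}(T)|=2$ rather than $4$: one must confirm that $t_1t_2t_3$ acts as a reflection, not trivially, on each $\bar E_i$ (combining the $\GL_2(3)$-structure coming from $N_H(Z)$ with the fixed-point-free order-three action of $R$), and one must correctly identify the regular $V_4$ permutation of the four components and that its point stabiliser acts by inversion of $R$.
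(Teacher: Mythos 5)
Your proof is correct, but it takes a genuinely different route from the paper's. The paper works out the full $K$-orbit structure of $\bar{E}^{\#}$: it partitions $\bar{E}^{\#}$ into the sets $\Pi_1,\Pi_2,\Pi_3,\Pi_4$ according to how many $E_i$'s are involved, proves $K$-transitivity on $\Pi_1,\Pi_2,\Pi_3$ (orbit lengths $12$, $54$, $108$) by identifying the Sylow $3$-subgroups of the stabilizers, deduces $C_{\bar{E}}(D_1)=1$ orbit-by-orbit from the fact that those Sylow $3$-subgroups lie in classes other than $3\mathcal{D}$, shows $\Pi_4$ splits into orbits of lengths $27$ and $54$ with a three-subgroup-lemma argument (using that $t_1t_2t_3$ inverts $D_1$) to pin down a unique $2$-central class, and finally gets irreducibility because no proper union of the orbit lengths sums to $2^n-1$. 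You instead treat $\bar{E}=\bar{E_0}\oplus\bar{E_1}\oplus\bar{E_2}\oplus\bar{E_3}$ as a semisimple $\GF(2)R$-module whose summands are pairwise non-isomorphic irreducibles with kernels $A^{t_0},\dots,A^{t_3}$: fixed-point-freeness of $D_i$ drops out of the class disjointness $3\mathcal{D}\cap(3\mathcal{A}\cup3\mathcal{C})=\emptyset$ (so $D_i\cap A^{t_j}=1$), irreducibility follows from the homogeneous-component description of submodules plus the transitive permutation of the summands by $N_H(R)$, and $|\mathcal{Z}(\bar{T})|=2$ comes from the induced-module identification $C_{\bar{E}}(\<t_1,t_2,t_3\>)\cong C_{\bar{E_0}}(t_1t_2t_3)$ together with the observation that $t_1t_2t_3$, inverting $R$, must induce a transvection on $\bar{E_0}$. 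Your argument is shorter and more conceptual; what it does not buy is the orbit data itself, which the paper quietly reuses later in the chapter (for instance, the proof that $w\notin O^2(G)$ needs that involutions of $E$ lie in $K$-orbits of length $27$ or $54$, forcing $|C_{\bar{T}}(\bar{w^g})|\geq 2^{11}$), so in the context of the whole chapter the paper's computation is not wasted work. Two small glosses you should make explicit: the kernel of $R$ on $\bar{E_i}$ equals its kernel on $E_i$ (this needs coprime action or Burnside's theorem, since a priori $R$ could act nontrivially on $E_i$ but trivially on $E_i/\<t\>$), and the containment $(A^{t_j})^{\#}\subseteq 3\mathcal{A}\cup 3\mathcal{C}$ rests on $A\setminus Z\subseteq 3\mathcal{A}$, which the paper establishes (it is the fact $|A\cap 3\mathcal{A}|=6$ used in Lemma \ref{O8-U^t_i's}) but which you use without citation.
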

\begin{proof}
By Lemma \ref{O8-Omega orbits etc} $(iv)$, $N_H(R)$ acts transitively on $\Omega_3\subset 3\mathcal{C}$. Therefore $N_H(R)\leq K$ acts transitively on the set
$\{E_0,E_1,E_2,E_3\}$. Moreover $[E_0,R]=E_0$ by Lemma \ref{O8-describing E} $(i)$ and so $R$ acts transitively on $\bar{E_0}^\#$. Thus $K$
acts transitively on $\Pi_1$. In particular, for $\bar{p_0} \in \bar{E_0}^\#$, $C_K(\bar{p_0})$ has
Sylow $3$-subgroups of order $3^3$. By Lemma \ref{O8-N_H(Z), P,R are sylow} $(iv)$,
$[C_Q(t),E_0]=1$ and so  $C_Q(t)\in \syl_3(C_K(\bar{p_0}))$.

Similarly, we have that $N_H(R)$ acts transitively on $\Omega_2\subset 3 \mathcal{A}$ and for
$\{i,j\} \subset \{0,1,2,3\}$, $E_iE_j=O_2(C_G(a_{ij}))$ (Lemma \ref{O8-describing E} $(i)$).
Therefore $N_H(R)\leq K$ acts transitively on the set $\{E_iE_j|\{i, j\}\subset  \{0,1,2,3\}\}$. So
consider $\bar{p_ip_j}\in \Pi_2$. We have that $[E_i,R]=E_i=C_E(A^{t_i})$ and
$[E_j,R]=E_j=C_E(A^{t_j})$ however $A^{t_i}$ and $A^{t_j}$ both normalize $E_iE_j$. Therefore
$R=A^{t_i}A^{t_j}$ acts transitively on $\bar{E_iE_j}\cap \Pi_2$ and so $K$ acts transitively on
$\Pi_2$. Since the orbit, $\{(p_ip_j)^K\}$ has length a multiple of 27 and $K\geq P\in \syl_3(H)$,
it is clear from Lemma \ref{O8-describing E} that $\<a_{ij}\> \in \syl_3(C_K(p_ip_j))$.

Recall that $b \in C_Q(t) \bs A\subseteq P \bs R$ and since $\mathcal{Z}(P)=Z$,  $C_R(b)=Z$. Hence $b$ permutes the set $\{Z^{t_1},Z^{t_2},Z^{t_3}\}$ and therefore
$b$ permutes $\{E_1,E_2,E_3\}$. Pick $p_1\in E_1\bs\<t\>$ then $\bar{p_1p_1^bp_1^{b^2}} \in \Pi_3$
and commutes with $\<b\>$. Since $R$ preserves each $E_i$, $C_R(\bar{p_ip_jp_k})= C_R(\bar{p_ip_j})\cap C_R(\bar{p_jp_k})=\<a_{ij}\> \cap \<a_{jk}\>=1$ by
Lemma \ref{O8-U^t_i's} $(ii)$.  Therefore Sylow $3$-subgroups of $C_K(\bar{p_ip_jp_k})$ are conjugate to
$\<b\>$. Since $R$ preserves each $E_i$, we see that $\{\bar{p_ip_jp_k}\}^R=\bar{E_iE_jE_k} \cap
\Pi_3$. Since $K$ is transitive on $\{E_0,E_1,E_2,E_3\}$ it is clear that
$\{\bar{p_ip_jp_k}\}^K=\Pi_3$. 

Now consider $C_{\bar{E}}(D_1)$. For $\bar{p_0} \in \bar{E_0}$, $C_R(\bar{p_0})=C_Q(t) \cap R=A$. Since
$A \cap 3\mathcal{D}=\emptyset$ (Lemmas \ref{o8-A not eq C not eq D} and \ref{O8-PSp43}), $\bar{p_0}$ commutes with no conjugate of $D_1$. We have
calculated that for $\bar{p_ip_j}\in \Pi_2$,  $\<a_{ij}\> \in \syl_3(C_K(p_ip_j))$. Furthermore,
for $\bar{p_ip_jp_k}\in \Pi_3$, $C_K(\bar{p_ip_jp_k})$ has Sylow $3$-subgroups of order three which
we have seen are conjugate to $\<b\>$ ($b \in 3\mathcal{B}\neq 3\mathcal{D}$ by Lemma \ref{O8- centralizer of e and v}). Thus $\bar{p_ip_jp_k}$ commutes with
no conjugate of $D_i$. So suppose  $C_{\bar{E}}(D_1)\neq 1$. Then there exists  some
$\bar{p_0p_1p_2p_3}\in \Pi_4$ commuting with $D_1$. However $R$ preserves each set $\bar{P_i}^\#$
and so  $C_R(\bar{p_0p_1p_2p_3})= C_R(\bar{p_0p_1})\cap C_R(\bar{p_2p_3})=\<a_{01}\> \cap \<a_{23}\>=1$. Thus
$C_{\bar{E}}(D_1)=1$.

Now we again observe that $p_1p_1^bp_1^{b^2}$ commutes with $b$ and $b\in C_Q(t)$. By Lemma \ref{O8-N_H(Z), P,R are sylow} $(iv)$, we have that $[E_0,C_Q(t)]=1$ and so $[E_0,b]=1$. Therefore for any $p_0\in E_0\bs \<t\>$, $[p_0p_1p_1^bp_1^{b^2},b]=1$ and
so $81 \nmid |\{\bar{p_0p_1p_1^bp_1^{b^2}}^K\}|$ and $K$ is not transitive on $\Pi_4$. As before,
since $R$ preserves each $E_i$, $C_R(p_0p_1p_2p_3)= C_R(p_0p_1)\cap C_R(p_2p_3)=\<a_{01}\> \cap
\<a_{23}\>=1$. Hence $|\{\bar{p_0p_1p_2p_3}^K\}|$ is a multiple of $27$ which is strictly less than
81 and so there are either two or three $K$ orbits on $\Pi_4$.

Observe that no involution in $\Pi_1\cup \Pi_2\cup\Pi_3$ lies in the centre of a Sylow
$2$-subgroup of $\bar{K}$ since each orbit has even order and so an involution in $\Pi_4$ must be
$2$-central in $\bar{K}$. Choose such an involution $\bar{q_0q_1q_2q_3}\in \mathcal{Z}(\bar{T})^\#$ then
$|\{\bar{q_0q_1q_2q_3}\}^K|$ is an odd multiple of 27 and so $|\{\bar{q_0q_1q_2q_3}\}^K|=27$.

We have that $\bar{q_0q_1q_2q_3}\in \mathcal{Z}(\bar{T})^\#$ and $t_1t_2t_3\in T$. We claim that
$\<\bar{q_0q_1q_2q_3}\>=\mathcal{Z}(\bar{T})$. Suppose not then we have $\bar{q_0q_1q_2q_3}\neq
\bar{p_0p_1p_2p_3}\in \mathcal{Z}(\bar{T})^\#$. Since $t_1t_2t_3$ inverts $R$ (Lemma \ref{O8-Omega
orbits etc} $(ii)$), it preserves the set $\{E_0,E_1,E_2,E_3\}$. Therefore
$[\bar{p_0p_1p_2p_3},\bar{t_1t_2t_3}]=1$ implies
\[[\bar{p_0},\bar{t_1t_2t_3}]=[\bar{p_1},\bar{t_1t_2t_3}]=[\bar{p_2},\bar{t_1t_2t_3}]=[\bar{p_3},\bar{t_1t_2t_3}]=1.\]
Since $\bar{q_0q_1q_2q_3}\neq
\bar{p_0p_1p_2p_3}$, for some $0\leq i \leq 3$, $\bar{p_i} \neq \bar{q_i}$ and so $\bar{E_i}=\<\bar{p_i},\bar{q_i}\>$ and furthermore, $[\bar{E_i},\bar{t_1t_2t_3}]=1$. We have seen that $D_1$ acts fixed-point-freely on $\bar{E}$ and therefore on $\bar{E_i}$. Hence
$\bar{E_i}=[\bar{E_i},\bar{D_1}]$ and so we have
$[\bar{E_i},\bar{t_1t_2t_3},\bar{D_1}]=[\bar{E_i},\bar{D_1},\bar{t_1t_2t_3}]=1$. Now, by the three subgroup
lemma, $1=[\bar{D_1},\bar{t_1t_2t_3},\bar{E_i}]=[\bar{D_1},\bar{E_i}]=\bar{E_i}$ (since $t_1t_2t_3$
inverts $D_1$) which is a contradiction. Thus $\<\bar{q_0q_1q_2q_3}\>=\mathcal{Z}(\bar{T})\cap
\bar{E}$ and so there is only one $K$-orbit of $2$-central involutions in $\bar{E}$. Hence $\Pi_4$
consists of two $K$-orbits; one of length 27 containing 2-central involutions and the other of
length 54.

Suppose $\<t\><F\leq E$ where $F\vartriangleleft K$. Then $\bar{F}$ is a union of $K$ orbits on
$\bar{E}$. However no union of $K$ orbits has order $2^n$ unless $F=E$. Hence $K$ acts irreducibly
on $\bar{E}$. Also since $C_K(E)\leq E$ (by Lemma \ref{O8-order of KleqH}), we have that $\mathcal{Z}(T)\leq E$ and so $\mathcal{Z}(T)=\mathcal{Z}(E)=\<t\>$ as $E$
is extraspecial. Since $K/E$ acts faithfully on $\bar{E}$, $\mathcal{Z}(\bar{T})\leq \bar{E}$ and so
$|\mathcal{Z}(\bar{T})|=2$.
\end{proof}

The following lemma will allow us to apply the strongly $2$-closed arguments in Lemma
\ref{prelim-strongly closed}.

\begin{lemma}\label{O8-centralizers on E}
Let $g \in T^\ddagger \bs E$ then $|C_{\bar{E}}(g)|=2^4$.
\end{lemma}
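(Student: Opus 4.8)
The plan is to reduce the assertion to a statement about the complement $\langle t_1,t_2,t_3\rangle$ acting on $\bar E$, and then to read off both a lower and an upper bound for $|C_{\bar E}(g)|$ from results already in hand. First I would use the semidirect decomposition $T^\ddagger=E\langle t_1,t_2,t_3\rangle$ together with $E\cap\langle t_1,t_2,t_3\rangle=1$ (established just before the statement) to write $g=e\tau$ with $e\in E$ and $\tau\in\langle t_1,t_2,t_3\rangle$; here $\tau\neq 1$ precisely because $g\notin E$. Since $E\cong 2_+^{1+8}$ by Lemma \ref{O8-describing E}, the quotient $\bar E=E/\mathcal{Z}(E)$ is elementary abelian of order $2^8$ and every inner automorphism of $E$ acts trivially on it; hence $x^g=x^\tau$ for all $x\in\bar E$, so $C_{\bar E}(g)=C_{\bar E}(\tau)$. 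It therefore suffices to prove $|C_{\bar E}(\tau)|=2^4$ for each of the seven involutions $\tau=t_1^{a_1}t_2^{a_2}t_3^{a_3}\in\langle t_1,t_2,t_3\rangle^\#$.

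For the lower bound I would simply note that $\tau$ is an involution acting on the elementary abelian $2$-group $\bar E$, so its action is quadratic and, by Lemma \ref{lem-cenhalfspace}$(ii)$, $|C_{\bar E}(\tau)|^2\geq|\bar E|=2^8$, giving $|C_{\bar E}(\tau)|\geq 2^4$.

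The main step is the upper bound, and the key observation is that it can be obtained uniformly for every $\tau$. Recall from the construction of the $t_i$ (cf.\ Lemma \ref{O8-Omega orbits etc}, where $t_1t_2t_3$ is shown to invert $R$) that $t_i$ inverts $D_i=[R,t_i]$ while centralizing $D_jD_k$; consequently $\tau$ acts on each $D_i$ as $(-1)^{a_i}$, and since $\tau\neq 1$ there is some index $i$ with $a_i=1$, for which $\tau$ inverts $D_i$. By Lemma \ref{O8-D_i acts fpf on E and Centre T is t} this $D_i$ acts fixed-point-freely on $\bar E$, so $C_{\bar E}(D_i)=1$. Passing to $N_H(E)/\langle t\rangle$, in which $\bar E$ is a normal elementary abelian $2$-subgroup of order $2^8$, the coset of $\tau$ has order two and inverts the coset of $D_i$, which has order three; I would then apply Lemma \ref{Prelims-centralizers of invs on a vspace which invert a 3} with $n=8$ and $a=0$ to get $|C_{\bar E}(\tau)|\leq 2^{(8+0)/2}=2^4$. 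Combining this with the lower bound yields $|C_{\bar E}(\tau)|=2^4$, hence $|C_{\bar E}(g)|=2^4$ as required.

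I expect no genuine obstacle here, only bookkeeping: I must check that the orders of the relevant cosets and the inversion relation descend correctly to $N_H(E)/\langle t\rangle$ so that Lemma \ref{Prelims-centralizers of invs on a vspace which invert a 3} genuinely applies (using $\tau^2=1$, $\tau\notin E$, and $|D_i|=3$ coprime to the order of $\langle t\rangle$), and that the reduction $C_{\bar E}(g)=C_{\bar E}(\tau)$ is valid, which follows from the triviality of inner automorphisms of the extraspecial group $E$ on $\bar E$. The conceptual content is entirely concentrated in the single uniform application of the inversion lemma to a fixed-point-free $D_i$ that $\tau$ inverts.
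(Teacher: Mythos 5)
Your proof is correct and follows essentially the same route as the paper's: replace $g$ by its $\langle t_1,t_2,t_3\rangle$-part modulo $E$ (valid since inner automorphisms of the extraspecial group $E$ act trivially on $\bar{E}$), observe that any such nontrivial element inverts some $D_i$, and then combine Lemma \ref{Prelims-centralizers of invs on a vspace which invert a 3} (with $C_{\bar{E}}(D_i)=1$ from Lemma \ref{O8-D_i acts fpf on E and Centre T is t}) for the upper bound with Lemma \ref{lem-cenhalfspace} for the lower bound. The only difference is that you spell out the reduction $C_{\bar{E}}(g)=C_{\bar{E}}(\tau)$ explicitly, which the paper leaves implicit.
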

\begin{proof}
Let $g \in T^\ddagger \bs E$ and let $x \in \<t_1,t_2,t_3\>$ such that $Eg=Ex$. Then  for $\{i,j,k\}=\{1,2,3\}$, we have that $x$  equals $Et_i$, $Et_it_j$ or $Et_it_jt_k$ and so, in any case, inverts $D_i$ (see
Lemma \ref{O8-Omega orbits etc}). By Lemma \ref{O8-D_i acts fpf on E and Centre T is t}, $C_{\bar{E}}(D_i)=1$ and so by Lemma \ref{Prelims-centralizers of invs on a vspace which invert a 3}, $|C_{\bar{E}}(x)|\leq 2^4$. However $|C_{\bar{E}}(x)|\geq 2^4$ by Lemma
\ref{lem-cenhalfspace} and so $|C_{\bar{E}}(x)|= 2^4$.
\end{proof}

\begin{lemma}\label{O8-T is a sylow 2 subgroup}
We have $T \in \syl_2(G)$.
\end{lemma}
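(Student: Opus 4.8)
The plan is to pass to the quotient $\bar H:=H/\langle t\rangle$ and apply the strong closure criterion of Lemma~\ref{prelim-strongly closed} to the elementary abelian group $\bar E=E/\langle t\rangle$, deducing that $\bar T\in\syl_2(\bar H)$; the assertion for $G$ will then drop out of the fact that $\mathcal{Z}(T)=\langle t\rangle$. Since $E\cong 2_+^{1+8}$ has centre $\langle t\rangle$, the group $\bar E$ is elementary abelian of order $2^8$. As $E\ni t$, normalising $E$ is the same as normalising $\bar E$, so $N_{\bar H}(\bar E)=\bar K$ where $K=N_H(E)$, and $\bar T\in\syl_2(\bar K)$ because $T\in\syl_2(K)$. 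I would also record that $\bar E$ is self-centralising in $\bar K$: by Lemma~\ref{O8-order of KleqH} we have $C_K(E)\le E$, and since $E$ is extraspecial the only automorphisms of $E$ trivial on $E/\mathcal{Z}(E)$ are inner; hence any $h\in K$ with $[E,h]\le\langle t\rangle$ already lies in $E$, giving $C_{\bar K}(\bar E)=\bar E$.

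Next I would check the hypothesis of Lemma~\ref{prelim-strongly closed} for $\bar E\le\bar T\in\syl_2(N_{\bar H}(\bar E))$, namely that $m(\bar E)>m(\bar T/\bar E)+m(C_{\bar E}(\bar x))$ for every $\bar x\in\bar T\bs\bar E$, where $m(\cdot)$ is the order of the largest elementary abelian $2$-subgroup. Here $m(\bar E)=2^8$. By Lemma~\ref{O8-order of KleqH}, $|K|_2\in\{2^{12},2^{13}\}$ while $|E|=2^9$, so $|\bar T/\bar E|=|T/E|\le 2^4$ and hence $m(\bar T/\bar E)\le 2^4$. Because $\bar E$ is self-centralising, each $\bar x\in\bar T\bs\bar E$ fails to centralise $\bar E$, so $C_{\bar E}(\bar x)<\bar E$ and $m(C_{\bar E}(\bar x))=|C_{\bar E}(\bar x)|\le 2^7$. (Lemma~\ref{O8-centralizers on E} in fact gives the much sharper bound $2^4$ on many of these elements, but this is not needed here.) Therefore $m(\bar T/\bar E)+m(C_{\bar E}(\bar x))\le 2^4+2^7<2^8=m(\bar E)$, so the criterion applies and yields that $\bar E$ is strongly closed in $\bar T$ with respect to $\bar H$; in particular $\bar T\in\syl_2(\bar H)$.

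Finally I would descend and transfer the result to $G$. As $\langle t\rangle$ is a $2$-group, $\bar T\in\syl_2(\bar H)$ forces $T\in\syl_2(H)=\syl_2(C_G(t))$. By Lemma~\ref{O8-D_i acts fpf on E and Centre T is t} we have $\mathcal{Z}(T)=\langle t\rangle$, so if $T<T_1\in\syl_2(G)$ then $T<N_{T_1}(T)$, and any $g\in N_{T_1}(T)\bs T$ normalises $\mathcal{Z}(T)=\langle t\rangle$, whence $t^g=t$ and $g\in C_G(t)=H$. This contradicts $T\in\syl_2(H)$, so in fact $T=T_1\in\syl_2(G)$.

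The verification is comfortable precisely because $m$ measures orders rather than ranks, so no delicate case distinction between $C_G(Z)=X$ and $C_G(Z)>X$ is needed in the numerical inequality. The only genuine points to secure are that $N_{\bar H}(\bar E)=\bar K$ with $\bar E$ self-centralising and that $\bar T$ is Sylow in this normaliser, since these are exactly what permit a clean invocation of Lemma~\ref{prelim-strongly closed}. I expect the self-centralising claim to be the main obstacle: it is the one step where $C_K(E)\le E$ must be pushed through the quotient, which requires the structural fact that an automorphism of the extraspecial group $E$ acting trivially on $E/\mathcal{Z}(E)$ is inner. Everything else is bookkeeping together with the standard normaliser-chasing argument at the end.
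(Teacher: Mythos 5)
Your overall route (strong closure of $\bar{E}$ in $\bar{T}$ via Lemma \ref{prelim-strongly closed}, then descent to $H$ and the normalizer chase using $\mathcal{Z}(T)=\<t\>$) differs from the paper's, which proves instead that $E$ is \emph{characteristic} in $T$ and then runs the same normalizer chase. Unfortunately your verification of the hypothesis of Lemma \ref{prelim-strongly closed} contains a genuine gap, and it is exactly the point you flag as making the check ``comfortable'': you read $m(\cdot)$ as the \emph{order} of the largest elementary abelian subgroup. The Goldschmidt--Yoshida result being quoted is a $2$-\emph{rank} inequality, and the paper itself uses it that way in its one genuine application (in the proof that $K=H$ when $C_G(Z)=X$ it writes ``$8=m(\bar{E})>m(\bar{T}/\bar{E})+m(C_{\bar{E}}(\bar{g}))=3+4$ where $m$ indicates the $2$-rank''); the preliminary definition of $m$ as an order is a slip in the paper. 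The distinction is fatal here, because the rank hypothesis implies the order hypothesis but not conversely: with your bounds the ranks give $m(\bar{T}/\bar{E})+m(C_{\bar{E}}(\bar{x}))\leq 4+7=11$, which is \emph{not} less than $m(\bar{E})=8$, so the hypothesis of the actual theorem is never verified. The inequality $2^4+2^7<2^8$ proves nothing.

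Nor is the repair just a matter of plugging in the sharper bound you set aside. Lemma \ref{O8-centralizers on E} gives $|C_{\bar{E}}(g)|=2^4$ only for $g\in T^\ddagger\bs E$, and combining it with the bound $m(\bar{T}/\bar{E})\leq 3$ (obtained from the faithful action of $T/E$ on $RE/E$, embedding $T/E$ into $\GL_3(3)$) would give $3+4=7<8$ and salvage your argument precisely when $T=T^\ddagger$, i.e.\ when $C_G(Z)=X$. But when $C_G(Z)/Q\cong \SL_2(3)\times 2$ one has $T>T^\ddagger$, and for $x\in T\bs T^\ddagger$ (the coset involving $w$) no bound on $C_{\bar{E}}(\bar{x})$ is available --- indeed one expects $|C_{\bar{E}}(\bar{w})|\geq 2^5$ there, so the rank inequality genuinely fails for those elements and Lemma \ref{prelim-strongly closed} cannot be invoked. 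This is exactly why the paper does not use the strong-closure criterion for this lemma: it shows $E$ is characteristic in $T$ directly, observing that any automorphism $\a$ of $T$ with $E^\a\neq E$ forces $EE^\a\cap T^\ddagger>E$ (using $\mathcal{Z}(T/E)\leq T^\ddagger/E$ when $T>T^\ddagger$), while $EE^\a\cap T^\ddagger$ centralizes $\bar{E\cap E^\a}$ of order at least $2^5$, contradicting Lemma \ref{O8-centralizers on E}; from $T\in\syl_2(K)=\syl_2(N_G(E))$ it then gets $T\in\syl_2(H)$ and finally $T\in\syl_2(G)$. Your closing descent coincides with the paper's, but the heart of the proof --- controlling all of $T\bs E$, not just $T^\ddagger\bs E$ --- is missing.
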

\begin{proof}
We show that $E$ is characteristic in $T$. By Lemma \ref{O8-N_H(Z), P,R are sylow} $(v)$, $C_H(R)=\<t\>R$.
Therefore $T/E$ acts faithfully on $RE/E$ and so is isomorphic to a subgroup of $\GL_3(3)$. In
particular the largest elementary abelian $2$-subgroup of $T/E$ has order $2^3$. So suppose $\a$ is
an automorphism of $T$ and $E\neq E^\a \leq T$.  Then $E^\a \vartriangleleft T$, $E^\a E/E$ is
elementary abelian of order at most $2^3$ and  $\bar{E\cap E^\a}$ has order at least $2^5$ and is
central in $\bar{EE^\a}$. If $T=T^\ddagger$ then we have that $EE^\a\leq T^\ddagger$. So suppose that
$T^\ddagger<T$. Then we have that $T/E$ is non-abelian and so $\mathcal{Z}(T/E)\leq T^\ddagger/E$. Since $EE^\a
/E\vartriangleleft T$, $1 \neq EE^\a/E \cap \mathcal{Z}(T/E) \leq EE^\a/E \cap T^\ddagger/E$.

Thus in either case we have that $1 \neq EE^\a/E \cap T^\ddagger/E$ and so $EE^\a \cap T^\ddagger>E$. By Lemma \ref{O8-centralizers on E},
$|C_{\bar{E}}(EE^\a \cap T^\ddagger)|\leq 2^4$. However $EE^\a \cap T^\ddagger$ centralizes
$\bar{E\cap E^\a}$ which has order at least $2^5$. This is a contradiction. Therefore $E$ is
characteristic in $T$. So let $S\in \syl_2(H)$ and suppose $T<S$. Then $T<N_S(T)$ and $N_S(T)$
normalizes $E$. Therefore $T<N_S(T) \leq N_G(E)=K$. This is a contradiction as $T \in \syl_2(K)$.
Therefore $T \in \syl_2(H)$. Now since $\mathcal{Z}(T)$, the same argument proves that $T \in \syl_2(G)$.
\end{proof}

Recall that when $C_G(Z)/Q \cong \SL_2(3) \times 2$ we have chosen an involution $w$ as in Lemma \ref{O8-first results}. Furthermore, recall that in Lemma \ref{O8-centralizer of w} we proved that if $C_G(Z)/Q \cong \SL_2(3) \times 2$, then either $C_G(w)/\<w\>\cong \mathrm{SO}^-_6(2)$ or $C_G(w)/\<w\>\cong \mathrm{SO}_7(2)$. We are now able to be more precise.
\begin{lemma}
If $C_G(Z)/Q \cong \SL_2(3) \times 2$, then $C_G(w)/\<w\>\cong \mathrm{SO}_7(2)$.
\end{lemma}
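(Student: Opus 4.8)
The plan is to argue by contradiction. By Lemma~\ref{O8-centralizer of w} we already know $C_G(w)/\langle w\rangle$ is isomorphic to $\mathrm{SO}^-_6(2)$ or $\mathrm{SO}_7(2)$, so I assume $C_G(w)/\langle w\rangle\cong\mathrm{SO}^-_6(2)$ and seek a contradiction. The groups $\mathrm{SO}^-_6(2)\cong\aut(\PSp_4(3))$ and $\mathrm{SO}_7(2)\cong\Sp_6(2)$ have essentially the same $3$-central local structure, which is why Prince's Theorem~\ref{prince} could not separate them; the feature that distinguishes them is recorded in Lemmas~\ref{Prelims-distinguishPSp62} and~\ref{prelims-PSp43 normalizer J}, namely that $\mathrm{SO}_7(2)$ contains an element of order three commuting with a subgroup isomorphic to $\sym(6)$, whereas $\mathrm{SO}^-_6(2)$ does not. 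Thus it suffices to exhibit inside $C_G(w)/\langle w\rangle$ an element of order three which commutes with a subgroup isomorphic to $\sym(6)$: this contradicts Lemma~\ref{prelims-PSp43 normalizer J}$(ii)$ and forces $C_G(w)/\langle w\rangle\cong\mathrm{SO}_7(2)$.

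To produce such a configuration I would first locate a $3\mathcal{A}$-element commuting with $w$. Since $w\in N_G(S)$ normalises $J=J(S)$, it acts on the order-$27$ group $C_J(w)$, and using the class distribution of $J^\#$ from Lemma~\ref{O8-normalizer of J} together with the action of $w$ on $C_J(t)$ (coprime action, as in Lemma~\ref{O8-U^t_i's}) I would show $C_J(w)\cap 3\mathcal{A}\neq\emptyset$; pick $\alpha$ in this set. By Lemma~\ref{O8-AutPSp43 and Centralizer of d}, $C_G(\alpha)\cong 3\times\mathrm{SO}^-_6(2)$, so writing $M_\alpha:=O^3(C_G(\alpha))\cong\mathrm{SO}^-_6(2)$ we have $C_G(\alpha)=\langle\alpha\rangle\times M_\alpha$. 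Because $w$ is an involution centralising $\alpha$, its $\langle\alpha\rangle$-component is trivial and hence $w\in M_\alpha$; that is, $w$ is an involution of $M_\alpha\cong\mathrm{SO}^-_6(2)$, and $\alpha$ centralises $C_{M_\alpha}(w)$. The point is then that $\alpha$, an element of order three of $C_G(w)$, commutes with $C_{M_\alpha}(w)$, so I only need $C_{M_\alpha}(w)$ to contain a $\sym(6)$; by Lemma~\ref{prelims-PSp43 normalizer J}$(ii)$ this happens precisely when $w$ lies in the outer coset $M_\alpha\setminus M_\alpha'$, in which case $C_{M_\alpha}(w)\cong 2\times\sym(6)$.

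The main obstacle is therefore to show that $w$ acts on $M_\alpha$ as an outer involution for a suitable $\alpha$. I would rule out the remaining possibilities for the $M_\alpha$-class of $w$. The trivial action is impossible, since it would give $C_G(w)\supseteq\langle\alpha\rangle\times M_\alpha=C_G(\alpha)$, whose order $3\cdot|\mathrm{SO}^-_6(2)|$ exceeds the order $2\cdot|\mathrm{SO}^-_6(2)|$ forced on $C_G(w)$ in the $\mathrm{SO}^-_6(2)$ scenario. If $w$ induced on $M_\alpha'\cong\PSp_4(3)$ the non-$2$-central involution of Lemma~\ref{Prelims-PSp4(3) involutions}, then $C_{M_\alpha'}(w)$ would have a Sylow $3$-subgroup of order at most $3$, contradicting that $C_J(w)\leq C_G(\langle\alpha,w\rangle)$ forces a Sylow $3$-subgroup of order at least $27$. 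This leaves only the $2$-central inner involution, the type that $t$ induces on the analogous $\PSp_4(3)$-sections in Lemma~\ref{O8-U^t_i's} and in the construction of $E\cong 2_+^{1+8}$, and the outer involution. To eliminate the $2$-central inner case I would use that $w$ is not $G$-conjugate to $t$ (Lemma~\ref{O8-t not conj tw}): an involution inducing a $2$-central involution on $C_G(\alpha)'$ for every $\alpha\in C_J(w)\cap 3\mathcal{A}$ reproduces, via Lemma~\ref{Prelims-PSp4(3) involutions}$(iii)$, exactly the extraspecial $2$-local configuration that characterises the class of $t$, whence $w$ would be fused to $t$. Hence for some $\alpha$ the involution $w$ must be outer on $M_\alpha$, yielding the forbidden $\langle\alpha\rangle\times\sym(6)$ inside $C_G(w)$ and the desired contradiction. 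I expect this fusion step, distinguishing the outer action from the $2$-central inner action of $w$ at the level of the $3\mathcal{A}$-centralizers, to be the genuinely delicate part of the argument.
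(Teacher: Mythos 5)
Your overall route is the paper's: produce an element of $3\mathcal{A}$ in $C_J(w)$, use $C_G(\alpha)\cong 3\times \mathrm{SO}^-_6(2)$ (Lemma \ref{O8-AutPSp43 and Centralizer of d}), show $w$ lies outside the derived subgroup, and apply Lemma \ref{prelims-PSp43 normalizer J}$(ii)$ twice. However, the step you yourself flag as ``genuinely delicate'' --- eliminating the case that $w$ is a $2$-central involution of the $\PSp_4(3)$-section --- is where your argument has a real gap. The inference ``$w$ reproduces the extraspecial $2$-local configuration that characterises the class of $t$, whence $w$ is fused to $t$'' is not valid: two involutions with isomorphic-looking $2$-local data need not be conjugate (turning such data into a conjugacy class is exactly what identification theorems like those of Smith, Segev and Aschbacher are for), so as written this case is not closed. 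A second, smaller, gap: your method for finding $\alpha$ cannot rest on the class distribution of $J^\#$ alone, since $|J\cap(3\mathcal{C}\cup 3\mathcal{D})|=56$ (Lemmas \ref{O8-normalizer of J} and \ref{O8-PSp43}) comfortably exceeds the $26$ non-trivial elements of a subgroup of order $27$; one needs structural information about $R=C_J(t)$.

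The paper resolves both points with one choice you did not quite make: it takes $a'\in C_J(w)\cap R\cap 3\mathcal{A}$, which exists because $|C_J(w)|=3^3$ (Lemma \ref{o8-Order of N_G(S)}$(ii)$) and $[R,w]\neq 1$ force $|C_J(w)\cap R|=9$, and every index-three subgroup of $R$ meets $3\mathcal{A}$ (Lemma \ref{O8-subgroups of R of order nine}). This choice puts $a'$ in $\Omega_2$ (Lemma \ref{O8-Omega orbits etc}), so $C_E(a')\cong 2_+^{1+4}$ (Lemma \ref{O8-describing E}$(i)$), and hence $t$ itself lies in $[C_G(a'),C_G(a')]\cong \Omega^-_6(2)$ and is $2$-central there. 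Now both of your inner cases collapse into one line: if $w\in [C_G(a'),C_G(a')]$, then $w$ commutes with the order-nine group $C_J(w)\cap [C_G(a'),C_G(a')]$, so by Lemma \ref{Prelims-PSp4(3) involutions}$(i)$ it is $2$-central in $[C_G(a'),C_G(a')]$; since $\PSp_4(3)$ has a single class of $2$-central involutions, $w$ is conjugate to $t$ \emph{inside that subgroup}, contradicting Lemma \ref{O8-t not conj tw}. So the ingredients you cite are the right ones, but the fusion must be obtained by anchoring $t$ and $w$ in the same $\PSp_4(3)$ and conjugating there, not by matching centralizer shapes; with $a'$ chosen in $C_J(w)\cap R\cap 3\mathcal{A}$ this is immediate, and the remainder of your argument then goes through exactly as in the paper.
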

\begin{proof}
By Lemma  \ref{o8-Order of N_G(S)} $(ii)$, $|C_J(w)|=3^3$ and by Lemma \ref{O8-N_H(Z), P,R are sylow} $(v)$, $C_H(R)=R\<t\>$. Therefore $[R,w]\neq 1$ and $|C_J(w)\cap R|= 9$.
Hence we may apply Lemma \ref{O8-subgroups of R of order nine} to give us that $C_J(w)\cap R$ contains an element in $3 \mathcal{A}$. Let $a' \in C_J(w)\cap R \cap 3 \mathcal{A}$. Then  by Lemma \ref{O8-Omega orbits etc}, $a \in \Omega_2$ and by Lemma \ref{O8-describing E} $(i)$, $C_E(a')\cong 2_+^{1+4}$. It therefore follows that $t \in  [C_G(a'),C_G(a')]\cong \Omega^-_6(2)$ and by Lemma \ref{Prelims-PSp4(3) involutions}, $t$ is $2$-central in $[C_G(a'),C_G(a')]$.

By Lemma \ref{O8-t not conj tw}, $w$ is not $G$-conjugate to $t$. Suppose that $w \in  [C_G(a'),C_G(a')]\cong \Omega^-_6(2)$. Then, since $w$ commutes with $C_J(w)\cong 3^3$, we apply Lemma \ref{Prelims-PSp4(3) involutions} again to see that $w$ is $2$-central in $[C_G(a'),C_G(a')]$. However this would force $w$ to be conjugate to $t$ which is a contradiction.

Thus $w\notin [C_G(a'),C_G(a')]\cong \Omega^-_6(2)$ and so by Lemma \ref{prelims-PSp43 normalizer J} $(ii)$,  $C_G(a') \cap
C_G(w)\cong 3\times ( 2 \times \sym(6))$. Thus $C_G(w)/\<w\>$ contains an element of order three with
centralizer $3 \times \sym(6)$. We again apply Lemma \ref{prelims-PSp43 normalizer J} $(ii)$ to see that  $C_G(w)/\<w\>\ncong \mathrm{SO}^-_6(2)$. We can therefore conclude that $C_G(w)/\<w\>\cong
\mathrm{SO}_7(2)$.
\end{proof}

Recall that $X=O^2(C_G(Z))$ and so in the case that $C_G(Z)/Q\cong \SL_2(3) \times 2$, $C_G(Z)>X$.
\begin{lemma}
Suppose $C_G(Z)/Q \cong \SL_2(3) \times 2$. Then $w \notin O^2(G)$ and $C_{O^2(G)}(Z)=X$.
\end{lemma}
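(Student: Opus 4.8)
The plan is to prove $w\notin O^2(G)$ by a transfer argument, and then to read off $C_{O^2(G)}(Z)=X$ as a formal consequence. Since we are in the case $C_G(Z)/Q\cong\SL_2(3)\times 2$, Lemma~\ref{O8-order of KleqH} gives $|K|=2^{13}3^4$, so $T\in\syl_2(G)$ (Lemma~\ref{O8-T is a sylow 2 subgroup}) has order $2^{13}$; as $|T^\ddagger|=2^{12}$ and $T^\ddagger=E\langle t_1,t_2,t_3\rangle\le K$, the subgroup $T^\ddagger$ is a maximal subgroup of $T$ of index two. I would then apply Thompson's Transfer Lemma (Lemma~\ref{ThompsonTransfer}) with $U:=T^\ddagger$: it suffices to show that $w^G\cap T^\ddagger=\emptyset$. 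Indeed $w$ is a $2$-element, so some $G$-conjugate of $w$ lies in $T$; if no conjugate meets $T^\ddagger$, then that conjugate lies in $T\setminus T^\ddagger$ and is an involution whose entire $G$-class misses $U$, so Lemma~\ref{ThompsonTransfer} yields $w\notin O^2(G)$.

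The heart of the matter is therefore the claim $w^G\cap T^\ddagger=\emptyset$, which I would settle by comparing involution centralizers. The preceding lemma gives $C_G(w)/\langle w\rangle\cong\mathrm{SO}_7(2)\cong\mathrm{Sp}_6(2)$, so $|C_G(w)|=2^{10}3^4\cdot 5\cdot 7$; in particular $w$ is not $2$-central, since $|C_G(w)|_2=2^{10}<2^{13}$, and $w\not\sim t$ (also Lemma~\ref{O8-t not conj tw}). The useful extremality criterion is that if $v=w^g\in T$ then $C_T(v)\le C_G(w)^g$, so any involution $v\in T^\ddagger$ with $|C_T(v)|>2^{10}$ is automatically not conjugate to $w$. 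I would then run through the involutions $v$ of $T^\ddagger$ by their image in $T^\ddagger/E\cong 2^3$: for $v\in E$ this distinguishes $v=t$ from the non-central involutions of $E\cong 2_+^{1+8}$ (for which $|C_E(v)|=2^8$), while for $v\in T^\ddagger\setminus E$ one has $|C_{\bar E}(v)|=2^4$ by Lemma~\ref{O8-centralizers on E} together with the fixed-point-free action of the $D_i$ from Lemma~\ref{O8-D_i acts fpf on E and Centre T is t}. Using the orbit data on $\Pi_1,\dots,\Pi_4$ already obtained, and the structure of involution centralizers inside $C_G(a)'\cong\Omega^-_6(2)$ (Lemma~\ref{Prelims-PSp4(3) involutions}) which governs $E=E_0E_1E_2E_3$, one identifies the $G$-class of each such $v$ and verifies in every case that the centralizer order (its $2$-part, or divisibility by $7$, or $2$-centrality) is incompatible with $2^{10}3^4\cdot 5\cdot 7$.

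The main obstacle is exactly this involution-fusion bookkeeping: one must show that none of the several $G$-classes of involutions meeting $T^\ddagger$ coincides with $w^G$. I expect to control it by exploiting that $t$ is the unique $2$-central class in $T$ (so conjugates of $w$ avoid $\mathcal{Z}(T)=\langle t\rangle$, and more usefully have strictly smaller centralizer $2$-part than $t$), and that all remaining involutions of $T^\ddagger$ lie in the ``inner'' configuration assembled from $E$ and the $\SL_2(3)$-pieces, whose centralizers are computable locally and can be matched against $\mathrm{Sp}_6(2)$; the classes forced into $E$ by the extremality criterion above are the delicate ones.

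Finally, granting $w\notin O^2(G)$, the remaining assertion is immediate. Since $X=O^2(C_G(Z))$ is generated by elements of odd order, it is contained in $O^2(G)$, whence $X\le C_{O^2(G)}(Z)\le C_G(Z)$. As $C_G(Z)/Q\cong\SL_2(3)\times 2$ while $X/Q\cong\SL_2(3)$, we have $[C_G(Z):X]=2$, so $C_{O^2(G)}(Z)$ equals either $X$ or $C_G(Z)$; the latter would force $w\in C_G(Z)=C_{O^2(G)}(Z)\le O^2(G)$, contradicting the first part. Hence $C_{O^2(G)}(Z)=X$.
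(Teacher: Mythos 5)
Your final paragraph (deducing $C_{O^2(G)}(Z)=X$ from $w \notin O^2(G)$) is correct and is exactly the paper's "it is now clear" step. The problem is in the first half: you replace the paper's key tool, extremal transfer (Theorem \ref{extremal transfer}), with Thompson's Transfer Lemma, and that substitution creates a genuine gap. Thompson transfer requires you to prove outright that $w^G \cap T^\ddagger = \emptyset$, i.e.\ to rule out \emph{every} involution of $T^\ddagger$ as a conjugate of $w$. Your only quantitative criterion is the upper bound $|C_T(v)| \leq |C_G(w)|_2 = 2^{10}$ for $v \in w^G \cap T$, so you can only eliminate involutions whose centralizer in $T$ is \emph{large}. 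That works (with a little care about choosing $K$-orbit representatives) for involutions inside $E$, where the orbit lengths $27$ and $54$ force $2$-parts of centralizers of size at least $2^{11}$. But for $v \in T^\ddagger \bs E$ the criterion is vacuous: Lemma \ref{O8-centralizers on E} gives $|C_{\bar{E}}(v)| = 2^4$, whence $|C_E(v)| \leq 2^5$ and $|C_T(v)| \leq 2^5 \cdot [T:E] = 2^9 < 2^{10}$, and a \emph{small} $C_T(v)$ is entirely consistent with $v$ being a conjugate of $w$ — a given conjugate of $w$ in $T$ need not have $C_T$ of full order $2^{10}$. So the case you yourself flag as "delicate" cannot be settled by centralizer orders at all, and the promised identification of the $G$-class of each involution in $T^\ddagger \bs E$ is neither carried out nor obtainable from the cited lemmas (the paper never determines this fusion, because it never needs to).

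The paper's proof is built precisely to avoid this. Extremal transfer gives a \emph{lower} bound on the right conjugate: if $w \in O^2(G)$, then there exists $g$ with $w^g \in T^\ddagger$ and $C_T(w^g) \in \syl_2(C_G(w^g))$, so $|C_T(w^g)| = 2^{10}$ exactly. If $w^g \in E$, the orbit lengths of $\Pi_2$ and $\Pi_4$ force $|C_T(w^g)| \geq 2^{11}$, a contradiction; if $w^g \in T^\ddagger \bs E$, then $|C_E(w^g)| \geq 2^{10}/[T:E] = 2^6$, hence $|C_{\bar{E}}(Ew^g)| \geq 2^5$, contradicting $|C_{\bar{E}}(Ew^g)| = 2^4$ from Lemma \ref{O8-centralizers on E}. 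It is exactly the Sylow property of $C_T(w^g)$ — which Thompson transfer does not provide — that kills the outside-$E$ case in two lines. To repair your argument you should either switch to extremal transfer as the paper does, or supply a complete fusion analysis of the involutions in $T^\ddagger \bs E$, which would be a substantially longer and essentially different proof.
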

\begin{proof}
Recall that $w$ normalizes $J$ and centralizes $t$ and therefore normalizes $C_J(t)=R$. Hence $w
\in N_H(R) \leq K$. We have that $C_G(w)/\<w\>\cong \mathrm{SO}_7(2)$. We assume for a
contradiction that $w \in O^2(G)$ and so by Theorem \ref{extremal transfer}, we may suppose that
there exists $g \in G$ such that $w^g \in T^\ddagger$ and $C_T(w^g)\in \syl_2(C_G(w^g))$ which has
order $2^{10}$.

Suppose first that $w^g \in E$. Since $w$ is an involution, $\bar{w^g} \in \Pi_2 \cup \Pi_4$. So
$\bar{w^g}$ lies in a $K$-orbit of length either $54$ or $27$. Hence $|C_{\bar{T}}(\bar{w^g})|\geq
2^{11}$ which implies that $|C_{{T}}({w^g})|\geq 2^{11}$. Therefore $w^g \notin E$. So we have $w^g
\in T^\ddagger\bs E$. Since $|C_T(w^g)|=2^{10}$, $|C_E(w^g)|\geq 2^6$. Therefore
$|C_{\bar{E}}(Ew^g)|\geq 2^5$. This contradicts Lemma \ref{O8-centralizers on E} $(i)$. Thus $w \notin O^2(G)$. It is now clear that $C_{O^2(G)}(Z)=X$.
\end{proof}

We may now apply all previous results when $C_G(Z)=X$ to $O^2(G)$. Recall that when $C_G(Z)=X$, Lemma \ref{O8-index three subgroup} proves that  $G$ has an index three subgroup $\wt{G}$. Recall that for any subgroup $B \leq G$, we define $\wt{B}= B \cap \wt{G}$.

\begin{lemma}
If $C_G(Z)=X$ then $K=H$ and $\wt{H}/E\cong \sym(3) \times\sym(3) \times \sym(3)$.
\end{lemma}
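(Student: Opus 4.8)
The plan is to establish the two assertions in turn: that $E\normal H$, which immediately gives $K=N_H(E)=H$, and then that $\wt H/E\cong\sym(3)^3$.

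First I would reduce the normality of $E$ to the identity $O_2(H)=E$. Since $\langle t\rangle\le\mathcal{Z}(H)$ we have $\langle t\rangle\le O_2(H)\ne1$, and as $O_2(H)$ is a $2$-group (hence a $3'$-group) normalised by $R$, Lemma~\ref{O8-describing E}$(iii)$ forces $O_2(H)\le E$. Because $O_2(H)\normal K$ and $K$ acts irreducibly on $\bar E:=E/\langle t\rangle$ (Lemma~\ref{O8-D_i acts fpf on E and Centre T is t}), the image $\overline{O_2(H)}$ is $1$ or $\bar E$; hence $O_2(H)\in\{\langle t\rangle,E\}$. I would also record that $C_H(E)=C_K(E)\le E$ (Lemma~\ref{O8-order of KleqH}), so that $C_H(E)=\langle t\rangle$, and that $T\in\syl_2(H)$ and $P\in\syl_3(H)$ both lie in $K$.

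The heart of the argument is to exclude $O_2(H)=\langle t\rangle$. I would pass to $\bar H:=H/\langle t\rangle$ and show that $\bar E$, which is elementary abelian of order $2^8$, is strongly closed in $\bar T\in\syl_2(N_{\bar H}(\bar E))$ with respect to $\bar H$. This I would deduce from the Goldschmidt--Yoshida criterion (Lemma~\ref{prelim-strongly closed}): here $\bar T/\bar E\cong T/E$ has order $2^3$, and since $T=T^{\ddagger}$ in the present case every $x\in\bar T\setminus\bar E$ satisfies $|C_{\bar E}(x)|=2^4$ by Lemma~\ref{O8-centralizers on E}, so the hypotheses of Lemma~\ref{prelim-strongly closed} are met. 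Now assume $O_2(H)=\langle t\rangle$, so that $O_2(\bar H)=1$. Applying Goldschmidt's Theorem~\ref{goldschmidt} to the section $\langle\bar E^{\bar H}\rangle$ modulo its odd core (strong closure passing to this section) would exhibit that section as the product of its components, with $\bar E$ mapping onto $\Omega$ of a Sylow $2$-subgroup. I expect this reconciliation to be the main obstacle: the output must be contradicted using the $3$-local information already assembled --- the fixed-point-free action of the $D_i$ on $\bar E$ (Lemma~\ref{O8-D_i acts fpf on E and Centre T is t}), the known shapes of the centralisers of the $3\mathcal A$-, $3\mathcal C$- and $3\mathcal D$-elements, and the presence of four classes of elements of order three in $H$ --- none of which is compatible with $\bar E$ occurring as $\Omega(\mathrm{Syl}_2)$ in a group with trivial $O_2$. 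Thus $O_2(H)=E$, whence $E\normal H$ and $K=N_H(E)=H$.

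For the final isomorphism I would work inside $K=H$. By Lemma~\ref{O8-Omega orbits etc} the subgroup $R\langle t_1,t_2,t_3\rangle$ is isomorphic to $\sym(3)^3$, and $R\langle t_1,t_2,t_3\rangle\cap E=1$ since $E\cap\langle t_1,t_2,t_3\rangle=1$ and $R$ is a $3$-group; hence $ER\langle t_1,t_2,t_3\rangle/E\cong\sym(3)^3$. Next I would check that this subgroup lies in $\wt G$: by Lemma~\ref{O8-index three subgroup} and the remark following it the classes $3\mathcal A,3\mathcal C,3\mathcal D$ lie in $\wt G$, so $R^{\#}\subseteq\wt G$, while each $t_i$, being a $2$-element, lies in $O^3(G)=\wt G$, and $E\le\wt G$; therefore $ER\langle t_1,t_2,t_3\rangle\le\wt H$. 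Finally $[H:\wt H]=3$, since $b\in 3\mathcal B\subseteq H\setminus\wt G$, so $|\wt H/E|=\tfrac13|K/E|=\tfrac13\cdot2^33^4=216=|ER\langle t_1,t_2,t_3\rangle/E|$; a comparison of orders then yields $\wt H=ER\langle t_1,t_2,t_3\rangle$ and $\wt H/E\cong\sym(3)^3$.
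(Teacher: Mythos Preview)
Your overall strategy --- strong closure via Lemma~\ref{prelim-strongly closed}, then Goldschmidt --- matches the paper, and your treatment of the second assertion ($\wt H/E\cong\sym(3)^3$) is correct and essentially identical to the paper's. The gap is exactly where you flag it: the contradiction after Goldschmidt. Your appeal to ``$3$-local information'', the $D_i$-action, and the number of classes of $3$-elements is not an argument, and in fact the contradiction the paper obtains uses none of those things.

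Here is what you are missing. Rather than assuming $O_2(H)=\langle t\rangle$ and working in $\bar H$, the paper sets $F:=\langle E^H\rangle$ and works in $\bar F$. Two preliminary facts are established for $F$: first, $O_{2'}(F)=1$ (if it were a $3'$-group it would lie in $E$ by Lemma~\ref{O8-describing E}$(iii)$; if $3$ divided its order then an element of $R$ would centralise $E$, which is false). Second, and crucially, $T\cap F>E$: if $K\cap F=ER$ one checks $N_F(R)=C_F(R)=R\langle t\rangle$, so Burnside's normal $p$-complement theorem gives a normal $3$-complement in $F$, forcing $E\normal H$, a contradiction; hence $K\cap F>ER$ and $E$ is not Sylow in $F$. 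Now Goldschmidt applied to $\bar F=\langle\bar E^{\bar F}\rangle$ (Frattini gives $H=N_H(R)F$, and $N_H(R)\leq K$ normalises $E$, so indeed $\bar F=\langle\bar E^{\bar F}\rangle$) yields $\bar E=O_2(\bar F)\Omega(\overline{T\cap F})$, in particular $\Omega(\overline{T\cap F})\leq\bar E$. But $T=E\langle t_1,t_2,t_3\rangle$ with the $t_i$ involutions, so every nontrivial coset of $E$ in $T\cap F$ is $Ex$ for some involution $x\in\langle t_1,t_2,t_3\rangle$; since $E\leq F$ this $x$ lies in $F$, hence in $T\cap F$. Together with $\Omega(E)=E$ this gives $\Omega(T\cap F)=T\cap F>E$, so $\Omega(\overline{T\cap F})\not\leq\bar E$ --- the required contradiction. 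The point is purely $2$-local: the complement $\langle t_1,t_2,t_3\rangle$ consists of involutions, so $\Omega$ cannot be trapped inside $\bar E$.
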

\begin{proof}
Assume $K<H$. Then $E \ntrianglelefteq H$. Set $F:=\<E^H\> >E$. By Lemma \ref{O8-index three
subgroup}, $G$ has an index three normal subgroup $\wt G$ and $S \cap \wt{G}=C_S(A)$. Therefore $P \cap
\wt G=R$ and so $\wt H < H$. Clearly $F \leq \wt H$.  By Lemma \ref{O8-describing E} $(iii)$, $E$ is the unique largest $3'$-subgroup
of $H$ normalized by $R$ and so $R\cap F\neq 1$. Since $N_H(R)$ acts irreducibly on $R$, this implies $R
\leq F$. Suppose $K \cap F=ER$. Then by Lemma \ref{O8-describing E}, $N_E(R)=\<t\>$ and $N_H(R)\leq K$ and so
$N_F(R)=R\<t\>=C_F(R)$. Therefore, by Theorem \ref{Burnside-normal p complement}, $F$ has a normal $3$-complement which implies $E\trianglelefteq
H$ which is a contradiction. So $K \cap F>ER$ and it follows that $E$ is not a Sylow $2$-subgroup
of $F$. Thus $T\cap F> E$. Recall that $T=T^\ddagger=E\<t_1,t_2,t_3\>$ where $\<t_1,t_2,t_3\>$ is
elementary abelian and so $\Omega(T\cap F)=T \cap F$.

Set $N:=O_{2'}(F)$. If $N$ is a $3'$-subgroup of $F$ then by Lemma \ref{O8-describing E} $(iii)$, $N\leq E$
which implies $N=1$. So suppose $R \cap N \neq 1$. Then $[R \cap N,E] \leq N \cap E=1$. However no
element of order three in $R$ acts trivially on $E$. Thus $N=O_{2'}(F)=1$.

By Lemma \ref{O8-T is a sylow 2 subgroup}, $\bar{T}\in \syl_2(\bar{H})$ and using  Lemma \ref{O8-centralizers on E} we see that if $\bar{g} \in \bar{T} \bs \bar{E}$ then $|C_{\bar{E}}(\bar{g})|=2^4$. Therefore  we may apply \ref{prelim-strongly closed} (since $8=m(\bar{E})>m(\bar{T}/\bar{E})+m(C_{\bar{E}}(\bar{g}))=3+4$ where $m$ indicates the $2$-rank) to $\bar{H}$ to say that $\bar{E}$ is strongly
closed in $\bar{T}$ with respect to $\bar{H}$.  Hence $\bar{E}$ is strongly closed in $\bar{F} \cap
\bar{T}$ with respect to $\bar{F}$.

Now we observe that by a Frattini argument, $H=N_H(R)F$ and so $\bar{F}=\<\bar{E}^{\bar{H}}\>=\<\bar{E}^{\bar{N_H(R)F}}\>=\<\bar{E}^{\bar{F}}\>$. Finally we may apply Theorem \ref{goldschmidt} to $\bar{F}=\<\bar{E}^{\bar{F}}\>$ to get that
$\bar{E}=O_2(\bar{F})\Omega(\bar{T}\cap \bar{F})$. However $\Omega(T\cap F)\nleq E$ and so
$\Omega(\bar{T}\cap \bar{F}) \nleq \bar{E}$. This contradiction proves that $K=H$.

Since $\wt{G}\cap S=C_S(a)$, $\wt{H} \cap P=C_P(a)=R$. Therefore $\wt{H}=ER\<t_1,t_2,t_3\>$. By
Lemma \ref{O8-Omega orbits etc} $(i)$, $R\<t_1,t_2,t_3\> \cong \sym(3) \times \sym(3) \times
\sym(3)$. Thus $\wt{H}/E \cong \sym(3) \times \sym(3) \times \sym(3)$.
\end{proof}

\begin{lemma}
If $C_G(Z)/Q \cong \SL_2(3)$, then $G \cong \Omega_8^+(2).3$. If $C_G(Z)/Q \cong
\SL_2(3)\times 2$ then $G \cong \Omega_8^+(2).\sym(3)$.
\end{lemma}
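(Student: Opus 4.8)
The plan is to identify $G$ with the appropriate extension of $\Omega_8^+(2)$ by analysing the centralizer $H=C_G(t)$, which the preceding lemmas have shown has the shape $2^{1+8}_+.(\text{something})$, and then applying Smith's theorem (Theorem \ref{Smith-Orthog}). First I would assemble the structural facts already proved: $E=O_2(H)\cong 2^{1+8}_+$ with $C_H(E)\leq E$ (Lemma \ref{O8-order of KleqH}), $T\in\syl_2(G)$ (Lemma \ref{O8-T is a sylow 2 subgroup}), and in the case $C_G(Z)=X$ the identification $K=H$ together with $\wt H/E\cong\sym(3)\times\sym(3)\times\sym(3)$. The key point is that $F^\ast(C_G(t))=E$ is extraspecial of order $2^9$, $C_G(O_2(C_G(t)))\leq O_2(C_G(t))$, and the quotient $C_G(t)/O_2(C_G(t))$ contains $\sym(3)\times\sym(3)\times\sym(3)$ as an index-one or index-something subgroup. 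This is precisely the hypothesis of Smith's theorem applied to $O^2(G)$ (or to $\wt G$ in the $\SL_2(3)$ case).

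Carrying this out in the case $C_G(Z)/Q\cong\SL_2(3)$: here $\wt G=O^3(G)$ has index three in $G$ by Lemma \ref{O8-index three subgroup}, and I would apply Smith's theorem to $\wt G$ with the involution $t$. I need $t\in O^2(\wt G)=\wt G$ and $F^\ast(C_{\wt G}(t))$ extraspecial of order $2^9$ with the prescribed quotient $\sym(3)^3$; the lemma identifying $\wt H/E\cong\sym(3)\times\sym(3)\times\sym(3)$ supplies exactly the quotient Smith requires, and $C_{\wt G}(E)\leq E$ follows from $C_H(E)\leq E$. Smith's dichotomy then yields either $O_{2'}(\wt G)t\in\mathcal{Z}(\wt G/O_{2'}(\wt G))$ or $\wt G\cong\Omega_8^+(2)$. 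The first alternative must be ruled out: since $t$ is $2$-central with $C_{\wt G}(t)$ having the stated nonabelian shape and $O_{2'}(\wt G)$ is controlled (indeed trivial, as $Y$ normalizes no nontrivial $3'$-group and the $2$-local structure forbids a central involution modulo $O_{2'}$), $t$ cannot be central in the quotient. Hence $\wt G\cong\Omega_8^+(2)$, and since $[G:\wt G]=3$ with $G$ inducing the graph-triality automorphism witnessed by the $\sym(3)^3$ permutation action of $N_H(R)$ on the three factors $D_1,D_2,D_3$, I conclude $G\cong\Omega_8^+(2).3$.

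In the case $C_G(Z)/Q\cong\SL_2(3)\times 2$, the preceding lemmas show $w\notin O^2(G)$ and $C_{O^2(G)}(Z)=X$, so $O^2(G)$ plays the role of the previous $G$: applying all earlier results to $O^2(G)$ gives that $O^2(G)$ has an index-three subgroup $\wt G$ with $\wt G\cong\Omega_8^+(2)$ by the argument of the previous paragraph, whence $O^2(G)\cong\Omega_8^+(2).3$. It then remains to see that the full group $G$ is obtained by adjoining the outer involution $w$, which induces (via $C_G(w)/\langle w\rangle\cong\mathrm{SO}_7(2)$ and its action permuting the triality factors together with an outer reflection) the remaining automorphism extending $\Omega_8^+(2).3$ to $\Omega_8^+(2).\sym(3)$. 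Concretely I would argue that $G/\wt G$ has order six, acting on the three $\Omega_8^+(2)$-triality classes as $\sym(3)$, with $w$ realizing a transposition and $\wt G$-coset of order three realizing the $3$-cycle; since $\aut(\Omega_8^+(2))\cong\Omega_8^+(2).\sym(3)$ and $G$ embeds in it with the correct order, $G\cong\Omega_8^+(2).\sym(3)$.

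The main obstacle will be the bookkeeping needed to verify Smith's hypotheses precisely for $\wt G$ rather than $H$ itself, in particular confirming $t\in O^2(\wt G)$ and that $O_{2'}(\wt G)=1$ so that the degenerate conclusion of Smith's theorem is genuinely excluded; ruling out $O_{2'}(\wt G)t\in\mathcal{Z}(\wt G/O_{2'}(\wt G))$ requires knowing that the nonabelian structure of $C_{\wt G}(t)/E$ survives modulo $O_{2'}$, which in turn rests on the fusion and transfer analysis already carried out. A secondary delicate point is correctly matching the outer structure: distinguishing $\Omega_8^+(2).3$ from $\Omega_8^+(2).\sym(3)$ must be driven entirely by whether $C_G(Z)/Q$ is $\SL_2(3)$ or $\SL_2(3)\times 2$, and I would make this explicit by tracking the image of the distinguished involutions $s,t,w$ in $\out(\Omega_8^+(2))\cong\sym(3)$, using that $w$ is not $G$-conjugate to $t$ (Lemma \ref{O8-t not conj tw}) to guarantee $w$ lies outside $\Omega_8^+(2).3$.
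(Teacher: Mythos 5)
Your proposal is correct and takes essentially the same approach as the paper: Smith's theorem is applied to $\wt{G}$ after verifying $O_{2'}(\wt{G})=1$ (if not, then $3$ divides $|O_{2'}(\wt{G})|$ because $Y$ normalizes no non-trivial $3'$-subgroup, whence $Z\leq O_{2'}(\wt{G})$ and $[Z,E]=1$, a contradiction) and $\wt{G}=O^2(\wt{G})$ (via $\mathcal{Z}(T)=\<t\>$, the irreducible action of $K$ on $\bar{E}$, and the fact that each $t_i$ is conjugate to $t$), after which the two extensions are pinned down exactly as you describe, using $\out(\Omega_8^+(2))\cong \sym(3)$ and the earlier reduction of the $\SL_2(3)\times 2$ case to $O^2(G)$. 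The verification steps you flag as the main obstacle are precisely the body of the paper's proof, and the ingredients you name for resolving them are the correct ones.
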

\begin{proof}
Assume $C_G(Z)/Q \cong \SL_2(3)$ and we will first prove that $\wt{G}\cong \Omega_8^+(2)$. Set $N:=O_{2'}(\wt{G})$ and suppose $N \neq 1$. Then $3\mid |N|$ since
$Y$ normalizes no non-trivial $3'$-subgroup of $G$ by Lemma \ref{O8-3' subgroup normalized by Y}. Therefore $1 \neq S \cap N \trianglelefteq S$
so $\mathcal{Z}(S) \cap N \neq 1$ and therefore $Z \leq N$. Now we have that $Z$ normalizes $E$ and so $[Z,E]
\leq N \cap E=1$. However $Z$ does not centralize $E$. Thus $N=1$. This implies that $t \notin Z^*(\wt G)$ else $t \in \mathcal{Z}(\wt G)$ which is not the case. Now set
$M:=O^2(\wt G)$ and consider $\wt H \cap M$. Since $O_{2'}(\wt G)=1$ and $T \in \syl_2(G)$, $1 \neq
T \cap M\trianglelefteq T$ and so $1 \neq \mathcal{Z}(T) \cap M$. By Lemma \ref{O8-D_i acts fpf on E and Centre T is t} $\mathcal{Z}(T)=\<t\>$ and so $t \in M$. If $E \cap M=\<t\>$ then $[E,R] \leq
E \cap M=\<t\>$ which implies that $R$ acts trivially on $E/\<t\>$ and therefore $[E,R]=1$ which is a contradiction. Thus $E \cap M>\<t\>$. Since $K$
acts irreducibly on $\bar{E}$, we have, $E \leq M$. Recall that $T=E\<t_1,t_2,t_3\>$ is a Sylow
$2$-subgroup of $G$ and therefore a Sylow $2$-subgroup of $\wt G$. Recall also that each $t_i$ is
$G$-conjugate to $t$ and so $t_i \in M$. Hence $T \leq M$ and so we have $\wt G=M$.

We now apply Theorem \ref{Smith-Orthog} to $\wt G$ to say that $\wt G\cong
\Omega_8^+(2)$. Since $\out(\Omega_8^+(2))\cong \sym(3)$, $G \cong
\Omega_8^+(2).3$ is uniquely defined.

Now assume that $C_G(Z)/Q \cong \SL_2(3)\times 2$. Then we have that $O^2(G)\cong
\Omega_8^+(2).3$. Thus $G \cong \Omega_8^+(2).\sym(3)$.
\end{proof}

This completes the proof of Theorem B.

\chapter{A 3-Local Characterization of the Harada--Norton Sporadic Simple Group}\label{Chapter-HN}

In \cite{HaradaHN} in 1975, Harada introduced a  new simple group. He proved that a group with an
involution whose centralizer is a double cover of the automorphism group of the Higman--Sims
sporadic simple group is simple of order $2^{14}.3^6.5^6.7.11.19$. In 1976, in his PhD thesis,
Norton proved such a group exists and thus we have the Harada--Norton sporadic simple group, $\HN$.
The simple group was not proved to be unique until 1992. In \cite{SegevHN}, Segev proves that there
is a unique group $G$ (up to isomorphism) with two involutions $u$ and $t$ such that $C_G(u)\sim (2
^. \HS) : 2$ and $C_G(t)\sim 2_+^{1+8}.(\alt(5)\wr 2)$ with $C_G(O_2(C_G(t)))\leq O_2(C_G(t))$. We
can therefore define the group $\HN$ by the structure of two involution centralizers in this way.

In this chapter, we characterize $\HN$ by the  structure of the centralizer of a $3$-central
element of order three. The hypothesis we consider and the theorem we prove are as follows.
\begin{hypC} Let $G$ be a group and let $Z$ be the centre of a Sylow $3$-subgroup of $G$
with $Q:=O_3(C_G(Z))$. Suppose that
\begin{enumerate}[$(i)$]
\item $Q\cong 3_+^{1+4}$;
\item $C_G(Q)\leq Q$;
\item $Z \neq Z^x \leq Q$ for some $x \in G$; and
\item $C_G(Z)/Q \cong 2^{.}\alt(5)$.
\end{enumerate}
\end{hypC}
\begin{thmC}
If $G$ satisfies Hypothesis C then $G \cong \HN$.
\end{thmC}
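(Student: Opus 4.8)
The plan is to follow the same two-stage strategy used for Theorem B: first apply the general $3$-local theory from Chapter \ref{chaper general hypothesis} (Theorem \ref{A general 3^1+4 theorem}) to pin down the $3$-fusion and the structure of $N_G(Y)$, $N_G(J)$ and the various $3$-centralizers, and then pass to the $2$-local geometry in order to recognize two conjugacy classes of involution centralizer and invoke Segev's theorem (Theorem \ref{Segev-HN}). Since Hypothesis C differs from Hypothesis B only in that $C_G(Z)/Q\cong 2^{.}\alt(5)$ rather than a soluble $\SL_2(3)$-extension, the opening moves are identical: $Z=\mathcal{Z}(Q)$ has order three, $S:=QW\in\syl_3(G)$ has order $3^6$, $J=J(S)$ is elementary abelian of order $3^4$, and $L=\<Q,Q^x\>$ induces $\SL_2(3)$ on $Y$ with $W=O_3(L)$ of order $3^5$. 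I would then determine $N_G(J)/J$ (expected to be a subgroup of an orthogonal-type group on the $4$-space $J$) and classify the conjugacy classes of elements of order three in $S$, using the $2^{.}\alt(5)$ action to control how many classes there are and which subgroups of $J$ are singular.

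The key new ingredient is the appearance of a non-soluble $3$-centralizer. First I would analyze $C_G(z)$ for a $3$-central $z$ and identify a $3$-centralizer of shape $3\times\alt(9)$ by applying Theorem A (via the recognition criterion in Lemma \ref{Alt9 prelims 3cubedsym4}): the point is that the $2^{.}\alt(5)$ over $Q$ forces an $\alt(9)$-type normalizer $N_{C_G(z)/\<z\>}(J)$ of shape $3^3{.}\sym(4)$ with the required fixed-point condition, and the hypothesis $O_{3'}(C_G(x))=1$ for elements of order three transfers down. This is exactly the step the introduction flags, namely that ``$3$-local analysis only allows us to see a small part'' of the $3\times\alt(9)$ subgroup, so Theorem A is used to recover the whole centralizer. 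Once $C_G(z)\cong 3\times\alt(9)$ is established, the embedding $\alt(9)\hookrightarrow$ the simple group gives a great deal of $2$-local data inside a single $3$-centralizer.

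The bulk of the work, as the introduction warns, is $2$-local. I would first locate an involution $t$ with $C_G(t)\sim 2_+^{1+8}.(\alt(5)\wr 2)$: the extraspecial group $2_+^{1+8}$ is built as a central product of four copies of $Q_8$ coming from the $\SL_2(3)$-blocks inside the various $C_G(Z^{x_i})$, precisely mirroring Lemmas \ref{O8-describing E} and \ref{O8-D_i acts fpf on E and Centre T is t}, but now the $\alt(5)$ from $2^{.}\alt(5)$ supplies an $\alt(5)\wr 2$ quotient rather than $\sym(3)^3$. I would show $C_G(O_2(C_G(t)))\le O_2(C_G(t))$, determine that a Sylow $2$-subgroup $T$ satisfies $T\in\syl_2(G)$ by a $J(T)$-characteristic/strong-closure argument (Lemmas \ref{prelim-strongly closed} and \ref{goldschmidt}), and then find the second involution $u$ with $C_G(u)\cong (2^{.}\HS):2$. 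Recognizing the latter is where I would lean on Aschbacher's $\HS$ criterion (Theorem \ref{Aschbacher-HS}): I must produce inside $C_G(u)/\<u\>$ the configuration $O_2(H)\cong 4*2_+^{1+4}$ with $H/O_2(H)\cong\sym(5)$ together with the $2\times2\times2$-normalizer of shape $4^3{:}\GL_3(2)$, and the transfer lemma \ref{Prelims-4*4*4 transfer} is tailored to control $O^2$ there.

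The main obstacle will be the construction and precise identification of the $(2^{.}\HS):2$ involution centralizer. Unlike the orthogonal case of Theorem B, where Smith's theorem identifies $\Omega_8^+(2)$ directly from a single extraspecial involution centralizer, here Segev's theorem requires \emph{both} centralizers simultaneously, and the $\HS$-involution sits far from the $3$-local data; its centralizer has a small Sylow $3$-subgroup (as the introduction stresses), so one cannot use $3$-local control to build it and must instead assemble $2^{.}\HS$ from fusion of involutions and the $4\times4\times4$-subgroup normalizer, verifying the exact hypotheses of Theorem \ref{Aschbacher-HS} and then the non-split extension by $2$. Once both $C_G(u)$ and $C_G(t)$ are in hand with the centraphobic condition $C_G(O_2(C_G(t)))\le O_2(C_G(t))$, Theorem \ref{Segev-HN} yields $G\cong\HN$ immediately, completing the proof.
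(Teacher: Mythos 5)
Your overall skeleton does match the paper's: Theorem \ref{A general 3^1+4 theorem} for the $3$-local spine, Theorem A to produce a $3\times\alt(9)$ centralizer, a $2$-local construction of $C_G(t)\sim 2_+^{1+8}.(\alt(5)\wr 2)$ sealed by a strong-closure/Goldschmidt argument, Aschbacher's $\HS$ criterion together with the transfer Lemma \ref{Prelims-4*4*4 transfer} for $(2^{.}\HS):2$, and finally Segev. But there is a concrete misstep at the pivot of the argument: you apply Theorem A to $C_G(z)/\<z\>$ for a $3$-central $z$. That cannot work, because $C_G(Z)$ is prescribed by Hypothesis C itself: $O_3(C_G(Z))=Q\cong 3_+^{1+4}$ with $C_G(Z)/Q\cong 2^{.}\alt(5)$, so $|C_G(Z)|=3^62^35$ and $C_G(Z)$ is certainly not $3\times\alt(9)$ (and $N_{C_G(z)/\<z\>}(\bar{J})$ has order $3^52^2$, not the order $3^42^3$ of a group of shape $3^3.\sym(4)$). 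The group $3\times\alt(9)$ is the centralizer of an element $a$ in a \emph{second}, non-$3$-central class $3\mathcal{A}$, and exhibiting that class is itself work your proposal omits: the paper passes to $X\leq C_G(Z)$ with $X/Q\cong\SL_2(3)$, decomposes $Q$ into the four $X$-invariant subgroups $N_1,\dots,N_4$, two elementary abelian and two extraspecial (Lemmas \ref{Lemma N1-N4} and \ref{N_1, N_2 abelian, N_3, N_4 not}), shows elements of $N_3\bs Z$ are not fused into $Z$ (Lemma \ref{HN-a new class of three}), and only then applies Lemma \ref{Alt9 prelims 3cubedsym4} and Theorem A to $(N_G(J)\cap C_G(a))/\<a\>$ inside $C_G(a)/\<a\>$.

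This misidentification then breaks your $2$-local step. You propose to build $2_+^{1+8}$ as a central product of four $Q_8$'s ``coming from the $\SL_2(3)$-blocks inside the various $C_G(Z^{x_i})$'', mirroring Lemma \ref{O8-describing E} from Theorem B. In the HN configuration this fails: here $P=C_J(t)\in\syl_3(C_G(t))$ has order only $9$ and contains just two conjugates $\<z_1\>,\<z_2\>$ of $Z$ (Lemma \ref{HN-conjugates in P}), and the $P$-invariant quaternion subgroups $A_i,B_i$ of $C_H(z_i)$ do not commute with one another --- they generate $O^3(C_H(z_i))\cong 2^{.}\alt(5)$ (Lemma \ref{HN-3'-subgroups of centralizers}), so no product of them is a $2$-group, let alone extraspecial of order $2^9$. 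The extraspecial group must instead be assembled from the $\alt(9)$-centralizers of the two $3\mathcal{A}$-elements $a_1,a_2$ in $P$: one takes $Q_i=O_2(C_H(a_i))\cong 2_+^{1+4}$, proves $Q_1\cap Q_2=\<t\>$ and $[Q_1,Q_2]=1$ (Lemma \ref{HN-Q_i's}), and sets $Q_{12}=Q_1Q_2\cong 2_+^{1+8}$; the four $Q_8$'s $A_1,B_1,A_2,B_2$ reappear only as generators of the two $\alt(5)$-factors of $N_G(Q_{12})/Q_{12}\cong\alt(5)\wr 2$. So the missing idea is precisely the second conjugacy class of $3$-elements and the use of its $\alt(9)$-centralizer to seed the $2$-local analysis; without it, both your Theorem A step and your $2_+^{1+8}$ construction fail as written, and everything downstream (Sylow $2$-structure, the $\HS$ centralizer, Segev) has no foundation.
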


In Section \ref{HN-Section-3Local}, we determine the structure of certain $3$-local subgroups of
$G$. We identify a subgroup $X \leq C_G(Z)$ such that $X/Q\cong \SL_2(3)$. This solvable group $X$
is isomorphic to the centralizer of a $3$-central element of order three in $\PSL_4(3)$ and so the
analysis is very similar to that required in a somewhat similar recognition of $\PSL_4(3)$
\cite{AstillPSL43}. Moreover, $3$-local arguments will often consider a subgroup of $C_G(Z)$ generated by  two distinct Sylow $3$-subgroups and of course $X$ is such a subgroup. The action of $X$ on $Q$ allows us to see the fusion of elements of order three
in $Q$. In particular, it allows us to identify a distinct conjugacy class of elements of order
three. In $3$-local recognition results, it is often necessary to determine $C_G(x)$ for each
element $x$ of order three in $G$. In this case, we have just one further centralizer to determine
which is isomorphic to $3 \times \alt(9)$. Thus we need an identification of $\alt(9)$ from its
$3$-local subgroups. Observe that in $\alt(9)$, the centralizer of a $3$-central element of order
three is just a $3$-group. This makes identification of $\alt(9)$ difficult. In Chapter \ref{chapter-Alt9}, we
describe some character and modular character theoretic methods which allow us to overcome this
difficulty. These character theoretic results together with some local arguments give a necessary
recognition of $\alt(9)$.

In Section \ref{HN-Section-CG(t)}, we determine the  structure of $C_G(t)$ where $t$ is a
$2$-central involution. This requires a great deal of $2$-local analysis, in particular, we must take full advantage of our knowledge of the $2$-local subgroups in $\alt(9)$ and use a theorem due to
Goldschmidt about $2$-subgroups with a strongly closed abelian subgroup. The determination
of $C_G(t)$ seems to be much more difficult than similar recognition results (in Chapter \ref{Chapter-O8Plus2} for example). A reason for this may
be that the $3$-rank of $C_G(t)/O_2(C_G(t))$ is just two whilst the $2$-rank is four. An
easier example may have greater $3$-rank and lesser $2$-rank.

One conjugacy class of involution centralizer is not enough to recognize $\HN$ and so in Section
\ref{HN-Section-CG(u)} we prove that $G$ also has an involution centralizer which has shape $(2^.\HS):2$  by making use of a theorem of Aschbacher. The results of Sections
\ref{HN-Section-CG(t)} and \ref{HN-Section-CG(u)} allow us to apply the uniqueness theorem by Segev
to prove that $G\cong \HN$.

It is hoped that the methods used in this chapter can soon be extended to recognize the almost
simple group $\aut(\HN)$ in a similar way.

\section{Determining the 3-Local Structure of $G$}\label{HN-Section-3Local}


We begin by recalling  Theorem \ref{A general 3^1+4 theorem} from Chapter \ref{chaper general hypothesis}  which
concerns groups which satisfy a more general hypothesis  than Hypothesis C.
Of course the conclusions of  Theorem \ref{A general 3^1+4 theorem} hold under  Hypothesis C. For
the rest of this chapter we work under Hypothesis C however we continue the notation from Theorem
\ref{A general 3^1+4 theorem}. In particular we fix a distinct conjugate of $Z$ in $Q$, $Z^x$ and
set $Y:=ZZ^x$, $L:=\<Q,Q^x\>$, $W:=C_L(Y)$, $S:=QW$, $J:=J(S)$ and $Z_2:=J \cap Q$. We continue to fix
an involution $s \in L$ such that $Ws \in \mathcal{Z}(L/W)$. Furthermore we now choose an involution $t$ such
that $Qt\in \mathcal{Z}(C_G(Z)/Q)$ and since $s$ normalizes $C_G(Z)$, we are able to choose $t$ such that $s$
and $t$ commute. We also fix an element of order three, $z$, such that $Z=\<z\>$.

\begin{lemma}\label{HN-EasyLemma}
\begin{enumerate}[$(i)$]
 \item $S \in \syl_3(G)$ and $Z=\mathcal{Z}(S)$.
 \item $C_G(Z)/Q$ acts irreducibly on $Q/Z$.

 \item $C_Q(t)=Z=C_Q(f)$ for every element of order five $f \in C_G(Z)$.

 \item There exists a group $X$ such that $S<X<C_G(Z)$ with $X/Q\cong 2^.\alt(4)\cong \SL_2(3)$ and such that $X/Q$ has  no central
 chief factors on $Q/Z$.

 \item  $C_G(Y)=W$ and $L\<t\>=N_G(Y)$ with
$L\<t\>/W \cong \GL_2(3)$.
\end{enumerate}
\end{lemma}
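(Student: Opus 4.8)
The plan is to prove the five assertions of Lemma~\ref{HN-EasyLemma} in order, leaning heavily on Theorem~\ref{A general 3^1+4 theorem} for the parts that are really statements about the general hypothesis, and using the extra input $C_G(Z)/Q\cong 2^.\alt(5)$ only where it is genuinely needed (parts $(ii)$, $(iii)$ and $(iv)$). For part $(i)$, I would argue exactly as in Lemma~\ref{O8-Easy Lemma}$(i)$: by Theorem~\ref{A general 3^1+4 theorem}$(iii)$ we have $|W|=3^5$ and $S=QW$ with $Q\neq W$, so $|S|=3^6$; since $Q$ has order $3^5$ and $C_G(Z)/Q\cong 2^.\alt(5)$ has a Sylow $3$-subgroup of order $3$, $S\in\syl_3(C_G(Z))$, and as $Z=\mathcal{Z}(P)$ for $P\in\syl_3(G)$ by hypothesis, $S\in\syl_3(G)$ and $Z=\mathcal{Z}(S)$.

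For part $(ii)$, I would use that $C_G(Z)/Q$ embeds in $\Sp_4(3)$ (Lemma~\ref{Prelims-EasyLemma}$(ii)$) and acts faithfully on $Q/Z$ by Lemma~\ref{Prelims-EasyLemma}$(iii)$. The key observation is that $2^.\alt(5)\cong\SL_2(5)$ acting on the $4$-dimensional symplectic space $Q/Z$ over $\GF(3)$ must act irreducibly: a proper nonzero $C_G(Z)/Q$-submodule would have dimension $1$, $2$ or $3$, and any $1$- or $3$-dimensional submodule forces a trivial composition factor (whence a nontrivial fixed point of the perfect group $2^.\alt(5)$, impossible since $Z=\mathcal{Z}(Q)$ and the action is faithful), while a $2$-dimensional invariant subspace would realise $\SL_2(5)$ inside $\GL_2(3)\cong 2\times\SL_2(3)$ (up to the kernel acting as scalars), which is impossible by order considerations. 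Part $(iii)$ then follows immediately: irreducibility gives $C_{Q/Z}(g)=1$ for any nontrivial $3'$-element $g$ acting without fixed points, and coprime action yields $C_Q(t)=Z$ and $C_Q(f)=Z$ for an element $f$ of order five (both $t$ and $f$ act fixed-point-freely on $Q/Z$ since $\alt(5)$ has no nontrivial fixed vector on its irreducible $4$-dimensional module; one checks $Qt$ and $Qf$ act without fixed points using the irreducibility just established).

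The part I expect to be the main obstacle is $(iv)$: producing the subgroup $X$ with $X/Q\cong\SL_2(3)$ that has no central chief factor on $Q/Z$. Here the natural route is to locate a subgroup $\SL_2(3)\cong A/Q$ inside $2^.\alt(5)\cong\SL_2(5)$. Such a subgroup exists because $\SL_2(5)$ contains $\SL_2(3)$ (it corresponds to the preimage of an $\alt(4)\le\alt(5)$ under $2^.\alt(5)\twoheadrightarrow\alt(5)$, giving $2^.\alt(4)\cong\SL_2(3)$). I would set $X$ to be this preimage in $C_G(Z)$, so $S<X<C_G(Z)$ and $X/Q\cong 2^.\alt(4)\cong\SL_2(3)$. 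The delicate point is verifying that $X/Q$ has no central chief factor on $Q/Z$; I would deduce this from the irreducibility in $(ii)$ together with the fact that the restriction of the irreducible $4$-dimensional $\SL_2(5)$-module to $\SL_2(3)$ decomposes as a sum of two natural (non-central) $\SL_2(3)$-modules, which one confirms by character restriction or by noting that an element of order three in $\SL_2(3)$ acts fixed-point-freely on $Q/Z$ (its eigenvalues are primitive cube roots of unity, forcing two non-central chief factors and no trivial one).

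Finally, part $(v)$ is a direct transcription of the earlier general results. By Lemma~\ref{O8 lemma w and tw}$(i)$ (which only uses the general hypothesis and the fact that no involution in $C_G(Z)/Q$ centralises $Y$) we get $C_G(Y)=W$; here the needed input, that no involution of $2^.\alt(5)$ fixes $Y$, follows from Lemma~\ref{central involutions don't fix Y} since any such involution would be the central $Qt$ and $t$ does not normalise $Y$. Then Theorem~\ref{A general 3^1+4 theorem}$(ii)$,$(iii)$ give $L\leq N_G(Y)$ and $L/W\cong\SL_2(3)$, and the argument of Lemma~\ref{O8 lemma w and tw}$(ii)$--$(iii)$ shows $N_G(Y)/C_G(Y)$ embeds in $\GL_2(3)$ with $L/W$ of index at most two; the element $t$ supplies the missing involution inverting $Y/Z$ while centralising $Z$, so $L\<t\>=N_G(Y)$ with $L\<t\>/W\cong\GL_2(3)$, completing the proof.
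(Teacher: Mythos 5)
Your overall skeleton matches the paper's (same $S$, same subgroup $X$, same use of Theorem~\ref{A general 3^1+4 theorem}), but several of your justifications do not hold up, and the most serious failure is in part $(iv)$. You claim that an element of order three in $X/Q\cong\SL_2(3)$ acts fixed-point-freely on $Q/Z$ ``with primitive cube root of unity eigenvalues.'' This is impossible: $Q/Z$ is a $\GF(3)$-module, and in characteristic $3$ every element of order $3$ acts unipotently, so it always fixes a nonzero vector; indeed $x^3-1=(x-1)^3$ over $\GF(3)$, so $1$ is the only cube root of unity. The paper's own Theorem~\ref{A general 3^1+4 theorem}$(vi)$ confirms this concretely: $Z_2/Z=\mathcal{Z}(S/Z)$ has order $9$, so $S/Q$ fixes a $2$-dimensional subspace of $Q/Z$. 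The correct (and one-line) argument, which is what the paper does, is that $Qt\in\mathcal{Z}(X/Q)$ inverts $Q/Z$ by part $(iii)$, so every $X$-chief factor inside $Q/Z$ is inverted by $Qt$ and hence is non-central. Your alternative route via ``character restriction'' asserts the decomposition into two natural $\SL_2(3)$-modules, which is true but is exactly what the paper only proves later (Lemma~\ref{Lemma N1-N4}, via Lemma~\ref{Parker-Rowley-SL2(q)-splitting}); citing it here without proof is not a substitute.

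There are also genuine logical gaps in $(ii)$, $(iii)$ and $(v)$. In $(ii)$, a trivial composition factor does \emph{not} produce a nonzero fixed vector (the action of $2^.\alt(5)$ on $Q/Z$ is not coprime, since $3$ divides $120$, so the module need not be semisimple), and even a nonzero fixed vector does \emph{not} contradict faithfulness of the action on $Q/Z$ --- faithful modules can have fixed points. What closes the argument, and what the paper uses, is that $5\nmid|\GL_k(3)|$ for $k\leq 3$, so in a reducible $Q/Z$ every composition factor would be trivial; then the image of the perfect group $2^.\alt(5)$ in $\GL(Q/Z)$ would be a $3$-group, hence trivial, against Lemma~\ref{Prelims-EasyLemma}$(iii)$. (Also $\GL_2(3)\ncong 2\times\SL_2(3)$: the latter has abelianization of order $6$.) In $(iii)$ your inference ``the group has no nonzero fixed vector, hence $Qt$ and $Qf$ act fixed-point-freely'' is invalid: individual elements of an irreducibly acting group can fix vectors (the order-three elements here do, as noted above). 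You need a real argument for each: for $t$, either Schur's lemma (the central involution acts as a scalar of order two, i.e.\ as $-1$) or, as in the paper, Theorem~\ref{A general 3^1+4 theorem}$(xiii)$ combined with irreducibility; for $f$, the coprime decomposition $Q/Z=C_{Q/Z}(f)\times[Q/Z,f]$ together with $|[Q/Z,f]|\equiv 1 \pmod 5$, forcing $[Q/Z,f]=Q/Z$. Finally, in $(v)$ you transplant Lemma~\ref{O8 lemma w and tw}$(i)$ by excluding only involutions from $C_G(Y)$; but under Hypothesis C, $C_G(Z)$ also contains elements of order five, and $C_G(Y)=W$ does not follow until you rule these out as well (immediate from $C_Q(f)=Z$, which is how the paper argues), since in the $\Omega_8^+(2)$ chapter $C_G(Z)$ was a $\{2,3\}$-group and that step was vacuous.
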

\begin{proof}
$(i)$ It is clear that $C_G(Z)$ has Sylow $3$-subgroups of order $3^6$ and, by hypothesis, $Z$ is
central  in a Sylow $3$-subgroup of $G$. Also  $|Q|=|W|=3^5$ and $S=QW$ is a $3$-group with $Q \neq
W$. Thus $|S|=3^6$ and so $S \in \syl_3(C_G(Z))\subset \syl_3(G)$ with $\mathcal{Z}(S)=Z$.

$(ii)$ This is because $2^.\alt(5)$ has no non-trivial modules of dimension less than four over
$\GF(3)$. We can see this, for example, from the fact that $5 \nmid |\GL_3(3)|$.

$(iii)$ By Theorem \ref{A general 3^1+4 theorem} $(xiii)$, either $C_Q(t)=Z$ and $[Q,t]/Z=Q/Z$ or
$C_Q(t)\cong [Q,t]\cong 3_+^{1+2}$. However $[Q,t]/Z$ is a non-trivial $C_G(Z)/Q$-module and so
must equal $Q/Z$. Therefore $C_Q(t)=Z$.  Now, for $f \in C_G(Z)$ of order five, by coprime action,
$Q/Z=C_{Q/Z}(f) \times [Q/Z,f]$. Since $f$ acts fixed-point-freely on $[Q/Z,f]$,  $[Q/Z,f]^\#$ has
order a multiple of five. Therefore $Q/Z=[Q/Z,f]$ and so $C_Q(f)=Z$.

$(iv)$ Observe (using \cite[33.15, p170]{Aschbacher} for example) that a group of shape
$2^{.}\alt(5)$ is uniquely defined and has Sylow $2$-subgroups isomorphic to $Q_8$ with normalizer
isomorphic to $\SL_2(3)$. Thus we may fix $S<X<C_G(Z)$ such that $X/Q\cong \SL_2(3)$. There can be no central chief factor of $X$ on $Q/Z$ because $Qt\in \mathcal{Z}(X/Q)$ inverts $Q/Z$.

$(v)$ Since $Y \neq Z=\mathcal{Z}(S)$, we have that $W \in \syl_3(C_G(Y))$. Suppose that $C_G(Y)$ contains an
involution. Since Sylow $2$-subgroups of $C_G(Z)$ are quaternion of order 8, we have that $[Y,Qt]=1$
which is a contradiction since $Qt$ inverts $Q/Z$. Suppose $C_G(Y)$ contains an element of order
five. Then we again have a contradiction since any element of order five in $C_G(Z)/Q$ acts
fixed-point-freely on $Q/Z$. Thus $C_G(Y)$ is a $3$-group and so $C_G(Y)=W$. Now $N_G(Y)/W$ is
isomorphic to a subgroup of $\GL_2(3)$ and $\SL_2(3) \cong L/W\leq N_G(Y)/W$ so $N_G(Y)/W\cong
\SL_2(3)$ or $\GL_2(3)$. Observe that $t$ centralizes $Z$ whilst inverting $Y/Z$. Therefore
$Wt\notin L/W$ and so $N_G(Y)/W\cong \GL_2(3)$ and $N_G(Y)=L\<t\>$.
\end{proof}

For the rest of this section we fix a subgroup $X$ of $C_G(Z)$ such that $S<X<C_G(Z)$ and $X/Q\cong
\SL_2(3)$.

\begin{lemma}
$Q=\<Z_2^{X/Q}\>$ and $S/Q$ acts quadratically on $Q/Z$.
\end{lemma}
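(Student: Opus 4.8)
The plan is to work entirely in the four-dimensional $\GF(3)$-space $\overline{Q}:=Q/Z$ (recall $|Q/Z|=3^4$), on which $S/Q$ acts, and to extract both assertions from the single commutator subspace $[\overline{Q},S/Q]$. First I would record the facts I need. Since $Q=O_3(C_G(Z))\trianglelefteq C_G(Z)$ and $S\in\syl_3(C_G(Z))$, we have $Q\trianglelefteq S$ and $|S/Q|=3$; fix a generator $u$ of the cyclic group $S/Q$. Because $Q'=Z$, the group $Q/Z$ is abelian and acts trivially on itself, so $C_{\overline{Q}}(u)=C_{\overline{Q}}(S/Q)=C_{\overline{Q}}(S/Z)$. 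Using Theorem \ref{A general 3^1+4 theorem}$(vi)$, namely $Z_2/Z=\mathcal{Z}(S/Z)$ together with $Z_2\leq Q$, this gives $C_{\overline{Q}}(u)=Z_2/Z$, a subspace of dimension $2$ (as $|Z_2|=27$).

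The quadratic action is then almost immediate. Since $Q\leq S$ we have $[Q,S]\leq S'$, and Theorem \ref{A general 3^1+4 theorem}$(vii)$ gives $S'\leq Z_2$; passing to $\overline{Q}$ yields $[\overline{Q},u]=[Q,S]Z/Z\leq Z_2/Z=C_{\overline{Q}}(u)$. Hence $[\overline{Q},u,u]\leq[C_{\overline{Q}}(u),u]=1$, which is exactly the assertion that $S/Q$ acts quadratically on $Q/Z$. Combining $[\overline{Q},u]\leq Z_2/Z$ with the isomorphism $\overline{Q}/C_{\overline{Q}}(u)\cong[\overline{Q},u]$ of Lemma \ref{lem-cenhalfspace}$(i)$ (so $\dim[\overline{Q},u]=4-2=2=\dim Z_2/Z$) forces the equality $[\overline{Q},u]=Z_2/Z$, which I would use in the generation step.

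For the generation statement I would identify $\langle Z_2^{X/Q}\rangle$ with $[\overline{Q},X/Q]$ and then invoke the no-central-chief-factor hypothesis. Note that $Z_2\trianglelefteq S$ (as $Z_2=J\cap Q$ with $J=J(S)$ and $Q$ both normal in $S$), so $Q$ normalizes $Z_2$ and $Z_2^{X/Q}=Z_2^X$; writing $M:=\langle Z_2^X\rangle$, its image $\overline{M}=M/Z$ is the $X/Q$-submodule of $\overline{Q}$ generated by $Z_2/Z$. Since for every $g\in X$ we have $[\overline{Q},u^g]=[\overline{Q},u]^g=(Z_2/Z)^g$, the $X/Q$-submodule $[\overline{Q},X/Q]$ generated by these commutator spaces is precisely $\overline{M}$. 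By Lemma \ref{HN-EasyLemma}$(iv)$, $X/Q\cong\SL_2(3)$ has no central chief factor on $Q/Z$; so if $\overline{M}=[\overline{Q},X/Q]$ were proper, the nonzero quotient $\overline{Q}/\overline{M}$ would carry the trivial $X/Q$-action and hence produce a central chief factor, a contradiction. Therefore $\overline{M}=\overline{Q}$, and since $Z\leq Z_2\leq M$ this gives $M=Q$, that is, $Q=\langle Z_2^{X/Q}\rangle$.

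The one point needing a little care, which I expect to be the main obstacle to state cleanly, is the identification $[\overline{Q},X/Q]=\overline{M}$: one must check that the $X/Q$-submodule generated by the single commutator space $[\overline{Q},u]$ really equals $[\overline{Q},X/Q]$. This follows from the identity $[\overline{q},ab]=[\overline{q},b][\overline{q},a]^b$ together with $X/Q=\langle u^{X/Q}\rangle$ (two distinct Sylow $3$-subgroups already generate $\SL_2(3)$, since that group has no subgroup of order $12$), which gives $[\overline{Q},h]\leq\overline{M}$ for all $h\in X/Q$ by induction on word length; the reverse containment $\overline{M}\leq[\overline{Q},X/Q]$ is clear since each $(Z_2/Z)^g=[\overline{Q},u^g]$ lies in $[\overline{Q},X/Q]$.
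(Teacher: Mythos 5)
Your proof is correct, but it reaches both conclusions by a different mechanism than the paper. For the generation statement the paper argues by classifying submodules: since $X/Q$ has no central chief factor on $Q/Z$, every proper $X/Q$-submodule of $Q/Z$ is a natural $\SL_2(3)$-module, on which $S/Q$ must act non-trivially, while $S/Q$ centralizes $Z_2/Z=\mathcal{Z}(S/Z)$; hence $Z_2$ lies in no proper $X$-invariant subgroup and $Q=\<Z_2^{X/Q}\>$. You instead prove the sharper identity $[Q/Z,S/Q]=Z_2/Z$ (containment from $[Q,S]\leq S'\leq Z_2$, equality by the dimension count via Lemma \ref{lem-cenhalfspace}$(i)$), and then identify $\<Z_2^{X/Q}\>/Z$ with the commutator submodule $[Q/Z,X/Q]$, which must be all of $Q/Z$ because a proper commutator submodule would leave a non-trivial quotient with trivial $X/Q$-action, i.e.\ a central chief factor. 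In effect the paper exploits $Z_2/Z\leq C_{Q/Z}(S/Q)$, while you exploit the dual fact $Z_2/Z=[Q/Z,S/Q]$; your route avoids any appeal to the structure of proper submodules at the cost of the commutator-calculus bookkeeping ($X/Q$ generated by conjugates of $u$, the identity $[\bar q,ab]=[\bar q,b][\bar q,a]^b$), all of which you handle correctly. For the quadratic action the two arguments also differ: the paper uses that $J=J(S)$ is abelian and $S=QJ$, so $[Q,J,J]=1$, whereas you use $[Q,S]\leq S'\leq Z_2$ together with $Z_2/Z=\mathcal{Z}(S/Z)$ — both are one-line deductions, just resting on different clauses ($(v)$ versus $(vi)$,$(vii)$) of Theorem \ref{A general 3^1+4 theorem}. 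A further small benefit of your ordering is that the quadratic computation feeds directly into the dimension count used for the generation step.
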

\begin{proof}
First observe that since $X/Q\cong \SL_2(3)$ and there is no central chief factor of $X/Q$ on
$Q/Z$, any  proper $X/Q$-submodule of $Q/Z$ is necessarily a natural $X/Q$-module. Let $Z < V< Q$
such that $V/Z$ is an $X/Q$-submodule and is therefore a natural module. Thus $S/Q$ acts
non-trivially on $V/Z$. In particular this means $V\neq Z_2$. So $Z_2$ is not contained in any
proper $X$-invariant subgroup of $Q$. Thus $Q=\<Z_2^{X/Q}\>$.

By Theorem \ref{A general 3^1+4 theorem} $(v)$, $J=J(S)$ is abelian. Moreover $Q \leq S$ and so $Q$
normalizes  $J$. Thus $[Q,J]\leq J$ and so $[Q,J,J]=1$ as $J$ is abelian. Now $J \nleq Q$
(as $Q$ has no abelian subgroups of order $3^4$) and so $S/Q=JQ/Q$ and therefore
$[Q/Z,JQ/Q,JQ/Q]=1$.
\end{proof}

We have thus satisfied the conditions of Lemma \ref{Parker-Rowley-SL2(q)-splitting} and so we have
the following results.
\begin{lemma}\label{Lemma N1-N4}
\begin{enumerate}[$(i)$]
\item $Q/Z$ is a direct product of natural $X$-modules.
 \item There are exactly four $X$-invariant
subgroups $N_1,N_2,N_3,N_4< Q$ properly containing $Z$ such that for $i \neq j$, $N_i \cap N_j=Z$.
 \item $N_i
\cap Z_2$ has order nine for each $i$ and $S'=Z_2=\<N_i \cap Z_2|1\leq i \leq 4\>$.
 \item For some $i \in \{1,2,3,4\}$, $Y \leq N_i$ and $N_i$ is abelian.
 \item For $i \in \{1,2,3,4\}$, $X$ is transitive on $N_i\bs Z$.
\end{enumerate}
\end{lemma}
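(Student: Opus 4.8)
The plan is to derive the five claims as consequences of Lemma \ref{Parker-Rowley-SL2(q)-splitting} applied to the action of $X/Q$ on $Q/Z$, together with the structural facts already assembled in Theorem \ref{A general 3^1+4 theorem}. The hypotheses of that lemma were verified in the preceding lemma: we have $X/Q\cong \SL_2(3)$, $Q/Z=\<C_{Q/Z}(S/Q)^{X}\>$ (since $Q=\<Z_2^{X/Q}\>$ and $Z_2/Z\leq C_{Q/Z}(S/Q)$), $C_{Q/Z}(X)=1$ (no central chief factor by Lemma \ref{HN-EasyLemma} $(iv)$), and $S/Q$ acts quadratically. Hence $Q/Z$ is a direct product of natural $\GF(3)X/Q$-modules, which gives $(i)$ immediately.

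For $(ii)$, I would note that $Q/Z$ has $\GF(3)$-dimension four, so by $(i)$ it is the direct sum of exactly two natural $\SL_2(3)$-modules $V,W$. Since each natural module is irreducible and the two summands are isomorphic, Lemma \ref{lemma making new modules} applies with $p=3$ to give exactly $p+1=4$ irreducible $X$-submodules of $Q/Z$, each two-dimensional and isomorphic to the natural module. Taking preimages in $Q$ produces the four subgroups $N_1,\dots,N_4$ properly containing $Z$, each with $N_i/Z$ a natural module; distinctness of the submodules forces $N_i/Z\cap N_j/Z=1$, i.e. $N_i\cap N_j=Z$ for $i\neq j$. For $(iii)$, since $Z_2/Z=\mathcal{Z}(S/Z)$ (Theorem \ref{A general 3^1+4 theorem} $(vi)$) and each $N_i/Z$ is a natural $\SL_2(3)$-module, the fixed points of the Sylow $3$-subgroup $S/Q$ on $N_i/Z$ form a one-dimensional space, so $(N_i\cap Z_2)/Z$ has order three and $N_i\cap Z_2$ has order nine. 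Because the four lines $C_{N_i/Z}(S/Q)$ are the four distinct one-dimensional $S/Q$-fixed subspaces and together they span the two-dimensional space $Z_2/Z=\mathcal{Z}(S/Z)\cap Q/Z$, we get $Z_2=\<N_i\cap Z_2\mid 1\leq i\leq 4\>$; and $S'=Z_2$ is already recorded in Theorem \ref{A general 3^1+4 theorem} (via $Y\leq S'\leq Z_2$ combined with the fact that $Q=\<Z_2^{X/Q}\>$ is not abelian, forcing $S'=Z_2$ exactly as in Lemma \ref{O8-first results} $(iv)$).

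For $(iv)$, I would use that $Y\leq Q\cap Q^x$ has order nine and $Y/Z$ is a one-dimensional $S/Q$-fixed subspace of $Q/Z$ (since $Y\leq Z_2$ by Theorem \ref{A general 3^1+4 theorem} $(vii)$ and $Z_2/Z$ is $S/Q$-central). Any such line equals $C_{N_i/Z}(S/Q)$ for exactly one $i$, so $Y/Z\leq N_i/Z$ for that $i$, whence $Y\leq N_i$. That $N_i$ is abelian follows because $N_i/Z$ is a natural module on which the involution $t$ (inverting $Q/Z$) acts by inversion, so $t$ inverts $N_i$; were $N_i$ non-abelian it would be extraspecial of order $27$ with $N_i'=Z$, but an element inverting $N_i/Z$ and centralizing $Z$ cannot invert a non-abelian group of exponent three, the standard contradiction. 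Finally $(v)$ is the transitivity of $\SL_2(3)$ on the nonzero vectors of a natural two-dimensional $\GF(3)$-module: $|N_i\bs Z|=8=|(N_i/Z)^\#|$ and $X/Q\cong \SL_2(3)$ acts transitively on $(N_i/Z)^\#$, so $X$ is transitive on $N_i\bs Z$.

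The main obstacle I anticipate is $(iv)$ — specifically pinning down which of the four modules contains $Y$ and proving $N_i$ abelian cleanly. The abelianness argument needs care: one must rule out the extraspecial possibility, and the quickest route is the inversion argument above, but one should check that $t$ genuinely centralizes $Z$ while inverting $N_i/Z$, both of which are available from Lemma \ref{HN-EasyLemma} $(iii)$ and the fact that $Qt$ inverts $Q/Z$. The remaining parts are essentially bookkeeping translations between the module-theoretic statement of Lemma \ref{Parker-Rowley-SL2(q)-splitting}, the submodule count of Lemma \ref{lemma making new modules}, and the subgroup lattice of $Q$, so I expect them to be routine once $(i)$ and $(ii)$ are in place.
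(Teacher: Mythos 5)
Your parts (i) and (ii) follow the paper's route exactly (Lemma \ref{Parker-Rowley-SL2(q)-splitting} plus Lemma \ref{lemma making new modules}), and in (iii) your fixed-point computation giving $|N_i\cap Z_2|=9$ is a legitimate shortcut. But part (iv) contains a genuine error. The claimed ``standard contradiction'' --- that an element centralizing $Z$ and inverting $N_i/Z$ cannot exist when $N_i$ is non-abelian --- is false. The extraspecial group $E=3_+^{1+2}=\<a,b\>$ admits the automorphism $a\mapsto a\inv$, $b\mapsto b\inv$ (well defined because commutators in a class-two group are bilinear, so $[a\inv,b\inv]=[a,b]$), and this automorphism inverts $E/\mathcal{Z}(E)$, centralizes $\mathcal{Z}(E)$, and has fixed-point subgroup exactly $\mathcal{Z}(E)$. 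So the action of $t$ on $N_i$ places no obstruction on $N_i$ being extraspecial: your chain ``$t$ inverts $N_i/Z$, hence $t$ inverts $N_i$, hence $N_i$ is abelian'' breaks at the first step, since inversion modulo $Z$ does not lift to elementwise inversion. Worse, because $t$ inverts all of $Q/Z$, your argument applies verbatim to every $N_i$ and would prove all four abelian --- contradicting the paper's very next lemma (Lemma \ref{N_1, N_2 abelian, N_3, N_4 not}), which shows that exactly two of them, $N_3\cong N_4\cong 3_+^{1+2}$, are non-abelian. What actually forces abelianness of the particular $N_i$ containing $Y$ is its position relative to $W=C_G(Y)$, not the action of $t$: if $N_1\geq Y$ were extraspecial then $C_{N_1}(Y)=Y$, so $W\cap N_1=Y$ and $N_1\nleq W$, whence $S=WN_1$ and $S'\leq [W,N_1]W'N_1'\leq (W\cap N_1)YZ=Y$, contradicting $S'=Z_2>Y$ from part (iii). (This is also why (iii) must establish $S'=Z_2$ honestly; the argument you cite from Lemma \ref{O8-first results} needs to know that $Y=N_1\cap Z_2$, i.e.\ $Y\nleq N_i$ for $i\neq 1$, which you only set up in (iv), and the ``$Q$ is not abelian'' ingredient you mention plays no role in it.)

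Part (v) also has a slip: $|N_i\bs Z|=24$, not $8$, so transitivity of $X/Q$ on $(N_i/Z)^\#$ is not by itself transitivity on $N_i\bs Z$ --- you must still fuse the three elements within each coset $Zn$. This is easy but needs saying: $\<Z,n\>\trianglelefteq Q$ and $|C_Q(n)|=3^4$, so the $Q$-orbit of $n$ has length three and therefore equals $Zn$; combining this with transitivity on the nonzero vectors of $N_i/Z$ gives the claim, which is exactly how the paper finishes.
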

\begin{proof}
Part $(i)$ follows immediately from Lemma \ref{Parker-Rowley-SL2(q)-splitting} which says that
$Q/Z$ is a direct product of natural $X/Q$-modules. Let $N_1$ and $N_2$ be the corresponding
subgroups of $Q$. View $N_1/Z$ and $N_2/Z$ as vector spaces over $\GF(3)$. Since $N_1/Z$ and
$N_2/Z$ are isomorphic as $X$-modules, we may apply Lemma
\ref{lemma making new modules} to see that there are exactly four  $X$-invariant subgroups of
$Q/Z$. Let $N_3$ and $N_4$ be the corresponding  normal subgroups of $Q$. Then  $N_3/Z$ and $N_4/Z$
are natural $X$-modules and for $i \neq j$, $N_i \cap N_j=Z$. This proves $(ii)$.

By Theorem \ref{A general 3^1+4 theorem} $(vi)$, $Z_2/Z=\mathcal{Z}(S/Z)$ and $Y \leq Z_2=J \cap Q$. Now
for each $i\in \{1,2,3,4\}$,  $C_{N_i/Z}(S) \neq 1$ and so $Z_2 \cap N_i$ has order at least nine.
In fact the order must be exactly nine for were it greater then for some $i$, $N_i=Z_2$ and then
$N_i \cap N_j$ would have order at least nine for each $j \neq i$. Now for each  $i \neq j$, $N_i
\cap N_j=Z$ and so $N_i \cap Z_2 \neq N_j \cap Z_2$ and so $Z_2=\<N_i \cap Z_2|1\leq i \leq 4\>$.
In particular we must have (without loss of generality) that $N_1\cap J=Y$. By Lemma \ref{A general 3^1+4 theorem}, $Y\leq S' \leq Z_2$. Suppose $S'=Y$. Then for any $2\leq i \leq 4$ $Y \nleq N_i$
and so $[N_i,S] \leq N_i \cap Y =Z$. Therefore $N_i \leq Z_2$ which is a contradiction. Thus
$Y<S'=Z_2$ which proves $(iii)$.

We  have already that (without loss of generality) $N_1\cap Z_2=Y$. Suppose that $N_1$ is
non-abelian. Then $C_Q(N_1)\cong N_1\cong 3_+^{1+2}$.  Since $N_1 \nleq W$, $S=WN_1$
and so  we have that $S'\leq [W,N_1] W' N_1' \leq (W
\cap N_1)Y Z=Y$ (using Theorem \ref{A general 3^1+4 theorem} $(iv)$)  which is a contradiction since $S'=Z_2$. This proves $(iv)$.

Finally, since each $N_i/Z$ is a natural $X/Q$-module, $X$ is transitive on the non-identity elements
of $N_i/Z$. So let $Z \neq Zn \in N_i/Z$. Then $\<Z,n\>\vartriangleleft Q$ however $|C_Q(n)|=3^4$.
Therefore $n$ lies in a $Q$-orbit of length three in $Zn$. Hence every element in $Zn$ is
conjugate in $X$. Thus $X$ is transitive on $N_i \bs Z$ which completes the proof.
\end{proof}

For the rest of this section we continue the notation from Lemma \ref{Lemma N1-N4} with
$N_1,N_2,N_3,N_4$  chosen such that $Y<N_1$ and satisfying the notation set in the following lemma
also.

\begin{lemma}\label{N_1, N_2 abelian, N_3, N_4 not}
Without loss of generality we may assume that $N_1\cong N_2$ is elementary abelian and $N_3\cong N_4$ is extraspecial with $[N_3,N_4]=1$.
\end{lemma}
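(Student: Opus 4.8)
The plan is to analyze the four $X$-invariant subgroups $N_1,N_2,N_3,N_4$ by classifying which are abelian and which are extraspecial. I already know from Lemma \ref{Lemma N1-N4} $(iv)$ that at least one of them, namely $N_1$ (the one containing $Y$), is abelian; since $N_1/Z$ is a natural $X$-module of order $9$ and $N_1$ is abelian, $N_1$ is elementary abelian of order $27$. The key structural fact I would exploit is that $Q\cong 3_+^{1+4}$ is extraspecial, so the commutator map induces a non-degenerate alternating (symplectic) form on $Q/Z$, and each $N_i/Z$ is a $2$-dimensional subspace. A subgroup $Z<N<Q$ of order $27$ is abelian precisely when $N/Z$ is a totally isotropic (singular) subspace for this form, and extraspecial precisely when $N/Z$ is non-degenerate.

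First I would set up this symplectic picture: the symplectic form $b$ on $Q/Z$ defined by $b(\bar a,\bar c)Z=[a,c]$ is preserved by $X/Q\cong\SL_2(3)$ acting as a subgroup of $\Sp_4(3)$ (using Lemma \ref{Prelims-EasyLemma} $(ii)$), and the four subspaces $N_i/Z$ are exactly the $X$-invariant $2$-spaces arising from the direct-sum decomposition $Q/Z=N_1/Z\oplus N_2/Z$ into isomorphic natural modules, via Lemma \ref{lemma making new modules}. Since $N_1$ is elementary abelian, $N_1/Z$ is totally isotropic. Because the form is non-degenerate and $X$-invariant, the perpendicular space $(N_1/Z)^\perp$ is also $X$-invariant of dimension two, hence equals one of the four $N_j/Z$, and as $N_1/Z$ is maximal isotropic (Lagrangian) I would argue $(N_1/Z)^\perp=N_1/Z$ forces a second Lagrangian among the $N_i$; the natural candidate is $N_2/Z$ (the complementary summand), giving $N_2$ elementary abelian as well. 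The remaining two subgroups $N_3,N_4$ then have non-degenerate $N_i/Z$, so each is extraspecial of order $27$, i.e. $N_3\cong N_4\cong 3_+^{1+2}$.

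For the commuting relation $[N_3,N_4]=1$: I would compute $b$ between the two remaining spaces. Since $Q/Z=N_3/Z\oplus N_4/Z$ is also a direct sum decomposition into the two non-isotropic natural modules, and the total form is non-degenerate of rank $4$ while each $N_i/Z$ already carries a non-degenerate rank-$2$ form, the spaces $N_3/Z$ and $N_4/Z$ must be mutually perpendicular (an orthogonal direct sum). Perpendicularity $b(N_3/Z,N_4/Z)=1$ translates precisely to $[N_3,N_4]\le Z$, and since $N_3\cap N_4=Z$ with both normal in $Q$, a short three-subgroup-lemma argument (or direct check that $[N_3,N_4]$ is $X$-invariant and lands in $Z$ yet is generated by commutators that must vanish by the orthogonality) gives $[N_3,N_4]=1$, so $Q=N_3\ast N_4$ is the central product. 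The phrase ``without loss of generality'' is justified because the labelling of $N_2,N_3,N_4$ was only fixed up to the constraint $Y<N_1$, so I am free to name the second Lagrangian $N_2$ and the two non-degenerate spaces $N_3,N_4$.

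The main obstacle I anticipate is establishing cleanly that among the four $X$-invariant lines-of-planes exactly two are isotropic and two are non-degenerate, rather than some other distribution. This is where I must use that $X/Q\cong\SL_2(3)$ acts on each natural module $N_i/Z$ in the \emph{same} way (the modules are pairwise isomorphic by Lemma \ref{lemma making new modules}), so the restriction of the $X$-invariant form to each $N_i/Z$ is, up to scalar, the unique $X$-invariant form, and whether it is zero or non-degenerate is governed by a single scalar parameter $i$ in the parametrization $V_i=\{(v^i,\phi(v))\}$ of Lemma \ref{lemma making new modules}. I would track how the form pairs these parametrized diagonal submodules and show the vanishing locus consists of exactly two of the $p+1=4$ submodules, matching the two isotropic ones, with the other two non-degenerate and mutually orthogonal. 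Getting the bookkeeping of this scalar parametrization exactly right, and confirming it forces the orthogonal-direct-sum conclusion $Q=N_3\ast N_4$, is the delicate part; everything else is routine extraspecial-group and symplectic-form manipulation.
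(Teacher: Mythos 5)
Your route is genuinely different from the paper's, and it can be made to work, but as written it has one unexecuted crux and one faulty justification. For comparison, the paper argues in two stages: first, if some $N_i$ ($i\geq 2$) is non-abelian, then $N_i\cong 3_+^{1+2}$ and $C_Q(N_i)$ is again an extraspecial $X$-invariant subgroup of order $27$, hence equals some $N_j$ with $j\neq i$, and the stated configuration follows at once by relabelling; second, to exclude the case that all four $N_i$ are abelian, it picks $m\in N_1\setminus Z$ and a corresponding $n\in N_2\setminus Z$ under a module isomorphism (so that $nm\in N_3$ and $n^2m\in N_4$), and uses an element $g$ of order four in $X$ (whose square inverts $Q/Z$) together with commutator identities to show that $[nm,(n^2m)^g]$ equals its own inverse, hence is trivial; this yields $[N_3,N_4]=1$, so $Q=N_3N_4$ would be abelian, a contradiction. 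Your symplectic formulation replaces that element-level calculation by a count of isotropic members among the four invariant planes; it is more conceptual (the distribution ``two Lagrangian, two non-degenerate and mutually perpendicular'' is visibly forced by non-degeneracy of the commutator form), at the price of setting up the space of $X$-invariant alternating forms, whereas the paper's computation is longer but entirely elementary.

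Two concrete issues. First, the crux of your plan --- that \emph{exactly} two of the four invariant planes are totally isotropic --- is only flagged, not proved. It does go through: with respect to $Q/Z=N_1/Z\oplus N_2/Z$, the invariant alternating forms are spanned by $\omega_{11}$, $\omega_{22}$ and $\omega_{12}+\omega_{21}$ (by Schur's lemma and $\End_{\GF(3)X}(N_1/Z)\cong \GF(3)$), so $b=a\omega_{11}+d\omega_{22}+c(\omega_{12}+\omega_{21})$; non-degeneracy of $b$ amounts to $ad-c^2\neq 0$, isotropy of $N_1/Z$ gives $a=0$, hence $c\neq 0$, and the restrictions of $b$ to the other three invariant planes are the scalars $d$, $d-c$, $d+c$ times the form on the natural module. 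Since $c \neq 0$, these are three distinct elements of $\GF(3)$, so exactly one vanishes: exactly one further plane is Lagrangian and the remaining two are non-degenerate. Second, your justification of $[N_3,N_4]=1$ is invalid as stated: for complementary subspaces of a non-degenerate $4$-space, non-degeneracy of both restrictions does \emph{not} imply perpendicularity (there are easy counterexamples). The correct argument inside your framework is that $(N_3/Z)^\perp$ is an $X$-invariant plane meeting $N_3/Z$ trivially, and it cannot be a Lagrangian $L$ (else $Q/Z=N_3/Z\perp L$ would put $L$ in the radical of $b$), so $(N_3/Z)^\perp=N_4/Z$. Alternatively, and more simply, once $N_1$ and $N_2$ are known to be abelian, $C_Q(N_3)$ is an extraspecial $X$-invariant subgroup of order $27$ distinct from $N_3$, so it must be $N_4$ --- which is exactly the paper's first-stage argument.
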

\begin{proof}
By Lemma \ref{Lemma N1-N4}, $N_1$ is abelian. So suppose $N_i$ is non-abelian for some $i\in
\{2,3,4\}$.  Then $C_Q(N_i)\cong N_i\cong 3_+^{1+2}$ is $X$-invariant and we may assume
$C_Q(N_i)=N_j$ for some $i \neq j\in \{2,3,4\}$. Now it follows that either $N_i$ is abelian for
every $i=1,2,3,4$ or without loss of generality $N_1\cong N_2$  and $N_3\cong N_4$ is non-abelian.
So we assume for a contradiction that $N_2$, $N_3$ and $N_4$ are all abelian.

Since $N_1/Z$ is isomorphic as a $\GF(3)X/Q$-module to $N_2/Z$, for any $m \in N_1\bs Z$ there
is an $n \in N_2 \bs Z$ such that $Zn$ is the image of $Zm$ under a module isomorphism. It then follows
(without loss of generality) that $Znm$ is an element of $N_3/Z$ and $Zn^2m$ is an element of
$N_4/Z$. In particular $x_1:=nm \in N_3$ and $x_2:=n^2m \in N_4$. Let $g \in X$ have order
four then $Qg^2=Qt$ inverts $Q/Z$ and so
\begin{equation}\label{one}Zn^{g^2}=Zn^2 ~\mathrm{and}~ Zm^{g^2}=Zm^2.\end{equation}
Also if $Z\neq Za\in N_i/Z$ and $g$ and $h$ are elements of order four in $X$ such that
$Q\<g\>\neq Q\<h\>$ then $N_i/Z=\<Za^g,Za^h\>$ and so $N_i=Z\<a^g,a^h\>$.

So consider $[x_1,x_2^g]$. We calculate the following using commutator relations and using that all
commutators are in $Z$ and therefore central.

\[\begin{array}{rclr}
    [x_1,x_2^g]&=&[nm,{(n^2)}^g
    m^g]&\:\\
    \;&=&[n,m^g][m,m^g][n,{(n^2)}^g][m,{(n^2)}^g]&\;\\
    \;&=&[n,m^g][m,{(n^2)}^g]& (\mathrm{since}~N_1~\mathrm{and}~N_2~ \mathrm{are~abelian})\\
    \;&=&[n,m^g][m,{n}^g]^2&\;\\
    \;&=&([n,m^g][m,{n}^g]^2)^g&(\mathrm{since~commutators~are~central~in~}X)\\
    \;&=&[n^g,m^{g^2}][m^g,n^{g^2}]^2&\;\\
    \;&=&[n^g,m^2][m^g,n^2]^2&(\mathrm{by~Equation~}\ref{one})\\
    \;&=&[n^g,m]^2[m^g,n]&\;\\
    \;&=&[{(n^2)}^g,m][m^g,n]&\;\\
   \;&=&[{(n^2)}^g,m][m^g,m][{(n^2)}^g,n][m^g,n]&(\mathrm{since}~N_1~\mathrm{and}~N_2~ \mathrm{are~abelian})\\
    \;&=&[{(n^2)}^g m^g,nm]&\;\\
    \;&=&[x_2^g,x_1].&\;\\
\end{array}\]Thus $[x_1,x_2^g]=[x_1,x_2^g]\inv$ and so
$[x_1,x_2^g]=1$. This holds for any element of order four in $X$. Thus $mn\in N_3$
commutes with $N_4=Z\<(n^2m)^g,(n^2m)^h\>$ where $g$ and $h$ are elements of order four as above.
Furthermore this argument works for any element of $N_3\bs Z$ and so $[N_3,N_4]=1$. However this
contradicts our assumption that $N_3$ and $N_4$ are abelian.
\end{proof}

\begin{lemma}\label{HN-a new class of three}
For $i\in \{3,4\}$, elements in $N_i\bs Z$ are not conjugate into $Z$. In particular,  there are 12
elements of order three in $Z_2$ which are not $G$-conjugate into $Z$.
\end{lemma}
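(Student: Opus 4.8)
The plan is to prove the slightly stronger statement that $Z^G\cap N_3=\{Z\}$ and $Z^G\cap N_4=\{Z\}$, i.e.\ that no $G$-conjugate of $Z$ other than $Z$ itself lies in $N_3$ or in $N_4$; the assertions about individual elements and the final count will then drop out. First I would fix $i\in\{3,4\}$ and argue by contradiction, supposing there is $g\in G$ with $Z^g\leq N_i$ and $Z^g\neq Z$. The point to exploit is that the whole structural analysis of this section --- Theorem \ref{A general 3^1+4 theorem}, Lemma \ref{Lemma N1-N4} and Lemma \ref{N_1, N_2 abelian, N_3, N_4 not} --- depends only on the choice of a $G$-conjugate of $Z$ that lies in $Q$ and is distinct from $Z$; since $Z^g\leq N_i\leq Q$ is exactly such a conjugate, I may re-run the construction with $x$ replaced by $g$, that is, with $Y$ replaced by $Y_g:=ZZ^g$. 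Crucially the four subgroups $N_1,\dots ,N_4$ are intrinsic to the action of $X$ on $Q$ and so are unchanged by this replacement.

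Next I would invoke Lemma \ref{Lemma N1-N4}$(iv)$ for this new choice: it gives that $Y_g$ is contained in some $N_j$ with $N_j$ abelian, and by Lemma \ref{N_1, N_2 abelian, N_3, N_4 not} the abelian members of the family are precisely $N_1$ and $N_2$, so $j\in\{1,2\}$. On the other hand $Z\leq N_i$ and $Z^g\leq N_i$ force $Y_g=\langle Z,Z^g\rangle\leq N_i$. Since $i\in\{3,4\}$ and $j\in\{1,2\}$ we have $i\neq j$, so Lemma \ref{Lemma N1-N4}$(ii)$ yields $Y_g\leq N_i\cap N_j=Z$, contradicting $|Y_g|=9$. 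This establishes $Z^G\cap N_i=\{Z\}$ for $i\in\{3,4\}$. To deduce the stated conclusion, note that if some $n\in N_i\setminus Z$ were $G$-conjugate into $Z$, then $\langle n\rangle\leq N_i$ would be a conjugate of $Z$ distinct from $Z$, which is impossible; hence no element of $N_3\setminus Z$ or of $N_4\setminus Z$ is $G$-conjugate into $Z$. For the count I would use that $Z_2=J\cap Q\leq J$ is elementary abelian of order $3^3$ by Theorem \ref{A general 3^1+4 theorem}, so every non-identity element of $Z_2$ has order three; by Lemma \ref{Lemma N1-N4}$(iii)$ each of $N_3\cap Z_2$ and $N_4\cap Z_2$ has order nine, while $(N_3\cap Z_2)\cap(N_4\cap Z_2)=(N_3\cap N_4)\cap Z_2=Z$, so $(N_3\cup N_4)\cap Z_2$ contains exactly $6+6=12$ elements of order three outside $Z$, none of which is conjugate into $Z$.

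The only delicate point, and the one I would be most careful to justify, is the legitimacy of the re-application of the structure theory with $Z^g$ playing the role of $Z^x$: one must be sure that every conjugate of $Z$ inside $Q$ distinct from $Z$ is an admissible value of the parameter $x$ of Hypothesis C, that the conclusions of Lemma \ref{Lemma N1-N4} used here (in particular $(iv)$) hold verbatim for that choice, and that the family $\{N_1,\dots,N_4\}$ together with the abelian/extraspecial dichotomy of Lemma \ref{N_1, N_2 abelian, N_3, N_4 not} stays fixed throughout. This genericity argument is exactly the device already used in the orthogonal-group chapter (cf.\ the proof of Lemma \ref{O8-Easy Lemma}$(iii)$, ``since $Z^x$ was chosen arbitrarily in $Q$''); once it is granted, everything else is routine bookkeeping.
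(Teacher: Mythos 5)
Your strategy coincides with the paper's: suppose some conjugate $Z^g\neq Z$ of $Z$ lies in $N_i$ ($i\in\{3,4\}$), re-run the structure theory with $Z^g$ in place of $Z^x$, use Lemma \ref{Lemma N1-N4}$(iv)$ to place $Y_g=ZZ^g$ in an abelian member of the family, and contradict the pairwise-intersection property. However, the point you flag as delicate and then ``grant'' is precisely the mathematical content of the paper's proof, and your justification for it does not work. The family $\{N_1,\dots,N_4\}$ is indeed intrinsic to $X$, but the re-run does not automatically involve the same $X$: replacing $x$ by $g$ replaces $W$ by $W_g=C_G(Y_g)$ and $S$ by $S_g=QW_g$, and the re-applied Lemma \ref{HN-EasyLemma}$(iv)$ only supplies \emph{some} $X_g$ with $S_g<X_g<C_G(Z)$. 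If $Z^g\nleq Z_2$ then necessarily $S_g\neq S$ (were $S_g=S$, the re-applied Theorem \ref{A general 3^1+4 theorem}$(vi)$,$(vii)$ would force $Y_g\leq J(S_g)\cap Q=Z_2$), so $X_g$ cannot be taken to be $X$, and the four $X_g$-invariant subgroups produced by the re-run need not be $N_1,\dots,N_4$: as the paper observes in proving Lemma \ref{HN-prelims2}$(ii)$, no $N_i$ is invariant under two distinct conjugates of $X$, so the five conjugates of $X$ in $C_G(Z)$ carry five \emph{different} families. Without identifying the two families, ``$Y_g$ lies in an abelian member of the new family'' together with ``$Y_g\leq N_i$'' yields no contradiction, because an abelian subgroup of $Q$ of order $27$ containing $Z$ can perfectly well meet $N_i$ in a subgroup of order nine ($Z_2$ does exactly this). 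The device you cite from Lemma \ref{O8-Easy Lemma}$(iii)$ shows that the \emph{conclusions} of the general theory hold for every admissible choice of $x$; it does not show that the \emph{objects} constructed from different choices coincide, which is what your argument needs.

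The paper closes this gap with two moves your sketch omits. First, by Lemma \ref{Lemma N1-N4}$(v)$, $X$ is transitive on $N_i\bs Z$, so one may assume the conjugate of $z$ in question is an element $a\in (N_i\cap Z_2)\bs Z$. Second, for such an $a$ the new centralizer can be computed inside known subgroups: $\<a,Z\>\leq Z_2\leq J$ with $J$ abelian, and $[N_j,N_i]=1$ for $\{i,j\}=\{3,4\}$, so $\<N_j,J\>\leq C_G(\<a,Z\>)$; since $|\<N_j,J\>|=3^5=|C_G(\<a,Z\>)|$ (the latter from the re-applied Theorem \ref{A general 3^1+4 theorem} and Lemma \ref{HN-EasyLemma}$(v)$), equality holds, and therefore $QC_G(\<a,Z\>)=QJ=S$. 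Only at this point is the original $X$ an admissible choice for the re-run, so the family $\{N_1,\dots,N_4\}$ is literally unchanged and your intersection argument applies. With these two steps inserted your proof becomes the paper's; as written, it assumes the key point rather than proving it.
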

\begin{proof}
Let $\{i,j\}=\{3,4\}$ and let $a\in (N_i \cap Z_2) \bs Z$. By Lemma \ref{Lemma N1-N4} $(v)$, every element in
$N_i \bs Z$ is conjugate to $a$. Suppose that $a \in Z^G$. Then we may again apply Lemmas \ref{A
general 3^1+4 theorem} and \ref{HN-EasyLemma} with $\<a\>$ in place of $Z^x$ to see that
$|C_G(\<a,Z\>)|=3^5$. Moreover $C_S(a)\geq \<N_j,J\>$ and $\<N_j,J\>$ has order $3^5$ so
$C_G(\<a,Z\>)=\<N_j,J\>$. Furthermore $S=QC_G(\<a,Z\>)$ and $Q<S<X$ so we may also apply Lemma
\ref{Lemma N1-N4} $(iv)$ to say that for some $k\in \{1,2,3,4\}$, $a \in N_k$ and $N_k$ is abelian.
By Lemma \ref{N_1, N_2 abelian, N_3, N_4 not}, $k\in \{1,2\}$. Therefore $a \in N_i \cap N_k$ and
$i \neq k$. This implies that $a \in Z$ which is a contradiction. Thus $a$ is not conjugate into
$Z$ and therefore no element in $N_i \bs Z$ is conjugate into $Z$.

Furthermore,
by Lemma \ref{Lemma N1-N4} $(iii)$, we see that $Z_2$ contains twelve  elements of order three
which are not conjugate into $Z$. These are contained in $N_3 \cap Z_2$ and $N_4 \cap Z_2$.
\end{proof}

\begin{lemma}\label{C-i's-orders and derived subgroups}
\begin{enumerate}[$(i)$]
\item Let $i \in \{1,2,3,4\}$ and set $S_i:=C_S(Z_2 \cap N_i)$ then $|S_i|=3^5$ and $|\mathcal{Z}(S_i)|=9$.
\item $S_1'=\mathcal{Z}(S_1)=Z_2 \cap N_1=Y$, $S_2'=\mathcal{Z}(S_2)=Z_2 \cap N_2$, $S_3'=\mathcal{Z}(S_4)=Z_2
\cap N_4$ and $S_4'=\mathcal{Z}(S_3)=Z_2 \cap N_3$.
\end{enumerate} In
particular $S_i \neq S_j$ for each $i \neq j$.
\end{lemma}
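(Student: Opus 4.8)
The plan is to carry everything out by explicit order and commutator computations inside $S$, exploiting the two ways the $S_i$ arise: as kernels of a ``near-central'' commutator map, and as products $JN_k$ for an abelian (or partner-centralising) $N_k$.

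First I would treat part $(i)$. Writing $Z_2\cap N_i=\langle Z,a_i\rangle$ (Lemma~\ref{Lemma N1-N4}$(iii)$), note $S_i=C_S(Z_2\cap N_i)=C_S(a_i)$ since $Z=\mathcal Z(S)$. Because $\bar a_i\in Z_2/Z=\mathcal Z(S/Z)$ (Theorem~\ref{A general 3^1+4 theorem}$(vi)$), the map $g\mapsto[g,a_i]$ is a homomorphism $S\to Z$; its kernel is $S_i$, and it is onto as $a_i\notin\mathcal Z(S)=Z$, so $[S:S_i]=3$ and $|S_i|=3^5$. Since $J$ is abelian and $a_i\in Z_2\le J$, we have $J\le S_i$, whence $J<S_i<S$ strictly, and Theorem~\ref{A general 3^1+4 theorem}$(viii)$ gives $|\mathcal Z(S_i)|=9$. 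As $Z_2\cap N_i\le J\le S_i$ is centralised by $S_i$ by construction and has order $9$, it equals $\mathcal Z(S_i)$.

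For part $(ii)$ I would first pin down $C_Q(a_i)$, and this is exactly where the abelian/non-abelian dichotomy of Lemma~\ref{N_1, N_2 abelian, N_3, N_4 not} enters. Since $a_i\notin Z$ and $Q$ is extraspecial, $|C_Q(a_i)|=3^4$, and $Z_2\le C_Q(a_i)$. For $i\in\{1,2\}$ the group $N_i$ is abelian and contains $a_i$, so $N_i\le C_Q(a_i)$ and an order count gives $C_Q(a_i)=N_iZ_2$; for $i\in\{3,4\}$ I would instead use $[N_3,N_4]=1$, so that the partner $N_{i'}$ (with $\{i,i'\}=\{3,4\}$) centralises $a_i\in N_i$, giving $C_Q(a_i)=N_{i'}Z_2$. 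In every case therefore $S_i=JC_Q(a_i)=JN_k$, where $k=i$ for $i\le2$ and $k=i'$ for $i\in\{3,4\}$ (using $Z_2\le J$). This swap on $\{3,4\}$, forced by $[N_3,N_4]=1$, is the one piece of genuine content and the main thing to get right.

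It then remains to compute $(JN_k)'$. Because $J\le S<X$ and each $N_k$ is $X$-invariant (Lemma~\ref{Lemma N1-N4}$(ii)$), $J$ normalises $N_k$, so $N_k\trianglelefteq S_i$ and $S_i/N_k$ is abelian, giving $S_i'\le N_k$; combined with $S_i'\le S'=Z_2$ this yields $S_i'\le Z_2\cap N_k$, a group of order $9$. For the reverse inclusion I would show $[J,N_k]\not\le Z$: if $[J,N_k]\le Z$ then, since $[Q,N_k]\le Q'=Z$ and $S=QJ$ (as $Q\cap J=Z_2$ has order $3^3$), we would get $[S,N_k]\le Z$, i.e. $N_k/Z\le\mathcal Z(S/Z)=Z_2/Z$ and hence $N_k\le Z_2$, contradicting $|N_k\cap Z_2|=9<27=|N_k|$. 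Since also $Z=C_Q(a_i)'\le S_i'$ (the order-$3^4$ subgroup $C_Q(a_i)$ of the extraspecial group $Q$ is non-abelian, with derived group $Z$), it follows that $S_i'\ge\langle Z,[J,N_k]\rangle=Z_2\cap N_k$, so $S_i'=Z_2\cap N_k$. Reading off $k$ in each case gives $S_1'=Z_2\cap N_1=Y$, $S_2'=Z_2\cap N_2$, $S_3'=Z_2\cap N_4=\mathcal Z(S_4)$ and $S_4'=Z_2\cap N_3=\mathcal Z(S_3)$, matching $\mathcal Z(S_j)=Z_2\cap N_j$ from part $(i)$; since the four subgroups $Z_2\cap N_i$ are distinct, the $S_i$ are pairwise distinct. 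As a sanity check one notes that $S_1=C_S(Y)=S\cap C_G(Y)=S\cap W=W$ by Lemma~\ref{HN-EasyLemma}$(v)$, so that $S_1'=W'=Y$ recovers the case $i=1$ directly from Theorem~\ref{A general 3^1+4 theorem}$(iv)$.
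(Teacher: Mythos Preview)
Your proof is correct and follows essentially the same strategy as the paper's: identify $S_i$ as $JN_k$ (with $k=i$ for $i\le 2$ and $k$ the partner for $i\in\{3,4\}$), use $N_k\trianglelefteq S_i$ with abelian quotient together with $S_i'\le S'=Z_2$ to get $S_i'\le Z_2\cap N_k$, and then push the lower bound. The only differences are technical: for $|S_i|=3^5$ you use the commutator homomorphism $g\mapsto[g,a_i]$ into $Z$, while the paper builds $\langle J,C_Q(Z_2\cap N_i)\rangle$ from below and bounds above via $\mathcal Z(S)=Z$; and for $S_i'>Z$ you compute $[J,N_k]\not\le Z$ directly, whereas the paper argues that $S_i'=Z$ would make $S_i/Z$ and $Q/Z$ two distinct abelian index-$3$ subgroups of $S/Z$, forcing $|\mathcal Z(S/Z)|\ge 3^3$ in contradiction to $Z_2/Z=\mathcal Z(S/Z)$.
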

\begin{proof}
By Lemma \ref{Lemma N1-N4}, $|Z_2 \cap N_i|=9$ for each $i\in\{1,2,3,4\}$ and by Lemma  \ref{A
general 3^1+4 theorem}, $Z_2 \leq J$ and $J$ is elementary abelian of order $81$. Therefore $J
\leq S_i$. Hence $S_i\geq \<J,C_Q(Z_2 \cap N_i)\>$. Since $C_Q(Z_2 \cap N_i)$ has order $3^4$ and
is non-abelian, $|\<J,C_Q(Z_2 \cap N_i)\>|\geq 3^5$. Moreover since $|S|=3^6$ and
$\mathcal{Z}(S)=Z$ has order three, it follows that $S_i=\<J,C_Q(Z_2 \cap N_i)\>$ has order $3^5$.
Now by Theorem \ref{A general 3^1+4 theorem} $(viii)$, for each $i \in \{1,2,3,4\}$, $|\mathcal{Z}(S_i)|=9$
and therefore $\mathcal{Z}(S_i)=N_i \cap Z_2$.

Now for $i \in \{1,2,3,4\}$, we have that $Z \leq S_i'$. If $S_i'=Z$ then $Q/Z$ and $S_i/Z$ are two
distinct abelian subgroups of $S/Z$ of index three. This implies that $S/Z$ has centre of order at
least $3^3$. However by Theorem \ref{A general 3^1+4 theorem} $(vi)$, $Z_2/Z=\mathcal{Z}(S/Z)$ has order
nine. Thus $S_i'>Z$. Now for $i=1$, by Lemma \ref{Lemma N1-N4}, $Y \leq N_1\cap Z_2$ and so
$\mathcal{Z}(S_1)=N_1 \cap Z_2=Y$. Furthermore, for $i \in \{1,2\}$, $N_i$ is abelian and so $N_i
\leq S_i$. Therefore $S_i' \leq S' \cap N_i= Z_2 \cap N_i$ since $N_i\vartriangleleft S_i$. For
$\{i,j\} = \{3,4\}$, $[N_i,N_j]=1$ and so $N_j \leq S_i$. Therefore $S_i' \leq S' \cap N_j= Z_2
\cap N_j$ since $N_j\vartriangleleft S_i$.
\end{proof}

Continue notation such that $S_i=C_S(N_i \cap Z_2)$.
\begin{lemma}\label{elements of order nine in S}
Every element of order three in $S$ lies in the set $Q \cup S_1 \cup S_2$ and the cube of every
element of order nine in $S$ is in $Z$.
\end{lemma}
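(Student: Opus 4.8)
The plan is to use the Thompson subgroup $J=J(S)$ to partition $S$ into four overlapping pieces and then separate these pieces according to their nilpotency class. Since $J\trianglelefteq S$ with $|S/J|=3^{2}$, and since $S_1,S_2,S_3,S_4$ are four pairwise distinct subgroups of order $3^{5}$ each containing $J$ (by Lemma \ref{C-i's-orders and derived subgroups}, noting $J\leq S_i$ because $N_i\cap Z_2\leq Z_2\leq J$ and $J$ is abelian), the quotient $S/J$ cannot be cyclic; hence $S/J\cong 3\times 3$ and $S_1,\dots,S_4$ are exactly the four subgroups lying strictly between $J$ and $S$. In particular $S=S_1\cup S_2\cup S_3\cup S_4$ with $S_i\cap S_j=J$ for $i\neq j$. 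I would first record that $S_1=C_S(N_1\cap Z_2)=C_S(Y)=W$ (using $N_1\cap Z_2=Y$ from Lemma \ref{Lemma N1-N4} and $C_G(Y)=W$ from Lemma \ref{HN-EasyLemma}), and that $Q$ is not one of the $S_i$: as $Q\cong 3_+^{1+4}$ contains no elementary abelian subgroup of order $3^{4}$ we have $J\not\leq Q$ and $Q\cap J=Z_2$. I would also note at the outset that every element of $S$ has order dividing $9$, since $(Jg)^{3}=J$ forces $g^{3}\in J$ and $J$ has exponent three.

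The decisive observation is the class-$2$ versus class-$3$ dichotomy supplied by Lemma \ref{C-i's-orders and derived subgroups}: for $i\in\{1,2\}$ we have $S_i'=\mathcal{Z}(S_i)=Z_2\cap N_i$, so $S_1$ and $S_2$ have class two, whereas for $i\in\{3,4\}$ the subgroups $S_3',S_4'$ differ from $\mathcal{Z}(S_3),\mathcal{Z}(S_4)$, forcing class three. For $S_2$ I would show it has exponent three: since $J\trianglelefteq S$ gives $S_2=(Q\cap S_2)J$ by order, every element is a product $xy$ with $x\in Q\cap S_2$ and $y\in J$; the class-two cube formula $(xy)^{3}=x^{3}y^{3}[y,x]^{3}$, together with $x^{3}=1$ ($Q$ has exponent three), $y^{3}=1$ ($J$ is elementary abelian) and $[y,x]\in S_2'\leq Z_2\leq Q$ (exponent three), yields $(xy)^{3}=1$. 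As $S_1=W$ has exponent three by Theorem \ref{A general 3^1+4 theorem}$(iv)$ and $Q$ has exponent three by hypothesis, it follows that $Q\cup S_1\cup S_2$ consists entirely of elements of order dividing three. Inclusion--exclusion (using $Q\cap S_i=C_Q(N_i\cap Z_2)$ of order $3^{4}$, $S_1\cap S_2=J$, and $Q\cap S_1\cap S_2=Z_2$) gives $|Q\cup S_1\cup S_2|=3\cdot 3^{5}-3\cdot 3^{4}+3^{3}=513$.

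For the two class-three pieces I would run the cube computation of Lemma \ref{O8-elements of order nine} along the same lines. Writing $S_i=(Q\cap S_i)J$ for $i\in\{3,4\}$, any element of $S_i\setminus(Q\cup J)$ has the form $hj$ with $h\in (Q\cap S_i)\setminus Z_2$ and $j\in J\setminus Z_2$; since $j\in J$ is abelian and contains $S'=Z_2$, the same manipulation of commutator identities gives $(hj)^{3}=[j,h,h]$. Here $[j,h]\in S_i'\leq Z_2$, and because $Z_2/Z=\mathcal{Z}(S/Z)$ gives $[Z_2,S]\leq Z$ we obtain $[j,h,h]\in[Z_2,S]\leq Z$; thus $g^{3}\in Z$ for every such $g$. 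To see these elements genuinely have order nine I would mimic the $\Omega_8^+(2)$ contradiction: assuming $(hj)^{3}=[j,h,h]=1$ forces $h$ to centralise a subgroup of $Z_2$ which, via the commutator relations inside $Q$ and the fact that $C_Q(Z_2)=Z_2$ with $h\notin Z_2$, is impossible. A count then closes the argument: the $108+108$ elements of $(S_3\cup S_4)\setminus(Q\cup J)$ all have order nine and exhaust the $729-513=216$ elements of $S$ outside $Q\cup S_1\cup S_2$. Hence every element of order three lies in $Q\cup S_1\cup S_2$, while every element of order nine lies in $(S_3\cup S_4)\setminus(Q\cup J)$ and so cubes into $Z$.

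The main obstacle is the class-three cube computation for $S_3$ and $S_4$: unlike the class-two formula used for $S_2$, here one must carry the triple-commutator term carefully through the identities of Lemma \ref{O8-elements of order nine} and then extract the non-vanishing of $[j,h,h]$ from the internal commutator structure of $Q$ (using $N_3\cong N_4$ extraspecial with $[N_3,N_4]=1$, from Lemma \ref{N_1, N_2 abelian, N_3, N_4 not}) together with $C_Q(Z_2)=Z_2$. Once that computation is in hand, locating $[j,h,h]$ inside $[Z_2,S]\leq Z$ is immediate and delivers both conclusions of the lemma simultaneously.
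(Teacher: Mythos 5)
Most of your skeleton is sound and in fact runs parallel to the paper's own proof: the paper also writes each element of $S\setminus(Q\cup J)$ as a product of a $Q$-part $c$ and a $J$-part $b$, computes $(cb)^3=[b,c,c]$ using that all commutators lie in $S'=Z_2\leq J$, and then splits into cases according to which $Z_2N_i$ contains $c$ --- which is exactly your partition $S=S_1\cup S_2\cup S_3\cup S_4$ in different packaging. Your exponent-three arguments for $Q$, $S_1=W$ and $S_2$ (the class-two cube formula), the identity $(hj)^3=[j,h,h]$, and the observation that $[j,h,h]\in[Z_2,S]\leq Z$ are all correct; the last is even a slightly cleaner way of landing the cube in $Z$ than the paper's $[Z_2\cap N_4,c]\leq[Q,Q]=Z$.

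The genuine gap is in the one step that carries all the weight: showing $[j,h,h]\neq 1$, i.e.\ that elements of $(S_3\cup S_4)\setminus(Q\cup J)$ really have order nine rather than three. Your stated contradiction --- ``$[j,h,h]=1$ forces $h$ to centralise a subgroup of $Z_2$, which is impossible since $C_Q(Z_2)=Z_2$ and $h\notin Z_2$'' --- is not valid as it stands: centralising \emph{a} subgroup of $Z_2$ is never impossible, and indeed $h$ centralises the order-nine subgroup $Z_2\cap N_i$ by the very definition of $S_i=C_S(Z_2\cap N_i)$. A contradiction with $C_Q(Z_2)=Z_2$ requires $h$ to centralise \emph{all} of $Z_2$, and the assumption $[j,h,h]=1$ only hands you the single extra element $[j,h]$; this gives nothing precisely in the dangerous case $[j,h]\in Z$, where $[j,h,h]=1$ holds automatically and your element would genuinely have order three if that case could occur. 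Excluding it is the real content of the step. The paper does this by first observing that $[b,c]$ commutes with $J\langle c\rangle=S_4$, so $[b,c]\in S_4'\cap\mathcal{Z}(S_4)=(Z_2\cap N_3)\cap(Z_2\cap N_4)=Z$, and then computing $S_4'=\langle[Z_2,c],[Z_2,b],[c,b]\rangle$ from the generation $S_4=Z_2\langle b,c\rangle$: all three generators lie in $Z$, contradicting $S_4'=Z_2\cap N_3$ of order nine (Lemma \ref{C-i's-orders and derived subgroups}). Alternatively, the $\Omega_8^+(2)$ argument of Lemma \ref{O8-elements of order nine} that you cite does transplant, but the step you must import is the containment $Z_2=S'=Q'[Q,j]^S\leq(Z_2\cap N_i)\langle[j,h]\rangle$, derived from the decomposition $Q=Z_2N_i\langle h\rangle$ (valid here since $Z_2\langle h\rangle=Z_2N_{i'}$ for the complementary index $i'$ and $Q=Z_2N_3N_4$) together with $[N_i,j]\leq N_i\cap Z_2$ and $[Z_2,j]=1$; it is this containment, not the bare assumption, that upgrades ``$h$ centralises $(Z_2\cap N_i)\langle[j,h]\rangle$'' to ``$h$ centralises $Z_2$''. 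Without one of these completions, the non-vanishing claim, and with it both conclusions of the lemma, remains unproven.
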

\begin{proof}
By hypothesis, $Q$ has exponent three and by Theorem \ref{A general 3^1+4 theorem} $(v)$, so does $J$.
So let $ g\in S $  such that $g \notin Q \cup J$. Then $g=cb$ for some $ c \in Q \bs J$ and some $b \in J \bs Q$. We calculate using the equality $c[b,c][b,c,c]=[b,c]c$ and using that $b \in J$ so
commutes with all commutators in $S'=Z_2 \leq J$.\begin{eqnarray*}
cbcbcb&=&c^2b[b,c]bcb \\
&=&c^2b^2[b,c]cb \\
&=&c^2b^2c[b,c][b,c,c]b \\
&=&c^2b^2cb[b,c][b,c,c] \\
&=&[c,b][b,c][b,c,c] \\
&=&[b,c,c].
\end{eqnarray*}

Since $c \in Q\bs J=Q \bs Z_2$, $Z_2\<c\>$ is a proper subgroup of $Q$ properly containing $Z_2$.
As $Z_2 \cap N_i$ has order nine for each $i\in \{1,2,3,4\}$, $Z_2N_i$ has order $81$. Thus $Z_2\<c\>=Z_2N_i$ for
some $i\in \{1,2,3,4\}$.

If $Z_2\<c\>=Z_2N_1=C_Q(Y)$ then $cb\in W$ and $W$ has exponent three.

Suppose $Z_2\<c\>=Z_2N_2$. Then $S_2=C_S(Z_2 \cap N_2)=J\<c\>$ and $S_2'=Z_2 \cap N_2$ therefore
$[b,c]\in Z_2 \cap N_2$ is central in $Z_2\<c\>=Z_2N_2$. Therefore $[b,c,c]=1$ and so $cb$ has
order three.

Now suppose $Z_2\<c\>=Z_2N_3$ (and a similar argument holds if $Z_2\<c\>=Z_2N_4$). Then
$S_4=C_S(Z_2 \cap N_4)=J\<c\>$ and $[b,c] \in S_4'=Z_2 \cap N_3$. Suppose $cbcbcb=[b,c,c]=1$. Then
$[b,c]$ commutes with $J\<c\>=S_4$ and so $[b,c] \in S_4' \cap \mathcal{Z}(S_4)=Z$. Thus
$S_4=J\<c\>=Z_2\<b,c\>$ and so $[S_4,S_4]=\<[Z_2,c],[Z_2,b],[c,b]\>$. However $[Z_2,c]\leq Z$,
$[Z_2,b]=1$  and $[c,b]\in Z$ which is a contradiction since $[S_4,S_4]=N_3\cap Z_2>Z$. Thus
$[b,c,c]\neq 1$ and $cb$ has order nine (no element can have order $27$ since $Q$ has exponent three).
Furthermore, $(cb)^3=[b,c,c] \in [Z_2 \cap N_4,c] \leq [Q,Q]=Z$ and so the cube of every such
element of order nine is in $Z$.
\end{proof}

\begin{lemma}\label{prelims-Centre of the C_i's}
For each $i\in\{3,4\}$, if $a \in \mathcal{Z}(S_i) \bs Z$ then $\mathcal{Z}(S_i/\<a\>)=\mathcal{Z}(S_i)/\<a\>$.
\end{lemma}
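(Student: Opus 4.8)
The plan is to compute the full preimage in $S_i$ of $\mathcal{Z}(S_i/\langle a\rangle)$ and show it is no larger than $\mathcal{Z}(S_i)$. Write $\{i,i'\}=\{3,4\}$. By Lemma \ref{C-i's-orders and derived subgroups} the two subgroups $\mathcal{Z}(S_i)=N_i\cap Z_2$ and $S_i'=N_{i'}\cap Z_2$ are distinct of order nine, each containing $Z$, and since $N_i\cap N_{i'}=Z$ by Lemma \ref{Lemma N1-N4} they satisfy $\mathcal{Z}(S_i)\cap S_i'=Z$. This is precisely the feature distinguishing $i\in\{3,4\}$ from $i\in\{1,2\}$ (where instead $\mathcal{Z}(S_i)=S_i'$), and it is what makes the statement true here.

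First I would record some elementary facts about $a$. As $a\in\mathcal{Z}(S_i)=N_i\cap Z_2\leq Q$ and $Q$ has exponent three, $\langle a\rangle$ has order three; since $a\notin Z$ and $|Z|=3$ we have $\langle a\rangle\neq Z$ and hence $\langle a\rangle\cap Z=1$. Because $\langle a\rangle\leq N_i$ while $S_i'\leq N_{i'}$, it follows that $\langle a\rangle\cap S_i'\leq N_i\cap N_{i'}=Z$, and therefore $\langle a\rangle\cap S_i'=1$. Note also that $\langle a\rangle$ is central in $S_i$, so it is normal and the quotient $S_i/\langle a\rangle$ is well defined.

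The key step is then a one-line commutator argument. Let $T$ be the preimage in $S_i$ of $\mathcal{Z}(S_i/\langle a\rangle)$, so that $T/\langle a\rangle=\mathcal{Z}(S_i/\langle a\rangle)$ and hence $[T,S_i]\leq\langle a\rangle$. On the other hand commutators always lie in the derived group, so $[T,S_i]\leq S_i'$. Combining these with the disjointness just established gives $[T,S_i]\leq\langle a\rangle\cap S_i'=1$, whence $T\leq\mathcal{Z}(S_i)$. The reverse inclusion $\mathcal{Z}(S_i)\leq T$ is immediate, since every element of $\mathcal{Z}(S_i)$ commutes with all of $S_i$. Thus $T=\mathcal{Z}(S_i)$ and $\mathcal{Z}(S_i/\langle a\rangle)=T/\langle a\rangle=\mathcal{Z}(S_i)/\langle a\rangle$, as required.

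There is no serious obstacle: the entire content is the observation that $\langle a\rangle$ avoids the derived subgroup $S_i'$, which in turn rests on the swapped identification $\mathcal{Z}(S_i)=N_i\cap Z_2$, $S_i'=N_{i'}\cap Z_2$ proved in Lemma \ref{C-i's-orders and derived subgroups}. The one thing to be careful about is to invoke the correct ($i\in\{3,4\}$) case of that lemma, since for $i\in\{1,2\}$ one would have $\langle a\rangle\leq Y=S_i'$ and both the argument and the statement would fail.
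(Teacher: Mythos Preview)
Your proof is correct and rests on the same key fact as the paper's, namely that $\langle a\rangle\cap S_i'=1$ (which both you and the paper deduce from $\mathcal{Z}(S_i)=N_i\cap Z_2$, $S_i'=N_{i'}\cap Z_2$, and $N_i\cap N_{i'}=Z$). Your route is in fact slightly more direct: you immediately observe that for any $t$ in the preimage $T$ of the centre one has $[t,S_i]\leq\langle a\rangle\cap S_i'=1$, forcing $T\leq\mathcal{Z}(S_i)$. The paper instead argues by contradiction, first showing that if $T>\mathcal{Z}(S_i)$ then $|T|\geq 3^3$, whence $S_i/T$ is abelian and $S_i'\leq T$, and then uses $[S_i',S_i]\leq\langle a\rangle\cap S_i'=1$ to conclude $S_i'\leq\mathcal{Z}(S_i)$, a contradiction. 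Your version avoids this detour through the order of $T$ and the inclusion $S_i'\leq T$ entirely.
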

\begin{proof}
Let $\{i,j\}= \{3,4\}$ then by Lemma
\ref{C-i's-orders and derived subgroups}, we have that $S_i'=\mathcal{Z}(S_j)$ and $S_j'=\mathcal{Z}(S_i)$. So let $a \in \mathcal{Z}(S_i) \bs Z$ and suppose
$\mathcal{Z}(S_i/\<a\>)>\mathcal{Z}(S_i)/\<a\>$. Let $V\leq S_i$ such that $a \in V$ and
$\mathcal{Z}(S_i/\<a\>)=V/\<a\>$ then $|V|\geq 3^3$.  Therefore $S_i/V$ is abelian and so $S_i'\leq
V$. Therefore $[S_i',S_i] \leq \<a\>$. However $S_i$ normalizes $\mathcal{Z}(S_j)=S_i'$ and so
$[S_i',S_i] \leq \<a\>\cap S_i'= \<a\>\cap \mathcal{Z}(S_j)=1$ since $\mathcal{Z}(S_i) \cap
\mathcal{Z}(S_j)\leq N_i \cap N_j=Z$. However this implies that $S_i' \leq \mathcal{Z}(S_i)$ and so $N_j \cap
Z_2\leq N_i \cap Z_2$ which is a contradiction. Therefore
$\mathcal{Z}(S_i/\<a\>)=\mathcal{Z}(S_i)/\<a\>$.
\end{proof}

We fix an element of order three $a$ in $Q$ such that $a \in (N_3 \cap Z_2) \bs Z$ and therefore $a
\notin Z^G$ by Lemma \ref{HN-a new class of three}. Let $3\mathcal{A}:=\{a^g|g \in G\}$ and
$3\mathcal{B}:=\{z^g|g \in G\}$. We show in the rest of this section that these are the only
conjugacy classes of elements of order three in $G$.

\begin{lemma}\label{HN-prelims1}
$|C_S(a)|=3^5$, $|a^{G} \cap
 Q|=|a^{C_G(Z)} \cap Q|=120$ and $|z^G \cap Q|=|z^{xC_G(Z)} \cap
 Q|+2=122$. In particular, $Q^\# \subset 3\mathcal{A} \cup 3\mathcal{B}$.
\end{lemma}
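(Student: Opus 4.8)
The plan is to evaluate the two relevant $C_G(Z)$-orbits inside $Q$ and then play them off against the total count $|Q^\#|=3^5-1=242$, every non-identity element of $Q$ having order three since $Q\cong 3_+^{1+4}$ has exponent three. First I would dispose of $|C_S(a)|=3^5$. Since $a\in(N_3\cap Z_2)\setminus Z$ and $Z_2/Z=\mathcal{Z}(S/Z)$ by Theorem \ref{A general 3^1+4 theorem}, we have $[S,a]\leq Z\leq\mathcal{Z}(S)$, so the map $S\to Z$, $g\mapsto[a,g]$ is a homomorphism with kernel $C_S(a)$. As $a\notin Z=\mathcal{Z}(S)$ this map is onto $Z$, whence $|C_S(a)|=|S|/3=3^5$; in fact $C_S(a)=S_3=C_S(N_3\cap Z_2)$, since the latter already has order $3^5$ by Lemma \ref{C-i's-orders and derived subgroups} and centralizes $a$.

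Next I would compute $|a^{C_G(Z)}|$. Because $Q\trianglelefteq C_G(Z)$ and $a\in Q$, the orbit $a^{C_G(Z)}$ lies entirely in $Q$, so it suffices to find $|C_{C_G(Z)}(a)|$. I would argue this centralizer is a $3$-group: if it contained an element of order $2$ or $5$, that element would project to a nontrivial element of $C_G(Z)/Q\cong 2^{.}\alt(5)$ fixing the nonzero vector $aZ\in Q/Z$. But $C_G(Z)/Q\cong\SL_2(5)$ has a unique involution, namely the central $Qt$, which inverts $Q/Z$ because $C_Q(t)=Z$, and an element of order five satisfies $C_Q(f)=Z$; both facts come from Lemma \ref{HN-EasyLemma}, so neither fixes a nonzero vector. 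Hence (by Cauchy) $C_{C_G(Z)}(a)$ is a $3$-group containing $C_S(a)$. It cannot have order $3^6$, for then $a$ would lie in the centre of a Sylow $3$-subgroup and so be conjugate into $Z$, contradicting Lemma \ref{HN-a new class of three}. Therefore $C_{C_G(Z)}(a)=C_S(a)$ has order $3^5$, and $|a^{C_G(Z)}\cap Q|=|a^{C_G(Z)}|=|C_G(Z)|/3^5=120$.

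The orbit of $z^x$ is immediate: $C_{C_G(Z)}(z^x)=C_G(Z)\cap C_G(Z^x)=C_G(Y)=W$, which has order $3^5$ by Lemma \ref{HN-EasyLemma}, so $|z^{xC_G(Z)}|=120$, again lying in $Q$ and meeting $Z$ trivially. For the bookkeeping, note that $z^2=z^s\in z^G$, so $Z^\#=\{z,z^2\}\subseteq 3\mathcal{B}\cap Q$, while $z^{xC_G(Z)}\subseteq 3\mathcal{B}\cap Q$ is disjoint from $Z$; thus $z^{xC_G(Z)}\cup Z^\#$ has size $122$ and lies in $3\mathcal{B}\cap Q$. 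By Lemma \ref{HN-a new class of three}, $a\notin z^G$, so $3\mathcal{A}\cap 3\mathcal{B}=\emptyset$ and $a^{C_G(Z)}$ is a subset of $3\mathcal{A}\cap Q$ of size $120$ disjoint from the previous set. Since $120+122=242=|Q^\#|$ and both pieces sit inside $Q^\#$, equality is forced throughout: $3\mathcal{A}\cap Q=a^{C_G(Z)}$, $3\mathcal{B}\cap Q=z^{xC_G(Z)}\cup Z^\#$, and $Q^\#=(3\mathcal{A}\cap Q)\sqcup(3\mathcal{B}\cap Q)$. This delivers $|a^G\cap Q|=|a^{C_G(Z)}\cap Q|=120$, $|z^G\cap Q|=|z^{xC_G(Z)}\cap Q|+2=122$, and $Q^\#\subset 3\mathcal{A}\cup 3\mathcal{B}$.

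The main obstacle is the precise evaluation $|a^{C_G(Z)}|=120$, i.e.\ showing $C_{C_G(Z)}(a)$ contains no element of order coprime to three; happily this reduces entirely to the fixed-point-free statements of Lemma \ref{HN-EasyLemma}. The genuinely delicate feature is that the two orbit sizes must come out exactly equal so that the final tally $120+122$ meets $|Q^\#|$ on the nose, which is what converts the orbit computations into the exhaustive partition of $Q^\#$ and removes any need to verify fusion control by hand.
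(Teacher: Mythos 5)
Your proposal is correct and takes essentially the same route as the paper: both arguments pin down the two $C_G(Z)$-orbits on $Q\setminus Z$ using the fixed-point-free action of the involution $t$ and of the order-five elements from Lemma \ref{HN-EasyLemma} $(iii)$, invoke Lemma \ref{HN-a new class of three} to keep the $a$-orbit and the $z^x$-orbit apart, and finish by counting against $|Q^\#|=242$. The only difference is organizational: you compute both centralizers exactly (identifying them with $C_S(a)$ and $W$), whereas the paper shows that \emph{every} $q\in Q\setminus Z$ has $C_G(Z)$-orbit length divisible by $120$ and lets the count $|Q\setminus Z|=240$ force the two orbits to have length exactly $120$.
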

\begin{proof}

%

We have chosen $a \in N_3\cap Z_2$ and so by Lemma \ref{C-i's-orders and derived subgroups},
$C_S(a)=C_S(\<Z,a\>)=C_S(N_3 \cap Z_2)=S_3$ which has order $3^5$. Now let $q \in Q\bs Z$ and
consider $[{C_G(Z)}:C_{C_G(Z)}(q)]$. By Lemma \ref{HN-EasyLemma} $(iii)$, an element of order five
acts fixed-point-freely on $Q/Z$ so we have that $5\mid [{C_G(Z)}:C_{C_G(Z)}(q)]$. Suppose $2\mid
|C_{C_G(Z)}(q)|$. Then there exists an involution $t_0\in C_{C_G(Z)}(q)$ and necessarily $Qt_0=Qt$
(since ${C_G(Z)}$ has Sylow $2$-subgroups which are quaternion of order eight). However this
implies that $q\in C_Q(t_0)=C_Q(t)=Z$ (by Lemma \ref{HN-EasyLemma} $(iii)$) which is a
contradiction. So $8\mid [{C_G(Z)}:C_{C_G(Z)}(q)]$. Furthermore $q$ is not $3$-central in $C_G(Z)$
and so $3\mid [{C_G(Z)}:C_{C_G(Z)}(q)]$. Therefore $[{C_G(Z)}:C_{C_G(Z)}(q)]$ is a multiple of 120.
Now there exists $z^x \in Q \bs Z$  which lies in a ${C_G(Z)}$-orbit in $Q$ of length at least 120
and also there exists $a \in Q$ which is not conjugate to $z$ and lies in a ${C_G(Z)}$-orbit in $Q$
of length at least 120. Since $a$ is not conjugate to $z^x$, these orbits are distinct. Thus
$|a^{G} \cap
 Q|=|a^{C_G(Z)} \cap Q|=120$ and $|z^G \cap Q|=|z^{xC_G(Z)} \cap Q|+2=122$.
\end{proof}


\begin{lemma}\label{HN-prelim-element of order four normalizing S}
\begin{enumerate}[$(i)$]
\item  $|C_J(t)|=3^2$ and $t$ inverts $S/J$.
 \item $|N_G(S) \cap C_G(Z)|=3^62^2$ and $|N_G(S)|=3^62^3$.
 \item There exists an element of order four $e \in N_G(S) \cap C_G(Z)$ such that $e^2=t$ and $e$ does not
 normalize $Y$.
\end{enumerate}
\end{lemma}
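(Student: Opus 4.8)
The plan is to read part $(i)$ straight off the general theory, to obtain the orders in part $(ii)$ by passing to the quotient $\bar{G}:=C_G(Z)/Q\cong 2^.\alt(5)$, and to produce the element $e$ in part $(iii)$ from the Sylow $2$-structure of the relevant normalizer in $\bar G$ together with a short linear-algebra argument over $\GF(3)$. For $(i)$, first note that $Qt\in\mathcal{Z}(C_G(Z)/Q)$ gives $[Qt,S/Q]=1$; since $Q\leq S$ this forces $S^t=S$, so $t\in N_G(S)$ and in particular $t$ normalizes $J=J(S)$. Thus $t$ satisfies the hypotheses of Theorem \ref{A general 3^1+4 theorem} $(xiii)$. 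By Lemma \ref{HN-EasyLemma} $(iii)$ we have $C_Q(t)=Z$ of order three, which rules out the second alternative of $(xiii)$ (there $C_Q(u)$ is extraspecial of order $3^3$); hence we are in the first alternative, giving $|C_J(t)|=3^2$ and that $t$ inverts $S/J$.

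For $(ii)$ the key observation is that, as $Q\leq S$, an element $g\in C_G(Z)$ normalizes $S$ if and only if $gQ$ normalizes $S/Q$; consequently the full preimage in $C_G(Z)$ of $N_{\bar G}(S/Q)$ is exactly $N_G(S)\cap C_G(Z)$. Now $S/Q$ is a Sylow $3$-subgroup of $\bar G\cong\SL_2(5)$ of order three, and a count of the $3$-elements of $\SL_2(5)$ shows it has ten Sylow $3$-subgroups, so $|N_{\bar G}(S/Q)|=12$ and therefore $|N_G(S)\cap C_G(Z)|=3^5\cdot 12=3^62^2$. For the second equality I would use that $Z=\mathcal{Z}(S)$ is characteristic in $S$, so $N_G(S)\leq N_G(Z)$, together with $[N_G(Z):C_G(Z)]\leq|\aut(Z)|=2$. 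The involution $s$ normalizes $W$ (as $W\trianglelefteq L$) and, inverting $Z$, lies in $N_G(Z)\bs C_G(Z)$ while normalizing $Q=O_3(C_G(Z))$; hence $s$ normalizes $S=QW$ and realises this index two, so $|N_G(S)|=2\cdot3^62^2=3^62^3$.

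For $(iii)$, since $\bar G$ has a unique involution (the central $\bar t:=Qt$), every $2$-subgroup of $\bar G$ has a unique involution and so is cyclic; in particular a Sylow $2$-subgroup of $N_{\bar G}(S/Q)$ is cyclic of order four, generated by some $\bar e$ with $\bar e^{\,2}=\bar t$. Lifting the preimage of $\langle\bar e\rangle$, which is $Q\rtimes C_4$ by Schur--Zassenhaus, and then adjusting the chosen complement by an element of $Q$ (all complements to $Q$ in the preimage $Q\langle t\rangle$ of $\langle\bar t\rangle$ are $Q$-conjugate, and $e^2$ is an involution of that preimage) yields an element $e$ of order four in $N_G(S)\cap C_G(Z)$ with $e^2=t$. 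To see $e$ does not normalize $Y$, suppose it does; then $e\in N_G(Y)\cap C_G(Z)$ acts on the order-nine group $Y$ fixing $Z$ pointwise, so in a basis adapted to $Z\leq Y$ its image in $\GL(Y)\cong\GL_2(3)$ is upper unitriangular with diagonal entry $\delta\in\GF(3)^\times$ on $Y/Z$. But $e^2=t$ inverts $Y/Z$ (since $t$ inverts $Q/Z\geq Y/Z$), forcing $\delta^2=-1$ in $\GF(3)$, which is impossible; hence $e$ does not normalize $Y$.

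I expect the main obstacle to be part $(iii)$: the delicate point is producing an element squaring \emph{exactly} to $t$ rather than merely to a conjugate of it, which is precisely where the Schur--Zassenhaus lifting and the $Q$-conjugacy of complements are needed, after which the non-normalization of $Y$ is packaged neatly as the non-existence of a square root of $-1$ in $\GF(3)$.
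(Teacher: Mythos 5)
Your proof is correct and follows essentially the same route as the paper: part $(i)$ comes from Theorem \ref{A general 3^1+4 theorem} $(xiii)$ with Lemma \ref{HN-EasyLemma} $(iii)$ selecting the correct alternative, part $(ii)$ from the order-$12$ Sylow $3$-normalizer in $C_G(Z)/Q\cong 2^{.}\alt(5)$ together with the involution $s$, and part $(iii)$ by lifting an element of order four from the cyclic Sylow $2$-subgroup of that normalizer and contradicting the action of $t=e^2$ on $Y$. The only cosmetic difference is your Schur--Zassenhaus/complement-conjugacy adjustment to arrange $e^2=t$; the paper gets this at once because $t\in N_G(S)\cap C_G(Z)$ lies in some Sylow $2$-subgroup of that group, which is cyclic of order four, so $t$ is automatically the square of its generator.
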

\begin{proof}
Using Theorem \ref{A general 3^1+4 theorem} $(xiii)$, $|C_J(t)|=3^2$ and $t$ inverts $S/J$. This  proves
$(i)$.

Now, $C_G(Z)/Q\cong 2^.\alt(5)$ and the normalizer of a Sylow $3$-subgroup in $2^.\alt(5)$ has
order $2^23$ with a cyclic Sylow $2$-subgroup. Thus $|N_G(S) \cap C_G(Z)|=3^62^2$ and since $s$ inverts $Z$, $|N_G(S) \cap N_G(Z)|=3^62^3$. Furthermore, we may choose
an element of order four $e \in C_G(Z)$ that squares to $t$ and normalizes $S$.  Suppose $e$
normalizes $Y$. Then $e^2=t$ centralizes $Y$ which is impossible. This completes the proof of
$(ii)$ and $(iii)$.
\end{proof}

\begin{lemma}\label{HN-prelims2}
\begin{enumerate}[$(i)$]
 \item  $J^\# \subseteq 3\mathcal{A} \cup 3\mathcal{B}$.
 \item $N_2^\# \subseteq 3\mathcal{B}$ and $C_W(s)^\# \subseteq 3\mathcal{A}$.
 \item Every element of order three in $S$ is in the set $3\mathcal{A}\cup 3\mathcal{B}$.

 \item For every $q \in Q$ there exists $P \in \syl_3(C_G(Z))$ such that
$q \in J(P)$.
 \item No non-trivial $3'$-subgroup of $G$ is normalized by $Y$.
\end{enumerate}
\end{lemma}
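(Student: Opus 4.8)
The plan is to establish the five parts largely independently, with a single recurring device: writing a $3$-group as a union of $G$-conjugates of subgroups of $Q$ and invoking $Q^\#\subseteq 3\mathcal{A}\cup 3\mathcal{B}$ (Lemma \ref{HN-prelims1}). Part (v) is immediate: if a nontrivial $3'$-subgroup $N$ is normalized by $Y$, then $[Y,N]=1$ by Theorem \ref{A general 3^1+4 theorem}(xii), so $N\leq C_G(Y)=W$ by Lemma \ref{HN-EasyLemma}(v); as $W$ is a $3$-group, $N=1$. For part (iv), recall from Lemma \ref{HN-EasyLemma}(ii) that $C_G(Z)/Q\cong 2^{.}\alt(5)$ acts irreducibly on $Q/Z$, and every $P\in\syl_3(C_G(Z))$ contains $Q$ with $(J(P)\cap Q)/Z=C_{Q/Z}(P/Q)$ a two-dimensional $C_G(Z)$-conjugate of $Z_2/Z$. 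I would show the ten such fixed spaces (one per Sylow $3$-subgroup) form a spread of $Q/Z$: two order-three elements from distinct Sylow $3$-subgroups generate a subgroup isomorphic to $X/Q\cong\SL_2(3)$ or to all of $2^{.}\alt(5)$, and neither has a nonzero fixed vector on $Q/Z$ (by Lemma \ref{HN-EasyLemma}(iv) and irreducibility respectively), so the ten planes meet pairwise trivially and cover the $80$ nonzero vectors. Lifting to $Q$, where each coset $Zq$ with $q\in Q\setminus Z$ is a single $Q$-conjugacy class, gives $q\in J(P)\cap Q$ for a suitable $P$. For part (i), $Z_2=J\cap Q\leq Q$ gives $Z_2^\#\subseteq 3\mathcal{A}\cup 3\mathcal{B}$; since $J/Y$ is a natural $L/W\cong\SL_2(3)$-module, the four preimages of its subgroups of order three are $L$-conjugates $V_0=Z_2,V_1,V_2,V_3$ meeting pairwise in $Y$ and covering $J$, and each $V_i^\#\subseteq 3\mathcal{A}\cup 3\mathcal{B}$ because conjugation preserves the two classes.

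For the first statement of (ii) I would compute the $X$-orbits on $Q\setminus Z$. On $Q/Z=N_1/Z\oplus N_2/Z$, the group $X/Q\cong\SL_2(3)$ has four orbits of length $8$ on the submodule vectors $\bigcup(N_i/Z)\setminus 0$ and two orbits of length $24$ on the remaining $48$ vectors $(v,w)$ with $v,w$ linearly independent (where $\GL_2(3)$ is regular and $\SL_2(3)$ has index two); lifting by $Z$ yields four $X$-orbits of length $24$, namely the $N_i\setminus Z$, and two of length $72$. By Lemma \ref{HN-prelims1}, $|3\mathcal{A}\cap(Q\setminus Z)|=|3\mathcal{B}\cap(Q\setminus Z)|=120$, and each class is a union of such orbits. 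As $120=72+24+24$ is the only admissible decomposition, each of $3\mathcal{A}$ and $3\mathcal{B}$ meets exactly two of the length-$24$ orbits; since $N_3\setminus Z$ and $N_4\setminus Z$ lie in $3\mathcal{A}$ by Lemma \ref{HN-a new class of three}, the other two orbits $N_1\setminus Z$ and $N_2\setminus Z$ must lie in $3\mathcal{B}$, proving $N_2^\#\subseteq 3\mathcal{B}$ (consistent with $Y\setminus Z\subseteq N_1$ and $Y^\#\subseteq 3\mathcal{B}$). Part (iii) then follows: by Lemma \ref{elements of order nine in S} every element of order three in $S$ lies in $Q\cup S_1\cup S_2$, and $Q^\#\subseteq 3\mathcal{A}\cup 3\mathcal{B}$. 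Here $S_1=W=C_S(Y)$, and using that $W/(Q\cap Q^x)$ is a natural $L/W$-module I would write $W$ as the union of the four $L$-conjugates $C_Q(Y),C_{Q^x}(Y),U_2,U_3$ of $C_Q(Y)\leq Q$, whence $W^\#\subseteq 3\mathcal{A}\cup 3\mathcal{B}$. For $S_2=C_S(V)$ with $V=N_2\cap Z_2$, the first part of (ii) gives $V^\#\subseteq 3\mathcal{B}$, so choosing $v\in V\setminus Z$ makes $\langle v\rangle$ a conjugate of $Z$ inside $Q$; applying Theorem \ref{A general 3^1+4 theorem} to the pair $Z,\langle v\rangle$ identifies $S_2$ with the corresponding ``$W$'', again a union of four conjugates of a subgroup of $Q$, so its (order three) elements lie in $3\mathcal{A}\cup 3\mathcal{B}$.

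The main obstacle will be the second statement of (ii), that $C_W(s)^\#\subseteq 3\mathcal{A}$. Here $C_W(s)=\langle e\rangle$ has order three with $e\in(Q\cap Q^x)\setminus Y$ and $e\notin J$ (as $s$ inverts $J$), so $e$ is not visibly inside a nonabelian $N_i$; the orbit bookkeeping of (ii) only places $\langle Ze\rangle$ in one of the two length-$72$ orbits without saying which. I would argue by contradiction: supposing $e\in 3\mathcal{B}$, the element $e$ is $3$-central and Hypothesis C applies to $\langle e\rangle$, giving $O_3(C_G(e))\cong 3_+^{1+4}$. I would then exploit that $Y\leq Q\cap Q^x$ centralizes $e$ and that $e$ lies simultaneously in $Q^{x'}=O_3(C_G(Z^{x'}))$ for every $3$-central $\langle Z^{x'}\rangle\leq Y$ (Lemma \ref{facts about W}(iii)), to force $\langle e\rangle$ into an abelian member $N_k$ ($k\in\{1,2\}$) exactly as in the argument of Lemma \ref{HN-a new class of three}, contradicting $e\in(Q\cap Q^x)\setminus Y$. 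Making this symmetric $3$-central argument precise — pinning down the position of $\langle e\rangle$ relative to the two $\SL_2(3)$-orbits of ``independent'' vectors, rather than merely inside $3\mathcal{A}\cup 3\mathcal{B}$ — is the delicate point, and is where I expect the real work to lie.
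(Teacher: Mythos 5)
Parts (i), (iii) and (v) of your proposal are essentially the paper's own arguments. Parts (iv) and the first claim of (ii) are correct but take genuinely different routes: for (iv) the paper has a one-line argument ($z^x$ and $a$ lie in $Z_2\leq J(S)$, and every element of $Q\bs Z$ is $C_G(Z)$-conjugate to one of them by Lemma \ref{HN-prelims1}), whereas your spread argument also works, since all $\SL_2(3)$-subgroups of $C_G(Z)/Q\cong 2^{.}\alt(5)$ are conjugate and hence, by Lemma \ref{HN-EasyLemma}$(iv)$, all act fixed-point-freely on $Q/Z$; for $N_2^\#\subseteq 3\mathcal{B}$ your orbit count (orbits of sizes $24,24,24,24,72,72$ on $Q\bs Z$, so each class of size $120$ must be one $72$-orbit plus two $24$-orbits) is a clean alternative to the paper, which instead uses the order-four element of $N_G(S)$ from Lemma \ref{HN-prelim-element of order four normalizing S}$(iii)$ to move $Y$ onto $N_2\cap Z_2$.

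The genuine gap is the second claim of (ii), $C_W(s)^\#\subseteq 3\mathcal{A}$, and it is not merely a matter of polishing: the mechanism you propose cannot close. The argument of Lemma \ref{HN-a new class of three} works because $a\in Z_2$: there $C_S(a)=\langle N_j,J\rangle$ has order $3^5$, so $C_G(\langle Z,a\rangle)\leq S$, the Sylow $3$-subgroup $QC_G(\langle Z,a\rangle)$ attached to the pair $(Z,\langle a\rangle)$ is $S$ itself, and Lemma \ref{Lemma N1-N4}$(iv)$ can be applied with the \emph{same} $X$, so that $a$ lands in a second member of the original quadruple $N_1,\dots,N_4$ and $a\in N_i\cap N_k=Z$ gives the contradiction. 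For a generator $e$ of $C_W(s)$ this collapses: $e\notin J$ (as $s$ inverts $J$), so $e\notin Z_2$; in fact $[W,e]=Y$ (since $[C_Q(Y),e]=Z$, $[C_{Q^x}(Y),e]=Z^x$ and $[W,e]\leq W'=Y$), whence $C_W(e)=Q\cap Q^x$ and $C_S(e)=C_Q(e)$ has order only $3^4$. So if $e$ were conjugate to $z$, the pair $(Z,\langle e\rangle)$ would generate a Sylow $3$-subgroup $QC_G(Z\langle e\rangle)$ \emph{different} from $S$, and the abelian invariant subgroup supplied by Lemma \ref{Lemma N1-N4}$(iv)$ is invariant under some other $X''$, with no relation to $N_1,\dots,N_4$; no intersection argument is available.

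Worse, the statement you aim to contradict is not contradictory: \emph{every} conjugate $z'$ of $z$ lying in $Q\bs Z$ has $Z\langle z'\rangle$ inside an abelian invariant subgroup for a suitable $X''$ — that is precisely what Theorem \ref{A general 3^1+4 theorem} and Lemma \ref{Lemma N1-N4}$(iv)$ assert — so ``forcing $\langle e\rangle$ into an abelian member'' is a tautological consequence of the assumption $e\in 3\mathcal{B}$, and no conflict with $e\in(Q\cap Q^x)\bs Y$ can be extracted from it. Your diagnosis that one must decide which of the two $72$-orbits contains $C_W(s)^\#$ is exactly right, but that decision needs a new input, and the paper's input is a counting argument inside $C_Q(Y)$: there is a second abelian conjugate $N_i^g$, $g\in C_G(Z)$, with $Y\leq N_i^g\neq N_1$ (obtained by counting the ten subgroups $N_1^h,N_2^h$ against the $120$ conjugates of $z$ in $Q\bs Z$); the involution $s$ fixes $Q\cap Q^x$ and $Z_2$ but must move $N_1$ or $N_i^g$, so the four maximal subgroups of $C_Q(Y)$ over $Y$ can be taken to be $Q\cap Q^x$, $Z_2$, $N_1$, $N_1^s$, the last two avoiding $3\mathcal{A}$; and since the four $X$-conjugates of $C_Q(Y)$ meet in $N_1$ and share the $120$ elements of $Q\cap 3\mathcal{A}$ equally, $|C_Q(Y)\cap 3\mathcal{A}|=30$, of which $12$ lie in $Z_2$, leaving all $18$ elements of $(Q\cap Q^x)\bs Y$ — in particular $C_W(s)^\#$ — inside $3\mathcal{A}$. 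Some argument of this kind is what your proposal is missing.
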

\begin{proof}
$(i)$ We have that $J/Y$ is a natural $L/W$-module and so there are four $L$-images of $Z_2$ in $J$
intersecting at $Y$. By Lemma \ref{HN-prelims1}, $Q^\# \subseteq 3\mathcal{A} \cup 3\mathcal{B}$.
Therefore $Z_2^\# \subseteq 3\mathcal{A} \cup 3\mathcal{B}$ which implies that $J^\# \subseteq
3\mathcal{A} \cup 3\mathcal{B}$.

$(ii)$ We have that for $i \in \{1,2,3,4\}$, by Lemma \ref{Lemma N1-N4} $(v)$, $X$ is transitive on
$N_i\bs Z$ and so either $N_i\bs Z\subseteq 3\mathcal{A}$ or $N_i\bs Z \subseteq 3\mathcal{B}$. By
Lemma \ref{HN-prelim-element of order four normalizing S} $(iii)$, there exists $e\in N_G(S)$ such
that $Y^e\neq Y$. Therefore $Y^e=N_i \cap Z_2$ for some $i \in \{2,3,4\}$. We have that  $N_i\bs
Z \subseteq 3\mathcal{A}$ for $i=3,4$ and so $Y^e=N_2 \cap Z_2$. Thus $N_2^\# \subseteq 3
\mathcal{B}$. Now there are five conjugates of $X$ in $C_G(Z)$ and therefore five images of $N_1$
and of $N_2$ in $C_G(Z)$ (since if $N_i$ was normal in two distinct conjugates of $X$ then $N_i$
would be normal in $C_G(Z)$). For each $i\in \{1,2\}$, $N_i\bs Z$ contains $24$ conjugates of $z$.
Since $Q\bs Z$ contains 120 conjugates of $Z$, there exists $i \in \{1,2\}$ and $g \in C_G(Z)$ such
that $Y\leq N_i^g\vartriangleleft X^g$ and $N_i^g\neq N_1$.  Now consider $C_Q(Y)$ which is
normalized by $s$ (as $s$ normalizes $Q$ and $Y$). By Theorem \ref{A general 3^1+4 theorem} $(xi)$, $3
\cong C_W(s) \leq Q \cap Q^x$. Therefore $|C_{C_Q(Y)}(s)|=3$. Now there are four proper subgroups
of $C_Q(Y)$ properly containing $Y$. These include $Q \cap Q^x$, $Z_2$, $N_1$ and $N_i^g$. We have
that $s$ normalizes at least two subgroups: $Z_2=S'\neq Q \cap Q^x$. Suppose that $s$ normalizes
$N_1$ and $N_i^g$. If $s$ inverts $N_1$ then $N_1\leq [W,s] \cap Q=J \cap Q=Z_2$ which is a
contradiction (as $|N_1 \cap Z_2|=9$). Therefore $N_1=YC_{W}(s)=Q \cap Q^x$ and by the same
argument $N_i^g =Q \cap Q^x$ which is a contradiction since $N_i^g \neq N_1$. Therefore at least
one of $N_1$ and $N_i^g$ is not normalized by $s$. We assume that $N_1^s \neq N_1$ (and the same
argument works if $N_i^{gs} \neq N_i^g$). Now consider $|C_Q(Y) \cap 3\mathcal{A}|$. Since $Q/N_1$
is a natural $X/Q$-module, there are four $X$-conjugates of $C_Q(Y)$ in $Q$ intersecting at $N_1$.
Each must contain exactly 120/4=30 conjugates of $a$. Thus $|C_Q(Y) \cap 3 \mathcal{A}|=30$.
Clearly $N_1\cap 3 \mathcal{A}=N_1^s \cap 3\mathcal{A}=\emptyset$ and $|Z_2\cap 3 \mathcal{A}|=12$
by Lemma \ref{HN-a new class of three}. Therefore we have $|Q \cap Q^x \cap 3\mathcal{A}|=18$. In
particular this implies $C_W(s)^\# \subseteq 3 \mathcal{A}$.

$(iii)$ By Lemma \ref{elements of order nine in S}, every element of order three in $S$ lies in $Q
\cup C_S(N_1 \cap Z_2) \cup C_S(N_2 \cup Z_2)$ and the cube of every element of order nine is in $Z$.
Since $N_1 \cap Z_2=Y$, $C_S(N_1 \cap Z_2)=W$ and since $N_2^\# \subseteq 3\mathcal{B}$ and $C_G(Z)$
is transitive on  $Q \cap 3\mathcal{B} \bs Z$,  $N_1 \cap Z_2$ is conjugate in $C_G(Z)$ to $N_2
\cap Z_2$. Therefore $S_2=C_S(N_2 \cap Z_2)$ is conjugate to $W$. Now, by Lemma \ref{A general
3^1+4 theorem} $(ix)$, $W/(Q \cap Q^x)$ is a natural $L/W$-module and so there are four $L$-conjugates of
$C_Q(Y)$ in $W$ and this accounts for every element of $W$. Since $C_Q(Y)^\# \subseteq Q^\# \subseteq
3\mathcal{A} \cup 3 \mathcal{B}$, $W^\# \subseteq 3\mathcal{A} \cup 3 \mathcal{B}$ and therefore
every element of order three in $S$ is in $3\mathcal{A} \cup 3 \mathcal{B}$.

$(iv)$ Since $z^x,a \in Z_2\leq J(S)=J$ and every element in $Q\bs Z$ is ${C_G(Z)}$-conjugate
to one of these, every element in $Q$ lies in the Thompson subgroup of a Sylow $3$-subgroup of
${C_G(Z)}$.

$(v)$ By Theorem \ref{A general 3^1+4 theorem} $(xii)$, any $3'$-subgroup of $G$ normalized by $Y$
commutes with $Y$. However $C_G(Y)=W$ is a $3$-group.
\end{proof}

\begin{lemma}\label{HN-normalizer Z2}
$N_G(Z_2)=N_G(S)=N_G(J) \cap N_G(Z)$ and $C_G(Z_2)=J=C_G(J)$.
\end{lemma}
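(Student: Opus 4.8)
The plan is to prove the centralizer assertion first, since the equality $J=C_G(Z_2)$ will feed directly back into the normalizer computation. I would begin with $C_G(Z_2)=J=C_G(J)$. As $J$ is elementary abelian of order $3^4$ (Theorem \ref{A general 3^1+4 theorem}) and $Z_2\le J$, we have $J\le C_G(J)\le C_G(Z_2)$. Since $Y\le Z_2$ by Lemma \ref{Lemma N1-N4}, anything centralizing $Z_2$ centralizes $Y$, so $C_G(Z_2)\le C_G(Y)=W$ by Lemma \ref{HN-EasyLemma}. Now $J=J(W)\le W$ with $[W:J]=3$ (Theorem \ref{A general 3^1+4 theorem}), so $C_W(Z_2)$ is either $J$ or $W$; the latter would force $Z_2\le \mathcal{Z}(W)=Y$, contradicting $|Z_2|=27>9=|Y|$. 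Hence $C_G(Z_2)=C_W(Z_2)=J$, and squeezing gives $C_G(J)=J$ as well.

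For the normalizers, I would next establish $N_G(S)=N_G(Z)\cap N_G(J)$. Both $Z=\mathcal{Z}(S)$ and $J=J(S)$ are characteristic in $S$ (Lemma \ref{HN-EasyLemma} and Theorem \ref{A general 3^1+4 theorem}), giving $N_G(S)\le N_G(Z)\cap N_G(J)$. Conversely, $N_G(Z)$ normalizes $O_3(C_G(Z))=Q$, and since $S=QJ$ (both are $3$-groups, $|QJ|=3^5\cdot 3^4/|Q\cap J|=3^6=|S|$, as $Q\cap J=Z_2$ has order $3^3$), an element normalizing $Q$ and $J$ normalizes $S$; thus $N_G(Z)\cap N_G(J)\le N_G(S)$. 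Because $Z_2=S'$ by Lemma \ref{Lemma N1-N4}, $Z_2$ is characteristic in $S$ and $N_G(S)\le N_G(Z_2)$. To close the loop I must show $N_G(Z_2)\le N_G(S)$: the equality $J=C_G(Z_2)$ already proved shows $N_G(Z_2)$ normalizes $J$, so once I prove $N_G(Z_2)\le N_G(Z)$ I obtain $N_G(Z_2)\le N_G(Z)\cap N_G(J)=N_G(S)$, yielding all three equalities.

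The crux is therefore $N_G(Z_2)\le N_G(Z)$, which I would extract from the fusion pattern inside $Z_2\cong 3^3$. By Lemmas \ref{HN-prelims1} and \ref{HN-a new class of three}, $Z_2^\#\subseteq 3\mathcal{A}\cup 3\mathcal{B}$ with $|Z_2\cap 3\mathcal{A}|=12$, hence $|Z_2\cap 3\mathcal{B}|=14$. On the other hand $Y^\#\subseteq 3\mathcal{B}$ (as $L$ is transitive on the natural module $Y^\#$ and $Z\in Y^\#$), and by Lemma \ref{HN-prelim-element of order four normalizing S} there is an element $e$ of order four normalizing $S$, hence $Z_2$, with $Z^e=Z$ but $Y^e\ne Y$; thus $Y\cap Y^e=Z$ and $YY^e=Z_2$. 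Then $(Y\cup Y^e)^\#$ has $8+8-2=14$ elements, all in $3\mathcal{B}$, so $Z_2\cap 3\mathcal{B}=(Y\cup Y^e)^\#$. Viewing the order-$3$ subgroups of $Z_2$ as points of $\mathrm{PG}(2,3)$ and the order-$9$ subgroups as lines, $Y$ and $Y^e$ are two lines exhausting $Z_2\cap 3\mathcal{B}$; an incidence count shows they are the only lines lying entirely in $3\mathcal{B}$ (any third line meets each of $Y,Y^e$ in at most one point, so carries at most two special points). Consequently $N_G(Z_2)$, which preserves $Z_2\cap 3\mathcal{B}$, stabilizes the unordered pair $\{Y,Y^e\}$ and hence its intersection $Z$, giving $N_G(Z_2)\le N_G(Z)$.

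The main obstacle is precisely this identification of $Z$ as intrinsic data of $Z_2$: one must exclude spurious order-$9$ subgroups of $Z_2$ lying wholly in $3\mathcal{B}$, for which the projective-plane count (two lines meet in a unique point, a line off the configuration carries at most two special points) is the cleanest device. Everything else is bookkeeping with the orders already recorded and with the characteristic subgroups $Z=\mathcal{Z}(S)$, $J=J(S)$, $Z_2=S'$ of $S$.
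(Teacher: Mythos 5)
Your proof is correct and follows essentially the same route as the paper: the same argument for $C_G(Z_2)=J=C_G(J)$ (via $C_G(Z_2)\le C_G(Y)=W$ and $\mathcal{Z}(W)=Y$), the same identification $N_G(S)=N_G(Z)\cap N_G(J)$ using $S=QJ$ and $Z_2=S'$, and the same key step that $N_G(Z_2)$ preserves $Z_2\cap 3\mathcal{B}$, which is the union of two order-$9$ subgroups meeting in $Z$ --- your $Y$ and $Y^e$ are exactly the paper's $N_1\cap Z_2$ and $N_2\cap Z_2$. The only difference is presentational: you make explicit, via the $\mathrm{PG}(2,3)$ incidence count, the step the paper leaves implicit, namely that preserving the union of the two lines forces preservation of the pair and hence of their intersection $Z$.
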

\begin{proof}
We clearly have, $N_G(S) \leq N_G(\mathcal{Z}(S)) \cap N_G(J(S))=N_G(Z) \cap N_G(J)$. However
$N_G(Z) \cap N_G(J)$ normalizes $Q=O_3(N_G(Z))$ and $J$ and therefore normalizes $S=QJ$ and so we
have $N_G(S) =N_G(Z) \cap N_G(J)$. By Lemma \ref{Lemma N1-N4}, $|Z_2 \cap N_i|=9$ for each $i \in
\{1,2,3,4\}$. Also by Lemma \ref{Lemma N1-N4},  $Z_2=\<N_i \cap Z_2|1 \leq i \leq 4\>$ and no element of
$N_3 \bs Z$ or $N_4 \bs Z$ is conjugate to $z$ by Lemma \ref{HN-a new class of three}. Lemma \ref{HN-prelims2} $(ii)$ says that $N_2^\# \subseteq
3\mathcal{B}$ and so  $Z_2 \cap 3 \mathcal{B}=(N_1 \cap Z_2)^\# \cup (N_2 \cap Z_2)^\#$. Therefore
$N_G(Z_2)$ preserves this set and therefore also preserves the set $(N_1 \cap Z_2) \cap (N_2 \cap
Z_2)=Z$. Hence $N_G(Z_2) \leq N_G(Z)$. Since $J$ is abelian, $J \leq C_G(J) \leq C_G(Z_2) \leq C_G(Y)=W$ and
since $\mathcal{Z}(W)=Y$ (by Theorem \ref{A general 3^1+4 theorem} $(iv)$), $J=C_G(Z_2)=C_G(J)$. Therefore $N_G(Z_2) \leq  N_G(J)$ and so $N_G(Z_2)
\leq N_G(Z) \cap N_G(J)$. Clearly $N_G(S) \leq N_G([S,S])=N_G(Z_2)$ which gives us that
$N_G(Z_2)=N_G(Z) \cap N_G(J)$ therefore completing the proof.
\end{proof}

\begin{lemma}\label{HN-Normalizer of Z structure}
$N_G(Z)/Q\cong 4^{.}\alt(5)\cong 4 \ast \SL_2(5)$.
\end{lemma}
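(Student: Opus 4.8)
The plan is first to pin down the order and gross shape of $N_G(Z)/Q$, and then to identify the precise extension type. Since $Z=\mathcal{Z}(S)$ has order three, the conjugation action of $N_G(Z)$ on $Z$ gives a homomorphism $N_G(Z)\to\aut(Z)\cong C_2$ with kernel $C_G(Z)$; as $s$ inverts $Z$ this map is surjective, so $[N_G(Z):C_G(Z)]=2$. Because $Q=O_3(C_G(Z))$ is characteristic in $C_G(Z)\trianglelefteq N_G(Z)$, the quotient $N_G(Z)/Q$ is a group of order $2|C_G(Z)/Q|=240$ containing $C_G(Z)/Q\cong 2^{.}\alt(5)\cong\SL_2(5)$ as a normal subgroup of index two. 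It therefore remains to show that this index-two extension is the central product $\SL_2(5)\ast C_4$, which is precisely $4^{.}\alt(5)$.

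The key input is the action on $Q/Z$. By Lemma~\ref{HN-EasyLemma} the group $C_G(Z)/Q\cong\SL_2(5)$ acts irreducibly on the four-dimensional $\GF(3)$-space $Q/Z$, and I would first record that this module is not absolutely irreducible: the two faithful two-dimensional $3$-modular representations of $\SL_2(5)$ are defined only over $\GF(9)$ and are interchanged by the field automorphism, so $\mathrm{End}_{\GF(3)C_G(Z)/Q}(Q/Z)\cong\GF(9)$. Consequently any element of $N_G(Z)/Q$ centralizing $C_G(Z)/Q$ acts on $Q/Z$ as a scalar from $\GF(9)^\ast\cong C_8$. I would then analyse how such a scalar acts on $Z=Q'$ through the commutator (symplectic) form: the scalars centralizing $Z$ are exactly $\{1,-1\}=\langle Qt\rangle$ (recall $Qt$ inverts $Q/Z$ by Lemma~\ref{HN-EasyLemma}), while the scalars of order four invert $Z$ and square to $Qt$. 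This computation simultaneously shows that $Qt$ is the unique central involution of $N_G(Z)/Q$, ruling out $\SL_2(5)\times C_2$, and that $C_{N_G(Z)/Q}(C_G(Z)/Q)$ is either $\langle Qt\rangle\cong C_2$ or a cyclic group $\langle w\rangle\cong C_4$ with $w$ inverting $Z$ and $w^2=Qt$.

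The decisive and hardest step is to show we are in the second case, equivalently that the outer coset of $N_G(Z)/Q$ induces inner automorphisms of $\SL_2(5)$ rather than the outer automorphism; the remaining competitor is the double cover $2^{.}\sym(5)$, which also has $Qt$ as its unique central involution and so cannot be excluded by the previous paragraph alone. To settle this I would realise $N_G(Z)/Q$ inside $\out(Q)\cong\Sp_4(3).2$ (Theorem~\ref{extraspecial outer automorphisms}), with $C_G(Z)/Q=N_G(Z)/Q\cap\Sp_4(3)$ and the $Z$-inverting elements lying in the outer coset. The outer automorphism of $\SL_2(5)$ interchanges the two Galois-conjugate $\GF(9)$-constituents of $Q/Z$, so it is enough to prove that a concrete $Z$-inverting involution does not swap these constituents. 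Here I would exploit the detailed action of $s$ recorded in Theorem~\ref{A general 3^1+4 theorem} (that $s$ inverts $Y$, has $C_W(s)\cong 3$, and is an involution) together with the order-four element $e$ of Lemma~\ref{HN-prelim-element of order four normalizing S}, computing the relevant fixed spaces on $Q/Z$ and the induced centraliser inside $\SL_2(5)$; this should distinguish an inner $(2,2)$-type action (centraliser of order $8$) from an outer transposition-type action (centraliser of order $12$), forcing the action to be inner.

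Once the action is known to be inner, the scalar analysis of the second paragraph supplies a $Z$-inverting element $w\in N_G(Z)/Q$ centralizing $C_G(Z)/Q$ with $w^2=Qt$; then $\langle w\rangle\cong C_4$ meets $C_G(Z)/Q$ in $\langle Qt\rangle$, and $N_G(Z)/Q=(C_G(Z)/Q)\langle w\rangle\cong\SL_2(5)\ast C_4\cong 4^{.}\alt(5)$, the order forcing equality. I expect the genuine difficulty to lie entirely in the third paragraph: separating the central product $4\ast\SL_2(5)$ from the double cover $2^{.}\sym(5)$ requires actually computing the conjugation action of a $Z$-inverting involution on $C_G(Z)/Q$, so that the fine structure of $s$ and of the $\GF(9)$-module $Q/Z$ must be used, rather than mere order and centre considerations.
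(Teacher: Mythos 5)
Your setup (paragraphs one and two) and your endgame (paragraph four) are correct, and the explicit identification $\mathrm{End}_{\GF(3)C_G(Z)/Q}(Q/Z)\cong\GF(9)$ is a nice self-contained substitute for the paper's appeal to Gorenstein's theorem for cyclicity of $\mathcal{Z}(N_G(Z)/Q)$. You have also located the crux correctly: everything reduces to showing that the outer coset induces inner automorphisms of $C_G(Z)/Q\cong\SL_2(5)$. But your third paragraph, the step you yourself call decisive, contains a genuine gap: the computation you propose cannot distinguish the two cases. In both configurations the fixed space of $Qs$ on $Q/Z$ has order exactly $3^2$. Indeed, in the inner case $Qs=cw$ with $c\in C_G(Z)/Q$ of order four and $w$ a scalar of order four, so on each $\GF(9)$-constituent $Qs$ has eigenvalues $1$ and $-1$ and a fixed space of order $9$ on $Q/Z$; in the outer case $Qs$ acts $\GF(9)$-semilinearly, swapping the two constituents, and the fixed space of such a swap, $\{u+u^{Qs}\}$, is a $\GF(3)$-form of one constituent, again of order $9$. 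So fixed-space dimensions are blind to the difference. What actually distinguishes the cases is whether $Qs$ centralizes a Sylow $3$-subgroup of $C_G(Z)/Q$ (equivalently, whether $C_{Q/Z}(s)$ is a $\GF(9)$-line), and none of the facts you cite decides this: that $s$ inverts $J$ (hence inverts $S/Q$), inverts $Y$, and has $C_W(s)\cong 3$ is perfectly consistent with the outer case, since a transposition-type involution in $2^{.}\sym(5)$ likewise inverts three of the four Sylow $3$-subgroups it normalizes and centralizes only the fourth.

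The missing ingredient is irreducibly $3$-local, and it is exactly what the paper's proof supplies. Since $s$ permutes the ten Sylow $3$-subgroups of $C_G(Z)$ and fixes $S$, parity gives a second $s$-invariant Sylow $3$-subgroup $R$. If $[R/Q,s]=1$, then $R/Q$ normalizes $[Q/Z,s]=Z_2/Z$, so $R\leq N_G(Z_2)$; but $N_G(Z_2)=N_G(S)$ by Lemma \ref{HN-normalizer Z2}, forcing $R=S$, a contradiction. Hence every Sylow $3$-subgroup of $C_G(Z)/Q$ normalized by $Qs$ is inverted by it, so $Qs$ centralizes none; this eliminates the transposition-type $2^{.}\sym(5)$, while the other isomorphism type dies at once because its outer coset contains no involutions, whereas $Qs$ is one. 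Note that Lemma \ref{HN-normalizer Z2} rests on $C_G(Z_2)=C_G(J)=J$ and ultimately on the non-weak-closure hypothesis $Z\neq Z^x\leq Q$; it is not recoverable from the module $Q/Z$ together with the properties of $s$ and $e$ that you allow yourself, which is why your plan as written cannot be completed. If you replace your fixed-space step by this $3$-local argument, the rest of your proof goes through and is then essentially the paper's proof in a module-theoretic dressing.
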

\begin{proof}
We have chosen an involution $s\in N_G(Z)\bs C_G(Z)$ such that $Ws \in \mathcal{Z}(N_G(Y)/W)$. Observe that
$C_G(Z)$ has ten Sylow $3$-subgroups and $s$ normalizes one of these, namely $S$. Clearly, $s$
must normalize at least one further Sylow $3$-subgroup of $C_G(Z)$. Let $S \neq R \in
\syl_3(C_G(Z))$ be normalized by $s$. We have that $s$ inverts $J$ and centralizes $S/J$, therefore
$9=|C_{S/J}(s)|=|C_S(s)J/J|=|C_S(s)/C_J(s)|=|C_S(s)|$. Notice that $s$ inverts $S/Q=QJ/Q\cong J/(Q
\cap J)$ and so $C_S(s) \leq Q$ and therefore $|C_Q(s)|=9$. By coprime action, we have
$Q/Z=C_{Q/Z}(s)\times [Q/Z,s]$ and $Z_2/Z \leq [Q/Z,s]$. Since $C_Q(s)\cong C_{Q/Z}(s)$ has order
nine, $Z_2/Z=[Q/Z,s]$. Suppose that $[R/Q,s]=1$. Then $R/Q$ normalizes $[Q/Z,s]=Z_2/Z$. Therefore
$Z_2\vartriangleleft \<S,R\>$ which contradicts Lemma \ref{HN-normalizer Z2} which says that
$N_G(Z_2) =N_G(S)\ngeq R$. Thus $s$ must invert $R/Q$.

Now we have that $N_G(Z)/C_G(Z)$ acts on $C_G(Z)/Q \cong 2^{.}\alt(5)$ so suppose this action is
non-trivial. Then $\<C_G(Z)/Q,Qs\> \sim 2^{.}\sym(5)$. There are two isomorphism types of group
with shape $2^{.}\sym(5)$. One of these has no involutions outside its $2$-residue which is clearly
not the case in $\<C_G(Z)/Q,Qs\>$ since $Qs$ is such an involution. The other has one class of
involutions outside its $2$-residue and these commute with a Sylow $3$-subgroup. However we have
seen that $Qs$ commutes with no Sylow $3$-subgroup of $C_G(Z)/Q$. Hence $\<C_G(Z)/Q,Qs\> \nsim
2^{.}\sym(5)$. Thus $\<C_G(Z)/Q,Qs\>$ has centre of order four. By Theorem 3.2.2 in
\cite[p64]{Gorenstein}, since $N_G(Z)/Q$ acts irreducibly on $Q/Z$, $\mathcal{Z}(N_G(Z)/Q)$ is
cyclic. Therefore $\<C_G(Z)/Q,Qs\>=N_G(Z)/Q \sim 4^{.}\alt(5)$.
\end{proof}

\begin{lemma}\label{HN-element of order four in CG(Z)}
Let $ A \in \syl_2(C_G(Z))$ such that $t \in A$ and suppose that $f \in A$ such that $f^2=t$. Then $Z \in \syl_3(C_G(f))\cap
\syl_3(C_G(A))$.
\end{lemma}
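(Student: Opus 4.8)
The plan is to pin down how $f$ sits inside $C_G(Z)$ and then to propagate the resulting $3$-structure outward. First I would record that, since $Q$ has odd order, $A$ maps isomorphically onto a Sylow $2$-subgroup of $C_G(Z)/Q\cong 2^{.}\alt(5)\cong\SL_2(5)$, whence $A\cong Q_8$; as $Q_8$ has a unique involution, $t=\mathcal{Z}(A)$ and $f$ has order $4$. The key local fact is that $f$ acts \emph{fixed-point-freely} on $Q/Z$: since $Qt$ is the central involution of $C_G(Z)/Q$ acting faithfully and irreducibly on $Q/Z$ (Lemma~\ref{HN-EasyLemma}), it inverts $Q/Z$, so $C_{Q/Z}(f)\le C_{Q/Z}(f^2)=C_{Q/Z}(t)=1$. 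Because $f$ centralises $Z$, this yields $C_Q(f)=Z$.

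Next I would compute a Sylow $3$-subgroup of $C_{C_G(Z)}(f)$. The image $Qf$ is an element of order $4$ in $\SL_2(5)$, and $\SL_2(5)$ contains no element of order $12$, so any $3$-element commuting with $Qf$ would generate an element of order $12$; hence $C_{\SL_2(5)}(Qf)$ is a $3'$-group. Restricting the quotient map $C_G(Z)\to C_G(Z)/Q$ to $C_{C_G(Z)}(f)$ has image inside $C_{\SL_2(5)}(Qf)$ and kernel $C_Q(f)=Z$; therefore $C_{C_G(Z)}(f)/Z$ embeds in a $3'$-group and $Z\in\syl_3(C_{C_G(Z)}(f))$. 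In particular every $3$-subgroup of $C_G(f)$ that lies in $C_G(Z)$ has order at most $3$.

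Finally I would pass from $C_{C_G(Z)}(f)$ to $C_G(f)$. Take $P\in\syl_3(C_G(f))$ with $Z\le P$ (possible since $f$ centralises $Z$). If $Z\neq P$ then normaliser growth in the $3$-group $P$ produces an element $g\in N_P(Z)\setminus Z$; as $g$ is a $3$-element and $\aut(Z)$ has order $2$, $g$ centralises $Z$, so $\langle Z,g\rangle$ is a $3$-subgroup of $C_{C_G(Z)}(f)$ of order at least $9$, contradicting the previous step. Hence $Z=P\in\syl_3(C_G(f))$. The assertion for $A$ is then immediate: $C_G(A)\le C_G(f)$ (as $f\in A$) and $Z\le C_G(A)$ (as $A\le C_G(Z)$), so a Sylow $3$-subgroup of $C_G(A)$ has order dividing $3$, whence $Z\in\syl_3(C_G(A))$. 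The only genuinely delicate points are the fixed-point-free action of $f$ on $Q/Z$ underpinning $C_Q(f)=Z$, and the non-local step ruling out a larger Sylow $3$-subgroup of $C_G(f)$ invisible inside $C_G(Z)$; the latter is exactly what the normaliser-growth argument settles.
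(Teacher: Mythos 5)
Your proof is correct and follows essentially the same route as the paper: both reduce to $C_G(Z)$ via $C_Q(f)=C_Q(A)=Z$ (a consequence of $f^2=t$ and $C_Q(t)=Z$), use the structure of $C_G(Z)/Q\cong 2^{.}\alt(5)$ to see that the relevant centralizer in the quotient is a $3'$-group, and then pass from $C_G(Z)$ to $G$. The only differences are cosmetic: the paper applies coprime action to get an exact isomorphism $C_{C_G(Z)}(f)/Z\cong C_{C_G(Z)/Q}(Qf)\cong 4$ where you use an embedding plus the absence of order-$12$ elements in $\SL_2(5)$, and you write out explicitly the normaliser-growth step and the reduction of the $\syl_3(C_G(A))$ claim to the $\syl_3(C_G(f))$ claim, both of which the paper compresses into its final ``Thus''.
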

\begin{proof}
We have that $C_Q(A)=C_Q(f)=Z$ since $f^2=t$ and $C_Q(t)=Z$. By coprime action, \[4 \cong
C_{C_G(Z)/Q}(f)= C_{C_G(Z)}(f)Q/Q\cong C_{C_G(Z)}(f)/C_Q(f)\] and  \[2 \cong C_{C_G(Z)/Q}(A)=
C_{C_G(Z)}(A)Q/Q\cong C_{C_G(Z)}(A)/C_Q(A).\] Therefore $Z\in \syl_3(C_G(Z) \cap C_G(f))$ and $Z\in
\syl_3(C_G(Z) \cap C_G(A))$. Thus $Z \in \syl_3(C_G(f))\cap \syl_3(C_G(A))$.
\end{proof}

\begin{lemma}\label{HN-normalizer of J1}
$[N_G(J):C_{N_G(J)}(a)]=48$, $[N_G(J):C_{N_G(J)}(Z)]=32$ and $|N_G(J)|=3^62^7$.
\end{lemma}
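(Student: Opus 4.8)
The plan is to reduce both index computations to counting how many elements of each of the two conjugacy classes $3\mathcal{A}$ and $3\mathcal{B}$ lie in $J$, and then to pin down $|N_G(J)|$ from one of these indices together with the known structure of $N_G(S)$. The key reduction is Lemma \ref{conjugation in thompson subgroup}: since $J=J(S)$ is abelian (Theorem \ref{A general 3^1+4 theorem}$(v)$) and $a,z\in J$, any two $G$-conjugate elements of $J$ are already conjugate in $N_G(J)$. Hence $a^G\cap J$ is a single $N_G(J)$-orbit, so $[N_G(J):C_{N_G(J)}(a)]=|a^G\cap J|=|3\mathcal{A}\cap J|$, and likewise, noting $C_{N_G(J)}(Z)=C_{N_G(J)}(z)$ because $Z=\langle z\rangle$ is cyclic, $[N_G(J):C_{N_G(J)}(Z)]=|z^G\cap J|=|3\mathcal{B}\cap J|$. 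So everything comes down to two counts in the elementary abelian group $J$ of order $3^4$.

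For the counts I would use the natural-module structure of $J/Y$. By Lemma \ref{HN-prelims2}$(i)$ (or Theorem \ref{A general 3^1+4 theorem}$(ix)$), $J/Y$ is a natural $L/W\cong\SL_2(3)$-module, so the four one-dimensional subspaces form a single $L/W$-orbit; lifting, $J$ contains exactly four subgroups of order $27$ containing $Y$, namely the $L$-conjugates $Z_2=Z_2^{(1)},\dots,Z_2^{(4)}$ of $Z_2$, meeting pairwise in $Y$. These cover $J$ since $4\cdot(27-9)+9=81$, and the sets $Z_2^{(i)}\setminus Y$ are pairwise disjoint. Now $Y$ is a natural $L/W$-module (Theorem \ref{A general 3^1+4 theorem}$(ix)$), so $L$ is transitive on $Y^\#$; as $Z\le Y$ and $z\in 3\mathcal{B}$, we get $Y^\#\subseteq 3\mathcal{B}$, whence $|Y\cap 3\mathcal{A}|=0$ and $|Y\cap 3\mathcal{B}|=8$. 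Inside $Z_2$ we have $|Z_2\cap 3\mathcal{A}|=12$ by Lemma \ref{HN-a new class of three}, and since $Z_2^\#\subseteq 3\mathcal{A}\cup 3\mathcal{B}$ (Lemma \ref{HN-prelims1}) with $|Z_2^\#|=26$, also $|Z_2\cap 3\mathcal{B}|=14$. Subtracting the contribution of $Y$, each $Z_2^{(i)}\setminus Y$ contains $12$ elements of $3\mathcal{A}$ and $6$ of $3\mathcal{B}$; the point to emphasise here, which is the only genuinely delicate step, is that $L$ fixes $Y=\mathcal{Z}(W)$ while permuting the four subgroups transitively, so conjugation by $L$ carries $Z_2\setminus Y$ onto each $Z_2^{(i)}\setminus Y$ preserving $G$-conjugacy and hence the split between $3\mathcal{A}$ and $3\mathcal{B}$. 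Summing over the four disjoint pieces gives $|J\cap 3\mathcal{A}|=4\cdot 12=48$ and $|J\cap 3\mathcal{B}|=8+4\cdot 6=32$, and as a consistency check $48+32=80=|J^\#|$. This yields the first two claims.

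Finally I would compute $|N_G(J)|$ from the second index. We have $C_{N_G(J)}(Z)=N_G(J)\cap C_G(Z)$, and by Lemma \ref{HN-normalizer Z2} this lies in $N_G(J)\cap N_G(Z)=N_G(S)$, so $C_{N_G(J)}(Z)=N_G(S)\cap C_G(Z)$, which has order $3^6 2^2$ by Lemma \ref{HN-prelim-element of order four normalizing S}$(ii)$. Therefore $|N_G(J)|=32\cdot 3^6 2^2=3^6 2^7$, as required. (As a sanity check, this gives $|C_{N_G(J)}(a)|=3^6 2^7/48=3^5 2^3$, compatible with $C_S(a)=S_3$ being a Sylow $3$-subgroup of order $3^5$.) I do not anticipate further obstacles beyond the transitivity argument above; the remaining work is the elementary bookkeeping already sketched.
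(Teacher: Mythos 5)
Your proposal is correct and follows essentially the same route as the paper: both reduce the index computations to counting $|J\cap 3\mathcal{A}|$ and $|J\cap 3\mathcal{B}|$ via Lemma \ref{conjugation in thompson subgroup}, carry out the count using the four $L$-conjugates of $Z_2$ covering $J$ with pairwise intersection $Y$ (with $|Z_2\cap 3\mathcal{B}|=14$, $Y^\#\subseteq 3\mathcal{B}$), and then obtain $|N_G(J)|=3^6 2^7$ from $|C_{N_G(J)}(z)|=3^6 2^2$ via Lemmas \ref{HN-normalizer Z2} and \ref{HN-prelim-element of order four normalizing S}. The only cosmetic difference is that the paper gets $|J\cap 3\mathcal{A}|=48$ by complementary counting from $J^\#\subseteq 3\mathcal{A}\cup 3\mathcal{B}$, whereas you also count it directly from the covering; your explicit remark that $L$-conjugation preserves the $3\mathcal{A}$/$3\mathcal{B}$ split on each piece is a point the paper leaves implicit.
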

\begin{proof}
By Theorem \ref{A general 3^1+4 theorem} $(ix)$, $J/Y$ is a natural $L/W$-module and so $J$ contains four
$L$-conjugates of $Z_2$ with pairwise intersection $Y$. By Lemma \ref{Lemma N1-N4}, $Z_2=\<N_i \cap
Z_2| 1 \leq i \leq 4\>$. Since the conjugates of $z$ lie in $N_1 \cup Z_2$ and $N_2 \cup Z_2$,
$|Z_2 \cap 3\mathcal{B}|=8+6=14$ and so $|J\cap 3\mathcal{B}|=8+(4*6)=32$. Therefore, by Lemma
\ref{conjugation in thompson subgroup},  $[N_G(J):C_{N_G(J)}(z)]=32$.  Now by Lemma
\ref{HN-prelim-element of order four normalizing S}, $|C_{N_G(J)}(z)|=3^62^2$ and so
$|N_G(J)|=3^62^7$. Since $J^\# \subseteq  3\mathcal{A} \cup  3\mathcal{B}$, $|J \cap
3\mathcal{A}|=48$ and so $[N_G(J):C_{N_G(J)}(a)]=48$.
\end{proof}

%
%
%
%

Recall that $L\leq N_G(Y)$ and $L/J\cong 3 \times \SL_2(3)$ using Theorem \ref{A general 3^1+4 theorem}. Recall also that a group $H$ is said to be $3$-soluble of length one $H/O_{3'}(H)$ has a normal Sylow $3$-subgroup which is to say that $H=O_{3',3,3'}(H)$.

\begin{lemma}\label{HN-normalier of J-order of the O2}
We have that $O_2(L/J) \leq O_2(N_G(J)/J) \cong 2_+^{1+4}$ and $N_G(J)/J$ is $3$-soluble of length one.
\end{lemma}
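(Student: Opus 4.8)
The plan is to pass to $\bar N := N_G(J)/J$ and study it inside $\GL(J)\cong\GL_4(3)$. By Lemma \ref{HN-normalizer Z2} we have $C_G(J)=J$, so $\bar N$ acts faithfully on the four-dimensional $\GF(3)$-space $J$, and by Lemma \ref{HN-normalizer of J1}, $|\bar N|=2^73^2$ with $S/J$ an elementary abelian Sylow $3$-subgroup of order nine. First I would record the two ingredients from Theorem \ref{A general 3^1+4 theorem}: since $J=J(W)$ is characteristic in $W\trianglelefteq L$ we have $L\leq N_G(J)$, and part $(x)$ gives $L/J\cong 3\times\SL_2(3)$, so $\bar E:=O_2(L/J)\cong Q_8$ is an $S/J$-invariant quaternion subgroup of $\bar N$; moreover, since $Y$ and $J/Y$ are natural $L/W$-modules by part $(ix)$, $\bar E$ acts fixed-point-freely on $J$.

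Next I would determine $F^\ast(\bar N)$. As $|\bar N|=2^73^2$, Burnside's $p^aq^b$ theorem shows $\bar N$ is soluble, so $F^\ast(\bar N)=F(\bar N)=O_2(\bar N)\times O_3(\bar N)$ is self-centralising. I would rule out $O_3(\bar N)\neq 1$: a non-trivial normal $3$-subgroup of $\bar N$ pulls back to a normal $3$-subgroup of $N_G(J)$ lying in $S$, which would force $N_G(J)\leq N_G(Z_2)=N_G(S)$ by Lemma \ref{HN-normalizer Z2}, contradicting the orders. Hence $O_2(\bar N)=F(\bar N)$ is a non-trivial self-centralising normal $2$-subgroup.

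The heart of the proof is to show $O_2(\bar N)\cong 2_+^{1+4}$ and that it contains $\bar E$. For this I would exhibit a second $S/J$-invariant quaternion subgroup $\bar E'$ of $\bar N$ distinct from $\bar E$; the natural candidate is $\bar E^{\bar e}$ for the order-four element $e\in N_G(S)\cap C_G(Z)$ of Lemma \ref{HN-prelim-element of order four normalizing S}, which normalises $S$ and $J$ (hence $S/J$) but moves $Y$, so that $\bar E$ and $\bar E^{\bar e}$ are fixed-point-free with distinct invariant decompositions of $J$. After checking $[\bar E,\bar E']=1$ and $\bar E\cap\bar E'=\mathcal{Z}(\bar E)$, so that $E:=\bar E\bar E'$ has order $2^5$ with $|\Phi(E)|\leq 2$, I would apply Lemma \ref{prelims-extraspecial 2^5 in GL(4,3)} (with $S/J$ in the role of the order-nine group, acting faithfully because $E$ is irreducible on $J$) to conclude $E\cong 2_+^{1+4}$, acting irreducibly on $J$ and therefore self-centralising in $\bar N$. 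Since by Lemma \ref{exactly 2 q-8's in extraspecial group} the subgroups $\bar E,\bar E'$ are the only copies of $Q_8$ in $E$, this canonical pair is permuted by any normaliser of $E$, and I would use this together with self-centrality of $F(\bar N)$ to identify $E=O_2(\bar N)$; in particular $O_2(L/J)=\bar E\leq O_2(N_G(J)/J)\cong 2_+^{1+4}$.

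Finally, $3$-solubility of length one follows: as $E=O_2(\bar N)$ is self-centralising, $\bar N/E$ embeds into $\out(E)\cong \O_4^+(2)\cong\sym(3)\wr\sym(2)$ by Theorem \ref{extraspecial outer automorphisms}, a group of order $72$ whose Sylow $3$-subgroup is normal; since $|\bar N/E|=2^23^2$ one checks $O_2(\bar N/E)=1$ and hence $SE/E$ is normal in $\bar N/E$, so $E\cdot(S/J)\trianglelefteq\bar N$, which is exactly the assertion that $\bar N$ is $3$-soluble of length one. The main obstacle will be the third paragraph: producing the second quaternion group explicitly and verifying that it commutes with $\bar E$ (equivalently, that the two natural $\SL_2(3)$-actions on $J$ centralise one another) so that Lemma \ref{prelims-extraspecial 2^5 in GL(4,3)} applies, and then upgrading ``$E$ unique up to $\GL_4(3)$-conjugacy'' to $E=O_2(\bar N)$, which I would handle through the self-centralising Fitting subgroup established via Burnside's theorem.
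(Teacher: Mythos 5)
Your overall strategy matches the paper's: Burnside's $p^\a q^\b$ theorem for solubility, a self-centralising Fitting subgroup after killing $O_3$, the two $S/J$-invariant quaternion groups $O_2(L/J)$ and its conjugate under the order-four element $e$ of Lemma \ref{HN-prelim-element of order four normalizing S} $(iii)$, and Lemma \ref{prelims-extraspecial 2^5 in GL(4,3)} to identify $2_+^{1+4}$. But there is a genuine gap exactly where you place your ``main obstacle'', and it is not a detail that can be filled in the order you propose. You want to apply Lemma \ref{prelims-extraspecial 2^5 in GL(4,3)} to $E:=\bar{E}\bar{E}'$, but that lemma takes as \emph{hypotheses} that $E$ is a group of order $2^5$ with $|\Phi(E)|\leq 2$; for the product $\bar{E}\bar{E}'$ even to be a group, let alone have order $2^5$, you must already know $[\bar{E},\bar{E}']=1$ and $\bar{E}\cap\bar{E}'=\mathcal{Z}(\bar{E})$ --- which is precisely the commuting statement you have no mechanism for. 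The lemma is the tool that is supposed to \emph{deliver} that conclusion (its proof derives $E=AB$ and extraspecialness from the order and Frattini hypotheses), so your plan is circular: there is no direct argument that two fixed-point-free $S/J$-invariant copies of $Q_8$ in $\GL_4(3)$ centralise one another; containment in a common small normal $2$-subgroup is the mechanism, and your plan skips it.

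The paper breaks the circle by reversing the order of steps: it sizes $\bar{N}:=O_2(N_G(J)/J)$ \emph{before} touching the quaternion groups. Since $\bar{s}$ acts as $-1$ on $J$ it is central in $N_G(J)/J$, hence lies in $\bar{N}$; by Theorem \ref{Burnside-p'-automorphism} the group $S/J$ acts faithfully on $\bar{N}/\Phi(\bar{N})$, and as $\GL_n(2)$ for $n\leq 3$ has Sylow $3$-subgroups of order at most $3$, this forces $|\bar{N}/\Phi(\bar{N})|\geq 2^4$ and then $|\bar{N}|\geq 2^5$; in the other direction $\bar{t}=\bar{e}^2$ inverts $S/J$, so $\bar{t}\notin\bar{N}$ (else $S/J=[S/J,\bar{t}]\leq\bar{N}$), whence $\<\bar{e}\>\cap\bar{N}=1$ and $|\bar{N}|\leq 2^73^2/(9\cdot 4)=2^5$. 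This yields $|\bar{N}|=2^5$, $|\Phi(\bar{N})|\leq 2$, and the factorisation $N_G(J)/J=\bar{N}(S/J)\<\bar{e}\>$, which is already the $3$-solubility of length one. Only then are $O_2(L/J)$ and $O_2(L^e/J)$ shown to lie inside $\bar{N}$, and the lemma is applied to $\bar{N}$ itself, so that the commuting of your $\bar{E},\bar{E}'$ drops out as a consequence (via Lemma \ref{exactly 2 q-8's in extraspecial group}) rather than being assumed. Note that your closing step has the same defect: without the bound $\<\bar{e}\>\cap O_2(N_G(J)/J)=1$ you can neither show $E\trianglelefteq N_G(J)/J$ nor rule out $O_2$ being strictly larger than $E$; and your elimination of $O_3$ is looser than the paper's (which simply observes $O_3(N_G(J))\leq O_3(L)=W$ and then $N_G(W)\leq N_G(Y)$, avoiding a case analysis over the four subgroups between $J$ and $S$). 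If you reorganise so that the Fitting subgroup is pinned down first and the quaternion groups are located inside it second, your argument becomes correct --- and is, in substance, the paper's proof.
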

\begin{proof}
Set $K:=N_G(J)$ and $\bar{K}=K/J$. Then $\bar{K}$ has order $3^22^7$ and $\bar{S}\in
\syl_3(\bar{K})$. Consider $O_3(K)$. Recall using Theorem \ref{A general 3^1+4 theorem} that
$W=O_3(L)$. Therefore $O_3(K) \leq W$. If $O_3(K)=W$ then $K \leq N_G(W) \leq N_G(Y)$ (as
$\mathcal{Z}(W)=Y$)  and it follows from Lemma \ref{HN-EasyLemma} $(v)$, $|N_G(Y)|<|N_G(J)|$  so we have that $O_3(K)=J$.

By Burnside's $p^\a q^\b$-Theorem \cite[4.3.3, p131]{Gorenstein}, $\bar{K}$ is solvable. Let $N$ be
a subgroup of $K$ such that  $J\leq N$ and $\bar{N}=O_2(\bar{K})$. Then $\bar{N} \neq 1$ since
$\bar{K}$ is solvable and $O_3(\bar{K})=1$. Recall that $s$ inverts $J$ and so $\bar{s} \in
\mathcal{Z}(\bar{K})$, in particular, $\bar{s} \in \bar{N}$. Moreover $\bar{N}$ is the Fitting subgroup of $\bar{K}$, $F(\bar{K})$, and so
by \cite[6.5.8]{stellmacher} $C_{\bar{K}}(\bar{N})\leq \bar{N}$. If any element in $\bar{S}$
centralizes $\bar{N}/\Phi(\bar{N})$ then by Theorem \ref{Burnside-p'-automorphism}, such an element
centralizes $\bar{N}$ and so is the identity. Therefore $\bar{S}$ acts faithfully on
$\bar{N}/\Phi(\bar{N})$ and so by calculating the order of a Sylow $3$-subgroup in $\GL_n(2)$ for
$n=1,2,3$ we see that $|\bar{N}/\Phi(\bar{N})|\geq 2^4$. Moreover, since $\bar{s}$ is central in
$\bar{K}$, we have that $\bar{S}$ acts faithfully on $\bar{N}/\<\bar{s},\Phi(\bar{N})\>$ and so
$|\bar{N}| \geq 2^5$. We use Lemma \ref{HN-prelim-element of order four normalizing S}
$(iii)$ to find $e\in N_G(S)$ such that $e^2=t$ and $e$ does not normalize $Y$. Since $t$ inverts $\bar{S}$, by Lemma
\ref{HN-prelim-element of order four normalizing S} $(i)$, $\<\bar{e}\>\cap \bar{N}=1$. Thus
$|\bar{N}|\leq 2^5$ and so we have that $|\bar{N}|=2^5$ and furthermore that $\bar{K}=\bar{N}\bar{S}\<\bar{e}\>$ and so $\bar{K}$ is $3$-soluble of length one.

By Theorem \ref{A general 3^1+4 theorem}$(x)$, $W/J \trianglelefteq L/J \cong 3 \times \SL_2(3)$. It is
clear that $Q_8\cong O_2(\bar{L})\leq \bar{N}$. Since  $e$ normalizes $S$ but not $Y$,  we have
that $Q_8\cong O_2(\bar{L^e})\leq \bar{N}$ and $O_2(L) \neq O_2(L^e)$. Thus by setting $A=O_2(\bar{L})$ and $B=O_2(\bar{L^e})$
we may apply Lemma \ref{prelims-extraspecial 2^5 in GL(4,3)} to see that $\bar{N}\cong 2_+^{1+4}$ which completes the proof.
\end{proof}

Note that we may also use Lemma \ref{prelims-extraspecial 2^5 in GL(4,3)} to see that
$O_2(N_G(J)/J)$ is unique up to conjugation in $\GL_4(3)$. It therefore follows that $N_G(J)/J$ is
isomorphic to a subgroup of $N_{\GL_4(3)}(\GO^+_4(3))\sim \GO^+_4(3).2$.

\begin{lemma}\label{HN-normalizer of J2}
$C_S(s)=\<\alpha_1,\alpha_2\>\cong 3 \times 3$ where $\alpha_1,\alpha_2 \in 3\mathcal{A}$ and there exist
$\<\alpha_1,\alpha_2\>$-invariant subgroups  $Q_8 \cong X_i\leq C_G(s) \cap C_G(\alpha_i)$ for $i \in\{1,2\}$ such that $s \in X_i$
and $[X_1,X_2]=1$.
\end{lemma}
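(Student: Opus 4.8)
The plan is to treat the two assertions separately: first pin down $C_S(s)$, then construct the quaternion subgroups inside $N_G(J)$. For the first part I would start from the proof of Lemma \ref{HN-Normalizer of Z structure}, where it is shown that $s$ inverts $J$ and $S/Q$ and that $C_S(s)=C_Q(s)$ has order $9$; since $Q$ has exponent three this gives $C_S(s)\cong 3\times 3$ at once. The subgroup $C_W(s)$, which has order three by Theorem \ref{A general 3^1+4 theorem}$(xi)$, lies in $C_Q(s)$ and is contained in $3\mathcal{A}$ by Lemma \ref{HN-prelims2}. To see that the whole of $C_Q(s)^\#$ lies in $3\mathcal{A}$, I would exploit the central cyclic group of order four in $N_G(Z)/Q\cong 4^{.}\alt(5)$: choose a $2$-element $i\in N_G(Z)$ commuting with $s$ whose image generates $\mathcal{Z}(N_G(Z)/Q)$ (such an $i$ exists because $sQ$ commutes with the centre of $N_G(Z)/Q$, so these elements lie in a common abelian $2$-subgroup). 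Then $i$ normalizes $Q$ and centralizes $s$, hence normalizes $C_Q(s)$; moreover $i^2$ maps into $\langle Qt\rangle=\mathcal{Z}(C_G(Z)/Q)$ and therefore inverts $Q/Z$, so $i^2$ inverts $C_Q(s)$ (as $C_Q(s)\cap Z=1$) and acts without fixed points on it. Thus $i$ induces a fixed-point-free automorphism of order four on $C_Q(s)\cong 3\times 3$, permuting its four subgroups of order three in a single cycle; as conjugation preserves $G$-classes and $C_W(s)\subseteq 3\mathcal{A}$, every subgroup of order three in $C_S(s)$ lies in $3\mathcal{A}$.

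For the quaternion subgroups I would work inside $K:=N_G(J)$ and use Lemma \ref{HN-normalier of J-order of the O2}: the group $\bar N:=O_2(K/J)\cong 2_+^{1+4}$ is the central product of $O_2(\bar L)$ and $O_2(\bar{L^e})$, two copies of $Q_8$ whose common centre is $\langle\bar s\rangle$. Since $s$ centralizes $C_S(s)$ and $J=O_3(\hat N)$ is a normal Sylow $3$-subgroup of the preimage $\hat N$ of $\bar N$, a coprime action argument furnishes a $C_S(s)$-invariant Sylow $2$-subgroup $T_2$ of $\hat N$ containing $s$; it maps isomorphically onto $\bar N$, so $T_2\cong 2_+^{1+4}$ with $\mathcal{Z}(T_2)=\langle s\rangle$ and $T_2=X_1X_2$, where $X_1,X_2\cong Q_8$ are the subgroups of $T_2$ mapping onto $O_2(\bar L)$ and $O_2(\bar{L^e})$. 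By construction $[X_1,X_2]=1$ and $s\in X_1\cap X_2$ is the common central involution, so $X_i\leq C_G(s)$.

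It remains to match $\alpha_i$ with $X_i$. Here I would use the identification $\overline{C_Q(s)}=\bar S=S/J$, valid because $C_Q(s)\cap J=1$ and $|\bar S|=9$, together with the fact that the two central subgroups $W/J$ and $W^e/J$ of $\bar L\cong \bar{L^e}\cong 3\times\SL_2(3)$ (Theorem \ref{A general 3^1+4 theorem}$(x)$) are distinct subgroups of order three of $\bar S$, each centralizing the corresponding $Q_8$-factor $O_2(\bar L)$, respectively $O_2(\bar{L^e})$. I would then select $\alpha_1,\alpha_2$ in $C_S(s)$ so that $\langle\overline{\alpha_1}\rangle=W/J$ and $\langle\overline{\alpha_2}\rangle=W^e/J$; this is possible since $C_W(s)$ maps onto $W/J$ and, by the first part, the preimage in $C_Q(s)$ of a generator of $W^e/J$ again lies in $3\mathcal{A}$, and then $C_S(s)=\langle\alpha_1,\alpha_2\rangle$. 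Because $\bar S$ normalizes both $O_2(\bar L)$ and $O_2(\bar{L^e})$, the group $\langle\alpha_1,\alpha_2\rangle$ normalizes $X_1$ and $X_2$; and since $\overline{\alpha_i}$ centralizes $\overline{X_i}$, we obtain $[\alpha_i,X_i]\leq X_i\cap J=1$, so $X_i\leq C_G(\alpha_i)$, completing both assertions.

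The main obstacle will be the fusion argument of the first paragraph: verifying that the central order-four element $i$ of $N_G(Z)/Q$ genuinely acts fixed-point-freely of order four on $C_Q(s)$ (so that it fuses all four order-three subgroups into $3\mathcal{A}$), which hinges on choosing $i$ to commute with $s$ and on the fact that $i^2$ inverts $Q/Z$. A secondary but delicate point is the bookkeeping in the second and third paragraphs—ensuring that the two central triples $W/J$ and $W^e/J$ sit inside $\overline{C_Q(s)}=\bar S$, that they can be matched with the two $Q_8$-factors of $T_2$, and that the $C_S(s)$-invariant Sylow $2$-subgroup $T_2$ may be taken through $s$.
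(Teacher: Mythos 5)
Your second and third paragraphs are essentially the paper's own argument in different notation: the paper works inside $D:=C_{N_G(J)}(s)\cong N_G(J)/J$ (which is a complement to $J$ since $s$ inverts $J$), takes $X_1=O_2(C_L(s))$ and $X_2=O_2(C_{L^e}(s))$, and quotes Lemma \ref{exactly 2 q-8's in extraspecial group} for $[X_1,X_2]=1$; your $T_2$ is exactly $O_2(D)=C_{\hat N}(s)$, so that part is sound. The genuine gap is the fusion step in your first paragraph. Your element $i$ exists and does induce on $C_Q(s)\cong 3\times 3$ an automorphism of order four whose square is inversion; but any such element of $\GL_2(3)$ has characteristic polynomial $x^2+1$, which is irreducible over $\GF(3)$, so it fixes no subgroup of order three and its image in $\PGL_2(3)\cong\sym(4)$ is an involution, hence a product of two disjoint transpositions on the four subgroups of order three --- not a $4$-cycle. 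Your argument therefore fuses only $C_W(s)$ with $C_W(s)^i$ and says nothing about the other two subgroups. Worse, the conclusion you are aiming for is actually false: $\HN$ satisfies Hypothesis C, and there $s$ is conjugate to $t$ (Lemma \ref{HN-alt9 observations}$(vii)$), $C_S(s)$ has order nine and so is a Sylow $3$-subgroup of $C_G(s)$, and Sylow $3$-subgroups of $C_G(t)$ meet $3\mathcal{A}$ and $3\mathcal{B}$ in exactly four elements each (Lemma \ref{HN-conjugates in P}). So exactly two of the four order-three subgroups of $C_S(s)$ lie in $3\mathcal{B}$, and no argument can establish what you claim.

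This breaks your third paragraph at the one point where it leans on the first: you need $\alpha_2\in 3\mathcal{A}$ for the generator $\alpha_2$ of $C_{W^e}(s)$, and you justify this only by the false claim. That particular membership is true, but it must be obtained as the paper does: choose the element $e$ of order four from Lemma \ref{HN-prelim-element of order four normalizing S}$(iii)$ so that in addition $[e,s]=1$. This is possible because Sylow $2$-subgroups of $N_G(S)$ are abelian of type $4\times 2$ (this follows from $N_G(Z)/Q\cong 4^{.}\alt(5)$, Lemma \ref{HN-Normalizer of Z structure}), so $s$ and a suitable $N_G(S)$-conjugate of $e$ lie in a common abelian Sylow $2$-subgroup of $N_G(S)$. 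With this choice $C_{W^e}(s)=C_W(s)^e=\<\alpha_1^e\>$ is visibly $G$-conjugate to $\<\alpha_1\>$, so $\alpha_2:=\alpha_1^e\in 3\mathcal{A}$, and the remainder of your construction of $X_1$ and $X_2$ then goes through.
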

\begin{proof}
Consider $D:=C_{N_G(J)}(s)\cong C_{N_G(J)}(s)J/J=C_{N_G(J)/J}(s)=N_G(J)/J$. This is a group of
order $2^73^2$ in which $O_2(D)\cong 2_+^{1+4}$. Let $P:=C_S(s)$ then by Lemma \ref{HN-normalier of J-order of the O2},
$D=O_2(D) N_{D}(P)$. By Lemma \ref{HN-prelims2} $(ii)$, $C_W(s)^\# \subseteq 3 \mathcal{A}$ so let
$\<\alpha_1\>:=C_W(s)\leq P$. Recall that using Lemma \ref{HN-prelim-element of order four normalizing
S} $(iii)$ there is an element of order four $e\in C_G(Z)$ which normalizes $S$ but not $Y$ and so $W \neq
W^e=C_G(Y^e)\leq S$. Moreover by Lemma \ref{HN-normalizer of J2}, $N_G(S)$ has abelian Sylow $2$-subgroups and so we may assume that $[e,s]=1$ and so $e \in D$. Therefore, $\alpha_1^e=:\alpha_2\in C_{W^e}(s)$ and $P=\<\alpha_1,\alpha_2\>$. By Lemma \ref{A
general 3^1+4 theorem}$(x)$, $W/J \trianglelefteq L/J \cong 3 \times \SL_2(3)$. Therefore
$L/J=C_{L/J}(s)=C_L(s)J/J\cong C_L(s)\cong 3 \times \SL_2(3)$ and $C_W(s)\vartriangleleft C_L(s)$
implies that $\alpha_1$ is central in $C_L(s)$. It follows that $Q_8\cong X_1:=O_2(C_L(s))\leq
O_2(C_K(s))$. In the same way, $\alpha_2$ is central in $C_{L^e}(s)$ and $Q_8\cong
X_2:=O_2(C_{L^e}(s))\leq O_2(C_K(s))$.  Now we simply apply Lemma
\ref{exactly 2 q-8's in extraspecial group} to see that $[X_1,X_2]=1$.
\end{proof}

\begin{lemma}\label{HN-centralizer of a does not normalize J}
$C_G(a)\nleq N_G(J)$.
\end{lemma}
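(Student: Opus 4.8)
The plan is to argue by contradiction, so suppose $C_G(a)\le N_G(J)$. First I would pin down the order and gross structure of $C_G(a)$ under this assumption. By Lemma \ref{conjugation in thompson subgroup} the elements of $J$ conjugate to $a$ are already conjugate in $N_G(J)$, so the $N_G(J)$-orbit of $a$ has size $|J\cap 3\mathcal{A}|=48$ by Lemma \ref{HN-normalizer of J1}; since $|N_G(J)|=3^62^7$ this gives $|C_G(a)|=|C_{N_G(J)}(a)|=3^62^7/48=3^52^3$. In particular $C_G(a)$ is a solvable $\{2,3\}$-group whose Sylow $3$-subgroup is $C_S(a)$ of order $3^5$ (Lemma \ref{HN-prelims1}), and $5\nmid|C_G(a)|$.

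The key reduction is that this assumption forces $a$ to lie in a unique $G$-conjugate of $J$. Indeed, let $J^g$ be any conjugate of $J$ with $a\in J^g$. As $J^g$ is abelian it centralises $a$, so $J^g\le C_G(a)\le N_G(J)$. Now a Sylow $3$-subgroup of $N_G(J)$ has order $3^6$ and hence is a conjugate $S^h$ with $h\in N_G(J)$, and by Theorem \ref{A general 3^1+4 theorem}(v) the group $J=J(S)$ is elementary abelian of order $3^4$; since the Thompson subgroup is generated by the abelian subgroups of maximal order, $J$ is in fact the unique abelian subgroup of $S$ of order $3^4$. Therefore $J^g$, being an abelian subgroup of $S^h$ of the maximal order $3^4$, must equal the unique such subgroup $J(S^h)=J^h=J$. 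Thus under the assumption $C_G(a)\le N_G(J)$ the only conjugate of $J$ containing $a$ is $J$ itself.

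To finish I would derive a contradiction by exhibiting a second conjugate $J^g\neq J$ with $a\in J^g$ --- equivalently, an element of $C_G(a)$ that does not normalise $J$, or a prime other than $2,3$ (or a $2$-subgroup of order exceeding $2^3$) occurring in $C_G(a)$. The difficulty is that every subgroup used so far to understand $C_G(a)$, namely $C_G(Z)$, $N_G(S)$, $L$ and the $2$-local data of Lemma \ref{HN-normalizer of J2}, normalises $J$, so the required element lies in a part of $C_G(a)$ transverse to the visible $3$-local structure. The natural source is a $5$-element: taking $f$ of order $5$ in $C_G(Z)$ we have $z\in C_G(f)$, and I would analyse $C_G(f)$ to locate a non-$3$-central (class $3\mathcal{A}$) element commuting with $f$; as $3\mathcal{A}$ is a single $G$-class this yields $5\mid|C_G(a)|$, contradicting the computation above. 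Producing this commuting $3\mathcal{A}$-element is the main obstacle, precisely because $f$ fixes only $Z$ on $Q$ (Lemma \ref{HN-EasyLemma}(iii)); hence no such element is visible inside $C_G(Z)$ and its existence must be extracted from the global $5$-local structure of $G$.
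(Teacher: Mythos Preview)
Your reduction is correct and matches the paper's logic: showing that some element of $3\mathcal{A}$ lies in two distinct $G$-conjugates of $J$ is exactly what is needed, and your argument that $C_G(a)\le N_G(J)$ would force $a$ to lie in a unique conjugate of $J$ is clean. The gap is in the completion. You propose to locate a $5$-element in $C_G(a)$ via the global $5$-local structure, and you rightly flag this as an obstacle; in fact nothing available at this point in the paper gives control over $C_G(f)$ for $f$ of order five, so that route cannot be made to work with the tools on hand.

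The paper avoids this entirely with a direct $3$-local construction. The key is the element $b\in C_W(s)^\#$, which by Lemma~\ref{HN-prelims2}$(ii)$ lies in $3\mathcal{A}$ and by Theorem~\ref{A general 3^1+4 theorem}$(xi)$ lies in $Q\cap Q^x$. Since $b\in Q$, Lemma~\ref{HN-prelims2}$(iv)$ gives $R\in\syl_3(C_G(Z))$ with $b\in J(R)$; since $b\in Q^x$, the same lemma applied inside $C_G(Z^x)$ gives $P\in\syl_3(C_G(Z^x))$ with $b\in J(P)$. One then checks $J(R)\neq J(P)$: if they were equal, $J(R)$ would be normalised by $\langle Q,Q^x\rangle=L$, hence $J(R)\le O_3(L)=W$ and so $J(R)=J(W)=J$; but $b\notin J$ because $J=[W,s]$ is inverted by $s$ while $b\in C_W(s)$. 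Thus $b$ lies in two distinct conjugates of $J$, and since $a$ is conjugate to $b$ the conclusion follows. The moral is that the ``element transverse to the visible $3$-local structure'' you were looking for is already sitting in $Q\cap Q^x$: the point is not to leave the $3$-local world but to exploit that $b$ is simultaneously visible from $C_G(Z)$ and from $C_G(Z^x)$.
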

\begin{proof}
By Lemma \ref{HN-prelims2} $(ii)$ and $(iv)$,  there exists $b \in Q \cap Q^x \cap 3\mathcal{A}$
and there exists $R\in \syl_3(C_G(Z))$ such that $b \in J(R)$. The same lemma applied to $C_G(Z^x)$
says that there exists $P\in \syl_3(C_G(Z^x))$ such that $b \in J(P)$. If $Q \cap Q^x \leq J(R)$
then $Y \leq J(R)\leq C_G(Y)=W$. Hence $J(R)=J(W)=J$ (see Theorem \ref{A general 3^1+4 theorem} $(v)$) however
$Q \cap Q^x \nleq J$ (by Theorem \ref{A general 3^1+4 theorem} $(xi)$ since $1 \neq C_W(s)\leq Q \cap Q^x$ but
$J$ is inverted by $s$). Therefore $C_R(b)=J(R)(Q \cap Q^x)$ and similarly $C_P(b)=J(P)(Q \cap
Q^x)$.

Suppose $J(P)=J(R)$. Then $J(R)$ is normalized by $\<Q,Q^x\>=L$.  However $O_3(L)=W$ (see Theorem
\ref{A general 3^1+4 theorem}) and so $b \in J(R)=J(W)=J$ which is a contradiction and so $J(P)\neq
J(R)$. This implies that $C_G(b)$ has two distinct Sylow $3$-subgroups with distinct Thompson
subgroups. Since $a$ is conjugate to $b$, it follows that $C_G(a) \nleq N_G(J)$.
\end{proof}

\begin{lemma}\label{HN-finding z's in CQ(a)}
$C_Q(a) \bs J \nsubseteq 3\mathcal{A}$.
\end{lemma}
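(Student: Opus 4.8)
The statement to prove is that $C_Q(a)\setminus J\nsubseteq 3\mathcal{A}$, i.e. $C_Q(a)$ contains, outside $J$, at least one element that is $G$-conjugate to $z$ (lies in $3\mathcal{B}$). Recall $a\in (N_3\cap Z_2)\setminus Z$, so by Lemma~\ref{C-i's-orders and derived subgroups} we have $C_Q(a)=C_Q(\langle Z,a\rangle)=C_Q(N_3\cap Z_2)$, which is a non-abelian subgroup of $Q$ of order $3^4$ with $\mathcal{Z}(C_Q(a))=\langle Z,a\rangle$. The plan is to suppose for a contradiction that every element of $C_Q(a)\setminus J$ lies in $3\mathcal{A}$ and count conjugates of $z$ in $C_Q(a)$ to force a contradiction against the $3$-local structure already established.

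\textbf{The counting strategy.} First I would pin down $C_Q(a)\cap J$. Since $J(S)=J$ is elementary abelian of order $3^4$ and $C_Q(a)$ has order $3^4$ is non-abelian, $C_Q(a)\neq J$, so $C_Q(a)\cap J$ has index three in $C_Q(a)$, of order $3^3$; in fact $C_Q(a)\cap J=C_Q(a)\cap Z_2$-extended, and one checks it contains $Z_2\cap N_3=\langle Z,a\rangle$ together with more of $J$. By Lemma~\ref{HN-prelims2}$(i)$, $J^\#\subseteq 3\mathcal{A}\cup 3\mathcal{B}$, and by Lemma~\ref{HN-prelims1} together with the orbit counts in Lemma~\ref{HN-prelims2}$(ii)$ I can compute exactly how many elements of $C_Q(a)\cap J$ lie in $3\mathcal{B}$. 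The key tool is that $X/Q\cong\SL_2(3)$ acts transitively on each $N_i\setminus Z$ (Lemma~\ref{Lemma N1-N4}$(v)$) and that $Q/N_1$ is a natural module, giving, as in the proof of Lemma~\ref{HN-prelims2}$(ii)$, precise counts such as $|C_Q(Y)\cap 3\mathcal{A}|=30$ and $|Q\cap 3\mathcal{B}|=122$, $|Q\cap 3\mathcal{A}|=120$.

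\textbf{Deriving the contradiction.} Assuming $C_Q(a)\setminus J\subseteq 3\mathcal{A}$, I would add the count of $3\mathcal{A}$-elements inside $C_Q(a)\cap J$ (computed above) to the $3^4-3^3 = 54$ elements of $C_Q(a)\setminus J$, and compare the total number of $3\mathcal{A}$-elements in $C_Q(a)$ against the constraint that $C_Q(a)$ is a fixed extraspecial group $3_+^{1+2}$-related object in $Q$ whose fusion is controlled by $C_G(Z)$. Specifically, $N_G(C_Q(a))\le N_G(Z)$ because $\mathcal{Z}(C_Q(a))=\langle Z,a\rangle$ and $Z$ is the unique conjugate of itself in $\langle Z,a\rangle$ (by Lemma~\ref{HN-a new class of three}, $a\notin Z^G$), so all fusion within $C_Q(a)$ is realized inside $N_{C_G(Z)}(C_Q(a))$; this gives an upper bound on $|C_Q(a)\cap 3\mathcal{A}|$ incompatible with the hypothetical lower bound. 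The main obstacle will be organizing the orbit arithmetic of $C_G(Z)/Q\cong 2^\cdot\alt(5)$ acting on $Q/Z$ carefully enough to nail the exact number of $3\mathcal{B}$-elements forced into $C_Q(a)$; the natural-module decomposition $Q/Z=N_1/Z\oplus N_2/Z$ and the symmetry between the four $X$-invariant subgroups $N_1,\dots,N_4$ should make the count tractable, and the contradiction then follows because suppressing $3\mathcal{B}$ entirely from $C_Q(a)\setminus J$ over-counts $3\mathcal{A}$.
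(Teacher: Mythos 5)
Your overall frame coincides with the paper's: assume $C_Q(a)\setminus J\subseteq 3\mathcal{A}$ and derive a contradiction by counting. One side of the count you essentially have: since $a\in Z_2$ and $Z_2$ is abelian, $Z_2\le C_Q(a)$, so $C_Q(a)\cap J=C_Q(a)\cap Z_2=Z_2$ (your phrase ``$C_Q(a)\cap Z_2$-extended'' should just be this equality), and with $|Z_2\cap 3\mathcal{A}|=12$ from Lemma \ref{HN-a new class of three} the contradiction hypothesis forces $|C_Q(a)\cap 3\mathcal{A}|\geq 54+12=66$. The genuine gap is the other side: you never obtain a valid upper bound for $|C_Q(a)\cap 3\mathcal{A}|$. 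The mechanism you propose --- $N_G(C_Q(a))\le N_G(Z)$, hence ``all fusion within $C_Q(a)$ is realized inside $N_{C_G(Z)}(C_Q(a))$'', hence an upper bound --- fails twice over. First, fusion in a subgroup is not in general controlled by its normalizer; the control results available in the paper (Lemma \ref{conjugation in thompson subgroup}, Lemma \ref{Burnside conjugation lemma}) require an abelian Thompson subgroup or a normal subset of a Sylow subgroup, and $C_Q(a)$ is neither (it is non-abelian of order $3^4$ with centre $\langle Z,a\rangle$, not extraspecial as you assert). Second, even granting fusion control, it produces no number: fusion of $3\mathcal{A}$-elements inside $Q$ is already controlled by $C_G(Z)$ --- that is exactly Lemma \ref{HN-prelims1}, $|a^G\cap Q|=|a^{C_G(Z)}\cap Q|=120$ --- and this is equally consistent with $48$ or with $66$ of those $120$ elements lying in $C_Q(a)$. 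What is needed is the distribution of the $120$ conjugates over subgroups of $Q$, which no fusion statement supplies.

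The missing idea, which is the entire content of the paper's proof, is to exploit $N_4$. Since $a\in N_3$ and $[N_3,N_4]=1$ by Lemma \ref{N_1, N_2 abelian, N_3, N_4 not}, we get $N_4\le C_Q(a)$, so $C_Q(a)$ is one of the four subgroups of order $3^4$ lying between $N_4$ and $Q$. Because $Q/N_4$ is a natural $X/Q$-module, $X$ permutes these four subgroups transitively, and $X\le C_G(Z)$ preserves $Q\cap 3\mathcal{A}$; since $N_4\setminus Z\subseteq 3\mathcal{A}$ accounts for $24$ of the $120$ elements and every element of $Q\setminus N_4$ lies in exactly one of the four subgroups, each of them contains exactly $(120-24)/4=24$ conjugates of $a$ outside $N_4$. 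Hence $|C_Q(a)\cap 3\mathcal{A}|=24+24=48<66$, the contradiction. This is the same kind of count you cite from the proof of Lemma \ref{HN-prelims2}$(ii)$ for $C_Q(Y)$ (there with $N_1$ in place of $N_4$); transporting it to $N_4$ is precisely the step your proposal lacks, and without it the argument cannot be closed.
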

\begin{proof}
Recall that $a \in N_3 \bs Z$ and $[a,N_4]=1$ where $N_4\bs Z\subseteq 3 \mathcal{A}$. Furthermore,
$Q/N_4$ is a natural $X/Q$-module and so $X$ is transitive on the four proper subgroups of $Q$
properly containing $N_4$ of which $C_Q(a)$ is one of these. Thus $C_Q(a) \bs N_4$ contains
$(120-24)/4=24$ conjugates of $a$ and so $|C_Q(a) \cap 3\mathcal{A}|=24+24=48$. Thus $C_Q(a) \bs
J=C_Q(a) \bs Z_2 \nsubseteq 3\mathcal{A}$.
\end{proof}

In the following lemma we demonstrate the necessary hypotheses to allow us to apply Theorem A to
$C_G(a)/\<a\>$ to see that it is isomorphic to $\alt(9)$. Note that we aim to find a group of shape $3^3:\sym(4)$. In fact there are two
isomorphism types of groups with this shape and only one appears as a subgroup of $\alt(9)$. In
Chapter \ref{chapter-Alt9} we refer to such a group as a $3$-local subgroup of $\alt(9)$-type.
Recall Lemma \ref{Alt9 prelims 3cubedsym4} which allows us to recognize groups of this isomorphism
type.

\begin{lemma}
$C_G(a)\cong 3 \times \alt(9)$ and $N_G(\<a\>)$ is isomorphic to the diagonal subgroup of index two in $\sym(3)\times \sym(9)$.
\end{lemma}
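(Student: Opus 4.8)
The plan is to prove that $\bar{C_a}:=C_G(a)/\langle a\rangle$ is isomorphic to $\alt(9)$ by verifying the hypotheses of Theorem~A for the group $\bar{C_a}$, and then to lift this to the stated structure of $C_G(a)$ and $N_G(\langle a\rangle)$. Write $C_a:=C_G(a)$. First I would pin down a Sylow $3$-subgroup of $C_a$: by Lemma~\ref{HN-normalizer of J1} we have $|N_{C_a}(J)|=|C_{N_G(J)}(a)|=3^62^7/48=3^52^3$, while $C_S(a)=S_3$ has order $3^5$ by Lemma~\ref{HN-prelims1}, so $S_3\in\syl_3(N_{C_a}(J))$. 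Since $J=J(S)\leq S_3$ is abelian of order $3^4$ and $S_3\leq S$ has no larger abelian subgroup, $J=J(S_3)$; hence any $3$-group properly containing $S_3$ in $C_a$ would normalise $J$ and so lie in $N_{C_a}(J)$, a contradiction, giving $S_3\in\syl_3(C_a)$. I would also record that $O_{3'}(C_a)=1$: as $a\in Z_2\leq J$ with $J$ abelian we get $Y\leq Z_2\leq C_a$, so $Y$ normalises the normal subgroup $O_{3'}(C_a)$, whence $[Y,O_{3'}(C_a)]=1$ by Theorem~\ref{A general 3^1+4 theorem}$(xii)$ and $O_{3'}(C_a)\leq C_G(Y)=W$ is a $3$-group, forcing $O_{3'}(C_a)=1$.

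Next I would set $\bar J:=J/\langle a\rangle$ (elementary abelian of order $27$, since $a\in Z_2\leq J$) and $\bar S:=S_3/\langle a\rangle\in\syl_3(H)$, where $H:=N_{\bar{C_a}}(\bar J)$ has order $3^42^3$. To show $H$ is a $3$-local subgroup of $\alt(9)$-type I would apply Lemma~\ref{Alt9 prelims 3cubedsym4}. Condition $(i)$ is immediate. For $(ii)$, note $\mathcal{Z}(S_3)=N_3\cap Z_2=\langle Z,a\rangle$, so by Lemma~\ref{prelims-Centre of the C_i's} $\bar Z:=\mathcal{Z}(\bar S)=(N_3\cap Z_2)/\langle a\rangle$ has order three; and $C_H(\bar Z)=\bar S$ because any $2$-element of $H$ centralising $\bar Z$ lifts to a $2$-element of $C_a$ inducing a $3$-power-order, hence trivial, automorphism of $N_3\cap Z_2$, so it lies in $C_G(N_3\cap Z_2)=S_3$. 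The crux is condition $(iii)$: producing an involution $\bar\tau\in N_H(\bar S)$ with $C_{\bar J}(\bar\tau)\neq 1$, equivalently ruling out the fixed-point-free isomorphism type. Here I would use that $N_G(J)/J$ embeds in $N_{\GL_4(3)}(\GO_4^+(3))$ (the remark after Lemma~\ref{HN-normalier of J-order of the O2}) and that, by the class counts $|J\cap 3\mathcal{B}|=32$ and $|J\cap 3\mathcal{A}|=48$ of Lemma~\ref{HN-normalizer of J1}, the vector $a$ is non-singular in the orthogonal $\GF(3)$-space $J$; its stabiliser then contains an involution fixing a subspace of $J$ of dimension at least two through $a$, giving $C_{\bar J}(\bar\tau)\neq 1$.

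With $H$ recognised as $\alt(9)$-type, it remains to check that $O_{3'}(C_{\bar{C_a}}(\bar x))=1$ for every element $\bar x$ of order three in $H$. Up to $H$-conjugacy we may take $\bar x\in\bar S$; since the cube of any order-nine element of $S$ lies in $Z$ (Lemma~\ref{elements of order nine in S}) and $Z\cap\langle a\rangle=1$, each such $\bar x$ lifts to an order-three $x\in S_3$, and coprime action shows that the $3'$-part of the preimage of $O_{3'}(C_{\bar{C_a}}(\bar x))$ centralises $V:=\langle a,x\rangle$; thus the task reduces to $O_{3'}(C_G(V))=1$ for these order-nine subgroups $V$. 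When $\bar x\in\bar J$ one has $V\leq J\leq W$, so $Y\leq W=C_G(Y)$ centralises $V$ and normalises $O_{3'}(C_G(V))$, and Theorem~\ref{A general 3^1+4 theorem}$(xii)$ again forces triviality. \textbf{The main obstacle} is the complementary case, where $\bar x$ projects to a $3$-cycle in $H/\bar J\cong\sym(4)$ and $x\notin J$: here no conjugate of $Y$ is visibly available inside $C_G(V)$, and I expect to have to run the argument inside $C_G(v_0)\cong C_G(Z)$ for a $3$-central $v_0\in V$, using $O_{3'}(C_G(Z))=1$ (which follows from $C_G(Q)\leq Q$) together with the detailed action of $V$ on $O_3(C_G(v_0))$ to kill the putative $3'$-subgroup.

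Finally, Theorem~A yields $\bar{C_a}=H$ or $\bar{C_a}\cong\alt(9)$; the first is impossible, since it would give $C_a\leq N_G(J)$, contradicting Lemma~\ref{HN-centralizer of a does not normalize J}, so $\bar{C_a}\cong\alt(9)$. As $\langle a\rangle\leq\mathcal{Z}(C_a)$ and $3\nmid|M(\alt(9))|=2$, the central extension splits and $C_G(a)\cong 3\times\alt(9)$. For the normaliser, $s$ inverts $J\ni a$ (Theorem~\ref{A general 3^1+4 theorem}$(v)$), so $[N_G(\langle a\rangle):C_G(a)]=2$; identifying the $\alt(9)$-direct-factor with $[C_a,C_a]$, which is characteristic in $C_a$ and hence normal in $N_G(\langle a\rangle)$, and checking, via the full inversion of $J$ by $s$, that the inverting element induces the outer automorphism of $\alt(9)$, one concludes that $N_G(\langle a\rangle)$ is the index-two diagonal subgroup of $\sym(3)\times\sym(9)$, as claimed.
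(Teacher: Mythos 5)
Your overall skeleton is the paper's: identify $C_S(a)\in\syl_3(C_G(a))$, verify that $\bar{H_a}:=(N_G(J)\cap C_G(a))/\<a\>$ is of $\alt(9)$-type via Lemma~\ref{Alt9 prelims 3cubedsym4}, check the $O_{3'}$ hypothesis, apply Theorem~A, exclude $\bar{C_a}=\bar{H_a}$ by Lemma~\ref{HN-centralizer of a does not normalize J}, split off $\<a\>$ via the Schur multiplier, and read off the normalizer from the action of $s$. However, two of your key steps have genuine gaps. The first is your verification of condition $(iii)$ of Lemma~\ref{Alt9 prelims 3cubedsym4}. Your orthogonal-geometry argument only shows that the stabiliser of the non-singular vector $a$ in $N_G(J)/J$ contains \emph{some} involution with fixed space of dimension at least two on $J$. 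That statement is true but cannot suffice: condition $(iii)$ demands an involution lying in $N_{\bar{H_a}}(\bar{S_a})$, i.e.\ normalising the Sylow $3$-subgroup, and this is exactly what separates the admissible group $A$ from the excluded group $C$ in Lemma~\ref{Alt9 prelims 3cubedsym4}. In the type-$C$ configuration the image of $\bar{H_a}$ in the isometry group of $J$ is $\<\sigma\>\times\alt(4)$, where $\sigma$ acts as $-1$ on $a^{\perp}$; this group still contains involutions with two- and three-dimensional fixed spaces on $J$ (the Klein-four elements of the $\alt(4)$ factor and their products with $\sigma$), yet every involution normalising a Sylow $3$-subgroup has image $\sigma$ and so acts fixed-point-freely on $\bar{J}$. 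Thus your argument would ``verify'' $(iii)$ for type $C$ as well, so it proves nothing. The paper instead produces the required involution explicitly: since $|N_G(Z)\cap C_G(a)|=3^5\cdot 2>3^5=|C_G(Z)\cap C_G(a)|=|S_a|$, there is an involution $u\in C_G(a)$ inverting $Z$; it normalises $S_a$ because $S_a=C_G(Z)\cap C_G(a)\trianglelefteq N_G(Z)\cap C_G(a)$, and $|C_J(u)|\geq 3^2$ follows from coprime action of the fours group $\<Js,Ju\>$ on $J$ together with the fact that $Jus$ is $Q$-conjugate to $Jt$ and $|C_J(t)|=3^2$ (Lemma~\ref{HN-prelim-element of order four normalizing S}).

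The second gap is the one you yourself flag as ``the main obstacle'': the condition $O_{3'}(C_{\bar{C_a}}(\bar{x}))=1$ for order-three elements $\bar{x}$ with $x\notin J$. Here you offer only an expectation (``run the argument inside $C_G(v_0)$ for a $3$-central $v_0\in V$''), with no mechanism for killing the putative $3'$-subgroup, and you do not even establish that your $V=\<a,x\>$ contains a $3$-central element. The paper's resolution is concrete: since $\bar{H_a}$ has a single class of order-three elements outside $\bar{J}$, the representative may be chosen inside $C_Q(a)$; Lemma~\ref{HN-finding z's in CQ(a)} then supplies $h\in C_Q(a)\setminus J$ with $h\in 3\mathcal{B}$, so one may take $g\in\<a,h\>$. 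Now $\<h,z\>\leq C_Q(h)\cap C_Q(a)$ centralises $\<a,g\>$ and hence normalises $N:=O_{3'}(M)$, where $M$ is the preimage of $O_{3'}(C_{\bar{C_a}}(\bar{g}))$; and since $C_G(Z)$ is transitive on $(Q\cap 3\mathcal{B})\setminus Z$, the group $\<h,z\>$ is a $C_G(Z)$-conjugate of $Y$, so Lemma~\ref{HN-prelims2}$(v)$ forces $N=1$. This choice of representative inside $Q$, making a conjugate of $Y$ visible inside the centraliser, is the missing idea; without it (and without a correct proof of condition $(iii)$) your proposal does not go through. The remaining portions of your argument --- the Sylow identification, condition $(ii)$, the application of Theorem~A, the splitting, and the normaliser --- are correct and essentially identical to the paper.
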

\begin{proof}
Let $C_a:=C_G(a)$, $S_a:=C_S(a)\in \syl_3(C_a)$ and  $\bar{C_a}:=C_a/\<a\>$. Set $H_a:=N_G(J) \cap
C_G(a)$.

We gather the required hypotheses to apply Lemma \ref{Alt9 prelims 3cubedsym4} to $\bar{H_a}$.
Observe first that by Lemma \ref{HN-normalizer of J1}, $[N_G(J):H_a]=48$ and $|N_G(J)|=3^62^7$.
Therefore $[H_a:J]=24$ and so $|\bar{H_a}|=3^42^3$ has the required order.

By Lemma \ref{HN-prelims1}, $|Q \cap 3\mathcal{A}|=120$ and $C_G(Z)$ is transitive on the
set.  Therefore $N_G(Z)$ is also transitive on the set and so we have that
$[C_G(Z):C_{C_G(Z)}(a)]=[N_G(Z):C_{N_G(Z)}(a)]=120$. Hence $|C_{C_G(Z)}(a)|=3^5=|S_a|$ and
$|C_{N_G(Z)}(a)|=3^52$.

Now $\bar{J}\vartriangleleft \bar{H_a}$ and $\bar{J}$ is elementary abelian of order $27$. Consider $\mathcal{Z}(\bar{S_a})$. Since $a \in N_3 \cap Z_2$, we may apply  Lemma
\ref{prelims-Centre of the C_i's} to say that
$\mathcal{Z}(\bar{S_a})=\bar{\mathcal{Z}(S_a)}=\bar{Z}$ which has order three. Now the coset
$\<a\>z$ contains exactly one conjugate of $z$ and so if $h \in H_a$ and $\bar{h}$ centralizes
$\<a\>z=\bar{z}$ then $h$ centralizes $\<z,a\>$. However $C_G(z) \cap C_a=S_a$ and so we have that
$C_{\bar{H_a}}(\bar{Z})=\bar{S_a}$.

Finally, $|N_G(Z) \cap C_a|=3^52$ and so $N_G(Z) \cap C_a >C_G(Z) \cap C_a$ and so there exists an
involution $u \in C_a$ that inverts $Z$. Therefore $u$ normalizes $S_a$ and normalizes $J$ and so
$u \in H_a$. Recall that $Js$ inverts $J$ and so $\<Js,Ju\>$ is an elementary abelian subgroup of
order four and by coprime action, $J=C_J(u)C_J(us)$. Since $Jus$ centralizes $Z$,  $Jus$ is
conjugate to  $Jt$ by an element of $Q\leq N_G(J)$. Therefore $|C_J(us)|=|C_J(t)|=3^2$ (by Lemma
\ref{HN-prelim-element of order four normalizing S} $(i)$). Hence $|C_J(u)|\geq 3^2$.  Thus we see
that $\bar{u}\in \bar{H_a}$ normalizes $\bar{S_a}$ and $C_{\bar{J}}(\bar{u})\neq 1$ as required. So
by Lemma \ref{Alt9 prelims 3cubedsym4}, $\bar{H_a}$ is isomorphic to a $3$-local subgroup of
$\alt(9)$-type.

Before we may apply Theorem A we must show that for every $\bar{g}$ of order three in $\bar{S_a}$,
$O_{3'}(C_{\bar{C_a}}(\bar{g}))=1$. If $g \in J$ then this is clear since by Lemma
\ref{HN-prelims2} $(v)$, $J$ normalizes no non-trivial $3'$-subgroup of $G$. So we consider
elements of order three in $S_a\bs J$. Since $\bar{H_a}$ has one class of elements of order three
outside $\bar{J}$ (see Table \ref{char table H} for example), we may choose $g \in C_Q(a)$.
Furthermore, by Lemma \ref{HN-finding z's in CQ(a)}, there exists $h \in C_Q(a)\bs J$ such that $h
\in 3\mathcal{B}$ so we may assume that $g \in \<a,h\>$. Let $M \leq C_a$ such that $a \in M$ and
$\bar{M}=O_{3'}(C_{\bar{C_a}}(\bar{g}))$ and then set $N:=O_{3'}(M)$. Then $N$ is a $3'$-subgroup
of $C_a$ with $M=N\<a\>$ and $N$ is normalized by $C_Q(h) \cap C_Q(a)$. By Lemma
\ref{HN-prelims1}, $C_G(Z)$ is transitive on $(Q \cap 3\mathcal{B}) \bs Z$ and since $h \in 3\mathcal{B}$, $\<h,z\>$ is
$C_G(Z)$-conjugate to $Y$. By Lemma \ref{HN-prelims2} $(v)$, $Y$ normalizes no non-trivial
$3'$-subgroup of $G$. Therefore $\<h,z\>\leq C_Q(h) \cap C_Q(a)$ normalizes no non-trivial $3'$-subgroup of $G$. Thus
$N=1$ and so $O_{3'}(C_{\bar{C_a}}(\bar{g}))=1$.

Finally, we may apply Theorem A to say that either $C_a=H_a$ or $\bar{C_a}\cong
\alt(9)$. However Lemma \ref{HN-centralizer of a does not normalize J} says that $C_G(a)\nleq
N_G(J)$ and so we  conclude that $\bar{C_a}\cong \alt(9)$. Using  \cite[33.15, p170]{Aschbacher}, for example, we see
that the Schur Multiplier of $\bar{C_a}$ has order two. Therefore $C_a$ splits over $\<a\>$ and so
$C_a \cong 3 \times \alt(9)$.

To see the structure of the normalizer we need only observe that an involution $s$ inverts $J$ and
therefore inverts $Z$ whilst acting non-trivially on $O^3(C_G(a))$. Therefore since
$\aut(\alt(9))\cong \sym(9)$, the result follows.
\end{proof}

\begin{lemma}\label{HN-centralizer of the a9}
For $i \in \{1,2\}$, $\<a\>=C_G(O^3(C_G(a)))$.
\end{lemma}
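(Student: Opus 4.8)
Set $M:=O^3(C_G(a))$ and $D:=C_G(M)$. The plan is to prove the two inclusions $\<a\> \leq D$ and $D \leq \<a\>$, the first being immediate and the second being the substance. Since the preceding lemma gives $C_G(a)\cong 3\times\alt(9)$ with $\alt(9)$ simple and non-abelian, taking $O^3$ kills the central factor $\<a\>$ and fixes the $\alt(9)$, so $M\cong\alt(9)$ and $C_G(a)=\<a\>\times M$. In particular $a$ centralizes $M$, giving $\<a\>\leq D$. For the converse I would first record the key intersection fact: if $d\in D\cap C_G(a)$ then $d=a^jm$ with $m\in M$, and since both $a$ and $d$ centralize $M$ so does $m$, whence $m\in\mathcal{Z}(M)=1$ and $d\in\<a\>$. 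Thus $D\cap C_G(a)=\<a\>$; in particular $C_D(a)=\<a\>$, $D\cap M=1$, and $DM=D\times M$.

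Next I would reduce the whole statement to showing that $D$ is a $3$-group. A Sylow count does the first half: a Sylow $3$-subgroup $P_D$ of $D$ satisfies $P_D\times P_M\leq S$ for $P_M\in\syl_3(M)$ of order $3^4$, and as $|S|=3^6$ this forces $|P_D|\leq 3^2$; combined with $C_{P_D}(a)=\<a\>$ (so $P_D$ has no abelian subgroup of order $9$ containing $a$ other than $\<a\>$) this gives $|P_D|=3$, i.e. $\<a\>\in\syl_3(D)$. Now if I can show $\<a\>\trianglelefteq D$, then $\<a\>$ is characteristic in $D\trianglelefteq N_G(M)$, so $N_G(M)\leq N_G(\<a\>)$, and a direct computation inside the diagonal subgroup $N_G(\<a\>)$ of index two in $\sym(3)\times\sym(9)$ (the preceding lemma) finishes matters, because $C_{\sym(9)}(\alt(9))=1$ forces $D=C_{N_G(\<a\>)}(M)=\<a\>$. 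Moreover $\<a\>\trianglelefteq D$ combined with $C_D(a)=\<a\>$ already forces $D=\<a\>$ on its own: a normal Sylow $3$-subgroup $\<a\>$ splits off a complement $O_{3'}(D)$, which is then centralized by $a$ and so lies in $C_D(a)=\<a\>$, hence is trivial. So everything comes down to proving $D$ has a normal (equivalently, trivial $3'$-residue and no extra structure) $3$-part, i.e. $D=\<a\>$.

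The main obstacle is exactly this last point, and here is how I would attack it. The group $M\cong\alt(9)$ contains the self-centralizing elementary abelian subgroup $E:=\<(123),(456),(789)\>\cong 3^3$, with $C_M(E)=E$; since $D$ centralizes $M$ we get $D\leq C_G(E)$ and $\<D,E\>=D\times E$. I would first kill $K:=O_{3'}(D)$: as $a$ acts on $K$ with $C_K(a)\leq C_D(a)\cap K=1$, Theorem~\ref{thompson-nilpotent} makes $K$ nilpotent, and since $K$ is a non-trivial $3'$-group centralized by $E$, I would derive a contradiction from the fact (Lemma~\ref{HN-prelims2}(v), i.e. Theorem~\ref{A general 3^1+4 theorem}(xii)) that a suitable $3\times 3$ subgroup of $E$ — one fused to $Y$ once the classes of $E^{\#}$ are identified inside $3\mathcal{A}\cup 3\mathcal{B}$ — normalizes no non-trivial $3'$-subgroup of $G$, forcing $K=1$. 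To rule out any remaining non-$3$-structure I would embed $D$ into the known centralizers $C_G(v)$ for $v\in E^{\#}$, each isomorphic to $3\times\alt(9)$ or $3^{1+4}.(2^.\alt(5))$, and use the self-centralizing property of $E$ to confine $D$ to $3$-elements. The delicate part will be controlling the fusion of $E^{\#}$ into the two classes $3\mathcal{A}$, $3\mathcal{B}$ precisely enough to guarantee that no involution or other $3'$-element of $D$ can centralize the whole of $M$; this class-fusion bookkeeping, rather than any single inequality, is where the real work lies.
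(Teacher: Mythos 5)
Your reductions are sound as far as they go --- $C_D(a)=\<a\>$, hence $\<a\>\in\syl_3(D)$, and the observation that the structure of $N_G(\<a\>)$ (diagonal of index two in $\sym(3)\times\sym(9)$, with $C_{\sym(9)}(\alt(9))=1$) forces $C_{N_G(\<a\>)}(M)=\<a\>$ --- but there is a genuine gap exactly where you say ``the real work lies,'' and the work you propose there is both misdirected and unlikely to close. The misdirection is one of logical order: you invoke the $N_G(\<a\>)$-computation only \emph{after} assuming $\<a\>\trianglelefteq D$, whereas it applies unconditionally: $N_D(\<a\>)=D\cap N_G(\<a\>)=C_{N_G(\<a\>)}(M)=\<a\>$, so $\<a\>$ is a \emph{self-normalizing} Sylow $3$-subgroup of $D$. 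That single remark unlocks Burnside's normal $p$-complement theorem (Theorem \ref{Burnside-normal p complement}), giving $D=N\<a\>$ with $N=O_{3'}(D)\trianglelefteq D$ --- precisely the normality you were trying to manufacture by other means. (Note also that your parenthetical claim that $\<a\>\trianglelefteq D$ together with $C_D(a)=\<a\>$ forces $D=\<a\>$ ``on its own'' is false as argued: a Schur--Zassenhaus complement need not be normal, i.e.\ need not equal $O_{3'}(D)$, and need not be centralized by $a$; the group $\sym(3)$ satisfies both hypotheses. One really does need the $N_G(\<a\>)$-structure to exclude an involution of $D$ inverting $a$, which is why it must enter the argument at the start rather than at the end.)

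Second, killing $N$ requires no fusion of subgroups of $E$ into $Y$ at all: $Y$ itself normalizes $N$. Indeed $Y$ and $a$ both lie in the abelian group $J$ (as $Y\leq Z_2$ and $a\in N_3\cap Z_2$), so $Y\leq C_G(a)$; and $C_G(a)$ normalizes its characteristic subgroup $M=O^3(C_G(a))$, hence normalizes $D=C_G(M)$, hence normalizes $N=O_{3'}(D)$, which is characteristic in $D$. Lemma \ref{HN-prelims2} $(v)$ then gives $N=1$ and $D=\<a\>$ --- this is exactly the paper's (three-line) proof. By contrast your plan --- Theorem \ref{thompson-nilpotent} to make $O_{3'}(D)$ nilpotent, then locating a $G$-conjugate of $Y$ inside $E=\<(123),(456),(789)\>\leq M$ --- founders on the fusion step you yourself flag: knowing $E^\#\subseteq 3\mathcal{A}\cup 3\mathcal{B}$ is far from knowing that some $3\times 3$ subgroup of $E$ is conjugate to $Y$ \emph{as a group} (a commuting pair of $3\mathcal{B}$-elements $z',z''$ need not generate a conjugate of $Y$; for that one needs $z''$ to lie in $O_3(C_G(z'))$, and nothing in the paper supplies this for $E$), and your closing step ``embed $D$ into the known centralizers $C_G(v)$ to confine $D$ to $3$-elements'' is a hope, not an argument. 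So the proposal as written does not prove the lemma; the repair is the rearrangement above.
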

\begin{proof}
Set $R:=O^3(C_G(a))\cong \alt(9)$, then $C_G(R) \cap N_G(\<a\>)=\<a\>$. Therefore $R$ has a
self-normalizing Sylow $3$-subgroup. By Burnside's normal $p$-complement Theorem (Theorem
\ref{Burnside-normal p complement}), $R$ has a normal $3$-complement to $\<a\>$, $N$ say. Since $Y
\leq C_G(a)$ and $C_G(a)$ clearly normalizes $N$, we have that $Y$ normalizes $N$ and so by Lemma
\ref{HN-prelims2} $(v)$, $N=1$.
\end{proof}

\section{The Structure of the Centralizer of $t$}\label{HN-Section-CG(t)}

We now have sufficient information concerning the $3$-local structure of $G$ to determine the
centralizer of $t$. We set $H:=C_G(t)$, $P:=C_J(t)$ and $\bar{H}:=H/\<t\>$. We will show that
$H\sim 2_+^{1+8}.(\alt(5) \wr 2)$ and so we must first show that $H$ has an extraspecial subgroup
of order $2^9$. We then show that $H$ has a subgroup, $K$, of the required shape and then finally
we apply a theorem of Goldschmidt to prove that $K=H$. Along the way we gather several results
which will be useful in Section \ref{HN-Section-CG(u)}.

\begin{lemma}\label{HN-conjugates in P}
$C_G(Z)\cap H \cong 3 \times  2^{.}\alt(5)$ and $N_G(Z)\cap H \cong 3 : 4^{.}\alt(5)$. Furthermore,
$|P \cap 3\mathcal{A}|=|P \cap 3\mathcal{B}|=4$, $C_H(P)=P\<t\>$ and $P \in \syl_3(H)$.
\end{lemma}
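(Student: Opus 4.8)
The plan is to obtain $C_G(Z)\cap H$ and $N_G(Z)\cap H$ by coprime action and Gasch\"utz, and then to control $P$ and the fusion inside it via the orthogonal geometry on $J$. First I would observe that $C_G(Z)\cap H=C_{C_G(Z)}(t)$ and $N_G(Z)\cap H=C_{N_G(Z)}(t)$. By the choice of $t$, $Qt\in\mathcal{Z}(C_G(Z)/Q)$, so conjugation by $t$ is trivial on $C_G(Z)/Q\cong 2^{.}\alt(5)$; moreover $\mathcal{Z}(N_G(Z)/Q)$ is cyclic of order four with $Qt$ as its unique involution (Lemma \ref{HN-Normalizer of Z structure}), so conjugation by $t$ is also trivial on $N_G(Z)/Q\cong 4^{.}\alt(5)$. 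Since $C_Q(t)=Z$ (Lemma \ref{HN-EasyLemma}), Theorem \ref{coprime action} gives $C_{C_G(Z)}(t)/Z\cong 2^{.}\alt(5)$ and $C_{N_G(Z)}(t)/Z\cong 4^{.}\alt(5)$, of orders $360$ and $720$ with Sylow $3$-subgroup $C_S(t)$.

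Because $t$ inverts $S/J$ (Lemma \ref{HN-prelim-element of order four normalizing S}) we have $C_S(t)=C_J(t)=P$, which is elementary abelian of order $9$ and contains $Z=C_Q(t)$; thus $P\in\syl_3(C_G(Z)\cap H)$ and $P\in\syl_3(N_G(Z)\cap H)$. As $P$ is elementary abelian, $Z$ has a complement in $P$, so Gasch\"utz (Theorem \ref{Gaschutz}) produces a complement to $Z$ in each centralizer. Since $Z$ is central in $C_G(Z)\cap H$ this gives $C_G(Z)\cap H\cong 3\times 2^{.}\alt(5)$; in $N_G(Z)\cap H$ the complement $\cong 4^{.}\alt(5)$ has order $240$ and so cannot lie in $C_G(Z)\cap H$, hence does not centralise $Z$ (indeed $s\in H$ inverts $Z$), giving the non-direct split $3:4^{.}\alt(5)$. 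For $C_H(P)$ I would use $Z\le P$ to get $C_H(P)\le C_G(Z)\cap H=3\times 2^{.}\alt(5)$; writing $P=Z\times P_0$ with $P_0$ a Sylow $3$-subgroup of the $2^{.}\alt(5)$-factor and using $C_{2^{.}\alt(5)}(P_0)=\langle P_0,t\rangle\cong C_6$ yields $C_H(P)=Z\times\langle P_0,t\rangle=P\langle t\rangle$.

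For $P\in\syl_3(H)$ I would take $P\le R\in\syl_3(H)$; then $C_R(P)\le C_H(P)\cap R=P$, so $P$ is self-centralising in $R$. If $P<R$ then $P<N_R(P)$ and $N_R(P)/P$ embeds as the order-$3$ subgroup of $\Aut(P)\cong\GL_2(3)$, i.e. is generated by a transvection, which fixes one of the four subgroups of order $3$ in $P$ and $3$-cycles the other three. As conjugation by an element of $R\le G$ preserves $G$-conjugacy, those three subgroups would lie in a single class among $3\mathcal{A},3\mathcal{B}$. This is impossible once the count $|P\cap 3\mathcal{A}|=|P\cap 3\mathcal{B}|=4$ is known, since then each class meets $P$ in exactly two subgroups; hence $N_R(P)=P$ and $R=P$.

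The decisive and hardest step is therefore the class count, which I would prove using the orthogonal form on $J$. By the remark after Lemma \ref{HN-normalier of J-order of the O2}, $N_G(J)/J$ embeds in $\GO^+_4(3).2$ acting on $J\cong\GF(3)^4$ and preserving a quadratic form; comparing the two $N_G(J)$-orbits $3\mathcal{A}\cap J$ (size $48$) and $3\mathcal{B}\cap J$ (size $32$) with the $48$ anisotropic and $32$ isotropic nonzero vectors of an $O^+_4(3)$-space identifies the $3$-central class $3\mathcal{B}$ with the singular vectors. The value $|Z_2\cap 3\mathcal{B}|=14$ forces the form on $Z_2$ to have one-dimensional radical, which must be the unique $S$-invariant line $Z$, with hyperbolic quotient $Z_2/Z$; consequently the complement $[J,t]=[Z_2,t]\le Z_2$ to this radical is a nondegenerate hyperbolic $2$-space. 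This shows $t$ is a genuine isometry rather than a similarity with factor $-1$ (the latter would make its fixed space $P$ totally isotropic), so $P=C_J(t)$ is nondegenerate; containing the singular line $Z$ it is a hyperbolic plane and meets the quadric in exactly two singular and two nonsingular $1$-spaces. This yields $|P\cap 3\mathcal{A}|=|P\cap 3\mathcal{B}|=4$, rules out the transvection above, and so completes both the count and $P\in\syl_3(H)$. The main obstacle is exactly this last paragraph: transferring the global invariant of $3$-centrality into the geometry of $J$ and verifying that the fixed plane of $t$ is nondegenerate rather than totally isotropic.
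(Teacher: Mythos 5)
Your proof is correct, and its skeleton agrees with the paper's in the outer parts: the paper also gets $C_G(Z)\cap H\cong 3\times 2^{.}\alt(5)$ and $N_G(Z)\cap H\cong 3:4^{.}\alt(5)$ by coprime action on $Q$ (using $C_Q(t)=Z$ and $Qt$ central) followed by Gasch\"{u}tz (splitting guaranteed because $P\leq J$ is elementary abelian), reads $C_H(P)=P\langle t\rangle$ off this structure, and proves $P\in\syl_3(H)$ by the same self-normalizing argument you give (an element of $N_R(P)\setminus P$ would have to fix, hence centralize, each of the four order-three subgroups of $P$ once the $2+2$ class split is known). Where you genuinely diverge is the central count $|P\cap 3\mathcal{A}|=|P\cap 3\mathcal{B}|=4$. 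The paper argues locally: since $C_Y(t)=Z$, the group $PY$ has order $27$, so by the natural-module structure of $J/Y$ it equals $Z_2^l$ for some $l\in L$; as $t$ normalizes $PY$, Lemma \ref{HN-normalizer Z2} shows $t$ normalizes and inverts $Z^l\neq Z$, and then $P$ meets each of the four subgroups $(N_i\cap Z_2)^l$ in order three, so the known distribution of $3\mathcal{A}$ and $3\mathcal{B}$ among the $N_i$ (Lemmas \ref{HN-a new class of three} and \ref{HN-prelims2}) gives the $4+4$ split. You instead read everything off the orthogonal geometry of $J$ supplied by the remark after Lemma \ref{HN-normalier of J-order of the O2}: the two classes meet $J$ in single $N_G(J)$-orbits of sizes $48$ and $32$ (Lemma \ref{HN-normalizer of J1}), an orbit of similitudes cannot mix singular with nonsingular vectors, so $3\mathcal{B}\cap J$ is forced to be the $32$ singular vectors; then $|Z_2\cap 3\mathcal{B}|=14$ pins down the form on $Z_2$ (one-dimensional radical, which is $S$-invariant and hence equals $Z$, with hyperbolic quotient), the inverted nondegenerate plane $[Z_2,t]=[J,t]$ forces $t$ to have similitude multiplier $1$, and $P=C_J(t)=[J,t]^{\perp}$ is a hyperbolic plane containing the singular line $Z$, giving $4+4$. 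Both arguments are sound; the paper's is shorter and purely local, needing nothing beyond the $N_i$ and $L$-module facts already in hand, while yours is more conceptual and describes the whole fusion pattern in $J$ (and the fixed spaces of involutions acting on $J$) uniformly. If you write your version up, make two steps explicit: the identification of $3\mathcal{B}$ with the singular vectors needs the single-orbit fact (which follows from Lemma \ref{conjugation in thompson subgroup} together with $s$ inverting $J$, so only a size count is left), and the multiplier argument should be run through $Q(v)=Q(-v)=Q(v^t)=\lambda Q(v)$ evaluated on the nonsingular vectors of $[J,t]$, which is where the nondegeneracy of $[J,t]$ is actually used.
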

\begin{proof}
By coprime action and an isomorphism theorem, we have that \[2^{.}\alt(5)\cong C_{C_G(Z)/Q}(t)=
C_{C_G(Z)}(t)Q/Q\cong C_{C_G(Z)}(t)/C_Q(t)\] and
\[4^{.}\alt(5)\cong C_{N_G(Z)/Q}(t)= C_{N_G(Z)}(t)Q/Q\cong C_{C_N(Z)}(t)/C_Q(t).\] Since $C_Q(t)=Z$,
we have that $C_{C_G(Z)}(t)\sim  3 .2^{.}\alt(5)$ and $N_G(Z)\cap H \cong 3. 4^{.}\alt(5)$. By
Lemma \ref{HN-prelim-element of order four normalizing S}, $|P|=9$ and since $J$ is elementary abelian, $P$ splits
over $Z$. Thus $C_{C_G(Z)}(t)$ splits over $Z$ by Gasch\"{u}tz's Theorem (\ref{Gaschutz}) and so
$C_{C_G(Z)}(t)\cong 3 \times 2^{.}\alt(5)$ and $N_G(Z)\cap H \cong 3 :  4^{.}\alt(5)$.

Notice that $Y<PY<J$. Since $L/W\cong \SL_2(3)$ and $J/Y$ is a natural $L/W$-module, there exists $l
\in L$ such that $PY=Z_2^l$. By Lemma \ref{HN-normalizer Z2},  $N_G(Z_2) \leq N_G(Z)$. Thus
$N_G(Z_2^l)\leq N_G(Z^l)$ and $Z^l \leq Y$. Since $t$ normalizes $PY=Z_2^l$, $t$ normalizes
$Z^l\neq Z$. Since $t$ inverts $Y/Z$, $t$ inverts $Z^l$. By Lemma \ref{Lemma N1-N4}, the four proper subgroups of $Z_2$ containing $Z$ are $\{N_i\cap Z_2|i
\in \{1,2,3,4\}\}$. Since $P \leq Z_2^l$ and $Z^l \nleq P$, we have $|P \cap (N_i \cap Z_2)^l|=3$
for each $i \in \{1,2,3,4\}$. Since for $i=1,2$, $N_i\bs Z \subseteq 3\mathcal{B}$ and for $i=3,4$,
$N_i\bs Z \subseteq 3\mathcal{A}$, we see that $|P \cap 3\mathcal{A}|=|P \cap 3\mathcal{B}|=4$.

It
is clear from the structure of $C_{C_G(Z)}(t)=H \cap {C_G(Z)}$ that $C_H(P)=P\<t\>$. So suppose
$P<R\in \syl_3(H)$. Then $P<N_R(P)$ so let $x \in N_R(P) \bs P$. We have that $P$ has two subgroups
conjugate to $Z$ and two subgroups conjugate to $\<a\>$. Therefore $x$ must centralize $P \cap
3\mathcal{A}$ and $P\cap 3\mathcal{B}$ and so $x \in {C_G(Z)}$. However $P \in \syl_3({C_G(Z)} \cap
H)$ which is a contradiction. Hence $P \in \syl_3(H)$.
\end{proof}

We fix notation such that $P=\{1,z_1,z_1^2,z_2,z_2^2,a_1,a_1^2, a_2,a_2^2\}$ where $z_1=z$, $P \cap
3\mathcal{B}=\{z_1,z_1^2,z_2,z_2^2\}$ and $P \cap 3\mathcal{A}=\{a_1,a_1^2, a_2,a_2^2\}$.

\begin{lemma}\label{HN-Normalizer H of P}
Let $\{i,j\}=\{1,2\}$ then $P \cap O^3(C_G(a_i))=\<a_j\>$. Furthermore $N_H(P)$ has order $3^22^4$
and is transitive on $3\mathcal{A}\cap P$ and $3\mathcal{B}\cap P$ with $N_H(P)/\<P,t\>\cong
\mathrm{Dih}(8)$.
\end{lemma}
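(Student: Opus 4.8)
The plan is to compute $N_H(P)$ as a subgroup of automorphisms of $P$ and then deduce the intersection statement last, using transitivity. Recall from Lemma \ref{HN-conjugates in P} that $P\in\syl_3(H)$, that $C_H(P)=\<P,t\>$ has order $18$, and that $P$ is elementary abelian of order $9$ with its four subgroups of order three being the two $3\mathcal{A}$-lines $\<a_1\>,\<a_2\>$ and the two $3\mathcal{B}$-lines $\<z_1\>,\<z_2\>$. So it suffices to identify $\bar N:=N_H(P)/\<P,t\>$, a subgroup of $\aut(P)\cong\GL_2(3)$. Since conjugation preserves the $G$-classes $3\mathcal{A}$ and $3\mathcal{B}$, $\bar N$ preserves the partition of the four lines into $\{\<a_1\>,\<a_2\>\}$ and $\{\<z_1\>,\<z_2\>\}$; hence its image in $\PGL_2(3)\cong\sym(4)$ lies in the stabiliser of a partition into two pairs, a $\mathrm{Dih}(8)$ that is a Sylow $2$-subgroup of $\sym(4)$. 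This gives the upper bound: $\bar N$ is a $2$-group with $|\bar N|\mid 16$, and $\bar N$ can never fuse the two classes.

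Next I would realise enough automorphisms to force $\bar N\cong\mathrm{Dih}(8)$. First, $s\in H$ inverts $J\supseteq P$ (Theorem \ref{A general 3^1+4 theorem}), so $\bar s=-I$. Second, $C_G(Z)\cap H\cong 3\times 2^{.}\alt(5)$ by Lemma \ref{HN-conjugates in P}, and the normaliser in $2^{.}\alt(5)$ of its Sylow $3$-subgroup $\<\rho\>$ is dicyclic of order $12$, yielding an element $f$ of order four with $f^2=t$ that centralises $z_1=z$ and inverts $\rho$; here $\<\rho\>$ must be a $3\mathcal{B}$-line, since otherwise $f$ would swap a $3\mathcal{A}$-line with a $3\mathcal{B}$-line, contrary to $f\in H$ preserving classes. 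Thus $f$ fixes both $3\mathcal{B}$-lines and so, being a non-scalar involution, swaps $\<a_1\>\leftrightarrow\<a_2\>$. The remaining generator, an element $\sigma$ swapping the two $3$-central lines $\<z_1\>,\<z_2\>$, is the crux: both $z_1,z_2$ are $G$-conjugate and lie in $J$, hence are conjugate in $N_G(J)$ by Lemma \ref{conjugation in thompson subgroup}, and since $t=e^2\in N_G(S)\leq N_G(J)$ (Lemma \ref{HN-prelim-element of order four normalizing S}) acts on $J$ as a $(2,2)$-isometry of the $N_G(J)$-invariant orthogonal form of $\GO^+_4(3)$-type (Lemma \ref{HN-normalier of J-order of the O2} and the following remark), $P=C_J(t)$ is a nondegenerate plane whose isotropic points are exactly $3\mathcal{B}\cap P$; the reflection in an anisotropic vector then lies in $C_{N_G(J)}(t)\leq H$, normalises $P$, and interchanges $\<z_1\>,\<z_2\>$. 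Then $\bar N=\<\bar s,\bar f,\bar\sigma\>$ is the full group of similitudes of this hyperbolic plane, $\cong\mathrm{Dih}(8)$, giving $|N_H(P)|=18\cdot 8=3^22^4$ and, from the dihedral action on the ``square'', transitivity on the four points of $3\mathcal{A}\cap P$ (the anisotropic points) and on those of $3\mathcal{B}\cap P$ (the isotropic points).

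Finally I would settle $P\cap O^3(C_G(a_i))=\<a_j\>$, using the transitivity just proved to reduce to $i=1$. Writing $M:=O^3(C_G(a_1))\cong\alt(9)$, we have $P=\<a_1\>\times(P\cap M)$ by Dedekind's modular law (Lemma \ref{dedekind}), and $t\in M$ as its order-two component, so $P\cap M=\<w\>\leq C_M(t)$. A direct check of the two involution classes of $\alt(9)$ shows that in each case the Sylow $3$-subgroup of the centraliser has order exactly three and is generated by a single $3$-cycle or a product of two $3$-cycles, never a product of three $3$-cycles; hence $w$ is not $\alt(9)$-central. Conversely, re-running the centre-of-Sylow computation of Lemma \ref{C-i's-orders and derived subgroups} inside $C_G(a_1)$ shows that any $3\mathcal{B}$-element commuting with $a_1$ projects into $\mathcal{Z}(\syl_3 M)$, that is, onto a product of three $3$-cycles. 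Therefore $w\notin 3\mathcal{B}$, so $w\in 3\mathcal{A}$, and the only $3\mathcal{A}$-line available other than $\<a_1\>$ is $\<a_2\>$, giving $P\cap M=\<a_2\>$. The hardest parts of the whole argument are exhibiting the isotropic-line swap $\sigma$ inside $C_G(t)$ and matching $G$-classes of order-three elements with $\alt(9)$-cycle types in the last step; both rest on controlling the orthogonal geometry that $N_G(J)$ induces on $J$.
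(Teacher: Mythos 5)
Your setup (pin down $\bar N:=N_H(P)/\<P,t\>$ inside $\GL_2(3)$ via class preservation, then realise enough automorphisms) is sound, and your constructions of $\bar s=-I$ and of the order-four element $f$ swapping the two $3\mathcal{A}$-lines are correct and essentially the paper's. But the proposal fails at exactly the point you call the crux. You assert that ``the reflection in an anisotropic vector then lies in $C_{N_G(J)}(t)$''. Nothing you cite gives this. The remark after Lemma \ref{HN-normalier of J-order of the O2} says only that $N_G(J)/J$ embeds as \emph{some} subgroup of order $3^22^7$, hence of index two, in $N_{\GL_4(3)}(\GO^+_4(3))\sim \GO^+_4(3).2$; it does not identify which index-two subgroup, and an index-two subgroup of the similitude group need not contain any prescribed reflection. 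What you need is an element of $N_G(J)$ centralising $t$, normalising $P=C_J(t)$ and interchanging $\<z_1\>$ and $\<z_2\>$ --- but such an element is an element of $N_H(P)$ swapping the two $3\mathcal{B}$-lines, i.e.\ precisely (half of) the transitivity statement under proof, so assuming the reflection is realised in $N_G(J)$ is circular. Lemma \ref{conjugation in thompson subgroup} does give $g\in N_G(J)$ with $z_1^g=z_2$, but gives no control on $t^g$, so it cannot be fed into your geometric picture. The paper closes this gap group-theoretically: every element of order three of $\alt(9)\cong O^3(C_G(a_i))$ is conjugate to its inverse, so some element of $C_G(a_i)$ centralises $a_i$ and inverts $a_j$, hence swaps $\<z_1\>=\<a_ia_j\>$ and $\<z_2\>=\<a_ia_j^{-1}\>$; combined with $f$, $s$ and Burnside's Lemma \ref{Burnside conjugation lemma} applied to the abelian Sylow subgroup $P$ of $H$, this yields the transitivity, and then orbit--stabiliser with Lemma \ref{HN-conjugates in P} gives $|N_H(P)|=3^22^4$. (Two smaller repairs you would also need: once you invoke non-fusion of the classes the correct bound is $|\bar N|\leq 8$, not $16$, since the image of $\bar N$ in $\PGL_2(3)$ must stabilise \emph{each} pair of lines; and you must rule out that $t$ acts as a similitude with multiplier $-1$, which follows because $P=C_J(t)$ contains anisotropic vectors.)

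The last step has a second, independent gap. The claim ``any $3\mathcal{B}$-element commuting with $a_1$ projects into $\mathcal{Z}(\syl_3(M))$'' is false, and in fact is inconsistent with your own (correct) first observation: $z_1\in 3\mathcal{B}$ commutes with $a_1$, and since $P=\<a_1\>\times\<w\>$ its projection to $M$ is a nontrivial power of $w$; so your claim would force $w$ to have cycle type $3^3$, while your argument from $w\in C_M(t)$ correctly shows it does not. What your deduction actually requires is the restricted statement that an element \emph{of $M$ itself} lying in $3\mathcal{B}$ has cycle type $3^3$. That restricted statement is true, but ``re-running Lemma \ref{C-i's-orders and derived subgroups} inside $C_G(a_1)$'' does not prove it: the substantive point is to exclude cycle type $3^2$ from $3\mathcal{B}\cap M$, and in the paper that information only becomes available \emph{after} the present lemma (it is Lemma \ref{HN-images in alt9}, whose proof uses the lemma you are proving), so this route is circular as well. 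The paper's argument for $P\cap O^3(C_G(a_i))=\<a_j\>$ avoids all of this in one line: choose $g\in O^3(C_G(a_i))$ centralising $a_i$ and inverting $w$ (realness in $\alt(9)$ again); if $\<w\>$ were $\<z_1\>$ or $\<z_2\>$, then $g$ would swap the remaining two lines $\<a_iw\>$ and $\<a_iw^{-1}\>$, one of which lies in $3\mathcal{A}$ and the other in $3\mathcal{B}$, contradicting $3\mathcal{A}\neq 3\mathcal{B}$. Adopting that argument repairs both gaps at once, since after it one knows $\<w\>=\<a_j\>$, and the same element $g$ is then the class-swapping element whose absence is the first gap.
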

\begin{proof}
By Lemma \ref{HN-conjugates in P}, $C_H(P)=\<P,t\>$ and $|P\cap 3 \mathcal{A}|=|P\cap 3
\mathcal{B}|=4$. Observe that every element of order three in $\alt(9)\cong O^3(C_G(a_i))$ is conjugate to its
inverse. Therefore an element in $O^3(C_G(a_i))$ inverts $P \cap O^3(C_G(a_i))$. Thus $P \cap
O^3(C_G(a_i))=\<a_j\>$ otherwise we would have an element of $3\mathcal{A}$ conjugate to an element
in $3\mathcal{B}$. Moreover, an element in $C_G(a_i)$ permutes $\<z_1\>$ and $\<z_2\>$. Furthermore, by
Lemma \ref{HN-conjugates in P} $N_H(Z)\cong 3 : 4^{.}\alt(5)$ and so an element of order four
inverts $Z$ whilst centralizing $P/Z$. Hence an element in $N_H(Z)$ permutes $a_1$ and $a_2$. We have that
$s$ inverts $P$ and by Lemma \ref{Burnside conjugation lemma}, $N_H(P)$ controls fusion in
$P$ and so we have that $N_H(P)$ is transitive on $3\mathcal{A}\cap P$ and $3\mathcal{B}\cap P$.

Finally, since  $C_H(Z)\cong 3 \times 2^.\alt(5)$, $|N_H(P) \cap C_G(Z)|=3^22^2$. Thus, by the
orbit-stabilizer theorem, $|N_H(P)|=3^22^4$ and so $N_H(P)/C_H(P)=N_H(P)/\<P,t\>$ has order eight
and is isomorphic to a subgroup of $\GL_2(3)$ and is therefore isomorphic to $\mathbb{Z}_8$, $D_8$
or $Q_8$. Since $N_H(P)$ is not transitive on $P^\#$, we have that $N_H(P)/C_H(P)\cong
\mathrm{Dih}(8)$.
\end{proof}

\begin{lemma}\label{HN-images in alt9}
Let $\{i,j\}=\{1,2\}$ then  $a_j \in P \cap O^3(C_G(a_i))$ has cycle type $3^2$ in $\alt(9)\cong
O^3(C_G(a_i))$. Furthermore, $t$ is a $2$-central involution in $C_G(a_i)$.
\end{lemma}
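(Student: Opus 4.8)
The plan is to work inside $R := O^3(C_G(a_i)) \cong \alt(9)$, using that $C_G(a_i) = \langle a_i\rangle \times R$ and, by Lemma \ref{HN-Normalizer H of P}, that $P \cap R = \langle a_j\rangle$, so that $a_j$ is a genuine order-three element of $\alt(9)$ whose cycle type must be one of $3\cdot 1^6$, $3^2 1^3$ or $3^3$. First I would record the containment $P \cap Q = Z$: since $J \cap Q = Z_2$ we have $P \cap Q = C_J(t) \cap Q = C_{Z_2}(t) \le C_Q(t) = Z$ by Lemma \ref{HN-EasyLemma}$(iii)$, while $Z \le C_{Z_2}(t)$ because $t$ centralises $Z$; hence $P \cap Q = Z$. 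In particular $a_i, a_j \notin Q$, so their images in $C_G(Z)/Q \cong 2^{.}\alt(5) \cong \SL_2(5)$ are non-trivial elements of order three.

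Next I would identify $C_G(P)$. Since $\langle a_i\rangle \ne \langle a_j\rangle$ and $P$ is elementary abelian of order nine, $\langle a_i, a_j\rangle = P$, so $C_G(P) = C_G(a_i) \cap C_G(a_j) = \langle a_i\rangle \times C_R(a_j)$ and $|C_G(P)| = 3\,|C_R(a_j)|$. Now I rule out two cycle types. If $a_j$ had type $3^3$ then $C_R(a_j)$ would be a $3$-group (of order $81$, see \cite{atlas}) and so would $C_G(P)$; but $t$ centralises $P = C_J(t)$, so $C_G(P)$ has even order, a contradiction. If $a_j$ had type $3 \cdot 1^6$ then $C_R(a_j) \cong 3 \times \alt(6)$ has order divisible by five, so $C_G(P) \le C_G(z) = C_G(Z)$ would contain an element $f$ of order five centralising $a_i$; its image $\bar f$ in $C_G(Z)/Q \cong \SL_2(5)$ again has order five (as $f \notin Q$) and centralises the order-three element $\bar{a_i}$. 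Since in $\SL_2(5)$ the centraliser of an element of order three is cyclic of order six (\cite{atlas}), this is impossible. Hence $a_j$ has cycle type $3^2 1^3$, and the reversed case $\{i,j\}$ is symmetric.

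For the $2$-central claim I would use that $t$ centralises $a_j$ (both lie in the abelian group $P = C_J(t)$) and that $t$ is a $2$-element, so $t \in C_R(a_j) \le R \cong \alt(9)$. There are two classes of involutions in $\alt(9)$, of cycle types $2^2 1^5$ and $2^4 1$, and only the latter are $2$-central: their centraliser, of order $192$, contains a full Sylow $2$-subgroup of order $2^6$, whereas the $2^2 1^5$ centraliser has order $480$ (see \cite{atlas}). A $2^2 1^5$ involution has centraliser $(\mathbb{Z}_2 \wr \sym(2)) \times \sym(5)$ in $\sym(9)$, whose only order-three elements have cycle type $3\cdot 1^6$; hence no $2^2 1^5$ involution commutes with a $3^2 1^3$ element. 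Since $t$ commutes with $a_j$, which has type $3^2 1^3$ by the first part, $t$ must be of type $2^4 1$, i.e. $2$-central in $R$ and therefore in $C_G(a_i) = \langle a_i\rangle \times R$.

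The main obstacle will be marshalling the exact invariants of $\alt(9)$ and $\SL_2(5)$ --- the centraliser orders of the three order-three classes ($1080$, $54$, $81$) and of the two involution classes ($480$ and $192$), together with $C_{\SL_2(5)}(\text{order }3) \cong \mathbb{Z}_6$ --- and checking the one combinatorial point that no $2^2 1^5$ involution commutes with a $3^2 1^3$ element. All of these are routine to extract from \cite{atlas} or by direct computation in the permutation group, so once the containment $P \cap Q = Z$ and the factorisation of $C_G(P)$ are in place the argument reduces to this bookkeeping.
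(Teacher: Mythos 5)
Your proof is correct and follows essentially the same route as the paper: decompose $C_G(P)=\langle a_i\rangle\times C_R(a_j)$, run the case analysis over the three cycle types of $a_j$, kill $3^3$ by observing $C_G(P)$ would be a $3$-group against $[P,t]=1$, kill $3\cdot 1^6$ by a structural contradiction inside $C_G(Z)$, and deduce $2$-centrality from the fact that involutions commuting with a $3^2 1^3$-element are of type $2^4 1$. The only differences are presentational: where the paper simply asserts that $C_G(z)$ has no subgroup isomorphic to $3\times 3\times\alt(6)$, you work that contradiction out explicitly in $C_G(Z)/Q\cong 2^{.}\alt(5)\cong\SL_2(5)$ (which is why you need the extra step $P\cap Q=Z$), and you spell out the final $2$-central bookkeeping that the paper leaves terse.
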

\begin{proof}
We have that $C_G(a_i)\cong 3 \times \alt(9)$ and so $|P\cap O^3(C_G(a_i))|=3$. Consider
representatives for the three conjugacy classes of elements of order three in $\alt(9)$. If the
image of $P \cap O^3(C_G(a_i))$ in $\alt(9)$ is conjugate to $\<(1,2,3)\>$ then $P$ commutes with a
subgroup isomorphic to $3\times 3 \times \alt(6)$. However $z \in P$ and $C_G(z)$ has no such
subgroup. So suppose the image of $P \cap O^3(C_G(b))$ in $\alt(9)$ is conjugate to
$\<(1,2,3)(4,5,6)(7,8,9)\>$. Then $C_G(P)$ is a $3$-group which is a contradiction since $[P,t]=1$.
So we must have that the image in $\alt(9)$ of $P \cap O^3(C_G(a_i))$ is conjugate to
$\<(1,2,3)(4,5,6)\>$. Therefore $P \cap O^3(C_G(a_i))$ commutes with a $2$-central involution of
$O^3(C_G(a_i))$ which proves that $t$ is $2$-central.
\end{proof}

Let $\{i,j\}=\{1,2\}$. We fix the following notation by first fixing an injective homomorphism from
$N_G(\<a_i\>)$ into $\alt(12)$ such that $O^3(C_G(a_i))$ maps onto $\alt(\{1,..,9\})$ and $a_i$
maps to $(10,11,12)$. Note that $C_G(P)$ has Sylow $2$-subgroups of order two and so we can make a
fixed choice of $2$-central representative for $t$ in $C_G(a_i)$.

\begin{notation1}\label{HN-Alt9notation}
\begin{enumerate}[$\bullet$]
\item $a_i\mapsto (10,11,12)$.
\item $a_j \mapsto (1,3,5)(2,4,6)$.
 \item $t \mapsto (1,2)(3,4)(5,6)(7,8)$.
 \item $Q_i \mapsto \<(1,2)(3,4)(5,6)(7,8), (1,3)(2,4)(5,8)(6,7),
 (1,5)(3,8)(2,6)(7,4),
 (1,2)(3,4), (3,4)(5,6)\>$.
 \item $r_i \mapsto (1,3)(2,4)$.
 \item When $i=1$, $Q_1>E\mapsto \<(1,2)(3,4)(5,6)(7,8),(1,3)(2,4)(5,8)(6,7),(1,5)(3,8)(2,6)(7,4)\>$.
 \item When $i=2$, $Q_2\ni u\mapsto (1,2)(3,4)$ and $Q_2>F\mapsto \<(1,2)(3,4),(3,4)(5,6)\>$.
\end{enumerate}
\end{notation1}

We observe the following by calculating directly in the image of $N_G(\<a_i\>)$ in $\alt(12)$.
\begin{lemma}\label{HN-alt9 observations}
\begin{enumerate}[$(i)$]
 \item $C_H(a_i)\sim 3 \times (2_+^{1+4}:\sym(3))$ and $Q_i=O_2(C_H(a_i))\cong 2_+^{1+4}$
 with $r_i \in C_H(a_i)\bs Q_i$.
 \item $2 \times 2 \times 2\cong E\vartriangleleft C_H(a_1)$ and there exists $\GL_3(2)\cong C\leq C_G(a_1)$ such
 that $a_2 \in C$ and $C$ is a complement to $C_{C_G(a_1)}(E)$ in $N_{C_G(a_1)}(E)$.
 \item If $\<t\> <V <Q_i$ such that $V \vartriangleleft C_H(a_i)$ then $V$ is elementary
 abelian.
 \item $C_{C_H(a_i)}(Q_i)=C_{N_H(\<a_i\>)}(Q_i)=\<t,a_i\>$.
 \item $C_{C_H(a_1)}(E)=C_{N_H(\<a_1\>)}(E)=\<E,a_1\>$.
 \item $C_{C_G(a_1)}([E,P])=C_{N_G(\<a_1\>)}([E,P])\leq N_{C_G(a_1)}(E)$ and has $\<a_1\>$ as a Sylow $3$-subgroup.
 \item Any involution which inverts $P=\<a_1,a_2\>$ is conjugate to $t$ in $G$, in particular, $t$ is
conjugate to $s$ in $G$.
 \item If $Q_i \leq T \in \syl_2(N_G(\<a_i\>))$ then $Q_i$ is characteristic in $T$.
\end{enumerate}
\end{lemma}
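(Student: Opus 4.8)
The statement is Lemma \ref{HN-alt9 observations}, a collection of eight observations that the text explicitly says are verified by direct calculation in the image of $N_G(\langle a_i\rangle)$ inside $\alt(12)$. So the overall strategy is not to invent conceptual arguments but to set up the concrete permutation model fixed in Notation \ref{HN-Alt9notation} and read off each claim, supplementing the raw computation with a few small structural arguments where a purely computational check needs justification. The plan is to work throughout inside the copy of $\sym(9)$ acting on $\{1,\dots,9\}$ together with the direct factor $\langle a_i\rangle$ acting on $\{10,11,12\}$, using the images recorded in the notation.

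First I would dispose of parts $(i)$–$(v)$, which are essentially direct verifications. For $(i)$: since $C_G(a_i)\cong 3\times\alt(9)$ and by Lemma \ref{HN-images in alt9} the image of $a_j$ in $\alt(9)$ has cycle type $3^2$, the centralizer in $\alt(9)$ of an element of cycle type $3^2$ is computed to be $2_+^{1+4}:\sym(3)$, so $C_H(a_i)=C_{C_G(a_i)}(t)$ is as claimed once one checks $t$ lies in this centralizer; that $Q_i=O_2(C_H(a_i))$ is extraspecial of order $2^5$ follows from Lemma \ref{prelim-exraspecial} after checking $|Q_i|=2^5$, $Q_i'=\langle t\rangle$ and exponent from the explicit generators, and $r_i\notin Q_i$ is read off the images. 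For $(ii)$ I would verify $E\cong 2^3$ and $E\trianglelefteq C_H(a_1)$ from the generators, then exhibit $C\cong\GL_3(2)$ inside $C_G(a_1)$ containing $a_2$; here the natural move is to identify $E$ with a $\GF(2)$-space of dimension three on which $N_{C_G(a_1)}(E)/C_{C_G(a_1)}(E)$ acts, note this quotient embeds in $\GL_3(2)=\GL(E)$, and argue it is all of $\GL_3(2)$ by order considerations in $\alt(9)$, then apply Gasch\"utz (Theorem \ref{Gaschutz}) or a direct splitting to obtain the complement $C$. Parts $(iii)$, $(iv)$, $(v)$ are again direct: $(iii)$ asks that every $C_H(a_i)$-invariant subgroup strictly between $\langle t\rangle$ and $Q_i$ be elementary abelian, which one checks by listing the possible normal subgroups of the explicit group $2_+^{1+4}:\sym(3)$ acting on $Q_i/\langle t\rangle$; $(iv)$ and $(v)$ are centralizer computations where the containment $\supseteq$ is immediate and $\subseteq$ follows because the relevant quotient acts faithfully (using $C_G(Q)\le Q$-type faithfulness and the structure of $\alt(9)$).

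For $(vi)$ I would again use that $[E,P]$ is a specific subgroup of $E$ (computed from the generators of $E$ and the action of $a_i$), show its centralizer in $N_G(\langle a_1\rangle)$ normalizes $E$ because any element centralizing $[E,P]$ must preserve the $P$-module structure on $E$, and then compute that a Sylow $3$-subgroup of this centralizer is exactly $\langle a_1\rangle$, using that in $\alt(9)$ the only $3$-elements centralizing the relevant $2^2$ lie in $\langle a_1\rangle$. Part $(vii)$ is more structural: if an involution $w$ inverts $P=\langle a_1,a_2\rangle$ then $w$ normalizes both $\langle a_1\rangle$ and $\langle a_2\rangle$ and inverts each, so $w$ lies in $N_G(\langle a_1\rangle)$ inverting $a_1$; inside $C_G(a_1)\cong 3\times\alt(9)$ such an involution, together with its action inverting the $3^2$-element $a_2$, is forced by Lemma \ref{HN-images in alt9} and the $2$-centrality of $t$ there to be conjugate to the $2$-central class, hence conjugate to $t$ in $G$; applying this to $s$, which inverts $P=C_S(s)$ by Lemma \ref{HN-normalizer of J2}, gives that $s$ is conjugate to $t$. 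Finally $(viii)$ follows from Lemma \ref{Prelims 2^8 3^2 Dih(8)}: one takes $V:=Z(Q_i)^{\perp}$-type subgroup, or more directly applies that lemma with $E$ a maximal elementary abelian normal subgroup of $Q_i$ and the hypothesis on fixed spaces checked from the module structure, concluding $Q_i$ is characteristic in any Sylow $2$-subgroup $T$ of $N_G(\langle a_i\rangle)$ containing it.

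The main obstacle I expect is part $(ii)$: identifying a genuine $\GL_3(2)$ complement rather than merely a subgroup of $\GL_3(2)$, and verifying it contains $a_2$ and splits $N_{C_G(a_1)}(E)$ over $C_{C_G(a_1)}(E)=\langle E,a_1\rangle$. This requires knowing the $2$-local structure of $\alt(9)$ precisely enough to see the $2^3:\GL_3(2)$ subgroup and to locate $a_2$ as a suitable $3$-element of $\GL_3(2)$ acting fixed-point-freely on $E/\langle t\rangle$; the cleanest route is to recall the standard $\alt(8)\cong\GL_4(2)$ and $2^3:\GL_3(2)\le\alt(9)$ subgroup structure and then apply Gasch\"utz (Theorem \ref{Gaschutz}) to produce the split complement $C$. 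All other parts reduce to finite computations in the fixed permutation representation of Notation \ref{HN-Alt9notation}, so I would present them as verified directly and concentrate the written argument on the faithfulness and splitting justifications.
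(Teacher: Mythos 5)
Your treatment of parts $(i)$--$(vi)$ is essentially the paper's: everything is read off from the fixed permutation model in Notation \ref{HN-Alt9notation}, with the same reductions (for $(vi)$, that $[E,P]$ is a fours group by coprime action, so the claim is a centralizer computation in $\alt(9)$ and $\sym(9)$). But your argument for part $(vii)$ has a genuine gap, and $(vii)$ is the part of this lemma that cannot be a purely local computation. An involution $w$ inverting $P=\<a_1,a_2\>$ \emph{inverts} $a_1$, so $w$ lies in $N_G(\<a_1\>)\bs C_G(a_1)$; your sentence ``inside $C_G(a_1)\cong 3\times\alt(9)$ such an involution \ldots is forced \ldots to be conjugate to the $2$-central class'' starts from a false premise. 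Worse, no argument confined to $N_G(\<a_1\>)$ can work: if $x\in N_G(\<a_1\>)$ then $w^x$ still inverts $a_1$, while $t$ centralizes $a_1$, so $w$ and $t$ are \emph{never} conjugate inside $N_G(\<a_1\>)$. The conjugacy has to be produced in $G$, and this needs an extra idea. The paper's device is to observe that, up to $N_G(\<a_1\>)$-conjugacy, $w$ has image $(1,3)(2,4)(7,8)(10,11)$ in $\alt(12)$ and hence centralizes an element of order three of cycle type $3^3$ (e.g.\ $(1,2,7)(3,4,8)(5,6,9)$); such an element is the cube of an element of order nine, and by Lemma \ref{elements of order nine in S} every element of order nine in $G$ cubes into the class $3\mathcal{B}$. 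Since $C_G(Z)/Q\cong 2^{.}\alt(5)$ has a unique involution, every involution commuting with a $3\mathcal{B}$-element is conjugate to $t$, and this closes the argument. Without some mechanism of this kind (i.e.\ locating a $3$-central element in $C_G(w)$), the claim ``$w$ is conjugate to $t$ in $G$'' is simply not reachable, and since $(vii)$ is what later identifies $s$ with the class $2\mathcal{B}$, this gap matters for the rest of the chapter.

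Part $(viii)$ is also off target, though less seriously. Lemma \ref{Prelims 2^8 3^2 Dih(8)} produces a characteristic \emph{elementary abelian} subgroup; applying it to a maximal elementary abelian normal subgroup of $Q_i$ would at best show that subgroup is characteristic in $T$, not that the extraspecial group $Q_i$ is. The paper instead checks that a Sylow $2$-subgroup of $N_G(\<a_i\>)$ is isomorphic to one of $\sym(9)$ and then verifies directly that a Sylow $2$-subgroup of $\sym(9)$ has a \emph{unique} normal extraspecial subgroup of order $2^5$, which immediately gives that $Q_i$ is characteristic in $T$. That is a finite check in the permutation model, consistent with the computational spirit you adopt elsewhere, and it is the argument you should substitute here.
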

\begin{proof}
These can all be checked by direct calculation in the permutation group. However we add the
following remarks.  Firstly $(iii)$ is a calculation within $C_H(a_i)/\<a_i\>$ and so can be
checked in a parabolic subgroup of $\GL_4(2)\cong \alt(8)$.

Secondly we calculate the image of
$[E,P]$ to be a fours subgroup of $E$ (since by coprime action, $E=[E,P] \times C_E(P)=[E,P] \times
\<t\>$). Therefore $(vi)$ amounts to calculating the centralizer of a fours subgroup (consisting of
involutions of cycle type $2^4$) in $\alt(9)$ and $\sym(9)$.

Thirdly, to prove $(vii)$ we observe
that any involution in $N_G(\<a_1\>)$ that inverts $P$ is conjugate in $N_G(\<a_1\>)$ to an
involution of shape  $(1,3)(2,4)(7,8)(10,11)$. Such an involution centralizes  an element of order
three of cycle type $3^3$ (the elements $(1,2,7)(3,4,8)(5,6,9)$ in this example). Now an element of
order three in $\alt(9)$ with cycle type $3^3$ is the cube of an element of order nine. By Lemma
\ref{elements of order nine in S}, any element of order nine in $G$ has cube in $3\mathcal{B}$ and
the only involutions to commute with elements in $3\mathcal{B}$ are conjugate to $t$ (as
$C_G(Z)/Q\cong 2^.\alt(5)$). Therefore we may assume that an involution which inverts $P$ is
conjugate in $G$ to $t$.

Finally, to verify $(viii)$ we check that a Sylow $2$-subgroup of
$N_G(\<a_i\>)$ is isomorphic to a Sylow $2$-subgroup of $\sym(9)$. Therefore we simply check that a
Sylow $2$-subgroup of $\sym(9)$ has a unique normal extraspecial subgroup of order $2^5$.
\end{proof}

\begin{lemma}\label{HN-3'-subgroups of centralizers}
Let $i \in \{1,2\}$. If $M$ is any $3'$-subgroup of $C_H(a_i)$ that is normalized by $P$ then
$M\leq Q_i$. If $M$ is any $3'$-subgroup of $C_H(z_i)$ that is normalized by $P$ then $M\in
\{1,\<t\>, A_i,B_i\}$ where $A_i\cong B_i\cong \Q_8$ are distinct Sylow $2$-subgroups of $C_H(z_i)$
with $\<A_i,B_i\>=O^3(C_H(z_i))$.
\end{lemma}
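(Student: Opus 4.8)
The plan is to establish the two assertions separately, in each case realising $M$ inside a concrete, well-understood group and exploiting the action of a Sylow $3$-subgroup of $P$.

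For the first assertion I would start from Lemma \ref{HN-alt9 observations}, which (together with $C_G(a_i)\cong 3\times\alt(9)$) identifies $C_H(a_i)=\<a_i\>\times(Q_i{:}\sym(3))$ with $Q_i=O_2(C_H(a_i))\cong 2_+^{1+4}$ and $Q_i{:}\sym(3)\cong C_{\alt(9)}(t)$. Since $M$ is a $3'$-group, its image under the projection onto the direct factor $\<a_i\>\cong C_3$ is a $3'$-subgroup of $C_3$, hence trivial; so $M\le Q_i{:}\sym(3)$. Now $a_j\in P$ normalises $M$, and by Lemma \ref{HN-images in alt9} the element $a_j$ has cycle type $3^2$ in $O^3(C_G(a_i))\cong\alt(9)$, so $a_jQ_i$ is a Sylow $3$-subgroup of $(Q_i{:}\sym(3))/Q_i\cong\sym(3)$. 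Then $MQ_i/Q_i$ is a $3'$-subgroup of $\sym(3)$ invariant under this Sylow $3$-subgroup; as a group of order three permutes the three subgroups of order two of $\sym(3)$ transitively, the only invariant $3'$-subgroup is trivial, whence $M\le Q_i$.

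For the second assertion I first fix the isomorphism type of $C_H(z_i)$. For $i=1$, since $z_1=z$ gives $C_G(z_1)=C_G(Z)$, Lemma \ref{HN-conjugates in P} yields $C_H(z_1)=C_{C_G(Z)}(t)\cong 3\times 2^{.}\alt(5)$; and since $t$ was chosen with $Qt\in\mathcal{Z}(C_G(Z)/Q)$, the involution $t$ maps to the central involution of the $2^{.}\alt(5)$-factor. By Lemma \ref{HN-Normalizer H of P} the group $N_H(P)\le H=C_G(t)$ is transitive on $3\mathcal{B}\cap P$, so some $h\in N_H(P)$ conjugates $z_1$ to $z_2$; as $t^h=t$ this gives $C_H(z_2)=C_H(z_1)^h\cong 3\times 2^{.}\alt(5)$ with $t$ again central in the second factor. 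Writing $C_H(z_i)=\<z_i\>\times D_i$ with $D_i\cong 2^{.}\alt(5)\cong\SL_2(5)$ and $t\in D_i$, the same projection argument as before forces $M\le D_i$; moreover $P\cap D_i=\<c_i\>$ is a Sylow $3$-subgroup of $D_i$ and $M$ is $\<c_i\>$-invariant.

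The core of the argument is the classification of $\<c_i\>$-invariant $3'$-subgroups of $\SL_2(5)$. I would first show $5\nmid|M|$: if $5\mid|M|$ then $n_5(M)\equiv 1\pmod 5$ divides $|M|/5\le 8$, forcing a normal (hence $\<c_i\>$-invariant) Sylow $5$-subgroup $C_5$; but $|\mathrm{Aut}(C_5)|=4$ is prime to $3$, so $c_i$ would centralise $C_5$, producing an element of order $15$ in $\SL_2(5)$, which is impossible. Thus $M$ is a $2$-group lying in some Sylow $2$-subgroup $\cong Q_8$. The subgroups $1$ and the centre $\<t\>$ are visibly invariant; a cyclic $C_4=\<x\>$ cannot be invariant, since $\mathrm{Aut}(C_4)$ is a $2$-group would force $c_i$ to centralise $x$, again yielding a (nonexistent) element of order $12$. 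Finally, using Lemma \ref{HN-EasyLemma} that $N_{\SL_2(5)}(Q_8)\cong\SL_2(3)$ has order $24$ (so $n_2=5$) and $n_3=10$, a double count of the pairs $(\<c\>,A)$ with $A$ a Sylow $2$-subgroup and $\<c\>\le N(A)$ (each $A$ lying in $n_3(\SL_2(3))=4$ such $\<c\>$, giving $20$ pairs among $10$ Sylow $3$-subgroups) shows each Sylow $3$-subgroup normalises exactly two Sylow $2$-subgroups $A_i,B_i$. As $\SL_2(5)$ is perfect it has no subgroup of index $3$, ruling out order $40$, and $16\nmid120$, so the only proper subgroups containing a full $Q_8$ are $Q_8$ and $\SL_2(3)$, each with a normal Sylow $2$-subgroup; hence the distinct groups $A_i,B_i$ generate $\SL_2(5)=D_i=O^3(C_H(z_i))$. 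Collecting the cases gives $M\in\{1,\<t\>,A_i,B_i\}$.

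I expect the main obstacle to be this $\SL_2(5)$-internal bookkeeping, namely the accurate counting of the $\<c_i\>$-invariant Sylow $2$-subgroups and the exclusion of the $C_4$ and Sylow-$5$-containing cases; all of it is elementary, and I would lean throughout on the explicit local data for $2^{.}\alt(5)$ already recorded in Lemma \ref{HN-EasyLemma}.
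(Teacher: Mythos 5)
Your proof is correct and follows essentially the same route as the paper: reduce via the direct-product structures $C_H(a_i)\cong 3\times(Q_i:\sym(3))$ and $C_H(z_i)\cong 3\times 2^{.}\alt(5)$, then classify the $P$-invariant $3'$-subgroups inside the second factor. The only difference is one of detail: you actually prove, via the Sylow double count in $\SL_2(5)$ together with the exclusion of invariant $C_4$'s and $5$-subgroups, that $P$ normalizes precisely two Sylow $2$-subgroups, and you verify $\<A_i,B_i\>=O^3(C_H(z_i))$ --- facts the paper's own proof merely asserts.
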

\begin{proof}
We have that $C_H(a_i)\sim 3 \times 2_+^{1+4}:\sym(3)$. Furthermore, $2_+^{1+4}\cong  Q_i=
O_2(C_H(a_i))$ and so if $M$ is a $3'$-subgroup of $C_H(a_i)$ that is normalized by $P$ then $MQ_i$ is also. Therefore
$MQ_i \leq Q_i$ and so $M \leq Q_i$.

We have that $C_H(z_i)\cong 3 \times 2^{.}\alt(5)$. Let $M$ be a $3'$-subgroup of $C_H(z_i)$ that is normalized by $P$. If $t \notin M$ then $2 \nmid |M|$ since
$C_H(z_i)$ has Sylow $2$-subgroups isomorphic to $Q_8$. Therefore $|M|=5$ or $|M|=1$. A Sylow $5$-subgroup of $C_H(z_i)$ is not normalized by $P$ and so $M=1$. So assume $\<t\> <M$. Then we
must have  $MP\cong 3 \times \SL_2(3)$. Since $P$ normalizes
precisely two Sylow $2$-subgroups of $C_H(z_i)$ we define $A_i$ and $B_i$ to be these two distinct
$2$-groups.
\end{proof}

We continue the notation for the $P$-invariant subgroups from  the previous lemma. The subgroups $\{A_i,B_i\}$ and $Q_j$ play key roles in this section.

\begin{lemma}\label{HN-swapping a_i's and z_i's-2}
Let $\{i,j\}=\{1,2\}$. The following hold.
\begin{enumerate}[$(i)$]
\item  $N_H(P) \cap C_H(a_i)$ acts transitively on the set $\{\<z_1\>,\<z_2\>\}$.
\item $N_H(P) \cap C_H(z_i)$ acts transitively on the set $\{\<a_1\>,\<a_2\>\}$.
\item $N_H(P) \cap N_H(\<a_i\>)$ acts transitively on the set $\{A_1,B_1, A_2,B_2\}$.
\item $N_H(P) \cap C_H(z_i)$ acts transitively on the set $\{A_i,B_i\}$ and
preserves $\{A_j\}$ and $\{B_j\}$.
\end{enumerate}
\end{lemma}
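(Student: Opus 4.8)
The plan is to exploit the faithful linear action of $N_H(P)$ on $P\cong\mathbb{F}_3^2$. First I would record that $N_H(P)/\langle P,t\rangle\cong\dih(8)$ (Lemma~\ref{HN-Normalizer H of P}) embeds in $\mathrm{GL}(P)=\GL_2(3)$, that the involution $s$ inverts $P$ and hence induces $-I$ (so $-I$ is the central involution of this $\dih(8)$), and that $N_H(P)$ preserves the partition of the four order-three subgroups of $P$ into $\{\langle a_1\rangle,\langle a_2\rangle\}$ and $\{\langle z_1\rangle,\langle z_2\rangle\}$ while being transitive on each pair. Since the only Klein four subgroup of $\sym(4)$ preserving a given partition into two pairs and transitive on each block is the one generated by the two block-internal transpositions, the image of $N_H(P)$ on these four subgroups is exactly $\langle(\langle a_1\rangle\,\langle a_2\rangle),(\langle z_1\rangle\,\langle z_2\rangle)\rangle$. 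For (i) I would take the element of $O^3(C_G(a_i))\cong\alt(9)$ that centralizes $t$ and inverts $a_j$ (an explicit such element is visible in the permutation model of Notation~\ref{HN-Alt9notation}); it lies in $N_H(P)\cap C_H(a_i)$ and, acting as the ``swap-$z$'' reflection, interchanges $\langle z_1\rangle,\langle z_2\rangle$. For (ii), the order-four element $g_i\in M_i:=O^3(C_H(z_i))\cong 2^{.}\alt(5)$ inverting the Sylow $3$-subgroup $P\cap M_i$ centralizes $z_i$, normalizes $P$, and, since it inverts $z_j$ while fixing $z_i$, interchanges $\langle a_1\rangle,\langle a_2\rangle$; thus it lies in $N_H(P)\cap C_H(z_i)$ and gives the required transitivity.

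For (iii) and (iv) I would first identify $A_i,B_i$ as the two Sylow $2$-subgroups of $M_i\cong\SL_2(5)$ normalized by the Sylow $3$-subgroup $P\cap M_i$; in the natural $\alt(5)=M_i/\langle t\rangle$ action on the five Sylow $2$-subgroups, $P\cap M_i$ is a $3$-cycle fixing precisely the two points $A_i,B_i$. The element $g_i\in M_i$ above projects to a double transposition inverting this $3$-cycle and interchanging its two fixed points, so $g_i$ interchanges $A_i$ and $B_i$; this is the transitivity on $\{A_i,B_i\}$ asserted in (iv). Next, a ``swap-$z$'' element conjugates $M_1$ onto $M_2$ and so carries $\{A_1,B_1\}$ bijectively onto $\{A_2,B_2\}$. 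Writing $\pi$ for the map from $N_H(P)$ to the symmetric group on $\{A_1,B_1,A_2,B_2\}$, I would then reduce both remaining assertions to a single fact. Since $P$ normalizes each of $A_1,B_1,A_2,B_2$ (they are $P$-invariant by construction) and $t$ is central, the relation $g_i=(g_is)\,s$ together with $g_is\in C_H(z_j)$ interchanging $A_j,B_j$ (it is, modulo $\langle P,t\rangle$, the element $g_j\in M_j$) shows $\pi(g_i)$ fixes $A_j,B_j$ \emph{precisely when} $s$ interchanges $A_j$ and $B_j$; and the same bookkeeping shows that the image $\langle s,\,\text{swap-}z\rangle\cong V_4$ of $N_H(P)\cap N_H(\langle a_i\rangle)$ acts transitively on the four $Q_8$'s \emph{precisely when} $s$ interchanges $A_i$ and $B_i$ inside each pair. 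Hence (iii) and the ``preserves $\{A_j\}$ and $\{B_j\}$'' part of (iv) both follow once one knows that $s$ interchanges $A_i$ and $B_i$.

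The hard part will therefore be this last fact. The involution $s$ normalizes $M_i$, centralizes $t=\mathcal{Z}(M_i)$, and inverts the Sylow $3$-subgroup $P\cap M_i$, so it induces an involutory automorphism of $M_i\cong\SL_2(5)$; in the $\sym(5)$ picture such an automorphism inverts a $3$-cycle, interchanging that cycle's two fixed points (equivalently $A_i,B_i$) exactly when it is \emph{inner} (an even, double-transposition type) and fixing them when it is \emph{outer}. Thus the crux is to prove that $s$ induces an inner automorphism of $M_i$, i.e.\ that $s\in M_i\,C_G(M_i)$. I expect this to be the main obstacle, since $s\notin C_G(z_i)\supseteq M_i$, so it cannot be read off directly. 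The plan to settle it is to compute $C_G(M_i)$ and locate in it an element inverting $z_i$, or to transport the structure of $C_G(s)$ recorded in Lemma~\ref{HN-normalizer of J2} (using $s\sim t$ from Lemma~\ref{HN-alt9 observations}), which exhibits commuting quaternion subgroups containing $s$; failing a clean structural argument, the interchange of $A_i$ and $B_i$ by $s$ can be verified by direct calculation in the permutation representation of Notation~\ref{HN-Alt9notation}, where a conjugate of $s$ and the relevant quaternion subgroups are explicit. Once ``$s$ interchanges $A_i$ and $B_i$'' is in hand, parts (iii) and (iv) close immediately by the permutation bookkeeping of the previous paragraph.
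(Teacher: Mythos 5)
Your handling of (i) and (ii) via the Klein four image of $N_H(P)$ on the four order-three subgroups of $P$ is correct and is essentially the paper's argument, your element $g_i\in M_i$ does give the transitivity of $N_H(P)\cap C_H(z_i)$ on $\{A_i,B_i\}$, and you have isolated exactly the right crux: (iii) and the rest of (iv) reduce to showing that $s$ interchanges $A_i$ and $B_i$. The genuine gap is that you never prove this crux. You defer it to three tentative plans and carry out none of them, so the proof is incomplete at precisely the point you yourself call the main obstacle. The paper settles it in a few lines: $s$ inverts $z_i$, so $s\in N_H(\langle z_i\rangle)\cong 3:4^{.}\alt(5)$; if $s$ normalized both $A_i$ and $B_i$, then $\langle s,A_i\rangle$ and $\langle s,B_i\rangle$ would be two distinct Sylow $2$-subgroups of $N_H(\langle z_i\rangle)$, each containing the cyclic group $O_2(N_H(\langle z_i\rangle))\cong 4$, yet their intersection is $(A_i\cap B_i)\cup(A_i\cap B_i)s=\langle t,s\rangle\cong 2\times 2$, which has no element of order four. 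Moreover, for (iv) the paper does not need $s$ at all: the central product structure $N_H(\langle z_j\rangle)\cong 3:(4\ast\SL_2(5))$ coming from Lemmas \ref{HN-conjugates in P} and \ref{HN-Normalizer of Z structure} exhibits an order-four element $f$ that inverts $z_j$ and centralizes $M_j\supseteq A_j,B_j$, and the Sylow argument inside $C_H(z_i)$ forces $f$ to swap $A_i$ and $B_i$. This $f$ is exactly the element of $C_G(M_j)$ inverting $z_j$ that your plan (a) proposes to ``locate''; it is already in plain view from results you cite, so that plan could have been completed, but you did not see it.

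Two further defects in the fallback plans. Plan (c) rests on a false premise: $A_i$ and $B_i$ are not subgroups of $N_G(\langle a_k\rangle)$ for either $k$, so they are not ``explicit'' in the permutation representation of Notation \ref{HN-Alt9notation} and no computation there can decide how $s$ acts on them. Indeed, the $P$-invariant subgroups of $A_i\cong Q_8$ are only $1$, $\langle t\rangle$ and $A_i$ (since $P\cap M_i$ permutes the three cyclic subgroups of order four transitively), so $A_i\leq N_H(\langle a_k\rangle)$ would force $A_i=A_i\cap C_H(a_k)$, whence $A_i\leq C_H(z_i)\cap C_H(a_k)=C_H(P)=\langle P,t\rangle$ by Lemma \ref{HN-conjugates in P}, which is absurd. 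Finally, you repeatedly use the identification $P\cap M_i=\langle z_j\rangle$ without justification (it underlies ``$g_i$ inverts $z_j$'' and ``$g_is\in C_H(z_j)$''). It is true, but it needs an argument: $g_i$ normalizes $P$, fixes $\langle z_i\rangle$, inverts $P\cap M_i$ and interchanges the remaining two subgroups of $P$, and since conjugation preserves the classes $3\mathcal{A}$ and $3\mathcal{B}$ the interchanged pair must be $\{\langle a_1\rangle,\langle a_2\rangle\}$, which forces $P\cap M_i=\langle z_j\rangle$.
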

\begin{proof}
By Lemma \ref{HN-Normalizer H of P}, $N_H(P)/\<P,t\>\cong \mathrm{Dih}(8)$ and $N_H(P)$ is
transitive on $3\mathcal{A}\cap P$ and $3\mathcal{B}\cap P$ which both have order four. It is therefore clear
that $N_H(P) \cap C_H(a_i)$ acts transitively on  $\{\<z_1\>,\<z_2\>\}$ and $N_H(P) \cap C_H(z_i)$
acts transitively on $\{\<a_1\>,\<a_2\>\}$. This proves $(i)$ and $(ii)$.

Now by Lemma \ref{HN-3'-subgroups of centralizers}, $N_H(P)$ acts on the set $\{A_1,B_1,A_2,B_2\}$.
Recall that $s\in H$ inverts $P$. In particular, $s$ acts on the set $\{A_1,B_1\}$. If $s$
normalizes $A_1$ and $B_1$, then $\<s,A_1\>$ and $\<s,B_1\>$ are two distinct Sylow
$2$-subgroups of $N_H(Z)\cong  3 : 4^{.}\alt(5)$. However this is a contradiction since $4\cong
O_2(N_H(Z))\leq \<s,A_1\> \cap \<s,B_1\>=\<t,s\>\cong 2 \times 2$. Hence $s \in N_H(P) \cap
N_H(\<a_i\>)$ permutes $\{A_1,B_1\}$ and by the same argument $s$ permutes  $\{A_2,B_2\}$. Thus $N_H(P) \cap N_H(\<a_i\>)$ acts
transitively on $\{A_1,B_1, A_2,B_2\}$. This proves $(iii)$.

Finally, we have that $N_H(\<z_j\>)\cong  3 : 4^{.}\alt(5)$ and so there is an element of order
four, $f$ say, in $N_H(\<z_j\>)$ that inverts $z_j$ whilst centralizing $z_i$ and $A_j$ and $B_j$.
So $f \in C_H(z_i)\cong  3 \times  2^{.}\alt(5)$. If $f$ normalizes $A_i$ and $B_i$ then since
$A_i,B_i \in \syl_2(C_H(z_i))$, we have that $f \in A_i \cap B_i=\<t\>$. This contradiction proves
that $f$ permutes the set $\{A_i,B_i\}$ and so  $\<f,P\>=N_H(P) \cap C_H(z_i)$ acts transitively on $\{A_i,B_i\}$ and
normalizes  $A_j$ and $B_j$.
\end{proof}

%
%

Recall from Notation \ref{HN-Alt9notation} that $u \in Q_2$ and $r_i\in C_G(a_i)$ are involutions. Set $2\mathcal{A}=\{u^g|g\in G\}$ and $2\mathcal{B}=\{t^g|g \in G\}$.

\begin{lemma}\label{HN-at least 2 classes of involution}
$C_G(a_i)$ has two classes of involution which are not conjugate in $G$. In particular, $r_i \in 2\mathcal{A} \neq 2 \mathcal{B}$.
\end{lemma}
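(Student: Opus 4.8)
The plan is to exploit the two distinct $G$-classes of elements of order three, namely $3\mathcal{A}$ and $3\mathcal{B}$, to separate involutions in $C_G(a_i)$ into two families that cannot fuse in $G$. Recall that $C_G(a_i)\cong 3\times\alt(9)$, so involutions in $O^3(C_G(a_i))\cong\alt(9)$ come in two $\alt(9)$-conjugacy classes, distinguished by cycle type: those of cycle type $2^2$ (such as $r_i$, which maps to $(1,3)(2,4)$ under Notation \ref{HN-Alt9notation}) and those of cycle type $2^4$ (such as $t$, which maps to $(1,2)(3,4)(5,6)(7,8)$). I would first record, using the embedding into $\alt(12)$ fixed in Notation \ref{HN-Alt9notation}, that $r_i$ and $t$ are genuinely in different $\alt(9)$-classes, and hence different $C_G(a_i)$-classes since $C_G(a_i)=\langle a_i\rangle\times O^3(C_G(a_i))$ and $a_i$ has odd order.

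The substance of the claim is that these two $C_G(a_i)$-classes remain distinct under $G$-fusion, i.e. that $r_i$ is not $G$-conjugate to $t$. The key is to compare the fixed-point behaviour of each involution on elements of order three. By Lemma \ref{HN-images in alt9}, $t$ is a $2$-central involution in $C_G(a_i)$ and $P\cap O^3(C_G(a_i))$ has cycle type $3^2$; more importantly $t$ commutes with the $3\mathcal{B}$-element $z_i$ (indeed $t\in C_G(Z)$ with $C_G(Z)/Q\cong 2^.\alt(5)$). The distinguishing feature I would isolate is the structure of $C_G(t)\cap C_G(a_i)$ versus $C_G(r_i)\cap C_G(a_i)$: by Lemma \ref{HN-alt9 observations}$(i)$, $C_H(a_i)\sim 3\times(2_+^{1+4}:\sym(3))$ with $r_i\in C_H(a_i)\setminus Q_i$, so $r_i$ lies outside the extraspecial group $Q_i=O_2(C_H(a_i))$, whereas $t\in Q_i=O_2(C_H(a_i))$ lies in its centre. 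I would compute $C_{\alt(9)}(r_i)$ and observe it has a markedly different order and $3$-local structure (in particular a different Sylow $3$-subgroup order) from $C_{\alt(9)}(t)$; since these centralizers within $\alt(9)$ are non-isomorphic, and any $G$-conjugacy carrying $r_i$ to $t$ would have to carry some conjugate of $C_G(a_i)$ appropriately, a counting or centralizer-order argument yields $r_i\not\sim_G t$.

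A cleaner route, which I expect to prefer, is to show the two classes differ by a $G$-invariant criterion: whether the involution commutes with an element of $3\mathcal{B}$ that is $3$-central in $G$. The involution $t$ is $2$-central in $G$ (shown in the course of this section) and centralizes $z_i\in 3\mathcal{B}$ with $z_i$ $3$-central, giving $C_G(t)\cap C_G(z_i)=C_G(Z)\cap H\cong 3\times 2^.\alt(5)$ by Lemma \ref{HN-conjugates in P}. By contrast, I would check directly in $\alt(9)$ that $r_i$ (cycle type $2^2$) centralizes no element of cycle type $3^3$, i.e. no element whose cube generates a $3$-central class; since $3$-centrality in $G$ is a conjugacy invariant and elements of $3\mathcal{B}\cap P$ are exactly the images of $3$-central classes, this forces $r_i\in 2\mathcal{A}$ with $2\mathcal{A}\neq 2\mathcal{B}=\{t^g\mid g\in G\}$. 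The main obstacle will be pinning down precisely which cycle-type involutions in $\alt(9)$ commute with $3$-central three-elements and translating that internal $\alt(9)$ data into a genuine $G$-fusion obstruction; this requires care because fusion in $G$ can be controlled only through subgroups we already understand, so I would route the argument through $C_G(a_i)$ and the already-established structure of $C_G(z_i)$ and $C_G(Z)$ rather than attempting a direct global fusion analysis.
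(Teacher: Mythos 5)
Your first route (comparing $C_{\alt(9)}(r_i)$ with $C_{\alt(9)}(t)$ and invoking ``a counting or centralizer-order argument'') does not work as stated: a $G$-conjugation carrying $r_i$ to $t$ has no reason to normalize, or even interact with, $C_G(a_i)$, so non-isomorphic centralizers \emph{inside} $\alt(9)$ place no constraint on fusion in $G$. Your second, preferred route identifies the right invariant --- $t$ commutes with the $3$-central element $z_i\in 3\mathcal{B}$, so it would suffice to show $r_i$ commutes with no element of $3\mathcal{B}$ --- but the verification you propose, ``check directly in $\alt(9)$'', only sees $C_G(r_i)\cap C_G(a_i)$. The centralizer $C_G(r_i)$ is much larger than this intersection, and a priori it could contain conjugates of $z$ lying outside $C_G(a_i)$; no cycle-type computation inside $\alt(9)$ can exclude that. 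There is also a bookkeeping error: the elements of $3\mathcal{B}\cap P$ are not of cycle type $3^3$. By Lemma \ref{HN-Normalizer H of P}, $P\cap O^3(C_G(a_i))=\langle a_j\rangle$, so $z_1,z_2$ are \emph{diagonal} in $\langle a_i\rangle\times O^3(C_G(a_i))$ with $\alt(9)$-image of cycle type $3^2$; hence $3\mathcal{B}$-elements of $C_G(a_i)$ are not detected by looking for cycle type $3^3$ alone.

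The missing mechanism, which is exactly what the paper supplies, is Sylow's theorem applied inside the involution centralizer. By Lemma \ref{HN-conjugates in P}, $P\in\syl_3(C_G(t))$ has order nine and meets both $3\mathcal{A}$ and $3\mathcal{B}$; consequently \emph{every} Sylow $3$-subgroup of $C_G(t)$ meets $3\mathcal{B}$, and this property is preserved under $G$-conjugation of $t$. Now take an involution $v\in O^3(C_G(a_i))$ not $C_G(a_i)$-conjugate to $t$ (such as $r_i$ or $u$): its image is a double transposition, so it commutes with a $3$-cycle $c$, and $R:=\langle a_i,c\rangle$ has order nine with $C_G(R)\cong 3\times 3\times\alt(6)$. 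Since no conjugate of $Z$ commutes with a subgroup isomorphic to $\alt(6)$, we get $R^\#\subseteq 3\mathcal{A}$. If $v=t^g$, then $R$, having order nine, would be a full Sylow $3$-subgroup of $C_G(v)=C_G(t)^g$, hence $C_G(v)$-conjugate to $P^g$, which meets $3\mathcal{B}$ --- a contradiction. This Sylow step is what converts a computation inside $C_G(a_i)$ into a genuine obstruction to fusion in $G$; without it, neither of your two routes closes.
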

\begin{proof}
We have that every involution in $C_G(a_i)$ lies in $O^3(C_G(a_i))\cong \alt(9)$ and $\alt(9)$ has
two classes of involution with representatives $(1,2)(3,4)$ and $(1,2)(3,4)(5,6)(7,8)$. We have
seen that $t \in C_G(a_i)$ is 2-central in $C_G(a_i)$ and $C_G(t)$ has a Sylow $3$-subgroup of order nine which
intersects non-trivially with both $3\mathcal{A}$ and $3\mathcal{B}$. So let $v \in O^3(C_G(a_i))$
be  an involution which is not conjugate to $t$ in $C_G(a_i)$. Then the image of $v$ in $\alt(9)$ is a double transposition which necessarily commutes with a $3$-cycle. Hence,  $v$ commutes with a subgroup of
$C_G(a_i)$ of order nine, $R$ say, and $C_G(R)\cong 3\times 3 \times \alt(6)$. This implies that $R$ contains no conjugate of $Z$ since no conjugate of $Z$ commutes with a subgroup isomorphic to $\alt(6)$. Thus $v$ is not $G$-conjugate to $t$. In particular, it is now clear from the images of $u$ and $r_i$ that neither are not conjugate to $t$ and so $u,r_i \in 2\mathcal{A} \neq 2 \mathcal{B}$.
\end{proof}

The following lemma is a key step in determining the structure of $H$ since it proves that $H$
contains a subgroup which is extraspecial of order $2^9$.
\begin{lemma}\label{HN-Q_i's}
Let $\{i,j\}=\{1,2\}$ then $Q_i \cap Q_j=\<t\>$ and $C_G(Q_i)=Q_j\<a_i\>$. In particular $\<t\>$ is
the centre of a Sylow $2$-subgroup of $G$ and $Q_1Q_2\cong 2_+^{1+8}$ with $C_G(Q_1Q_2)=\<t\>$.
\end{lemma}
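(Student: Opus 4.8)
The plan is to prove the two core equalities $Q_i\cap Q_j=\langle t\rangle$ and $C_G(Q_i)=Q_j\langle a_i\rangle$, and then read off the remaining assertions as formal consequences. I first record the standing facts used throughout. Since $t=\mathcal{Z}(H)$ and $t\in Q_i\cong 2_+^{1+4}$, we have $\mathcal{Z}(Q_i)=\langle t\rangle$, so $Q_i\leq C_G(t)=H$ and $C_G(Q_i)=C_H(Q_i)$; moreover $P=\langle a_1,a_2\rangle$ normalizes both $Q_1$ and $Q_2$, because $a_i$ centralizes $Q_i$ (as $Q_i\leq C_H(a_i)$) while $a_j\in C_H(a_i)$ normalizes $Q_i=O_2(C_H(a_i))$. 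For the intersection, note $Q_1\cap Q_2\leq Q_2\leq C_G(a_2)$, so $Q_1\cap Q_2\leq C_{Q_1}(a_2)$; any element of $C_{Q_1}(a_2)$ centralizes $a_1$ (it lies in $Q_1\leq C_G(a_1)$) and $a_2$, hence lies in $C_G(P)\cap H=C_H(P)=P\langle t\rangle$ by Lemma \ref{HN-conjugates in P}. As this group has $\langle t\rangle$ as Sylow $2$-subgroup and $C_{Q_1}(a_2)$ is a $2$-group containing $t$, we get $C_{Q_1}(a_2)=\langle t\rangle$, and therefore $Q_1\cap Q_2=\langle t\rangle$; symmetrically $C_{Q_2}(a_1)=\langle t\rangle$.

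Next I would establish $[Q_1,Q_2]=1$, which together with $Q_1\cap Q_2=\langle t\rangle$ and $\mathcal{Z}(Q_1)=\mathcal{Z}(Q_2)=\langle t\rangle$ exhibits $Q_1Q_2$ as a central product of two copies of $2_+^{1+4}$ and hence gives $Q_1Q_2\cong 2_+^{1+8}$ by Lemma \ref{prelim-exraspecial}. The mechanism I would use is the pair of commuting quaternion subgroups produced in Lemma \ref{HN-normalizer of J2}: conjugating by an element carrying $s$ to $t$ (legitimate by Lemma \ref{HN-alt9 observations}(vii)) places commuting copies of $Q_8$ inside $C_H(a_1)$ and $C_H(a_2)$, and, invoking Lemma \ref{exactly 2 q-8's in extraspecial group} to identify $Q_1$ and $Q_2$ as the central products of their two commuting $Q_8$'s, one matches these up to conclude that $Q_1$ and $Q_2$ commute. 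This is the \textbf{first main obstacle}: making the identification of the transported $Q_8$'s with the two quaternion subgroups of $Q_1$ and $Q_2$ rigorous.

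With $[Q_i,Q_j]=1$ in hand, so that $Q_j\langle a_i\rangle\leq C_G(Q_i)$, I would determine $C=C_G(Q_i)=C_H(Q_i)$ by applying the Feit--Thompson Theorem (Theorem \ref{Feit-Thompson}) to $\bar C=C/\langle t\rangle$. Here $t\in\mathcal{Z}(C)$, and by Lemma \ref{HN-alt9 observations}(iv) we have $C\cap C_G(a_i)=C_{C_H(a_i)}(Q_i)=\langle t,a_i\rangle$, so $\bar a_i$ is a self-centralizing element of order three in $\bar C$. The $\PSL_2(7)$ case is excluded because $\bar Q_j\cong 2^4$ is elementary abelian and embeds in $\bar C$, whereas $\PSL_2(7)$ has $2$-rank only $2$. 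The surviving analysis forces $\bar C=N{:}\langle\bar a_i\rangle$ with $N$ nilpotent and $\bar a_i$ acting fixed-point-freely on $N$; since a $3$-element cannot act fixed-point-freely on a nontrivial $3$-group, $3\nmid|N|$. The \textbf{second main obstacle} is to exclude the other odd primes from $N$ (equivalently, to rule out the $\alt(5)$-quotient case and any centralizing element of order $5,7,11$ or $19$) and to force $N=\bar Q_j$ of order $16$. I expect to achieve this by showing $\bar Q_j$ is self-centralizing in $\bar C$, i.e.\ $C_G(Q_iQ_j)=\langle t\rangle$: the fixed-point-free action of $a_i$ on $Q_j/\langle t\rangle$ gives $C_{C_G(a_i)}(Q_iQ_j)=\langle t\rangle$, and Lemma \ref{HN-prelims2}(v) (no nontrivial $3'$-subgroup is normalized by the relevant $3$-subgroup) is the tool to kill $O_{2'}(C)$ and the remaining central factors. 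This yields $|C|=96$ and $C=Q_j\langle a_i\rangle$.

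Granting $C_G(Q_i)=Q_j\langle a_i\rangle$, the rest is formal. First $C_G(Q_1Q_2)=C_G(Q_1)\cap C_G(Q_2)=Q_2\langle a_1\rangle\cap Q_1\langle a_2\rangle$; since $a_1$ does not centralize $Q_2$, no conjugate of $a_1$ lies in this intersection, so it has trivial Sylow $3$-subgroup and hence lies in $Q_2\cap C_G(Q_2)=\mathcal{Z}(Q_2)=\langle t\rangle$, giving $C_G(Q_1Q_2)=\langle t\rangle$. Finally, choosing $T\in\syl_2(H)$ with $Q_1Q_2\leq T$, we have $\mathcal{Z}(T)\leq C_T(Q_1Q_2)\leq C_G(Q_1Q_2)=\langle t\rangle$ while $t\in\mathcal{Z}(H)\leq\mathcal{Z}(T)$, so $\mathcal{Z}(T)=\langle t\rangle$; and any $2$-group properly containing $T$ would normalize $\mathcal{Z}(T)=\langle t\rangle$ and hence lie in $C_G(t)=H$, contradicting $T\in\syl_2(H)$, so $T\in\syl_2(G)$ and $\langle t\rangle$ is the centre of a Sylow $2$-subgroup of $G$.
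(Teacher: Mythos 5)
Your opening step ($Q_1\cap Q_2=\<t\>$ via $C_H(P)=P\<t\>$) and your closing paragraph of formal consequences are correct and essentially match the paper. The trouble is precisely the two steps you flag as "obstacles": both are genuine gaps, and the mechanisms you propose cannot close them. For the first, knowing that $X_1\leq Q_1$ and $X_2\leq Q_2$ are commuting $P$-invariant quaternion subgroups (Lemmas \ref{HN-normalizer of J2} and \ref{HN-3'-subgroups of centralizers}) says nothing about the remaining quaternion factors: by Lemma \ref{exactly 2 q-8's in extraspecial group} each $Q_k$ is the central product of exactly two copies of $Q_8$, and $[X_1,X_2]=1$ leaves the commutator of the second factor of $Q_1$ with $Q_2$ completely uncontrolled, so no "matching up" can extract $[Q_1,Q_2]=1$ from this data alone. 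The paper never proves commutation this way; there $[Q_1,Q_2]=1$ is the \emph{conclusion} of the centralizer computation, not an input to it. This matters doubly because your Feit--Thompson analysis is anchored on it: your exclusion of $\PSL_2(7)$ and your identification of the normal subgroup both presuppose $\bar{Q_j}\leq \bar{C}$, i.e.\ the unproved first step, so the second half of your argument is circular as written.

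For the second obstacle, the tool you cite is inapplicable: Lemma \ref{HN-prelims2}(v) concerns $3'$-subgroups normalized by $Y$, and $Y^\#\subset 3\mathcal{B}$ (all four of its order-three subgroups are conjugate to $Z$, as $Y$ is a natural module), whereas the only order-nine subgroup acting naturally on $C_G(Q_i)$ is $P=\<a_1,a_2\>$, which meets $3\mathcal{A}$ and is therefore not $G$-conjugate to $Y$; indeed $Y\nleq H$ at all. The correct tool — and this is what the paper does — is Lemma \ref{HN-3'-subgroups of centralizers} combined with coprime generation. Concretely: since $C_{C_H(a_i)}(Q_i)=C_{N_H(\<a_i\>)}(Q_i)=\<t,a_i\>$ (Lemma \ref{HN-alt9 observations}(iv)), Burnside's normal $p$-complement theorem (Theorem \ref{Burnside-normal p complement}) — not Feit--Thompson — gives a normal $3$-complement $N$ of $C_G(Q_i)$ with $C_N(a_i)=\<t\>$; coprime action of $P$ gives $N=\<C_N(z_1),C_N(z_2),C_N(a_j)\>$; Lemma \ref{HN-3'-subgroups of centralizers} forces $C_N(a_j)\leq Q_j$ and $C_N(z_k)\in\{1,\<t\>,A_k,B_k\}$, and the transitivity of $N_H(P)\cap N_H(\<a_i\>)$ on $\{A_1,B_1,A_2,B_2\}$ (Lemma \ref{HN-swapping a_i's and z_i's-2}(iii)) rules out $C_N(z_k)=A_k$ or $B_k$ (else $2^{.}\alt(5)\cong\<A_k,B_k\>\leq N$), killing all odd primes at once and giving $N\leq Q_j$. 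A Sylow/fusion argument via Lemma \ref{HN-element of order four in CG(Z)} shows $N\neq\<t\>$, and your commuting $Q_8$'s enter only here, to exclude $\<t\><N<Q_j$ (an elementary abelian $N$ of order $8$ would give $Q_j=NX_j$ and eventually $X_j\leq C_G(Q_i)=N\<a_i\>$, impossible). Hence $N=Q_j$, which delivers $[Q_1,Q_2]=1$ and $C_G(Q_i)=Q_j\<a_i\>$ simultaneously. In short, your plan inverts the paper's logical order, and the statement you take as the starting point is exactly the one that admits no independent proof from the quoted lemmas.
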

\begin{proof}
Let $\{i,j\}=\{1,2\}$. Since $C_H(P)=P\<t\>$ and $Q_1 \cap Q_2 \leq C_H(\<a_1,a_2\>)=C_H(P)$, we have $Q_1 \cap
Q_2=\<t\>$. Now observe that $P$ normalizes $Q_i$.  By Lemma
\ref{HN-alt9 observations},  $C_{C_H(a_i)}(Q_i)=C_{N_H(\<a_i\>)}(Q_i)=\<t,a_i\>$. Therefore
$C_G(Q_i)$ has a normal $3$-complement, $N$ say, by Burnside's normal $p$-complement  Theorem
(Theorem \ref{Burnside-normal p complement}). Furthermore $C_{N}(a_i)=\<t\>$. By coprime action we
have,
\[N=\<C_N(z_1),C_N(z_2),C_N(a_1),C_N(a_2)\>=\<C_N(z_1),C_N(z_2),C_N(a_j)\>\]
since $C_N(a_i)=\<t\>$. Suppose first that $N=\<t\>$. Then $C_G(Q_i)=\<t,a_i\>$ and so
$N_G(Q_i)\leq N_G(\<a_i\>)$. By Lemma \ref{HN-alt9 observations}, $Q_i$ is characteristic in a
Sylow $2$-subgroup of $N_G(\<a_i\>)$ and so $N_G(\<a_i\>)$ contains a Sylow $2$-subgroup of $H$. Let
$T\in \syl_2(N_G(Q_i))$ then $|T|=2^7$. By Sylow's Theorem, since $A_1 \leq H$, there exists $g \in H$ such that
$A_1^g\leq T\leq N_G(\<a_i\>)$. Therefore $|A_1^g \cap C_G(a_i)|= 4$ or $8$ and of course $A_1^g \cap C_G(a_i)$ commutes with $a_i \in 3\mathcal{A}$. However, by Lemma \ref{HN-element of order four in CG(Z)}, since $A_1^g \cap C_G(a_1)$ has order 4 or 8, $C_G(A_1^g \cap C_G(a_1))$ has a Sylow $3$-subgroup $Z^g$ which is a contradiction. Thus $N>\<t\>$.

Suppose $C_N(z_1)>\<t\>$. Then by Lemma \ref{HN-3'-subgroups of centralizers}, we may assume,
without loss of generality, that $A_1=C_N(z_1)$. By Lemma \ref{HN-swapping a_i's and z_i's-2}
$(iii)$, $N_H(P) \cap N_H(\<a_i\>)$ acts transitively on $\{A_1,B_1,A_2,B_2\}$. Clearly $N_H(P)
\cap N_H(\<a_i\>)$ normalizes $Q_i$ and therefore $\<A_1,B_1\> \leq N$ which is a contradiction
since $\<A_1,B_1\>\cong 2^.\alt(5)$ and $N$ is a $3'$-group. Thus $C_N(z_1)=\<t\>$ and by the same
argument $C_N(z_2)=\<t\>$. So we have that $\<t\><N=C_N(a_j)\leq Q_j$.

Hence $\<t\> < N\leq Q_j$. Suppose for a contradiction that $N<Q_j$. Then $N\vartriangleleft
C_H(a_i)$ and since $a_i$ acts fixed-point-freely on $N/\<t\>$, $|N|=2^3$. By Lemma \ref{HN-alt9
observations} $(iii)$, $N$ is elementary abelian. Now by Lemma \ref{HN-alt9 observations} $(vii)$, $s$ is conjugate to $t$ in $G$. Recall Lemma \ref{HN-normalizer of J2}. This,
together with the fact that $P=\<a_1,a_2\> \in \syl_3(H)$, implies that for $k\in\{1,2\}$ there
exists a $P$-invariant subgroup $Q_8\cong X_k\leq C_H(a_k)$  with $[X_1,X_2]=1$. Now by Lemma
\ref{HN-3'-subgroups of centralizers}, $X_i \leq Q_i$ and  $X_j \leq Q_j$.  We have that $X_j$ and $N$ are both $P$-invariant and furthermore we have that $X_j\cong Q_8$ where as $N$ is elementary abelian. Therefore $|X_j \cap N|=2$ and so $Q_j=NX_j$. Similarly, $Q_i=O_2(C_G(Q_j))X_i$. Therefore $X_j$
commutes with $Q_i$ which is a contradiction.

Hence we have that $N=Q_j$ and so $[Q_1,Q_2]=1$ which implies that $Q_1Q_2\cong 2_+^{1+8}$. Now let
$Q_1Q_2\leq T\in \syl_2(G)$ then $\mathcal{Z}(T)\leq C_T(Q_1Q_2)\leq C_T(Q_1) \cap C_T(Q_2)\leq Q_2 \cap Q_1=\<t\>$. Hence
$\mathcal{Z}(T)=C_G(Q_1Q_2)=\<t\>$.
\end{proof}

Set ${Q_{12}}:=Q_1Q_2\cong 2_+^{1+8}$ and recall that in Notation \ref{HN-Alt9notation} we defined
$E\leq C_G(a_1)$ such that $t \in E\trianglelefteq C_H(a_1)$ is elementary abelian of order eight. We now consider $C_G(E)$ and $N_G(E)$.

\begin{lemma}\label{HN-Describing E}
We have  $t\in E \trianglelefteq  C_H(a_1)$, $C_G(E)/E$ has a nilpotent normal $3$-complement on which $a_1$
acts fixed-point-freely. Furthermore, $N_G(E)/C_G(E)\cong \GL_3(2)$ where the extension is split and $C_H(a_1)$ contains a complement of $C_G(E)$ in $N_G(E)$ which contains $a_2$.
\end{lemma}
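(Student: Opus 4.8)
The plan is to localise everything inside $C_G(t)=H$ and the known centralizer $C_G(a_1)\cong 3\times\alt(9)$, then treat $C_G(E)$ and $N_G(E)$ separately. That $t\in E\trianglelefteq C_H(a_1)$ is immediate from Lemma~\ref{HN-alt9 observations}(ii) together with Notation~\ref{HN-Alt9notation}, where $t$ occurs as one of the three displayed generators of $E$. Since $t\in E$, every element of $C_G(E)$ centralizes $t$, so $C_G(E)\leq C_G(t)=H$; this confines all further work to $H$. First I would pin down a Sylow $3$-subgroup of $C_G(E)$. Note $a_1\in C_G(E)$, since $a_1$ generates the direct factor of $C_G(a_1)\cong 3\times\alt(9)$ and so centralizes $E\leq\alt(9)$. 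Choosing $R\in\syl_3(C_G(E))$ with $a_1\in R$, Lemma~\ref{HN-alt9 observations}(v) gives $C_{C_G(E)}(a_1)=\langle E,a_1\rangle$, whence $C_R(a_1)=\langle a_1\rangle$ as $E$ is a $2$-group; a standard $p$-group argument (if $|R|>3$ then $\mathcal{Z}(R)\leq C_R(a_1)=\langle a_1\rangle$ forces $a_1\in\mathcal{Z}(R)$ and hence $R=C_R(a_1)=\langle a_1\rangle$) yields $R=\langle a_1\rangle$. So Sylow $3$-subgroups of $C_G(E)$ have order three.

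Next I would build the normal $3$-complement of $C_G(E)$ by Burnside's theorem (Theorem~\ref{Burnside-normal p complement}); it suffices to verify $N_{C_G(E)}(\langle a_1\rangle)=C_{C_G(E)}(a_1)$, i.e. that no element of $C_G(E)$ inverts $a_1$. Any such element would lie in $C_G(E)\cap N_G(\langle a_1\rangle)\leq C_G(t)\cap N_G(\langle a_1\rangle)=N_H(\langle a_1\rangle)$ and centralize $E$, hence lie in $C_{N_H(\langle a_1\rangle)}(E)=\langle E,a_1\rangle$ by Lemma~\ref{HN-alt9 observations}(v); but $\langle E,a_1\rangle$ is abelian and centralizes $a_1$, a contradiction. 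Thus $C_G(E)$ has a normal $3$-complement $N:=O_{3'}(C_G(E))$. Since $E\leq\mathcal{Z}(C_G(E))$ is a normal $3'$-subgroup we have $E\leq N$, so $\bar N:=N/E$ is the normal $3$-complement of $\bar C:=C_G(E)/E$.

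To finish the structure of $\bar C$ I would show $\bar a_1:=a_1E$ acts fixed-point-freely on $\bar N$ and invoke Thompson's theorem. From $\langle a_1\rangle\cap N=1$ and $C_{C_G(E)}(a_1)=\langle E,a_1\rangle$ one gets $C_N(a_1)=\langle E,a_1\rangle\cap N=E$, so coprime action (Theorem~\ref{coprime action}(iii)) gives $C_{\bar N}(\bar a_1)=C_N(a_1)E/E=1$; Theorem~\ref{thompson-nilpotent} then makes $\bar N$ nilpotent, which is the asserted structure of $C_G(E)/E$. For $N_G(E)$ I would read the complement directly off Lemma~\ref{HN-alt9 observations}(ii): there $C\cong\GL_3(2)$ satisfies $C\leq C_G(a_1)\leq N_G(E)$ and $a_2\in C$, and $C\cap C_G(E)=C\cap C_{C_G(a_1)}(E)=1$ because $C$ complements $C_{C_G(a_1)}(E)$ in $N_{C_G(a_1)}(E)$. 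Since $N_G(E)/C_G(E)$ embeds in $\aut(E)\cong\GL_3(2)$ and already contains the image of $C$ of full order $|\GL_3(2)|$, we conclude $N_G(E)/C_G(E)\cong\GL_3(2)$ with $C$ a split complement containing $a_2\in C_H(a_1)$.

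The main obstacle I expect is the first structural point, namely that $\langle a_1\rangle$ is Sylow in $C_G(E)$: once this self-centralizing information (from Lemma~\ref{HN-alt9 observations}(v)) is in hand, the Burnside normal complement, the Thompson nilpotency, and the reading of $N_G(E)$ off Lemma~\ref{HN-alt9 observations}(ii) are all routine. I would also be careful about the location of the complement: being $\cong\GL_3(2)$ acting faithfully on $E$, it moves $t$ within $E$ and so cannot literally centralize $t$; the complement is therefore found inside $C_G(a_1)$, its relevance to $C_H(a_1)$ entering through $a_2\in C_H(a_1)$ and the governing $\alt(9)$-structure of $C_G(a_1)$.
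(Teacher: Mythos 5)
Your proposal is correct and takes essentially the same route as the paper's proof: $C_G(E)\leq C_G(t)=H$, Lemma \ref{HN-alt9 observations}$(v)$ supplies the hypotheses of Burnside's normal $p$-complement theorem for $C_G(E)$, Thompson's theorem applied to the fixed-point-free action of $a_1$ gives nilpotency, and the complement is read off Lemma \ref{HN-alt9 observations}$(ii)$; your additional steps (that $\<a_1\>$ is Sylow in $C_G(E)$, the verification of the Burnside hypothesis, and $N_G(E)/C_G(E)\cong \GL_3(2)$ via the faithful image of $C$ in $\aut(E)$) only make explicit what the paper leaves implicit. Two small remarks: the middle containment in your chain $C\leq C_G(a_1)\leq N_G(E)$ is false ($C_G(a_1)$ does not normalize $E$) — what you need, and what follows from the citation in the same sentence, is $C\leq N_{C_G(a_1)}(E)\leq N_G(E)$; and your closing caveat is well taken, since $C$ acts faithfully, hence transitively, on $E^\#$ it cannot centralize $t$, so the complement lies in $C_G(a_1)$ rather than $C_H(a_1)$, which is exactly how the lemma is used later in Lemmas \ref{HN-centralizer of F} and \ref{HN-HS-complement to A}.
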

\begin{proof}
By Lemma \ref{HN-alt9 observations} $(v)$, $C_{C_H(a_1)}(E)=C_{N_H(\<a_1\>)}(E)=\<E,a_1\>$ and so
by Burnside's normal $p$-complement Theorem  (Theorem \ref{Burnside-normal p complement}), $C_G(E)$
has  a normal $3$-complement, $M$ say and $C_M(a_1)=E$ which implies that $a_1$ acts
fixed-point-freely on $M/E$. A theorem of Thompson says that $M/E$ is nilpotent and therefore $M$ is nilpotent. Also by Lemma \ref{HN-alt9 observations} $(ii)$, there exists a
complement to $C_G(E)$ in $C_H(a_1)$ containing $a_2$.
\end{proof}

\begin{lemma}\label{HN-info on centralizer of E}
Without loss of generality we may assume that $O_{3'}(C_G(E))=\<E,Q_2,A_1,A_2\>$
and $\<{Q_{12}},A_i\>$ is a $2$-group for $i \in \{1,2\}$
that is normalized by $P$.
\end{lemma}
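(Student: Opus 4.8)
The plan is to realise $M:=O_{3'}(C_G(E))$ via coprime action of $P=\langle a_1,a_2\rangle$ and then to pin down its four fixed-point subgroups. First I would note that $t\in E$ gives $M\leq C_G(E)\leq C_G(t)=H$, and that $P$ normalises $E$: indeed $a_1$ centralises $E\leq Q_1\leq C_G(a_1)$ while $a_2\in C_H(a_1)$ normalises $E\trianglelefteq C_H(a_1)$ (Lemma~\ref{HN-Describing E}). Hence $P$ normalises $M$, and since $M$ is a $3'$-group Theorem~\ref{coprime action}$(iv)$ applies to the four subgroups of order three in $P$, which by Lemma~\ref{HN-conjugates in P} are exactly $\langle a_1\rangle,\langle a_2\rangle,\langle z_1\rangle,\langle z_2\rangle$. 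This yields $M=\langle C_M(a_1),C_M(a_2),C_M(z_1),C_M(z_2)\rangle$.

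Next I would identify the first two terms. We already have $C_M(a_1)=E$ from Lemma~\ref{HN-Describing E}. For $a_2$, Lemma~\ref{HN-3'-subgroups of centralizers} gives $C_M(a_2)\leq Q_2$, while conversely $Q_2\leq C_G(a_2)$ and $[Q_2,E]\leq[Q_2,Q_1]=1$ (using $E\leq Q_1$ and Lemma~\ref{HN-Q_i's}) force $Q_2\leq M\cap C_G(a_2)$; hence $C_M(a_2)=Q_2$. By Lemma~\ref{HN-3'-subgroups of centralizers} each of $C_M(z_1),C_M(z_2)$ lies in $\{1,\langle t\rangle,A_i,B_i\}$, so all four generators of $M$ are $2$-groups; as $M$ is nilpotent (Lemma~\ref{HN-Describing E}), $M$ is generated by $2$-subgroups of a nilpotent group and is therefore itself a $2$-group.

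The heart of the argument is to show $C_M(z_1),C_M(z_2)\nleq\langle t\rangle$, for then each equals $A_i$ or $B_i$ and we simply relabel so that $C_M(z_i)=A_i$, giving $M=\langle E,Q_2,A_1,A_2\rangle$. I would argue by contradiction: if both were contained in $\langle t\rangle$ then $M=\langle E,Q_2\rangle=EQ_2$, whose derived group is $M'=Q_2'=\langle t\rangle$ (since $E$ is central in $M$ and $[E,Q_2]=1$). But $M'$ is characteristic in $M$, and $N_G(E)$ normalises $M=O_{3'}(C_G(E))$, so $N_G(E)$ would stabilise $\langle t\rangle$ and hence fix $t$; this contradicts the fact that $N_G(E)/C_G(E)\cong\GL_3(2)$ (Lemma~\ref{HN-Describing E}) acts on $E$ as the full group $\GL(E)$, which is transitive on $E^\#\ni t$. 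Thus not both $C_M(z_i)$ are trivial. To upgrade this to \emph{both} being nontrivial I would invoke the symmetry of Lemma~\ref{HN-swapping a_i's and z_i's-2}$(i)$: there is $\nu\in N_H(P)\cap C_H(a_1)$ interchanging $\langle z_1\rangle$ and $\langle z_2\rangle$; as $\nu$ centralises $a_1$ it normalises $E\trianglelefteq C_H(a_1)$, hence normalises $M$, so that $C_M(z_1)^\nu=C_M(z_2)$. This forces $|C_M(z_1)|=|C_M(z_2)|$, whence neither is $\langle t\rangle$, completing the identification $M=\langle E,Q_2,A_1,A_2\rangle$.

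Finally, for the $2$-group assertion I would observe that $E\trianglelefteq Q_1$ (a subgroup of $Q_1\cong 2_+^{1+4}$ containing $Q_1'=\langle t\rangle$), so $Q_1\leq N_G(E)$ and $Q_1$ normalises $M$. Since $M$ is a $2$-group and $Q_1$ is a $2$-group normalising it, $Q_1M$ is a $2$-group, and $\langle {Q_{12}},A_i\rangle=\langle Q_1,Q_2,A_i\rangle\leq Q_1M$ because $Q_2,A_i\leq M$; that $P$ normalises $\langle {Q_{12}},A_i\rangle$ is immediate, as $P$ normalises each of $Q_1,Q_2,A_i$. I expect the main obstacle to be the step ruling out $M=EQ_2$: the clean contradiction via the characteristic subgroup $M'=\langle t\rangle$ against the transitivity of $\GL(E)$ on $E^\#$ is what makes both centralisers $C_M(z_i)$ genuinely nontrivial, and the delicate point is arranging the swapping element $\nu$ to lie inside $N_G(E)$ (rather than merely in $N_H(P)$) by taking it from $C_H(a_1)$.
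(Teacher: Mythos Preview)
Your proof is correct and follows the same overall architecture as the paper: decompose $M=O_{3'}(C_G(E))$ via coprime action of $P$, identify $C_M(a_1)=E$ and $C_M(a_2)=Q_2$, show at least one $C_M(z_i)$ is large, then use the swap from Lemma~\ref{HN-swapping a_i's and z_i's-2}$(i)$ (carefully noting the swapping element lies in $C_H(a_1)\leq N_G(E)$) to get both.

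The genuine difference is in how you rule out $M=EQ_2$. The paper takes an element $g$ of order seven in $N_G(E)\cap C_G(a_1)$, observes that $g$ cannot centralise $M/E$ (else $M$ would split over $E$, but $Q_2\cap E=\langle t\rangle$ prevents this), and then combines the fixed-point-free action of $a_1$ on $M/E$ with the nontrivial action of $g$ to force $|[M/E,g]|\geq 2^6$; this contradicts $|EQ_2/E|=2^4$. Your route is shorter: if $M=EQ_2$ then $M'=Q_2'=\langle t\rangle$ is characteristic in $M$, hence fixed by $N_G(E)$, contradicting the transitivity of $N_G(E)/C_G(E)\cong\GL_3(2)$ on $E^\#$. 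Your argument avoids the $7$-element and the module-dimension count entirely, at the cost of using the full force of the $\GL_3(2)$ action (which the paper only establishes in Lemma~\ref{HN-Describing E} anyway). Both are valid; yours is the cleaner of the two.
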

\begin{proof}
Let $N:=O_{3'}(C_G(E))$.  Since $P$ normalizes $N$, we may apply coprime action again to see that
\[N=\<C_N(z_1),C_N(z_2),C_N(a_1),C_N(a_2)\>.\] By Lemma \ref{HN-3'-subgroups of centralizers}, we see that $N$ is generated by $2$-groups and since $N$ is nilpotent, by Lemma \ref{HN-Describing E},
$N$ is a $2$-group.

Since $E\leq Q_1$ and by Lemma \ref{HN-Q_i's}, $[Q_1,Q_2]=1$, we have
that $Q_2\leq N$ and so $N/E\neq 1$. Since $Q_1 \cap Q_2=\<t\>$, $Q_2 \cap E=\<t\>$. In particular,
$N$ does not split over $E$. Let $g\in N_G(E)\cap C_G(a_1)$ be an element of order seven then $g$
acts fixed-point-freely on $E$. If $[N/E,g]=1$ then $N=C_N(g)\times E$ which is a contradiction.
Thus $[N/E,g]\neq 1$ and so $|N/E|\geq 2^3$. Since $a_1$ acts fixed-point-freely on $N/E$ and
preserves $[N/E,g]$, we have $|[N/E,g]| \geq 2^6$.

If $z_1$ and $z_2$ act fixed-point-freely on $N/E$ then $N=Q_2E$ and so
$|N/E|=2^4$ which we have seen is not the case. Therefore at least one of $C_{N/E}(z_1)$ and
$C_{N/E}(z_2)$ is non-trivial.  Since $E\trianglelefteq  C_H(a_1)$  we may apply Lemma
\ref{HN-swapping a_i's and z_i's-2} $(i)$ which says that $N_H(P) \cap C_H(a_1)$ acts transitively
on the set $\{\<z_1\>,\<z_2\>\}$. Therefore $C_{N/E}(z_1)$ and
$C_{N/E}(z_2)$ are both non-trivial. So we may assume, without loss of generality, that $A_1\leq N$
and $A_2 \leq N$ and so
$N=\<E,Q_2,A_1,A_2\>$. Finally, since $Q_1$ normalizes $E$ and so normalizes $N$, we see that $\<Q_{12},A_1\>$ and $\<Q_{12},A_2\>$ are both $2$-groups which are clearly normalized by $P$.
\end{proof}
We continue the notation from this lemma for the rest of this chapter such that $A_1$ and $A_2$ commute with $E$. Set $K:=N_G(Q_{12})\leq H$. We show in the rest of this section that $K=H$.

\begin{lemma}\label{HN-three things about K}
\begin{enumerate}[$(i)$]
\item $N_H(P)\leq K$.
\item $C_{Q_{12}}(A_1)\neq C_{Q_{12}}(A_2)$.
\item For $i \in \{1,2\}$, $C_H(z_i)\leq K$.
\end{enumerate}
\end{lemma}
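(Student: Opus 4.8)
The plan is to handle the three parts in turn, exploiting that each $Q_i=O_2(C_H(a_i))=O_2(C_H(\langle a_i\rangle))$ is canonically attached to $\langle a_i\rangle$ inside $H$, together with the transitivity data for $N_H(P)$.

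For part $(i)$, I would argue as follows. By Lemma \ref{HN-Normalizer H of P} the group $N_H(P)$ is transitive on $3\mathcal{A}\cap P$, so every $g\in N_H(P)$ permutes $\{\langle a_1\rangle,\langle a_2\rangle\}$. Since $Q_i=O_2(C_H(\langle a_i\rangle))$ and $g\in H$, conjugation by $g$ sends $C_H(\langle a_i\rangle)$ to $C_H(\langle a_i\rangle^g)$ and hence sends $\{Q_1,Q_2\}$ to itself; therefore $g$ normalises $Q_{12}=Q_1Q_2$ and $N_H(P)\le N_G(Q_{12})=K$. In particular $P\le N_H(P)\le K$, so $z_1,z_2\in K$ immediately. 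This already disposes of the elements $z_i$ in part $(iii)$: since $C_H(z_i)=\langle z_i\rangle\times\langle A_i,B_i\rangle$ with $O^3(C_H(z_i))=\langle A_i,B_i\rangle$, it remains only to place $A_i$ and $B_i$ in $K$.

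For that reduction I would use Lemma \ref{HN-swapping a_i's and z_i's-2}$(iii)$: $N_H(P)$ acts transitively on $\{A_1,B_1,A_2,B_2\}$, and $N_H(P)\le K$ by part $(i)$. Hence all four quaternion groups are $K$-conjugate, and it suffices to prove that one of them, say $A_1$, normalises $Q_{12}$; the rest then lie in $K$, giving $\langle A_i,B_i\rangle\le K$ and so $C_H(z_i)\le K$. The main obstacle is precisely this single normalisation statement. Note that $A_1\cong Q_8$ sits inside $C_H(z_1)\le C_G(Z)$ rather than inside $C_H(a_1)$, and one checks $A_1\cap Q_{12}=\langle t\rangle$ (as $A_1$ meets the $2$-group $Q_{12}$ only in its unique involution $t$), so the point is genuinely that $A_1$ \emph{normalises} $Q_{12}$, not that it lies in it; thus its interaction with $Q_1$ and $Q_2$ is not visible from a single centraliser embedding. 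To overcome this I would work inside $M:=O_{3'}(C_G(E))=\langle E,Q_2,A_1,A_2\rangle$, a $2$-group in which $E\le\mathcal Z(M)$ and $a_1$ acts fixed-point-freely on $M/E$ (Lemma \ref{HN-info on centralizer of E}). Since $E\le Q_1$ is a maximal elementary abelian subgroup of $Q_1\cong2_+^{1+4}$ one computes $C_{Q_1}(E)=E$, whence $C_{Q_{12}}(E)=EQ_2$ of order $2^7$; as $A_1$ centralises $E$, both $A_1$ and $EQ_2$ lie in $M$. I would then show, using the coprime $\langle a_1\rangle$-action on $M/E$ and the fact that $Q_2$ is $a_1$-invariant (because $a_1\in C_H(a_2)$ normalises $O_2(C_H(a_2))$), that $A_1$ normalises $Q_2$, and symmetrically $Q_1$, hence $Q_{12}$; where the abstract $2$-local argument is not decisive I would fall back on the explicit embedding of Notation \ref{HN-Alt9notation} into $\alt(12)$ to compute the relevant invariant subgroups directly.

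Part $(ii)$ shares this obstacle, and once the action of $A_1$ and $A_2$ on $Q_{12}\cong2_+^{1+8}$ is understood it should be read off directly: $A_1$ and $A_2$ are associated with the two distinct ``halves'' of the central product $Q_{12}=Q_1*Q_2$, so their centralisers in $Q_{12}$ are the two complementary $2^7$-subgroups and are visibly different. Failing a direct identification, I would argue by contradiction. Suppose $C_{Q_{12}}(A_1)=C_{Q_{12}}(A_2)=:C$. Choosing $n\in N_H(P)\le K$ with $A_1^n=A_2$ (Lemma \ref{HN-swapping a_i's and z_i's-2}$(iii)$) gives $C^n=C_{Q_{12}}(A_1)^n=C_{Q_{12}}(A_2)=C$, and $\langle A_1,A_2\rangle$ centralises $C$. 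Since $A_1\le C_H(z_1)$ and $A_2\le C_H(z_2)$ with $P=\langle z_1,z_2\rangle$ forces $A_1\ne A_2$ (otherwise $A_1\le C_H(P)=\langle P,t\rangle$, impossible for $Q_8$), this together with $C_G(Q_{12})=\langle t\rangle$ (Lemma \ref{HN-Q_i's}) and the computed order of $C$ yields a contradiction, so $C_{Q_{12}}(A_1)\ne C_{Q_{12}}(A_2)$. The hard part throughout is the control of the $Q_8$-actions on the extraspecial group $Q_{12}$, which I expect to be the technical heart of the lemma.
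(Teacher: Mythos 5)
Part $(i)$ of your proposal is correct and is exactly the paper's argument: $N_H(P)$ permutes $P\cap 3\mathcal{A}=\{a_1^{\pm 1},a_2^{\pm 1}\}$, hence permutes $\{Q_1,Q_2\}=\{O_2(C_H(a_1)),O_2(C_H(a_2))\}$ and so normalises $Q_{12}$. The genuine gaps are in $(ii)$ and $(iii)$.

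For $(iii)$, your reduction via the transitivity of $N_H(P)$ on $\{A_1,B_1,A_2,B_2\}$ is the same as the paper's, but the step you ask to carry all the weight --- that $A_1$ normalises $Q_2$, and symmetrically $Q_1$ --- is not merely unproven: it is false in the group being characterised, so no correct argument can deliver it. In $\HN$ one has $H=K\sim 2_+^{1+8}.(\alt(5)\wr 2)$, the image of $A_1$ in $H/Q_{12}$ is a fours subgroup of one $\alt(5)$ direct factor of $O^2(H/Q_{12})$ (compare Lemma \ref{HN-K/Q has shape alt(5) wr 2 ii}, where $A_1\leq M_2$), and $\bar{Q_{12}}$ is the $\GF(4)$-tensor product of the natural $\SL_2(4)$-modules of the two factors; the subspace $\bar{Q_2}=C_{\bar{Q_{12}}}(a_2)$ is spanned by the two products of $a_2$-eigenlines, and its stabiliser meets that $\alt(5)$ factor inside the normaliser of a torus, isomorphic to a subgroup of $\sym(3)$, which contains no fours group. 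Thus $A_1$ normalises $Q_{12}$ while normalising neither $Q_1$ nor $Q_2$. Your fallback to Notation \ref{HN-Alt9notation} cannot rescue this, because $A_1$ lies in no $N_G(\langle a_i\rangle)$: otherwise $C_{A_1}(a_i)$ would have index at most two in $A_1\cong Q_8$, yet $C_{A_1}(a_i)\leq A_1\cap C_H(P)=\langle t\rangle$ by Lemma \ref{HN-conjugates in P}; so the $\alt(12)$ embedding simply does not contain $A_1$. The idea you are missing is the paper's: by Lemma \ref{HN-info on centralizer of E}, $T:=\langle Q_{12},A_1\rangle$ is a $2$-group normalised by $P$, and $A_1\nleq Q_{12}$ (else all four quaternion groups, hence $\langle A_1,B_1\rangle\cong 2^{.}\alt(5)$, would lie in the $2$-group $Q_{12}$); since normalisers grow in $2$-groups, $N_T(Q_{12})>Q_{12}$, and coprime action gives $N_T(Q_{12})=\langle C_{N_T(Q_{12})}(r)\mid r\in P^\#\rangle$. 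Lemma \ref{HN-3'-subgroups of centralizers} identifies these centralisers ($C_{N_T(Q_{12})}(a_j)=Q_j$, and $C_{N_T(Q_{12})}(z_i)\in\{1,\langle t\rangle,A_i,B_i\}$), forcing some $A_i$ or $B_i$ into $N_T(Q_{12})\leq K$; your transitivity argument then finishes.

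For $(ii)$, the primary ``read-off'' rests on a wrong picture: $A_1$ centralises neither $Q_1$ nor $Q_2$ (if $[A_1,Q_2]=1$ then $A_1\leq C_G(Q_2)=Q_1\langle a_2\rangle$ by Lemma \ref{HN-Q_i's}, whence $A_1\leq Q_1\leq Q_{12}$, a contradiction), so $C_{Q_{12}}(A_i)$ is nothing like a complementary $2^7$ ``half''. Your fallback contradiction also does not close. Conjugating by $n$ with $A_1^n=A_2$ yields only that $\langle A_1,A_2\rangle$ centralises $C$, and this contradicts nothing: $A_1$ and $A_2$ genuinely do centralise the common subgroup $E\leq Q_{12}$ (Lemma \ref{HN-info on centralizer of E}), so the existence of a jointly centralised subgroup is consistent with $A_1\neq A_2$ and with $C_G(Q_{12})=\langle t\rangle$, and you never compute $|C|$ nor extract anything further from it. The paper instead conjugates by $g\in N_H(P)\cap C_H(z_1)\leq K$ with $A_1^g=B_1$ and $A_2^g=A_2$ (Lemma \ref{HN-swapping a_i's and z_i's-2}$(iv)$): applying $g$ to the assumed equality gives $C_{Q_{12}}(A_1)=C_{Q_{12}}(B_1)=C_{Q_{12}}(\langle A_1,B_1\rangle)$, and since $E\leq C_{Q_{12}}(A_1)$ this places $\langle A_1,B_1\rangle\cong 2^{.}\alt(5)$ inside $C_G(E)$ --- impossible, because $C_G(E)$ is a $\{2,3\}$-group by Lemmas \ref{HN-Describing E} and \ref{HN-info on centralizer of E} and so contains no element of order five. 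That $5$-element obstruction, reached by bringing $B_1$ (not $A_2$) into play, is precisely the contradiction your outline lacks.
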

\begin{proof}
$(i)$ First observe that $N_H(P)$ acts on the set $\{a_1,a_2,a_1^2,a_2^2\}=P \cap 3\mathcal{A}$ and
therefore it preserves ${Q_{12}}=O_2(C_H(a_1))O_2(C_H(a_2))$ so $N_H(P) \leq K$.

$(ii)$ Suppose that $C_{Q_{12}}(A_1)=C_{Q_{12}}(A_2)$. By Lemma \ref{HN-swapping a_i's and z_i's-2}
$(iv)$, $N_H(P) \cap C_H(z_1)$ acts transitively on the set $\{A_1,B_1\}$ whilst preserving $A_2$
and $B_2$. Therefore there exists $g \in N_H(P) \cap C_H(z_1)\leq K$ such that $A_2^g=A_2$ whilst
$A_1^g=B_1$. Therefore
\[C_{Q_{12}}(A_1)=C_{Q_{12}}(A_2)=C_{Q_{12}}(A_2)^g=C_{Q_{12}}(A_1)^g=C_{Q_{12}}(B_1).\]
Therefore $E \leq C_{Q_{12}}(A_1)=C_{Q_{12}}(\<A_1,B_1\>)$. This is a contradiction.

$(iii)$ By Lemma \ref{HN-info on centralizer of E}, $T:=\<Q_{12},A_1\>$ is a $2$-group which is normalized by $P$. We consider $N_T(Q_{12})\leq K$. Since $T$ is normalized by $P$, we apply coprime action to see that \[N_T(Q_{12})=\<C_{N_T(Q_{12})}(r)\mid r \in P^\#\>.\] Since $Q_{12}$ is normalized by $N_H(P)$ which is transitive on $\{A_1,B_1,A_2,B_2\}$ (by Lemma \ref{HN-swapping a_i's and z_i's-2} $(iv)$), it is clear that $A_1 \nleq Q_{12}$. Thus $N_T(Q_{12})>Q_{12}$. Now we use Lemma \ref{HN-3'-subgroups of centralizers} to see that for $j \in \{1,2\}$, $C_{N_T(Q_{12})}(a_j)=Q_j$ and to see that for some $i \in \{1,2\}$,  $C_{N_T(Q_{12})}(z_i)\in \{A_i,B_i\}$. However we again apply Lemma \ref{HN-swapping a_i's and z_i's-2} $(iv)$ to see that since one of $A_i$ or $B_i$ is in $K$ and $N_H(P) \leq K$ is transitive on $\{A_1,B_1,A_2,B_2\}$, $\<A_1,B_1,A_2,B_2\> \leq K$. We can therefore conclude that $C_H(z_1)\leq
K$ and $C_H(z_2)\leq K$.
\end{proof}

\begin{lemma}\label{HN-invs acting on Q}
\begin{enumerate}[$(i)$]
\item Suppose that $v\in K$ such that $Q_{12}v$ is an involution which inverts ${Q_{12}}z_i$ for some $i \in \{1,2\}$.
Then $|C_{\bar{{Q_{12}}}}(v)|=2^4$. In particular, if $Q_{12}v$ inverts a Sylow $3$-subgroup of
$K/Q_{12}$ then $|C_{\bar{{Q_{12}}}}(v)|=2^4$.

\item Suppose that $v\in K$ such that $Q_{12}v$ is an involution which inverts ${Q_{12}}a_i$ for some $i
\in \{1,2\}$.
 Then $|C_{\bar{{Q_{12}}}}(v)|\leq 2^6$.
\end{enumerate}\end{lemma}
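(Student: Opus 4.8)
The plan is to analyze the action of the involution $Q_{12}v$ on the $8$-dimensional $\GF(2)$-space $\bar{Q_{12}}:=Q_{12}/\<t\>$. Recall that $Q_{12}=Q_1Q_2\cong 2_+^{1+8}$ with $Q_1\cong Q_2\cong 2_+^{1+4}$ and $[Q_1,Q_2]=1$, so $\bar{Q_{12}}=\bar{Q_1}\oplus\bar{Q_2}$ with each $\bar{Q_i}$ a $4$-dimensional space. The key structural fact I would exploit is that $P=\<a_1,a_2\>$ acts on $\bar{Q_{12}}$ with $a_i$ centralizing $\bar{Q_j}$ (since $a_i\in C_G(Q_i)=Q_j\<a_i\>$ by Lemma \ref{HN-Q_i's}, so $[a_i,Q_j]=1$, hence $a_i$ acts on $\bar{Q_i}$) and $a_i$ acting fixed-point-freely on $\bar{Q_i}$ (as $Q_i=O_2(C_H(a_i))$ with $C_{C_H(a_i)}(Q_i)=\<t,a_i\>$, so $C_{\bar{Q_i}}(a_i)=1$).

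\textbf{Part $(i)$.} Suppose $Q_{12}v$ inverts $Q_{12}z_i$ for some $i$. The idea is to reduce to Lemma \ref{Prelims-centralizers of invs on a vspace which invert a 3}, which bounds the fixed space of a $2$-element inverting a $3$-element on a $\GF(2)$-module. First I would note that $z_i$ acts fixed-point-freely on one of the $4$-dimensional summands (say $\bar{Q_j}$) and trivially on the other: indeed $z_i\in C_H(a_k)$ fixes $\bar{Q_k}$ but $C_{Q_{12}}(z_i)$ is constrained by the structure $C_H(z_i)\cong 3\times 2^.\alt(5)$ with $C_{Q_{12}}(z_i)$ being one of the commuting quaternion subgroups. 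More precisely, since $C_{\bar{Q_{12}}}(z_i)$ has dimension $4$ (as $z_i$ inverts exactly one $4$-dimensional summand and centralizes the other), I apply Lemma \ref{Prelims-centralizers of invs on a vspace which invert a 3} with $n=8$ and $a=4$ to obtain $|C_{\bar{Q_{12}}}(v)|\leq 2^{(8+4)/2}=2^6$; but I would sharpen this using the quadratic nature of the involution action. By Lemma \ref{lem-cenhalfspace}$(ii)$, $|C_{\bar{Q_{12}}}(v)|^2\geq |\bar{Q_{12}}|=2^8$, giving $|C_{\bar{Q_{12}}}(v)|\geq 2^4$. For the matching upper bound, the cleanest route is to use that $C_{\bar{Q_{12}}}(v)\cap C_{\bar{Q_{12}}}(v^{z_i})\leq C_{\bar{Q_{12}}}(z_i)$ has dimension exactly $4$, exactly as in the proof of Lemma \ref{Prelims-centralizers of invs on a vspace which invert a 3}, forcing $|C_{\bar{Q_{12}}}(v)|=2^4$. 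The final clause follows since a Sylow $3$-subgroup of $K/Q_{12}$ contains a conjugate of $z_i$, so inverting it forces inverting $Q_{12}z_i$.

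\textbf{Part $(ii)$.} Now suppose $Q_{12}v$ inverts $Q_{12}a_i$. Here $a_i$ acts fixed-point-freely on the $4$-dimensional summand $\bar{Q_i}$ and centralizes $\bar{Q_j}$, so $C_{\bar{Q_{12}}}(a_i)=\bar{Q_j}$ has dimension $4$, i.e. $a_i$ inverts a $4$-dimensional complement and fixes $4$ dimensions. Applying Lemma \ref{Prelims-centralizers of invs on a vspace which invert a 3} directly with $n=8$ and $|C_{\bar{Q_{12}}}(a_i)|=2^4$ (so $a=4$) yields $|C_{\bar{Q_{12}}}(v)|\leq 2^{(8+4)/2}=2^6$, which is exactly the asserted bound, and no sharpening is needed. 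The main obstacle I anticipate is the bookkeeping in part $(i)$: verifying precisely that $\dim C_{\bar{Q_{12}}}(z_i)=4$ (rather than merely $\leq 6$) requires pinning down how $z_i$ decomposes the two summands, which rests on the commuting-quaternion description of $C_{Q_{12}}(z_i)$ from Lemmas \ref{HN-3'-subgroups of centralizers} and \ref{HN-swapping a_i's and z_i's-2}; once that dimension is established, both inequalities of the squeeze for part $(i)$ are routine.
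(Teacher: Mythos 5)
Part $(ii)$ of your proposal is correct and is essentially the paper's argument: $C_{Q_{12}}(a_i)$ is a $P$-invariant $3'$-subgroup of $C_H(a_i)$ containing $Q_i$, so Lemma \ref{HN-3'-subgroups of centralizers} forces $C_{Q_{12}}(a_i)=Q_i$, whence $|C_{\bar{Q_{12}}}(a_i)|=2^4$ and Lemma \ref{Prelims-centralizers of invs on a vspace which invert a 3} gives the bound $2^6$. (Your index bookkeeping in the setup is reversed: by Lemma \ref{HN-Q_i's}, $a_i\in C_G(Q_i)$, so $a_i$ centralizes $\bar{Q_i}$ and is fixed-point-free on $\bar{Q_j}$, not the other way round; this does not affect the count $2^4$.)

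Part $(i)$, however, has a genuine gap. Your claim that $z_i$ centralizes one of the two $4$-dimensional summands, so that $|C_{\bar{Q_{12}}}(z_i)|=2^4$, is false, and your own sentences are inconsistent with each other: if $z_i$ centralized a summand $\bar{Q_k}$, then by coprime action $|C_{Q_{12}}(z_i)|=2\cdot|C_{\bar{Q_{12}}}(z_i)|\geq 2^5$, while you also (correctly) recall that $C_{Q_{12}}(z_i)$ lies in a Sylow $2$-subgroup of $C_H(z_i)\cong 3\times 2^{.}\alt(5)$, which is quaternion of order $8$. The error is in conflating $z_i\in C_H(a_k)$ with $z_i$ centralizing $Q_k$: membership only gives that $z_i$ normalizes $Q_k=O_2(C_H(a_k))$, and by Lemma \ref{HN-alt9 observations}$(iv)$ the kernel of the $C_H(a_k)$-action on $Q_k$ is $\<t,a_k\>$, which does not contain $z_i$. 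In fact $z_i$ is fixed-point-free on all of $\bar{Q_{12}}$: $C_{Q_{12}}(z_i)$ is a $P$-invariant $2$-subgroup of $C_H(z_i)$, hence by Lemma \ref{HN-3'-subgroups of centralizers} it is one of $1$, $\<t\>$, $A_i$, $B_i$; and $A_i,B_i\nleq Q_{12}$ (otherwise the transitivity of $N_H(P)$ on $\{A_1,B_1,A_2,B_2\}$ from Lemma \ref{HN-swapping a_i's and z_i's-2} would place $\<A_i,B_i\>=O^3(C_H(z_i))\cong 2^{.}\alt(5)$ inside the $2$-group $Q_{12}$, exactly as in the proof of Lemma \ref{HN-three things about K}), so $C_{Q_{12}}(z_i)=\<t\>$ and $C_{\bar{Q_{12}}}(z_i)=1$. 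This is the fact the paper uses: with $a=0$, Lemma \ref{Prelims-centralizers of invs on a vspace which invert a 3} gives $|C_{\bar{Q_{12}}}(v)|\leq 2^4$, and Lemma \ref{lem-cenhalfspace}$(ii)$ supplies the matching lower bound. Your attempted sharpening cannot rescue the argument: rerunning the intersection bound $C_{\bar{Q_{12}}}(v)\cap C_{\bar{Q_{12}}}(v^{z_i})\leq C_{\bar{Q_{12}}}(z_i)$ with a right-hand side of order $2^4$ yields only $|C_{\bar{Q_{12}}}(v)|^2\leq 2^8\cdot 2^4$, i.e.\ the bound $2^6$ you already had, so with your value of $a$ the squeeze never closes to $2^4$.
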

\begin{proof}
Observe that $\bar{Q_{12}}$ is elementary abelian and $\bar{Q_{12}}\bar{v}$ has order two and
inverts $\bar{Q_{12}}\bar{x}$ which has order three. Therefore we may use Lemma
\ref{Prelims-centralizers of invs on a vspace which invert a 3}. In case $(i)$,
$|C_{\bar{Q_{12}}}(z_i)|=1$ so we have that $|C_{\bar{{Q_{12}}}}(v)|\leq 2^4$ however by Lemma
\ref{lem-cenhalfspace}, $|C_{\bar{{Q_{12}}}}(v)|\geq 2^4$ so we get equality. In case $(ii)$,
$|C_{\bar{Q_{12}}}(a_i)|=2^4$. Therefore $|C_{\bar{{Q_{12}}}}(v)|\leq 2^6$.
\end{proof}

\begin{lemma}\label{HN-K/Q has shape alt(5) wr 2 i}
$N_K(P)Q_{12}/Q_{12}\cong 3^2:\mathrm{Dih}(8)$, $C_H(a_i)\leq K$ for $i \in
\{1,2\}$ and a minimal normal subgroup of $K/{Q_{12}}$ is neither a $3$-group nor a $3'$-group.
\end{lemma}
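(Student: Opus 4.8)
The plan is to prove the three assertions in the order $C_H(a_i)\le K$, then the shape of $N_K(P)Q_{12}/Q_{12}$, and finally the statement on minimal normal subgroups, since the last step draws on the first two. For the inclusion $C_H(a_i)\le K$ I would show that $C_H(a_i)$ normalizes $Q_{12}$. Indeed $Q_i=O_2(C_H(a_i))$ is characteristic in $C_H(a_i)$, so $C_H(a_i)$ normalizes the functorially determined group $C_G(Q_i)$, which by Lemma~\ref{HN-Q_i's} equals $Q_j\langle a_i\rangle$; since $[C_G(Q_i):Q_j]=3$ and $Q_j\cong 2_+^{1+4}$ we have $Q_j=O_2(C_G(Q_i))$, so $C_H(a_i)$ also normalizes $Q_j$ and hence $Q_{12}=Q_iQ_j$. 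Thus $C_H(a_i)\le N_G(Q_{12})=K$.

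Next, as $N_H(P)\le K\le H$ (Lemma~\ref{HN-three things about K}) we have $N_K(P)=N_H(P)$, of order $2^43^2$, with $C_H(P)=P\langle t\rangle=P\times\langle t\rangle$ and $N_H(P)/C_H(P)\cong\mathrm{Dih}(8)$ by Lemma~\ref{HN-Normalizer H of P}. The crux is to establish $N_H(P)\cap Q_{12}=\langle t\rangle$; granting this, $N_K(P)Q_{12}/Q_{12}\cong N_H(P)/\langle t\rangle$, in which the image $P\langle t\rangle/\langle t\rangle\cong 3^2$ of $C_H(P)$ is normal with quotient $\mathrm{Dih}(8)$, split off by the image of a Sylow $2$-subgroup of $N_H(P)$, giving $3^2{:}\mathrm{Dih}(8)$. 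To prove the identity, put $D=N_{Q_{12}}(P)$; since $D$ is a $2$-group, $C_D(P)=D\cap P\langle t\rangle=\langle t\rangle$, so it suffices to show $D$ centralizes $P$. Write $d=q_1q_2\in D$ with $q_i\in Q_i$ and $[Q_1,Q_2]=1$; as $q_1\in Q_1\le C_H(a_1)$ centralizes $a_1$, we get $a_1^d=a_1^{q_2}$. Working in $C_H(a_2)=\langle a_2\rangle\times(Q_2{:}\mathrm{Sym}(3))$ with the normal subgroup $N:=\langle a_2\rangle\times Q_2$ and $C_H(a_2)/N\cong\mathrm{Sym}(3)$, the element $a_1$ lies outside $N$ while $q_2\in N$ fixes its coset, so $a_1^{q_2}\equiv a_1\pmod N$; intersecting with $P$ forces $a_1^d\in a_1\langle a_2\rangle=\{a_1,a_1a_2,a_1a_2^2\}$. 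The two lines $\langle a_1a_2\rangle,\langle a_1a_2^2\rangle$ are exactly $\langle z_1\rangle,\langle z_2\rangle$, so $a_1a_2,a_1a_2^2\in 3\mathcal{B}$ whereas $a_1\in 3\mathcal{A}$; since $d$ preserves $G$-conjugacy, $a_1^d=a_1$. By symmetry $a_2^d=a_2$, whence $d\in C_{Q_{12}}(P)=\langle t\rangle$ and $D=\langle t\rangle$.

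For the statement on minimal normal subgroups of $\bar K:=K/Q_{12}$ I would show $O_3(\bar K)=1$ and $O_{3'}(\bar K)=1$. Since $P$ normalizes $Q_{12}$ (as $a_i$ normalizes both $Q_i$ and $Q_j$), $P\in\mathrm{Syl}_3(K)$, so $PQ_{12}/Q_{12}\cong 3^2$ is a Sylow $3$-subgroup of $\bar K$; a nontrivial $O_3(\bar K)$ would be a line in it invariant under the $\mathrm{Dih}(8)$ found above, but that $\mathrm{Dih}(8)$ is transitive on each of $\{\langle a_1\rangle,\langle a_2\rangle\}$ and $\{\langle z_1\rangle,\langle z_2\rangle\}$ and so fixes no line, forcing $O_3(\bar K)\in\{1,3^2\}$. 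If $O_3(\bar K)=3^2$ then $PQ_{12}\trianglelefteq K$, and a Frattini argument gives $K=Q_{12}N_K(P)$, so $|\bar K|=72$ is prime to $5$, contradicting $C_H(z_1)\le K$ (Lemma~\ref{HN-three things about K}), which forces $5\mid|K|$; hence $O_3(\bar K)=1$. For $O_{3'}(\bar K)=1$, let $V=O_{3'}(K)$, a normal $3'$-subgroup of $K$ containing $Q_{12}$ with $V/Q_{12}=O_{3'}(\bar K)$, on which $P\cong 3^2$ acts coprimely. By coprime action (Theorem~\ref{coprime action}) $V=\langle C_V(r)\mid r\in\{a_1,a_2,z_1,z_2\}\rangle$; using $C_H(a_i)\le K$ and $C_H(z_i)\le K$, each $C_V(r)=V\cap C_G(r)$ is a normal $3'$-subgroup of the relevant centralizer, hence lies in $O_{3'}(C_H(a_i))=Q_i$ or in $O_{3'}(C_H(z_i))=\langle t\rangle$ (read off from $C_H(a_i)\cong 3\times(2_+^{1+4}{:}\mathrm{Sym}(3))$ and $C_H(z_i)\cong 3\times 2^{.}\mathrm{Alt}(5)$). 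In every case $C_V(r)\le Q_{12}$, so $V=Q_{12}$ and $O_{3'}(\bar K)=1$. With $O_3(\bar K)=O_{3'}(\bar K)=1$, no minimal normal subgroup of $\bar K$ is a $3$-group or a $3'$-group.

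The main obstacle is the identity $N_H(P)\cap Q_{12}=\langle t\rangle$, which both pins down the precise shape $3^2{:}\mathrm{Dih}(8)$ and supplies the order estimate that excludes $O_3(\bar K)=3^2$. The delicate point is that a priori an element of $Q_{12}$ normalizing $P$ might act nontrivially; ruling this out relies on combining the commuting decomposition $Q_{12}=Q_1Q_2$ with the fact that the four lines of $P\cong 3^2$ split into two $3\mathcal{A}$-lines and two $3\mathcal{B}$-lines, a partition that is a $G$-conjugacy invariant. Once this is in place, the remaining arguments are the structural computations of $O_{3'}$ in the two centralizer types, which are routine given $C_H(a_i),C_H(z_i)\le K$.
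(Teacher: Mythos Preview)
Your proof is correct and follows the same overall plan as the paper, but there is one place where you work much harder than necessary. For $N_{Q_{12}}(P)=\langle t\rangle$, the paper simply observes that since $P$ normalises $Q_{12}$ and any $d\in N_{Q_{12}}(P)$ normalises $P$, one has $[d,P]\le Q_{12}\cap P=1$; hence $N_{Q_{12}}(P)=C_{Q_{12}}(P)\le C_H(P)\cap Q_{12}=\langle t\rangle$. Your case analysis via the $Q_1Q_2$-decomposition and the $3\mathcal{A}/3\mathcal{B}$ partition of the lines of $P$ is valid but avoidable. For $C_H(a_i)\le K$ the paper instead writes $C_H(a_i)=Q_i\,N_{C_H(a_i)}(P)$ and appeals to $N_H(P)\le K$; your route through $Q_j=O_2(C_G(Q_i))$ is an equally clean alternative. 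For the final assertion the paper fixes a single minimal normal $M/Q_{12}$ and, in the $3'$-case, uses Lemma~\ref{HN-3'-subgroups of centralizers} together with the transitivity of $N_H(P)$ on $\{A_1,B_1,A_2,B_2\}$ to force $\langle A_i,B_i\rangle\cong 2^{.}\alt(5)$ into $M$; you prove the slightly stronger statements $O_3(\bar K)=O_{3'}(\bar K)=1$, obtaining the latter by reading off $O_{3'}(C_H(a_i))=Q_i$ and $O_{3'}(C_H(z_i))=\langle t\rangle$ and using normality of $V$ in $K$. Both approaches are sound.
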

\begin{proof}
Recall from Lemma \ref{HN-three things about K} that $N_H(P) \leq K$. Clearly $N_K(P) \cap Q_{12}\leq C_H(P) \cap Q_{12}=\<t\>$ (see Lemma \ref{HN-conjugates in P}). Thus $N_K(P)\cap Q_{12}=\<t\>$. So $N_K(P)Q_{12}/Q_{12}\cong N_K(P)/(N_K(P) \cap Q_{12})\cong
3^2:\mathrm{Dih}(8)$ by Lemma \ref{HN-Normalizer H of P}. Notice also that
$C_H(a_i)=Q_iN_{C_H(a_i)}(P)$ and so $C_H(a_i)\leq K$.

Let $M \geq {Q_{12}}$ be a normal subgroup of $K$ such that $M/{Q_{12}}$ is a minimal normal
subgroup of $K/{Q_{12}}$. Suppose $M$ is a $3'$-group. Then, since $M$ is normalized by $P$, we may
apply coprime action to say that
\[M=\<C_M(a_1),C_M(a_2),C_M(z_1),C_M(z_2)\>.\] By Lemma \ref{HN-3'-subgroups of centralizers},
$Q_j \leq C_M(a_j)\leq Q_j$ for $j=1$ and $j=2$. Therefore we may assume $C_M(z_i)>\<t\>$ for some
$i \in \{1,2\}$. Now $C_M(z_i)$ is normalized by $P$ so must equal $A_i$ or $B_i$ by Lemma
\ref{HN-3'-subgroups of centralizers}. However by Lemma \ref{HN-swapping a_i's and z_i's-2} $(iii)$,
$N_H(P)$ acts transitively on the set $\{A_1,B_1, A_2,B_2\}$. Therefore $M\geq \<A_i,B_i\>\cong
\alt(5)$ and so $M$ is not a $3'$-group which is a contradiction. So suppose instead that
$M/{Q_{12}}$ is a $3$-group. Then we must have $M={Q_{12}}P$. Now by Lemma \ref{frattini} (Frattini
argument), $K={Q_{12}}N_K(P)$. Therefore $K/Q_{12}\cong N_K(P)/(N_K(P) \cap Q_{12})\cong
3^2:\mathrm{Dih}(8)$ and $|K|=2^93^22^3$. However this contradicts Lemma \ref{HN-three things about K} which says that $C_H(z_1) \leq K$ and $5\mid |C_H(z_1)|$. Hence $M/Q_{12}$  is not a $3$-group.
\end{proof}

\begin{lemma}\label{HN-K/Q has shape alt(5) wr 2 ii}
$K/{Q_{12}}\cong \alt(5) \wr 2$ and there exist subgroups $M_1,M_2\leq K$ such that for $\{i,j\}=\{1,2\}$,  $z_i \in M_i$ and $A_j \leq M_i$ with $M_i/Q_{12}\cong \alt(5)$ and $[M_1,M_2]\leq Q_{12}$.
\end{lemma}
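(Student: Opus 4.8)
The plan is to pin down $\wt{K}:=K/Q_{12}$ using the two $\alt(5)$-subgroups already in hand together with the conjugation action on $\bar{Q_{12}}=Q_{12}/\langle t\rangle$; I write $\wt{g}$ for the image of $g\in K$ in $\wt{K}$. First I set $M_1:=\langle A_2,B_2\rangle Q_{12}$ and $M_2:=\langle A_1,B_1\rangle Q_{12}$, so that $A_j\leq M_i$ holds by construction for $\{i,j\}=\{1,2\}$. Since $\langle A_j,B_j\rangle=O^3(C_H(z_j))\cong 2^{.}\alt(5)$ and, by Lemma \ref{HN-invs acting on Q} (and its proof), $z_j$ is fixed-point-free of order three on $\bar{Q_{12}}$, so $C_{Q_{12}}(z_j)=\langle t\rangle$, we get $\langle A_j,B_j\rangle\cap Q_{12}=\langle t\rangle$ and hence $M_i/Q_{12}\cong\alt(5)$. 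Using $C_H(z_j)\leq K$ (Lemma \ref{HN-three things about K}) and coprime action, $C_{\wt K}(\wt{z_j})=C_H(z_j)Q_{12}/Q_{12}\cong 3\times\alt(5)$ with $M_i/Q_{12}=O^3(C_{\wt K}(\wt{z_j}))$, where $\{i,j\}=\{1,2\}$.

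The key step is $z_i\in M_i$, equivalently $\wt{z_i}\in M_i/Q_{12}$. Since $\wt{P}\cong 3\times 3$ is abelian, $\wt{z_1}\in C_{\wt K}(\wt{z_2})=\langle\wt{z_2}\rangle\times L_1$ where $L_1:=M_1/Q_{12}\cong\alt(5)$; write $\wt{z_1}=\wt{z_2}^{\,a}\ell$ with $\ell\in L_1$ and $a\in\{0,1,2\}$. As $z_2$ is fixed-point-free of order three, $\bar{Q_{12}}\otimes\overline{\GF(2)}$ is the sum of the two $\wt{z_2}$-eigenspaces (eigenvalues $\omega,\omega^2$, each of dimension four), and $\ell$, commuting with $\wt{z_2}$, preserves the $\omega$-eigenspace $V_\omega$ and acts there as an order-three element of $L_1\cong\alt(5)$ (the case $\ell=1$ being excluded as $\wt{z_1}\notin\langle\wt{z_2}\rangle$). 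Because order-three elements of $\alt(5)$ are rational, the eigenvalue multiplicities of $\ell$ on $V_\omega$ satisfy $n_\omega=n_{\omega^2}$, so the multiplicity pattern $(n_1,n_\omega,n_{\omega^2})$ is $(2,1,1)$ or $(0,2,2)$. Now $\wt{z_1}$ acts on $V_\omega$ as $\omega^{a}\ell$ and is fixed-point-free, so $\omega^{-a}$ cannot be an eigenvalue of $\ell$ on $V_\omega$; this is impossible for $(2,1,1)$ and forces $a=0$ for $(0,2,2)$. Hence $\wt{z_1}=\ell\in L_1$, so $z_1\in M_1$, and symmetrically $z_2\in M_2$. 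The same analysis gives $L_1\neq L_2$, since $\wt{z_1}\in L_1$ while $\wt{z_1}$ is central in $C_{\wt K}(\wt{z_1})\cong 3\times\alt(5)$ and so lies outside $O^3(C_{\wt K}(\wt{z_1}))=L_2$.

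It remains to prove $[M_1,M_2]\leq Q_{12}$ and to recover the wreath product. Let $M\geq Q_{12}$ with $M/Q_{12}$ a minimal normal subgroup of $\wt K$; by Lemma \ref{HN-K/Q has shape alt(5) wr 2 i} it is a direct product of non-abelian simple groups of order divisible by three, and since $\wt{P}\cong 3\times 3$ is a Sylow $3$-subgroup of $\wt K$ there are at most two factors. I would then show $L_1,L_2$ are exactly the simple direct factors of $M/Q_{12}\cong\alt(5)\times\alt(5)$: each $L_i\cong\alt(5)$ is perfect and contained in $M/Q_{12}$, they are distinct, and the relation $C_{M/Q_{12}}(\wt{z_i})\cong 3\times\alt(5)$ excludes the single–simple case and forces the two factors to be $\alt(5)$ with $[L_1,L_2]=1$, i.e.\ $[M_1,M_2]\leq Q_{12}$. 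Finally $N_{\wt K}(\wt{P})\cong 3^2:\mathrm{Dih}(8)$ has order $72=2\cdot|N_{\alt(5)\times\alt(5)}(3\times3)|$, so $\wt{K}/(M/Q_{12})$ has order two and contains an element interchanging $L_1$ and $L_2$ (equivalently interchanging the two fixed-point-free cyclic subgroups $\langle\wt{z_1}\rangle,\langle\wt{z_2}\rangle$ of $\wt{P}$), whence $\wt{K}\cong\alt(5)\wr 2$.

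The main obstacle is the third paragraph: establishing that the two copies of $\alt(5)$ commute and that $M/Q_{12}$ is precisely their direct product rather than a single simple group. I expect to resolve this by analysing the $\wt K$-module $\bar{Q_{12}}$, where the $\GF(4)$-structure supplied by the fixed-point-free elements $\wt{z_i}$ exhibits $L_1$ and $L_2$ acting on complementary tensor factors (which simultaneously forces $[L_1,L_2]=1$ and rules out the single-simple alternative); alternatively, and more expediently, by embedding $\wt{K}\hookrightarrow\operatorname{Out}(Q_{12})\cong\mathrm{O}^+_8(2)$ (legitimate since $C_G(Q_{12})=\langle t\rangle$ by Lemma \ref{HN-Q_i's}) and matching the local data $C_{\wt K}(\wt{z_i})\cong 3\times\alt(5)$ and $N_{\wt K}(\wt{P})\cong 3^2:\mathrm{Dih}(8)$ against the subgroup structure of $\mathrm{O}^+_8(2)$ recorded in the {\sc Atlas}.
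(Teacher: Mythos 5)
Your construction of $M_1,M_2$ and the eigenvalue argument that $\wt{z_i}\in M_i$ are correct, and that argument is a genuinely different (and neater) route than the paper's, which only locates the $z_i$ inside the factors after decomposing a minimal normal subgroup of $K/Q_{12}$. Two small points there need patching: you must also exclude the multiplicity pattern $(4,0,0)$ (i.e.\ $\ell$ trivial on $V_\omega$), which follows because $\wt{K}$ acts faithfully on $\bar{Q_{12}}$ (by Lemma \ref{HN-Q_i's} and Theorem \ref{Burnside-p'-automorphism}, an element of order three trivial on $Q_{12}/\langle t\rangle$ is trivial) together with Galois symmetry of the two eigenspaces; and your later assertion that $L_1,L_2\leq M/Q_{12}$ needs an argument --- for instance, $\wt{P}\cap M/Q_{12}$ is a non-trivial $N_{\wt{K}}(\wt{P})$-invariant subgroup of $\wt{P}$, no subgroup of order three is invariant because $N_H(P)$ fuses $\langle z_1\rangle$ with $\langle z_2\rangle$ and $\langle a_1\rangle$ with $\langle a_2\rangle$ (Lemma \ref{HN-Normalizer H of P}), so $\wt{P}\leq M/Q_{12}$, and then $L_i=\langle \wt{z_i}^{\,L_i}\rangle\leq M/Q_{12}$ by simplicity of $L_i$.

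The genuine gap is your third paragraph, and it is not merely an unfinished computation: the criterion you propose for excluding the single-simple case is false, and neither fallback strategy repairs it. The group $\alt(8)$ satisfies every piece of data you have assembled: take $\wt{z_2}=(1,2,3)$, $\wt{z_1}=(4,5,6)$, $L_1=\alt(\{4,\dots,8\})$, $L_2=\alt(\{1,2,3,7,8\})$. Then $C_{\alt(8)}(\wt{z_2})=\langle\wt{z_2}\rangle\times L_1\cong 3\times\alt(5)$ and symmetrically for $\wt{z_1}$; $\wt{z_i}\in L_i$; $L_1\neq L_2$; the Sylow $3$-subgroup $\langle\wt{z_1},\wt{z_2}\rangle$ is elementary abelian of order nine, self-centralizing, with normalizer $3^2{:}\dih(8)$ of order $72$, exactly matching Lemma \ref{HN-K/Q has shape alt(5) wr 2 i}; the remaining four elements of order three have centralizer $3\times\sym(3)$, matching the $\wt{a_i}$; and $(1,4)(2,5)(3,6)(7,8)$ interchanges $L_1$ and $L_2$. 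Yet $[L_1,L_2]\neq 1$ and $\langle L_1,L_2\rangle=\alt(8)$ is simple. Worse for the fallbacks, $\alt(8)\cong\GL_4(2)$ embeds in $\mathrm{O}^+_8(2)$ acting on $\bar{Q_{12}}$ as natural module plus dual, and there the $3$-cycles act fixed-point-freely; so the $\GF(4)$-structure, the fixed-point-freeness of the $\wt{z_i}$, and an {\sc Atlas} match of $C_{\wt{K}}(\wt{z_i})\cong 3\times\alt(5)$ and $N_{\wt{K}}(\wt{P})\cong 3^2{:}\dih(8)$ inside $\mathrm{O}^+_8(2)$ are all consistent with $\alt(8)$. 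What destroys this configuration --- and what your proposal never invokes --- is the paper's use of $N_H(Z)$: since $N_H(Z)=C_H(z_1)\langle s\rangle\leq K$ and $N_H(Z)\cap Q_{12}=\langle t\rangle$, the group $\wt{K}$ contains $N_H(Z)Q_{12}/Q_{12}\cong\sym(3)\times\alt(5)$ (Lemma \ref{HN-conjugates in P}), that is, an involution inverting $\wt{z_1}$ while centralizing $L_2=O^3(C_{\wt{K}}(\wt{z_1}))$. In $\alt(8)$ no such involution exists, because $C_{\alt(8)}(L_2)=\langle\wt{z_1}\rangle$; and the residual possibility that $\wt{K}$ is almost simple of type $\sym(8)$ is killed by the order-$72$ Sylow $3$-normalizer. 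Until this (or some equivalent) datum is incorporated, neither $[M_1,M_2]\leq Q_{12}$ nor $\wt{K}\cong\alt(5)\wr 2$ can be deduced from what you have established.
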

\begin{proof}
We continue notation from the previous result by setting $Q_{12}\leq M \leq K$ such that $M/Q_{12}$
is a minimal normal subgroup of $K/Q_{12}$. By Lemma \ref{HN-K/Q has shape alt(5) wr 2 i},
$M/{Q_{12}}$ is a direct product of non-abelian isomorphic simple groups and properly contains
$P/{Q_{12}}$.  By Lemma \ref{HN-Q_i's}, $C_G(Q_{12})\leq Q_{12}$ and so $M/{Q_{12}}$ is isomorphic
to a subgroup of $\aut(Q_{12})\cong \mathrm{GO}_8^+(2)$ (Lemma \ref{extraspecial outer
automorphisms}). Suppose that $M/Q_{12}$ is simple. Then we check (using \cite{atlas} for example)
every simple subgroup of $\mathrm{GO}_8^+(2)$  to see that the only simple groups with an elementary abelian Sylow
$3$-subgroup of order nine are $\alt(6)$, $\alt(7)$ and $\alt(8)$. Note that
$C_{K/Q_{12}}(M/Q_{12})\leq
C_{K/Q_{12}}(PQ_{12}/Q_{12})=C_{K}(P)Q_{12}/Q_{12}=P\<t\>Q_{12}/Q_{12}\leq M/Q_{12}$ by
coprime action. Thus  $K/Q_{12}$ is isomorphic to a subgroup of the automorphism group of
$\alt(6)$, $\alt(7)$ or $\alt(8)$. Note that $N_H(Z)Q_{12}/Q_{12}\cong N_H(Z)/N_{Q_{12}}(Z)=N_H(Z)/\<t\>\cong \sym(3)\times \alt(5)$ which
is not the case in any such group. Thus $M/Q_{12}$ is not simple.

So we must have that $M/{Q_{12}}$ is a direct product of two non-cyclic isomorphic simple groups.
Let $M=M_1M_2$ where $Q_{12} \leq M_1 \cap M_2$ and $M_1/{Q_{12}}\cong M_2/{Q_{12}}$ is simple and $[M_1,M_2]\leq Q_{12}$. Let $3
\cong R\in \syl_3(M_1)$ then by coprime action, $M_2/Q_{12} \cong
C_{M_2/Q_{12}}(R)=C_{M_2}(R)Q_{12}/Q_{12}\cong C_{M_2}(R)/C_{Q_{12}}(R)$. Observe that
$C_{K}(a_i)/C_{Q_{12}}(a_i)=C_{K}(a_i)/Q_{i}\cong 3\times \sym(3)$, so we have without loss of
generality that $z_1 \in M_1$ and $z_2 \in M_2$ and furthermore we have that $M_1/Q_{12}\cong
M_2/Q_{12}\cong \alt(5)$. Moreover we have that $M_2/Q_{12}=C_{K/Q_{12}}(z_1)$ and so $A_1 \leq M_2$. Finally we apply a Frattini argument to see now that $K=MN_K(P)$ and it therefore
follows that $K/{Q_{12}}\cong \alt(5)\wr 2$.
\end{proof}

Let $T\in \syl_2(K)$. In the following lemma we prove that $T$ is in fact a Sylow $2$-subgroup of $G$.

\begin{lemma}\label{HN-T is sylow in G}
$C_{\bar{{Q_{12}}}}(T \cap O^2(K))<2^4$ and $T\in \syl_2(G)$.
\end{lemma}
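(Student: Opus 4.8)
The plan is to establish the centralizer inequality first and then to deduce that $T$ is a Sylow $2$-subgroup of $G$ by showing that $Q_{12}$ is characteristic in $T$, an argument modelled on Lemma \ref{O8-T is a sylow 2 subgroup}.

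For the inequality I would first describe how $T\cap O^2(K)$ sits inside $K$. By Lemma \ref{HN-K/Q has shape alt(5) wr 2 ii}, $O^2(K)/Q_{12}=M_1M_2/Q_{12}\cong\alt(5)\times\alt(5)$ with $[M_1,M_2]\le Q_{12}$, and $\bar A_2$, $\bar A_1$ are Sylow $2$-subgroups (fours groups) of $M_1/Q_{12}$ and $M_2/Q_{12}$ respectively; hence $(T\cap O^2(K))Q_{12}/Q_{12}=\bar A_1\times\bar A_2$. Since $Q_{12}$ has class two with centre $\langle t\rangle$ it acts trivially on $\bar{Q_{12}}$, so that
\[
C_{\bar{Q_{12}}}(T\cap O^2(K))=C_{\bar{Q_{12}}}(A_1)\cap C_{\bar{Q_{12}}}(A_2).
\]
Each $\bar A_i$ contains an involution inverting a fixed-point-free $3$-element of the relevant $\alt(5)$-factor (every $3$-element of $M_i/Q_{12}$ is $M_i$-conjugate to $Q_{12}z_j$ and so acts fixed-point-freely on $\bar{Q_{12}}$, and in $\alt(5)$ an involution inverts a $3$-element), so Lemma \ref{HN-invs acting on Q}$(i)$ (equivalently Lemma \ref{Prelims-centralizers of invs on a vspace which invert a 3}) gives $|C_{\bar{Q_{12}}}(A_i)|\le 2^4$ for $i=1,2$. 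Re-running the argument of Lemma \ref{HN-three things about K}$(ii)$ at the level of $\bar{Q_{12}}$ (using $\bar E\le C_{\bar{Q_{12}}}(\bar A_1)\cap C_{\bar{Q_{12}}}(\bar A_2)$ from Lemma \ref{HN-info on centralizer of E}, together with the transitivity in Lemma \ref{HN-swapping a_i's and z_i's-2}$(iii)$) shows that these two fixed spaces are distinct. A short dimension count then finishes: if one of them has order less than $2^4$ the intersection does too, while if both have order exactly $2^4$ then, being distinct subspaces of the same dimension in $\bar{Q_{12}}\cong 2^8$, they meet in a proper subspace; in either case $C_{\bar{Q_{12}}}(T\cap O^2(K))<2^4$.

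For the Sylow statement I would first record that $\mathcal{Z}(T)=\langle t\rangle$: indeed $\mathcal{Z}(T)\le C_T(Q_{12})\le C_G(Q_{12})=\langle t\rangle$ by Lemma \ref{HN-Q_i's}, and $t\in\mathcal{Z}(T)$ because $\langle t\rangle=\mathcal{Z}(Q_{12})$ is characteristic in $Q_{12}\trianglelefteq T$. I would then suppose, for a contradiction, that $T$ has a normal extraspecial subgroup $R$ of order $2^9$ with $R\neq Q_{12}$. Then $\mathcal{Z}(R)=\langle t\rangle$, the subspace $\overline{R\cap Q_{12}}$ of $\bar{Q_{12}}$ is centralized by $RQ_{12}/Q_{12}$, and (using $|T|=2^{14}$) the order estimate $|R\cap Q_{12}|\ge |R||Q_{12}|/|T|=2^{18}/2^{14}=2^4$ forces a nontrivial image of $R$ inside $(T\cap O^2(K))Q_{12}/Q_{12}$ centralizing a subspace of order at least $2^4$, contradicting the inequality just proved. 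Hence $Q_{12}$ is characteristic in $T$, so $N_G(T)\le N_G(Q_{12})=K$; as $T\in\syl_2(K)$ this yields $T\in\syl_2(G)$. I expect the genuinely delicate point to be this last step: one must carry out the rank bookkeeping carefully to ensure that the image of $R$ really does lie in $(T\cap O^2(K))Q_{12}/Q_{12}$ and centralizes enough of $\bar{Q_{12}}$, treating separately the possibility that $R$ projects onto a swapping involution of the Sylow $2$-subgroup $T/Q_{12}$ of $\alt(5)\wr 2$.
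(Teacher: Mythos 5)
Your first half --- the inequality $C_{\bar{Q_{12}}}(T\cap O^2(K))<2^4$ --- is essentially the paper's own argument: both use Lemma \ref{HN-invs acting on Q}$(i)$ to get $|C_{\bar{Q_{12}}}(A_iQ_{12}/Q_{12})|\le 2^4$ and the distinctness of the two fixed spaces (Lemma \ref{HN-three things about K}$(ii)$) to push the intersection below $2^4$. Your extra care in re-running the distinctness argument modulo $\langle t\rangle$ addresses a point the paper glosses over (it cites the unbarred statement for the barred one), and your patch is sound: if $[\bar E,f]=1$ for $f$ of odd order centralizing $t$, coprime action gives $[E,f]=[E,f,f]\le[\langle t\rangle,f]=1$.

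The second half, however, has a genuine gap. From $|T|=2^{14}$ you get $|R\cap Q_{12}|\ge 2^4$, but the subspace of $\bar{Q_{12}}$ centralized by $R$ is $\overline{R\cap Q_{12}}=(R\cap Q_{12})/\langle t\rangle$, of order at least $2^3$ only, since $t\in\mathcal{Z}(R)\cap Q_{12}$; and $2^3$ does not contradict your inequality $<2^4$ (i.e.\ $\le 2^3$), so the claimed contradiction evaporates exactly in the extremal case $|R\cap Q_{12}|=2^4$. What rescues the argument --- and is precisely what the paper packages as Lemma \ref{Prelims 2^8 3^2 Dih(8)} applied to $\bar K$ with $E=\bar{Q_{12}}$ --- is a case analysis on $2^a:=|RQ_{12}/Q_{12}|$. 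Since $R'=\langle t\rangle\le R\cap Q_{12}$, the image $RQ_{12}/Q_{12}$ is elementary abelian and normal in $T/Q_{12}\in\syl_2(\alt(5)\wr 2)$, and $|\overline{R\cap Q_{12}}|=2^{8-a}$. First, $a=5$ is impossible (the image would be all of $T/Q_{12}$, which is not elementary abelian). For $a=1$, normality forces the image into $\mathcal{Z}(T/Q_{12})\le O^2(K/Q_{12})$; for $a=2,3$, the image meets $O^2(K/Q_{12})$ nontrivially since that subgroup has index two; in all these cases some involution $v$ of $O^2(K/Q_{12})$ centralizes $\overline{R\cap Q_{12}}$ of order $2^{8-a}\ge 2^5$, contradicting the pointwise bound $|C_{\bar{Q_{12}}}(v)|=2^4$. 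Only for $a=4$, where the image must be the unique normal elementary abelian subgroup of order $2^4$, namely $(T\cap O^2(K))Q_{12}/Q_{12}$, does your main inequality bite: $2^{8-4}=2^4>|C_{\bar{Q_{12}}}(T\cap O^2(K))|$. So your reduction to a second normal extraspecial subgroup $R$ is a correct reformulation of ``$Q_{12}$ is characteristic in $T$'', and your closing Sylow deduction from that is fine, but the contradiction cannot be extracted from the single crude bound $|R\cap Q_{12}|\ge 2^4$: you need the normal-subgroup bookkeeping above (including the case where the image contains swapping involutions, which you flagged but did not carry out), together with the pointwise $2^4$ bound for involutions of $O^2(K/Q_{12})$.
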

\begin{proof}
We show that ${Q_{12}}$ is a characteristic subgroup of $T$ to conclude that $T \in \syl_2(H)$ and
since $\<t\>=\mathcal{Z}(T)$ by Lemma \ref{HN-Q_i's}, we can from there conclude that $T\in \syl_2(G)$. We
show that $\bar{{Q_{12}}}$ is characteristic in $\bar{T}$ by applying Lemma \ref{Prelims 2^8 3^2
Dih(8)} to $\bar{K}$.

We have that $O^2(K)=M_1M_2$ and $O^2(K)/Q_{12}\cong \alt(5)\times \alt(5)$. Notice that every
involution in $\alt(5)$ inverts an element of order three. Suppose that $Q_{12}v$ is an involution
in $O^2(K/{Q_{12}})$. Then $Q_{12}v$ inverts an element of order three in some $M_i/Q_{12}$ which is therefore
conjugate to $Q_{12}z_i$ ($i \in \{1,2\}$). Hence, by Lemma \ref{HN-invs acting on Q},
$|C_{\bar{{Q_{12}}}}(v)|=2^4$. In particular, $C_{\bar{{Q_{12}}}}(A_i{Q_{12}}/{Q_{12}})$ has
order at most $2^4$ ($\{i,j\}=\{1,2\}$). By Lemma \ref{HN-three things about K} $(ii)$,
$C_{\bar{{Q_{12}}}}(A_1{Q_{12}}/{Q_{12}})\neq C_{\bar{{Q_{12}}}}(A_2{Q_{12}}/{Q_{12}})$. Therefore
$|C_{\bar{{Q_{12}}}}(\<A_1,A_2\>{Q_{12}}/{Q_{12}})|<2^4$.

Now, let $R$ be a non-trivial elementary abelian normal $2$-subgroup of $T/{Q_{12}}$. If $|R|=2$
then $R\in \mathcal{Z}(T/{Q_{12}})$ and therefore $R\leq O^2(K/{Q_{12}})\cong \alt(5)\times \alt(5)$ and so
$|C_{\bar{{Q_{12}}}}(R)|\leq 2^4$. Suppose $|R|=4$ or $8$. Then $R \cap O^2(K/{Q_{12}})\neq 1$ and
so again we have $|C_{\bar{{Q_{12}}}}(R)|\leq 2^4$. Now suppose $|R|=2^4$.  Then a calculation in
$\alt(5) \wr 2$ verifies that $R=T \cap O^2(K/{Q_{12}})$. We may assume (up to conjugation) that
$R=A_1A_2{Q_{12}}/{Q_{12}}$. Therefore $|C_{\bar{{Q_{12}}}}(R)|<2^4$. Thus we may now apply Lemma
\ref{Prelims 2^8 3^2 Dih(8)} to say that $\bar{{Q_{12}}}$ is characteristic in $\bar{T}$ and we are
done.
\end{proof}


\begin{lemma}\label{HN-centralizer of F}
$N_H(E) \leq K$ and for $V<E$ such that $t \in V\cong 2 \times 2$, $C_G(V)\leq K$ and $|C_G(V)|=
2^{13}3$.
\end{lemma}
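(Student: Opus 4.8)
The plan is to leverage the $2$-group $M:=O_{3'}(C_G(E))=\langle E,Q_2,A_1,A_2\rangle$ from Lemma \ref{HN-info on centralizer of E}, which is characteristic in $C_G(E)\trianglelefteq N_G(E)$, together with the action of $\overline K:=K/Q_{12}\cong\alt(5)\wr 2$ on $\overline{Q_{12}}:=Q_{12}/\langle t\rangle$. First I would pin down the relevant orders. Since $E\leq Q_1$ is a maximal abelian subgroup of the extraspecial group $Q_1$ while $[Q_1,Q_2]=1$ and $Q_1\cap Q_2=\langle t\rangle$ (Lemma \ref{HN-Q_i's}), one gets $C_{Q_{12}}(E)=EQ_2=M\cap Q_{12}$ of order $2^7$; and because $\langle Q_{12},A_i\rangle$ is a $2$-group with $A_iQ_{12}/Q_{12}$ a fours group lying in one of the two direct factors of $\overline K$ (Lemma \ref{HN-K/Q has shape alt(5) wr 2 ii}), we have $MQ_{12}/Q_{12}\cong 2^2\times 2^2$, whence $|M|=2^{11}$. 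Viewing $E$ inside the section $O^3(C_G(a_1))\cong\alt(9)$ as a regular elementary abelian group on eight points gives $C_{\alt(9)}(E)=E$, so a Sylow $3$-subgroup of $C_G(E)$ is $\langle a_1\rangle$ and $|C_G(E)|=2^{11}\cdot 3$. As $N_G(E)/C_G(E)\cong\GL_3(2)$ acts naturally on $E$ (Lemma \ref{HN-Describing E}), the stabiliser of the vector $t$ gives $N_H(E)=C_{N_G(E)}(t)$ with $N_H(E)/C_G(E)\cong\sym(4)$, so $|N_H(E)|=2^{14}\cdot 3^2$.

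For the inclusion $N_H(E)\leq K$, I would first observe that $Q_1\leq N_H(E)$ (since $E\trianglelefteq C_H(a_1)\ni Q_1$) and $Q_2\leq M\leq C_G(E)\leq N_H(E)$, so $Q_{12}=Q_1Q_2\leq N_H(E)$ and $Q_{12}$ normalises $E$. Consequently $N_K(E)=K\cap N_G(E)\leq N_H(E)$, and it suffices to show these two groups have the same order. This I would do by computing $N_K(E)$ inside $K$: the subgroup $\overline E=E/\langle t\rangle$ is a totally singular $2$-space of the orthogonal module $\overline{Q_{12}}$ for $\overline K\cong\alt(5)\wr 2$, and reading off its stabiliser together with $C_{Q_{12}}(E)=EQ_2$ yields $|N_K(E)|=2^{14}\cdot 3^2=|N_H(E)|$. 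Hence $N_H(E)=N_K(E)\leq K$.

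For the statement about $V<E$ with $t\in V\cong 2\times 2$: the three such fours groups are permuted transitively by $N_H(E)$, since the image $\sym(4)\cong N_H(E)/C_G(E)$ maps onto $\GL(E/\langle t\rangle)\cong\GL_2(2)$, which is transitive on the nonzero vectors of $E/\langle t\rangle$; so it suffices to treat a single $V$. The decisive step is to prove $E\trianglelefteq C_G(V)$ — recognising $E$ as a characteristic subgroup of a Sylow $2$-subgroup of $C_G(V)$, using that $\langle t\rangle=\mathcal{Z}(T)$ for $T\in\syl_2(G)$ and the $\alt(9)$-local data of Lemma \ref{HN-alt9 observations}. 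Granting this, $C_G(V)\leq N_G(E)\cap C_G(t)=N_H(E)\leq K$, so $C_G(V)=C_{N_H(E)}(V)$; and since $C_{N_H(E)}(V)/C_G(E)$ is the centraliser of the pair $\{t,v\}$ in $\GL_3(2)$, namely the transvection group of order $2^2$, we obtain $|C_G(V)|=2^{11}\cdot 3\cdot 2^2=2^{13}\cdot 3$.

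The main obstacle lies entirely in the $\alt(5)\wr 2$-module $\overline{Q_{12}}\cong 2^8$. On the one hand, establishing $E\trianglelefteq C_G(V)$ rigorously requires controlling that no element of $C_G(V)$ carries $E$ off itself, which forces one to combine the orthogonal-geometry of the totally singular space $\overline E$ with the $\alt(9)$-fusion of the involutions of $E$; on the other hand, the two stabiliser computations in that module (for $\overline E$ and for the vector $\overline V$) must be carried out carefully to nail the orders. By contrast, the order bookkeeping — $|M|=2^{11}$, $|C_G(E)|=2^{11}\cdot 3$ and $|N_H(E)|=2^{14}\cdot 3^2$ — is routine once $C_{Q_{12}}(E)=EQ_2$ and $MQ_{12}/Q_{12}\cong 2^4$ are in hand.
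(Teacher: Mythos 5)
Your order bookkeeping is correct and agrees with what the paper leaves implicit: $|O_{3'}(C_G(E))|=2^{11}$, $|C_G(E)|=2^{11}\cdot 3$, $|N_H(E)|=2^{14}\cdot 3^{2}$, and, granted $C_G(V)\leq N_H(E)$, the count $|C_G(V)|=2^{11}\cdot 3\cdot 2^{2}=2^{13}\cdot 3$ via the pointwise stabiliser of $V$ in $\GL_3(2)$. Nevertheless the proposal has a genuine gap, and it sits exactly where you place your ``decisive step'': you never prove $C_G(V)\leq N_G(E)$ (equivalently $E\trianglelefteq C_G(V)$), which is the whole content of the second assertion. Your sketch for it cannot work as stated. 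First, knowing that $E$ is characteristic in some Sylow $2$-subgroup $S_V$ of $C_G(V)$ would only give $N_{C_G(V)}(S_V)\leq N_G(E)$, not $C_G(V)\leq N_G(E)$. Second, and more fundamentally, your plan to control $C_G(V)$ through the orthogonal geometry of $\bar{Q_{12}}$ is circular: elements of $C_G(V)$ are not known to normalize $Q_{12}$, i.e.\ to lie in $K$, until precisely this containment is established; nor can one compute inside $H=C_G(t)$, since at this stage the structure of $H$ is unknown ($H=K$ is proved only later, and that proof uses the present lemma through Lemma \ref{HN-orbits of elements in Q}). The same criticism applies, less seriously, to your first part: the identity $N_K(E)=N_H(E)$ rests on a stabiliser computation in $\alt(5)\wr 2$ (the stabiliser of $\bar{E}$ must have order exactly $2^{5}3^{2}$) which you assert but do not carry out, whereas the paper obtains $N_H(E)\leq K$ in three lines from the complement $C\leq C_G(a_1)$ of Lemma \ref{HN-Describing E}, Dedekind's law $N_H(E)=C_G(E)(C\cap H)$ (valid because $C_G(E)\leq H$), and $C\cap H\leq C_H(a_1)\leq K$.

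The idea you are missing is the paper's change of fours group. It does not attack $V\ni t$ directly; it works with $[E,P]$, the complement of $\langle t\rangle$ in $E$ under the coprime action of $P$. For this fours group, Lemma \ref{HN-alt9 observations}$(vi)$ says that $C_{C_G(a_1)}([E,P])$ has $\langle a_1\rangle$ as a Sylow $3$-subgroup and lies in $N_{C_G(a_1)}(E)$, so Burnside's normal $p$-complement theorem applies to $C_G([E,P])$ with no ambient structural information required; the normal $3$-complement $N$ is then decomposed by coprime $P$-action as $\langle C_N(a_1),C_N(a_2),C_N(z_1),C_N(z_2)\rangle$, the last three pieces are pinned down as $Q_2$, $A_1$, $A_2$ using Lemma \ref{HN-3'-subgroups of centralizers} together with the containments $Q_2,A_1,A_2\leq C_G(E)\leq C_G([E,P])$ from Lemma \ref{HN-info on centralizer of E}, and the first piece lies in $N_G(E)$ by Lemma \ref{HN-alt9 observations}$(vi)$; hence $C_G([E,P])\leq N_G(E)$. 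Only then does one return to $V$: since $N_G(E)/C_G(E)\cong\GL_3(2)$ is transitive on the seven fours subgroups of $E$, some element of $N_G(E)$ carries $[E,P]$ to $V$, whence $C_G(V)\leq N_G(E)\cap H=N_H(E)\leq K$ (using $t\in V$). Without this, or an equivalent argument, your proposal establishes only the numerology, not the lemma.
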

\begin{proof}
By Lemma \ref{HN-info on centralizer of E}, $C_G(E)=\<E,Q_2,A_1,A_2,a_1\>$. By Lemma  \ref{HN-K/Q
has shape alt(5) wr 2 ii}, $A_i \leq C_H(z_i) \leq K$. Thus $C_G(E) \leq K$. So we consider
$N_H(E)$. By Lemma \ref{HN-Describing E}, there exists a complement, $C$, to $C_G(E)$ in $N_G(E)$
such that $C \leq C_G(a_1)$. Now, by Lemma \ref{dedekind} (Dedekind Modular Law), $N_G(E) \cap
H=C_G(E)C \cap H=C_G(E)(C \cap H)$. Furthermore, $C \cap H \leq C_H(a_1)\leq K$ by Lemma
\ref{HN-K/Q has shape alt(5) wr 2 i}. Thus $N_H(E) \leq K$.

By Lemma \ref{HN-alt9 observations}, $C_{C_H(a_1)}([E,P])=C_{N_H(\<a_1\>)}([E,P])$ and  by coprime
action, $E=[E,P] \times C_E(P)=[E,P] \times \<t\>$. Therefore by Burnside's normal $p$-complement
Theorem (Theorem \ref{Burnside-normal p complement}), $C_G([E,P])$ has a normal $3$-complement, $N$
say, which is normalized by $P$. By coprime action,
\[N=\<C_N(a_1),C_N(a_2),C_N(z_1),C_N(z_2)\>.\] Since $[E,P]\leq E$, it follows from Lemma
\ref{HN-info on centralizer of E} that $C_N(a_2)\geq Q_2$, $C_N(z_1)\geq A_1$ and $C_N(z_2)\geq
A_2$. Since $C_N(a_2)$, $C_N(z_1)$, $C_N(z_2)$ are $3'$-groups normalized by $P$ it follows that
$C_N(a_2)= Q_2$, $C_N(z_1)= A_1$ and $C_N(z_2)= A_2$. By Lemma \ref{HN-alt9 observations} $(vi)$, $C_G(a_1) \cap C_G([E,P])\leq N_G(E)$. Thus $N \leq N_G(E)$ and therefore $C_G([E,P])\leq N_G(E)$. Finally, $N_G(E)$ is
transitive on subgroups of $E$ of order four. Therefore if we choose $t \in V<E$ of order four.
Then $C_G(V) \leq N_G(E) \cap H \leq K$.
\end{proof}

\begin{lemma}\label{HN-K is strongly 3 embedded}
$K$ is strongly $3$-embedded in $H$.
\end{lemma}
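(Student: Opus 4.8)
The plan is to verify the standard normalizer criterion for strong embedding: I will invoke the well-known characterisation (see \cite{Aschbacher}) that, since $3\mid|K|$, the subgroup $K$ is strongly $3$-embedded in $H$ precisely when $N_H(D)\le K$ for every non-trivial $3$-subgroup $D$ of $K$. (Note that if it should turn out that $K=H$ the statement is vacuous, so the content is entirely the case $g\in H\setminus K$, which this criterion encodes.)

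First I would cut down the family of subgroups $D$ that must be tested. Since $P\in\syl_3(H)$ by Lemma~\ref{HN-conjugates in P} and $P\le K$ by Lemma~\ref{HN-three things about K} (as $N_H(P)\le K$), we have $P\in\syl_3(K)$ and $\syl_3(K)\subseteq\syl_3(H)$. Hence every non-trivial $3$-subgroup of $K$ is $K$-conjugate to a subgroup of $P$, and because $N_H(D^k)=N_H(D)^k$ it suffices to treat the case $D\le P$. As $P\cong 3\times 3$, the subgroups to consider are $P$ itself together with $\langle a_1\rangle,\langle a_2\rangle,\langle z_1\rangle,\langle z_2\rangle$. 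By Lemma~\ref{HN-Normalizer H of P} the group $N_H(P)\le K$ is transitive on $3\mathcal{A}\cap P$ and on $3\mathcal{B}\cap P$, so $\langle a_1\rangle$ and $\langle a_2\rangle$ are $K$-conjugate and likewise $\langle z_1\rangle$ and $\langle z_2\rangle$. Thus it is enough to establish the three containments $N_H(P)\le K$, $N_H(\langle a_1\rangle)\le K$ and $N_H(\langle z_1\rangle)\le K$.

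The first is exactly Lemma~\ref{HN-three things about K}. The crucial point for the other two is that the single involution $s$ simultaneously inverts $a_1$ and $z_1$ and already lies in $K$. Indeed $s\in H$ because $[s,t]=1$, and $s$ inverts $J$; since $P=C_J(t)\le J$, $s$ inverts every element of $P$ and, commuting with $t$ while normalising $J$, it normalises $C_J(t)=P$. Therefore $s\in N_H(P)\le K$. Combining this with $\aut(C_3)\cong 2$ and $s\notin C_H(a_1)$ (as $s$ inverts $a_1$), we get $N_H(\langle a_1\rangle)=C_H(a_1)\langle s\rangle$, and both factors lie in $K$ by Lemma~\ref{HN-K/Q has shape alt(5) wr 2 i} and the previous line, so $N_H(\langle a_1\rangle)\le K$. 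In the same way $N_H(\langle z_1\rangle)=N_H(Z)=C_H(Z)\langle s\rangle$, since $[N_H(Z):C_H(Z)]=2$ by Lemma~\ref{HN-conjugates in P} and $s$ inverts $z_1$; as $C_H(Z)=C_H(z_1)\le K$ by Lemma~\ref{HN-three things about K} and $s\in K$, this yields $N_H(Z)\le K$. The criterion then gives the lemma.

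The step one might expect to be troublesome is the treatment of the two order-three normalizers, where a priori the element inverting the relevant $3$-element could fail to normalise $Q_{12}$, forcing a delicate analysis of its action on $Q_{12}$. The main obstacle dissolves once one observes that a \emph{single} involution, namely $s$, inverts the whole of $P$ and is already known to lie in $N_H(P)\le K$; this removes any need to track how an inverting element acts on $Q_{12}$, so the only real inputs are the centraliser containments $C_H(a_i)\le K$ and $C_H(z_i)\le K$ together with $N_H(P)\le K$.
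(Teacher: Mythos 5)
Your proposal is correct, but it takes a genuinely different route from the paper. The paper verifies the definition directly: given $h\in H$ with $3\mid |K\cap K^h|$, it picks an element $y$ of order three in $K\cap K^h$, uses the same two centralizer lemmas you quote (Lemmas \ref{HN-three things about K} and \ref{HN-K/Q has shape alt(5) wr 2 i}) to get $C_H(y)\le K\cap K^h$, hence (since Sylow $3$-subgroups of $H$ are abelian of order nine) may assume $P\le K\cap K^h$; it then identifies $O_2(K)$ intrinsically from $P$ via $Q_{12}=O_2(K)=\prod_{p\in P^\#}O_2(C_H(p))=O_2(K^h)=Q_{12}^h$, so that $h\in N_G(Q_{12})=K$. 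You instead invoke the standard equivalence between strong $3$-embedding and normalizer control, reduce by $K$-conjugacy and the transitivity of $N_H(P)$ on $P\cap 3\mathcal{A}$ and $P\cap 3\mathcal{B}$ to the three subgroups $P$, $\langle a_1\rangle$, $\langle z_1\rangle$, and use the involution $s$ (which inverts $J\geq P$, commutes with $t$, and so lies in $N_H(P)\le K$) to pass from the centralizers $C_H(a_1)\le K$ and $C_H(z_1)\le K$ to the full normalizers $N_H(\langle a_1\rangle)=C_H(a_1)\langle s\rangle$ and $N_H(Z)=C_H(Z)\langle s\rangle$. Your route never touches the structure of $Q_{12}$ at all — its only inputs are the centralizer containments, $N_H(P)\le K$, and the single element $s$ — which is an attractive economy; what it costs is reliance on the normalizer criterion, which is not proved in the thesis and whose attribution to \cite{Aschbacher} is loose (the fact is genuinely standard, and its proof is a short Sylow argument: normalizer control forces a maximal $3$-subgroup of $K\cap K^h$ to be Sylow in $H$, after which Sylow conjugacy in $K$ puts $h$ in $K$). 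The paper's argument, by contrast, is self-contained, exploiting the definition $K=N_G(Q_{12})$ to pin down $h$ directly. Both proofs are valid, and your verifications of $s\in K$ and of the two index-two factorizations are accurate.
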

\begin{proof}
Let $h \in H$ and $y\in K \cap K^h$ be an element of order three. By Lemmas \ref{HN-three things about K} and \ref{HN-K/Q has shape alt(5) wr 2 i}, the centralizer in $H$ of every element of order three in $K$ is contained in $K$. Thus $C_H(y) \leq K \cap K^h$.
Therefore $K \cap K^h$ contains a Sylow $3$-subgroup of $H$. So assume $P\leq K \cap K^h$. Then
${Q_{12}}=O_2(K)=\prod_{p\in P^\#}O_2(C_H(p))=O_2(K^h)={Q_{12}}^h$. Therefore $h \in
N_G({Q_{12}})=K$ and so $K=K^h$.
\end{proof}


\begin{lemma}\label{HN-involutions in O^2(K)/Q}
Let $\bar{v}\in O^2(\bar{K})\bs \bar{{Q_{12}}}$ be an involution. Then either $v$ is an element of
order four squaring to $t$ and $C_H(v)$ contains a conjugate of $Z$ or $v\in 2\mathcal{B}$ and
$|C_{\bar{{Q_{12}}}}(\bar{v})|=2^4$ and $|C_{\bar{K}}(\bar{v})|=2^9$.
\end{lemma}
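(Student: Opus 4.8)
The plan is to read everything off the image of $\bar v$ in $O^2(\bar K)/\bar{Q_{12}} = (M_1/Q_{12})\times(M_2/Q_{12}) \cong \alt(5)\times\alt(5)$ (Lemma \ref{HN-K/Q has shape alt(5) wr 2 ii}), together with the action of $\bar K$ on the $2^8$-module $\bar{Q_{12}}$. First I would record the facts that hold regardless of which case we are in. Since $\bar v \notin \bar{Q_{12}}$, the element $\bar v\bar{Q_{12}}$ is a non-trivial involution of $\alt(5)\times\alt(5)$; as $\alt(5)$ has a single class of involutions, each inverting a $3$-element, $\bar v\bar{Q_{12}}$ inverts an element of order three lying in one factor, and this element is $\bar K$-conjugate to $\bar{z_i}\bar{Q_{12}}$ (the factor $3$-elements are exactly the images of the $3\mathcal{B}$-elements, since $z_i \in M_i$). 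Conjugating in $\bar K$ and applying Lemma \ref{HN-invs acting on Q}$(i)$ gives $|C_{\bar{Q_{12}}}(\bar v)| = 2^4$ in all cases. Next, because $\bar v$ is an involution, $v^2 \in \langle t\rangle$, and since $(vt)^2 = v^2$ this square is independent of the lift; so ``$v$ has order four with $v^2=t$'' versus ``$v$ is an involution with $v^2=1$'' is a genuine invariant of $\bar v$, and I split the proof on it.

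The engine for both cases is the remark that any $\bar y \in C_{\bar K}(\bar v)$ of order three lifts to an order-three $y \in C_K(v)$: writing $yvy^{-1}\in\{v,vt\}$ and using that $y$ fixes $t$, the possibility $yvy^{-1}=vt$ forces $\langle y\rangle$ to have an orbit of length two on $\{v,vt\}$, which is impossible for an element of order three. Thus a $3$-element centralising $\bar v$ modulo $\bar{Q_{12}}$ always yields a genuine $3$-element of $C_K(v)$. Moreover there are exactly two $\bar K$-classes of non-trivial involutions of $\alt(5)\times\alt(5)$: the single-coordinate type $(\sigma,1)$, with centraliser $C_{\alt(5)}(\sigma)\times\alt(5)$ of order $240$ in $\alt(5)\wr2$, and the diagonal type $(\sigma_0,\sigma_0)$, with centraliser of order $2^5$ (the $C(\sigma_0)\times C(\sigma_0)$ enlarged by the wreath swap). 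In the single-coordinate case every $3$-element of the centraliser lies in the factor $M_2/Q_{12}$ (as $C_{\alt(5)}(\sigma)\cong 2\times 2$ has none), hence is conjugate to $\bar{z_2}$, i.e.\ to a conjugate of $Z$.

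For the case $v^2 = t$ I would show the image is single-coordinate, so that the engine lifts a $3$-element of $M_2/Q_{12}$ to a conjugate of $Z$ inside $C_H(v)$, giving the first alternative; equivalently one exhibits $v$ as $G$-conjugate to an order-four element squaring to $t$ in a Sylow $2$-subgroup of $C_G(Z)$ and quotes Lemma \ref{HN-element of order four in CG(Z)}. For the case $v^2 = 1$ the image must be diagonal, for otherwise the engine would place a conjugate of $Z$ in $C_K(v)$, contradicting that $|C_{\bar K}(\bar v)|$ is then a $2$-group; combining $|C_{\bar{Q_{12}}}(\bar v)| = 2^4$ with $|C_{\bar K/\bar{Q_{12}}}(\bar v\bar{Q_{12}})| = 2^5$ and a check that every element centralising $\bar v$ modulo $\bar{Q_{12}}$ lifts to one centralising $\bar v$ yields $|C_{\bar K}(\bar v)| = 2^4\cdot 2^5 = 2^9$. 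For the fusion claim $v\in 2\mathcal{B}$ I would use the explicit involution $s$: it lies in $O^2(K)$, inverts $P=\langle a_1,a_2\rangle$, satisfies $s^2=1$ and $s\in 2\mathcal{B}$ (Lemma \ref{HN-alt9 observations}$(vii)$), and its image is diagonal; showing the diagonal involutions with $v^2=1$ form the single class $\bar s^{\bar K}$ then gives $v\sim_G s\sim_G t$.

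The hard part will be the squaring analysis: deciding, for each image type, whether lifts of $\bar v$ square to $1$ or to $t$, and, in the involution case, showing that the relevant involutions form a single $\bar K$-class. This amounts to understanding the non-split extension $Q_{12}.(\alt(5)\wr2)$ and the quadratic squaring map $C_{\bar{Q_{12}}}(\bar v)\to\langle t\rangle$. I expect to resolve it by reducing, via $\bar K$-conjugacy, to the explicit representatives $s$, $Q_1$, $Q_2$, $A_1$, $A_2$ of Notation \ref{HN-Alt9notation} (where $Q_i\cong Q_8$ forces its order-four elements to square to $t$), computing their images, orders and centralisers directly in the $\alt(12)$-models, and using the transitivity of $C_{\bar K}(\cdot)$ on the appropriate quadric in $\bar{Q_{12}}$ to pass from the representatives to a general $\bar v$.
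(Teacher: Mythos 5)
Your outline coincides with the paper's proof in its main steps: the two $K/Q_{12}$-classes of involutions in $O^2(\bar{K})/\bar{Q_{12}}\cong\alt(5)\times\alt(5)$, the computation $|C_{\bar{Q_{12}}}(\bar{v})|=2^4$ via Lemma \ref{HN-invs acting on Q} $(i)$, and, in the diagonal case, the identification of $v$ with $s$ or $st$, the appeal to Lemma \ref{HN-alt9 observations} $(vii)$, and the product formula giving $2^9$. Your ``engine'' (lifting order-three elements of $C_{\bar{K}}(\bar{v})$ to order-three elements of $C_K(v)$) is correct, and it gives a genuinely different, self-contained route to the clause that $C_H(v)$ contains a conjugate of $Z$: such a lift lying in $M_2$ generates a Sylow $3$-subgroup of $M_2$, hence a conjugate of $\langle z_2\rangle$, i.e.\ of $Z$; the paper obtains this clause instead by conjugating $v$ to an explicit order-four element $f\in C_H(z_1)$. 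Note, though, that even the engine's fuel --- the existence of an order-three element of $C_{\bar{K}}(\bar{v})$ --- needs the surjectivity of $C_{\bar{K}}(\bar{v})\to C_{K/Q_{12}}(Q_{12}v)$, which is exactly Lemma \ref{lem-conjinvos} $(ii)$ and is not automatic.

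The genuine gaps sit at the step your case split cannot avoid: matching ``$v^2=t$'' with the single-coordinate image and ``$v^2=1$'' with the diagonal image. First, your justification for ``$v^2=1\Rightarrow$ diagonal'' is a non-sequitur: if the image were single-coordinate then $|C_{\bar{K}}(\bar{v})|=2^4\cdot 240$ is not a $2$-group, so producing a conjugate of $Z$ inside $C_K(v)$ contradicts nothing you have asserted. The contradiction can be completed, but only by a further argument absent from your proposal: taking $y\in C_K(v)$ with $\langle y\rangle=Z^g$, both $v$ and $t$ are commuting involutions of $C_G(Z^g)$ (note $t$ centralises $y$ since $y\in K\leq H$), the quotient $C_G(Z^g)/Q^g\cong 2^{.}\alt(5)$ has a unique involution, so $vt\in C_{Q^g}(t)=Z^g$; as $v$ is an involution this forces $v=t\in Q_{12}$, contrary to $\bar{v}\notin\bar{Q_{12}}$. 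Second, the reverse implication ``diagonal $\Rightarrow v^2=1$'' --- which your first alternative silently needs, because if $v^2=t$ with diagonal image the engine has no $3$-element to lift and the conclusion on $C_H(v)$ is unreachable --- is precisely the ``hard part'' you defer, and your sketch of it is partly unusable: $Q_1$ and $Q_2$ lie inside $Q_{12}$, so their elements cannot represent cosets of $O^2(\bar{K})\setminus\bar{Q_{12}}$. What does work, and is the paper's argument, is to take as representatives an order-four element $f$ of $A_1\leq C_H(z_1)$ (which squares to $t$ and has single-coordinate image) and the involution $s$ (which inverts $P$ and so has diagonal image), and then to invoke Lemma \ref{lem-conjinvos} $(i)$ --- applicable because $|C_{\bar{Q_{12}}}(\bar{v})|=2^4$ forces $C_{\bar{Q_{12}}}(\bar{v})=[\bar{Q_{12}},\bar{v}]$ --- to conjugate every involution of $\bar{Q_{12}}\bar{f}$, respectively $\bar{Q_{12}}\bar{s}$, to $\bar{f}$, respectively $\bar{s}$. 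That is the precise content behind your phrase ``transitivity on the appropriate quadric''; without it neither direction of the dichotomy is established.
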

\begin{proof}
We have that $O^2(K/{Q_{12}})\cong \alt(5) \times \alt(5)$ and it follows that ${Q_{12}}v$ lies in
one of two $K/{Q_{12}}$-conjugacy classes of involutions in $O^2(K)$. Either ${Q_{12}}v\in
M_i/Q_{12}$ for some $i \in \{1,2\}$ or is a diagonal involution. Suppose ${Q_{12}}v \in
M_i/{Q_{12}}\cong \alt(5)$ where ${Q_{12}}z_i\in M_i/{Q_{12}}\vartriangleleft O^2(K/{Q_{12}})$.
Then up to conjugation we may assume ${Q_{12}}v$ inverts ${Q_{12}}z_i$. If ${Q_{12}}v$ is diagonal
then we may assume up to conjugation that ${Q_{12}}v$ inverts ${Q_{12}}z_1$ and ${Q_{12}}z_2$. So
in either case we may apply Lemma \ref{HN-invs acting on Q} $(i)$ to say that
$|C_{\bar{{Q_{12}}}}(v)|=2^4$ and then by Lemma \ref{lem-conjinvos}, every involution in
$\bar{{Q_{12}}}\bar{v}$ is conjugate to $\bar{v}$. We may choose an element of order four, $f\in
C_H(z_1)$ with $f^2=t$. Then $\bar{{Q_{12}}}\bar{f}$ is an involution in
$\bar{M_2}/\bar{{Q_{12}}}\cong M_2/{Q_{12}}$ and so if ${Q_{12}}v \in M_2/{Q_{12}}$ then $Q_{12}v$
is conjugate to $Q_{12}f$ and therefore $\bar{v}$ is conjugate to $\bar{f}$ which implies that $v$
has order four and is conjugate to $f$.  Suppose that ${Q_{12}}v$ is diagonal. Recall that $Q_{12}s$ inverts $P$ and so  ${Q_{12}}v$ is conjugate
to ${Q_{12}}s$ and therefore $\bar{v}$ is conjugate to $\bar{s}$ which implies $v$ is conjugate to $s$ or
$st$. By Lemma \ref{HN-alt9 observations} $(vii)$, since $s$ and $st$ invert $P$, $s,st,v \in 2\mathcal{B}$. Also by Lemma
\ref{lem-conjinvos},
$|C_{\bar{K}}(\bar{v})|=|C_{\bar{{Q_{12}}}}(\bar{v})||C_{K/{Q_{12}}}({Q_{12}}v)|=2^9$.
\end{proof}

Recall we fixed an involution $r_1 \in C_H(a_1)$ in Notation \ref{HN-Alt9notation}.
\begin{lemma}\label{HN-Transfer-O^2(H) is proper}
$r_1$ is not in $O^2(H)$. In particular, $H\neq O^2(H)$ and $O^2(H) \cap K\sim 2_+^{1+8}.(\alt(5)\times \alt(5))$.
\end{lemma}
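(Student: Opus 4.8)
The plan is to apply Thompson's Transfer Lemma (Lemma \ref{ThompsonTransfer}) to the group $H=C_G(t)$. First I would record that $T\in\syl_2(H)$: by Lemma \ref{HN-T is sylow in G} we have $T\in\syl_2(G)$, and since $\mathcal{Z}(T)=\langle t\rangle$ forces $T\le C_G(t)=H$, it follows that $T\in\syl_2(H)$. Using $K/Q_{12}\cong\alt(5)\wr 2$ (Lemma \ref{HN-K/Q has shape alt(5) wr 2 ii}) I set $K_0:=O^2(K)Q_{12}$, the unique index two subgroup of $K$ with $K_0/Q_{12}\cong\alt(5)\times\alt(5)$, and put $U:=T\cap K_0$. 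Since $T/Q_{12}$ is a Sylow $2$-subgroup of $\alt(5)\wr 2$ it contains a factor-swapping element, so $T\not\le K_0$ and $U$ is a maximal subgroup of $T$.

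Next I would locate $r_1$ relative to $U$. Because $r_1\in C_H(a_1)\setminus Q_1\le K$ (Lemma \ref{HN-alt9 observations} and Lemma \ref{HN-K/Q has shape alt(5) wr 2 i}), its image $\bar{r_1}$ lies in $C_{K/Q_{12}}(\bar{a_1})$. Here $\bar{a_1}$ is one of the two diagonal subgroups of order three in $\langle\bar{z_1},\bar{z_2}\rangle$, where $\bar{z_1}\in M_1/Q_{12}$ and $\bar{z_2}\in M_2/Q_{12}$ (Lemma \ref{HN-K/Q has shape alt(5) wr 2 ii}), so $C_{K/Q_{12}}(\bar{a_1})\cong 3\times\sym(3)$ and every involution in it interchanges the two $\alt(5)$ direct factors. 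Hence $\bar{r_1}$ lies in the nontrivial coset of $K_0/Q_{12}$, i.e. $r_1\in K\setminus K_0$; after replacing $r_1$ by a suitable $K$-conjugate I may assume $r_1\in T\setminus U$.

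The crux is to verify the hypothesis $r_1^{H}\cap U=\emptyset$ of Lemma \ref{ThompsonTransfer}. The genuine difficulty is that $u\in Q_2\le Q_{12}\le U$ is $G$-conjugate to $r_1$ (both lie in $2\mathcal{A}$, by Lemma \ref{HN-at least 2 classes of involution}), so I must show that $u$ is not $H$-conjugate to $r_1$, and more generally that no $H$-conjugate of $r_1$ meets $U$. My strategy is to reduce $H$-fusion in $T$ to fusion inside $K=N_H(Q_{12})$: since $Q_{12}$ is characteristic in $T$ (from the proof of Lemma \ref{HN-T is sylow in G}) one has $N_H(T)\le K$, and an extremal-conjugacy argument should let me assume any $H$-conjugate of $r_1$ landing in $K_0$ is $2$-central in its centralizer and hence realized by an element of $K$. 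Once $H$-fusion is reduced to $K$-fusion the conclusion is immediate, because $K_0\trianglelefteq K$ and $r_1\in K\setminus K_0$ while $U\le K_0$. In carrying out the reduction the relevant $K$-invariant is the swap/non-swap dichotomy, supported by a centralizer count: by Lemma \ref{HN-invs acting on Q} the factor-swapping $r_1$ satisfies $|C_{Q_{12}}(r_1)/\langle t\rangle|\le 2^{6}$, whereas every involution of $U$ inside $Q_{12}$ centralizes an index two subgroup of $Q_{12}$ and so has $|C_{Q_{12}}(\cdot)/\langle t\rangle|=2^{7}$ (using $\mathcal{Z}(Q_{12})=\langle t\rangle$, Lemma \ref{HN-Q_i's}). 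This fusion verification is where I expect essentially all of the work to lie.

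Finally, Lemma \ref{ThompsonTransfer} yields $r_1\notin O^2(H)$, whence $H\neq O^2(H)$. For the structural assertion I would note that $K/(K\cap O^2(H))$ embeds in the $2$-group $H/O^2(H)$, so $K\cap O^2(H)$ is a normal subgroup of $K$ with $2$-group quotient and therefore contains $O^2(K)$; and $O^2(K)=K_0$ because $\alt(5)\times\alt(5)$ is perfect and acts on $Q_{12}/\langle t\rangle$ without trivial summand, forcing $Q_{12}\le O^2(K)$. Thus $O^2(H)\cap K\supseteq K_0$, and combining this with $r_1\in K\setminus K_0$ and $r_1\notin O^2(H)$ gives the equality $O^2(H)\cap K=K_0\sim 2_+^{1+8}.(\alt(5)\times\alt(5))$.
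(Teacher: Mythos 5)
Your setup is sound and matches the paper's: $T\in\syl_2(H)$, $U=T\cap K_0$ is maximal in $T$ (and $K_0=O^2(K)$, as you verify at the end), and $r_1$ lies in $K\setminus K_0$, so Thompson's Transfer Lemma \ref{ThompsonTransfer} finishes once $r_1^H\cap U=\emptyset$ is established. The gap is that this last statement --- which you yourself flag as ``where essentially all of the work'' lies --- is never actually proved. Your plan is to reduce $H$-fusion to $K$-fusion by an unspecified ``extremal-conjugacy argument'' and then use the invariants $|C_{Q_{12}}(\cdot)|$ and the factor-swap dichotomy. But these are $K$-invariants, not $H$-invariants: $Q_{12}$ is normal in $K$ but not (at this stage of the paper) in $H$, so for $h\in H\setminus K$ nothing relates $C_{Q_{12}}(r_1^h)$ to $C_{Q_{12}}(r_1)$. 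Worse, the reduction you hope for amounts to knowing that $K$ controls $2$-fusion in $H$, which in the paper is a \emph{consequence} of later results (the strong closure of $\bar{Q_{12}}$ in $\bar{T}$ and $K=H$), and those results use the present lemma; so this route risks circularity. Note also that once you are inside $K$ your invariants do no work at all: $K_0\trianglelefteq K$ and $r_1\notin K_0$ already make $K$-fusion of $r_1$ into $U$ impossible, and in any case the centralizer count only distinguishes $r_1$ from involutions lying in $Q_{12}$, saying nothing about involutions of $U\setminus Q_{12}$.

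The paper avoids any fusion reduction by testing the hypothetical conjugate $r_1^h$ ($h\in H$ arbitrary) against $G$-conjugacy invariants, which survive conjugation by every element of $H$. First, from the $\alt(9)$ picture, $r_1t$ is conjugate to $t$ in $O^3(C_G(a_1))$ while $r_1\not\sim_G t$ by Lemma \ref{HN-at least 2 classes of involution}, so $r_1\not\sim_G r_1t$. If $r_1^h\in Q_{12}$, then $\langle r_1^h,t\rangle$ is normal but non-central in the extraspecial group $Q_{12}$, hence $r_1^h$ is $Q_{12}$-conjugate to $r_1^ht=(r_1t)^h$, forcing $r_1\sim_G r_1t$, a contradiction. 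Second, if $r_1^h\in O^2(K)\setminus Q_{12}$, then Lemma \ref{HN-involutions in O^2(K)/Q} --- the tool prepared for exactly this case, which your proposal never invokes --- shows that $r_1^h$ either has order four or lies in $2\mathcal{B}$; both are impossible since $r_1$ is an involution in $2\mathcal{A}$. To repair your proof, replace the fusion-reduction sketch by these two $G$-invariant arguments; your opening reductions and your closing identification of $O^2(H)\cap K$ with $K_0\sim 2_+^{1+8}.(\alt(5)\times\alt(5))$ can then stand as written.
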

\begin{proof}
Given the cycle type of the images of $r_1$ and $t$ in $\alt(9)\cong O^3(C_G(a_1))$ and by Lemma \ref{HN-at least 2
classes of involution}, we see that $r_1$ is not conjugate to $t$ in $G$ however the product $r_1t$
is conjugate to $t$ in $O^3(C_G(a_1))$ and therefore $r_1$ is not conjugate to $r_1t$ in $G$.

Observe that $r_1$ inverts $a_2$ therefore $r_1\notin {Q_{12}}$ else $[r_1,\<a_2\>]=\<a_2\>\leq
{Q_{12}}$. Since  $r_1$ centralizes $a_1$ whilst inverting $a_2$, we have that $r_1$ permutes
$\<z_1\>$ and $\<z_2\>$ and therefore permutes $M_1$ and $M_2$ and so $r_1\notin O^2(K)$. Recall
that $T\in \syl_2(K)$ so choose $T$ such that $r_1\in T$ and suppose that for some $h \in H$,
$r_1^h\in O^2(K) \cap T$. Suppose that $r_1^h \in {Q_{12}}$. Then $\<r_1^h,t\>\vartriangleleft
{Q_{12}}$ but is not central in ${Q_{12}}$ as $Q_{12}$ is extraspecial. Therefore $r_1^h$ is
conjugate to $r_1^ht=(r_1t)^h$ in ${Q_{12}}$ and so $r_1$ is conjugate to
$r_1t$ which is a contradiction. So $r_1^h \notin {Q_{12}}$. 
So consider $Q_{12} \neq {Q_{12}}r_1^h$. By Lemma \ref{HN-involutions in O^2(K)/Q}, either $r_1^h
\in 2 \mathcal{B}$ or has order four. However $r_1$ is an involution and is not conjugate to $t$ in
$G$ and so we have a contradiction.

Thus no $H$-conjugate of $r_1$ lies in $T \cap O^2(K)$ which is a maximal subgroup of $T\in
\syl_2(H)$. By Thompson Transfer (Lemma \ref{ThompsonTransfer}), $r_1 \notin O^2(H)$ and so $H \neq
O^2(H)$. Since $[K:O^2(K)]=2$, we must have $O^2(K)=O^2(H) \cap K\sim 2_+^{1+8}.(\alt(5)\times
\alt(5))$.
\end{proof}

\begin{lemma}\label{HN-orbits of elements in Q}
Let $f \in {Q_{12}}\bs \<t\>$. Then either $f$ has order four or one of the following occurs.
\begin{enumerate}[$(i)$]
 \item $f\in 2\mathcal{B}$, $C_H(f)\leq K$ has order
 $2^{13}3$ and $\bar{f}$ is $2$-central in $\bar{K}$.
  \item $f\in 2\mathcal{A}$, $|C_K(f)|=2^{11}35$ and
  $C_{K}({f}){Q_{12}}/{Q_{12}}\cong \alt(5) \times 2$ or $\sym(5)$.
 \end{enumerate}
\end{lemma}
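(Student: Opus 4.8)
The plan is to classify the non-central involutions of $Q_{12}\cong 2_+^{1+8}$ by viewing $\bar{Q_{12}}=Q_{12}/\langle t\rangle$ as an $8$-dimensional $\GF(2)$-space carrying the quadratic form $q$ with $q(\bar f)=0$ exactly when $f^2=1$; since $Q_{12}$ has $+$-type this form has $+$-type, so there are $135$ singular vectors, i.e.\ $270$ non-central involutions, and $120$ non-singular ones, i.e.\ the elements of order four. First I would pin down the module structure of $\bar{Q_{12}}$ under $O^2(K)/Q_{12}\cong M_1/Q_{12}\times M_2/Q_{12}\cong\alt(5)\times\alt(5)$. Because $z_1\in M_1$ and $z_2\in M_2$ act fixed-point-freely on $\bar{Q_{12}}$ (the equality $|C_{\bar{Q_{12}}}(z_i)|=1$ is recorded in Lemma \ref{HN-invs acting on Q}), Higman's Theorem \ref{Higman's SL2 Thm} forces $\bar{Q_{12}}$ to be a sum of natural $\SL_2(4)$-modules for each factor; as the two factors commute this identifies $\bar{Q_{12}}$ with the tensor product of two natural modules, that is, with the space of $2\times 2$ matrices over $\GF(4)$ on which $(g_1,g_2)\in\SL_2(4)\times\SL_2(4)$ acts by $X\mapsto g_1Xg_2^{-1}$ and the wreathing involution (supplied by $N_K(P)$) acts by transpose. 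Under this identification the form becomes $q(X)=\mathrm{Tr}_{\GF(4)/\GF(2)}(\det X)$, so $\bar f$ is singular precisely when $\det X\in\{0,1\}$.

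Given this, the classification is immediate: the non-central involutions split according to $\det X=0$ (the rank-one matrices) and $\det X=1$, while the order-four elements are those with $\det X\in\{\omega,\omega^2\}$. The rank-one locus consists of $75$ vectors, hence $150$ involutions, and the $\det=1$ locus of $60$ vectors, hence $120$ involutions; these are two $K$-orbits, and $150+120=270$ accounts for all non-central involutions of $Q_{12}$.

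Next I would match the orbits with the $G$-classes and compute centralisers. For the $\det=1$ orbit I take the representative $u\in Q_2$, which lies in $2\mathcal{A}$ by Lemma \ref{HN-at least 2 classes of involution}; here $C_{Q_{12}}(u)=Q_1C_{Q_2}(u)$ has order $2^8$, and the stabiliser of $\bar u$ in $K/Q_{12}$ is the diagonal copy of $\SL_2(4)$ together with the transpose, of order $120$. Checking that this full stabiliser centralises $u$ (rather than interchanging $u$ and $ut$) gives $|C_K(u)|=2^8\cdot 120=2^{11}3\cdot5$ with $C_K(u)Q_{12}/Q_{12}\cong\alt(5)\times 2$ or $\sym(5)$, which is case $(ii)$. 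For the rank-one orbit I would use $E\leq Q_1$: every $e\in E\bs\langle t\rangle$ maps in $\alt(9)\cong O^3(C_G(a_1))$ to an element of cycle type $2^4$, hence is $G$-conjugate to $t$, so $e\in 2\mathcal{B}$; and since $V=\langle t,e\rangle<E$ is a fours group with $t\in V$, Lemma \ref{HN-centralizer of F} gives $C_H(e)=C_G(V)\leq K$ of order $2^{13}3$. Finally $\bar e$ is $2$-central in $\bar K$: the symmetric rank-one matrix fixed by the unipotent Sylow $2$-subgroup of each $\SL_2(4)$ and by the transpose is centralised by a full Sylow $2$-subgroup, and as the rank-one vectors form a single orbit this holds throughout. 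This is case $(i)$, and the counting $150+120=270$ leaves no room for a third orbit, so every non-central involution falls under $(i)$ or $(ii)$.

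The main obstacle is establishing the module identification rigorously: confirming the fixed-point-free action of $z_1,z_2$ on all of $\bar{Q_{12}}$, applying Higman's theorem, and assembling the two commuting $\alt(5)$-actions into the tensor (matrix) module with the determinant-trace form. A secondary but genuinely delicate point is the bookkeeping that distinguishes stabilising a coset $\bar f$ from centralising the involution $f$ itself (as opposed to $ft$); this is what fixes the precise orders $2^{11}3\cdot5$ and $2^{13}3$ and the shape of $C_K(u)Q_{12}/Q_{12}$. If one prefers to avoid the full module theory, the counting $150+120=270$ together with the two explicit centraliser computations—$|C_H(e)|=2^{13}3$ directly from Lemma \ref{HN-centralizer of F}, and $|C_K(u)|=2^{11}3\cdot5$ from the known structure of $C_H(a_2)$ enlarged by the commuting $Q_1$—already forces exactly two orbits and completes the proof.
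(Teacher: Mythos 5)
Your proposal is correct in substance but follows a genuinely different route from the paper. The paper never identifies the module structure of $\bar{Q_{12}}$ explicitly: it works purely with orbit arithmetic. Since an element of order five in $M_1/Q_{12}$ acts fixed-point-freely on $Q_{12}$, every $K$-orbit on $Q_{12}\bs\langle t\rangle$ has length a multiple of $30$; orbits of length $30$, $60$, $90$ are excluded because $\alt(5)\wr 2$ has no subgroups of the orders these would force on $C_K(f)Q_{12}/Q_{12}$ (in particular none of order $2^5\cdot 5$, and none of the right order containing a diagonal element of order three); the $2\mathcal{B}$-orbit is pinned at $150$ exactly as you do, via Lemma \ref{HN-centralizer of F}; and the $2\mathcal{A}$-orbit is then forced to have length exactly $120$ because it is disjoint from the $150$ involutions already accounted for among Segev's count of $270$ involutions in $Q_{12}\bs\langle t\rangle$. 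Your approach instead builds the $\mathrm{O}_4^+(4)$-picture: $\bar{Q_{12}}\cong V_1\otimes_{\GF(4)}V_2$ with the trace-determinant form, and reads the orbits off from rank and determinant. What your route buys is structural transparency (the orbit sizes $75$, $60$, $60$, $60$ on vectors and the $2$-centrality of the rank-one class are visible at a glance, and you get the finer information about the order-four elements for free); what the paper's route buys is economy, since it needs no module identification beyond fixed-point-freeness and Lemma \ref{prelim-alt5 action}.

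Two caveats. First, the step ``as the two factors commute this identifies $\bar{Q_{12}}$ with the tensor product'' is the real content of your argument and needs the irreducibility observation spelled out: a proper $M_1M_2$-invariant subspace would be a natural module for $M_1$, so $M_2$ would map into $\mathrm{End}_{\GF(2)M_1}(V_1)^{\times}\cong\GF(4)^{\times}$ and hence centralize it, contradicting the fixed-point-free action of $z_2$; one also needs to know the invariant quadratic form is (after rescaling the $\GF(4)$-structure) $\mathrm{Tr}_{\GF(4)/\GF(2)}(\det)$, or else argue that $135$ singular vectors can only be the union of orbits $75+60$. Second, your fallback paragraph is too optimistic: ``$|C_K(u)|=2^{11}3\cdot 5$ from the known structure of $C_H(a_2)$ enlarged by the commuting $Q_1$'' only gives a lower bound of order $2^8\cdot 3$-ish; producing the element of order five (equivalently the full diagonal $\alt(5)$) in $C_K(u)$ is exactly what either the module identification or the paper's exclusion-plus-counting argument supplies, so the fallback is not self-contained without one of those ingredients.
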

In particular, $K$ acts irreducibly on $\bar{Q_{12}}$, $C_H(f) \cap 3\mathcal{A} \neq 1$ and if
$\bar{f} \in \mathcal{Z}(\bar{T})$ then $f\in 2\mathcal{B}$ and $C_H(f) \leq K$.
\begin{proof}
We have that ${Q_{12}}z_1\in M_1/Q_{12}$ acts fixed-point-freely on $\bar{{Q_{12}}}$ and so every
$M_1/Q_{12}$-chief factor of $\bar{Q_{12}}$ is non-trivial. By Lemma \ref{prelim-alt5 action},
every non-central chief factor has order  $2^4$ and is a natural module for $M_1/Q_{12}$. Let $e\in K$ such that ${Q_{12}}e$ has order five and
$M_1/Q_{12}=\<{Q_{12}}z_1,{Q_{12}}e\>\cong \alt(5)$. Then $Q_{12}e$ acts fixed-point-freely on every chief factor of $\bar{{Q_{12}}}$ and therefore acts fixed-point-freely on $Q_{12}$.
It follows that for every $1\neq \bar{f}\in \bar{{Q_{12}}}$, $\bar{f}$ lies in a $K$-orbit of
length a multiple of $15$ and therefore $f$ lies in a $K$-orbit of length a multiple of $30$.

If $|{f^K}|=30,60$ or $90$ then $|C_K(f)(Q_{12})/Q_{12}|=2^535, 2^435,$ or $2^55$. As $\alt(5) \wr 2$ has no subgroup of order $2^55$, there is no orbit of length 90. If the orbit has length 30 or 60 then $C_K(f)$ contains a conjugate of $a_1$ (the image of which is diagonal in $\alt(5) \wr 2$) however $\alt(5) \wr 2$ has no subgroups of the necessary order containing a diagonal element of order three.  Thus  $|{f^K}|$ is not equal to $30,60$ or $90$.

Recall Lemma \ref{HN-Describing E} which describes $t \in E\leq {Q_{12}}$. Every involution in $E$
is conjugate to $t$ since $N_G(E)/C_G(E)\cong \GL_3(2)$. Furthermore, by Lemma \ref{HN-centralizer
of F}, if we choose $f \in E\bs \<t\>$ then $C_H(f)=C_G(\<f,t\>)\leq K$ and $|C_H(f)|=2^{13}3$.
Since $\<f,t\> \vartriangleleft {Q_{12}}$, it follows that $|C_{\bar{H}}(\bar{f})|=2^{13}3$.
Therefore $\bar{f}$ is central in a Sylow $2$-subgroup of $\bar{K}$ and $f$ lies in a $K$-orbit of
length $|K|/(2^{13}3)=150$.

In Notation \ref{HN-Alt9notation} we fixed an image of $Q_1$ in $\alt(9)$. Observe that the image
of $Q_1$ contains involutions in $2\mathcal{A}$. So let $f \in Q_1$ be such an involution. Now
${Q_{12}}\bs\<t\>$ contains 240 elements of order four and 270 elements of order two (see
\cite[2.4.1]{SegevHN}  for example). Therefore ${f}$ lies in an orbit of length a multiple of 30
and less than 120 and not 30, 60 or 90. Therefore
$[K:C_K(f)]=|\{f^K\}|=120$ and so $|C_K(f)|=2^{11}35$
and $C_{K}({f}){Q_{12}}/{Q_{12}}\cong \alt(5) \times 2$ or $\sym(5)$.

We now suppose $f \in Q_1$ has order four. Then we have that ${f}$ lies in a $K$-orbit of length a
multiple of 30 and less than 240 and not 30, 60 or 90. Moreover $[a_1,Q_1]=1$ and so $\{f^K\}$ is not a multiple of nine. Therefore the only possibilities are
$[K:C_K(f)]=|\{f^K\}|=120,150,240$ and
$C_{K}(f){Q_{12}}/{Q_{12}}=2^335,2^53,2^235$. If $f$ lies in an orbit of length 112 or 150 then consider the remaining elements of order four in $Q_{12}$. These elements cannot lie in an orbit of length 30, 60, 90. Thus it follows that $|\{f^K\}|\neq 150$ and the elements of order four either lie in two orbits of length $120$ or one orbit of length $240$.

In particular, every $1\neq \bar{f} \in \bar{{Q_{12}}}$ commutes with an element of order three in
$\bar{K}$. Since each $z_i$ acts fixed-point-freely on $\bar{{Q_{12}}}$, we have that $\bar{f}$ is
centralized by a conjugate of ${Q_{12}}a_i$. Therefore $f$ commutes with a conjugate of $a_i$.
Furthermore we observe that if $f$ has order four or $f\in 2\mathcal{A}$ then $\bar{f}$ is not
$2$-central in $\bar{K}$ whereas if $f\in 2\mathcal{B}$ then $\bar{f}$ is $2$-central in $\bar{K}$.
Finally, suppose that $ W<Q_{12}$ with $t \in W\vartriangleleft K$. Then $W$ must be a union of
$K$-orbits. However the $K$-orbits on $Q_{12}$ have lengths in $\{1, 150,120,240\}$ and no union of
orbits  is a power of $2$ greater than $2$ and less than $2^9$. Thus $K$ acts irreducibly on
$\bar{Q_{12}}$.
\end{proof}

\begin{lemma}\label{HN-Q=Q^h}
Let $h \in H$. If $\bar{{Q_{12}}} \cap \bar{{Q_{12}}^h}$ contains a $2$-central involution
then ${Q_{12}}={Q_{12}}^h$.
\end{lemma}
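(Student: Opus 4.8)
The plan is to reduce the statement to the strong $3$-embedding of $K = N_G({Q_{12}})$ in $H$ established in Lemma \ref{HN-K is strongly 3 embedded}, using the orbit analysis of Lemma \ref{HN-orbits of elements in Q} to manufacture an element of order three in $K \cap K^h$.

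First I would set up the preimage carefully. Let $\bar f \in \bar{{Q_{12}}} \cap \bar{{Q_{12}}^h}$ be a $2$-central involution and fix a preimage $f \in {Q_{12}}$. Since $h \in H = C_G(t)$ we have $t^h = t$, so $\langle t\rangle = \mathcal{Z}({Q_{12}})^h = \mathcal{Z}({Q_{12}}^h)$; in particular $t \in {Q_{12}}^h$, the group ${Q_{12}}^h$ is again extraspecial of order $2^9$ with $N_G({Q_{12}}^h) = K^h$, and $f$ may be taken inside ${Q_{12}}^h$ as well. Because in $2_+^{1+8}$ the two preimages in ${Q_{12}}$ of a given element of $\bar{{Q_{12}}}$ are either both involutions or both of order four, and since the order-four preimages yield non $2$-central images by the orbit classification at the end of Lemma \ref{HN-orbits of elements in Q}, the hypothesis that $\bar f$ is a $2$-central involution forces $f$ to be an honest involution. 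I would also record that $T \in \syl_2(K) = \syl_2(H) = \syl_2(G)$ by Lemma \ref{HN-T is sylow in G}, so $2$-centrality in $\bar K$, $\bar H$ and $\bar G$ all coincide and is a $G$-conjugacy invariant.

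Next I would extract the element of order three. Since $2$-centrality is $K$-invariant and, by Lemma \ref{HN-orbits of elements in Q}, exactly the $K$-orbit of the involutions lying in $2\mathcal{B}$ consists of the $2$-central elements of $\bar{{Q_{12}}}$, the element $\bar f$ lies in this orbit; hence $f \in 2\mathcal{B}$ and $C_H(f) \leq K$ with $|C_H(f)| = 2^{13}3$. Applying the $h$-conjugate of the same lemma to ${Q_{12}}^h$ (legitimate because $f \in {Q_{12}}^h$ is an involution with $\bar f$ $2$-central, while $H^h = H$ and $K^h = N_G({Q_{12}}^h)$) gives $C_H(f) \leq K^h$ as well. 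Therefore $C_H(f) \leq K \cap K^h$, and since $3 \mid |C_H(f)|$, the subgroup $K \cap K^h$ contains an element of order three.

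Finally, strong $3$-embedding closes the argument: by Lemma \ref{HN-K is strongly 3 embedded}, $K$ is strongly $3$-embedded in $H$, so $3 \mid |K \cap K^h|$ with $h \in H$ forces $h \in K = N_G({Q_{12}})$, whence ${Q_{12}}^h = {Q_{12}}$. I expect the only delicate point to be the bookkeeping between $\bar f$ and its preimages, together with the clean reuse of Lemma \ref{HN-orbits of elements in Q} for the conjugate ${Q_{12}}^h$; once it is granted that $f$ is a $2\mathcal{B}$-involution with $C_H(f)$ of order $2^{13}3$ lying in both $K$ and $K^h$, the conclusion is immediate.
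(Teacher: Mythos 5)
Your proof is correct and takes essentially the same route as the paper: both arguments use Lemma \ref{HN-orbits of elements in Q} to conclude that the $2$-central involution $f$ lies in $2\mathcal{B}$ with $C_H(f)\leq K$ and $C_H(f)\leq K^h$, so that $3$ divides $|K\cap K^h|$, and then invoke the strong $3$-embedding of $K$ in $H$ (Lemma \ref{HN-K is strongly 3 embedded}) to force ${Q_{12}}={Q_{12}}^h$. The additional bookkeeping you supply (the preimage $f$ being an honest involution rather than of order four, and the coincidence of the various notions of $2$-centrality) is sound and simply makes explicit what the paper's shorter argument takes as read.
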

\begin{proof}
We may suppose that for some $1 \neq \bar{f} \in \mathcal{Z}(\bar{T})$,  $\bar{f} \in \bar{{Q_{12}}} \cap
\bar{{Q_{12}}}^h$. By Lemma \ref{HN-orbits of elements in Q}, $f\in 2\mathcal{B}$ and $C_H(f) \leq
K$ and also $C_H(f) \leq K^h$. However this implies that $3\mid |K \cap K^h|$ and so $K=K^h$ and
${Q_{12}}={Q_{12}}^h$ by Lemma \ref{HN-K is strongly 3 embedded}.
\end{proof}

\begin{lemma}
$\bar{{Q_{12}}}$ is strongly closed in $\bar{T}$ with respect to $\bar{H}$.
\end{lemma}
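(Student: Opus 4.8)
The plan is to prove strong closure directly: I will show that whenever $\bar x\in\bar{Q_{12}}$ and $\bar h\in\bar H$ satisfy $\bar y:=\bar x^{\bar h}\in\bar T$, the element $\bar y$ already lies in $\bar{Q_{12}}$. The driving observation is that it suffices to show $\bar{Q_{12}}=\bar{Q_{12}}^{\bar h}$, since then $\bar y\in\bar{Q_{12}}^{\bar h}=\bar{Q_{12}}$ (recall $\bar x\in\bar{Q_{12}}$ forces $\bar y\in\bar{Q_{12}}^{\bar h}$); and by Lemma \ref{HN-Q=Q^h} the equality $Q_{12}=Q_{12}^h$, equivalently $\bar{Q_{12}}=\bar{Q_{12}}^{\bar h}$, follows as soon as $\bar{Q_{12}}\cap\bar{Q_{12}}^{\bar h}$ contains a $2$-central involution. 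So the whole problem reduces to manufacturing a $2$-central involution in this intersection. We may assume $\bar x\neq 1$, so that $\bar x$, and hence $\bar y$, is an involution (as $\bar{Q_{12}}$ is elementary abelian, $Q_{12}$ being extraspecial).

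First I would assemble the ambient facts. Since $T\in\syl_2(G)$ by Lemma \ref{HN-T is sylow in G}, we have $\bar T\in\syl_2(\bar H)$. As $N_G(Q_{12})=K$, normalizing $\bar{Q_{12}}$ in $\bar H$ is the same as normalizing $Q_{12}$, so $N_{\bar H}(\bar{Q_{12}})=\bar K$. Moreover $C_{\bar H}(\bar{Q_{12}})=\bar{Q_{12}}$: any centralizing element normalizes $\bar{Q_{12}}$, hence lies in $\bar K$; and the kernel of the action of $\bar K$ on $\bar{Q_{12}}$ is exactly $\bar{Q_{12}}$, because $C_G(Q_{12})=\langle t\rangle$ (Lemma \ref{HN-Q_i's}) places $\bar K/\bar{Q_{12}}\cong\alt(5)\wr 2$ (Lemma \ref{HN-K/Q has shape alt(5) wr 2 ii}) inside $\out(Q_{12})\cong\O_8^+(2)$ (Lemma \ref{extraspecial outer automorphisms}), which acts faithfully on the Frattini quotient $Q_{12}/\langle t\rangle=\bar{Q_{12}}$, while $\bar{Q_{12}}$ itself acts trivially. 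Conjugating gives $C_{\bar H}(\bar{Q_{12}}^{\bar h})=\bar{Q_{12}}^{\bar h}$, which will be the key self-centralizing property.

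The one genuinely technical step is a standard extremality reduction, whose purpose is to push the whole conjugate $\bar{Q_{12}}^{\bar h}$ into $\bar T$. Replacing $\bar y$ by a suitable $\bar T$-conjugate (and adjusting $\bar h$ on the right by an element of $\bar T\le\bar K$, which fixes membership in $\bar{Q_{12}}$) I may assume $\bar y$ is extremal in $\bar T$, i.e.\ $C_{\bar T}(\bar y)\in\syl_2(C_{\bar H}(\bar y))$. Now $\bar{Q_{12}}^{\bar h}$ is abelian and contains $\bar y$, so it is a $2$-subgroup of $C_{\bar H}(\bar y)$; hence some $C_{\bar H}(\bar y)$-conjugate of it lies in $C_{\bar T}(\bar y)\le\bar T$. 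Conjugating $\bar h$ by this centralizing element fixes $\bar y$, so after this final replacement we have $\bar y=\bar x^{\bar h}\in\bar T$ together with $\bar{Q_{12}}^{\bar h}\le\bar T$.

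It then remains to invoke $\mathcal{Z}(\bar T)$. It is a non-trivial group whose involutions are $2$-central, and by Lemma \ref{HN-orbits of elements in Q} every element of $\mathcal{Z}(\bar T)$ lies in $\bar{Q_{12}}$; on the other hand $\mathcal{Z}(\bar T)$ centralizes $\bar{Q_{12}}^{\bar h}\le\bar T$, so $\mathcal{Z}(\bar T)\le C_{\bar H}(\bar{Q_{12}}^{\bar h})=\bar{Q_{12}}^{\bar h}$. Thus $\mathcal{Z}(\bar T)\le\bar{Q_{12}}\cap\bar{Q_{12}}^{\bar h}$ supplies the required $2$-central involution, Lemma \ref{HN-Q=Q^h} gives $\bar{Q_{12}}=\bar{Q_{12}}^{\bar h}$, and therefore $\bar y\in\bar{Q_{12}}$. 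I expect the main obstacle to be the extremality manoeuvre: one must verify that the successive conjugations keep $\bar y$ inside $\bar T$ while never moving $\bar x$ out of $\bar{Q_{12}}$, and that proving $\bar y\in\bar{Q_{12}}$ for every such configuration is exactly the element-wise definition of strong closure (equivalently $\bar{Q_{12}}^{\bar g}\cap\bar T\le\bar{Q_{12}}$ for all $\bar g\in\bar H$). Everything after the production of a $2$-central involution in the intersection is immediate from Lemma \ref{HN-Q=Q^h}.
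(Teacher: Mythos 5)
Your reduction target is the right one, and it matches the paper's: both arguments funnel through Lemma \ref{HN-Q=Q^h}, i.e.\ through producing a $2$-central involution in $\bar{Q_{12}}\cap\bar{Q_{12}}^{h}$. Your preliminary observations are also sound: $N_{\bar H}(\bar{Q_{12}})=\bar K$, $C_{\bar H}(\bar{Q_{12}})=\bar{Q_{12}}$, hence $\mathcal{Z}(\bar T)\leq\bar{Q_{12}}$ (note this follows from your self-centralizing claim, not from Lemma \ref{HN-orbits of elements in Q} as you cite), and if one knew $\bar{Q_{12}}^{\bar h}\leq\bar T$ then indeed $\mathcal{Z}(\bar T)\leq C_{\bar H}(\bar{Q_{12}}^{\bar h})=\bar{Q_{12}}^{\bar h}$ would finish. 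The gap is the extremality manoeuvre, exactly the step you flagged. First, a local error: extremality cannot be arranged by passing to a $\bar T$-conjugate, since $|C_{\bar T}(\bar y^{\bar s})|=|C_{\bar T}(\bar y)|$ for $\bar s\in\bar T$; one must conjugate by an element $\bar g\in\bar H$. But that is what breaks the reduction: after replacing $\bar h$ by $\bar h\bar g\bar c$, what you prove is that the \emph{new} element $\bar x^{\bar h\bar g\bar c}=\bar y^{\bar g}$ lies in $\bar{Q_{12}}$; nothing follows about the original $\bar y=\bar x^{\bar h}$, because $\bar g^{-1}$ need not normalize $\bar{Q_{12}}$. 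What your argument actually establishes is that every conjugate of an element of $\bar{Q_{12}}$ which is \emph{extremal} in $\bar T$ lies in $\bar{Q_{12}}$, and this is strictly weaker than strong closure: in $\sym(4)$ with $T\in\syl_2(\sym(4))$ and $A=\mathcal{Z}(T)$ of order two, every extremal conjugate in $T$ of the central involution is the central involution itself, yet $A$ is not strongly closed in $T$ (the two non-extremal double transpositions in $T$ are fused into $A$). Attempting to treat the original $\bar y$ by the same method is circular, since its extremal conjugate is precisely the element already handled.

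The paper's proof needs no Sylow-intersection or extremality hypothesis because it exploits structure your argument never touches. Given an offending element $\bar f\in\bar{Q_{12}}\cap(\bar T^{h}\bs\bar{Q_{12}}^{h})$ for an \emph{arbitrary} $h\in H$, the containments $Q_{12}\leq O^2(K)\leq O^2(H)$ and $O^2(K)=O^2(H)\cap K$ (Lemma \ref{HN-Transfer-O^2(H) is proper}) force $\bar f$ into $O^2(\bar K^{h})\bs\bar{Q_{12}}^{h}$, so Lemma \ref{HN-involutions in O^2(K)/Q}, applied in $K^{h}$, pins $f$ down: either $f$ has order four squaring to $t$, which is excluded by comparing the $3$-structure of $C_G(f)$ coming from the two sides (Lemmas \ref{HN-element of order four in CG(Z)} and \ref{HN-orbits of elements in Q}); or $f\in 2\mathcal{B}$, in which case the centralizer information on both sides ($C_H(f)\leq K$ from Lemma \ref{HN-orbits of elements in Q}, together with $|C_{\bar{Q_{12}}^{h}}(\bar f)|=2^4$ and $|C_{\bar K^{h}}(\bar f)|=2^9$) lets one manufacture a $2$-central involution inside $\bar{Q_{12}}\cap\bar{Q_{12}}^{h}$ by a direct order computation, using Lemma \ref{HN-T is sylow in G} to rule out the degenerate case. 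Some argument of this kind, valid for every conjugating element rather than only for extremal configurations, is what your proposal is missing.
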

\begin{proof}
Let $1\neq \bar{f}\in \bar{{Q_{12}}}$ such that $\bar{f} \in \bar{T}^h\bs \bar{{Q_{12}}}^h$ for
some $h \in H$. Since $f \in Q_{12}\leq O^2(K)\leq O^2(H)$, we must have that $f\in O^2(K^h)=O^2(H)
\cap K^h$. By Lemma \ref{HN-involutions in O^2(K)/Q} applied to $K^h$,  either $f$ is an element of
order four squaring to $t$ and commuting with a conjugate of $Z$ or $f\in 2\mathcal{B}$ and
$|C_{\bar{{Q_{12}}}^h}({f})|=2^4$ and $|C_{\bar{K}^h}({f})|=2^9$.

Suppose first that $f$ has order four. Then $f^2=t$ and ${Q_{12}}^hf$ is an involution in
$O^2(K/{Q_{12}})^h$. By Lemma \ref{HN-involutions in O^2(K)/Q}, $C_G(f)$ contains a conjugate of
$Z$ and then by Lemma \ref{HN-element of order four in CG(Z)}, a Sylow $3$-subgroup of $C_G(f)$ is
conjugate to $Z$. However, by Lemma \ref{HN-orbits of elements in Q},  $C_H(f) \cap 3\mathcal{A}
\neq 1$ which is a contradiction.

So we suppose instead that $f$ is an involution then $f \in 2 \mathcal{B}$ and by Lemma
\ref{HN-orbits of elements in Q}, $C_H(f)\leq K$.  Set $D:=C_{\bar{K}^h}({f})$ and
$V:=C_{\bar{{Q_{12}}}^h}({f})$. Clearly $|D \cap \bar{Q_{12}}| \geq 2^4$ however suppose that $V
\cap \bar{Q_{12}}=1$. Then $V \bar{Q_{12}}\in \syl_2(\bar{O^2(K)})$ and $[V, D \cap
\bar{Q_{12}}]\leq V \cap \bar{Q_{12}}=1$. However this implies that $D \cap \bar{Q_{12}}$ commutes
with  $V \bar{Q_{12}}\in \syl_2(\bar{O^2(K)})$ which contradicts Lemma \ref{HN-T is sylow in G}.

Hence $1 \neq V \cap \bar{Q_{12}}\vartriangleleft D$ and so there exists some $y \in \bar{Q_{12}}
\cap V\cap \mathcal{Z}(D)$.  Notice that $y$ commutes with $\bar{Q_{12}}^h D \in \syl_2(\bar{K}^h)$ and so $y \in \bar{{Q_{12}}}
\cap \bar{{Q_{12}}}^h$ is a $2$-central involution of $\bar{K}^h$. Now by Lemma \ref{HN-Q=Q^h},
${Q_{12}}={Q_{12}}^h$ which is a contradiction. Thus $\bar{{Q_{12}}}$ is strongly closed in
$\bar{T}$ with respect to $\bar{H}$.
\end{proof}

\begin{lemma}
$K=H$.
\end{lemma}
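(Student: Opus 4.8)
The plan is to argue by contradiction using Goldschmidt's theorem (Theorem \ref{goldschmidt}) together with the strong closure of $\bar{Q_{12}}$ established in the previous lemma. So I would assume $K<H$; since $K=N_G(Q_{12})$ this means $Q_{12}\ntrianglelefteq H$, and I would set $F:=\langle Q_{12}^H\rangle\trianglelefteq H$, so that $F>Q_{12}$. As $F$ is generated by the $2$-groups $Q_{12}^h$, it is generated by $2$-elements, whence $F=O^{2'}(F)$; passing to $\bar{H}=H/\langle t\rangle$ this gives $\bar F=O^{2'}(\bar F)=\langle\bar{Q_{12}}^{\bar F}\rangle$, which is the generation hypothesis required by Goldschmidt.

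The next step is to verify $O_{2'}(F)=1$, so that $O_{2'}(\bar F)=1$ as well. Setting $N:=O_{2'}(F)\trianglelefteq H$ (a $2'$-group normalised by the abelian Sylow $3$-subgroup $P=\langle a_1,a_2\rangle\leq K$), coprime action gives $N=\langle C_N(z_1),C_N(z_2),C_N(a_1),C_N(a_2)\rangle$. Here I would first note that $N$ is a $3'$-group: if $3\mid|N|$ then $N\cap P\neq 1$ contains an element of $P^\#\subseteq 3\mathcal{A}\cup 3\mathcal{B}$, and since $K$ is strongly $3$-embedded in $H$ (Lemma \ref{HN-K is strongly 3 embedded}) the presence of such an element in the normal subgroup $N$ forces a contradiction with the fact that $H$ is a $\{2,3,5\}$-group whose relevant centralisers have the shapes recorded in Lemmas \ref{HN-conjugates in P} and \ref{HN-alt9 observations}. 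Once $N$ is a $3'$-group, Lemma \ref{HN-3'-subgroups of centralizers} shows that each $P$-invariant $3'$-subgroup of $C_H(a_i)$ or $C_H(z_i)$ is a $2$-group; intersecting with the $2'$-group $N$ kills each generator, so $N=1$.

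Now I would apply Theorem \ref{goldschmidt} to $\bar F=\langle\bar{Q_{12}}^{\bar F}\rangle$ with the strongly closed abelian subgroup $\bar{Q_{12}}$ and $T\in\syl_2(G)$ (so $\bar T\cap\bar F\in\syl_2(\bar F)$), obtaining $\bar F=F^*(\bar F)$ and $\bar{Q_{12}}=O_2(\bar F)\Omega(\bar T\cap\bar F)$; in particular $\Omega(\bar T\cap\bar F)\leq\bar{Q_{12}}$. To derive the contradiction I would analyse $F\cap K\trianglelefteq K$: it contains $Q_{12}=O_2(K)$, and since $K/Q_{12}\cong\alt(5)\wr 2$ has only $1$, the base group and the whole group as normal subgroups (Lemma \ref{HN-K/Q has shape alt(5) wr 2 ii}), we get $F\cap K\in\{Q_{12},\,O^2(K),\,K\}$. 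In the two nondegenerate cases $F\cap K\supseteq O^2(K)\supseteq A_1$, and since $A_1\not\leq Q_{12}$ (by the generation of $Q_{12}$ in Lemma \ref{HN-info on centralizer of E}) there is an involution of $T\cap F$ outside $Q_{12}$, so $\Omega(\bar T\cap\bar F)\not\leq\bar{Q_{12}}$, contradicting Goldschmidt.

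I expect the main obstacle to be the degenerate case $F\cap K=Q_{12}$: here $T\cap F=Q_{12}\in\syl_2(F)$ is self-normalising in $F$ and (by strong $3$-embedding) $3\nmid|F|$, so there is \emph{no} involution of $T\cap F$ lying outside $Q_{12}$ and the involution argument fails. To handle this I would instead exploit the structure $\bar F=F^*(\bar F)=O_2(\bar F)E(\bar F)$ together with $N_{\bar F}(\bar{Q_{12}})=\bar{Q_{12}}$: each component of $E(\bar F)$ would inherit a self-normalising abelian Sylow $2$-subgroup, and Burnside's normal $p$-complement theorem (Theorem \ref{Burnside-normal p complement}) then forces such a component to have a normal $2$-complement, impossible for a quasisimple group. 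Hence $E(\bar F)=1$, so $\bar F=O_2(\bar F)=\bar{Q_{12}}$, i.e. $F=Q_{12}$, contradicting $F>Q_{12}$. In every case we reach a contradiction, so $Q_{12}\trianglelefteq H$ and therefore $K=H$.
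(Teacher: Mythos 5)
Your overall architecture (contradiction, then Goldschmidt's Theorem \ref{goldschmidt} applied to $\bar F=\langle\bar{Q_{12}}{}^{\bar H}\rangle$ using the strong closure of $\bar{Q_{12}}$) is the same as the paper's, but the step on which the whole proof hinges is mis-justified. To invoke Theorem \ref{goldschmidt} you need $\bar F=\langle\bar{Q_{12}}{}^{\bar F}\rangle$, i.e.\ generation by \emph{$\bar F$-conjugates}, and you derive this from ``$F$ is generated by $2$-elements, whence $F=O^{2'}(F)$''. That implication is false: being generated by $2$-elements says nothing about being generated by conjugates of $Q_{12}$ under $F$ itself. (In $\dih(8)$ the normal closure of a non-central subgroup $\langle s\rangle$ of order two is a fours group $V$; $V$ is generated by involutions, yet $\langle s^V\rangle=\langle s\rangle\neq V$. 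So ``generated by $2$-elements'' cannot by itself yield the generation you need.) The statement you want is in fact true here, but it needs an argument: either the standard Sylow-plus-strong-closure argument (any $\bar H$-conjugate of $\bar{Q_{12}}$ lies in $\bar F$, hence can be conjugated by an element of $\bar F$ into $\bar T\cap\bar F\leq\bar T$, whereupon strong closure forces equality with $\bar{Q_{12}}$; so every $\bar H$-conjugate is already an $\bar F$-conjugate), or the paper's route: first prove $O_{3'}(H)=\langle t\rangle$ (coprime action of $P$ on $O_{3'}(H)$ together with $C_H(p)\leq K$ for all $p\in P^\#$), deduce $3\mid|F|$ and hence $P\leq F$, and then use a Frattini argument $H=FN_H(P)$ with $N_H(P)\leq K=N_G(Q_{12})$ to get $\langle Q_{12}^H\rangle=\langle Q_{12}^F\rangle$. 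Note that the paper's route also makes your ``degenerate case'' vanish: once $P\leq F$, the case $F\cap K=Q_{12}$ is impossible ($P$ would lie in the $2$-group $Q_{12}$), and $Q_{12}\in\syl_2(F)$ is excluded by a Dedekind/Burnside normal-$3$-complement argument using $O_{3'}(F)\leq O_{3'}(H)=\langle t\rangle$.

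There is also a genuine error in your nondegenerate cases: from $A_1\leq F$ and $A_1\nleq Q_{12}$ you claim ``there is an involution of $T\cap F$ outside $Q_{12}$''. There is not: $A_1\cong Q_8$ has a unique involution, namely $t\in Q_{12}$. The contradiction must be run in $\bar H$, where $\bar{A_1}\cong 2\times 2$ meets $\bar{Q_{12}}$ trivially (equivalently, use Lemma \ref{HN-involutions in O^2(K)/Q} as the paper does: cosets $Q_{12}e$ with $e\in(T\cap O^2(K))\setminus Q_{12}$ contain involutions or elements of order four squaring to $t$), and one must Sylow-conjugate the resulting involutions into $\bar T\cap\bar F$ before contradicting $\Omega(\bar T\cap\bar F)\leq\bar{Q_{12}}$. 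Your remaining ingredients are sound or repairable: $O_{2'}(F)=1$ can be made precise (the image of the odd-order group $O_{2'}(F)\cap K$ in $K/Q_{12}\cong\alt(5)\wr 2$ is a normal odd-order subgroup, hence trivial, so $O_{2'}(F)\cap K=1$; in particular $3\nmid|O_{2'}(F)|$, and then your coprime-action argument with Lemma \ref{HN-3'-subgroups of centralizers} finishes), and your treatment of the degenerate case does work once you justify that an element normalizing a component's Sylow $2$-subgroup normalizes $\bar{Q_{12}}=O_2(\bar F)\prod_i S_{C_i}$ (distinct components commute elementwise), after which Burnside gives each component a normal $2$-complement lying in $O_{2'}(\bar F)=1$, forcing $E(\bar F)=1$. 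But as written, the proof does not get past the generation hypothesis of Goldschmidt's theorem.
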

\begin{proof}
Assume for a contradiction that $K<H$ then ${Q_{12}} \ntriangleleft H$. Consider $O_{3'}(H)$. By
Lemma \ref{HN-orbits of elements in Q}, the only proper subgroup of $Q_{12}$ which is normalized by
$K$ is $\<t\>$. So we have that $O_{3'}(H)\cap K\leq O_{3'}(H)\cap Q_{12}=\<t\>$. Since $O_{3'}(H)$
is normalized by $P$, by coprime action, $O_{3'}(H)$ is generated by elements commuting with
elements of $P^\#$. However by Lemmas \ref{HN-K/Q has shape alt(5) wr 2 i} and \ref{HN-K/Q has
shape alt(5) wr 2 ii}, for every $p \in P^\#$, $C_H(p) \leq K$. Therefore $O_{3'}(H)\leq K$ and so
$O_{3'}(H)=\<t\>$.

Set $M:=\<{Q_{12}}^H\>\trianglelefteq H$ then $M \leq O^2(H)$. Moreover $O_{3'}(M)\leq O_{3'}(H)$
and so $O_{3'}(M)=\<t\>$.  Therefore we have $P \leq M$. Suppose $C_M(P)=N_M(P)$. Then $M$ has a
normal $3$-complement which is a contradiction since $O_{3'}(M)=\<t\>$. Since $[P, N_H(P)\cap
{Q_{12}}]\leq P \cap {Q_{12}}=1$, we see that $N_H(P) \cap {Q_{12}}=C_H(P)\cap {Q_{12}}$. Suppose
$Q_{12} \in \syl_2(M)$ then $K \cap M= Q_{12}P$. By Lemma \ref{HN-K/Q has shape alt(5) wr 2 i},
$N_H(P)\leq K$ and so $N_M(P)\leq Q_{12}P$. By Lemma \ref{dedekind} (Dedekind), $N_M(P) \cap
Q_{12}P=(N_M(P) \cap Q_{12})P=C_{Q_{12}}(P)P\leq C_H(P)$ which is a contradiction. Therefore
$Q_{12}$ is not a Sylow $2$-subgroup of $M$ and so $M \cap K>Q_{12}P$.

Set $N:=O_{2'}(M)$. If $N$ is $3'$ then $N \leq O_{3'}(H) =\<t\>$ and so $N=1$. Otherwise $P \leq
N$ and then $[P,{Q_{12}}] \leq N \cap {Q_{12}}=1$ which is a contradiction. Therefore
$O_{2'}(M)=1$. Now, since $P \leq M\vartriangleleft H$, $H=MN_H(P)$ by a Frattini argument and so
$M=\<Q_{12}^H\>=\<Q_{12}^{N_H(P)M}\>=\<Q_{12}^M\>$ since $N_H(P)\leq K=N_G(Q_{12})$. Finally, we
may apply Theorem \ref{goldschmidt} to $\bar{M}=\<\bar{{Q_{12}}}^M\>$. As required, we have that
$O_{2'}(\bar{M})=1$ and since $\bar{{Q_{12}}}$ is strongly closed in $\bar{T}$ with respect to
$\bar{H}$, we have that $\bar{{Q_{12}}}$ is strongly closed in $\bar{M} \cap \bar{T}$ with respect
to $\bar{M}$. Thus $\bar{{Q_{12}}}=O_2(\bar{M})\Omega(\bar{T} \cap \bar{M})$. Since ${Q_{12}}$ is
not a Sylow $2$-subgroup of $M \leq O^2(H)$ we may find $e \in (M \cap T)\bs {Q_{12}}$. Then by
Lemma \ref{HN-involutions in O^2(K)/Q}, ${Q_{12}}e$ contains either involutions or elements of
order four squaring to $t$. In either case $\bar{{Q_{12}}}\bar{e} \cap \Omega(\bar{T} \cap
\bar{M})\neq 1$ and so ${Q_{12}} \nleq \Omega(\bar{T} \cap \bar{M})$. This contradiction proves
that $H=K$.
\end{proof}

\section{The Structure of the Centralizer of $u$}\label{HN-Section-CG(u)}

We now know the structure of the centralizer of an involution in $G$-conjugacy class $2\mathcal{B}$
and so we must determine the structure of the centralizer of an involution in $2\mathcal{A}$.
Recall that in Notation \ref{HN-Alt9notation} we fixed an  involution $u \in Q_2$ and we defined
$2\mathcal{A}$ to be the conjugacy class of involutions in $G$ containing $u$. By Lemma \ref{HN-at
least 2 classes of involution}, $2\mathcal{A}\neq 2 \mathcal{B}$. Let $L:=C_G(u)$ and
$\wt{L}=L/\<u\>$ and we continue to set $H=C_G(t)$ and $\bar{H}=H/\<t\>$. We will show that $L\sim
(2^.\HS):2$ and so we must identify that $\wt{L}$ has an index two subgroup isomorphic to the
sporadic simple group $\HS$. We first show that $\wt{L}$ has a subgroup $2\times \sym(8)$ and later
that the centre of this subgroup will lie outside of $O^2(\wt{L})$. We will use the information we
have about $C_G(t)=H$ and $N_G(E)$ to see the structure of some $2$-local subgroups of $\wt{L}$.
Once we have used extremal transfer to find the index two subgroup of $\wt{L}$ we are then able to
use this $2$-local information to apply a theorem due to Aschbacher \cite{AschbacherHS} to
recognize $\HS$. The Aschbacher result requires us to find $2$-local subgroups of shape $(4 *2_+^{1+4}).\sym(5)$ and $(4 \times 4 \times 4). \GL_3(2)$.

Recall using  Notation \ref{HN-Alt9notation} that $u \in F \leq Q_2 \leq C_G(a_2)$ and that $a_1$
normalizes $F$.

\begin{lemma}\label{HN-fours groups centralize A8}
$C_G(F)\cong 2\times 2 \times \alt(8)$ with $C_G(F)>C_G(u) \cap C_G(a_1)\cong \alt(8)$ and
$C_{\wt{L}}(\wt{F})\cong 2 \times \sym(8)$. Moreover if $F_0$ is any fours subgroup of $C_G(a_2)$
such that  $F_0^\# \subseteq 2\mathcal{A}$ then $C_G(F_0)\cong C_G(F)$.
\end{lemma}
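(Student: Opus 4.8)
The plan is to reduce everything to the identification of a single copy of $\alt(8)$ and then to assemble the direct product. First I would pin down $\<u,a_1\>$ by the explicit permutation calculation in the $\alt(12)$-coordinates of Notation \ref{HN-Alt9notation}: computing $a_1^{-1}a_1^{u}$ yields $(3,4)(5,6)\in\<u,a_1\>$, so $F\leq\<u,a_1\>$, and since $a_1$ cyclically permutes the three involutions of $F$ one gets $\<u,a_1\>=F\<a_1\>\cong\alt(4)$, with $a_1$ inducing the full $3$-cycle on $F^{\#}$. Consequently $C_G(u)\cap C_G(a_1)=C_G(F\<a_1\>)$, the element $a_1$ normalises both $F$ and $C_G(F)$ while centralising $C_G(F\<a_1\>)$, and $a_2$, $(7,8,9)$ and $t$ all centralise $F\<a_1\>$ (disjoint supports). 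In particular $t\in 2\mathcal{B}$ (by \ref{HN-at least 2 classes of involution}) lies in $D:=C_G(F\<a_1\>)$ and $R:=\<a_2,(7,8,9)\>\cong 3^2\leq D$.

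The heart of the proof is to show $D\cong\alt(8)$, and the clean route is to recognise $D$ through the centralizer of the involution $t\in D$. Since $C_G(t)=H\sim 2_+^{1+8}.(\alt(5)\wr 2)$ is now completely known, I would compute $C_D(t)=C_H(F\<a_1\>)$ directly inside $H$ and verify that it has shape $2_+^{1+4}.\sym(3)$, that $C_{O_2(C_D(t))}(X)=\<t\>$ for $X\in\syl_3(C_D(t))$, and that $t$ is not weakly closed in $O_2(C_D(t))$ with respect to $D$. Aschbacher's theorem \ref{Ascbacher M12} then forces $D$ into the list $2^3.\PSL_3(2)$, $\alt(8)$, $\alt(9)$, $\mathrm{M}_{12}$; as these have Sylow $3$-subgroups of orders $3$, $3^2$, $3^4$, $3^3$ respectively and $R\cong 3^2\leq D$, it remains only to check $R\in\syl_3(D)$ (for instance, any $3$-element of $N_D(R)\setminus R$ would centralise some member of $R$ and hence lie in $C_G(a_2)\cong 3\times\alt(9)$, forcing it back into $R$), which isolates $D\cong\alt(8)$. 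A parallel option is to verify the $3$-local hypotheses of Parker--Rowley (\ref{ParkerRowleyA8}) for $D$ directly, using that the $3$-cycle-type subgroups of $R$ have centralizer $3\times\alt(5)$ while $a_2$ has centralizer of order $18$ in $D$.

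With $D\cong\alt(8)$ in hand I would assemble $C_G(F)$. Since $a_1$ centralises $D$ and acts fixed-point-freely on $F\cong 2^2$, the groups $F$ and $D$ commute and meet trivially, so $F\times D\cong 2^2\times\alt(8)\leq C_G(F)$; moreover $D=C_{C_G(F)}(a_1)$ by construction, so coprime action (\ref{coprime action}) gives $C_G(F)=[C_G(F),a_1]D$, and an order count—using $C_{C_G(F)}(a_2)=3\times C_{\alt(9)}(F)$ of order $72$ (read off from $C_G(a_2)\cong 3\times\alt(9)$ with $C_{\alt(9)}(F)=(V\times\sym(3))\cap\alt(9)$, $V=\<(1,2),(3,4),(5,6)\>$) together with $C_H(F)$—forces $|C_G(F)|=4\,|\alt(8)|$ and hence $C_G(F)=F\times D\cong 2^2\times\alt(8)$, with $C_G(F)>D=C_G(u)\cap C_G(a_1)\cong\alt(8)$. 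For the statement in $\wt L=L/\<u\>$, note $N_G(F)/C_G(F)\cong\sym(3)$ (witnessed by $a_1$ and an involution such as $t$ or $s$); its stabiliser of $\<u\>$ is an involution $\tau\in C_G(u)=L$ inducing the transvection on $F$ fixing $u$, the preimage in $L$ of $C_{\wt L}(\wt F)$ is $C_G(F)\<\tau\>$, and checking that $\tau$ induces an outer automorphism of $D\cong\alt(8)$ gives $C_{\wt L}(\wt F)\cong 2\times\sym(8)$.

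Finally, for the ``moreover'' I would show every fours subgroup $F_0\leq C_G(a_2)$ with $F_0^{\#}\subseteq 2\mathcal{A}$ is $G$-conjugate to $F$: such an $F_0$ lies in $\alt(9)=O^3(C_G(a_2))$ and consists of double transpositions (the $2\mathcal{A}$-involutions of $\alt(9)$ by \ref{HN-images in alt9}), and using the $G$-fusion of $2\mathcal{A}$-involutions together with the transitivity data already in hand one checks that all such configurations fuse to $F$, whence conjugacy yields $C_G(F_0)\cong C_G(F)$. The main obstacle is squarely the recognition $D\cong\alt(8)$: the $\alt(8)$ is genuinely global—inside $C_G(a_2)$ only its order-$72$ fragment $C_{C_G(a_2)}(F)$ is visible, and inside $C_G(t)$ only $C_H(F\<a_1\>)$—so it cannot be read off from any single local subgroup and must be produced through the involution-centralizer recognition theorem; the delicate points will be the exact determination of $C_H(F\<a_1\>)$ and the verification of the non-weak-closure hypothesis of \ref{Ascbacher M12}.
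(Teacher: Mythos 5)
Your reduction of the lemma to identifying $D:=C_G(u)\cap C_G(a_1)=C_G(F\<a_1\>)$ and then assembling $C_G(F)=F\times D$ matches the paper's skeleton, but the route you choose for the crucial step $D\cong\alt(8)$ has a gap I do not believe can be filled with the tools available. Aschbacher's theorem (Theorem \ref{Ascbacher M12}) needs three things about $D$: that $C_D(t)$ has shape $2_+^{1+4}.\sym(3)$, that $C_{O_2(C_D(t))}(X)=\<t\>$ for $X\in\syl_3(C_D(t))$, and that $t$ is not weakly closed in $O_2(C_D(t))$ with respect to $D$. Only the second is accessible: $Q_1\leq C_G(F)\cap C_G(a_1)$ (because $F\leq Q_2$ and $[Q_1,Q_2]=1$ by Lemma \ref{HN-Q_i's}), and $C_{Q_1}(a_2)=Q_1\cap Q_2=\<t\>$. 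For the first, the known structure of $H$ exhibits only $Q_1\<a_2\>\sim 2_+^{1+4}.3$ inside $C_D(t)=C_H(F\<a_1\>)$; to get the full $\sym(3)$ you must produce an involution of $H$ which simultaneously centralizes $F$, $a_1$ and $t$ and inverts $a_2$ modulo $Q_1$, and no lemma in the paper yields such an element --- the natural candidates (the point stabilizer of $a_1$ in the $\dih(8)$ of Lemma \ref{HN-Normalizer H of P}, or $r_1$ of Notation \ref{HN-Alt9notation}) live in the $N_G(\<a_1\>)$-coordinates, while $F$ is defined in the $N_G(\<a_2\>)$-coordinates, and translating between the two embeddings is precisely the hard part you identify. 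The non-weak-closure hypothesis is worse: it is a statement about fusion inside $D$ itself, the group being identified, so it cannot be checked from outside. The paper avoids all of this: since $a_1\notin C_G(F)$, the projection of $C_G(a_1)=\<a_1\>\times O^3(C_G(a_1))$ with kernel $\<a_1\>$ embeds $D$ into the \emph{known} group $\alt(9)$; $D$ is proper there (Lemma \ref{HN-centralizer of the a9}), contains $Q_1\<a_2\>\sim 2_+^{1+4}.3$ and $R\cong 3\times 3$, and the maximal-subgroup lists of $\alt(9)$ and then $\alt(8)$ (the one surviving candidate $2^4{:}(\sym(3)\times\sym(3))$ dies because it contains no extraspecial group of order $2^5$ normalized by a $3$-element) force $D\cong\alt(8)$ with no fusion input whatsoever.

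The assembly step also fails as written. The coprime-action appeal is illegal: $\<a_1\>$ has order $3$ and $3$ divides $|C_G(F)|$ (which contains $a_2$ and $R$), so Theorem \ref{coprime action} does not give $C_G(F)=[C_G(F),a_1]D$; and even granting that, knowing $|C_{C_G(F)}(a_2)|=72$ and $|C_H(F)|$ cannot ``force'' $|C_G(F)|=4\,|\alt(8)|$ --- a handful of element centralizers never bounds a group order without a counting or recognition theorem. This upper bound is a genuine recognition step in the paper: one shows $R^\#\subset 3\mathcal{A}$ (no $3\mathcal{B}$-element commutes with a fours group, since Sylow $2$-subgroups of $C_G(z)$ are quaternion), computes each $C_M(r)=C_{C_G(r)}(F)$ ($r\in R^\#$, $M=C_G(F)$) as one of the two fours-group centralizer types $3\times 2^2\times\alt(5)$, $3\times 2^2\times\sym(3)$ inside $C_G(r)\cong 3\times\alt(9)$, matches these against the internal centralizers of $D\cong\alt(8)$, and then applies Corollary \ref{Cor-ParkerRowleyA8} to $M/F$ to conclude $M/F\cong\alt(8)$, i.e.\ $C_G(F)=F\times D$. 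Your remaining steps (obtaining $2\times\sym(8)$, and fusing all fours groups $F_0\leq C_G(a_2)$ with $F_0^\#\subseteq 2\mathcal{A}$ to $F$) are only gestured at, but they can be completed along the paper's lines --- by exhibiting $3\times\sym(5)$ inside $N_L(F)/F$ to exclude $2\times\alt(8)$, and by conjugating the pair $(b_1,F)$ to $(a_2,F')$ to link the two $C_G(a_2)$-classes of fours groups; it is the two gaps above that break the proof.
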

\begin{proof}
Set $M:=C_G(F)$. First observe that $F \leq O^3(C_G(a_2))\cong \alt(9)$ and the image of $F^\#$ in
$\alt(9)$ consists of involutions of cycle type $2^2$. Notice also that $\alt(9)$ has two classes
of such fours groups with representatives $\<(1,2)(3,4),(1,3)(2,4)\>$ and
$\<(1,2)(3,4),(3,4)(5,6)\>$. These subgroups of $\alt(9)$ have respective centralizers isomorphic
to $2 \times 2 \times \alt(5)$ and $2 \times 2 \times \sym(3)$ and respective normalizers
 $(\alt(4) \times \alt(5)):2$ and $\sym(4) \times \sym(3)$.

Given the image of $F$ in $O^3(C_G(a_2))$, we have that $M\cap C_G(a_2)\cong 3 \times 2 \times 2 \times
\sym(3)$. Let $R \in \syl_3(M \cap C_G(a_2))$ such
that $\<R,a_1\>$ is a Sylow $3$-subgroup of $N_G(F)$ (notice that $a_1$ permutes $F^\#$). Then $a_2 \in R$ and  $\<R,a_1\>$ is abelian and $R^\# \subseteq
3\mathcal{A}$ since no element of order three in $3\mathcal{B}$ commutes with a fours group.
Therefore by the earlier argument for each $r \in R^\#$, $C_G(r) \cap M\cong 3\times 2 \times 2
\times \alt(5)$ or $3\times 2 \times 2 \times \sym(3)$.

Consider $M \cap C_G(a_1)$ which is isomorphic to a subgroup of $\alt(9)\cong O^3(C_G(a_1))$. By Lemma
\ref{HN-centralizer of the a9}, $C_G(O^3(C_G(a_1)))=\<a_1\>$. In particular, $F$ does not commute
with $O^3(C_G(a_1))$ and so $M \cap C_G(a_1)$ is a proper subgroup of $O^3(C_G(a_1))$. By Lemma
\ref{HN-Q_i's}, we  have that $F\leq Q_2$ commutes with $Q_1\leq C_G(a_1)$. Also $F$ commutes with
$R\leq C_G(a_1)$ and so $|M \cap C_G(a_1)|$ is a multiple of $2^53^2$. Moreover $M \cap C_G(a_1)$ contains the subgroup $Q_1\<a_2\>\sim 2^{1+4}_+.3$.

We check the maximal subgroups of $\alt(9)$ (see \cite{atlas}) to see that $M \cap C_G(a_1)$ is
either a subgroup of $\alt(8)$ or the diagonal subgroup of index two in $\sym(5) \times \sym(4)$.
The latter possibility leads to a Sylow $2$-subgroup of order $2^5$ with centre of order four which
is impossible as $2_+^{1+4}\cong Q_2\leq M\cap C_G(a_1)$. So $M\cap C_G(a_1)$ is isomorphic to a
subgroup of $\alt(8)$. Suppose it is isomorphic to a proper subgroup of $\alt(8)$. We again check
the maximal subgroups of $\alt(8)$ (\cite{atlas}) to see that $M\cap C_G(a_1)$ is isomorphic to a
subgroup of $N_{\alt(8)}(\<(1,2)(3,4),(1,3)(2,4),(5,6)(7,8),(5,7)(6,8)\>)\sim 2^4:(\sym(3) \times
\sym(3))$. This subgroup can be seen easily in $\GL_4(2)$ as the subgroup of matrices of shape
\[\left(\begin{array}{cccc}
\ast & \ast & 0 & 0 \\
\ast & \ast & 0 & 0 \\
\ast & \ast & \ast & \ast \\
\ast & \ast & \ast & \ast
\end{array}\right).\] We calculate in this group that an extraspecial subgroup of order $2^5$ is not normalized by a element of order three. Therefore  $M \cap C_G(a_1)$ is not isomorphic to a subgroup of this matrix group. Thus $M\cap C_G(a_1)\cong \alt(8)$. In particular $M$ has a subgroup isomorphic to
$2\times 2 \times \alt(8)$.

Now we have that for every $r \in R^\#$, $C_M(r)\cong 3\times 2 \times 2 \times \sym(3)$ or
$3\times 2 \times 2 \times \alt(5)$. Now $R\leq C_M(a_1)\cong \alt(8)$ and so $R \in
\syl_3(C_M(a_1))$.  Moreover, $\alt(8)$ has two conjugacy classes of elements of order three. So we
may set $R=\{1,a_2,a_2^2,a_3,a_3^2,b_1,b_1^2,b_2,b_2^2\}$ where $a_2$ is conjugate to $a_3$ in
$C_M(a_1)$ and $b_1$ is conjugate to $b_2$ in $C_M(a_1)$ such that $C_{C_M(a_1)}(b_i)\cong 3 \times
\alt(5)$ ($i \in \{1,2\}$) and $C_{C_M(a_1)}(a_j)\cong 3 \times \sym(3)$ ($j \in \{2,3\}$). Now we
already have that $C_M(a_3) \cong C_M(a_2)\cong 3 \times 2 \times 2 \times \sym(3)$ and we have two
possibilities for the structure of the other $3$-centralizer. Therefore we must have that
$C_{M}(b_i)\cong 3 \times 2 \times 2 \times \alt(5)$. Now by coprime action $C_{M/F}(Fb_i)\cong
C_M(b_i)/F$ and $C_{M/F}(Fa_i)\cong C_M(a_i)/F$. Hence we may apply Corollary
\ref{Cor-ParkerRowleyA8} to $M/F$ to say that $M/F\cong \alt(8)$. Therefore $M\cong 2 \times 2
\times \alt(8)$.

Consider $N_L(F)$. We have seen that $N_G(F)/M\cong \sym(3)$ and so $[N_L(F):M]=2$. It follows that
$N_L(F)/F\cong 2 \times \alt(8)$ or $\sym(8)$. For $b_1\in R$, $C_{M}(b_1)\cong 3 \times 2 \times 2
\times \alt(5)$ and so  $C_{N_L(F)}(b_1)\sim 3 \times (2 \times 2 \times \alt(5)):2$ and
$C_{N_L(F)}(b_1)/F\sim 3 \times \sym(5)$ which is not a subgroup of $\alt(8) \times 2$. Thus we
must have that $N_L(F)/F\cong \sym(8)$ and so $C_{\wt{L}}(\wt{F})\cong 2 \times \sym(8)$.

Now let $F_0\leq C_G(a_2)$ have image $\<(1,2)(3,4),(1,3)(2,4)\>$ in $\alt(9)\cong O^3(C_G(a_2))$. Then
$C_G(a_2) \cap C_G(F_0)\cong 3 \times 2 \times 2 \times \alt(5)$. Now recall that $R\in \syl_3(M)$
and $M=C_G(F)\cong 2 \times 2 \times \alt(8)$ and so there exists $r \in R^\#$ such that $M \cap
C_G(r)\cong 3 \times 2 \times 2 \times \alt(5)$. Since every element in $R^\#$ is conjugate in $G$,
we have that $F_0$ is conjugate to $F$ in $G$. Thus $C_G(F_0)\cong C_G(F)$.
\end{proof}

Recall from Notation \ref{HN-Alt9notation} that $r_2$ is an involution in $O^3(C_G(a_2))$ which is
conjugate to $u$ and $r_2u$. In light of Lemma \ref{HN-fours groups centralize A8}, the following
result is a calculation in a group isomorphic to $2 \times 2 \times \alt(8)$.
\begin{lemma}\label{HN-HS-centralizer of r,t,u}
$C_{H \cap L}(r_2)\sim 2 \times 2 \times (2 \times 2 \times \alt(4)) :2$.
\end{lemma}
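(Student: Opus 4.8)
The plan is to carry out the computation inside $C_G(\langle u,r_2\rangle)$, which Lemma~\ref{HN-fours groups centralize A8} identifies as $2\times 2\times\alt(8)$, and to realize $C_{H\cap L}(r_2)$ as the centralizer of $t$ in that group. First I would set $F_0:=\langle u,r_2\rangle$ and observe, from the images recorded in Notation~\ref{HN-Alt9notation}, that $F_0\le O^3(C_G(a_2))\cong\alt(9)$ is a fours group every non-identity element of which has cycle type $2^2$; by Lemma~\ref{HN-at least 2 classes of involution} this gives $F_0^\#\subseteq 2\mathcal{A}$, so Lemma~\ref{HN-fours groups centralize A8} applies and $C_G(F_0)\cong 2\times 2\times\alt(8)$. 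Since $F_0$ is abelian it is centralized by all of $C_G(F_0)$ and is therefore exactly the central direct factor, while $A:=C_G(F_0)'\cong\alt(8)$ is the canonical complement. As $H\cap L=C_G(\langle t,u\rangle)$ and $t$ centralizes $F_0$ (both $[t,u]=1$ and $[t,r_2]=1$ are visible from the permutation images), I would then rewrite $C_{H\cap L}(r_2)=C_G(\langle t,u,r_2\rangle)=C_{C_G(F_0)}(t)$.

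Next I would locate $t$ inside $C_G(F_0)=F_0\times A$. Because $t\in 2\mathcal{B}$ while $F_0^\#\subseteq 2\mathcal{A}$, we have $t\notin F_0$, so $t=f\sigma$ with $f\in F_0$ and $\sigma$ a non-trivial involution of $A$; consequently $C_{C_G(F_0)}(t)=F_0\times C_A(\sigma)$. From the images, $\sigma=ut$ is an element of $O^3(C_G(a_2))\cong\alt(9)$ of cycle type $2^2$, so $\sigma\in 2\mathcal{A}$ by Lemma~\ref{HN-at least 2 classes of involution}. It then remains only to read off the two involution classes of $A\cong\alt(8)$: the $2^2$-type has centralizer $(2\times 2\times\alt(4)){:}2$ of order $96$, the $2^4$-type has centralizer $2^3{:}\sym(4)$ of order $192$. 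Once $\sigma$ is shown to be of $2^2$-type we obtain $C_A(\sigma)\cong(2\times 2\times\alt(4)){:}2$ and hence
\[
C_{H\cap L}(r_2)=F_0\times C_A(\sigma)\cong 2\times 2\times(2\times 2\times\alt(4)){:}2,
\]
which is the assertion.

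The main obstacle is exactly this last identification of the $A$-class of $\sigma$, that is, showing that $2\mathcal{A}$ meets $A$ in its $2^2$-type involutions and $2\mathcal{B}$ in its $2^4$-type (central) involutions rather than the reverse. I would settle it by a short fusion argument internal to $\alt(8)$: a $2^4$-type involution $\tau$ of $A$ is $2$-central in $A$ and commutes only with order-three elements of $3^2$-type, whereas a $2^2$-type involution commutes with a three-cycle of $A$; matching the $G$-centralizers of these commuting order-three elements (those in $3\mathcal{A}$ have centralizer $3\times\alt(9)$, those in $3\mathcal{B}$ have centralizer $3_+^{1+4}.(2^{.}\alt(5))$, as established earlier) against the structure forced by Lemmas~\ref{HN-images in alt9} and~\ref{HN-at least 2 classes of involution} forces $\tau\in 2\mathcal{B}$, and so $\sigma$ is of $2^2$-type. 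Alternatively one can bypass the correspondence altogether by computing $|C_G(\langle t,u,r_2\rangle)|=|C_{C_H(u)}(r_2)|$ directly from the known structure of $C_H(u)$ supplied by Lemma~\ref{HN-orbits of elements in Q} and matching the order against $384$. Either way the work is elementary once $C_G(F_0)\cong 2\times 2\times\alt(8)$ is available, which is why the statement is, as noted before it, essentially a calculation in $2\times 2\times\alt(8)$.
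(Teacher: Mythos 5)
Your framework is the paper's own: pass to $C_G(F_0)=F_0\times A$ with $A=C_G(F_0)'\cong\alt(8)$, write $t=f\sigma$ with $f\in F_0$, $\sigma\in A$, reduce to $C_{H\cap L}(r_2)=F_0\times C_A(\sigma)$, and then decide whether $\sigma$ has $2^2$- or $2^4$-type in $A$. You also correctly flag that last decision as the crux. The problem is that neither of your two proposed ways of settling it works. The fusion argument rests on a distinction that is vacuous here: every element of order three in $C_G(F_0)$ lies in $3\mathcal{A}$, because a conjugate of $z$ cannot centralize a fours group ($C_G(Z)/Q\cong 2^{.}\alt(5)$ has quaternion Sylow $2$-subgroups); this is exactly how the paper shows $R^\#\subseteq 3\mathcal{A}$ in Lemma \ref{HN-fours groups centralize A8}. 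Hence both the $3^2$-type and the $3$-cycle-type elements of order three in $A$ have the same $G$-centralizer $3\times\alt(9)$, and ``matching the $G$-centralizers of the commuting order-three elements'' against $3\times\alt(9)$ versus $3_+^{1+4}.(2^{.}\alt(5))$ can never distinguish a $2^2$-involution of $A$ from a $2^4$-involution. (Your preliminary assertion that $\sigma=ut$ ``from the images'' is also unjustified as written: the splitting $F_0\times A$ is not visible inside $O^3(C_G(a_2))$; one needs the extra remark that the image $(5,6)(7,8)$ of $ut$ lies in the perfect subgroup $\alt(\{5,\ldots,9\})\leq C_G(F_0)$, hence in $C_G(F_0)'=A$.) Your fallback — reading $|C_{C_H(u)}(r_2)|$ off the ``known structure'' of $C_H(u)$ from Lemma \ref{HN-orbits of elements in Q} — is circular: at this stage only $|H\cap L|=2^{11}\cdot 3\cdot 5$ and the quotient modulo $Q_{12}\cap L$ are known, not the action of $r_2$ on $C_{Q_{12}}(u)$; the paper proves the present lemma precisely in order to pin that action down in the next lemma, where $(H\cap L)/(Q_{12}\cap L)\cong\sym(5)$ is established.

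The missing ingredient, which the paper invokes explicitly and your proposal never uses, is the structure of $C_G(F_0)\cap C_G(a_2)$. Since $F_0$ has image $\langle(1,2)(3,4),(1,3)(2,4)\rangle$ in $O^3(C_G(a_2))$, Lemma \ref{HN-fours groups centralize A8} gives $C_G(F_0)\cap C_G(a_2)\cong 3\times 2\times 2\times\alt(5)$, i.e.\ $C_A(a_2)\cong 3\times\alt(5)$, so $a_2$ is a $3$-cycle-type element of $A$. Since $f$ is central in $C_G(F_0)$ we have $[\sigma,a_2]=[t,a_2]=1$, and no $2^4$-type involution of $\alt(8)$ commutes with a $3$-cycle; therefore $\sigma$ has $2^2$-type, which closes your argument. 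The paper draws the same conclusion by counting instead: if $\sigma$ were $2$-central in $A$, then $C_G(\langle F_0,t\rangle)\cap C_G(a_2)\cong 2\times 2\times 2\times 3$ of order $24$, contradicting the abelian subgroup $3\times 2\times 2\times 2\times 2$ of $C_G(F_0)\cap C_G(a_2)$ containing $t$. Either way, the isomorphism $C_G(F_0)\cap C_G(a_2)\cong 3\times 2\times 2\times\alt(5)$ is indispensable, and without it your proof does not go through.
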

\begin{proof}
It is clear from Notation \ref{HN-Alt9notation} that $\<r_2,u\>^\# \subseteq 2\mathcal{A}$. Set
$F_0:=\<r_2,u\>$ then by Lemma \ref{HN-fours groups centralize A8}, $C_G(F_0)\cong 2\times 2 \times
\alt(8)$. Notice also from Notation \ref{HN-Alt9notation} that $t \in C_G(F_0) \cap C_G(a_2)\cong 3
\times 2 \times 2 \times \alt(5)$ which has an abelian subgroup containing $t$ isomorphic to $3
\times 2 \times 2 \times 2 \times 2$. Consider $\<F_0,t\> \cap C_G(F_0)'$ (of course
$C_G(F_0)'\cong \alt(8)$) which has order two. If $\<F_0,t\> \cap C_G(F_0)'$ is $2$-central in
$C_G(F_0)'$ then $a_2 \in C_G(F_0)' \cap C_G(t)$ is isomorphic to the subgroup of $\alt(8)$ of
shape $2_+^{1+4}.\sym(3)$. However this implies that $C_G(\<F_0,t\>) \cap C_G(a_2)\cong 2 \times 2
\times 2 \times 3$ which is not the case. Thus $\<F_0,t\> \cap C_G(F_0)'$ is not $2$-central in
$C_G(F_0)'$ and so $C_G(F_0)' \cap C_G(t)$ is isomorphic to a subgroup of $\alt(8)$ of shape $(2
\times 2 \times \alt(4)) :2$. Thus $C_{H \cap L}(r_2)\sim 2 \times 2 \times (2 \times 2 \times
\alt(4)) :2$.
\end{proof}

\begin{lemma}\label{HN-HS-Sylow 2}
$H\cap L$ contains a Sylow $2$-subgroup of $L$ which has order $2^{11}$ and centre $\<t,u\>$.
\end{lemma}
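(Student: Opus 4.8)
The plan is to exhibit a $2$-subgroup of $H \cap L = C_G(t) \cap C_G(u)$ whose order is $2^{11}$, verify it is Sylow in $L$, and then locate its centre. The natural starting point is the subgroup $C_{H}(u) = C_G(t) \cap C_G(u) = H \cap L$. I would first compute $|H \cap L|$ using my detailed knowledge of $H = C_G(t)$. Recall from Lemma \ref{HN-Q_i's} and the structure theorem for $H$ that $H \sim 2_+^{1+8}.(\alt(5)\wr 2)$ with $Q_{12} = O_2(H)$ extraspecial of order $2^9$. Since $u \in Q_2 \leq Q_{12}$, I would locate $\bar u = Q_{12}u/\langle t\rangle$... more precisely identify the $H$-class of the involution $u$ inside $Q_{12}$. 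By Lemma \ref{HN-orbits of elements in Q}, $u \in 2\mathcal{A}$ lies in an $H$-orbit of length $120$ on $Q_{12}$, with $C_H(u)Q_{12}/Q_{12} \cong \alt(5)\times 2$ or $\sym(5)$ and $|C_K(u)| = |C_H(u)| = 2^{11}3\cdot 5$.

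\textbf{Extracting the Sylow $2$-subgroup.} Having $|C_H(u)| = 2^{11}3\cdot 5$, a Sylow $2$-subgroup $T_0$ of $C_H(u) = H \cap L$ has order exactly $2^{11}$. The crux is then to show that $T_0 \in \syl_2(L)$, i.e. that $|L|_2 = 2^{11}$. For this I would use the involution fusion already established. Since $t$ is a $2$-central involution of $G$ (its centralizer contains a Sylow $2$-subgroup $T$ of $G$ with $\mathcal{Z}(T) = \langle t\rangle$ by Lemmas \ref{HN-Q_i's} and \ref{HN-T is sylow in G}, noting $K=H$), any Sylow $2$-subgroup of $L = C_G(u)$ meets the $G$-class of $t$. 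The standard argument is: take $S_L \in \syl_2(L)$; then $\mathcal{Z}(S_L)$ contains an involution, and I would argue that some conjugate of $t$ lies in $\mathcal{Z}(S_L)$ so that $S_L \leq C_G(t')$ for a suitable conjugate $t'$ of $t$, forcing $S_L$ into a conjugate of $H \cap L$. More directly, since $t \in H \cap L$ centralizes $u$ and $t$ is $2$-central in $G$, I would show $\langle t, u\rangle$ contains an element $t'$ conjugate to $t$ whose centralizer in $L$ is all of $H \cap L$; then $|L|_2 \leq |C_L(t')|_2 = |H\cap L|_2 = 2^{11}$, while $T_0 \leq L$ already gives $|L|_2 \geq 2^{11}$, yielding equality and $T_0 \in \syl_2(L)$.

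\textbf{Identifying the centre.} To compute $\mathcal{Z}(T_0)$ I would work inside $H$. Since $T_0 \leq T \in \syl_2(H) = \syl_2(G)$ and $\mathcal{Z}(T) = \langle t\rangle$, certainly $t \in \mathcal{Z}(T_0)$; also $u \in \mathcal{Z}(C_H(u)) \supseteq \mathcal{Z}(T_0)$ forces $u \in \mathcal{Z}(T_0)$ (as $u$ is centralized by all of $C_H(u)$). Thus $\langle t, u\rangle \leq \mathcal{Z}(T_0)$, and I must show there is nothing more. Using the description $C_H(u)Q_{12}/Q_{12}\cong \alt(5)\times 2$ or $\sym(5)$ together with $C_{Q_{12}}(u) = C_{\bar{Q_{12}}}(u)$ of order $2^5$ (from $|C_{\bar{Q_{12}}}(u)| = 2^4$ by Lemma \ref{HN-orbits of elements in Q}, pulling back through $\langle t\rangle$), I would verify directly that the centre of $T_0$ is no larger than $\langle t,u\rangle$: any further central involution would have to centralize both the extraspecial-type part $C_{Q_{12}}(u)$ and the complement acting on it, which an orbit/action count rules out.

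\textbf{Main obstacle.} The genuinely delicate step is proving $T_0 \in \syl_2(L)$, equivalently $|L|_2 = 2^{11}$; this is where one must control $2$-local fusion of $u$ and $t$ well enough to bound $|C_G(u)|_2$ from above without yet knowing the full structure of $L$. The cleanest route is the $2$-centrality of $t$: I expect to show that every Sylow $2$-subgroup of $L$ is contained in the $t$-centralizer of some conjugate, reducing the bound to $|H \cap L|_2$. The centre computation, by contrast, is a routine consequence of the orbit data in Lemma \ref{HN-orbits of elements in Q} and the fact $\mathcal{Z}(T) = \langle t\rangle$, so it should present no real difficulty once the Sylow claim is in hand.
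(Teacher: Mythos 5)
Your first step (getting $|H\cap L|_2 = 2^{11}$ from Lemma \ref{HN-orbits of elements in Q}) agrees with the paper, but your route to the Sylow claim has a genuine gap, and it is precisely the step you flag as delicate. Your plan is: embed $S_L \in \syl_2(L)$ in $T'\in \syl_2(G)$, use $\mathcal{Z}(T')=\<t'\>$ with $t'$ conjugate to $t$ to get $t'\in \mathcal{Z}(S_L)$, hence $S_L \leq C_G(t')\cap C_G(u)$. This much works. But to conclude $|S_L|\leq 2^{11}$ you must bound $|C_G(t')\cap C_G(u)|_2$, and after conjugating $t'\mapsto t$ this becomes the assertion that $|C_H(v)|_2 \leq 2^{11}$ for \emph{every} involution $v\in 2\mathcal{A}\cap H$ — not only those lying in $Q_{12}$. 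Lemma \ref{HN-orbits of elements in Q} covers only $v\in Q_{12}$; Lemma \ref{HN-involutions in O^2(K)/Q} shows involutions of $H$ projecting nontrivially into $O^2(\bar K)$ lie in $2\mathcal{B}$; but $H\setminus O^2(H)$ does contain $2\mathcal{A}$-involutions (e.g.\ $r_1$, see Notation \ref{HN-Alt9notation} and Lemma \ref{HN-Transfer-O^2(H) is proper}), and no established lemma bounds the $2$-part of their $H$-centralizers. Equivalently, you are implicitly assuming that the commuting pair $(t',u)$ is $G$-conjugate to $(t,u)$, a fusion statement about $2\mathcal{B}\times 2\mathcal{A}$ pairs that is nowhere proved and not obvious. (Your "more direct" variant fails for the same reason: the only conjugate of $t$ inside $\<t,u\>$ is $t$ itself, and knowing $C_L(t)=H\cap L$ says nothing about Sylow $2$-subgroups of $L$ that avoid $t$.)

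The paper closes this hole by inverting your order of logic: the centre computation, which you defer as "routine", is the engine of its Sylow argument. It first shows $\mathcal{Z}(S_u)=\<t,u\>$ concretely — since $C_{Q_{12}}(u)\trianglelefteq H\cap L$ one has $Q_1\leq S_u$, so $\mathcal{Z}(S_u)\leq C_{S_u}(Q_1)\leq Q_2$ by Lemma \ref{HN-Q_i's} ($C_G(Q_1)=Q_2\<a_1\>$), whence $\mathcal{Z}(S_u)\leq \mathcal{Z}(C_{Q_2}(u))=\<t,u\>$ using that $Q_2$ is extraspecial of order $2^5$ (note your hand-waved "orbit/action count" would need exactly this). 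Then comes the fusion fact you never use: $u$ and $ut$ are conjugate in the extraspecial group $Q_{12}$ while $t\not\in 2\mathcal{A}$, so $N_G(\<t,u\>)\leq C_G(t)$. Finally, if $S_u < T_u \in \syl_2(L)$, then $N_{T_u}(S_u)>S_u$ normalizes $\mathcal{Z}(S_u)=\<t,u\>$, hence lies in $N_L(\<t,u\>)\leq H\cap L$, contradicting $S_u\in\syl_2(H\cap L)$. This is entirely local to the pair $(t,u)$ and never needs control of arbitrary $2\mathcal{A}$-involutions in $H$. To repair your proposal you would either have to prove the missing conjugacy of pairs, or bound $|C_H(r_1)|_2$ and its like directly — both substantially harder than the paper's three-line normalizer argument.
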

\begin{proof}
Let $S_u$ be a Sylow $2$-subgroup of $C_L(t)$. We have that $u \in Q_2 \leq Q_{12}$ and since $u
\in 2\mathcal{A}$,  we may apply Lemma \ref{HN-orbits of elements in Q} to see that
$|C_{{H}}({u})|=2^{11}.3.5$. Therefore $|S_u|=2^{11}$. Now, $u \in Q_2$ and $[Q_1,Q_2]=1$ (by Lemma
\ref{HN-Q_i's}) so  we have that $Q_1\leq C_{O_2(H)}(u) \leq S_u$. Moreover, $\mathcal{Z}(S_u)\leq
C_{S_u}(Q_1) \leq Q_2$. Therefore $\mathcal{Z}(S_u)\leq \mathcal{Z}(C_{Q_2}(u))=\<t,u\>$ since
$Q_2$ is  extraspecial of order $2^5$. Hence $\mathcal{Z}(S_u)=\<t,u\>$. Since $\<t,u\>\leq Q_{12}$
and $Q_{12}$ is extraspecial, $u$ is conjugate to $ut$ in $Q_{12}$. Therefore $N_G(\<t,u\>)\leq
C_G(t)$. So let $S_u \leq T_u\in \syl_2( L)$ then $N_{T_u}(S_u) \leq N_{L}(\<t,u\>) \leq H
\cap L$. Thus $S_u$ is a Sylow $2$-subgroup of $L$.
\end{proof}

\begin{lemma}
$(H \cap L)/({Q_{12}} \cap L) \cong \sym(5)$.
\end{lemma}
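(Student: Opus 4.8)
The plan is to compute the two groups occurring in the quotient explicitly and then identify the quotient by analysing the action on $\bar{Q_{12}}=Q_{12}/\langle t\rangle$. First I would reduce everything to a centraliser computation. Since $t\in\mathcal{Z}(H)$, $u\in\mathcal{Z}(L)$ and $[t,u]=1$, we have $H\cap L=C_G(\langle t,u\rangle)=C_H(u)$ and $Q_{12}\cap L=C_{Q_{12}}(u)$. As $u\in 2\mathcal{A}$ while $\mathcal{Z}(Q_{12})=\langle t\rangle$ lies in $2\mathcal{B}$ (Lemmas \ref{HN-Q_i's} and \ref{HN-at least 2 classes of involution}), $u$ is a non-central element of the extraspecial group $Q_{12}\cong 2_+^{1+8}$, so $[Q_{12},u]=\langle t\rangle$ and $[Q_{12}:C_{Q_{12}}(u)]=2$, giving $|Q_{12}\cap L|=2^8$. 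By Lemma \ref{HN-orbits of elements in Q} we have $|C_H(u)|=2^{11}\cdot 3\cdot 5$ and $C_H(u)Q_{12}/Q_{12}\cong \alt(5)\times 2$ or $\sym(5)$. An isomorphism theorem then yields $(H\cap L)/(Q_{12}\cap L)=C_H(u)/(C_H(u)\cap Q_{12})\cong C_H(u)Q_{12}/Q_{12}$, a group of order $2^{11}\cdot 3\cdot 5/2^8=120$. Thus everything comes down to excluding the possibility $\alt(5)\times 2$.

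The heart of the argument is to rule out $\alt(5)\times 2$, and the cleanest distinguishing invariant is the centre (trivial for $\sym(5)$, of order two for $\alt(5)\times 2$), equivalently the isomorphism type of a Sylow $2$-subgroup of the quotient (dihedral of order $8$ for $\sym(5)$, elementary abelian $2^3$ for $\alt(5)\times 2$). Since $Q_{12}$ is extraspecial, conjugation makes $\bar{Q_{12}}$ an $8$-dimensional symplectic $\mathbb{F}_2$-space on which $H/Q_{12}\cong \alt(5)\wr 2$ acts (Lemmas \ref{HN-K/Q has shape alt(5) wr 2 ii} and \ref{extraspecial outer automorphisms}); and because the non-central conjugacy classes of $Q_{12}$ are the pairs $\{x,xt\}$, the image $C_H(u)Q_{12}/Q_{12}$ is exactly the stabiliser in $\alt(5)\wr 2$ of the vector $\bar u$. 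I would first recover the module structure: each order-three element $z_i$ acts fixed-point-freely (Lemma \ref{HN-orbits of elements in Q}), so by Lemma \ref{prelim-alt5 action} the base group $M_1M_2/Q_{12}\cong\SL_2(4)\times\SL_2(4)$ acts with $\bar{Q_{12}}$ a tensor product $V_1\otimes_{\mathbb{F}_4}V_2$ of two natural $\SL_2(4)$-modules, the wreath involution interchanging the two factors. Identifying $\bar u$ as a non-degenerate (rank-two) tensor, its stabiliser in the base group is a ``diagonal'' $\SL_2(4)\cong\alt(5)$, and the order $120$ forces the wreath involution to contribute the remaining factor of two. The decisive step is then to show this involution induces an \emph{outer} automorphism of the diagonal $\alt(5)$ (so the extension is $\sym(5)$) rather than centralising it (which would give $\alt(5)\times 2$); equivalently, that the order-$8$ Sylow $2$-subgroup of the quotient contains an element of order four.

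I expect verifying that the wreath-swap acts as an outer automorphism to be the main obstacle, since the swap is only pinned down up to the diagonal $\alt(5)$ and a direct computation of its action on $V_1\otimes V_2$ is delicate. A more group-theoretic route, which I would pursue in parallel, is to locate a genuine element of order four in $C_H(u)$ mapping to order four in the quotient by working inside a fours-group centraliser: by Lemma \ref{HN-fours groups centralize A8} the subgroup $F\leq Q_2$ with $u\in F$ satisfies $C_G(F)\cong 2\times 2\times\alt(8)$ and $C_{\wt{L}}(\wt{F})\cong 2\times\sym(8)$, with $t$ projecting into the $\alt(8)$-factor. Tracing the section $C_H(u)/(Q_{12}\cap L)$ through this copy of $\sym(8)$, and using Lemma \ref{HN-HS-centralizer of r,t,u} to pin down $C_{\alt(8)}(\bar t)$, should exhibit a transposition-type involution lying outside the derived group, forcing the quotient to have trivial centre. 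Either route delivers $(H\cap L)/(Q_{12}\cap L)\cong\sym(5)$, and I would write up whichever makes the non-abelian (dihedral) structure of the Sylow $2$-subgroup most transparent.
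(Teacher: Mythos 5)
You handle the routine part correctly, and it is the same reduction the paper makes: $H\cap L=C_H(u)$, $Q_{12}\cap L=C_{Q_{12}}(u)$ has index two in $Q_{12}$, and Lemma \ref{HN-orbits of elements in Q} leaves exactly the dichotomy $\sym(5)$ versus $\alt(5)\times 2$ for a group of order $120$. The problem is that neither of your routes actually resolves the dichotomy, and Route A \emph{cannot} be completed from the data it uses. Model the tensor product as $2\times 2$ matrices $M$ over $\GF(4)$, with the base group acting by $M\mapsto g_1Mg_2^T$ and quadratic form $q(M)=\mathrm{Tr}_{\GF(4)/\GF(2)}(\det M)$; then $\bar u$ is a vector with $\det M=1$. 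There are two inequivalent ways to extend this action of $\alt(5)\times\alt(5)$ to the wreath involution inside $\out(Q_{12})$: the swap can act as $M\mapsto M^T$, or as $M\mapsto \phi(M)^T$ with $\phi$ the Frobenius of $\GF(4)$. Both preserve $q$, both give subgroups abstractly isomorphic to $\alt(5)\wr 2$ (the twisted swap is conjugate to the plain swap in $\aut(\alt(5)\times\alt(5))$), and in both each $z_i$ acts fixed-point-freely. But in the first model the stabiliser of $\bar u$ is $\alt(5)\times 2$: since $g\mapsto g^{-T}$ is \emph{inner} in $\SL_2(4)$ (it is conjugation by the matrix $J$ interchanging the two basis vectors, and $J\in\SL_2(4)$ in characteristic two), a correction of the swap by an element of the twisted diagonal centralises the diagonal $\alt(5)$ and fixes $\bar u$. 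In the second model the same computation produces $h\mapsto J\phi(h)J^{-1}$, an outer automorphism, and the stabiliser is $\sym(5)$. The two embeddings are distinguished precisely by whether the $240$ elements of order four of $Q_{12}$ form one $K$-orbit or two --- exactly the question Lemma \ref{HN-orbits of elements in Q} leaves open. So the "delicate computation" you defer is not delicate but impossible: the module-theoretic input you allow yourself is consistent with both answers, and additional information about $G$ is unavoidable.

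Route B names the right additional information --- $r_2$ and Lemma \ref{HN-HS-centralizer of r,t,u} --- which is indeed what the paper uses, but your proposed punchline is logically insufficient: an involution "lying outside the derived group" exists in $\alt(5)\times 2$ as well (its central involution), so exhibiting one forces nothing. What must be shown is that some involution outside $O^2$ of the quotient fails to \emph{centralise} $O^2$, and you cannot read that off from $|C_{H\cap L}(r_2)|$ alone, because the image of a centraliser in a quotient may be properly contained in the centraliser of the image. The paper's mechanism for making quotient-centrality bite is a contradiction argument on the module $\bar{V}$, where $V=C_{Q_{12}}(u)$: since $[V,V]\leq\<t\>$, the order-$120$ quotient acts on $\bar{V}=V/\<t\>$; if the quotient were $\alt(5)\times 2$, then $\bar{r_2}$, being an involution commuting with the order-three element $\bar{a_2}$, would be its central involution, so $C_{\bar{V}}(r_2)$ and $[\bar{V},r_2]$ would be $O^2$-invariant; Lemma \ref{lem-cenhalfspace} together with the bound $|C_V(r_2)|\leq 2^5$ extracted from Lemma \ref{HN-HS-centralizer of r,t,u} then forces every $O^2$-chief factor of $\bar{V}$ to be trivial, contradicting the nontrivial action of $a_2$ (every nontrivial irreducible $\GF(2)\alt(5)$-module has dimension at least four). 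That contradiction argument, or an equivalent way of converting the $r_2$-centraliser data into a statement about the action, is the content missing from your proof.
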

\begin{proof}
Using Lemma \ref{HN-orbits of elements in Q} we have that $C_H(u)/C_{Q_{12}}(u)\cong \alt(5) \times
2$ or $\sym(5)$. We suppose for a contradiction that $(H \cap L) /(Q_{12} \cap
L)=C_H(u)/C_{Q_{12}}(u)\cong C_H(u){Q_{12}}/{Q_{12}}\cong 2 \times \alt(5)$. Now set
$V:=C_{Q_{12}}(u)$ then $|V|=2^8$ and $\bar{V}$ is normalized by $C_H(u)/V\cong 2 \times \alt(5)$.

Recall from Notation \ref{HN-Alt9notation} that $r_2$ is an involution in $O^3(C_G(a_2))$ and from
Lemma \ref{HN-alt9 observations} that $r_2\in C_H(a_2)\bs  Q_2$. Since $[r_2,a_2]=1$,
$[Vr_2,Va_2]=1$ and therefore $Vr_2\in \mathcal{Z}(C_H(u)/V)$. In particular, $C_{\bar{V}}(r_2)$ is
preserved by $O^2(C_H(u)/V)\cong \alt(5)$. Since $Va_2$ acts non-trivially on $\bar{V}$,
$O^2(C_H(u)/V)$ acts non-trivially. This is to say that there exists a non-central
$O^2(C_H(u)/V)$-chief factor of $\bar{V}$. Moreover, this chief factor has order at least $2^4$.

By Lemma \ref{lem-cenhalfspace} $(ii)$, $|C_{\bar{V}}(r_2)|\geq 2^4$. Now Lemma
\ref{HN-HS-centralizer of r,t,u}  gives us that $C_{H \cap L}(r_2)\sim 2 \times 2 \times (2 \times
2 \times \alt(4)) :2$. Clearly $C_V(r_2)$ is a normal $2$-subgroup of $C_{H \cap L}(r_2)$. However
$O_2(C_{H \cap L}(r_2))$ has order $2^6$ and contains $r_2$. Therefore $|C_V(r_2)|\leq 2^5$ and so
by Lemma \ref{Prelims-p centralizers on class 2 groups}, $|C_{\bar{V}}(r_2)|\leq 2^5$. Thus
$|C_{\bar{V}}(r_2)|=2^4$ or $2^5$. Suppose first that $|C_{\bar{V}}(r_2)|=2^4$ then $\bar{u} \in
C_{\bar{V}}(r_2)$ is normalized by $O^2(C_H(u)/V)$ and so $C_{\bar{V}}(r_2)$ is necessarily a sum of
trivial $O^2(C_H(u)/V)$-modules. Moreover $\bar{V}/C_{\bar{V}}(r_2)$ has dimension three and is
therefore also a sum of trivial $O^2(C_H(u)/V)$-modules. This is a contradiction.

So suppose instead that $|C_{\bar{V}}(r_2)|=2^5$. Then $|[\bar{V},r_2]|=2^2$ by Lemma
\ref{lem-cenhalfspace} $(i)$. Furthermore $[\bar{V},r_2]$  is preserved by $O^2(C_H(u)/V)$. Thus
$[\bar{V},r_2]$ is a sum of two trivial $O^2(C_H(u)/V)$-modules. Since $[\bar{V},r_2]\leq
C_{\bar{V}}(r_2)$ (Lemma \ref{lem-cenhalfspace} $(ii)$), it follows that $C_{\bar{V}}(r_2)$ is also
a sum of trivial $O^2(C_H(u)/V)$-modules as is $\bar{V}/C_{\bar{V}}(r_2)$. Again this gives us a
contradiction. Hence we may conclude that $C_H(u)/C_{Q_{12}}(u)\cong C_H(u){Q_{12}}/{Q_{12}}\cong
\sym(5)$.
\end{proof}

\begin{lemma}\label{HN-HS-element of order five}
Let $(L \cap {Q_{12}})e \in (L \cap H)/(L \cap {Q_{12}})$ have order five then
$C_{Q_{12}}(e) \cong [{Q_{12}},e] \cong 2_{-}^{1+4}$.
\end{lemma}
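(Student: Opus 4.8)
The plan is to regard $e$ as a genuine $5$-element and to study its (coprime) action on the extraspecial group $Q_{12}\cong 2_+^{1+8}$. Since $(L\cap H)/(L\cap Q_{12})\cong\sym(5)$ and $L\cap Q_{12}$ is a $2$-group, I would first replace $e$ by a representative of its coset having order five, which is possible by Schur--Zassenhaus (take a Sylow $5$-subgroup of $L\cap H$). Then $e\in L\cap H$ centralizes both $t$ and $u$, and $e\in H=N_G(Q_{12})$ normalizes $Q_{12}$ with $[\mathcal{Z}(Q_{12}),e]=[\langle t\rangle,e]=1$. Thus conjugation by $e$ is an automorphism of $Q_{12}$ of order coprime to $2$ fixing $\mathcal{Z}(Q_{12})$, and Lemma~\ref{prelims-extraspecial and a coprime aut} applies: either (i) $Q_{12}=[Q_{12},e]$ and $C_{Q_{12}}(e)=\langle t\rangle$, or (ii) $C_{Q_{12}}(e)$ and $[Q_{12},e]$ are both extraspecial, with $Q_{12}=C_{Q_{12}}(e)[Q_{12},e]$ and $C_{Q_{12}}(e)\cap[Q_{12},e]=\langle t\rangle$.

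Case (i) is immediately excluded, because $u\in Q_2\leq Q_{12}$ satisfies $[u,e]=1$ (as $e\in L=C_G(u)$) and $u\notin\langle t\rangle$ (since $u\in 2\mathcal{A}$, $t\in 2\mathcal{B}$ and $2\mathcal{A}\neq 2\mathcal{B}$ by Lemma~\ref{HN-at least 2 classes of involution}); hence $C_{Q_{12}}(e)>\langle t\rangle$ and we are in case (ii). Writing $|C_{Q_{12}}(e)|=2^{2a+1}$ and $|[Q_{12},e]|=2^{2b+1}$, the two relations $C_{Q_{12}}(e)\cap[Q_{12},e]=\langle t\rangle$ and $C_{Q_{12}}(e)[Q_{12},e]=Q_{12}$ of order $2^{9}$ give $a+b=4$. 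To force $a=b=2$ I would use two observations. First, $e$ acts nontrivially on $Q_{12}$ (as $C_G(Q_{12})=\langle t\rangle$ by Lemma~\ref{HN-Q_i's} while $e$ has order five), so $[Q_{12},e]>\langle t\rangle$ and $b\geq 1$. Second, the minimal faithful $\GF(2)\langle e\rangle$-module has dimension four (the order of $2$ modulo $5$ is four), and $e$ acts fixed-point-freely on $[Q_{12},e]/\langle t\rangle$, so $2b$ is a multiple of $4$; thus $b$ is even, $b\in\{2,4\}$. Combined with $a\geq 1$ (as $u,t\in C_{Q_{12}}(e)$, so $a\geq1$ and $b\leq 3$) this yields $b=a=2$, so both subgroups are extraspecial of order $2^{5}$.

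Finally I would determine the isomorphism type through the orthogonal geometry on $\overline{Q_{12}}:=Q_{12}/\langle t\rangle$, a nondegenerate quadratic space of $+$ type (as $Q_{12}\cong 2_+^{1+8}$) on which $e$ acts preserving both the commutator bilinear form and the squaring quadratic form. Since $C_{\overline{[Q_{12},e]}}(e)=1$, the automorphism $e$ induces on $[Q_{12},e]$ is nontrivial of order five; being of odd order it cannot be inner, so $5\mid |\out([Q_{12},e])|$. By Theorem~\ref{extraspecial outer automorphisms} this outer automorphism group is $\mathrm{O}^+_4(2)$ or $\mathrm{O}^-_4(2)$, of orders $72$ and $120$ respectively, and only the latter has order divisible by five; hence $[Q_{12},e]\cong 2_-^{1+4}$. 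For the centralizer, coprime action gives the orthogonal decomposition $\overline{Q_{12}}=C_{\overline{Q_{12}}}(e)\perp[\overline{Q_{12}},e]$ into two nondegenerate $4$-spaces; since the type of a perpendicular sum is multiplicative and the total type is $+$, the $-$ type of $[\overline{Q_{12}},e]$ forces $C_{\overline{Q_{12}}}(e)$ to be of $-$ type as well, so $C_{Q_{12}}(e)\cong 2_-^{1+4}$. The main obstacle is precisely this last step: one must verify that the fixed and commutator subspaces are genuinely perpendicular and nondegenerate (which uses that both are extraspecial from case (ii)) and that the $\pm$ types multiply as claimed under the $e$-invariant quadratic form; once this orthogonal bookkeeping is in place the conclusion follows at once.
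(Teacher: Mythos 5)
Your proof is correct and takes essentially the same route as the paper's: pass to a coprime (order five) representative, apply Lemma~\ref{prelims-extraspecial and a coprime aut} and rule out the fixed-point-free case via $u\in C_{Q_{12}}(e)\setminus\langle t\rangle$, pin down the orders using the fixed-point-free action of $e$ on $[\bar{Q_{12}},e]$, deduce $[Q_{12},e]\cong 2_-^{1+4}$ from $5\mid|\out([Q_{12},e])|$ via Theorem~\ref{extraspecial outer automorphisms}, and read off the type of $C_{Q_{12}}(e)$ from the decomposition of $Q_{12}\cong 2_+^{1+8}$. The final ``orthogonal bookkeeping'' you flag as the main obstacle is exactly the multiplicativity of type in a central product of extraspecial groups, which the paper invokes in one line (``since $Q_{12}$ is extraspecial of plus type''), so no genuinely new verification is needed there.
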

\begin{proof}
By Lemma \ref{HN-Q_i's}, $C_G({Q_{12}})\leq {Q_{12}}$ and so  $e$ acts non-trivially on $Q_{12}$ and since $(L \cap {Q_{12}})e $ has order five, $e$ describes an automorphism of $Q_{12}$ of order five. We have that $e$ centralizes $u$ and so $C_{Q_{12}}(e)>\<t\>$. Hence by Lemma \ref{prelims-extraspecial and a coprime aut}, $C_{Q_{12}}(e)$ and
$[{{Q_{12}}},e]$ are both extraspecial with intersection equal to $\<t\>$ and product equal to $Q_{12}$. Since
$e$ acts fixed-point-freely on $[\bar{{Q_{12}}},e]$, we have that $|[\bar{{Q_{12}}},e]|=2^4$. Thus $C_{Q_{12}}(e)$ and $[{{Q_{12}}},e]$ are both extraspecial of order $2^5$. Since
$[{{Q_{12}}},e]$ has an automorphism of order five, $[{{Q_{12}}},e]\cong 2_-^{1+4}$ follows from
Lemma \ref{extraspecial outer automorphisms}. Finally, since ${Q_{12}}$ is extraspecial of plus
type, we have that $C_{Q_{12}}(e)\cong 2_-^{1+4}$.
\end{proof}

\begin{lemma}\label{HN-HS-1-finding 4*4*4}
There exists an element of order four $d\in C_{Q_2}(u)$ such that $d^2=t$ and $4 \times 2\cong
{\<d,u\>}\vartriangleleft {H \cap L}$.
\end{lemma}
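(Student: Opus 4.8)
I need to produce an element of order four $d \in C_{Q_2}(u)$ with $d^2 = t$ such that $\langle d, u\rangle \cong 4 \times 2$ is normal in $H \cap L$. Let me think about the structure at play here.

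Let me reconstruct the context. We have $Q_2 = O_2(C_H(a_2)) \cong 2_+^{1+4}$, which is extraspecial of plus type with center $\langle t \rangle$. The involution $u$ lies in $Q_2$ and is in class $2\mathcal{A}$. We're working inside $L = C_G(u)$ and $H = C_G(t)$, and $H \cap L = C_H(u) = C_G(\langle t, u\rangle)$.

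**My plan.** First I would examine $C_{Q_2}(u)$. Since $Q_2 \cong 2_+^{1+4}$ is extraspecial of order $2^5$ and $u \in Q_2$ is a non-central involution (as $u \neq t$ and $\langle t\rangle = \mathcal{Z}(Q_2)$), the centralizer $C_{Q_2}(u)$ has index two in $Q_2$, hence order $2^4$, and contains both $u$ and $t$. By Lemma \ref{exactly 2 q-8's in extraspecial group}, the group $2_+^{1+4}$ contains exactly $12$ elements of order four, and $C_{Q_2}(u)$ is a maximal subgroup containing the noncentral involution $u$. I would identify $C_{Q_2}(u)$ as a group of order $2^4$ with center containing $\langle t, u\rangle$; since $u$ is a noncentral involution of $2_+^{1+4}$, the quotient $C_{Q_2}(u)/\langle t\rangle$ is abelian and I expect $C_{Q_2}(u) \cong 4 \times 2$ or $2_+^{1+2} \times 2 \cong \dih(8)\ast$-type — the key is to locate an element $d$ of order four with $d^2 = t$. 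The existence of such $d$ follows because $C_{Q_2}(u)$, being index two in the extraspecial group $2_+^{1+4}$ and containing noncentral involutions, must contain elements of order four squaring into $\mathcal{Z}(Q_2) = \langle t\rangle$; indeed among the $12$ elements of order four in $Q_2$, a counting/commutator argument shows some lie in $C_{Q_2}(u)$ and square to $t$.

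**Producing $d$ and checking $\langle d, u\rangle \cong 4 \times 2$.** Having chosen $d \in C_{Q_2}(u)$ of order four with $d^2 = t$, I note that $d$ commutes with $u$ by construction (as $d \in C_{Q_2}(u)$), and $u \notin \langle d\rangle$ since $\langle d\rangle = \{1, d, t, d^3\}$ contains $t$ but not $u$ (as $u \neq t$ and $u$ has order two while $d, d^3$ have order four). Therefore $\langle d, u\rangle = \langle d\rangle \times \langle u\rangle \cong 4 \times 2$, an abelian group of order eight.

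**Normality in $H \cap L$ — the main obstacle.** The hardest step will be establishing that $\langle d, u\rangle \trianglelefteq H \cap L$. Here I would use that $H \cap L = C_H(u)$ centralizes $u$, so $\langle u \rangle$ is automatically central in $H \cap L$, hence normal. For the full group $\langle d, u\rangle$ I would argue via the structure $C_H(u)/C_{Q_{12}}(u) \cong \sym(5)$ established just above, together with the action on $Q_{12}$. The element $t = d^2$ is central in $H$ and therefore central in $H \cap L$, so $\langle t\rangle \trianglelefteq H\cap L$. The subtlety is controlling the image $\langle d, u\rangle \langle t\rangle / \langle t\rangle = \langle \bar d, \bar u\rangle$; since $\bar u$ is central and $\bar d$ has order two in $\overline{Q_{12}} = Q_{12}/\langle t\rangle$, I would show $\bar d$ is fixed by the relevant $\sym(5)$-action by locating $d$ inside a characteristic subgroup — most naturally $C_{Q_2}(u)$ is normalized by $C_H(a_2) \cap L$, and I would leverage that $d$ can be chosen inside $E$ or a $C_H(u)$-invariant piece (recall $E \trianglelefteq C_H(a_1)$ from Notation \ref{HN-Alt9notation} and the $4\times 4\times 4$ structure being built toward). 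I expect the cleanest route is to show $\langle d, u, t\rangle$ is exactly the preimage of a $1$-dimensional fixed space of the $\sym(5)$-action on a suitable section of $Q_{12}$, forcing normality; this fixed-point analysis, paralleling the $2_-^{1+4}$ computation of Lemma \ref{HN-HS-element of order five}, is where the real work lies.
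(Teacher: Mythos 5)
You have the routine parts right: $C_{Q_2}(u)\cong 2\times \dih(8)$ does contain elements of order four, every element of order four in an extraspecial $2$-group squares to the central involution, and $\langle d,u\rangle\cong 4\times 2$ is then immediate. But the entire content of the lemma is the normality statement, and your proposal does not prove it — you explicitly defer it as ``where the real work lies.'' Your intended strategy (realize $\langle d,u\rangle$ as the preimage of a one-dimensional fixed space of the $\sym(5)$-action on a section of $Q_{12}$) is indeed the paper's strategy: it works with $V:=C_{Q_{12}}(u)/\langle u,t\rangle$, a $6$-dimensional module for $(H\cap L)/(Q_{12}\cap L)\cong\sym(5)$. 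The step you omit, however, is the crux. From $|C_V(a_2)|=2^2$ one only learns that the composition factors of $V$ are two trivial modules and one natural $4$-dimensional $\alt(5)$-module; nothing so far prevents the natural module from being the unique minimal submodule, in which case $V$ has \emph{no} trivial submodule and no candidate normal subgroup exists at all. The paper excludes this by showing that if a natural submodule $V_0$ existed, its preimage $W_0$ of order $2^6$ would equal both $\langle u\rangle\times[Q_{12},a_2]=\langle u\rangle\times Q_1\cong 2\times 2_+^{1+4}$ and $\langle u\rangle\times[Q_{12},e]\cong 2\times 2_-^{1+4}$, where $e$ has order five — the second isomorphism being exactly Lemma~\ref{HN-HS-element of order five}, which you cite only as an analogy and never actually invoke. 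Without this contradiction between the two extraspecial types, there is no normal subgroup to exhibit.

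There is a second gap of the same kind: even once a trivial submodule $V_0$ is found, its preimage $W_0$ is abelian of order $8$ containing $\langle u,t\rangle$, and could a priori be elementary abelian — in which case it contains no element of order four and the lemma fails. The paper rules this out because $V_0$ trivial forces (by coprime action) $W_0\leq C_{Q_{12}}(e)\cong 2_-^{1+4}$, which has no elementary abelian subgroup of order eight; only then is $d$ extracted from $W_0\cong 4\times 2$. Note also that your fallback of choosing $d$ ``inside $E$'' is impossible, since $E$ is elementary abelian of order eight and so contains no elements of order four. Your element-first ordering is itself harmless — all four elements of order four of $C_{Q_2}(u)\cong 2\times\dih(8)$ generate, together with $u$, the same subgroup $\langle d,u\rangle$, so the group is well defined once $d$ is chosen — but normality of that group cannot be obtained from invariance of $C_{Q_2}(u)$ (which is \emph{not} normalized by $H\cap L$, since $H\cap L$ does not normalize $\langle a_2\rangle$); it only follows from the module analysis you left undone.
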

\begin{proof}
Set $V:=C_{Q_{12}}(u)/\<u,t\>$ then $|V|=2^6$. Consider the action of $(L \cap H)/(L \cap
{Q_{12}})\cong \sym(5)$ on $V$. We have that $C_{Q_{12}}(a_2)=Q_2$ and if we set
$B:=C_{Q_2}(u)=C_{C_{Q_{12}}(u)}(a_2)$ then $|B|=2^4$. Now using coprime action we have that
$|C_V(a_2)|=|C_{C_{Q_{12}}(u)}(a_2)/\<t,u\>|=2^2$. It follows that $V$ is a sum of two trivial
$\alt(5)$-modules and a 4-dimensional natural $\alt(5)$-module. Let $V_0$ be an irreducible $((L
\cap H)/(L \cap {Q_{12}}))$-submodule of $V$ and let $W_0$ be the preimage of $V_0$ in
$C_{Q_{12}}(u)$. Suppose $V_0$ is a $4$-dimensional module. Then an element of order five and an
element of order three act fixed-point-freely on $V_0$. We have that $|W_0|=2^6$ and can be written
as a direct product in two different ways. Firstly,  $W_0=\<u\>\times [{Q_{12}},e]$ where
$({Q_{12}} \cap L)e \in (H \cap L)/({Q_{12}} \cap L)$ has order five. Secondly, $W_0=\<u\>\times
[{Q_{12}},a_2]$. However $[{Q_{12}},e]\cong 2_-^{1+4}$ and $[{Q_{12}},a_2]=Q_1\cong 2_+^{1+4}$
which is a contradiction. Thus $V_0$ is isomorphic to either a trivial $\sym(5)$-module or a sum of
two trivial $\alt(5)$-modules. In the latter case, $|W_0|=2^4$ and commutes with an element of
order five and an element of order three. Thus $W_0 \leq C_{Q_{12}}(a_2)=Q_2$ and so
$W_0=C_{Q_2}(u)$. However $\<F,t\> \leq C_{Q_2}(u)$ and $\<F,t\>$ is elementary abelian of order
eight. This implies that $\<F,t\>\leq C_{Q_{12}}(e)\cong 2_-^{1+4}$ which is a contradiction. Thus
$|V_0|=2$ and so $|W_0|=8$  and since $\<u,t\>$ is central in $W_0$, $W_0$ must be abelian.
Moreover if $({Q_{12}} \cap L)e$ is an element of order five in $(H \cap L)/({Q_{12}} \cap L)$ then
by Lemma \ref{HN-HS-element of order five}, $W_0 \leq C_{Q_{12}}(e)\cong 2_-^{1+4}$. Thus $W_0$ is
not elementary abelian and so $H \cap L \vartriangleright W_0 \cong 4 \times 2$. Thus, there is an
element of order four $d\in C_{Q_{12}}(u)$ such that $d^2=t$ and $4 \times 2\cong
{\<d,u\>}\vartriangleleft {H \cap L}$.
\end{proof}

\begin{lemma}\label{HN-HS-complement to A}
There exists  a  complement $C\cong \GL_3(2)$ to $C_L(E)$ in $N_L(E)$ such that $EC \leq C_G(F)$ and
there exists $S_u \in \syl_2(H \cap L)$ such that $E\vartriangleleft S_u$.
\end{lemma}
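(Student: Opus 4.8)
The plan is to place everything inside the subgroup $C_G(F)$, whose structure is known from Lemma \ref{HN-fours groups centralize A8}, and to read off the $\GL_3(2)$ as a point stabiliser in an alternating group. First I would record the elementary containments. Since $E\le Q_1$, $u\in Q_2$ and $[Q_1,Q_2]=1$ by Lemma \ref{HN-Q_i's}, we have $[E,u]=1$, and since $Q_1\cap Q_2=\langle t\rangle$ with $u\ne t$ (as $u\in 2\mathcal{A}$, $t\in 2\mathcal{B}$) we get $u\notin E$; thus $E$ centralises $u$ and lies in $C_G(a_1)$, so $E\le A_0:=C_G(u)\cap C_G(a_1)$, where $A_0\cong\alt(8)$ and $A_0\le C_G(F)\le L$ (the last containment because $u\in F\le \mathcal{Z}(C_G(F))$), by Lemma \ref{HN-fours groups centralize A8}. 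Being a perfect group of order $|\alt(8)|$ inside $C_G(a_1)\cong 3\times\alt(9)$, $A_0$ lies in $O^3(C_G(a_1))\cong\alt(9)$, and, having index $9$, is a point stabiliser there. By Notation \ref{HN-Alt9notation} the image of $E$ in $\alt(9)$ acts regularly on $\{1,\dots,8\}$, fixing only the point $9$; since $E\le A_0$ fixes the point omitted by $A_0$, that point is $9$, so $A_0=\alt(\{1,\dots,8\})$ and $E$ is a regular elementary abelian subgroup of $A_0$.

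Consequently $N_{A_0}(E)\cong 2^3{:}\GL_3(2)$ is the holomorph of $E$ and induces the full group $\GL(E)$. I would take $C$ to be a point-stabiliser complement, so $C\cong\GL_3(2)$, $C\cap E=1$ and $C$ induces $\GL(E)$ on $E$. Then $C\le A_0\le C_G(F)$, whence $EC\le C_G(F)$, and since $C_G(F)\le L$ also $C\le N_L(E)$. Because $C_L(E)\cap A_0=C_{A_0}(E)=E$ by regularity, $C\cap C_L(E)=C\cap E=1$; and since $C$ already induces all of $\GL(E)$ while $N_L(E)/C_L(E)$ embeds in $\GL(E)$, we conclude $N_L(E)=C_L(E)C$ with $C$ a complement. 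This settles the first assertion.

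For the Sylow statement I would work in $H\cap L=C_H(u)$. Here $E\le Q_1\le Q_{12}$ and $E$ centralises $u$, so $E\le V:=C_{Q_{12}}(u)$, and $V=O_2(H\cap L)$ has order $2^8$ (as $Q_{12}\cong 2^{1+8}_+$ and $(H\cap L)/V\cong\sym(5)$, using the lemma immediately preceding Lemma \ref{HN-HS-1-finding 4*4*4}). The image of $N_{H\cap L}(E)$ in $\GL(E)$ is exactly $\sym(4)$: it lies in $\mathrm{Stab}_{\GL(E)}(t)\cong\sym(4)$ since $H\cap L$ centralises $t\in E$, and it contains the image of $C\cap H=C_C(t)\cong\sym(4)$. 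Hence $|N_{H\cap L}(E)|_2=|C_L(E)|_2\cdot 2^3$, and a Sylow $2$-subgroup $S_u$ of $N_{H\cap L}(E)$ will be Sylow in $H\cap L$ — giving $E\trianglelefteq S_u$ because $E\trianglelefteq N_{H\cap L}(E)$ — precisely when $|C_L(E)|_2=2^8$.

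The hard part will be this last equality. Since $t\in E$ we have $C_L(E)=C_{C_H(u)}(E)$, and its Sylow $2$-subgroup splits as $C_V(E)$ extended by the image of $C_L(E)$ in $\sym(5)=C_H(u)/V$. The first factor is computable from the symplectic geometry of $Q_{12}/\langle t\rangle$: the abelian group $\langle E,u\rangle/\langle t\rangle$ is a $3$-dimensional isotropic subspace, so $C_V(E)=C_{Q_{12}}(\langle E,u\rangle)$ has order $2^{1+5}=2^6$. It then remains to show that the image of $C_L(E)$ in $\sym(5)$ has $2$-part exactly $4$. I would do this by locating $E$ inside the $\sym(5)$-module $V/\langle t,u\rangle$, whose structure (two trivial summands and a natural $4$-dimensional module) was determined in the proof of Lemma \ref{HN-HS-1-finding 4*4*4}: since $u$ centralises $Q_1$ we have $Q_1\le V$, and $\bar Q_1=Q_1\langle t,u\rangle/\langle t,u\rangle$ is the natural $4$-dimensional submodule containing the plane $\bar E\cong 2^2$; a direct check of the stabiliser of $\bar E$ then yields $2$-part $4$, so $|C_L(E)|_2=2^6\cdot 4=2^8$. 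This module computation, together with the precise identification of the position of $\bar E$, is the main obstacle, the remaining bookkeeping being routine.
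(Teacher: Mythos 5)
Your treatment of the first assertion is correct and is a genuine (if mild) variant of the paper's. Where the paper invokes Lemma \ref{HN-Describing E} to obtain a complement to $C_G(E)$ in $N_G(E)$ lying inside $O^3(C_G(a_1))$, you build $C$ directly as a point-stabiliser complement to the regular subgroup $E$ inside $N_{A_0}(E)\cong 2^3{:}\GL_3(2)$, where $A_0=C_G(u)\cap C_G(a_1)\cong \alt(8)$ is the unique point stabiliser of $O^3(C_G(a_1))\cong\alt(9)$ containing $E$; self-centralisation of the regular subgroup then gives $C\cap C_L(E)=1$ and surjectivity onto $\GL(E)$ gives $N_L(E)=C_L(E)C$. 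This is sound and delivers $EC\leq C_G(F)$ and $[C,F]=1$ just as the paper's construction does.

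The second assertion contains a genuine gap, and it sits exactly where you locate your ``main obstacle''. Your reduction is fine: since the image of $N_{H\cap L}(E)$ in $\GL(E)$ is the stabiliser of $t$, of order $24$, the statement is equivalent to $|C_L(E)|_2=2^8$, and since $|C_V(E)|=|C_{Q_{12}}(\<E,u\>)|=2^6$ this amounts to the image of $C_L(E)$ in $\sym(5)$ having $2$-part $4$ --- which is even the correct value. But your proposed proof of it rests on the claim that $Q_1\<t,u\>/\<t,u\>$ is a ($4$-dimensional, natural) $\sym(5)$-submodule of $C_{Q_{12}}(u)/\<t,u\>$, and that claim is false. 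The proof of Lemma \ref{HN-HS-1-finding 4*4*4} shows precisely that this module has \emph{no} $4$-dimensional irreducible submodule (every irreducible submodule is trivial; that is how the element $d$ of order four is produced). Since the composition factors are two trivial modules and one natural module, if $Q_1\<u\>$ were invariant under $H\cap L$ --- indeed, even under a single element $e$ with $Q_{12}e$ of order five --- then $Q_1\<t,u\>/\<t,u\>$ would be such a submodule, and one would be forced into the identity $\<u\>\times[Q_{12},e]=\<u\>\times Q_1$, i.e.\ $2\times 2_-^{1+4}\cong 2\times 2_+^{1+4}$: exactly the contradiction exploited in Lemma \ref{HN-HS-1-finding 4*4*4}. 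So the module is a non-split extension (the paper's phrase ``sum of\dots modules'' there refers to composition factors), $Q_1\<u\>$ is not normalised by $H\cap L$, and the ``direct check of the stabiliser of $\bar E$'' you envisage cannot be carried out. There is a second, independent, problem: even with the correct module structure, a stabiliser computation only bounds the image of $C_L(E)$ in $\sym(5)$ from above (centralising $E$ is stronger than fixing its image pointwise); the crux of the lemma is the \emph{lower} bound $|C_L(E)|\geq 2^8$, and your proposal offers no mechanism for producing those elements.

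That lower bound is what the paper's proof actually works for. It sets $W:=O^3(C_G(E))\cap L$ (a $2$-group, which is all of $C_L(E)$), takes an element $n$ of order seven in $C$, observes that $C_{Q_2}(u)\leq W$ does not split over $\<t\>=Q_2\cap E$ so that $W$ does not split over $E$ and hence $n$ acts nontrivially on $W/E$, and then uses $[C,F]=1$ --- so that $FE\<u\>/E\<u\>$ is a nonzero fixed point of $n$ on $W/E\<u\>$ --- to rule out $|W/E\<u\>|\in\{2^2,2^3\}$ and force $|W|\geq 2^8$; adjoining a Sylow $2$-subgroup of $C_C(t)$, of order $8$, then gives a $2$-subgroup of $N_L(E)\cap H$ of order at least $2^{11}$, which is Sylow by Lemma \ref{HN-HS-Sylow 2} and normalises $E$. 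Some argument of this kind (producing, not merely bounding, a large $2$-subgroup of $C_L(E)$) is needed to close your gap.
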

\begin{proof}
Recall that $u \in F \leq Q_2$ and by Lemma \ref{HN-fours groups centralize A8}, $2 \times 2 \times
\alt(8)\cong C_G(F)> C_G(u) \cap C_G(a_1)\cong \alt(8)$. Notice that $E \leq Q_1 \leq C_G(F)$ since $[Q_1,Q_2]=1$. Notice also that $t\in C_G(u)\cap C_G(a_1)$.
From notation \ref{HN-Alt9notation}, the image of $t$ in $\alt(9)\cong O^3(C_G(a_1))$ is
$(1,2)(3,4)(5,6)(7,8)$ and so clearly $t$ lies in exactly one subgroup of $O^3(C_G(a_1))$
isomorphic to $\alt(8)$.  By Lemma \ref{HN-Describing E}, $O^3(C_G(a_1))$ contains a complement,
$C$ say, to $C_G(E)$ in $N_G(E)$. Moreover the image of $EC$ in $O^3(C_G(a_1))$ lies in a subgroup
isomorphic to $\alt(8)$ containing $t$. Therefore $EC\leq C_G(u)\cap C_G(a_1)\leq C_G(F)$.

Recall from Lemma \ref{HN-info on centralizer of E} that $O^3(C_G(E))=\<E,Q_2,A_1,A_2\>$ and
consider $W=O^3(C_G(E)) \cap L$ which is normalized by $C$. Let $n \in C$ be an element of order
seven. Notice that $C_{Q_2}(u)\leq W$ which has order $2^4$ and does not split over $\<t\>=Q_2 \cap
E$. Therefore $W$ does not split over $E$. In particular, $n$ does not centralize $W/E$
else $W=C_W(n)\times E$. Now consider the action of $C$ on $W/E\<u\>$. 
We have that $C_{Q_2}(u)\leq W$ so $|W|\geq 2^6$ therefore
$|W/E\<u\>|\geq 2^2$. Suppose first that $|W/E\<u\>|=2^2$. Then $n$ necessarily acts trivially on $W/E$ which is a contradiction. So suppose $|W/E\<u\>|=2^3$. Then  $n$ must act fixed-point-freely on $W/E\<u\>$. 
Recall that $F \leq C_{Q_2}(u) \leq W$ and so $FE/\<Eu\>$ has order two. Since $[C,F]=1$,
$[FE/\<E,u\>,n]=1$ and $n$ does not act fixed-point-freely on  $W/E\<u\>$. Thus $|W/E\<u\>|\geq
2^4$ and so $|W|\geq 2^8$. Now $C$ has a Sylow $2$-subgroup of order eight which centralizes $t$.
Thus $N_G(E) \cap L$ has a Sylow $2$-subgroup of order at least $2^{11}$ which centralizes $t$.
Hence if we call this $2$-group $S_u$ then $S_u \in \syl_2(H \cap L)$ by Lemma \ref{HN-HS-Sylow 2}
and $E\vartriangleleft S_u$.
\end{proof}

\begin{lemma}\label{HN-HS-index two subgroup}
$L$ has an index two subgroup $L_1$ such that $F \nleq L_1$ and $L_1 \cap N_G(E)\sim 2.(
4^3:\GL_3(2))$. Moreover, there is an element of order four, $d\in Q_{2}$ as in Lemma
\ref{HN-HS-1-finding 4*4*4}  and $d \in L_1$.
\end{lemma}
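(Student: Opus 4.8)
The plan is to produce the index two subgroup by a transfer argument, applied not to $L$ itself but to $\wt{L}=L/\<u\>$, where the hypotheses of Lemma \ref{Prelims-4*4*4 transfer} can be arranged. Write $F=\<u,r\>$ with $r$ the involution of $F$ mapping to $(3,4)(5,6)$ under Notation \ref{HN-Alt9notation}, so $F^\#\subseteq 2\mathcal{A}$ and, by Lemma \ref{HN-fours groups centralize A8}, $C_{\wt{L}}(\wt{r})=C_{\wt L}(\wt F)\cong 2\times\sym(8)$. This is precisely the isomorphism type demanded of the distinguished involution in Lemma \ref{Prelims-4*4*4 transfer}, and it is the reason we must work in $\wt{L}$: in $L$ itself $C_L(r)=C_G(F)\cong 2\times 2\times\alt(8)$ has the wrong shape.

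The bulk of the work is to exhibit the $4\times 4\times 4$ subgroup. First I would pin down $C_L(E)$. Since $C_G(E)$ has $M:=O_{3'}(C_G(E))=\<E,Q_2,A_1,A_2\>$ as a nilpotent (hence, here, $2$-) normal subgroup with $a_1$ acting fixed-point-freely on $M/E$ (Lemmas \ref{HN-info on centralizer of E}, \ref{HN-Describing E}) and $O^3(C_G(E))=M$, we get $C_L(E)=C_G(E)\cap C_G(u)=C_M(u)$, a $2$-group. Lemma \ref{HN-HS-complement to A} gives $|C_L(E)|\geq 2^8$ and that it is normalized by the complement $C\cong\GL_3(2)$ with $EC\leq C_G(F)$; since $S_u\in\syl_2(L)$ has order $2^{11}$ (Lemma \ref{HN-HS-Sylow 2}) and the Sylow $2$-subgroup of $N_L(E)=C_L(E)C$ has order $8\,|C_L(E)|$, equality $|C_L(E)|=2^8$ follows, so $N_L(E)$ contains a Sylow $2$-subgroup of $L$. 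Passing to $\wt{L}$, I would show that $\wt{C_L(E)}$ has shape $4^3{:}2$ with $O_2$-part $\wt A\cong 4\times 4\times 4$, $\Omega_1(\wt A)=\wt E$, that $\wt A$ is self-centralizing in $\wt L$, and that $N_{\wt L}(\wt A)=N_{\wt L}(\wt E)$ contains a Sylow $2$-subgroup of $\wt L$. Because $EC\leq C_G(F)$ forces $[r,C]=1$, the residual involution $\wt r$ of $\wt{C_L(E)}$ centralizes $\wt C\cong\GL_3(2)$, so $N_{\wt L}(\wt A)/\wt A\cong 2\times\GL_3(2)$, giving $N_{\wt L}(\wt A)\sim 4^3{:}(2\times\GL_3(2))$ with $\wt r\in N_{\wt L}(\wt A)\bs O^2(N_{\wt L}(\wt A))$.

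With $\wt A$, $N_{\wt L}(\wt A)$ and $\wt r$ so arranged, Lemma \ref{Prelims-4*4*4 transfer} (via the extremal transfer of Lemma \ref{extremal transfer}) yields $\wt r\notin O^2(\wt L)$, equivalently a surjection $\wt L\to C_2$ nontrivial on $\wt r$. Letting $L_1$ be the preimage in $L$ of its kernel, we obtain $[L:L_1]=2$ with $u\in L_1$ but $r\notin L_1$, so $F\not\leq L_1$. Since $\GL_3(2)$ is perfect, $C=O^2(C)\leq [L,L]\leq L_1$, and $C_{L_1}(E)$ is the preimage of $\wt A$ in $L$, of shape $2.4^3$; hence $L_1\cap N_G(E)=N_{L_1}(E)\sim 2.(4^3{:}\GL_3(2))$. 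For the final assertion, $d\in C_{Q_2}(u)\leq C_L(E)$ has order four with $\wt d^2=\wt t\in\wt E$ (Lemma \ref{HN-HS-1-finding 4*4*4}, together with $t\in E$), and I would locate $\wt d$ inside $\wt A$ and use $\wt A=[\wt A,C]\leq [\wt L,\wt L]\leq L_1/\<u\>$ — the $\GL_3(2)$-module $4^3$ having no nonzero coinvariants — to conclude $d\in L_1$.

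The main obstacle is the second paragraph: establishing precisely that $\wt{C_L(E)}$ is homocyclic-by-$2$ with a self-centralizing $\wt A\cong 4\times 4\times 4$, i.e. extracting the exponent-four structure (which rests on the non-splitting of $M$ over $E$ recorded in the proof of Lemma \ref{HN-HS-complement to A}) together with the faithful $\GL_3(2)$-module structure on $\wt A$ and its reduction $\wt E$. Once $\wt A$ is correctly identified and shown self-centralizing, the verification of the remaining hypotheses of Lemma \ref{Prelims-4*4*4 transfer}, the construction of $L_1$, and the location of $d$ are all routine.
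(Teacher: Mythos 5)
Your overall strategy coincides with the paper's: both arguments pass to $\wt{L}$, build a subgroup $\wt{A}\cong 4\times 4\times 4$ inside $\wt{C_L(E)}$ with $N_{\wt{L}}(\wt{A})\sim 4^3{:}(2\times \GL_3(2))$ containing a Sylow $2$-subgroup of $\wt{L}$, use the involution $\wt{F}$ together with $C_{\wt{L}}(\wt{F})\cong 2\times \sym(8)$ from Lemma \ref{HN-fours groups centralize A8}, and then invoke Lemma \ref{Prelims-4*4*4 transfer}. Your derivations of $|C_L(E)|=2^8$, of $[C,F]=1$, and of $d\in L_1$ via $\wt{A}=[\wt{A},C]\leq O^2(\wt{L})$ are all sound (the last is a small, valid variant of the paper's ``$\wt{L_1}\cap \wt{N_L(E)}\sim 4^3{:}\GL_3(2)$'' step). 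However, there is a genuine gap at exactly the point you flag: the existence of $\wt{A}$. Your proposed route --- deducing that $\wt{C_L(E)}$ is homocyclic-by-$2$ from the non-splitting of $M$ over $E$ together with ``the module structure'' --- is not an argument. Non-splitting only yields \emph{some} element of order four in $C_L(E)$ squaring into $E$; it produces neither an abelian subgroup of index two, nor exponent four, nor three independent commuting elements of order four. Since the shape $2.(4^3{:}\GL_3(2))$ is the substance of the statement, the lemma is not proved without this step.

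The missing idea is that the element $d$, which you hold back until your final sentence, is the seed of the whole construction. By Lemma \ref{HN-HS-1-finding 4*4*4}, $\<d,u\>\trianglelefteq H\cap L$, so $\<\wt{d}\>\trianglelefteq \wt{H\cap L}$; in particular $\<\wt{d}\>$ is normal in the $2$-group $\wt{C_L(E)}$ and in $\wt{N_L(E)\cap H}$, which has index $7$ in $\wt{N_L(E)}$ because $N_L(E)/C_L(E)\cong \GL_3(2)$ is transitive on $E^\#$. The paper sets $\wt{A}:=\<\wt{d}^{N_L(E)}\>$: the conjugates of $\<\wt{d}\>$ are seven cyclic groups of order four, each normal in $\wt{C_L(E)}$, whose unique involutions are the seven distinct elements $\wt{t}^g$ of $\wt{E}^\#$; hence any two conjugates intersect trivially, and two normal subgroups with trivial intersection commute. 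Their product is therefore abelian of exponent four with $\mho_1$ equal to $\wt{E}$, and comparing with $|\wt{C_L(E)}|=2^7$ (noting $\wt{A}$ cannot exhaust $\wt{C_L(E)}$, since $\wt F \leq \wt{C_L(E)}$ while $C_{\wt L}(\wt F)\cong 2\times\sym(8)$ contains no abelian subgroup of order $2^7$) pins down $\wt{A}\cong 4\times 4\times 4$. This short normal-closure argument entirely replaces the structural determination of $\wt{C_L(E)}$ that you could not complete; once $\wt{A}$ is in hand, the rest of your proposal goes through essentially as the paper's proof does.
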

\begin{proof}
By Lemma \ref{HN-alt9 observations} $(v)$, a Sylow $3$-subgroup of $C_G(E)/E$ is self-normalizing
and therefore $3 \nmid |C_G(u) \cap C_G(E)|$. Hence $C_L(E)$ is a $2$-group. Since
$E\vartriangleleft S_u \in \syl_2(L)$, $C_L(E) \leq S_u$. By Lemma \ref{HN-HS-1-finding 4*4*4},
there exists an element of order four $d\in C_{Q_2}(u)$ such that $d^2=t \in E$ and
${\<d,u\>}\vartriangleleft H \cap L$ which implies that $4\cong \<\wt{d}\>\vartriangleleft \wt{H
\cap L}$. Therefore $\<\wt{d}\>\vartriangleleft \wt{N_L(E) \cap H}$. Since $N_L(E)/C_L(E)\cong
\GL_3(2)$, $[N_L(E):N_L(E) \cap H]=7$. So consider $\<\wt{d}^{N_L(E)}\>$. Since $\wt{d}^2=\wt{t}\in
\wt{E}$ and $N_L(E)$ is transitive on $E^\#$, we clearly have at least seven conjugates of
$\<\wt{d}\>$ in $\wt{N_L(E)}$. Moreover since $\<\wt{d}\>\vartriangleleft \wt{C_L(E)}\leq\wt{S_u}$,
the seven conjugates of $\<\wt{d}\>$ in $\wt{C_L(E)}$ pairwise commute. Thus
$\wt{N_L(E)}\vartriangleright \<\wt{d}^{{N_L(E)}}\>=:\wt{A}\cong 4 \times 4 \times 4$ (where $u
\in A\trianglelefteq  N_L(E)$). Now by Lemma \ref{HN-HS-complement to A}, there exists a complement,
$C$, to  $C_L(E)$ in $N_L(E)$. Moreover, $\wt{C}$ acts non-trivially on $\wt{A}$ and so
$\wt{A}\wt{C}\sim 4^3:\GL_3(2)$. Since $C_L(E)$ is a $2$-group, and $L$ has Sylow $2$-subgroups of
order $2^{11}$, it follows that $|N_L(E)| \leq 2^{11}37$. Thus $\wt{A}\wt{C}$ has index at most two in
$\wt{N_L(E)}$.

Recall that  $u \in F \leq Q_2$ 
and by Lemma \ref{HN-HS-complement to A}, $F \leq C_L(E)$. Therefore $\wt{F}$ normalizes $\wt{AC}$.
Furthermore, by Lemma \ref{HN-fours groups centralize A8}, $C_{\wt{L}}(\wt{F})\cong 2 \times
\sym(8)$. In particular, $\wt{F} \nleq \wt{A}$ and $[\wt{A}, \wt{F}] \neq 1$. By Lemma
\ref{HN-HS-complement to A}, $[C,F]=1$. Thus $\wt{ACF}\sim 4^3:(2 \times \GL_3(2))$. Thus $\wt{ACF}
= \wt{N_L(E)}$  so we may apply Lemma \ref{Prelims-4*4*4 transfer} to $\wt{L}$ to  say that
$O^2(\wt{L})\neq \wt{L}$.

So we define $u \in L_1\vartriangleleft L$ such that $\wt{L_1}=O^2(\wt{L})$ then $\wt{L_1} \cap
\wt{N_L(E)}\sim 4^3:\GL_3(2)$ so clearly $\wt{d} \in \wt{A} \leq L_1$. It is also clear that  $[L:L_1]=2$.
\end{proof}

We continue the notation in the following lemma such that $u \in L_1\vartriangleleft L$ with $[L:L_1]=2$.

\begin{lemma}
$L\cong 2^.\HS :2$.
\end{lemma}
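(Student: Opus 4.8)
The plan is to recognise the index-two subgroup $\wt{L_1}=O^2(\wt L)$ as the Higman--Sims group by Aschbacher's Theorem~\ref{Aschbacher-HS} and then to recover the exact shape of $L$ by an extension argument, after which Segev's Theorem~\ref{Segev-HN}, together with the already-determined centraliser $C_G(t)=H$, yields $G\cong\HN$. By Lemma~\ref{HN-HS-index two subgroup} we already have $\wt{L_1}\trianglelefteq\wt L$ of index two. To apply Theorem~\ref{Aschbacher-HS} to $\wt{L_1}$ I would use the involution $\wt t$ (note $t=d^2\in A\leq L_1$ by Lemmas~\ref{HN-HS-1-finding 4*4*4} and~\ref{HN-HS-index two subgroup}, so $\wt t\in\wt{L_1}$) together with the elementary abelian group $V:=\wt E\cong 2\times 2\times 2$. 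The two hypotheses to verify are: (a) $C_{\wt{L_1}}(\wt t)$ has $O_2\cong 4 *2_+^{1+4}$ with $C_{\wt{L_1}}(\wt t)/O_2\cong\sym(5)$; and (b) $V\leq O_2(C_{\wt{L_1}}(\wt t))$, $O_2(N_{\wt{L_1}}(V))\cong 4\times 4\times 4$ and $N_{\wt{L_1}}(V)/O_2(N_{\wt{L_1}}(V))\cong\GL_3(2)$.

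For (a) I would start from $O_2(C_L(t))=C_{{Q_{12}}}(u)=Q_1*C_{Q_2}(u)$, a group of order $2^8$ with centre $\langle t,u\rangle$; passing to $\wt L$ gives $O_2(C_{\wt L}(\wt t))=C_{{Q_{12}}}(u)/\langle u\rangle$, which is extraspecial of order $2^7$. Intersecting with $\wt{L_1}$ removes a single non-central involution (the image of an element of $F$ outside $L_1$, available since $F\nleq L_1$ by Lemma~\ref{HN-HS-index two subgroup}), leaving the index-two subgroup $O_2(C_{\wt{L_1}}(\wt t))=\wt{Q_1}*\langle\wt d\rangle$, where $\wt{Q_1}\cong 2_+^{1+4}$, $\langle\wt d\rangle\cong 4$ with $\wt d^2=\wt t$ (Lemma~\ref{HN-HS-1-finding 4*4*4}), and $[\wt{Q_1},\wt d]=1$ since $[Q_1,Q_2]=1$ (Lemma~\ref{HN-Q_i's}). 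The order-four central element $\wt d$ is exactly what forces this to be the central product $4 *2_+^{1+4}$ rather than $2_+^{1+4}\times 2$. The quotient is $\sym(5)$ because $(H\cap L)/({Q_{12}}\cap L)\cong\sym(5)$ and the index-two reduction took place inside $O_2$. For (b), $E\leq Q_1$ gives $V=\wt E\leq\wt{Q_1}\leq O_2(C_{\wt{L_1}}(\wt t))$, while Lemma~\ref{HN-HS-index two subgroup} already supplies $N_{\wt{L_1}}(\wt E)=\wt A\wt C\sim 4^3{:}\GL_3(2)$ with $O_2=\wt A\cong 4\times 4\times 4$. Theorem~\ref{Aschbacher-HS} then yields $\wt{L_1}\cong\HS$.

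Next I would identify $\wt L$ itself. Since $\wt{L_1}=O^2(\wt L)\cong\HS$ is simple of index two and $\mathrm{Out}(\HS)\cong 2$, it suffices to exclude the direct product $\wt L\cong\HS\times 2$; once this is done, $\wt L$ embeds into $\aut(\HS)$ with $|\wt L|=2|\HS|$, forcing $\wt L\cong\HS{:}2$. Because $F\nleq L_1$ while $u\in L_1$, the image $\wt F$ is generated by an involution $\wt f\in\wt L\setminus\wt{L_1}$ with $C_{\wt L}(\wt f)\cong 2\times\sym(8)$ (Lemma~\ref{HN-fours groups centralize A8}). In a direct product $\HS\times 2$ such an involution would act on $\HS$ as an inner involution $h$, giving $C_{\wt L}(\wt f)\cong C_{\HS}(h)\times 2$; since no involution centraliser in $\HS$ is isomorphic to $\sym(8)$, this is impossible. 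Hence $\wt L\cong\HS{:}2$.

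Finally I would lift back to $L$. The preimage $L_1$ of $\wt{L_1}$ is a central extension of $\langle u\rangle\cong 2$ by $\HS$, so $L_1\cong 2\times\HS$ or $L_1\cong 2^{.}\HS$, as the Schur multiplier of $\HS$ is $\mathbb{Z}/2$. By Lemma~\ref{HN-HS-index two subgroup} we have $u\in A\cong 4\times 4\times 4$ with $A\leq L_1$; as $u$ is an involution of $A$ it is a square in $A$, so $u=y^2$ for some $y\in A$ of order four, and $\wt y\in\wt{L_1}\cong\HS$ is an involution lifting to the order-four element $y$. This rules out the split case, so $L_1$ is the non-split cover $2^{.}\HS$. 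Since the second involution $f\in F$ satisfies $f\notin L_1$ while $f^2=1$, the subgroup $\langle f\rangle$ is a complement to $L_1$ in $L$, and therefore $L\cong (2^{.}\HS){:}2$. I expect the main obstacle to be step (a): establishing $O_2(C_{\wt{L_1}}(\wt t))\cong 4 *2_+^{1+4}$ exactly (and confirming $V=\wt E$ sits inside it) requires careful tracking of the extraspecial structure of ${Q_{12}}$ through the quotient by $\langle u\rangle$ and the restriction to the index-two subgroup $\wt{L_1}$, and it is there that the order-four elements $\wt d$ and $y$ must be used to separate the desired group from its near-relatives.
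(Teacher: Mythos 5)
Your route is the paper's own: apply Theorem \ref{Aschbacher-HS} to $\wt{L_1}$ with the involution $\wt{t}$ and $V=\wt{E}$, compute $O_2(C_{\wt{L_1}}(\wt{t}))=\wt{Q_1}\ast\langle\wt{d}\rangle\cong 4\ast 2_+^{1+4}$ with quotient $\sym(5)$, read off $N_{\wt{L_1}}(\wt{E})\cong 4^3{:}\GL_3(2)$ from Lemma \ref{HN-HS-index two subgroup}, conclude $\wt{L_1}\cong\HS$ and then $\wt{L}\cong\HS{:}2$. Up to some glossed but repairable points (you need $Q_1\leq L_1$, which the paper gets from $Q_1=[Q_1,a_2]\leq L'$, and you need the fusion facts $ut\in 2\mathcal{A}$, $t\in2\mathcal{B}$ and $E\langle u\rangle\cap 2\mathcal{B}=E^{\#}$ to identify $C_{\wt{L_1}}(\wt{t})$ with $\wt{H\cap L_1}$ and $N_{\wt{L_1}}(\wt{E})$ with $\wt{L_1\cap N_G(E)}$), this part is essentially the paper's argument.

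The genuine gap is in your final step, where you rule out $L_1\cong 2\times\HS$. You cite Lemma \ref{HN-HS-index two subgroup} as giving ``$u\in A\cong 4\times4\times4$ with $A\leq L_1$'' and deduce that $u$, being an involution in a homocyclic group of exponent four, is a square. But the lemma constructs $\wt{A}=\langle\wt{d}^{N_L(E)}\rangle\cong 4\times4\times4$ inside $\wt{L}=L/\langle u\rangle$; the subgroup $A\leq L_1$ containing $u$ is its preimage, of order $2^7$, and $u$ is \emph{not} a square in $A$. Indeed the generators $\wt{d}^{\,g}$ square onto $\wt{E}^{\#}$ (transitivity of $N_L(E)$ on $E^{\#}$), so $\Omega_1(\wt{A})=\Phi(\wt{A})=\wt{E}$; hence any $a\in A$ with $a^2=u$ satisfies $\wt{a}\in\wt{E}$, forcing $a\in E\langle u\rangle$, which is elementary abelian of order $2^4$ (as $E\leq Q_1$, $u\in Q_2$ and $[Q_1,Q_2]=1$), so $a^2=1\neq u$. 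Thus your proposed witness for non-splitness does not exist, and the distinction between $2^{.}\HS$ and $2\times\HS$ — the whole point of this step — is left unproved. The idea is correct (exhibit an element of order four in $L$ squaring to $u$), but the element must be found elsewhere: the paper uses Notation \ref{HN-Alt9notation}, where $u$ has image $(1,2)(3,4)$ in $\alt(9)\cong O^3(C_G(a_2))$, and the element of $O^3(C_G(a_2))$ with image $(1,3,2,4)(5,6)$ has order four, lies in $C_G(u)=L$, and squares to $u$. This shows $L$, and hence $L_1$, does not split over $\langle u\rangle$; your concluding observation that $L=L_1\langle f\rangle$ for an involution $f\in F\setminus L_1$ then gives $L\cong 2^{.}\HS{:}2$ as in the paper.
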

\begin{proof}
We must prove that $\wt{L_1}$ satisfies the hypotheses of  Theorem \ref{Aschbacher-HS}   to
recognize the sporadic simple group $\HS$. Now we have that $\wt{t}$ is an involution in
$\wt{L_1}$. Consider $C_{\wt{L_1}}(\wt{t})$. If $x \in L_1$ and $[\wt{t},\wt{x}]=1$ then $x$
centralizes $\<u\>t=\{t,ut\}$. It follows from Notation \ref{HN-Alt9notation} that $ut\in 2
\mathcal{A}$. Therefore $x$ centralizes $t$ and so $C_{\wt{L_1}}(\wt{t})=\wt{(H \cap L_1)}$. Notice
that $F\leq Q_{12} \cap L$ and $F \nleq L_1$ so $Q_{12} \cap L_1<Q_{12} \cap L$. Now $[Q_1,u]=1$
and $[a_2,u]=1$. Moreover, $[Q_1,a_2]=Q_1$ (otherwise $C_{Q_1}(a_2)>\<t\>$) and so $Q_1 \leq L'
\leq L_1$. By definition of $L_1$, we have that $u \in L_1$. Also Lemma \ref{HN-HS-index two
subgroup} says that an element of order four $d$ satisfying Lemma \ref{HN-HS-1-finding 4*4*4} is in
$C_{Q_2}(u) \cap L_1$ such that $\<d,u\>\cong 4 \times 2$. Thus $(2_+^{1+4} \ast 4) \times 2\sim
Q_1\<d,u\>=Q_{12} \cap L_1$.  Now $H \cap L_1/Q_{12} \cap L_1\cong \sym(5)$ follows from an
isomorphism theorem since
\[\frac{H \cap L_1}{Q_{12} \cap L_1}=\frac{H \cap L_1}{(H \cap L_1) \cap (Q_{12} \cap L)}\cong  \frac{(H \cap
L_1)(Q_{12} \cap L)}{Q_{12} \cap L}=\frac{H \cap L}{Q_{12} \cap L}\cong \sym(5).\] Thus
$C_{\wt{L_1}}(\wt{t})$ has $2$-radical, $\wt{Q_{12} \cap L_1}\cong 2_+^{1+4}*4$ with quotient
$\sym(5)$.

Now we have that $E \leq Q_{12} \cap L_1$. Suppose that $x \in L_1$ and $\wt{x}$ normalizes
$\wt{E}$. Then $x$ normalizes $E\<u\>$. Since $N_L(E)$ is transitive on $E^\#$ and we have seen
that $tu \in 2 \mathcal{A}$, we have that $\{ue|e \in E^\#\}\subseteq 2\mathcal{A}$. Therefore
$E\<u\> \cap 2\mathcal{B}=E^\#$. Hence $x$ normalizes $E$. Thus $N_{\wt{L_1}}(\wt{E})=\wt{L_1 \cap
N_G(E)}$. By Lemma \ref{HN-HS-index two subgroup}, $L_1 \cap N_G(E)\sim 2.( 4^3:\GL_3(2))$ and so
$N_{\wt{L_1}}(\wt{E})\cong 4^3:\GL_3(2)$. Thus we have satisfied the hypothesis of Theorem
\ref{Aschbacher-HS}  and therefore $\wt{L_1}\cong \HS$. Since $L=L_1F$ and since $\wt{F}$ acts
non-trivially on $\wt{L_1}$, $\wt{L}\cong \aut(\HS)\sim \HS :2$. Now notice that $L$ does not split
over $\<u\>$ for example because if we consider the image of $u$ in $\alt(9)\cong O^3(C_G(a_2))$ as
in Notation \ref{HN-Alt9notation}, $u\mapsto (1,2)(3,4)$ then we see that an element of order four
with image $(1,3,2,4)(5,6)$ squares to $u$. Thus $L\cong 2^.\aut(\HS)\cong 2^.\HS :2$.
\end{proof}

\begin{lemma}
$G \cong \HN$
\end{lemma}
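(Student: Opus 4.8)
The plan is to finish by a direct appeal to Segev's recognition theorem (Theorem \ref{Segev-HN}), since by this point both required involution centralizers have been pinned down. Concretely, I would take $t$ to be the fixed $2$-central involution and $u$ to be the chosen representative of the class $2\mathcal{A}$, and verify that the pair $(u,t)$ satisfies the three hypotheses of Theorem \ref{Segev-HN}.

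First I would assemble the structure of $C_G(t)$. The results of Section \ref{HN-Section-CG(t)} culminate in $K=H$, together with $Q_{12}=Q_1Q_2\cong 2_+^{1+8}$ and $K/Q_{12}\cong \alt(5)\wr 2$ (Lemma \ref{HN-K/Q has shape alt(5) wr 2 ii}); hence $C_G(t)=H$ has shape $2_+^{1+8}.(\alt(5)\wr 2)$ as required. To supply the condition $C_G(O_2(C_G(t)))\leq O_2(C_G(t))$ I would first identify $O_2(H)=Q_{12}$: since $Q_{12}$ is a normal $2$-subgroup of $H$ and $H/Q_{12}\cong \alt(5)\wr 2$ has trivial $O_2$, the preimage of $O_2(H/Q_{12})$ shows $O_2(H)=Q_{12}$. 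Then Lemma \ref{HN-Q_i's} gives $C_G(Q_{12})=\langle t\rangle\leq Q_{12}$, which is exactly the self-centralizing hypothesis.

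Next I would record that $C_G(u)=L\cong 2^{.}\HS:2$, which is precisely the content of the final lemma of Section \ref{HN-Section-CG(u)}. With all three hypotheses of Theorem \ref{Segev-HN} in hand, I would invoke that theorem to conclude $G\cong \HN$, completing the proof of Theorem C.

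There is no genuine obstacle remaining at this stage: every substantive piece of structural information was established in the preceding sections, so the argument is purely a matter of checking that the determined centralizers match Segev's hypotheses verbatim. The only point demanding a line of justification rather than a pure citation is the identification $O_2(C_G(t))=Q_{12}$, needed so that the self-centralizing condition on $O_2(C_G(t))$ reads off correctly from Lemma \ref{HN-Q_i's}; this is immediate from $O_2(\alt(5)\wr 2)=1$.
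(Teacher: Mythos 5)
Your proposal is correct and matches the paper's proof essentially verbatim: both conclude by verifying the hypotheses of Segev's theorem for the pair $(u,t)$, citing the established structures $C_G(t)\sim 2_+^{1+8}.(\alt(5)\wr 2)$, $C_G(u)\cong 2^{.}\HS:2$, and $C_G(Q_{12})=\langle t\rangle$ from Lemma \ref{HN-Q_i's}. Your explicit justification that $O_2(H)=Q_{12}$ (via $O_2(\alt(5)\wr 2)=1$) is a small point the paper leaves implicit, but it is the same argument.
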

\begin{proof}
We have that $G$ is a finite group with two involutions $u$ and $t$ and $L=C_G(u) \sim (2 . HS) :
2$. Also $C_G(t)\sim 2_+^{1+8}.(\alt(5)\wr 2)$ and $O_2(H)=Q_{12}$ and by Lemma \ref{HN-Q_i's},
$C_G(Q_{12})\leq Q_1\<a_2\> \cap Q_2\<a_1\>=\<t\>\leq Q_{12}$. Thus, by Theorem \ref{Segev-HN}, $G
\cong \HN$.
\end{proof}

This completes the proof of Theorem C.

\appendix

\include{AppendixCode}
\addcontentsline{toc}{chapter}{Bibliography}
\bibliographystyle{plain}
\bibliography{mybibliography}

\end{document}